\documentclass[10pt,final]{book}
\usepackage[a4paper,hmargin={2.4cm,2.4cm},vmargin={3.1cm,2.9cm},headheight=14pt]{geometry}

\usepackage{combelow}
\usepackage{amsthm}
\usepackage{amssymb}
\usepackage{stmaryrd}
\usepackage{makeidx}
\usepackage[all]{xy}
\usepackage{epsf}
\usepackage{enumerate}
\usepackage{fancyhdr}
\usepackage{amsmath,amscd}
\newtheorem{theorem}{Theorem}[section]
\newtheorem{mtheorem}{Theorem}
\newtheorem*{theoremzero}{Theorem 0}
\newtheorem*{TubeLemma}{The Tube Lemma}
\newtheorem*{theoremtwo}{Theorem 2}
\newtheorem*{theoremthree}{Theorem 3}
\newtheorem*{theoremfour}{Theorem 4}
\newtheorem*{theoremfive}{Theorem 5}
\newtheorem*{theoremone}{Theorem 1}

\newtheorem*{tvL}{Tame Vanishing Lemma}
\newtheorem{proposition}[theorem]{Proposition}
\newtheorem{lemma}[theorem]{Lemma}
\newtheorem{corollary}[theorem]{Corollary}

\newtheorem{definition}[theorem]{Definition}
\newtheorem{definitions}[theorem]{Definitions}
\newtheorem{example}[theorem]{Example}
\newtheorem{remark}[theorem]{Remark}

\newcommand{\rmap}{\longrightarrow}
\newcommand{\plin}{\pi_{\mathrm{lin}}}
\newcommand{\diffto}{\xrightarrow{\raisebox{-0.2 em}[0pt][0pt]{\smash{\ensuremath{\sim}}}}}
\newcommand{\Gr}{\textrm{Gr}_k(M)}

\newcommand{\thesistitle}
{Normal forms in Poisson geometry}

\newcommand{\nativetitle}
{Normaalvormen in de Poisson-meetkunde}

\vfuzz12pt 
\hfuzz12pt 
\makeindex

\begin{document}
 \frontmatter
 \thispagestyle{empty}
$\phantom{1}$
 \clearpage

 \thispagestyle{empty}
\begin{center}

\vspace*{0.2\textheight}

\begin{minipage}{\textwidth}
\begin{center}
  \renewcommand{\baselinestretch}{1.2}
  \bfseries\huge\thesistitle
\end{center}
\end{minipage}

\end{center}

 \clearpage


\thispagestyle{empty}

\noindent $\phantom{0}$\\

\noindent $\phantom{0}$\\
$\phantom{0}$\\
$\phantom{0}$\\
$\phantom{0}$\\
$\phantom{0}$\\

\vfill

\noindent Normal forms in Poisson geometry, Ioan M\u{a}rcu\cb{t}

\noindent Ph.D. Thesis Utrecht University, February 2013


\noindent ISBN: 978-90-393-5913-6

\noindent

\clearpage

\thispagestyle{empty}
\begin{center}

\vspace*{3em}

\begin{minipage}{\textwidth}
\begin{center}
  \renewcommand{\baselinestretch}{1.2}
  \bfseries\huge\thesistitle
\end{center}
\end{minipage}

\vspace{5em}

\begin{minipage}{\textwidth}
\begin{center}
  \renewcommand{\baselinestretch}{1.2}
  \bfseries\Large\nativetitle
\end{center}
\end{minipage}

\vspace{\baselineskip} (met een samenvatting in het Nederlands)

\vfill

{\LARGE Proefschrift}

\vspace{3em}

\begin{minipage}{0.9\textwidth}
\begin{center}
ter verkrijging van de graad van doctor aan de Universiteit Utrecht\\
op gezag van de rector magnificus, prof.\,dr.\ G.J.\ van der Zwaan,\\
ingevolge het besluit van het college voor promoties in het openbaar\\
te verdedigen op maandag 11 februari 2013 des middags te 2.30 uur
\end{center}
\end{minipage}
\vspace{\baselineskip}

door \vspace{\baselineskip}

{\Large Ioan Tiberiu M\u{a}rcu\cb{t}} \vspace{\baselineskip}

geboren op 24 maart 1984 te Sibiu, Roemeni\"e \vspace{\baselineskip}

\end{center}

\clearpage

\thispagestyle{empty}

\begin{tabular}{ll}
Promotor:  & Prof.\ dr. M.\ Crainic 
\end{tabular}

\vfill

This thesis was accomplished with financial support from the Netherlands Organisation for Scientific Research (NWO)
under the Vidi Project ``Poisson Topology'' no. 639.032.712.

 \cleardoublepage

\thispagestyle{empty}

\vspace*{4cm}
\begin{center}
 \textit{Pentru Ioana \& Dodu}
\end{center}

\clearpage

\thispagestyle{empty}

\cleardoublepage

\pagestyle{fancy}

\fancyhead{}
\fancyhead[CE]{Contents} 
\fancyhead[CO]{Contents}

\tableofcontents

\clearpage \pagestyle{plain}

\mainmatter

\fancyhead{} \setcounter{tocdepth}{1}
\chapter*{Introduction}
\addcontentsline{toc}{chapter}{Introduction} \pagestyle{fancy} 
\fancyhead[CE]{Introduction} 
\fancyhead[CO]{Introduction}

In this thesis we prove four local normal form theorems in Poisson geometry using several techniques. Theorems 1 and
2 are normal form results around symplectic leaves, which are both proven geometrically. While Theorem 1 is in the
setting of symplectic foliations, Theorem 2 treats arbitrary Poisson manifolds. Theorem 3 is a formal equivalence result
around Poisson submanifolds with an algebraic proof that uses formal power series expansions of diffeomorphisms and
multivector fields. Theorem 4 is a local rigidity result around Poisson submanifolds with an analytic proof involving the
fast convergence method of Nash and Moser. This result implies a strengthening of Theorem 2. As a more surprising
application of Theorem 4, we obtain a description of the smooth deformations of Lie-Poisson spheres (Theorem 5),
which represents the first computation of a Poisson moduli space in dimension greater or equal than three around a
degenerate (i.e.\ non-symplectic) Poisson structure. We also present a new approach
to the existence problem of symplectic realizations (Theorem 0).\\

\noindent\underline{\textbf{Poisson geometry}}\\

Poisson geometry has its origins in the Hamiltonian formulation of classical mechanics, where the phase space of
certain mechanical systems carries a Poisson bracket. In general, a Poisson structure on a manifold $M$ is defined as
Lie bracket $\{\cdot,\cdot\}$ on the space of smooth functions on $M$ that acts by derivations in each entry.
Explicitly, the operation $\{\cdot,\cdot\}$ is required to satisfy:
\[\{f,g\}=-\{g,f\}, \]
\[ \{f,gh\}=\{f,g\}h+\{f,h\}g,\]
\[\{f,\{g,h\}\}+\{g,\{h,f\}\}+\{h,\{f,g\}\}=0.\]
The second equation shows that every function $f\in C^{\infty}(M)$ induces a vector field $H_f$ (the Hamiltonian
vector field of $f$) via $L_{H_f}(g)=\{f,g\}$.

A Poisson structure can be described also by a bivector field $\pi\in\Gamma(\Lambda^2TM)$ satisfying $[\pi,\pi]=0$
for the Schouten bracket. The relation between the bivector and the bracket is given by
\[\{f,g\}=\langle\pi,df\wedge dg\rangle.\]

The beauty and the strength of Poisson geometry lies in the fact that it brings together various fields of differential
geometry:

\vspace{0.1cm}

\textbf{Foliation Theory}. A Poisson manifold carries a natural partition into immersed submanifolds, called leaves.
Two points that can be connected by flow lines of Hamiltonian vector fields belong to the same leaf, and in fact, this
relation defines the partition. If the leaves have the same dimension, one deals with standard foliations and one says
that the Poisson structure is regular.

\textbf{Symplectic Geometry} can be seen as the nondegenerate version of Poisson geometry. Namely, the condition
that a nondegenerate two-form is closed is equivalent to its inverse being a Poisson bivector. The relations between
symplectic and Poisson geometry are actually deeper: the leaves of a Poisson manifold are (canonically) symplectic
manifolds, while the global object associated to a Poisson structure is symplectic (the Weinstein groupoid).


\textbf{Lie Theory}. Lie algebras are the same as linear Poisson structures: a Lie algebra
$(\mathfrak{g},[\cdot,\cdot])$ induces a canonical Poisson structure $\pi_{\mathfrak{g}}$ on $\mathfrak{g}^*$,
whose Poisson bracket extends the Lie bracket $[\cdot,\cdot]$ form linear functions to all smooth functions;
conversely, any Poisson bivector on a vector space whose coefficients are linear functions is of this form. In fact, for
any Poisson manifold $(M,\pi)$, the normal spaces to the leaves are linear Poisson, i.e.\ for $x\in M$, the annihilator of
the tangent space to the leaf through $x$ carries a Lie algebra structure. This is called the isotropy Lie algebra at $x$
and is denoted by
$(\mathfrak{g}_x,[\cdot,\cdot])$.\\

In contrast with symplectic geometry, Poisson geometry is interested even locally, and a large part of the research is
motivated by such local issues. The study of local properties of Poisson manifolds begins with the work of Weinstein
\cite{Wein}. We mention here (see \emph{loc.cit.}): \vspace{0.2cm}

\textbf{The local splitting theorem}. Locally, a Poisson manifold decomposes as the product of a symplectic manifold
(which is a small open in the symplectic leaf) and a Poisson manifold with a fixed point (which is the Poisson structure
induced on a small transversal to the leaf).\vspace{0.1cm}

\textbf{Local existence of symplectic realizations}. Around every point in $(M,\pi)$, there exists an open $U$ and a
symplectic manifold $(\Sigma,\omega)$ with a surjective Poisson submersion $(\Sigma,\omega)\to
(U,\pi)$.\vspace{0.1cm}

\textbf{Formal linearization}. If the isotropy Lie algebra $\mathfrak{g}_x$ at a fixed point $x$ is semisimple, then the
Poisson structure is formally isomorphic around $x$ to the linear Poisson structure $\pi_{\mathfrak{g}_x}$.
\vspace{0.1cm}

Weinstein also conjectures the following theorem, proven by Conn \cite{Conn}. \vspace{0.1cm}

\textbf{Conn's linearization theorem}. If the isotropy Lie algebra $\mathfrak{g}_x$ at a fixed point $x$ is semisimple
and compact, then the Poisson structure is isomorphic around $x$ to the linear Poisson structure
$\pi_{\mathfrak{g}_x}$. \vspace{0.1cm}

Conn's proof is analytic, based on the Nash-Moser method. We rewrite his proof with some simplifications in section
\ref{Section_Conn}. Conn's result is a ``first order normal form theorem'', in the sense that both the hypothesis and
the local model depend only on the first jet of the Poisson bivector at the fixed point.\\

\noindent\underline{\textbf{The main results of this thesis}}\\

We briefly explain our main results. The order of the presentation is different from that of the thesis, but more natural
for this introduction.

\vspace{0.2cm}

\textbf{Theorem 2: The normal form theorem around symplectic leaves}

\vspace{0.2cm}

One of the main results of this thesis is a first order normal form theorem around symplectic leaves (Theorem 2 below),
which generalizes Conn's theorem. Consider a Poisson manifold $(M,\pi)$ with a symplectic leaf $(S,\omega_S)$. The
vector bundle
\[A_S:=T^*M_{|S}\]
carries the structure of a transitive Lie algebroid over $S$, and the pair $(A_S,\omega_S)$ encodes the first order jet of
$\pi$ at $S$. Out of this pair one builds the \emph{local model} of $\pi$ around $S$, which generalizes the linear
Poisson structure from Conn's theorem. This is the Poisson-geometric first order approximation of $\pi$ at $S$, and
was first constructed by Vorobjev \cite{Vorobjev,Vorobjev2}; we describe it in detail in chapter \ref{ChNormalForms},
where we present also new approaches. The conditions of Theorem 2 are expressed in terms of the $1$-connected
integration of $A_S$, called the \emph{Poisson homotopy bundle} of $S$ and denoted by $P$.
\begin{theoremtwo}
If the Poisson homotopy bundle $P$ of $S$ is smooth, compact and $H^2(P)=0$, then $\pi$ is isomorphic around $S$
to its local model.
\end{theoremtwo}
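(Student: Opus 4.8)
The plan is to run a Nash–Moser fast-convergence scheme, following the strategy behind Conn's theorem, of which this is the leafwise generalization, but adapted from linearization at a fixed point to normalization around the whole leaf $S$. First I would fix a tubular neighborhood of $S$ in $M$ and encode both $\pi$ and its local model $\pi_0$ by Vorobjev data — a connection on the normal bundle $\nu$, a leafwise $2$-form extending $\omega_S$, and a vertical Poisson structure. Since $\pi_0$ is by construction the first-order approximation of $\pi$ along $S$, one can arrange that $\pi$ and $\pi_0$ have the same $1$-jet along $S$; the goal then becomes to produce a diffeomorphism, fixing $S$ pointwise and preserving $\omega_S$, that carries $\pi$ to $\pi_0$. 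Applying a dilation in the fibre directions of $\nu$ and using compactness of $S$ (a Tube-Lemma type argument), one reduces to the situation in which $\pi$ is $C^k$-close to $\pi_0$, for any prescribed $k$, on a fixed tube around $S$.

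Next, set up the Newton iteration. Given a Poisson structure $\pi'$ that agrees with $\pi_0$ to high order along $S$, one seeks $\phi=\exp(X)$, with $X$ vanishing to high order along $S$, such that $\phi^*\pi'$ is much closer to $\pi_0$. The bivector $\pi'-\pi_0$ is $d_{\pi_0}$-closed modulo terms quadratic in the error, so to leading order $X$ must solve $d_{\pi_0}X = \pi'-\pi_0$; thus solvability is governed by the second cohomology of the Poisson complex of $\pi_0$ near $S$, and the usual smoothing operators are inserted to absorb the derivative loss and force quadratic convergence to the desired isomorphism. The iteration must be run relative to $S$, so that every correction vanishes along $S$ and the limiting diffeomorphism fixes $S$ and $\omega_S$.

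The heart of the proof is showing that this obstruction space vanishes \emph{with tame estimates}. Near $S$, the Poisson complex of $\pi_0$ is, up to tame-bounded lower-order terms, the Chevalley–Eilenberg complex of the transitive Lie algebroid $A_S = T^*M_{|S}$ with coefficients in the functions transverse to $S$. A Van Est–type argument — the Poisson homotopy bundle $P$ being built precisely so that its de Rham cohomology controls the low-degree cohomology of $A_S$ — identifies the obstruction with $H^2(P)$, which is zero by hypothesis. Compactness (and smoothness) of $P$ then upgrades this abstract vanishing to explicit homotopy operators $h$ satisfying $\mathrm{id}=d_{\pi_0}h+h\,d_{\pi_0}$ in degree $2$, with tame estimates uniform in the normal direction, obtained by averaging over $P$ against an invariant density — exactly as compactness of the structure group is used in Conn's linearization theorem.

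The main obstacle is precisely this Tame Vanishing Lemma. Turning $H^2(P)=0$ into homotopy operators with Nash–Moser-admissible bounds requires controlling the Van Est integration and the averaging over $P$ quantitatively, and, crucially, producing estimates that are both uniform on a whole tube around $S$ and compatible with the filtration by order of vanishing along $S$. One must then choose norms on the tube for which the homotopy operators, the smoothing operators, and the nonlinear remainder of the Newton equation all belong to a single tame category, and run the iteration while keeping it inside the region where the geometric identifications of the first step remain valid. Once these estimates and the bookkeeping of the scheme are in place, the iteration converges to the required isomorphism; Conn's theorem is recovered as the special case $S=\{\mathrm{pt}\}$.
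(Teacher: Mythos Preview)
Your proposal is correct but follows the \emph{analytic} route, whereas the paper's proof of Theorem~2 (Chapter~4) is \emph{geometric}, following the Crainic--Fernandes proof of Conn's theorem. The paper first uses a Moser path method (not Nash--Moser) connecting $\pi$ to $\pi_0$ via the family $\pi_t = t\mu_t^*(\pi^{(t-1)p^*(\omega_S)})$; this reduces the problem to the vanishing of three specific cohomology groups: the germinal $H^2_{\pi}(M)_S$ and the algebroid groups $H^1(A_S)$ and $H^1(A_S,\nu_S^*)$. The last two follow from Van Est plus properness of $\mathcal{G}(A_S)$, but the first requires knowing that $\pi$ itself is Hausdorff-integrable on a neighborhood of $S$; the paper deduces this from the hypotheses by constructing a suitable symplectic realization out of a transversal in the Banach manifold of cotangent paths. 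Your Nash--Moser scheme avoids this integrability step entirely, because the tame homotopy operators are built from the groupoid integrating the \emph{local model} $\pi_0$ (which always exists, with $s$-fibers $\cong P$), not from an integration of $\pi$. The paper in fact carries out your approach as well --- it is essentially the content of Theorem~4 and its corollary Theorem~6.1 --- and notes that it yields more: $C^p$-$C^1$-rigidity, and a weakening of the hypothesis to allow any compact integration of $A_S$ rather than the $1$-connected one. One point in your sketch to sharpen: the tame homotopy operators must act on the full Poisson complex of $\pi_0$ over the tube, not merely on the $A_S$-complex with transverse coefficients; this is exactly what the Tame Vanishing Lemma provides once you observe that $\pi_0$ is integrated by $\mathcal{G}(P)_{|U}$, whose $s$-fibers are diffeomorphic to $P$.
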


The proof of Theorem 2 is based on the geometric proof of Conn's theorem of Crainic and Fernandes \cite{CrFe-Conn}.
In particular, we follow the same steps: first, we reduce the problem to a cohomological one using the Moser path
method; second, using the Van Est map and vanishing of cohomology of proper groupoids, we show that integrability
implies vanishing of the cohomological obstructions; in the third step we show that integrability is implied by the
existence of some special symplectic realizations; finally, we constructs such realizations using the space of cotangent
paths. Compared to \cite{CrFe-Conn}, we describe explicitly the symplectic structure on transversals in the space of
cotangent paths via a simple formula; this is presented in section \ref{Constructing symplectic realizations from
transversals in the manifold of cotangent paths}.\\

\textbf{Theorem 0: On the existence of symplectic realizations}

\vspace{0.2cm}

The formula for the symplectic structure on transversals in the space of cotangent paths served as an inspiration for a
new approach to the existence problem of symplectic realizations, which we explain in the sequel. Let $(M,\pi)$ be a
Poisson manifold. A ``quadratic'' vector field $\mathcal{V}_{\pi}$ on $T^*M$ with the property that
$\mathcal{V}_{\pi,\xi}$ projects to $\pi^{\sharp}(\xi)$, is called a \emph{Poisson spray}. Such vector fields are easily
constructed. Using the principles of ``contravariant geometry'', in section \ref{Ontheexistence} we prove the
following:
\begin{theoremzero}
Let $(M, \pi)$ be a Poisson manifold and let $\mathcal{V}_{\pi}$ be a Poisson spray with flow $\varphi_t$. There exists
an open $\mathcal{U}\subset T^*M$ around the zero section so that
\[ \omega:= \int_{0}^{1} (\varphi_t)^*\omega_{\textrm{can}} dt\]
is a symplectic structure on $\mathcal{U}$ and the canonical projection $p: (\mathcal{U}, \omega)\to (M, \pi)$ is a
symplectic realization.
\end{theoremzero}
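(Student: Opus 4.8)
The plan is to verify in turn the three defining features of a symplectic realization: $\omega$ is closed, $\omega$ is nondegenerate near the zero section, and $p$ is a Poisson map. Surjectivity of $p$ is automatic, since $p$ restricts to $\mathrm{id}_M$ on the zero section, which lies in $\mathcal{U}$, and $p$ is obviously a submersion.

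Closedness is immediate: each $(\varphi_t)^*\omega_{\mathrm{can}}$ is closed, and $d$ commutes with pullback and with integration in $t$. (One even has $\omega=-d\Theta$ with $\Theta:=\int_0^1(\varphi_t)^*\theta_{\mathrm{can}}\,dt$, because $\theta_{\mathrm{can},\xi}(\mathcal{V}_\xi)=\langle\xi,p_*\mathcal{V}_\xi\rangle=\langle\xi,\pi^\sharp\xi\rangle=0$ forces $\iota_{\mathcal{V}}\theta_{\mathrm{can}}=0$; this will not be needed below.) The real content is nondegeneracy and the Poisson property.

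For nondegeneracy I would argue along the zero section $M\subset T^*M$ and then invoke openness. A Poisson spray is quadratic along the fibres, so $\mathcal{V}$ vanishes on the zero section and $\varphi_t$ fixes each point $0_x$. In the canonical splitting $T_{0_x}(T^*M)\cong T_xM\oplus T^*_xM$ the linearization of $\mathcal{V}$ at $0_x$ is the square-zero endomorphism $(v,\alpha)\mapsto(\pi^\sharp_x\alpha,0)$ — the horizontal part $\pi^{ij}(x)p_j\partial_{x^i}$ contributes $\pi^\sharp_x$, the quadratic vertical part contributes nothing to first order — so $(d\varphi_t)_{0_x}(v,\alpha)=(v+t\,\pi^\sharp_x\alpha,\,\alpha)$. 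Substituting into the integral and using $\int_0^1 2t\,dt=1$ gives
\[ \omega_{0_x}\big((v,\alpha),(w,\beta)\big)\;=\;\langle\alpha,w\rangle-\langle\beta,v\rangle+\pi_x(\alpha,\beta), \]
which is nondegenerate: if $(v,\alpha)$ lies in the kernel, pairing against $(w,0)$ forces $\alpha=0$, and then pairing against $(0,\beta)$ forces $v=0$. Since nondegeneracy is an open condition, $\omega$ is symplectic on some open $\mathcal{U}$ containing the whole zero section; shrinking $\mathcal{U}$ we may assume it is star-shaped along the fibres.

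The crux is that $p:(\mathcal{U},\omega)\to(M,\pi)$ is Poisson. It suffices to show that for every $h\in C^\infty(M)$ the Hamiltonian vector field $X^\omega_{p^*h}$ is $p$-related to the $\pi$-Hamiltonian vector field $X^\pi_h$; then $\{p^*f,p^*g\}_\omega=d(p^*f)(X^\omega_{p^*g})=\big(df(X^\pi_g)\big)\circ p=p^*\{f,g\}_\pi$ for all $f,g$. I would establish this in three moves. First, the computation above already shows $\{p^*f,p^*g\}_\omega=p^*\{f,g\}_\pi$ along the zero section. Second, the homogeneity $\varphi_t\circ m_\lambda=m_\lambda\circ\varphi_{\lambda t}$ that follows from quadraticity of $\mathcal{V}$ (here $m_\lambda$ is fibrewise multiplication by $\lambda$) yields $m_\lambda^*\omega=\omega_\lambda$, where $\omega_\lambda:=\int_0^\lambda(\varphi_s)^*\omega_{\mathrm{can}}\,ds$; since $p\circ m_\lambda=p$ and $m_\lambda^*\theta_{\mathrm{can}}=\lambda\theta_{\mathrm{can}}$, this gives $\{p^*f,p^*g\}_\omega(\lambda\xi)=\{p^*f,p^*g\}_{\omega_\lambda}(\xi)$. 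Third — and this is the main obstacle — one shows $\lambda\mapsto\{p^*f,p^*g\}_{\omega_\lambda}(\xi)$ is constant. Using $\tfrac{d}{d\lambda}\omega_\lambda=(\varphi_\lambda)^*\omega_{\mathrm{can}}$ and the variation formula for the inverse of a symplectic form, this derivative is (up to sign) $\omega_{\mathrm{can}}\big((d\varphi_\lambda)X^{\omega_\lambda}_{p^*f},\,(d\varphi_\lambda)X^{\omega_\lambda}_{p^*g}\big)$, which vanishes once one knows that $(d\varphi_\lambda)X^{\omega_\lambda}_{p^*f}$ is tangent to the (Lagrangian) fibres of $T^*M\to M$. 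This verticality is where the defining property $p_*\mathcal{V}=\pi^\sharp$ genuinely enters — via the computation that the horizontal component of $[\mathcal{V},X^{\omega_{\mathrm{can}}}_{p^*f}]$ equals $-X^\pi_f$, integrated along the flow — and it is the step requiring real care. Combining the three moves and letting $\lambda\to0$ (legitimate since $\mathcal{U}$ is fibrewise star-shaped and the bracket is continuous) gives $\{p^*f,p^*g\}_\omega(\xi)=\{p^*f,p^*g\}_\omega(0_{p(\xi)})=p^*\{f,g\}_\pi(\xi)$ on all of $\mathcal{U}$, so $p$ is a symplectic realization.
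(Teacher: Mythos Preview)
Your treatment of closedness and of nondegeneracy along the zero section matches the paper's exactly, including the formula $\omega_{0_x}\big((v,\alpha),(w,\beta)\big)=\langle\alpha,w\rangle-\langle\beta,v\rangle+\pi_x(\alpha,\beta)$. The scaling framework you set up for the Poisson property is a genuinely different reorganization: the identity $m_\lambda^*\omega=\omega_\lambda$ is correct, and reducing to constancy of $\lambda\mapsto\{p^*f,p^*g\}_{\omega_\lambda}(\xi)$ is a clean idea.

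The gap is at your ``third move''. Unwinding your verticality claim with the homogeneity relation $\varphi_\lambda\circ m_{1/\lambda}=m_{1/\lambda}\circ\varphi_1$ shows that ``$(d\varphi_\lambda)X^{\omega_\lambda}_{p^*f}$ is $p$-vertical at $\xi$'' is exactly ``$X^{\omega}_{p^*f}\in\ker d(p\circ\varphi_1)$ at $\lambda\xi$''. So what you need is precisely $T\mathcal{F}(p)^{\perp_\omega}\subset T\mathcal{F}(p_1)$ with $p_1:=p\circ\varphi_1$, which is the core fact the paper isolates. The paper proves it by fixing an auxiliary torsion-free connection, introducing the associated contravariant connections $\overline{\nabla}$, and deriving a boundary-term formula $\omega(v_0,w_0)=\big(\langle\widetilde\theta_w,\overline v\rangle-\langle\widetilde\theta_v,\overline w\rangle-\pi(\widetilde\theta_v,\widetilde\theta_w)\big)\big|_0^1$ (Lemma~\ref{Lemma_simple_frmla}), from which both the orthogonality and the Poisson property of $p$ are read off directly.

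Your hint does not close the gap. The computation that the horizontal part of $[\mathcal{V},X^{\omega_{\mathrm{can}}}_{p^*f}]$ equals $-X^\pi_f$ uses only the spray condition $p_*\mathcal{V}_\xi=\pi^\sharp\xi$ and holds for \emph{any} bivector $\pi$; since the verticality you need fails when $[\pi,\pi]\neq0$, that bracket identity alone cannot suffice. Moreover, $X^{\omega_{\mathrm{can}}}_{p^*f}$ is the vertical lift of $df$, whereas the vector you must control is $X^{\omega_\lambda}_{p^*f}$, Hamiltonian for a different form, and your sketch does not explain the passage between them. The Jacobi identity for $\pi$ has to enter this step; in the paper it enters through the compatibility $\overline{\nabla}_a\circ\pi^\sharp=\pi^\sharp\circ\overline{\nabla}_a$, and your outline gives no substitute for it.
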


\vspace{0.2cm} \textbf{Theorem 1: Reeb stability for symplectic foliations} \vspace{0.2cm}

Another main result proven in this thesis (Theorem 1 form chapter \ref{ChReeb}) is a normal form result for
symplectic foliations $(M,\mathcal{F},\omega)$ (i.e.\ regular Poisson structures), which is the Poisson-geometric
version of the Local Reeb Stability Theorem from foliation theory. Compared to Theorem 2, this is not a first order
normal form result, in the sense that the conditions of Theorem 1 depend on the holonomy of the foliation. We will use
the cohomological variation of the symplectic structure at the leaf $S$ through $x$, denoted by
\[[\delta_S\omega]:\nu_{S,x}\rmap H^2(\widetilde{S}),\]
where $\nu_{S,x}$ is the normal space to $S$ at $x$, and $\widetilde{S}$ is the holonomy cover of $S$.
\begin{theoremone}
Let $(M,\mathcal{F}, \omega)$ be a symplectic foliation and let $S\subset M$ be an embedded symplectic leaf. If $S$
is a finite type manifold with finite holonomy and surjective cohomological variation then, around $S$, the symplectic
foliation is isomorphic to its local model.
\end{theoremone}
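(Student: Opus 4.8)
The plan is to adapt the Moser-path strategy used for Theorem 2 to the regular setting, replacing the Poisson-homotopy bundle by the holonomy groupoid of $\mathcal{F}$. First I would set up the local model: near $S$ the foliated manifold looks like the total space of the normal bundle $\nu_S$, foliated by the pullback foliation, with the leafwise symplectic form determined by $\omega|_S$ together with the linearized variation data, i.e.\ by the cohomological variation $[\delta_S\omega]$. Concretely, choosing a tubular neighbourhood identifies a neighbourhood of $S$ with a disc bundle in $\nu_S$; using finiteness of holonomy one may average over the (finite) holonomy group to arrange that the transverse structure is linear, so the foliation is the one pulled back from the flat bundle $\widetilde S\times_{\mathrm{Hol}} \nu_{S,x}$. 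The claim then becomes that any two symplectic foliations with the same $1$-jet along $S$ (same local model) are isomorphic near $S$.

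Second, I would run a Moser argument along the segment $\omega_t = \omega_0 + t(\omega_1-\omega_0)$ joining the given foliated symplectic form to the local model. Since both have the same restriction and the same first-order normal data along $S$, the difference $\omega_1-\omega_0$ vanishes to second order along $S$, hence defines a leafwise closed $2$-form representing a class in the foliated (leafwise de Rham) cohomology $H^2_{\mathcal F}$ of a neighbourhood that vanishes along $S$. The heart of the matter is to show this class is exact through a primitive that also vanishes along $S$, so that integrating the Moser vector field produces a genuine isotopy fixing $S$. This is exactly where the hypotheses enter: leafwise cohomology of the neighbourhood is computed, via the tubular model and a Reeb-stability-type retraction, by the holonomy cover $\widetilde S$; finiteness of holonomy makes the holonomy groupoid proper, so its cohomology is concentrated in the invariants and the relevant obstruction lives in $H^2(\widetilde S)$ twisted by $\nu_S$; surjectivity of the cohomological variation $[\delta_S\omega]\colon \nu_{S,x}\to H^2(\widetilde S)$ is what guarantees that the only such class compatible with having a fixed $1$-jet is zero. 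I would phrase this step using a Van Est / averaging argument for the proper holonomy groupoid, paralleling the second step in the proof of Theorem 2.

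Third, with the primitive in hand I would solve $\iota_{X_t}\omega_t = -\theta_t$ leafwise (the $\omega_t$ are leafwise nondegenerate on a possibly smaller neighbourhood, by finite type / compactness of $S$ ensuring uniform control), obtain a time-dependent foliated vector field $X_t$ vanishing along $S$, integrate it — here the finite-type hypothesis on $S$ and properness are what let me integrate the flow on a full neighbourhood of $S$ rather than just formally — and conclude that its time-one flow carries $\omega_1$ to $\omega_0$ while fixing $S$. Composing with the tubular-neighbourhood identification gives the desired isomorphism with the local model.

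I expect the main obstacle to be the cohomological step: identifying the obstruction space precisely with $H^2(\widetilde S;\nu_S)$ and showing that finite holonomy forces the vanishing of the degree-$(\ge 1)$ contributions, so that surjectivity of $[\delta_S\omega]$ is exactly the right and sufficient condition. The interplay between the leafwise de Rham complex of the neighbourhood, the cohomology of the holonomy groupoid, and the averaging that makes the retraction onto $S$ work is the delicate part; by contrast the Moser deformation and the integration of the flow near a finite-type leaf are, modulo uniform nondegeneracy estimates, routine.
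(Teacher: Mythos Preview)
Your proposal has a genuine gap at the cohomological step, and it stems from misreading what surjectivity of $[\delta_S\omega]$ is for.

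After linearizing the foliation (which is fine), you want to run Moser between the given $\omega$ and the local model $j^1_S\omega$. For the Moser argument to work you need $\omega-j^1_S\omega$ to be \emph{leafwise exact}, i.e.\ exact on every leaf $S_v$ near $S$. But the restriction of $\omega-j^1_S\omega$ to $S_v$ represents the class
\[
[\widetilde\omega_v]-\bigl([\widetilde\omega_0]+[\delta_S\omega](v)\bigr)\in H^2(\widetilde S),
\]
which is the second-order remainder of the (generally nonlinear) map $v\mapsto[\widetilde\omega_v]$. This is not zero in general, and no Van Est or averaging argument over the finite holonomy groupoid will make it vanish: $H^2(\widetilde S)$ is precisely the target of $[\delta_S\omega]$ and is typically nonzero. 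Your claim that ``the only such class compatible with having a fixed $1$-jet is zero'' is false --- matching $1$-jets along $S$ says nothing about cohomology classes on leaves $S_v$ with $v\neq 0$.

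The missing idea is a \emph{reparametrization of the transversal}. Surjectivity of $[\delta_S\omega]:\nu_{S,x}\to H^2(\widetilde S)$ is used not to kill a cohomology group but to ensure that $f(v):=[\widetilde\omega_v]-[\widetilde\omega_0]$ is a submersion at $0$; one then finds an $H$-equivariant embedding $\chi:U\hookrightarrow V$ with $\chi(0)=0$ and $f(\chi(v))=df_0(v)=[\delta_S\omega](v)$. After pulling back by the induced foliated map, the new family $\overline\omega_v:=\widetilde\omega_{\chi(v)}$ has exactly $[\overline\omega_v]=[j^1_S\omega_v]$ on every leaf. Only then is the difference a smooth family of exact forms vanishing at $v=0$; one chooses smooth $H$-equivariant primitives (this is where the finite-type hypothesis and finiteness of $H$ are used, via an explicit lemma on smooth families of exact forms), and the leafwise Moser lemma finishes the proof. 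The paper's argument is thus elementary and avoids the groupoid/Van Est machinery entirely.
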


The proof of Theorem 1 is substantially easier than that of Theorem 2: we use a version of Reeb's theorem for
non-compact leaves to linearize the foliation and then Moser's argument to put the symplectic structures on the leaves
in normal form.

\vspace{0.2cm} \textbf{Theorem 3: Formal rigidity around Poisson submanifolds} \vspace{0.2cm}

The next main result that we describe (Theorem 3 from chapter \ref{ChFormalRigidity}) is a formal rigidity result,
which generalizes Weinstein's formal linearization theorem from fixed points to arbitrary Poisson submanifolds.

\begin{theoremthree}
Let $\pi_1$ and $\pi_2$ be two Poisson structures on $M$ and let $S\subset M$ be an embedded Poisson submanifold
for both structures. If $\pi_1$ and $\pi_2$ have the same first order jet along $S$ and their common Lie algebroid
$A_S$ satisfies
\[H^{2}(A_S,\mathcal{S}^{k}(\nu_S^*))=0, \ \ \forall \  k\geq 2,\]
then the two structures are formally Poisson diffeomorphic.
\end{theoremthree}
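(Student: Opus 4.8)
The plan is to construct the formal diffeomorphism order-by-order along $S$, using a formal Moser–type path argument adapted to the filtration of $C^\infty(M)$ by vanishing order along $S$. First I would set up the algebraic framework: let $I \subset C^\infty(M)$ be the ideal of functions vanishing on $S$, and work in the formal completion $\widehat{C^\infty(M)}$ along $S$, which is filtered by the powers $I^k$. Since $\pi_1$ and $\pi_2$ are both Poisson, $S$ is a Poisson submanifold for each, and they share the same $1$-jet along $S$, the difference $\pi_2 - \pi_1$ lies in the piece of the filtration corresponding to quadratic-and-higher terms in the normal directions; concretely its leading term at each order $k\geq 2$ defines a class in the Lie algebroid cohomology $H^2(A_S, \mathcal{S}^k(\nu_S^*))$ — this is exactly where the hypothesis enters. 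The associated graded of the Poisson complex along $S$ splits according to the symmetric powers $\mathcal{S}^k(\nu_S^*)$, and the Schouten bracket $[\pi_i,\cdot]$ induces on each graded piece the Lie algebroid differential $d_{A_S}$ twisted by the $A_S$-representation on $\mathcal{S}^k(\nu_S^*)$.

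Next I would run the deformation argument. Consider the affine path $\pi_t = \pi_1 + t(\pi_2 - \pi_1)$; each $\pi_t$ has the same $1$-jet along $S$ as $\pi_1$, but is not Poisson for intermediate $t$ — so one works instead formally. The standard approach: build a formal isotopy $\phi_t$ with $\phi_0 = \mathrm{id}$, generated by a formal time-dependent vector field $X_t$ tangent to $S$ and flat-enough in the normal directions, such that $(\phi_t)_*\pi_1$ agrees with $\pi_2$ to higher and higher order. At step $k$, assuming $(\phi)_*\pi_1 \equiv \pi_2 \bmod I^{k+1}\cdot(\text{multivectors})$, the obstruction to pushing the agreement to order $k+1$ is the leading term of $(\phi)_*\pi_1 - \pi_2$, which is a $d_{A_S}$-cocycle in $C^2(A_S,\mathcal{S}^{k+1}(\nu_S^*))$ (the cocycle condition coming from $[\pi_i,\pi_i]=0$). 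By hypothesis $H^2(A_S,\mathcal{S}^{k+1}(\nu_S^*))=0$ for $k+1\geq 2$, so this cocycle is a coboundary $d_{A_S}\xi$; choosing $X_t$ at this order to correspond to $\xi$ (using that $A_S = T^*M_{|S}$ acts on normal data via the linearized Hamiltonian flow) kills the obstruction and improves the agreement by one order. Taking the formal limit of the composition $\phi = \cdots\circ\phi^{(k)}\circ\cdots$ — which converges in the $I$-adic topology because each factor is $\equiv\mathrm{id}\bmod I^k$ — yields a formal diffeomorphism with $\phi_*\pi_1 = \pi_2$ as formal Poisson structures along $S$.

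The main obstacle I anticipate is bookkeeping the algebra correctly: identifying precisely which graded piece of the formal Poisson complex carries the $k$-th order obstruction, and verifying that the relevant differential really is the Lie algebroid differential of $A_S$ with coefficients in $\mathcal{S}^k(\nu_S^*)$ — this requires unpacking how the $1$-jet of $\pi$ at $S$ determines both the Lie algebroid structure on $A_S=T^*M_{|S}$ and its action on $\mathcal{S}^\bullet(\nu_S^*)$, and checking that the Schouten bracket with $\pi_i$ degenerates on the associated graded to exactly $d_{A_S}$. A secondary subtlety is ensuring the infinitesimal generators $X_t$ can be chosen tangent to $S$ and flat to the required order so that the isotopy fixes $S$ and the formal composition makes sense; but once the cohomological identification is in place, these are routine. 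Note the hypothesis is only needed for $k\geq 2$ precisely because the $1$-jets already agree, so the induction starts at order $2$.
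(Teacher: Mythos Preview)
Your proposal is correct and follows essentially the same approach as the paper: both identify the associated graded of the formal Poisson complex along $S$ with $\Omega^\bullet(A_S,\mathcal{S}^k(\nu_S^*))$ (the paper's Proposition~\ref{Proposition_isomorphic_complexes}) and then kill the obstruction order-by-order using the vanishing hypothesis. The only difference is packaging: the paper abstracts your step-by-step argument into a general criterion for gauge equivalence of Maurer--Cartan elements in a complete filtered graded Lie algebra (Theorem~\ref{Teo1} in the appendix), and then invokes Borel's theorem (Lemma~\ref{gauge_real}) to realize the resulting formal exponential $e^{ad_X}$ by the time-one flow of an honest vector field --- a step you should also make explicit, since the conclusion asserts an actual diffeomorphism whose pushforward agrees with $\pi_1$ to infinite order along $S$, not merely a formal one.
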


This result relies on an equivalence criterion for Maurer-Cartan elements in complete graded Lie algebras. The proof of
the criterion uses a sequence of elements in the algebra that converges formally to an element whose exponential
realizes the gauge equivalence. Our original motivation for proving Theorem 3 was to understand the algebraic steps
needed to construct such a sequence, and to use this sequence in an analytic proof of Theorem 2. The outcome is
Theorem 4.

\vspace{0.2cm} \textbf{Theorem 4: Rigidity around Poisson submanifolds}\vspace{0.2cm}

Using the fast convergence method of Nash and Moser, in chapter \ref{ChRigidity} we prove a rigidity result for
integrable Poisson structure around compact Poisson submanifolds (Theorem 4 below). A Poisson structure $\pi$ is
called $C^p$-$C^1$-rigid around a submanifold $S$, if every Poisson structure $\widetilde{\pi}$ that is $C^p$-close
to $\pi$ around $S$ is isomorphic to $\pi$ around $S$ by a Poisson diffeomorphism $C^1$-close to the identity.

\begin{theoremfour}
Let $(M,\pi)$ be a Poisson manifold for which the Lie algebroid $T^*M$ is integrable by a Hausdorff Lie groupoid
whose $s$-fibers are compact and their de Rham cohomology vanishes in degree two. For every compact Poisson
submanifold $S$ of $M$ we have that
\begin{enumerate}[(a)]
\item $\pi$ is $C^p$-$C^1$-rigid around $S$,
\item up to isomorphism, $\pi$ is determined around $S$ by its first order jet at $S$.
\end{enumerate}
\end{theoremfour}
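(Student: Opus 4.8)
The plan is to run a Nash–Moser normal form scheme modeled on Conn's argument (as streamlined in Section~\ref{Section_Conn}) and on the geometric input behind Theorem~2, but now carried out in families over the compact submanifold $S$ and with the perturbed structure $\widetilde\pi$ in place of a linear model. First I would fix tubular-neighbourhood coordinates identifying a neighbourhood of $S$ in $M$ with a neighbourhood of the zero section of $\nu_S$, so that ``$C^p$-close around $S$'' becomes a precise statement about finitely many derivatives on a fixed compact set. The key structural point is that the hypotheses are $s$-fiber statements about the integrating groupoid $\mathcal{G}\rightrightarrows M$: the $s$-fibers are compact with $H^2_{\mathrm{dR}}=0$. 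Compactness gives properness-type control (averaging, uniform estimates), and the vanishing of $H^2$ of the $s$-fibers feeds, via the Van~Est map, into vanishing of the relevant Lie algebroid cohomology $H^2(T^*M;\cdot)$ with coefficients in the normal jets --- exactly the cohomological obstruction that must disappear at each step of the iteration. This is the same chain of implications used for Theorem~2 (proper groupoid $\Rightarrow$ Van~Est $\Rightarrow$ cohomology vanishing), and I would reuse it essentially verbatim to kill the obstruction to correcting $\widetilde\pi$ towards $\pi$ at each order.

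Next comes the iteration itself. Given $\widetilde\pi$ that is $C^p$-close to $\pi$, the difference is a bivector $\theta$ with $[\pi,\theta]+\tfrac12[\theta,\theta]=0$ (Maurer--Cartan over $\pi$); I would linearize, solve the cohomological equation $[\pi,\beta]=-\theta$ modulo higher order using the vanishing just described together with a homotopy operator obtained by averaging over the compact $s$-fibers, and then apply the time-one flow of $\beta$ to produce $\widetilde\pi_1$ much closer to $\pi$. To defeat the inevitable loss of derivatives in solving the cohomological equation I would interleave each step with a smoothing operator $S_t$ (Nash's device), set up tame estimates in the Fréchet scale of $C^k$-norms on the fixed compact neighbourhood, and choose the sequence of smoothing parameters so the quadratic convergence of the errors beats the polynomial loss --- this is the standard Nash--Moser bookkeeping, and I would follow the scheme of Conn/Crainic–Fernandes. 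The compactness of $S$ and of the $s$-fibers is what makes all constants uniform, so the abstract Nash–Moser implicit function theorem applies and yields a Poisson diffeomorphism $\phi$, $C^1$-close to the identity, with $\phi^*\widetilde\pi=\pi$ near $S$; this is part~(a). For part~(b), I would note that two Poisson structures with the same first-order jet along $S$ have difference vanishing to second order at $S$, hence one is ``infinitesimally close'' to the other in the relevant sense; formal equivalence is provided by Theorem~3 (its cohomological hypothesis $H^2(A_S,\mathcal{S}^k\nu_S^*)=0$ again follows from the $s$-fiber $H^2$-vanishing via Van~Est), and then the rigidity of part~(a) upgrades the formal equivalence to an honest local one.

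I expect the main obstacle to be the third ingredient above: establishing the \emph{tame} estimates uniformly in the family over $S$ for the homotopy operator solving $[\pi,\beta]=-\theta$. Conn works over a single (compact) isotropy group; here the averaging must be performed over a smoothly varying family of compact $s$-fibers, and one must check that the resulting operator loses a fixed finite number of derivatives with constants independent of the base point in $S$ and stable under the perturbation $\widetilde\pi$. Controlling this requires a careful choice of connection/propagator on the groupoid and a uniform lower bound on the ``size'' of the $s$-fibers, together with the observation that Hausdorffness of $\mathcal G$ is exactly what prevents pathologies in these integrals. Once the tame estimate is in place with uniform constants, the rest is the now-routine (if lengthy) Nash–Moser iteration, and the cohomological vanishing feeds in exactly as in the proof of Theorem~2.
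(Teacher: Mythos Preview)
Your overall architecture matches the paper's: Nash--Moser iteration driven by tame homotopy operators for the Poisson complex of $\pi$, with part~(b) obtained by first applying the formal equivalence of Theorem~3 and then feeding the resulting structure (which agrees with $\pi$ to infinite order along $S$) into the rigidity machine. A few specific points, however, differ from what the paper actually does and deserve correction.

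First, the mechanism producing the tame homotopy operators is not ``averaging over the compact $s$-fibers'' combined with Van~Est. Averaging and Van~Est give you vanishing of $H^2$ in the abstract sense, but that alone does not yield operators with controlled derivative loss. The paper instead proves a Tame Vanishing Lemma: one identifies $\Omega^\bullet(A,V)$ with the invariant part of the $s$-foliated de~Rham complex on $\mathcal G$, chooses a right-invariant fiberwise metric, and builds the homotopy operators from the fiberwise Hodge package $\delta\circ\Delta^{-1}$ and $\Delta^{-1}\circ\delta$. Invertibility of $\Delta$ in degree~$2$ uses $H^2(\text{$s$-fiber})=0$, and tameness comes from elliptic regularity for the smooth family $\{\Delta_x\}$ plus the Sobolev embedding; the loss is $s=\lfloor\tfrac12\dim M\rfloor+1$. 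This is the analytic heart of the proof and is not obtainable from averaging.

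Second, the tubular neighbourhood must be $\mathcal G$-\emph{invariant}: the homotopy operators above are only local in the orbit direction, so to have them restrict to the shrinking tubes $E_r$ one needs each $E_r$ to be a union of orbits. The paper secures this via a separate lemma on adapted metrics for proper groupoids. Your ``fix tubular-neighbourhood coordinates'' step hides this.

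Third, two smaller points. The homotopy operators are built once for $\pi$ and never for $\widetilde\pi$; there is no ``stability under perturbation'' issue for them. And the paper explicitly notes that Hamilton's abstract implicit function theorem does \emph{not} apply here (the linearized complex fails to split at nearby points); instead one runs the iteration by hand, tracking that the threshold $\delta$ depends polynomially on $r$ and $R-r$. That polynomial dependence is precisely what makes the ``formal equivalence $\Rightarrow$ actual equivalence'' step in part~(b) work: after the formal diffeomorphism the difference vanishes to all orders at $S$, so on a sufficiently thin tube it is below the polynomial threshold. The paper also needs the homotopy operators to preserve the subcomplex of multivector fields vanishing to first order along $S$, so that the resulting diffeomorphism fixes $S$ to first order; this requires an extra argument (integrating the ideal $\nu_S^*$ to the groupoid).
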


A consequence of  (b) is a strengthening of Theorem 2: namely, it suffices to assume existence of some compact $P$
integrating $A_S$ such that $H^2(P)=0$; $P$ does not have to be $1$-connected.

The proof of Theorem 4 is mainly inspired by Conn's proof. We deviate the most from his approach in the construction
of the tame homotopy operators. For this we prove a general result on tame vanishing of Lie algebroid cohomology with
coefficients: the Tame Vanishing Lemma, which is presented in the appendix \ref{ChFoli}. This result can be applied to
similar geometric problems. To illustrate this, we briefly discuss in the appendix \ref{ChFoli} a theorem of Hamilton on
rigidity of foliations.\\

\textbf{Theorem 5: Smooth deformations of the Lie-Poisson sphere}\vspace{0.2cm}

The last main result of this thesis is a rather unexpected application of Theorem 4 to the Poisson moduli space of the
Lie-Poisson spheres. Let $\mathfrak{g}$ be a compact semisimple Lie algebra. We define the \emph{Lie-Poisson
sphere} as the unit sphere in $\mathfrak{g}^*$ with respect to an $Aut(\mathfrak{g})$- invariant inner product.
Endowed with the restriction of the linear Poisson structure $\pi_{\mathfrak{g}}$, the sphere becomes a Poisson
manifold, denoted
\[(\mathbb{S}(\mathfrak{g}^*),\pi_{\mathbb{S}}).\]
Combining Theorem 4 with a nice geometric argument, in chapter \ref{ChDef} we describe all Poisson structures on
$\mathbb{S}(\mathfrak{g}^*)$ around $\pi_{\mathbb{S}}$. Using a Lie theoretical interpretation of the cohomology
of the leaves of $\pi_{\mathbb{S}}$, we determine which of these Poisson structures are isomorphic. The outcome is:
\begin{theoremfive}
\begin{enumerate}[(a)]
\item There exists a $C^p$-open $\mathcal{W}\subset \mathfrak{X}^2(\mathbb{S}(\mathfrak{g}^*))$ around $\pi_{\mathbb{S}}$, such that every Poisson structure in $\mathcal{W}$ is isomorphic to one
of the form $f\pi_{\mathbb{S}}$, where $f$ is a positive Casimir function. 
\item For two positive Casimirs $f$ and $g$, the Poisson manifolds $(\mathbb{S}(\mathfrak{g}^*),f\pi_{\mathbb{S}})$ and $(\mathbb{S}(\mathfrak{g}^*),g\pi_{\mathbb{S}})$
are isomorphic precisely when $f$ and $g$ are related by an outer automorphism of $\mathfrak{g}$.
\end{enumerate}
\end{theoremfive}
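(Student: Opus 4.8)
The plan is to derive both statements from Theorem 4 applied not to $\mathbb{S}:=\mathbb{S}(\mathfrak{g}^*)$ itself but to the ambient linear Poisson manifold $(\mathfrak{g}^*,\pi_{\mathfrak{g}})$, together with the geometry relating $\mathbb{S}$ to its ``cone'' $\mathfrak{g}^*$. The hypotheses of Theorem 4 are immediate here: $T^*\mathfrak{g}^*$ is the action algebroid $\mathfrak{g}\ltimes\mathfrak{g}^*$, integrated by the Hausdorff groupoid $G\ltimes\mathfrak{g}^*$ with $G$ the $1$-connected integration of $\mathfrak{g}$; since $\mathfrak{g}$ is compact, $G$ is compact, and since $\mathfrak{g}$ is semisimple, $H^2_{\mathrm{dR}}(G)=0$, so the $s$-fibres are compact with vanishing second cohomology. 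Moreover $\mathbb{S}$ is a compact Poisson submanifold of $\mathfrak{g}^*$: because the inner product is $\mathrm{Ad}$-invariant, every coadjoint orbit---i.e.\ every symplectic leaf of $\pi_{\mathfrak{g}}$---stays on a sphere centred at the origin, so $\mathbb{S}$ is a union of leaves. Thus Theorem 4 applies with $S=\mathbb{S}$, and in particular $\pi_{\mathfrak{g}}$ is $C^p$-$C^1$-rigid around $\mathbb{S}$.

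For (a) I would transfer deformations of $\pi_{\mathbb{S}}$ to deformations of $\pi_{\mathfrak{g}}$. Identify a tubular neighbourhood of $\mathbb{S}$ in $\mathfrak{g}^*$ with $\mathbb{S}\times(1-\varepsilon,1+\varepsilon)$ via the radial coordinate $r=|\xi|$. By homogeneity, $\pi_{\mathfrak{g}}$ is tangent to the slices $\mathbb{S}\times\{r\}$ and restricts on each to $c_r\,\pi_{\mathbb{S}}$ for a constant $c_r$ with $c_1=1$. Given a Poisson structure $\pi'$ on $\mathbb{S}$ that is $C^p$-close to $\pi_{\mathbb{S}}$, define $\widetilde\pi$ on the tube to equal $c_r\,\pi'$ on the slice of radius $r$; since $\widetilde\pi$ is tangent to the slice foliation and restricts to a Poisson structure on each leaf of it, $[\widetilde\pi,\widetilde\pi]=0$, and $\widetilde\pi$ is $C^p$-close to $\pi_{\mathfrak{g}}$. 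By the rigidity in Theorem 4 there is a diffeomorphism $\phi$, $C^1$-close to the identity, with $\phi_*\widetilde\pi=\pi_{\mathfrak{g}}$ near $\mathbb{S}$. Then $\phi(\mathbb{S})$ is a Poisson submanifold of $(\mathfrak{g}^*,\pi_{\mathfrak{g}})$, hence a union of coadjoint orbits; being $C^1$-close to $\mathbb{S}$ it is a radial graph $\{\,|\xi|=f(\theta)\,\}$ with $f>0$ close to $1$, and being a union of orbits forces $f$ to be constant on orbits, i.e.\ $f$ is a positive Casimir of $\pi_{\mathbb{S}}$. Finally, the radial rescaling $\theta\mapsto f(\theta)\theta$ is a Poisson diffeomorphism $(\mathbb{S},\tfrac1f\pi_{\mathbb{S}})\to(\phi(\mathbb{S}),\pi_{\mathfrak{g}})$, because the Kirillov--Kostant--Souriau form scales linearly along rays. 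Composing, $\pi'\cong\tfrac1f\pi_{\mathbb{S}}$ with $\tfrac1f$ a positive Casimir; choosing $\mathcal{W}$ so small that $\pi'\mapsto\widetilde\pi$ lands in the rigidity neighbourhood of Theorem 4 completes (a).

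For (b), the implication ``related by an outer automorphism $\Rightarrow$ isomorphic'' is clear: an automorphism $\alpha$ of $\mathfrak{g}$ induces a linear isometry $\alpha^*$ of $\mathfrak{g}^*$ preserving $\pi_{\mathfrak{g}}$, hence a Poisson diffeomorphism of $\mathbb{S}$ carrying $g\,\pi_{\mathbb{S}}$ to $(g\circ(\alpha^*)^{-1})\,\pi_{\mathbb{S}}$, and inner automorphisms act trivially on Casimirs, which are $\mathrm{Ad}^*$-invariant. For the converse, let $\Phi\colon(\mathbb{S},f\pi_{\mathbb{S}})\to(\mathbb{S},g\pi_{\mathbb{S}})$ be a Poisson isomorphism. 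It maps symplectic leaves to symplectic leaves, inducing a stratification-preserving homeomorphism of the orbit space $\mathbb{S}/G$ (a Weyl-chamber slice of the sphere); on each leaf $\mathcal{O}_\theta\cong G/G_\theta$ it restricts to a symplectomorphism onto some $\mathcal{O}_{\theta'}$, hence identifies $\tfrac1{f(\theta)}[\omega^{\mathrm{KKS}}_{\mathcal{O}_\theta}]$ with $\tfrac1{g(\theta')}[\omega^{\mathrm{KKS}}_{\mathcal{O}_{\theta'}}]$ in $H^2$. Here I would use the Lie-theoretic description $H^2(\mathcal{O}_\theta;\mathbb{R})\cong\mathfrak{z}(\mathfrak{g}_\theta)^*$, under which the KKS class corresponds to $\theta$, together with the classification of symmetries of the cohomology rings of the flag-type leaves, to upgrade the orbit-space map to an automorphism $\alpha$ of the root system and then to an automorphism of $\mathfrak{g}$ with $\Phi(\mathcal{O}_\theta)=\mathcal{O}_{\alpha^*\theta}$; comparing cohomology classes then gives $f=g\circ\alpha^*$, so $f$ and $g$ are related by the outer class of $\alpha$.

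The step I expect to be the main obstacle is the converse in (b): extracting from an abstract Poisson isomorphism---which a priori only gives a family of symplectomorphisms between individual coadjoint orbits---a single automorphism of $\mathfrak{g}$. This needs both the exact Lie-theoretic computation of the leafwise second cohomology together with the location of the KKS class inside it, and a rigidity statement ruling out symplectomorphisms between flag manifolds that act on $H^2$ in a way not induced by $\mathrm{Aut}(\mathfrak{g})$. By comparison, (a) is comparatively soft once one has the idea of running Theorem 4 in the cone $\mathfrak{g}^*$ rather than on $\mathbb{S}$, and of extending deformations slice-wise using that $\pi_{\mathfrak{g}}$ is tangent to the spheres.
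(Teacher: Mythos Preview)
Your approach to (a) is essentially the paper's proof. Both extend a deformation $\pi'$ of $\pi_{\mathbb S}$ to a deformation $\widetilde\pi$ of $\pi_{\mathfrak g}$ on a neighbourhood of $\mathbb S$ in $\mathfrak g^*$, apply the $C^p$-$C^1$-rigidity of $\pi_{\mathfrak g}$ around the compact Poisson submanifold $\mathbb S$ coming from Theorem~4, and then read off a positive Casimir from the fact that the image of $\mathbb S$ under the resulting diffeomorphism is a radial graph consisting of coadjoint orbits. The paper packages the ``cone'' identification and the rescaling step into a single Poisson diffeomorphism $\widetilde\varphi_f:(\mathbb S\times\mathbb R_+,\,tf\pi_{\mathbb S})\to(\mathfrak g^*\setminus\{0\},\pi_{\mathfrak g})$, $(\xi,t)\mapsto\xi/(tf(\xi))$, but this is exactly your slice-wise scaling with $c_r=1/r$ and your radial map $\theta\mapsto f(\theta)\theta$. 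One small addition in the paper: it builds a separate open $\mathcal W_f$ around each $f\pi_{\mathbb S}$ and takes the union, so that $\mathcal W$ visibly contains \emph{all} $f\pi_{\mathbb S}$; for the bare statement you were asked to prove, your version with $f=1$ suffices.

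Your outline for (b) is on the right track and your self-diagnosis of the obstacle is accurate. The paper sharpens your sketch in two ways. First, it restricts from the start to the \emph{regular} locus $\mathbb S(\mathfrak g^*)^{\mathrm{reg}}\cong G/T\times\mathbb S(\mathfrak c)$, so that every leaf is the fixed manifold $G/T$; this avoids tracking the varying $H^2(\mathcal O_\theta)\cong\mathfrak z(\mathfrak g_\theta)^*$ across singular strata. In these coordinates the Poisson isomorphism becomes $(x,\xi)\mapsto(\phi_\xi(x),\theta(\xi))$, and connectedness of $\mathbb S(\mathfrak c)$ forces all $\phi_\xi$ to induce the \emph{same} automorphism of $H^2(G/T)$. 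Second, the ``rigidity of symmetries'' you anticipate is supplied by the statement that every diffeomorphism of $G/T$ acts on $H^2(G/T)$ as $\overline\sigma^{\,*}$ for some $\sigma\in\mathrm{Aut}(\mathfrak g,\mathfrak t)$. The paper proves this via the isomorphism $\mathfrak t^*\xrightarrow{\sim}H^2(G/T)$, $\xi\mapsto[\eta_\xi]$ (with $p^*\eta_\xi=-d\xi^l$), together with two observations: the induced map on $\mathfrak t^*$ must preserve $\mathfrak t^{*\mathrm{reg}}$ (detected by $\wedge^{\mathrm{top}}[\eta_\xi]\neq0$), and its dual must preserve the lattice $\Pi=\ker(\exp|_{\mathfrak t})\cong\pi_2(G/T)$; these two conditions force it to lie in $\mathrm{Aut}(\Phi)$ by a dual-root argument. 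Once $\sigma$ is in hand, comparing the classes $[\eta_{\xi/f(\xi)}]$ and $[\eta_{\sigma^*(\theta(\xi))/g(\theta(\xi))}]$ gives $f\circ\sigma^*=g$ exactly as you predicted.
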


\noindent In other words, the Poisson moduli space is parameterized around $\pi_{\mathbb{S}}$ by
\begin{equation*}
\mathfrak{Casim}(\mathbb{S}(\mathfrak{g}^*),\pi_{\mathbb{S}})/Out(\mathfrak{g}),
\end{equation*}
where $\mathfrak{Casim}(\mathbb{S}(\mathfrak{g}^*),\pi_{\mathbb{S}})$ is the space of Casimirs and
$Out(\mathfrak{g})$ is the finite group of outer automorphism. Using classical invariant theory, we give a more
explicit description of this space at the end of chapter \ref{ChDef}.

This result represents the first computation of a Poisson moduli space in dimension greater or equal to three around a
degenerate (i.e.\ not symplectic) Poisson structure.\\

\noindent\underline{\textbf{Other contributions of the thesis}}\\

There are some smaller contribution of the thesis which we consider of independent interest. Some of these are
auxiliary results used in the proofs of the main theorems, others are new approaches to well-known results.

\vspace{0.2cm}

\textbf{A detailed proof of Conn's theorem}. In section \ref{Section_Conn} we revisit Conn's theorem presenting it as
a manifestation of a rigidity phenomenon. We rewrite the proof with some simplifications. The major novelly in our
approach lies in the construction of the homotopy operators, which uses classical Hodge theory. As in the proof of
Theorem 4, the construction of these operators follows from the Tame Vanishing Lemma, but for clarity of the
exposition we include the construction in this case. Actually, section \ref{Section_Conn} is a toy model of the proof of
Theorem 4, but which is still involved enough to capture the main technical difficulties.

\vspace{0.1cm}

\textbf{Existence of invariant tubular neighborhoods}. In subsection \ref{Subsection_inv_tub_nbd} we prove a result
(Lemma \ref{Lemma_tubular_neighborhood}) on the existence of invariant tubular neighborhoods around invariant
submanifolds of the base of a proper Lie groupoid. This result is used in the proof of Theorem 4. The proof relies on
results from \cite{Posthuma}, and to experts such as the authors of \emph{loc.cit.} our lemma should be obvious, yet
we couldn't find it in the literature.

\vspace{0.1cm}

\textbf{Integration of ideals}. In subsection \ref{subsection_representations} we prove that an ideal of a Lie algebroid
$A$ can be integrated as a representation to any $s$-connected Lie groupoid of $A$ (Lemma
\ref{Lemma_integrating_ideals}). This is precisely what happens also in classical Lie theory. This result is used later in
the proof of Theorem 4.

\vspace{0.1cm} \textbf{Symplectic realizations from transversals in the manifold of cotangent paths}. Section
\ref{Constructing symplectic realizations from transversals in the manifold of cotangent paths} presents a
finite-dimensional, explicit approach to the space of cotangent paths of a Poisson manifold. We give a simple formula
for the symplectic structure on transversals to the foliation induced by cotangent homotopy. This formula turns out to
be an efficient computational tool, which allows us to prove many of the properties of these transversals explicitly. The
results obtained in this section were mostly known from the infinite-dimensional construction of the Weinstein
groupoid as a symplectic reduction \cite{CaFe,CrFe2,CrFe-Conn}, yet it is likely that our approach can be further used
to give a more explicit description of the local groupoid of a Poisson manifold. The results of this section are used in the
proof of Theorem 2.

\vspace{0.1cm}

\textbf{Local Reeb Stability Theorem for non-compact leaves}. In the appendix \ref{section_Appendix_Reeb} of
chapter \ref{ChReeb} we prove a non-invariant version of the Local Reeb Stability Theorem around non-compact leaves
(Theorem \ref{Reeb_Theorem}). The proof is an adaptation of the proof of the classical result from \cite{MM}. This
result is used in the proof of Theorem 1.

\vspace{0.1cm}

\textbf{The local model around symplectic leaves}. Chapter \ref{ChNormalForms} contains several new descriptions
of the local model of a Poisson structure around a symplectic leaf. We also extend the algebraic framework developed
in \cite{CrFe-stab} in order to handle Vorobjev triples and their linearization. This framework is of interest on its own,
since it describes several other geometric structures that are transverse to the fibers of a bundle (flat Ehresmann
connections, Dirac structures) and their linearization.

\vspace{0.1cm}

\textbf{Equivalence of Maurer Cartan elements in complete graded Lie algebras}. In the appendix
\ref{section_equivalence} of chapter \ref{ChFormalRigidity}, we prove a general criterion (Theorem \ref{Teo1}) for
equivalence of Maurer-Cartan elements in graded Lie algebras endowed with a complete filtration. This result is used
in the proof of Theorem 3. The analogue of this criterion for differential graded associative algebras can be found in the
Appendix A of \cite{CMB}.

\vspace{0.1cm}

\textbf{Continuity of the volume function}. In subsection \ref{subsection_a_global_conflict} we prove that on a Poisson
manifold, integrable by a groupoid with compact $s$-fibers, the function that associates to a regular leaf its symplectic
volume multiplied with the number of elements of its holonomy group, can be extended continuously by zero to the
singular part (Lemma \ref{Lemma_volume_holonomy}). This result implies that there is no compact Poisson manifold
satisfying the conditions of Theorem 4.

\vspace{0.1cm}\textbf{The Tame Vanishing Lemma}. In the first part of the appendix \ref{ChFoli} of the thesis, we
present a general construction of tame homotopy operators for the complex computing Lie algebroid cohomology with
coefficients (the Tame Vanishing Lemma). This result is used in the proof of Theorem 4, and a particular case of this
construction in the proof of Conn's theorem from section \ref{Section_Conn}. When combined with the Nash-Moser
techniques, the Tame Vanishing Lemma is a very useful tool that can be applied in similar geometric problems.

\vspace{0.1cm}

\textbf{Hamilton's theorem on rigidity of foliations}. In the second part of the appendix \ref{ChFoli}, we revisit a
theorem of Richard S. Hamilton \cite{Ham3} on rigidity of foliations. We briefly rewrite Hamilton's proof using the
language of Lie algebroids, and we show that the Tame Vanishing Lemma implies ``tame infinitesimal rigidity'', which
is a crucial step in the proof of this result.

 \clearpage \pagestyle{plain}

\chapter{Basic results in Poisson geometry}\label{ChBasicPoisson}
\pagestyle{fancy}
\fancyhead[CE]{Chapter \ref{ChBasicPoisson}} 
\fancyhead[CO]{Basic results in Poisson geometry} 

This chapter contains an introduction to the basic notions in Poisson geometry and the proofs of two classical results of
this field: the existence of symplectic realizations and Conn's theorem.

\section{Preliminaries}\label{SPreliminaries}

\subsection{Poisson manifolds}\label{SSPoissonDefi}

In this subsection we recall some standard notions and results in Poisson geometry. As a reference to this subject, we
recommend the monographs \cite{DZ,Vaism}.

\begin{definition}
A \textbf{Poisson structure} on a smooth manifold $M$ is a Lie bracket $\{\cdot,\cdot\}$ on the space
$C^{\infty}(M)$, satisfying the Leibniz rule\index{Poisson manifold/structure}
\[\{f,gh\}=\{f,h\}g+\{f,g\}h, \ \  f,g,h \in C^{\infty}(M).\]
A Poisson structure can be given also by a bivector $\pi\in\mathfrak{X}^2(M)$, involutive with respect to the
Schouten bracket, i.e.\ $[\pi,\pi]=0$ (for the Schouten bracket, see section \ref{SSNotConv}). The bivector and the
bracket are related by:
\[\{f,g\}=\langle\pi,df\wedge dg\rangle,\ \  f,g \in C^{\infty}(M).\]
\end{definition}

The bivector $\pi\in\mathfrak{X}^2(M)$ induces a map $T^*M\to TM$ denoted by
\index{$\pi^{\sharp}$}\index{bivector}
\[\pi^{\sharp}:T^*M\rmap TM, \ \ \pi^{\sharp}(\alpha):=\pi(\alpha,\cdot).\]

A Poisson structure on $M$ can be thought of as partition of $M$ into symplectic submanifolds; more
precisely:\index{symplectic leaves}
\begin{proposition}[Theorem 2.12 \cite{Vaism}]
Let $(M,\pi)$ be a Poisson manifold. Then $\pi^{\sharp}(T^*M)$ is a completely integrable singular foliation, and the
Poisson structure induces symplectic forms on the leaves. More precisely, $M$ has a partition into immersed,
connected submanifolds, called symplectic leaves, such that the leaf $S$ passing through $x\in M$ satisfies
$T_xS=\pi^{\sharp}(T^*_xM)$ and carries a symplectic structure
\[\omega_{S}:=\pi_{|S}^{-1}\in \Omega^2(S).\]
\end{proposition}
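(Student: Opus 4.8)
The plan is to establish the three assertions in turn: (i) that $\pi^\sharp(T^*M)$ defines a generalized (Stefan--Sussmann) foliation whose leaves are the orbits of the family of Hamiltonian vector fields; (ii) that $\pi^\sharp$ restricts to an isomorphism $T^*_xM/\ker\pi^\sharp_x \cong T_xS$, so the characteristic distribution is smooth and integrable along each leaf; and (iii) that the bivector descends to a nondegenerate $2$-form on $S$ which is closed, i.e.\ symplectic.

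First I would examine the distribution $D := \pi^\sharp(T^*M) \subseteq TM$. It is spanned pointwise by the Hamiltonian vector fields $H_f = \pi^\sharp(df)$, $f \in C^\infty(M)$; indeed at any point $x$ the covectors $d_xf$ exhaust $T^*_xM$, so $D_x = \pi^\sharp(T^*_xM)$. The key algebraic input is the identity $[H_f, H_g] = H_{\{f,g\}}$, which is equivalent to the Jacobi identity for $\{\cdot,\cdot\}$ (or to $[\pi,\pi]=0$); this shows the family $\{H_f\}$ is involutive in the strong sense needed for the Stefan--Sussmann theorem. Hence $D$ is completely integrable in the generalized sense, and $M$ partitions into immersed connected leaves, each of which is an integral manifold; the leaf through $x$ is exactly the set of points reachable from $x$ by composing flows of Hamiltonian vector fields, and $T_xS = D_x = \pi^\sharp(T^*_xM)$ by construction. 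I would invoke the cited Theorem 2.12 of \cite{Vaism} for the precise statement of the integrability theorem rather than reproving Stefan--Sussmann.

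Next I would put the symplectic form on a fixed leaf $S$. Since $\pi^\sharp_x$ has image $T_xS$ and kernel $(T_xS)^\circ$ (the annihilator: $\alpha \in \ker\pi^\sharp_x$ iff $\langle\pi,\alpha\wedge\beta\rangle = 0$ for all $\beta$, and by skew-symmetry of $\pi$ the image and kernel are annihilators of each other), the map $\pi^\sharp_x$ induces a linear isomorphism $T^*_xM/(T_xS)^\circ \xrightarrow{\sim} T_xS$, and dualizing identifies $(T^*_xM/(T_xS)^\circ)^* \cong (T_xS)^\circ{}^\circ = T_xS$ again; chasing these identifications, $\pi_x$ restricts to a nondegenerate bilinear form on $T^*_xS$-side, whose inverse is a nondegenerate $2$-form $\omega_{S,x}$ on $T_xS$. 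Concretely, $\omega_S(H_f, H_g) := \{f,g\}|_S$ is well-defined (independence of the choice of $f,g$ with prescribed Hamiltonian vectors follows from the kernel description) and nondegenerate; this is the content of writing $\omega_S = \pi^{-1}_{|S}$. Smoothness of $\omega_S$ along $S$ follows because locally one can choose smooth functions whose Hamiltonian fields frame $TS$.

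Finally, closedness: $d\omega_S = 0$. I would verify this by the standard computation $d\omega_S(H_f, H_g, H_h) = \sum_{\mathrm{cyc}} H_f(\omega_S(H_g,H_h)) - \sum_{\mathrm{cyc}} \omega_S([H_f,H_g],H_h)$, substitute $\omega_S(H_g,H_h) = \{g,h\}$ and $[H_f,H_g] = H_{\{f,g\}}$, so that the expression becomes $\sum_{\mathrm{cyc}} \{f,\{g,h\}\} - \sum_{\mathrm{cyc}} \{\{f,g\},h\} = 2\sum_{\mathrm{cyc}}\{f,\{g,h\}\}$, which vanishes by Jacobi; since Hamiltonian vector fields span $TS$ pointwise, $d\omega_S = 0$. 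The main obstacle, such as it is, is purely bookkeeping: being careful with the annihilator identifications that make ``$\pi^{-1}_{|S}$'' a legitimate definition and checking the well-definedness of $\omega_S$ on tangent vectors rather than on functions. There is no deep difficulty here — the statement is essentially a repackaging of the Jacobi identity plus the Stefan--Sussmann integrability theorem — so I would keep the proof short and lean on \cite{Vaism} for the foliation part.
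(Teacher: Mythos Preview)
The paper does not actually prove this proposition: it is stated with a citation to Vaisman, followed only by the one-sentence remark that ``the main ingredient in the proof of the proposition is that $\pi^{\sharp}(T^*M)$ is spanned by Hamiltonian vector fields'' together with the identity $[H_f,H_g]=H_{\{f,g\}}$. Your proposal is correct and is precisely the standard argument that this hint is pointing at --- Stefan--Sussmann via involutivity of the Hamiltonian vector fields, the kernel/image annihilator description to make $\omega_S=\pi_{|S}^{-1}$ well-defined and nondegenerate, and the Jacobi identity for closedness --- so there is nothing to correct and nothing materially different from what the cited reference does.
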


The main ingredient in the proof of the proposition is that $\pi^{\sharp}(T^*M)$ is spanned by Hamiltonian vector
fields. The \textbf{Hamiltonian vector field} of a function $f\in C^{\infty}(M)$ is defined by\index{Hamiltonian vector
field}
\[H_f=\{f,\cdot\}=-[\pi,f] \in\mathfrak{X}(M).\]
Set theoretically, the symplectic leaf through $x$ is the set of points that can be reached starting from $x$ by flow
lines of Hamiltonian vector fields. The Hamiltonian vector fields satisfy $[H_{f},H_g]=H_{\{f,g\}}$ and
$L_{H_f}\pi=0$. In particular they are infinitesimal automorphisms of $(M,\pi)$. A function $f$ for which $H_{f}=0$
is called a \textbf{Casimir} function on $M$. Equivalently, a Casimir is a smooth function constant on the symplectic
leaves.\index{Casimir function}

\subsection{Examples}\label{SSExamplesPoisson}

\subsubsection*{Symplectic manifolds}
Symplectic geometry is the nondegenerate version of Poisson geometry. For a symplectic manifold $(M,\omega)$, the
inverse of the symplectic structure is a Poisson structure and conversely, the inverse of a nondegenerate Poisson
structure $\pi$ (i.e.\ for which $\pi^{\sharp}$ is invertible) is a symplectic structure. In other words, the equation
$d\omega=0$ is equivalent to $[\pi,\pi]=0$, for $\pi:=\omega^{-1}$.

\subsubsection*{Quotients of symplectic manifolds}\index{quotients of symplectic manifolds}
Let $(\Sigma,\omega)$ be a symplectic manifold and let $G$ be a Lie group with a free and proper action on $\Sigma$
by symplectomorphisms. The quotient space inherits a Poisson structure
\[(M,\pi):=(\Sigma,\omega)/G.\] If $p:\Sigma\to M$ denotes the projection, the Poisson bracket on $M$ is such that
\[\{f,g\}\circ p=\{f\circ p, g\circ p\}, \ f,g\in C^{\infty}(M).\]

\subsubsection*{Linear Poisson structures}\index{linear Poisson structure}
Let $(\mathfrak{g},[\cdot,\cdot])$ be a Lie algebra. The dual vector space $\mathfrak{g}^*$ carries a canonical
Poisson structure $\pi_{\mathfrak{g}}$, called a \textbf{linear Poisson structure}. It is defined by
\[\pi_{\mathfrak{g},\xi}:=\xi\circ [\cdot,\cdot]\in \Lambda^2\mathfrak{g}^*\cong \Lambda^2(T_{\xi}\mathfrak{g}^*).\]
Let $\{e_i\}$ be a basis of $\mathfrak{g}$ and let $\{x_i\}$ be the induced coordinates on $\mathfrak{g}^*$. The
corresponding structure constants of $\mathfrak{g}$ are the coefficients $C_{i,j}^k$ in
\[[e_i,e_j]=\sum_{k}C_{i,j}^ke_k.\]
Using these numbers, the bivector $\pi_{\mathfrak{g}}$ is given by
\[\pi_{\mathfrak{g}}=\frac{1}{2}\sum_{i,j,k}C_{i,j}^k x_k\frac{\partial}{\partial x_i}\wedge\frac{\partial}{\partial x_j}.\]
Conversely, if a Poisson structure
\[\pi=\frac{1}{2}\sum_{i,j}\pi_{i,j}(x)\frac{\partial}{\partial x_i}\wedge\frac{\partial}{\partial x_j}\]
on $\mathbb{R}^n$ has linear coefficients, i.e.\ $\pi_{i,j}(x)=\sum_{k}C_{i,j}^k x_k$, then the numbers $C_{i,j}^k$
form the structure constants of a Lie algebra.

The Hamiltonian vector field associated to $X\in \mathfrak{g}$, viewed as a linear function on $\mathfrak{g}^*$, is
the infinitesimal (right) coadjoint action of $X$ on $\mathfrak{g}^*$,
\[H_{X,\xi}=-ad^{*}_X(\xi)=\xi([X,\cdot])\in \mathfrak{g}^*\cong T_{\xi}\mathfrak{g}^*.\]

Let $G$ be a connected Lie group integrating $\mathfrak{g}$. Then $G$ acts by Poisson diffeomorphisms on
$(\mathfrak{g}^*,\pi_{\mathfrak{g}})$, via the (left) coadjoint action
\[G\times \mathfrak{g}^* \rmap \mathfrak{g}^*, \  \  (g,\xi) \mapsto Ad^{*}_{g^{-1}}(\xi):=\xi\circ Ad_{g^{-1}}.\]
For $\xi\in \mathfrak{g}^*$, the coadjoint orbit\index{coadjoint orbit}\index{coadjoint action} through $\xi$ coincides
with the symplectic leaf through $\xi$ and we denote it by $(O_{\xi},\omega_{\xi})$. If $G_{\xi}$ denotes the stabilizer
of $\xi$, then $G/G_{\xi}\cong O_{\xi}$. Consider the (left) $G$-equivariant map
\[p:G\rmap O_{\xi}, \ \ g\mapsto Ad_{g^{-1}}^{*}(\xi).\]
The pullback of $\omega_{\xi}$ by $p$ is given by
\begin{equation}\label{EQ_pull_back_coadjoint}
p^*(\omega_{\xi})=-d\xi^l,
\end{equation}
where $\xi^l_g:=l_{g^{-1}}^*(\xi)$; it is the left invariant extension of $\xi$. To see this, observe that both 2-forms
are left invariant, so it is enough to check the equality on $X,Y\in \mathfrak{g}$. The left side gives
\begin{align*}
p^*(\omega_{\xi})&(X,Y)=\omega_{\xi}(dp_{e}(X),dp_e(Y))=\omega_{\xi}(ad^{*}_X(\xi),ad^{*}_Y(\xi))=\\
&=\omega_{\xi}(\pi_{\mathfrak{g}}^{\sharp}(X),\pi_{\mathfrak{g}}^{\sharp}(Y))=\langle\pi_{\mathfrak{g}}^{\sharp}(Y), \omega_{\xi}^{\sharp}\circ \pi_{\mathfrak{g}}^{\sharp}(X)\rangle=\pi_{\mathfrak{g}}(Y,X)=-\xi([X,Y]).
\end{align*}
Denoting by $X^l,Y^l$ the left invariant extensions of $X,Y$, we have
\[d\xi^l(X^l,Y^l)=L_{X^l}(\xi^l(Y^l))-L_{Y^l}(\xi^l(X^l))-\xi^l[X^l,Y^l]=\xi([X,Y]),\]
where we have used that $\xi^l(Z^l)$ is constant. This implies (\ref{EQ_pull_back_coadjoint}).

\subsection{Maps and submanifolds}

A map between two Poisson manifolds
\[\varphi:(M_1,\pi_1)\rmap (M_2,\pi_2),\]
is called \textbf{a Poisson map}, if $\varphi_*(\pi_1)=\pi_2$, or equivalently, if\index{Poisson map}
\[\varphi^*:(C^{\infty}(M_2),\{\cdot,\cdot\})\rmap (C^{\infty}(M_1),\{\cdot,\cdot\})\]
is a Lie algebra homomorphism.\\

A \textbf{symplectic realization} of a Poisson manifold $(M,\pi)$ is a Poisson map from a symplectic manifold that is a
surjective submersion\index{symplectic realization}
\[\mu:(\Sigma,\omega)\rmap (M,\pi).\]

A submanifold $N$ of $(M,\pi)$ is called a \textbf{Poisson submanifold}, if\index{Poisson submanifold}
\[\pi_{|N}\in \mathfrak{X}^2(N).\]
Equivalently, $N$ is a Poisson submanifold if for every symplectic leaf $S$ of $M$, the intersection $N\cap S$ is open
in $S$. If $N$ satisfies this condition, then $\pi_{|N}$ is a Poisson structure on $N$ such that the inclusion map is
Poisson.\\

A \textbf{Poisson transversal} in $(M,\pi)$ is a submanifold $N\subset M$ that satisfies\index{Poisson transversal}
\begin{equation}\label{EQ_cosymplectic}
TM_{|N}=TN\oplus \pi^{\sharp}(TN^{\circ}),
\end{equation}
where $TN^{\circ}\subset T^*M_{|N}$ denotes the annihilator of $TN$. If $N$ satisfies this condition, then it carries
a canonical Poisson structure $\pi_N$ (see Proposition 1.4 in \cite{Wein}), determined by
\[\pi_N^{\sharp}(\xi_{|TN})=\pi^{\sharp}(\xi),\ \ \forall \ \xi \textrm{ such that } \pi^{\sharp}(\xi)\in TN.\]
Geometrically, Poisson transversality means that $N$ intersects every symplectic leaf $(S,\omega_S)$ transversally
and that $\omega_{S|N\cap S}$ is nondegenerate. The symplectic leaves of $\pi_N$ are the connected components of
these intersections. The standard name used for this notion is \textbf{cosymplectic submanifold}
\cite{CrFe2,Zambon}\index{cosymplectic submanifold}, yet we consider that ``Poisson transversal'' is more
appropriate. For example, a Poisson transversal in a symplectic manifolds is the same as symplectic submanifold, and it
would be awkward to call it a cosymplectic submanifold.

Small enough transversal submanifolds to the symplectic leaves, of complementary dimension, are Poisson transversal:
if $(S,\omega_S)$ a symplectic leaf, $x\in S$ and $N\subset M$ is a submanifold through $x$ such that
\[T_xS\oplus T_xN=T_xM,\]
then, around $x$, $N$ is a Poisson transversal. This holds since (\ref{EQ_cosymplectic}) is an open condition and it is
satisfied at $x$. Weinstein's splitting theorem\index{splitting theorem} \cite{Wein} gives a local decomposition of
$(M,\pi)$ around $x$: there are open neighborhoods of $x$, $M_x\subset M$, $S_x\subset S$ and $N_x\subset N$
(such that it is Poisson transversal), and a Poisson diffeomorphism (which fixes $S_x$ and $N_x$):
\[(M_x,\pi_{|M_x})\cong (S_{x},\omega_{S|S_{x}})\times (N_x,\pi_{N_x}).\]

The lemma below will be useful later on.
\begin{lemma}\label{Lemma_restricting_symplectic_realizations}
Let $\mu:(\Sigma,\omega)\to (M,\pi)$ be a symplectic realization, $N\subset M$ be a submanifold. Let $U\subset N$ be
the open where $N$ is Poisson transversal, and denote by $\Sigma_{U}:=\mu^{-1}(U)$. Then $\Sigma_U$ is the
nondegeneracy locus of $\omega_{|\mu^{-1}(N)}$, and moreover, $\mu$ restricts to a symplectic realization
\[\mu:(\Sigma_U,\omega_{|\Sigma_U})\rmap(U,\pi_{U}).\]
\end{lemma}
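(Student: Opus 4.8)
The plan is to work pointwise and use the characterization of Poisson transversality that was just recalled. Fix $x \in N$ and let $y \in \mu^{-1}(x)$. Since $\mu$ is a Poisson map which is a surjective submersion from a symplectic manifold, we have the standard relations $d\mu_y \circ \omega^\sharp_y = \pi^\sharp_x \circ (d\mu_y)^*$ on $T^*_xM$, and $\ker d\mu_y$ is a symplectic subspace of $T_y\Sigma$ whose $\omega$-orthogonal is $\omega^\sharp_y((d\mu_y)^* T^*_xM) = (d\mu_y)^{-1}(T_xS)$, where $S$ is the symplectic leaf through $x$. These are the only facts about symplectic realizations I will need; everything reduces to linear symplectic/Poisson algebra in $T_y\Sigma$ and $T_xM$.

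\textbf{The nondegeneracy computation.} I would first identify the nondegeneracy locus of $\omega|_{\mu^{-1}(N)}$. Write $W = \mu^{-1}(N)$, a submanifold since $\mu$ is a submersion, with $T_yW = (d\mu_y)^{-1}(T_xN)$. The radical of $\omega|_{T_yW}$ is $T_yW \cap (T_yW)^{\perp_\omega}$. Using $(T_yW)^{\perp_\omega} = \omega^\sharp_y((d\mu_y)^* TN^\circ)$ (the annihilator of $T_yW$ under $\omega$ is the image under $\omega^\sharp$ of the annihilator of $T_yW$, which is $(d\mu_y)^*TN^\circ$), I compute $d\mu_y$ of the radical: $d\mu_y\big(T_yW \cap \omega^\sharp_y((d\mu_y)^*TN^\circ)\big) \subseteq T_xN \cap \pi^\sharp_x(TN^\circ)$ by the intertwining relation. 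Conversely, given $v \in T_xN \cap \pi^\sharp_x(TN^\circ)$, say $v = \pi^\sharp_x(\xi)$ with $\xi \in TN^\circ$, the element $\omega^\sharp_y((d\mu_y)^*\xi)$ lies in both $T_yW$ and $(T_yW)^{\perp_\omega}$ and maps to $v$; so the radical surjects onto $T_xN \cap \pi^\sharp_x(TN^\circ)$. Since $\ker d\mu_y \subseteq T_yW$ is symplectic, the radical of $\omega|_{T_yW}$ meets $\ker d\mu_y$ trivially, so $d\mu_y$ restricts to an isomorphism from the radical onto $T_xN \cap \pi^\sharp_x(TN^\circ)$. Hence $\omega|_{T_yW}$ is nondegenerate if and only if $T_xN \cap \pi^\sharp_x(TN^\circ) = 0$; and by a dimension count (equation (\ref{EQ_cosymplectic}) as an open condition), on $U$ this intersection vanishes, while off $U$ it does not. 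This proves $\Sigma_U = \mu^{-1}(U)$ is exactly the nondegeneracy locus.

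\textbf{The realization property.} It remains to check that $\mu : (\Sigma_U, \omega|_{\Sigma_U}) \to (U, \pi_U)$ is a symplectic realization. Surjectivity and submersivity are inherited from $\mu$. For the Poisson property, I would verify $d\mu_y \circ (\omega|_{\Sigma_U})^\sharp_y = (\pi_U)^\sharp_x \circ (d\mu_y|_{T_y\Sigma_U})^*$ at each $y \in \Sigma_U$, $x = \mu(y) \in U$. The left-hand side: for a covector on $T_y\Sigma_U$, extend it to $T_y\Sigma$, apply $\omega^\sharp_y$, and project — using that $\ker d\mu_y \subseteq T_y\Sigma_U$ and the symplectic orthogonal decomposition this is compatible with the ambient relation for $\mu$. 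Then using the defining property of $\pi_U$ recalled in the excerpt — $\pi_U^\sharp(\xi|_{TN}) = \pi^\sharp(\xi)$ whenever $\pi^\sharp(\xi) \in TN$ — together with the fact that $\pi^\sharp(TN^\circ|\text{-contributions})$ land in $TN$ along $U$, the identity follows. I would organize this as: $\mu^*$ maps Casimir-type functions correctly, or more directly, check $\{f\circ\mu, g\circ\mu\}_\omega = \{f,g\}_{\pi_U}\circ\mu$ for $f,g \in C^\infty(U)$ by extending $f,g$ to $M$ and comparing with the known identity for $\mu$.

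\textbf{Main obstacle.} The routine parts (submanifold structure, surjectivity) are immediate; the genuine content is the linear-algebra bookkeeping in $T_y\Sigma$ — in particular keeping straight the interplay between $\ker d\mu_y$, its symplectic orthogonal $(d\mu_y)^{-1}(T_xS)$, and the subspace $T_yW$, and making sure the radical of $\omega|_{T_yW}$ is correctly identified and maps isomorphically (not just onto) $T_xN \cap \pi^\sharp_x(TN^\circ)$. The one subtlety I would be careful about is whether the radical could have a component inside $\ker d\mu_y$; this is ruled out precisely because $\ker d\mu_y$ is $\omega$-symplectic, but it is the step most easily glossed over. Once that isomorphism is pinned down, both the nondegeneracy statement and the Poisson-map verification fall out of the same diagram chase.
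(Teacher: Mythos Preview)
There is a genuine error in the nondegeneracy argument, exactly at the step you flag as the main obstacle. The claim that $\ker d\mu_y$ is a symplectic subspace of $T_y\Sigma$ is false: one has $\ker d\mu_y \cap (\ker d\mu_y)^{\perp_\omega} = \omega^\sharp_y\big((d\mu_y)^*\ker\pi^\sharp_x\big)$, whose dimension is the corank of $\pi$ at $x$, so this intersection is nonzero whenever $\pi$ is degenerate there (e.g.\ for the realization $T^*M\to(M,0)$ the fibres are Lagrangian). Hence the radical can meet $\ker d\mu_y$, the map $d\mu_y$ need not be injective on it, and the condition you extract, $T_xN\cap\pi^\sharp_x(TN^\circ)=0$, is strictly weaker than Poisson transversality --- it misses the requirement $TN^\circ\cap\ker\pi^\sharp_x=0$. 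Concretely, with $\pi=0$ on $M$ and $N$ any proper submanifold, your intersection vanishes everywhere but $N$ is nowhere Poisson transversal.

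The fix, which is what the paper does, is to bypass $d\mu_y$ on the radical. Since the radical lies in $(T_yW)^{\perp_\omega}=\omega^\sharp_y\big((d\mu_y)^*T_xN^\circ\big)$ and $(d\mu_y)^*$ is injective, every radical element is \emph{uniquely} of the form $\omega^\sharp_y((d\mu_y)^*\xi)$ with $\xi\in T_xN^\circ$; membership in $T_yW$ then reads $\pi^\sharp_x(\xi)\in T_xN$. Thus the radical is identified with the kernel of $\pi^\sharp_x\colon T_xN^\circ\to T_xM/T_xN$, and since source and target have the same dimension this kernel vanishes exactly when the map is an isomorphism, i.e.\ when $x\in U$. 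Your sketch of the Poisson-map part is in the right direction but vague; the paper makes it precise by extending $\xi\in T^*_xN$ to $\eta\in T^*_xM$ with $\pi^\sharp(\eta)=\pi_U^\sharp(\xi)$ and checking that $X_\eta:=(\omega^{-1})^\sharp(\mu^*\eta)$ already lies in $T\Sigma_U$ and coincides with $X_\xi$.
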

\begin{proof}
Denote also by $\Sigma_N:=\mu^{-1}(N)$. For $x\in \Sigma_N$, $\omega_{|\Sigma_N}$ is nondegenerate at $x$ if
and only if the map
\[(\omega^{-1})^{\sharp}:T_x\Sigma^{\circ}_N\rmap T_x\Sigma/T_x\Sigma_N\]
is an isomorphism. Since $d\mu$ induces an isomorphism between the spaces
\[(d\mu): T_x\Sigma/T_x\Sigma_N\rmap T_{\mu(x)}M/T_{\mu(x)}N,\]
and $\mu$ is Poisson, this is equivalent to invertability of the map
\[(d\mu)\circ(\omega^{-1})^{\sharp}\circ (d\mu)^*=(\pi)^{\sharp}:T_{\mu(x)}N^{\circ}\rmap T_{\mu(x)}M/T_{\mu(x)}N,\]
hence to $\mu(x)\in U$. This proves the first part.

Consider $x\in \Sigma_U$, $\xi\in T_{\mu(x)}^*N$ and denote by
\[X_{\xi}:=(\omega_{|\Sigma_U}^{-1})^{\sharp}(\mu^*(\xi))\in T_x\Sigma_U.\]
To prove that $\mu_{|\Sigma_U}$ is Poisson, we have to check that $d\mu(X_{\xi})=\pi_U^{\sharp}(\xi)$. By the
definition of $\pi_U$, there is some $\eta\in T^*_{\mu(x)}M$ such that $\eta_{|TN}=\xi$ and
$\pi^{\sharp}(\eta)=\pi_U^{\sharp}(\xi)$. Since $\mu$ is Poisson, the vector
$X_{\eta}:=(\omega^{-1})^{\sharp}(\mu^*(\eta))$ projects to $\pi^{\sharp}(\eta)$. In particular, $X_{\eta}\in
T_x\Sigma_U$, thus
\[\mu^*(\xi)=\mu^*(\eta)_{|\Sigma_U}=(\omega^{\sharp}X_{\eta})_{|\Sigma_U}=(\omega_{|\Sigma_U})^{\sharp}(X_{\eta}).\]
This shows that $X_{\eta}=X_{\xi}$, hence $X_{\xi}$ also projects to $\pi^{\sharp}(\eta)=\pi_U^{\sharp}(\xi)$.
\end{proof}

\subsection{Poisson cohomology}\label{SSPoissonCoho}

The notions of Casimir function and of Hamiltonian vector field fit into the framework of Poisson
cohomology.\index{cohomology, Poisson}

\begin{definition}
The \textbf{Poisson cohomology} of a Poisson manifold $(M,\pi)$ is the cohomology computed by the complex
\[(\mathfrak{X}^{\bullet}(M),d_{\pi}), \ \ d_{\pi}:=[\pi,\cdot].\]
The resulting Poisson cohomology groups are denoted by $H^{\bullet}_{\pi}(M)$.
\end{definition}

In low degrees these groups have a geometric interpretation:
\begin{itemize}
\item $H^0_{\pi}(M)$ is the space of Casimir functions.
\item $H^1_{\pi}(M)$ is the space of infinitesimal automorphisms of $\pi$ modulo Hamiltonian vector fields.
\item $H^2_{\pi}(M)$ is the space of infinitesimal deformations of $\pi$ modulo ``geometric deformations'', i.e.\ deformations coming from
diffeomorphisms.
\end{itemize}

To explain the interpretation given to $H^2_{\pi}(M)$, let $\pi_t$ be a family of Poisson structures, with $\pi_0=\pi$.
Taking the derivative at $t=0$ in $[\pi_t,\pi_t]=0$, we obtain that $d_\pi(\dot{\pi}_0)=0$. Now if
$\pi_t=\Phi_{t}^{*}(\pi)$, where $\Phi_t$ is a family of diffeomorphisms of $M$, with $\Phi_0=\textrm{Id}_M$, then
$\dot{\pi}_0=d_{\pi}(\dot{\Phi}_0)$. So $H^2_{\pi}(M)$ has the heuristical interpretation of being the ``tangent
space'' at $\pi$ to the moduli space of all Poisson structures on $M$.

For example, every Casimir function $f\in H^0_{\pi}(M)$ gives a class $[f\pi]\in H^{2}_{\pi}(M)$. This class
corresponds to the deformation of $\pi$ given by $\pi_t=e^{tf}\pi$. Geometrically, the leaves of $\pi_t$ are the same
as those of $\pi$, but the symplectic form on $S$ for $\pi_t$ is $e^{-tf}\omega_S$ (these deformations are relevant in
chapter \ref{ChDef}).

The space $\mathfrak{X}^{\bullet}(M)$, of multivector fields on $M$, can be regarded as the space of
multi-derivations of $C^{\infty}(M)$, i.e.\ skew-symmetric maps
\[C^{\infty}(M)\times\ldots \times C^{\infty}(M)\rmap C^{\infty}(M),\]
satisfying the Leibniz rule in each entry. Then, the Poisson differential can be given also by the formula\index{Koszul
formula}
\begin{align*}
d_{\pi}W(f_0, \ldots , f_{p})=&\sum_{i}(-1)^{i} \{f_i,W(f_0, \ldots , \widehat{f}_i, \ldots , f_{p})\}+\\
& + \sum_{i< j} (-1)^{i+j}W(\{f_i, f_j\}, \ldots , \widehat{f}_i, \ldots, \widehat{f}_j, \ldots , f_{p}),
\end{align*}
for $W\in\mathfrak{X}^{p}(M)$ and $f_0,\ldots,f_p\in C^{\infty}(M)$. So, Poisson cohomology can be computed using
a subcomplex of the Eilenberg-Chevalley complex of the Lie algebra $(C^{\infty}(M),\{\cdot,\cdot\})$.

The Poisson cohomology of $(M,\pi)$ is related to de Rham cohomology of $M$ by the chain map
\begin{equation}\label{EQ_Chain_deRham_Poisson}
(-1)^{\bullet+1}\Lambda^{\bullet}\pi^{\sharp}:(\Omega^{\bullet}(M),d)\rmap (\mathfrak{X}^{\bullet}(M),d_{\pi}).
\end{equation}
The induced map in cohomology we denote by
\[H(\pi^{\sharp}):H^{\bullet}(M)\rmap H^{\bullet}_{\pi}(M).\]

When $\pi$ is nondegenerate, i.e.\ $\pi=\omega^{-1}$ for a symplectic structure $\omega$, then
(\ref{EQ_Chain_deRham_Poisson}) is an isomorphism. So, in this case, the Poisson cohomology is isomorphic to the de
Rham cohomology.

In the next subsection we explain the infinitesimal deformations of $\pi$ coming from elements in
$H(\pi^{\sharp})(H^2(M))$.

\subsection{Gauge transformations}

Let $\omega$ be a closed 2-form on a Poisson manifold $(M,\pi)$. On the open where
\begin{equation}\label{EQ_map_gauge}
\textrm{Id}+\omega^{\sharp}\circ\pi^{\sharp}
\end{equation}
is invertible, one defines a new Poisson structure, denoted by $\pi^{\omega}$, called the \textbf{gauge
transformation} of $\pi$ by $\omega$\index{gauge transformation}
\[\pi^{\omega,\sharp}:=\pi^{\sharp}\circ (\textrm{Id}+\omega^{\sharp}\circ\pi^{\sharp} )^{-1}.\]
The fact that $\pi^{\omega}$ is Poisson will be explained when we discuss gauge transformations of Dirac structures.
Geometrically, the leaves of $\pi^{\omega}$ are the same as those of $\pi$, but the symplectic form on $S$ for
$\pi^{\omega}$ is $\omega_S+\omega_{|S}$.

The paths $t\mapsto \pi^{t\omega}$ is the deformation corresponding to the class
\[H(\pi^{\sharp})[\omega]\in H^2_{\pi}(M).\]
To see this, we differentiate the equation
\[\pi^{t\omega,\sharp}\circ (\textrm{Id}+t \omega^{\sharp}\circ \pi^{\sharp})=\pi^{\sharp},\]
and obtain
\[\frac{d}{dt}(\pi^{t\omega,\sharp})\circ (\textrm{Id}+t \omega^{\sharp}\circ \pi^{\sharp})=-\pi^{t\omega,\sharp}\circ\omega^{\sharp}\circ \pi^{\sharp}.\]
Multiplying with $(\textrm{Id}+t \omega^{\sharp}\circ \pi^{\sharp})^{-1}$ from the right, this gives
\begin{equation}\label{EQ_gauge_time_derivative}
\frac{d}{dt}\pi^{t\omega}=-(\Lambda^2\pi^{t\omega,\sharp})(\omega).
\end{equation}
At $t=0$, we obtain that $[\frac{d}{dt}\pi^{t\omega}_{|0}]=H(\pi^{\sharp})[\omega]$.\\

By skew-symmetry of $\pi$ and $\omega$, the dual of the map (\ref{EQ_map_gauge}) is
$(\textrm{Id}+\pi^{\sharp}\omega^{\sharp})$; thus, if one is invertible, then so is the other. This plays a role in the
following lemma, whose proof we explain in the next section.

\begin{lemma}\label{Lemma_gauge_cohomology}
Let $U$ be the open on which $(\mathrm{Id}+\omega^{\sharp}\pi^{\sharp})$ is invertible. The Poisson manifolds
$(U,\pi^{\omega})$ and $(U,\pi)$ have isomorphic cohomology; an isomorphism between their complexes is the chain
map
\[\Lambda^{\bullet}(\mathrm{Id}+\pi^{\sharp}\omega^{\sharp}):(\mathfrak{X}^{\bullet}(U),d_{\pi^{\omega}})\rmap (\mathfrak{X}^{\bullet}(U),d_{\pi})\]
\end{lemma}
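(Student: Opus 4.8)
The plan is to realize the gauge transformation, at the level of cotangent Lie algebroids, by a genuine isomorphism, and then invoke functoriality of Lie algebroid cohomology. Write $\psi:=\mathrm{Id}+\pi^\sharp\omega^\sharp:TU\to TU$. By the skew-symmetry of $\pi$ and $\omega$ recalled just above the lemma, its fibrewise transpose is $\psi^*=\mathrm{Id}+\omega^\sharp\pi^\sharp:T^*U\to T^*U$, which is invertible on $U$ by definition of $U$, hence so is $\psi$. From $\pi^{\omega,\sharp}=\pi^\sharp\circ(\mathrm{Id}+\omega^\sharp\pi^\sharp)^{-1}=\pi^\sharp\circ(\psi^*)^{-1}$ one reads off the two identities
\[\pi^{\omega,\sharp}\circ\psi^*=\pi^\sharp,\qquad \psi\circ\pi^{\omega,\sharp}=\pi^\sharp,\]
the second using that $\psi\circ\pi^\sharp=\pi^\sharp\circ\psi^*$.

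I would then show that $\psi^*$ is an isomorphism of Lie algebroids from the cotangent Lie algebroid of $(U,\pi)$ (underlying bundle $T^*U$, anchor $\pi^\sharp$, Koszul bracket $[\cdot,\cdot]_\pi$, cohomology with trivial coefficients equal to $H^\bullet_\pi(U)$) to that of $(U,\pi^\omega)$. Compatibility of $\psi^*$ with the anchors is exactly the first identity above. Compatibility with the brackets is the statement that the gauge transformation $\tau_\omega:X+\xi\mapsto X+\xi+\iota_X\omega$ of $TU\oplus T^*U$ is an automorphism of the Courant bracket — which is precisely where the hypothesis $d\omega=0$ enters — together with the fact that $\tau_\omega$ carries the Dirac structure $\mathrm{graph}(\pi^\sharp)$ onto $\mathrm{graph}(\pi^{\omega,\sharp})$ (this is the content behind ``$\pi^\omega$ is Poisson'', to be developed when Dirac structures are discussed); transporting $\tau_\omega$ through the canonical Lie-algebroid identifications of these graphs with $T^*U$ produces exactly $\psi^*$.

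Finally, a Lie algebroid isomorphism $\Psi:A\to B$ over the identity induces, by pullback of cochains, an isomorphism of complexes $(\Gamma\Lambda^\bullet B^*,d_B)\to(\Gamma\Lambda^\bullet A^*,d_A)$, given at the bundle level by $\Lambda^\bullet$ of the transpose $B^*\to A^*$ of $\Psi$. Applying this with $\Psi=\psi^*$, so that $A$ is the cotangent algebroid of $(U,\pi)$ with cochain complex $(\mathfrak{X}^\bullet(U),d_\pi)$ and $B$ that of $(U,\pi^\omega)$ with complex $(\mathfrak{X}^\bullet(U),d_{\pi^\omega})$, and noting that the transpose of $\psi^*$ is $\psi=\mathrm{Id}+\pi^\sharp\omega^\sharp$, yields precisely the asserted chain isomorphism $\Lambda^\bullet(\mathrm{Id}+\pi^\sharp\omega^\sharp):(\mathfrak{X}^\bullet(U),d_{\pi^\omega})\to(\mathfrak{X}^\bullet(U),d_\pi)$, and in particular the two Poisson cohomologies agree. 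The only non-formal step is the bracket-compatibility of $\psi^*$, i.e.\ that $\tau_\omega$ is a Courant automorphism for closed $\omega$; this is where $d\omega=0$ is used and is the main thing to check. If one prefers to bypass Courant algebroids, the equivalent identity $\Lambda^\bullet\psi\circ d_{\pi^\omega}=d_\pi\circ\Lambda^\bullet\psi$ can be verified directly: since $\Lambda^\bullet\psi$ is an algebra homomorphism of $(\mathfrak{X}^\bullet(U),\wedge)$ and $d_\pi,d_{\pi^\omega}$ are degree-one graded derivations, both composites obey the same $\Lambda^\bullet\psi$-twisted Leibniz rule, so it suffices to check them on the algebra generators $\mathfrak{X}^0(U)\oplus\mathfrak{X}^1(U)$ — on functions this is immediate from $\psi\circ\pi^{\omega,\sharp}=\pi^\sharp$, and on vector fields it is a short Lie-derivative computation that again uses $d\omega=0$.
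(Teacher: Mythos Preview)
Your proposal is correct and is essentially the same argument the paper gives: the paper defers the proof to the Dirac section, where it observes that for a closed $2$-form $\omega$ the bundle map $e_\omega:L\to L^\omega$, $X+\xi\mapsto X+\omega^\sharp(X)+\xi$, is an isomorphism for the Dorfman bracket, hence induces a chain isomorphism of Lie algebroid complexes, and then computes that under the identifications $L_\pi^*\cong TU\cong L_{\pi^\omega}^*$ this becomes $\Lambda^\bullet(\mathrm{Id}+\pi^\sharp\omega^\sharp)$. Your $\tau_\omega$ is exactly their $e_\omega$ (extended to all of $TU\oplus T^*U$), and your transport through the graph identifications is the same computation; the alternative direct check on generators you sketch at the end is not in the paper but is a valid shortcut.
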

We give now the Poisson geometric version of the Moser Lemma from symplectic geometry.

\begin{lemma}\label{Lemma_Moser_Poisson_vs_Symplectic}
Assume that the class $H(\pi^{\sharp})[\omega]\in H^2_{\pi}(M)$ vanishes, and let $X\in\mathfrak{X}(M)$ be a
solution to
\[d_{\pi}(X)=-(\Lambda^2\pi^{\sharp})(\omega).\]
The flow $\Phi_t$ of the time dependent vector field
\[X_t=(\mathrm{Id}+t\pi^{\sharp}\omega^{\sharp})^{-1}(X)\]
sends $\pi$ to $\pi^{t\omega}$, whenever it is defined. In particular, for $\omega=d\alpha$,
\[X_t=X=\pi^{\sharp}(\alpha).\]
\end{lemma}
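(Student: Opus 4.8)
The plan is to verify directly that the time-dependent flow $\Phi_t$ of $X_t$ satisfies $(\Phi_t)_*\pi = \pi^{t\omega}$, equivalently $\tfrac{d}{dt}(\Phi_t^*\pi^{t\omega})=0$ together with the correct value at $t=0$; here I read $(\Phi_t)_*\pi = \pi^{t\omega}$ as $\Phi_t^*(\pi^{t\omega}) = \pi$, which holds trivially at $t=0$. Differentiating, one gets $\tfrac{d}{dt}\Phi_t^*(\pi^{t\omega}) = \Phi_t^*\big(L_{X_t}\pi^{t\omega} + \tfrac{d}{dt}\pi^{t\omega}\big)$, so it suffices to show the bracketed term vanishes for every $t$ in the relevant domain. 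Using $L_{X_t}\pi^{t\omega} = [X_t,\pi^{t\omega}] = -d_{\pi^{t\omega}}(X_t)$ and the formula \eqref{EQ_gauge_time_derivative}, namely $\tfrac{d}{dt}\pi^{t\omega} = -(\Lambda^2\pi^{t\omega,\sharp})(\omega)$, the identity to be proven becomes
\[
d_{\pi^{t\omega}}(X_t) = -(\Lambda^2\pi^{t\omega,\sharp})(\omega).
\]

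The first key step is to relate this to the hypothesis $d_\pi(X) = -(\Lambda^2\pi^\sharp)(\omega)$ via Lemma \ref{Lemma_gauge_cohomology}. That lemma provides the chain isomorphism $\Lambda^{\bullet}(\mathrm{Id}+\pi^\sharp\omega^\sharp)\colon(\mathfrak{X}^\bullet,d_{\pi^\omega})\to(\mathfrak{X}^\bullet,d_\pi)$ — applied with $t\omega$ in place of $\omega$, it reads $\Lambda^\bullet(\mathrm{Id}+t\pi^\sharp\omega^\sharp)\colon(\mathfrak X^\bullet,d_{\pi^{t\omega}})\to(\mathfrak X^\bullet,d_\pi)$. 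Since $X_t=(\mathrm{Id}+t\pi^\sharp\omega^\sharp)^{-1}(X)$, applying this chain map to the degree-$1$ equation $d_{\pi^{t\omega}}(X_t) = -(\Lambda^2\pi^{t\omega,\sharp})(\omega)$ turns the left-hand side into $d_\pi(X)$, so the whole thing will follow once I check that the chain map sends $-(\Lambda^2\pi^{t\omega,\sharp})(\omega)$ to $-(\Lambda^2\pi^\sharp)(\omega)$. The second key step is exactly this algebraic identity of bivectors:
\[
(\Lambda^2(\mathrm{Id}+t\pi^\sharp\omega^\sharp))\big((\Lambda^2\pi^{t\omega,\sharp})(\omega)\big) = (\Lambda^2\pi^\sharp)(\omega),
\]
which reduces, by functoriality of $\Lambda^2$, to the map-level identity $(\mathrm{Id}+t\pi^\sharp\omega^\sharp)\circ\pi^{t\omega,\sharp} = \pi^\sharp$ as maps $T^*M\to TM$. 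But that is immediate from the defining formula $\pi^{t\omega,\sharp} = \pi^\sharp\circ(\mathrm{Id}+t\omega^\sharp\pi^\sharp)^{-1}$ together with the observation (made just before Lemma \ref{Lemma_gauge_cohomology}) that the dual of $\mathrm{Id}+t\omega^\sharp\pi^\sharp$ is $\mathrm{Id}+t\pi^\sharp\omega^\sharp$, so that $\pi^\sharp\circ(\mathrm{Id}+t\omega^\sharp\pi^\sharp)^{-1} = (\mathrm{Id}+t\pi^\sharp\omega^\sharp)^{-1}\circ\pi^\sharp$ — wait, more directly: $(\mathrm{Id}+t\pi^\sharp\omega^\sharp)\circ\pi^{t\omega,\sharp}$; one computes using skew-symmetry that $\pi^{t\omega,\sharp}\circ(\mathrm{Id}+t\omega^\sharp\pi^\sharp) = \pi^\sharp$ is the defining relation, and precomposing/postcomposing with the appropriate duals yields the claim. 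I will spell this out carefully since signs and the order of $\sharp$'s are the only place an error can hide.

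Finally, the special case $\omega=d\alpha$: here $H(\pi^\sharp)[d\alpha]=0$ automatically and one may take $X=\pi^\sharp(\alpha)$, since $d_\pi(\pi^\sharp(\alpha)) = [\pi,\pi^\sharp(\alpha)] = -(\Lambda^2\pi^\sharp)(d\alpha)$ — this is just the statement that the chain map \eqref{EQ_Chain_deRham_Poisson} intertwines $d$ and $d_\pi$, applied to $\alpha$. It then remains to observe that with this choice $X_t$ is actually independent of $t$: the claim $X_t = \pi^\sharp(\alpha)$ amounts to $(\mathrm{Id}+t\pi^\sharp\omega^\sharp)(\pi^\sharp(\alpha)) = \pi^\sharp(\alpha)$, i.e. $\pi^\sharp\omega^\sharp\pi^\sharp(\alpha)=0$; since $\omega=d\alpha$, $\omega^\sharp\pi^\sharp(\alpha)$ is an exact pairing that vanishes by skew-symmetry of $\pi$ applied twice — concretely, $\langle d\alpha,\,\pi^\sharp(\alpha)\wedge\,\cdot\,\rangle$ feeds $\pi^\sharp(\alpha)$ back into $\pi$, giving $\pi(\alpha,\alpha,\cdot)$-type terms that cancel. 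I will state this cleanly rather than compute it componentwise. The main obstacle is purely bookkeeping: keeping the $\sharp$-conventions, the transpose relation $({-})^*$, and the signs in $\Lambda^2$ consistent so that the chain-map argument collapses to the trivial algebraic identity; there is no conceptual difficulty once Lemma \ref{Lemma_gauge_cohomology} and \eqref{EQ_gauge_time_derivative} are in hand.
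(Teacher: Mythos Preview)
Your argument for the main claim is correct and follows the paper's proof essentially verbatim: both reduce to showing $d_{\pi^{t\omega}}(X_t)=-(\Lambda^2\pi^{t\omega,\sharp})(\omega)$, and both do this by applying the chain isomorphism of Lemma~\ref{Lemma_gauge_cohomology} together with the identity $(\mathrm{Id}+t\pi^\sharp\omega^\sharp)^{-1}\circ\pi^\sharp=\pi^{t\omega,\sharp}$, then invoking \eqref{EQ_gauge_time_derivative}. (Your hesitation about the order of the $\sharp$'s is unnecessary: taking duals in the defining relation $\pi^{t\omega,\sharp}\circ(\mathrm{Id}+t\omega^\sharp\pi^\sharp)=\pi^\sharp$ and using $(\pi^\sharp)^*=-\pi^\sharp$, $(\omega^\sharp)^*=-\omega^\sharp$ gives exactly $(\mathrm{Id}+t\pi^\sharp\omega^\sharp)\circ\pi^{t\omega,\sharp}=\pi^\sharp$.)

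There is, however, a genuine gap in your treatment of the ``in particular'' clause. You correctly verify that $X=\pi^\sharp(\alpha)$ satisfies the hypothesis via the chain map \eqref{EQ_Chain_deRham_Poisson}. But the identity you then invoke, $\pi^\sharp\omega^\sharp\pi^\sharp(\alpha)=0$ for $\omega=d\alpha$, is \emph{false} in general, so your hand-wave about ``$\pi(\alpha,\alpha,\cdot)$-type terms'' cannot be made rigorous. A concrete counterexample: on $\mathbb{R}^2$ with $\pi=\partial_x\wedge\partial_y$ and $\alpha=x\,dy$, one has $\pi^\sharp(\alpha)=-x\partial_x$, $\omega^\sharp(-x\partial_x)=-x\,dy$, and $\pi^\sharp(-x\,dy)=x\partial_x\neq 0$. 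What one actually obtains from the formula is $X_t=(\mathrm{Id}+t\pi^\sharp\omega^\sharp)^{-1}\pi^\sharp(\alpha)=\pi^{t\omega,\sharp}(\alpha)$, which is $t$-dependent in general. The paper's own proof does not address this line; the literal assertion $X_t=X$ appears to be an imprecision in the statement, with the substantive content being only that $X=\pi^\sharp(\alpha)$ is an admissible choice.
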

\begin{proof}
Using Lemma \ref{Lemma_gauge_cohomology} and that
$(\textrm{Id}+t\pi^{\sharp}\omega^{\sharp})^{-1}\pi^{\sharp}=\pi^{t\omega,\sharp}$, we obtain
\[L_{X_t}\pi^{t\omega}=-d_{\pi^{t\omega}}X_t=-(\Lambda^2(\textrm{Id}+t\pi^{\sharp}\omega^{\sharp})^{-1})(d_{\pi}X)=(\Lambda^2\pi^{t\omega,\sharp})(\omega).\]
Using (\ref{EQ_gauge_time_derivative}), we obtain that $\Phi_t^*(\pi^{t\omega})$ is constant
\[\frac{d}{dt}\Phi_t^*(\pi^{t\omega})=\Phi_t^*(L_{X_t}\pi^{t\omega}+\frac{d}{dt}\pi^{t\omega})=0.\]
Since $\Phi_0=\textrm{Id}_M$, the result follows.
\end{proof}

\subsection{Dirac structures}\label{subsection_Dirac}
Dirac geometry is the common framework for several geometric structures: foliations, closed 2-forms and Poisson
bivectors. Our main application for Dirac structures is to the study of the local model for a Poisson manifold around a
symplectic leaf, in chapter \ref{ChNormalForms}; the resulting structure is globally Dirac and only on an open around
the leaf it is Poisson. As a reference to Dirac geometry we indicate \cite{BR,Courant}.

\subsubsection*{Definition and first properties}

Let $M$ be a manifold. The vector bundle $TM\oplus T^*M$ carries a symmetric, nondegenerate paring of signature
$(\mathrm{dim}(M),\mathrm{dim}(M))$,
\[(X+\xi,Y+\eta):=\xi(X)+\eta(Y), \ X+\xi,Y+\eta\in T_xM\oplus T_x^*M,\]
and its space of sections carries the so-called Dorfman bracket\index{Dorfman bracket}
\begin{equation}\label{EQ_Dorfman_bracket}
[X+\xi,Y+\eta]_D:=[X,Y]+L_X\eta-\iota_{Y}d\xi,
\end{equation}
for $X,Y\in\mathfrak{X}(M)$ and $\eta,\xi\in\Omega^1(M)$. This bracket satisfies the Jacobi identity, but fails to be
skew-symmetric.

\begin{definition}
A \textbf{Dirac structure} on $M$ is a maximal isotropic subbundle $L\subset (TM\oplus T^*M,(\cdot,\cdot))$, which
is involutive with respect to the Dorfman bracket.\index{Dirac structure}
\end{definition}
Note that the failure of the bracket of being skew-symmetric is given by
\[[X+\xi,Y+\eta]_D+[Y+\eta, X+\xi]_D=d(X+\xi,Y+\eta).\]
Therefore, the space of sections of a Dirac structure is a Lie algebra.

\subsubsection*{Presymplectic leaves}

Let $L$ be a Dirac structure on $M$. Denote by $p_T$ and $p_{T}^*$ the projections of $TM\oplus T^*M$ onto $TM$
and $T^*M$ respectively. As for Poisson structures, the singular distribution $p_T(L)$ is involutive and integrates to a
partition of $M$ into immersed submanifolds which carry closed 2-forms. These submanifolds are called
\textbf{presymplectic leaves}\index{presymplectic leaves} and the closed 2-form $\omega_S$ on the leaf $S\subset
M$ is given at $x$ by
\[\omega_S(X,Y):=\xi(Y)=-\eta(X), \ X+\xi,Y+\eta\in L_x.\]

\subsubsection*{Foliations}

A smooth distribution $B\subset TM$ induces a maximal isotropic subspace \[L_{B}:=B\oplus B^{\circ}\subset
TM\oplus T^*M,\] and $L_B$ is a Dirac structure if and only if $B$ is involutive. For such Dirac structures $L_B$, the
presymplectic leaves are the leaves of the foliation corresponding to $B$ and the closed 2-forms vanish. This class of
examples corresponds to Dirac structures $L$ such that $L= p_T(L)\oplus p_{T^*}(L)$.\index{foliation}

\subsubsection*{Poisson structures}

A bivector $\pi\in \mathfrak{X}^2(M)$ induces a maximal isotropic subspace
\[L_\pi:=\{\pi^{\sharp}(\xi)+\xi | \xi\in T^*M \},\]
and the condition that $L_{\pi}$ is Dirac is equivalent to $\pi$ being Poisson. In this case, the presymplectic leaves of
$L_{\pi}$ are the symplectic leaves of $\pi$ with the corresponding symplectic structures.

Under the isomorphism
\[T^*M\diffto L_{\pi},\ \ \xi\mapsto \xi+\pi^{\sharp}(\xi),\]
the Dorfman bracket becomes
\[[\alpha,\beta]_{\pi}:=L_{\pi^{\sharp}\alpha}\beta-L_{\pi^{\sharp}\beta}\alpha- d\pi(\alpha,\beta).\]
Poisson manifolds are Dirac structures $L$ satisfying $p_{T^*}(L)=T^*M$.

\begin{definition}
The \textbf{Poisson support} of a Dirac structure $L$ is
\[\mathrm{supp}(L):=\{x\in M | p_{T^*}(L_x)=T^*_xM\}.\]
It is maximal open set on which $L$ is Poisson.\index{Poisson support}
\end{definition}

\subsubsection*{Closed 2-forms}

A 2-form $\omega\in \Omega^2(M)$ gives a maximal isotropic subspace
\[L_\omega:=\{X+\iota_X\omega| X\in TM \},\]
which is Dirac if and only if $\omega$ is closed. In this case, if $M$ is connected, $(M,\omega)$ is the only
presymplectic leaf. This class of examples corresponds to Dirac structures $L$ such that $p_{T}(L)=TM$.


\subsubsection*{Dirac maps}

For Dirac structures there are both pullback and push forward maps, generalizing the pullback of foliations and
2-forms, and the push forward of Poisson bivectors. A map between Dirac manifolds
\[\varphi:(M_1,L_1)\rmap (M_2,L_2)\]
is a \textbf{forward Dirac map}, if for all $x\in M_1$\index{forward Dirac map},
\[L_{2,\varphi(x)}=\varphi_*(L_{1,x}):=\{\varphi_*(X)+\xi | X+\varphi^*(\xi)\in L_{1,x} \}.\]
The map $\varphi$ is called a \textbf{backward Dirac map}, if for all $x\in M_1$,\index{backward Dirac map}
\[L_{1,x}=\varphi^*(L_{2,\varphi(x)}):=\{X+\varphi^*(\xi) | \varphi_*(X)+\xi \in L_{2,\varphi(x)} \}.\]
A Poisson map is a forward Dirac map; the inclusion of a Poisson transversal of a Poisson manifold is a backward Dirac
map.

\subsubsection*{Rescaling}

We have also the operation of rescaling of Dirac structures, defined by
\[tL:=\{tX+\xi| X+\xi\in L \}, \ t\neq 0.\]
Foliations are the only Dirac structures invariant under rescaling.

\subsubsection*{Products}

Two Dirac structures $L_1$ and $L_2$ on $M$, such that $L_1+L_2=TM\oplus T^*M$ are called \textbf{transverse}
Dirac structures. If $L_1$ and $L_2$ satisfy $p_T(L_1)+p_T(L_2)=TM$, then we say that $L_1$ and $L_2$ are
$t$-\textbf{transverse}.\index{transverse Dirac structures}

Define the \textbf{product}\index{product of Dirac structures} of $t$-transverse Dirac structures $L_1$ and $L_2$ by
\[L_1*L_2:=\{X+\xi_1+\xi_2 | X+\xi_1\in L_1, X+\xi_2\in L_2\}.\]
Notice that, by $t$-transversality, the map
\[L_1\oplus L_2\rmap TM, \ (X_1+\xi_1)\oplus(X_2+\xi_2)\mapsto X_1-X_2,\]
is surjective, thus its kernel $K$ is a subbundle. We claim that the map
\[K\rmap L_1*L_2, \ (X+\xi_1)\oplus(X+\xi_2)\mapsto X+\xi_1+\xi_2,\]
is an isomorphism, therefore $L_1*L_2$ is a (smooth) subbundle of $TM\oplus T^*M$. The map is surjective by
definition. An element in its kernel is of the form $(\xi,-\xi)$, for $\xi\in L_1\cap L_2\cap T^*M$. Since $L_1$ and $L_2$
are isotropic,
\[\{0\}=(\xi,L_1+L_2)=\xi(p_T(L_1)+p_T(L_2))=\xi(TM),\] therefore $\xi=0$, and this proves injectivity. This
argument also shows that $L_1*L_2$ has the right dimension. The fact that it is isotropic and involutivity follow easily.
We conclude that $L_1*L_2$ is a new Dirac structure.

This operation has the usual algebraic properties
\begin{align*}
& TM*L=L*TM=L,\\
& L_1*L_2=L_2*L_1,\\
& L_1*(L_2*L_3)=(L_1*L_2)*L_3,
\end{align*}
whenever these products are defined.

Geometrically, taking the product of $L_1$ and $L_2$ amounts to intersecting the underlying singular foliations and
adding the restrictions of the presymplectic structures. Observe that the product has the following property
\[L_1*L_2\ \textrm{is Poisson}\ \Leftrightarrow \ L_1 \ \textrm{and}\ -L_2 \ \textrm{are transverse}.\]

This product generalizes several constructions:

\noindent $\bullet$ Addition of $2$-forms.

\noindent $\bullet$ Intersection of transverse foliations.

\noindent $\bullet$ The \textbf{gauge transformation}\index{gauge transformation} of a Dirac structure $L$ by a
closed 2-form $\omega$
\[L^{\omega}:=L* L_{\omega}=\{ X+\xi+\iota_X\omega | X+\xi\in L\}.\]
Observe that for $L_{\pi}$, where $\pi$ is a Poisson structure, the Poisson support of $L_{\pi}^{\omega}$ is given by
the open where the map (\ref{EQ_map_gauge}) is invertible. On this open,
\[L_{\pi}^{\omega}=L_{\pi^{\omega}},\]
and this proves that $\pi^{\omega}$ is indeed Poisson.

\noindent$\bullet$ The product of Poisson structures from \cite{LWX}. Transversality of the Dirac structures
$L_{\pi_1}$ and $-L_{\pi_2}$, with $\pi_1$ and $\pi_2$ Poisson, is equivalent to nondegeneracy of $\pi_1+\pi_2$. In
this case, we obtain a new Poisson structure $\pi_1*\pi_2$, which corresponds to the product
\[L_{\pi_1}*L_{\pi_2}=L_{\pi_1*\pi_2}.\] More explicitly, this Poisson tensor is given by
\[(\pi_1*\pi_2)^{\sharp}:=\pi_1^{\sharp}\circ (\pi_1^{\sharp}+\pi_2^{\sharp})^{-1}\circ \pi_2^{\sharp}.\]

\subsubsection*{Cohomology}

The cohomology of a Dirac structure $L$, is computed by the complex
\[(\Gamma(\Lambda^{\bullet}L^*),d_L),\]
where the differential is given by a Koszul-type formula\index{cohomology, Dirac}\index{Koszul formula}
\begin{align*}
d_{L}\alpha(X_0, \ldots , X_{p})=&\sum_{i}(-1)^{i} L_{p_T(X_i)}(\alpha(X_0, \ldots , \widehat{X}_i, \ldots , X_{p}))+\\
& + \sum_{i< j} (-1)^{i+j}\alpha([X_i, X_j]_D, \ldots , \widehat{X}_i, \ldots, \widehat{X}_j, \ldots , X_{p}).
\end{align*}

For a Poisson structure $\pi$, the cohomology of the associated Dirac structure $L_{\pi}$ is isomorphic to the Poisson
cohomology. The natural
identification $L_{\pi}^*\cong TM$ induces an isomorphism between the complexes.\\

For $\omega$ a closed $2$-form and $L$ a Dirac structure, it is easy to see that the isomorphism of bundles
\begin{equation}\label{EQ_gauge_dirac_map}
e_{\omega}: L\diffto L^{\omega}, \ \ X+\xi\mapsto X+\omega^{\sharp}(X)+\xi,
\end{equation}
induces an isomorphism between the Lie algebras
\[e_{\omega}:(\Gamma(L),[\cdot,\cdot]_D)\diffto (\Gamma(L^{\omega}),[\cdot,\cdot]_D).\]
Therefore, the dual map induces a chain-isomorphism
\[\Lambda^{\bullet} e_{\omega}^*:(\Gamma(\Lambda^{\bullet}L^{\omega,*}),d_{L^{\omega}})\rmap (\Gamma(\Lambda^{\bullet}L^*),d_{L}).\]
For a Poisson structure $\pi$, identifying $L_{\pi}^*\cong TM\cong L_{\pi^{\omega}}^*$, this map becomes
\[e_{\omega}^*=(\textrm{Id}+\omega^{\sharp}\pi^{\sharp})^*=(\textrm{Id}+\pi^{\sharp}\omega^{\sharp}).\]
This remark implies Lemma \ref{Lemma_gauge_cohomology}.

\subsection{Contravariant geometry}\label{SubSection_Contravariant}

The basic idea of contravariant geometry\index{contravariant geometry} in Poisson geometry is to replace the tangent
bundle $TM$ of a Poisson manifold $(M,\pi)$ by the cotangent bundle $T^*M$. The two are related by the bundle map
$\pi^{\sharp}$. The main structure that makes everything work is the Lie bracket $[\cdot,\cdot]_{\pi}$ on
$\Omega^1(M)$, which is the contravariant analogue of the Lie bracket on vector fields
\[[\alpha,\beta]_{\pi}:=L_{\pi^{\sharp}\alpha}\beta-L_{\pi^{\sharp}\beta}\alpha- d\pi(\alpha,\beta).\]
The Lie bracket $[\cdot,\cdot]_{\pi}$ has the following properties:
\begin{itemize}
\item the map $\pi^{\sharp}$ is a Lie algebra map
\begin{equation}\label{EQ_bracket_relation}
\pi^{\sharp}([\alpha,\beta]_{\pi})=[\pi^{\sharp}(\alpha),\pi^{\sharp}(\beta)];
\end{equation}
\item the de Rham differential is a Lie algebra map
\[d\{f, g\}=[df, dg]_{\pi};\]
\item it satisfies the Leibniz identity
\[ [\alpha, f\beta]_{\pi}= f[\alpha, \beta]_{\pi}+ L_{\pi^{\sharp}\alpha}(f) \beta.\]
\end{itemize}
In other words $(T^*M, [\cdot, \cdot]_{\pi}, \pi^{\sharp})$ is a Lie algebroid, called \textbf{the cotangent Lie
algebroid}\index{cotangent Lie algebroid} of $(M,\pi)$, and contravariant geometry is the geometry associated to this
Lie algebroid (this notion will be discussed in chapter \ref{CHLieAlgLieGroupoids}).

Here are the contravariant versions of some usual notions (see \cite{CrFe2, Fernandes}).

\vspace*{.1in} A \textbf{contravariant connection}\index{contravariant connection} on a vector bundle $E$ over
$(M,\pi)$ is a bilinear map
\[\nabla: \Omega^1(M)\times \Gamma(E)\rmap \Gamma(E), \ \ (\alpha, s)\mapsto \nabla_{\alpha}(s)\]
satisfying
\[ \nabla_{f\alpha}(s)= f\nabla_{\alpha}(s), \ \ \nabla_{\alpha}(fs)= f\nabla_{\alpha}(s)+ L_{\pi^{\sharp}\alpha}(f) s.\]
The standard operations with connections (duals, tensor products, etc.) have obvious contravariant versions. Note also that any classical
connection $\nabla$ on $E$ induces a contravariant connection on $E$
\begin{equation}\label{EQ_class_conn_cont_conn}
 \nabla_{\alpha}= \nabla_{\pi^{\sharp}\alpha}.
\end{equation}

A \textbf{cotangent path}\index{cotangent path} in $(M,\pi)$ is a path $\gamma: [0,1]\to M$ with a ``contravariant
speed'', that is a map $a:[0,1]\to T^*M$ sitting above $\gamma$ that satisfies
\[ \pi^{\sharp}(a(t))= \frac{d\gamma}{dt}(t).\]

Given a contravariant connection $\nabla$ on a vector bundle $E$, one has a well-defined notion of \textbf{derivative of
sections along cotangent paths}: Given a cotangent path $(a, \gamma)$ and a path $u: [0, 1]\to E$ sitting above
$\gamma$, $\nabla_{a}(u)$ is a new path in $E$ sitting above $\gamma$. Writing $u(t)= s_t(\gamma(t))$ for some time
dependent section $s_t$ of $E$,
\[\nabla_{a}(u)= \nabla_{a}(s_t)(x)+ \frac{d  s_t}{dt}(x), \ \ \ \textrm{at}\ x= \gamma(t).\]

Given a contravariant connection $\nabla$ on $T^*M$, the \textbf{contravariant torsion}\index{contravariant torsion}
of $\nabla$ is the tensor $T_{\nabla}$ defined by
\[ T_{\nabla}(\alpha, \beta)= \nabla_{\alpha}(\beta)- \nabla_{\beta}(\alpha)- [\alpha, \beta]_{\pi}.\]

Given a metric $g$ on $T^*M$, one has an associated \textbf{contravariant Levi-Civita
connection}\index{contravariant Levi-Civita connection}, which is the unique contravariant metric connection
$\nabla^{g}$ on $T^*M$ whose contravariant torsion vanishes:
\[g(\nabla^g_{\alpha}\beta,\gamma)+g(\beta,\nabla^g_{\alpha}\gamma)=L_{\pi^\sharp\alpha}g(\beta,\gamma),\ \ T_{\nabla^g}=0.\]
The corresponding \textbf{contravariant geodesics}\index{contravariant geodesics} are defined as the (cotangent)
paths $a$ satisfying $\nabla_{a}a= 0$. They are the integral curves of a vector field $\mathcal{V}_{\pi}^{g}$ on
$T^*M$, called the \textbf{contravariant geodesic vector field}\index{contravariant geodesic vector field}. In local
coordinates $(x, y)$, where $x$ are the coordinates on $M$ and $y$ the coordinates on the fibers,
\begin{equation}\label{EQ_local_spray}
\mathcal{V}_{\pi}^g(x, y)= \sum_{i, j} \pi_{i, j}(x)y_i \frac{\partial}{\partial x_j}- \sum_{i,j,k} \Gamma_{i, j}^{k}(x) y_iy_j \frac{\partial}{\partial y_k},
\end{equation}
where $\Gamma_{i, j}^k(x)$ are the coefficients in
\[\nabla^g_{dx_i}(dx_j)=\sum_k \Gamma_{i,j}^{k}(x) dx_k.\]
Geodesics and the geodesic vector field are actually defined for any contravariant connection $\nabla$ on $T^*M$, not necessarily of metric type
(for example, the connection from (\ref{EQ_class_conn_cont_conn})).

The geodesic vector is an example of a Poisson spray.
\begin{definition} A \textbf{Poisson spray}\index{Poisson spray} or \textbf{contravariant spray}\index{contravariant spray} on a Poisson manifold $(M,\pi)$ is a vector field $\mathcal{V}_{\pi}$ on $T^*M$
satisfying:
\begin{enumerate}
\item[(1)] $(dp)_{\xi}(\mathcal{V}_{\pi, \xi})= \pi^{\sharp}(\xi)$ for all $\xi\in T^*M$,
\item[(2)] $\mu_{t}^{*}(\mathcal{V}_{\pi}) = t\mathcal{V}_{\pi}$ for all $t> 0$,
\end{enumerate}
where $p:T^*M\to M$ is the canonical projection and $\mu_t: T^*M\to T^*M$ is the fiberwise multiplication by
$t>0$.
\end{definition}

Condition (1) means that the integral curves of $\mathcal{V}_{\pi}$ are cotangent paths, it also appears in
\cite{WeinLagrmech} under the name ``second order differential equation''. Actually, this definition is equivalent to
the local expression (\ref{EQ_local_spray}).

The discussion above implies:
\begin{corollary}
For every Poisson manifold $(M,\pi)$, contravariant sprays exist. Any contravariant geodesic vector field is a contravariant spray.
\end{corollary}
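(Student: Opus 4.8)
The plan is to deduce both assertions from the local formula (\ref{EQ_local_spray}) together with the observation, already recorded in the text, that this local expression is equivalent to the intrinsic definition of a Poisson spray.

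For existence, I would note that $T^*M$ carries a linear connection — take any classical connection $\nabla$ on $T^*M$, which exists by a partition-of-unity argument — and that by (\ref{EQ_class_conn_cont_conn}) this induces a contravariant connection $\nabla_{\alpha}:=\nabla_{\pi^{\sharp}\alpha}$ on $T^*M$. (Equivalently, one may use the contravariant Levi-Civita connection $\nabla^{g}$ associated to a Riemannian metric $g$ on $T^*M$.) As explained above, any contravariant connection on $T^*M$ has an associated contravariant geodesic vector field, so such vector fields exist; hence it suffices to show that every contravariant geodesic vector field is a Poisson spray.

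For the latter, I would write the geodesic vector field in local coordinates $(x,y)$ in the form (\ref{EQ_local_spray}), $\mathcal{V}_{\pi}(x,y)=\sum_{i,j}\pi_{i,j}(x)y_i\,\partial/\partial x_j-\sum_{i,j,k}\Gamma_{i,j}^{k}(x)y_iy_j\,\partial/\partial y_k$, where now $\Gamma_{i,j}^{k}$ are the Christoffel symbols of the contravariant connection in question; this is a direct unwinding of the geodesic equation $\nabla_{a}a=0$ using the formula for the derivative of sections along cotangent paths. Condition $(1)$ is then immediate, since $(dp)_{\xi}$ annihilates the vertical terms and sends the horizontal part to $\sum_{i,j}\pi_{i,j}(x)y_i\,\partial/\partial x_j=\pi^{\sharp}(\xi)$ for $\xi=\sum_i y_i\,dx_i$. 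For condition $(2)$, observe that $\mu_t(x,y)=(x,ty)$ for $t>0$ satisfies $d\mu_t(\partial/\partial x_j)=\partial/\partial x_j$ and $d\mu_t(\partial/\partial y_k)=t\,\partial/\partial y_k$; evaluating (\ref{EQ_local_spray}) at $(x,ty)$ scales the horizontal part by $t$ and the vertical part by $t^2$, and applying $(d\mu_t)^{-1}$ restores a single factor of $t$ on the vertical part, giving $\mu_t^{*}(\mathcal{V}_{\pi})=t\mathcal{V}_{\pi}$.

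There is essentially no obstacle here: the only thing requiring care is the convention for $\mu_t^{*}$ on vector fields and the matching of homogeneity degrees — degree $1$ in $y$ for the horizontal component, degree $2$ for the vertical, against the degree-$1$ rescaling of the fiber coordinates — and this makes condition $(2)$ fall out at once. Thus the corollary is a bookkeeping consequence of the preceding discussion, with the existence half resting only on the existence of linear connections on $T^*M$.
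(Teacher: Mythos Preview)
Your proposal is correct and follows essentially the same approach as the paper. The paper gives no proof at all beyond the sentence ``The discussion above implies'', relying on the already-stated fact that the definition of a Poisson spray is equivalent to the local expression (\ref{EQ_local_spray}); your argument simply makes this explicit by verifying conditions (1) and (2) directly from that local form, and handles existence via the same route (existence of a contravariant connection, hence of its geodesic vector field).
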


Recall also (cf. e.g. \cite{CrFe2}) that any classical connection $\nabla$ induces two contravariant connections, one on
$TM$ and one on $T^*M$, both denoted $\overline{\nabla}$:
\begin{equation}\label{EQ_two_conn}
\overline{\nabla}_{\alpha}(X)= \pi^{\sharp}\nabla_{X}(\alpha)+ [\pi^{\sharp}(\alpha),X], \ \overline{\nabla}_{\alpha}(\beta)= \nabla_{\pi^{\sharp}\beta}(\alpha)+ [\alpha, \beta]_{\pi}.
\end{equation}
The two are related as in the lemma below, a direct consequence of (\ref{EQ_bracket_relation}). 

\begin{lemma} For any classical connection $\nabla$,
\begin{equation}\label{EQ_conn_pisharp_rel}
\overline{\nabla}_{\alpha}(\pi^{\sharp}(\beta))=\pi^{\sharp}(\overline{\nabla}_{\alpha}(\beta)).
\end{equation}
\end{lemma}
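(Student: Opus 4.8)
The plan is to simply unwind the two definitions in (\ref{EQ_two_conn}) and then invoke the bracket relation (\ref{EQ_bracket_relation}); no further input is needed. The two contravariant connections $\overline{\nabla}$ on $TM$ and on $T^*M$ were defined precisely so as to make this compatibility hold, so the proof is a one-line substitution.

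Concretely, I would first substitute $X=\pi^{\sharp}(\beta)$ into the formula for the connection on $TM$, namely $\overline{\nabla}_{\alpha}(X)= \pi^{\sharp}\nabla_{X}(\alpha)+ [\pi^{\sharp}(\alpha),X]$, to obtain
\[
\overline{\nabla}_{\alpha}(\pi^{\sharp}(\beta))= \pi^{\sharp}\bigl(\nabla_{\pi^{\sharp}(\beta)}(\alpha)\bigr)+ [\pi^{\sharp}(\alpha),\pi^{\sharp}(\beta)].
\]
Next I would apply $\pi^{\sharp}$ to the formula for the connection on $T^*M$, namely $\overline{\nabla}_{\alpha}(\beta)= \nabla_{\pi^{\sharp}\beta}(\alpha)+ [\alpha, \beta]_{\pi}$, giving
\[
\pi^{\sharp}\bigl(\overline{\nabla}_{\alpha}(\beta)\bigr)= \pi^{\sharp}\bigl(\nabla_{\pi^{\sharp}\beta}(\alpha)\bigr)+ \pi^{\sharp}\bigl([\alpha, \beta]_{\pi}\bigr).
\]
Finally, by (\ref{EQ_bracket_relation}) the last term equals $[\pi^{\sharp}(\alpha),\pi^{\sharp}(\beta)]$, so the two right-hand sides agree and the identity (\ref{EQ_conn_pisharp_rel}) follows.

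There is no real obstacle to overcome here: the statement is tautological once the definitions are expanded, and the only non-formal ingredient, the fact that $\pi^{\sharp}$ is a morphism of Lie brackets from $(\Omega^1(M),[\cdot,\cdot]_{\pi})$ to $(\mathfrak{X}(M),[\cdot,\cdot])$, has already been recorded as (\ref{EQ_bracket_relation}). One may note that the identity is genuinely an equality of vectors at each point (not merely a tensoriality statement), but the computation above is pointwise and requires nothing more.
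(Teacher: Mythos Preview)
Your proof is correct and matches the paper's approach exactly: the paper simply states that the lemma is ``a direct consequence of (\ref{EQ_bracket_relation}),'' and you have written out precisely that direct consequence by expanding both definitions from (\ref{EQ_two_conn}) and matching terms via $\pi^{\sharp}([\alpha,\beta]_{\pi})=[\pi^{\sharp}(\alpha),\pi^{\sharp}(\beta)]$.
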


The next lemma shows that if the classical connection $\nabla$ is a torsion-free, then the two contravariant connections are in duality.

\begin{lemma}\label{Lemma_ConjConn} If $\nabla$ is a torsion-free connection, then the connections from (\ref{EQ_two_conn}) satisfy the duality
relation
\[\langle\overline{\nabla}_{\alpha}(\beta),X\rangle+\langle \beta,\overline{\nabla}_{\alpha}(X)\rangle=L_{\pi^{\sharp}\alpha}\langle \beta, X\rangle.\]
\end{lemma}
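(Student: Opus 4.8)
The plan is to prove the identity by a direct expansion: substitute the definitions from (\ref{EQ_two_conn}), namely $\overline{\nabla}_{\alpha}(\beta)=\nabla_{\pi^{\sharp}\beta}(\alpha)+[\alpha,\beta]_{\pi}$ and $\overline{\nabla}_{\alpha}(X)=\pi^{\sharp}\nabla_{X}(\alpha)+[\pi^{\sharp}\alpha,X]$, together with the Koszul bracket $[\alpha,\beta]_{\pi}=L_{\pi^{\sharp}\alpha}\beta-L_{\pi^{\sharp}\beta}\alpha-d\pi(\alpha,\beta)$, and then collect terms. Before starting I would record the two conventions that get used repeatedly: by skew-symmetry of $\pi$ one has $\langle\beta,\pi^{\sharp}\gamma\rangle=\pi(\gamma,\beta)=-\langle\gamma,\pi^{\sharp}\beta\rangle$ for one-forms, and for the function $\pi(\alpha,\beta)$ one has $\langle d\pi(\alpha,\beta),X\rangle=L_{X}(\pi(\alpha,\beta))=-L_{X}\langle\alpha,\pi^{\sharp}\beta\rangle$.

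The bookkeeping then proceeds in three cancellations. First, the Lie-derivative Leibniz rule $\langle\beta,[\pi^{\sharp}\alpha,X]\rangle=L_{\pi^{\sharp}\alpha}\langle\beta,X\rangle-\langle L_{\pi^{\sharp}\alpha}\beta,X\rangle$ produces the right-hand side $L_{\pi^{\sharp}\alpha}\langle\beta,X\rangle$, and the leftover $-\langle L_{\pi^{\sharp}\alpha}\beta,X\rangle$ cancels the $L_{\pi^{\sharp}\alpha}\beta$-contribution coming from $[\alpha,\beta]_{\pi}$. Second, the hypothesis that $\nabla$ is torsion-free, applied to the induced connection on $T^{*}M$ in the form $(L_{\pi^{\sharp}\beta}\alpha)(X)=(\nabla_{\pi^{\sharp}\beta}\alpha)(X)+\alpha(\nabla_{X}(\pi^{\sharp}\beta))$, merges the term $\langle\nabla_{\pi^{\sharp}\beta}\alpha,X\rangle$ (from $\overline{\nabla}_{\alpha}\beta$) with the remaining $L_{\pi^{\sharp}\beta}\alpha$-term from $[\alpha,\beta]_{\pi}$ into $-\langle\alpha,\nabla_{X}(\pi^{\sharp}\beta)\rangle$. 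Third, the surviving pieces $-\langle\alpha,\nabla_{X}(\pi^{\sharp}\beta)\rangle-\langle\nabla_{X}\alpha,\pi^{\sharp}\beta\rangle$ assemble, by the ordinary Leibniz rule for $\nabla_{X}=L_{X}$ acting on the function $\langle\alpha,\pi^{\sharp}\beta\rangle$, into $-L_{X}\langle\alpha,\pi^{\sharp}\beta\rangle=\langle d\pi(\alpha,\beta),X\rangle$, which cancels the last term $-d\pi(\alpha,\beta)$ from the bracket. What remains is exactly $L_{\pi^{\sharp}\alpha}\langle\beta,X\rangle$, proving the claim.

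I do not expect a genuine obstacle here; the argument is a finite manipulation with no cohomological or analytic content. The only thing that needs care is sign discipline — keeping straight $\langle\beta,\pi^{\sharp}\gamma\rangle=-\langle\gamma,\pi^{\sharp}\beta\rangle$ and $\langle\alpha,\pi^{\sharp}\beta\rangle=-\pi(\alpha,\beta)$, and the precise shape of the torsion-free identity for the induced connection on $T^{*}M$ — since these are what make the three cancellations go through mechanically. As an independent sanity check, one can instead verify the identity in a chart with constant-coefficient frames $dx_{i}$, $\partial/\partial x_{j}$, where it reduces to the symmetry $\Gamma_{ij}^{k}=\Gamma_{ji}^{k}$ of a torsion-free connection.
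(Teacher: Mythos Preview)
Your proof is correct and takes essentially the same approach as the paper: a direct expansion using the definitions of $\overline{\nabla}$ and $[\cdot,\cdot]_{\pi}$, followed by cancellation via the Leibniz rules and the torsion-free hypothesis. The paper organizes the bookkeeping slightly differently (grouping the two $\nabla$-terms together and the two bracket-terms together, then adding), but the ingredients and the logic are identical to yours.
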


\begin{proof} Using the formulas for $\overline{\nabla}$, the left hand side becomes
\begin{equation}\label{EQ_Aux0}
\langle \nabla_{\pi^{\sharp}\beta}(\alpha)+ [\alpha, \beta]_{\pi}, X\rangle+ \langle \beta, \pi^{\sharp}\nabla_{X}(\alpha)+ [\pi^{\sharp}\alpha,
X]\rangle.
\end{equation}
For the two terms involving $\nabla$ we find
\begin{align*}
\langle \nabla_{\pi^{\sharp}\beta}(\alpha)&, X\rangle + \langle \beta, \pi^{\sharp}\nabla_{X}(\alpha)\rangle = \langle \nabla_{\pi^{\sharp}\beta}(\alpha), X\rangle - \langle \nabla_{X}(\alpha), \pi^{\sharp}\beta \rangle =\\
&= L_{\pi^{\sharp}\beta}\langle \alpha, X \rangle- \langle \alpha, \nabla_{\pi^{\sharp}\beta}(X) \rangle- L_{X}\langle \alpha, \pi^{\sharp}\beta \rangle + \langle \alpha, \nabla_{X}(\pi^{\sharp}\beta)\rangle =\\
&= L_{\pi^{\sharp}\beta}\langle \alpha, X \rangle+L_{X}(\pi(\alpha,\beta))+ \langle \alpha, [X,\pi^{\sharp}\beta]\rangle,
\end{align*}
where we have used the antisymmetry of $\pi$, and then passed from $\nabla$ on $T^*M$ to its dual on $TM$, and used that $\nabla$ is
torsion-free. For the remaining two term in (\ref{EQ_Aux0}), using the definition of $[\cdot,\cdot]_{\pi}$, we find
\begin{align*}
\langle [\alpha, \beta]_{\pi}, X\rangle+&\langle \beta, [\pi^{\sharp}\alpha, X]\rangle =\\
&= \langle L_{\pi^{\sharp}\alpha}(\beta)-L_{\pi^{\sharp}\beta}(\alpha)- d\pi(\alpha, \beta), X\rangle+\langle \beta, L_{\pi^{\sharp}\alpha} X\rangle = \\
&= L_{\pi^{\sharp}\alpha}\langle \beta, X\rangle - L_{\pi^{\sharp}\beta}\langle \alpha,
X\rangle + \langle \alpha, [\pi^{\sharp}\beta, X]\rangle - L_X(\pi(\alpha,
\beta)).
\end{align*}
Adding up, the conclusion follows.
\end{proof}

This duality can be expressed also using the covariant derivatives.

\begin{lemma}\label{ConjConn2} If $\nabla$ is a torsion-free connection and $a:[0,1]\to T^*M$ a cotangent path with base path $\gamma$, then for any smooth paths $\theta$ in $T^*M$ and $v$ in $TM$, both above
$\gamma$, the following identity holds:
\[\langle\overline{\nabla}_{a}(\theta),v\rangle+\langle \theta,\overline{\nabla}_{a}(v)\rangle=\frac{d}{dt}\langle \theta, v\rangle.\]
\end{lemma}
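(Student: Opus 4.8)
The plan is to reduce this to Lemma~\ref{Lemma_ConjConn} via the formula for the covariant derivative of sections along cotangent paths. The statement is local and pointwise in $t$, so fix $t_0$, work on a neighbourhood of $\gamma(t_0)$, and choose a smooth time-dependent $1$-form $\beta_t$ and a smooth time-dependent vector field $X_t$ there with $\theta(t)=\beta_t(\gamma(t))$ and $v(t)=X_t(\gamma(t))$ (for instance by freezing coordinate components). Applying the formula for the derivative along a cotangent path to the two contravariant connections $\overline{\nabla}$ on $T^*M$ and on $TM$ from~(\ref{EQ_two_conn}), at $x=\gamma(t)$ one obtains
\[
\overline{\nabla}_{a}(\theta)=\overline{\nabla}_{a(t)}(\beta_t)(x)+\frac{d\beta_t}{dt}(x),\qquad
\overline{\nabla}_{a}(v)=\overline{\nabla}_{a(t)}(X_t)(x)+\frac{dX_t}{dt}(x),
\]
where $\overline{\nabla}_{a(t)}$ denotes $\overline{\nabla}_{\widetilde a}$ evaluated at $x$ for any $1$-form $\widetilde a$ with $\widetilde a(\gamma(t))=a(t)$; this is unambiguous since contravariant connections are tensorial in the $1$-form argument.

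Pairing and summing, at $x=\gamma(t)$,
\[
\langle\overline{\nabla}_{a}(\theta),v\rangle+\langle\theta,\overline{\nabla}_{a}(v)\rangle
=\langle\overline{\nabla}_{a(t)}(\beta_t),X_t\rangle+\langle\beta_t,\overline{\nabla}_{a(t)}(X_t)\rangle
+\Big\langle\frac{d\beta_t}{dt},X_t\Big\rangle+\Big\langle\beta_t,\frac{dX_t}{dt}\Big\rangle .
\]
Now Lemma~\ref{Lemma_ConjConn}, applied with $\alpha=\widetilde a$, $\beta=\beta_t$, $X=X_t$, identifies the first two terms on the right with $L_{\pi^{\sharp}\widetilde a}\langle\beta_t,X_t\rangle$ at $x$, which equals $L_{\dot\gamma(t)}\langle\beta_t,X_t\rangle$ because $a$ is a cotangent path, so $\pi^{\sharp}(a(t))=\dot\gamma(t)$.

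It then remains to recognise the resulting expression as $\frac{d}{dt}\langle\theta,v\rangle$. Setting $g_t:=\langle\beta_t,X_t\rangle\in C^{\infty}$, one has $\langle\theta(t),v(t)\rangle=g_t(\gamma(t))$, and the chain rule gives
\[
\frac{d}{dt}\langle\theta,v\rangle=\frac{dg_t}{dt}(\gamma(t))+L_{\dot\gamma(t)}g_t
=\Big\langle\frac{d\beta_t}{dt},X_t\Big\rangle+\Big\langle\beta_t,\frac{dX_t}{dt}\Big\rangle+L_{\dot\gamma(t)}\langle\beta_t,X_t\rangle ,
\]
which matches the previous line exactly, proving the identity. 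There is no genuine obstacle here: the substance is entirely contained in Lemma~\ref{Lemma_ConjConn}, and the only point requiring care is the bookkeeping between the covariant derivative of a \emph{field} (to which Lemma~\ref{Lemma_ConjConn} directly applies) and the covariant derivative \emph{along the path} (which produces the additional $d/dt$ terms), together with checking that these two contributions reassemble, via the chain rule, into $\frac{d}{dt}\langle\theta,v\rangle$.
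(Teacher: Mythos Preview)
Your proof is correct and follows essentially the same approach as the paper: extend $\theta$ and $v$ to time-dependent sections, expand $\overline{\nabla}_a$ via the definition of the derivative along a cotangent path, apply Lemma~\ref{Lemma_ConjConn} to the field terms, and reassemble everything with the chain rule using $\pi^\sharp(a(t))=\dot\gamma(t)$. The only cosmetic difference is that the paper also extends $a$ to a time-dependent $1$-form $A$, whereas you note directly that tensoriality in the $1$-form argument makes this unnecessary.
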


\begin{proof}
Choose a time dependent 1-form $A= A(t, x)$ such that $a(t)= A(t, \gamma(t))$ and similarly a time dependent 1-form $\Theta$ corresponding to
$\theta$, and a time dependent vector field $V$ corresponding to $v$. Using the definition of the derivatives $\overline{\nabla}_{a}$ along
cotangent paths and then Lemma \ref{Lemma_ConjConn}, we obtain
\begin{align*}
\langle\overline{\nabla}_{a}(\theta),v\rangle(t)+\langle \theta,\overline{\nabla}_{a}(v)\rangle(t)&= \langle\overline{\nabla}_{A}(\Theta)+\frac{d\Theta}{dt},V\rangle(t,\gamma(t))+\\
+\langle \Theta,\overline{\nabla}_{A}(V)+\frac{dV}{dt}\rangle(t,\gamma(t))&=L_{\pi^{\sharp}A}\langle\Theta,V\rangle(t,\gamma(t))+\frac{d}{dt}\langle \Theta,V\rangle(t,\gamma(t)).
\end{align*}
Since $\pi^{\sharp}A(t,\gamma(t))= \frac{d\gamma}{dt}(t)$, we obtain the identity from the statement.
\end{proof}

\section{Existence of symplectic realizations: a new approach}\label{Ontheexistence}

The content of this section was published in \cite{CrMa_symplectic}.

\subsection{Statement of Theorem 0}

Recall that a \textbf{symplectic realization} of a Poisson manifold $(M, \pi)$ is a symplectic manifold $(\Sigma,
\omega)$ together with a surjective Poisson submersion
\[ \mu: (\Sigma, \omega) \rmap (M, \pi).\]
Although the existence of symplectic realizations is a fundamental result in Poisson geometry, the known proofs are
rather involved. Originally, the local result was proven in \cite{Wein} and a gluing argument was provided in
\cite{DWC}; the same procedure appears in \cite{Kar}. The path approach to symplectic groupoids \cite{CaFe,
CrFe2} gives a different proof. Here we present a direct, global, finite dimensional proof, based on the philosophy of
contravariant geometry.\index{symplectic realization}

\begin{theoremzero} 
Let $(M, \pi)$ be a Poisson manifold and $\mathcal{V}_{\pi}$ a contravariant spray with flow $\varphi_t$. There exists
an open $\mathcal{U}\subset T^*M$ around the zero section so that
\[ \omega:= \int_{0}^{1} (\varphi_t)^*\omega_{\textrm{can}} dt\]
is a symplectic structure on $\mathcal{U}$ and the canonical projection $p: (\mathcal{U}, \omega)\to (M, \pi)$ is a symplectic realization.
\end{theoremzero}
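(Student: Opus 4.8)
\noindent The plan is to check separately that $\omega$ is closed, that $\omega$ is nondegenerate near the zero section, and that $p$ is a Poisson submersion; the first two points are routine, and the third is the real content.

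\emph{Closedness.} Let $\theta_{\mathrm{can}}$ be the tautological $1$-form, so $\omega_{\mathrm{can}}=-d\theta_{\mathrm{can}}$. Then each $(\varphi_t)^*\omega_{\mathrm{can}}=-d\big((\varphi_t)^*\theta_{\mathrm{can}}\big)$ is exact, hence so is
\[\omega=-d\vartheta,\qquad\vartheta:=\int_0^1(\varphi_t)^*\theta_{\mathrm{can}}\,dt;\]
in particular $d\omega=0$ (indeed $\omega$ is exact on all of $T^*M$).

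\emph{Nondegeneracy near $0_M$.} First, $\mathcal{V}_{\pi}$ vanishes along the zero section: by (1) the horizontal component of $\mathcal{V}_{\pi,0_x}$ is $\pi^{\sharp}(0)=0$, and letting $t\to 0$ in (2) forces the vertical component to vanish as well. Hence $\varphi_t$ fixes $0_M$ pointwise, and in the canonical splitting $T_{0_x}(T^*M)\cong T_xM\oplus T^*_xM$ the linearization of $\mathcal{V}_{\pi}$ at $0_x$ is the nilpotent endomorphism $L_x(u,\eta)=(\pi^{\sharp}_x\eta,0)$ --- its horizontal part comes from differentiating (1), and its vertical part is quadratic, hence vanishes to first order. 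Thus $d(\varphi_t)_{0_x}=\exp(tL_x)=\mathrm{Id}+tL_x$ (as $L_x^2=0$), and a short computation gives, for $u_i\in T_xM$, $\eta_i\in T^*_xM$,
\[\omega_{0_x}\big((u_1,\eta_1),(u_2,\eta_2)\big)=\langle\eta_2,u_1\rangle-\langle\eta_1,u_2\rangle+\pi_x(\eta_1,\eta_2),\]
which is nondegenerate because the pairing of the $u$'s with the $\eta$'s is perfect. Nondegeneracy being an open condition, $\omega$ is symplectic on some open $\mathcal{U}\supset 0_M$.

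\emph{$p$ is a symplectic realization.} This is the main obstacle. Since $p$ is the bundle projection $T^*M\to M$ it is a surjective submersion (its domain $\mathcal{U}$ contains $0_M$), so it remains to show it is a Poisson map, i.e.\ that $dp_\xi\circ\omega_\xi^{-1}\circ(dp_\xi)^*=\pi^{\sharp}_{p(\xi)}$ for every $\xi\in\mathcal{U}$; equivalently, that for each $f\in C^{\infty}(M)$ the $\omega$-Hamiltonian vector field $X$ of $p^*f$ satisfies $dp(X)=\pi^{\sharp}(df)\circ p$. The computational levers are: (i) $\iota_{\mathcal{V}_{\pi}}\theta_{\mathrm{can}}=0$, since $(\theta_{\mathrm{can}})_\xi(\mathcal{V}_{\pi,\xi})=\xi\big(dp(\mathcal{V}_{\pi,\xi})\big)=\pi(\xi,\xi)=0$; combined with Cartan's formula and $(\varphi_t)_*\mathcal{V}_{\pi}=\mathcal{V}_{\pi}$ this yields $\iota_{\mathcal{V}_{\pi}}(\varphi_t)^*\omega_{\mathrm{can}}=-\tfrac{d}{dt}(\varphi_t)^*\theta_{\mathrm{can}}$, hence $\iota_{\mathcal{V}_{\pi}}\omega=\theta_{\mathrm{can}}-(\varphi_1)^*\theta_{\mathrm{can}}$; (ii) fiberwise translation $\sigma^f_s\colon\xi\mapsto\xi+s\,(df)_{p(\xi)}$ is the $\omega_{\mathrm{can}}$-Hamiltonian flow of $p^*f$, satisfies $p\circ\sigma^f_s=p$, preserves $\omega_{\mathrm{can}}$, and obeys $(\sigma^f_s)^*\theta_{\mathrm{can}}=\theta_{\mathrm{can}}+s\,p^*df$; (iii) the spray homogeneity (2) gives $\varphi_t\circ\mu_\lambda=\mu_\lambda\circ\varphi_{\lambda t}$, and hence $(\mu_\lambda)^*(\varphi_t)^*\omega_{\mathrm{can}}=\lambda\,(\varphi_{\lambda t})^*\omega_{\mathrm{can}}$. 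The heart of the argument is then to track how the spray flow $\varphi_t$ intertwines with the fiber translations $\sigma^f_s$: the discrepancy turns out to be a cotangent-lift type vector field attached to the Hamiltonian flow of $f$ on $(M,\pi)$, which is $\omega_{\mathrm{can}}$-Hamiltonian and contracts trivially against $\theta_{\mathrm{can}}$ --- exactly the input needed to identify $X$ and conclude $dp(X)=\pi^{\sharp}(df)\circ p$. I expect this last step --- pinning down the Hamiltonian vector field of $p^*f$ for the \emph{averaged} form $\omega$ and controlling the interplay of $\varphi_t$, $\mu_\lambda$ and $\sigma^f_s$ --- to absorb essentially all of the difficulty; once it is in hand, $p\colon(\mathcal{U},\omega)\to(M,\pi)$ is the desired symplectic realization.
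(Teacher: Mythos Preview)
Your treatment of closedness and of nondegeneracy along the zero section is correct and matches the paper's: the paper also computes $\omega_{0_x}$ via the linearized flow and obtains exactly your formula $\langle\eta_2,u_1\rangle-\langle\eta_1,u_2\rangle+\pi(\eta_1,\eta_2)$.

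The gap is in the third part, which --- as you yourself anticipate --- is the whole theorem. Your identities (i)--(iii) are correct (in particular $\iota_{\mathcal{V}_\pi}\omega=\theta_{\mathrm{can}}-(\varphi_1)^*\theta_{\mathrm{can}}$ is a nice observation), but the plan to ``track how $\varphi_t$ intertwines with $\sigma^f_s$'' does not go through in any evident way: the spray flow and the fibre translations do not commute, and their commutator is not a cotangent-lift vector field in any sense I can see. In fact the paper warns explicitly that ``the computation is more subtle than expected,'' and points out that your ingredients (closedness, nondegeneracy, the spray axioms) make sense for \emph{any} bivector $\pi$, while the Poisson-map conclusion can only hold when $[\pi,\pi]=0$; so the argument must use the Jacobi identity somewhere, and your sketch never does.

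The paper's route is quite different from what you outline. It fixes an auxiliary torsion-free connection $\nabla$ on $M$ and forms the associated contravariant connections $\overline\nabla$ on $TM$ and $T^*M$. Using the splitting $T_\xi(T^*M)\cong T_{p(\xi)}M\oplus T^*_{p(\xi)}M$, a tangent vector $v_0\in T_\xi(T^*M)$ gives, under push-forward by $\varphi_t$, a pair of paths $(\overline v_t,\theta_{v,t})$ over the base cotangent path $a(t)=\varphi_t(\xi)$. The key computational lemma (Lemma~\ref{Lemma_simple_frmla}) is an \emph{explicit closed formula} for $\omega(v_0,w_0)$ at an arbitrary $\xi$:
\[
\omega(v_0,w_0)=\Big[\langle\widetilde\theta_w,\overline v\rangle-\langle\widetilde\theta_v,\overline w\rangle-\pi(\widetilde\theta_v,\widetilde\theta_w)\Big]_0^1,
\]
where $\widetilde\theta_v$ solves the linear ODE $\overline\nabla_a\widetilde\theta_v=\theta_v$ along $a$. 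The Jacobi identity enters precisely in establishing this formula, via the compatibility $\pi^\sharp\circ\overline\nabla_a=\overline\nabla_a\circ\pi^\sharp$, which fails for non-Poisson bivectors. With this formula in hand, the Poisson-map property drops out in a few lines by choosing initial/final conditions on the $\widetilde\theta$'s, and as a by-product one finds that the $\omega$-orthogonal to the $p$-fibres is the foliation by $(p\circ\varphi_1)$-fibres. So rather than trying to identify the Hamiltonian vector field of $p^*f$ for $\omega$, the paper evaluates $\omega$ directly --- and that is the step your proposal is missing.
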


When $M$ is an open in $\mathbb{R}^n$ and $\pi_{i, j}$ are the components of $\pi$, the simplest contravariant
spray is $\mathcal{V}_{\pi}(x, y)= \sum_{i, j} \pi_{i, j}(x) y_i\frac{\partial}{\partial x_j}$, where $x$ are the
coordinates on $M$ and $(x, y)$ the induced coordinates on $T^*M$. It is not difficult to see that the resulting
$\omega$ coincides with the one constructed by Weinstein \cite{Wein}.

One may expect that the proof is ``just a computation''. Although that is true in principle, the computation is more
subtle than expected. In particular, we will make use of the principle of ``contravariant geometry'' which is intrinsic to
Poisson geometry. The fact that the proof cannot be so trivial and hides some interesting geometry was already
observed in the local case by Weinstein \cite{Wein}: the notion of contravariant spray, its existence, and the formula
for $\omega$ (giving a symplectic form on an small enough $\mathcal{U}$) all make sense for any bivector $\pi$, even
if it is not Poisson. But the fact that the push-down of (the inverse of) $\omega$ is $\pi$ can hold only for Poisson
bivectors. Nowadays, with all the insight we have gained from symplectic groupoids, we can say that we have the full
geometric understanding of this theorem; in particular, it can be derived from the path-approach to symplectic
groupoids of \cite{CaFe} and the resulting construction of local symplectic groupoids \cite{CrFe2} (see Remark
\ref{remark_symplectic_realizations_is_a_consequence}).

\subsection{The first steps of the proof of Theorem 0}

We will first look at $\omega$ at zeros $0_x\in T^{*}_{x}M$. At such points one has a canonical isomorphism
$T_{0_x}(T^*M)\cong T_xM\oplus T_{x}^{*}M$ denoted by $v\mapsto (\overline{v}, \theta_v)$, and the canonical
symplectic form is
\begin{equation}\label{omega-can-0}
\omega_{\textrm{can}, 0_x}(v, w)= \langle \theta_{w},
\overline{v}\rangle -\langle \theta_{v}, \overline{w}\rangle.
\end{equation}
From the properties of $\mathcal{V}_{\pi}$, it follows that $\varphi_t(0_x)= 0_x$ for all $t$ and all $x$, hence
$\varphi_{t}$ is well-defined on a neighborhood of the zero section, for all $t\in [0, 1]$ (see the Tube Lemma
\ref{TubeLemma}). The properties $\mathcal{V}_{\pi}$ also imply that
\[ (d\varphi_t)_{0_x} : T_{0_x}(T^*M)\rmap T_{0_x}(T^*M) \]
is given in components by:
\[ (\overline{v}, \theta_v)\mapsto (\overline{v}+ t\pi^{\sharp}\theta_{v}, \theta_{v}) .\]
From the definition of $\omega$ and the previous formula for $\omega_{\textrm{can}}$, we deduce that
\begin{equation}\label{omega-0}
\omega_{0_x}(v, w)= \langle \theta_{w}, \overline{v}\rangle- \langle
\theta_{v}, \overline{w}\rangle+ \pi(\theta_{v}, \theta_{w}).
\end{equation}
This shows that $\omega$ is nondegenerate at all zeros $0_x\in T^*M$. Hence, we can find a neighborhood of the zero
section $\mathcal{U}\subset T^*M$ such that $\varphi_t$ is defined on $\mathcal{U}$ for all $t\in [0, 1]$ and
$\omega|_{\mathcal{U}}$ is nondegenerate (again, use the Tube Lemma \ref{TubeLemma}). Fixing such an
$\mathcal{U}$, we still have to show that the map
\[ (dp)_{\xi}: T_{\xi}\mathcal{U}\rmap T_{p(\xi)}M \]
sends the bivector associated to $\omega$ to $\pi$. The fact that this holds at $\xi= 0_x$ follows immediately from
(\ref{omega-0}), so our task is to show that it holds at all $\xi\in \mathcal{U}$. Recall the following result (see e.g.
Theorem 1.9.7 \cite{DZ}).
\begin{theorem}[Libermann's Theorem]\label{Libermann's Theorem}\index{Libermann's Theorem}
Let $(\mathcal{U}, \omega)$ be a symplectic manifold and let $p:\mathcal{U}\to M$ be a surjective submersion with
connected fibers. The bivector $\omega^{-1}$ can be pushed down to a bivector on $M$ precisely when the
symplectic orthogonal of $T\mathcal{F}(p)=\ker(dp)$, denoted by $T\mathcal{F}(p)^{\perp}\subset T\mathcal{U}$, is
involutive.
\end{theorem}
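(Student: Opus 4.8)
The plan is to trade the distributional condition for a bracket condition on basic functions, in the spirit of the functional description of Poisson maps. Write $\pi:=\omega^{-1}$ for the Poisson bivector on $\mathcal{U}$ and set $\mathcal{D}:=T\mathcal{F}(p)^{\perp}$. The first step is to describe $\mathcal{D}$ algebraically: by the standard formula for a symplectic orthogonal and the identity $(\ker dp)^{\circ}=dp^{*}(T^{*}M)$ (valid since $p$ is a submersion), one has
\[\mathcal{D}=\pi^{\sharp}\big((\ker dp)^{\circ}\big)=\pi^{\sharp}\big(dp^{*}(T^{*}M)\big).\]
In particular $\mathcal{D}$ is a constant rank (equal to $\dim M$) distribution, locally framed by the Hamiltonian vector fields $H_{p^{*}f_{i}}$ for $f_{i}$ a system of coordinates on $M$; and, applying $\omega^{\sharp}$, a vector field $X$ is a section of $\mathcal{D}$ at a point $x$ if and only if $\iota_{X}\omega$ annihilates $\ker dp_{x}$, i.e.\ $\iota_{X}\omega$ is the pullback of a covector from $M$.

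Next I reformulate both sides of the desired equivalence in terms of the functions $\{p^{*}f,p^{*}g\}$ with $f,g\in C^{\infty}(M)$ locally defined. For the left-hand side, $\pi$ pushes down to a bivector on $M$ if and only if $dp_{x}(\pi_{x})\in\Lambda^{2}T_{p(x)}M$ is independent of the choice of $x$ in each fibre of $p$; pairing with $df\wedge dg$ this value is $\{p^{*}f,p^{*}g\}(x)$, so projectability amounts to $\{p^{*}f,p^{*}g\}$ being constant along the connected fibres of $p$ for all local $f,g$ (and then, $p$ being a surjective submersion, the pushed-down bivector is automatically smooth). For the right-hand side, since $\mathcal{D}$ is generated as a $C^{\infty}(\mathcal{U})$-module by the vector fields $H_{p^{*}f}$, and the Leibniz rule for the Lie bracket propagates membership in $\mathcal{D}$ from such a generating family to all sections, involutivity of $\mathcal{D}$ is equivalent to $[H_{p^{*}f},H_{p^{*}g}]$ being a section of $\mathcal{D}$ for all local $f,g$. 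Now $[H_{p^{*}f},H_{p^{*}g}]=H_{\{p^{*}f,p^{*}g\}}=\pi^{\sharp}\big(d\{p^{*}f,p^{*}g\}\big)$, and as $\pi^{\sharp}$ is an isomorphism carrying $(\ker dp)^{\circ}$ onto $\mathcal{D}$, this vector field lies in $\mathcal{D}$ at $x$ exactly when $d\{p^{*}f,p^{*}g\}_{x}\in(\ker dp_{x})^{\circ}$, i.e.\ when $\{p^{*}f,p^{*}g\}$ is constant along the fibre through $x$. Comparing the two reformulations gives the theorem.

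The computations are all routine; the one place to be careful is the passage between the distributional and functional pictures — namely that it suffices to test involutivity of $\mathcal{D}$ on the generating family $\{H_{p^{*}f}\}$ rather than on arbitrary sections, and that a function which is constant along the connected fibres of the surjective submersion $p$ genuinely descends to a smooth function on $M$, so that the candidate pushforward $p_{*}\pi$ is well defined. These are exactly the points at which the hypotheses ``surjective submersion'' and ``connected fibres'' are used, so I would record them explicitly rather than leaving them implicit.
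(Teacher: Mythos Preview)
The paper does not actually prove Libermann's Theorem; it simply quotes it as a known result with a reference to Dufour--Zung (Theorem 1.9.7 in \cite{DZ}). So there is no ``paper's own proof'' to compare against.

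That said, your argument is correct and is the standard one. The key identifications $\mathcal{D}=\pi^{\sharp}((\ker dp)^{\circ})$, the local generation of $\mathcal{D}$ by $H_{p^{*}f}$, and the equivalence between involutivity and fibrewise constancy of $\{p^{*}f,p^{*}g\}$ via $[H_{p^{*}f},H_{p^{*}g}]=H_{\{p^{*}f,p^{*}g\}}$ are all sound, and you correctly flag the two places where the hypotheses ``surjective submersion'' and ``connected fibres'' enter. One very minor remark: when you say involutivity can be tested on the generating family, it might be worth noting explicitly that the $H_{p^{*}f_i}$ form a local \emph{frame} for $\mathcal{D}$ (not just a spanning set), so that arbitrary sections are $C^{\infty}$-combinations of them and the Leibniz argument goes through cleanly.
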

What happens in our case is that the foliation tangent to $T\mathcal{F}(p)^{\perp}$ is
\begin{equation}\label{EQ_symplectic_orth}
\mathcal{F}(p)^{\perp}= \mathcal{F}(p_1),
\end{equation}
where $\mathcal{F}(p_1)$ is the foliation given by the fibers of \[p_1:= p\circ \varphi_1: \mathcal{U}\rmap M.\]
However, it turns out that the ingredients needed to prove this equality can be used to show directly that $p$ is a
Poisson map, without having to appeal to Libermann's result.

\subsection{A different formula for $\omega$}

In this subsection we give another description of $\omega$. The resulting formula generalizes (\ref{omega-0}) from
zeros $0_x$ to arbitrary $\xi\in \mathcal{U}$. It will depend on a connection on $TM$ which is used in order to handle
vectors tangent to $T^*M$. Hence, from now on, we fix such a connection $\nabla$ which we assume to be torsion-free
and let $\overline{\nabla}$ denote the corresponding contravariant connections defined by (\ref{EQ_two_conn}). With
respect to $\nabla$, any tangent vector $v\in T_{\xi}(T^*M)$ is determined by its projection to $M$ and by its vertical
component
\[ \overline{v}= (dp)_{\xi}(v)\in T_{p(\xi)}M , \ \ \theta_v= v- \textrm{hor}_{\xi}(\overline{v})\in T_{p(\xi)}^{*}M .\]
Of course, when $\xi= 0_x$, these coincide with the components mentioned in the introduction. The fact that $\nabla$ is torsion-free ensures the
following generalization of the formula (\ref{omega-can-0}) for $\omega_{\textrm{can}}$ at arbitrary $\xi$.

\begin{lemma}\label{torsion-free1} If $\nabla$ is torsion-free, then for any $v, w\in T_{\xi}(T^*M)$,
\begin{equation}\label{Omega_can}
\omega_{\mathrm{can}}(v, w)=\langle \theta_w, \overline{v}\rangle-\langle\theta_v,\overline{w}\rangle.
\end{equation}
\end{lemma}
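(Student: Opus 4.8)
The plan is to compute $\omega_{\mathrm{can}}$ from the tautological one-form $\lambda$ on $T^*M$, recalling $\lambda_\xi(v)=\langle\xi,(dp)_\xi(v)\rangle=\langle\xi,\overline v\rangle$ and $\omega_{\mathrm{can}}=-d\lambda$ (with the sign of $\lambda$ normalised so that this is consistent with (\ref{omega-can-0})). Since $d\lambda$ is a genuine $2$-form, it suffices to evaluate it on any vector fields extending $v$ and $w$, and I would pick extensions adapted to $\nabla$: write $v=X^h_\xi+\mu^v_\xi$ and $w=Y^h_\xi+\nu^v_\xi$, where $X,Y\in\mathfrak{X}(M)$ satisfy $X(p(\xi))=\overline v$, $Y(p(\xi))=\overline w$, the fields $X^h,Y^h$ are their horizontal lifts for the connection induced by $\nabla$ on $T^*M$, and $\mu,\nu\in\Omega^1(M)$ satisfy $\mu(p(\xi))=\theta_v$, $\nu(p(\xi))=\theta_w$, with $\mu^v,\nu^v$ the fibrewise constant vertical lifts they determine. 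By bilinearity, $d\lambda(v,w)$ at $\xi$ splits into four terms of Cartan type $V\lambda(W)-W\lambda(V)-\lambda([V,W])$.

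The bookkeeping rests on two observations: $\lambda(X^h)$ is the fibrewise linear ``momentum'' function $f_X\colon\xi\mapsto\langle\xi,X(p(\xi))\rangle$, while $\lambda(\mu^v)=0$ since $\mu^v$ is vertical. Hence the vertical--vertical term vanishes, because $[\mu^v,\nu^v]=0$ (both fields are $p$-related to $0$ and are constant along each fibre). Each mixed term reduces to one derivative: $d\lambda(X^h,\nu^v)=-\nu^v(f_X)$, and as $f_X$ is linear on fibres one gets $\nu^v(f_X)=\langle\nu,X\rangle\circ p$, so at $\xi$ this contributes $-\langle\theta_w,\overline v\rangle$; symmetrically $d\lambda(\mu^v,Y^h)$ contributes $+\langle\theta_v,\overline w\rangle$ — here one uses that $[X^h,\nu^v]$ is vertical, being $p$-related to $[X,0]=0$, so $\lambda$ kills it. Finally the horizontal--horizontal term is $d\lambda(X^h,Y^h)=X^h(f_Y)-Y^h(f_X)-\lambda([X^h,Y^h])$; differentiating $f_Y$ along the parallel transport of $\xi$, and using that a parallel covector has zero covariant derivative, gives $X^h(f_Y)(\xi)=\langle\xi,\nabla_XY\rangle$, while $[X^h,Y^h]$ is $p$-related to $[X,Y]$ so $\lambda([X^h,Y^h])(\xi)=\langle\xi,[X,Y]\rangle$; thus this term equals $\langle\xi,\nabla_XY-\nabla_YX-[X,Y]\rangle=\langle\xi,T_\nabla(\overline v,\overline w)\rangle$, which is manifestly tensorial. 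Summing, $\omega_{\mathrm{can}}(v,w)=-d\lambda(v,w)=\langle\theta_w,\overline v\rangle-\langle\theta_v,\overline w\rangle-\langle\xi,T_\nabla(\overline v,\overline w)\rangle$, which collapses to (\ref{Omega_can}) exactly when $\nabla$ is torsion-free; this also explains why (\ref{omega-can-0}) holds at zeros for any $\nabla$, since there the torsion term is paired with $\xi=0_x$.

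The only genuine obstacle is justifying the connection-theoretic facts used above — that $[X^h,\nu^v]$ and $[\mu^v,\nu^v]$ are vertical, and that $X^h(f_Y)=\langle\cdot,\nabla_XY\rangle\circ p$ — which needs care because vertical tangent vectors at different points of one fibre get identified with elements of the same $T^*_xM$. As a cross-check, or as a shorter alternative proof, I would redo everything in canonical coordinates $(x^i,p_i)$ on $T^*M$: with $\mathrm{hor}_\xi(\partial_{x^i})=\partial_{x^i}+\Gamma^j_{ik}(x)p_j\,\partial_{p_k}$ one reads off the components of $\theta_v$, and a direct computation from $\omega_{\mathrm{can}}=dx^i\wedge dp_i$ shows that $\langle\theta_w,\overline v\rangle-\langle\theta_v,\overline w\rangle-\omega_{\mathrm{can}}(v,w)$ is a term of the form $a^kc^ip_j(\Gamma^j_{ki}-\Gamma^j_{ik})$, vanishing precisely by symmetry of the Christoffel symbols, i.e.\ by torsion-freeness of $\nabla$.
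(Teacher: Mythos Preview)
Your proof is correct, but it takes a considerably more hands-on route than the paper. The paper's proof is a single line: since $\nabla$ is torsion-free, the horizontal distribution $H\subset T(T^*M)$ is Lagrangian for $\omega_{\mathrm{can}}$, and the formula follows immediately (the vertical distribution is always Lagrangian, and the cross-pairing between horizontal and vertical vectors is the standard one). Your Cartan-formula computation is essentially a direct proof of this Lagrangian property --- your horizontal--horizontal term $\langle\xi,T_\nabla(\overline v,\overline w)\rangle$ is exactly the obstruction to $H$ being Lagrangian --- together with the explicit identification of the mixed terms.

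What your approach buys is the full formula $\omega_{\mathrm{can}}(v,w)=\langle\theta_w,\overline v\rangle-\langle\theta_v,\overline w\rangle-\langle\xi,T_\nabla(\overline v,\overline w)\rangle$ valid for any connection, which transparently explains both the role of torsion-freeness and why (\ref{omega-can-0}) holds at the zero section regardless of $\nabla$. The paper's approach buys brevity by invoking the Lagrangian fact as known. Your coordinate cross-check is a reasonable sanity check, though once the invariant computation is done carefully it is not strictly needed.
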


\begin{proof}  Since $\nabla$ is torsion free, the associated horizontal distribution $H\subset T(T^*M)$ is Lagrangian with respect to $\omega_{\textrm{can}}$.
This implies the formula.
\end{proof}

As above, let $\mathcal{U}\subset T^*M$ be an open around the zero section on which $\varphi_t$ is defined up to
$t=1$ and $\omega$ is nondegenerate. To establish the generalization of (\ref{omega-0}), we introduce some notation.
Fix $\xi\in\mathcal{U}$ and consider
\[ a: [0, 1]\rmap \mathcal{U}, \ a(t)= \varphi_t(\xi),\]
which, by the properties of $\mathcal{V}_{\pi}$, is a cotangent path. We denote by $\gamma= p\circ a$ its base path.
The push forward by $\varphi_t$ of a tangent vector $v_0\in T_{\xi}\mathcal{U}$ gives a smooth path
\begin{equation}\label{v-t}
t\mapsto v_t:= (\varphi_t)_*(v_0)\in T_{a(t)}\mathcal{U}.
\end{equation}
The components of $v$ with respect to $\nabla$ are two paths above $\gamma$, one in $TM$ and one in $T^*M$, denoted by $\overline{v}$ and
$\theta_v$
\[ t\mapsto \overline{v}_t\in T_{\gamma(t)}M, \  \ t\mapsto \theta_{v_t}\in T_{\gamma(t)}^{*}M .\]
They are related by:

\begin{lemma}\label{lemma_cotangent_tangent}
For $a$, $\overline{v}$ and $\theta_v$ as above, we have that  $\overline{\nabla}_a
\overline{v}=\pi^{\sharp}\theta_{v}$.
\end{lemma}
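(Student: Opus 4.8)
The identity is local along $\gamma$, so I would fix coordinates $(x_1,\dots,x_n)$ on $M$ and use the induced coordinates $(x,y)$ on $T^*M$. Write $\pi=\frac{1}{2}\sum_{i,j}\pi_{ij}(x)\,\partial_{x_i}\wedge\partial_{x_j}$ and let $N^k_{ij}$ be the Christoffel symbols of the fixed torsion-free connection $\nabla$ on $TM$; then the induced $\nabla$-horizontal lift to $T^*M$ of $\partial_{x_j}$ at a point $(x,y)$ is $\partial_{x_j}+\sum_{i,k}N^i_{jk}(x)\,y_i\,\partial_{y_k}$, so a tangent vector $v=\sum_j v^{x_j}\partial_{x_j}+\sum_k v^{y_k}\partial_{y_k}$ at $(x,y)$ has components $\overline{v}=\sum_j v^{x_j}\partial_{x_j}\in T_xM$ and
\[\theta_v=\sum_k\Big(v^{y_k}-\sum_{i,j}N^i_{jk}(x)\,y_i\,v^{x_j}\Big)dx^k\in T^*_xM.\]

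First I would write down the equation satisfied by $v_t=(\varphi_t)_*(v_0)$. Being a flow-pushed vector, $v_t$ solves the Jacobi (variational) equation $\dot v^c=\sum_d(\partial_d\mathcal{V}_{\pi}^c)\,v^d$ along the orbit $a$; feeding in the local form (\ref{EQ_local_spray}) of $\mathcal{V}_{\pi}$, the base components obey
\[\dot v^{x_j}=\sum_{i,l}(\partial_{x_l}\pi_{ij})(x)\,y_i\,v^{x_l}+\sum_l\pi_{lj}(x)\,v^{y_l},\]
while $\dot x_j=\sum_i\pi_{ij}(x)y_i$ records the cotangent-path condition $\pi^{\sharp}(a(t))=\dot\gamma(t)$. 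It is worth observing already here that the quadratic spray coefficients $\Gamma^k_{ij}$ of $\mathcal{V}_{\pi}$ enter only the equation for $\dot v^{y_k}$, which is never used: the proof needs only that $a$ is a cotangent path and the linearisation of the $x$-equation.

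Next I would compute $\overline{\nabla}_a\overline{v}$ from the definition of the derivative along a cotangent path together with formula (\ref{EQ_two_conn}). Extending $\overline{v}_t$ momentarily to a field with constant coefficients, $\overline{\nabla}_a\overline{v}=\frac{d}{dt}\overline{v}+\overline{\nabla}_{a(t)}(\overline{v})$, and a short computation from (\ref{EQ_two_conn}) gives $\overline{\nabla}_{dx^i}(\partial_{x_j})=-\sum_l\big(\sum_k N^i_{jk}\pi_{kl}+\partial_{x_j}\pi_{il}\big)\partial_{x_l}$, hence $\overline{\nabla}_{a(t)}(\overline{v})=-\sum_{i,j,l}y_i v^{x_j}\big(\sum_k N^i_{jk}\pi_{kl}+\partial_{x_j}\pi_{il}\big)\partial_{x_l}$. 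Substituting the variational equation, the two families of terms proportional to $\partial_x\pi$ cancel identically, leaving
\[\overline{\nabla}_a\overline{v}=\sum_l\Big(\sum_k\pi_{kl}(x)\big(v^{y_k}-\sum_{i,j}N^i_{jk}(x)\,y_i\,v^{x_j}\big)\Big)\partial_{x_l},\]
which, by $\pi^{\sharp}(dx^k)=\sum_l\pi_{kl}\partial_{x_l}$, is exactly $\pi^{\sharp}\theta_v$.

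I do not expect a genuine obstacle here: the content is the familiar principle that the projection of a flow-pushed vector is a Jacobi field whose connection-vertical component is its covariant derivative, now in the contravariant incarnation — the $\pi^{\sharp}$ appears because the fibre coordinates of $T^*M$ feed into the spray dynamics through $\pi^{\sharp}$. The one point that requires care is the index bookkeeping in the last step, specifically the cancellation of the $\partial_x\pi$ terms between the Christoffel symbols of $\overline{\nabla}$ and the linearised spray equation. Should a coordinate-free presentation be preferred, one can instead pair both sides against an arbitrary covector path along $\gamma$ and invoke the duality of Lemma \ref{ConjConn2}, but the coordinate computation seems the most transparent.
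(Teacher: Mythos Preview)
Your proof is correct. The local computation checks out: the variational equation for $v^{x_j}$, the coordinate expression for $\theta_v$, and the formula for $\overline{\nabla}_{dx^i}\partial_{x_j}$ are all right, and the cancellation of the $\partial_x\pi$ terms goes through as you say.

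The paper organizes the same computation differently. Rather than writing the variational equation directly, it introduces the Lie derivative $L_{\mathcal{V}_\pi}(V)$ of a path of tangent vectors along the spray and observes that $L_{\mathcal{V}_\pi}(v)=0$ because $v$ is flow-pushed. It then computes $dp\circ L_{\mathcal{V}_\pi}$ separately on vertical and on horizontal vectors: on vertical $V$ one gets $-\pi^{\sharp}(\theta_V)$ (using only the first spray condition), and on horizontal $V$ one gets $\overline{\nabla}_a(\overline{V})$; summing and setting to zero gives the lemma. This framing is more coordinate-free and makes the independence from the quadratic spray coefficients $\Gamma^k_{ij}$ conceptually transparent (only the first spray property enters), but the paper still defers the verification of each claim to an unstated local computation. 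Your approach makes that computation fully explicit at the cost of some index bookkeeping; your remark that only the $x$-part of the variational equation is used is exactly the counterpart of the paper's observation that only the first spray property matters. The two arguments are equivalent in content, differing mainly in packaging.
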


\begin{proof} We start with a remark on derivatives along vector fields. For any smooth path of vectors $V$ tangent to $\mathcal{U}$ along $a$, $t\mapsto V(t)\in T_{a(t)}\mathcal{U}$,
one has the Lie derivative of $V$ along $\mathcal{V}_{\pi}$, again a smooth path of tangent vectors along $a$, defined by
\[ L_{\mathcal{V}_{\pi}}(V)(t)= \frac{d}{ds}_{|s= 0}  (d\varphi_{-s})_{a(s+t)}(V(s+t)) \in T_{a(t)}\mathcal{U}.\]
Note the following:
\begin{enumerate}
\item For vertical $V$, i.e.\ corresponding to a 1-form $\theta_V$ on $M$ along $\gamma$, we have that
$(dp)(L_{\mathcal{V}_{\pi}}(V))= -\pi^{\sharp}(\theta_V)$. This follows immediately from the first property of the spray (e.g. by a local
computation).
\item For horizontal $V$, $(dp)(L_{\mathcal{V}_{\pi}}(V))= \overline{\nabla}_{a}(\overline{V})$, where $\overline{V}= (dp)(V)$ is a tangent vector to $M$ along $\gamma$. To check this, one may assume that
$V$ is a global horizontal vector field on $M$, and one has to show that $(dp)_{\eta}(L_{\mathcal{V}_{\pi}}(V))=
\overline{\nabla}_{\eta}(\overline{V})$ for all $\eta\in T^*M$. Again, this follows immediately by a local computation.
\end{enumerate}
Hence, for an arbitrary $V$ (along $a$), using its components $(\overline{V}, \theta_V)$,
\[ dp(L_{\mathcal{V}_{\pi}}(V)) = -\pi^{\sharp}(\theta_V) + \overline{\nabla}_{a}(\overline{V}).\]
Finally, note that for $v$ as in (\ref{v-t}), $L_{\mathcal{V}_{\pi}}(v)= 0$.
\end{proof}

We have the following version of (\ref{omega-0}) at arbitrary $\xi$ in $\mathcal{U}$.

\begin{lemma}\label{Lemma_simple_frmla} Let $\xi\in \mathcal{U}$ and $v_0, w_0 \in T_{\xi}(T^*M)$.
Let $v=v_t$ be as before and let $\widetilde{\theta}_{v}$ be a path in $T^*M$, which is a solution of the differential
equation
\begin{equation}\label{diff-eq}
{\overline{\nabla}}_{a}(\widetilde{\theta}_v)= \theta_v .
\end{equation}
Similarly, consider $w= w_t$ and $\widetilde{\theta}_{w}$ corresponding to $w_0$. Then
\begin{equation}\label{frmla_lemma}
\omega(v_0, w_0)= (\langle \widetilde{\theta}_w, \overline{v}\rangle - \langle \widetilde{\theta}_v, \overline{w}\rangle- \pi(\widetilde{\theta}_{v},\widetilde{\theta}_w) )|_{0}^{1}.
\end{equation}
\end{lemma}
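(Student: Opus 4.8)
The formula $\omega = \int_0^1 (\varphi_t)^*\omega_{\mathrm{can}}\,dt$ must be unwound by evaluating the integrand at $\xi$ on the pushed-forward vectors $v_t,w_t$. By Lemma~\ref{torsion-free1}, $(\varphi_t)^*\omega_{\mathrm{can}}(v_0,w_0) = \omega_{\mathrm{can}}(v_t,w_t) = \langle \theta_{w_t},\overline{v}_t\rangle - \langle \theta_{v_t},\overline{w}_t\rangle$, since $(\varphi_t)_*(v_0) = v_t$ and $(\varphi_t)_*(w_0) = w_t$ by definition. So the whole problem is to show
\[
\int_0^1 \big(\langle \theta_{w_t},\overline{v}_t\rangle - \langle \theta_{v_t},\overline{w}_t\rangle\big)\,dt
= \big(\langle \widetilde\theta_w,\overline{v}\rangle - \langle \widetilde\theta_v,\overline{w}\rangle - \pi(\widetilde\theta_v,\widetilde\theta_w)\big)\big|_0^1.
\]
The natural approach is to recognize the right-hand side as $\int_0^1 \frac{d}{dt}\big(\langle \widetilde\theta_w,\overline{v}\rangle - \langle \widetilde\theta_v,\overline{w}\rangle - \pi(\widetilde\theta_v,\widetilde\theta_w)\big)\,dt$ and to differentiate each term using the tools already assembled: Lemma~\ref{ConjConn2} (the duality $\frac{d}{dt}\langle\theta,v\rangle = \langle\overline\nabla_a\theta,v\rangle + \langle\theta,\overline\nabla_a v\rangle$ along the cotangent path $a$), Lemma~\ref{lemma_cotangent_tangent} ($\overline\nabla_a\overline{v} = \pi^\sharp\theta_v$, and similarly for $w$), the defining equation \eqref{diff-eq} ($\overline\nabla_a\widetilde\theta_v = \theta_v$, $\overline\nabla_a\widetilde\theta_w = \theta_w$), and Lemma~\ref{Lemma_ConjConn}'s covariant-derivative form \eqref{EQ_conn_pisharp_rel} ($\overline\nabla_a(\pi^\sharp\beta) = \pi^\sharp(\overline\nabla_a\beta)$).

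Carrying this out: for the first term, $\frac{d}{dt}\langle\widetilde\theta_w,\overline{v}\rangle = \langle\overline\nabla_a\widetilde\theta_w,\overline{v}\rangle + \langle\widetilde\theta_w,\overline\nabla_a\overline{v}\rangle = \langle\theta_w,\overline{v}\rangle + \langle\widetilde\theta_w,\pi^\sharp\theta_v\rangle$, using \eqref{diff-eq} and Lemma~\ref{lemma_cotangent_tangent}. Symmetrically, $\frac{d}{dt}\langle\widetilde\theta_v,\overline{w}\rangle = \langle\theta_v,\overline{w}\rangle + \langle\widetilde\theta_v,\pi^\sharp\theta_w\rangle$. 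For the last term, write $\pi(\widetilde\theta_v,\widetilde\theta_w) = \langle\widetilde\theta_w,\pi^\sharp\widetilde\theta_v\rangle$ and differentiate via Lemma~\ref{ConjConn2} applied to the pair $\theta = \widetilde\theta_w$ and the $TM$-path $v' := \pi^\sharp\widetilde\theta_v$: $\frac{d}{dt}\langle\widetilde\theta_w,\pi^\sharp\widetilde\theta_v\rangle = \langle\overline\nabla_a\widetilde\theta_w,\pi^\sharp\widetilde\theta_v\rangle + \langle\widetilde\theta_w,\overline\nabla_a(\pi^\sharp\widetilde\theta_v)\rangle = \langle\theta_w,\pi^\sharp\widetilde\theta_v\rangle + \langle\widetilde\theta_w,\pi^\sharp\overline\nabla_a\widetilde\theta_v\rangle$, where the last step uses \eqref{EQ_conn_pisharp_rel} and then \eqref{diff-eq} gives $\overline\nabla_a\widetilde\theta_v = \theta_v$, so this equals $\langle\theta_w,\pi^\sharp\widetilde\theta_v\rangle + \langle\widetilde\theta_w,\pi^\sharp\theta_v\rangle$. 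Now assemble: $\frac{d}{dt}(\text{RHS integrand}) = \langle\theta_w,\overline{v}\rangle + \langle\widetilde\theta_w,\pi^\sharp\theta_v\rangle - \langle\theta_v,\overline{w}\rangle - \langle\widetilde\theta_v,\pi^\sharp\theta_w\rangle - \langle\theta_w,\pi^\sharp\widetilde\theta_v\rangle - \langle\widetilde\theta_w,\pi^\sharp\theta_v\rangle$. By skew-symmetry of $\pi$, $\langle\widetilde\theta_v,\pi^\sharp\theta_w\rangle = \pi(\theta_w,\widetilde\theta_v)\cdot(-1)\cdot(-1)$... more carefully, $\langle\widetilde\theta_v,\pi^\sharp\theta_w\rangle = \pi(\theta_w,\widetilde\theta_v) = -\pi(\widetilde\theta_v,\theta_w) = -\langle\theta_w,\pi^\sharp\widetilde\theta_v\rangle$, so the terms $-\langle\widetilde\theta_v,\pi^\sharp\theta_w\rangle - \langle\theta_w,\pi^\sharp\widetilde\theta_v\rangle$ cancel, and the $\pm\langle\widetilde\theta_w,\pi^\sharp\theta_v\rangle$ pair cancels too, leaving exactly $\langle\theta_w,\overline{v}\rangle - \langle\theta_v,\overline{w}\rangle$, which is the integrand on the left. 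Integrating from $0$ to $1$ and applying the fundamental theorem of calculus gives \eqref{frmla_lemma}.

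The one genuinely delicate point is the existence and regularity of the solution $\widetilde\theta_v$ to the linear ODE \eqref{diff-eq} along $\gamma$ (with, say, any prescribed initial value), and the fact that the identity is independent of that choice of initial condition — but since both sides of \eqref{frmla_lemma} change by the same total-derivative boundary term under a change of $\widetilde\theta_v$, or rather since the computation above shows the $t$-derivative of the RHS integrand equals the LHS integrand pointwise regardless of the chosen solution, the boundary-value ambiguity is immaterial: any solution works. The rest is the bookkeeping of signs in the $\pi$-terms, which is exactly where the torsion-freeness of $\nabla$ (entering through Lemmas~\ref{Lemma_ConjConn} and \ref{ConjConn2}) and the spray axioms (entering through Lemma~\ref{lemma_cotangent_tangent}) are used. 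I expect the sign check to be the only place one can slip; everything else is a direct chain of the four cited lemmas.
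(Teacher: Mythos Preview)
Your proof is correct and follows essentially the same route as the paper: both start from $\omega(v_0,w_0)=\int_0^1(\langle\theta_w,\overline v\rangle-\langle\theta_v,\overline w\rangle)\,dt$ via Lemma~\ref{torsion-free1}, and then verify that the integrand equals $\frac{d}{dt}\big(\langle\widetilde\theta_w,\overline v\rangle-\langle\widetilde\theta_v,\overline w\rangle-\pi(\widetilde\theta_v,\widetilde\theta_w)\big)$ using Lemma~\ref{ConjConn2}, Lemma~\ref{lemma_cotangent_tangent}, equation~\eqref{diff-eq}, and the compatibility~\eqref{EQ_conn_pisharp_rel}. The only cosmetic difference is that the paper first differentiates $\langle\widetilde\theta_w,\overline v\rangle-\langle\widetilde\theta_v,\overline w\rangle$ and then shows the leftover terms equal $-\frac{d}{dt}\pi(\widetilde\theta_v,\widetilde\theta_w)$, whereas you differentiate all three terms separately and cancel; the ingredients and sign checks are identical.
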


\begin{proof}
Since $\nabla$ is torsion-free, Lemma \ref{torsion-free1} implies that
\[ \omega(v_0, w_0)= \int_{0}^{1}(\langle \theta_w, \overline{v}\rangle - \langle \theta_v, \overline{w}\rangle ) dt.\]
Hence, it suffices to show that
\[ \langle \theta_w, \overline{v}\rangle - \langle \theta_v, \overline{w}\rangle = \frac{d}{dt}  (\langle \widetilde{\theta}_w, \overline{v}\rangle - \langle \widetilde{\theta}_v, \overline{w}\rangle- \pi(\widetilde{\theta}_{v},\widetilde{\theta}_w) ).\]
Using (\ref{diff-eq}) for $\theta_v$ and $\theta_w$, followed by Lemma \ref{ConjConn2}, we obtain
\[\langle \theta_w, \overline{v}\rangle - \langle \theta_v, \overline{w}\rangle = \frac{d}{dt}(\langle \widetilde{\theta}_w, \overline{v}\rangle - \langle \widetilde{\theta}_v, \overline{w}\rangle)- \langle \widetilde{\theta}_{w}, \overline{\nabla}_{a}(\overline{v})\rangle + \langle \widetilde{\theta}_{v},
\overline{\nabla}_{a}(\overline{w})\rangle.\]
The last two terms can be evaluated as follows:
\begin{align*}
-\langle \widetilde{\theta}_{w}, \overline{\nabla}_{a}(\overline{v})\rangle + \langle \widetilde{\theta}_{v},& \overline{\nabla}_{a}(\overline{w})\rangle =- \langle \widetilde{\theta}_{w}, \pi^{\sharp}(\theta_v)\rangle + \langle \widetilde{\theta}_{v}, \pi^{\sharp}(\theta_w)\rangle=\\
&= - \langle \widetilde{\theta}_{w}, \pi^{\sharp}({\overline{\nabla}}_{a}(\widetilde{\theta}_v))\rangle + \langle \widetilde{\theta}_{v}, \pi^{\sharp}({\overline{\nabla}}_{a}(\widetilde{\theta}_w))\rangle=\\
&=- \langle \widetilde{\theta}_{w}, {\overline{\nabla}}_{a}(\pi^{\sharp}(\widetilde{\theta}_v))\rangle - \langle \overline{\nabla}_a(\widetilde{\theta}_w), \pi^{\sharp}(\widetilde{\theta}_v) \rangle=\\
&=-\frac{d}{dt} \langle \widetilde{\theta}_w, \pi^{\sharp}(\widetilde{\theta}_v)\rangle=
-\frac{d}{dt} \pi(\widetilde{\theta}_{v},\widetilde{\theta}_w),
\end{align*}
where we have used Lemma (\ref{lemma_cotangent_tangent}), equation (\ref{diff-eq}), the fact that
$\pi^{\sharp}({\overline{\nabla}}_{a}(\widetilde{\theta}_v))= {\overline{\nabla}}_{a}(\pi^{\sharp}(\widetilde{\theta}_v))$, the antisymmetry of
$\pi^{\sharp}$ and Lemma \ref{ConjConn2}.
\end{proof}

\subsection{Proof of Theorem 0}

We start by proving the equality (\ref{EQ_symplectic_orth}). By counting dimensions, it suffices to prove that for
$\xi\in \mathcal{U}$, we have that $\omega(v_0, w_0)= 0$ for all
\begin{equation}\label{cond}
v_0\in T_{\xi}\mathcal{F}(p),\ \textrm{and} \ w_0\in T_{\xi}\mathcal{F}(p_1).
\end{equation}
Using the same notation as before, conditions \ref{cond} are equivalent to $\overline{v}(0)= 0$ and $\overline{w}(1)=
0$. We remark that (\ref{diff-eq}), as an equation on $\widetilde{\theta}_{v}$, is a linear ordinary differential
equation; hence it has solutions defined for all $t\in [0, 1]$ satisfying any given initial (or final) condition. Hence one
may arrange that $\widetilde{\theta}_{v}(0)= 0$, $\widetilde{\theta}_{w}(1)=0$. Lemma \ref{Lemma_simple_frmla}
immediately implies that $\omega(v_0, w_0)= 0$.

\vspace*{.2in}

Finally, we show that $p$ is a Poisson map. We have to show that, for $\xi\in \mathcal{U}$ and $\theta\in
T_{x}^{*}M$  ($x= p(\xi)$), the unique $v_0\in T_{\xi}\mathcal{U}$ satisfying
\begin{equation}\label{Poisson_map}
 p^{*}(\theta)_{\xi}=\iota_{v_0}(\omega)
\end{equation}
also satisfies $(dp)_{\xi}(v_0)= \pi^{\sharp}(\theta)$. From (\ref{Poisson_map}), it follows that $v_0\in
T_{\xi}\mathcal{F}(p)^{\perp}$, hence $v_0\in T_{\xi}\mathcal{F}(p_1)$, therefore $\overline{v}(1)= 0$. Next, we
evaluate (\ref{Poisson_map}) on an arbitrary $w_0\in T_{\xi}\mathcal{U}$. We use the formula \ref{frmla_lemma},
where $\widetilde{\theta}_v$ and $\widetilde{\theta}_w$ are chosen so that $\widetilde{\theta}_v(1)= 0$ and
$\widetilde{\theta}_w(0)= \eta\in T_{x}^{*}M$ is arbitrary. We find:
\[ \theta(\overline{w}(0))= \langle \widetilde{\theta}_v(0), \overline{w}(0)\rangle + \langle \eta, \pi^{\sharp}\widetilde{\theta}_v(0)- \overline{v}(0)\rangle.\]
Since this holds for all $w_0$ and all $\eta$, we deduce that $\theta= \widetilde{\theta}_v(0)$ and
$\pi^{\sharp}\widetilde{\theta}_v(0)=\overline{v}(0)$. Hence $\pi^{\sharp}(\theta)= \overline{v}(0)= (dp)_{\xi}(v_0)$.

\subsection{Some remarks}

Here are some remarks on possible variations. First of all, regarding the notion of contravariant spray, the first condition means that,
locally, $\mathcal{V}_{\pi}$ is of the form
\[ \mathcal{V}_{\pi}(x, y)= \sum_{i,j} \pi_{i, j}(x)y_i\frac{\partial}{\partial x_j}+ \sum_p \gamma^p(x, y) \frac{\partial}{\partial y_p}.\]
The second condition means that each $\gamma^p(x, y)$ is of the form $\sum_{q, r} \gamma^{p}_{q, r}(x) y_q y_r$.
While the first condition has been heavily used, the second was only needed to ensure that $\omega$ is well-defined
and nondegenerate at elements $0_x\in T^{*}_{x}M$.

Another remark is that one can show that $\mathcal{U}$ can be made into a local symplectic groupoid, with source map $p$ and target map $p_1$
(see also \cite{Kar}).

Let us also point out that we used that $\pi$ is Poisson only in the compatibility relation (\ref{EQ_conn_pisharp_rel}) which in turn was only
used at the end of the proof of Lemma \ref{Lemma_simple_frmla}. However, it is easy to keep track of the extra-terms that show up for general
bivectors $\pi$. At the right hand side of (\ref{EQ_conn_pisharp_rel}), one has to add the term $i_{\alpha\wedge \beta}(\chi_{\pi})$ where
$\chi_{\pi}= [\pi, \pi]$, and to (\ref{frmla_lemma}) the term $\int_{0}^{1} \chi_{\pi}(a, \widetilde{\theta}_v, \widetilde{\theta}_w) dt$. This
is useful for handling various twisted versions. For example, for a $\sigma$-twisted bivector $\pi$ on $M$ (i.e. satisfying $[\pi, \pi]=
\pi^{\sharp}(\sigma)$, where $\sigma$ is a given closed 3-form on $M$ \cite{WeinSev}), the interesting (twisted symplectic) 2-form on
$\mathcal{U}$ is the previously defined $\omega$ to which we add the 2-form $\omega_{\sigma}$ given by (compare with \cite{CaXu}):
\[ \omega_{\sigma} = \int_{0}^{1} \varphi_{t}^{*}(i_{\mathcal{V}_{\pi}}p^*(\sigma)) dt.\]

\section{Conn's theorem revisited}\label{Section_Conn}

Conjectured by Weinstein \cite{Wein}, Conn's\index{Conn's theorem} normal form theorem \cite{Conn} is nowadays
one of the classical results in Poisson geometry. Conn's original proof is analytic, based on the fast convergence
method of Nash and Moser. A geometric proof of this result is also available \cite{CrFe-Conn}. However, Conn's
argument seems stronger than the geometric one; in particular, it also implies a local rigidity property. We have
adapted both approaches (the geometric and the analytic) to the case of symplectic leaves. The outcomes are the
normal form Theorem \ref{Theorem_TWO} and the rigidity Theorem \ref{Theorem_FOUR}. In this section we present a
simplified version of Conn's original proof.

\subsection{The statement}

Let $(M,\pi)$ be a Poisson manifold. A point $x\in M$ is called a \textbf{fixed point}\index{fixed point} of $\pi$, if
$\pi_{x}=0$ or, equivalently, if $\{x\}$ is a symplectic leaf. Conn's theorem \cite{Conn} gives a normal form for $(M,
\pi)$ near $x$, built out of the \textbf{isotropy Lie algebra}\index{isotropy Lie algebra} $\mathfrak{g}_x$ at $x$.
Recall that $\mathfrak{g}_x= T^*_{x}M$, with Lie bracket defined by
\[[d_xf,d_xg]=d_x \{ f,g\}.\]
The linear Poisson structure $(\mathfrak{g}^*_x,\pi_{\mathfrak{g}_x})$ plays the role of the first order approximation of $\pi$ around $x$ in
the realm of Poisson geometry.

\begin{theorem}[Conn's Theorem \cite{Conn}]
If $\mathfrak{g}_x$ is semisimple of compact type, then a neighborhood of $x$ in $(M,\pi)$ is Poisson diffeomorphic to
a neighborhood of the origin in $(\mathfrak{g}_{x}^{*},\pi_{\mathfrak{g}_x})$.
\end{theorem}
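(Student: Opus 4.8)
The plan is to follow Conn's original analytic strategy, recast as a Nash--Moser fast-convergence scheme but with the homotopy operators supplied by Hodge theory. Throughout write $\mathfrak{g}:=\mathfrak{g}_x$ and $\pi_0:=\pi_{\mathfrak{g}}$ on $\mathfrak{g}^*$; since $\mathfrak{g}$ is semisimple of compact type, fix a compact connected Lie group $G$ integrating $\mathfrak{g}$ together with a bi-invariant metric, and recall the Whitehead lemmas $H^1(\mathfrak{g})=H^2(\mathfrak{g})=0$. As a first reduction I would use Weinstein's formal linearization theorem (available because $H^2(\mathfrak{g})=0$) to arrange, after a formal change of coordinates at $x$, that $\pi$ agrees to infinite order at the origin with $\pi_0$; realizing a large finite part of this jet by a genuine diffeomorphism and then applying the rescaling $\mu_\lambda(\xi)=\lambda\xi$, which multiplies $\pi_0$ by $\lambda$ and multiplies the higher-order remainder by a higher power of $\lambda$, one reduces to the following situation: for any prescribed $k$ and $\varepsilon>0$ there is a diffeomorphism of a neighborhood of $x$ onto the unit ball $B_1\subset\mathfrak{g}^*$ carrying $\pi$ to $\pi_0+e_0$ with $\|e_0\|_{C^k(B_1)}<\varepsilon$, and with the origin a fixed point.

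The heart of the argument is the construction of tame homotopy operators for $d_{\pi_0}$ near the origin. Averaging the complex $(\mathfrak{X}^\bullet,d_{\pi_0})$ over the $G$-action and splitting into polynomial-homogeneous components identifies the Poisson cohomology of $\pi_0$ near $0$, in each fixed form-degree, with the Chevalley--Eilenberg cohomology $H^\bullet(\mathfrak{g};\mathcal{S}^\bullet\mathfrak{g})$, which vanishes in degrees $1$ and $2$ by semisimplicity. The task is to upgrade this to a tame statement: the bi-invariant metric induces an inner product on $\Lambda^\bullet\mathfrak{g}^*\otimes\mathcal{S}^\bullet\mathfrak{g}$, hence a Hodge decomposition, and thus an operator $h$ with $d_{\pi_0}h+h\,d_{\pi_0}=\mathrm{id}$ in the relevant degrees; one must then extend $h$ to the smooth complex on balls and prove estimates of the form $\|h\alpha\|_{C^k(B_r)}\le C_k\,\|\alpha\|_{C^k(B_r)}$ \emph{uniformly} in $r\in(1/2,1]$. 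This is exactly the special case of the Tame Vanishing Lemma, and establishing the uniform-in-$r$ bounds, together with the companion estimates for the correction handling the non-exact remainder (controlled via $H^1(\mathfrak{g})=0$), is where I expect the real difficulty to lie.

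With $h$ in hand, the Newton step is as follows. Suppose $\pi_n=\pi_0+e_n$ on $B_{r_n}$ with $e_n$ small; the Jacobi identity $[\pi_n,\pi_n]=0$ together with $[\pi_0,\pi_0]=0$ forces $d_{\pi_0}e_n=-\tfrac12[e_n,e_n]$, which is quadratically small, so $e_n$ is approximately $d_{\pi_0}$-closed. I would smooth it, $\widetilde{e}_n:=S_{t_n}e_n$, set $X_n:=-h(\widetilde{e}_n)$ up to a lower-order correction making it vanish at the origin, and take $\phi_n$ to be the time-one flow of $X_n$, defined and $C^1$-small on the shrunk ball $B_{r_{n+1}}$ with $r_{n+1}=r_n-\delta_n$. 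Then $\pi_{n+1}:=(\phi_n)_*\pi_n$ and a Moser-type computation as in Lemma~\ref{Lemma_Moser_Poisson_vs_Symplectic} (comparing $L_{X_n}\pi$ with $d_\pi X_n$) give $e_{n+1}=(e_n-\widetilde{e}_n)+(d_{\pi_0}X_n+\widetilde{e}_n)+Q_n$ with $Q_n$ quadratic in $X_n$; the three terms are estimated respectively by the high-frequency smoothing tail, by $[e_n,e_n]$, and by $\|X_n\|^2$.

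Finally, choosing $t_n$ growing doubly exponentially and $\delta_n$ summable with $r_\infty:=r_0-\sum_n\delta_n>0$, the interpolation inequalities combined with the tame bounds of the second step yield a recursion of the type $\varepsilon_{n+1}\le C\,\varepsilon_n^{3/2}$ in a suitable norm; hence $\varepsilon_n\to 0$ super-fast provided the starting neighborhood was taken small enough in a sufficiently high $C^k$. The infinite composition $\phi:=\lim_n \phi_0\circ\cdots\circ\phi_n$ then converges in every $C^k$ on $B_{r_\infty}$ to a diffeomorphism with $\phi_*\pi=\pi_0$, which is the asserted Poisson diffeomorphism onto a neighborhood of the origin in $(\mathfrak{g}^*,\pi_{\mathfrak{g}})$. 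The main obstacle is the one already flagged: producing a single homotopy operator with no (or tightly controlled) loss of derivatives and with estimates uniform as the domain shrinks; the remainder is the standard, if delicate, Nash--Moser bookkeeping balancing the quadratic gain against the smoothing error and the shrinking of the domains.
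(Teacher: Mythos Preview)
Your proposal is correct and follows the same overall Nash--Moser strategy as the paper, but there are two places where the paper's execution is cleaner and worth noting.

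First, the formal linearization step is unnecessary. The paper instead proves a rigidity statement (any Poisson structure $C^p$-close to $\pi_{\mathfrak{g}}$ on a ball is isomorphic to it on a smaller ball) and then deduces linearization by a one-line scaling argument: the path $\pi^t:=t\mu_t^*(\pi)$ is Poisson for all $t$, satisfies $\pi^1=\pi$, and converges to $\pi_{\mathfrak{g}}$ in every $C^p$ as $t\to 0$ simply because $\pi_{\mathfrak{g}}$ is the first-order jet of $\pi$. So for small $t$ rigidity gives $\varphi$ with $\varphi^*(\pi^t)=\pi_{\mathfrak{g}}$, and since $\pi^t=t\mu_t^*(\pi)$ and $\mu_{1/t}^*(\pi_{\mathfrak{g}})=t\pi_{\mathfrak{g}}$, the map $\mu_t\circ\varphi\circ\mu_{1/t}$ linearizes $\pi$. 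No formal normal form is needed, and no jet-realization step.

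Second, your construction of the homotopy operators via algebraic Hodge theory on each $\Lambda^\bullet\mathfrak{g}^*\otimes\mathcal{S}^k\mathfrak{g}$ would require uniform-in-$k$ control of the constants to pass to the smooth complex, which is exactly the delicate point you flag. The paper bypasses this by working on the compact group: it identifies $(\mathfrak{X}^\bullet(\mathfrak{g}^*),d_{\pi_{\mathfrak{g}}})$ with the $G$-invariant part of the complex of $s$-foliated forms on $G\times\mathfrak{g}^*$ (equivalently, smooth maps $\mathfrak{g}^*\to\Omega^\bullet(G)$ with the pointwise de Rham differential), and then uses classical Hodge theory on $G$. Since $G$ is compact and simply connected, $H^2(G)=0$, so $\Delta$ is invertible in degree two and $H_1=\delta\Delta^{-1}$, $H_2=\Delta^{-1}\delta$ are honest homotopy operators on $\Omega^\bullet(G)$; they are $G$-invariant by bi-invariance of the metric, hence descend to the Poisson complex. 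The tameness (with loss $s=\lfloor\dim\mathfrak{g}/2\rfloor$) then follows directly from elliptic regularity plus Sobolev embedding, and the uniformity in the radius $r$ is automatic because the $\mathfrak{g}^*$-variable enters only as a parameter. This is conceptually the same Hodge idea you propose, but executed on the group rather than degree-by-degree on symmetric powers, which makes the required uniform bounds immediate.
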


\subsection{Rigidity of the linear Poisson structure}
Conn's theorem follows from a rigidity\index{rigidity} property of the linear Poisson structure, which we explain in the
sequel.

Throughout this section, we fix $(\mathfrak{g},[\cdot,\cdot])$ a compact semisimple Lie algebra and $(\cdot,\cdot)$ an
invariant inner product on $\mathfrak{g}^*$. Denote by $B_r\subset \mathfrak{g}^*$ the open ball of radius $r$
centered at the origin. Let $\{x_i\}$ be linear coordinates on $\mathfrak{g}^*$ corresponding to an orthonormal basis
of $\mathfrak{g}$. We define $C^n$-norms\index{C$^n$-norms} on the space of multivector fields on the closed ball
$\overline{B}_r$: for $f\in C^{\infty}(\overline{B}_r)$, let
\[\|f\|_{n,r}:=\sup_{0\leq |\alpha|\leq n}\sup_{x\in \overline{B}_r}|\frac{\partial^{|\alpha|}f}{\partial x^{\alpha}}(x)|,\]
and for $W\in \mathfrak{X}^\bullet(\overline{B}_r)$, let
$\|W\|_{n,r}:=\sup_{i_1,\ldots,i_k}\|W_{i_1,\ldots,i_k}\|_{n,r}$, where
\[W=\sum_{i_1,\ldots,i_k}W_{i_1,\ldots,i_k}\frac{\partial}{\partial x_{i_1}}\wedge\ldots\wedge\frac{\partial}{\partial x_{i_k}}.\]
For $R>r$, denote the restriction map $\mathfrak{X}^\bullet(\overline{B}_R)\to
\mathfrak{X}^\bullet(\overline{B}_r)$ by
$W\mapsto W_{|r}$. 

The linear Poisson structure $\pi_{\mathfrak{g}}$ has the following rigidity property.

\begin{theorem}\label{Theorem_rigidity_fixed_points}
For some $p>0$ and all $0<r<R$, there is a constant $\delta=\delta(r,R)>0$, such that for every Poisson structure
$\pi$ on $\overline{B}_R$ satisfying
\[\|\pi-\pi_{\mathfrak{g}}\|_{p,R}<\delta,\]
there is an open embedding $\varphi:B_r\hookrightarrow B_R$, which is a Poisson map between
\[\varphi:(B_r,\pi_{\mathfrak{g}})\rmap (B_R,\pi).\]
\end{theorem}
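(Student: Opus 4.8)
The plan is to prove the rigidity of $\pi_{\mathfrak g}$ via a Nash--Moser fast-convergence scheme, treating the perturbation $\pi = \pi_{\mathfrak g} + \rho$ and iteratively pushing it towards the linear model. I would set up the usual Newton-type iteration: given a Poisson structure $\pi_n = \pi_{\mathfrak g} + \rho_n$ on a ball $\overline B_{r_n}$ with $\rho_n$ small in a suitable $C^{p}$-norm, I would seek a Poisson diffeomorphism $\psi_n$, close to the identity, so that $\psi_n^*\pi_n = \pi_{\mathfrak g} + \rho_{n+1}$ on a slightly smaller ball $\overline B_{r_{n+1}}$, with $\|\rho_{n+1}\|$ controlled quadratically by $\|\rho_n\|$ (up to a loss of derivatives and a loss of domain $r_n - r_{n+1}$). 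The radii $r_n$ would be chosen decreasing to some $r' \in (r,R)$, and the derivative losses absorbed by the standard smoothing operators, so that the quadratic estimate beats the accumulated losses and $\psi_n \circ \cdots \circ \psi_1$ converges in $C^{p}$ to the desired embedding $\varphi \colon B_r \hookrightarrow B_R$.

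The linearized step is where the compactness and semisimplicity of $\mathfrak g$ enter. Writing $\psi_n = \exp(X_n)$ for a vector field $X_n$, the first-order change in $\rho_n$ is $-d_{\pi_{\mathfrak g}} X_n$ (the Poisson differential of the \emph{linear} structure), so I need to solve, approximately, the cohomological equation $d_{\pi_{\mathfrak g}} X_n = \rho_n$. Since $\rho_n$ is a $d_{\pi_n}$-closed bivector, it is $d_{\pi_{\mathfrak g}}$-closed up to a quadratic error, and the relevant obstruction lives in $H^2_{\pi_{\mathfrak g}}$ of a ball. Here I would invoke the key vanishing input: for a compact semisimple Lie algebra $\mathfrak g$, the Poisson (equivalently, Lie algebroid $T^*\mathfrak g^*$) cohomology of $(\mathfrak g^*,\pi_{\mathfrak g})$ restricted to a ball vanishes in positive degrees, via averaging over the compact group $G$ integrating $\mathfrak g$ together with the Whitehead lemmas (this is exactly the mechanism behind Weinstein's formal linearization). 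What is needed for Nash--Moser, however, is not just vanishing but a \emph{tame} homotopy operator: a linear map $h$ with $d_{\pi_{\mathfrak g}} h + h\, d_{\pi_{\mathfrak g}} = \mathrm{id}$ on positive degrees, satisfying estimates $\|h W\|_{n,r'} \lesssim \|W\|_{n+\ell,r}$ with a fixed finite loss $\ell$. Constructing this operator — by combining $G$-averaging (a smoothing, norm-nonincreasing operation once one controls the group action on the ball) with an explicit algebraic contraction of the Chevalley--Eilenberg complex of $\mathfrak g$, and handling the shrinking of domains caused by the coadjoint flow — is the technical heart of the argument, and is precisely what the paper packages as the Tame Vanishing Lemma.

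Concretely I would proceed in the following order. First, fix a torsion-free ingredient and a $G$-invariant setup: choose the $G$-action on $\mathfrak g^*$ (coadjoint), verify it preserves a family of balls, and build the averaging operator with its tame estimates. Second, prove the tame homotopy operator $h$ on the Poisson complex of $\pi_{\mathfrak g}$ over balls, with explicit derivative loss. Third, carry out one Newton step: given small $\rho_n$, set $X_n = -h(\rho_n)$ (after smoothing), let $\psi_n = \exp(X_n)$, and estimate $\rho_{n+1} := \psi_n^*(\pi_{\mathfrak g}+\rho_n) - \pi_{\mathfrak g}$, showing $\|\rho_{n+1}\|_{p,r_{n+1}} \le C (r_n - r_{n+1})^{-a}\,\|\rho_n\|_{p+\ell,r_n}^2$ and propagating the higher-norm bounds using the smoothing/interpolation inequalities. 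Fourth, run the iteration with a geometric choice of radii and a suitably small initial $\delta = \delta(r,R)$, check convergence of the composite diffeomorphisms in $C^p$ on $B_r$ (so the limit is an embedding into $B_R$), and conclude $\varphi^*\pi = \pi_{\mathfrak g}$, i.e. $\varphi \colon (B_r,\pi_{\mathfrak g}) \to (B_R,\pi)$ is Poisson. The main obstacle throughout is bookkeeping the two simultaneous losses — derivatives and domain — in the homotopy operator and in the composition of the maps, so that the quadratic gain of the Newton step still dominates; once the Tame Vanishing Lemma is in hand, the remainder is the (delicate but standard) Nash--Moser accounting.
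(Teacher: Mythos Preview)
Your outline follows essentially the same Nash--Moser scheme as the paper: an iteration $\pi_{k+1}=\varphi_{X_k}^*\pi_k$ with $X_k$ obtained by applying smoothing operators to a tame homotopy operator $h_1$ evaluated on $Z_k=\pi_k-\pi_{\mathfrak g}$, then quadratic estimates via interpolation, and convergence of the composed diffeomorphisms on a fixed inner ball. Two points where your description diverges from the paper are worth flagging.

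First, the tame homotopy operators are \emph{not} built from ``$G$-averaging plus an algebraic contraction of the Chevalley--Eilenberg complex'' or the Whitehead lemmas. The paper identifies the Poisson complex with the $G$-invariant part of $\Omega^\bullet_{\mathfrak g^*}(G)$ via $\omega\mapsto J(\omega)$, and then uses \emph{Hodge theory} on the compact group $G$: since $H^2(G)=0$, the Laplacian $\Delta$ is invertible in degree $2$, and $H_1=\delta\Delta^{-1}$, $H_2=\Delta^{-1}\delta$ are the homotopy operators, transferred back via $P$. The tameness comes directly from the elliptic estimate $|\Delta^{-1}\eta|_{n+2}\le C_n|\eta|_n$ and Sobolev embedding, with loss $s=\lfloor\dim(\mathfrak g)/2\rfloor$. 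This is cleaner than trying to make an algebraic contraction tame.

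Second, and more importantly, there is \emph{no} domain shrinking in the homotopy operator: since the inner product on $\mathfrak g^*$ is $G$-invariant, the coadjoint action preserves every ball $\overline B_r$, so $J$, $H_i$, and $P$ all restrict to $\overline B_r$, and $h_1^r, h_2^r$ satisfy $\|h_i^r\eta\|_{n,r}\le C_n\|\eta\|_{n+s,r}$ with constants independent of $r$. The domain loss $r_k\to r_{k+1}$ in the iteration comes \emph{only} from the flow $\varphi_{X_k}$, and is controlled by $\|X_k\|_{0,r_k}$. Your mention of ``shrinking of domains caused by the coadjoint flow'' suggests you may be anticipating a difficulty that is not there; removing it simplifies the bookkeeping considerably. (A minor point: your composition should read $\varphi_{X_0}\circ\cdots\circ\varphi_{X_k}$, not the reverse order, to land in $B_R$.)
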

\begin{remark}\rm
For $p$ we find the (most probably not optimal) value:
\[p=7\lfloor \mathrm{dim}(\mathfrak{g})/2\rfloor+17.\]
\end{remark}

Conn's theorem is a consequence of this rigidity property.

\begin{proof}[Proof of rigidity $\Rightarrow$ linearization]
Let $\pi$ be a Poisson structure whose isotropy Lie algebra at a fixed point $x$ is $\mathfrak{g}$. Taking a chart
centered at $x$, we may assume that $\pi$ is defined on $\overline{B}_R$, for some $R>0$, and so, it is given by
\[\pi=\frac{1}{2}\sum_{i,j}\pi_{i,j}(x)\frac{\partial}{\partial x_i}\wedge\frac{\partial}{\partial x_j}, \ \textrm{with}\ \pi_{i,j}(0)=0.\]
The linear Poisson structure is
\[\pi_{\mathfrak{g}}=\frac{1}{2}\sum_{i,j,k}\frac{\partial\pi_{i,j}}{\partial x_k}(0)x_k\frac{\partial}{\partial x_i}\wedge\frac{\partial}{\partial x_j}.\]
The two structures can be connected by a smooth path of Poisson bivectors $\pi^t\in \mathfrak{X}^2(\overline{B}_{R})$, with $\pi^1=\pi$ and
$\pi^0=\pi_\mathfrak{g}$, given by
\[\pi^t:=\frac{1}{2t}\sum_{i,j}\pi_{i,j}(tx)\frac{\partial}{\partial x_i}\wedge\frac{\partial}{\partial x_j},\ t\in[0,1].\]
Let $0<r<R$. By Theorem \ref{Theorem_rigidity_fixed_points}, for $t>0$ such that
\[\|\pi^t-\pi_{\mathfrak{g}}\|_{p,R}<\delta(r,R),\]
there exists an open embedding $\varphi:B_r\hookrightarrow B_R$ such that $\pi_{\mathfrak{g}}=\varphi^*(\pi^t)$.
Notice that $\pi^t=t\mu_t^{*}(\pi)$, where $\mu_t$ denotes multiplication by $t>0$, and, since $\pi_\mathfrak{g}$ is
linear, that $\mu_{1/t}^*(\pi_\mathfrak{g})=t\pi_\mathfrak{g}$. Therefore
\[\pi_{\mathfrak{g}}=\frac{1}{t}\mu_{1/t}^*(\pi_{\mathfrak{g}})=\frac{1}{t}\mu_{1/t}^*\circ\varphi^*(\pi^t)=\mu_{1/t}^*\circ\varphi^*\circ\mu_t^{*}(\pi)=(\mu_t\circ\varphi\circ\mu_{1/t})^*(\pi). \]
Thus $\psi:=\mu_t\circ\varphi\circ\mu_{1/t}$ is an open embedding and a Poisson map between
\[\psi:(B_{tr},\pi_{\mathfrak{g}})\rmap (B_{tR},\pi).\]
\end{proof}

\begin{remark}\rm
In fact, by making $\delta$ small enough, we prove that $\varphi$ from Theorem \ref{Theorem_rigidity_fixed_points}
can be made arbitrary $C^1$-close to the identity. In particular, we may assume that $0\in \varphi(B_r)$, and so, in the
proof above, that $0\in\psi(B_{tr})$. Since $0$ is the only fixed point of $\pi_{\mathfrak{g}}$ the Poisson map $\psi$
sends $0$ to $0$; and this is how Conn states the conclusion of his theorem \cite{Conn}.
\end{remark}

\subsection{Tame homotopy operators}\label{SSTamehomotopy}

One of the main ingredients in the proof of Theorem \ref{Theorem_rigidity_fixed_points} is the existence of tame
homotopy operators for the second Poisson cohomology of $\pi_{\mathfrak{g}}$. Here is where our proof deviates the
most from Conn's. A more general construction, which applies to Lie algebroid cohomology with coefficients, is
presented in the appendix (The Tame Vanishing Lemma).

\subsubsection*{The Poisson cohomology as the invariant part of $\Omega_{\mathfrak{g}^*}^{\bullet}(G)$}

The results in this subsection hold for any Lie algebra $\mathfrak{g}$.

The space of multivector fields on $\mathfrak{g}^*$ can be identified with the space of forms on $\mathfrak{g}$ with values in
$C^{\infty}(\mathfrak{g}^*)$
\[\mathfrak{X}^{\bullet}(\mathfrak{g}^*)\cong \Lambda^{\bullet}\mathfrak{g}^*\otimes C^{\infty}(\mathfrak{g}^*).\]
With this, the Poisson differential becomes the differential computing the Eilenberg Chevalley cohomology of
$\mathfrak{g}$ with coefficients in the representation $C^{\infty}(\mathfrak{g}^*)$, and it can be written using the
Poisson bracket as
\begin{align*}
d_{\pi_{\mathfrak{g}}}\omega(X_0,\ldots,X_k)&=\sum_{i=0}^k(-1)^i\{X_i,\omega(X_0,\ldots,\hat{X}_i,\ldots,X_k)\}+\\
&+\sum_{i<j}(-1)^{i+j}\omega(\{X_i,X_j\},X_0,\ldots,\hat{X}_i,\ldots,\hat{X}_j,\ldots,X_k),
\end{align*}
for $\omega\in \Lambda^{k}\mathfrak{g}^*\otimes C^{\infty}(\mathfrak{g}^*)$, where we regard $X_i\in
\mathfrak{g}$ as linear functions on $\mathfrak{g}^*$.

Let $G$ be the 1-connected Lie group integrating $\mathfrak{g}$. Let $X^r\in\mathfrak{X}(G)$ be the right invariant
extension of $X\in\mathfrak{g}$, i.e.\ $X^r_g=dr_g(X)$. By our convention (see section \ref{SSNotConv}), the map
$X\mapsto X^r$ is a Lie algebra homomorphism. Therefore, the map associating to a form on $\mathfrak{g}$ its right
invariant extension
\[\Lambda^{\bullet}\mathfrak{g}^*\ni\omega\mapsto \omega^r\in \Omega^{\bullet}(G),\  \omega^r_g:=r_{g^{-1}}^*(\omega)\]
is a chain map between the Eilenberg Chevalley complex of $\mathfrak{g}$ and the de Rham complex of $G$
\[(\Lambda^{\bullet}\mathfrak{g}^*,d_{\mathfrak{g}})\rmap(\Omega^{\bullet}(G),d_G).\]
We have a version of this also with coefficients in $C^{\infty}(\mathfrak{g}^*)$. Let
$\Omega^{\bullet}_{\mathfrak{g}^*}(G)$ denote the space of sections of $\Lambda^{\bullet}T^*G\times
\mathfrak{g}^*\to G\times\mathfrak{g}^*$. Regarding the elements in $\Omega^{\bullet}_{\mathfrak{g}^*}(G)$ as
smooth maps
\[\omega:\mathfrak{g}^*\rmap \Omega^{\bullet}(G), \ \xi\mapsto \omega_{\xi},\]
$\Omega^{\bullet}_{\mathfrak{g}^*}(G)$ can be endowed with the ``pointwise de Rham differential'':
\[d_G(\omega)_{\xi}:=d_G(\omega_{\xi}),\ \xi\in \mathfrak{g}^*,\  \omega\in \Omega^{\bullet}_{\mathfrak{g}^*}(G).\]
Poisson cohomology is the $G$-invariant part of the resulting cohomology.
\begin{lemma}\label{lemma_commutes_with_diff_fixed_point}
The Poisson complex of $(\mathfrak{g}^*,\pi_{\mathfrak{g}})$ is isomorphic to the $G$-invariant part of the complex
$(\Omega_{\mathfrak{g}^*}^{\bullet}(G),d_G)$ via the map
\[J:(\mathfrak{X}^{\bullet}(\mathfrak{g}^*),d_{\pi_{\mathfrak{g}}})\rmap  (\Omega_{\mathfrak{g}^*}^{\bullet}(G),d_G),\]
\[J(\omega)_{(g,\xi)}(X_1,\ldots,X_k):=\omega_{Ad^{*}_{g^{-1}}(\xi)}(dr_{g^{-1}}(X_1),\ldots,dr_{g^{-1}}(X_k)),\]
for $\omega\in \mathfrak{X}^{k}(\mathfrak{g}^*)$, where the action of $G$ on
$\Omega_{\mathfrak{g}^*}^{\bullet}(G)$ is
\[(g\cdot \omega)_{\xi}:=r_{g}^*(\omega_{Ad^{*}_{g}(\xi)}).\]

\end{lemma}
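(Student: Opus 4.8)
The plan is to verify that $J$ is an injective chain map from the Poisson complex of $(\mathfrak{g}^*,\pi_{\mathfrak{g}})$ into $(\Omega_{\mathfrak{g}^*}^{\bullet}(G),d_G)$ whose image is exactly the $G$-invariant subcomplex. That the invariant part is a subcomplex is immediate: the de Rham differential commutes with the pullbacks $r_g^*$, and the reparametrization $\xi\mapsto Ad^*_g(\xi)$ does not interact with $d_G$, so $d_G$ is $G$-equivariant for the action $(g\cdot\omega)_{\xi}=r_g^*(\omega_{Ad^*_g(\xi)})$.

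First I would treat injectivity and the identification of the image, both of which reduce to restriction to the unit $e\in G$. Under the canonical identification $\Lambda^{k}T_e^*G\cong\Lambda^{k}\mathfrak{g}^*$, evaluating $J(\omega)$ at $g=e$ simply returns $\omega$; in particular $J(\omega)=0$ forces $\omega=0$. That $J(\omega)$ is $G$-invariant is a short cocycle computation: unwinding $(g\cdot J(\omega))_{\xi}$ at a point $h$, the two occurrences of $Ad^*$ collapse via the cocycle identity to $Ad^*_{h^{-1}}(\xi)$, while $r_{(hg)^{-1}}\circ r_g=r_{h^{-1}}$, so $(g\cdot J(\omega))_{(h,\xi)}=J(\omega)_{(h,\xi)}$. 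Conversely, given $\eta\in(\Omega^{k}_{\mathfrak{g}^*}(G))^G$, I set $\omega_{\xi}:=(\eta_{\xi})_e\in\Lambda^{k}\mathfrak{g}^*$, a smooth function of $\xi$; writing the invariance relation $r_g^*(\eta_{Ad^*_g(\xi)})=\eta_{\xi}$ at $h=e$ expresses $(\eta_{\xi})_g$ in terms of $(\eta_{Ad^*_{g^{-1}}(\xi)})_e$ and $dr_{g^{-1}}$, which is precisely the formula defining $J(\omega)$, so $J(\omega)=\eta$.

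The main step is that $J$ intertwines $d_{\pi_{\mathfrak{g}}}$ and $d_G$. I would check this by evaluating both sides on tuples of right-invariant vector fields $X_0^r,\dots,X_k^r$ (which frame $TG$ at every point), using the observation that $J(\omega)_{(g,\xi)}(X_0^r,\dots,X_k^r)=\omega_{Ad^*_{g^{-1}}(\xi)}(X_0,\dots,X_k)$, since $dr_{g^{-1}}$ sends $X_i^r|_g$ back to $X_i$. Plugging this into Koszul's formula for $d_G$, the bracket terms become $\omega_{Ad^*_{g^{-1}}(\xi)}([X_i,X_j],\dots)$ because $[X_i^r,X_j^r]=[X_i,X_j]^r$ in the paper's convention, and these match the corresponding terms of $d_{\pi_{\mathfrak{g}}}$ since the Poisson bracket of the linear functions $X_i,X_j$ on $\mathfrak{g}^*$ is the linear function $[X_i,X_j]$. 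The remaining terms are the derivatives of $g\mapsto\omega_{Ad^*_{g^{-1}}(\xi)}(X_0,\dots,\widehat{X}_i,\dots,X_k)$ along $X_i^r$; differentiating through $Ad^*$ moves $\xi$ along the coadjoint orbit with velocity the Hamiltonian vector field of $X_i$ (this is the content of $H_{X,\xi}=-ad^*_X(\xi)$), so these terms turn into $\{X_i,\omega(X_0,\dots,\widehat{X}_i,\dots,X_k)\}$ evaluated at $Ad^*_{g^{-1}}(\xi)$, which is exactly what $J(d_{\pi_{\mathfrak{g}}}\omega)$ produces.

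I expect the only real obstacle to be the sign bookkeeping in this last computation: one must simultaneously keep track of Koszul's formula for $d_G$, the paper's convention making $X\mapsto X^r$ a Lie algebra homomorphism, and the sign in $H_{X,\xi}=-ad^*_X(\xi)$, so that all the directional-derivative and bracket terms line up correctly. There is no conceptual difficulty here — it is a matter of carefully unwinding the definitions and the fixed conventions — but that is where all the care is needed.
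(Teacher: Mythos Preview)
Your proposal is correct and follows essentially the same approach as the paper. The only difference is a minor economy in the chain-map verification: the paper observes that on constant-coefficient forms $\Lambda^{\bullet}\mathfrak{g}^*$ the map $J$ is just the right-invariant extension (already known to be a chain map), so by the derivation property of both differentials it suffices to check $d_G\circ J=J\circ d_{\pi_{\mathfrak{g}}}$ on functions $f\in C^{\infty}(\mathfrak{g}^*)$ alone, whereas you verify the full Koszul formula in all degrees directly.
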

\begin{proof}
The fact that $J$ is an isomorphism between
\[\Lambda^{\bullet}\mathfrak{g}^*\otimes C^{\infty}(\mathfrak{g}^*)\cong \Omega_{\mathfrak{g}^*}^{\bullet}(G)^G,\]
is straightforward; we only note that a left inverse of $J$ is the map
\[P:\Omega_{\mathfrak{g}^*}^{\bullet}(G)\rmap \Lambda^{\bullet}\mathfrak{g}^*\otimes C^{\infty}(\mathfrak{g}^*), \ P(\omega)_{\xi}:=\omega_{\xi|\Lambda^{\bullet}T_eG}.\]
On forms with constant coefficients, i.e.\ forms in $\Lambda^{\bullet}\mathfrak{g}^*$, the map $J$ is just the right
invariant extension, thus it commutes with the differentials. So is suffices to check the statement for $f\in
C^{\infty}(\mathfrak{g}^*)$. For $X\in\mathfrak{g}$, we have
\begin{align*}
(d_GJ(f))_{\xi}&(dr_g(X))=\frac{d}{d\epsilon}\left(J(f)_{\xi}(\exp(\epsilon X)g)\right)|_{\epsilon=0}=\\
&=\frac{d}{d\epsilon}f(Ad_{{\exp(-\epsilon X)}}^{*}Ad_{g^{-1}}^{*}(\xi))|_{\epsilon=0}=df(H_{X})_{Ad_{g^{-1}}^{*}(\xi)}=\\
&=\{X,f\}_{Ad_{g^{-1}}^{*}(\xi)}=(d_{\pi_{\mathfrak{g}}}f)_{Ad_{g^{-1}}^{*}(\xi)}(X)=J(d_{\pi_{\mathfrak{g}}}f)_{\xi}(dr_g(X)),
\end{align*}
and this finishes the proof.
\end{proof}

\subsubsection*{Homotopy operators for the de Rham complex of $G$}

The\index{homotopy operators} assumption that $\mathfrak{g}$ is compact and semisimple is equivalent to the
compactness of $G$. The simply connectedness of $G$ implies that $H^2(G)=0$ (see e.g. \cite{DK}). The invariant
inner product on $\mathfrak{g}$ extends to a bi-invariant Riemannian metric $m$ on $G$. Integration against the
volume density corresponding to $m$ gives an inner product on the space of forms
\[(\eta,\theta):=\int_{G}m(\eta,\theta)|dVol(m)|,\ \eta,\theta\in\Omega^{\bullet}(G).\]
Denote by $\delta$ the formal adjoint of the exterior derivative $d$, i.e.\ $\delta$ is the unique linear, first order partial differential
operator
\[\delta:\Omega^{\bullet+1}(G)\rmap \Omega^{\bullet}(G)\]
satisfying
\[(d\eta,\theta)=(\eta,\delta\theta),\ \ \ \  \eta\in\Omega^{\bullet}(G),\ \theta\in\Omega^{\bullet+1}(G).\]
Since $d^2=0$, also $\delta^2=0$. The corresponding Laplace-Beltrami operator is\index{Laplace-Beltrami operator}
\[\Delta:\Omega^{\bullet}(G)\rmap \Omega^{\bullet}(G),\ \ \ \Delta:=(d+\delta)^2=d\delta+\delta d.\]
Since $H^2(G)=0$, by classical Hodge theory (see e.g. \cite{Warner}), it follows that $\Delta$ is invertible in degree 2, and that its inverse
satisfies
\begin{equation}\label{EQ_Delta_inverse_tame}
|\Delta^{-1}\eta|^G_{n+2}\leq C_n|\eta|^G_{n},\ \ \ \ \eta\in \Omega^2(G),
\end{equation}
where $C_n$ are positive constants and $|\cdot|_n^G$ denote Sobolev norms on $\Omega^{\bullet}(G)$. The Sobolev
embedding theorem and (\ref{EQ_Delta_inverse_tame}) imply that $\Delta^{-1}$ satisfies similar inequalities for
$C^n$-norms $\|\cdot\|_n^G$ on $\Omega^{\bullet}(G)$
\[\|\Delta^{-1}\eta\|^G_{n}\leq C_n\|\eta\|^G_{n+s-1},\ \ \ \ \eta\in \Omega^2(G),\]
where $s=\lfloor\frac{1}{2}\mathrm{dim}(G)\rfloor$.

Consider the linear operators $H_1:=\delta\circ \Delta^{-1}$ and $H_2:=\Delta^{-1}\circ \delta$
\[\Omega^{1}(G)\stackrel{H_1}{\longleftarrow}\Omega^{2}(G)\stackrel{H_2}{\longleftarrow}\Omega^{3}(G).\]
Note that $d\delta$ and $\Delta$ commute (hence also $d\delta$ and $\Delta^{-1}$)
\[\Delta d\delta=(d\delta+\delta d)d\delta=d\delta d\delta+ \delta d^2\delta=d\delta d\delta=d\delta d\delta+d\delta^2 d=d\delta(d\delta+\delta d)=d\delta\Delta.\]
This implies that $H_1$ and $H_2$ are linear homotopy operators for the de Rham complex in degree 2
\begin{align*}
d H_1+H_2d=d\delta\Delta^{-1}+\Delta^{-1}\delta d=\Delta^{-1}(d\delta+\delta d)=\textrm{Id}_{\Omega^2(G)}.
\end{align*}
Since $\delta$ is of first order, and by (\ref{EQ_Delta_inverse_tame}), $H_1$ and $H_2$ satisfy
\begin{equation}\label{EQ_fixed_point_tame_homotopy}
\|H_1\eta\|^G_{n}\leq C_n\|\eta\|^G_{n+s},\ \ \|H_2\theta\|^G_{n}\leq C_n\|\theta\|^G_{n+s}.
\end{equation}
By invariance of the metric, also these operators are invariant
\begin{equation}\label{EQ_H_i_invariant}
l_g^*(H_1\eta)=H_1l_g^*(\eta), \ \ l_g^*(H_2\theta)=H_2l_g^*(\theta).
\end{equation}

\subsubsection*{Tame homotopy operators for the Poisson complex}

We are ready to construct the homotopy operators for the Poisson complex.\index{homotopy operators}
\begin{lemma}[Proposition 2.1 \cite{Conn}]\label{Lemma_fixed_point_homotopy_ops}
There are linear homotopy operators for the Poisson complex of $(\mathfrak{g}^*,\pi_{\mathfrak{g}})$ in degree two
\[\mathfrak{X}^1(\mathfrak{g}^*)\stackrel{h_1}{\longleftarrow}\mathfrak{X}^2(\mathfrak{g}^*)\stackrel{h_2}{\longleftarrow}\mathfrak{X}^3(\mathfrak{g}^*),\]
\[d_{\pi_{\mathfrak{g}}}h_1+h_2d_{\pi_{\mathfrak{g}}}=\textrm{Id}_{\mathfrak{X}^2(\mathfrak{g}^*)},\]
which induce homotopy operators for $(\overline{B}_r,\pi_{\mathfrak{g}|r})$ in degree two
\[\mathfrak{X}^1(\overline{B}_r)\stackrel{h_1^r}{\longleftarrow}\mathfrak{X}^2(\overline{B}_r)\stackrel{h_2^r}{\longleftarrow}\mathfrak{X}^3(\overline{B}_r),\]
\[h_1^r(\eta_{|r})=h_1(\eta)_{|r}, \ \ h_2^r(\theta_{|r})=h_2(\theta)_{|r}.\]
These operators satisfy the following inequalities
\[\|h_1^r\eta\|_{n,r}\leq C_n\|\eta\|_{n+s,r},\ \ \|h_2^r\theta\|_{n,r}\leq C_n\|\theta\|_{n+s,r},\]
with $s=\lfloor\frac{1}{2}\mathrm{dim}(\mathfrak{g})\rfloor$ and constants $C_n>0$ independent of $r$.
\end{lemma}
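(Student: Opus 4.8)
The plan is to build $h_1,h_2$ by transporting the de Rham homotopy operators $H_1,H_2$ on $G$ to the Poisson complex through the identification of Lemma \ref{lemma_commutes_with_diff_fixed_point}, and then to read off locality and tame estimates on balls from the corresponding properties of $H_1,H_2$. First I would let $H_1$ and $H_2$ act on $\Omega^{\bullet}_{\mathfrak{g}^*}(G)$ fibrewise over $\mathfrak{g}^*$, by $(H_i\omega)_{\xi}:=H_i(\omega_{\xi})$; this preserves smoothness in $\xi$. Since $H_1=\delta\circ\Delta^{-1}$ and $H_2=\Delta^{-1}\circ\delta$ are built from a bi-invariant metric, they commute with both left and right translations on $G$; together with (\ref{EQ_H_i_invariant}) this shows, by a direct check, that fibrewise they preserve the $G$-invariant subspace for the action $(g\cdot\omega)_{\xi}=r_g^*(\omega_{Ad^{*}_g\xi})$. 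The identity $dH_1+H_2d=\mathrm{Id}_{\Omega^2(G)}$, applied fibrewise, yields $d_GH_1+H_2d_G=\mathrm{Id}$ on $\Omega^2_{\mathfrak{g}^*}(G)$. Conjugating by the isomorphism $J$ and its left inverse $P$ from Lemma \ref{lemma_commutes_with_diff_fixed_point}, I set $h_i:=P\circ H_i\circ J$. Using $Jd_{\pi_{\mathfrak{g}}}=d_GJ$, $PJ=\mathrm{Id}$, the fact that $H_iJ$ takes values in the invariant part of $\Omega^{\bullet}_{\mathfrak{g}^*}(G)$ (on which $JP=\mathrm{Id}$, $P$ being the inverse of the isomorphism $J$ there), and $d_GH_1+H_2d_G=\mathrm{Id}$, one obtains $d_{\pi_{\mathfrak{g}}}h_1+h_2d_{\pi_{\mathfrak{g}}}=\mathrm{Id}_{\mathfrak{X}^2(\mathfrak{g}^*)}$.

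Next I would descend to balls. The inner product on $\mathfrak{g}^*$ is coadjoint-invariant, so each $\overline{B}_r$ is a union of coadjoint orbits. For a fixed $\xi$, the $2$-form $J(\omega)_{\xi}$ on $G$ depends only on the restriction of $\omega$ to the orbit through $\xi$; hence so does $H_i(J(\omega)_{\xi})$, and therefore $h_i(\omega)_{\xi}$, for $\xi\in\overline{B}_r$, depends only on $\omega_{|\overline{B}_r}$. This produces well-defined operators $h_i^r$ with $h_i^r(\eta_{|r})=h_i(\eta)_{|r}$, and the homotopy identity restricts from $\mathfrak{g}^*$ to $\overline{B}_r$ because the Poisson differential is a local (differential) operator.

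For the estimates, differentiation in the $\mathfrak{g}^*$-variable commutes with the $H_i$ (which act only in the $G$-variable), and $P$ is evaluation of the coefficient form at $e\in G$, so for a multi-index $\alpha$,
\[\partial_{\xi}^{\alpha}\big(h_i(\eta)_{\xi}\big)=H_i\big(\partial_{\xi}^{\alpha}(J(\eta)_{\xi})\big)|_{\Lambda^{\bullet}T_eG}.\]
Bounding evaluation at $e$ by a $C^0$-norm on $G$, applying (\ref{EQ_fixed_point_tame_homotopy}) (itself a consequence of (\ref{EQ_Delta_inverse_tame}) and Sobolev embedding), and noting that the $C^n$-norm on $G$ of $\partial_{\xi}^{\alpha}(J(\eta)_{\xi})$ is controlled, uniformly in $\xi$ and with constants depending only on the fixed coadjoint action, by $\|\eta\|_{|\alpha|+n,\,r}$ taken along the orbit of $\xi$ (which lies inside $\overline{B}_r$), then taking the supremum over $\xi\in\overline{B}_r$, gives
\[\|h_i^r\eta\|_{n,r}\le C_n\|\eta\|_{n+s,r},\qquad s=\lfloor\tfrac{1}{2}\mathrm{dim}(\mathfrak{g})\rfloor,\]
with $C_n$ independent of $r$. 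The step I expect to require the most care is exactly this $r$-independence: one must ensure that passing from $\eta$ on $\overline{B}_r$ to the family $\{J(\eta)_{\xi}\}_{\xi}$, differentiating in $\xi$, applying $H_i$, and evaluating at $e$ never introduces a factor growing with $r$ — which works precisely because the coadjoint orbit through any $\xi\in\overline{B}_r$ stays in $\overline{B}_r$, and because $H_i$, the coadjoint action, and evaluation at $e$ are all fixed data on the compact group $G$.
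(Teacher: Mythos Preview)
Your proposal is correct and follows essentially the same approach as the paper: define $h_i:=P\circ H_i\circ J$, use bi-invariance of the metric to see that $H_i$ preserve the $G$-invariant subcomplex, descend to $\overline{B}_r$ via coadjoint-invariance of the balls, and obtain the tame estimates by commuting $\partial_\xi^\alpha$ through the fibrewise $H_i$. The only cosmetic difference is that the paper organizes the estimate by introducing intermediate $C^n$-norms $\|\cdot\|_{n,r}^G$ on $\Omega^{\bullet}_{\overline{B}_r}(G)$ and bounding $J$, $H_i$, and $P$ separately with respect to these, whereas you do the chain rule directly; both routes give the same $r$-independent constants.
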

\begin{proof}
Using the maps $J$ and $P$ from Lemma \ref{lemma_commutes_with_diff_fixed_point}, we define
\[h_1(\eta):=P\circ H_1\circ J(\eta), \ \ h_2(\theta):=P\circ H_2\circ J(\theta).\]

Since $H_1$ and $H_2$ are homotopy operators for the de Rham cohomology of $G$ in degree 2, it follows that they
induce also homotopy operators for the complex $(\Omega_{\mathfrak{g}^*}(G),d_G)$ in degree 2. By
(\ref{EQ_H_i_invariant}), $H_1$ and $H_2$ preserve $\Omega_{\mathfrak{g}^*}(G)^G$ and Lemma
\ref{lemma_commutes_with_diff_fixed_point} gives an isomorphism between $(\Omega_{\mathfrak{g}^*}(G)^G,d_G)$
and the Poisson complex under which $H_1$ and $H_2$ correspond to $h_1$ and $h_2$. Thus, $h_1$ and $h_2$ are
homotopy operators for the Poisson complex in second degree.

Observe that for $\eta\in \mathfrak{X}^{\bullet}(\mathfrak{g}^*)\cong \Lambda^{\bullet}\mathfrak{g}^*\otimes C^{\infty}(\mathfrak{g}^*)$ and
$\xi\in \overline{B}_r$, by invariance of the metric, $J(\eta)_{\xi}$ depends only on $\eta_{|r}$. Therefore, a version of Lemma
\ref{lemma_commutes_with_diff_fixed_point} is true for $\overline{B}_r$ which gives an isomorphism between
\[(\mathfrak{X}^{\bullet}(\overline{B}_r),d_{\pi_{\mathfrak{g}}})\cong (\Omega_{\overline{B}_r}^{\bullet}(G)^G,d_G).\]
In particular, $h_1^r$ and $h_2^r$ are well-defined and, by a similar argument as before, they are homotopy operators in degree 2.

To check the inequalities, we endow first $\Omega_{\overline{B}_r}^{\bullet}(G)$ with $C^n$-norms
\[\|\omega\|^G_{n,r}:=\sup_{|\alpha|+k\leq n}\sup_{x\in \overline{B}_r}\|\frac{\partial^{|\alpha|}\omega}{\partial x^{\alpha}}(x)\|_k^G,\]
where $\|\cdot\|_n^G$ are some fixed $C^n$-norms on $G$. Using (\ref{EQ_fixed_point_tame_homotopy}), we compute
\begin{align*}
\|H_1(\eta)\|^G_{n,r}&=\sup_{|\alpha|+k\leq n}\sup_{x\in \overline{B}_r}\|\frac{\partial^{|\alpha|}H_1\eta}{\partial x^{\alpha}}(x)\|_k^G=\sup_{|\alpha|+k\leq n}\sup_{x\in \overline{B}_r}\|H_1\frac{\partial^{|\alpha|}\eta}{\partial x^{\alpha}}(x)\|_k^G\leq\\
&\leq \sup_{|\alpha|+k\leq n}\sup_{x\in \overline{B}_r}C_k\|\frac{\partial^{|\alpha|}\eta}{\partial x^{\alpha}}(x)\|_{k+s}^G\leq C_n \|\eta\|^G_{n+s,r},
\end{align*}
and similarly, for $H_2$ we obtain
\[\|H_2(\theta)\|^G_{n,r}\leq C_n \|\theta\|^G_{n+s,r}.\]
It is not difficult to show that $J$ and $P$ satisfy the inequalities
\[\|J(\eta)\|^G_{n,r}\leq C_n \|\eta\|_{n,r}, \ \ \|P(\omega)\|_{n,r}\leq C_n\|\omega\|^G_{n,r},\]
(see also Lemma \ref{Lemma_pullback_tame}) and combining these with the inequalities for $H_1$ and $H_2$, we
obtain the conclusion.
\end{proof}

\subsection{Inequalities}\label{Inequalities}

In this subsection we list several inequalities which will be used in the proof of Theorem
\ref{Theorem_rigidity_fixed_points}. Most of these results are standard, and for the proofs we refer to chapter
\ref{ChRigidity} where we present more general statements.

A usual convention when dealing with the Nash-Moser techniques (e.g.\ \cite{Ham}), already used in the previous
subsection, and which we also adopt here, is to denote all constants by the same symbol. In the results below we work
with ``big enough'' constants $C_n$ which depend continuously on $r>0$ (the radius of $\overline{B}_r$), on the Lie
algebra $\mathfrak{g}$ and the metric defined on it, and we also work with a ``small enough'' constant $\theta>0$,
which does not depend on $r$.

\subsubsection{Smoothing operators and interpolation inequalities}

The main technical tool used in the Nash-Moser method are the \textbf{smoothing operators}. This is a family of linear
operators\index{smoothing operators}
\[\{S_t^r:\mathfrak{X}^{k}(\overline{B}_r)\rmap \mathfrak{X}^{k}(\overline{B}_r) \}_{t>1,r>0}\]
that satisfy, for all $m\leq n$, $W\in \mathfrak{X}^{k}(\overline{B}_r)$, the inequalities
\begin{equation*}
\|S_t^r(W)\|_{n,r}\leq t^{m}C_{n}\|W\|_{n-m,r},\ \ \|S_t^r(W)-W\|_{n-m,r}\leq t^{-m}C_{n}\|W\|_{n,r},
\end{equation*}
with $C_n>0$ depending continuously on $r>0$. For their existence see Lemma \ref{Lemma_smoothing_operators}.

As remarked in \cite{Ham}, existence of smoothing operators implies that the classical \textbf{interpolation
inequalities} hold for the norms $\|\cdot\|_{n,r}$\index{interpolation inequalities}
\[\|W\|_{l,r}\leq C_{n} (\|W\|_{k,r})^{\frac{n-l}{n-k}}(\|W\|_{n,r})^{\frac{l-k}{n-k}}, \ \ \forall \ W\in\mathfrak{X}^{\bullet}(\overline{B}_r),\]
for $ k\leq l\leq n$, not all equal, with $C_n>0$ depending continuously on $r$.

\subsubsection*{Inequalities of some natural operations}

As a straightforward consequence of the interpolation inequalities, the Schouten bracket on
$\mathfrak{X}^{\bullet}(\overline{B}_r)$ satisfies (see Lemma \ref{L_Bracket})
\[\|[W,V]\|_{n,r}\leq C_n(\|W\|_{0,r}\|V\|_{n+1,r}+\|W\|_{n+1,r}\|V\|_{0,r}).\]

We consider also $C^n$-norms on the space of maps $C^{\infty}(\overline{B}_r,\mathfrak{g}^*)$, defined exactly as
on $\mathfrak{X}(\mathfrak{g}^*)$. Such a map is called a \textbf{local diffeomorphism},\index{local
diffeomorphism} if it can be extended on some neighborhood of $\overline{B}_r$ to an open embedding. Let $I_r$
denote the inclusion $\overline{B}_r\subset \mathfrak{g}^*$. A $C^1$-open around $I_r$ contains only local
diffeomorphisms.

\begin{lemma}[see Lemma \ref{Lemma_embedding}]\label{Lemma_embeddingfixed}
There exists $\theta>0$, such that, if $\psi\in C^{\infty}(\overline{B}_r,\mathfrak{g}^*)$ satisfies $\|\psi-I_r\|_{1,r}<\theta$, then $\psi$ is
a local diffeomorphism.
\end{lemma}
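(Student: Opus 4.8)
The plan is to show that a $C^1$-small perturbation of the identity on a ball is an injective immersion, and then to invoke the elementary fact that an injective local diffeomorphism is an open embedding onto its image. Write $n=\mathrm{dim}(\mathfrak{g})$ and fix any $\theta>0$ with $n\theta<1$, e.g.\ $\theta:=\frac{1}{2n}$. Let $\psi\in C^{\infty}(\overline{B}_r,\mathfrak{g}^*)$ satisfy $\|\psi-I_r\|_{1,r}<\theta$.

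\emph{Local invertibility of the differential.} For $x\in\overline{B}_r$ the Jacobian matrix is $D\psi(x)=\mathrm{Id}+D(\psi-I_r)(x)$, and every entry of $D(\psi-I_r)(x)$ is bounded in absolute value by $\|\psi-I_r\|_{1,r}<\theta$, so its operator norm is at most $n\theta<1$. By the Neumann series $D\psi(x)$ is invertible at every point of $\overline{B}_r$. Since $\psi$ is smooth on an open neighborhood of $\overline{B}_r$, and since the bound at radius $r$ is strict, by continuity of the derivatives and compactness we may pick $R>r$ with $\overline{B}_R$ contained in the domain of $\psi$ and $\|\psi-I_R\|_{1,R}<\frac1n$ still; hence $D\psi$ is invertible on all of $\overline{B}_R$ and, by the inverse function theorem, $\psi$ is a local diffeomorphism near every point of $B_R$.

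\emph{Global injectivity.} Because $\overline{B}_R$ is convex, for $x,y\in\overline{B}_R$ we may write
\[\psi(x)-\psi(y)-(x-y)=\int_{0}^{1}D(\psi-I_R)\bigl(y+t(x-y)\bigr)(x-y)\,dt,\]
so that $|\psi(x)-\psi(y)-(x-y)|\le n\,\|\psi-I_R\|_{1,R}\,|x-y|<|x-y|$. Consequently $|\psi(x)-\psi(y)|\ge (1-n\|\psi-I_R\|_{1,R})\,|x-y|>0$ whenever $x\ne y$, so $\psi$ is injective on $\overline{B}_R$, in particular on the open ball $B_R$.

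\emph{Conclusion.} The restriction $\psi|_{B_R}\colon B_R\to\mathfrak{g}^*$ is an injective local diffeomorphism between manifolds of equal dimension; such a map is automatically open and a homeomorphism onto its (open) image, hence a diffeomorphism onto an open subset of $\mathfrak{g}^*$. Since $\overline{B}_r\subset B_R$, this exhibits $\psi$ as the restriction to $\overline{B}_r$ of an open embedding defined on the neighborhood $B_R$ of $\overline{B}_r$, which is precisely the claim. The only genuinely global ingredient is the injectivity step, obtained from convexity of the ball and the mean value inequality; the rest is the inverse function theorem applied pointwise together with the slack built into the strict $C^1$-bound, which lets us enlarge $\overline{B}_r$ slightly. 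The same scheme works in the general setting of Lemma \ref{Lemma_embedding}, where balls are replaced by suitable tube-like neighborhoods that are still ``fibrewise convex''.
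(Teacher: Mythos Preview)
Your proof is correct and follows the same approach as the paper's proof of the general Lemma~\ref{Lemma_embedding}: bound the entries of $D(\psi-I_r)$ to get invertibility of $D\psi$ via a Neumann series, then use the mean-value inequality on the convex domain to get injectivity. The only difference is packaging: the paper works directly on the closed domain (invoking that an injective immersion from a compact set extends to an open embedding), whereas you pass to a slightly larger ball---which tacitly uses that a smooth map on $\overline{B}_r$ extends smoothly to a neighborhood (Seeley), a standard fact you might cite explicitly.
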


The composition satisfies the following inequalities.
\begin{lemma}[see Lemma \ref{Lemma_composition}]
There are constants $C_n> 0$, depending continuously on $0<s< r$, such that for and all $\varphi\in
C^{\infty}(\overline{B}_s,B_r)$ satisfying $\|\varphi-I_s\|_{1,s}<1$, and all $\psi\in
C^{\infty}(\overline{B}_r,\mathfrak{g}^*)$, the following hold
\begin{align*}
\|\psi\circ \varphi-I_s\|_{n,s}&\leq \|\psi-I_r\|_{n,r}+\|\varphi-I_s\|_{n,s}+\\
&+ C_n(\|\psi-I_r\|_{n,r}\|\varphi-I_s\|_{1,s}+\|\varphi-I_s\|_{n,s}\|\psi-I_r\|_{1,r}),\\
\|\psi\circ \varphi-\psi\|_{n,s}&\leq \|\varphi-I_s\|_{n,s}+\\
&+ C_n(\|\psi-I_r\|_{n+1,r}\|\varphi-I_s\|_{1,s}+\|\varphi-I_s\|_{n,s}\|\psi-I_r\|_{1,r}).
\end{align*}
\end{lemma}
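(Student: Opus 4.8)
The plan is to write $u:=\psi-I_r$ and $v:=\varphi-I_s$ and to reduce both inequalities to estimates for the composition term $u\circ\varphi$. Since $\psi(y)=y+u(y)$, we have $\psi\circ\varphi=I_s+v+u\circ\varphi$, so that $\psi\circ\varphi-I_s=v+u\circ\varphi$ and $\psi\circ\varphi-\psi=v+(u\circ\varphi-u)$; by the triangle inequality it therefore suffices to prove
\[\|u\circ\varphi\|_{n,s}\le \|u\|_{n,r}+C_n\big(\|u\|_{n,r}\|v\|_{1,s}+\|u\|_{1,r}\|v\|_{n,s}\big)\]
and
\[\|u\circ\varphi-u\|_{n,s}\le C_n\big(\|u\|_{n+1,r}\|v\|_{1,s}+\|u\|_{1,r}\|v\|_{n,s}\big).\]
I would prove these for $n\ge 2$, the cases $n\le 1$ being immediate from the chain rule. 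Throughout I use: the interpolation inequalities stated above, valid for the norms $\|\cdot\|_{\bullet,r}$ and for maps into $\mathfrak{g}^*$, with constants depending continuously on the radius; the monotonicity $\|\cdot\|_{m,r}\le\|\cdot\|_{m+1,r}$; the hypotheses $\|v\|_{1,s}<1$ and $\varphi(\overline{B}_s)\subset B_r$ (so that $(\partial^\alpha u)\circ\varphi$ is defined and $\|(\partial^\alpha u)\circ\varphi\|_{0,s}\le\|u\|_{|\alpha|,r}$); and the elementary inequality $x^{\theta}y^{1-\theta}\le x+y$ for $\theta\in[0,1]$, $x,y\ge 0$.

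For the first estimate, fix $\alpha$ with $|\alpha|=n$, expand $\partial^\alpha(u\circ\varphi)$ by the Fa\`{a} di Bruno formula, substitute $\partial_i\varphi_j=\delta_{ij}+\partial_iv_j$, and multiply out. This gives
\[\partial^\alpha(u\circ\varphi)=(\partial^\alpha u)\circ\varphi+R_\alpha,\]
where the \emph{pure term} $(\partial^\alpha u)\circ\varphi$ is the contribution of the partition into $n$ singletons with all $\delta$'s selected (sup-norm $\le\|u\|_{n,r}$), and $R_\alpha$ is a sum of finitely many monomials, each containing at least one factor $\partial^Bv$ with $|B|\ge 1$. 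A typical monomial of $R_\alpha$ has, up to a combinatorial constant and bounded $\delta$-factors, the form $(\partial^p u)\circ\varphi\cdot\prod_{j=1}^r\partial^{B_j}v$, where $B_1,\dots,B_r$ are its ``$v$-blocks'' (so $r\ge 1$), $d_j:=|B_j|$, and $p$ is the total number of Fa\`{a} di Bruno blocks; a count of derivative slots yields the identity $\tau:=\sum_{j=1}^r(d_j-1)=n-p$, independent of $r$, with $0\le\tau\le n-1$. Hence such a monomial is bounded in sup-norm by $C_n\|u\|_{n-\tau,r}\prod_{j=1}^r\|v\|_{d_j,s}$.

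Now set $a:=\|u\|_{1,r}$, $A:=\|u\|_{n,r}$, $b:=\|v\|_{1,s}$, $B:=\|v\|_{n,s}$ and $z:=\tau/(n-1)\in[0,1]$. Interpolating $\|u\|_{n-\tau,r}$ between degrees $1$ and $n$, and each $\|v\|_{d_j,s}$ between degrees $1$ and $n$, gives
\[\|u\|_{n-\tau,r}\le C_n\,a^{z}A^{1-z},\qquad \prod_{j=1}^r\|v\|_{d_j,s}\le C_n\,b^{\,r-z}B^{z}.\]
Since $0<b<1$ and $r\ge 1$ we have $b^{\,r-z}\le b^{\,1-z}$, so the monomial is at most $C_n\,a^{z}A^{1-z}b^{1-z}B^{z}=C_n(aB)^{z}(Ab)^{1-z}\le C_n(aB+Ab)$. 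Summing over the monomials of $R_\alpha$ and over $|\alpha|\le n$, and adding back the pure terms, yields $\|u\circ\varphi\|_{n,s}\le\|u\|_{n,r}+C_n(aB+Ab)$, which is the first estimate; together with $\psi\circ\varphi-I_s=v+u\circ\varphi$ this proves the first inequality of the lemma.

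For the second estimate the key observation is that the pure term cancels against the subtracted $u$: for $|\alpha|=n$,
\[\partial^\alpha(u\circ\varphi-u)=R_\alpha+\big((\partial^\alpha u)\circ\varphi-\partial^\alpha u\big).\]
The summand $R_\alpha$ is controlled exactly as above, by $C_n(\|u\|_{n,r}\|v\|_{1,s}+\|u\|_{1,r}\|v\|_{n,s})\le C_n(\|u\|_{n+1,r}\|v\|_{1,s}+\|u\|_{1,r}\|v\|_{n,s})$. For the other summand, the fundamental theorem of calculus gives $(\partial^\alpha u)\circ\varphi-\partial^\alpha u=\int_0^1\big[(\nabla\partial^\alpha u)\circ(I_s+tv)\big]\cdot v\,dt$, whence its sup-norm is at most $\|v\|_{0,s}\,\|\nabla\partial^\alpha u\|_{0,r}\le\|v\|_{1,s}\,\|u\|_{n+1,r}$. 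The same bounds hold a fortiori for $|\alpha|<n$, and $\|u\circ\varphi-u\|_{0,s}\le\|v\|_{1,s}\|u\|_{1,r}$; summing proves the second estimate, and with $\psi\circ\varphi-\psi=v+(u\circ\varphi-u)$ the second inequality follows. All constants depend only on $n$ and, through the interpolation inequalities, continuously on $0<s<r$. The main obstacle is the bookkeeping of the third paragraph: one must peel off the unique ``pure'' Fa\`{a} di Bruno monomial so that the leading terms $\|u\|_{n,r}$ and $\|\varphi-I_s\|_{n,s}$ occur with coefficient $1$, verify the slot-count identity $\tau=n-p$, and check that interpolation, combined with $\|v\|_{1,s}<1$, collapses every remaining monomial into the two cross-terms $\|u\|_{n,r}\|v\|_{1,s}$ and $\|u\|_{1,r}\|v\|_{n,s}$ --- the decisive point being that the interpolation exponents $z$ and $1-z$ produced on the $u$-factor and the $v$-factors are exactly complementary.
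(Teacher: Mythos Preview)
Your proof is correct and follows essentially the same approach as the paper's proof of the more general Lemma~\ref{Lemma_composition}: write $\psi=I+u$, $\varphi=I+v$, expand $\partial^\alpha(u\circ\varphi)$ by Fa\`a di Bruno, separate the pure term $(\partial^\alpha u)\circ\varphi$, interpolate each remaining monomial between degrees $1$ and $n$ so that the exponents on the $u$- and $v$-factors are complementary, and collapse via $x^\lambda y^{1-\lambda}\le x+y$; for the second inequality, use the fundamental theorem of calculus on $(\partial^\alpha u)\circ\varphi-\partial^\alpha u$. Your bookkeeping of $\delta$-blocks versus $v$-blocks and the slot-count $\tau=n-p$ is slightly more explicit than the paper's, but the argument is the same.
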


These inequalities imply a criterion for convergence of compositions.
\begin{lemma}[see Lemma \ref{Lemma_convergent_embbedings}]\label{Lemma_convergent_embbedingsfixed}
There exists $\theta>0$, such that for all sequences
\[\{\varphi_{k}\in C^{\infty}(\overline{B}_{r_k},B_{r_{k-1}})\}_{k\geq 1},\] where $0<r<r_{k}<r_{k-1}< R$, which satisfy
\[\sigma_1:=\sum_{k\geq 1}\|\varphi_k-I_{r_k}\|_{1,r_k}<\theta\ \textrm{ and  }\  \sum_{k\geq 1}\|\varphi_k-I_{r_k}\|_{n,r_k}<\infty,\]
the sequence of maps
\[\psi_k:=\varphi_1\circ\ldots\circ \varphi_{k|r}:\overline{B}_{r}\rmap B_{R},\]
converges in all $C^n$-norms to a smooth map $\psi\in C^{\infty}(\overline{B}_r,B_{R})$ which satisfies
\[\|\psi-I_{r}\|_{1,r}\leq C\sigma_1,\]
for some constant $C>0$ depending continuously on $r$ and $R$.
\end{lemma}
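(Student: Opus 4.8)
The plan is to bootstrap from the $C^1$-estimates to uniform bounds in every $C^n$-norm, and then to show that the partial composites form a Cauchy sequence in each $C^n$-norm on $\overline{B}_r$. Throughout, set $\Psi_j:=\varphi_1\circ\cdots\circ\varphi_j$; since $r_j<r_{j-1}$ and $\varphi_j(\overline{B}_{r_j})\subset B_{r_{j-1}}$, the images nest, so $\Psi_j$ is a well-defined smooth map on $\overline{B}_{r_j}$ with values in $B_R$, and $\psi_k=\Psi_k|_{\overline{B}_r}$. (Shrinking $\theta$ if necessary, each $\varphi_j$ is in addition a local diffeomorphism by Lemma \ref{Lemma_embeddingfixed}.) Because all radii that occur lie in the fixed interval $[r,R]$, the constants $C_n$ furnished by the two composition inequalities of the preceding lemma may be taken independent of $k$, and we also take $\theta\le 1$ so that the hypothesis $\|\varphi-I_s\|_{1,s}<1$ of those inequalities is met for every $\varphi_j$.

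For the $C^1$-step I would apply the first composition inequality with $\psi=\Psi_{j-1}$ (on $\overline{B}_{r_{j-1}}$) and $\varphi=\varphi_j:\overline{B}_{r_j}\to B_{r_{j-1}}$. Writing $a_j:=\|\Psi_j-I_{r_j}\|_{1,r_j}$ and $b_j:=\|\varphi_j-I_{r_j}\|_{1,r_j}$, it yields a recursion of the form
\[
a_j\le (1+C_1 b_j)\,a_{j-1}+C_1 b_j,\qquad a_0=0.
\]
Since $\sum_j b_j=\sigma_1<\theta$, a discrete Gr\"onwall estimate gives $a_k\le C_1 e^{C_1\sigma_1}\sigma_1\le C\sigma_1$ for all $k$; passing to the limit will then produce the asserted bound $\|\psi-I_r\|_{1,r}\le C\sigma_1$.

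For the $C^n$-step, now that the $C^1$-norms are bounded by a constant $K$ independent of $k$, the first composition inequality in degree $n$ gives
\[
\|\Psi_j-I_{r_j}\|_{n,r_j}\le (1+C_n b_j)\,\|\Psi_{j-1}-I_{r_{j-1}}\|_{n,r_{j-1}}+(1+C_n K)\,\|\varphi_j-I_{r_j}\|_{n,r_j}.
\]
As $\prod_j(1+C_n b_j)\le e^{C_n\sigma_1}<\infty$ and $\sum_j\|\varphi_j-I_{r_j}\|_{n,r_j}<\infty$ by hypothesis, iterating this recursion bounds $\|\Psi_k-I_{r_k}\|_{n,r_k}$ uniformly in $k$, for every $n$.

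Finally, for convergence I would use that on $\overline{B}_r$ one has $\psi_k-\psi_{k-1}=\Psi_{k-1}\circ(\varphi_k|_{\overline{B}_r})-\Psi_{k-1}$, so the second composition inequality (the one estimating $\|\psi\circ\varphi-\psi\|_{n,s}$), together with $\|\varphi_k|_{\overline{B}_r}-I_r\|_{m,r}\le\|\varphi_k-I_{r_k}\|_{m,r_k}$, gives
\[
\|\psi_k-\psi_{k-1}\|_{n,r}\le \|\varphi_k-I_{r_k}\|_{n,r_k}+C_n\bigl(\|\Psi_{k-1}-I_{r_{k-1}}\|_{n+1,r_{k-1}}\,b_k+\|\varphi_k-I_{r_k}\|_{n,r_k}\,\|\Psi_{k-1}-I_{r_{k-1}}\|_{1,r_{k-1}}\bigr).
\]
By the uniform bounds of the previous step (applied here also in degree $n+1$) and the two summability hypotheses, the right-hand side is summable over $k$, so $(\psi_k)$ is Cauchy in the $C^n$-norm on $\overline{B}_r$ for every $n$; hence it converges to some $\psi\in C^{\infty}(\overline{B}_r,B_R)$, and the $C^1$-bound on $\psi$ follows by continuity. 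The one point to watch is the loss of one derivative in this last inequality — the $n+1$ appearing on the right — but it is harmless exactly because the $C^n$-step already delivers uniform bounds in all degrees, so no interpolation or smoothing operator is needed here: the tameness of the composition estimates together with the two summability assumptions does all the work. The main obstacle is really just setting up the bookkeeping so that all constants are genuinely uniform in $k$ despite the radii $r_k$ possibly clustering near their limit.
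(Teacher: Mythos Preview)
Your argument is correct and follows essentially the same strategy as the paper's proof of the general Lemma~\ref{Lemma_convergent_embbedings}: bound the partial composites uniformly via the first composition inequality and a discrete Gr\"onwall/product estimate, then use the second composition inequality (absorbing the one-derivative loss with the already-established uniform $C^{n+1}$ bounds) to show the sequence is Cauchy in every $C^n$-norm. In the Euclidean-ball setting here you correctly omit the bookkeeping with the auxiliary chart parameters $\delta_k$ that the paper needs in the vector-bundle version; since all radii lie in $[r,R]$ the relevant constants are indeed uniform in $k$.
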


We list some properties of the flow.

\begin{lemma}[see Lemmas \ref{Lemma_domain_of_flow}, \ref{Lemma_size of _the flow},
\ref{Lemma_tame_flow_pull_back}]\label{Lemma_domain_of_flowfixed} There exists $\theta>0$ such that for all
$0<s<r$ and all $X\in \mathfrak{X}^1(\overline{B}_r)$ with $\|X\|_{0,r}<(r-s)\theta$, the flow of $X$ is defined for
$t\in [0,1]$ between
\[\varphi^t_{X}: \overline{B}_{s}\rmap B_{r}.\]
Moreover, there are $C_n>0$, which depend continuously on $r$, such that if $X$ also satisfies $\|X\|_{1,r}<\theta$, then
$\varphi_X:=\varphi_X^1$ has the following properties
\begin{align*}
&\|\varphi_{X}-I_s\|_{n,s}\leq C_n\|X\|_{n,r},\\
&\|\varphi_{X}^*(W)\|_{n,s} \leq C_n (\|W\|_{n,r}+\|W\|_{0,r}\|X\|_{n+1,r}),\\
&\|\varphi_{X}^*(W)-W_{|s}\|_{n,s}\leq C_n(\|X\|_{n+1,r}\|W\|_{1,r}+\|X\|_{1,r}\|W\|_{n+1,r}),\\
&\|\varphi_{X}^*(W)-W_{|s}- \varphi_{X}^*([X,W])\|_{n,s} \leq \\
&\phantom{\|\varphi_{X}^*(W)-W_{|s}- \varphi_{X}^*}\leq C_n\|X\|_{0,r}(\|X\|_{n+2,r}\|W\|_{2,r}+\|X\|_{2,r}\|W\|_{n+2,r}),
\end{align*}
for all $W\in\mathfrak{X}^{\bullet}(\overline{B}_r)$.
\end{lemma}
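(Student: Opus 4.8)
The plan is to treat every assertion as an exercise in the ``tame calculus'' set up in the preceding subsections: each follows by differentiating an explicit integral formula in the space variable, applying the Leibniz and higher-order chain rules, and then invoking the interpolation inequalities to absorb all nonlinear, lower-order terms into the constants $C_n$ --- the smallness hypothesis $\|X\|_{1,r}<\theta$ being precisely what makes such absorption legitimate. None of this is deep; the general statements are proved in Chapter~\ref{ChRigidity}, and below I only outline the mechanism.

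\emph{Existence of the flow.} It solves $\tfrac{d}{dt}\varphi^t_X(x)=X(\varphi^t_X(x))$, $\varphi^0_X=\mathrm{Id}$, equivalently $\varphi^t_X(x)-x=\int_0^t X(\varphi^\tau_X(x))\,d\tau$. Taking $\theta\le 1$, a standard continuation argument shows $\varphi^t_X$ maps $\overline{B}_s$ into $B_r$ for all $t\in[0,1]$, since a trajectory starting in $\overline{B}_s$ moves a distance at most $\|X\|_{0,r}<(r-s)\theta\le r-s$. For the bound on $\varphi_X-I_s$, differentiate the integral formula $|\alpha|$ times in $x$: the top derivative $D^\alpha\varphi^t_X$ satisfies a linear ODE whose zeroth-order coefficient involves only $DX$ (of size $<\theta$) and whose forcing term is a universal polynomial in the $D^\beta\varphi^t_X$, $|\beta|<|\alpha|$, and $D^\gamma X$, $|\gamma|\le|\alpha|$, that is linear in $D^\alpha X$. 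A Gronwall estimate, combined with induction on $|\alpha|$ and interpolation to control the lower-order products, gives $\|\varphi_X-I_s\|_{n,s}\le C_n\|X\|_{n,r}$.

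\emph{Pullback estimates.} In local coordinates $\varphi_X^*(W)$ equals $W\circ\varphi_X$ contracted with Jacobians of $\varphi_X$ and $\varphi_X^{-1}$ (the latter being a diffeomorphism onto its image, with $C^n$-sizes controlled by those of $\varphi_X$ once $\|X\|_{1,r}<\theta$). Differentiating $n$ times, the dangerous terms are $D^nW\circ\varphi_X$, bounded by $\|W\|_{n,r}$, and $W\circ\varphi_X$ times $D^{n+1}\varphi_X$, bounded by $\|W\|_{0,r}\|X\|_{n+1,r}$ via the previous step; all remaining terms have their derivatives split between the two factors and are handled by interpolation. This gives the second inequality, and it holds uniformly for $\varphi^t_X$, $t\in[0,1]$, since $\varphi^t_X=\varphi_{tX}$ and $\|tX\|_{k,r}\le\|X\|_{k,r}$. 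The third and fourth inequalities then follow from integrating $\tfrac{d}{dt}\varphi^{t*}_X=\varphi^{t*}_X\circ L_X=\varphi^{t*}_X\circ[X,\cdot\,]$, which yields
\[
\varphi_X^*(W)-W_{|s}=\int_0^1\varphi^{t*}_X\big([X,W]\big)\,dt\,;
\]
applying the second inequality to $[X,W]$ and inserting the bracket estimate (Lemma~\ref{L_Bracket}) $\|[X,W]\|_{n,r}\le C_n(\|X\|_{0,r}\|W\|_{n+1,r}+\|X\|_{n+1,r}\|W\|_{0,r})$, one collects the resulting products --- using $\|X\|_{1,r}<\theta$ --- into the stated form.

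\emph{The second-order remainder, and the main obstacle.} The last inequality is the delicate one. I would iterate the identity above: setting $G(t):=\varphi^{t*}_X([X,W])$ one has $G'(t)=\varphi^{t*}_X([X,[X,W]])$, whence
\[
\varphi_X^*(W)-W_{|s}-\varphi_X^*([X,W])=\int_0^1\!\big(G(t)-G(1)\big)\,dt=-\int_0^1\!\!\int_t^1\varphi^{\tau*}_X\big([X,[X,W]]\big)\,d\tau\,dt.
\]
Applying the second pullback inequality to the \emph{double} bracket and then the bracket estimate twice produces, schematically, the norms $\|X\|_{0,r}\|[X,W]\|_{n+1,r}$, $\|X\|_{n+1,r}\|[X,W]\|_{0,r}$ and $\|X\|_{n+1,r}\|[X,[X,W]]\|_{0,r}$; expanding $[X,W]$ once more, and invoking the interpolation identity $\|X\|_{1,r}\|X\|_{n+1,r}\le C_n\|X\|_{0,r}\|X\|_{n+2,r}$ together with $\|X\|_{2,r}<\theta$, one rearranges the derivatives so that a factor $\|X\|_{0,r}$ can be pulled out, leaving $C_n\|X\|_{0,r}(\|X\|_{n+2,r}\|W\|_{2,r}+\|X\|_{2,r}\|W\|_{n+2,r})$. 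The heart of the matter --- the step I expect to require the most care --- is exactly this redistribution: each of the two nested brackets must be made to surrender a factor of $X$, one of which is to survive as the outer $\|X\|_{0,r}$, while the total number of derivatives falling on the other factors must stay within the budget $n+2$. This is where the convexity (interpolation) of the $C^n$-norms is used most sharply, and it explains why this estimate is stated with $n+2$ derivatives and with the weaker weights $\|W\|_{2,r}$, $\|X\|_{2,r}$ on the low-order factors.
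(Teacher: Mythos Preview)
Your approach is essentially the same as the paper's: the integral formula for the flow plus Gronwall and induction gives the first estimate, the local-coordinate expansion with interpolation gives the second, and the integral identities $\varphi_X^*(W)-W=\int_0^1\varphi^{t*}_X([X,W])\,dt$ and (for the last one) its iterate reduce the remaining inequalities to the second one applied to $[X,W]$ and $[X,[X,W]]$. Your double integral $-\int_0^1\int_t^1\varphi^{\tau*}_X([X,[X,W]])\,d\tau\,dt$ is, by Fubini, exactly the paper's $-\int_0^1 t\,\varphi^{t*}_X([X,[X,W]])\,dt$.

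One small correction: in the final redistribution you invoke ``$\|X\|_{2,r}<\theta$'', but the hypothesis only gives $\|X\|_{1,r}<\theta$. The paper handles the offending term $\|X\|_{2,r}\|X\|_{n+1,r}$ by first interpolating it to $C_n\|X\|_{1,r}\|X\|_{n+2,r}$ and \emph{then} using $\|X\|_{1,r}\le\theta$; with that adjustment your argument goes through unchanged.
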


\subsection{Proof of Theorem \ref{Theorem_rigidity_fixed_points}}

We will construct $\psi$ as a limit of open embeddings $\psi_k:\overline{B}_{r}\hookrightarrow B_{R}$, chosen such
that the difference $\psi_k^*(\pi)-\pi_{\mathfrak{g}|{r}}$ converges to zero. Denote by
\[\alpha:=2(s+3), \  p:=7s+17,\]
where $s=\lfloor\frac{1}{2}\mathrm{dim}(\mathfrak{g})\rfloor$, and consider the sequences 
\[\begin{array}{ccc}
  \epsilon_0:=\mathrm{min}\{\frac{1}{4},\frac{R-r}{2}\}, & r_0:=R, &  t_0:={\delta}^{-1/\alpha}, \\
  \epsilon_{k+1}:=\epsilon_k^{3/2}, & r_{k+1}:=r_k-\epsilon_k, & t_{k+1}:=t_k^{3/2},
\end{array}\]
where $\delta=\delta(r,R)\in (0,1)$ will be fixed later on. By our choice of $\epsilon_0$,
\begin{align*}
\sum_{k=0}^{\infty} \epsilon_k=\sum_{k=0}^{\infty} \epsilon_0^{(3/2)^k}<\sum_{k=0}^{\infty} \epsilon_0^{1+\frac{k}{2}}=\frac{\epsilon_0}{1-\sqrt{\epsilon_0}}\leq (R-r),
\end{align*}
hence $r<r_k< R$, for all $k\geq 1$.

Denote the smoothing operators by $S_{k}:=S_{t_k}^{r_k}$. 
Consider the sequences
\[\{X_k\in\mathfrak{X}^1(\overline{B}_{r_k})\}_{k\geq 0}, \ \ \{\pi_k\in\mathfrak{X}^{2}(\overline{B}_{r_k})\}_{k\geq 0},\]
defined by the following recursive procedure:
\[\pi_0:=\pi,\,\,\,\,\, Z_k:=\pi_k-\pi_{\mathfrak{g}|{r_k}},\,\,\,\,\, X_k:=S_k(h_1^{r_k}(Z_k)),\,\,\,\,\, \pi_{k+1}:=\varphi_{X_k}^*(\pi_k).\]

We will prove by induction the following statements (note that the first ensures that the procedure is well-defined for all $k$):\\
\noindent $(a_k)$: the flow of $X_k$ at time one is defined as a map
\[\varphi_{X_k}:\overline{B}_{r_{k+1}}\rmap B_{r_k}.\]
\noindent $(b_k)$: $Z_k$ satisfies the inequalities
\[\|Z_k\|_{s,r_k}\leq t_k^{-\alpha}, \ \ \|Z_k\|_{p,r_k}\leq t_k^{\alpha}.\]

By hypothesis, $(b_0)$ holds
\[\|Z_0\|_{p,R}=\|\pi-\pi_{\mathfrak{g}}\|_{p,R}<\delta=t_{0}^{-\alpha}.\]
We will show that $(b_k)$ implies $(a_k)$ and $(b_{k+1})$.

First we give a bound for the norms of $X_k$ in terms of the norms of $Z_k$
\begin{align}\label{EQ_X_n_lfixed}
\|X_k\|_{m,r_k}&=\|S_{k}(h_1^{r_k}(Z_k))\|_{m,r_k}\leq C_mt_k^{l}\|h_1^{r_k}(Z_k)\|_{m-l,r_k}\leq\\
\nonumber &\leq C_mt_k^{l}\|Z_k\|_{m+s-l,r_k},
\end{align}
for all $0\leq l\leq m$. In particular, for $m=l$, using $(b_k)$, we obtain
\begin{align}\label{EQ_X_n_nfixed}
\|X_k\|_{m,r_k}&\leq C_mt_k^{m-\alpha}.
\end{align}
Taking $\delta$ such that $\delta^{-1/\alpha}=t_0>\epsilon_0^{-1}$, we have that $t_k>\epsilon_k^{-1}$. Using that $\alpha>3$ and
(\ref{EQ_X_n_lfixed}), we obtain
\begin{equation}\label{EQ_X_12fixed}
\|X_k\|_{1,r_k}\leq Ct_k^{1-\alpha}\leq Ct_0^{-1}t_k^{-1}<C\delta^{1/\alpha}\epsilon_k.
\end{equation}
By shrinking $\delta$, we get that $\|X_k\|_{1,r_k}\leq \theta\epsilon_k$, so $X_k$ satisfies the hypothesis of Lemma
\ref{Lemma_domain_of_flowfixed}, in particular $(a_k)$ holds.

We deduce now an inequality for all norms $\|Z_{k+1}\|_{n,r_{k+1}}$, with $n\geq s$
\begin{align}\label{EQ_Z_nfixed}
\|Z_{k+1}&\|_{n,r_{k+1}}=\|\varphi_{X_k}^{*}(Z_k)+\varphi_{X_k}^{*}(\pi_{\mathfrak{g}})-\pi_{\mathfrak{g}}\|_{n,r_{k+1}}\leq\\
\nonumber &\leq C_n(\|Z_k\|_{n,r_{k}}+\|X_k\|_{n+1,r_k}\|Z_k\|_{0,r_k}+\|X_k\|_{n+1,r_k}\|\pi_{\mathfrak{g}}\|_{n+1,r_k})\leq\\
\nonumber&\leq C_n(\|Z_k\|_{n,r_{k}}+\|X_k\|_{n+1,r_k})\leq C_nt_k^{s+1}\|Z_k\|_{n,r_k},
\end{align}
where we used Lemma \ref{Lemma_domain_of_flowfixed}, the inductive hypothesis and inequality (\ref{EQ_X_n_lfixed}) with $m=n+1$ and $l=s+1$. For
$n=p$, using also that $s+1+\alpha< \frac{3}{2}\alpha-1$ and by shrinking $\delta$, this gives the second part of $(b_{k+1})$:
\begin{align*}
\|Z_{k+1}\|_{p,r_{k+1}}&\leq Ct_k^{s+1+\alpha}< Ct_k^{\frac{3}{2}\alpha-1}\leq Ct_0^{-1}t_{k+1}^{\alpha}=C\delta^{1/\alpha}t_{k+1}^{\alpha}\leq t_{k+1}^{\alpha}.
\end{align*}

To prove the first part of $(b_{k+1})$, we write $Z_{k+1}=V_k+\varphi_{X_k}^{*}(U_{k})$, where
\[V_k:=\varphi_{X_k}^{*}(\pi_{\mathfrak{g}})-\pi_{\mathfrak{g}}-\varphi_{X_k}^{*}([X_k,\pi_{\mathfrak{g}}]),\ \ U_{k}:=Z_{k}-[\pi_{\mathfrak{g}},X_k].\]
Using Lemma \ref{Lemma_domain_of_flowfixed} and inequality (\ref{EQ_X_n_nfixed}), these terms can be bounded by
\begin{align}\label{EQ_Vfixed}
\|V_k\|_{s,r_{k+1}}&\leq C\|\pi_{\mathfrak{g}}\|_{s+2,r_k}\|X_k\|_{0,r_k}\|X_k\|_{s+2,r_k}\leq  Ct_k^{s+2-2\alpha},\\
\label{EQ_Ufixed}\|\varphi_{X_k}^{*}(U_{k})\|_{s,r_{k+1}}&\leq C(\|U_k\|_{s,r_k}+\|U_k\|_{0,r_k}\|X_k\|_{s+1,r_{k}})\leq\\
\nonumber&\leq C(\|U_k\|_{s,r_k}+t_k^{s+1-\alpha}\|U_k\|_{0,r_k}).
\end{align}
To compute the $C^s$-norm for $U_k$, we rewrite it as
\begin{align*}
U_k&=Z_k-[\pi_{\mathfrak{g}},X_k]=[\pi_{\mathfrak{g}},h_1^{r_k}(Z_k)]+h_2^{r_k}([\pi_{\mathfrak{g}},Z_k])-[\pi_{\mathfrak{g}},X_k]=\\
&=[\pi_{\mathfrak{g}},(I-S_k)h_1^{r_k}(Z_k)]-\frac{1}{2}h_2^{r_k}([Z_k,Z_k]).
\end{align*}
By tameness of the Lie bracket, the first term can be bounded by
\begin{align*}
\|[\pi_{\mathfrak{g}},(I-S_k)& h_1^{r_k}(Z_k)]\|_{s,r_k}\leq C \|(I-S_k)h_1^{r_k}(Z_k)\|_{s+1,r_k}\leq\\
&\leq C t_k^{2s+1-p} \|h_1^{r_k}(Z_k)\|_{p-s,r_k}\leq C t_k^{2s+1-p} \|Z_k\|_{p,r_k}\leq\\
&\leq C t_k^{2s+1-p+\alpha}= C t_k^{-\frac{3}{2}\alpha-1},
\end{align*}
and using also the interpolation inequalities, for the second term we obtain
\begin{align*}
\|\frac{1}{2}h_2^{r_k}([Z_k,Z_k])&\|_{s,r_k}\leq C\|[Z_k,Z_k]\|_{2s,r_k}\leq C\|Z_k\|_{0,r_k}\|Z_k\|_{2s+1,r_k}\leq\\
&\leq C t_{k}^{-\alpha}\|Z_k\|_{s,r_k}^{\frac{p-(2s+1)}{p-s}}\|Z_k\|_{p,r_k}^{\frac{s+1}{p-s}}\leq C t_k^{-\alpha(1+\frac{p-(3s+2)}{p-s})}.
\end{align*}
Since $-\alpha(1+\frac{p-(3s+2)}{p-s})\leq -\frac{3}{2}\alpha-1$, these two inequalities imply that
\begin{equation}\label{EQ_U_sfixed}
\|U_k\|_{s,r_k}\leq Ct_k^{-\frac{3}{2}\alpha-1}.
\end{equation}
Using (\ref{EQ_X_n_nfixed}), we bound the $C^0$-norm of $U_k$ by
\begin{equation}\label{EQ_U_0fixed}
\|U_k\|_{0,r_{k}}\leq \|Z_k\|_{0,r_{k}}+\|[\pi_{\mathfrak{g}},X_{k}]\|_{0,r_k}\leq t_k^{-\alpha}+C\|X_{k}\|_{1,r_k}\leq Ct_{k}^{1-\alpha}.
\end{equation}
Using (\ref{EQ_Vfixed}), (\ref{EQ_Ufixed}), (\ref{EQ_U_sfixed}), (\ref{EQ_U_0fixed}) and that $s+2-2\alpha= -\frac{3}{2}\alpha-1$ we get
\begin{align*}
\|Z_{k+1}\|_{s,r_{k+1}}\leq C(t_k^{s+2-2\alpha}+t_k^{-\frac{3}{2}\alpha-1})\leq Ct_k^{-\frac{3}{2}\alpha-1}\leq C\delta^{1/\alpha}t_{k}^{-\frac{3}{2}\alpha}= C\delta^{1/\alpha}t_{k+1}^{-\alpha},
\end{align*}
and by taking $\delta$ even smaller, this finishes the induction.

From (\ref{EQ_Z_nfixed}) we conclude that for each $n\geq 1$, there is $k_n\geq 0$, such that
\[\|Z_{k+1}\|_{n,r_{k+1}}\leq t_k^{s+2}\|Z_{k}\|_{n,r_{k}}, \ \ \forall\ k\geq k_n.\]
By iterating this, we obtain
\[t_k^{s+2}\|Z_{k}\|_{n,r_{k}}\leq (t_kt_{k-1}\ldots t_{k_n})^{s+2}\|Z_{k_n}\|_{n,r_{k_n}}.\]
On the other hand we have that
\[t_kt_{k-1}\ldots t_{k_n}=t_{k_n}^{1+\frac{3}{2}+\ldots+(\frac{3}{2})^{k-k_n}}\leq t_{k_n}^{2(\frac{3}{2})^{k+1-k_n}}=t_k^3.\]
Therefore, we obtain a bound valid for all $k>k_n$
\[\|Z_{k}\|_{n,r_{k}}\leq t_{k}^{2(s+2)}\|Z_{k_n}\|_{n,r_{k_n}}.\]
Consider now $m>s$ and denote by $n:=4m-3s$. Applying the interpolation inequalities, for $k> k_{n}$, we obtain
\begin{eqnarray*}
\|Z_k\|_{m,r_k}&\leq& C_m  \|Z_k\|_{s,r_k}^{\frac{n-m}{n-s}}\|Z_k\|_{n,r_k}^{\frac{m-s}{n-s}}=C_m  \|Z_k\|_{s,r_k}^{\frac{3}{4}}\|Z_k\|_{n,r_k}^{\frac{1}{4}}\leq \\
&\leq& C_m t_k^{-\frac{3}{4}\alpha+\frac{1}{2}(s+2)}\|Z_{k_{n}}\|_{n,r_{k_{n}}}^{\frac{1}{4}}= C_m t_k^{-(s+\frac{7}{2})} \|Z_{k_{n}}\|_{n,r_{k_{n}}}^{\frac{1}{4}}.
\end{eqnarray*}
This inequality and inequality (\ref{EQ_X_n_lfixed}), for $l=s$, imply
\begin{align*}
\|X_{k}\|_{m,r_k}\leq C_m t_k^{s}\|Z_k\|_{m,r_k}\leq  t_k^{-\frac{7}{2}}\left( C_m \|Z_{k_{n}}\|_{n,r_{k_{n}}}^{\frac{1}{4}}\right).
\end{align*}
This shows that the series $\sum_{k\geq 0}\|X_k\|_{m,r_k}$ converges for all $m$. By Lemma \ref{Lemma_domain_of_flowfixed}, also $\sum_{k\geq
0}\|\varphi_{X_k}-I_{r_{k+1}}\|_{m,r_{k+1}}$ converges for all $m$ and, by (\ref{EQ_X_12fixed}),
\begin{equation*}
\sigma_1:=\sum_{k\geq 1}\|\varphi_{X_k}-I_{r_{k+1}}\|_{1,r_{k+1}}\leq C \sum_{k\geq 1}\|X_k\|_{1,r_k}\leq C \sum_{k\geq 1}\delta^{1/\alpha}\epsilon_k\leq C \delta^{1/\alpha}.
\end{equation*}
So we may assume that $\sigma_1\leq \theta$, and we can apply Lemma \ref{Lemma_convergent_embbedingsfixed} to conclude that the sequence of
maps
\[\psi_k:=\varphi_{X_0}\circ\ldots\circ \varphi_{X_{k|r}}:\overline{B}_r\rmap B_R,\]
converges uniformly in all $C^n$-norms to a map $\psi:\overline{B}_r\to B_R$ that satisfies $\|\psi-I_r\|_{1,r}\leq
C\delta^{1/\alpha}$. Hence, by shrinking $\delta$, we may apply Lemma \ref{Lemma_embeddingfixed}, to conclude
that $\psi$ is a local diffeomorphism. Since $\psi_{k}$ converges in the $C^1$-topology to $\psi$ and
$\psi_k^*(\pi)=(d\psi_k)^{-1}(\pi_{\psi_k})$, it follows that $\psi_k^*(\pi)$ converges in the $C^0$-topology to
$\psi^*(\pi)$. On the other hand, $Z_{k|r}=\psi_k^*(\pi)-\pi_{\mathfrak{g}|r}$ converges to $0$ in the $C^0$-norm,
hence $\psi^*(\pi)=\pi_{\mathfrak{g}|r}$. So $\psi$ is a Poisson map and a local diffeomorphism between
\[\psi:(\overline{B}_r,\pi_{\mathfrak{g}|r})\rmap (B_R,\pi).\]

\subsection{The implicit function theorem point of view}

By the heuristical interpretation of Poisson cohomology from subsection \ref{SSPoissonCoho}, the infinitesimal
condition corresponding to rigidity is the vanishing of the second Poisson cohomology. This fits very well in the
framework of \emph{implicit function theorems}. The various results of this type allow one to prove existence results
by analyzing the linearized equations underlying the problem (i.e.\ the infinitesimal data). A natural question is if such a
result is applicable in the case of Conn's theorem. The standard implicit function theorem is for finite dimensional
spaces, but there are versions also for infinite dimensional ones. A generalization to Banach spaces (e.g.\ spaces of
$C^n$-maps), with rather mild extra assumptions, is well known and proved very useful. In the case of Fr\'echet
spaces (e.g.\ spaces of $C^{\infty}$-maps), the situation is more subtle and the precise conditions are more involved.
The main general such result is Hamilton's implicit function theorem \cite{Ham}, which is discussed briefly in the
appendix \ref{Section_Richard}. Here we explain that, although intuitively, Conn's theorem is the manifestation of an
implicit function phenomenon, Hamilton's result cannot be applied (at least not in the way outlined in \cite{Desol}).
Still, Hamilton's implicit function theorem provides more insight into the problem; in particular the inequalities listed
in subsection \ref{Inequalities} assert that the natural operations (e.g.\ the Schouten bracket, flows of vector fields,
pullbacks) are ``tame'' maps in Hamilton's category of ``tame Fr\'echet spaces''.

The approach presented here is similar to the one in \cite{Desol}. For $0<r<R$, let $\mathcal{E}(R,r)$ denote the
space of embeddings of $\overline{B}_R$ in $\mathfrak{g}^*$ whose image contains $\overline{B}_r$ in the interior.
Consider the ``nonlinear complex''
\begin{equation}\label{EQ_12}
\mathcal{E}(R,r)\stackrel{P}{\rmap}\mathfrak{X}^2(\overline{B}_r)\stackrel{Q}{\rmap}\mathfrak{X}^3(\overline{B}_r),
\end{equation}
where the maps $P$ and $Q$ are given by
\begin{equation}\label{EQ_11}
P(\varphi):=\varphi_*(\pi_{\mathfrak{g}})_{|\overline{B}_r},\ \ Q(W):=\frac{1}{2}[W,W].
\end{equation}
We call this a nonlinear complex, because $ Q\circ P(\varphi)=0$, for all $\varphi\in\mathcal{E}(R,r)$. Exactness of the
complex around $\pi_{\mathfrak{g}}$ corresponds to the following rigidity notion: every bivector $\pi$ on
$\overline{B}_r$, close enough to $\pi_{\mathfrak{g}}$, such that $Q(\pi)=0$ (i.e.\ $\pi$ is Poisson) is of the form
$P(\varphi)$, for some $\varphi\in \mathcal{E}(R,r)$ (i.e.\ $\pi$ it is isomorphic to the restriction of
$\pi_{\mathfrak{g}}$ to $\varphi^{-1}(\overline{B}_r)$). In a finite dimensional situation, for this exactness property
to hold, it is enough to require infinitesimal exactness at $\varphi=I_R$ (see e.g.\ chapter 3, section 11.C of
\cite{Serre}, Part II). In our case, identifying the tangent space at $I_R$ of $\mathcal{E}(R,r)$ with
$\mathfrak{X}(\overline{B}_R)$, the linearized complex is
\begin{equation}\label{EQ_10}
\mathfrak{X}(\overline{B}_R)\stackrel{d_{\pi_{\mathfrak{g}}}\circ|_{r} }{\rmap}\mathfrak{X}^2(\overline{B}_r)\stackrel{d_{\pi_{\mathfrak{g}}}}{\rmap}\mathfrak{X}^3(\overline{B}_r).
\end{equation}
One easily checks that the maps $e\circ h_1$ and $h_2$ are homotopy operators for (\ref{EQ_10}), where $h_1$ and
$h_2$ are the operators from subsection \ref{SSTamehomotopy} and $e:\mathfrak{X}(\overline{B}_r)\to
\mathfrak{X}(\overline{B}_R)$ is a splitting of the restriction map. So the infinitesimal sequence splits, and moreover,
by choosing $e$ to be a tame extension operator (see e.g.\ Corollary 1.3.7 \cite{Ham}), we obtain tame homotopy
operators.

To apply Hamilton's result \cite{Ham2}, first one needs to check that the maps in (\ref{EQ_11}) are smooth and
``tame'' (which is routine, and follows from results in \cite{Ham}), and second, that the infinitesimal complex splits at
\emph{all points} near $I_R$. At $\varphi\in \mathcal{E}(R,r)$, this complex is isomorphic to
\begin{equation}\label{EQ_13}
\mathfrak{X}(\overline{B}_R)\stackrel{d_{P(\varphi)}\circ|_r\circ\varphi_*}{\rmap}\mathfrak{X}^2(\overline{B}_r)\stackrel{d_{P(\varphi)}}{\rmap}\mathfrak{X}^3(\overline{B}_r),
\end{equation}
and its exactness implies vanishing of $H^2_{P(\varphi)}(\overline{B}_r)$. Yet this cohomology doesn't vanish on any
neighborhood of $I_R$! This, together with the fact that the nonlinear complex is not exact in general, is explained
below in the case of $\mathfrak{so}(3)$. So, this natural attempt to apply Hamilton's implicit function theorem doesn't
work. In \cite{Desol}, the author works instead of $\mathcal{E}(R,r)$ with the space of diffeomorphisms of
$\mathfrak{g}^*$ with support in $\overline{B}_R$. Yet, the same problem occurs when trying to apply Hamilton's
result, as one can easily check. Nevertheless, this point of view gives more insight into the problem. Of course, there
are other geometric situations in which Hamilton's theorem is the right tool to prove rigidity (see e.g.\
\cite{Ham2,Ham3} and also Appendix \ref{Section_Richard}).

\subsubsection{Smooth deformations of the ball in $\mathfrak{so}(3)^*$}

Consider the linear Poisson structure on $\mathbb{R}^3$ corresponding to $\mathfrak{so}(3)^*$
\[\pi_{\mathfrak{so}(3)}:=x\frac{\partial}{\partial y}\wedge \frac{\partial}{\partial z}+y\frac{\partial}{\partial z}\wedge\frac{\partial}{\partial x}+z\frac{\partial}{\partial x}\wedge \frac{\partial}{\partial y}.\]
The following shows that (\ref{EQ_13}) is not exact on a neighborhood of $I_R$.

\begin{proposition}
There exist open embeddings $\varphi:\overline{B}_R\hookrightarrow \mathbb{R}^3$, arbitrary close to the identity,
such that
\[H^2_{\pi}(\overline{B}_r)\neq 0, \ \ \textrm{for }\ \pi:=\varphi_*(\pi_{\mathfrak{so}(3)})_{|_r}.\]
\end{proposition}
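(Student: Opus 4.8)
The plan is to exploit the fact that the Poisson cohomology of the ball in $\mathfrak{so}(3)^*$ does not vanish, even though that of $\pi_{\mathfrak{so}(3)}$ on all of $\mathbb{R}^3$ does. The key structural feature of $\pi_{\mathfrak{so}(3)}$ is that it is \emph{regular} away from the origin, with symplectic leaves the spheres $\{|x|^2 = c\}$ for $c>0$, and the Casimir $f(x) = |x|^2$ cutting them out. Consequently, on any small ball $\overline{B}_r$ not containing the origin in a degenerate way --- more precisely, after moving the origin off-center by a diffeomorphism --- the regular part produces a nonzero class in $H^2_\pi$. Concretely, I would take $\varphi$ to be a translation (suitably cut off so as to be compactly supported or at least an embedding of $\overline{B}_R$) by a small vector $v$, so that $\pi := \varphi_*(\pi_{\mathfrak{so}(3)})_{|r}$ is the linear Poisson structure centered at $-v$. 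For $|v|$ large enough relative to $r$ (but one can rescale: the real point is that $0 \notin \overline{B}_r$ after translating, or that $\overline{B}_r$ sits inside one regular "shell"), the structure $\pi$ is regular on $\overline{B}_r$, with leaves the pieces of concentric spheres and a Casimir $f$ with nowhere-vanishing differential.

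The main step is then to produce a nonzero class in $H^2_\pi(\overline{B}_r)$. For a regular Poisson structure whose symplectic foliation is a product-like foliation by codimension-one leaves with trivial holonomy, the natural candidate is the class $[f\,\pi]$ --- or rather a class built from the symplectic areas of the leaves. Recall from the discussion of gauge transformations in the excerpt that for a Casimir $f$, the bivector $f\pi$ is $d_\pi$-closed and represents a class in $H^2_\pi$; geometrically it rescales the leafwise symplectic form by $e^{-tf}$ along the deformation $e^{tf}\pi$. I would argue that for the regular structure $\pi$ on $\overline{B}_r$ this class is nonzero: if $f\pi = d_\pi X = [\pi, X]$ for some vector field $X$, then restricting to a leaf $S$ one gets that the leafwise $2$-form $f|_S \cdot \omega_S$ is exact on $S$; but $f$ is constant on each leaf, so this says $\omega_S$ is exact on $S$, i.e.\ the symplectic area of the (open piece of a) $2$-sphere leaf vanishes --- and one checks the relevant leaves here are in fact symplectomorphic to honest round $2$-spheres (or spherical caps large enough to carry positive area), giving a contradiction via Stokes. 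Equivalently, and more robustly, I would use the map $H(\pi^\sharp): H^2(\overline{B}_r) \to H^2_\pi(\overline{B}_r)$ together with the leafwise-volume argument: pairing a Poisson $2$-cocycle against a transverse Casimir and integrating over a leaf gives a well-defined linear functional on $H^2_\pi$ that is nonzero on $[f\pi]$.

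The step I expect to be the main obstacle is making precise which concrete $\varphi$ arbitrarily close to the identity does the job, and verifying that the relevant leaves of $\pi$ inside $\overline{B}_r$ genuinely carry nonzero symplectic area --- one cannot simply translate the whole of $\mathfrak{so}(3)^*$ since $\varphi$ must be an embedding of $\overline{B}_R$ whose image contains $\overline{B}_r$, so the "off-center" structure is only defined locally and its leaves are spherical caps, not full spheres. The fix is a rescaling trick analogous to the one used in the "rigidity $\Rightarrow$ linearization" argument: conjugating by fiberwise dilation $\mu_t$ replaces $\pi$ by $t\mu_t^*(\pi)$, which is again a translate of a multiple of $\pi_{\mathfrak{so}(3)}$, and lets one arrange that $\overline{B}_r$ contains a full regular leaf (a round sphere) of the translated structure while keeping the generating embedding $C^\infty$-close to the identity. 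With such a leaf present, the area functional above is manifestly nonzero on $[f\pi]$, so $H^2_\pi(\overline{B}_r) \neq 0$, completing the proof. It is worth noting that this also shows the "nonlinear complex" (\ref{EQ_12}) fails to be exact: the regular structure $\pi$ is Poisson and $C^\infty$-close to $\pi_{\mathfrak{g}}$ but is \emph{not} of the form $P(\varphi)$ for $\varphi$ close to $I_R$, precisely because $P(\varphi)$ always has vanishing leafwise areas near the leaf through the (unique) fixed point $\varphi^{-1}(0)$, whereas $\pi$ has no fixed point at all in $\overline{B}_r$.
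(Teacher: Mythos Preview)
Your candidate class $[f\pi]$ is always trivial, so the approach cannot succeed. For $\pi_{\mathfrak{so}(3)}$ and any Casimir $f$ one has $f\pi = [\pi, f\mathcal{E}]$, where $\mathcal{E} = x\partial_x + y\partial_y + z\partial_z$ is the Euler vector field: linearity gives $[\pi,\mathcal{E}] = -L_{\mathcal{E}}\pi = \pi$, and $[\pi, f] = -H_f = 0$, so $[\pi,f\mathcal{E}]=[\pi,f]\wedge\mathcal{E}+f[\pi,\mathcal{E}]=f\pi$. This holds on \emph{every} domain, hence $[f\pi] = 0$ in $H^2_\pi$ regardless of $\varphi$. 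Correspondingly, the implication ``$f\pi = [\pi, X]$ on the ambient $\Rightarrow$ $f|_S\omega_S$ exact on $S$'' is false: on the standard $\overline{B}_r$ one has $H^2_{\pi_{\mathfrak{so}(3)}}(\overline{B}_r) = 0$ (by the homotopy operators of Lemma~\ref{Lemma_fixed_point_homotopy_ops}) while every leaf $S^2_\rho$ carries a non-exact area form; the purely radial primitive $f\mathcal{E}$ contributes to $[\pi,X]|_S$ through the \emph{transverse} variation of $\pi$, which your leafwise-area functional ignores. The attempt to avoid the fixed point by translating is separately blocked: the unique fixed point of $\varphi_*(\pi_{\mathfrak{so}(3)})$ is $\varphi(0)$, which lies in $B_r$ once $\varphi$ is $C^0$-close to $I_R$, and no rescaling repairs this for fixed $r<R$. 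Your concluding remark about the nonlinear complex rests on the same impossible regular $\pi$.

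The paper's argument is entirely different. It uses anisotropic scalings $\varphi_\epsilon(x,y,z) = (e^{-\epsilon}x, e^{-\epsilon}y, e^{\epsilon}z)$, which \emph{keep} the fixed point at $0$, and exhibits an explicit cocycle not of the form $f\pi$: in coordinates $(\rho,\theta,z)$ where $\pi_{\mathfrak{so}(3)} = \partial_z\wedge\partial_\theta$, set $W = \chi(\rho)\,\partial_\rho\wedge\partial_z$ with $\chi|_{[0,r]}=0$ and $\chi(\rho)>0$ for $\rho>r$. The ellipsoid $\varphi_\epsilon^{-1}(\overline{B}_r)$ contains a full circle $\{(\rho_0,\theta,0):\theta\in S^1\}$ with $r<\rho_0<re^\epsilon$; integrating the $\partial_\rho\wedge\partial_z$-component of the equation $[\pi,X]=W$ around that circle yields $0 = 2\pi\chi(\rho_0)>0$, so $[W]\neq 0$. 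The mechanism is that distorting the domain makes it enclose a $\theta$-circle at radius $\rho_0>r$, not anything about leafwise symplectic areas.
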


\begin{proof}
Consider coordinates $(\rho,\theta,z)$ on $\mathbb{R}^3\backslash\{(0,0,z)|z\in\mathbb{R}\}$, where $\rho>0$ is the
distance to the origin, $\theta\in S^1$ is the angle with the $xz$-plane and $z$ satisfies $-\rho<z<\rho$. In these
coordinates the Poisson structure becomes
\[\pi_{\mathfrak{so}(3)}=\frac{\partial}{\partial z} \wedge \frac{\partial}{\partial \theta}.\]
Consider the bivector
\[W:=\chi(\rho)\frac{\partial}{\partial \rho} \wedge \frac{\partial}{\partial z},\]
where $\chi:[0,\infty)\to \mathbb{R}$ is a smooth function satisfying $\chi(\rho)=0$ for $\rho\in[0,r]$ and
$\chi(\rho)>0$ for $\rho>r$. Note that, in the coordinates $(x,y,z)$, $W$ extends to
\[U:=\mathbb{R}^3\backslash \{(0,0,z)| |z|\geq r\},\] and that it vanishes on $B_r$. Clearly $[\pi_{\mathfrak{so}(3)},W]=0$.

We claim that, for an open $\widetilde{U}\subset U$ that contains a circle of the form
$S^1_{\rho_0,z_0}:=\{(\rho_0,\theta,z_0) | \theta\in S^1\}$, for $\rho_0>r$, the class of $[W]\in
H^2_{\pi_{\mathfrak{so}(3)}}(\widetilde{U})$ is nontrivial. Assume that $X=A\frac{\partial}{\partial
\rho}+B\frac{\partial}{\partial \theta}+C\frac{\partial}{\partial z}$ is a vector field on $\widetilde{U}$ such that
$[\pi_{\mathfrak{so}(3)},X]=W$. In particular, this equation implies that
\[\frac{\partial A}{\partial \theta}(\rho_0,\theta,z_0)=\chi(\rho_0).\]
Integrating $\int_{\theta=0}^{\theta=2\pi}$, we obtain a contradiction: $0=2\pi\chi(\rho_0)$.

Thus, every embedding $\varphi:\overline{B}_R\hookrightarrow \mathbb{R}^3$, such that for some $\rho_0>r$:
\[S^1_{\rho_0,z_0}\subset \varphi^{-1}(\overline{B}_r)\subset U,\]
satisfies that $H^2_{\pi}(\overline{B}_r)\neq 0$, for $\pi=\varphi_*(\pi_{\mathfrak{so}(3)})_{|\overline{B}_r}$. For
example, the family:
\begin{equation}\label{famdiff}
\varphi_{\epsilon}(x,y,z):=(e^{-\epsilon}x,e^{-\epsilon}y,e^{{\epsilon}}z),\ \ {\epsilon}>0.
\end{equation}
\end{proof}

The class used in the proof comes from a nontrivial deformation $\widetilde{\pi}$ of $\pi_{\mathfrak{so}(3)}$ on $U$,
which we explain in the sequel. Geometrically, the Poisson structure $\widetilde{\pi}$ coincides with
$\pi_{\mathfrak{so}(3)}$ on $B_r$, but the leaves outside $\overline{B}_r$ are noncompact, and spin around $\partial
\overline{B}_r$. To give $\widetilde{\pi}$ explicitly, we describe a regular Poisson structure on an open in
$\mathbb{R}^3$ as a pair $(\alpha,\eta)$ consisting of a non-vanishing 1-form and a non-vanishing 2-form such that
\[\alpha\wedge d\alpha=0, \ \ \alpha\wedge\eta\neq 0.\]
The foliation is the kernel of $\alpha$ and the symplectic structure is the restriction of $\eta$ to the leaves.

The linear Poisson structure $\pi_{\mathfrak{so}(3)}$ is represented on $\mathbb{R}^3\backslash\{0\}$ by the pair
\[(d\rho,\omega), \ \ \omega=\frac{1}{\rho^2}\left(xdy\wedge dz+ydz\wedge dx+zdx\wedge dy\right).\]
We define $\widetilde{\pi}$ on $U\backslash\{0\}$ in terms of the pair
\[(d\rho+\chi(\rho)d\theta, \omega),\]
where $\chi$ is defined in the proof above. Clearly, $\widetilde{\pi}$ extends to $U$, and coincides on $B_r$ with
$\pi_{\mathfrak{so}(3)}$. Since the kernel of $\omega$ is spanned by $\frac{\partial}{\partial \rho}$, it follows that
$(d\rho+\chi(\rho)d\theta)\wedge \omega$ is a volume form on $U\backslash\{0\}$.

Using $\widetilde{\pi}$, we prove that the nonlinear complex (\ref{EQ_11}) is not exact.

\begin{proposition}
There are Poisson structures $\pi$ on $\overline{B}_r$, arbitrary close to $\pi_{\mathfrak{so}(3)}$, such that
$(\overline{B}_r,\pi)$ cannot be embedded in $(\mathbb{R}^3,\pi_{\mathfrak{so}(3)})$.
\end{proposition}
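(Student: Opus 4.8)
The plan is to transport the deformation $\widetilde\pi$ from the open set $U$ onto the full closed ball $\overline{B}_r$ by a linear diffeomorphism close to the identity, and then to obstruct Poisson embeddings by a transversality argument localized near a circle $S^1_{\rho_0,z_0}$, exactly as in the previous proposition but made diffeomorphism–invariant. First I would refine the choice of $\chi$ so that, besides vanishing on $[0,r]$ and being positive on $(r,\infty)$, it vanishes to infinite order at $\rho=r$; this costs nothing and is needed for the convergence estimate. For $\epsilon>0$ let $\varphi_\epsilon$ be the diffeomorphism from (\ref{famdiff}). A direct computation gives $\varphi_\epsilon^{-1}(\overline{B}_r)=\{e^{-2\epsilon}(x^2+y^2)+e^{2\epsilon}z^2\le r^2\}$, and the only points of $\overline{B}_r$ whose $\varphi_\epsilon^{-1}$–image could land on the excluded part of the $z$-axis are the points of $\overline{B}_r$ on the $z$-axis, which have $|z|\le r$ and are sent to $|z|\le e^{-\epsilon}r<r$; hence $\varphi_\epsilon^{-1}(\overline{B}_r)\subset U$ and $\widetilde\pi$ is defined there. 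I then set
\[\pi_\epsilon:=\varphi_{\epsilon*}\bigl(\widetilde\pi\bigr)\big|_{\overline{B}_r},\]
a Poisson structure on $\overline{B}_r$, so that $\varphi_\epsilon^{-1}$ is a Poisson diffeomorphism $(\overline{B}_r,\pi_\epsilon)\to(\varphi_\epsilon^{-1}(\overline{B}_r),\widetilde\pi)$.

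Step 1 is to check that $\pi_\epsilon\to\pi_{\mathfrak{so}(3)}$ in every $C^p$-norm on $\overline{B}_r$ as $\epsilon\to0$. Writing $\widetilde\pi=\pi_{\mathfrak{so}(3)}+W$ gives $\pi_\epsilon=\varphi_{\epsilon*}(\pi_{\mathfrak{so}(3)})|_{\overline{B}_r}+\varphi_{\epsilon*}(W)|_{\overline{B}_r}$. The first summand is the globally defined bivector $e^{\epsilon}x\,\partial_y\wedge\partial_z+e^{\epsilon}y\,\partial_z\wedge\partial_x+e^{-3\epsilon}z\,\partial_x\wedge\partial_y$, which tends to $\pi_{\mathfrak{so}(3)}$ smoothly as $\varphi_\epsilon\to\mathrm{id}$. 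The second summand vanishes on $\overline{B}_r$ except where $\varphi_\epsilon^{-1}$ lands in $\{\rho>r\}$, a region contained in $\{r<\rho\le e^{\epsilon}r\}$; since $\chi$ is flat at $r$, $\sup_{[r,e^{\epsilon}r]}\bigl(|\chi|+\dots+|\chi^{(p)}|\bigr)\to0$, and pushing forward by the uniformly bounded linear maps $\varphi_\epsilon$ (for $\epsilon\in(0,1]$) does not destroy this, so $\|\varphi_{\epsilon*}(W)|_{\overline{B}_r}\|_{p,r}\to0$.

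Step 2, the heart of the argument, is that for $\epsilon>0$ there is no Poisson embedding $(\overline{B}_r,\pi_\epsilon)\hookrightarrow(\mathbb{R}^3,\pi_{\mathfrak{so}(3)})$; via the Poisson diffeomorphism above it is equivalent to rule out Poisson embeddings $J\colon(\varphi_\epsilon^{-1}(\overline{B}_r),\widetilde\pi)\hookrightarrow(\mathbb{R}^3,\pi_{\mathfrak{so}(3)})$. Suppose one exists, fix $\rho_0$ with $r<\rho_0<e^{\epsilon}r$, and let $C:=S^1_{\rho_0,0}$, which lies in the $z=0$ slice of the ellipsoid $\varphi_\epsilon^{-1}(\overline{B}_r)$. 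Near $C$ one has $\chi>0$, so the leaves of $\widetilde\pi$ are spanned by $\partial_z$ and $\chi\partial_\rho-\partial_\theta$; since $C$ is tangent to $\partial_\theta$ and $\chi(\rho_0)\ne0$, $C$ is transverse to these leaves (indeed a Poisson transversal with zero induced structure). I would then use the elementary fact that a Poisson immersion matches leaf tangent spaces: from $\pi_{\mathfrak{so}(3)}^{\sharp}\alpha=dJ\circ\widetilde\pi^{\sharp}\circ(dJ)^{*}\alpha$ and surjectivity of $(dJ)^{*}$ one gets $dJ(T_p\mathcal{L}_p)=T_{J(p)}\mathcal{L}_{J(p)}$ for the symplectic leaves $\mathcal{L}_p$ through $p$ and $\mathcal{L}_{J(p)}$ through $J(p)$. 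Since $\widetilde\pi$ has rank $2$ at every $p\in C$, each $\mathcal{L}_{J(p)}$ is $2$-dimensional, hence $J(p)\ne0$ and $\mathcal{L}_{J(p)}$ is the sphere of radius $|J(p)|$; and transversality of $C$ to the leaves upstairs forces, at $q=J(p)$,
\[T_q\mathbb{R}^3=dJ\bigl(T_pC+T_p\mathcal{L}_p\bigr)=T_qJ(C)+T_q\mathcal{L}_q.\]
Thus the smooth function $q\mapsto|q|$ has nowhere-vanishing derivative along $J(C)$; but $J(C)\cong C\cong S^1$ is compact, so this function attains a maximum and its derivative must vanish there — a contradiction. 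Hence no such $J$ exists, the $\pi_\epsilon$ for small $\epsilon>0$ are the required family, and in particular the nonlinear complex (\ref{EQ_11}) is not exact near $\pi_{\mathfrak{so}(3)}$.

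The step I expect to be the main obstacle is making the transversality argument in Step 2 airtight: one must verify carefully that a Poisson map sends leaf tangent spaces \emph{onto} leaf tangent spaces of the target, so that the Poisson transversal $C$ of the source goes to a curve genuinely transverse to the symplectic leaves of $\pi_{\mathfrak{so}(3)}$, after which the ``no immersion $S^1\to\mathbb{R}$'' punchline is immediate. A secondary technical point is the $C^p$-convergence in Step 1, where the infinite-order flatness of $\chi$ at $\rho=r$ is precisely what allows one to bound all derivatives of $W$ on the shrinking shell $\{r<\rho\le e^{\epsilon}r\}$.
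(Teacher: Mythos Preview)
Your proof is correct, but both the convergence step and the obstruction step differ from the paper's argument.

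For Step~1, the flatness of $\chi$ at $r$ is unnecessary. Since $\widetilde\pi$ is a smooth bivector on $U$ and $\varphi_\epsilon^{-1}(\overline{B}_r)\subset U$ for all small $\epsilon\ge0$, the map $(\epsilon,p)\mapsto\pi_\epsilon(p)$ is jointly smooth on $[0,\delta]\times\overline{B}_r$ with $\pi_0=\widetilde\pi|_{\overline B_r}=\pi_{\mathfrak{so}(3)}|_{\overline B_r}$; hence $\pi_\epsilon\to\pi_{\mathfrak{so}(3)}$ in $C^\infty$ automatically. Your estimate via the shell $\{r<\rho\le e^\epsilon r\}$ works, but it is not needed.

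For Step~2, the paper uses a different (and shorter) obstruction: holonomy. The paper observes that the vector field $\partial_\theta-\chi(\rho)\partial_\rho$ is tangent to the leaves of $\widetilde\pi$ outside $\overline B_r$ and its flow lines spiral in toward $\partial B_r$; hence the boundary leaf $S=U\cap\partial B_r$ of $\widetilde\pi$ has nontrivial holonomy. After pushing forward by $\varphi_\epsilon$, the leaf $\varphi_\epsilon(S)\cap\overline B_r$ of $\pi_\epsilon$ still has nontrivial holonomy, whereas every leaf of $\pi_{\mathfrak{so}(3)}$ is a sphere with trivial holonomy. A Poisson embedding, being a local Poisson diffeomorphism onto its image, would conjugate the holonomy of any loop in this leaf to the (trivial) holonomy of its image---contradiction. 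Your transversal-circle argument detects the same spiraling phenomenon from a dual point of view: the circle $S^1_{\rho_0,0}$ is transverse to the $\widetilde\pi$-foliation precisely because the leaves spiral past it, and this transversality pushes forward to an impossible compact transversal to the sphere foliation. Both obstructions are valid; the holonomy argument is more direct, while yours avoids discussing leaves with boundary and makes the invariant (a compact Poisson transversal) manifestly diffeomorphism-invariant.
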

\begin{proof}
Consider the vector field $X=\frac{\partial}{\partial \theta}-\chi(\rho)\frac{\partial}{\partial \rho}$ on $U\backslash
\overline{B}_r$. Note that $X$ is tangent to the symplectic leaves of $\widetilde{\pi}$, and that its flow lines are of
the form
\[(\rho(t),t+\theta_0, z_0),\]
where $\rho(t)$ is a decreasing function which tends to $r$. In particular, the boundary leaf
$S:=U\cap\partial\overline{B}_r$ has nontrivial holonomy.

Consider the smooth deformation of $\pi_{\mathfrak{so}(3)}$
\[\pi_{\epsilon}:=\varphi_{\epsilon*}(\widetilde{\pi})_{|\overline{B}_r},\ \ {\epsilon}>0,\]
where $\varphi_{\epsilon}$ is given in (\ref{famdiff}). For $\epsilon>0$, $(\overline{B}_r,\pi_{\epsilon})$ cannot be
embedded in $(\mathbb{R}^3,\pi_{\mathfrak{so}(3)})$, since the leaf $\varphi_{\epsilon}(S)\cap\overline{B}_r$ has
nontrivial holonomy.
\end{proof}

\section{Some notations and conventions}\label{SSNotConv}

We use the following conventions:\\
$\bullet$ the Lie bracket of vector fields is defined by
\[L_X(L_Y(f))-L_Y(L_X(f))=L_{[X,Y]}(f),\]
$\bullet$ the Schouten bracket on $\mathfrak{X}^{\bullet}(M)$ is defined by \index{Schouten
bracket}\index{multivector fields}
\begin{align}
&[X_1\wedge\ldots \wedge X_p,Y_1\wedge\ldots \wedge Y_q]=\label{EQ_Schouten_bracket}\\
\nonumber &=\sum_{i,j}(-1)^{i+j}[X_i,Y_j]\wedge X_1\wedge\ldots\wedge \widehat{X}_i\wedge\ldots \wedge X_p\wedge Y_1\wedge\ldots\wedge \widehat{Y}_j\wedge\ldots \wedge Y_q,
\end{align}
\noindent$\bullet$ we define the Lie bracket on the Lie algebra $\mathfrak{g}$ of a Lie group $G$ using right
invariant vector fields:
\[[X,Y]^r=[X^r,Y^r],\ \ [X,Y]^l=-[X^l,Y^l],\]
where for $Z\in\mathfrak{g}$ we denote by $Z^r, Z^l\in\mathfrak{X}(G)$ the right (respectively left) invariant
extension to $G$:\index{right invariant}
\[Z^r_g:=dr_g(Z), \ \ Z^l_g:=dl_g(Z).\]
Denoting as usual\index{adjoint action}
\[Ad_g(Y):=\frac{d}{d\epsilon}_{|\epsilon=0}g\exp(\epsilon Y) g^{-1}, \ \textrm{ and } ad_X(Y):=\frac{d}{d\epsilon}_{|\epsilon=0}Ad_{\exp(\epsilon X)} (Y),\]
the Lie bracket also satisfies
\[ad_X(Y)=[Y,X].\]

\index{Tube lemma}The Tube Lemma from topology states that a neighborhood of $K\times \{x\}$ in $K\times X$,
where $K$ and $X$ are topological spaces, with $K$ compact, contains a tube $K\times U$, where $U$ is a
neighborhood of $x$. We will often use this result in the following form:
\begin{TubeLemma}\label{TubeLemma}
Let $M$ be a topological space and let $\{U_t\}_{t\in[0,1]} \subset M$ be a family of opens, such that
$\cup_{t}(\{t\}\times U_t)$ is open in $[0,1]\times M$. Then $\cap_{t\in [0,1]}U_t$ is open in $M$.
\end{TubeLemma}

\clearpage \pagestyle{plain}

\chapter{Lie algebroids and Lie groupoids}\label{CHLieAlgLieGroupoids}
\pagestyle{fancy}
\fancyhead[CE]{Chapter \ref{CHLieAlgLieGroupoids}} 
\fancyhead[CO]{Lie algebroids and Lie groupoids} 

In this chapter we recall some results about Lie algebroids and Lie groupoids, and we also prove some lemmas on the
subject to be used in the following chapters. For the general theory, we recommend \cite{MM,MackenzieGT}.

\section{Lie groupoids}

\subsection{Definition of a Lie groupoid}

One usually thinks about a group as the space of symmetries of an object. Groupoids generalize this idea, by putting
together the symmetries of several objects and also ``external symmetries'' between different objects. Groupoids that
come with a smooth structure, compatible with the groupoid operations, are called Lie groupoids.

More precisely, a \textbf{Lie groupoid}\index{Lie groupoid} is given by two smooth manifolds $\mathcal{G}$ and
$M$, whose points are called \textbf{arrows} and \textbf{objects} respectively, and smooth structure maps $s$, $t$,
$u$, $\iota$ and $m$, as follows
\begin{itemize}
\item $s,t:\mathcal{G}\to M$, are surjective submersions, called \textbf{source} and \textbf{target},
\item $u:M\to \mathcal{G}$, denoted also by $u(x)=1_{x}$, is called the \textbf{unit} map,
\item $\iota:\mathcal{G}\to \mathcal{G}$, denoted also by $\iota(g)=g^{-1}$, is the \textbf{inversion map},
\item $m:\mathcal{G}\times_{M} \mathcal{G}\to \mathcal{G}$, denoted also by $m(g,h)=gh$, is called the \textbf{multiplication} and
is defined on \[\mathcal{G}\times_{M} \mathcal{G}=\{(g,h) | s(g)=t(h)\}.\] $\mathcal{G}\times_{M} \mathcal{G}$
is a smooth manifold since $s,t$ are submersions.
\end{itemize}
These structure maps are required to satisfy the natural axioms
\begin{itemize}
\item $s(1_x)=t(1_x)=x$,
\item $t(g^{-1})=s(g)$, $s(g^{-1})=t(g)$,
\item $1_{t(g)}g=g1_{s(g)}=g$,
\item $gg^{-1}=1_{t(g)}$, $g^{-1}g=1_{s(g)}$,
\item $s(gh)=s(h)$, $t(gh)=t(g)$, whenever $s(g)=t(h)$,
\item $g(hk)=(gh)k$, whenever $s(g)=t(h)$ and $s(h)=t(k)$,
\end{itemize}
for all $x\in M$ and $g,h,k\in\mathcal{G}$.

We usually denote groupoids by $\mathcal{G}\rightrightarrows M$.\\

There are natural examples of groupoids (e.g.\ coming from foliation theory) for which the space of arrows is not a
Hausdorff manifold; that is, $\mathcal{G}$ satisfies all the axioms of a manifold (it has a countable atlas with smooth
transition functions), except for Hausdorffness. Our overall assumptions are that $M$ is a smooth Hausdorff manifold
and that $\mathcal{G}$ is smooth manifold but not necessarily Hausdorff.

\subsection{Examples}\label{Examples Lie groupoids}

\subsubsection*{Lie groups} A Lie group is a Lie groupoid over a point.

\subsubsection*{The action groupoid}
Let $G$ be a Lie group acting on an manifold $M$. One forms the corresponding \textbf{action groupoid} $G\ltimes M
\rightrightarrows M$ with structure maps\index{action groupoid}
\begin{align*}
s(g,x)=x, \ \ &t(g,x)=gx, \ \ 1_x=(1_{G},x),\ \ (g,x)^{-1}=(g^{-1},gx), \\
&(g,x)(h,y)=(gh,y)\ \  \textrm{where} \ \ x=hy.
\end{align*}

\subsubsection*{The fundamental groupoid}\index{fundamental groupoid}
The \textbf{fundamental groupoid} of a manifold $M$, denoted by $\Pi_1(M)\rightrightarrows M$, is the space of paths
in $M$ modulo homotopy relative to the endpoints. The source (respectively target) of a path is its initial (respectively
final) point, the composition is concatenation, the units are represented by constant paths, and the inverse of a path is
the same path with reversed orientation.

\subsubsection*{The gauge groupoid}\index{gauge groupoid}
Let $P\to M$ be a principal bundle with structure group $G$. The \textbf{gauge groupoid} of $P$, denoted by
$P\times_G P\rightrightarrows M$, has as space of arrows between $x,y\in M$, equivalence classes $[p_x,p_y]$ with
$p_x\in P_x$, $p_y\in P_y$, where
\[[p_x,p_y]=[q_x,q_y]\ \Leftrightarrow  \ q_x=p_xg,\  q_y=p_yg \textrm{  for some  }g\in G.\]
The units are $1_x=[p_x,p_x]$, inversion $[p_x,p_y]\to [p_y,p_x]$ and composition is
\[[p_x,p_y][q_y,q_z]=[p_x,q_z g],\textrm{  where  }g\in G \textrm{  satisfies  }p_y=q_y g.\]

\subsubsection*{The holonomy groupoid}\index{holonomy groupoid}\index{holonomy}
To a foliation $\mathcal{F}$ on $M$, one associates the \textbf{holonomy groupoid}
\[\textrm{Hol}(\mathcal{F})\rightrightarrows M.\]
Points on different leaves of $\mathcal{F}$ have no arrows between them, and the arrows between $x$ and $y$, both
in the same leaf $L$, are the holonomy classes of paths from $x$ to $y$. Recall \cite{MM} that, given a path
$\gamma:[0,1]\to L$, with $\gamma(0)=x$ and $\gamma(1)=y$, the \textbf{holonomy} of $\gamma$ is the germ of a
diffeomorphism
\[hol({\gamma}): T_x\rmap T_y,\]
where $T_x$ and $T_y$ are small transversal submanifolds to $L$ at $x$ and $y$ respectively. The map
$hol({\gamma})$ can be computed using parallel transport of the Ehresmann connection induced by $\mathcal{F}$ on
a tubular neighborhood $T\subset M$ of $L$. Two paths $\gamma_1,\gamma_2:x\to y$ and said to be in the same
\textbf{holonomy class}, if, after restricting to some open of $x$ in $T_x$, $hol(\gamma_1)=hol(\gamma_2)$. The space
of arrows $\textrm{Hol}(\mathcal{F})$ is a smooth manifold \cite{MM}, but may fail to be Hausdorff.

\subsection{Terminology and some properties of Lie groupoids}
For a Lie groupoid $\mathcal{G}\rightrightarrows M$, we have that
\begin{itemize}
\item $\mathcal{G}(x,y):=s^{-1}(x)\cap t^{-1}(y)$ - the set of arrows from $x$ to $y$ - is a (Hausdorff) submanifold of $\mathcal{G}$.

\item $\mathcal{G}(x,x):=\mathcal{G}_x$ - called the \textbf{isotropy group} at $x$ - is a Lie group.\index{isotropy group}

\item $\mathcal{G}x:=t(s^{-1}(x))$- called the \textbf{orbit} through $x$ - is an immersed submanifold of $M$.\index{orbit}

\item $\mathcal{G}(x,-):=s^{-1}(x)$ - the $s$-\textbf{fiber} over $x$ - is a principal $\mathcal{G}_x$ bundle over $\mathcal{G}x$ with projection the target map
$t: \mathcal{G}(x,-)\to \mathcal{G}x$.\index{s-fiber}

\item If $X\subset M$ is a submanifold transverse to the orbits of $\mathcal{G}$, then
\[\mathcal{G}_{|X}:=s^{-1}(X)\cap t^{-1}(X)\rightrightarrows X\]
is a Lie groupoid. In particular, this holds for opens $X\subset M$.

\item A submanifold $S\subset M$ is called \textbf{invariant}, if it is a union of orbits.\index{invariant submanifold}


\item $\mathcal{G}$ is called \textbf{proper}, if it is Hausdorff and the map\index{proper groupoid}
\[(s,t):\mathcal{G}\rmap M\times M,\]
is proper (i.e.\ preimages of compact sets are compact).

\item $\mathcal{G}$ is called \textbf{transitive} if the map\index{transitive groupoid}
\[(s,t):\mathcal{G}\rmap M\times M,\]
is a surjective submersion. In this case, $\mathcal{G}$ is isomorphic to a gauge groupoid
\[\mathcal{G}\cong P\times_G P,\]
where the principal $G$-bundle $P$ is any $s$-fiber. 
\end{itemize}

\subsection{Existence of invariant tubular neighborhoods}\label{Subsection_inv_tub_nbd}

In chapter \ref{ChRigidity}, we will use the following lemma.

\begin{lemma}\label{Lemma_tubular_neighborhood}\index{invariant tubular neighborhood}
Let $\mathcal{G}\rightrightarrows M$ be a proper Lie groupoid with connected $s$-fibers and let $S\subset M$ be a
compact invariant submanifold. There exists a tubular neighborhood $E\subset M$ (where $E\cong T_SM/TS$) and a
metric on $E$, such that, for all $r>0$, the closed tube $E_r:=\{v\in E: |v|\leq r\}$ is $\mathcal{G}$-invariant.
\end{lemma}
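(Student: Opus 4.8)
The plan is to reduce the statement to a fact about proper group actions via the standard linearization theorem for proper Lie groupoids near an invariant submanifold. First I would invoke the slice/linearization results from \cite{Posthuma} (proper groupoids with connected $s$-fibers are, near an invariant submanifold, isomorphic to the linearized model, i.e.\ to the action groupoid of the normal representation): after restricting to a $\mathcal{G}$-invariant neighborhood of $S$, the groupoid $\mathcal{G}|_U$ is isomorphic to the transformation groupoid $H\ltimes (T_SM/TS)$ where $H$ is a suitable proper groupoid over $S$ (the restriction $\mathcal{G}|_S$) acting linearly and fiberwise on the normal bundle $E := T_SM/TS$, the action covering the $\mathcal{G}|_S$-action on $S$ (which is transitive on each orbit; since $S$ is invariant, $S$ is a union of orbits). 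Concretely, $\mathcal{G}$ acts on $E$ by vector bundle automorphisms over its action on $S$.

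Next I would build an invariant metric on $E$ by averaging. Since $S$ is compact and $\mathcal{G}$ is proper with connected (hence non-empty) $s$-fibers, the restricted groupoid $\mathcal{G}|_S \rightrightarrows S$ is a proper groupoid over a compact base, so it admits a Haar system with compact supports and a cutoff function, and one can average any fiber metric on $E\to S$ over $\mathcal{G}|_S$ to obtain a $\mathcal{G}|_S$-invariant fiber metric $\langle\cdot,\cdot\rangle$ on $E$. Invariance means: for every arrow $g$ from $x$ to $y$ in $\mathcal{G}|_S$, the induced linear map $E_x\to E_y$ is an isometry. Transporting this metric through the isomorphism $\mathcal{G}|_U\cong \mathcal{G}|_S\ltimes E$ gives the desired metric on the tubular neighborhood $E\subset M$ (shrinking $U$ if necessary so that the linearization is valid on all of $E_{r_0}$ for some $r_0$, and then rescaling the metric so that $r_0$ can be taken arbitrarily large — or just noting the statement is local and $E_r$ for small $r$ suffices, then rescale).

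Finally I would check invariance of the closed tubes. In the linearized picture, the fibered norm $|\cdot|$ on $E$ is preserved by the $\mathcal{G}$-action by construction, so for any $r>0$ the set $E_r=\{v\in E:|v|\le r\}$ is a union of $\mathcal{G}$-orbits, hence $\mathcal{G}$-invariant; transporting back, $E_r\subset M$ is $\mathcal{G}$-invariant. I expect the main obstacle to be the precise bookkeeping in quoting the linearization theorem of \cite{Posthuma}: one must ensure the isomorphism is $\mathcal{G}$-equivariant in the right sense (so that it carries the linear action on the normal bundle to the actual action of $\mathcal{G}$ near $S$), and that the averaging producing the invariant metric is legitimate, which needs properness together with compactness of $S$ to guarantee a compactly supported Haar system and cutoff on $\mathcal{G}|_S$. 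Once the linearization and the invariant metric are in hand, the conclusion about the tubes is immediate.
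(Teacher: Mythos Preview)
Your approach differs substantially from the paper's, and it rests on a result that is not available in the form you need. You invoke a linearization theorem for proper groupoids around a general compact \emph{invariant submanifold} $S$, attributing it to \cite{Posthuma}. But the linearization theorems in the literature (Zung \cite{Zung}, Weinstein \cite{Wein3}, Crainic--Struchiner \cite{CrStr}) are stated around a single \emph{orbit}, not around an arbitrary invariant submanifold; and \cite{Posthuma} (Pflaum--Posthuma--Tang) contains results on adapted metrics and orbit-space geometry, not a linearization theorem of the type you describe. Since $S$ is in general a union of orbits of varying dimension (in the application, $S$ is a Poisson submanifold, a union of symplectic leaves of different dimensions), one cannot simply patch together the orbit-by-orbit linearizations. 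Producing a $\mathcal{G}$-equivariant tubular neighborhood of $S$ is in fact at least as hard as the lemma you are trying to prove, so your argument is close to circular.

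The paper takes a more elementary route that avoids linearization entirely. From \cite{Posthuma} it quotes only the existence of an \emph{adapted metric} on $M$: a Riemannian metric such that geodesics emanating orthogonally from an orbit remain orthogonal to every orbit they meet. With such a metric, one takes $E = TS^\perp$ and the Riemannian exponential as the tubular neighborhood. The Gauss Lemma argument then shows that the geodesic spheres $\exp(\partial E_r)$ are orthogonal to the radial geodesics, hence (by the adapted-metric property) tangent to the orbits; since the $s$-fibers are connected, the orbits are connected and therefore must stay inside $\exp(\partial E_r)$, making each tube invariant. This uses only the adapted metric, which is genuinely the content available in \cite{Posthuma}, and sidesteps the equivariant-tubular-neighborhood problem altogether.
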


The proof is inspired by the proof of Theorem 4.2 from \cite{Posthuma}, and it uses the following result (see Definition
3.11, Proposition 3.13 and Proposition 6.4 in  \emph{loc.cit.}).

\begin{lemma}
On the base of a proper Lie groupoid there exist Riemannian metrics such that every geodesic which emanates
orthogonally from an orbit stays orthogonal to any orbit it passes through. Such metrics are called \textbf{adapted}.
\end{lemma}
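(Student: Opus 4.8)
The plan is to build the metric by averaging and then to verify the geodesic property by a Clairaut-type (conserved-momentum) argument, reducing the singular situation to the classical case of a compact group acting by isometries. First I would exploit properness to set up the averaging. A proper Lie groupoid $\mathcal{G}\rightrightarrows M$ carries a Haar system $\{\lambda^x\}$ on its $s$-fibers together with a cutoff function $c:M\to\mathbb{R}_{\geq 0}$ satisfying $\int_{s^{-1}(x)}c(t(g))\,d\lambda^x(g)=1$ for all $x$, and all isotropy groups $\mathcal{G}_x$ are compact. Starting from an arbitrary metric on $M$, I would average it against this data to obtain a metric $\eta$ invariant under the action of $\mathcal{G}$ by local bisections, i.e.\ such that every local diffeomorphism $t\circ\sigma$ attached to a bisection $\sigma$ is an isometry for $\eta$.

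The averaging is cleanest when organised through the linearization (slice) theorem for proper groupoids: near an orbit $\mathcal{O}$ through $x$, $\mathcal{G}$ is isomorphic to its linear local model, in which the transverse geometry is governed by the compact isotropy group $K:=\mathcal{G}_x$ acting linearly on the normal space $N_x=T_xM/T_x\mathcal{O}$. Averaging an inner product on $N_x$ over $K$ makes it $K$-invariant, and combining this transverse metric with a chosen metric along $\mathcal{O}$ (so that the projection to $\mathcal{O}$ is a Riemannian submersion) gives an invariant metric on a saturated neighborhood. Patching these local invariant metrics with an invariant partition of unity built from $c$ yields the global adapted metric $\eta$.

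Next I would prove that orthogonality is preserved. Since this is a pointwise statement along the geodesic, it suffices to check it inside each local model, where the problem becomes the classical one. In the linear model the orbit directions are spanned by the fundamental vector fields $X_\xi=\rho(\xi)$ of the $K$-action together with the directions along $\mathcal{O}$, and invariance of $\eta$ says exactly that each $X_\xi$ is a Killing field. For a geodesic $\gamma$ the momentum $\langle X_\xi,\dot\gamma\rangle$ is then conserved, because
\[\tfrac{d}{dt}\langle X_\xi,\dot\gamma\rangle=\langle\nabla_{\dot\gamma}X_\xi,\dot\gamma\rangle=0,\]
the last equality being skew-symmetry of $\nabla X_\xi$ for a Killing field. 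If $\gamma$ starts orthogonally to the orbit, then $\langle X_\xi,\dot\gamma\rangle$ vanishes at $t=0$ for every $\xi$, hence vanishes for all $t$; together with the submersion structure along $\mathcal{O}$ this forces $\dot\gamma$ to stay orthogonal to every orbit it meets. Covering the image of $\gamma$ by finitely many such local models propagates the conclusion globally.

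The main obstacle I anticipate is the singular nature of the orbit partition: orbit dimension jumps, so "orthogonal to the orbit" refers to a foliation whose leaves change dimension, and the conservation argument must be run in a model valid across such a jump. This is precisely what the linearization theorem for proper groupoids supplies, reducing every transverse picture to a compact group acting linearly and orthogonally; the delicate points are therefore arranging the global averaging so that the transverse metric is genuinely preserved by the normal representation over all orbits simultaneously, and checking that the local submersion/product structures glue to a single invariant metric. Both are handled by the properness-based averaging, which is why compactness of the isotropy groups, a consequence of properness, is essential.
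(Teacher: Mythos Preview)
The paper does not give its own proof of this lemma; it quotes the result from Pflaum--Posthuma--Tang \cite{Posthuma}, pointing to their Definition 3.11, Proposition 3.13, and Proposition 6.4. So there is no in-paper argument to compare your sketch against, but your outline does contain a genuine gap worth naming.

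The invariance you aim for in the first paragraph --- that every local bisection act by an isometry of $\eta$ --- is too strong and cannot be produced by averaging. Already for the pair groupoid $M\times M\rightrightarrows M$ (proper when $M$ is compact) bisections are graphs of arbitrary diffeomorphisms, so full bisection-invariance would force every diffeomorphism of $M$ to be an isometry. The underlying obstruction is that an arrow $g:x\to y$ of a general groupoid carries no natural map $T_xM\to T_yM$, so there is nothing to average the full metric against; what \emph{can} be averaged is only the transverse part, via the normal representation of $\mathcal{G}$ on the normal spaces $T_xM/T_x\mathcal{O}_x$. For such a transversally invariant metric, sections of the Lie algebroid do \emph{not} act by Killing fields on $M$, so your conserved-momentum argument does not run as written. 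The same difficulty resurfaces in your local-model paragraph: in a mere Riemannian submersion vertical geodesics need not remain vertical, so the phrase ``together with the submersion structure along $\mathcal{O}$'' does not explain why a geodesic starting orthogonal to the orbit stays orthogonal to the horizontal directions along $\mathcal{O}$; that requires totally geodesic fibers, which your construction does not arrange. In the approach of \cite{Posthuma} the geodesic property is obtained not via global Killing fields but by showing that transverse invariance makes the orbit partition a singular Riemannian foliation.
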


\begin{proof}[Proof of Lemma \ref{Lemma_tubular_neighborhood}]
Let $g$ be an adapted metric on $M$ and let $E:=TS^{\perp}\subset T_SM$ be the normal bundle. By rescaling $g$,
we may assume that
\begin{enumerate}[(1)]
\item the exponential is defined on $E_2$ and on $\textrm{int}(E_2)$ it is an open embedding;
\item for all $r\in (0,1]$ we have that
\[\exp(E_r)=\{p\in M: d(p,S)\leq r\},\]
where $d$ denotes the distance induced by the Riemannian structure.
\end{enumerate}
For these assertions, see the proof of Theorem 20, Chapter 9 in \cite{Spivak}.

Let $v\in E_1$ with $|v|:=r$ and base point $x$. We claim that the geodesic $\gamma(t):=\exp(tv)$ at $t=1$ is normal
to $\exp (\partial E_r)$ at $\gamma(1)$, i.e.
\[T_{\gamma(1)}\exp (\partial E_r)=\dot{\gamma}(1)^{\perp}.\]
Let $\overline{B}_r(x)$ be the closed ball of radius $r$ centered at $x$. By the Gauss Lemma
\[\dot{\gamma}(1)^{\perp}=T_{\gamma(1)}\partial \overline{B}_r(x),\]
and by (2), $\overline{B}_r(x)\subset \exp(E_r)$, therefore the boundaries of $\overline{B}_r(x)$ and $\exp(E_r)$ are
tangent at $\gamma(1)$. This proves the claim.

By assumption, $S$ is a union of orbits, therefore the geodesics $\gamma(t):=\exp(tv)$, for $v\in E$, start normal to the
orbits of $\mathcal{G}$, and by the property of the metric, they continue to be orthogonal to the orbits. Hence, the
claim implies that the orbit are tangent to $\exp(\partial E_r)$, for $r\in (0,1)$. Since the orbits are connected, it follows
that $\exp(\partial E_r)$ is invariant for all $r\in (0,1)$. Let $\lambda: [0,\infty)\to [0,1)$ be a diffeomorphism that is the
identity around $0$, and define the embedding $E\hookrightarrow M$ by $v\mapsto \exp (\lambda(|v|)/|v|v)$.
\end{proof}

\subsection{Representations}

A \textbf{representation}\index{representation, Lie groupoid} of a Lie groupoid $\mathcal{G}\rightrightarrows M$ is
a vector bundle $p: V\to M$ endowed with a linear action of $\mathcal{G}$ on $V$, i.e.\ a map
\[\mu:\mathcal{G}\times_{M} V \rmap V, \ \ \mu(g,v):=g\cdot v\in V_{t(g)}, \ \textrm{ for }g,v \textrm{ such that } s(g)=p(v),\]
which is linear in $v$ and satisfied the usual axioms
\[1_{p(v)}\cdot v=v, \ \ (gh)\cdot v=g\cdot(h\cdot v).\]

\section{Lie algebroids}\label{Section_Lie_algebroids}

\subsection{Definition of a Lie algebroid}

A \textbf{Lie algebroid}\index{Lie algebroid} is a vector bundle $A\to M$ with a Lie bracket $[\cdot,\cdot]$ on its
space of sections $\Gamma(A)$ and a vector bundle map $\rho:A\to TM$, called the \textbf{anchor}\index{anchor},
which satisfy the Leibniz rule
\[[X,f Y]=f[X,Y]+\rho(X)(f)Y,\ \  \ X,Y\in\Gamma(A), f\in C^{\infty}(M).\]
This axiom implies that $\rho$ is a Lie algebra homomorphism $\Gamma(A)\to \mathfrak{X}(M)$.

\subsection{The Lie algebroid of a Lie groupoid}

The infinitesimal counterpart of a Lie groupoid $\mathcal{G}\rightrightarrows M$ is a Lie algebroid
\[A=Lie(\mathcal{G})\rmap M.\]
As a vector bundle, $A$ is the pullback by the unit map $u:M\to \mathcal{G}$ of the space of $s$-vertical vectors on
$\mathcal{G}$
\[A=u^*(T^s\mathcal{G}), \textrm{ where } T^s\mathcal{G}:=\ker (ds:T\mathcal{G}\to TM).\]
The anchor is the differential of the target map $\rho:=dt_{|A}$. To define the Lie bracket, let $r_g$ denote right
multiplication by an arrow $g:x\to y$,
\[r_g:\mathcal{G}(y,-)\diffto \mathcal{G}(x,-), \ h\mapsto hg.\]
An $s$-vertical vector field $X\in \Gamma(T^s\mathcal{G})$ is called \textbf{right invariant}\index{right invariant}, if
\[dr_g(X_h)=X_{hg},\ \ \textrm{ whenever  }s(h)=t(g).\]
The space of right invariant vector fields is denoted by $\Gamma(T^s\mathcal{G})^\mathcal{G}$. A section
$X\in\Gamma(A)$ extends to a unique right invariant vector field $X^r$,
\[X^r_g:=dr_g(X_{t(g)}), \ g\in \mathcal{G},\]
and the assignment $X\mapsto X^r$ defines an isomorphism between $\Gamma(A)\cong
\Gamma(T^s\mathcal{G})^\mathcal{G}$. This isomorphism induces the Lie bracket on $\Gamma(A)$.

Compared with the theory of Lie algebras and Lie groups, a major difference is that not every Lie algebroid is the Lie algebroid of a Lie groupoid.
\begin{definition}
A Lie algebroid $A\to M$, for which there is a Lie groupoid $\mathcal{G}\rightrightarrows M$ such that $A\cong
Lie(\mathcal{G})$, is called \textbf{integrable}.\index{integrable Lie algebroid}
\end{definition}
More on integrability will be presented in section \ref{Section_Integrability}.

\subsection{Examples}\label{Subsection_Examples_of_Lie_algebroids}

\subsubsection*{Lie algebras} A Lie algebroid over a point is the same as a Lie algebra.

\subsubsection*{The tangent bundle and foliations}
The tangent bundle of a manifold $M$ is a Lie algebroid with $\rho=\textrm{Id}_{TM}$. A Lie groupoid integrating
$TM$ is the fundamental groupoid $\Pi_1(M)$.

The tangent bundle $T\mathcal{F}\subset TM$ of a foliation $\mathcal{F}$ is a Lie algebroid. A Lie groupoid that
integrates it is the holonomy groupoid $\textrm{Hol}(\mathcal{F})\rightrightarrows M$.

\subsubsection*{The action Lie algebroid}
Let $\mathfrak{g}$ be a Lie algebra acting on a manifold $M$ by the Lie algebra homomorphism
$\alpha:\mathfrak{g}\to \mathfrak{X}(M)$. This defines a Lie algebroid $\mathfrak{g}\ltimes M$ called the
\textbf{action algebroid}\index{action algebroid}. The vector bundle is $\mathfrak{g}\times M\to M$, the anchor is
given by $\alpha$ and the bracket is uniquely determined by the Leibniz rule and the fact that on constant sections it
coincides with that of $\mathfrak{g}$. If the action integrates to the action of a Lie group $G$ of $\mathfrak{g}$, then
$\mathfrak{g}\ltimes M$ is the Lie algebroid of the action groupoid $G\ltimes M$.

\subsubsection*{Transitive Lie algebroids}
A Lie algebroid $A\to M$ with surjective anchor is called \textbf{transitive}\index{transitive Lie algebroid}. If $A$ is
integrable and $M$ is connected, then any Lie groupoid integrating $A$ is automatically transitive, hence a gauge
groupoid.

Conversely, let $p:P\to M$ be a principal $G$-bundle. The Lie algebroid of the gauge groupoid $P\times_G P$, as a
vector bundle, is $A(P):=TP/G$\index{gauge groupoid}. The anchor is induced by $dp:TP\to TM$ and the Lie bracket
comes from the identification between sections of $A(P)$ and invariant vector fields on $P$: \[\Gamma(A(P))\cong
\mathfrak{X}(P)^G.\]

\subsubsection*{The cotangent Lie algebroid of a Poisson manifold}
For a Poisson manifold $(M,\pi)$, $T^*M$ is a Lie algebroid, called \textbf{the cotangent Lie
algebroid}\index{cotangent Lie algebroid}, (see section \ref{SPreliminaries}) with anchor $\pi^{\sharp}$ and Lie
bracket
\begin{equation}\label{br}
[\alpha,\beta]_{\pi}:=L_{\pi^{\sharp}(\alpha)}(\beta)-L_{\pi^{\sharp}(\beta)}(\alpha)-d\pi(\alpha,\beta).
\end{equation}

\subsubsection*{Dirac structure}

A Dirac structure $L\subset TM\oplus T^*M$ is a Lie algebroid with anchor $p_T:L\to TM$, and the Dorfman bracket (\ref{EQ_Dorfman_bracket}).

For $\pi$ Poisson, the cotangent Lie algebroid $T^*M$ and the associated Dirac structure $L_{\pi}$ are isomorphic
\[(T^*M,[\cdot,\cdot]_{\pi},\pi^{\sharp})\diffto (L_{\pi},[\cdot,\cdot]_D,p_T), \ \xi\mapsto \pi^{\sharp}(\xi)+\xi.\]

\subsection{Some properties of Lie algebroids}\label{subsection_some_properties_of_Lie_algebroids}
Let $A$ be a Lie algebroid over $M$.

\subsubsection*{The isotropy Lie algebra}

\index{isotropy Lie algebra}For $x\in M$, the Lie bracket on $\Gamma(A)$ induces a natural Lie algebra structure on
the kernel of the anchor at $x$, called the \textbf{isotropy Lie algebra} at $x$ and denoted by
$\mathfrak{g}_x:=\ker(\rho)_x$. For $X,Y\in \mathfrak{g}_x$, the bracket is defined by
\[[X,Y]:=[\widetilde{X},\widetilde{Y}]_{|x},\]
where $\widetilde{X},\widetilde{Y}\in\Gamma(A)$ are extensions of $X$, respectively of $Y$.

In the integrable case, when $A=Lie(\mathcal{G})$, the isotropy Lie algebra at $x$ is the Lie algebra of the isotropy
group $\mathcal{G}_x$ at $x$.

\subsubsection*{The orbits}
\index{orbit}The subbundle $\rho(A)\subset TM$ is a singular involutive distribution, which integrates to a partition of
$M$ by immersed submanifolds, called the \textbf{orbits} of $A$. The orbit through $x$, denoted by $Ax$, satisfies
$T_x(Ax)=\rho(A_x)$. If $A$ is integrable, with $A=Lie(\mathcal{G})$, then the orbits of $A$ coincide with the
connected components of the orbits of $\mathcal{G}$.

\subsubsection*{Restrictions}

If $N\subset M$ is a submanifold satisfying
\[\rho(A_x)\subset T_xN, \ \forall\  x\in N,\]
one can restrict $A$, to $N$, i.e.\ there is a unique bracket on $\Gamma(A_{|N})$ such that the restriction map is a
Lie algebra homomorphism. With this bracket,
\[(A_{|N},[\cdot,\cdot]_N,\rho_{|N})\]
becomes a Lie algebroid over $N$ called the \textbf{restriction} of $A$ to $N$. In particular, for an orbit $O=Ax$, the
restricted Lie algebroid $A_{|O}$ is transitive.

\subsubsection*{A-paths}

The orbit $Ax$ can be described as the set of points $y\in M$ for which there exists a smooth path $\gamma:[0,1]\to M$ joining
$x$ to $y$, and a smooth path $a:[0,1]\to A$ lying over $\gamma$, such that
\[\rho(a)(t)=\frac{d\gamma}{dt}(t), \ \  t\in[0,1].\]
Such a pair $(a,\gamma)$ is called an $A$-\textbf{path}\index{A-path}.

For the cotangent algebroid of a Poisson manifold, this notion coincides with that of a cotangent path introduced in
section \ref{SPreliminaries}.

\subsubsection*{The fundamental
integration of $A$}

For a Lie groupoid $\mathcal{G}$, the union of the connected components of the $s$-fibers containing the
corresponding unit is an open subgroupoid of $\mathcal{G}$, denoted by $\mathcal{G}^{\circ}$. We call this the
$s$-\textbf{connected} subgroupoid of $\mathcal{G}$.\index{s-connected}

Putting together the universal covers of the $s$-fibers of $\mathcal{G}^{\circ}$, we obtain a Lie groupoid
$\widetilde{\mathcal{G}}$ with the following property: it is the unique Lie groupoid (up to isomorphism) with
1-connected $s$-fibers integrating the Lie algebroid $A=Lie(\mathcal{G})$.

Any integration with 1-connected $s$-fibers of a Lie algebroid $A$ is called the \textbf{fundamental
integration}\index{fundamental integration} of $A$.

\subsection{Representations}\label{subsection_representations}

Let $(A,[\cdot,\cdot],\rho)$ be an algebroid over a manifold $M$. An $A$-\textbf{connection} on a vector bundle $V\to
M$ is a bilinear map\index{A-connection}
\[\nabla:\Gamma(A)\times \Gamma(V)\rmap \Gamma(V),\]
satisfying
\[\nabla_{fX}(\lambda)=f\nabla_X(\lambda),\ \ \nabla_{X}(f\lambda)=f\nabla_X(\lambda)+L_{\rho(X)}(f)\lambda.\]

A \textbf{representation}\index{representation, Lie algebroid} of $A$ is an $A$-connection that is flat:
\[\nabla_{[X,Y]}=\nabla_X\circ \nabla_Y - \nabla_Y\circ \nabla_X.\]

Let $\mathcal{G}$ be a Lie groupoid with Lie algebroid $A$. A representation $V$ of $\mathcal{G}$ can be
differentiated to a representation of $A$. Explicitly, for $X\in\Gamma(A)$ and $\lambda\in\Gamma(V)$, the
$A$-connection is given by (see Definition 3.6.8 \cite{MackenzieGT})
\begin{align}\label{EQ_differentiating_reps}
\nabla_X(\lambda)_x:=\frac{d}{dt}\left(\phi^{t}(X,x)^{-1}\cdot \lambda_{\varphi^t(X,x)}\right)_{|t=0},
\end{align}
where $g\mapsto \phi^t(X,g)$ is the flow of the right invariant extension $X^r$ of $X$ to $\mathcal{\mathcal{G}}$
and $x\mapsto \varphi^t(X,x)$ is the flow of $\rho(X)$ on $M$. Since $dt(X^r)=\rho(X)$, we also have that
$t(\phi^t(X,x))=\varphi^t(X,x)$.

Conversely, a representation of $A$ can be integrated to a representation of the $s$-fiber 1-connected groupoid of $A$,
but, in general, not to $\mathcal{G}$.

We prove now a result that will be used in chapter \ref{ChRigidity}, namely that representations on ideals of $A$ can
be integrated to any $s$-connected Lie groupoid. We call a subbundle $I\subset A$ an \textbf{ideal}\index{ideal, Lie
algebroid}, if $\rho(I)=0$ and $\Gamma(I)$ is an ideal of the Lie algebra $\Gamma(A)$. Using the Leibniz rule, one
easily sees that, if $A\neq I$, then the second condition implies the first. An ideal $I$ is naturally a representation of
$A$, with $A$-connection given by the Lie bracket \[\nabla_X(Y):=[X,Y], \ \ X\in\Gamma(A), \ Y\in\Gamma(I).\]

\begin{lemma}\label{Lemma_integrating_ideals}
Let $\mathcal{G}\rightrightarrows M$ be a Hausdorff Lie groupoid with Lie algebroid $A$ and let $I\subset A$ be an
ideal. If the $s$-fibers of $\mathcal{G}$ are connected, then the representation of $A$ on $I$ given by the Lie bracket
integrates to $\mathcal{G}$.
\end{lemma}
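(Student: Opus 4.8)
The plan is to integrate the $A$-connection on $I$ fiberwise along $s$-fibers, using connectedness and the fact that $I$ is an ideal (so $\rho(I)=0$) to glue the fiberwise data into a globally well-defined $\mathcal{G}$-action. First I would recall the general principle (from subsection \ref{subsection_representations}): every representation of $A$ integrates to the $s$-fiber $1$-connected groupoid $\widetilde{\mathcal{G}}$. So there is a canonical action $\widetilde{\mu}:\widetilde{\mathcal{G}}\times_M I\to I$. The task is to show this descends along the covering morphism $q:\widetilde{\mathcal{G}}\to\mathcal{G}^{\circ}=\mathcal{G}$ (here $\mathcal{G}$ has connected $s$-fibers, so $\mathcal{G}=\mathcal{G}^{\circ}$). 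Descending means: if $q(\widetilde{g}_1)=q(\widetilde{g}_2)$, then $\widetilde{g}_1$ and $\widetilde{g}_2$ act the same way on $I$. Equivalently, writing $N:=\ker(q)$ — a bundle of discrete groups over $M$, namely $\pi_1$ of the $s$-fibers — one must show $N$ acts trivially on $I$.

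The key step is the triviality of the $N$-action. Fix $x\in M$ and an element $\widetilde{n}\in N_x$, represented by a loop in the $s$-fiber $s^{-1}(x)$ of $\mathcal{G}$ based at $1_x$; equivalently, by an $A$-path $a:[0,1]\to A$ over a loop $\gamma$ in the orbit $Ax$ based at $x$, whose parallel transport in $\widetilde{\mathcal{G}}$ closes up. Since $I$ is an ideal, $\rho(I)=0$, and the $A$-connection on $I$ is $\nabla_X Y=[X,Y]$. Parallel transport of $I$ along the $A$-path $a$ is governed by $\frac{d}{dt}u(t) = [a(t), \cdot]$, a linear ODE in the fiber $I_{\gamma(t)}$. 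I would argue that this parallel transport depends only on the homotopy class of the $A$-path with fixed endpoints — this is precisely the statement that $I$ is a representation of $A$ (flatness of $\nabla$), applied to the fact that $A$-homotopic $A$-paths give the same parallel transport. Since $\widetilde{n}$ corresponds to an $A$-path that is $A$-homotopic (within $\widetilde{\mathcal{G}}$, which has $1$-connected $s$-fibers) to the constant path — that is exactly what $q(\widetilde{n})=1_x$ means — its parallel transport on $I$ is the identity. Hence $N$ acts trivially, and the action $\widetilde{\mu}$ factors through $q$ to give the desired $\mathcal{G}$-action $\mu:\mathcal{G}\times_M I\to I$; its axioms and smoothness are inherited from $\widetilde{\mu}$ and the local diffeomorphism property of $q$.

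The main obstacle I anticipate is being careful about the identification ``$N_x \cong \pi_1(s$-fiber of $\mathcal{G}$ at $x)$'' and the precise sense in which an element of this group, viewed as an $A$-homotopy class of $A$-loops, has trivial parallel transport on $I$. One has to invoke the comparison between $A$-homotopy of $A$-paths and homotopy of paths in the $s$-fibers of an integrating groupoid — this is standard (it underlies the Weinstein/Crainic–Fernandes integrability picture) but should be cited rather than reproved. A secondary point is that the ideal hypothesis $\rho(I)=0$ is genuinely used twice: once to ensure $I$ restricts to every orbit and the parallel transport ODE stays in a single fiber (no base motion beyond $\gamma$ itself), and once — implicitly — in the remark preceding the lemma that for $A\neq I$ the ideal condition on sections already forces $\rho(I)=0$. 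With these in place, the descent argument is routine, and the conclusion follows.
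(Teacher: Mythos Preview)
Your argument has a genuine gap at the crucial descent step, and as written it would prove too much.

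You claim: ``Since $\widetilde{n}$ corresponds to an $A$-path that is $A$-homotopic (within $\widetilde{\mathcal{G}}$, which has $1$-connected $s$-fibers) to the constant path --- that is exactly what $q(\widetilde{n})=1_x$ means.'' This is false. The condition $q(\widetilde n)=1_x$ says only that the integrated path in the $s$-fiber $s^{-1}(x)\subset\mathcal{G}$ is a \emph{loop} at $1_x$; it does \emph{not} say that the loop is null-homotopic, nor that the underlying $A$-path is $A$-homotopic to the constant. Indeed, $\widetilde{\mathcal G}$ is by definition $A$-paths modulo $A$-homotopy, so ``$A$-homotopic to the constant path'' is exactly $\widetilde n=1_x$. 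The kernel $N_x\cong\pi_1(s^{-1}(x),1_x)$ consists precisely of nontrivial $A$-homotopy classes of $A$-loops, and for these your flatness-of-$\nabla$ argument gives no information. Notice that your descent argument never uses that $I$ is an ideal --- only that $\nabla$ is flat --- so it would show that \emph{every} representation of $A$ descends from $\widetilde{\mathcal G}$ to $\mathcal G$. The paper explicitly notes this is false in general (subsection~\ref{subsection_representations}). The ``main obstacle'' you anticipate is in fact the whole content of the lemma.

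The paper takes a different route that avoids $\widetilde{\mathcal G}$ entirely. It first writes down a candidate $\mathcal G$-action on all of $\ker(\rho)$ by groupoid conjugation, $g\cdot Y=\frac{d}{d\epsilon}\big|_{\epsilon=0}\big(g\,\exp(\epsilon Y)\,g^{-1}\big)$, and then shows this preserves $I$. The key computation is that for $X\in\Gamma(A)$, conjugation by $\phi^t(X,x)$ (the flow of the right-invariant extension $X^r$) is the flow of the derivation $([X,\cdot],\rho(X))$ on $A$; since $I$ is an ideal, this derivation restricts to $I$, hence so does its flow. Elements of the form $\phi^t(X,x)$ contain a neighborhood of the units and therefore generate $\mathcal G$ by $s$-connectedness, so the conjugation action preserves $I$ globally. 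Finally the same flow computation shows the action differentiates to the bracket. If you want to salvage your approach, you would need to prove that the parallel transport of $\nabla$ along an $A$-path equals conjugation by its endpoint in $\mathcal G$ --- but establishing that conjugation even lands in $I$ is precisely the paper's argument.
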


\begin{proof}

First observe that $\mathcal{G}$ acts on the possibly singular bundle of isotropy Lie algebras $\ker(\rho)\to M$ via the formula:
\begin{align}\label{EQ_Action}
g\cdot Y=\frac{d}{d\epsilon}\left(g\phantom{\cdot} exp(\epsilon Y) g^{-1} \right)_{|\epsilon=0},  \forall\  Y\in \ker(\rho)_{s(g)}.
\end{align}
Let $N(I)\subset \mathcal{G}$ be the subgroupoid consisting of elements $g$ that satisfy $g\cdot I_{s(g)}\subset
I_{t(g)}$. We will prove that $N(I)=\mathcal{G}$, and that the induced action of $\mathcal{G}$ on $I$ differentiates
to the Lie bracket.

Recall that a derivation\index{derivation on a vector bundle} on a vector bundle $E\to M$ (see section 3.4
\cite{MackenzieGT}) is a pair $(D,V)$, with $D$ a linear operator on $\Gamma(E)$ and $V$ a vector field on $M$,
satisfying
\[D(f\alpha)=fD(\alpha)+V(f)\alpha,  \ \forall\ \alpha\in\Gamma(E), f\in C^{\infty}(M) .\]
The flow of a derivation $(D,V)$, denoted by $\phi_D^t$, is a vector bundle map covering the flow $\varphi_V^t$ of
$V$, $\phi_D^t(x):E_x\to E_{\varphi_V^t(x)}$ (whenever $\varphi_V^t(x)$ is defined) that is the solution to the
following differential equation
\[\phi_{D}^0=\textrm{Id}_E,\ \ \frac{d}{dt}(\phi_D^t)^*(\alpha)=(\phi_D^t)^*(D \alpha),\]
where $(\phi_D^t)^*(\alpha)_x=\phi_D^{-t}(\varphi_V^{t}(x)) \alpha_{\varphi_V^{t}(x)}$.

For $X\in\Gamma(A)$, denote by $\phi^t(X,g)$ the flow of the corresponding right invariant vector field on
$\mathcal{G}$, and by $\varphi^t(X,x)$ the flow of $\rho(X)$ on $M$. Conjugation by $\phi^t(X)$ is an automorphism
of $\mathcal{G}$ covering $\varphi^t(X,x)$, which we denote by
\[C(\phi^t(X)):\mathcal{G}\rmap \mathcal{G}, \ g\mapsto \phi^t(X,t(g))g\phi^t(X, s(g))^{-1}.\]
Since $C(\phi^t(X))$ sends the $s$-fiber over $x$ to the $s$-fiber over $\varphi^t(X,x)$, its differential at the identity $1_x$ gives an
isomorphism
\[Ad(\phi^t(X)):A_{x}\rmap A_{\varphi^t(X,x)}, \ Ad(\phi^t(X))_x:=dC(\phi^t(X))_{|A_x}.\]
On $\ker(\rho)_x$, we recover the action (\ref{EQ_Action}) of $g=\phi^t(X,x)$. We have that:
\begin{align}\label{EQ_adjoint_diff}
\frac{d}{dt} &(Ad(\phi^t(X))^*Y)_x=\frac{d}{dt}Ad(\phi^{-t}(X,\varphi^t(X,x)))Y_{\varphi^t(X,x)}=\\
\nonumber &=-\frac{d}{ds}\left(Ad(\phi^{-t}(X,\varphi^t(X,x)))Ad(\phi^s(X,\varphi^{t-s}(X,x)))Y_{\varphi^{t-s}(X,x)}\right)_{|s=0}=\\
\nonumber &=Ad(\phi^{-t}(X,\varphi^t(X,x)))[X,Y]_{\varphi^t(X,x)}=Ad(\phi^t(X))^*([X,Y])_x,
\end{align}
for $Y\in\Gamma(A)$, where we have used the adjoint formulas from Proposition 3.7.1 \cite{MackenzieGT}. This shows
that $Ad(\phi^{t}(X))$ is the flow of the derivation $([X,\cdot],\rho(X))$ on $A$. Since $I$ is an ideal, the derivation
$[X,\cdot]$ restricts to a derivation on $I$, and therefore $I$ is invariant under $Ad(\phi^{t}(X))$. This proves that for
all $Y\in I_x$,
\[Ad(\phi^t(X,x))Y=\phi^t(X,x)\cdot Y\in I.\]
So $N(I)$ contains all the elements in $\mathcal{G}$ of the form $\phi^t(X,x)$. The set of such elements contains an
open neighborhood $O$ of the unit section in $\mathcal{G}$. Since the $s$-fibers of $\mathcal{G}$ are connected,
$O$ generates $\mathcal{G}$ (see Proposition 1.5.8 \cite{MackenzieGT}). Therefore, $N(I)=\mathcal{G}$ and so,
(\ref{EQ_Action}) defines an action of $\mathcal{G}$ on $I$.

Using that $\phi^{-t}(X,\varphi^t(X,x))=\phi^{t}(X,x)^{-1}$, equation (\ref{EQ_adjoint_diff}) gives
\begin{align*}
\frac{d}{dt}_{|t=0}\left(\phi^{t}(X,x)^{-1}\cdot Y_{\varphi^t(X,x)}\right)=
[X,Y]_x,\  \forall  \ X\in\Gamma(A), Y\in\Gamma(I),
\end{align*}
which, by (\ref{EQ_differentiating_reps}), implies that the action differentiates to the Lie bracket.
\end{proof}

\section{Cohomology}\label{Section_COHOMOLOGY}

In this section we recall some of the cohomology theories associated to Lie groupoids/algebroids and also some vanishing results.

\subsection{Differentiable cohomology of Lie groupoids}\label{subsection_differentiable_cohomology_of_Lie_groupoids}

\index{differentiable cohomology}The \textbf{differentiable cohomology} \cite{Haefliger} of a Lie groupoid
$\mathcal{G}$ with coefficients in a representation $V\to M$ is computed by the complex
of smooth maps $c:\mathcal{G}^{(p)}\to V$, where
\[\mathcal{G}^{(p)}:=\{(g_1,\ldots, g_p)\in \mathcal{G}^p | s(g_i)=t(g_{i+1}), i=1,\ldots, p-1\}\]
with $c(g_1,\ldots,g_p)\in V_{t(g_1)}$, and with differential given by
\begin{align*}
dc(g_1, \ldots ,g_{p+1}) &= g_1c(g_2, \ldots , g_{p+1})+\\
& + \sum_{i=1}^p(-1)^i c(g_1, \ldots , g_ig_{i+1}, \ldots , g_{p+1}) + (-1)^{p+1}c(g_1,\ldots,g_p).
\end{align*}
The resulting cohomology groups will be denoted $H^{\bullet}_{\mathrm{diff}}(\mathcal{G},V)$.

We recall a vanishing result for proper Lie groupoids.

\begin{proposition}[Proposition 1 \cite{Cra}]
For any proper Lie groupoid $\mathcal{G}$, and any representation $V$,
\[H^{k}_{\mathrm{diff}}(\mathcal{G},V)=0, \ \forall\ k\geq 1.\]\index{proper groupoid}
\end{proposition}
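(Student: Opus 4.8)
The plan is to show that the differentiable cochain complex of $\mathcal{G}$ with coefficients in $V$ is acyclic in positive degrees by constructing explicit contracting homotopy operators obtained by averaging cochains over the $s$-fibers, exactly as one proves vanishing of group cohomology of a compact group with the help of Haar measure. The two structures that make this possible on a proper groupoid are: (i) a smooth, right-invariant \emph{Haar system} $\{\mu^x\}_{x\in M}$ of measures along the $s$-fibers $s^{-1}(x)$, which exists for any Lie groupoid since $s$ is a submersion; and (ii) a normalized \emph{cutoff function} $\varphi\in C^{\infty}(M)$, $\varphi\geq 0$, such that $s$ restricted to $t^{-1}(\operatorname{supp}\varphi)$ is proper and $\int_{s^{-1}(x)}\varphi(t(g))\,d\mu^{x}(g)=1$ for all $x\in M$. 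It is precisely here that properness of $(s,t)\colon\mathcal{G}\to M\times M$ enters: it guarantees both that such a $\varphi$ exists and that the averaging integrals below are over compactly supported integrands, hence convergent, even though $M$ and the $s$-fibers may be non-compact. Write $\mu_x:=\iota_{*}\mu^{x}$ for the induced measures on the $t$-fibers $t^{-1}(x)$; right-invariance of $\{\mu^{x}\}$ becomes the left-invariance $(L_g)_{*}\mu_{s(g)}=\mu_{t(g)}$ of $\{\mu_x\}$, where $L_g(h)=gh$.

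Next I would define, for $p\geq 1$, the operator $h\colon C^{p}_{\mathrm{diff}}(\mathcal{G},V)\to C^{p-1}_{\mathrm{diff}}(\mathcal{G},V)$ by
\[
(hc)(g_1,\dots,g_{p-1})=\int_{t^{-1}(t(g_1))}\varphi(s(g))\,g\cdot c(g^{-1},g_1,\dots,g_{p-1})\,d\mu_{t(g_1)}(g).
\]
One checks this is well defined and smooth: for fixed $g_1,\dots,g_{p-1}$ the integrand is a smooth, compactly supported map on $t^{-1}(t(g_1))$ taking values in the single vector space $V_{t(g_1)}$, so no invariant metric on $V$ is required. (Smoothness in the remaining variables follows by differentiating under the integral sign.)

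The core step is the verification of the homotopy identity $dh+hd=\operatorname{id}$ on $C^{p}$ for every $p\geq 1$, done by expanding both $d(hc)$ and $h(dc)$ using the Koszul-type differential of the complex. The terms of $h(dc)$ coming from the ``inner'' face operations $c(\dots,g_ig_{i+1},\dots)$ cancel in pairs against the corresponding terms of $d(hc)$; the term of $h(dc)$ in which the differential multiplies the inserted slot $g^{-1}$ on the left by $g_1$ is matched, after the change of variables $g\mapsto g_1 g$ and an application of the left-invariance $(L_{g_1})_{*}\mu_{\bullet}=\mu_{\bullet}$, with the $g_1$-translated term of $d(hc)$; and the one surviving term, in which the differential collapses $g\cdot g^{-1}$, equals $c(g_1,\dots,g_{p-1})\cdot\int_{t^{-1}(t(g_1))}\varphi(s(g))\,d\mu_{t(g_1)}(g)=c(g_1,\dots,g_{p-1})$ by the normalization of $\varphi$ (using $\iota_{*}\mu^{x}=\mu_{x}$ to convert the integral back to an $s$-fiber integral). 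I have checked the case $p=1$ directly and it comes out cleanly; the general case is the same bookkeeping. Conclude $H^{k}_{\mathrm{diff}}(\mathcal{G},V)=0$ for all $k\geq 1$.

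The main obstacle is not conceptual but technical: ensuring the averaging integrals are genuinely well defined on a non-compact base with non-compact $s$-fibers — this is exactly what the cutoff function (and hence properness, together with Hausdorffness, which is part of the definition of properness here) provides — and carrying out the index bookkeeping in the homotopy identity carefully, in particular applying the substitution $g\mapsto g_1 g$ together with the left-invariance of the $t$-fiber measures to the single ``non-diagonal'' term. Everything else is routine.
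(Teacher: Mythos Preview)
Your proposal is correct and is exactly the standard averaging argument from the cited reference \cite{Cra}; the thesis itself does not give a proof of this proposition but merely quotes it as Proposition~1 of that paper. Your homotopy operator, the use of a Haar system together with a cutoff function (whose existence is guaranteed by properness), and the verification of $dh+hd=\mathrm{id}$ via the substitution $g\mapsto g_1 g$ and left-invariance of the $t$-fiber measures all match the original proof.
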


\subsection{Lie algebroid cohomology}\label{subsection_Lie_algebroid_cohomology}

Let $(A,[\cdot,\cdot],\rho)$ be a Lie algebroid over a manifold $M$ and let $(V,\nabla)$ be a representation of $A$. The
cohomology of $A$ with coefficient in $V$,
\[H^{\bullet}(A,V),\]
is the cohomology of the complex
\[\Omega^{\bullet}(A,V):=\Gamma(\Lambda^{\bullet}A^*\otimes V),\]
with differential given by the classical Koszul formula:\index{cohomology, Lie algebroid}\index{Koszul formula}
\begin{align*}
d_{\nabla}\alpha(X_0, \ldots , X_{p})=&\sum_{i}(-1)^{i} \nabla_{X_i}(\alpha(X_0, \ldots , \widehat{X}_i, \ldots , X_{p}))+\\
& + \sum_{i< j} (-1)^{i+j}\alpha([X_i, X_j], \ldots , \widehat{X}_i, \ldots, \widehat{X}_j, \ldots , X_{p}).
\end{align*}
For trivial coefficients, we denote the differential by $d_A$ and the cohomology groups by $H^{\bullet}(A)$.

\subsubsection*{Examples}

\begin{itemize}
\item
For $A=TM$, $H^{\bullet}(TM)=H^{\bullet}(M)$ is the de Rham cohomology.

\item For $A=\mathfrak{g}$, a Lie algebra, we obtain the Eilenberg Chevalley cohomology of $\mathfrak{g}$ with coefficients in $V$.

\item If $A=T^*M$ is the cotangent Lie algebroid of a Poisson manifold $(M,\pi)$, then $\Omega^{\bullet}(T^*M)=\mathfrak{X}^{\bullet}(M)$ and $d_{T^*M}=d_{\pi}$; thus we obtain the Poisson
cohomology\index{cohomology, Poisson}
$$H^{\bullet}(T^*M)= H^{\bullet}_{\pi}(M).$$

\end{itemize}

\subsubsection*{Invariant forms on the Lie groupoid}
Let $\mathcal{G}$ be a Lie groupoid with a representation $V$. We consider the complex of $s$-foliated
forms\index{s-foliated forms} on $\mathcal{G}$ with values in $s^*(V)$
\[(\Omega^{{\bullet}}(T^s{\mathcal{G}}, s^*(V)),d\otimes I_V).\]
This is the complex of forms on the (foliation) Lie algebroid $T^s{\mathcal{G}}\to \mathcal{G}$ with coefficients in
the representation $(s^*(V),\nabla)$; the connection is
\[\nabla_X(\lambda)_{|s^{-1}(x)}:=L_{X|s^{-1}(x)}(\lambda_{|s^{-1}(x)}),\]
where this is just the Lie derivative of the function $\lambda_{|s^{-1}(x)}:s^{-1}(x)\to V_{x}$.

A form $\omega\in\Omega^{\bullet}(T^s{\mathcal{G}}, s^*(V))$ is called \textbf{invariant}\index{invariant forms},
if it satisfies
\[(r_g^*\otimes g)(\omega_{hg})=\omega_h,\  \forall \ h,g\in {\mathcal{G}}, \textrm{ with }s(h)=t(g),\]
where $r_g^*\otimes g$ is the linear isomorphism $\eta\mapsto g\cdot\eta\circ dr_g$. Denote the space of $V$-valued invariant forms on
${\mathcal{G}}$ by $\Omega^{\bullet}(T^s{\mathcal{G}},s^*(V))^{\mathcal{G}}$.

There is a canonical isomorphism between $V$-values forms on $A$ and $V$-valued invariant forms on
$T^s\mathcal{G}$, given by
\begin{equation}\label{EQ_definition_of_J}
J:\Omega^{\bullet}(A,V)\rmap\Omega^{\bullet}(T^s{\mathcal{G}},s^*(V)), \ \ J(\eta)_g:=(r_{g^{-1}}^*\otimes g^{-1})(\eta_{t(g)}).
\end{equation}
A left inverse for $J$ (i.e.\ a map $P$ such that $P\circ J=I$) is
\begin{equation}\label{EQ_definition_of_P}
P:\Omega^{\bullet}(T^s{\mathcal{G}},s^*(V))\rmap \Omega^{\bullet}(A,V), \ \ P(\omega)_x:=\omega_{u(x)}.
\end{equation}

Moreover, it is well known that $J$ is a chain map (e.g.\ Theorem 1.2 \cite{WeinXu}). For completeness we include a
proof.

\begin{lemma}\label{Lemma_forms_on_A_invariant_forms}
The map $J$ is a chain map, therefore it is an isomorphism of complexes
\[J:(\Omega^{\bullet}(A,V),d_{\nabla})\cong (\Omega^{\bullet}(T^s{\mathcal{G}},s^*(V))^{\mathcal{G}},d\otimes I_V).\]
\end{lemma}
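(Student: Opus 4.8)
The content of the statement is really that $J$ commutes with the two differentials: since $J$ is already a bijection onto the invariant forms $\Omega^{\bullet}(T^s\mathcal{G},s^*(V))^{\mathcal{G}}$ with the explicit left inverse $P$ from (\ref{EQ_definition_of_P}), once we know it is a chain map it is automatically an isomorphism of complexes, and as a byproduct one learns that $d\otimes I_V$ preserves invariant forms. So the plan is to prove $J\circ d_{\nabla}=(d\otimes I_V)\circ J$.

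First I would reduce this identity to an evaluation on right-invariant vector fields. Writing $X^{r}\in\Gamma(T^s\mathcal{G})$ for the right-invariant extension of $X\in\Gamma(A)$, recall that $X\mapsto X^{r}$ is a Lie algebra isomorphism onto $\Gamma(T^s\mathcal{G})^{\mathcal{G}}$, so $[X,Y]^{r}=[X^{r},Y^{r}]$, and that for each $g\in\mathcal{G}$ the vectors $\{X^{r}_g:X\in\Gamma(A)\}$ span $T^s_g\mathcal{G}$ (indeed $X\mapsto X^{r}_g=dr_g(X_{t(g)})$ is already onto $T^s_g\mathcal{G}$ for $X$ ranging over sections). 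Since an element of $\Omega^{\bullet}(T^s\mathcal{G},s^*(V))$ is determined by its values on right-invariant vector fields, it suffices to check
\[J(d_{\nabla}\eta)_g(X_0^{r},\dots,X_p^{r})=(d\otimes I_V)(J\eta)_g(X_0^{r},\dots,X_p^{r})\]
for all $X_0,\dots,X_p\in\Gamma(A)$ and $g\in\mathcal{G}$. Unwinding (\ref{EQ_definition_of_J}) and using $X^{r}_g=dr_g(X_{t(g)})$ gives the convenient formula $J(\omega)_g(X_1^{r},\dots,X_q^{r})=g^{-1}\cdot\omega_{t(g)}(X_1,\dots,X_q)$ for $\omega\in\Omega^{q}(A,V)$; in particular, in degree zero, $J(\lambda)(g)=g^{-1}\cdot\lambda_{t(g)}$ for $\lambda\in\Gamma(V)$.

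The one genuine computation is the degree-zero case of the chain-map property: for $X\in\Gamma(A)$ and $\lambda\in\Gamma(V)$,
\[L_{X^{r}}\big(J(\lambda)\big)=J\big(\nabla_X\lambda\big),\]
where the left-hand side is the Lie derivative along the $s$-vertical field $X^{r}$ of the section $J(\lambda)$ of $s^*(V)$, which restricted to $s^{-1}(x)$ is the $V_x$-valued function $g\mapsto g^{-1}\cdot\lambda_{t(g)}$. I would prove this by differentiating along the flow: if $\phi^{t}(X,-)$ denotes the flow of $X^{r}$ and $\varphi^{t}(X,-)$ the flow of $\rho(X)$ on $M$, then $\phi^{t}(X,-)$ is $s$-vertical and right-invariant, so $\phi^{t}(X,g)=\phi^{t}(X,1_{t(g)})\,g$ and $t(\phi^{t}(X,g))=\varphi^{t}(X,t(g))$; hence $J(\lambda)\big(\phi^{t}(X,g)\big)=g^{-1}\cdot\big(\phi^{t}(X,1_{t(g)})^{-1}\cdot\lambda_{\varphi^{t}(X,t(g))}\big)$, and differentiating at $t=0$ and invoking the formula (\ref{EQ_differentiating_reps}) for the differentiated connection gives $L_{X^{r}}(J\lambda)(g)=g^{-1}\cdot(\nabla_X\lambda)_{t(g)}=J(\nabla_X\lambda)(g)$.

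Finally I would plug the Koszul formula of subsection \ref{subsection_Lie_algebroid_cohomology} for $d_{\nabla}$ into $J(d_{\nabla}\eta)_g(X_0^{r},\dots,X_p^{r})=g^{-1}\cdot(d_{\nabla}\eta)_{t(g)}(X_0,\dots,X_p)$. Each term $(-1)^{i}\nabla_{X_i}\big(\eta(X_0,\dots,\widehat{X_i},\dots)\big)$ becomes, after applying $g^{-1}\cdot$ and using the degree-zero identity together with the evaluation formula, the term $(-1)^{i}L_{X_i^{r}}\big(J(\eta)(X_0^{r},\dots,\widehat{X_i^{r}},\dots)\big)(g)$ — here one uses that the connection on $s^*(V)$ is precisely the tautological one $Y\mapsto L_Y$; and each term $(-1)^{i+j}\eta([X_i,X_j],\dots)$ becomes $(-1)^{i+j}J(\eta)_g([X_i^{r},X_j^{r}],\dots)$ via $[X_i,X_j]^{r}=[X_i^{r},X_j^{r}]$ and the evaluation formula. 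The resulting sum is exactly the Koszul formula for $(d\otimes I_V)(J\eta)$ evaluated on $X_0^{r},\dots,X_p^{r}$, finishing the proof. The main obstacle is the degree-zero identity, i.e.\ correctly tracking the interplay between right-translation of the flow, the groupoid action on $V$ and (\ref{EQ_differentiating_reps}); everything after that is routine bookkeeping with the Koszul formula.
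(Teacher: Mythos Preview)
Your proof is correct and follows essentially the same approach as the paper: reduce to right-invariant vector fields (which span $T^s\mathcal{G}$), establish the degree-zero identity $L_{X^r}(J\lambda)=J(\nabla_X\lambda)$ via the flow of $X^r$ and formula (\ref{EQ_differentiating_reps}), and then match the two Koszul formulas using $[X,Y]^r=[X^r,Y^r]$ and the evaluation identity $J(\eta)(X_1^r,\dots,X_q^r)=g^{-1}\cdot\eta_{t(g)}(X_1,\dots,X_q)$. The paper's proof is the same in structure and in its key computation.
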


\begin{proof}
As before, for $X\in\Gamma(A)$ denote by $X^r$ its right invariant extension to $\mathcal{G}$, by $g\mapsto
\phi^t(X,g)$ the flow of $X^r$ and let $\varphi^t(X,x):=t(\phi^t(X,x))$. Using right invariance of $\phi^t(X,g)$ and
formula (\ref{EQ_differentiating_reps}), we obtain
\begin{align*}
J(\nabla_X(\lambda))_g&=g^{-1}(\nabla_X(\lambda))_{t(g)}=g^{-1}\frac{d}{d\epsilon}_{|\epsilon=0}\phi^{\epsilon}(X,t(g))^{-1}\lambda_{\varphi_{\epsilon}(X,t(g))}=\\
\nonumber&=\frac{d}{d\epsilon}_{|\epsilon=0}\phi^{\epsilon}(X,g)^{-1}\lambda_{\varphi^{\epsilon}(X,t(g))}=
\frac{d}{d\epsilon}_{|\epsilon=0}J(\lambda)_{\phi^{\epsilon}(X,g)}=L_{X^r}J(\lambda)_g,
\end{align*}
for all $X\in \Gamma(A)$ and $\lambda\in\Gamma(V)$. Clearly, $J$ satisfies
\begin{equation*}
J(\eta(X_1,\ldots ,X_q))=J(\eta)(X^r_1,\ldots , X^r_q),
\end{equation*}
for $X_1,\ldots, X_q\in \Gamma(A)$ and $\eta\in\Omega^q(A,E)$. Recall also that $[X^r,Y^r]=[X,Y]^r$, for $X,
Y\in\Gamma(A)$. Using these facts and that $d_{\nabla}$ and $d\otimes I_V$ are both expressed using the Koszul
formula, one obtains that the equation $J\circ d_{\nabla}(\eta)=d\otimes I_V\circ J(\eta)$ holds when evaluated on right
invariant vector fields. Since such vector fields span $T^s\mathcal{G}$, the conclusion follows.
\end{proof}

\subsection{The Van Est map}\label{Subsection_Van_Est}

Let $\mathcal{G}\rightrightarrows M$ be a Hausdorff Lie groupoid with Lie algebroid $A$, and let $V$ be a
representation of $\mathcal{G}$. Recall \cite{Cra} that the differentiable cohomology of $\mathcal{G}$ with
coefficients in $V$ and the Lie algebroid cohomology of $A$ with values in $V$ are related by the so-called \textbf{Van
Est map}\index{Van Est map}:
\[\Phi_V:H^{p}_{\mathrm{diff}}(\mathcal{G},V) \rmap  H^{p}(A,V).\]
For trivial coefficients, denote this map by $\Phi$.

The Van Est map satisfies:

\begin{theorem}[Theorem 4 and Corollary 2 in \cite{Cra}]
If the $s$-fibers of $\mathcal{G}$ are homologically $n$-connected, then $\Phi_V$ is an
isomorphism in degrees $p\leq n$, and is injective for $p=n+1$. Moreover, in this case, $[\omega]\in H^{n+1}(A)$ is in the image of $\Phi$ if and only if
\[\int_{\gamma}J(\omega)=0,\]
for all loops $\gamma:\mathbb{S}^{n+1}\to s^{-1}(x)$, for some $x\in M$, where $J(\omega)\in
\Omega^{n+1}(T^s\mathcal{G})^{\mathcal{G}}$ is given by (\ref{EQ_definition_of_J}).
\end{theorem}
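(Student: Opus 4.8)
This statement is quoted as Theorem 4 and Corollary 2 of \cite{Cra}, so the plan is not to invent a new argument but to reproduce Crainic's double-complex proof. First I would build the \emph{Van Est double complex}. For $p,q\geq 0$ let $C^{p,q}$ be the space of $V$-valued $s$-foliated $q$-forms on the nerve manifold $\mathcal{G}^{(p)}$ — that is, $q$-forms along the fibres of the natural submersion $\mathcal{G}^{(p)}\to M$ (the fibre over $x$ is, for $p=1$, the source fibre $s^{-1}(x)$ up to inversion, and in general an iterated bundle built out of source fibres), with values in the pullback of $V$; here $\mathcal{G}^{(0)}=M$ and $C^{0,q}=\Omega^q(T^s\mathcal{G},s^*V)$. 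The horizontal differential $\delta\colon C^{p,q}\to C^{p+1,q}$ is the alternating sum of pullbacks along the simplicial face maps of the nerve (the bar differential underlying $H^\bullet_{\mathrm{diff}}$), and the vertical differential $d\colon C^{p,q}\to C^{p,q+1}$ is the foliated de Rham differential twisted by the flat $\mathcal{G}$-connection on $s^*V$. One checks $\delta d=\pm d\delta$, so $(C^{\bullet,\bullet},\delta,d)$ is a double complex; write $(C^\bullet,D)$ for its total complex.

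Next I would run the two spectral sequences of $(C^\bullet,D)$. Taking vertical cohomology first: by an inductive Serre-spectral-sequence argument the fibres of $\mathcal{G}^{(p)}\to M$ are again homologically $n$-connected, so the vertical cohomology of the $p$-th column is $C^\infty(\mathcal{G}^{(p)},V)$ concentrated in degree $0$ throughout total degrees $\leq n+1$; hence this spectral sequence identifies $H^k(C^\bullet,D)$ with $H^k_{\mathrm{diff}}(\mathcal{G},V)$ for $k\leq n$ and provides an edge map in degree $n+1$ which (by the connectivity bound there are no incoming differentials) is injective. Taking horizontal cohomology first: the bar complex with these coefficients has the usual extra degeneracy furnished by the unit map $u\colon M\to\mathcal{G}$ (a contracting homotopy compatible with the foliated forms), so its horizontal cohomology is concentrated in $p=0$, where it equals the invariant foliated forms $\Omega^q(T^s\mathcal{G},s^*V)^{\mathcal{G}}$; by Lemma \ref{Lemma_forms_on_A_invariant_forms} — the chain isomorphism $J$ between $(\Omega^\bullet(A,V),d_\nabla)$ and the invariant foliated complex — the surviving vertical differential is exactly $d_\nabla$, so this spectral sequence collapses and $H^\bullet(C^\bullet,D)\cong H^\bullet(A,V)$. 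Comparing the two edge maps shows that their composite is the Van Est map $\Phi_V$, and that it is an isomorphism for $p\leq n$ and injective for $p=n+1$, which is the first assertion.

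For the image criterion in degree $n+1$ (trivial coefficients, $\Phi=\Phi_{\mathbb{R}}$): unravelling the first spectral sequence, a class $[\omega]\in H^{n+1}(A)\cong H^{n+1}(C^\bullet,D)$ comes from $H^{n+1}_{\mathrm{diff}}(\mathcal{G})$ precisely when the bidegree-$(0,n+1)$ component of a representing total cocycle — which, after the identifications above, is the foliated-closed invariant form $J(\omega)\in\Omega^{n+1}(T^s\mathcal{G})^{\mathcal{G}}$ — is foliated-exact, since only then can the cocycle be transported across the double complex to a purely differentiable one. Restricting to a single fibre $s^{-1}(x)$, foliated-exactness there means the de Rham class of $J(\omega)|_{s^{-1}(x)}$ in $H^{n+1}(s^{-1}(x);\mathbb{R})$ vanishes (invariance of $J(\omega)$ then propagates this over the orbit of $x$), and since $s^{-1}(x)$ is homologically $n$-connected the Hurewicz theorem lets one detect that class by its periods over maps $\gamma\colon\mathbb{S}^{n+1}\to s^{-1}(x)$; thus $[\omega]\in\mathrm{Im}(\Phi)$ iff $\int_\gamma J(\omega)=0$ for all such $\gamma$.

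The main obstacle is not any single computation but the bookkeeping of the double complex: verifying the extra degeneracy of the bar direction via the unit section in a way compatible with the foliated de Rham structure; confirming that the higher nerve fibres $\mathcal{G}^{(p)}\to M$ inherit the homological $n$-connectedness (an iterated-fibration argument); and, for the last part, pinning down that the obstruction of the first spectral sequence in bidegree $(0,n+1)$ is represented precisely by $J(\omega)$ and is faithfully detected by integration over $(n+1)$-spheres. All of this is carried out in \cite{Cra}, so in the thesis I would simply cite it.
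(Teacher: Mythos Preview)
Your reading is correct: in the thesis this theorem is simply stated with attribution to \cite{Cra} and no proof is given at all, so your plan to ``simply cite it'' is exactly what the paper does. Your accompanying sketch of Crainic's double-complex argument is a reasonable summary of the original proof, but none of it appears in the thesis itself.
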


\section{Symplectic groupoids}\label{Section_symplectic_groupoids}

\begin{definition}
A \textbf{symplectic groupoid}\index{symplectic groupoid} is Lie groupoid $\mathcal{G}\rightrightarrows M$ with a
symplectic structure $\omega$ for which the graph of the multiplication is Lagrangian
\[\{(g,h,gh)|s(g)=t(h)\}\subset (\mathcal{G},\omega)\times (\mathcal{G},\omega)\times (\mathcal{G},-\omega).\]
\end{definition}

\subsection{Some properties of symplectic groupoids}

We list below some remarkable properties of symplectic groupoids \cite{MackXu,DWC}.
\begin{itemize}
\item  The symplectic structure is \textbf{multiplicative}, i.e.\ it satisfies
\[m^*(\omega)=p_1^*(\omega)+p_2^*(\omega),\]
where $m:\mathcal{G}\times_{M} \mathcal{G}\to \mathcal{G}$ is the multiplication and $p_1,p_2:\mathcal{G}\times_{M} \mathcal{G}\to \mathcal{G}$ are the natural projections.
\item We have that $2\mathrm{dim}(M)=\mathrm{dim}(\mathcal{G})$.
\item The multiplicativity of $\omega$ and the dimension relation imply that the units $M\subset \mathcal{G}$ form a Lagrangian submanifold.
\item As for any Lie groupoid, we have a canonical decomposition of the tangent space along the units
\[T\mathcal{G}_{|M}=A\oplus TM.\]
By the dimension formula, $A$ and $T^*M$ have the same rank. Moreover, since $u^*(\omega)=0$, it follows that
$\omega^{\sharp}_{|M}:TM\to A^*$ is an isomorphism, and we use its dual $-\omega^{\sharp}_{|M}$ to identify
$A\cong T^*M$.
\item The anchor of $A\cong T^*M$, when viewed as a map
$T^*M\to TM$ is antisymmetric, thus it gives a bivector $\pi\in\mathfrak{X}^2(M)$. In fact, $\pi$ is Poisson and
the algebroid structure on $T^*M$ is that of the cotangent Lie algebroid\index{cotangent Lie algebroid} of
$(M,\pi)$.
\item The source map
\[s:(\mathcal{G},\omega)\rmap (M,\pi),\]
is a Poisson map and the target map is an anti-Poisson map.
\item The $s$-fibers and the $t$-fibers are symplectic orthogonal,
\[T^s\mathcal{G}^{\perp}=T^t\mathcal{G}, \ \ T^t\mathcal{G}^{\perp}=T^s\mathcal{G}.\]
\item The inversion map $\iota:\mathcal{G}\to \mathcal{G}$ satisfies $\iota^*(\omega)=-\omega$.
\end{itemize}

A Poisson manifold $(M,\pi)$ is called \textbf{integrable}\index{integrable Poisson manifold} if there is a symplectic
groupoid $(\mathcal{G},\omega)$ over $M$ that induces on $M$ the original Poisson structure. As remarked before, if
such a groupoid $\mathcal{G}$ exists, then $\mathcal{G}$ integrates the cotangent Lie algebroid $T^*M$. The
converse also holds:
\begin{theorem}[Theorem 5.2 \cite{MackXu}]\label{Theorem_Integration_Poisson}
Suppose that the cotangent Lie algebroid $T^*M$ of a Poisson manifold $(M,\pi)$ is integrable. Then the $s$-fiber
1-connected groupoid of $T^*M$ admits a natural symplectic structure that makes it into a symplectic groupoid and
that induces $\pi$ on $M$.
\end{theorem}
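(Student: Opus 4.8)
The plan is to produce the required symplectic form directly on $\mathcal{G}$, the $s$-fiber $1$-connected integration of $T^*M$, which exists by the standing hypothesis, and then to check the features that a symplectic groupoid must have: that the form is closed, that it is multiplicative, that it is nondegenerate, and that it induces $\pi$ on $M$. The conceptual point is that a closed multiplicative $2$-form on $\mathcal{G}$ is an essentially infinitesimal object: it is pinned down by its restriction to the units together with a transversal derivative along the units, i.e.\ by a bundle map $A\to T^*M$ subject to compatibility with the bracket and the anchor, and for $A=T^*M$ the identity map is the obvious candidate — its interaction with the anchor $\pi^{\sharp}$ and the antisymmetry of $\pi$ are exactly what will force multiplicativity, closedness and the correct induced bivector. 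So the genuine work is an \emph{integration} statement: that this infinitesimal datum exponentiates to an honest closed multiplicative $2$-form on the $1$-connected $\mathcal{G}$.

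There are two natural ways to carry out the integration. \emph{(i) Via the cotangent path model.} Realize $\mathcal{G}$ as the quotient of the (Banach) manifold of cotangent paths of $(M,\pi)$ by cotangent homotopy. The space of paths in $T^*M$ carries the closed $2$-form $\overline{\omega}(X,Y):=\int_0^1\omega_{\mathrm{can}}(X_t,Y_t)\,dt$; one shows that, restricted to the submanifold cut out by the cotangent-path condition and by the boundary conditions, its characteristic distribution is precisely tangent to cotangent homotopy, so that $\overline{\omega}$ descends to $\mathcal{G}$, and that concatenation of cotangent paths makes the descended form multiplicative. \emph{(ii) Via a Poisson spray.} By Theorem 0, together with the remark that the open $\mathcal{U}\subset T^*M$ there can be made into a local symplectic groupoid with source $p$ and target $p_1=p\circ\varphi_1$, one already has a local symplectic groupoid integrating $T^*M$ near the units; since $T^*M$ is integrable this is the germ of $\mathcal{G}$ along $M$, and one extends $\omega$ to all of $\mathcal{G}$ by imposing $m^*\omega=p_1^*\omega+p_2^*\omega$: every arrow is a finite product of arrows lying inside $\mathcal{U}$ because the $s$-fibers are connected, and the $1$-connectedness of the $s$-fibers ensures there is no monodromy, so the extension is well defined and smooth.

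Whichever route one takes, the remaining verifications are local and essentially mechanical. Closedness and multiplicativity hold by construction. For nondegeneracy one computes $\omega$ along the units: using the canonical splitting $T\mathcal{G}_{|M}=A\oplus TM$ and the formula for $\omega_{\mathrm{can}}$ at zeros, which here gives $\omega_{0_x}(v,w)=\langle\theta_w,\overline v\rangle-\langle\theta_v,\overline w\rangle+\pi(\theta_v,\theta_w)$ as in (\ref{omega-0}), one reads off that $\omega^{\sharp}_{|M}\colon TM\to A^{*}$ is an isomorphism; since $\omega$ is multiplicative its kernel distribution is invariant under left and right translation along $s$-fibers, so the nondegeneracy locus is open, a union of $s$-fibers, and contains every unit, hence — the $s$-fibers being connected — it is all of $\mathcal{G}$. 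Finally, identifying $A\cong T^*M$ via $-\omega^{\sharp}_{|M}$ and noting that under this identification the anchor $\rho=dt_{|A}$ becomes antisymmetric, hence a bivector, one checks from the same formula that this bivector is exactly $\pi$; since $A$ was by definition the cotangent Lie algebroid of $(M,\pi)$, the induced Lie algebroid structure is automatically the right one, and $(\mathcal{G},\omega)$ is a symplectic groupoid inducing $\pi$.

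The main obstacle is the integration step — assembling the global closed multiplicative $2$-form $\omega$ coherently on all of $\mathcal{G}$. In approach (i) the subtlety is the infinite-dimensional symplectic reduction: identifying the characteristic foliation of $\overline{\omega}$ with cotangent homotopy and checking that the reduced space is the smooth manifold $\mathcal{G}$. In approach (ii) the subtlety is the local-to-global extension: one must show that transporting $\omega$ along a product of ``short'' arrows does not depend on the chosen factorization, and this is exactly where $1$-connectedness of the $s$-fibers is needed to kill monodromy; smoothness near ``long'' arrows then follows from smoothness of multiplication and of the local model. Once $\omega$ has been built, everything else is bookkeeping with the relation $m^*\omega=p_1^*\omega+p_2^*\omega$ and the formula at the units.
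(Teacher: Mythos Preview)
The paper does not actually prove this theorem; it is quoted as Theorem~5.2 of \cite{MackXu} and used as a black box. So there is no ``paper's own proof'' to compare against in the strict sense.

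That said, your approach~(i) is precisely the one whose technical core the paper develops independently in Section~\ref{Constructing symplectic realizations from transversals in the manifold of cotangent paths}: the closed $2$-form $\overline{\omega}=\int_0^1\omega_{\mathrm{can}}$ on cotangent paths, the identification of its kernel with the tangent space to cotangent homotopy (Corollary~\ref{CoroKerOmega}), and the verification that $\tilde s$ is Poisson and that the $\tilde s$- and $\tilde t$-fibers are $\Omega$-orthogonal (Proposition~\ref{PropTrnasversal}, Lemma~\ref{LemmaOrto}). What the paper does \emph{not} do is take the quotient globally and check multiplicativity under concatenation --- that is the step you would need to add, and it is the genuine content of \cite{CaFe,CrFe2}. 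Your approach~(ii), extending from the local symplectic groupoid of Theorem~0, is also correct and is alluded to in the paper only in passing (the remark after Theorem~0 that $\mathcal{U}$ can be made into a local symplectic groupoid). The IM-form framing you start with --- that a closed multiplicative $2$-form is determined by a bundle map $A\to T^*M$, here the identity --- is the cleanest conceptual organization and is the Bursztyn--Crainic--Weinstein--Zhu viewpoint, which postdates \cite{MackXu} but makes the integration step transparent.

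Your nondegeneracy and induced-bivector checks at the units, using the formula~(\ref{omega-0}), are exactly right and mirror the paper's own computation in the proof of Theorem~0. The only point where your sketch is slightly glib is the claim in approach~(ii) that $1$-connectedness of the $s$-fibers kills monodromy for the extension: this is true, but making it precise requires either the IM-form integration theorem or an explicit cocycle argument, which is where the real work hides.
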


\subsection{Examples}\label{Subsection_examples_of_symplectic_groupoids}

\subsubsection*{Symplectic manifolds}

A symplectic manifold $(S,\omega_S)$ is integrated by the symplectic pair groupoid \[(S\times
\overline{S},\omega)\rightrightarrows S, \ \ \omega:=p_1^*(\omega_S)-p_2^{*}(\omega_S).\] The $s$-fiber 1-connected
groupoid of $(S,\omega_S)$ is the symplectic fundamental groupoid
\[(\Pi_1(S),\omega)\rightrightarrows S, \ \ \omega:=s^*(\omega_S)-t^{*}(\omega_S).\]

\subsubsection*{Trivial Poisson structure}

Recall that the cotangent bundle $p:T^*M\to M$ of a manifold $M$ carries the \textbf{canonical symplectic
structure}\index{canonical symplectic structure}
\[\omega_{\mathrm{can}}:=-d\alpha, \ \ \alpha_{\xi}:=p^*(\xi), \ \  \forall \  \xi \in T^*M,\]
where $\alpha\in \Omega^1(T^*M)$ is called the \textbf{tautological} 1-\textbf{form}\index{tautological 1-form}. This
makes $T^*M$ into a symplectic groupoid $(T^*M,\omega_{\mathrm{can}})\rightrightarrows M$, integrating the
zero Poisson structure on $M$.

\subsubsection*{Quotients of symplectic manifolds}

Let $(\Sigma,\omega)$\index{quotients of symplectic manifolds} be a symplectic manifold and let $G$ be a Lie group
acting by symplectomorphisms on $\Sigma$. The action is called \textbf{Hamiltonian}\index{Hamiltonian action}, if
there is a map $\mu:\Sigma\to \mathfrak{g}^*$, called the \textbf{moment map}\index{moment map}, such that
\[\iota_{\widetilde{X}}\omega=d\langle \mu,X\rangle, \ X\in\mathfrak{g},\]
where $\widetilde{X}$ is the infinitesimal action of $X$ on $\Sigma$. The following lemma is a consequence of results
in \cite{FeOrRa}, and it describes a symplectic groupoid integrating the quotient $(M,\pi)=(\Sigma,\omega)/G$.

\begin{lemma}\label{Lemma_symplectic_groupoid_reduction}
Let $(\Sigma,\omega)$ be a symplectic manifold endowed with a proper, free Hamiltonian action of a Lie group $G$ and
equivariant moment map $\mu:\Sigma\to \mathfrak{g}^*$. Then $(M,\pi)=(\Sigma,\omega)/G$ is integrable by the
symplectic groupoid
\[(\Sigma\times_{\mu}\Sigma)/G\rightrightarrows M,\]
with symplectic structure $\Omega$ induced by $(s^*(\omega)-t^*(\omega))_{|\Sigma\times_{\mu}\Sigma}$.
\end{lemma}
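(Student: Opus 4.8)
The plan is to realize $\mathcal{G}=(\Sigma\times_\mu\Sigma)/G$ as a quotient of a subgroupoid of the symplectic \emph{pair} groupoid, and to transport the symplectic‑groupoid structure through these two operations. Write $P:=\Sigma\times\overline{\Sigma}\rightrightarrows\Sigma$ for the pair groupoid with $\sigma:=p_1^*\omega-p_2^*\omega$, which (as recalled above) is a symplectic groupoid inducing $(\Sigma,\omega^{-1})$ on its base. Since the action is free, $\mu$ is a submersion, so $Z:=\Sigma\times_\mu\Sigma=\{(p,q):\mu(p)=\mu(q)\}$ is an embedded submanifold of $P$; one checks directly that the condition $\mu(p)=\mu(q)$ is preserved by the groupoid operations, so $Z\rightrightarrows\Sigma$ is a Lie subgroupoid of $P$. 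The diagonal $G$‑action on $P$ is by groupoid automorphisms over the $G$‑action on $\Sigma$, it preserves $Z$ by equivariance of $\mu$, and it is free and proper on $Z$ (restriction of the free proper diagonal action on $\Sigma\times\Sigma$). Hence $\mathcal{G}=Z/G$ is a smooth Hausdorff manifold and, being the quotient of a Lie groupoid by a group acting by automorphisms over $\Sigma$, it is a Lie groupoid over $M=\Sigma/G$ with the structure maps of the statement; write $q_Z\colon Z\to\mathcal{G}$ for the projection.

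Next I would push the $2$‑form down. Up to the sign convention chosen for $s$ and $t$, the relevant form on $Z$ is $\eta:=\sigma|_Z$ (or its negative), and both choices are treated identically, so I work with $\eta$. It is $G$‑invariant, and for $\xi\in\mathfrak{g}$ the diagonal fundamental vector field $(\xi_\Sigma,\xi_\Sigma)$ contracts $\sigma$ into $d\langle(\mu\circ p_1)-(\mu\circ p_2),\xi\rangle$ (using $\iota_{\xi_\Sigma}\omega=d\langle\mu,\xi\rangle$), which vanishes on $TZ$ because the function $(\mu\circ p_1)-(\mu\circ p_2)$ vanishes identically on $Z$. Thus $\iota_{\xi_Z}\eta=0$, so $\eta$ descends to a $2$‑form $\Omega$ on $\mathcal{G}$ with $q_Z^*\Omega=\eta$, and $\Omega$ is closed since $d\eta=0$ and $q_Z^*$ is injective on forms. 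For nondegeneracy I compute $\ker\eta$: if $(v_1,v_2)\in T_{(p,q)}Z$ pairs trivially against all of $T_{(p,q)}Z$, testing against vectors with one slot in $\ker d\mu$ and using $\omega^\sharp(\xi_\Sigma)=d\langle\mu,\xi\rangle$ forces $v_1=\xi_\Sigma(p)$, $v_2=\xi'_\Sigma(q)$ for some $\xi,\xi'\in\mathfrak{g}$, and testing against general vectors together with surjectivity of $d\mu$ forces $\xi=\xi'$; so $\ker\eta$ is exactly the tangent to the diagonal $G$‑orbits, i.e.\ $\ker(dq_Z)$, whence $\Omega$ is nondegenerate (consistently, $\dim\mathcal{G}=2\dim M$). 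Multiplicativity is inherited: $\sigma$ is multiplicative on $P$, so $\eta$ is multiplicative on the subgroupoid $Z$ by restricting that identity of forms, and since $m_\mathcal{G}\circ q_Z^{(2)}=q_Z\circ m_Z$ with $q_Z^{(2)}\colon Z\times_\Sigma Z\to\mathcal{G}\times_M\mathcal{G}$ a surjective submersion, injectivity of $(q_Z^{(2)})^*$ transports $m_Z^*\eta=\mathrm{pr}_1^*\eta+\mathrm{pr}_2^*\eta$ to $\mathcal{G}$. Therefore $(\mathcal{G},\Omega)$ is a symplectic groupoid over $M$.

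It remains to identify the Poisson structure $\pi_\Omega$ induced by $\Omega$ on $M$ with $\pi$. The source map $s\colon(\mathcal{G},\Omega)\to(M,\pi_\Omega)$ is Poisson, and since $p\colon(\Sigma,\omega^{-1})\to(M,\pi)$ is a surjective submersion and a Poisson map by the definition of the quotient, it suffices to show $p$ is also Poisson onto $(M,\pi_\Omega)$. For $f,g\in C^\infty(M)$ the functions $f\circ p,g\circ p$ are $G$‑invariant, hence their $\omega$‑Hamiltonian vector fields are tangent to the $\mu$‑fibers (pairing $d\mu(X^\omega_{f\circ p})$ with $\xi$ gives $L_{\xi_\Sigma}(f\circ p)=0$), so $(0,\pm X^\omega_{f\circ p})$ defines a vector field on $Z$ whose contraction with $\eta$ equals $-d(f\circ s\circ q_Z)$. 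Evaluating $\eta$ on the two such vector fields built from $f$ and $g$ produces $\{f\circ p,g\circ p\}_{\omega^{-1}}$ in the $p_2$‑slot, while pushing the same computation through $q_Z$ and $s$ yields $\{f,g\}_{\pi_\Omega}\circ p$ in the $p_2$‑slot; since $p_2|_Z$ is onto $\Sigma$ via the units $(q,q)$, this gives $\{f\circ p,g\circ p\}_{\omega^{-1}}=\{f,g\}_{\pi_\Omega}\circ p$, so $\pi_\Omega=\pi$. (Alternatively, one identifies $\mathrm{Lie}(\mathcal{G})=(\ker d\mu)/G$ with the cotangent algebroid of $(M,\pi)$ via $\omega^\sharp$ and invokes Theorem \ref{Theorem_Integration_Poisson}.) The step that needs genuine care — and the only real obstacle — is this last identification of the two Poisson structures, together with pinning down signs everywhere; the rest is a mechanical transfer of the symplectic‑groupoid axioms from the pair groupoid along a restriction followed by a quotient, both of which are submersions on the spaces involved.
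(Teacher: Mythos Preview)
Your proof is correct and follows the same route as the paper: recognize the construction as the Marsden--Weinstein reduction at $0$ of the symplectic pair groupoid $(\Sigma\times\overline{\Sigma},p_1^*\omega-p_2^*\omega)$ with respect to the diagonal $G$-action and moment map $J=p_1^*\mu-p_2^*\mu$, so that $J^{-1}(0)=\Sigma\times_\mu\Sigma$. The only difference is one of packaging: the paper invokes Proposition~4.6 of Fernandes--Ortega--Ratiu \cite{FeOrRa}, which says in general that the symplectic reduction of a symplectic groupoid by a Hamiltonian action by groupoid automorphisms with moment map a groupoid cocycle is again a symplectic groupoid integrating the quotient Poisson manifold; you instead unpack this black box and verify the symplectic-groupoid axioms (descent of the form, nondegeneracy via the kernel computation, multiplicativity, identification of the induced Poisson structure) by hand. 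Your explicit verification is a correct instance of that general result, and your kernel computation and multiplicativity transfer are exactly the standard ingredients one would use to prove the cited proposition in this case.
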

\begin{proof}
Consider the symplectic pair groupoid $(\Sigma\times\Sigma,\widetilde{\Omega})\rightrightarrows \Sigma$, with
$\widetilde{\Omega}:=s^*(\omega)-t^*(\omega)$. Then $G$ acts on $\Sigma\times\Sigma$ by symplectic groupoid
automorphisms, with equivariant moment map $J:=s^*\mu-t^*\mu$, which is also a groupoid cocycle. By Proposition
4.6 \cite{FeOrRa}, the Marsden-Weinstein reduction
\[\Sigma\times\Sigma//G=J^{-1}(0)/G\]
is a symplectic groupoid integrating the Poisson manifold $(M,\pi)=(\Sigma,\omega)/G$. In our case,
$J^{-1}(0)=\Sigma\times_{\mu}\Sigma$, and the symplectic form $\Omega$ is obtained as follows: the 2-form
$\widetilde{\Omega}_{|\Sigma\times_{\mu}\Sigma}$ is $G$-invariant and its kernel is spanned by the vectors coming
from the infinitesimal action of $\mathfrak{g}$. Therefore, it is the pullback of a symplectic form $\Omega$ on
$(\Sigma\times_{\mu}\Sigma)/G$.
\end{proof}

\subsubsection*{Linear Poisson structures}


Consider the linear Poisson structure\index{linear Poisson structure} $(\mathfrak{g}^*,\pi_{\mathfrak{g}})$
corresponding to a Lie algebra $\mathfrak{g}$. Let $G$ be a Lie group integrating $\mathfrak{g}$, and consider the
(left) coadjoint action of $G$ on $\mathfrak{g}^*$, $(g,\xi)\mapsto Ad_{g^{-1}}^*(\xi)$\index{coadjoint action}. A
symplectic Lie groupoid integrating $\pi_{\mathfrak{g}}$ is the corresponding action groupoid\index{action
groupoid}
\[(G\ltimes \mathfrak{g}^*,\omega_{\mathfrak{g}})\rightrightarrows (\mathfrak{g}^*,\pi_{\mathfrak{g}}),\]
with $\omega_{\mathfrak{g}}=d\widetilde{\theta}_{\mathrm{MC}}$ and
\[\widetilde{\theta}_{\mathrm{MC}}(X,\eta):=\langle\xi,dl_{g^{-1}}(X)\rangle, \ \ X\in T_{g}G, \ \eta\in T_{\xi}\mathfrak{g}^*.\]
Of course, $\widetilde{\theta}_{\mathrm{MC}}$ comes from the right invariant Maurer-Cartan form on $G$,
\[\theta_{\mathrm{MC}}\in\Omega^1(G,\mathfrak{g}), \ \ \theta_{\mathrm{MC},g}(X):=dl_{g^{-1}}(X).\]
Under the diffeomorphism $G\times\mathfrak{g}^*\diffto T^*G$, $(g,\xi)\mapsto l_{g^{-1}}^*(\xi)$, the 1-form
$\widetilde{\theta}_{\mathrm{MC}}$ becomes the tautological 1-form on $T^*G$, thus $\omega_{\mathfrak{g}}$
becomes the negative of the canonical symplectic structure on $T^*G$.

We compute $\omega_{\mathfrak{g}}$ explicitly. Let $X,Y\in\mathfrak{g}$ and denote by $X^l, Y^l$ their left
invariant extensions, and let $\alpha,\beta\in\mathfrak{g}^*$ which we regard as constant vector fields on
$\mathfrak{g}^*$. Then we have that
\begin{align*}
d\widetilde{\theta}_{\mathrm{MC}}(X^l+\alpha,Y^l+\beta)=&L_{X^l+\alpha}(\xi\mapsto \langle\xi,Y\rangle)-L_{Y^l+\beta}(\xi\mapsto \langle\xi,X\rangle)-\\
&-\widetilde{\theta}_{\mathrm{MC}}([X^l+\alpha,Y^l+\beta]).
\end{align*}
By our convention, $[X^l,Y^l]=-[X,Y]^l$, and clearly $\alpha$ and $\beta$ commute with each other and with $X^l$
and $Y^l$. Hence, the symplectic form is given by
\[\omega_{\mathfrak{g}}(U+\alpha, V+\beta)_{(g,\xi)}=\xi([dl_{g^{-1}}(U),dl_{g^{-1}}(V)])+\alpha(dl_{g^{-1}}(V))-\beta(dl_{g^{-1}}(U)).\]

We prove now that the source map is Poisson. The Hamiltonian vector field of $X\in \mathfrak{g}$, regarded as a
linear function on $(\mathfrak{g}^*,\pi_{\mathfrak{g}})$, is (see section \ref{SPreliminaries})
\[H_{X,\xi}:=-ad_X^*(\xi)\in \mathfrak{g}^*\cong T_{\xi}\mathfrak{g}^*.\]
We claim that the Hamiltonian vector field of $s^*(X)$ on $G\times \mathfrak{g}^*$ is
\[H_{s^*(X),(g,\xi)}=X^l_g-ad_X^*(\xi).\]
This follows by using the formula for $\omega_{\mathfrak{g}}$
\begin{align*}
\omega_{\mathfrak{g}}(X^l_g-ad_X^*(\xi), V&+\beta)_{(g,\xi)}=\xi([X,dl_{g^{-1}}(V)])-\langle ad_{X}^*(\xi),dl_{g^{-1}}(V)\rangle+\\
&+\beta(X)=\beta(X)=ds^*(X)(V+\beta).
\end{align*}
Since $ds(H_{s^*(X)})=H_X$, it follows that $s$ is Poisson.

Similarly, $t$ is anti-Poisson. Observe that the diagonal action of $G$ on the product
$(G\times\mathfrak{g}^*,-d\widetilde{\theta}_{\mathrm{MC}})$ is symplectic and its orbits are precisely the fibers
of $t$. Therefore, $t$ induces a Poisson diffeomorphism between
\begin{equation}\label{EQ_local_model_fixed_point_reduction}
(G\times\mathfrak{g^*},-d\widetilde{\theta}_{\mathrm{MC}})/G \cong (\mathfrak{g}^*,\pi_{\mathfrak{g}}).
\end{equation}

\begin{remark}\rm
Observe that the complex of $s$-foliated forms on $G\ltimes\mathfrak{g}^*$ coincides with the complex
$(\Omega_{\mathfrak{g}^*}^{\bullet}(G), d_G)$ from section \ref{Section_Conn}, of smooth maps
$\mathfrak{g}^*\to\Omega^{\bullet}(G)$ with the pointwise de Rham differential. Moreover, the invariant forms on
$G\ltimes\mathfrak{g}^*$ form the invariant part of $\Omega_{\mathfrak{g}^*}^{\bullet}(G)$, therefore Lemma
\ref{lemma_commutes_with_diff_fixed_point} is a special case of Lemma \ref{Lemma_forms_on_A_invariant_forms}.
\end{remark}

\section{Integrability}\label{Section_Integrability}

In this section we present some results related to integrability of Lie algebroids, and in particular of Poisson manifolds.
We start by explaining the necessary and sufficient conditions for integrability from \cite{CrFe1}, and also some of the
techniques involved in the proofs.

\subsection{Criterion for integrability of Lie algebroids}

Consider an integrable Lie algebroid $A$,\index{integrable Lie algebroid} and let $\mathcal{G}$ be the $s$-fiber
1-connected groupoid of $A$. The $s$-fiber at $x\in M$ $\mathcal{G}(x,-)$ is a principal $\mathcal{G}_x$-bundle over
the orbit $\mathcal{G}x$, which coincides with $Ax$, since $\mathcal{G}(x,-)$ is connected. From the long exact
sequence of homotopy groups associated to this fibration:
\[\ldots \to \pi_2(\mathcal{G}_x,1_x)\to \pi_2(\mathcal{G}(x,-),1_x)\to \pi_2(Ax,x)\stackrel{\tilde{\partial}}{\to}\pi_1(\mathcal{G}_x,1_x)\to \{1\},\]
we obtain a group homomorphism $\tilde{\partial}:\pi_2(Ax,x)\to \pi_1(\mathcal{G}_x,1_x)$. On the other hand,
$\pi_1(\mathcal{G}_x,1_x)$ sits naturally as a discrete subgroup of the center of $G(\mathfrak{g}_x)$, the 1-connected
group of the isotropy Lie algebra\index{isotropy Lie algebra} $\mathfrak{g}_x$. Therefore, we can regard
$\tilde{\partial}$ as a map with values in the center of $G(\mathfrak{g}_x)$
\begin{equation*}
\widetilde{\partial}:\pi_2(Ax,x)\rmap Z(G(\mathfrak{g}_x)).
\end{equation*}

The crucial remark in \cite{CrFe1} is that also for a possibly non-integrable Lie algebroid $A$, there is a group
homomorphism
\[\partial:\pi_2(Ax,x)\rmap Z(G(\mathfrak{g}_x)),\]
called the \textbf{monodromy map}\index{monodromy map}, which coincides with $\widetilde{\partial}$ in the
integrable case. Denote the image of $\partial$ by
\[\widetilde{\mathcal{N}}_x(A)\subset Z(G(\mathfrak{g}_x)).\]
Discreteness of this group is equivalent to discreteness of the so-called \textbf{monodromy group}\index{monodromy
group}, defined by
\[\mathcal{N}_x(A):=\{X\in Z(\mathfrak{g}_x) | \exp(X)\in \widetilde{\mathcal{N}}_x(A)\}\subset Z(\mathfrak{g}_x),\]
where $Z(\mathfrak{g}_x)$ denotes the center of $\mathfrak{g}_x$. Note that the terminology we adopt here differs
form that in \cite{CrFe1}, where $\widetilde{\mathcal{N}}_x(A)$ is called the monodromy group and
$\mathcal{N}_x(A)$ is called the reduced monodromy group.

Denote by
\[\mathcal{N}(A):=\bigcup_{x\in M}\mathcal{N}_x(A)\subset A.\]
We have the following criterion for integrability of Lie algebroids.
\begin{theorem}[Theorem 4.1 \cite{CrFe1}]\label{Criterion of Marius and Rui}
A Lie algebroid $A$ is integrable if and only if there exists an open $U\subset A$ around the zero section $M$, such
that
\[\mathcal{N}(A)\cap U=M.\]
\end{theorem}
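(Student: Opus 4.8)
The plan is to exhibit the canonical candidate integration of $A$ — the Weinstein groupoid of $A$-paths modulo $A$-homotopy — and to pin down exactly when it is smooth; the monodromy groups $\mathcal{N}_x(A)$ will be precisely the obstruction. Following \cite{CrFe1}, let $P(A)$ be the Banach manifold of $A$-paths of $C^1$ (or Sobolev) class, and let $\mathcal{G}(A):=P(A)/\!\sim$ be its quotient by the $A$-homotopy equivalence relation. This is always a topological groupoid over $M$ with $1$-connected source fibres, whose infinitesimal object is $A$; by uniqueness of the $1$-connected integration, $A$ is integrable if and only if $\mathcal{G}(A)$ admits a smooth structure making it a Lie groupoid. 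Since $A$-homotopy is the orbit relation of a finite-codimension foliation $\mathcal{F}$ on $P(A)$ (coming from the gauge action of time-dependent sections of $A$), smoothness of $\mathcal{G}(A)=P(A)/\mathcal{F}$ is a local question around each $\mathcal{F}$-leaf, and reduces to producing transversals to $\mathcal{F}$ with trivial, \emph{uniformly} trivial holonomy.

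\textbf{Necessity.} Suppose $A$ is integrable, so $\mathcal{G}(A)$ is a Lie groupoid. Fix an auxiliary $TM$-connection on $A$ (as in subsection \ref{SubSection_Contravariant}) and exponentiate: this produces a map $\exp$ from an open $U\subset A$ around the zero section onto an open around the unit section $M\subset\mathcal{G}(A)$, which is a diffeomorphism near $M$ and which restricts on each isotropy fibre to the Lie-group exponential of $G(\mathfrak{g}_x)$. By definition a class $X\in\mathcal{N}_x(A)$ satisfies $\exp_{G(\mathfrak{g}_x)}(X)\in\widetilde{\mathcal{N}}_x(A)=\mathrm{im}\,\widetilde{\partial}=\pi_1(\mathcal{G}_x,1_x)$, i.e.\ $\exp(X)$ is the unit $1_x$ in $\mathcal{G}(A)$; injectivity of $\exp$ on $U$ near $M$ then forces $X=0_x$. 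Hence $\mathcal{N}(A)\cap U=M$.

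\textbf{Sufficiency.} Assume $\mathcal{N}(A)\cap U=M$ for an open $U$ around the zero section; we build a smooth atlas on $\mathcal{G}(A)$. Around $x_0\in M$, pass to a transversal $T$ to the orbit $Ax_0$, so $A|_{Ax_0}$ is transitive, and use the local splitting theorem for Lie algebroids to put $A$ into a standard local form. Exponentiating along the fixed spray gives a candidate chart: an open $\Sigma$ around the zero section of $A|_T$ and the map $\Sigma\to\mathcal{G}(A)$, $X\mapsto[a_X]$, where $a_X$ is the exponential $A$-path of $X$. The three things to check are: (i) \emph{injectivity} for $\Sigma$ small — if $[a_X]=[a_{X'}]$ with $X,X'$ in the same fibre, an $A$-homotopy between $a_X$ and $a_{X'}$ determines, through its base $2$-sphere in the orbit, an element of $\widetilde{\mathcal{N}}_{x_0}(A)$ which equals $\exp_{G(\mathfrak{g}_{x_0})}$ of the "difference" of $X$ and $X'$, so the hypothesis $\mathcal{N}(A)\cap U=M$ forces $X=X'$; (ii) the image of each such chart is open, and right-translation of these charts covers all of $\mathcal{G}(A)$; (iii) transition maps between overlapping charts (as $x_0$ and $T$ vary) are smooth, since they are built out of the smooth parameter-dependence of flows of time-dependent sections of $A$. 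Granting (i)--(iii), $\mathcal{G}(A)$ inherits a smooth structure, the source, target, unit, inversion and multiplication become smooth, and $\mathcal{G}(A)$ is a Lie groupoid with $\mathrm{Lie}(\mathcal{G}(A))\cong A$.

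\textbf{Main obstacle.} The heart of the argument is step (i) of the sufficiency direction: identifying the holonomy of the $A$-homotopy foliation $\mathcal{F}$, transverse to the unit leaf through $1_{x_0}$, with the monodromy map $\partial\colon\pi_2(Ax_0,x_0)\to Z(G(\mathfrak{g}_{x_0}))$. Concretely one must trace an $A$-homotopy between two exponential $A$-paths, extract from the resulting based $2$-sphere in the orbit the central element it contributes — essentially rederiving, at the level of $A$-paths, the connecting homomorphism $\widetilde{\partial}$ of the homotopy exact sequence of the fibration $\mathcal{G}(x_0,-)\to Ax_0$ — and only afterwards invoke the hypothesis to rule out small monodromy elements. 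The second delicate point, and the reason the statement asks for an \emph{open} $U$ rather than fibrewise discreteness of $\mathcal{N}_x(A)$, is uniformity: the charts $\Sigma$ must be chosen smoothly in families over varying base points, so one needs $\mathcal{N}(A)$ to stay away from $M$ on a whole neighbourhood; this is exactly the stated condition, which is why it is simultaneously necessary and sufficient.
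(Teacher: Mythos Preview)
This theorem is not proved in the paper; it is simply quoted from \cite{CrFe1} (as the label indicates) and used as a black box in the surrounding discussion of integrability. So there is no ``paper's own proof'' to compare against. Your sketch does track the strategy of \cite{CrFe1}: construct the Weinstein groupoid $\mathcal{G}(A)=P(A)/\mathcal{F}$ as a topological groupoid, deduce necessity from an exponential chart near the units, and for sufficiency build a smooth atlas from exponential $A$-paths, identifying the obstruction to injectivity of the local charts with the monodromy map. Your emphasis on the uniformity condition (an open $U$ rather than fibrewise discreteness) is also the right point. If you want to turn this into an actual proof you would need to fill in the analytic details of steps (i)--(iii), in particular the precise construction of the monodromy map $\partial$ directly at the level of $A$-paths (without assuming integrability), which is the main technical content of \cite{CrFe1}; your outline gestures at this but does not carry it out.
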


\subsection{The space of $A$-paths}

In this subsection, we recall some results from \cite{CrFe1}.

The condition that $a:I=[0,1]\to A$ is an $A$-path is equivalent to\index{A-path}
\[a(t)dt:TI\rmap A\]
being a Lie algebroid homomorphism.

An $A$-\textbf{homotopy}\index{A-homotopy, A-homotopic} is a family of $A$-paths $a_{\epsilon}$, for $\epsilon\in
I$, for which there exists a Lie algebroid map of the form
\[adt+bd\epsilon:TI\times TI \rmap A,\]
where $b=b(\epsilon,t):I\times I\to A$ satisfies $b(\epsilon,0)=0$ and $b(\epsilon,1)=0$.

Two $A$-paths $a_0$ and $a_1$ are called $A$-\textbf{homotopic}, if there is an $A$-homotopy $a_{\epsilon}$
connecting them. This relation is an equivalence relation $\sim$ on the space of $A$-paths. The quotient space
\[\mathcal{G}(A):=\{A-\textrm{paths}\}/\sim\]
carries a groupoid structure, with source/target map the initial/end point
\[s([a])=p(a(0))\in M, \ t([a])=p(a(1))\in M,\]
with units the constant 0-paths and composition essentially given by concatenation. In fact,
$\mathcal{G}(A)\rightrightarrows M$ is a topological groupoid with 1-connected $s$-fibers, and it is called the
\textbf{Weinstein groupoid}\index{Weinstein groupoid} of $A$. In the integrable case, it carries a natural smooth
structure and therefore it is the fundamental integration of $A$.

To describe the topology, consider the space of $C^1$-paths
\[\widetilde{P}(A):=C^1(I,A).\]
The space of $C^1$-$A$-paths, denoted by $P(A)\subset \widetilde{P}(A)$, is a smooth Banach submanifold, and the
equivalence relation $\sim$ of $A$-homotopy is given by the leaves of a finite codimension foliation $\mathcal{F}(A)$
on $P(A)$ (for Banach manifolds and foliations on Banach manifolds see \cite{Lang}). The topology of
$\mathcal{G}(A)$ is the quotient topology. There exists at most one smooth structure on $\mathcal{G}(A)$ for which
the projection map $P(A)\to \mathcal{G}(A)$ is a submersion. This exists precisely when $A$ is integrable!

\section{Symplectic realizations from transversals in the manifold of cotangent paths}
\label{Constructing symplectic realizations from transversals in the manifold of cotangent paths}

In this section we specialize to the case when $A=T^*M$\index{cotangent path} is the cotangent algebroid of a
Poisson manifold $(M,\pi)$. Using the space of cotangent paths, we describe a general method for constructing
symplectic realizations. In particular, we describe the symplectic structure on the Weinstein groupoid
\[\mathcal{G}(M,\pi):=\mathcal{G}(T^*M),\]
whenever this is smooth.

We will use the same notations as in \cite{CrFe1,CrFe2}. We consider:
\begin{itemize}
\item $\widetilde{\mathcal{X}}= \widetilde{P}(T^*M)$ is the space of all $C^1$-paths in $T^*M$. Recall \cite{CrFe1} that
$\widetilde{\mathcal{X}}$ has a natural structure of Banach
manifold.
\item $\mathcal{X}= P(T^*M)$ is the space of all cotangent paths which, by Lemma 4.6 in \cite{CrFe1}, is a Banach submanifold of $\widetilde{\mathcal{X}}$.
\item $\mathcal{F}= \mathcal{F}(T^*M)$ is the foliation on $\mathcal{X}$ given by the equivalence relation of cotangent homotopy\index{cotangent homotopy} (i.e.\ $T^*M$-homotopy)
it is a smooth foliation on $\mathcal{X}$ of finite codimension. In (\ref{the-fol}) we will recall the description of
$\mathcal{F}$ via its involutive distribution.
\end{itemize}
Thinking of $\widetilde{\mathcal{X}}$ as the cotangent space of the space $P(M)$ of paths in $M$, it comes with a canonical symplectic structure
$\widetilde{\Omega}$. To avoid issues regarding symplectic structures on Banach manifolds let us just define $\widetilde{\Omega}$ explicitly:
\[\widetilde{\Omega}(X_a,Y_a)=\int_0^1 \omega_{\mathrm{can}}(X_a,Y_a)_{a(t)}dt,\textrm{ for } a\in \widetilde{\mathcal{X}}, X,Y\in T_a\widetilde{\mathcal{X}},\]
where $X_a, Y_a$ are interpreted as paths in $T(T^*M)$ sitting above $a$ and where $\omega_{\mathrm{can}}$ is the canonical symplectic form on
$T^*M$. It can be checked directly that $\widetilde{\Omega}$ is closed; we only need its restriction to $\mathcal{X}$:
\[ \Omega:= \widetilde{\Omega}|_{\mathcal{X}}\in \Omega^2(\mathcal{X}).\]
We will prove that the kernel of $\Omega$ is precisely $T\mathcal{F}$ and that $\Omega$ is invariant under the
holonomy of $\mathcal{F}$; these properties ensures that $\Omega$ descends to a symplectic form on the leaf space
$\mathcal{G}(M, \pi)$ (whenever smooth).

We will use the contravariant geometry developed in subsection \ref{SubSection_Contravariant}:
\begin{itemize}
\item let $\nabla$ be a torsion-free connection on $M$;
\item consider the two contravariant connections\index{contravariant connection} defined by (\ref{EQ_two_conn}) on $T^*M$ and $TM$, both denoted by $\overline{\nabla}$;
\item we decompose a tangent vector $X$ to $T_x^*M$ as a pair $(\overline{X}, \theta_X)\in T_xM\oplus T^*_xM$ with respect to $\nabla$;
\end{itemize}

Similarly, a tangent vector $X\in T_a\widetilde{\mathcal{X}}$ is represented by a pair $(\overline{X}, \theta_X)$,
where $\overline{X}$ is a $C^1$-path in $TM$ and $\theta_X$ a $C^1$-path in $T^*M$, both sitting above the base
path $\gamma= p\circ a$. The tangent space
\[ T_a\mathcal{X}\subset T_a\widetilde{\mathcal{X}}\]
corresponds to those pairs $(\overline{X},\theta_X)$ satisfying (see \cite{CrFe1}):
\begin{equation}\label{EQ_tangent_to_the_cotangent}
\overline{\nabla}_a(\overline{X})=\pi^{\sharp}(\theta_X).
\end{equation}
Note that this equation forces $\overline{X}$ to be of class $C^2$.

To describe the distribution $T\mathcal{F}$, let $a\in \mathcal{X}$ with base path $\gamma$ and let
\[\mathcal{E}_{\gamma}:=\{\beta\in C^2(I,T^*M) | p\circ\beta=\gamma \}.\]
By (\ref{EQ_conn_pisharp_rel}), each $\beta\in\mathcal{E}_{\gamma}$ induces a vector in $T_a\mathcal{X}$, with components
\[ X_{\beta}:= (\pi^{\sharp}(\beta),\overline{\nabla}_a(\beta))\in T_a\mathcal{X}.\]
With these, the foliation $\mathcal{F}$ has tangent bundle (see \cite{CrFe1})
\begin{equation}\label{the-fol}
T\mathcal{F}_{a}= \{ X_{\beta} | \beta\in \mathcal{E}_{\gamma}, \beta(0)=0, \beta(1)=0\}.
\end{equation}

Next, we give a very useful formula for $\Omega$.
\begin{lemma}\label{Omega_Formula_simpla}
Let $a\in \mathcal{X}$ with base path $\gamma$. For $X=(\overline{X},\theta_X)$, $Y=(\overline{Y},\theta_Y)\in T_a\mathcal{X}$ choose
$\beta_X,\beta_Y\in \mathcal{E}_{\gamma}$ such that $\theta_X=\overline{\nabla}_a(\beta_X)$ and $\theta_Y=\overline{\nabla}_a(\beta_Y)$. Then
\[\Omega(X,Y)=\langle \beta_Y,\overline{X}\rangle|_0^1-\langle \beta_X,\overline{Y}\rangle|_0^1-\pi(\beta_X,\beta_Y)|_0^1.\]
In particular, for $Y=X_{\beta}$, with $\beta\in\mathcal{E}_{\gamma}$, we have that $\Omega(X,X_\beta)=\langle\beta,\overline{X}\rangle|_0^1$.
\end{lemma}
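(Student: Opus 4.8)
The plan is to compute $\Omega(X,Y)$ directly from its definition as $\int_0^1 \omega_{\mathrm{can}}(X,Y)_{a(t)}\,dt$, using the torsion-free connection $\nabla$ to express $\omega_{\mathrm{can}}$ via the formula of Lemma \ref{torsion-free1}. Concretely, for $X=(\overline X,\theta_X)$ and $Y=(\overline Y,\theta_Y)$ in $T_a\mathcal{X}$, Lemma \ref{torsion-free1} gives
\[
\Omega(X,Y)=\int_0^1\big(\langle\theta_Y,\overline X\rangle-\langle\theta_X,\overline Y\rangle\big)\,dt .
\]
So the whole statement reduces to showing that the integrand equals the $t$-derivative of
\[
F(t):=\langle\beta_Y,\overline X\rangle-\langle\beta_X,\overline Y\rangle-\pi(\beta_X,\beta_Y),
\]
after which the Fundamental Theorem of Calculus yields the claimed evaluation $F(t)|_0^1$. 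This is exactly the same bookkeeping that appears in the proof of Lemma \ref{Lemma_simple_frmla}, so I would follow that computation almost verbatim, with $\widetilde\theta_v,\widetilde\theta_w$ replaced by $\beta_X,\beta_Y$ and the role of equation (\ref{diff-eq}) played by the hypotheses $\theta_X=\overline\nabla_a(\beta_X)$, $\theta_Y=\overline\nabla_a(\beta_Y)$.

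The key steps, in order: (1) substitute $\theta_X=\overline\nabla_a(\beta_X)$, $\theta_Y=\overline\nabla_a(\beta_Y)$ into the integrand and apply Lemma \ref{ConjConn2} to each of the two terms $\langle\overline\nabla_a(\beta_Y),\overline X\rangle$ and $\langle\overline\nabla_a(\beta_X),\overline Y\rangle$; this replaces them by $\frac{d}{dt}\langle\beta_Y,\overline X\rangle-\langle\beta_Y,\overline\nabla_a(\overline X)\rangle$ and similarly for the other, producing the first two terms of $\frac{d}{dt}F$ plus a leftover $-\langle\beta_Y,\overline\nabla_a(\overline X)\rangle+\langle\beta_X,\overline\nabla_a(\overline Y)\rangle$. (2) Use the defining relation (\ref{EQ_tangent_to_the_cotangent}) for tangent vectors to $\mathcal{X}$, namely $\overline\nabla_a(\overline X)=\pi^\sharp(\theta_X)=\pi^\sharp(\overline\nabla_a(\beta_X))$ and likewise for $Y$, to rewrite this leftover as $-\langle\beta_Y,\pi^\sharp\overline\nabla_a(\beta_X)\rangle+\langle\beta_X,\pi^\sharp\overline\nabla_a(\beta_Y)\rangle$. (3) Apply the Leibniz identity (\ref{EQ_conn_pisharp_rel}) ($\overline\nabla_a(\pi^\sharp\beta)=\pi^\sharp(\overline\nabla_a\beta)$), the antisymmetry of $\pi^\sharp$, and Lemma \ref{ConjConn2} once more to recognize this combination as $-\frac{d}{dt}\langle\beta_Y,\pi^\sharp(\beta_X)\rangle=-\frac{d}{dt}\pi(\beta_X,\beta_Y)$, i.e. the third term of $\frac{d}{dt}F$. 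Assembling, the integrand equals $\frac{d}{dt}F(t)$, and integrating over $[0,1]$ gives the formula. Finally, for the special case $Y=X_\beta$ with $\beta\in\mathcal{E}_\gamma$: by definition $X_\beta=(\pi^\sharp(\beta),\overline\nabla_a(\beta))$, so one may take $\beta_Y=\beta$, $\overline Y=\pi^\sharp(\beta)$; substituting into the general formula, the term $-\langle\beta_X,\overline Y\rangle=-\langle\beta_X,\pi^\sharp(\beta)\rangle=\pi(\beta_X,\beta)$ exactly cancels the term $-\pi(\beta_X,\beta)$, leaving $\Omega(X,X_\beta)=\langle\beta,\overline X\rangle|_0^1$.

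Two minor points need care rather than ingenuity. First, one must check that suitable $\beta_X,\beta_Y\in\mathcal{E}_\gamma$ with $\overline\nabla_a(\beta_X)=\theta_X$ exist: the equation $\overline\nabla_a(\beta)=\theta_X$ is a linear first-order ODE for $\beta$ along $\gamma$, so solutions exist on all of $[0,1]$ (with the initial value free), and $\beta$ is $C^2$ since $\theta_X$ is $C^1$ — this matches the regularity demanded in the definition of $\mathcal{E}_\gamma$. Second, the final formula is manifestly independent of the choices of $\beta_X,\beta_Y$ (two solutions differ by a $\overline\nabla_a$-parallel section, and one can verify independence either a priori or simply observe that $\Omega(X,Y)$ was defined without reference to them), so no separate well-definedness argument is logically required. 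The only genuinely computational part is step (1)–(3), and since it is a line-by-line mirror of the proof of Lemma \ref{Lemma_simple_frmla}, I do not expect any real obstacle; the main thing to watch is consistent sign conventions in the pairings and in $\pi(\beta_X,\beta_Y)=\langle\beta_Y,\pi^\sharp(\beta_X)\rangle$ versus its antisymmetrization.
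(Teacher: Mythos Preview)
Your proposal is correct and follows exactly the approach the paper takes: the paper's proof simply says it is the same as that of Lemma~\ref{Lemma_simple_frmla}, with the role of Lemma~\ref{lemma_cotangent_tangent} played by equation~(\ref{EQ_tangent_to_the_cotangent}), which is precisely the substitution you spell out. Your steps (1)--(3) reproduce that computation faithfully, and your handling of the special case $Y=X_\beta$ via the cancellation $-\langle\beta_X,\pi^\sharp(\beta)\rangle=\pi(\beta_X,\beta)$ is correct.
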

\begin{proof}
The proof is the same as that of Lemma \ref{Lemma_simple_frmla}, which, besides the properties of the connections, uses only the fact that the components of the paths $v$ and $w$ satisfy the equation from Lemma \ref{lemma_cotangent_tangent}, which is exactly equation (\ref{EQ_tangent_to_the_cotangent}) for $X$ and $Y$.
\end{proof}

\begin{corollary}\label{CoroKerOmega}
Let $a\in \mathcal{X}$ with base path $\gamma$. Then
\[\ker(\Omega_{a})=T\mathcal{F}_a=\{X_\beta: \beta\in \mathcal{E}_{\gamma}, \beta(0)=0, \beta(1)=0\}.\]
\end{corollary}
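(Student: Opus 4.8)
The plan is to prove the two inclusions $T\mathcal{F}_a \subseteq \ker(\Omega_a)$ and $\ker(\Omega_a) \subseteq T\mathcal{F}_a$ separately, using the explicit formula for $\Omega$ from Lemma \ref{Omega_Formula_simpla} as the main computational tool. The identification $T\mathcal{F}_a = \{X_\beta : \beta\in\mathcal{E}_\gamma,\ \beta(0)=0,\ \beta(1)=0\}$ is exactly (\ref{the-fol}), so the content is that $\ker(\Omega_a)$ equals this space.

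For the inclusion $T\mathcal{F}_a \subseteq \ker(\Omega_a)$: let $X_\beta \in T\mathcal{F}_a$, so $\beta\in\mathcal{E}_\gamma$ with $\beta(0)=\beta(1)=0$. I take an arbitrary $Y=(\overline{Y},\theta_Y)\in T_a\mathcal{X}$ and apply the last assertion of Lemma \ref{Omega_Formula_simpla}: since $X_\beta$ is of the form $X_\beta$ with $\beta\in\mathcal{E}_\gamma$, one has $\Omega_a(Y,X_\beta) = \langle\beta,\overline{Y}\rangle|_0^1$. Because $\beta$ vanishes at both endpoints $t=0$ and $t=1$, this boundary term is zero, hence $\Omega_a(Y,X_\beta)=0$ for all $Y$, i.e.\ $X_\beta\in\ker(\Omega_a)$. (One must make sure the sign conventions match; $\Omega$ is antisymmetric so $\Omega_a(X_\beta,Y)=0$ as well.)

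For the reverse inclusion $\ker(\Omega_a)\subseteq T\mathcal{F}_a$: let $X=(\overline{X},\theta_X)\in\ker(\Omega_a)$. First I want to show $\overline{X}(0)=0$ and $\overline{X}(1)=0$; testing against suitable $X_\beta$ with $\beta$ supported near an endpoint, the formula $\Omega_a(X,X_\beta)=\langle\beta,\overline{X}\rangle|_0^1 = \langle\beta(1),\overline{X}(1)\rangle - \langle\beta(0),\overline{X}(0)\rangle$ must vanish for all such $\beta$; choosing $\beta$ with $\beta(1)$ arbitrary and $\beta(0)=0$ forces $\overline{X}(1)=0$, and symmetrically $\overline{X}(0)=0$. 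Next, choose $\beta_X\in\mathcal{E}_\gamma$ with $\overline{\nabla}_a(\beta_X)=\theta_X$ — this is solving a linear ODE along $\gamma$, so solutions exist with any prescribed initial value, and I arrange $\beta_X(0)=0$. Now I claim $X = X_{\beta_X}$: by (\ref{EQ_conn_pisharp_rel}), $X_{\beta_X}=(\pi^\sharp(\beta_X),\overline{\nabla}_a(\beta_X))$ lies in $T_a\mathcal{X}$ and has the same vertical component $\theta_X$ as $X$; to see the horizontal components agree, I evaluate $\Omega_a(X - X_{\beta_X},Y)$ for arbitrary $Y$ using the formula, and since $X - X_{\beta_X}$ has zero vertical part it reduces to a pure boundary pairing of its horizontal part with $\overline{Y}$; using $\overline{X}(0)=\overline{X}(1)=0$ together with $\overline{X_{\beta_X}}=\pi^\sharp(\beta_X)$ and $\beta_X(0)=0$, and varying $Y$, I conclude the horizontal part of $X-X_{\beta_X}$ vanishes at both endpoints and, since $X\in\ker\Omega_a$ while $X_{\beta_X}\in T\mathcal{F}_a\subseteq\ker\Omega_a$ already by the first part, that it vanishes identically. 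Hence $X=X_{\beta_X}$ with $\beta_X(0)=0$; it remains to check $\beta_X(1)=0$, which follows because $\overline{X}(1)=\pi^\sharp(\beta_X(1))=0$ only gives $\beta_X(1)\in\ker\pi^\sharp$ — so the actual argument here is the delicate one: I instead invoke the known description (\ref{the-fol}) of $T\mathcal{F}_a$ together with the codimension/dimension count, or more directly use that $X_{\beta_X}$ is $T^*M$-homotopically trivial iff its defining $\beta$ can be taken to vanish at the endpoints, adjusting $\beta_X$ by a section of $\ker\pi^\sharp$ along $\gamma$ without changing $X_{\beta_X}$.

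The main obstacle I anticipate is precisely this last point: showing that membership in $\ker(\Omega_a)$ forces not just $X=X_{\beta_X}$ for \emph{some} $\beta_X\in\mathcal{E}_\gamma$ but for one with $\beta_X(0)=\beta_X(1)=0$, i.e.\ pinning down that the ``gauge'' ambiguity in $\beta_X$ (adding paths in $\ker\pi^\sharp$) can be used to kill the boundary values. The cleanest route is probably a dimension count: $\ker(\Omega_a)$ and $T\mathcal{F}_a$ are both finite-codimension subspaces of $T_a\mathcal{X}$, we have already shown $T\mathcal{F}_a\subseteq\ker(\Omega_a)$, and the formula in Lemma \ref{Omega_Formula_simpla} shows that the induced pairing on $T_a\mathcal{X}/T\mathcal{F}_a$ is nondegenerate (it identifies the quotient with boundary data $(\overline{X}(0),\overline{X}(1),\beta_X(0),\beta_X(1))$ modulo the relations imposed by $\pi^\sharp$ and $\overline{\nabla}_a$), whence equality. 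I would lean on the references \cite{CrFe1,CrFe2} for the finite-codimensionality of $\mathcal{F}$ and carry out the nondegeneracy check on the finite-dimensional quotient explicitly.
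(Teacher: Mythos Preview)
Your easy inclusion $T\mathcal{F}_a\subseteq\ker(\Omega_a)$ is fine and matches the paper. The hard inclusion has a genuine gap at exactly the two places you flagged as worrisome, and both have direct fixes that you are overlooking.

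For $X=X_{\beta_X}$: you try to extract this from $\Omega$, but the right tool is ODE uniqueness. Since $X=(\overline{X},\theta_X)\in T_a\mathcal{X}$, equation (\ref{EQ_tangent_to_the_cotangent}) says $\overline{\nabla}_a(\overline{X})=\pi^\sharp(\theta_X)$. On the other hand $\beta_X$ satisfies $\overline{\nabla}_a(\beta_X)=\theta_X$, so by (\ref{EQ_conn_pisharp_rel}) one has $\overline{\nabla}_a(\pi^\sharp\beta_X)=\pi^\sharp(\theta_X)$ as well. Both $\overline{X}$ and $\pi^\sharp(\beta_X)$ therefore solve the same linear ODE along $\gamma$ with the same initial value $0$ (you already showed $\overline{X}(0)=0$, and $\beta_X(0)=0$ by choice), hence $\overline{X}=\pi^\sharp(\beta_X)$ and $X=X_{\beta_X}$. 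Your attempt via $\Omega_a(X-X_{\beta_X},Y)$ is circular: you invoke $X_{\beta_X}\in T\mathcal{F}_a$ before knowing $\beta_X(1)=0$.

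For $\beta_X(1)=0$: no dimension count or gauge adjustment is needed. Once $X=X_{\beta_X}$, apply the second assertion of Lemma \ref{Omega_Formula_simpla} with the roles reversed: for any $Y\in T_a\mathcal{X}$, $0=\Omega_a(Y,X_{\beta_X})=\langle\beta_X,\overline{Y}\rangle|_0^1=\langle\beta_X(1),\overline{Y}(1)\rangle$. Since $\overline{Y}(1)$ can be prescribed arbitrarily (solve $\overline{\nabla}_a(\overline{Y})=0$ with given terminal value and take $\theta_Y=0$; this is exactly how surjectivity of $\tilde{t}$ is shown in Lemma \ref{LemmaOrto}), one concludes $\beta_X(1)=0$. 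Your proposed ``gauge'' by sections of $\ker\pi^\sharp$ does not work: adding $\eta$ to $\beta_X$ preserves $\overline{\nabla}_a\beta_X=\theta_X$ only if $\overline{\nabla}_a\eta=0$, and such $\eta$ are determined by one boundary value, so you cannot kill both endpoints independently.
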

\begin{proof}
Let $X=(\overline{X},\theta_X)\in \ker(\Omega_{a})$. Then, for all $\xi\in\mathcal{E}_{\gamma}$ we have that
$\Omega_a(X,X_{\xi})=0$, hence, by the previous lemma, $\overline{X}(0)=0$ and $\overline{X}(1)=0$. Let $\beta\in
\mathcal{E}_{\gamma}$ be the unique solution to the equation $\theta_X=\overline{\nabla}_a(\beta)$ with
$\beta(0)=0$. Observe that by (\ref{EQ_conn_pisharp_rel}), both $\overline{X}$ and $\pi^{\sharp}(\beta)$ satisfy the
equation
\[\overline{\nabla}_a(Z)=\pi^{\sharp}(\theta_X),\quad Z(0)=0.\]
Therefore they must be equal, and so $X=X_\beta$. Again by the lemma, for all $Y=(\overline{Y},\theta_Y)\in
T_a\mathcal{X}$, we have that $\langle\overline{Y}(1),\beta(1)\rangle=0$. On the other hand, $\overline{Y}(1)$ can
take any value (see the lemma below), thus $\beta(1)=0$ and this shows that $X\in T\mathcal{F}_a$. The other
inclusion follows directly from Lemma \ref{Omega_Formula_simpla}.
\end{proof}

Consider now the maps $\tilde{s}, \tilde{t}: \mathcal{X}\to M$ which assign to a path $a$ the starting (respectively
ending) point of its base path $\gamma$.

\begin{lemma}\label{LemmaOrto} $\tilde{s}$ and $\tilde{t}$ are submersions and their fibers
are orthogonal with respect to $\Omega$:
\[(\ker d\tilde{s}_a)^{\perp}=\ker d\tilde{t}_a,\quad (\ker d\tilde{t}_a)^{\perp}=\ker d\tilde{s}_a.\]
\end{lemma}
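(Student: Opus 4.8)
The statement asserts two things: that $\tilde{s}$ and $\tilde{t}$ are submersions, and that their fibers are $\Omega$-orthogonal. The plan is to reduce the orthogonality claim to the explicit formula for $\Omega$ in Lemma \ref{Omega_Formula_simpla}, just as the kernel computation in Corollary \ref{CoroKerOmega} did, and to handle the submersion claim by exhibiting enough tangent vectors in $T_a\mathcal{X}$ with prescribed endpoint behaviour of the base path.

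First I would unwind the definitions. A tangent vector $X=(\overline{X},\theta_X)\in T_a\mathcal{X}$ lies in $\ker d\tilde{s}_a$ precisely when $\overline{X}(0)=0$, and in $\ker d\tilde{t}_a$ precisely when $\overline{X}(1)=0$ (since $d\tilde{s}_a(X)=\overline{X}(0)$ and $d\tilde{t}_a(X)=\overline{X}(1)$, because $\overline{X}$ is the velocity of the base path). For surjectivity of $d\tilde{s}_a$ onto $T_{\gamma(0)}M$, and likewise for $d\tilde{t}_a$, I would invoke the fact—used implicitly in the proof of Corollary \ref{CoroKerOmega} and stated in the ``lemma below''—that the endpoint values $\overline{X}(0)$ and $\overline{X}(1)$ of a tangent vector can be arbitrary: given any $v_0\in T_{\gamma(0)}M$, pick a $C^2$-path $\overline{X}$ above $\gamma$ with $\overline{X}(0)=v_0$, then \emph{define} $\theta_X$ by solving the pointwise-linear equation $\pi^{\sharp}(\theta_X)=\overline{\nabla}_a(\overline{X})$... but this is underdetermined, so more carefully: pick $\beta\in\mathcal{E}_{\gamma}$ with $\beta(0)$ an arbitrary cotangent vector lifting $v_0$ in the sense that $\pi^{\sharp}(\beta(0))$ together with the rest is realizable; in fact the cleanest route is to take $\beta\in\mathcal{E}_{\gamma}$ arbitrary with $\pi^\sharp(\beta(0))$ and use $X_\beta=(\pi^\sharp(\beta),\overline{\nabla}_a(\beta))$, whose base endpoint is $\pi^\sharp(\beta(0))$ — then add a genuinely horizontal deformation to hit a full complement. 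I expect the honest argument here just cites equation (\ref{EQ_tangent_to_the_cotangent}): since $\overline{\nabla}_a$ is a connection, for \emph{any} $C^2$-path $\overline{X}$ above $\gamma$ there is a (unique, once an initial value of a primitive is fixed) $\theta_X$ with $\overline{\nabla}_a(\overline X)=\pi^\sharp(\theta_X)$ — no wait, that needs $\overline{\nabla}_a(\overline X)$ to be in the image of $\pi^\sharp$, which is not automatic. So the correct statement is the one already invoked: the map $T_a\mathcal{X}\to T_{\gamma(1)}M$, $X\mapsto\overline{X}(1)$ is onto (and similarly at $0$), which is precisely ``the lemma below''; I will cite it.

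For the orthogonality, the core computation uses Lemma \ref{Omega_Formula_simpla}. Take $X\in\ker d\tilde{t}_a$, so $\overline{X}(1)=0$. Given any $Y=(\overline{Y},\theta_Y)\in T_a\mathcal{X}$, choose $\beta_X,\beta_Y\in\mathcal{E}_{\gamma}$ with $\theta_X=\overline{\nabla}_a(\beta_X)$, $\theta_Y=\overline{\nabla}_a(\beta_Y)$, and—using the freedom in the initial condition of these linear ODEs—arrange $\beta_X(1)=0$. Then the formula gives
\[
\Omega(X,Y)=\langle\beta_Y,\overline{X}\rangle|_0^1-\langle\beta_X,\overline{Y}\rangle|_0^1-\pi(\beta_X,\beta_Y)|_0^1,
\]
and every term evaluated at $t=1$ vanishes because $\overline{X}(1)=0$ and $\beta_X(1)=0$, while at $t=0$ it vanishes iff $\langle\beta_Y(0),\overline{X}(0)\rangle=0$ for all admissible $\beta_Y$; that is exactly the condition $\overline{X}(0)=0$, i.e. $X\in\ker d\tilde s_a$. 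This shows $(\ker d\tilde{s}_a)^\perp\supseteq\ker d\tilde{t}_a$... I need to be careful about the direction. Running it the other way: if $X\perp\ker d\tilde{s}_a$, I test $\Omega(X,Y)$ against $Y$ ranging over $\ker d\tilde{s}_a$; I want to conclude $\overline{X}(1)=0$. By the endpoint-surjectivity lemma I can choose $Y\in\ker d\tilde{s}_a$ (so $\overline{Y}(0)=0$) with $\overline{Y}(1)$ arbitrary and with $\beta_Y(0)=0$; then $\Omega(X,Y)=\langle\beta_X,\overline{Y}\rangle(1)$ up to terms that vanish, wait I also need $\beta_X$ manageable — choose $\beta_X$ with $\beta_X(0)=0$; then all $t=0$ terms die, all but one $t=1$ term need control. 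Hmm: $\Omega(X,Y)$ at $t=1$ is $\langle\beta_Y(1),\overline X(1)\rangle - \langle\beta_X(1),\overline Y(1)\rangle - \pi(\beta_X(1),\beta_Y(1))$; with $\beta_Y(0)=0$ the freedom in $\beta_Y(1)$ is still total (solve the linear ODE forward from $0$ — but then $\beta_Y(1)$ is \emph{determined} by $\theta_Y$, not free). This is the genuinely fiddly point, and it is the main obstacle: one must track which data ($\overline{Y}$, $\theta_Y$, the ODE initial conditions) are free and which are slaved, and choose $Y$ and the auxiliary $\beta$'s so that $\Omega(X,Y)$ isolates exactly $\overline{X}(1)$. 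I expect the resolution is: vary $Y$ with $\overline{Y}\equiv 0$ near $t=1$ but $\theta_Y$ a bump concentrated near $t=1$ whose ``primitive'' $\beta_Y$ (normalised by $\beta_Y(0)=0$) has $\beta_Y(1)$ arbitrary — this works because over the short interval the ODE $\overline{\nabla}_a\beta_Y=\theta_Y$ can be made to reach any target value. Then $\Omega(X,Y)=\langle\beta_Y(1),\overline X(1)\rangle$ forces $\overline X(1)=0$. Dimension-counting (both fibers have the same finite codimension, inherited from $\mathcal{F}$ having finite codimension and $\Omega$ having kernel exactly $T\mathcal{F}$ by Corollary \ref{CoroKerOmega}) then upgrades the inclusions to equalities, and the symmetric statement for $\tilde{t}$ follows by swapping roles of $0$ and $1$.

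Once the fibers-orthogonal claim is in hand, the submersivity is cheap: $d\tilde{s}_a$ and $d\tilde{t}_a$ are surjective by the endpoint lemma, which is all ``submersion'' means for a map to a finite-dimensional manifold. So the write-up is: (i) identify $d\tilde{s}_a, d\tilde{t}_a$ with evaluation of $\overline{X}$ at the endpoints; (ii) cite the endpoint-surjectivity lemma for submersivity; (iii) run the Lemma \ref{Omega_Formula_simpla} computation in both directions, being careful with the ODE initial-condition bookkeeping in the harder direction, and close with the finite-codimension dimension count. The main obstacle, as flagged, is purely the choice of test vectors making the pairing in Lemma \ref{Omega_Formula_simpla} collapse to a single boundary term — a routine but delicate point about the linear ODE $\overline{\nabla}_a\beta=\theta$.
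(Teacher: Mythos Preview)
Your plan to cite ``the lemma below'' for the submersion claim is circular: the forward reference in the proof of Corollary~\ref{CoroKerOmega} to ``the lemma below'' points to \emph{this} lemma, so endpoint-surjectivity must be proved here. The construction you were groping for is simpler than any of your attempts. Given $V_0\in T_{\gamma(0)}M$, let $\overline{V}$ be the solution along $\gamma$ of the linear ODE $\overline{\nabla}_a(\overline{V})=0$ with initial value $\overline{V}(0)=V_0$. Then $(\overline{V},0)\in T_a\mathcal{X}$, since the tangency condition~(\ref{EQ_tangent_to_the_cotangent}) reads $\overline{\nabla}_a(\overline{V})=\pi^\sharp(0)=0$, and $d\tilde{s}_a(\overline{V},0)=V_0$. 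You had correctly noticed that for a general $\overline{X}$ there is no reason for $\overline{\nabla}_a(\overline{X})$ to lie in the image of $\pi^\sharp$; taking $\overline{X}$ to be $\overline{\nabla}_a$-parallel and $\theta_X=0$ sidesteps this completely.

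For the orthogonality, your easy inclusion is essentially right once you actually restrict $Y$ to $\ker d\tilde{s}_a$ (you forgot to impose $\overline{Y}(0)=0$; together with the choice $\beta_Y(0)=0$ this kills the remaining $t=0$ terms). For the reverse inclusion $(\ker d\tilde{s}_a)^\perp\subseteq\ker d\tilde{t}_a$ you are working much harder than necessary. The paper tests against the special vectors $X_\xi=(\pi^\sharp(\xi),\overline{\nabla}_a(\xi))$ for $\xi\in\mathcal{E}_\gamma$ with $\xi(0)=0$: these lie in $\ker d\tilde{s}_a$, and the second formula of Lemma~\ref{Omega_Formula_simpla} gives directly $\Omega(X,X_\xi)=\langle\xi,\overline{X}\rangle|_0^1=\langle\xi(1),\overline{X}(1)\rangle$. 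Since $\xi(1)$ is arbitrary, this forces $\overline{X}(1)=0$. No bump functions, no bookkeeping of which ODE data are free. Your proposed dimension count on the symplectic quotient $T_a\mathcal{X}/T\mathcal{F}_a$ would also close the argument, but it needs the submersion claim to know that both kernels have codimension $\dim M$, so it does not let you bypass the first gap.
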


\begin{proof}
For $V_0\in T_{\gamma(0)}M$, let $\overline{V}$ be the solution (above $\gamma$) to
\[\overline{\nabla}_a(\overline{V})=0,\ \ \overline{V}(0)=V_0.\]
Then $(\overline{V},0)\in T_a\mathcal{X}$ and $d\tilde{s}_a(\overline{V},0)=V_0$. Thus $\tilde{s}$ is a submersion,
and similarly, one shows that $\tilde{t}$ is a submersion. For the second part, note that
\[\ker d\tilde{s}_a=\{(\overline{X},\theta_X) | \overline{X}(0)=0\}, \  \ \ \ker d\tilde{t}_a=\{(\overline{X},\theta_X) | \overline{X}(1)=0\}.\]
For $X=(\overline{X},\theta_X)\in \ker d\tilde{t}_a$, $Y=(\overline{Y},\theta_Y)\in \ker d\tilde{s}_a$, consider
$\beta_X,\beta_Y\in \mathcal{E}_{\gamma}$ the solutions to
\[\overline{\nabla}_a(\beta_X)=\theta_X,\  \beta_X(1)=0,\ \ \  \overline{\nabla}_a(\beta_Y)=\theta_Y,\  \beta_Y(0)=0.\]
Lemma \ref{Omega_Formula_simpla} implies that $\Omega_a(X,Y)=0$. Conversely, let $X=(\overline{X},\theta_X)\in
(\ker d\tilde{s}_a)^{\perp}$. For $\xi\in \mathcal{E}_{\gamma}$, with $\xi(0)=0$ we have that $X_{\xi}\in \ker
d\tilde{s}_a$, therefore, by assumption, $\Omega_a(X,X_{\xi})=0$. Thus, by Lemma \ref{Omega_Formula_simpla} we
have that
\[0=\Omega_a(X,X_{\xi})=\langle \xi(1),\overline{X}(1)\rangle.\]
But $\xi(1)$ is arbitrary, hence $\overline{X}(1)=0$, i.e. $X\in \ker d\tilde{t}_a$. So $(\ker d\tilde{s}_a)^{\perp}=\ker
d\tilde{t}_a$. The other equality is proven similarly.
\end{proof}

We collect the main properties of $\Omega$.

\begin{proposition}\label{PropTrnasversal}
Let $\mathcal{T}$ be a transversal to $\mathcal{F}$. Then the
following hold:
\begin{enumerate}
\item[(a)]
$\Omega_{|\mathcal{T}}$ is a symplectic form on $\mathcal{T}$, and is invariant under the action of the holonomy
of $\mathcal{F}$ on $\mathcal{T}$
\item[(b)] the sets $U_s=\tilde{s}(\mathcal{T})$ and $U_t=\tilde{t}(\mathcal{T})$ are open in
$M$, and
\begin{eqnarray*}
&&\sigma:=\tilde{s}_{|\mathcal{T}}:(\mathcal{T},\Omega_{|\mathcal{T}})\to
(U_s,\pi_{|U_s})\textrm{
is a Poisson map and}\\
&&\tau:=\tilde{t}_{|\mathcal{T}}:(\mathcal{T},\Omega_{|\mathcal{T}})\to
(U_t,\pi_{|U_t})\textrm{ is anti-Poisson}
\end{eqnarray*}
\item[(c)] $\ker(\sigma)^{\perp}=\ker(\tau)$ and
$\ker(\tau)^{\perp}=\ker(\sigma)$.
\end{enumerate}
\end{proposition}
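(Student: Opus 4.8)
The plan is to deduce everything from the three lemmas just established: Corollary \ref{CoroKerOmega} (which identifies $\ker\Omega_a = T\mathcal{F}_a$), Lemma \ref{Omega_Formula_simpla} (the explicit formula for $\Omega$), and Lemma \ref{LemmaOrto} (orthogonality of the fibers of $\tilde s$ and $\tilde t$). First, for part (a): since $\mathcal{T}$ is a transversal to $\mathcal{F}$, we have $T_a\mathcal{X} = T_a\mathcal{T}\oplus T\mathcal{F}_a$ at each $a\in\mathcal{T}$, and by Corollary \ref{CoroKerOmega} the form $\Omega$ is closed with kernel exactly $T\mathcal{F}$; hence its restriction to the complement $T\mathcal{T}$ is nondegenerate, so $\Omega_{|\mathcal{T}}$ is symplectic. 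Closedness of $\Omega_{|\mathcal{T}}$ is inherited from closedness of $\Omega$ on $\mathcal{X}$ (which was asserted when $\Omega$ was defined). Invariance under holonomy is the standard Moser-type argument: holonomy transport along a leaf of $\mathcal{F}$ is, up to the identification of two transversals, a diffeomorphism whose generating (leafwise) vector fields lie in $T\mathcal{F} = \ker\Omega$; since $\Omega$ is closed and $\iota_Z\Omega = 0$ for $Z$ tangent to $\mathcal{F}$, the Lie derivative $L_Z\Omega = d\iota_Z\Omega + \iota_Z d\Omega = 0$, so $\Omega$ is preserved along leaves and the induced map between transversals is a symplectomorphism.

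For part (b): openness of $U_s = \tilde s(\mathcal{T})$ and $U_t = \tilde t(\mathcal{T})$ follows because $\tilde s,\tilde t$ are submersions (Lemma \ref{LemmaOrto}) and $\mathcal{T}$ is transverse to $\mathcal{F}$, while $\tilde s$ and $\tilde t$ are already constant along $\mathcal{F}$ (the endpoints of the base path are $\mathcal{F}$-invariant, since cotangent homotopies fix the base endpoints); hence $\sigma = \tilde s_{|\mathcal{T}}$ and $\tau = \tilde t_{|\mathcal{T}}$ are submersions onto open sets. To see that $\sigma$ is Poisson, I would verify directly that it intertwines Hamiltonian vector fields: given $f\in C^\infty(U_s)$, the Hamiltonian vector field of $\sigma^*f$ on $(\mathcal{T},\Omega_{|\mathcal{T}})$ is computed using the formula in Lemma \ref{Omega_Formula_simpla}. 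Concretely, for $X = (\overline X,\theta_X)\in T_a\mathcal{T}$ and a test variation $Y = X_\beta$ with $\beta\in\mathcal{E}_\gamma$, $\beta(1) = 0$, one has $\Omega_a(X,X_\beta) = \langle\beta(0),\overline X(0)\rangle$ from the ``In particular'' part of Lemma \ref{Omega_Formula_simpla}; matching this against $d(\sigma^*f)_a(Y) = \langle d_{\gamma(0)}f, d\sigma_a(Y)\rangle$ forces $d\sigma_a(X_{f}) = \pi^\sharp(d_{\sigma(a)}f)$ for the Hamiltonian vector field $X_f$ of $\sigma^*f$, which is exactly the statement that $\sigma$ is Poisson. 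The computation for $\tau$ is identical except for a sign coming from evaluating $|_0^1$ at the $t=1$ endpoint, giving that $\tau$ is anti-Poisson. (Alternatively, one can invoke that $\mathcal{X}$ with $\Omega$ is the path-space model of the symplectic groupoid and that $\tilde s,\tilde t$ are the source/target maps, which are Poisson/anti-Poisson by the general theory in Section \ref{Section_symplectic_groupoids}, but the direct argument via Lemma \ref{Omega_Formula_simpla} is cleaner here.)

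For part (c): the fibers of $\sigma$ and $\tau$ on $\mathcal{T}$ are $\ker d\sigma_a = T_a\mathcal{T}\cap\ker d\tilde s_a$ and likewise for $\tau$. By Lemma \ref{LemmaOrto}, $(\ker d\tilde s_a)^{\perp_\Omega} = \ker d\tilde t_a$ inside $T_a\mathcal{X}$. Restricting to the symplectic subspace $T_a\mathcal{T}$, which is a $\Omega$-complement to $\ker\Omega_a = T\mathcal{F}_a$, the orthogonal complement computed in $T_a\mathcal{T}$ of $\ker d\sigma_a$ is obtained by intersecting $(\ker d\tilde s_a)^\perp = \ker d\tilde t_a$ with $T_a\mathcal{T}$, i.e. $\ker d\tau_a$; one must check that taking $\Omega$-orthogonals commutes with this restriction, which holds because $T_a\mathcal{T}$ is symplectic and $\ker d\tilde s_a \supset T\mathcal{F}_a = \ker\Omega_a$ (so $\ker d\tilde s_a$ decomposes compatibly with the splitting $T_a\mathcal{X} = T_a\mathcal{T}\oplus\ker\Omega_a$). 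The symmetric statement $\ker(\tau)^\perp = \ker(\sigma)$ follows the same way.

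The main obstacle I expect is part (b), specifically the bookkeeping of the endpoint evaluations and the $\pm$ signs in Lemma \ref{Omega_Formula_simpla} when extracting the Hamiltonian vector fields of $\sigma^*f$ and $\tau^*f$; one has to be careful that the terms $\langle\beta_Y,\overline X\rangle|_0^1$, $\langle\beta_X,\overline Y\rangle|_0^1$ and $\pi(\beta_X,\beta_Y)|_0^1$ combine correctly when restricted to $\ker d\tilde t_a$ (for $\sigma$) versus $\ker d\tilde s_a$ (for $\tau$), and that the resulting push-forward is exactly $\pi^\sharp$ and not some gauge-modified version. The verification that $\Omega_{|\mathcal{T}}$ is closed and that the holonomy invariance argument goes through in the Banach-manifold setting is routine given that $\Omega$ was already shown closed on $\mathcal{X}$, so I would not dwell on it.
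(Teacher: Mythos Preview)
Your proposal is correct and follows essentially the same route as the paper for parts (a) and (c). For (a) the paper uses the equivalent local foliation-chart argument (on a product $B\times\mathcal{T}$, closedness of $\Omega$ plus $\ker\Omega = TB\times\{0\}$ forces $\Omega|_{\{x\}\times\mathcal{T}}$ to be independent of $x$), and for (c) the paper simply cites Lemma \ref{LemmaOrto} and Corollary \ref{CoroKerOmega}, exactly as you suggest.

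The one place where the paper's argument differs from yours is part (b), and it is worth noting because it sidesteps precisely the sign bookkeeping you flag as the main obstacle. Rather than characterizing the Hamiltonian vector field of $\sigma^*f$ implicitly by pairing against test vectors $X_\beta$, the paper writes down an explicit Hamiltonian lift on all of $\mathcal{X}$: for $f\in C^\infty(U_s)$ it sets
\[
\widetilde{H}_{f,a} := X_{(1-t)\,df_{\gamma(t)}} \in T_a\mathcal{X},
\]
i.e.\ the vector of type $X_\beta$ with $\beta(t) = (1-t)\,df_{\gamma(t)}$. This choice makes $\beta(1)=0$ and $\beta(0)=df_{\gamma(0)}$, so Lemma \ref{Omega_Formula_simpla} gives $\Omega(\widetilde{H}_f, Y) = d(\tilde{s}^*f)(Y)$ for all $Y\in T_a\mathcal{X}$ in one line, and $d\tilde{s}(\widetilde{H}_f) = \pi^\sharp(df)$ is immediate from the definition of $X_\beta$. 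Decomposing $\widetilde{H}_{f|\mathcal{T}} = H_f + V_f$ along $T\mathcal{T}\oplus T\mathcal{F}$ and using $T\mathcal{F}=\ker\Omega$ then gives the Hamiltonian vector field on $\mathcal{T}$ with the correct push-forward. Your indirect approach also works (and you correctly anticipate where the signs enter), but the explicit lift is cleaner and avoids having to argue that the test vectors $X_\beta$ with $\beta(1)=0$ suffice to pin down $d\sigma(X_f)$.
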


\begin{proof}
Since $\mathcal{F}$ is of finite codimension \cite{CrFe1}, there are no issues regarding the meaning of symplectic
forms on our $\mathcal{T}$, and $\Omega|_{\mathcal{T}}$ is clearly symplectic. Actually, the entire (a) is a standard
fact about kernels of closed 2-forms, at least in the finite dimensions; it applies to our situation as well: from the
construction of holonomy by patching together foliation charts, the second part is a local issue: given a product
$B\times \mathcal{T}$ of a ball $B$ in a Banach space and a finite dimensional manifold $\mathcal{T}$ (for us a small
ball in a Euclidean space) and a closed two-form $\Omega$ on $B\times \mathcal{T}$, if
\[ \ker(\Omega_{x, y})= T_{x}B\times\{0_y\} \subset T_xB\times T_{y}\mathcal{T}, \ \ \forall \ (x, y)\in B\times \mathcal{T},\]
then $\Omega_x= \Omega|_{\{x\}\times \mathcal{T}}\in \Omega^{2}(\mathcal{T})$ does not depend on $x\in B$ (since $\Omega$ is closed).

We prove part (b) for $\sigma$, for $\tau$ it follows similarly. Since
\[d\tilde{s}_a:T_a\mathcal{X}=T_a\mathcal{T}\oplus T_a\mathcal{F}\to T_{\gamma(0)}M\]
is surjective and $T_a\mathcal{F}\subset \ker{d\tilde{s}_a}$, it follows that $\tilde{s}_{|\mathcal{T}}$ is a
submersion onto the open $\tilde{s}(\mathcal{T})=U_s$. To show that $\sigma$ is Poisson, we first describe the
Hamiltonian vector field of $\sigma^*(f)$, for $f\in C^{\infty}(U_s)$. Consider the vector field on $\mathcal{X}$:
\[\widetilde{H}_{f,a}:=X_{(1-t)df_{\gamma(t)}}=(\overline{\nabla}_a((1-t)df_{\gamma(t)}),(1-t)\pi^{\sharp}(df_{\gamma(t)}))\in T_a\mathcal{X}.\]
Clearly $d\tilde{s}(\widetilde{H}_{f})=\pi^{\sharp}(df)$, and by Lemma \ref{Omega_Formula_simpla} $\widetilde{H}_{f}$ also satisfies
\[\Omega(\widetilde{H}_{f},Y)_a=\langle df_{\gamma(0)},\overline{Y}(0)\rangle=d(\widetilde{s}^*(f))(Y),\ \  Y\in T_a\mathcal{X}.\]
Thus $\Omega(\widetilde{H}_{f},\cdot)=d(\widetilde{s}^*(f))$. Decomposing
$\widetilde{H}_{f|\mathcal{T}}:=H_f+V_f$, where $H_f$ is tangent to $\mathcal{T}$ and $V_f$ is tangent to
$\mathcal{F}$ and using that $T\mathcal{F}=\ker\Omega$, we obtain
\[\Omega_{|\mathcal{T}}(H_f,\cdot)=d(\sigma^*(f)), \ \ d\sigma(H_f)=\pi^{\sharp}(df).\]
Thus $\sigma$ is Poisson. Part (c) follows from Lemma \ref{LemmaOrto} and Corollary \ref{CoroKerOmega}.
\end{proof}

\begin{remark}\label{remark_symplectic_realizations_is_a_consequence}\rm
In fact, Theorem 0 is a consequence of the results of this section. Namely, if $\mathcal{V}_{\pi}$ is a contravariant
spray with flow $\varphi_t$, then, on an open $\mathcal{U}$ of the zero section, the map
\[\phi_{\mathcal{V}_{\pi}}:\mathcal{U}\rmap \mathcal{X}, \ \ \xi\mapsto a(t):=\varphi_t(\xi)\]
parameterizes a transversal $\mathcal{T}$ to $\mathcal{F}$. Pulling back $\Omega$ by $\phi_{\mathcal{V}_{\pi}}$,
one obtains the formula from Theorem 0 for the symplectic structure $\omega$ on $\mathcal{U}$.
\end{remark}

\clearpage \pagestyle{plain}

\chapter{Reeb stability for symplectic foliations}\label{ChReeb}
\pagestyle{fancy}
\fancyhead[CE]{Chapter \ref{ChReeb}} 
\fancyhead[CO]{Reeb stability for symplectic foliations} 

In this chapter we prove Theorem \ref{Theorem_ONE}, a normal form result for symplectic foliations around finite type
symplectic leaves.

\section{Introduction}

A \textbf{symplectic foliation}\index{symplectic foliation} on a manifold $M$ is a (regular) foliation $\mathcal{F}$,
endowed with a 2-form $\omega$ on $T\mathcal{F}$, such that $\omega$ restricts to a symplectic form on every leaf
$S$ of $\mathcal{F}$
\[\omega_{|S}\in\Omega^2(S).\]
Equivalently, a symplectic foliation $(M,\mathcal{F},\omega)$ is a \textbf{regular}\index{regular Poisson structure}
Poisson structure on $M$, i.e.\ a Poisson structure $\pi$ of constant rank. The symplectic leaves of $\pi$ correspond to
the leaves of
$\mathcal{F}$ endowed with the restriction of $\omega$.\\

In this chapter we prove Theorem \ref{Theorem_ONE}, a normal form result for symplectic foliations, analogous to the
Local Reeb Stability Theorem (see e.g.\ \cite{MM}). A similar result  (Theorem \ref{Theorem_TWO}) for possibly
non-regular Poisson manifolds, and which generalizes Conn's theorem, will be presented in chapter
\ref{ChNormalForms}. Compared to Theorem \ref{Theorem_TWO}, in the regular case, the proof is substantially
easier and also the hypothesis is weaker; in particular, the compactness assumption on the leaf can be replaced with
the requirement that it is a finite type manifold\index{finite type manifold} (i.e.\ a manifold admitting a Morse function with a finite number of critical points).\\

The local model of a symplectic foliation $(M,\mathcal{F},\omega)$ around a leaf $(S,\omega_S)$ is a refinement of the
one from the Local Reeb Stability Theorem: the foliation is ``linearized'' (as in Reeb's theorem) and the 2-form
$\omega$ is made ``affine'', which, loosely speaking, means that one linearizes $\omega-\omega_S$.

Accordingly, the proof has two steps. In the first step, we apply a version of Reeb's theorem for non-compact leaves,
whose proof we include in the appendix (it is a straightforward adaptation of the proof from \cite{MM} of Reeb's
theorem, yet we couldn't find it elsewhere in the literature). In the second step, we take care of the 2-form: first, we
show that its variation can be linearized cohomologically, and then, using a leaf-wise Moser deformation argument, we
bring the leafwise symplectic structures to the normal form.

\section{Local Reeb Stability for non-compact leaves}\label{section_local_reeb_stability_for_non-compact_leaves}
We start by recalling the local model for foliations that appears in the Local Reeb Stability Theorem. We give two
descriptions of it, the first using the linear holonomy and the second in terms of the Bott connection on the normal
bundle to the leaf. For the theory of foliations, including the basic properties of holonomy, we recommend \cite{MM}.

\subsubsection*{The holonomy cover}

Let $(M,\mathcal{F})$ be a foliated manifold and let $S\subset M$ be an embedded leaf. Denote the normal bundle to
the $S$ by
\[\nu_S:=TM_{|S}/TS.\]

The \textbf{holonomy group}\index{holonomy group} at $x\in S$ is the isotropy group at $x$ of the holonomy
groupoid\index{holonomy groupoid} of the foliation (see \cite{MM} and subsection \ref{Examples Lie groupoids}), and
is denoted by:
\[H:=\textrm{Hol}(\mathcal{F})_x.\]
Then $H$ is the quotient of $\pi_1(S,x)$ by the normal subgroup $K$, of classes of paths with trivial
holonomy\index{holonomy}:
\[1\rmap K\rmap \pi_1(S,x)\rmap H\rmap 1.\]
The \textbf{holonomy cover}\index{holonomy cover} of $S$ is the covering space $\widetilde{S}\to S$ corresponding
to the subgroup $K$; equivalently $\widetilde{S}$ is the $s$-fiber over $x$ of $\textrm{Hol}(\mathcal{F})$. The
holonomy cover $\widetilde{S}$ has the structure of a principal $H$-bundle over $S$.

The holonomy group $H$ acts on every transversal to the foliation at $x$ by germs of diffeomorphisms, and
differentiating this action gives a linear representation of $H$ on the normal space at $x$
\[V:=\nu_{S,x}.\]
This is called the \textbf{linear holonomy action}\index{linear holonomy} of $H$. The \textbf{linear holonomy
group}\index{linear holonomy group}, denoted by $H_{\mathrm{lin}}$, is the quotient of $H$ that acts effectively on
$V$, i.e.\ it is defined as
\[H_{\mathrm{lin}}:=H/K_{\mathrm{lin}},\]
where $K_{\mathrm{lin}}:=\ker(H\to \mathrm{Gl}(V))$. The \textbf{linear holonomy cover}\index{linear holonomy
cover} of $S$, denoted by $\widetilde{S}_{\textrm{lin}}\to S$, is the covering space corresponding to
$K_{\mathrm{lin}}$, i.e.\
\[\widetilde{S}_{\textrm{lin}}:=\widetilde{S}/K_{\mathrm{lin}}.\]

The local model\index{local model, foliation} for the foliation around $S$ is the foliated manifold
\[(\widetilde{S}\times_{H}V,\mathcal{F}_{N}),\]
where the leaves of the foliation $\mathcal{F}_N$ are
\[(\widetilde{S}\times Hv)/H,\ \ \textrm{ for } \ v\in V.\]
Clearly, it can be described also using the linear holonomy cover
\[(\widetilde{S}\times_{H}V,\mathcal{F}_{N})\cong (\widetilde{S}_{\mathrm{lin}}\times_{H_{\mathrm{lin}}}V,\mathcal{F}_{N}).\]

\subsubsection*{The Bott connection}\index{Bott connection}

The foliation induces on $\nu_S$ a flat linear connection, called the \textbf{Bott connection}; defined as follows:
\[\nabla_{X}(V):=[\widetilde{X},\widetilde{V}]_{|S} \mod \ TS , \ \ X\in \mathfrak{X}(S), \ V\in \Gamma(\nu_S),\]
where $\widetilde{X}$ and $\widetilde{V}$ are vector fields on some open around $S$, such that $\widetilde{X}$ is
tangent to the foliation and extends $X$, and the restriction of $\widetilde{V}$ to $S$ is a representative of $V$. The
local model for the foliation\index{local model, foliation} around $S$, also called the \textbf{linearization of the
foliation}\index{linearization, foliation} at $S$, is the foliated manifold
\[(\nu_S,\mathcal{F}_{\nabla}),\]
where $\mathcal{F}_{\nabla}$ is the foliation tangent to the horizontal bundle of $\nabla$.

The linear holonomy action on $V$ can be given in terms of the parallel transport induced by $\nabla$, namely
\[[\gamma]\cdot v= T_{\nabla}(\gamma)v, \]
for $v\in V$ and $\gamma$ a closed loop at $x$. This implies that parallel transport also induces an isomorphism
between the two descriptions of the local model:
\begin{equation}\label{transport}
T:(\widetilde{S}_{\textrm{lin}}\times_{H_{\textrm{lin}}}\nu_{S,x},\mathcal{F}_N)\rmap(\nu_S,\mathcal{F}_{\nabla}),\  [\gamma,v]\mapsto T_{\nabla}(\gamma)v.
\end{equation}

\subsubsection*{Local Reeb Stability}

Similar to the discussion in \cite{CrStr} on linearization of proper Lie groupoids around orbits, a local normal form
theorem for foliations might have three different conclusions:\index{Reeb stability}
\begin{enumerate}[1)]
\item
$\mathcal{F}$ is \textbf{linearizable} around $S$, i.e.\ there exist open neighborhoods $U\subset M$ and $O\subset
\widetilde{S}\times_{H}V$ of $S$, and a foliated diffeomorphism, which is the identity on $S$, between
\[(U,\mathcal{F}_{|U})\cong (O,\mathcal{F}_{N|O}),\]
\item $\mathcal{F}$ is \textbf{semi-invariant linearizable} around $S$, i.e.\ there exists an open neighborhood
$U\subset M$ of $S$, and a foliated diffeomorphism, which is the identity on $S$, between
\[(U,\mathcal{F}_{|U})\cong (\widetilde{S}\times_{H}V,\mathcal{F}_{N}),\]
\item $\mathcal{F}$ is \textbf{invariant linearizable} around $S$, i.e.\ there exists a saturated open neighborhood of $S$,
$U\subset M$, and a foliated diffeomorphism, which is the identity on $S$, between
\[(U,\mathcal{F}_{|U})\cong (\widetilde{S}\times_{H}V,\mathcal{F}_{N}).\]
\end{enumerate}
Accordingly, we have three versions of the Local Reeb Stability Theorem:

\begin{theorem}[General Local Reeb Stability]\label{Reeb_Theorem}
Let $(M,\mathcal{F})$ be a foliated manifold and let $S\subset M$ be an embedded leaf with a finite holonomy group.
Then
\begin{enumerate}[1)]
\item The foliation is linearizable around $S$.
\item If $S$ is a finite type manifold, then the foliation is semi-invariant linearizable around $S$.
\item If $S$ is compact, then the foliation is invariant linearizable around $S$.
\end{enumerate}
\end{theorem}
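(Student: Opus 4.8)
The plan is to prove the three statements in increasing order of strength, each one building on the previous. For the \textbf{linearization} statement (1), I would adapt the classical argument for Reeb stability (cf.\ \cite{MM}) in the following way. Fix a point $x\in S$ and a small transversal $T_x$ with the holonomy action of the finite group $H=\mathrm{Hol}(\mathcal F)_x$ by germs of diffeomorphisms fixing $x$. Since $H$ is finite, by averaging one linearizes the $H$-action on a possibly smaller transversal, i.e.\ one finds $H$-equivariant coordinates identifying $(T_x,x)$ with $(V,0)$, $V=\nu_{S,x}$, on which $H$ acts linearly via the linear holonomy action. This already gives a \emph{local} normal form near $x$; the point of (1) is to spread this over all of $S$. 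For this I would cover a neighborhood of $S$ by foliation charts whose plaques are transversal slices, patch the local linearizations using the holonomy cocycle, and observe that the transition maps land in $\mathrm{Gl}(V)$ precisely because on the linear holonomy cover the holonomy is linear; the resulting foliated manifold is, by construction, an open neighborhood $O$ of $S$ inside $\widetilde S_{\mathrm{lin}}\times_{H_{\mathrm{lin}}}V\cong\widetilde S\times_H V$. The compatibility with the zero section $S$ is automatic from the equivariant linearization fixing $x$.

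For statement (2), the extra input is that $S$ is of finite type, i.e.\ it admits a Morse function with finitely many critical points. Here I would follow the strategy used in \cite{MM} for the compact case but localize around the (finitely many) critical points of such a Morse function. The neighborhood $O\subset\widetilde S\times_H V$ produced in (1) is a neighborhood of the zero section whose ``width'' may shrink as one moves along $S$; the goal of (2) is to find a foliated diffeomorphism of $O$ onto a neighborhood of the zero section whose fiber over each point is a \emph{full} linear ball — equivalently, onto all of $\widetilde S\times_H V$ after rescaling the fibers. Using the Morse function one builds, by induction over the handle decomposition it induces, a vector field on $\nu_S$ tangent to the linearized foliation $\mathcal F_\nabla$ whose flow pushes $O$ outward; the finiteness of the number of handles is exactly what guarantees the flow is defined long enough to exhaust each fiber. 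Equivalently, one can argue via a complete vector field scaling the fibers that preserves $\mathcal F_N$ because $\mathcal F_N$ is the linearized (horizontal) foliation, which is invariant under fiberwise dilations, so any proper, nowhere-vanishing rescaling along the base — produced using the Morse data — conjugates $O$ onto the whole associated bundle.

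For statement (3), with $S$ compact, the neighborhood $O$ from (1) already has fibers of uniformly bounded size (by compactness of $S$, after shrinking once), so after a fiberwise rescaling as in (2) one obtains a diffeomorphism onto all of $\widetilde S\times_H V$; the resulting neighborhood $U$ is automatically saturated (invariant), since the image is the entire associated bundle, whose leaves $(\widetilde S\times Hv)/H$ are closed in it. This is the classical statement and the argument is essentially that of \cite{MM}; compactness also removes the need for the Morse-theoretic exhaustion.

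The main obstacle I anticipate is the \emph{globalization in the non-compact finite-type case}, i.e.\ statement (2): unlike the compact case, one cannot simply shrink $S$ to control the fiber size, and one must genuinely use the finite-type hypothesis to build a flow (or a proper rescaling function) on $\nu_S$, tangent to $\mathcal F_\nabla$, that exhausts every fiber while still being defined for all relevant time. Keeping track of the holonomy $H$-action throughout — so that all the constructions descend to the associated bundles $\widetilde S\times_H V$ and the foliated diffeomorphisms remain the identity on $S$ — is the delicate bookkeeping. The linear algebra (equivariant linearization of a finite group action) and the local Reeb argument are standard; the care is entirely in the patching and in the finite-type exhaustion, which is why I would isolate the non-invariant version of Local Reeb Stability for non-compact leaves as the key lemma (as the paper indeed does in Theorem \ref{Reeb_Theorem} and its appendix).
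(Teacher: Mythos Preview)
Your plan for part (1) is essentially the paper's: use Bochner's linearization to make the finite holonomy action linear on a small transversal, then spread this over $S$ via holonomy transport. The paper's execution is more explicit than ``patch using the holonomy cocycle'': it works with chains $\xi=(U_0,\ldots,U_k)$ in a good cover, builds a nested sequence of $H$-invariant opens $O_n\subset V$ on which all holonomies along chains of length $\leq n$ agree, and defines the foliated map $\widetilde{\mathcal H}:\mathcal V\to E$ on $\mathcal V=\bigcup_n \widetilde S_n\times O_{n+c}$ by holonomy transport. Injectivity of the induced map on $\mathcal V/H$ is checked by hand. Your outline is correct in spirit but elides exactly this bookkeeping, which is where all the work is.

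For part (2) your approach \emph{differs} from the paper's and contains a confusion. The paper does not build anything by hand from a Morse function; it invokes Weinstein's theorem (Theorem~5.1 in \cite{Wein3}, quoted as Theorem~\ref{Weinstein}): for a $G$-equivariant submersion $f:X\to Y$ with compact $G$, a fixed point $y$, and finite-type fiber $f^{-1}(y)$, there is a $G$-invariant neighborhood on which $(\rho,f)$ is an equivariant product. This is applied to the $H$-equivariant projection $f:X\to V$, where $X\subset\widetilde S\times V$ is the lift of the open $O$ from part (1); the fiber over $0$ is $\widetilde S$, which is finite type because $S$ is. One then shrinks $f(X')$ to an $H$-invariant ball $W$ and identifies $W\cong V$ equivariantly. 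Your Morse-theoretic sketch is essentially an attempt to reprove Weinstein's theorem inside the argument, which is legitimate but harder than citing it. More seriously, a vector field \emph{tangent to} $\mathcal F_\nabla$ is horizontal, and its flow moves along leaves; it cannot ``exhaust each fiber'' in the vertical $V$-direction. What is actually needed is the equivariant product structure over a neighborhood of $0\in V$, which is precisely what Weinstein's theorem provides; neither a horizontal flow nor a naive fiberwise rescaling by a nonconstant function (which does not preserve $\mathcal F_\nabla$) does this job.

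For part (3), the paper's argument is shorter than yours: it is a direct consequence of (2). Compactness of $S$ forces every leaf of the local model $\widetilde S\times_H V$ to be compact; hence if $U$ is an open on which the foliation is semi-invariant linearizable, each leaf of $\mathcal F$ meeting $U$ has its $U$-component compact and open in the leaf, so equal to the whole leaf. Thus $U$ is already saturated. No separate rescaling step is needed.
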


A manifold is called of \textbf{finite type}\index{finite type manifold}, if it admits a proper Morse function with a finite
number of critical points. Equivalently, a manifold is of finite type if and only if it is diffeomorphic to the interior of a
compact manifold with boundary. This condition appears in the study of linearization of proper Lie groupoids in
\cite{Wein3,CrStr}, and seems to be the most suited for semi-invariant linearizability. For an example of a foliation
which is not semi-invariant linearizable around a non-finite type leaf see Example 5.3 \cite{Wein3}.

We prove the theorem in the appendix of this chapter. For the first part, we adapt the proof from \cite{MM} of Reeb's
theorem. The other two versions follow easily by analyzing the local model. Version 3) is the classical Reeb stability
theorem, and the compactness assumption on the leaf is essential for invariant linearization; a good illustration of this
is the foliation from Example 3.5 \cite{Wein3}.

\begin{remark}\rm
Note that if the foliation is linearizable around $S$, then also its holonomy groupoid\index{holonomy groupoid} is
linearizable around $S$ (in the sense of \cite{Wein3}). This suggests that Theorem \ref{Reeb_Theorem} might be a
consequence of the normal form theorem for proper Lie groupoids around orbits \cite{Wein3,Zung,CrStr}.
Nevertheless, these results do not apply directly, because they are about proper Hausdorff Lie groupoids. The
holonomy groupoid, even though it is always a smooth manifold, it might be non-Hausdorff (see the example in the
appendix of this chapter). Even if a neighborhood of the leaf is integrable by a Hausdorff groupoid, it is not trivial that
finiteness of the holonomy group of the leaf implies properness of the holonomy group (of a possibly smaller open).
\end{remark}

\section{A normal form theorem for symplectic foliations}

In this section we describe the local model for a symplectic foliation around a leaf and we prove Theorem
\ref{Theorem_ONE}, a normal form result for finite type leaves.

\subsection{The local model}\label{subsection_model_sympl_foli}

\subsubsection*{The foliation as a Lie algebroid}

Let $(M,\mathcal{F},\omega)$ be a symplectic foliation. Recall from subsection
\ref{subsection_Lie_algebroid_cohomology}, that the cohomology of the Lie algebroid $T\mathcal{F}$ is computed
using the complex
\[(\Omega^{\bullet}(T\mathcal{F}),d_{\mathcal{F}}),\]
where, in this case, $d_{\mathcal{F}}$ is the leafwise de Rham differential. We denote by
\[H^{\bullet}(\mathcal{F})\]
the corresponding cohomology groups. Since $\omega$ is closed, it defines a class \[[\omega]\in H^{2}(\mathcal{F}).\]

The normal bundle to the foliation
\[\nu:=TM/T\mathcal{F},\]
carries a canonical flat $T\mathcal{F}$-connection
\[\nabla:\Gamma(T\mathcal{F})\times\Gamma(\nu)\rmap \Gamma(\nu), \ \ \nabla_X(\overline{Y}):=\overline{[X,Y]},\]
where, for a vector field $Z$ on $M$, we denote by $\overline{Z}$ its image in $\Gamma(\nu)$. This makes $\nu$ into
a representation of the Lie algebroid $T\mathcal{F}$. For a leaf $S$, the Bott connection\index{Bott connection} on
$\nu_S$ defined in the previous section is just the pullback to the Lie sub-algebroid $TS$ of this representation. The
dual connection defines a representation of $T\mathcal{F}$ on $\nu^*$.

Recall that there is a canonical map between the cohomology groups
\[d_{\nu}:H^{2}(\mathcal{F})\rmap H^2(\mathcal{F},\nu^*),\]
constructed as follows (see e.g.\ \cite{CrFe2}): for $[\theta]\in H^{2}(\mathcal{F})$, consider
$\widetilde{\theta}\in\Omega^2(M)$ an extension of $\theta$, and define $d_{\nu}[\theta]$ to be the class of the
2-form
\[(X,Y)\mapsto d\widetilde{\theta}(X,Y,-)\in \nu^*, \ \ X,Y\in T\mathcal{F}.\]

The image of the class of $[\omega]$ under this map will be used to construct the local model. We fix a representative
$\Omega\in \Omega^2(T\mathcal{F},\nu^*)$,
\[d_{\nu}[\omega]=[\Omega]\in H^2(\mathcal{F},\nu^*).\]

\subsubsection*{The vertical derivative}

Consider $S$ a leaf of $M$. Restricting $\Omega$ to $TS$, we obtain a cocycle
\[\Omega_{|TS}\in \Omega^2(S,\nu^*_S).\]
We regard this as a 2-form, denoted by $\delta_S$, on (the manifold) $\nu_S$, as follows:
\[\delta_S\omega\in \Omega^2(\nu_S), \ \ \ \ \delta_S\omega_v:=p^*\langle \Omega_{|TS}, v\rangle.\]
We call $\delta_S\omega$ the \textbf{vertical derivative}\index{vertical derivative} of $\omega$ at $S$. Observe that
$\delta_S\omega$ vanishes on vertical vectors, and so it is determined by its restriction to the horizontal distribution
$T\mathcal{F}_{\nabla}$, corresponding to the Bott connection. Moreover, the fact that $\Omega_{|TS}$ is closed
with respect to $d_{\nabla}$, is equivalent to the fact that $\delta_S\omega$ is closed along the leaves of
$\mathcal{F}_{\nabla}$. This allows us to regard $\delta_{S}\omega$ as a closed foliated 2-form, i.e.\ as a cocycle in
\[(\Omega^{\bullet}(T\mathcal{F}_{\nabla}), d_{\mathcal{F}}).\]

\subsubsection*{The local model}

The local model\index{local model, symplectic foliation} of Poisson manifolds around symplectic leaves was first
defined by Vorobjev \cite{Vorobjev,Vorobjev2} (see also chapter \ref{ChNormalForms}). Here, we present a simple
description of the local model for regular Poisson manifolds.

The \textbf{first order jet} of $\omega$ at $S$ is defined as the foliated 2-form on $\mathcal{F}_{\nabla}$,
\[j^1_{S}(\omega):=p^*(\omega_S)+\delta_S\omega\in \Omega^2(T\mathcal{F}_{\nabla}).\]
By the above, $j^1_{S}(\omega)$ is closed along the leaves of $\mathcal{F}_{\nabla}$. Clearly, $j^1_{S}(\omega)$
restricts to $\omega_S$ on the leaf $S$ (viewed as the zero section). Therefore, the open $N(\omega)\subset \nu_S$
consisting of points where $j^1_{S}(\omega)$ is nondegenerate contains $S$.

\begin{definition}
The \textbf{local model} of $(M,\mathcal{F},\omega)$ around $S$ is
\[(N(\omega),\mathcal{F}_{\nabla|N(\omega)}, j^1_{S}(\omega)_{|N(\omega)}).\]
The symplectic foliation $(M,\mathcal{F},\omega)$ is called \textbf{linearizable} around $S$, if there exist open
neighborhoods of $S$, $O\subset M$ and $U\subset N(\omega)$, and an isomorphism of symplectic foliations
\[(O,\mathcal{F}_{|O},\omega_{|O})\cong (U,\mathcal{F}_{\nabla|U}, j^1_{S}(\omega)_{|U}),\]
which is the identity on $S$.
\end{definition}

The construction of the local model depends on the choice of the 2-form $\Omega$. Yet, this choice has no essential
influence on the outcome (see also the more general Proposition \ref{proposition_splitting_equivalent}).

\begin{proposition}\label{Proposition_independence_of_extension}
Different choices of $\Omega\in \Omega^2(T\mathcal{F},\nu^*)$ satifying $d_{\nu}[\omega]=[\Omega]$ produce local
models that are isomorphic around $S$ by a diffeomorphism that fixes $S$.
\end{proposition}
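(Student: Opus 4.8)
The plan is to reduce the statement to a leafwise Moser argument on the total space $\nu_S$. Let $\Omega_0$ and $\Omega_1$ be two representatives of $d_\nu[\omega]\in H^2(\mathcal{F},\nu^*)$, with associated local models $j^1_S(\omega)^{(i)}=p^*(\omega_S)+\delta_S^{(i)}\omega$. Since $[\Omega_0]=[\Omega_1]$, there is $\lambda\in\Omega^1(\mathcal{F},\nu^*)$ with $\Omega_1-\Omega_0=d_\nabla\lambda$; restricting to the sub-Lie-algebroid $TS\hookrightarrow T\mathcal{F}_{|S}$ and using that the representation $\nu^*$ restricts to the dual Bott connection, we obtain $\Omega_1{}_{|TS}-\Omega_0{}_{|TS}=d_\nabla(\lambda_{|TS})$ in $\Omega^2(S,\nu_S^*)$.

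First I would translate $\lambda_{|TS}$ into honest geometry on the manifold $\nu_S$: let $\mu\in\Omega^1(\nu_S)$ be the fiberwise-linear $1$-form $\mu_v:=p^*\langle\lambda_{|TS},v\rangle$. By construction $\mu$ vanishes on vertical vectors and along the zero section $S$. The standard identification, for a flat bundle, of fiberwise-linear foliated forms on the horizontal foliation $\mathcal{F}_\nabla$ with $\Omega^\bullet(S,\nu_S^*)$ equipped with $d_\nabla$ shows that, after restricting to $T\mathcal{F}_\nabla$, one has $d_{\mathcal{F}}(\mu_{|T\mathcal{F}_\nabla})=\delta_S^{(1)}\omega-\delta_S^{(0)}\omega$; hence as foliated $2$-forms on $(\nu_S,\mathcal{F}_\nabla)$ we have $j^1_S(\omega)^{(1)}=j^1_S(\omega)^{(0)}+d_{\mathcal{F}}(\mu_{|T\mathcal{F}_\nabla})$, and both restrict to $\omega_S$ on $S$.

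Now set $\omega_t:=(1-t)j^1_S(\omega)^{(0)}+t\,j^1_S(\omega)^{(1)}=j^1_S(\omega)^{(0)}+t\,d_{\mathcal{F}}(\mu_{|T\mathcal{F}_\nabla})$ for $t\in[0,1]$. Each $\omega_t$ is $d_{\mathcal{F}}$-closed and restricts to $\omega_S$ on the zero section, so its nondegeneracy locus along the leaves of $\mathcal{F}_\nabla$ is open and contains $S$; since nondegeneracy is an open condition jointly in $(t,v)$, the Tube Lemma~\ref{TubeLemma} yields a single open $U\subset\nu_S$ containing $S$ on which every $\omega_t$ is leafwise nondegenerate. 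On $U$ there is then a unique time-dependent vector field $X_t$, tangent to $\mathcal{F}_\nabla$, solving $\iota_{X_t}\omega_t=-\mu_{|T\mathcal{F}_\nabla}$; because $\mu$ vanishes along $S$ and $\omega_t$ is leafwise nondegenerate, $X_t$ vanishes along $S$. Applying the Cartan formula in the foliated complex to the foliation-preserving field $X_t$ gives, for its flow $\phi_t$, the identity $\tfrac{d}{dt}(\phi_t^*\omega_t)=\phi_t^*\big(d_{\mathcal{F}}(\iota_{X_t}\omega_t)+d_{\mathcal{F}}(\mu_{|T\mathcal{F}_\nabla})\big)=0$. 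Since $X_t{}_{|S}=0$, the zero section is fixed pointwise, and, as in the proof of Theorem~0, $\phi_t$ is defined for all $t\in[0,1]$ on a possibly smaller open neighbourhood of $S$ (Tube Lemma~\ref{TubeLemma}); then $\phi_1$ is a diffeomorphism fixing $S$, carrying $\mathcal{F}_\nabla$ to itself and satisfying $\phi_1^*(j^1_S(\omega)^{(1)})=j^1_S(\omega)^{(0)}$, which is the desired isomorphism of local models.

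The main obstacle is the bookkeeping in the second paragraph: making precise the identification of the difference $j^1_S(\omega)^{(1)}-j^1_S(\omega)^{(0)}$ with $d_{\mathcal{F}}$ of the explicit foliated $1$-form $\mu$ vanishing along $S$, i.e.\ matching the covariant exterior derivative $d_\nabla$ on $\Omega^\bullet(S,\nu_S^*)$ with the foliated de Rham differential on $(\nu_S,\mathcal{F}_\nabla)$; this is exactly the computation encoded in the more general framework of Proposition~\ref{proposition_splitting_equivalent}, and in principle the proposition could simply be deduced from it. The remainder is a routine leafwise Moser argument, the only non-standard point being the systematic use of the Tube Lemma~\ref{TubeLemma} to control neighbourhoods because $S$ need not be compact.
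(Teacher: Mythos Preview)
Your proof is correct and follows essentially the same approach as the paper: one writes the difference of the two vertical derivatives as $d_{\mathcal{F}_\nabla}$ of a fiberwise-linear foliated $1$-form vanishing along $S$ (your $\mu$, the paper's $\widetilde{\lambda}$), using that the ``tilde'' construction intertwines $d_\nabla$ on $\Omega^\bullet(S,\nu_S^*)$ with $d_{\mathcal{F}_\nabla}$, and then concludes by a leafwise Moser argument. The paper packages this last step as a separate lemma (Lemma~\ref{Moser_lemma_for_symplectic foliations}), which you have inlined, and it works directly with cohomologousness of $\Omega_{0|TS}$ and $\Omega_{1|TS}$ in $H^2(S,\nu_S^*)$ rather than first lifting to $H^2(\mathcal{F},\nu^*)$ and restricting, but these are cosmetic differences.
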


We first need a version of the Moser Lemma for symplectic foliations.
\begin{lemma}\label{Moser_lemma_for_symplectic foliations}
Let $(M,\mathcal{F},\omega)$ be a symplectic foliation, and $S\subset M$ an embedded leaf. Let
\[\alpha\in \Omega^1(T\mathcal{F})\]
be a 1-form which vanishes on $S$. Then, $\omega+d_{\mathcal{F}}\alpha$ is nondegenerate in a neighborhood $U$
of $S$, and the resulting symplectic foliation \[(U,\mathcal{F}_{|U},\omega+d_{\mathcal{F}}\alpha)\] is isomorphic
around $S$ to $(M,\mathcal{F},\omega)$ by a diffeomorphism that fixes $S$.
\end{lemma}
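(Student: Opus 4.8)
The statement is the leafwise analogue of the classical Moser argument, so I would follow the Moser path method adapted to the Lie algebroid $T\mathcal{F}$. Consider the family of leafwise $2$-forms
\[
\omega_t := \omega + t\, d_{\mathcal{F}}\alpha, \qquad t \in [0,1].
\]
Since $\alpha$ vanishes on $S$, so does $d_{\mathcal{F}}\alpha$ along $TS$ (the leafwise differential of a form vanishing on a submanifold, evaluated on vectors tangent to that submanifold, still vanishes there), hence $\omega_t$ restricts to $\omega_S$ on the leaf $S$ for every $t$. Nondegeneracy is an open condition, so there is an open neighborhood $U$ of $S$ on which $\omega_t$ is nondegenerate for all $t \in [0,1]$ simultaneously — here one invokes the Tube Lemma \ref{TubeLemma} with the compact parameter interval $[0,1]$, exactly as is done elsewhere in the text. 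This gives a smooth path of symplectic foliations $(U,\mathcal{F}_{|U},\omega_t)$ interpolating between $\omega_0 = \omega$ and $\omega_1 = \omega + d_{\mathcal{F}}\alpha$.

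Next I would define the time-dependent leafwise vector field $X_t \in \Gamma(T\mathcal{F}_{|U})$ by the Moser equation
\[
\iota_{X_t}\omega_t = -\alpha,
\]
which has a unique solution because $\omega_t$ is leafwise nondegenerate; since $\alpha$ vanishes on $S$, so does $X_t$, hence $S$ is pointwise fixed by the flow. Because $X_t$ is tangent to $\mathcal{F}$, its flow $\Phi_t$ preserves each leaf, and on leaves we may use the ordinary Cartan formula for the leafwise de Rham differential $d_{\mathcal{F}}$: $\frac{d}{dt}\Phi_t^*\omega_t = \Phi_t^*\big(\mathcal{L}^{\mathcal{F}}_{X_t}\omega_t + \frac{d}{dt}\omega_t\big) = \Phi_t^*\big(d_{\mathcal{F}}\iota_{X_t}\omega_t + d_{\mathcal{F}}\alpha\big) = \Phi_t^*\big(-d_{\mathcal{F}}\alpha + d_{\mathcal{F}}\alpha\big) = 0$, using $d_{\mathcal{F}}\omega_t = 0$. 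Thus $\Phi_1^*\omega_1 = \omega_0 = \omega$, so $\Phi_1$ is the desired diffeomorphism intertwining the two symplectic foliation structures, and it fixes $S$ since $X_t$ vanishes there.

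**Main obstacle.** The one genuinely delicate point is the domain of definition of the flow $\Phi_t$: a priori the integral curves of $X_t$ need not remain in $U$ for all $t \in [0,1]$. Since $X_t$ vanishes on $S$, the flow is defined up to time $1$ on a (possibly smaller) neighborhood of $S$ — again this is a Tube Lemma argument, shrinking $U$ if necessary: the set of points whose forward orbit under $\{X_t\}$ stays in $U$ up to time $1$ is open and contains $S$. One then replaces $U$ by this smaller neighborhood, which is legitimate because the statement is purely local around $S$. Everything else is a routine transcription of the symplectic Moser trick into the leafwise setting, using only that $d_{\mathcal{F}}$ satisfies the Cartan calculus along leaves and that $\mathcal{F}$-tangent vector fields have leaf-preserving flows.
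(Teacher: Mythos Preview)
Your proposal is correct and follows essentially the same approach as the paper: the same path $\omega_t = \omega + t\,d_{\mathcal{F}}\alpha$, the same Moser vector field $\iota_{X_t}\omega_t = -\alpha$, and the same two applications of the Tube Lemma (first for simultaneous nondegeneracy, then for the domain of the flow). The paper's argument is organized leaf-by-leaf rather than via a global leafwise Cartan formula, but this is only a cosmetic difference.
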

\begin{proof}
Since $\alpha$ vanishes on $S$, it follows that $(\omega+d_{\mathcal{F}}\alpha)_{|S}=\omega_S$, thus it is
nondegenerate on $S$. Therefore, on some open $U$ around $S$, we have that $\omega+d_{\mathcal{F}}\alpha$ is
nondegenerate along the leaves of $\mathcal{F}$. Moreover, by the Tube Lemma \ref{TubeLemma}, we can choose
$U$ such that
\[\omega_t:= \omega+td_{\mathcal{F}}\alpha\in \Omega^2(T\mathcal{F})\]
is nondegenerate along the leaves of $\mathcal{F}_{|U}$, for all $t\in [0,1]$. Consider the time dependent vector field
$X_t$ on $U$, tangent to $\mathcal{F}$, determined by
\[\iota_{X_t}\omega_t=-\alpha, \ \ X_t\in \Gamma(T\mathcal{F}_{|U}).\]
Since $X_t$ vanishes along $S$, again by the Tube Lemma \ref{TubeLemma}, there is an open neighborhood of $S$,
$V\subset U$, such that $\Phi^t_{X}$, the flow of $X_t$, is defined up to time 1 on $V$. We claim that $\Phi^1_X$
gives the desired isomorphism. Clearly $\Phi^1_X$ preserves the foliation and is the identity on $S$. On each leaf
$\widetilde{S}$ of the foliation, we have that
\begin{align*}
\frac{d}{dt}\Phi_X^{t*}(\omega_{t|\widetilde{S}})=\Phi_X^{t*}(L_{X_t}\omega_{t|\widetilde{S}}+d_{\mathcal{F}}\alpha_{|\widetilde{S}})=\Phi_X^{t*}(d\iota_{X_t}\omega_{t|\widetilde{S}}+d\alpha_{|\widetilde{S}})=0.
\end{align*}
Thus $\Phi_X^{t*}(\omega_{t|\widetilde{S}})$ is constant, and since $\Phi_X^0=\textrm{Id}$, we have that
\[\Phi_X^{1*}((\omega+d_{\mathcal{F}}\alpha)_{|\widetilde{S}})=\omega_{|\widetilde{S}}.\]
So, $\Phi^1_X$ is an isomorphism onto its image between the symplectic foliations
\[\Phi_X^1:(V,\mathcal{F}_{|V},\omega_{|V})\rmap (U,\mathcal{F}_{|U},\omega_{|U}+d_{\mathcal{F}}\alpha_{|U}).\]
\end{proof}

\begin{proof}[Proof of Proposition \ref{Proposition_independence_of_extension}]
Let $\Omega_1$ and $\Omega_2$ be such 2-forms. Since
\[[\Omega_{1|S}]=[\Omega_{2|S}]=d_{\nu}[\omega]_{|S}\in H^2(S,\nu^*_S),\]
it follows that $\Omega_{2|S}-\Omega_{1|S}$ is exact, i.e.\ there is a 1-form
\[\lambda\in \Omega^1(S,\nu^*_S),\ \textrm{ such that }\ d_{\nabla}\lambda=\Omega_{2|S}-\Omega_{1|S}.\]
Let $\widetilde{\lambda}$ be the foliated 1-form on $\nu_S$ induced by $\lambda$, i.e.\
\[\widetilde{\lambda}\in\Omega^1(T\mathcal{F}_{\nabla}), \ \ \widetilde{\lambda}_v:=p^*(\langle\lambda,v\rangle)_{|T\mathcal{F}_{\nabla}}.\]
Now if $j^1_S(\omega)$ denotes the first jet constructed using $\Omega_1$, then the one corresponding to
$\Omega_2$ is
\[j^1_S(\omega)+\widetilde{d_{\nabla}\lambda}=j^1_S(\omega)+d_{\mathcal{F}_{\nabla}}\widetilde{\lambda}.\]
Since $\widetilde{\lambda}$ vanishes on $S$, the result follows from the previous lemma.
\end{proof}

\subsubsection*{Cohomological variation}

The vertical derivative of $\omega$ encodes the variation at $S$ of the symplectic forms on the leaves. To define the
cohomological variation, note first that the leaf $\widetilde{S}_{\mathrm{lin}}$ covers all leaves of the local model,
via the maps
\[p_v:\widetilde{S}_{\mathrm{lin}}\rmap S_v, \ \ p_v([\gamma])=T([\gamma,v]),\ \ v\in \nu_{S,x},\]
where $S_v$ is the leaf through $v$ of the local model, and $T$ is the map defined in (\ref{transport}) using parallel
transport. This defines a linear map that associates to $v\in \nu_{S,x}$ the closed 2-form
\begin{equation}\label{EQ_variation}
p_v^*(\delta_S\omega)\in \Omega^2(\widetilde{S}_{\textrm{lin}}).
\end{equation}
By the proof of Proposition \ref{Proposition_independence_of_extension}, we see that the cohomology class of
$p^*_v(\delta_S\omega)$ does not depend on the 2-form $\Omega$ used to construct $\delta_{S}\omega$. The induced
linear map to the cohomology of $\widetilde{S}_{\textrm{lin}}$, will be called the \textbf{cohomological
variation}\index{cohomological variation} of $\omega$ at $S$
\[[\delta_S\omega]:\nu_{S,x}\rmap H^2(\widetilde{S}_{\textrm{lin}}), \ [\delta_S\omega](v):=[p^*_v(\delta_S\omega)].\]

\subsection{Theorem \ref{Theorem_ONE}, statement and proof}

We state now the main result of this chapter:

\begin{mtheorem}\label{Theorem_ONE}
Let $(M,\mathcal{F}, \omega)$ be a symplectic foliation, $S\subset M$ an embedded symplectic leaf and consider
$x\in S$. If $S$ is a finite type manifold, the holonomy group at $x$ is finite, and the cohomological variation is
surjective
\[[\delta_S\omega]:\nu_{S,x}\rmap H^2(\widetilde{S}_{\textrm{lin}}),\]
then, the symplectic foliation is linearizable around $S$.
\end{mtheorem}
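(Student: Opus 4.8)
The plan is to follow the two-step strategy announced in the introduction: first linearize the underlying foliation using Reeb stability, then normalize the leafwise symplectic form by a Moser-type argument, with the surjectivity hypothesis entering precisely to match the cohomology classes of the symplectic forms carried by the nearby leaves. Since $S$ is a finite type manifold and the holonomy group at $x$ is finite, part 2) of Theorem~\ref{Reeb_Theorem} applies: after shrinking to a neighborhood of $S$ and composing with a foliated diffeomorphism that is the identity on $S$, we may assume $(M,\mathcal{F})=(\nu_S,\mathcal{F}_{\nabla})$ near the zero section, with $\omega$ a leafwise symplectic form satisfying $\omega_{|S}=\omega_S$. This reduction is harmless: the Reeb diffeomorphism induces along $S$ an isomorphism of $\nu_S$ intertwining the Bott connections and carrying the vertical derivative of $\omega$ to that of the transported form up to a $d_{\mathcal{F}_{\nabla}}$-exact term vanishing on $S$, so by Proposition~\ref{Proposition_independence_of_extension} the local model of the original symplectic foliation is isomorphic, by a diffeomorphism fixing $S$, to the local model of the transported one. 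It therefore suffices to linearize in this normalized situation.

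Next I form the affine model $j^1_S(\omega)=p^*\omega_S+\delta_S\omega$, which is leafwise closed, equals $\omega_S$ on $S$, hence is nondegenerate on a neighborhood of $S$. Each leaf $S_v$ of $\mathcal{F}_{\nabla}$ is covered by $\widetilde{S}_{\textrm{lin}}$ via the map $p_v$, and I consider the smooth $H$-equivariant maps $g(v):=[p_v^*\omega]$ and $g_{\textrm{aff}}(v):=[p_v^*j^1_S(\omega)]=[\pi^*\omega_S]+[\delta_S\omega](v)$ with values in $H^2(\widetilde{S}_{\textrm{lin}})$; they satisfy $g(0)=g_{\textrm{aff}}(0)$ and, by the very definition of the cohomological variation, $Dg(0)=Dg_{\textrm{aff}}(0)=[\delta_S\omega]$. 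The hypothesis that $[\delta_S\omega]$ is surjective makes both maps submersions at $0$, so a relative submersion theorem yields a germ of diffeomorphism $\psi$ of $(\nu_{S,x},0)$ with $D\psi(0)=\mathrm{id}$ and $g=g_{\textrm{aff}}\circ\psi$; averaging over the finite group $H$ (which preserves $D\psi(0)=\mathrm{id}$, so the average is still a local diffeomorphism and still solves the same equation) makes $\psi$ $H$-equivariant. Then $\psi$ induces a foliation-preserving diffeomorphism $\Psi$ of a neighborhood of $S$, equal to the identity on $S$, and after replacing $\omega$ by $\Psi^*\omega$ we may assume in addition that $[p_v^*\omega]=[p_v^*j^1_S(\omega)]$ for all nearby $v$.

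Now $\beta:=\omega-j^1_S(\omega)\in\Omega^2(T\mathcal{F}_{\nabla})$ is leafwise closed, vanishes on $S$, and is leafwise cohomologically trivial; since the leaves are of finite type their de Rham cohomology is finite-dimensional and one can choose a leafwise primitive depending smoothly on the leaf and vanishing on $S$, so $\beta=d_{\mathcal{F}_{\nabla}}\alpha$ with $\alpha$ vanishing on $S$. Interpolating by $\omega_t:=j^1_S(\omega)+t\beta$ and invoking the Moser Lemma for symplectic foliations (Lemma~\ref{Moser_lemma_for_symplectic foliations}), whose proof needs only that $\alpha$ vanish on $S$ and whose flow is defined near $S$ by the Tube Lemma~\ref{TubeLemma}, produces a diffeomorphism near $S$, fixing $S$, carrying $\omega$ to $j^1_S(\omega)$. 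Composing this diffeomorphism with $\Psi$ and with the Reeb diffeomorphism gives the isomorphism of symplectic foliations required for linearizability.

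The main obstacle is the middle step: converting the surjectivity of the cohomological variation into a genuine foliation-preserving, $H$-equivariant change of coordinates that matches the leafwise cohomology classes, which are a priori controlled only to first order at $S$. The delicate points there are the relative submersion theorem in the presence of the finite holonomy action and the verification that the resulting $\psi$ is compatible with the flat-bundle structure of $\mathcal{F}_{\nabla}$; the smooth dependence on the leaf of the primitives used in the final Moser step is a further technical ingredient, but of a more routine nature once the leaves are known to be of finite type.
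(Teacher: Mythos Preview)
Your approach is essentially the paper's: apply Reeb stability (Theorem~\ref{Reeb_Theorem}, part 2) to linearize the foliation, use surjectivity of $[\delta_S\omega]$ to equivariantly linearize the cohomology-class map, and conclude with the foliated Moser lemma. The paper carries this out in the model $\widetilde{S}\times_H V$ (where, since $H$ is finite, $\widetilde{S}=\widetilde{S}_{\textrm{lin}}$), but your Bott-connection model is equivalent via the transport map~(\ref{transport}).

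Two points deserve care. First, there is a direction slip: with $g=g_{\textrm{aff}}\circ\psi$ and $\Psi$ induced by $\psi$ in the natural way, one computes $[p_v^*(\Psi^*\omega)]=g(\psi(v))=g_{\textrm{aff}}(\psi^2(v))$, not $g_{\textrm{aff}}(v)$. You need instead $g\circ\psi=g_{\textrm{aff}}$ (equivalently $f\circ\psi=df_0$, which is exactly the paper's equation for its map $\chi$), or you should induce $\Psi$ from $\psi^{-1}$. The paper constructs $\chi$ directly $H$-equivariantly by choosing an equivariant complement to $\ker df_0$; your averaging argument also works, since $g$ and $g_{\textrm{aff}}$ are both $H$-equivariant and $g_{\textrm{aff}}$ is affine.

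Second, the assertion that a smooth family of exact forms admits a smooth family of primitives vanishing at the basepoint is not automatic and is precisely the content of Lemma~\ref{Appendix_Lemma} (applied on $\widetilde{S}_{\textrm{lin}}$, which is finite type as a finite cover of $S$). One then averages the primitives over $H$ to obtain an equivariant family, which descends to the required $\alpha\in\Omega^1(T\mathcal{F}_{\nabla})$ vanishing on $S$. Working directly on the varying leaves $S_v$ is awkward because their topology jumps; the clean route is to pass to the trivial foliation on $\widetilde{S}_{\textrm{lin}}\times V$, as the paper does.
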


\begin{proof}
Denote as before $V:=\nu_{S,x}$ and by $H$ the holonomy group at $x$. Since $H$ is finite and $S$ is a finite type
leaf, the second version of Reeb stability applies. So, an open around $S$ in $M$ is diffeomorphic to the foliated
manifold
\[(\widetilde{S}\times_H V,\mathcal{F}_N),\]
where $\widetilde{S}$ is the holonomy cover of $S$, which, in this case, coincides with
$\widetilde{S}_{\textrm{lin}}$. The symplectic leaves are
\[\{(S_v,\omega_v)\}_{v\in V}, \ \ S_v=(\widetilde{S}\times Hv)/H\]
Denote the pullback of $\omega_v$ to $\widetilde{S}$ via the map $y\mapsto [y,v]$ by $\widetilde{\omega}_v$. Then
$\{\widetilde{\omega}_v\}_{v\in V}$ is a smooth family of symplectic structures on $\widetilde{S}$, which is
$H$-equivariant:
\[\widetilde{\omega}_{gv}=g^*(\widetilde{\omega}_v), \ \textrm{for all }g\in H, \ v\in V.\]
Note that a smooth family of forms $\{\eta_{v}\in \Omega^{\bullet}(\widetilde{S})\}_{v\in V}$ descends to a foliated
form on $(\widetilde{S}\times_HV,\mathcal{F}_N)$ if and only if it is $H$-equivariant.

Consider the extension $\Omega\in \Omega^2(\widetilde{S}\times_H V)$ of $\omega$ that vanishes on vectors tangent
to the fibers $\{y\}\times V\hookrightarrow\widetilde{S}\times_H V$ of the projection to $S$. The variation
$\delta_S\omega$ corresponding to $\Omega$ is given by the $H$-equivariant family:
\[\delta_{S}\omega_v:=\frac{d}{d\epsilon}(\widetilde{\omega}_{\epsilon v})_{|\epsilon=0}\in \Omega^2(\widetilde{S}),\]
thus, the local model is represented by the $H$-equivariant family of 2-forms:
\[j^1_{S}(\omega)_v=p^*(\omega_S)+\delta_S\omega_v\in \Omega^2(\widetilde{S}).\]

Consider the $H$-equivariant map
\[f:V\rmap H^2(\widetilde{S}),\ \ f(v)=[\widetilde{\omega}_v]-[\widetilde{\omega}_0].\]
Clearly, $f(0)=0$ and its differential at $0$ is the cohomological variation
\[df_{0}(v)=[\delta_{S}\omega]v, \ \ \forall \ v\in V.\]
So, by our hypothesis, $f$ is a submersion around $0$. We show that $f$ can be equivariantly linearized. For this we
construct an $H$-equivariant embedding
\[\chi:U\hookrightarrow V,\]
where $U$ is an $H$-invariant open neighborhoods of $0$ in $V$, such that
\[\chi(0)=0 \ \textrm{ and }\ f(\chi(v))=df_0(v).\]
Since $H$ is finite, we can find an $H$-equivariant projection $p_K:V\to K$, where $K:=\ker(df_0)$. The differential at
$0$ of the $H$-equivariant map
\[(f,p_K):V\rmap H^2(\widetilde{S})\times K, \ v\mapsto(f(v),p_K(v))\]
is $(df_{0},p_K)$. So $(f,p_K)$ is a diffeomorphism when restricted to a small open $U_0$ in $V$ around $0$ and,
since $0$ is a fixed point for the action of $H$, we may assume $U_0$ to be $H$-invariant. Define the embedding as
follows
\[\chi:U\hookrightarrow V, \ \ \chi=(f,p_K)^{-1}\circ(df_{0},p_K),\]
where $U:=(df_{0},p_K)^{-1}(U_0)$. Clearly $U$ is $H$-invariant, $\chi(0)=0$, $\chi$ is $H$-equivariant, and since
\[(f(\chi(v)),p_K(\chi(v)))=(df_{0}(v),p_K(v)),\]
we also obtain that $f(\chi(v))=df_{0}(v)$.

By $H$-equivariance, $\chi$ induces a foliation preserving embedding
\[\widetilde{\chi}: (\widetilde{S}\times_H U,\mathcal{F}_N) \hookrightarrow (\widetilde{S}\times_H V,\mathcal{F}_N), \ \ \widetilde{\chi}([y,v])=[y,\chi(v)]\]
that restricts to a diffeomorphism between the leaf $S_v$ and the leaf $S_{\chi(v)}$. The pullback of $\omega$ under
$\widetilde{\chi}$ is the $H$-equivariant family
\[ \overline{\omega}:=\{\overline{\omega}_v:=\widetilde{\omega}_{\chi(v)}\}_{v\in U},\]
Since $d_0\chi=\textrm{Id}_V$, it follows that $\overline{\omega}$ has the same variation as $\omega$ at $S$
\begin{align*}
\delta_S\overline{\omega}_v=\frac{d}{d\epsilon}_{|\epsilon=0}\overline{\omega}_{\epsilon v}=\frac{d}{d\epsilon}_{|\epsilon=0}\widetilde{\omega}_{\chi(\epsilon v)}=\delta_S\omega_v.
\end{align*}
On the other hand, we obtain that
\begin{align*}
[\overline{\omega}_v]-[\overline{\omega}_0]&=[\widetilde{\omega}_{\chi(v)}]-[\widetilde{\omega}_0]=f(\chi(v))=df(0)v=[\delta_S\omega]v.
\end{align*}
Equivalently, this relation can be rewritten as
\[\overline{\omega}_v=j^1_{S}(\omega)_v+\eta_v, \forall v\in U,\]
where $\{\eta_v\}_{v\in U}$ is an $H$-equivariant family of exact 2-forms that vanishes for $v=0$. For such a family,
one can choose a smooth family of primitives, $\alpha_v\in \Omega^1(\widetilde{S})$ for $v\in U$, such that
$\alpha_0=0$ (we prove this in the appendix Lemma \ref{Appendix_Lemma}). Also, we may assume that $\alpha_v$
$H$-equivariant, by averaging:
\[\frac{1}{|H|}\sum_{g\in H}(g^{-1})^*(\alpha_{gv}).\]
So, we obtain that
\begin{equation}\label{EQ_cutarica}
\overline{\omega}_v=j^1_{S}(\omega)_v+d\alpha_v,
\end{equation}
with $\alpha_0=0$ and $\alpha$ is $H$-equivariant. Regarding $\alpha$ is a foliated 1-form,
\[\alpha\in\Omega^1(T\mathcal{F}_N),\]
we can rewrite (\ref{EQ_cutarica}) as
\[\overline{\omega}=j^1_S\omega+d_{\mathcal{F}_N}\alpha,\]
therefore Lemma \ref{Moser_lemma_for_symplectic foliations} concludes the proof.
\end{proof}

\section{Appendix}\label{section_Appendix_Reeb}

\subsection{Proof of Theorem \ref{Reeb_Theorem}, part 1)}\label{subsection_Proof_reeb}

Since $S$ is embedded, by restricting to a tubular neighborhood, we may assume that the foliation is on a vector
bundle $p:E\to S$, for which $S$, identified with the zero section, is a leaf. Then the holonomy of paths in $S$ is
represented by germs of a diffeomorphism between the fibers of $E$.

Each point in $S$ has an open neighborhood $U\subset E$ such that
\begin{itemize}
\item $S\cap U$ is 1-connected,
\item for $x\in S\cap U$, $E_x\cap U$ is a connected neighborhood of $x$,
\item for every $x,y\in S\cap U$, the holonomy along any path in $S\cap U$ connecting them is defined as a diffeomorphism between
\[hol_{x}^y:E_x\cap U\diffto E_y\cap U.\]
\end{itemize}

Let $\mathcal{U}$ be locally finite cover of $S$ by opens $U\subset E$ of the type just described, such that for all
$U,U'\in\mathcal{U}$, $U\cap U'\cap S$ is connected (or empty), and such that each $U\in \mathcal{U}$ is relatively
compact.

We fix $x_0\in S$, $U_0\in\mathcal{U}$ an open containing $x_0$, and denote by
\[V:=E_{x_0}.\]
Consider a path $\gamma$ in $S$ starting at $x_0$ and with endpoint $x$. Cover the path by a chain of opens in
$\mathcal{U}$
\[\xi=(U_{0},\ldots, U_{k(\xi)}),\]
such that there is a partition
\[0=t_0<t_1<\ldots t_{k-1}<t_k=1,\]
with $\gamma([t_{j-1},t_{j}])\subset U_{j}$. Since the holonomy transformations inside $U_{j}$ are all trivial, and all
the intersections $U_i\cap U_j\cap S$ are connected, it follows that the holonomy of $\gamma$ only depends on the
chain $\xi$ and is defined as an embedding
\[hol(\gamma)=hol_{x_0}^x(\xi):O(\xi)\hookrightarrow E_x,\]
where $O(\xi)\subset V$ is an open neighborhood of $x_0$, which is independent of $x\in U_{k(\xi)}$. Denote by
$\mathcal{Z}$ the space of all chains in $\mathcal{U}$
\[ \xi=(U_{0}, \ldots, U_{k(\xi)}),\ \  \textrm{with} \ U_{l}\cap U_{l+1}\neq \emptyset.\]

Denote by $H:= \textrm{Hol}(S,x_0)$, the holonomy group at $x_0$, and let
\[K:=\ker(\pi_1(S,x_0)\to H).\]
The holonomy cover $\widetilde{S}\to S$ can be described as the space of all paths $\gamma$ in $S$ starting at $x_0$,
and two such paths $\gamma_1$ and $\gamma_2$ are equivalent if they have the same endpoint, and the homotopy
class of $\gamma_2^{-1}\circ \gamma_1$ lies in $K$. The projection is then given by $[\gamma]\mapsto \gamma(1)$.
Denote by $\widetilde{x}_0$ the point in $\widetilde{S}$ corresponding to the constant path at $x_0$. So, we can
represent each point in $\widetilde{S}$ (not uniquely!) by a pair $(\xi,x)$ with $\xi \in \mathcal{Z}$ and endpoint $x\in
U_{k(\xi)}\cap S$.

The group $H$ acts freely on $\widetilde{S}$ by pre-composing paths. For every $g\in H$ fix a chain
$\xi_g\in\mathcal{Z}$, such that $(\xi_g,x_0)$ represents $\widetilde{x}_0g$. Consider the open
\[\widetilde{O}_0:=\bigcap_{g\in H}O(\xi_{g})\subset V,\]
on which all holonomies $hol_{x_0}^{x_0}(\xi_{g})$ are defined, and a smaller open $\widetilde{O}_1\subset
\widetilde{O}_0$ around $x_0$, such that $hol_{x_0}^{x_0}(\xi_{g})$ maps $\widetilde{O}_1$ into
$\widetilde{O}_0$. Hence the composition
\begin{equation*}
hol_{x_0}^{x_0}(\xi_g)\circ hol_{x_0}^{x_0}(\xi_h): \widetilde{O}_{1}\hookrightarrow V,
\end{equation*}
is well defined. Since the germs of $hol_{x_0}^{x_0}(\xi_g)\circ hol_{x_0}^{x_0}(\xi_h)$ and
$hol_{x_0}^{x_0}(\xi_{gh})$ are the same, by shrinking $\widetilde{O}_1$ if necessary, we may assume that
\begin{equation}\label{composition}
hol_{x_0}^{x_0}(\xi_g)\circ hol_{x_0}^{x_0} (\xi_h)=hol_{x_0}^{x_0}(\xi_{gh}): \widetilde{O}_{1}\hookrightarrow V.
\end{equation}
Consider the following open
\[O:=\bigcap_{g\in H}hol_{x_0}^{x_0}(\xi_g)(\widetilde{O}_{1}).\]
Then $O\subset \widetilde{O}_{1}$, and for $h\in H$, we have that
\begin{align*}
hol_{x_0}^{x_0}(\xi_{h})(O)&\subseteq  \bigcap_{g\in H}hol_{x_0}^{x_0}(\xi_{h})\circ hol_{x_0}^{x_0}(\xi_{g})(\widetilde{O}_{1})
=\\
&=\bigcap_{g\in H}hol_{x_0}^{x_0}(\xi_{hg})(\widetilde{O}_{1})=\bigcap_{g\in H}hol_{x_0}^{x_0}(\xi_{g})(\widetilde{O}_{1})= O.
\end{align*}
So $hol_{x_0}^{x_0}(\xi_h)$ maps $O$ to $O$, and by (\ref{composition}) it follows that the holonomy transport along
$\xi_g$ defines an action of $H$ on $O$, which we denote by
\[hol(g):=hol_{x_0}^{x_0}(\xi_g):O\rmap O.\]
Since $H$ is a finite group acting on $O$ with a fixed point $x_0$, by Bochner's Linearization Theorem (see e.g.\
\cite{DK}), we can linearize the action around $x_0$. So, by shrinking $O$ if necessary, the action is isomorphic to the
linear holonomy action of $H$ on $V$. In particular, this implies that $O$ contains arbitrarily small $H$-invariant open
neighborhoods of $x_0$.

Since $\mathcal{U}$ is a locally finite cover by relatively compact opens, there are only finitely many chains in
$\mathcal{Z}$ of a certain length. Denote by $\mathcal{Z}_n$ the set of chains of length at most $n$. Let $c$ be such
that $\xi_g\in \mathcal{Z}_c$ for all $g\in H$.

By the above and by the basic properties of holonomy, there exist open neighborhoods $\{O_n\}_{n\geq 1}$ of $x_0$ in
$O$:
\[\ldots \subset O_{n+1}\subset O_n\subset O_{n-1}\subset \ldots \subset O_1\subset O\subset V,\]
satisfying the following:
\begin{enumerate}[1)]
\item for every chain $\xi\in \mathcal{Z}_n$, $O_n\subset O({\xi})$,
\item for every two chains $\xi,\xi'\in \mathcal{Z}_n$ and $x\in U_{k(\xi)}\cap U_{k(\xi')}\cap S$, such that the pairs $(\xi,x)$ and
$(\xi',x)$ represent the same element in $\widetilde{S}$, we have that
\[hol_{x_0}^x(\xi)=hol_{x_0}^x(\xi'):O_n\hookrightarrow E_x,\]
\item $O_n$ is $H$-invariant,
\item for every $g\in H$, $\xi\in\mathcal{Z}_{n}$ and $x\in U_{k(\xi)}\cap S$, we have that
\[hol_{x_0}^x(\xi_g\cup \xi)=hol_{x_0}^x(\xi)\circ hol(g): O_{n+c}\hookrightarrow E_{x}.\]
\end{enumerate}

Denote by $\widetilde{S}_n$ the set of points in $\widetilde{x}\in \widetilde{S}$ for which every element in the orbit
$\widetilde{x}H$ can be represented by a pair $(\xi,x)$ with $\xi\in\mathcal{Z}_n$. Note that for $n\geq c$,
$\widetilde{S}_n$ is nonempty, $H$-invariant, open and connected. Consider the following $H$-invariant open
neighborhood of $\widetilde{S}\times\{x_0\}$:
\[\mathcal{V}:=\bigcup_{n\geq c}\widetilde{S}_n\times O_{n+c}\subset \widetilde{S}\times V.\]
On $\mathcal{V}$ we define the map
\[\widetilde{\mathcal{H}}:\mathcal{V}\rmap E,  \ \ \ \widetilde{\mathcal{H}}(\widetilde{x},v):=hol_{x_0}^x(\xi)(v),\]
for $(\widetilde{x},v)\in\widetilde{S}_n\times O_{n+c}$ and $(\xi,x)$ is pair representing $\widetilde{x}$ with
$\xi\in\mathcal{Z}_{n}$ and $x\in U_{k(\xi)}$. By the properties of the opens $O_n$, $\widetilde{\mathcal{H}}$ is
well defined. Since the holonomy transport is by germs of diffeomorphisms and preserves the foliation, it follows that
$\widetilde{\mathcal{H}}$ is a foliated local diffeomorphism, which sends the trivial foliation on $\mathcal{V}$ with
leaves $\mathcal{V}\cap \widetilde{S}\times\{v\}$ to $\mathcal{F}_{|E}$.

We  prove now that $\widetilde{\mathcal{H}}$ is $H$-invariant. Let $(\widetilde{x},v)\in\widetilde{S}_n\times
O_{n+c}$ and $g\in H$. Consider chains $\xi$ and $\xi'$ in $\mathcal{Z}_{n}$ representing $\widetilde{x}$ and
$\widetilde{x}g$ respectively, with $x\in U_{k(\xi)}\cap U_{k(\xi')}\cap S$. Then $\xi'$ and $\xi_g\cup \xi$ both belong
to $\mathcal{Z}_{n+c}$ and $(\xi',x)$, $(\xi_g\cup\xi,x)$ both represent $\widetilde{x}g\in\widetilde{S}$. Hence, by
the properties 2) and 4) of the opens $O_n$, we obtain $H$-invariance:
\begin{align*}
\widetilde{\mathcal{H}}(\widetilde{x}g,hol(g^{-1})v)&=hol_{x_0}^x(\xi')(hol(g^{-1})v)=hol_{x_0}^x(\xi_g\cup \xi)(hol(g^{-1})v)=\\
&=hol_{x_0}^x(\xi)\circ hol(g)\circ hol(g^{-1})v=hol_{x_0}^x(\xi)(v)=\widetilde{\mathcal{H}}(\widetilde{x},v).
\end{align*}

Since the action of $H$ on $\mathcal{V}$ is free and preserves the foliation on $\mathcal{V}$, we obtain an induced
local diffeomorphism of foliated manifolds:
\[\mathcal{H}: \mathcal{V}/H\subset \widetilde{S}\times_{H} V\rmap E.\]

We prove now that $\mathcal{H}$ is injective. Let $(\widetilde{x},v), (\widetilde{x}',v')\in \mathcal{V}$ be such that
\[\widetilde{\mathcal{H}}(\widetilde{x},v)=\widetilde{\mathcal{H}}(\widetilde{x}',v').\]
Denoting by $x=p(\widetilde{\mathcal{H}}(\widetilde{x},v))=p(\widetilde{\mathcal{H}}(\widetilde{x}',v'))$, we have
that $\widetilde{\mathcal{H}}(\widetilde{x},v)$, $\widetilde{\mathcal{H}}(\widetilde{x}',v')\in E_x$. Hence
$\widetilde{x}$ and $\widetilde{x}'$, both lie in the fiber of $\widetilde{S}\to S$ over $x$, thus there is a unique $g\in
H$ with $\widetilde{x}'=\widetilde{x}g$. Let $n,m\geq c$ be such that $(\widetilde{x},v)\in \widetilde{S}_n\times
O_{n+c}$ and $(\widetilde{x}',v')\in \widetilde{S}_m\times O_{m+c}$, and assume also that $n\leq m$. Consider
$\xi\in\mathcal{Z}_n$ and $\xi'\in \mathcal{Z}_m$ such that $(\xi,x)$ represents $\widetilde{x}$ and $(\xi',x)$
represents $\widetilde{x}'$. Then we have that
\begin{equation}\label{EQ1}
hol_{x_0}^x(\xi)(v)=hol_{x_0}^x(\xi')(v').
\end{equation}
Since both $(\xi',x)$ and $(\xi_g\cup \xi,x)$ represent $\widetilde{x}'\in \widetilde{S}$, and both have length $\leq
m+c$, again by the properties 2) and 4) we obtain
\[hol_{x_0}^x(\xi')(v')=hol_{x_0}^x(\xi_g\cup \xi)(v')=hol_{x_0}^x(\xi)(hol(g)(v')).\]
Since $hol_{x_0}^x(\xi)$ is injective, (\ref{EQ1}) implies that $v=hol(g)(v')$. So, we obtain
\[(\widetilde{x},v)=(\widetilde{x}'g^{-1},hol(g)(v')),\]
which proves injectivity of $\mathcal{H}$.

\subsection{Proof of  Theorem \ref{Reeb_Theorem}, part 2), 3)}

To prove the second version of the theorem, we will use the following property of finite type manifolds
\begin{theorem}[Theorem 5.1. \cite{Wein3}]\label{Weinstein}
Let $f : X \to Y$ be a submersion that is equivariant with respect to the action of a compact group $G$. If $y\in Y$ is a
fixed point and if $O:= f^{-1}(y)$ is a manifold of finite type, then there exists a $G$-invariant open neighborhood
$U\subset X$ of $O$, and a $G$-equivariant retraction $\rho:U\to O$ such that
\[(\rho,f) :U \rmap O\times f(U)\]
is a $G$-equivariant diffeomorphism.
\end{theorem}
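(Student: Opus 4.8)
The plan is to realise $O := f^{-1}(y)$ as the zero section of a product $O\times(\text{ball})$ sitting inside $X$, on which $f$ becomes the projection to the second factor. First I would record the basic structure: since $y$ is a fixed point and $f$ is a $G$-equivariant submersion, $O$ is a $G$-invariant closed submanifold of $X$ of codimension $\dim Y$, with $T_xO=\ker(df_x)$ for $x\in O$; in particular $df$ trivialises the normal bundle $TX|_O/TO$ $G$-equivariantly onto $O\times T_yY$, with $G$ acting linearly on $T_yY$. By Bochner's linearisation theorem applied to the $G$-action near the fixed point $y$, I may assume that a $G$-invariant neighborhood of $y$ in $Y$ is an invariant ball $B$ in the representation $V:=T_yY$, with $y=0$.

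Next I would fix a $G$-invariant complete Riemannian metric on $X$ (available since $G$ is compact) and take $\mathcal{H}:=(\ker df)^{\perp}$ as a $G$-invariant Ehresmann connection for $f$ over $f^{-1}(B)$. For $(o,v)\in O\times B$ let $\Phi(o,v)$ be the point reached at time $1$ by the $\mathcal{H}$-horizontal lift, starting at $o$, of the ray $s\mapsto sv$ in $B$. Where defined, $\Phi$ is smooth, $G$-equivariant (the metric, hence $\mathcal{H}$, is invariant and $g\cdot(sv)=s(gv)$), satisfies $f(\Phi(o,v))=v$ and $\Phi(o,0)=o$, and $d\Phi$ is invertible at every point of $O\times\{0\}$ (it restricts to the inclusion $T_oO\hookrightarrow T_oX$ on the first factor and to the horizontal lift $V\xrightarrow{\sim}\mathcal{H}_o$ on the second). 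Granting that $\Phi$ is defined and a diffeomorphism on $O\times B'$ for some $G$-invariant ball $B'\subset B$ around $0$, one puts $U:=\Phi(O\times B')$ and $\rho:=\mathrm{pr}_O\circ\Phi^{-1}$: then $U$ is a $G$-invariant neighborhood of $O$, $\rho$ is a $G$-equivariant retraction ($\rho|_O=\mathrm{id}$ since $\Phi(o,0)=o$), and $(\rho,f)\colon U\rmap O\times f(U)$ coincides with $\Phi^{-1}$ and is the required $G$-equivariant diffeomorphism.

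The crux --- and the only place the finite-type hypothesis enters --- is producing such a ball $B'$ whose radius does not degenerate along the non-compact $O$. For $O$ compact this is immediate from the Tube Lemma. In general I would use that a finite-type manifold has cylindrical ends: there is a $G$-invariant compact submanifold-with-boundary $O_0\subset O$ (obtained, say, from a $G$-averaged proper exhausting function, or from an equivariant collar) with $O\setminus\mathrm{int}(O_0)\cong\Sigma\times[0,\infty)$, $\Sigma$ closed and $G$ acting through the $\Sigma$ factor. Over $O_0$ the compact argument applies verbatim. Over the end, one propagates the already constructed product structure along the $[0,\infty)$-factor by flowing with a $G$-invariant, $f$-vertical vector field on a neighborhood of the end in $X$ that restricts along $O$ to $\partial_s$; since such a field is vertical its flow preserves $O$ and the fibers of $f$ and carries the product structure over slice by slice, and because the cross-section $\Sigma$ is compact this flow is uniformly defined (an Ehresmann-type argument), so the neighborhood thickness stays bounded below. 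One then glues the end piece to the $O_0$ piece using the equivariant uniqueness, up to isotopy, of such product structures near $O$. Carrying out this gluing and the accompanying uniform estimate is the technical heart of the proof; everything else is the standard equivariant tubular-neighborhood and connection machinery.
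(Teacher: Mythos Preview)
The paper does not contain a proof of this statement: it is quoted verbatim as Theorem 5.1 from Weinstein's paper \cite{Wein3} and then applied without further argument. So there is no ``paper's own proof'' to compare against.

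On the merits of your sketch: the overall strategy (Bochner-linearise near the fixed point $y$, choose a $G$-invariant Ehresmann connection for $f$, and use radial horizontal lifts to build the product chart) is the right shape, and it correctly isolates the only nontrivial point --- getting a uniform radius over the noncompact fiber $O$. Your handling of that point, however, is where the argument is thin. The claim that one can find a $G$-invariant compact core $O_0$ with $G$-equivariant cylindrical ends $\Sigma\times[0,\infty)$ on which $G$ acts purely through the $\Sigma$ factor is not automatic: averaging a proper Morse function over $G$ yields a $G$-invariant proper function, but in general destroys the Morse property, so one must argue separately that the critical set remains compact (or work with a suitable regular superlevel set). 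Once such an invariant proper function with compact critical set is in hand, the gradient flow of a $G$-invariant metric does give the equivariant product structure on the ends, and your propagation argument goes through. You should spell out this step rather than leave it as ``obtained, say, from a $G$-averaged proper exhausting function''; as written that sentence hides exactly the difficulty you are trying to overcome.
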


Using part 1) of Theorem \ref{Reeb_Theorem}, part 2) is a consequence of the following property of the local model
around finite type leaves:

\begin{proposition}
Let $U\subset \widetilde{S}\times_H V$ be an open neighborhood containing $S=(\widetilde{S}\times\{0\})/H$. If $S$
is a finite type manifold, then there exists a smaller open $\widetilde{U}\subset U$, containing $S$, and an
isomorphism of foliated manifolds
\[(\widetilde{U},\mathcal{F}_{N|\widetilde{U}})\cong (\widetilde{S}\times_H V,\mathcal{F}_N),\]
which is the identity on $S$.
\end{proposition}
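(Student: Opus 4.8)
The plan is to reduce the statement to Theorem~\ref{Weinstein} by finding a suitable submersion whose fiber over a fixed point is $S$. First I would set up the relevant group action: the finite holonomy group $H$ acts on $V=\nu_{S,x}$ linearly, and it acts on $\widetilde{S}\times V$ diagonally, so $\widetilde{S}\times_H V$ carries the residual structure. The key observation is that the projection $q:\widetilde{S}\times_H V\to V/H$ (induced by the second factor) is an $H$-invariant map, but to apply Weinstein's theorem with a genuine \emph{group} action one should instead work upstairs: consider the projection $\widetilde{q}:\widetilde{S}\times V\to V$, which is an $H$-equivariant submersion, with $\widetilde{q}^{-1}(0)=\widetilde{S}\times\{0\}\cong\widetilde{S}$. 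Since $S$ is a finite type manifold and $\widetilde{S}\to S$ is a finite cover (as $H$ is finite), $\widetilde{S}$ is also of finite type.

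Next I would pull back the open set $U$ to an $H$-invariant open $\widehat{U}\subset\widetilde{S}\times V$ containing $\widetilde{S}\times\{0\}$. We cannot directly apply Theorem~\ref{Weinstein} to $\widetilde{q}$ restricted to $\widehat{U}$, because the theorem is about a submersion defined on all of the source with the zero fiber of finite type; but its proof (a flow/retraction argument, cf.\ \cite{Wein3}) localizes: given the $H$-invariant open $\widehat{U}$ around the finite-type fiber $\widetilde{S}\times\{0\}$, one obtains an $H$-invariant open $\widehat{W}\subset\widehat{U}$ and an $H$-equivariant diffeomorphism
\[
(\rho,\widetilde{q}):\widehat{W}\xrightarrow{\ \sim\ }\widetilde{S}\times\widetilde{q}(\widehat{W}),
\]
where $\widetilde{q}(\widehat{W})$ is an $H$-invariant open neighborhood of $0$ in $V$. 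Shrinking, we may take $\widetilde{q}(\widehat{W})=V'$ for an $H$-invariant open ball $V'\subset V$, and then rescale the linear $H$-action to identify $(V',H)\cong(V,H)$ as $H$-spaces. This produces an $H$-equivariant diffeomorphism $\widehat{W}\cong\widetilde{S}\times V$ covering the identity on $\widetilde{S}\times\{0\}$.

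The remaining point is to check that this diffeomorphism is foliation preserving and descends to the quotient. The foliation $\mathcal{F}_N$ on $\widetilde{S}\times_H V$ lifts to the foliation on $\widetilde{S}\times V$ whose leaves are $\widetilde{S}\times Hv$, i.e.\ the pullback of the $H$-orbit foliation on $V$ under $\widetilde{q}$; equivalently, its leaves are the fibers of $\widetilde{S}\times V\to V/H$. Thus the lifted foliation is exactly $\widetilde{q}^{-1}$ of a foliation on $V$, and any map of the form $(\rho,\widetilde{q})$ (which commutes with $\widetilde{q}$ in the second slot) automatically carries this foliation to the corresponding product foliation on $\widetilde{S}\times V$. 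Combining with $H$-equivariance, the map passes to the quotient to give a foliated diffeomorphism $(\widetilde{U},\mathcal{F}_{N|\widetilde{U}})\cong(\widetilde{S}\times_H V,\mathcal{F}_N)$ that is the identity on $S$, where $\widetilde{U}:=\widehat{W}/H$. I expect the main obstacle to be purely bookkeeping: verifying that the retraction $\rho$ can be chosen $H$-equivariant and that $\widetilde{q}(\widehat{W})$ can be arranged $H$-invariant and then rescaled back to all of $V$ without destroying equivariance — but since $H$ is finite one can always average, so this is routine rather than deep.
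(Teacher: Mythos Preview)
Your proposal is correct and follows essentially the same route as the paper: lift to the $H$-invariant open $\widehat{U}\subset\widetilde{S}\times V$, apply Weinstein's theorem (Theorem~\ref{Weinstein}) to the $H$-equivariant projection onto $V$ with finite-type fiber $\widetilde{S}$, then rescale an $H$-invariant open ball in $V$ back to $V$ and descend to the quotient. One small remark: you do not need to argue that Weinstein's theorem ``localizes'' --- as stated, it applies directly to the submersion $\widetilde{q}|_{\widehat{U}}:\widehat{U}\to V$, yielding the required $H$-invariant open and equivariant retraction inside $\widehat{U}$.
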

\begin{proof} The open $U$ is of the form $X/H$, where $X\subset \widetilde{S}\times V$ is an $H$-invariant open containing
$\widetilde{S}\times\{0\}$. Consider the $H$-equivariant projection
\[f:X\rmap V, \ \ f(x,v)=v.\]
Clearly, $f$ is a submersion, and its fiber above $0\in V$ is $\widetilde{S}\times\{0\}$. Since $S$ is of finite type, it
follows that also its finite cover $\widetilde{S}$ is a finite type manifold (just pull back to $\widetilde{S}$ an
appropriate Morse function on $S$). Hence, by Theorem \ref{Weinstein}, there exists an $H$-invariant open $X'\subset
X$ containing $\widetilde{S}\times\{0\}$ and an $H$-equivariant retraction $\rho:X'\to \widetilde{S}$ such that
\[(\rho,f):X'\rmap \widetilde{S}\times f(X')\]
is an $H$-equivariant diffeomorphism. Since $f(X')$ is an open neighborhood of $0$ in $V$, we can find $W\subset
f(X')$, a smaller open around $0$, diffeomorphic to $V$ by an $H$-equivariant map
\[\chi:W\rmap V.\]
Denote by $\widetilde{X}:=X'\cap f^{-1}(W)$. Then the map
\[(\rho, \chi\circ f):\widetilde{X}\rmap \widetilde{S}\times V,\]
is an $H$-equivariant diffeomorphism that restrict to a diffeomorphism between the leaf
$(\widetilde{S}\times\{v\})\cap \widetilde{X}$ and the leaf $\widetilde{S}\times\{\chi(v)\}$. Hence, it induces a
foliated diffeomorphism between
\[(\widetilde{U},\mathcal{F}_{N|\widetilde{U}})\cong (\widetilde{S}\times_H V,\mathcal{F}_N),\]
where $\widetilde{U}:=\widetilde{X}/H\subset U$.
\end{proof}

The classical Reeb stability theorem is a direct consequence of part 2) of the theorem. To see this, observe first that
compactness of $S$ implies compactness of all the leaves in the local model. Let $U$ be an open neighborhood of $S$
on which $\mathcal{F}$ is semi-invariant linearizable. We prove that $U$ is invariant. Consider $x\in U$, and let $S'$
be the leaf of $\mathcal{F}$ through $x$. The connected component containing $x$ of $S'\cap U$, denoted by $S'_x$,
is the leaf of $\mathcal{F}_{|U}$ through $x$. Thus $S'_x$ is compact. Since it is also open in $S'$, we have that
$S'=S'_x$. This shows that $S'\subset U$, hence $U$ is invariant.

\subsection{Example: trivial holonomy, non-Hausdorff holonomy groupoid}\label{trivial holonomy, non-Hausdorff holonomy group}

We construct a foliation with an embedded leaf $S$ with trivial holonomy, and such that the holonomy groupoid of any
saturated neighborhood of $S$ is non-Hausdorff. The example is of the form \[(M\times_{\Gamma} V,\mathcal{F}),\]
where $M$ and $V$ are connected manifolds, $\Gamma$ is a discrete group acting on $M$ and $V$, the action on
$M$ is free and proper (such that $M/\Gamma$ is a manifold), and the foliation $\mathcal{F}$ has leaves
\[M_{v}:=M\times_{\Gamma} \Gamma v,\  \textrm{for}\  v\in V.\]
The projection $M\times_{\Gamma} V\to M/\Gamma$ is a fiber bundle, and freeness of the action on $M$ implies that
all the fibers $\{m\}\times V\hookrightarrow M\times_{\Gamma} V$ are diffeomorphic to $V$. Moreover, each fiber is
transversal to the foliation, and therefore can be used to compute holonomy. Using these transversals, it is easy to see
that the following are equivalent:
\[M_v =\ \textrm{embedded}\ \Leftrightarrow \ M_v=\ \textrm{closed} \ \Leftrightarrow\  \Gamma v\subset V =\ \textrm{discrete}.\]

Observe that the foliation, viewed as a Lie algebroid, is integrable by the (Hausdorff) Lie groupoid
\[\mathcal{G}:=(M\times M\times V)/\Gamma \rightrightarrows M\times_{\Gamma} V,\]
whose $s$-fibers are all diffeomorphic to $M$, thus connected. Since the holonomy groupoid
$\mathrm{Hol}(\mathcal{F})$ is the smallest Lie groupoid integrating $T\mathcal{F}$ \cite{MM}, it is a quotient of
$\mathcal{G}$ by a normal subgroupoid (a bundle of groups) $\mathcal{K}$
\[\mathcal{K}\rmap \mathcal{G}\rightrightarrows \mathrm{Hol}(\mathcal{F}).\]
Now, the projection
\[\textrm{P}:\mathcal{G}\rightrightarrows \mathrm{Hol}(\mathcal{F})\]
works as follows: for $[m_1,m_0,v]\in \mathcal{G}$, consider a path $m_t$ in $M$ starting at $m_0$ and ending at
$m_1$, and define $\textrm{P}([m_1,m_0,v])$ to be the holonomy class of the path $[m_t,v]$ in $M_v$.

For $v\in V$, let $\Gamma_v\subset\Gamma$ be the stabilizer of $v$, and denote by $K_{v}$ the following subgroup of
$\Gamma_v$:
\[K_v:=\{g\in \Gamma_v| \textrm{germ}_v(g:V\to V)=\textrm{germ}_v(\textrm{Id}_V:V\to V)\}.\]
We claim that the subgroupoid $\mathcal{K}$ is given by
\[\mathcal{K}_{[m,v]}:=\{[mg,m,v]|g\in K_v\}\subset \mathcal{G}_{[m,v]}.\]
To see this, consider a closed loop $\gamma_t:=[m_t,v]\in M_{v}$, with $m_0=m$. Since $[m_1,v]=[m,v]$, for some
$g\in\Gamma_v$, we have that $m_1=mg$. Then, the holonomy transport along this path is multiplication by $g$:
\[hol(\gamma):\{m\}\times V\rmap \{m\}\times V, \ \ [m,w]\mapsto [mg,w]=[m,gw].\]
This implies that $hol(\gamma)$ is trivial, if and only if $g\in K_{v}$. Since $hol(\gamma)=\textrm{P}([mg,m,v])$, this
implies the claim.

Hausdorffness translates to the following property of the action:
\begin{lemma}
Let $\Gamma_{\circ}\subset \Gamma$ be the kernel of the homomorphism $\Gamma\to \textrm{Diff}(V)$. Then
$\mathrm{Hol}(\mathcal{F})$ is Hausdorff if and only if $K_{v}=\Gamma_{\circ}$ for all $v\in V$.
\end{lemma}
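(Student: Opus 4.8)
The statement to prove is: $\mathrm{Hol}(\mathcal{F})$ is Hausdorff if and only if $K_v = \Gamma_\circ$ for all $v \in V$. The strategy is to characterize Hausdorffness of $\mathrm{Hol}(\mathcal{F})$ in terms of the closedness of the unit section inside $\mathrm{Hol}(\mathcal{F})$, and then to translate this into the stated condition on the subgroups $K_v$. Recall from the preceding discussion that $\mathrm{Hol}(\mathcal{F}) = \mathcal{G}/\mathcal{K}$ where $\mathcal{G} = (M\times M\times V)/\Gamma$ is Hausdorff with connected $s$-fibers, and $\mathcal{K}_{[m,v]} = \{[mg,m,v] \mid g\in K_v\}$.

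First I would use the general principle that for a Lie groupoid $\mathcal{H}$ (with $\mathcal{H}$ a possibly non-Hausdorff manifold but $M$ Hausdorff), $\mathcal{H}$ is Hausdorff if and only if the image of the unit embedding $u: M \hookrightarrow \mathcal{H}$ is closed; more precisely, since $\mathcal{G}$ is Hausdorff and $\mathrm{Hol}(\mathcal{F})$ is a quotient of $\mathcal{G}$ by the bundle of groups $\mathcal{K}$, the failure of Hausdorffness of $\mathrm{Hol}(\mathcal{F})$ is detected by sequences: two points are non-separated in $\mathrm{Hol}(\mathcal{F})$ exactly when they are limits of a common sequence, i.e.\ when there is a sequence $g_n \to g$, $g_n \to g'$ in $\mathcal{G}$ (after choosing convergent lifts) with $\mathrm{P}(g) \ne \mathrm{P}(g')$ but $g(g')^{-1}$ a limit of units $\mathrm{P}(1_{x_n})$. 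So $\mathrm{Hol}(\mathcal{F})$ is Hausdorff iff $\mathcal{K} \subset \mathcal{G}$ is closed, and since $\mathcal{K}$ is a bundle of groups over $M\times_\Gamma V$, the right object to analyze is whether $\bigcup_v \{g \in \Gamma_v \mid g \in K_v\}$, suitably interpreted, is closed in the relevant topology. The cleanest route: $\mathcal{K}$ is closed in $\mathcal{G}$ iff for every sequence $v_n \to v$ in $V$ and every $g \in \Gamma$ with $g \in K_{v_n}$ for all $n$ (equivalently $g$ fixes $v_n$ and has trivial germ there), one has $g \in K_v$.

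Next I would unwind what $g \in K_{v_n}$ for all $n$ forces as $v_n \to v$. If $g$ has trivial germ at each $v_n$, then $g$ acts as the identity on a neighborhood of each $v_n$; if additionally the $v_n$ accumulate at $v$ (which we may arrange whenever $v$ is not isolated, and the isolated case must be handled separately — there $K_v = \Gamma_v$ automatically but we still need $K_v = \Gamma_\circ$, forcing the point-count argument), then $g$ acts as the identity on a set accumulating at $v$, hence — since the action is by diffeomorphisms — $g$ acts as the identity near $v$, so $g \in K_v$; but more is true: if $K_v = \Gamma_\circ$ for all $v$, then $g \in K_{v_n} = \Gamma_\circ$ means $g$ acts trivially on all of $V$, so trivially $g \in \Gamma_\circ = K_v$, giving closedness and hence Hausdorffness. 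Conversely, if $K_{v_0} \ne \Gamma_\circ$ for some $v_0$, I would produce a non-Hausdorff point pair: $\Gamma_\circ \subsetneq K_{v_0}$ is impossible (elements of $\Gamma_\circ$ act trivially on $V$ hence have trivial germ at $v_0$ and fix $v_0$, so $\Gamma_\circ \subseteq K_{v_0}$ always), so the failure must be $K_{v_0} \supsetneq \Gamma_\circ$ is automatic — wait, the genuine failure is $K_{v_0} \ne \Gamma_\circ$ meaning there exists $g \in K_{v_0} \setminus \Gamma_\circ$, i.e.\ $g$ has trivial germ at $v_0$ and fixes $v_0$ but acts nontrivially somewhere on $V$. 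Then pick $v_n \to v_0$ with $v_n$ in the region where $g$ acts trivially but such that $g$ does not act trivially on any whole neighborhood of $v_0$ in the ambient sense needed — more carefully, choose $w$ with $gw \ne w$, and a sequence $v_n$ lying in the (open) germ-triviality region of $g$ at $v_0$ but converging to a point where this breaks; this yields arrows in $\mathcal{K}$ over the $v_n$ converging in $\mathcal{G}$ to a non-unit arrow, so $\mathcal{K}$ is not closed and $\mathrm{Hol}(\mathcal{F})$ is not Hausdorff.

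The main obstacle I anticipate is the bookkeeping in the converse direction: constructing the explicit non-separated pair of arrows requires carefully tracking how an element $g \in K_{v_0}\setminus\Gamma_\circ$ — which by definition has trivial \emph{germ} at $v_0$ — can nonetheless witness non-closedness, since "trivial germ at $v_0$" already means $g$ is the identity on a neighborhood of $v_0$, so arrows of $\mathcal{K}$ over nearby $v_n$ do lie over a neighborhood where things look trivial. The resolution is that $g$, while trivial near $v_0$, is a fixed global element of $\Gamma$ that is \emph{not} trivial as a diffeomorphism of $V$; the non-Hausdorffness is then detected not over the $v_n \to v_0$ directly but by comparing the arrow $[mg, m, v_n] \in \mathcal{K}$ (a unit in $\mathrm{Hol}$) with $[mg, m, v_n] \in \mathcal{G}$ interpreted near a point $v'$ where $g$ acts nontrivially, using that $\mathcal{G}$ is connected along fibers — so one slides $v_n$ within a single $s$-fiber. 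I would therefore devote the bulk of the write-up to this sliding argument, and keep the forward direction (closedness $\Rightarrow$ Hausdorff) brief, as it follows formally once one observes $\Gamma_\circ \subseteq K_v$ always and the hypothesis forces equality, making $\mathcal{K}$ the closed subgroupoid $\{[m\gamma, m, v] : \gamma \in \Gamma_\circ\}$, which is manifestly closed since $\Gamma_\circ$ is a fixed subgroup acting trivially on $V$.
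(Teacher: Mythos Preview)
Your forward direction is correct and matches the paper's approach in spirit: when $K_v=\Gamma_\circ$ for all $v$, the bundle $\mathcal{K}$ is constant and closed in the Hausdorff groupoid $\mathcal{G}$, so the quotient is Hausdorff. The paper phrases this by simply writing down the quotient explicitly as $(M/\Gamma_\circ\times M/\Gamma_\circ\times V)/(\Gamma/\Gamma_\circ)$, but your closedness criterion is equivalent.

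The converse, however, has a genuine gap. You correctly identify that one should exhibit a sequence in $\mathcal{K}$ converging in $\mathcal{G}$ to a point outside $\mathcal{K}$, and you correctly observe that the set $\{w : g\in K_w\}$ is open. But you then get tangled up about where the limit point lives. Your first instinct, taking $v_n\to v_0$, cannot work: since $g$ has trivial germ at $v_0$, every $v_n$ near $v_0$ lies in $\{w:g\in K_w\}$ and so does $v_0$ itself, so nothing breaks at $v_0$. Your ``more carefully'' sentence gestures at the right fix---converge to a point where germ-triviality fails---but you never explain why such a point exists, and your anticipated-obstacle paragraph then reverts to worrying about $v_0$ and proposes a ``sliding within an $s$-fiber'' argument that is both unnecessary and not well-defined.

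The missing ingredient is connectedness of $V$. The open set $U=\{w:g\in K_w\}$ is nonempty (it contains $v_0$) and proper (it is disjoint from the nonempty open $\{w:gw\neq w\}$, which exists because $g\notin\Gamma_\circ$). Since $V$ is connected, $U$ has nonempty boundary: pick $w\in\partial U$ and a sequence $w_n\in U$ with $w_n\to w$. Openness of $U$ forces $w\notin U$, i.e.\ $g\notin K_w$. Then $[mg,m,w_n]\in\mathcal{K}$ converges in $\mathcal{G}$ to $[mg,m,w]\notin\mathcal{K}$, so $\mathcal{K}$ is not closed. This is exactly the paper's argument; once you see it, the ``sliding'' detour disappears.
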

\begin{proof}
If $K_{v}=\Gamma_{\circ}$, then the holonomy groupoid is the Hausdorff groupoid
\[\mathrm{Hol}(\mathcal{F})=(M/\Gamma_{\circ}\times M/\Gamma_{\circ}\times V)/(\Gamma/\Gamma_{\circ})\rightrightarrows M/\Gamma_{\circ}\times_{\Gamma/\Gamma_{\circ}}V\cong M\times_{\Gamma}V.\]
Conversely, assume that there is some $g\in K_{v}\backslash \Gamma_{\circ}$. Then, both opens
\[\{w | g\in K_w\}\  \textrm{and} \ \{w |gw\neq  w\}\]
are nonempty. By connectedness of $V$, we find a sequence $w_n$ in the first open which converges to a point $w$ on
the boundary, i.e.\ $g\in K_{w_n}$, but $g\notin K_{w}$. Then $[mg,m,w_n]$ is a sequence in $\mathcal{K}$, which
converges to the element $[mg,m,w]\notin \mathcal{K}$. Thus $\mathcal{K}$ is not closed, and this implies that the
quotient is not Hausdorff.
\end{proof}

For our example, let $\Gamma$ be the free group in generators $\{x_n| n\geq 1\}$. For every $n\geq 1$, let
$\varphi_n:\mathbb{R}\diffto\mathbb{R}$ be a diffeomorphism such that
\begin{itemize}
\item $\varphi_n(0)=0$ and $\textrm{germ}_0(\varphi_n)=\textrm{germ}_0(\textrm{Id})$,
\item $\varphi_n(\frac{1}{n})=\frac{1}{n}$ and $\textrm{germ}_{1/n}(\varphi_n)\neq \textrm{germ}_{1/n}(\textrm{Id})$.
\end{itemize}
By freeness, $\Gamma$ has a unique action on $\mathbb{R}$ which sends $x_n$ to $\varphi_n$, and since every
element of $\Gamma$ is a product of $x_{n}^{\pm1}$, it follows that
\begin{equation}\label{EQ_Example_foliations}
K_0=\Gamma, \ \ \ K_{\frac{1}{n}}\neq \Gamma.
\end{equation}
Let $M$ be the universal cover of $S:=\mathbb{R}^2\backslash\{(n,0)| n\geq 1\}$. Since the fundamental group of
$S$ is isomorphic to $\Gamma$, it follows that $\Gamma$ has a free and proper action on $M$ (of course $M\cong
\mathbb{R}^2$). We consider the foliated manifold
\[(M\times_{\Gamma}\mathbb{R}, \mathcal{F}_N).\]

Since $0$ is a fixed point for the action of $\Gamma$ on $\mathbb{R}$, it follows that $M_{0}\cong S$ is an embedded
leaf, and since $K_{0}=\Gamma$, the holonomy group of $S$ is trivial. By the lemma above and by
(\ref{EQ_Example_foliations}), the holonomy groupoid is non-Hausdorff. Moreover, we claim that there is no saturated
neighborhood of $S$ for which the holonomy groupoid is Hausdorff. For this, notice that every saturated neighborhood
is of the form $M\times_{\Gamma}U$, where $U\subset \mathbb{R}$ is a $\Gamma$-invariant neighborhood of $0$.
Since $\frac{1}{n}\in U$, for $n$ big enough, the same argument implies the claim. The fact that $S$ has a
non-invariant neighborhood with a Hausdorff (even proper) holonomy groupoid follows from Theorem
\ref{Reeb_Theorem}.

\subsection{On primitives of smooth families of exact forms}

We devote this subsection of the appendix to proving that, on a finite type manifold, a smooth family of exact forms
admits a smooth family of primitives. This result was used in the proof of Theorem \ref{Theorem_ONE}. Alan
Weinstein, after reading the version of the thesis sent for review, has pointed out that a variation of this lemma
appeared in \cite{GLSW}. Also, a slight modification to the argument given in \emph{loc.cit.} implies our result. For
completeness, we have decided to keep the proof in the thesis.

Recall that a \textbf{finite good cover}\index{finite good cover} of a manifold of dimension $m$, is a finite cover by
opens diffeomorphic to $\mathbb{R}^m$ such that every nonempty intersection is diffeomorphic to $\mathbb{R}^m$.

\begin{lemma}
A finite type manifold admits a finite good cover.
\end{lemma}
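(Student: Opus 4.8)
The statement to prove is: \emph{A finite type manifold admits a finite good cover.} Recall that a manifold $X$ is of finite type if it admits a proper Morse function with finitely many critical points, equivalently if it is diffeomorphic to the interior of a compact manifold with boundary. A good cover is a cover by opens each diffeomorphic to $\mathbb{R}^m$ with all nonempty finite intersections diffeomorphic to $\mathbb{R}^m$; finiteness of the cover is the extra requirement here.

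The plan is to exploit the compact model. First I would write $X \cong \mathrm{int}(\overline{X})$ for a compact manifold $\overline{X}$ with boundary $\partial\overline{X}$, using the characterization of finite type manifolds recalled in the excerpt. On the compact manifold $\overline{X}$ one fixes a Riemannian metric; by a classical result (see e.g. the appendix of Bott--Tu, or the argument via geodesically convex neighborhoods) every compact Riemannian manifold—possibly with boundary—admits a \emph{finite} good cover, because one can take the (finitely many, by compactness) geodesically convex balls, and geodesically convex sets as well as their finite intersections are contractible and in fact diffeomorphic to a ball (or a half-ball near the boundary, which is still diffeomorphic to $\mathbb{R}^m$ after straightening). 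So $\overline{X}$ has a finite good cover $\{\overline{U}_i\}$.

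Next I would push this cover into the interior. The issue is that the $\overline{U}_i$ meeting $\partial\overline{X}$ are not opens of $X = \mathrm{int}(\overline{X})$. To fix this, use a collar neighborhood $\partial\overline{X}\times[0,1)\hookrightarrow \overline{X}$ and a diffeomorphism $\overline{X}\setminus(\partial\overline{X}\times\{0\})\cong \mathrm{int}(\overline{X})$ that is the identity outside the collar and "absorbs" the boundary—concretely, reparametrize the collar coordinate by a diffeomorphism $[0,1)\to [0,1)$ pushing $0$ off to infinity in the appropriate sense, or more simply: choose a smaller compact codimension-zero submanifold-with-boundary $X'\subset X$ such that $X'$ is a deformation retract of $X$ and $X\setminus X'\cong \partial X'\times[0,\infty)$, and then take $\{U_i := \varphi^{-1}(\overline{U}_i\cap X_{1/2})\}$ for the obvious identifications. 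Cleaner still: apply the finite-good-cover result directly to the \emph{double} or simply observe that $X$ itself, being diffeomorphic to the interior of a compact manifold, is covered by pulling back a finite good cover of a slightly larger open compact-exhaustion piece. I would then check that the resulting finite collection of opens of $X$ are each diffeomorphic to $\mathbb{R}^m$ and that their nonempty finite intersections are too, which is inherited from the good-cover property on $\overline{X}$ since the pushing-in diffeomorphism sends convex/ball-like sets to ball-like sets.

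The main obstacle is the bookkeeping near the boundary: one must ensure that after modifying the convex cover of $\overline{X}$ to get genuine opens of the interior, the "diffeomorphic to $\mathbb{R}^m$" property survives for all the (finitely many) iterated intersections, including the ones that originally touched $\partial\overline{X}$. The cleanest way around this is to choose the metric on $\overline{X}$ so that $\partial\overline{X}$ is totally geodesic after doubling—work on the double $D\overline{X}$, take a finite good cover there by geodesically convex balls (now genuinely a cover of a boundaryless compact manifold), restrict to one copy of $\overline{X}$ minus its boundary, which is exactly $X$; convexity guarantees each restricted piece and each restricted intersection is still diffeomorphic to $\mathbb{R}^m$ (a convex ball intersected with a totally geodesic hypersurface's complement-half is a convex half-ball, diffeomorphic to $\mathbb{R}^m$). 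Finiteness is immediate since we started with a finite cover of the compact double. This reduces everything to the standard fact that compact Riemannian manifolds have finite good covers, which I would cite rather than reprove.
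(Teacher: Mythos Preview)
Your doubling argument at the end is correct and is a genuinely different route from the paper's proof. The paper does not double; instead it puts a product metric on a collar $C=\partial\overline{M}\times[0,1)$, takes a finite good cover $\mathcal{U}_0$ of the compact boundary $\partial\overline{M}$ by geodesically convex opens, and observes that the cylinders $U\times(0,1)$ for $U\in\mathcal{U}_0$ are themselves geodesically convex opens of $M$ (because the metric is a product there). It then covers the compact core $M\setminus C$ by finitely many geodesically convex opens taken from any good cover of $M$, and the union of these two finite families is the desired finite good cover. This avoids entirely the analysis of how a totally geodesic hypersurface slices a geodesic ball: the opens near infinity are simply products $U\times(0,1)\cong\mathbb{R}^{m-1}\times\mathbb{R}$.

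Your approach buys a cleaner reduction to the closed compact case, at the price of having to justify that a geodesically convex ball in the double, intersected with one open side of the totally geodesic hypersurface $\partial\overline{X}$, is still diffeomorphic to $\mathbb{R}^m$. That step is fine (via $\exp_p$ from a point $p$ on the hypersurface, the hypersurface pulls back to a linear hyperplane because it is totally geodesic and the ball is convex, so the half-ball becomes a convex open in a Euclidean half-space), but it is exactly the ``bookkeeping near the boundary'' you flagged, and it needs to be said rather than only asserted. Note also that making the double smooth with $\partial\overline{X}$ totally geodesic already forces a product-type metric on the collar, so the two proofs share the same underlying metric choice; the paper just exploits that choice more directly. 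Your earlier sketches (covering $\overline{X}$ with boundary and ``pushing in'') are less clean and you were right to abandon them in favor of the doubling.
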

\begin{proof}
For a finite type manifold $M$, there exists a manifold with boundary $\overline{M}$ such that the interior of
$\overline{M}$ is diffeomorphic to $M$ (see \cite{Wein3}). A good cover of a manifold can be given by a locally finite
cover consisting of geodesically convex opens with respect to some metric (see the proof of Theorem 5.1 in \cite{Tu}).
Endow $\overline{M}$ with a metric which is a product metric on a collar neighborhood of the boundary
\[C=\partial\overline{M}\times[0,1).\]
Consider an open cover $\mathcal{V}_0$ of $M$ and a finite open cover $\mathcal{U}_{0}$ of
$\partial\overline{M}$, both consisting of geodesically convex opens. By our choice of the metric, the finite family
\[\mathcal{U}:=\{U\times (0,1): U\in \mathcal{U}_{0}\},\]
consists also of geodesically convex opens. Since $M\backslash C$ is compact, we can find a finite collection
$\mathcal{V}_1\subset \mathcal{V}_0$ covering it. Hence $\mathcal{U}_1\cup \mathcal{V}_1$ is a finite open cover
of $M$ by geodesically convex opens, therefore it is a finite good cover.
\end{proof}

\begin{lemma}\label{Appendix_Lemma}
Let $M$ and $P$ be smooth manifolds, such that $M$ admits a finite good cover. Let $\{\omega_x\}$ be a family of
$k$ forms on $M$ depending smoothly on $x\in P$. If $\omega_x$ is exact for all $x\in P$, then there exists a family of
$k-1$ forms $\theta_x$, depending smoothly on $x\in P$, such that $d\theta_x=\omega_x$ for all $x\in P$. Moreover, if
$\omega_{x_0}=0$, for some $x_0\in P$, one can choose $\theta_x$ such that $\theta_{x_0}=0$.
\end{lemma}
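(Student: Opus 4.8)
The plan is to use a finite good cover $\mathcal{U}=\{U_1,\dots,U_N\}$ of $M$ (which exists by the previous lemma) together with a subordinate partition of unity $\{\rho_i\}$, and to build the primitive by induction on the Mayer--Vietoris/\v{C}ech--de Rham double complex, keeping track of smooth dependence on $x\in P$ throughout. The point is that the classical argument showing that a closed form on a manifold with a finite good cover is exact (if its cohomology class vanishes) is entirely algebraic once one has local Poincar\'e homotopy operators, and all the operations involved --- the Poincar\'e homotopy operator on each $\mathbb{R}^m$, multiplication by $\rho_i$, \v{C}ech differentials, finite sums --- are manifestly linear and hence preserve smooth dependence on a parameter.

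First I would record the parametrized Poincar\'e lemma: on $\mathbb{R}^m$ there is a linear operator $h\colon\Omega^k(\mathbb{R}^m)\to\Omega^{k-1}(\mathbb{R}^m)$ (the standard radial homotopy, built from pullback along the scaling map and integration over $[0,1]$) with $dh+hd=\mathrm{id}$ on positive-degree forms; applied fiberwise to a smooth family $\{\omega_x\}$ it produces a smooth family $\{h\omega_x\}$, and if $\omega_{x_0}=0$ then $h\omega_{x_0}=0$. Pulling back by the chosen diffeomorphisms $U_i\cong\mathbb{R}^m$ and $U_{i_0\dots i_p}\cong\mathbb{R}^m$, we get such operators on every open of the cover and every finite intersection, all linear in the form and hence smoothly parametrized. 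Then I would run the usual descent: write $\omega_x=d\theta_x$ on $M$ by first solving on each $U_i$ via the local homotopy operator to get $\theta^{(i)}_x\in\Omega^{k-1}(U_i)$ with $d\theta^{(i)}_x=\omega_x|_{U_i}$; the differences $\theta^{(i)}_x-\theta^{(j)}_x$ are closed on $U_{ij}$, and one works down the double complex, at each stage using the exactness hypothesis (which guarantees the relevant \v{C}ech classes vanish, cohomology of a good cover being \v{C}ech cohomology) and the local homotopy operators to produce the next layer of patching data, and finally assembling a global primitive using the partition of unity $\{\rho_i\}$. Since the good cover is finite, this induction terminates after finitely many steps, and every map used along the way is $\mathbb{R}$-linear in the input form, so the resulting $\theta_x$ depends smoothly on $x$.

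For the normalization statement, note that when $\omega_{x_0}=0$ every operator in the construction sends the zero form to the zero form (the local homotopy operators are linear, the \v{C}ech differentials are linear, multiplication by $\rho_i$ is linear), so the assembled primitive satisfies $\theta_{x_0}=0$ automatically, provided we make the one harmless choice of taking all ``correction'' terms to be produced by these canonical linear operators rather than by arbitrary selections of primitives of exact forms. In particular one never needs to choose a primitive of a nonzero exact form ``by hand'': the hypothesis that $\omega_x$ is exact for all $x$ is used only to know that the \v{C}ech obstruction classes vanish, and once they vanish the actual primitives are manufactured by the homotopy operators.

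The main obstacle --- really the only subtle point --- is bookkeeping: making sure that the smooth dependence on $x\in P$ is genuinely preserved at each stage of the Mayer--Vietoris descent, i.e. that restricting a smooth family of forms to an open is smooth, that the \v{C}ech coboundary of a smooth family is smooth, and that ``the cohomology class is zero for every $x$'' can be upgraded to ``a smoothly varying \v{C}ech cochain witnessing the vanishing exists'' --- which again follows because the witness is produced by applying the fixed linear homotopy operators to the given smooth family, not chosen ad hoc. I would phrase the induction so that the inductive hypothesis carries both the smoothness and the vanishing-at-$x_0$ clauses simultaneously, so that both are verified in one pass. Everything else is the standard finite-good-cover argument, which I would cite (e.g. Bott--Tu) rather than reproduce in detail.
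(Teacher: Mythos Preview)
Your approach is the paper's: descend the \v{C}ech--de Rham double complex using the Poincar\'e homotopy operators $h$, land in the constant-coefficient \v{C}ech complex $C^\bullet(\mathfrak{U})$, then climb back using the partition-of-unity homotopy $k$; all these maps are linear, so they preserve smooth dependence on $x$ and the vanishing at $x_0$. The gap is at the turning point, precisely where you write ``once [the \v{C}ech classes] vanish the actual primitives are manufactured by the homotopy operators''. This is not correct. Having descended to a smooth family of \v{C}ech cocycles $\eta_x\in C^k(\mathfrak{U})$, you need a \v{C}ech primitive $\alpha_x\in C^{k-1}(\mathfrak{U})$ with $\delta\alpha_x=\eta_x$. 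Neither $h$ (which acts in the de Rham direction) nor $k$ provides this: applying $k$ to a cochain of constants yields $\sum_j\rho_j c_{j i_0\dots i_{p-1}}\in C^{k-1}(\mathfrak{U},\Omega^0)$, a cochain of \emph{functions}, not constants. And you do need constants: if $\alpha$ is merely a function, an unwanted $d\alpha$-term appears at the next step of the ascent and blocks the procedure.

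The missing ingredient is what the paper supplies. Since the good cover is finite, $C^\bullet(\mathfrak{U})$ is a complex of \emph{finite-dimensional} real vector spaces, so one may choose once and for all a linear map $\Psi:C^k(\mathfrak{U})\to C^{k-1}(\mathfrak{U})$ with $\delta\Psi=\mathrm{id}$ on the image of $\delta$; then $\alpha_x:=\Psi(\eta_x)$ is smooth in $x$ (linear map between finite-dimensional spaces) and vanishes when $\eta_{x_0}=0$. This is the one step where the pointwise-exactness hypothesis is genuinely consumed, and it is \emph{not} handled by $h$ or $k$. With $\Psi$ in place your argument becomes complete and coincides with the paper's proof.
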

\begin{proof}
Let $\mathfrak{U}:=\{U_{i}\}_{i\in I}$ be a finite good cover of $M$, i.e.\ $U_{i_0i_1\ldots i_k}:=U_{i_0}\cap
U_{i_1}\cap\ldots\cap U_{i_k}$ is either empty or diffeomorphic to $\mathbb{R}^m$, for all $i_0,\ldots, i_{k}\in I$.
First we show that it suffices to prove the statement for the \v Cech complex
\[(C^{\bullet}(\mathfrak{U}),\delta)\]
associated to $\mathfrak{U}$. For this consider the double complex
\[(C^{\bullet}(\mathfrak{U},\Omega^{\bullet}), D),\ \ C^{p}(\mathfrak{U},\Omega^{q}):=\prod_{i_0<\ldots<i_p}\Omega^q(U_{i_0\ldots i_p}),\]
endowed with the differential $D:=\delta+D''$, where \[\delta:C^{p}(\mathfrak{U},\Omega^{q})\rmap
C^{p+1}(\mathfrak{U},\Omega^{q})\] is the usual \v Cech differential and $D''=(-1)^pd$. The \v Cech complex with
coefficients in the sheaf $\Omega^{q}$
\[0\rmap \Omega^q(M)\stackrel{r}{\rmap} C^{0}(\mathfrak{U},\Omega^{q})\stackrel{\delta}{\rmap} C^{1}(\mathfrak{U},\Omega^{q})\stackrel{\delta}{\rmap} C^{2}(\mathfrak{U},\Omega^{q})\stackrel{\delta}{\rmap} \ldots,\]
is exact, and the homotopy operators $k$ for this complex constructed in Proposition 8.5 \cite{Tu} have the property
that they send smooth families to smooth families. Similarly, using the explicit homotopies for the de Rham complex of
$\mathbb{R}^{m}$ from the standard proof of the Poincar\'{e} lemma (see e.g. \cite{Tu}, Chapter 1 \S 4), we obtain
homotopy operators, denoted $h$, for the complex
\[0\rmap C^p(\mathfrak{U})\stackrel{i}{\rmap} C^{p}(\mathfrak{U},\Omega^{0})\stackrel{d}{\rmap} C^{p}(\mathfrak{U},\Omega^{1})\stackrel{d}{\rmap} C^{p}(\mathfrak{U},\Omega^{2})\stackrel{d}{\rmap} \ldots.\]
Also these operators have the property that they send smooth families to smooth families. By a standard argument for
double complexes (see e.g.\ Proposition 8.8 \cite{Tu}), the maps $r$ and $i$ induce isomorphisms in cohomology
\[r:H^{\bullet}(M)\diffto H_D\{C^{\bullet}(\mathfrak{U},\Omega^{\bullet})\},\ \
i:\check{H}^{\bullet}(\mathfrak{U})\diffto H_D\{C^{\bullet}(\mathfrak{U},\Omega^{\bullet})\},\] where
$H_D\{C^{\bullet}(\mathfrak{U},\Omega^{\bullet})\}$ is the cohomology of the double complex. Using the maps $k$
and $h$ one can construct an explicit isomorphisms
\begin{equation}\label{EQ_faranumar}
H^{\bullet}(M)\diffto \check{H}^{\bullet}(\mathfrak{U}),
\end{equation}
which represents the map $i^{-1}\circ r$. Moreover, this map can be constructed at the level of the closed forms/\v
Cech cocycles (see Proposition 9.8 \cite{Tu}); and the explicit formula one gets implies that it maps smooth families to
smooth families. Using these maps, one reduces the problem to the \v Cech complex.

To simplify the discussion, we will describe what is happening in the case of 2-forms. Let $\omega_x\in\Omega^2(M)$
be a smooth family of closed 2-forms. Consider the elements:
\[\omega_1:=h r\omega\in C^{0}(\mathfrak{U},\Omega^{1}), \ \ \omega_2:=h \delta \omega_1\in C^{1}(\mathfrak{U},\Omega^{0}).\]
Since $\omega$ is closed, we have that
\[d\omega_1=dhr\omega=r\omega-hdr\omega=r\omega-hrd\omega=r\omega.\]
Repeating this computation, we obtain that
\[d\omega_2=dh\delta \omega_1=\delta\omega_1-hd\delta\omega_1=\delta\omega_1-h\delta(r \omega)=\delta\omega_1.\]
In particular, we obtain that
\[d\delta \omega_2=\delta d\omega_2=\delta^2 \omega_1=0,\]
hence $\delta \omega_2=i(\omega_3)$, for some $\omega_3\in C^2(\mathfrak{U})$. Clearly $\omega_{3,x}$ depends
smoothly on $x\in P$, and also $\omega_3$ is closed, since $i$ is injective and
\[i\delta(\omega_3)=\delta^2\omega_2=0.\]
The map $\omega\mapsto \omega_3$ is a lift of the isomorphism from (\ref{EQ_faranumar}).

Assuming that $\omega$ is exact, we will prove that $\omega_{3}$ is exact. Write $d\theta_x=\omega_x$, where we
don't make any assumption on the dependence of $\theta$ on $x$. Let
\[\theta_1:=hr\theta \in C^{0}(\mathfrak{U},\Omega^{0}).\]
Then we have that
\[d\theta_1=d hr\theta=r\theta -h d r \theta= r\theta - h r \omega = r\theta - \omega_1,\]
thus
\[d(\omega_2+\delta\theta_1)=d\omega_2 - \delta \omega_1=0,\]
and so $\omega_2+\delta\theta_1=i(\theta_2)$, for some $\theta_2\in C^1(\mathfrak{U})$. This is the primitive of
$\omega_3$:
\[i \delta \theta_2=\delta (\omega_2+\delta \theta_1)=i(\omega_3).\]

Assuming that the statement is true for the \v Cech cohomology, we can write $\omega_3=\delta\alpha$, for a smooth
family $\alpha_x\in C^1(\mathfrak{U})$. Then we have that
\[\delta (i(\alpha)-\omega_2)=0,\]
therefore, denoting by $\alpha_1:=k(i(\alpha)-\omega_2)$, we obtain
\[\delta \alpha_1=i(\alpha)-\omega_2.\]
Applying $d$ to both sides, we get
\[d\delta \alpha_1=-d\omega_2=-\delta\omega_1,\]
hence
\[\delta (d\alpha_1+\omega_1)=0.\]
Denote now $\alpha_2:=k(d\alpha_1+\omega_1)$. Then
\[\delta \alpha_2=d\alpha_1+\omega_1,\]
hence
\[r\omega=d\omega_1=d\delta\alpha_2=r d \alpha_2,\]
and since $r$ is injective, it follows that $\alpha_2$ is a smooth primitive for $\omega$.

Note also that, if $\omega_{x_0}=0$, for some $x_0\in P$ then, by construction, $\omega_{3,x_0}=0$. If also
$\alpha_{x_0}=0$, then again by construction, $\alpha_{2,x_0}=0$.\\

It remains now to prove the statement for the \v Cech complex. Consider a smooth family $\eta_x\in
C^{k}(\mathfrak{U})$, such that
\[[\eta_x]=0\in \check{H}^k(\mathfrak{U}),\ \ \forall \ x\in P.\]
Let $\Psi:C^{k}(\mathfrak{U})\to C^{k-1}(\mathfrak{U})$ be a linear map, such that if $\lambda\in
C^{k}(\mathfrak{U})$ is exact, then $\lambda=\delta (\Psi\lambda)$; the existence of such a map is a trivial linear
algebra exercise. Define $\alpha$ by $\alpha_x:=\Psi(\eta_x)$, for $x\in P$. Since the vector spaces
$C^{k}(\mathfrak{U})$ are finite dimensional, $\alpha$ is clearly smooth, and since $\eta_x$ is exact, it follows that
$\delta(\alpha)=\eta$. Moreover, if $\eta_{x_0}=0$, then also $\alpha_{x_0}=0$.
\end{proof}

\clearpage \pagestyle{plain}

\chapter{A normal form theorem around symplectic leaves}\label{ChNormalForms}
\pagestyle{fancy}
\fancyhead[CE]{Chapter \ref{ChNormalForms}} 
\fancyhead[CO]{A normal form theorem around symplectic leaves} 

In this chapter we prove a normal form theorem around symplectic leaves, which is the Poisson-geometric version of
the Local Reeb Stability Theorem (from foliation theory) and of the Slice Theorem (from equivariant geometry). Conn's
linearization theorem corresponds to the case of one-point leaves (fixed points). The content of this chapter was
published in \cite{CrMa}.

\section{Introduction}

Let $(M,\pi)$ be a Poisson manifold, $x$ a point in $M$ and let $S$ be the symplectic leaf through $x$. We will use the
\textbf{Poisson homotopy bundle}\index{Poisson homotopy bundle} of the leaf $S$
\[ P_x\rmap S,\]
whose structure group $G_x$ we call the \textbf{Poisson homotopy group}\index{Poisson homotopy group} at $x$.
The bundle $P_x$ is the space of cotangent paths starting at $x$ modulo cotangent homotopy, i.e.\ it is the $s$-fiber
over $x$ of the Weinstein groupoid
\[\mathcal{G}(M,\pi)\rightrightarrows M.\]

We state now the main result of this chapter (more details and reformulations are presented in subsection \ref{Subsection_Full_statement_of_the_theorem_and_reformulations}).

\begin{mtheorem}\label{Theorem_TWO}
Let $(M, \pi)$ be a Poisson manifold and let $S$ be a compact leaf. If the Poisson homotopy bundle over $S$ is a
smooth, compact manifold with vanishing second de Rham cohomology group, then, in a neighborhood of $S$, $\pi$ is
Poisson diffeomorphic to its first order model at $S$.
\end{mtheorem}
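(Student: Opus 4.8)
The plan is to follow the four-step strategy of the geometric proof of Conn's theorem from \cite{CrFe-Conn}, adapted from fixed points to symplectic leaves. Write $\pi_{\mathrm{loc}}$ for the first order model of $\pi$ at $S$; by construction it is a Dirac structure which is Poisson on a neighborhood of $S$ inside the normal bundle $\nu_S$, its cotangent Lie algebroid restricts along $S$ to the transitive algebroid $A_S=T^*M_{|S}$, and $P:=P_x$ is the $s$-fiber over $x$ of the $1$-connected integration of $A_S$, a principal $G_x$-bundle over $S$.

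First I would reduce the normal form statement to a cohomological vanishing by a Moser path argument. Since $\pi$ and $\pi_{\mathrm{loc}}$ have the same first order jet along $S$, the fiberwise rescaling built into Vorobjev's model (see \cite{Vorobjev}) produces a smooth family $\pi_t$, $t\in[0,1]$, of Poisson structures on a fixed neighborhood of $S$ with $\pi_0=\pi_{\mathrm{loc}}$, $\pi_1=\pi$, all agreeing with $\pi_{\mathrm{loc}}$ to first order along $S$ (the Tube Lemma \ref{TubeLemma} guarantees a common domain). Differentiating $[\pi_t,\pi_t]=0$ shows $\dot\pi_t$ is $d_{\pi_t}$-closed; if one can solve $d_{\pi_t}X_t=-\dot\pi_t$ with $X_t$ smooth in $t$ and vanishing to suitable order along $S$, the flow of $X_t$ provides the required Poisson diffeomorphism exactly as in Lemma \ref{Lemma_Moser_Poisson_vs_Symplectic}. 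Thus everything comes down to the vanishing, near $S$, of the classes $[\dot\pi_t]\in H^2_{\pi_t}$; using the jet/formal neighborhood filtration together with an averaging (tubular neighborhood) argument, this is governed by the Lie algebroid cohomology of $A_S$ with coefficients in the symmetric powers $\mathcal{S}^k(\nu_S^*)$ — precisely the groups appearing in Theorem 3. Setting up this identification, i.e. linearizing Vorobjev's local model and its cohomology within the extended algebraic framework of \cite{CrFe-stab} (Vorobjev triples), is the technical heart of the argument and the step I expect to cost the most work.

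Next, to feed the vanishing machinery I would secure integrability. The hypothesis that $P$ is a smooth manifold means the Weinstein groupoid of $\pi$ is smooth near $S$, so $A_S$ is integrable; concretely its $1$-connected integration is the gauge groupoid $\mathcal{G}_S=P\times_{G_x}P\rightrightarrows S$, and compactness of $P$ makes $\mathcal{G}_S$ a proper Hausdorff Lie groupoid with compact $s$-fibers. Following section \ref{Constructing symplectic realizations from transversals in the manifold of cotangent paths}, one may alternatively obtain this smoothness by exhibiting a suitable symplectic realization as a transversal in the manifold of cotangent paths of $(M,\pi)$ near $S$, and then invoke the integrability criterion of \cite{CrFe1}, Theorem \ref{Criterion of Marius and Rui}.

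Finally I would kill the obstruction using the Van Est map. The bundle $P$ is $1$-connected, being an $s$-fiber of the $1$-connected integration, and $H^2(P)=0$, so $P$ is homologically $2$-connected; hence, for any representation $V$ of $\mathcal{G}_S$, the Van Est map $\Phi_V\colon H^p_{\mathrm{diff}}(\mathcal{G}_S,V)\to H^p(A_S,V)$ is an isomorphism for $p\le 2$ (Theorem 4 and Corollary 2 in \cite{Cra}). Since $\mathcal{G}_S$ is proper, $H^p_{\mathrm{diff}}(\mathcal{G}_S,V)=0$ for $p\ge 1$ (Proposition 1 in \cite{Cra}), so $H^2(A_S,V)=0$ for all such $V$; applying this to $V=\mathcal{S}^k(\nu_S^*)$ yields the vanishing required in the Moser step. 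Integrating the resulting time-dependent vector field then produces a Poisson diffeomorphism between a neighborhood of $S$ in $(M,\pi)$ and the first order model, proving Theorem \ref{Theorem_TWO}. The main obstacle, as noted above, is the cohomological reduction: controlling the Poisson cohomology of a genuine (not merely formal) neighborhood of $S$ in terms of the algebroid cohomology of $A_S$ with symmetric-power coefficients, uniformly in the deformation parameter $t$.
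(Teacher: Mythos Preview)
Your outline has the right architecture but contains two genuine gaps that prevent it from working.

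First, the Moser reduction is to the wrong cohomology. The homotopy equation $\dot\pi_t=[\pi_t,X_t]$ asks for $[\dot\pi_t]$ to vanish in the \emph{actual} Poisson cohomology $H^2_{\pi_t}(U)$ of a genuine open $U\supset S$, together with correction terms controlled by $H^1(A_S)$ and $H^1(A_S,\nu_S^*)$ (this is Theorem~\ref{theorem-1}). You instead claim the obstruction is governed by $H^2(A_S,\mathcal{S}^k(\nu_S^*))$ via the jet filtration. That filtration only controls the \emph{formal} completion along $S$, and vanishing of those groups gives precisely the formal equivalence of Theorem~\ref{Theorem_THREE}, nothing more; there is no ``averaging argument'' that promotes this to a smooth equivalence. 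To solve the homotopy equation smoothly one needs either Nash--Moser analysis (as in Theorem~\ref{Theorem_FOUR}) or vanishing of $H^2_\pi$ on a genuine neighborhood.

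Second, and this is the crux: you write that smoothness of $P$ ``means the Weinstein groupoid of $\pi$ is smooth near $S$''. This is false. Smoothness of $P_x$ is equivalent to integrability of the transitive algebroid $A_S$ over $S$ --- a condition on the \emph{first jet} of $\pi$ along $S$ --- and says nothing about whether $T^*U$ is integrable for an open $U\supset S$, which is a condition on the \emph{germ}. The first example in subsection~\ref{Subsection_Examples_linearization} exhibits a Poisson structure with $P_x$ smooth and compact but with $\pi$ non-integrable on every neighborhood of $S$. The whole point of the paper's Steps~2.2--2.3 is that the additional hypothesis $H^2(P_x)=0$ \emph{forces} integrability of a neighborhood: one builds a symplectic realization with compact $1$-connected $\mathcal{F}(\mu)^{\perp}$-leaves from transversals in the manifold of cotangent paths (Proposition~\ref{technical} plus Lemmas~\ref{lemma-Getzler1}--\ref{lemma-Getzler2}, where $H^2(P_x)=0$ enters to make the holonomy trivial), and then Theorem~\ref{theorem-step-2.2} yields smooth Hausdorff $\mathcal{G}(U,\pi|_U)$. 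This is not an ``alternative'' route; it is the missing step that your outline treats as automatic. Once integrability of the neighborhood is established, the Van~Est plus properness argument you sketch does give $H^2_\pi(U)=0$ and the $H^1$ vanishings, and the Moser argument closes.
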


\subsection{Some comments on the proof}

The proof uses ideas similar to the ones in \cite{CrFe-Conn}: a Moser-type argument reduces the problem to a
cohomological one (Theorem \ref{theorem-1}); a Van Est argument and averaging reduces the cohomological problem
to an integrability problem (Theorem \ref{TheoremRedInt}) which, in turn, can be reduced to the existence of special
symplectic realizations (Theorem \ref{theorem-step-2.2}); the symplectic realization is built as in subsection
\ref{Constructing symplectic realizations from transversals in the manifold of cotangent paths}, by working on the
Banach manifold of cotangent paths (subsection \ref{Step 2.3: the needed symplectic realization}).

There have been various attempts to generalize Conn's linearization theorem to arbitrary symplectic leaves. While the desired conclusion was clear (the same as in our theorem), the assumptions (except for the compactness of $S$) are more subtle. Of course, as for any (first order) local form result, one looks for assumptions on the first jet of $\pi$ along $S$. Here are a few remarks on the assumptions.\\

$1.$ {\it Compactness assumptions}. It was originally believed that such a result could follow by first applying Conn's
theorem to a transversal to $S$. The expected assumption was, next to compactness of $S$, that the isotropy Lie
algebra $\mathfrak{g}_x$ ($x\in S$) is semisimple of compact type. The failure of such a result was already pointed out
in \cite{WadeLent}. A refined conjecture with the compactness assumptions from our theorem appeared in
\cite{CrFe0}. The idea is the following: while the condition that $\mathfrak{g}_x$ is semisimple of compact type is
equivalent to the fact that all (connected) Lie groups integrating the Lie algebra $\mathfrak{g}_x$ are compact, one
should require compactness (and smoothness) of only one group associated to $\mathfrak{g}_x$- namely, of the
Poisson homotopy group $G_x$. This is an important difference because
\begin{itemize}
\item our theorem may be applied even when $\mathfrak{g}_x$ is abelian; 
\item actually, under the assumptions of the theorem, $\mathfrak{g}_x$ can be semisimple and compact only when the leaf is a point!
\end{itemize}

$2.$ {\it Vanishing of $H^2(P_x)$}. The compactness condition on the Poisson homotopy bundle is natural also when
drawing an analogy with other local normal form results like local Reeb stability or the slice theorem. However,
compactness alone is not enough (see the first example in subsection \ref{Subsection_Examples_linearization}). The
subtle condition is $H^2(P_x)=0$ and its appearance is completely new in the context of normal forms:

\begin{itemize}
\item In Conn's theorem, it is not visible (it is automatically satisfied!).
\item In the classical cases (foliations, actions) such a condition is not needed.
\end{itemize}
What happens is that the vanishing condition is related to integrability phenomena \cite{CrFe1, CrFe2}. In contrast
with the case of foliations and of group actions, Poisson manifolds give rise to Lie algebroids that may fail to be
integrable. To clarify the role of this assumption, we mention here that:

{\it It implies integrability.}  The main role of this assumption is that it forces the Poisson manifold to be (Hausdorff)
integrable around the leaf. Actually, under such an integrability assumption, the normal form is much easier to
establish, and the vanishing condition is not needed- see Proposition \ref{main-cor}, which can also be deduced from
Zung's linearization theorem \cite{Zung}. Note however that such an integrability condition refers to the germ of $\pi$
around $S$ (and not the first order jet, as desired!); and, of course, Conn's theorem does not make such an assumption.

{\it It implies vanishing of the second Poisson cohomology.} Next to integrability, the vanishing condition also implies the vanishing of the
second Poisson cohomology group $H^2_{\pi}(U)$ (of arbitrarily small neighborhoods $U$ of $S$)- which is known to be relevant to infinitesimal
deformations (see e.g. \cite{CrFe0}). We would like to point out that the use of $H^2_{\pi}(U)=0$ only simplifies our argument but is not essential.
A careful analysis shows that one only needs a certain class in $H^2_{\pi}(U)$ to vanish, and this can be shown using only integrability.
This is explained at the end of subsection \ref{Step 2.1: Reduction to integrability}, when concluding the proof of Proposition
\ref{main-cor} mentioned above.\\

\subsection{Normal form theorems}\label{subsection_normal_form_theorems}

We recall some classical normal form theorems in differential geometry. We give weaker versions of these results, so
that the assumptions depend only on the first jet of the structures involved. Our theorem is of the same nature. For
example, in the slice theorem, properness of the action at the orbit is a sufficient hypothesis, but this is a condition on
the germ of the action, and similarly, in Reeb stability, the holonomy group depends on the germ of the foliation around
the leaf.

\subsubsection*{The Slice Theorem}
\label{The Slice Theorem}

Let $G$ be a Lie group acting on a manifold $M$, $x\in M$, and let $\mathcal{O}$ be the orbit through $x$. The Slice
Theorem (see e.g.\ \cite{DK}) gives a normal form for the $G$-manifold $M$ around $\mathcal{O}$. It is built out of
the isotropy group $G_x$ at $x$ and its canonical representation $\nu_x= T_xM/T_x\mathcal{O}$. Explicitly, the local
model is:
\[ G\times_{G_x} \nu_x= (G\times \nu_x)/G_{x}\]
which is a $G$-manifold and admits $\mathcal{O}$ as the orbit corresponding to $0\in \nu_x$.

\begin{theorem}[The Slice Theorem]
If $G$ is compact, then a $G$-invariant neighborhood of $\mathcal{O}$ in $M$ is diffeomorphic, as a $G$-manifold, to
a $G$-invariant neighborhood of $\mathcal{O}$ in $G\times_{G_x} \nu_x$.\index{slice theorem}
\end{theorem}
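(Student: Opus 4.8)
The plan is to establish the equivalence by means of the exponential map of a $G$-invariant Riemannian metric, in the classical way. First I would average an arbitrary Riemannian metric on $M$ over the compact group $G$ to obtain a $G$-invariant metric $m$; this averaging is the only place where compactness of $G$ is truly essential. Since $G$ is compact, the orbit $\mathcal{O}=G\cdot x\cong G/G_x$ is a compact embedded submanifold of $M$, and its normal bundle with respect to $m$,
\[\nu\mathcal{O}:=T\mathcal{O}^{\perp}\subset TM_{|\mathcal{O}},\]
is a $G$-equivariant vector bundle over $\mathcal{O}$, canonically isomorphic as a $G$-bundle to $TM_{|\mathcal{O}}/T\mathcal{O}$.

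Next I would identify $\nu\mathcal{O}$ with the local model $G\times_{G_x}\nu_x$. As $\mathcal{O}$ is the homogeneous space $G/G_x$ and $\nu\mathcal{O}$ is $G$-equivariant over it, the bundle is determined by its fiber $\nu_x=T_xM/T_x\mathcal{O}$ together with the $G_x$-action obtained by restricting the $G$-action; one checks that this $G_x$-action is exactly the isotropy representation used to build the local model. The associated-bundle construction then yields a $G$-equivariant vector bundle isomorphism
\[G\times_{G_x}\nu_x\diffto \nu\mathcal{O},\qquad [g,v]\mapsto dg_{x}(v),\]
which sends the zero orbit to $\mathcal{O}$.

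Then I would invoke the exponential map $\exp^{m}\colon\nu\mathcal{O}\rmap M$ of $m$. Since $m$ is $G$-invariant, $\exp^{m}$ is $G$-equivariant, it restricts to the identity on the zero section $\mathcal{O}$, and its differential along $\mathcal{O}$ is an isomorphism. By compactness of $\mathcal{O}$ there is $\varepsilon>0$ such that $\exp^{m}$ is an open embedding on the disk bundle $D_{\varepsilon}(\nu\mathcal{O})=\{v:|v|_{m}<\varepsilon\}$, and by $G$-invariance of $m$ both $D_{\varepsilon}(\nu\mathcal{O})$ and its image $U:=\exp^{m}(D_{\varepsilon}(\nu\mathcal{O}))$ are $G$-invariant. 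Precomposing with the isomorphism of the previous paragraph produces the desired $G$-equivariant diffeomorphism from a $G$-invariant neighborhood of $\mathcal{O}$ in $G\times_{G_x}\nu_x$ onto the $G$-invariant neighborhood $U$ of $\mathcal{O}$ in $M$, the identity on $\mathcal{O}$.

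The argument is entirely standard, so there is no serious obstacle; the only subtlety is the careful bookkeeping of equivariance throughout — in particular verifying that the $G_x$-action on the fiber $\nu_x$ induced from the $G$-action on $\nu\mathcal{O}$ coincides with the isotropy representation, so that the associated-bundle identification is honestly $G$-equivariant. If one wished to avoid the metric, an alternative would be to build an invariant tubular neighborhood directly, using an equivariant version of the Tube Lemma together with averaging of a (non-invariant) tubular neighborhood embedding; but the Riemannian route is the cleanest.
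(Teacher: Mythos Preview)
Your argument is the standard, correct proof of the Slice Theorem via the exponential of a $G$-invariant metric. However, the paper does not actually prove this statement: it is recalled as classical background (with a reference to \cite{DK}) in the discussion of normal form theorems, so there is no paper proof to compare against.
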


\noindent It is instructive to think of the building blocks of the local model as a triple
\[(G_x, G\rmap \mathcal{O}, \nu_x),\]
consisting of the Lie group $G_x$, the principal $G_x$-bundle $G$ over $\mathcal{O}$ and a representation $\nu_x$ of
$G_x$. This triple represents the first order data (first jet) at $\mathcal{O}$ of the $G$-manifold $M$, while the
associated local model, represents its first order approximation.

\subsubsection*{Local Reeb Stability}

\index{Reeb stability}Let $\mathcal{F}$ be a foliation on a manifold $M$, $x\in M$, and denote by $L$ the leaf
through $x$. The Local Reeb Stability Theorem is a normal form result for the foliation around $L$. We recall here a
weaker version (see also chapter \ref{ChReeb}). Denote by $\widetilde{L}$ the universal cover of $L$, and consider
the linear holonomy representation of $\Gamma_x:=\pi_1(L,x)$ on $\nu_x= T_xM/T_xL$. The local model is
$$\widetilde{L}\times_{\Gamma_x} \nu_x=(\widetilde{L}\times \nu_x)/\Gamma_x$$
with leaves $\widetilde{L}\times_{\Gamma_x} (\Gamma_xv)$ for $v\in \nu_x$ and $L$ corresponds to $v=0$.

\begin{theorem}[Local Reeb Stability]
If $L$ is compact and $\Gamma_x$ is finite, then a saturated neighborhood of $L$ in $M$ is diffeomorphic, as a
foliated manifold, to a neighborhood of $L$ in $\widetilde{L}\times_{\Gamma_x} \nu_x$.
\end{theorem}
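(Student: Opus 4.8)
The plan is to obtain this statement as a corollary of the General Local Reeb Stability Theorem~\ref{Reeb_Theorem}, which was proved in the appendix for (possibly non-compact) embedded leaves with finite holonomy. First I would record the standard fact that a compact leaf $L$ is automatically an embedded submanifold (an injective immersion of a compact manifold into a Hausdorff manifold is an embedding), so that after restricting to a tubular neighborhood one may regard the foliation as living near the zero section of the normal bundle $\nu_L\to L$, whose fiber at $x$ is $\nu_x=T_xM/T_xL$. Next I would observe that $\Gamma_x=\pi_1(L,x)$ finite forces the holonomy group $H:=\mathrm{Hol}(\mathcal{F})_x$ to be finite, since $H$ is the quotient of $\Gamma_x$ by the normal subgroup $K$ of loops with trivial holonomy germ. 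With $L$ compact and $H$ finite, part~3) of Theorem~\ref{Reeb_Theorem} applies and yields a saturated open neighborhood $U\subset M$ of $L$ together with a foliated diffeomorphism, equal to the identity on $L$,
\[
(U,\mathcal{F}_{|U})\;\cong\;(\widetilde{S}\times_{H}\nu_x,\mathcal{F}_{N}),
\]
where $\widetilde{S}\to L$ is the holonomy cover and $H$ acts on $\nu_x$ through the linear holonomy representation.

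The remaining work is purely formal: I would identify the local model $\widetilde{S}\times_{H}\nu_x$ just produced with the model $\widetilde{L}\times_{\Gamma_x}\nu_x$ of the statement. Here the point is that $\widetilde{S}$ is the quotient $\widetilde{L}/K$ of the universal cover by $K=\ker(\Gamma_x\to H)$, and that every class in $K$, having trivial holonomy germ at $x$, in particular has trivial linear holonomy; hence $K$ acts trivially on $\nu_x$. Consequently the diagonal $\Gamma_x$-action on $\widetilde{L}\times\nu_x$ descends to $\widetilde{L}/K\times\nu_x=\widetilde{S}\times\nu_x$ with residual group $\Gamma_x/K=H$, and the canonical map
\[
(\widetilde{L}\times\nu_x)/\Gamma_x\;\longrightarrow\;(\widetilde{S}\times\nu_x)/H
\]
is a diffeomorphism that sends the leaf $\widetilde{L}\times_{\Gamma_x}(\Gamma_x v)$ onto $(\widetilde{S}\times Hv)/H$ for each $v\in\nu_x$, hence is an isomorphism of foliated manifolds. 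Composing it with the diffeomorphism of the previous paragraph gives the asserted normal form, still restricting to the identity on $L$; and the model is built only from the triple $(\Gamma_x,\ \widetilde{L}\to L,\ \nu_x)$, i.e.\ from first-order data along $L$, exactly in the spirit of the Slice Theorem.

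The genuine difficulty is entirely absorbed into Theorem~\ref{Reeb_Theorem}: linearizing the holonomy cocycle via Bochner linearization of the finite-group action on a transversal, patching the resulting foliated charts coherently along $L$, and then using compactness of $L$ to upgrade a local linearization to a saturated one. At the level of the present statement nothing beyond the bookkeeping above is required; the only things worth double-checking are that the hypotheses line up ($\Gamma_x$ finite is strictly stronger than "$H$ finite", so Theorem~\ref{Reeb_Theorem} does apply) and that it is compactness of $L$, rather than mere finite type, that licenses invoking the strongest, \emph{invariant}, conclusion.
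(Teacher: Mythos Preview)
Your proposal is correct. The paper does not give a self-contained proof of this statement---it is recalled in the motivational section on classical normal form theorems, with a pointer to Chapter~\ref{ChReeb}---so your derivation from the General Local Reeb Stability Theorem~\ref{Reeb_Theorem} is exactly the intended route. The identification $\widetilde{L}\times_{\Gamma_x}\nu_x\cong\widetilde{S}\times_H\nu_x$ via the observation that $K=\ker(\Gamma_x\to H)$ acts trivially on $\nu_x$ is the same mechanism the paper invokes in section~\ref{section_local_reeb_stability_for_non-compact_leaves} when it notes $(\widetilde{S}\times_H V,\mathcal{F}_N)\cong(\widetilde{S}_{\mathrm{lin}}\times_{H_{\mathrm{lin}}}V,\mathcal{F}_N)$; you simply carry this one step further up to the universal cover.
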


Again, the local model is build out of a triple $$(\Gamma_x, \widetilde{L}\rmap L, \nu_x),$$ consisting of the discrete
group $\Gamma_x$, the principal $\Gamma_x$-bundle $\widetilde{L}$ and a representation $\nu_x$ of $\Gamma_x$.
The triple should be thought of as the first order data along $L$ associated to the foliated manifold $M$, and the local
model should be thought of as its first order approximation (see also chapter \ref{ChReeb}).

\subsubsection*{Conn's Linearization Theorem}\label{Conn's theorem}

Also Conn's theorem\index{Conn's theorem} \cite{Conn} can be put in a similar setting. Let $(M,\pi)$ be a Poisson
manifold and let $x\in M$ be a fixed point of $\pi$. Let $\mathfrak{g}_x$ be the isotropy Lie algebra at $x$ and let
$G_x:=\mathcal{G}(\mathfrak{g}_x)$ be the 1-connected Lie group integrating $\mathfrak{g}_x$. The local model of
$M$ around $x$ is the linear Poisson structure corresponding to $\mathfrak{g}_x$, and, as discussed in subsection
\ref{Subsection_examples_of_symplectic_groupoids}, this can be constructed as the quotient of the symplectic
manifold (see (\ref{EQ_local_model_fixed_point_reduction}))
\[(\mathfrak{g}_x^*,\pi_{\mathfrak{g}_x})=(G_x\times\mathfrak{g}_x^*,-d\widetilde{\theta}_{\mathrm{MC}})/G_x.\]
The local data is again a triple
\[ (G_x, G_x\rmap \{x\}, \mathfrak{g}_{x}^{*} )\]
and the assumption of Conn's theorem is equivalent to compactness of $G_x$.

\subsection{The local model}\label{The local model}

In this subsection we explain the local model around symplectic leaves, which generalizes the linear Poisson structure
from the case of fixed points.\index{local model, Poisson} The construction given below is standard in symplectic
geometry and goes back to the local forms of Hamiltonian spaces around the level sets of the moment map (cf. e.g.
\cite{GS,SymplecticFibrations}) and it also shows up in the work of Montgomery \cite{Montgomery}.

The starting data is again a triple. It consists of a symplectic manifold $(S,\omega_S)$, which will be the symplectic
leaf, a principal $G$-bundle $P\to S$, which will be the Poisson homotopy bundle, and the coadjoint representation of
$G$ 
\[(G,P\rmap (S,\omega_S),\mathfrak{g}^*).\]
As before, $G$ acts diagonally on $P\times\mathfrak{g}^*$. As a manifold, the local model is:
\[ P\times_{G}\mathfrak{g}^*= (P\times \mathfrak{g}^*)/G.\]
To describe the Poisson structure, we choose a connection 1-form on $P$, $\theta\in \Omega^1(P, \mathfrak{g})$, i.e.\ $\theta$ must satisfy
\[\theta(\widetilde{X})=X, \ \ r_g^*(\theta)=Ad_{g^{-1}}\circ\theta, \ \ X\in \mathfrak{g}, \ g\in G,\]
where $\widetilde{X}$ the vector field on $P$ representing the infinitesimal action of $X$. The $G$-equivariance of
$\theta$ implies that the $1$-form $\widetilde{\theta}$ on $P\times \mathfrak{g}^*$ defined by
\[\widetilde{\theta}_{(q,\xi)}=\langle\xi,\theta_q\rangle\]
is $G$-invariant. Consider now the $G$-invariant 2-form
\[\Omega:=p^*(\omega_S)-d\widetilde{\theta}\in \Omega^2(P\times \mathfrak{g}^*).\]
Let $\Sigma\subset P\times \mathfrak{g}^*$ be the (open) set on which $\Omega$ is nondegenerate. Then $\Sigma$
contains $P\times\{0\}$ and $(\Sigma,\Omega)$ is a symplectic manifold on which $G$ acts freely and properly. The
action is Hamiltonian, with equivariant moment map
\[\mu:\Sigma\rmap \mathfrak{g}^*, \ \mu(q,\xi)=\xi.\]
Using $G$-invariance of $\widetilde{\theta}$, this follows from the computation
\begin{align*}
\iota_{\widetilde{X}-ad_X^*}\Omega=-L_{{\widetilde{X}-ad_X^*}}\widetilde{\theta}+d\iota_{\widetilde{X}-ad_X^*}\widetilde{\theta}=d\langle X, \mu\rangle.
\end{align*}

The local model is the quotient of this symplectic manifold\index{quotients of symplectic manifolds}
\[(N(P),\pi_P):=(\Sigma,\Omega)/G.\]
We regard $N(P)$ as an open in $P\times_G\mathfrak{g}^*$ and we make the identification $S\cong P\times_G \{0\}$.
Then $(N(P),\pi_P)$ contains $(S,\omega_S)$ as the symplectic leaf
\[(S,\omega_S)=(P\times \{0\},\Omega_{|P\times \{0\}})/G\subset (N(P),\pi_P).\]

We will prove in Proposition \ref{proposition_splitting_equivalent} below, that different choices of connections induce
Poisson structures that are isomorphic around $S$.

\begin{example}[Torus bundles]\label{torus-bundle}\rm
To understand the role of the bundle $P$, it is instructive to look at the case when $G= T^q$ is a $q$-torus. As a
foliated manifold, the local model is:
\[ P\times_{G} \mathfrak{g}^*= S\times \mathbb{R}^q,\]
and the symplectic leaves, are just copies of $S$:
\[ S\times \{y\}\ \ \ y\in \mathbb{R}^q.\]
To complete the description of the local model as a Poisson manifold, we need to specify the symplectic forms
$\omega_y$ on $S$- and this is where $P$ comes in. Principal $T^q$-bundles are classified by $q$ integral cohomology
classes $c_1, \ldots , c_q\in H^2(S,\mathbb{Z})$. The choice of the connection $\theta$ above corresponds to a choice
of representatives $\omega_i\in \Omega^2(S)$ for $c_i$. The symplectic structure on the leaf $S\times\{y\}$ of the
local model is given by
\[ \omega_{y}= \omega_{S}+ y_1\omega_1+ \ldots + y_q\omega_q,\ \ y= (y_1, \ldots , y_q).\]
\end{example}

\subsubsection*{Integrability}

Since $(N(P),\pi_P)$ is constructed as the quotient of the Hamiltonian space $\Sigma$, by Lemma
\ref{Lemma_symplectic_groupoid_reduction}, it is integrable by the Lie groupoid
$(\Sigma\times_{\mu}\Sigma)/G\rightrightarrows N(P)$. In fact, this groupoid is just the restriction to $N(P)$ of the
action groupoid
\[\mathcal{G}(P):=(P\times P\times\mathfrak{g}^*)/G\rightrightarrows P\times_G\mathfrak{g}^*,\]
corresponding to the representation of the gauge groupoid\index{gauge groupoid} $P\times_GP$ on
$P\times_G\mathfrak{g}^*$. Explicitly, its structure maps are given by
\[u([p,\xi])=[p,p,\xi], \ s([p_1,p_2,\xi])=[p_2,\xi], \ t([p_1,p_2,\xi])=[p_1,\xi],\] \[[p_1,p_2,\xi]^{-1}=[p_2,p_1,\xi], \ [p_1,p_2,\xi]\cdot[p_2,p_3,\xi]=[p_1,p_3,\xi].\]

Note that, if $P$ is compact, then $N(P)$ contains arbitrarily small opens of the form $P\times_GV$, with $V\subset
\mathfrak{g}^*$ a $G$-invariant open around $0$. Denote the coadjoint orbits by $O_{\xi}:=G\xi$. The symplectic
leaves of $P\times_G V$ are
\[ P\times_{G}O_{\xi} \subset P\times_{G} V, \ \ \  \xi\in V.\]
The opens $P\times_GV$ are also $\mathcal{G}(P)$-invariant, and the restriction of $\mathcal{G}(P)$ to $P\times_G
V$ is $(P\times P\times V)/G$. In particular, all its $s$-fibers are diffeomorphic to $P$. This proves the following:

\begin{proposition}\label{Lemma_small_neighborhoods}
The local model $(N(P),\pi_P)$ associated to a principal bundle $P$ over a symplectic manifold $(S,\omega_S)$ is
integrable by a Hausdorff symplectic groupoid. If $P$ is compact, then there are arbitrarily small invariant opens $U$
containing $S$, such that the $s$-fibers over points in $U$ are diffeomorphic to $P$.
\end{proposition}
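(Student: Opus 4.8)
The plan is to read off both assertions directly from the description of $(N(P),\pi_P)$ as a Marsden--Weinstein quotient and from the action groupoid $\mathcal{G}(P)=(P\times P\times\mathfrak{g}^*)/G$ introduced just above the statement; essentially everything needed has already been assembled in the preceding paragraphs, so the proof is mostly a matter of citing and organizing.

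For the integrability statement I would argue as follows. By construction $(N(P),\pi_P)=(\Sigma,\Omega)/G$, where $\Sigma$ is the open $G$-invariant subset of $(P\times\mathfrak{g}^*,\,p^*\omega_S-d\widetilde{\theta})$ on which $\Omega$ is nondegenerate, and where the free and proper $G$-action is Hamiltonian with equivariant moment map $\mu(q,\xi)=\xi$. Lemma~\ref{Lemma_symplectic_groupoid_reduction} then applies and exhibits $(\Sigma\times_{\mu}\Sigma)/G\rightrightarrows N(P)$ as a symplectic groupoid integrating $\pi_P$. To see Hausdorffness I would identify this groupoid, via $[(q_1,\xi),(q_2,\xi)]\mapsto[q_1,q_2,\xi]$, with the open subgroupoid of $\mathcal{G}(P)$ consisting of the $[p_1,p_2,\xi]$ with $[p_1,\xi],[p_2,\xi]\in N(P)$; and $\mathcal{G}(P)$ itself is a Hausdorff manifold because $G$ acts freely and properly on $P\times P\times\mathfrak{g}^*$ (properness already for the action on a single copy of $P$), so any open subgroupoid of it is Hausdorff. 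Reformulating everything in terms of $\mathcal{G}(P)$ is the right organizing principle here, since it simultaneously makes the source fibers explicit.

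For the second assertion, assume $P$ compact and let $\mathcal{O}\subset N(P)$ be an arbitrary neighborhood of $S=P\times_G\{0\}$; I must produce an invariant open $U$ with $S\subset U\subset\mathcal{O}$ whose source fibers are diffeomorphic to $P$. Pulling $\mathcal{O}$ back to $\Sigma$ yields an open $G$-invariant set $\widetilde{\mathcal{O}}$ containing the compact slice $P\times\{0\}$, so the Tube Lemma~\ref{TubeLemma} provides an open $W\ni0$ in $\mathfrak{g}^*$ with $P\times W\subset\widetilde{\mathcal{O}}$; replacing $W$ by its coadjoint saturation $V:=\bigcup_{g\in G}Ad^{*}_{g^{-1}}(W)$, which is open and $G$-invariant, gives $P\times V\subset\widetilde{\mathcal{O}}$. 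Then $U:=P\times_G V$ satisfies $S\subset U\subset\mathcal{O}$, it is $\mathcal{G}(P)$-invariant, and the restriction of $\mathcal{G}(P)$ to it is $(P\times P\times V)/G$. Finally, for $[p,\xi]\in U$ the source fiber is $\{[p_1,p,\xi]:p_1\in P\}$ --- using freeness of the $G$-action on $P$ to normalise the last two entries --- so $p_1\mapsto[p_1,p,\xi]$ is a diffeomorphism $P\diffto s^{-1}([p,\xi])$.

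The only routine items are the verification that $\mu$ is an equivariant moment map and that the $G$-action on $\Sigma$ is free and proper --- both essentially done in the text --- together with the Tube-Lemma bookkeeping. I do not expect a genuine obstacle; the single spot deserving a sentence rather than a bare citation is the Hausdorffness of the integrating groupoid, and passing to the open subgroupoid of $\mathcal{G}(P)$ disposes of that and of the shape of the $s$-fibers at one stroke.
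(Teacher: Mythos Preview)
Your proposal is correct and follows essentially the same approach as the paper: the paper's argument is really the paragraph immediately preceding the proposition (ending with ``This proves the following:''), which likewise identifies the integrating groupoid with the restriction to $N(P)$ of $\mathcal{G}(P)=(P\times P\times\mathfrak{g}^*)/G$, observes that for $P$ compact one can take invariant opens $P\times_G V$ with $V$ $G$-invariant around $0$, and reads off that the $s$-fibers of $(P\times P\times V)/G$ are diffeomorphic to $P$. You supply a bit more detail than the paper --- the Tube Lemma step to produce $V$ inside a prescribed neighborhood, and the explicit Hausdorffness argument via freeness and properness of the $G$-action --- but the structure is the same.
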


\subsection{Full statement of Theorem \ref{Theorem_TWO} and reformulations}\label{Subsection_Full_statement_of_the_theorem_and_reformulations}

\subsubsection*{Smoothness of the Poisson homotopy bundle}

Recall that the Poisson homotopy bundle\index{Poisson homotopy bundle} $P_x$ of $(M,\pi)$ at $x$, is the $s$-fiber
over $x$ of the Weinstein groupoid of the cotangent Lie algebroid $T^*M$\index{cotangent Lie
algebroid}\index{Weinstein groupoid}
\[ \mathcal{G}(M,\pi)=\frac{\textrm{cotangent\ paths}}{\textrm{cotangent\ homotopy}}.\]
As such, $P_x$ is the space of cotangent paths starting at $x$ modulo cotangent homotopies. The Poisson homotopy
group $G_x$\index{Poisson homotopy group} are those cotangent paths that start and end at $x$. Composition of
cotangent paths defines a free action of $G_x$ on $P_x$, with quotient identified with the symplectic leaf $S$ through
$x$, via the target map
\[ P_x\longrightarrow S,\ \ [a]\mapsto p(a(1)).\]

Regarding the smoothness of $P_x$, one remarks that it is a quotient of the (Banach) manifold of cotangent paths of
class $C^1$. We are interested in a smooth structure which make the corresponding quotient map into a submersion.
Of course, there is at most one such smooth structure on $P_x$; when it exists, we say that $P_x$ is smooth.
Completely analogously, one makes sense of the smoothness of $G_x$. However, smoothness of $P_x$, Hausdorffness
of $P_x$, smoothness of $G_x$ and Hausdorffness of $G_x$ are all equivalent (see \cite{CrFe2}), and they are
governed by the \textbf{monodromy map}\index{monodromy map} at $x$, which is a group homomorphism (see also
section \ref{Section_Integrability})
\begin{equation}
\label{partial}
\partial_x: \pi_2(S,x)\rmap Z(G(\mathfrak{g}_x))
\end{equation}
with values in the center of the 1-connected Lie group $G(\mathfrak{g}_x)$ of $\mathfrak{g}_x$.

From \cite{CrFe1,CrFe2}, we recall:

\begin{proposition}\label{smoothness-all-in-one}
The Poisson homotopy bundle $P_x$ at $x$ is smooth if and only if the image of $\partial_x$ is a discrete subgroup of
$G(\mathfrak{g}_x)$.

In this case, $G_x$ is a Lie group with Lie algebra $\mathfrak{g}_x$ and $P_x$ is a smooth principal $G_x$-bundle
over $S$. Under the natural inclusion $\pi_1(G_x)\subset Z(G(\mathfrak{g}_x))$, the monodromy map becomes the
boundary in the homotopy long exact sequence associated to $P_x\to S$
\[\ldots \rmap \pi_2(S)\stackrel{\partial_x}{\rmap}\pi_1(G_x)\rmap \ldots\]
\end{proposition}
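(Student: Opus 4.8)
The plan is to deduce the proposition from the monodromy theory of transitive Lie algebroids in \cite{CrFe1,CrFe2}, which I would assemble in four steps.

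\emph{Step 1: reduce to the transitive algebroid $A_S:=T^*M_{|S}$.} A cotangent path starting at $x$ has base path contained in the symplectic leaf $S$, and so does every cotangent homotopy between two such paths; hence $P_x$, together with its $G_x$-action and the projection $P_x\rmap S$, is canonically the $s$-fibre over $x$ of the Weinstein groupoid $\mathcal{G}(A_S)$, and $G_x$ is the isotropy group $\mathcal{G}(A_S)_x$. The algebroid $A_S$ is transitive over $S$, with isotropy Lie algebra $\ker(\rho)_x=\mathfrak{g}_x$ at $x$.

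\emph{Step 2: identify the monodromy.} For the transitive algebroid $A_S$, the monodromy homomorphism at $x$ (recalled in section \ref{Section_Integrability}) is exactly the map $\partial_x\colon\pi_2(S,x)\rmap Z(G(\mathfrak{g}_x))$ of (\ref{partial}), and its image is $\widetilde{\mathcal N}_x(A_S)$. By transitivity, the monodromy group $\mathcal N_y(A_S)$ does not depend on $y\in S$, so the local uniform discreteness of $\mathcal N(A_S)$ near the zero section that appears in Theorem \ref{Criterion of Marius and Rui} is equivalent to discreteness of the single group $\widetilde{\mathcal N}_x(A_S)=\mathrm{im}(\partial_x)$ in $G(\mathfrak{g}_x)$. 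Thus $A_S$ is integrable if and only if $\mathrm{im}(\partial_x)$ is discrete.

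\emph{Step 3: smoothness of $P_x\Leftrightarrow$ discreteness.} If $\mathrm{im}(\partial_x)$ is discrete, Theorem \ref{Criterion of Marius and Rui} makes $A_S$ integrable; then $\mathcal G(A_S)$ is a Lie groupoid with $1$-connected $s$-fibres which, being transitive over the connected base $S$, is a gauge groupoid, so $P_x=s^{-1}(x)$ is a smooth principal $G_x$-bundle over $S$ and $G_x$ is a Lie group with Lie algebra $\mathfrak g_x$. Conversely, if $P_x$ is smooth then, by the equivalences of \cite{CrFe2} quoted above, $G_x$ is smooth, hence a Lie group with Lie algebra $\mathfrak g_x$; the universal integration $G(\mathfrak g_x)$ maps onto the identity component $G_x^{\circ}$ with kernel a discrete central subgroup, and by the construction of $\partial_x$ in \cite{CrFe1} this kernel is precisely $\mathrm{im}(\partial_x)$ — hence $\mathrm{im}(\partial_x)$ is discrete.

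\emph{Step 4: the homotopy exact sequence, and the main obstacle.} Once $P_x$ is a smooth principal $G_x$-bundle over $S$, the fibration $G_x\rmap P_x\rmap S$ gives a long exact sequence; since the $s$-fibres of a Weinstein groupoid are $1$-connected, $\pi_1(P_x)=0$, so the connecting map $\pi_2(S)\rmap\pi_1(G_x)=\pi_1(G_x^{\circ})$ is surjective, and $\pi_1(G_x^{\circ})=\ker\bigl(G(\mathfrak g_x)\to G_x^{\circ}\bigr)\subset Z(G(\mathfrak g_x))$. Comparing the geometric description of this connecting homomorphism (lift a $2$-sphere in $S$ to a disc in $P_x$ and read off its boundary loop in the fibre) with the definition of $\partial_x$ via $A_S$-homotopies of $A_S$-loops identifies the two, as in \cite{CrFe1,CrFe2}. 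The delicate point — and the one I expect to require the full weight of \cite{CrFe1,CrFe2} rather than a short argument — is the converse in Step 3: one must know that smoothness of the leaf space of the cotangent-path foliation is genuinely \emph{equivalent} to integrability of $A_S$ (not merely implied by it), and that the kernel of $G(\mathfrak g_x)\to G_x^{\circ}$ is exactly the monodromy image and not an a priori larger subgroup.
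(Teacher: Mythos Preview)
The paper does not actually prove this proposition: it is introduced with the phrase ``From \cite{CrFe1,CrFe2}, we recall'' and no proof is given. Your proposal is therefore not competing with a proof in the paper but rather reconstructing the argument from the cited references, and your four-step outline does this correctly: the reduction to the transitive algebroid $A_S$, the identification of the monodromy with $\partial_x$, the application of the integrability criterion (Theorem~\ref{Criterion of Marius and Rui}), and the comparison with the homotopy long exact sequence are exactly the ingredients assembled in \cite{CrFe1,CrFe2}. Your identification of the delicate point --- that smoothness of $P_x$ genuinely forces integrability of $A_S$ and that the covering kernel is exactly $\mathrm{im}(\partial_x)$ --- is accurate and is indeed where the substance of \cite{CrFe1,CrFe2} is invoked rather than derived.
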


\subsubsection*{Complete statement of Theorem \ref{Theorem_TWO}}
The local model of $(M,\pi)$ around the leaf $S$ through $x$ is defined as follows.

\begin{definition}\label{first-order-jet-def1}
Assuming that $P_x$ is smooth, \textbf{the first order local model}\index{local model, Poisson} of $(M,\pi)$ around
the leaf $(S,\omega_S)$ through $x$ is defined as the local model (from subsection \ref{The local model}) associated to
the Poisson homotopy bundle
\[(G_x,P_x\rmap (S,\omega_S),\mathfrak{g}^*_x).\]
\end{definition}

The fact that the Poisson homotopy bundle encodes the first jet of $\pi$ along $S$, and that the first order local model
is a first order approximation of $\pi$ along $S$, is explained in subsection
\ref{Subsection_the_local_model_the_general_case}.

We can complete now the statement of Theorem \ref{Theorem_TWO}.
\begin{theoremtwo}[complete version]
Let $(M, \pi)$ be a Poisson manifold, $S$ a compact symplectic leaf and $x\in S$. If $P_x$, the Poisson homotopy bundle at $x$, is smooth,
compact, with
\begin{equation}\label{subtle-cond}
H^2(P_x; \mathbb{R})= 0,
\end{equation}
then there exists a Poisson diffeomorphism between an open neighborhood of $S$ in $(M,\pi)$ and an open
neighborhood of $S$ in the first order local model, which is the identity on $S$.
\end{theoremtwo}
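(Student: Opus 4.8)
The plan is to follow the four-step scheme announced in the introduction, adapting the geometric proof of Conn's theorem from \cite{CrFe-Conn} to the setting of symplectic leaves. Roughly: (i) a Moser-type argument reduces the normal form problem to a cohomological vanishing statement about the first-order model; (ii) a Van Est argument together with averaging over a proper groupoid reduces this cohomological problem to an integrability statement; (iii) integrability is reduced to the existence of a suitable symplectic realization with extra properties; (iv) the realization is constructed explicitly on the Banach manifold of cotangent paths, using the formula for $\Omega$ from section \ref{Constructing symplectic realizations from transversals in the manifold of cotangent paths}. The smoothness, compactness and $H^2$-vanishing hypotheses on $P_x$ feed into the middle steps: smoothness makes the Poisson homotopy bundle a genuine principal $G_x$-bundle (Proposition \ref{smoothness-all-in-one}), compactness of $P_x$ gives arbitrarily small invariant tubes with $s$-fibers diffeomorphic to $P_x$ (Proposition \ref{Lemma_small_neighborhoods}) and hence a \emph{proper} integration near $S$, and $H^2(P_x;\mathbb{R})=0$ is exactly what is needed to run the Van Est / integrability argument and to kill the relevant Poisson cohomology class.

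\textbf{Step 1: reduction to a cohomological problem.} First I would set up the identification of the first-order model with the Poisson structure built from the Poisson homotopy bundle (subsection \ref{Subsection_the_local_model_the_general_case}), and show, using a tubular neighborhood theorem for Poisson structures along $S$ (matching first-order jets), that $\pi$ and its first-order model $\pi_P$ have the same first jet along $S$. Then a path $\pi_t$ of Poisson structures interpolating between them, all agreeing to first order along $S$, produces a family of Dirac/Poisson cohomology classes; a Moser path argument in the spirit of Lemma \ref{Lemma_Moser_Poisson_vs_Symplectic} shows that if a suitable cohomological obstruction (a class living in a degree-two cohomology of the transitive algebroid $A_S=T^*M|_S$ with appropriate coefficients, or equivalently a class in $H^2_{\pi_t}(U)$ of small tubes $U$) vanishes, then $\pi$ is Poisson diffeomorphic to $\pi_P$ near $S$ by a diffeomorphism fixing $S$. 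This is Theorem \ref{theorem-1} in the scheme; the point is that the normal form follows once we produce the needed primitives on arbitrarily small neighborhoods of $S$ in a controlled way.

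\textbf{Steps 2 and 3: integrability and symplectic realizations.} Next I would show that the cohomological obstruction vanishes. By Proposition \ref{Lemma_small_neighborhoods}, compactness of $P_x$ gives an integration $\mathcal{G}(P)$ near $S$ whose $s$-fibers are compact and diffeomorphic to $P_x$; combined with $H^2(P_x;\mathbb{R})=0$, the $s$-fibers are homologically $2$-connected in the relevant range, so Crainic's Van Est theorem (section \ref{Subsection_Van_Est}) identifies Lie algebroid cohomology in degree two with differentiable groupoid cohomology, which vanishes for proper groupoids (Proposition 1 of \cite{Cra}). Averaging over the proper groupoid then kills the obstruction. This reduces the problem to showing that $\pi$ itself is integrable near $S$ by a Hausdorff symplectic groupoid with the same kind of compact $2$-connected $s$-fibers (Theorem \ref{TheoremRedInt}). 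That, in turn, is reduced to the existence of a symplectic realization of $\pi$ near $S$ whose fibers over $S$ reconstruct $P_x$ (Theorem \ref{theorem-step-2.2}); such a realization is built by working on the manifold of cotangent paths and choosing a transversal through $S$, invoking Proposition \ref{PropTrnasversal} and Corollary \ref{CoroKerOmega} for the symplectic structure and its kernel. Here the hypothesis that $P_x$ is smooth and compact is what makes the leaf space $\mathcal{G}(M,\pi)$ smooth and Hausdorff near $S$.

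\textbf{Main obstacle.} The delicate part is Step 3 — controlling \emph{global} integrability near $S$ (not just the first-order jet) from the stated first-order/compactness hypotheses, and in particular verifying that the symplectic realization built on cotangent paths restricts correctly over the compact leaf so that the resulting local groupoid is Hausdorff, proper, and has $s$-fibers $P_x$. One must check that the monodromy map \eqref{partial} having discrete image propagates to a neighborhood of $S$ (so that Theorem \ref{Criterion of Marius and Rui} applies on a tube), and that properness near $S$ follows from compactness of $P_x$; this uses an invariant tubular neighborhood argument (Lemma \ref{Lemma_tubular_neighborhood}) and continuity of the monodromy. Once integrability with proper, compact, homologically $2$-connected $s$-fibers is secured near $S$, Steps 2 and 1 combine to finish: the obstruction vanishes, Moser gives the Poisson diffeomorphism fixing $S$, and one shrinks to the first-order local model. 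I would also record, as in the remark following the statement, that strictly one only needs a single class in $H^2_\pi(U)$ to vanish, so the full $H^2_\pi(U)=0$ is a convenience and the essential input is integrability plus properness.
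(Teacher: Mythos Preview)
Your overall architecture matches the paper's: Moser path method for Step 1, Van Est plus properness for the cohomological vanishing, reduction to a symplectic realization with good $s$-fibers, and construction of that realization on the Banach manifold of cotangent paths. But there are two genuine gaps in the details.

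\textbf{Step 1.} The Moser reduction (Theorem \ref{theorem-1}) does not boil down to a single degree-two class. One first solves $[\gamma,Z]_\ltimes=\dot\gamma_1$ using $H^2_\pi(M)_S=0$; but to make the resulting flow fix $S$ one needs the Moser vector field to vanish to first order along $S$, and correcting $Z$ so that $j^1_S Z=0$ requires two further conditions, $H^1_{\pi,S}(M)=0$ and $H^1_{\pi,S}(M,\nu_S^*)=0$, corresponding to the $\mathrm{gr}_0$ and $\mathrm{gr}_1$ parts of $j^1_S Z$. These degree-one conditions are handled by the Van Est/properness argument over $\mathcal G(A_S)$ (which is compact because $P_x$ is), but you should be aware they are there.

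\textbf{Step 3 (the ``main obstacle'').} Your suggested mechanism---propagating discreteness of the monodromy to a tube via Lemma \ref{Lemma_tubular_neighborhood} and continuity---is \emph{not} how the paper proceeds, and indeed one cannot conclude integrability of $\pi$ near $S$ from first-order data by a monodromy-continuity argument (the first example in subsection \ref{Subsection_Examples_linearization} shows monodromy can blow up arbitrarily close to $S$). Lemma \ref{Lemma_tubular_neighborhood} belongs to the analytic proof of Theorem \ref{Theorem_FOUR}, not here. Instead, the paper constructs the realization by quotienting a transversal $\mathcal T\subset P(T^*M)$ by a holonomy action, using a technical proposition (Proposition \ref{technical}) about foliated Banach manifolds. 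The conditions (H1), (H2) follow from smoothness and compactness of $\mathcal G(A_S)$; the subtle one is (H0), triviality of the holonomy of $\mathcal F$ at points of $\mathcal Y=P(A_S)$. The leaves of $\mathcal F_{\mathcal Y}$ have $\pi_1\cong\pi_2(P_x)$ (Lemma \ref{lemma-Getzler1}), and $H^2(P_x;\mathbb R)=0$ together with $\pi_1(P_x)=1$ gives $\pi_2(P_x)$ \emph{finite}, not trivial; a separate argument (Lemma \ref{lemma-Getzler2}) using the $\sigma$-invariance and the geometry of $\mathcal T_{\mathcal Y}$ then shows that the finite holonomy group must act trivially. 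This second, hidden, use of $H^2(P_x)=0$ is precisely the ``integrability'' role of the hypothesis; your account attributes it only to the Van Est step.
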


\begin{remark}\rm
The open neighborhood of $S$ in $M$ can be chosen to be saturated. Indeed, by Proposition
\ref{Lemma_small_neighborhoods}, the local model has arbitrarily small saturated open neighborhoods of $S$, with all
leaves compact.
\end{remark}
Comparing with the classical results from foliation theory and group actions, the surprising condition is
(\ref{subtle-cond}). As we shall soon see, this condition is indeed necessary. However, as the next proposition shows,
this condition is not needed in the Hausdorff integrable case.

\begin{proposition}\label{main-cor}
In Theorem \ref{Theorem_TWO}, if $S$ admits a neighborhood that is Hausdorff integrable, then the assumption
(\ref{subtle-cond}) can be dropped.
\end{proposition}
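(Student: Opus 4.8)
The plan is to run the proof of Theorem~\ref{Theorem_TWO} unchanged through its Moser step and then, in place of the arguments that produce integrability and the vanishing of Poisson cohomology (Theorems~\ref{TheoremRedInt} and \ref{theorem-step-2.2}), to use the hypothesis and kill the single surviving cohomological obstruction directly from properness of a symplectic groupoid. First I would note that the remaining assumptions of Theorem~\ref{Theorem_TWO} are still available, and one of them for free: $S$ and $P_x$ are compact, and $P_x$ is automatically smooth, since a Hausdorff integration of a neighborhood of $S$ forces the image of the monodromy map $\partial_x$ from (\ref{partial}) to be discrete (Proposition~\ref{smoothness-all-in-one}). Let $\mathcal{G}_0\rightrightarrows U_0$ be a Hausdorff symplectic groupoid over a neighborhood $U_0$ of $S$ integrating $\pi$, and pass to its $s$-connected subgroupoid $\mathcal{G}_0^{\circ}$. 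Since $S$ is an orbit, all $s$-fibers of $\mathcal{G}_0^{\circ}$ over points of $S$ are diffeomorphic to $\mathcal{G}_0^{\circ}(x,-)=P_x/\Gamma$, where $\Gamma$ is finite (the universal cover $P_x\to \mathcal{G}_0^{\circ}(x,-)$ is finite because $P_x$ is compact); in particular this $s$-fiber is a compact manifold with finite $\pi_1$, hence with $H_1(\,\cdot\,;\mathbb{R})=0$. By the argument used for the local model in the proof of Proposition~\ref{Lemma_small_neighborhoods}, we may then shrink $U_0$ to an invariant neighborhood $U$ of $S$ over which all $s$-fibers are compact, so that $\mathcal{G}:=(\mathcal{G}_0^{\circ})_{|U}$ is a proper Hausdorff Lie groupoid whose $s$-fibers are connected with vanishing real $H_1$.

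As in the proof of Theorem~\ref{Theorem_TWO} (Theorem~\ref{theorem-1} and subsection~\ref{Step 2.1: Reduction to integrability}), I would then connect $\pi$ to its first order local model by a path of Poisson structures with fixed first jet along $S$ and run a Moser isotopy; this reduces the normal form, on arbitrarily small neighborhoods of $S$, to solving a cohomological equation whose only obstruction is a single class $c\in H^2_{\pi}(U)=H^2(T^*M_{|U})$, with the behaviour along $S$ handled as in \cite{CrFe-Conn}. In Theorem~\ref{Theorem_TWO} this class lies in a group that vanishes because of (\ref{subtle-cond}); here I would instead show $c$ lies in the image of the Van~Est map $\Phi\colon H^2_{\mathrm{diff}}(\mathcal{G})\rmap H^2(T^*M_{|U})$. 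Since the $s$-fibers of $\mathcal{G}$ are homologically $1$-connected over $\mathbb{R}$, the Van~Est theorem (Theorem~4 and Corollary~2 of \cite{Cra}) shows that $\Phi$ is injective in degree~$2$ and that $c$ is in its image exactly when $\int_{\gamma}J(c)=0$ for every $\gamma\colon \mathbb{S}^2\to s^{-1}(x)$ --- equivalently, for every $\gamma\colon\mathbb{S}^2\to P_x$, after lifting through the covering $P_x\to s^{-1}(x)$. Granting this, $c=\Phi(\tilde c)$ with $\tilde c\in H^2_{\mathrm{diff}}(\mathcal{G})=0$ by Proposition~1 of \cite{Cra}; hence $c=0$ and the Moser isotopy closes up to a Poisson diffeomorphism onto the first order local model which is the identity on $S$.

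The hard part will be verifying the integrality condition $\int_{\mathbb{S}^2}J(c)=0$, that is, that the comparison class $c$ restricts to a cohomologically trivial $2$-form on the $s$-fiber $P_x$: this is precisely where (\ref{subtle-cond}) was doing the work for free in Theorem~\ref{Theorem_TWO}, since $H_2(P_x;\mathbb{R})=0$ leaves nothing to check. Without it one must identify $J(c)$ on $s^{-1}(x)$ explicitly: over a neighborhood of $x$ the $s$-fibers of the integrations of both $\pi$ and its local model are copies of $P_x$, and $J(c)_{|P_x}$ is the difference of the pullbacks along these fibers of the two multiplicative symplectic forms, which agree to first order at the units over $S$; combining this with the explicit symplectic groupoid of the local model from subsection~\ref{The local model}, one sees that the restriction to the single fiber over $x$ is exact. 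Finally I would record the alternative noted after the statement: once the first step exhibits a neighborhood of $S$ that is integrated by a proper symplectic groupoid, the Proposition also follows from Zung's linearization theorem \cite{Zung}, modulo identifying Zung's linear model with the first order local model of subsection~\ref{The local model}.
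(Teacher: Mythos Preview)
Your overall strategy matches the paper's: reduce via the Moser step (Theorem~\ref{theorem-1} and Corollary~\ref{remark-corollary2}) to killing the single linearization class $l_{\pi,S}=[\pi]-[\pi_{|S}]\in H^2_\pi(V)$, then show this class lies in the image of the Van~Est map and invoke properness. The setup with a proper integration whose $s$-fibers over $S$ are compact with vanishing $H^1$ is also fine (though you could just work with the $1$-connected groupoid, as the paper does).

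The gap is at the step you flag as hard. The Van~Est criterion requires $\int_\gamma J(c)=0$ for all $2$-spheres $\gamma$ in \emph{every} $s$-fiber $s^{-1}(y)$, $y\in V$, not only in the single fiber over your chosen $x\in S$. You only argue exactness on $s^{-1}(x)$, and even that argument is not correct: $J(c)$ is an $s$-foliated form on the one groupoid integrating $\pi$; there is no second ``multiplicative symplectic form of the local model'' living on the same space to subtract, and ``agree to first order along $S$'' does not imply exactness. What the paper does instead is identify the class explicitly: $J(\pi)=\Omega_{|s\text{-fibers}}$ and $J\bigl(\Lambda^2\pi^\sharp(p^*\omega_S)\bigr)=t^*(\widetilde{\omega}_S)_{|s\text{-fibers}}$, so that $J(l_{\pi,S})$ is the restriction to $s$-fibers of the \emph{globally closed} $2$-form $\omega=\Omega+t^*(\widetilde{\omega}_S)$ on the whole groupoid. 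On $s$-fibers over $S$ one has $\Omega_{|s^{-1}(x)}=-t^*(\omega_S)$ (since $t$ is anti-Poisson), so $\omega$ vanishes there. The global closedness is what lets you pass from fibers over $S$ to arbitrary fibers: any $2$-sphere in any $s$-fiber is homotopic \emph{inside the groupoid} to one lying in an $s$-fiber over $S$, and Stokes gives equality of the integrals. Without this global extension your argument cannot leave the single orbit $S$, and the criterion is not verified over the rest of $V$.
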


Note the conceptual difference between the proposition and Theorem \ref{Theorem_TWO}: the assumptions of the
proposition are on the restriction of $\pi$ to an open around $S$, while those of Theorem 2 depend only on the first jet
of $\pi$ at $S$. As a consequence, the proof of the proposition is considerably easier (in fact, it can be derived from
Zung's results on linearization of proper groupoids \cite{Zung}).

Note also that, in contrast with the proposition, if $M$ is compact, then the conditions of Theorem \ref{Theorem_TWO}
cannot hold {\it at all} points $x\in M$, since it would follow that $\mathcal{G}(M, \pi)$ is compact and that its
symplectic form is exact (see \cite{CrFe0} and also subsection \ref{subsection_a_global_conflict} for a different
approach).

\subsubsection*{Reformulations}

Since $P_x$ is the $s$-fiber of the Weinstein groupoid, it is 1-connected. When $P_x$ is smooth, by Proposition \ref{smoothness-all-in-one}, the homotopy long exact sequence gives
\begin{equation}\label{EQ_short_exact_sequence}
1\rmap \pi_2(P_x)\rmap \pi_2(S)\stackrel{\partial_x}{\rmap}\pi_1(G_x)\rmap 1,\ \pi_1(S)\cong \pi_0(G_x),
\end{equation}
where we also used that, by Hopf's theorem, $\pi_2(G_x)=1$. Moreover, since \[\pi_1(G_x)\cong
\textrm{Im}(\partial_x)\subset Z(G(\mathfrak{g}_x)),\] the connected component of the identity satisfies
\begin{equation}\label{EQ_short_exact_sequence1}
G_x^{\circ}\cong G(\mathfrak{g}_x)/\mathrm{Im}(\partial_x).
\end{equation}

We next reformulate the conditions of Theorem \ref{Theorem_TWO}, as, in their present form, they may be difficult to
check in explicit examples.

\begin{proposition}\label{Prop_restatement_1} The conditions of Theorem \ref{Theorem_TWO} are equivalent to:
\begin{enumerate}[(a)]
\item The leaf $S$ is compact.
\item The Poisson homotopy group $G_x$ is smooth and compact.
\item The dimension of the center of $G_x$ equals to the rank of $\pi_2(S, x)$.
\end{enumerate}
\end{proposition}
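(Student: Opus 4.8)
The plan is to strip off the conditions common to both lists and then match what remains. Both $(a)$ and the hypotheses of Theorem~\ref{Theorem_TWO} demand that $S$ be compact, and by Proposition~\ref{smoothness-all-in-one} (together with the remark preceding it) smoothness of the Poisson homotopy bundle $P_x$ is equivalent to smoothness of the Poisson homotopy group $G_x$. So it suffices to prove that, assuming $S$ compact and $P_x$ (equivalently $G_x$) smooth,
\[
\bigl(P_x \text{ compact and } H^2(P_x;\mathbb{R})=0\bigr)\iff\bigl(G_x \text{ compact and } \dim Z(G_x)=\operatorname{rank}\pi_2(S,x)\bigr).
\]

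First I would dispose of the compactness equivalence. When $P_x$ is smooth it is, by Proposition~\ref{smoothness-all-in-one}, a principal $G_x$-bundle over $S$; since $S$ is compact, the total space $P_x$ is compact if and only if the fibre $G_x$ is compact --- one direction because a bundle with compact base and compact fibre has compact total space, the converse because each fibre is a closed submanifold of $P_x$. From now on I may assume $S$ and $G_x$ compact and must identify $H^2(P_x;\mathbb{R})=0$ with $\dim Z(G_x)=\operatorname{rank}\pi_2(S,x)$.

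Next comes the cohomology computation. Being the $s$-fibre of the Weinstein groupoid, $P_x$ is $1$-connected, so by the Hurewicz theorem $H_2(P_x;\mathbb{Z})\cong\pi_2(P_x)$, and by the universal coefficient theorem $H^2(P_x;\mathbb{R})\cong\operatorname{Hom}(\pi_2(P_x),\mathbb{R})$; hence $H^2(P_x;\mathbb{R})=0$ exactly when $\operatorname{rank}\pi_2(P_x)=0$. Now I invoke the exact sequence (\ref{EQ_short_exact_sequence}), which is available precisely because $P_x$ is smooth and which already incorporates $\pi_2(G_x)=\pi_2(G_x^{\circ})=1$ (Hopf); taking free ranks of the abelian groups in it gives
\[
\operatorname{rank}\pi_2(S,x)=\operatorname{rank}\pi_2(P_x)+\operatorname{rank}\pi_1(G_x),
\]
so $H^2(P_x;\mathbb{R})=0$ if and only if $\operatorname{rank}\pi_2(S,x)=\operatorname{rank}\pi_1(G_x)$. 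It then remains to note that for a compact Lie group $G_x$ the free rank of $\pi_1(G_x)$ equals $\dim Z(G_x)$: indeed $\pi_1(G_x)=\pi_1(G_x^{\circ})$, and for the compact connected group $G_x^{\circ}\cong G(\mathfrak{g}_x)/\mathrm{Im}(\partial_x)$ the structure theory of compact Lie groups (decomposing $\mathfrak{g}_x=\mathfrak{z}(\mathfrak{g}_x)\oplus[\mathfrak{g}_x,\mathfrak{g}_x]$ and passing to the universal cover) gives $\operatorname{rank}\pi_1(G_x^{\circ})=\dim\mathfrak{z}(\mathfrak{g}_x)=\dim Z(G_x)$. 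Chaining these equalities yields the equivalence, and hence the proposition.

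The step I expect to require the most care is the topological bookkeeping around $G_x$: one must use (\ref{EQ_short_exact_sequence}) in its genuine smooth form (which is exactly why smoothness of $P_x$ cannot be dropped here), check that passing to $\mathbb{R}$-coefficients legitimately kills all torsion in $\pi_2$, and handle the possible disconnectedness of $G_x$ when reading off $\operatorname{rank}\pi_1(G_x)=\dim Z(G_x)$ --- this is where one appeals to $\pi_1(G_x)=\pi_1(G_x^{\circ})$ and to the fact that the centre is detected by $\mathfrak{z}(\mathfrak{g}_x)=\mathrm{Lie}(Z(G_x^{\circ}))$. Everything else is either quoted from Proposition~\ref{smoothness-all-in-one} or a routine use of Hurewicz, universal coefficients, and the homotopy long exact sequence of the fibration $G_x\to P_x\to S$.
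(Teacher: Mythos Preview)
Your proof is correct and follows essentially the same strategy as the paper's: both reduce to showing, under compactness of $S$ and $G_x$, that $H^2(P_x;\mathbb{R})=0$ is equivalent to $\operatorname{rank}\pi_2(S,x)=\dim\mathfrak{z}(\mathfrak{g}_x)$ via Hurewicz and the homotopy exact sequence~(\ref{EQ_short_exact_sequence}) tensored with $\mathbb{R}$. The only cosmetic difference is that the paper phrases the key step as surjectivity of $\partial_{\mathbb{R}}:\pi_2(S)\otimes\mathbb{R}\to\zeta$ (deduced from compactness of $G_x^{\circ}\cong(K\times\zeta)/\mathrm{Im}(\partial_x)$), whereas you package the same fact as $\operatorname{rank}\pi_1(G_x^{\circ})=\dim\mathfrak{z}(\mathfrak{g}_x)$ for a compact connected Lie group --- these are the same computation, since $\pi_1(G_x^{\circ})\cong\mathrm{Im}(\partial_x)$.
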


\begin{proof} We already know that smoothness of $P_x$ is equivalent to that of $G_x$ while, under this
assumption, compactness of $P_x$ is clearly equivalent to that of $S$ and $G_x$. Hence, assuming (a) and (b), we still
have to show that (c) is equivalent to $H^2(P_x)=0$. Since $\mathfrak{g}_x$ is compact, it decomposes as (see
\cite{DK})
\[\mathfrak{g}_x=\mathfrak{k}\oplus\zeta,\]
where $\mathfrak{k}=[\mathfrak{g}_x,\mathfrak{g}_x]$ is semisimple of compact type and
$\zeta=Z(\mathfrak{g}_x)$ is abelian. Therefore $G(\mathfrak{g}_x)= K\times \zeta$, with $K$ compact 1-connected,
and $\partial_x$ maps to
\[Z(G(\mathfrak{g}_x))=Z\times \zeta,\]
where $Z= Z(K)$ is a finite group. Since by (\ref{EQ_short_exact_sequence}) $\pi_2(P_x)$ can be identified with
$\mathrm{ker}(\partial_x:\pi_2(S)\to Z\times\zeta)$, applying $\otimes_{\mathbb{Z}}\mathbb{R}$, we obtain an exact
sequence
\[ 0\rmap \pi_2(P_x)\otimes_{\mathbb{Z}}\mathbb{R}\rmap \pi_2(S)\otimes_{\mathbb{Z}}\mathbb{R}\stackrel{\partial_{\mathbb{R}}}{\rmap} \zeta.\]
Since $P_x$ is 1-connected, by the Hurewicz theorem, the first term is isomorphic to $H_2(P_x; \mathbb{R})$. Finally,
by (\ref{EQ_short_exact_sequence1}), $(K\times \zeta)/\mathrm{Im}(\partial_x)$ is compact, so also $(Z\times
\zeta)/\mathrm{Im}(\partial_x)$ is compact, and thus $\partial_{\mathbb{R}}$ is surjective. We obtain an exact
sequence
\[ 0\rmap H_2(P_x)\rmap \pi_2(S)\otimes_{\mathbb{Z}}\mathbb{R}\stackrel{\partial_{\mathbb{R}}}{\rmap} \zeta \rmap 0.\]
Therefore, condition (c) is equivalent to the vanishing of $H_2(P_x)$.
\end{proof}

Next, using the monodromy group\index{monodromy group}, one can also get rid of the $G_x$. Recall \cite{CrFe2,
CrFe1} that the monodromy group at $x$ is the following subgroup of $Z(\mathfrak{g}_x)$
\[\mathcal{N}_x=\{X\in Z(\mathfrak{g}_x)| \exp(X)\in \mathrm{Im}(\partial_x)\}\subset Z(\mathfrak{g}_x).\]

\begin{proposition}\label{Prop_restatement_2} The conditions of Theorem \ref{Theorem_TWO} are equivalent to:
\begin{enumerate}[(a)]
\item The leaf $S$ is compact with finite fundamental group.
\item The isotropy Lie algebra $\mathfrak{g}_x$ is of compact type.
\item $\mathcal{N}_x$ is a lattice in $Z(\mathfrak{g}_x)$.
\item The dimension of $Z(\mathfrak{g}_x)$ equals the rank of $\pi_2(S,x)$.
\end{enumerate}
\end{proposition}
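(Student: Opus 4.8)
The plan is to derive Proposition~\ref{Prop_restatement_2} by showing that its four conditions (a)--(d) are equivalent to the three conditions of the complete version of Theorem~\ref{Theorem_TWO}, namely that $P_x$ is smooth, compact and satisfies $H^2(P_x;\mathbb{R})=0$; by Proposition~\ref{Prop_restatement_1} this is the same as reformulating (a)--(c) there. The three tools are: the equivalence ``$P_x$ smooth $\Leftrightarrow$ $\mathrm{Im}(\partial_x)$ discrete in $G(\mathfrak{g}_x)$ $\Leftrightarrow$ $\mathcal{N}_x$ discrete in $Z(\mathfrak{g}_x)$'' from Proposition~\ref{smoothness-all-in-one} and Section~\ref{Section_Integrability}; the exact sequence (\ref{EQ_short_exact_sequence}) together with $\pi_0(G_x)\cong\pi_1(S)$; and the structure theory of Lie algebras of compact type, which once $\mathfrak{g}_x$ is of compact type gives $G(\mathfrak{g}_x)=K\times\zeta$ with $K$ compact, $1$-connected, semisimple, $\zeta=Z(\mathfrak{g}_x)$ the vector group, and $Z(G(\mathfrak{g}_x))=Z(K)\times\zeta$ with $Z(K)$ finite.

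First I would handle smoothness and compactness simultaneously. If $P_x$ is smooth and compact, then $S=P_x/G_x$ is compact, the structure group $G_x$ (a closed subset of the compact $P_x$) is compact, hence $\mathfrak{g}_x=\mathrm{Lie}(G_x)$ is of compact type, giving (b); and $\pi_1(S)\cong\pi_0(G_x)$ is finite, which with compactness of $S$ gives (a). Smoothness gives discreteness of $\mathrm{Im}(\partial_x)$; writing $Z(G(\mathfrak{g}_x))=Z(K)\times\zeta$ one checks $\mathcal{N}_x=\ker\bigl(\mathrm{pr}_{Z(K)}|_{\mathrm{Im}(\partial_x)}\bigr)$, so $\mathcal{N}_x$ and $\mathrm{Im}(\partial_x)$ have equal (finite) rank, and compactness of $G_x^{\circ}\cong G(\mathfrak{g}_x)/\mathrm{Im}(\partial_x)$ (which is equivalent to cocompactness of $\mathrm{pr}_\zeta(\mathrm{Im}(\partial_x))$ in $\zeta$, since the $K$-direction is already compact) becomes: $\mathcal{N}_x$ is a full-rank, hence cocompact, lattice in $Z(\mathfrak{g}_x)$, which is (c). The converse runs the same equalities backwards: (b) produces the decomposition $G(\mathfrak{g}_x)=K\times\zeta$, (c) makes $G_x^{\circ}$ compact and smooth, and (a) adds compactness of $S$ and finiteness of $\pi_0(G_x)=\pi_1(S)$, so the $G_x$-bundle $P_x\to S$ is smooth and compact.

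For the cohomological condition I would use that, once the previous hypotheses are arranged, $P_x$ is $1$-connected, so by Hurewicz $H^2(P_x;\mathbb{R})\cong H_2(P_x;\mathbb{R})\cong\pi_2(P_x)\otimes_{\mathbb{Z}}\mathbb{R}$. Tensoring $0\to\pi_2(P_x)\to\pi_2(S)\xrightarrow{\partial_x}\mathrm{Im}(\partial_x)\to 0$ with $\mathbb{R}$ gives $\mathrm{rk}\,\pi_2(P_x)=\mathrm{rk}\,\pi_2(S)-\mathrm{rk}\,\mathrm{Im}(\partial_x)$, and since by (c) we have $\mathrm{rk}\,\mathrm{Im}(\partial_x)=\mathrm{rk}\,\mathcal{N}_x=\dim Z(\mathfrak{g}_x)$, the vanishing $H^2(P_x;\mathbb{R})=0$ is precisely $\mathrm{rk}\,\pi_2(S,x)=\dim Z(\mathfrak{g}_x)$, i.e.\ condition (d). This closes the chain of equivalences in both directions.

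The step I expect to be the main obstacle is the bookkeeping forced by the possibly disconnected group $G_x$ and the finite central factor $Z(K)$: one must keep straight the three subgroups $\mathcal{N}_x\subset\zeta$, $\mathrm{Im}(\partial_x)\subset Z(K)\times\zeta$ and $\mathrm{pr}_\zeta(\mathrm{Im}(\partial_x))\subset\zeta$, and argue that ``$\mathcal{N}_x$ is a lattice in $Z(\mathfrak{g}_x)$'' is the correct cocompactness condition equivalent to compactness of $G_x^{\circ}$ -- a priori it is the projection $\mathrm{pr}_\zeta(\mathrm{Im}(\partial_x))$, not $\mathcal{N}_x$, that must be cocompact, but finiteness of $Z(K)$ makes the two conditions coincide. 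Everything else is formal, relying on Proposition~\ref{smoothness-all-in-one}, Hopf's theorem ($\pi_2(G_x)=1$, already invoked in (\ref{EQ_short_exact_sequence})) and the compact-type structure theory.
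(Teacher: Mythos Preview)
Your proposal is correct and follows essentially the same route as the paper: first show that (a)--(c) are jointly equivalent to smoothness and compactness of $P_x$, then invoke the rank argument from Proposition~\ref{Prop_restatement_1} (which you re-sketch explicitly) to identify (d) with the vanishing of $H^2(P_x;\mathbb{R})$. The only minor difference is in the compactness step: the paper argues directly that $\zeta/\mathcal{N}_x$ injects into $G_x^{\circ}=(K\times\zeta)/\widetilde{\mathcal{N}}_x$ and that $K\times(\zeta/\mathcal{N}_x)$ surjects onto it, so compactness of $G_x^{\circ}$ is equivalent to that of $\zeta/\mathcal{N}_x$, whereas you pass through $\mathrm{pr}_\zeta(\mathrm{Im}(\partial_x))$ and then use finiteness of $Z(K)$ to compare it with $\mathcal{N}_x$. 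Both arguments are valid; the paper's injection/surjection formulation is slightly more direct and sidesteps exactly the bookkeeping you flag as the main obstacle.
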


\begin{proof}
Let $\widetilde{\mathcal{N}}_x$ be the image of $\partial_x$ and $\zeta=Z(\mathfrak{g}_x)$.

We show first that conditions (a), (b) and (c) are equivalent to $P_x$ being smooth and compact. Discreteness of
$\mathcal{N}_x$ (from (c)) is equivalent to that of $\widetilde{\mathcal{N}}_x$ \cite{CrFe2}, hence to smoothness of
$P_x$. Compactness of $P_x$ is equivalent to the following three conditions: $S$ being compact, $\pi_0(G_x)$ being
finite and the connected component of the identity $G_{x}^{\circ}$ being compact. Using
(\ref{EQ_short_exact_sequence}), the first two are equivalent to (a). Compactness of $G_x^{\circ}$ implies (b), which
is equivalent to the decomposition $G(\mathfrak{g}_x)=K\times\zeta$, for a 1-connected compact Lie group $K$. By
(\ref{EQ_short_exact_sequence1}), $G_x^{\circ}\cong (K\times\zeta)/\widetilde{\mathcal{N}}_x$, and we claim that
its compactness is equivalent to compactness of $\zeta/\mathcal{N}_x$ (hence to $\mathcal{N}_x$ being of maximal
rank in $\zeta$). To see this, note that $\zeta/\mathcal{N}_x$ injects naturally into $(K\times
\zeta)/\widetilde{\mathcal{N}}_x$ and that there is a surjection $K\times (\zeta/\mathcal{N}_x)\to (K\times
\zeta)/\widetilde{\mathcal{N}}_x$.

Under the compactness and smoothness assumptions, the proof from Proposition \ref{Prop_restatement_1} applies to
conclude that (d) is equivalent $H^2(P_x)=0$.
\end{proof}

\subsection{The first order data}\label{Subsection_the_first_order_data}

In this subsection we show that the Poisson homotopy bundle and its infinitesimal version, the transitive Lie algebroid
of the leaf, encode the first order data of a Poisson structure around the leaf.

\subsubsection*{The first order jet}

Let $M$ be a manifold and let $S\subset M$ be an embedded submanifold. We are interested in first jets of Poisson
structures around $S$ that have $S$ as a symplectic leaf. So, by replacing $M$ with a tubular neighborhood of $S$, we
may also assume that $S$ is closed in $M$. Consider the following objects
\begin{itemize}
\item the subalgebra of $\mathfrak{X}^{\bullet}(M)$ consisting of multivector fields tangent to $S$
\[\mathfrak{X}^{\bullet}_{S}(M):=\{W\in \mathfrak{X}^{\bullet}(M) | W_{|S}\in \mathfrak{X}^{\bullet}(S)\},\]
\item the ideal $I_S \subset C^{\infty}(M)$ of functions that vanish on $S$.
\end{itemize}
Now, $I^k_S\mathfrak{X}^{\bullet}(M)$ are ideals in $\mathfrak{X}^{\bullet}_{S}(M)$; therefore, the quotients
inherit Lie brackets such that the jet maps are graded Lie algebra homomorphisms
\[ j_{|S}^{k}: (\mathfrak{X}^{\bullet}_{S}(M),[\cdot,\cdot])\longrightarrow (J^k_S(\mathfrak{X}^{\bullet}_{S}(M)),[\cdot,\cdot]),\]
where
\[J^k_S(\mathfrak{X}^{\bullet}_{S}(M)):=\mathfrak{X}^{\bullet}_{S}(M)/I^{k+1}_S\mathfrak{X}^{\bullet}(M).\]
Of course, $J^0_S(\mathfrak{X}^{\bullet}_S(M))=\mathfrak{X}^{\bullet}(S)$ and $j^0_{|S}$ is the restriction map
$W\mapsto W_{|S}$.

\begin{definition}\label{Defintion_first_jet_Poisson_symplectic}
A first jet of a Poisson on $M$ with symplectic leaf $(S,\omega_S)$ is an element $\tau\in
J^1_S(\mathfrak{X}^{2}_S(M))$, satisfying $\tau_{|S}=\omega_S^{-1}\in \mathfrak{X}^{2}(S)$ and
$[\tau,\tau]=0$. We denote by $J^1_{(S,\omega_S)}\mathrm{Poiss}(M)$ the space of such elements.
\end{definition}
If $\pi$ is a Poisson structure on $M$ with symplectic leaf $(S,\omega_S)$, then $\tau:=j^1_{|S}\pi$ is such an
element. The fact that every such $\tau$ comes from a Poisson structure defined on a neighborhood of $S$ is surprising
and will be explained in the next subsection. On the contrary, if one only requires that $\tau_{|S}$ be Poisson, this is
no longer true (see Example \ref{Example_Not_every_X_is_Y}).

\subsubsection*{The abstract Atiyah sequence}

An \textbf{abstract Atiyah sequence}\index{Atiyah sequence} over a manifold $S$ is simply a transitive Lie algebroid
$A$ over $S$, thought of as the exact sequence of Lie algebroids:
\begin{equation}
\label{abstract-Atiyah}
0\rmap K\rmap A\stackrel{\rho}{\rmap} TS\rmap 0,
\end{equation}
where $\rho$ is the anchor map of $A$ and $K=\ker(\rho)$. As discussed in subsection
\ref{Subsection_Examples_of_Lie_algebroids}, any principal $G$-bundle $p: P\to S$ gives rise to such a sequence,
known as the \textbf{Atiyah sequence} associated to $P$:
\begin{equation}\label{concrete_Atiyah_sequence}
 0\longrightarrow P\times_{G} \mathfrak{g}^*\longrightarrow TP/G\stackrel{(dp)}{\longrightarrow} TS \rmap 0.
\end{equation}

\begin{definition}
The abstract Atiyah sequence (\ref{abstract-Atiyah}) is said to \textbf{integrable} if it is isomorphic to one given by a
principal bundle (\ref{concrete_Atiyah_sequence}).
\end{definition}

This notion was already considered in \cite{Almeida} without any reference to Lie algebroids. However, it is clear that
integrability of an abstract Atiyah sequence is equivalent to its integrability as a transitive Lie algebroid
\cite{Mackenzie, MackenzieLL,CrFe1}. In the integrable case there exists a unique (up to isomorphism) 1-connected
principal bundle integrating it (the $s$-fiber of the Weinstein groupoid).

Given $(M,\pi)$ a Poisson manifold and $(S,\omega_S)$ a symplectic leaf, one can associate to the leaf a transitive Lie
algebroid
\[A_S:=T^*M_{|S},\]
which is the restriction to $S$ of the cotangent Lie algebroid $(T^*M,[\cdot,\cdot]_{\pi},\pi^{\sharp})$. The
corresponding Atiyah sequence is
\begin{equation}
\label{pre-Atiyah} 0\rmap \nu_{S}^{*}\rmap A_S \stackrel{(\omega_S^{-1})^{\sharp}}{\rmap} TS \rmap 0,
\end{equation}
where $\nu_{S}^{*}\subset T^*M_{|S}$ is the conormal bundle of $S$,  i.e.\ the annihilator of $TS$.

Now, smoothness of the Poisson homotopy bundle $P_x$ (for $x\in S$) is equivalent to integrability of $A_S$ and, in
this case, $P_x$ is the 1-connected principal bundle of $A_S$.

The abstract Atiyah sequence also encodes the first jet of $\pi$ at $S$. For a proof of the following result, see the more
general Proposition \ref{Proposition_first_order_data}.

\begin{proposition}\label{1st-jet-Atiyah}
Consider a submanifold $S$ of $M$ endowed with a symplectic structure $\omega_{S}$. There is a 1-1 correspondence
between elements in $J^{1}_{(S, \omega_S)}\textrm{Poiss}(M)$ and Lie brackets on $A_S$ making (\ref{pre-Atiyah})
into an abstract Atiyah sequence. For $\pi$ a Poisson structure with $(S,\omega_S)$ as a symplectic leaf, under this
correspondence, $j^1_{|S}\pi$ is mapped to the restriction to $S$ of the cotangent Lie algebroid of $\pi$.
\end{proposition}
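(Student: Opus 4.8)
The plan is to define the correspondence by restricting the cotangent-bracket formula to $S$, check that it is well defined already at the level of $1$-jets, and then prove bijectivity by a cohomology-free first-order computation together with the coupling description of the local model. Concretely: given $\tau\in J^{1}_{(S,\omega_S)}\mathrm{Poiss}(M)$, choose a representative $\pi\in\mathfrak{X}^2_S(M)$ with $\pi_{|S}=\omega_S^{-1}$ and $[\pi,\pi]\in I^2_S\mathfrak{X}^3(M)$; for $a,b\in\Gamma(A_S)=\Gamma(T^*M_{|S})$ extend to $\alpha,\beta\in\Omega^1(M)$ near $S$ and set $[a,b]:=[\alpha,\beta]_\pi|_S$, $\rho(a):=\pi^\sharp(\alpha)|_S=(\omega_S^{-1})^\sharp(a)$. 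First I would check this is independent of the extensions: replacing $\alpha$ by $\alpha+\sum f_i\eta_i$ with $f_i\in I_S$ changes $[\alpha,\beta]_\pi$ by terms $f_i[\eta_i,\beta]_\pi+L_{\pi^\sharp\beta}(f_i)\eta_i$, each of which vanishes on $S$ because $f_i|_S=0$ and $\pi^\sharp_{|S}=(\omega_S^{-1})^\sharp$ is tangent to $S$, hence annihilates $df_i$. Independence of the representative follows because $[\alpha,\beta]_\pi$ is $\mathbb{R}$-linear in $\pi$ and, for $W=fgV$ with $f,g\in I_S$, a direct expansion of $L_{W^\sharp\alpha}\beta-L_{W^\sharp\beta}\alpha-d(W(\alpha,\beta))$ shows every surviving term carries a factor vanishing to first order along $S$, so $[\alpha,\beta]_W|_S=0$; thus $[\cdot,\cdot]|_S$ depends only on $\pi\bmod I^2_S\mathfrak{X}^2(M)$, i.e.\ on $\tau$. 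This defines the map $\Phi$.

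Next I would verify that $\Phi(\tau)$ is an abstract Atiyah sequence. The Leibniz rule for $[\cdot,\cdot]$ relative to $\rho$ is immediate from the Leibniz identity $[\alpha,\tilde f\beta]_\pi=\tilde f[\alpha,\beta]_\pi+L_{\pi^\sharp\alpha}(\tilde f)\beta$, and antisymmetry is visible in the formula. For the Jacobi identity, note that $\rho$ is a bracket morphism on $S$: the defect $\pi^\sharp([\alpha,\beta]_\pi)-[\pi^\sharp\alpha,\pi^\sharp\beta]=\iota_{\alpha\wedge\beta}[\pi,\pi]$ vanishes on $S$ since $[\pi,\pi]\in I^2_S\mathfrak{X}^3(M)$; consequently the Jacobiator of $[\cdot,\cdot]$ is $C^\infty(S)$-trilinear, hence pointwise, so it suffices to evaluate it on exact forms, where $[df,dg]_\pi=d(\pi(df,dg))$ and the Jacobiator equals $d$ of half of $[\pi,\pi](df,dg,dh)$ — a function in $I^2_S$, whose differential vanishes on $S$. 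So $(A_S,[\cdot,\cdot],\rho)$ is a Lie algebroid, transitive since $(\omega_S^{-1})^\sharp$ is onto with kernel exactly $\nu_S^*$, i.e.\ an abstract Atiyah sequence (\ref{pre-Atiyah}). The last assertion of the proposition is then a tautology: when $\pi$ is a genuine Poisson structure with $(S,\omega_S)$ a symplectic leaf, the restricted cotangent Lie algebroid is by definition the unique bracket making $\Gamma(T^*M)\to\Gamma(A_S)$ a Lie algebra map, which is $[\alpha,\beta]_\pi|_S$, and whose anchor is $\pi^\sharp|_S=(\omega_S^{-1})^\sharp$.

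Finally, bijectivity. The $1$-jets with fixed $0$-jet $\omega_S^{-1}$ form an affine space modeled on $I_S\mathfrak{X}^2(M)/I^2_S\mathfrak{X}^2(M)\cong\Gamma(\nu_S^*\otimes\Lambda^2TM_{|S})=\Gamma(\mathrm{Hom}(\Lambda^2A_S,\nu_S^*))$, and the difference of the two associated brackets is $[\cdot,\cdot]_W|_S$ for a representative $W$ of the perturbation; the same expansion as before, now retaining the $df$-terms, gives $[\alpha,\beta]_W|_S=\pm\,\bar W(\alpha\wedge\beta)$ under this identification, so the assignment "perturbation $\mapsto$ bracket change" is, up to sign, the identity — in particular injective; together with the common anchor this forces $\tau$, giving injectivity of $\Phi$. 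For surjectivity — the substantive point — given an abstract Atiyah sequence structure on $A_S$ with anchor $(\omega_S^{-1})^\sharp$, I would pick a horizontal splitting (connection) of (\ref{pre-Atiyah}), extract the coupling data (the leafwise form $\omega_S$, the $TS$-connection on $\nu_S^*$, the bundle of isotropy Lie algebras $\ker\rho=\nu_S^*$, and the curvature), and feed it into Vorobjev's coupling construction on the total space of $\nu_S$, identified with a tubular neighbourhood of $S$; this is exactly the connection-based description of the local model of Section \ref{The local model}. One checks that the Lie algebroid axioms for the given bracket are precisely the equations making the coupling bivector Poisson, and that $\Phi$ sends its first jet along $S$ back to the given bracket. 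The hard step is this last one — producing a (first-order, or genuine) Poisson representative for an arbitrary, possibly non-integrable transitive Lie algebroid, and matching first jets — and it is here that the local-model machinery does the real work; the rest is bookkeeping with the ideals $I_S,I^2_S$ and the Schouten bracket.
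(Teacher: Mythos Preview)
Your proof is correct, and the intrinsic definition of the map via $[\alpha,\beta]_\pi|_S$, together with your checks of well-definedness, Leibniz, and Jacobi, is cleaner than what the paper does. The paper does not argue this way: it defers to the more general Proposition~\ref{Proposition_first_order_data}, where one fixes a tubular neighbourhood $M\cong\nu_S$, splits $A_S\cong T^*S\oplus\nu_S^*$, and encodes both a first jet $\tau$ and a Lie algebroid bracket on $A_S$ by the same triple $(R,\nabla,\pi^{\mathrm v})$; the bijection is then manifest and the equivalence $[\tau,\tau]=0\Leftrightarrow\text{Jacobi}$ is declared a ``lengthy but straightforward computation'' on the triple and omitted. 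Your approach is coordinate-free up to surjectivity and makes the tautological last assertion transparent.

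One comment on surjectivity: invoking the Vorobjev/local-model machinery works (this is essentially Proposition~\ref{proposition_restricting_to_cosymplectic}, proved independently of the present statement), but it is heavier than what you need. Your own computations already give it. Since $\pi|_S=\omega_S^{-1}$ is Poisson, one has $[\pi,\pi]\in I_S\mathfrak X^3(M)$ automatically (no MC assumed), so your anchor-defect argument shows that $\Phi(\tau)$ is always an anchor-preserving bracket. Your difference computation then shows that $\Phi$ is an affine isomorphism from $\{\tau:\tau|_S=\omega_S^{-1}\}$ (no MC) onto $\{\text{anchor-preserving brackets}\}$ (no Jacobi), both modeled on $\Gamma(\mathrm{Hom}(\Lambda^2A_S,\nu_S^*))$. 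Finally, your Jacobiator identity on exact forms says that the Jacobiator equals the class $\overline{[\pi,\pi]}\in\Gamma(\nu_S^*\otimes\Lambda^3TM|_S)$ evaluated on $df\wedge dg\wedge dh$; this is an if-and-only-if, so Jacobi $\Leftrightarrow$ $\overline{[\pi,\pi]}=0$ $\Leftrightarrow$ $[\tau,\tau]=0$. That closes the bijection without the local model.
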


\subsection{The local model: the general case}\label{Subsection_the_local_model_the_general_case}

The local model (from Theorem \ref{Theorem_TWO}) can be described also when $P_x$ is not smooth, using its
infinitesimal counterpart: the transitive Lie algebroid $A_S$. This was explained by Vorobjev
\cite{Vorobjev,Vorobjev2}, but here we indicate a different approach using the linear Poisson structure on $A_S^*$.
The proofs of the claims made in this
subsection are given at the end of subsection \ref{subsection_linearizing_Poisson_structures}.\\

The starting data is a symplectic manifold $(S,\omega_S)$ and a transitive Lie algebroid $(A,[\cdot,\cdot]_A,\rho)$ over
$S$, with Atiyah sequence
\begin{equation}\label{EQ_Atiyah1}
0\rmap K\rmap A\stackrel{\rho}{\rmap} TS\rmap 0.
\end{equation}
Similar to the linear Poisson structure on the dual of a Lie algebra, the dual vector bundle $A^*$ carries a linear
Poisson structure $\pi_{\mathrm{lin}}(A)$, with Poisson bracket determined by
\[\{p^*(f),p^*{(g)}\}=0, \  \  \  \{\widetilde{\alpha},p^*(g)\}=p^*(L_{\rho(\alpha)}g),\ \ \ \{\widetilde{\alpha},\widetilde{\beta}\}=\widetilde{[\alpha,\beta]},\]
for all $f,g\in C^{\infty}(S)$ and $\alpha,\beta\in \Gamma(A)$, where by $\widetilde{\alpha},\widetilde{\beta}\in
C^{\infty}(A^*)$ we denote the corresponding fiberwise linear functions on $A^*$.

The following lemma will be proved at the end of subsection \ref{subsection_linearizing_Poisson_structures}.
\begin{lemma}\label{Lemma_pi_la_omega_S}
The gauge transform $\pi_{\mathrm{lin}}^{p^*(\omega_S)}(A)$ is well-defined on $A^*$.
\end{lemma}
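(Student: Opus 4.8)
The statement asks us to show that the gauge transform $\pi_{\mathrm{lin}}^{p^*(\omega_S)}(A)$ is well-defined on all of $A^*$. Recall from the discussion of gauge transformations (subsection \ref{subsection_Dirac}) that for a closed $2$-form $\eta$ and a Poisson bivector $\pi$, the gauge transform $\pi^{\eta}$ is defined precisely on the open subset where the bundle map $\mathrm{Id}+\eta^{\sharp}\circ\pi^{\sharp}$ is invertible; equivalently, where the Dirac structure $L_{\pi}^{\eta}=L_{\pi}*L_{\eta}$ still satisfies $p_{T^*}(L_{\pi}^{\eta})=T^*(A^*)$. So the plan is: first, verify that $p^*(\omega_S)$ is indeed a closed $2$-form on the total space $A^*$ (immediate, since $\omega_S$ is closed and $p:A^*\to S$ is smooth, so $d\,p^*(\omega_S)=p^*(d\omega_S)=0$); then show that the invertibility condition holds at \emph{every} point of $A^*$, not merely on an open neighborhood of the zero section.

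The key computational step is to understand $\pi_{\mathrm{lin}}(A)^{\sharp}$ and $p^*(\omega_S)^{\sharp}$ fiberwise and check that $\mathrm{Id}+p^*(\omega_S)^{\sharp}\circ\pi_{\mathrm{lin}}(A)^{\sharp}$ is an isomorphism of $T^*_{\xi}(A^*)$ for each $\xi\in A^*$. I would work in a local frame: choose a local splitting of the Atiyah sequence (\ref{EQ_Atiyah1}), giving coordinates on $A^*$ adapted to the fibers of $p$ and to $S$. In such coordinates $\pi_{\mathrm{lin}}(A)$ has the standard block form of a linear Poisson structure twisted by the $TS$-action (the brackets $\{p^*f,p^*g\}=0$, $\{\widetilde\alpha,p^*g\}=p^*(L_{\rho(\alpha)}g)$, $\{\widetilde\alpha,\widetilde\beta\}=\widetilde{[\alpha,\beta]}$ dictate the blocks), while $p^*(\omega_S)^{\sharp}$ is supported entirely in the ``base'' directions and vanishes on the vertical cotangent directions. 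The point is that $p^*(\omega_S)^{\sharp}$ kills the conormal/fiber directions, so the composite $p^*(\omega_S)^{\sharp}\circ\pi_{\mathrm{lin}}(A)^{\sharp}$ is nilpotent-like: it maps base covectors to base covectors through $\omega_S^{\sharp}$ composed with the anchor, but iterating it one loses a ``vertical'' degree each time, forcing $\mathrm{Id}$ plus this map to be invertible with inverse given by a finite geometric-type series. Concretely, $p^*(\omega_S)$ is pulled back from $S$, hence its contraction with any bivector only sees the part of that bivector projecting to $\Lambda^2 TS$; combined with the linear (fiberwise homogeneous of degree one) nature of $\pi_{\mathrm{lin}}(A)$, this produces the strict upper-triangularity needed.

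Alternatively — and I think this is the cleaner route — I would argue via Dirac geometry: the Dirac structure $L_{\omega}$ associated to the pullback form $\eta=p^*(\omega_S)$ has $p_T(L_{\eta})=T(A^*)$, and since $\eta$ is basic, its interaction with $L_{\pi_{\mathrm{lin}}(A)}$ is controlled by the fact that $\eta$ is degenerate exactly along the fibers of $p$, which are precisely where $\pi_{\mathrm{lin}}(A)$ is ``non-degenerate transversally''. Using the criterion stated in subsection \ref{subsection_Dirac} that $L_1 * L_2$ is Poisson iff $L_1$ and $-L_2$ are transverse, it suffices to check that $L_{\pi_{\mathrm{lin}}(A)}$ and $-L_{p^*(\omega_S)}$ are transverse in $T(A^*)\oplus T^*(A^*)$ at every point, and transversality is again verified by the splitting argument: the ``missing directions'' of each are complementary because one is basic and the other is linear. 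The main obstacle is bookkeeping: making the triangularity/transversality argument rigorous requires care with the identification of $T(A^*)$ via a connection on $A$ and keeping track of which blocks vanish, and one must make sure the argument is pointwise uniform over all of $A^*$ (not just near the zero section), which is exactly where the special ``basic'' nature of $p^*(\omega_S)$ — as opposed to an arbitrary closed $2$-form — is used. Once the invertibility is established globally, well-definedness of $\pi_{\mathrm{lin}}^{p^*(\omega_S)}(A)$ on all of $A^*$ follows immediately, and its Poisson property is automatic from the Dirac-geometric description.
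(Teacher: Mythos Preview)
Your first route is essentially the paper's argument, and it is correct: the map $\mathrm{Id}+p^*(\omega_S)^{\sharp}\circ\pi_{\mathrm{lin}}(A)^{\sharp}$ is invertible because $p^*(\omega_S)^{\sharp}\circ\pi_{\mathrm{lin}}(A)^{\sharp}$ is nilpotent. However, your explanation of \emph{why} it is nilpotent is vaguer than necessary. The talk of ``losing a vertical degree each time'' and ``strict upper-triangularity'' via a local splitting is not the cleanest way to see it, and as stated it is not quite accurate (the composite does not obviously drop a filtration degree in the way you suggest).

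The paper's argument is a one-line observation that avoids any coordinate or splitting choice: since $p^*(\omega_S)$ is pulled back from $S$, the image of $p^*(\omega_S)^{\sharp}$ lies in the span of the differentials $d\widetilde{f}$ with $f\in C^{\infty}(S)$; but the defining relation $\{p^*(f),p^*(g)\}=0$ for $\pi_{\mathrm{lin}}(A)$ says exactly that $\pi_{\mathrm{lin}}(A)$ vanishes on pairs of such covectors. Hence
\[
p^*(\omega_S)^{\sharp}\circ\pi_{\mathrm{lin}}(A)^{\sharp}\circ p^*(\omega_S)^{\sharp}=0,
\]
so the endomorphism $p^*(\omega_S)^{\sharp}\circ\pi_{\mathrm{lin}}(A)^{\sharp}$ squares to zero and $\mathrm{Id}$ plus it is invertible everywhere on $A^*$. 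This also gives an explicit formula for the gauge-transformed bivector. Your Dirac-transversality route would work too, but it is considerably more machinery than the problem requires.
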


Using a splitting $\sigma: A\to K$ of (\ref{EQ_Atiyah1}), we identify $K^*\cong\sigma^*(K^*)\subset A^*$. Let
$N(A)\subset K^*$ be the open where $K^*$ is Poisson transversal\index{Poisson transversal} for
$\pi_{\mathrm{lin}}^{p^*(\omega_S)}(A)$ (see section \ref{SPreliminaries}), and let $\pi_A$ be the corresponding
Poisson structure on $N(A)$. The following will be proved at the end of subsection
\ref{subsection_linearizing_Poisson_structures}:

\begin{proposition}\label{proposition_restricting_to_cosymplectic}
The Poisson manifold $(N(A),\pi_A)$ contains $(S,\omega_S)$ as a symplectic leaf and the transitive Lie algebroid
$A_S$ of $S$ is isomorphic to $A$, via the maps
\[A_{S}=T^*K^*_{|S}\cong T^*S\oplus K\stackrel{(\omega_S^{-1,\sharp},I)}\rmap TS\oplus K\cong A.\]
\end{proposition}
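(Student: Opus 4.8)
The plan is to peel back the three layers in the construction of $\pi_A$ — the linear Poisson structure $\pi_{\mathrm{lin}}(A)$ on $A^*$, its gauge transform by $p^*(\omega_S)$, and the restriction to the Poisson transversal $\sigma^*(K^*)\subset A^*$ — and to track at each stage the symplectic leaf through the zero section and the cotangent Lie algebroid along it. For the first assertion I would argue pointwise at a zero $0_x$. From the defining relations of $\pi_{\mathrm{lin}}(A)$ one computes that $H_{p^*f}\vert_{0_x}=-\rho^*(d_xf)\in A^*_x$ is vertical while $H_{\widetilde{\alpha}}\vert_{0_x}$ is horizontal with horizontal part $\rho(\alpha_x)$; transitivity of $\rho$ then gives that the symplectic leaf $L$ of $\pi_{\mathrm{lin}}(A)$ through $0_x$ has $T_{0_x}L=T_xS\oplus(\ker\rho_x)^{\circ}$. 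The gauge transform (well defined by Lemma~\ref{Lemma_pi_la_omega_S}) leaves $L$ unchanged and replaces the leafwise form by $\omega_L+(p^*\omega_S)\vert_L$; since $\{\widetilde{\alpha},\widetilde{\beta}\}(0_x)=\widetilde{[\alpha,\beta]}(0_x)=0$, the form $\omega_L$ vanishes on the horizontal directions $T_xS$, so the twisted form restricted to $T_xS$ is $\omega_S$. A short linear-algebra check with the splitting $A^*=(\ker\rho)^{\circ}\oplus\sigma^*(K^*)$ shows that $L$ is transverse to $\sigma^*(K^*)$ at $0_x$, with $T_{0_x}L\cap T_{0_x}\sigma^*(K^*)=T_xS$ and with the restricted form nondegenerate there (it equals $\omega_S$). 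Since the transversality condition (\ref{EQ_cosymplectic}) is open, $0_x\in N(A)$ for every $x$, the leaf of $\pi_A$ through $0_x$ is an open piece of the zero section, and hence $(S,\omega_S)$ is a symplectic leaf of $(N(A),\pi_A)$.

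For the Lie algebroid isomorphism I would first make the bundle identifications explicit. Along the zero section of the vector bundle $K^*\to S$ there is the canonical splitting $T(K^*)\vert_S=TS\oplus K^*$, whose dual gives $A_S=T^*(K^*)\vert_S\cong T^*S\oplus K$; moreover the normal bundle of $S$ in $N(A)$ is $K^*$, so the conormal bundle $\nu_S^*$ in the Atiyah sequence (\ref{pre-Atiyah}) is $K$. Under these identifications the anchor of $A_S$ is $(\omega_S^{-1})^{\sharp}$ on the $T^*S$-summand and zero on $K$, which matches the anchor $\rho$ of $A$ once the splitting $\sigma$ is used to write $A=TS\oplus K$ (with $TS\xrightarrow{\sim}\ker\sigma\subset A$ via $\rho$). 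Thus $\Phi:=((\omega_S^{-1})^{\sharp},\mathrm{Id})\colon T^*S\oplus K\to TS\oplus K\cong A$ intertwines the anchors, and it remains only to check that $\Phi$ preserves the Lie brackets on sections.

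This bracket identity is where the real work lies, and my plan is to reduce it to the linear model. The cotangent Lie algebroid is gauge invariant: the Dirac–gauge isomorphism $e_{p^*\omega_S}$ of (\ref{EQ_gauge_dirac_map}) identifies $(T^*A^*,[\cdot,\cdot]_{\pi_{\mathrm{lin}}(A)})$ with $(T^*A^*,[\cdot,\cdot]_{\pi_{\mathrm{lin}}^{p^*\omega_S}(A)})$, and over the zero section it is $\mathrm{Id}+\pi_{\mathrm{lin}}^{\sharp}(p^*\omega_S)^{\sharp}$, whose action on $T^*A^*\vert_S$ is read off from the pointwise formulas above (cf.\ also Lemma~\ref{Lemma_gauge_cohomology} and (\ref{EQ_map_gauge})). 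Since $\sigma^*(K^*)$ is a Poisson transversal and $S$ has become a symplectic leaf, $T^*N(A)$ is recovered from $T^*A^*$ by cutting out the subbundle $\{\xi:\pi^{\sharp}\xi\in T(K^*)\}$ (via $\xi\mapsto\xi\vert_{TK^*}$) and restricting to $S$. Finally, for linear Poisson structures there is the classical identification of the cotangent algebroid of $\pi_{\mathrm{lin}}(A)$ along the zero section — modelled on $T^*(\mathfrak{g}^*)\vert_0\cong\mathfrak{g}$ for Lie algebras — as $T^*S\oplus A$ with bracket extending that of $A$ on constant $A$-sections; tracing $A$ through the gauge twist and the transversal restriction collapses $T^*S\oplus A$ onto $T^*S\oplus K$ and produces exactly $\Phi$ and the bracket of $A$, independently of $\sigma$ by Lemma~\ref{Lemma_pi_la_omega_S} and Proposition~\ref{proposition_splitting_equivalent}. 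Concretely I would verify the bracket identity by evaluating both sides on the spanning set of $1$-forms $\{d(p^*f)\}\cup\{d\widetilde{\alpha}:\alpha\in\Gamma(A)\}$, using the Koszul formula for $[\cdot,\cdot]_{\pi}$ and the three defining relations of $\pi_{\mathrm{lin}}(A)$; alternatively, since $A_S$ depends only on $j^1_{\vert S}\pi_A$ (Proposition~\ref{1st-jet-Atiyah}), it suffices to show that $j^1_{\vert S}\pi_A$ corresponds to $A$ under the bijection of that proposition, which one can check by computing $\pi_A$ and its first normal derivative along $S$ from the construction. I expect the main obstacle to be bookkeeping: disentangling the connection-dependent splitting $\sigma$ from the gauge twist and the transversal restriction so as to recognise the resulting bracket as the given, connection-free one.
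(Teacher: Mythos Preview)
Your approach is correct in outline but takes a genuinely different route from the paper. You work directly with the three layers of the construction (linear Poisson structure on $A^*$, gauge transform by $p^*(\omega_S)$, restriction to the Poisson transversal $\sigma^*(K^*)$), computing Hamiltonian vector fields pointwise at zeros and tracking the leaf and the cotangent algebroid through each step. The paper instead invokes the Dirac--element / Vorobjev--triple machinery developed earlier in the chapter: it observes (via Proposition~\ref{proposition_one_to_one_lin_Dirac}) that the inclusion $(E,L_{\gamma_{\mathrm{lin}}})\subset (A^*,\pi_{\mathrm{lin}}(A))$ is a backward Dirac map, notes that gauge--transforming both sides by $p^*(\omega_S)$ preserves this, so that $\pi_A$ is the Poisson structure on the support of the Dirac element $\gamma_{\mathrm{lin}}+\omega_S$; then Lemma~\ref{lma-last} (which reads off the symplectic leaf from $\gamma|_S$) and Lemma~\ref{Lemma_algebroid_cutarica} (which reads off $A_S$ from $d^1_S\gamma=\gamma_{\mathrm{lin}}$) give both conclusions at once.

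What each buys: the paper's proof is a three--line corollary once the $\widetilde{\Omega}_E$ framework is in place, and it handles the symplectic--leaf and the Lie--algebroid assertions uniformly through the same Dirac element. Your approach is more elementary and self--contained --- it avoids the graded--Lie--algebra formalism entirely --- but the bracket verification you sketch (tracing $T^*A^*|_S\cong T^*S\oplus A$ through gauge and transversal, or alternatively computing $j^1_{|S}\pi_A$ and appealing to Proposition~\ref{1st-jet-Atiyah}) is where the real bookkeeping lies, and you correctly flag this as the main obstacle. One small caution: the zero section of $A^*$ is \emph{not} a Poisson submanifold for $\pi_{\mathrm{lin}}(A)$ (the leaf through $0_x$ has a vertical component $(\ker\rho_x)^\circ$), so you cannot literally ``restrict the cotangent algebroid to $S$'' before the gauge and transversal steps; your phrasing suggests you are aware of this, but it is worth keeping the order of operations straight when you carry out the computation.
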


We will call the Poisson manifold $(N(A),\pi_A)$ the \textbf{local model}\index{local model, Poisson} associated to the
pair $(A,\omega_S)$. As proven by Vorobjev in \cite{Vorobjev}, the local model doesn't depend on the splitting used to
define it. We will give our own proof of this result at the end of subsection
\ref{subsection_linearizing_Poisson_structures}.

\begin{proposition}\label{proposition_splitting_equivalent}
If $(N_{\sigma_1}(A),\pi_A(\sigma_1))$ and $(N_{\sigma_2}(A),\pi_A(\sigma_2))$ are two local models, constructed
with the aid of two splittings $\sigma_1,\sigma_2:A\to K$, then there exists a Poisson diffeomorphism
\[\varphi:(U_1,\pi_{A|U_1}(\sigma_1))\rmap (U_2,\pi_{A|U_2}(\sigma_2)),\]
where $U_1$ and $U_2$ are open neighborhoods of $S$ in $N_{\sigma_1}(A)$ and $N_{\sigma_2}(A)$ respectively,
which is the identity along $S$.
\end{proposition}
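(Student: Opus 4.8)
The plan is to reduce Proposition~\ref{proposition_splitting_equivalent} to a Moser-type deformation argument performed along the segment of splittings joining $\sigma_1$ and $\sigma_2$. A splitting $\sigma\colon A\to K$ of the Atiyah sequence (\ref{EQ_Atiyah1}) is the same as a right inverse of $K\hookrightarrow A$; since $\sigma_2-\sigma_1$ kills $K$ it factors through the anchor, so $\sigma_t:=(1-t)\sigma_1+t\sigma_2$ is a smooth one-parameter family of splittings, equal to $\sigma_1$ at $t=0$ and to $\sigma_2$ at $t=1$. Write $\pi_t:=\pi_A(\sigma_t)$ for the associated Vorobjev model on $N_{\sigma_t}(A)\subset K^{*}$. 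By Proposition~\ref{proposition_restricting_to_cosymplectic} each $\pi_t$ has $(S,\omega_S)$ as a symplectic leaf (the zero section), and by the Tube Lemma~\ref{TubeLemma} all $\pi_t$ are defined and Poisson on one common open $U\supseteq S$. It is then enough to construct an isotopy $\Psi_t$ of a neighbourhood of $S$ in $K^{*}$, fixing $S$ pointwise and with $\Psi_0=\mathrm{Id}$, such that $\Psi_t^{*}\pi_t=\pi_0$; the time-one map $\varphi:=\Psi_1$ is then the desired Poisson diffeomorphism carrying $\pi_A(\sigma_1)$ to $\pi_A(\sigma_2)$ and fixing $S$.

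The cleanest incarnation of this, which I would present first, is the case $A=TP/G$ coming from a principal $G$-bundle, where a splitting corresponds to a principal connection $\theta_t\in\Omega^{1}(P,\mathfrak g)$ and, as in subsection~\ref{The local model}, $\pi_t$ is the quotient $(\Sigma,\Omega_t)/G$ of the symplectic manifold with $\Omega_t=p^{*}(\omega_S)-d\widetilde\theta_t$, $\widetilde\theta_{t,(q,\xi)}=\langle\xi,\theta_{t,q}\rangle$. Here $\theta_t-\theta_0=t\alpha$ with $\alpha$ a horizontal, $G$-equivariant $\mathfrak g$-valued $1$-form on $P$, so $\Omega_t=\Omega_0-t\,d\widetilde\alpha$ with $\widetilde\alpha:=\langle\xi,\alpha\rangle$ a $G$-invariant $1$-form that \emph{vanishes along $P\times\{0\}$} (because $\xi=0$ there). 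Consequently the Moser vector field $X_t$ determined by $\iota_{X_t}\Omega_t=\widetilde\alpha$ (using nondegeneracy of $\Omega_t$ near $P\times\{0\}$) is $G$-invariant and vanishes on $P\times\{0\}$; by the Tube Lemma~\ref{TubeLemma} its flow $\Psi_t$ is defined for $t\in[0,1]$ near $P\times\{0\}$, is $G$-equivariant, fixes $P\times\{0\}$, and satisfies $\Psi_t^{*}\Omega_t=\Omega_0$, since $\tfrac{d}{dt}\Psi_t^{*}\Omega_t=\Psi_t^{*}(L_{X_t}\Omega_t-d\widetilde\alpha)=\Psi_t^{*}(d\widetilde\alpha-d\widetilde\alpha)=0$. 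A $G$-equivariant symplectomorphism between the $\Omega_0$- and $\Omega_t$-structures descends to the quotients, and by the very definition of the Poisson structure on a free symplectic quotient it is a Poisson diffeomorphism between neighbourhoods of $S$ in the two local models, equal to the identity on $S=(P\times\{0\})/G$.

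For a general, possibly non-integrable abstract Atiyah sequence there is no bundle $P$ to work upstairs with, and the statement becomes the independence, up to a Poisson diffeomorphism fixing $S$, of the restriction of the \emph{fixed} Poisson manifold $(A^{*},\pi_{\mathrm{lin}}^{p^{*}(\omega_S)}(A))$ to the two Poisson transversals $\sigma_1^{*}(K^{*})$ and $\sigma_2^{*}(K^{*})$ (cf.\ Lemma~\ref{Lemma_pi_la_omega_S} and Proposition~\ref{proposition_restricting_to_cosymplectic}). The strategy is the same Moser scheme run directly on $A^{*}$: the transversals $\sigma_t^{*}(K^{*})$ interpolate between the two, the infinitesimal change $\dot\pi_t$ is again governed by the difference of splittings $\psi\in\mathrm{Hom}(TS,K)$ -- an ``exact'' perturbation vanishing along the zero section $S$ -- and one must drag $\sigma_1^{*}(K^{*})$ along the family by a time-dependent vector field that vanishes on $S$ and is infinitesimally Poisson along each transversal. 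Making this precise is exactly the role of the linearization framework of subsection~\ref{subsection_linearizing_Poisson_structures} (the extension of \cite{CrFe-stab} to Vorobjev triples): there the class of $\dot\pi_t$ is exhibited as a coboundary in the relevant deformation complex, the cohomology group in question vanishing precisely because the perturbation vanishes to the right order along $S$. This last point -- identifying the correct complex controlling the construction and checking that the change-of-splitting class is a coboundary in it, while using the Tube Lemma throughout to keep all flows defined on a full neighbourhood of the (possibly non-compact) leaf $S$ -- is where the real work lies; the Moser bookkeeping around it is routine.
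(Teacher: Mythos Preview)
Your argument in the integrable case is correct and is a genuinely different (and pleasant) alternative to the paper's: you run a classical symplectic Moser argument upstairs on $P\times\mathfrak g^{*}$, exploiting that two connections differ by an equivariant horizontal $\alpha$ so that $\Omega_t-\Omega_0=-t\,d\widetilde\alpha$ with $\widetilde\alpha$ vanishing on $P\times\{0\}$; the $G$-equivariant Moser flow then descends. The paper does not split into cases and does not go upstairs; it works directly in the algebraic framework $(\widetilde\Omega_E,[\cdot,\cdot]_{\ltimes})$ and proves the statement uniformly for any transitive $A$.

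Your general-case paragraph, however, is only a sketch and is slightly misdirected. You frame the remaining step as ``the class of $\dot\pi_t$ is exhibited as a coboundary'' in some deformation complex with a vanishing cohomology group. The paper's proof uses no cohomological vanishing at all. The key is an explicit algebraic identity: writing the second splitting as $\sigma_\lambda=\sigma+\lambda$ with $\lambda\in\Omega^1(S,K)=\mathrm{gr}_1(\Omega_E^{1,0})$, one computes the three components of the new linear Dirac element and finds
\[
\gamma_{\mathrm{lin}}^{\lambda}=\exp(\mathrm{ad}_\lambda)(\gamma_{\mathrm{lin}}),\qquad \mathrm{ad}_\lambda:=-[\lambda,\cdot]_{\ltimes},
\]
the exponential being a finite sum since $\mathrm{ad}_\lambda$ shifts bidegree by $(+1,-1)$. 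Because $\omega_S$ is central in $\Omega_E$, also $\omega_S+\gamma_{\mathrm{lin}}^{\lambda}=\exp(\mathrm{ad}_\lambda)(\omega_S+\gamma_{\mathrm{lin}})$, and the interpolating family is $\gamma_t:=\exp(t\,\mathrm{ad}_\lambda)(\omega_S+\gamma_{\mathrm{lin}})$. The homotopy equation of Lemma~\ref{HomotopyEquation} is then tautological,
\[
\tfrac{d}{dt}\gamma_t=-[\lambda,\gamma_t]_{\ltimes}=[\gamma_t,\lambda]_{\ltimes},
\]
so the Moser generator is simply $V_t\equiv\lambda$ (constant in $t$), and the corresponding vector field $X_t=\tau_{\pi_t}^{-1}(\lambda)$ vanishes on $S$ because $\lambda\in\mathrm{gr}_1$. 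The Tube Lemma then gives the flow up to $t=1$ on a neighbourhood of $S$, exactly as you say. In other words, the ``real work'' you defer is not a cohomological obstruction computation but the single closed-form identity $\gamma_{\mathrm{lin}}^\lambda=\exp(\mathrm{ad}_\lambda)\gamma_{\mathrm{lin}}$; once you have it, the Moser argument is one line. Incidentally, your upstairs Moser generator $\widetilde\alpha=\langle\xi,\alpha\rangle$ is precisely the avatar of $\lambda$ under $\Gamma(K^*\otimes K)\cong\Omega^1(S,K)$, so your integrable-case argument is the symplectic shadow of the paper's general one.
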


We can drop now the assumption on the smoothness of the Poisson homotopy bundle in Definition
(\ref{first-order-jet-def1}) of the local model.

\begin{definition}\label{definition_local_model_nonintegrable}
The \textbf{first order local model} of $(M,\pi)$ around the symplectic leaf $(S,\omega_S)$ is the Poisson manifold
\[(N(A_S),\pi(A_S)),\ \ \textrm{ with } \ N(A_S)\subset \nu_S,\]
associated to the Lie algebroid $A_S:=T^*M_{|S}$ and to $(S,\omega_S)$.
\end{definition}

In the case when $A$ is integrable, we obtain the same local model as in subsection \ref{The local model}.

\begin{proposition}\label{Proposition_reconcile_3}
Let $P$ be a principal $G$-bundle over a symplectic manifold $(S,\omega_S)$, with Atiyah sequence
\[0\rmap K\rmap A\rmap TS\rmap 0,\]
where $K:=P\times_G\mathfrak{g}$ and $A=TP/G$. There is a bijection between
\begin{itemize}
\item $\theta\in\Omega^1(P,\mathfrak{g})$, connection 1-forms on $P$,
\item $\sigma:A\to K$, splittings of the Atiyah sequence,
\end{itemize}
such that the Poisson manifold $(N(P),\pi_P)$ constructed in subsection \ref{The local model} with the aid of $\theta$
and the Poisson manifold $(N(A),\pi_{A})$, constructed using the corresponding $\sigma$, coincide.
\end{proposition}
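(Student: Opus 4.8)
The plan is to exhibit the bijection explicitly and then to trace through the two constructions, identifying the symplectic manifold used in Subsection~\ref{The local model} with the "Poisson-transversal slice" in the linear model $\plin^{p^*(\omega_S)}(A)$ from Subsection~\ref{Subsection_the_local_model_the_general_case}. The bijection $\{\theta\} \leftrightarrow \{\sigma\}$ is classical: a connection $1$-form $\theta \in \Omega^1(P,\mathfrak g)$ is $G$-equivariant and satisfies $\theta(\widetilde X) = X$, so it descends to a bundle map $TP/G \to (P\times\mathfrak g)/G$, i.e. to a splitting $\sigma: A \to K$ of the Atiyah sequence \eqref{concrete_Atiyah_sequence}; conversely any such $\sigma$ lifts uniquely to a connection form. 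I would record this in one sentence, then fix a matched pair $(\theta,\sigma)$ once and for all.

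The heart of the argument is a commuting diagram relating the two symplectic quotients. First I would recall that $A^* = T^*P/G$ carries $\plin(A)$, and that under the identification $A^* \cong T^*P/G$ the linear Poisson structure is precisely the one induced on the quotient from the canonical symplectic structure $\omega_{\mathrm{can}}$ on $T^*P$ (this is the standard "Poisson structure on $T^*P/G$" — it is a quotient of the symplectic manifold $T^*P$ by the cotangent-lifted $G$-action, as in Subsection~\ref{SSExamplesPoisson}). Gauge-transforming by $p^*(\omega_S)$ corresponds, at the level of $T^*P$, to adding the pullback of $p_S^*\omega_S$ (where $p_S: P\to S$), which is still a symplectic form on a neighborhood of the zero section and still $G$-invariant with the same moment map $\mu(q,\xi)=\xi$ for the coadjoint-type action — exactly the $2$-form $\Omega = p^*(\omega_S) - d\widetilde\theta$ appearing in Subsection~\ref{The local model} once one uses $\theta$ to trivialize $T^*P \cong P\times\mathfrak g^*$. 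Thus $(\Sigma,\Omega)$ of Subsection~\ref{The local model} is canonically the open piece of $(T^*P, \omega_{\mathrm{can}} + p_S^*\omega_S)$ on which the form is nondegenerate, and the splitting $\sigma$ is what realizes the identification $K^* \cong \sigma^*(K^*) \subset A^*$ used to cut out $N(A)$. I would then check that the "nondegeneracy locus where $K^*$ is a Poisson transversal" of $\plin^{p^*(\omega_S)}(A)$ matches, under $\mu$, the image in the quotient of the nondegeneracy locus of $\Omega$; both describe the same open set $N(P) \cong N(A)$, and the reduced Poisson structures agree because reduction commutes with the identifications. Concretely: $(N(P),\pi_P) = (\Sigma,\Omega)/G$ by definition, while $(N(A),\pi_A)$ is obtained by restricting $\plin^{p^*(\omega_S)}(A)$ to the transversal $\sigma^*(K^*)$, and Lemma~\ref{Lemma_restricting_symplectic_realizations} together with the description of $\pi_A$ via the moment map $\mu$ identifies this restriction with the same quotient.

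The main obstacle I anticipate is bookkeeping around the two different "trivializations": in Subsection~\ref{The local model} one works on $P\times\mathfrak g^*$ with the $1$-form $\widetilde\theta_{(q,\xi)} = \langle \xi, \theta_q\rangle$, whereas in Subsection~\ref{Subsection_the_local_model_the_general_case} one works intrinsically on $A^* = T^*P/G$ and uses $\sigma$ to embed $K^*$. Showing that the isomorphism $P\times\mathfrak g^* \cong T^*P/G$ induced by $\theta$ sends $-d\widetilde\theta$ to (the quotient of) $-\omega_{\mathrm{can}}$, and sends the slice $\{0\}\times\mathfrak g^*$-type data to $\sigma^*(K^*)$, is a direct but slightly fiddly computation in local bundle coordinates; I would do it by evaluating both sides on horizontal and vertical lifts, exactly as in the moment-map computation already displayed in Subsection~\ref{The local model}. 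Once that identification of data is in place, the equality of the two Poisson manifolds and of their reduced structures is formal, since both are presented as the Marsden--Weinstein-type reduction $\mu^{-1}(\,\cdot\,)$-slice of the same symplectic manifold by the same $G$-action, and Proposition~\ref{proposition_splitting_equivalent} already guarantees independence of the auxiliary choice so no further normalization is needed. I would close by remarking that, restricting to $S = P\times_G\{0\}$, both constructions manifestly induce $(\omega_S)$ and the same Atiyah sequence, which pins down the identification as the one fixing $S$.
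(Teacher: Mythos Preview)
Your overall strategy matches the paper's: show $(T^*P,\omega_{\mathrm{can}})/G \cong (A^*,\plin(A))$, gauge-transform by $p^*(\omega_S)$ to get a symplectic realization of $\plin^{p^*(\omega_S)}(A)$, and then invoke Lemma~\ref{Lemma_restricting_symplectic_realizations} to identify the two local models. That is exactly the route the paper takes.

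There is, however, a genuine slip in the middle that would cause trouble if you tried to carry it out as written. You say that $\theta$ ``trivializes $T^*P \cong P\times\mathfrak g^*$'' and then describe $(\Sigma,\Omega)$ as the nondegeneracy locus of the form on $T^*P$. This is dimensionally impossible: $\dim T^*P = 2\dim P$, while $\dim(P\times\mathfrak g^*) = \dim P + \dim G$; they agree only when $S$ is a point. Moreover $p^*(\omega_S)+\omega_{\mathrm{can}}$ is symplectic on \emph{all} of $T^*P$, so there is no nondegeneracy locus to cut out there. What $\theta$ actually provides is a $G$-equivariant \emph{embedding}
\[
i=\theta^*:\ P\times\mathfrak g^*\ \hookrightarrow\ T^*P,
\]
covering the inclusion $\sigma^*:K^*\hookrightarrow A^*$. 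The $1$-form $\widetilde\theta$ of Subsection~\ref{The local model} is $i^*(\alpha_{\mathrm{can}})$, so $i^*\bigl(p^*(\omega_S)+\omega_{\mathrm{can}}\bigr)=p^*(\omega_S)-d\widetilde\theta=\Omega$. It is the nondegeneracy locus of this \emph{restricted} form that gives $\Sigma$, and this is precisely the situation handled by Lemma~\ref{Lemma_restricting_symplectic_realizations}: the preimage of the Poisson-transversal open $N(A)\subset K^*$ under the realization $T^*P\to A^*$ is where $\Omega$ is nondegenerate, and the restricted realization pushes $\Omega^{-1}$ to $\pi_A$. Once you replace ``trivialization'' by ``embedding'' and recognize $\Omega$ as a pullback rather than the ambient form, your argument becomes the paper's proof; the appeal to Proposition~\ref{proposition_splitting_equivalent} at the end is then unnecessary, since you are proving an equality of Poisson manifolds, not merely an isomorphism up to choice.
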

\begin{proof}
The first part is clear: a connection 1-form $\theta\in\Omega^1(P,\mathfrak{g})$ is the same as a $G$-invariant
splitting of the exact sequence
\[0\rmap P\times\mathfrak{g}\rmap TP\rmap p^*(TS)\rmap 0,\]
which is the same as a splitting
\[\sigma:A=TP/G\rmap K=P\times_G\mathfrak{g}\]
of the Atiyah sequence of $P$. We fix such a pair $(\theta,\sigma)$.

We prove now that the linear Poisson structure $\pi_{\mathrm{lin}}(A)$ on $A^*$ is obtained as the quotient of the
cotangent bundle of $P$,
\begin{equation}\label{EQ_diagram}
(T^*P,\omega_{\mathrm{can}})/G\cong(A^*,\pi_{\mathrm{lin}}(A)).
\end{equation}
For $Z\in \mathfrak{X}(P)$, we denote by $\widetilde{Z}\in C^{\infty}(T^*P)$ the induced fiberwise linear function
on $T^*P$. The Poisson brackets on $T^*P$ and on $A^*=(TP/G)^*$ are both uniquely determined by the relation
\[\{\widetilde{X},\widetilde{Y}\}=\widetilde{[X,Y]},\]
imposed in the first case for all vector fields $X$ and $Y$ on $P$, while in the second case only for $G$-invariant $X$
and $Y$; thus (\ref{EQ_diagram}) holds.

Note that $p^*(\omega_S)+\omega_{\mathrm{can}}$ is a symplectic structure on $T^*P$ (this does not require
$\omega_S$ to be nondegenerate). Applying the gauge transformation by $p^*(\omega_S)$ to (\ref{EQ_diagram}), we
obtain also a symplectic realization for $\plin^{p^*(\omega_S)}(A)$
\[(T^*P,p^*(\omega_S)+\omega_{\mathrm{can}})\rmap (A^*,\plin^{p^*(\omega_S)}(A)).\]
Notice that $\theta$ induces a $G$-invariant inclusion $i=\theta^*:P\times\mathfrak{g}^*\to T^*P$, which covers
$\sigma^*:K^*\to A^*$. The 1-form $\widetilde{\theta}$ constructed in subsection \ref{The local model} is just
$\widetilde{\theta}=i^*(\alpha_{\mathrm{can}})$, where $\alpha_{\mathrm{can}}$ is the tautological 1-form on
$T^*P$; therefore
\[i^*(p^*(\omega_S)+\omega_{\mathrm{can}})=p^*(\omega_S)-d\widetilde{\theta}=\Omega.\]
So we can apply Lemma \ref{Lemma_restricting_symplectic_realizations}, which says that $N_A$, the open in $K^*$
where $\plin^{p^*(\omega_S)}(A)$ is Poisson transverse, coincides with $N_P=\Sigma/G$, where $\Sigma$ is the open
where $\Omega$ is nondegenerate, and moreover, that the induced Poisson structure $\pi_A$ on $N_A$ is the push
forward of $\Omega^{-1}$; which, by construction, is $\pi_P$.
\end{proof}

\subsubsection*{The local model as a linearization}

Using Definition \ref{definition_local_model_nonintegrable} to construct the local model might be difficult, even in
some simple cases. We describe here a direct approach, but the proofs are
left for subsection \ref{subsection_linearizing_Poisson_structures}, after we develop some stronger algebraic tools.\\

Let $(M,\pi)$ be a Poisson manifold and $(S,\omega_S)$ an embedded symplectic leaf. Let $\nu_S$ be the normal
bundle of $S$ in $M$, and consider a tubular neighborhood of $S$ in $M$
\[\Psi:\nu_S\rmap M.\]
Denote by $E:=\Psi(\nu_S)$, by $p:E\to S$ the induced projection and by $\mu_t:E\to E$ the multiplication by $t$.
Consider the path of Poisson structures
\begin{equation}\label{EQ_linearization_explicit}
\pi_t:=t\mu_t^*(\pi^{(t-1)p^*(\omega_{S})}),\ t\neq 0.
\end{equation}
Now $\pi_1=\pi$, and we will show in Remark \ref{Remark_why_pite_is_smooth} that, on some open around $S$,
$\pi_t$ is defined for all $t\in(0,1]$ and that it extends smoothly at $t=0$.

\begin{definition}\label{definition_first_order_approx}
The Poisson structure
\[\pi_0:=\lim_{t\to 0}\pi_t,\]
defined on an neighborhood of $S$, is called the \textbf{first order approximation} of $\pi$ around $S$ corresponding
to the tubular neighborhood $\Psi$.
\end{definition}

The following will be proved at the end of subsection \ref{subsection_linearizing_Poisson_structures}.

\begin{proposition}\label{Proposition_reconcile}
Let $A_S=T^*M_{|S}$ be the transitive Lie algebroid of $S$
\[0\rmap \nu^*_S\rmap A_S\rmap TS\rmap 0,\]
and consider the corresponding local model $\pi_{A_S}$ on $N(A_S)\subset \nu_S$, constructed with the aid of the
splitting
\[\sigma:=d\Psi_{|S}^*:A_S\rmap \nu^*_S. \]
Then, $\pi_0$ is defined on $\Psi(N(A_S))$, and $\Psi$ is a Poisson diffeomorphism
\[\Psi:(N(A_S),\pi_{A_S})\diffto (\Psi(N(A_S)),\pi_0).\]
\end{proposition}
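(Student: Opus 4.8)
The plan is to prove that the first-order approximation $\pi_0$ is a genuine first-order invariant of $\pi$ along $S$ — it depends only on the $1$-jet $j^1_{|S}\pi$ — and then to identify this invariant with the local model $\pi_{A_S}$. Throughout I pull everything back along $\Psi$, so that we may assume $M=\nu_S=:E$, $\Psi=\mathrm{id}$, $p\colon E\to S$ the bundle projection, $\mu_t$ fibrewise scaling by $t$, and $S$ the zero section, a symplectic leaf with form $\omega_S$. Write $\Phi_t(\pi):=t\,\mu_t^*(\pi^{(t-1)p^*\omega_S})$ for the operation in (\ref{EQ_linearization_explicit}), which is defined near $S$ for $t\in(0,1]$ and, by Remark \ref{Remark_why_pite_is_smooth}, extends smoothly to $t=0$ on a neighbourhood of $S$, with $\pi_0=\Phi_0(\pi)$. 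Using the elementary identities $\varphi^*(\rho^\eta)=(\varphi^*\rho)^{\varphi^*\eta}$, $(\rho^{\eta_1})^{\eta_2}=\rho^{\eta_1+\eta_2}$ and $(c\rho)^\eta=c\,\rho^{c\eta}$ for a scalar $c$, together with $p\circ\mu_t=p$ and $\mu_s\circ\mu_t=\mu_{st}$, one checks the semigroup law $\Phi_t\circ\Phi_s=\Phi_{st}$ with $\Phi_1=\mathrm{id}$; since $\Phi_s$ is continuous (indeed smooth) in $\pi$ on a fixed neighbourhood of $S$, this makes $\pi_0$ a fixed point of every $\Phi_s$.

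The first ingredient is that the local model is itself a fixed point of $\Phi_t$. By Proposition \ref{Proposition_reconcile_3}, on the dual $A_S^*$ of the restricted cotangent algebroid the local model lifts to $\Pi:=\plin(A_S)^{p^*\omega_S}$, and $\pi_{A_S}$ is the Poisson structure induced by $\Pi$ on the Poisson transversal $K^*\cong\nu_S\supset N(A_S)$ (via the splitting $\sigma$). Since $\plin(A_S)$ is fibrewise linear, $\mu_t^*\plin(A_S)=t^{-1}\plin(A_S)$, and the identities above give
\[
\Phi_t(\Pi)=t\,\mu_t^*\bigl(\plin(A_S)^{p^*\omega_S+(t-1)p^*\omega_S}\bigr)=t\,\mu_t^*\bigl(\plin(A_S)^{t\,p^*\omega_S}\bigr)=t\cdot t^{-1}\plin(A_S)^{p^*\omega_S}=\Pi .
\]
Because $\mu_t$ preserves the linear subbundle $K^*\subset A_S^*$ and the gauge $2$-form is pulled back from $S$, the operation $\Phi_t$ commutes with the reduction to the Poisson transversal — this is exactly the compatibility encoded in Lemma \ref{Lemma_restricting_symplectic_realizations} as used in the proof of Proposition \ref{Proposition_reconcile_3} — so $\Phi_t(\pi_{A_S})=\pi_{A_S}$ for all $t\in[0,1]$; in particular $\Phi_0(\pi_{A_S})=\pi_{A_S}$.

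The second ingredient is that $\Phi_0$ only sees the $1$-jet: if $\pi,\pi'$ are Poisson structures near $S$ with $(S,\omega_S)$ as symplectic leaf and $j^1_{|S}\pi=j^1_{|S}\pi'$, then $\Phi_0(\pi)=\Phi_0(\pi')$. This is where the algebraic machinery of the preceding subsections enters: presenting $\pi$ near $S$ as a Vorobjev triple (extending the framework of \cite{CrFe-stab}), the operation $\Phi_t$ corresponds to a rescaling of the triple under which the limit $t\to 0$ retains precisely its "affine part", and that affine part is a function of the $1$-jet alone — by construction it is the data defining the local model. Granting this, the proof concludes: by Proposition \ref{1st-jet-Atiyah} the datum $j^1_{|S}\pi$ is the transitive Lie algebroid $A_S=T^*M_{|S}$ of the Atiyah sequence (\ref{pre-Atiyah}); by Proposition \ref{proposition_restricting_to_cosymplectic} the local model $\pi_{A_S}$ has $(S,\omega_S)$ as symplectic leaf with restricted cotangent algebroid isomorphic to $A$, the isomorphism being the one built from the splitting, and for $\sigma=d\Psi_{|S}^*$ this is exactly the one transporting the normal bundle by $\Psi$; hence $j^1_{|S}\pi_{A_S}=j^1_{|S}\pi$. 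Therefore $\pi_0=\Phi_0(\pi)=\Phi_0(\pi_{A_S})=\pi_{A_S}$ on a neighbourhood of $S$, which is the assertion that $\Psi$ is a Poisson diffeomorphism $(N(A_S),\pi_{A_S})\to(\Psi(N(A_S)),\pi_0)$ restricting to the identity on $S$.

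The main obstacle is the second ingredient: proving that $\Phi_0$ factors through the $1$-jet and reproduces exactly Vorobjev's linearization rather than some twisted variant. A direct Taylor expansion in the fibre coordinates is awkward because the gauge correction by $(t-1)p^*\omega_S$ mixes the horizontal, mixed and vertical components of $\pi$; the whole point of packaging the data as a Vorobjev triple is that $\Phi_t$ becomes diagonal on it, so the components of fibre-degree $\ge 2$ die in the limit while the affine ones survive unchanged. Verifying that $\Phi_t$ is compatible with the Poisson-transversal reduction used to descend from $A_S^*$ to $\nu_S$ is a second, minor technical point, settled by the same argument that proves Proposition \ref{Proposition_reconcile_3}.
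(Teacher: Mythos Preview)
Your strategy is sound and the semigroup identity $\Phi_t\circ\Phi_s=\Phi_{st}$ is a pleasant observation, but the route is more circuitous than the paper's and your first ingredient is not fully justified.

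The paper works entirely inside the Dirac-element formalism (the Vorobjev-triple machinery you invoke). After reducing to $\Psi=\mathrm{id}$, $E=\nu_S$, write $\gamma\in\widetilde\Omega_E^2$ for the Dirac element of $\pi$. Proposition~\ref{Proposition_reconcile_2} computes the Dirac element of $\pi_t$ explicitly as $\gamma_t=\omega_S+(t\varphi_t(\gamma)-\omega_S)/t$, whence $\gamma_0=j^1_S\gamma=\omega_S+d^1_S\gamma$; this \emph{is} your second ingredient, carried out rather than deferred. On the other side, the proof of Proposition~\ref{proposition_restricting_to_cosymplectic} (together with Lemma~\ref{Lemma_algebroid_cutarica} and Proposition~\ref{proposition_one_to_one_lin_Dirac}) shows that $\pi_{A_S}$, built from the splitting $\sigma=d\Psi_{|S}^*$, also has Dirac element $\omega_S+d^1_S\gamma$. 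Hence $\pi_0=\pi_{A_S}$ on the common Poisson support $N(A_S)$, and a short continuity argument confirms that $\pi_0$ is indeed defined there as the limit. In this language your first ingredient becomes a triviality---an affine Dirac element satisfies $\gamma_t\equiv\gamma$---so the detour through $A_S^*$ is unnecessary.

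As for that detour: your claim that $\Phi_t$ commutes with the Poisson-transversal reduction $A_S^*\supset K^*$ is not what Lemma~\ref{Lemma_restricting_symplectic_realizations} provides; that lemma restricts symplectic realizations to transversals, it says nothing about gauge transforms preserving Poisson transversality (and in general they need not). The statement is true here because $p^*\omega_S$ is basic, but it requires its own argument. In short, the step you correctly flag as the main obstacle is the entire proof; once it is done via Dirac elements, the fixed-point detour dissolves.
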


\subsection{Examples}\label{Subsection_Examples_linearization}

\subsubsection{The condition $H^2(P_x)=0$}

We give now an example in which all conditions of Theorem \ref{Theorem_TWO} are satisfied, except for the vanishing
of $H^2(P_x)$, and in which the conclusion fails. The Poisson structure we construct is of the following type: take a
product $S\times\mathfrak{g}^*$, with $S$ symplectic and $\mathfrak{g}^*$ the dual of a Lie algebra, and then
multiply the first Poisson tensor by a Casimir function on $\mathfrak{g}^*$. This class of Poisson structures (and their
associated local models) are discussed in \cite{WadeLent}.

Consider the unit sphere $(\mathbb{S}^2,\pi_{\mathbb{S}^2})\subset \mathbb{R}^3$, with coordinates denoted $(u,
v, w)$, endowed with the inverse of the area form
\[\pi_{\mathbb{S}^2}^{-1}= \omega_{\mathbb{S}^2}=  (udv\wedge dw+ vdw\wedge du+ wdu\wedge dv) .\]
Consider also the linear Poisson structure on $\mathfrak{so}(3)^*\cong \{(x,y,z)\in \mathbb{R}^3\}$
\[ \pi_{\mathfrak{so}(3)}=x\frac{\partial}{\partial y}\wedge\frac{\partial}{\partial z}+y\frac{\partial}{\partial z}\wedge\frac{\partial}{\partial x}+z\frac{\partial}{\partial x}\wedge\frac{\partial}{\partial y}.\]
Its symplectic leaves are the spheres $\mathbb{S}^{2}_{r}$ of radius $r>0$ and symplectic form
\[ \omega_r= \frac{1}{r^2} (x dy\wedge dz+ y dz\wedge dx+ z dx\wedge dy),\]
and the origin. Finally, consider the product of these two Poisson manifolds
\[ M= \mathbb{S}^2\times \mathbb{R}^3,  \ \pi_0= \pi_{\mathbb{S}^2}+ \pi_{\mathfrak{so}(3)}.\]
The symplectic leaves of $(M,\pi_0)$ are
\[(S,\omega_S):=(\mathbb{S}^{2}\times \{0\}, \omega_{\mathbb{S}^2})\  \textrm{and}\ (\mathbb{S}^{2}\times \mathbb{S}^{2}_{r}, \omega_{\mathbb{S}^2}+\omega_r), \ r>0.\]
The abstract Atiyah sequence of $S$ is the product of Lie algebroids
\[A_S=TS\oplus \mathfrak{so}(3).\]
Hence, for $x\in S$, the Poisson homotopy group equals
\[G_x = G(\mathfrak{so}(3))\cong \textrm{SU}(2)(\cong \mathbb{S}^3),\]
and the Poisson homotopy bundle is
\[P_x= \mathbb{S}^2\times \textrm{SU}(2){\rmap}\mathbb{S}^2.\]
Using the trivial connection on $P_x$, one finds that $(M, \pi_0)$ coincides with the resulting local model. Note that all
the conditions of Theorem \ref{Theorem_TWO} are satisfied, except for the vanishing of $H^2(P_x)$.

Let us now modify $\pi_0$ without modifying $j^{1}_{|S}\pi_0$; we consider
\[ \pi= (1+ r^2)\pi_{\mathbb{S}^2}+ \pi_{\mathfrak{so}(3)}.\]
Note that $\pi$ has the same leaves as $\pi_0$, but with different symplectic forms:
\[(S,\omega_S) \ \textrm{and} \  (\mathbb{S}^2\times \mathbb{S}^{2}_{r},\frac{1}{1+ r^2} \omega_{\mathbb{S}^2}+ \omega_r), \ r>0.\]

To show that $\pi$ and $\pi_0$ are not equivalent, we compute symplectic areas/volumes. For this, note that, under the
parametrization
\[[0,2\pi]\times [-\pi/2,\pi/2]\ni (\varphi,\theta)\mapsto r(\cos\varphi\cos\theta,\sin\varphi\cos\theta,\sin\theta)\in \mathbb{S}_r,\]
the symplectic form becomes $\omega_r=r\cos\theta d\varphi\wedge d\theta$. Therefore
\[\int_{\mathbb{S}_r}\omega_r=4\pi r.\]

We claim that $\pi$ is not Poisson diffeomorphic around $S$ to $\pi_0$. Assume it is. Then, for any $r$ small enough,
we find $r'$ and a symplectomorphism
\[ \phi: (\mathbb{S}^2\times \mathbb{S}^{2}_{r}, \frac{1}{1+ r^2} \omega_{\mathbb{S}^2}+ \omega_r)\diffto (\mathbb{S}^2\times \mathbb{S}^{2}_{r'}, \omega_{\mathbb{S}^2}+ \omega_{r'}).\]
Comparing the symplectic volumes, we find
\[\frac{r}{1+ r^2}=\frac{1}{(4\pi)^{2}}\int_{\mathbb{S}^2\times \mathbb{S}^{2}_{r}}\frac{1}{1+ r^2} \omega_{\mathbb{S}^2}\wedge\omega_r=\frac{1}{(4\pi)^{2}}\int_{\mathbb{S}^2\times \mathbb{S}^{2}_{r'}}\omega_{\mathbb{S}^2}\wedge\omega_{r'}=r'.\]
On the other hand, $\phi$ sends the first generator $\sigma_1$ of $\pi_2(\mathbb{S}^{2}\times \mathbb{S}^{2}_{r})$ into a combination $m\sigma_1+ n\sigma_2$ with
$m$ and $n$ integers. Computing the symplectic areas of these elements, we obtain:
\[\frac{1}{1+ r^2}=\frac{1}{4\pi} \int_{\sigma_1} (\frac{1}{1+r^2} \omega_{\mathbb{S}^{2}}+ \omega_r)= \frac{1}{4\pi}\int_{m\sigma_1+ n\sigma_2} (\omega_{\mathbb{S}^{2}}+ \omega_{r'})=m+nr'.\]
These two equalities imply that $mr^2+nr+m=1$. This cannot hold for all $r$ (even small enough), because it forces
$r$ to be an algebraic number.

Another characteristic which tells apart $\pi_0$ from $\pi$ is integrability. Since $\pi_0$ is the local model of $\pi$
around $S$, and $P_x$ is smooth and compact, by Proposition \ref{main-cor} and Lemma
\ref{Lemma_small_neighborhoods}, integrability of $\pi$ around $S$ is equivalent to $\pi$ and $\pi_0$ being
isomorphic around $S$.

We compute the monodromy groups\index{monodromy group} of $\pi$ at $y=(p,q)\in
\mathbb{S}^2\times\mathbb{S}^2_r$. The isotropy Lie algebra $\mathfrak{g}_y$ is one-dimensional, and the normal
bundle to the leaf at $y$ is spanned by $q$ regarded as a tangent vector at $y$. Since the Poisson structure is regular
around $y$, we can use the formula (\ref{monodromy-regular}) below to compute the monodromy
map\index{monodromy map}
\begin{align*}
\partial_y(\sigma_1)(q)&=\frac{d}{dr}\int_{\sigma_1}\left(\frac{1}{1+r^2}\omega_{\mathbb{S}^2}+\omega_r\right)=\frac{d}{dr}\left(\frac{4\pi}{1+ r^2}\right)=\frac{-8\pi r}{(1+r^2)^2},\\
\partial_y(\sigma_2)(q)&=\frac{d}{dr}\int_{\sigma_2}\left(\frac{1}{1+r^2}\omega_{\mathbb{S}^2}+\omega_r\right)=\frac{d}{dr}(4\pi r)=4\pi.
\end{align*}
So, the monodromy group at $y$ is
\[\mathcal{N}_y=\left\{\frac{2r}{(1+r^2)^2} 4\pi m + 4\pi n\ | \ m,n\in \mathbb{Z}\right\},\]
and, for $r$ a transcendental number, it is not discrete. This shows that $\pi$ is not integrable on any open
neighborhood of $S$.

\subsubsection*{The regular case}\label{second-main-example}

Let $(M,\pi)$ be a regular Poisson\index{regular Poisson structure} manifold with an embedded leaf $(S,\omega_S)$.
The local model around $S$ from this chapter coincides with the local model constructed in chapter \ref{ChReeb},
where we regard the Poisson manifold as a symplectic foliation $(M,\mathcal{F},\omega)$. To see this, assume that the
symplectic foliation is on the normal bundle $\nu_S$, and that the leaves are transverse to the fibers of $\nu_S$. Let
$\Omega$ be the extension of $\omega$ that vanishes on vertical vectors in $\nu_S$. Then, the symplectic foliation
corresponding to the path $\pi_t$ from Definition \ref{definition_first_order_approx} is
\[(\nu_S,\mu_t^*(\mathcal{F}),p^*(\omega_S)+\frac{\mu_t^*(\Omega)-p^*(\omega_S)}{t}).\]
We have that $\lim_{t\to 0}\mu_t^*(\mathcal{F})=\mathcal{F}_{\nabla}$ is the foliation corresponding to the Bott
connection\index{Bott connection}, and
\[p^*(\omega_S)+\lim_{t\to 0}\frac{\mu_t^*(\Omega)-p^*(\omega_S)}{t}=p^*(\omega_S)+\delta_S\omega=j^1_S\omega,\]
where $\delta_S\omega$ is the vertical derivative of $\omega$ at $S$. So $\pi_0$ corresponds to the local model from
chapter \ref{ChReeb}
\[(\nu_S,\mathcal{F}_{\nabla},j^1_S\omega).\]

Let us discuss the local model also in the integrable case. Let $x\in S$ and denote by $\widetilde{S}$ the universal
cover of $S$. The isotropy Lie algebra $\mathfrak{g}_x= \nu_{x}^{*}$ is abelian and, as in the general case (see
(\ref{EQ_short_exact_sequence})), the Poisson homotopy group fits into a short exact sequence
\[1\rmap G_{x}^{\circ}\rmap G_x\rmap \pi_1(S,x) \rmap 1.\]
Hence $G_{x}^{\circ}$ is abelian and the Poisson homotopy bundle is a principal $G_{x}^{\circ}$-bundle over the
universal cover  $\widetilde{S}$. In conclusion, as a foliated manifold we obtain the same local model as in Reeb
stability
\[ P_x\times_{G_x} \mathfrak{g}_x^*\cong \widetilde{S}\times_{\pi_1(S,x)} \nu_x.\]
To describe the symplectic forms on this foliation, consider
\[\widetilde{S}_{\textrm{lin}}\rmap S, \ \ \ \ \ H_{\textrm{lin}}\]
the linear holonomy cover, respectively the linear holonomy group at $x$ (see chapter \ref{ChReeb}). Recall that the
vertical derivative\index{vertical derivative} of $\omega$ can be viewed as a $H_{\textrm{lin}}$-equivariant linear
map (see (\ref{EQ_variation})), which we denote by the same symbol
\[\delta_S\omega:\nu_{x}\rmap \Omega^2_{\textrm{cl}}(\widetilde{S}_{\textrm{lin}}),\]
where $\Omega^2_{\textrm{cl}}(\widetilde{S}_{\textrm{lin}})$ is the space of closed $2$-forms on
$\widetilde{S}_{\textrm{lin}}$. By pulling back to $\widetilde{S}$, we obtain a $\pi_1(S,x)$-equivariant map, which
we also denote by
\[\delta_S\omega:\nu_{x}\rmap \Omega^2_{\textrm{cl}}(\widetilde{S}).\]
On easily sees that the pullback of the symplectic forms from the local model to $\widetilde{S}\times \nu_{x}$ are
given by the family of $2$-forms
\[(\widetilde{S}\times \{y\},p^*(\omega_S)+\delta_S\omega_y), \ \ y\in \nu_{x}.\]

To make the connection with the Poisson homotopy bundle, let us recall the simpler description of the monodromy
map\index{monodromy map} for regular Poisson structures from \cite{CrFe2}. Let $y\in\nu_x$ and let $\sigma$ be a
2-sphere in $S$, with $\sigma(N)=x$, where $N\in \mathbb{S}^2$ denotes the north pole. Consider a small variation
$\sigma_{\epsilon}$ of 2-spheres, such that
\begin{itemize}
\item $\sigma_{\epsilon}(\mathbb{S}^2)\subset S_{\epsilon}$, where $(S_{\epsilon},\omega_{\epsilon})$ is a symplectic leaf,
\item $\sigma_0=\sigma$,
\item the vector $\frac{d}{d\epsilon}_{|\epsilon=0}\sigma_{\epsilon}(N)$ represents $y$.
\end{itemize}
Then the monodromy map on $\sigma$ is:
\begin{equation}\label{monodromy-regular}
 \partial(\sigma)(y)= \frac{d}{d\epsilon} _{|_{\epsilon= 0}}\int_{\sigma_{\epsilon}} \omega_{\epsilon}.
\end{equation}
Now, since the projections $\widetilde{S}\to \widetilde{S}_{\textrm{lin}}\to S$ induce isomorphisms between
\[\pi_2(\widetilde{S},x)\diffto \pi_2(\widetilde{S}_{\textrm{lin}},x)\diffto \pi_2(S,x),\]
we can lift the monodromy map to a map
\[\pi_2(\widetilde{S},x)\cong \pi_2(\widetilde{S}_{\textrm{lin}},x)\stackrel{\partial}{\rmap} \nu_x^*.\]
This map is just integration of the vertical derivative \cite{CrFe2}
\[\partial(\sigma)(y)=\int_{\sigma}\delta_S\omega_y.\]
Choosing linear coordinates $y=(y_1,\ldots,y_q)$ on $\nu_x$, the 2-forms $j^1_S\omega_{y}$ become
\[ j^1_S\omega_{y}= p^*(\omega_S)+ y_1\omega_1+ \ldots +y_q\omega_q,\]
where $\omega_i\in \Omega^2(\widetilde{S})$ are closed 2-forms representing the components of the monodromy map
$\partial: \pi_2(\widetilde{S},x)\to \nu_{x}^{*}$,
\begin{equation}\label{EQ_components_monodromy}
\partial(\sigma)=(\int_{\sigma}\omega_1,\ldots,\int_{\sigma}\omega_q),\ [\sigma]\in \pi_2(\widetilde{S},x).
\end{equation}
Since $\widetilde{S}$ is simply connected, the Hurewicz theorem gives
\[\pi_2(\widetilde{S},x)\cong H_2(\widetilde{S},\mathbb{Z});\]
therefore (\ref{EQ_components_monodromy}) determines uniquely the cohomology classes $[\omega_i]\in
H^2(\widetilde{S})$. Of course, this is related to the fact that the cohomological variation $[\delta_S\omega]$ of
$\omega$ is independent
of the choices made (see subsection \ref{subsection_model_sympl_foli}).\\

Recall the two conditions of Theorem \ref{Theorem_ONE}, the normal form result for symplectic foliations:
\begin{itemize}
\item $S$ is a manifold of finite type,
\item the cohomological variation of $\omega$ is a surjective map
\[[\delta_S\omega]:\nu_x\rmap H^2(\widetilde{S}_{\textrm{lin}}).\]
\end{itemize}
We will discuss now the relation between Theorem \ref{Theorem_TWO}, in the case of regular Poisson structures, and
Theorem \ref{Theorem_ONE}. First, we restate the conditions of Theorem \ref{Theorem_TWO} in a similar fashion.

\begin{proposition}\label{proposition_cohomological_variation}
The cohomological variation of $\omega$, viewed as a map
\begin{equation}\label{EQ_variation_1}
[\delta_S\omega]:\nu_x\rmap H^2(\widetilde{S}),
\end{equation}
satisfies:
\begin{enumerate}[(a)]
\item it is surjective if and only if $P_x$, the Poisson homotopy bundle at $x$, is smooth with $H^2(P_x)=0$,
\item it is injective if and only if $G_x^{\circ}$, the connected component of the Poisson homotopy group at $x$, is compact.
\end{enumerate}
\end{proposition}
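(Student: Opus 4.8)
The plan is to reduce the statement to a piece of linear algebra about the monodromy map, exploiting the fact that in the regular case the cohomological variation $[\delta_S\omega]$ is nothing but the monodromy map $\partial_x$ read off through linear duality, and then to feed this into the dictionary between the monodromy map and the Poisson homotopy bundle recalled in Proposition~\ref{smoothness-all-in-one} (and established in the proof of Proposition~\ref{Prop_restatement_1}). Since $(M,\pi)$ is regular, the isotropy Lie algebra $\mathfrak{g}_x=\nu_x^*$ is abelian, so $G(\mathfrak{g}_x)$ is the additive group of the vector space $\nu_x^*$ and coincides with its own centre; the monodromy map $(\ref{partial})$ is therefore a homomorphism $\partial_x\colon\pi_2(S,x)\to\nu_x^*$. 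Composing with the isomorphisms $\pi_2(S,x)\cong\pi_2(\widetilde{S},x)\cong H_2(\widetilde{S};\mathbb{Z})$ (the covering and Hurewicz isomorphisms, the latter because the universal cover $\widetilde{S}$ is $1$-connected) and using the identity $\partial_x(\sigma)(y)=\int_{\sigma}\delta_S\omega_y=\langle[\delta_S\omega](y),[\sigma]\rangle$ recalled above (see \cite{CrFe2}), one sees that after tensoring with $\mathbb{R}$ the map $\partial_x$ becomes precisely the transpose
\[
\partial_{\mathbb{R}}\colon H_2(\widetilde{S};\mathbb{R})\rmap\nu_x^*
\]
of the linear map $[\delta_S\omega]\colon\nu_x\to H^2(\widetilde{S};\mathbb{R})\cong H_2(\widetilde{S};\mathbb{R})^*$. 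Hence $[\delta_S\omega]$ is surjective if and only if $\partial_{\mathbb{R}}$ is injective, and injective if and only if $\partial_{\mathbb{R}}$ is surjective.

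For part (a): surjectivity of $[\delta_S\omega]$ is then equivalent to $\ker\partial_x$ being a torsion group. If $P_x$ is smooth, the homotopy exact sequence $(\ref{EQ_short_exact_sequence})$ gives $\pi_2(P_x)\cong\ker\partial_x$, and since $P_x$ is $1$-connected the Hurewicz theorem gives $H^2(P_x;\mathbb{R})\cong\mathrm{Hom}(\ker\partial_x,\mathbb{R})$, which vanishes exactly when $\ker\partial_x$ is torsion --- this gives one direction. Conversely, injectivity of $\partial_{\mathbb{R}}$ forces $H_2(\widetilde{S};\mathbb{R})$ to be finite-dimensional and $\mathrm{Im}\,\partial_x\subset\nu_x^*$ to be a finitely generated torsion-free subgroup whose rank equals the dimension of its $\mathbb{R}$-span, hence a lattice in that span and in particular discrete; by Proposition~\ref{smoothness-all-in-one} this means $P_x$ is smooth, and then $H^2(P_x;\mathbb{R})=0$ follows as above. (This is the same computation as in the proof of Proposition~\ref{Prop_restatement_1}.)

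For part (b): injectivity of $[\delta_S\omega]$ is equivalent to surjectivity of $\partial_{\mathbb{R}}$, i.e.\ to $\mathrm{Im}\,\partial_x$ spanning $\nu_x^*$ over $\mathbb{R}$; together with discreteness of $\mathrm{Im}\,\partial_x$ --- equivalently, smoothness of $P_x$, which the very mention of $G_x^{\circ}$ in the statement presupposes --- this says that $\mathrm{Im}\,\partial_x$ is a full lattice in $\nu_x^*$. By $(\ref{EQ_short_exact_sequence1})$, $G_x^{\circ}\cong G(\mathfrak{g}_x)/\mathrm{Im}\,\partial_x=\nu_x^*/\mathrm{Im}\,\partial_x$, which is a torus, hence compact, exactly when $\mathrm{Im}\,\partial_x$ is a full lattice; reversing the chain of equivalences yields the converse.

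The conceptual content is all in the first paragraph; the only genuine subtlety is the passage between integral and real (co)homology --- the torsion in $H_2(\widetilde{S})$ and, especially, the step in part (a) deducing from a property of the \emph{real} map $[\delta_S\omega]$ that $\mathrm{Im}\,\partial_x$ is a \emph{lattice} rather than merely a dense subgroup. This is precisely the integrability phenomenon measured by the monodromy group $\mathcal{N}_x$, and the cleanest course is to quote the discreteness criterion of \cite{CrFe1,CrFe2} (packaged as Proposition~\ref{smoothness-all-in-one}) instead of reproving it here.
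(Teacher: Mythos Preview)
Your overall strategy---identify $[\delta_S\omega]$ with the transpose of $\partial_{\mathbb{R}}$ via the Hurewicz isomorphism and the integration formula, then translate surjectivity/injectivity across duality and feed the result into the sequence (\ref{EQ_short_exact_sequence}) and the formula (\ref{EQ_short_exact_sequence1})---is exactly the paper's, and your treatment of part (a) matches it closely (including the brevity on the discreteness step).

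There is, however, a genuine gap in part (b). You assert that ``the very mention of $G_x^\circ$ in the statement presupposes'' smoothness of $P_x$, and then argue only via full lattices. That reading is not correct: the Poisson homotopy group $G_x$ is always defined as a topological group (it is the isotropy of the Weinstein groupoid), so $G_x^\circ$ and its compactness make sense regardless of integrability. The paper's proof addresses precisely this point: it invokes the fact that the identification $G_x^\circ\cong\nu_x^*/\mathcal{N}_x$ holds as topological groups \emph{also in the non-integrable case} (citing \cite{CrFe2}), and then observes that compactness of $\nu_x^*/\mathcal{N}_x$ is equivalent simply to $\mathcal{N}_x$ spanning $\nu_x^*$, with no discreteness hypothesis needed (choose $v_1,\dots,v_n\in\mathcal{N}_x$ forming a basis and note the quotient is the continuous image of $[0,1]^n$; conversely, if $\mathcal{N}_x$ does not span, the quotient surjects onto a nonzero vector space). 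Your route through ``full lattice'' thus leaves the non-integrable direction of (b) uncovered, and you should replace the discreteness assumption by this direct compactness argument together with the reference to \cite{CrFe2}.
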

\begin{proof}
Under the identification $\pi_2(S,x)\cong \pi_2(\widetilde{S},x)\cong H_2(\widetilde{S},\mathbb{Z})$, the discussion
above implies that the dual of $[\delta_{S}\omega]$ is the map
\[\partial_{\mathbb{R}}:\pi_2(S,x)\otimes_{\mathbb{Z}}\mathbb{R}\rmap \nu_x^*.\]
So, surjectivity of $[\delta_S\omega]$ is equivalent to $\ker(\partial)\otimes_{\mathbb{Z}}\mathbb{R}=0$. This
condition implies discreteness of $\mathcal{N}_x$, the image of $\partial$, which is equivalent to smoothness of
$P_x$. Under the smoothness assumption, by (\ref{EQ_short_exact_sequence}) and the fact that $\pi_1(P_x)=1$, we
have that
\[\ker(\partial)\otimes_{\mathbb{Z}}\mathbb{R}=\pi_2(P_x)\otimes_{\mathbb{Z}}\mathbb{R}=H_2(P_x,\mathbb{Z})\otimes_{\mathbb{Z}}\mathbb{R}=H_2(P_x).\]
For the second part, surjectivity of $\partial_{\mathbb{R}}$ is equivalent to the fact that $\mathcal{N}_x$ spans
$\nu_x^*$. By (\ref{EQ_short_exact_sequence1}), $G_x^{\circ}\cong \nu_x^*/\mathcal{N}_x$ as topological groups,
and this holds also in the non-integrable case (see \cite{CrFe2}). It is easy to see that compactness of
$\nu_x^*/\mathcal{N}_x$ is equivalent to the fact that $\mathcal{N}_x$ spans $\nu_x^*$. 
\end{proof}

We conclude:
\begin{corollary}\label{corollary_conditions_regular}
For regular Poisson structures, the conditions of Theorem \ref{Theorem_TWO} are equivalent to compactness of
$\widetilde{S}$ and to (\ref{EQ_variation_1}) being a linear isomorphism.
\end{corollary}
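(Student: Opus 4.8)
The plan is to deduce Corollary \ref{corollary_conditions_regular} from the two restatements of the conditions of Theorem \ref{Theorem_TWO} already available (Propositions \ref{Prop_restatement_1} and \ref{Prop_restatement_2}) together with Proposition \ref{proposition_cohomological_variation}, which is stated immediately before the corollary. First I would recall that, by Proposition \ref{proposition_cohomological_variation}, surjectivity of the cohomological variation map (\ref{EQ_variation_1}) is equivalent to smoothness of $P_x$ together with $H^2(P_x)=0$, while injectivity of (\ref{EQ_variation_1}) is equivalent to compactness of $G_x^{\circ}$, the connected component of the Poisson homotopy group. So $[\delta_S\omega]$ being a linear isomorphism is equivalent to: $P_x$ smooth, $H^2(P_x)=0$, and $G_x^{\circ}$ compact. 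It therefore remains to show that, in the regular case and under these three conditions, compactness of $\widetilde{S}$ is equivalent to the remaining content of the conditions of Theorem \ref{Theorem_TWO}, namely compactness of $S$ (the rest being subsumed, as we will see).

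The key observation is that in the regular case the isotropy Lie algebra $\mathfrak{g}_x=\nu_x^*$ is abelian, so condition (b) of Proposition \ref{Prop_restatement_2} (``$\mathfrak{g}_x$ is of compact type'') is automatic, and $G(\mathfrak{g}_x)=\nu_x^*$ is already the vector group, with trivial center-vs-whole distinction: $Z(\mathfrak{g}_x)=\mathfrak{g}_x=\nu_x^*$. Thus condition (d) of Proposition \ref{Prop_restatement_2} reads $\dim \nu_x = \mathrm{rk}\,\pi_2(S,x)$, and via the duality $\partial_{\mathbb{R}}\colon \pi_2(S,x)\otimes_{\mathbb{Z}}\mathbb{R}\to \nu_x^*$ dual to $[\delta_S\omega]$ this is exactly bijectivity of $\partial_{\mathbb{R}}$, i.e. bijectivity of (\ref{EQ_variation_1}). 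Condition (c), that $\mathcal{N}_x$ is a lattice in $Z(\mathfrak{g}_x)=\nu_x^*$, then follows from discreteness (equivalent to smoothness of $P_x$) plus maximal rank (equivalent to compactness of $G_x^{\circ}\cong\nu_x^*/\mathcal{N}_x$), both of which we already have. So in Proposition \ref{Prop_restatement_2}, conditions (b), (c), (d) are all packaged into ``(\ref{EQ_variation_1}) is a linear isomorphism'', and what is left of the conditions of Theorem \ref{Theorem_TWO} is precisely (a): $S$ compact with finite fundamental group.

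Finally I would note that, under smoothness of $P_x$, the short exact sequence (\ref{EQ_short_exact_sequence}) reads $1\to G_x^\circ\to G_x\to \pi_1(S,x)\to 1$ with $G_x^\circ$ abelian; so compactness of $G_x$ — which by Proposition \ref{Prop_restatement_1}(b) together with $S$ compact is equivalent to the full set of conditions — amounts to $G_x^\circ$ compact \emph{and} $\pi_1(S,x)$ finite. We have already extracted ``$G_x^\circ$ compact'' into injectivity of (\ref{EQ_variation_1}). Hence the conditions of Theorem \ref{Theorem_TWO} are equivalent to: $S$ compact, $\pi_1(S,x)$ finite, and (\ref{EQ_variation_1}) a linear isomorphism. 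But $S$ compact with finite fundamental group is the same as $\widetilde{S}$ compact (the universal cover of a compact manifold is compact iff the fundamental group is finite). This gives the equivalence: the conditions of Theorem \ref{Theorem_TWO} in the regular case are equivalent to $\widetilde{S}$ compact and (\ref{EQ_variation_1}) being a linear isomorphism, which is the statement of the corollary.

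The main obstacle, such as it is, is purely bookkeeping: one must be careful that the various conditions ``discreteness of $\mathcal{N}_x$'', ``$G_x^\circ$ compact'', ``$P_x$ smooth'', ``$H^2(P_x)=0$'' are combined without circularity, and that one correctly invokes the non-integrable versions of the identifications $G_x^\circ\cong\nu_x^*/\mathcal{N}_x$ and $\pi_2(P_x)\otimes\mathbb{R}\cong H_2(P_x)$ from \cite{CrFe2}, since $P_x$ need not be a genuine manifold a priori. Once Proposition \ref{proposition_cohomological_variation} is in hand, though, there is no real analytic or geometric content remaining — it is a translation between the list in Proposition \ref{Prop_restatement_2} and the single clean statement about $[\delta_S\omega]$.
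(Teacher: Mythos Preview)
Your argument is correct and matches the paper's approach, which treats the corollary as immediate from Proposition~\ref{proposition_cohomological_variation} (the paper gives no separate proof, just ``We conclude:''). Your first and last paragraphs already contain the clean derivation: $[\delta_S\omega]$ an isomorphism $\Leftrightarrow$ $P_x$ smooth, $H^2(P_x)=0$, $G_x^\circ$ compact; and then, since $P_x$ compact $\Leftrightarrow$ $S$ compact and $G_x$ compact $\Leftrightarrow$ $S$ compact, $G_x^\circ$ compact, $\pi_1(S)$ finite, the remaining condition is exactly $\widetilde{S}$ compact.

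Two small expository slips to clean up. First, in the middle paragraph you write that condition (d) of Proposition~\ref{Prop_restatement_2} ``is exactly bijectivity of $\partial_{\mathbb{R}}$''; a dimension count alone does not give bijectivity, only once you combine it with the surjectivity coming from (c). Second, in the last paragraph you say ``compactness of $G_x$ --- which by Proposition~\ref{Prop_restatement_1}(b) together with $S$ compact is equivalent to the full set of conditions''; this drops condition (c) of that proposition. Neither slip matters for the actual proof, since the whole detour through Propositions~\ref{Prop_restatement_1} and~\ref{Prop_restatement_2} is unnecessary: Proposition~\ref{proposition_cohomological_variation} plus the elementary fact that $P_x$ is a principal $G_x$-bundle over $S$ suffice.
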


The following relates the assumptions of Theorems \ref{Theorem_ONE} and \ref{Theorem_TWO}.
\begin{lemma}\label{lemma_surj_implies_surj}
Let $(M,\mathcal{F},\omega)$ be a symplectic foliation and let $S$ be a symplectic leaf. If $p:\widetilde{S}\to
\widetilde{S}_{\mathrm{inf}}$ is a finite cover, and the map
\[[\delta_S\omega]:\nu_x\rmap H^2(\widetilde{S})\]
is surjective, then also
\[[\delta_S\omega]:\nu_x\rmap H^2(\widetilde{S}_{\mathrm{lin}})\]
is surjective.
\end{lemma}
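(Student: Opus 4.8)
The plan is to exploit the fact that the covering map $p:\widetilde{S}\to\widetilde{S}_{\mathrm{lin}}$ is finite, so that the pullback $p^*:H^2(\widetilde{S}_{\mathrm{lin}})\to H^2(\widetilde{S})$ is \emph{injective} (a standard transfer argument: the composition of $p^*$ with $\frac{1}{\deg p}p_!$, where $p_!$ is the transfer, is the identity on $H^2(\widetilde{S}_{\mathrm{lin}};\mathbb{R})$, since we are working with real coefficients and the deck group is finite). The key compatibility I would establish is that the two cohomological variation maps fit into a commuting triangle
\[
\begin{CD}
\nu_x @>{[\delta_S\omega]}>> H^2(\widetilde{S}_{\mathrm{lin}})\\
@| @VV{p^*}V\\
\nu_x @>{[\delta_S\omega]}>> H^2(\widetilde{S}).
\end{CD}
\]
This is essentially immediate from the construction in subsection \ref{subsection_model_sympl_foli}: the variation on $\widetilde{S}_{\mathrm{lin}}$ is defined via the maps $p_v:\widetilde{S}_{\mathrm{lin}}\to S_v$, and composing with $p:\widetilde{S}\to\widetilde{S}_{\mathrm{lin}}$ gives the corresponding covering $\widetilde{S}\to S_v$ used to define the variation on $\widetilde{S}$; since the vertical derivative $\delta_S\omega$ is pulled back along these maps, one gets $p^*[\delta_S\omega](v) = [\delta_S\omega](v)$ for all $v\in\nu_x$, where on the left the variation is computed on $\widetilde{S}_{\mathrm{lin}}$ and on the right on $\widetilde{S}$. (Here I would use that $\widetilde{S}\to S$ factors as $\widetilde{S}\to\widetilde{S}_{\mathrm{lin}}\to S$ by definition of the linear holonomy cover, $\widetilde{S}_{\mathrm{lin}} = \widetilde{S}/K_{\mathrm{lin}}$ with $\widetilde{S}$ the holonomy cover and $K_{\mathrm{lin}}$ a subgroup of the holonomy group $H$.)

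Given the commuting triangle, the argument is purely formal: suppose the bottom map $[\delta_S\omega]:\nu_x\to H^2(\widetilde{S})$ is surjective. Then $p^*\circ[\delta_S\omega]:\nu_x\to H^2(\widetilde{S})$ is surjective (it equals the bottom map), so in particular $p^*$ has image all of $H^2(\widetilde{S})$, i.e.\ $p^*$ is surjective. But $p^*$ is also injective by the transfer argument above, hence $p^*$ is an isomorphism, and therefore $[\delta_S\omega]:\nu_x\to H^2(\widetilde{S}_{\mathrm{lin}})$, which equals $(p^*)^{-1}\circ(p^*\circ[\delta_S\omega])$, is surjective. Wait --- I should double check: actually from $p^*\circ[\delta_S\omega]$ surjective we directly get $[\delta_S\omega]:\nu_x\to H^2(\widetilde{S}_{\mathrm{lin}})$ surjective without even needing injectivity of $p^*$, since if $p^*\circ f$ is surjective then so is $p^*$, but to conclude $f$ surjective we do want $p^*$ injective. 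With both, $f = [\delta_S\omega]$ on $\widetilde{S}_{\mathrm{lin}}$ is an isomorphism composed with a surjection, hence surjective. So the injectivity of $p^*$ (finiteness of the cover, real coefficients) is genuinely used.

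The main obstacle I anticipate is not any of the above formal manipulation but rather verifying the commutativity of the triangle carefully against the definitions --- in particular tracking that the map $T$ from (\ref{transport}) built via parallel transport, and the covering maps $p_v$, are compatible with the covering $p:\widetilde{S}\to\widetilde{S}_{\mathrm{lin}}$, and that the 2-form $\delta_S\omega$ (which a priori depends on a choice of extension $\Omega$) behaves well under these pullbacks. Proposition \ref{Proposition_independence_of_extension} and the discussion following it (showing the cohomology class $[p_v^*(\delta_S\omega)]$ is independent of $\Omega$) take care of the well-definedness, so the only real work is the naturality statement, which should be a short check. I would also remark that the statement is really about $\widetilde{S}_{\mathrm{lin}}$ being a quotient of $\widetilde{S}$ by a \emph{finite} group --- the hypothesis ``finite cover'' is exactly what makes the transfer work, and without it (e.g.\ if the linear holonomy cover were a proper infinite-index quotient) the conclusion could fail.
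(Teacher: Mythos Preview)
Your proposal is correct and follows essentially the same route as the paper: both set up the factorisation $[\delta_S\omega]_{\widetilde{S}} = p^*\circ[\delta_S\omega]_{\widetilde{S}_{\mathrm{lin}}}$, reduce to injectivity of $p^*$, and deduce surjectivity of the first factor. The only cosmetic difference is that you cite the transfer map for injectivity of $p^*$, whereas the paper writes out the equivalent averaging argument explicitly (if $p^*\eta=d\alpha$, average $\alpha$ over the finite deck group $K_{\mathrm{lin}}$ to get $\alpha=p^*\beta$, hence $\eta=d\beta$).
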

\begin{proof}
By assumption, we have that the composition
\[\nu_x\stackrel{[\delta_S\omega]}{\rmap}H^2(\widetilde{S}_{\textrm{lin}})\stackrel{p^*}{\rmap}H^2(\widetilde{S}) \]
is surjective, and we need to show that the first map is surjective. For this it suffices to prove that $p^*$ is injective.
Now $\widetilde{S}\to \widetilde{S}_{\textrm{lin}}$ is principal $K_{\mathrm{lin}}$-bundle, where
$K_{\mathrm{lin}}$ is the kernel of the map
\[\pi_1(S,x)\rmap H_{\mathrm{inf}}\rmap 0.\]
Let $\eta\in \Omega^{\bullet}_{\mathrm{cl}}(\widetilde{S}_{\textrm{lin}})$ be a form such that
$p^*(\eta)=d\alpha$. Since $K_{\mathrm{lin}}$ is finite, by averaging $\alpha$, we may assume that $\alpha$ is
$K_{\mathrm{lin}}$-invariant, thus $\alpha=p^*(\beta)$. This implies injectivity of $p^*$ in cohomology, and
concludes the proof.
\end{proof}

The corollary and the lemma above together, show that Theorem \ref{Theorem_ONE} implies Theorem
\ref{Theorem_TWO} for regular Poisson structures. Nevertheless, this is not completely true, since Theorem
\ref{Theorem_ONE} is not a \emph{first order normal form theorem}, whereas Theorem
\ref{Theorem_TWO} is. Here are a few remarks which clarify this:\\

$1.$ The condition from Theorem \ref{Theorem_ONE}, that the holonomy group is finite, is not a first order condition;
it depends on the germ of the foliation around $S$. The first jet of the foliation sees only the linear holonomy group,
whose finiteness is not sufficient for linearization. The corresponding first order condition is, as in Corollary
\ref{corollary_conditions_regular}, that the fundamental group is finite. This corresponds also to the weaker version of
Reeb stability from subsection \ref{subsection_normal_form_theorems}.

$2.$ Theorem \ref{Theorem_ONE} has a more subtle condition, which is not a first order condition in the Poisson
world, namely that the Poisson structure is regular around the leaf. Now, chapter \ref{ChReeb} is entirely about
symplectic foliations, so this condition is automatically satisfied, but if we are dealing with general Poisson structures,
then this is not detectable from the first jet. For example, the Poisson structure $\pi=x^2\frac{\partial}{\partial
x}\wedge \frac{\partial}{\partial y}$ on $\mathbb{R}^2$ has $0$ as a fixed point, and its first order approximation at
$0$ is regular (it is the trivial Poisson structure), but $\pi$ is not regular around $0$. Now, Theorem
\ref{Theorem_TWO} implies the following statement, which is not a consequence of Theorem \ref{Theorem_ONE}:
\begin{corollary}
If $\pi$ is a Poisson structure whose local model around a leaf $S$ is regular, and it satisfies the conditions from
Corollary \ref{corollary_conditions_regular}, then, around $S$, $\pi$ is regular and isomorphic to its local model.
\end{corollary}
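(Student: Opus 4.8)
The plan is to deduce the statement directly from Theorem \ref{Theorem_TWO}, the key point being that every hypothesis in play depends only on the first order jet of $\pi$ along $S$, so it may be checked on the local model instead of on $\pi$ itself. Write $\pi_0$ for the first order local model of $\pi$ around $S$, which by assumption is regular.

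First I would record that $\pi$ and $\pi_0$ have the same first jet along $S$: by Proposition \ref{1st-jet-Atiyah} this jet is encoded by the transitive Lie algebroid $A_S = T^*M_{|S}$ together with $\omega_S$, and by Proposition \ref{proposition_restricting_to_cosymplectic} (equivalently Proposition \ref{Proposition_reconcile}) the local model is built so that its associated transitive Lie algebroid is again $A_S$. Consequently $\pi$ and $\pi_0$ share the same Poisson homotopy bundle $P_x$ over $S$ --- recall that smoothness of $P_x$ is equivalent to integrability of $A_S$, and in that case $P_x$ is the $1$-connected principal bundle integrating $A_S$ --- so the hypotheses of Theorem \ref{Theorem_TWO} (smoothness, compactness and $H^2(P_x)=0$) hold for $\pi$ if and only if they hold for $\pi_0$. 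The same applies to the conditions from Corollary \ref{corollary_conditions_regular}, namely compactness of the universal cover $\widetilde{S}$ and the cohomological variation $[\delta_S\omega]\colon \nu_x \to H^2(\widetilde{S})$ being a linear isomorphism: these depend only on $A_S$ and $\omega_S$, the statement about $[\delta_S\omega]$ because it is dual to the monodromy map of $A_S$ (subsection \ref{second-main-example}).

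Then I would close the argument. Since $\pi_0$ is regular and, by hypothesis, satisfies the conditions from Corollary \ref{corollary_conditions_regular}, that corollary --- applied to the regular structure $\pi_0$ --- tells us $\pi_0$ satisfies the hypotheses of Theorem \ref{Theorem_TWO}; by the previous paragraph, so does $\pi$. Theorem \ref{Theorem_TWO} now produces a Poisson diffeomorphism from a neighborhood of $S$ in $(M,\pi)$ onto a neighborhood $V$ of $S$ in the local model $(N(A_S),\pi_0)$, equal to the identity on $S$. As $\pi_0$ has constant rank on all of $N(A_S)$, hence on $V$, and constant rank is preserved by Poisson diffeomorphisms, $\pi$ is regular near $S$; and the same diffeomorphism shows it is isomorphic to its local model there. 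The only subtlety is purely a matter of bookkeeping: Corollary \ref{corollary_conditions_regular} is stated only for regular Poisson structures, so it must be invoked for $\pi_0$ rather than for $\pi$, and one must be careful that ``the conditions from Corollary \ref{corollary_conditions_regular}'' and ``the conditions of Theorem \ref{Theorem_TWO}'' are read as conditions on the pair $(A_S,\omega_S)$; once this is arranged, no computation remains.
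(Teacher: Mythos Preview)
Your proposal is correct and follows exactly the approach indicated in the paper, which gives no detailed proof and simply remarks that the corollary is implied by Theorem~\ref{Theorem_TWO}. You have correctly unpacked the key observation: the hypotheses of Theorem~\ref{Theorem_TWO} depend only on $A_S$ (hence are shared by $\pi$ and its local model $\pi_0$), so one may verify them on the regular structure $\pi_0$ via Corollary~\ref{corollary_conditions_regular} and then transfer them to $\pi$; your handling of the bookkeeping subtlety (applying Corollary~\ref{corollary_conditions_regular} to $\pi_0$ rather than $\pi$) is exactly right.
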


\subsubsection*{On the condition $H^2(P_x)=0$ and integrability in the regular case}

We give now an example which explains the importance of the condition
\[H^2(P_x)=0\]
for symplectic foliations. To simplify the discussion, we will assume that
\[S\textrm{ is compact and simply connected}.\]
Then, by Reeb stability, $M= S\times \mathbb{R}^q$ with the trivial foliation, and the Poisson structure on $M$ is
determined by a family $\{\omega_y\in \Omega^2(S)\}_{y\in\mathbb{R}^q}$ of symplectic forms. We look at the leaf
$S\cong S\times\{0\}$ and denote $\omega_S:=\omega_0$. The vertical derivative of $\omega$ is simply
\[\delta_S\omega_y:=y_1\omega_1+ \ldots +y_q\omega_q, \ \ \textrm{where}\ \omega_k=\frac{\partial}{\partial y_k}\omega_{|y=0},\]
and so the local model is
\begin{equation}\label{EQ_linear_regular_simplyconnected}
j^1_S\omega_y=\omega_S+y_1\omega_1+ \ldots +y_q\omega_q.
\end{equation}
Let's assume that the cohomological variation is not surjective
\begin{equation}\label{EQ_cohomological_variation}
[\delta_S\omega]:\mathbb{R}^q\rmap H^2(S),
\end{equation}
which by Proposition \ref{proposition_cohomological_variation} means that either $P_x$ is not smooth or that
$H^2(P_x)\neq 0$. Then we can find a closed $2$-form $\lambda$ on $S$, with $[\lambda]\in H^2(S)$ not in the linear
span of $[\omega_1], \ldots, [\omega_q]$. Consider the Poisson structure corresponding to the family of $2$-forms
\begin{equation}\label{EQ_forms}
\omega_y:=\omega_S+y_1\omega_1+\ldots+y_q\omega_q+ y_{1}^{2} \lambda.
\end{equation}
We claim that it is not isomorphic to (\ref{EQ_linear_regular_simplyconnected}), its linearization around $S$, by a
diffeomorphism which fixes $S$. Otherwise, we could find a diffeomorphism of the form $(x,y)\mapsto(\phi_y(x),
\tau(y))$ with $\tau(0)= 0$, $\phi_0(x)= x$ and such that
\[ \phi_{y}^{*}\omega_S+ \sum \tau_i(y)\phi_{y}^{*}\omega_i= \omega_S+ \sum y_i\omega_i+ y_{1}^{2}\lambda.\]
Since $\phi_{y}^{*}$ is the identity in cohomology ($\phi_y$ is isotopic to $\phi_0$), we get, for $y_1\neq 0$ a
contradiction:
\[[\lambda]=\frac{1}{y_1^2} \sum (\tau_i(y)-y_i)[\omega_i].\]

Related to Proposition \ref{main-cor}, let us now assume that $P_x$ is smooth, but the map
(\ref{EQ_cohomological_variation}) is still not surjective. This implies that the monodromy group\index{monodromy
group} $\mathcal{N}_x\subset\mathbb{R}^q$ is discrete, and so, by a linear change of coordinates on
$\mathbb{R}^q$, we may assume that
\[\mathcal{N}_x=\mathbb{Z}^p\times\{0\} \subset \mathbb{R}^q.\]
As a result, using also that $\pi_2(S,x)\cong H_2(S,\mathbb{Z})$, we see that
\[[\omega_1],\ldots,[\omega_p]\in H^2(S,\mathbb{Z}) \textrm{ are linearly independent,}\]
and that
\[[\omega_{p+1}]=\ldots=[\omega_q]=0.\]
As a side remark, by Proposition \ref{proposition_cohomological_variation} (b), $p=q$ is equivalent to compactness of
$P_x$. Let us choose also $\lambda$ such that $[\lambda]\in H^2(S;\mathbb{Z})$ and, as before, which it is not in the
$\mathbb{R}$-span of the $[\omega_i]$'s.

We claim that the Poisson structure corresponding to the family of forms (\ref{EQ_forms}) is not integrable on any
open neighborhood of $S$ (remark that this is not a direct consequence of Proposition \ref{main-cor}, since we are not
assuming that $P_x$ is compact). This follows by computing the monodromy groups using (\ref{monodromy-regular})
at $(x,y)$
\[\mathcal{N}_{y}=\left\{(\int_{\sigma}(\omega_1+2y_1\lambda),\int_{\sigma}\omega_2,\ldots,\int_{\sigma}\omega_q) | [\sigma]\in \pi_2(S,x)\right\}.\]
The conditions on $[\omega_i]$, $[\lambda]$ imply existence of $[\sigma_1]$, $[\sigma]\in\pi_2(S,x)$, such that
\[\int_{\sigma_1}\omega_1=\int_{\sigma}\lambda=C, \ \ \  \int_{\sigma_1}\lambda=\int_{\sigma}\omega_i=\int_{\sigma_1}\omega_j=0,\  j\neq 1,\]
with $C$ a nonzero integer. Then $\mathcal{N}_y$ contains
\[\left\{(nC+2y_1m C, 0,\ldots,0) | m,n\in\mathbb{Z}\right\},\]
thus it is not discrete for $y_1\notin\mathbb{Q}$.

\subsubsection*{Duistermaat-Heckman variation formula}\index{Duistermaat-Heckman formula}

Next, we indicate the relationship of our results with the theorem of Duistermaat and Heckman on the linear variation
in cohomology of the reduced symplectic forms \cite{DH}. We first recall (a simplified version of) this result.

Let $(\Sigma, \Omega)$ be a symplectic manifold endowed with a Hamiltonian action of a torus $T$ and with proper
moment map $\mu: \Sigma\to \mathfrak{t}^*$. Let $\xi_0\in \mathfrak{t}^*$ be a regular value of $\mu$. For
simplicity, we will assume that the action of $T$ on $\mu^{-1}(\xi_0)$ is free. Then, there exists $U$, a ball around
$\xi_0$ consisting of regular values of $\mu$, such that $T$ acts freely on $\mu^{-1}(U)$. We replace $\Sigma$ with
$\mu^{-1}(U)$.

The symplectic quotients
\[ S_{\xi}:= \mu^{-1}(\xi)/T,\ \ \xi\in U \]
come with symplectic forms denoted $\omega_{\xi}$. There are canonical isomorphisms
\[ H^2(S_{\xi})\cong H^2(S_{\xi_0}),\ \ \  \textrm{for}\ \xi\in U,\]
and the Duistermaat-Heckman theorem asserts that, in cohomology,
\begin{equation}\label{EQ_DH}
 [\omega_{\xi}]= [\omega_{\xi_0}]+ \langle c, \xi- \xi_0\rangle,
\end{equation}
where $c$ is the Chern class of the $T$-bundle $\mu^{-1}(\xi_0)\to S_{\xi_0}$.

This is related to our theorem applied to the regular Poisson manifold
\[(M,\mathcal{F},\omega):=(\Sigma,\Omega)/T,\]
with symplectic leaves the $(S_{\xi},\omega_{\xi})$'s. By Lemma \ref{Lemma_symplectic_groupoid_reduction}, $M$
is integrable by the symplectic groupoid
\[\mathcal{G}:=(\Sigma\times_\mu \Sigma)/T\rightrightarrows M,\]
with symplectic structure induced by $pr_1^*(\Omega)-pr_2^*(\Omega)\in \Omega^2(\Sigma\times \Sigma)$. The
isotropy groups of $\mathcal{G}$ are all isomorphic to $T$ and the $s$-fibers are isomorphic (as principal
$T$-bundles) to the fibers of $\mu$. Taking $U$ small enough and using a $T$-invariant Ehresmann connection, on
proves that all fibers of $\mu$ are diffeomorphic as $T$-bundles (see \cite{DH} for details). So, if we are assuming that
$\mu^{-1}(\xi_0)$ is 1-connected, then $\mathcal{G}$ is the 1-connected symplectic groupoid integrating $M$. In
particular, the Poisson homotopy bundle corresponding to $S_{\xi_0}$ is the $T$-bundle $\mu^{-1}(\xi_0)\to
S_{\xi_0}$, and the Chern class $c$ is the cohomological variation $[\delta_S\omega]$ (see also Example
\ref{torus-bundle}). In this case the condition $H^2(\mu^{-1}(\xi_0))=0$ is not required, since we can apply directly
Proposition \ref{main-cor}, to conclude that the local model holds around $S_{\xi_0}$,
\[\omega_{\xi}=\omega_{\xi_0}+\delta_S\omega_{\xi-\xi_0},\]
which implies (\ref{EQ_DH}).

\subsubsection*{Linear Poisson structures}\index{linear Poisson structure}

Consider $(\mathfrak{g}^*,\pi_{\mathfrak{g}})$, the linear Poisson structure on the dual of a Lie algebra
$\mathfrak{g}$. As discussed in subsection \ref{Subsection_examples_of_symplectic_groupoids}, this Poisson
structure is integrable and its 1-connected symplectic groupoid is
\[\mathcal{G}(\mathfrak{g}^*,\pi_{\mathfrak{g}})= (G\ltimes \mathfrak{g}^*,\omega_{\mathfrak{g}})\rightrightarrows \mathfrak{g}^*,\]
where $G$ is the 1-connected group of $\mathfrak{g}$. Thus, for every $\xi\in \mathfrak{g}^{*}$, the Poisson
homotopy bundle is the principal $G_{\xi}$-bundle
\[G\rmap O_{\xi},\]
where $G_{\xi}\subset G$ is the stabilizer of $\xi$ and $O_{\xi}$ is the coadjoint orbit (and also the symplectic leaf)
through $\xi$. Since $H^2(G)=0$, the hypothesis of Theorem \ref{Theorem_TWO} reduces to $G$ being compact, or
equivalently, to $\mathfrak{g}$ being semisimple of compact type. Note also that the resulting local form around
$O_{\xi}$ implies the linearizability of the transversal Poisson structure \cite{Wein} to $O_{\xi}$, which fails
for general Lie algebras \cite{Wein}-Errata.\\

Of course, one may wonder about a direct argument. This is possible, and actually one needs slightly weaker
conditions:
\[O_{\xi} \textrm{ is \textbf{embedded} and } \xi \textrm{ is \textbf{split}.} \]
The split condition \cite{SymplecticFibrations} means that there is a $G_{\xi}$-equivariant projection
\[\sigma:\mathfrak{g}\rmap \mathfrak{g}_{\xi}.\]
The proof of this can be found in \cite{SymplecticFibrations,Montgomery}. We give here a direct argument. Observe
that the splitting induces a $G$-invariant principal connection on the Poisson homotopy bundle $G\to O_{\xi}$:
\[\theta\in\Omega^1(G;\mathfrak{g}_{\xi}), \quad \theta_g=l_{g^{-1}}^*(\sigma).\]
By (\ref{EQ_pull_back_coadjoint}), we have that $p^*(\omega_{\xi})=-d\xi^l$, where $\xi^l\in\Omega^1(G)$ is the left
invariant extension of $\xi$, so the 2-form $\Omega\in\Omega^2(G\times\mathfrak{g}^*_{\xi})$ used to define the
local model (subsection \ref{The local model}) is
\[\Omega=-d\xi^l-d\widetilde{\theta}.\]
Now, $\Omega$ is not just (right) $G_{\xi}$-invariant, but also (left) $G$-invariant, therefore its nondegeneracy locus
is of the form $G\times U$, where $U\subset \mathfrak{g}_{\xi}^*$ is an open $G_{\xi}$-invariant neighborhood of
$0$; and the local model is of the form
\begin{equation}\label{EQ_local_model_linear}
(G\times U,\Omega)/G_{\xi}\cong (G\times_{G_{\xi}} U,\pi_{\xi}).
\end{equation}

The projection $\sigma$ also gives a $G$-invariant tubular neighborhood of $O_{\xi}$
\[\Psi:G\times_{G_{\xi}}\mathfrak{g}_{\xi}^{*}\rmap \mathfrak{g}^{*},\quad [g,\eta]\mapsto Ad_{g^{-1}}^*(\xi+\sigma^{*}(\eta)),\]
which is a diffeomorphism on some open around $O_{\xi}$. As a side remark, the open where the differential of $\Psi$
is invertible coincides with $G\times_{G_{\xi}}U$ (see Theorem 2.3.7 \cite{SymplecticFibrations}). We show now that
the local model holds around $O_{\xi}$, by proving that $\Psi$ is a Poisson map (Theorem 1, section 1.3
\cite{Montgomery})
\[\Psi:(G\times_{G_{\xi}}U,\pi_{\xi})\rmap (\mathfrak{g}^{*},\pi_{\mathfrak{g}}).\]
Notice that $\sigma$ induces also a map into the symplectic groupoid of $\pi_{\mathfrak{g}}$
\[\psi:G\times\mathfrak{g}_{\xi}^{*}\rmap G\times \mathfrak{g}^{*},\ \ (g,\eta)\mapsto (g,\xi+\sigma^{*}(\eta)),\]
which satisfies $t\circ \psi(g,\xi)=\Psi([g,\xi])$, for $t$ the target map; and $\Omega$ is the pull-back of the symplectic
structure on $G\ltimes\mathfrak{g}^*$
\[\Omega=-d\xi^l-d\widetilde{\theta}=-d\langle\xi+\sigma^*,\theta_{\mathrm{MC}}\rangle=\psi^*(-d\widetilde{\theta}_{\mathrm{MC}}).\]
Therefore the symplectic realization (\ref{EQ_local_model_linear}) is just the restriction of the symplectic realization
(\ref{EQ_local_model_fixed_point_reduction}) of $\pi_{\mathfrak{g}}$, thus Lemma
\ref{Lemma_restricting_symplectic_realizations} implies that $\Psi$ is Poisson.

\section{Poisson structures around a symplectic leaf: the algebraic framework}\label{Poisson structures around a symplectic leaf: the algebraic framework}

A Poisson structure on a tubular neighborhood of a symplectic leaf can be described by a so-called Vorobjev triple
\cite{Vorobjev,Vorobjev2}: a vertical Poisson structure, an Ehresmann connection and a horizontal 2-form. This triple
encodes the behavior of the Poisson tensor around the leaf; in particular the linearized structure corresponds to (and
can be defined by) the linearization of the components of the triple. In this section we present an improvement of the
algebraic framework from \cite{CrFe-stab}, which is used the handle theses triples. In particular, using the algebraic
tools we develop, we reprove several results from \cite{Vorobjev,Vorobjev2}. Also, we explain why the first order
approximation from Definition \ref{definition_first_order_approx} deserves this name.

\subsection{The graded Lie algebra $(\widetilde{\Omega}_E,[\cdot,\cdot]_{\ltimes})$}\label{The graded Lie algebra}

Since we are interested in the local behavior of Poisson structures around an embedded symplectic leaf, we may
restrict our attention to a tubular neighborhood. Throughout this section $p: E\to S$ will be a vector bundle over a
manifold $S$. Actually, we don't use the linear structure of $E$ until subsection \ref{The dilation operators and jets
along $S$}, and the whole discussion works for any surjective submersion (except for Lemma \ref{OmegaS-Central},
where one has to assume that $p$ has
connected fibers).\\

We will use the following notations: given any vector bundle $F\to S$, denote the space of $F$-valued forms on $S$ by:
\[ \Omega^{\bullet}(S, F):= \Gamma(\Lambda^{\bullet}T^*S\otimes F)= \Omega^{\bullet}(S)\otimes_{C^{\infty}(S)}\Gamma(F).\]
More generally, for any $C^{\infty}(S)$-module $\mathfrak{X}$, denote by
\[\Omega^{\bullet}(S, \mathfrak{X}):=\Omega^{\bullet}(S)\otimes_{C^{\infty}(S)} \mathfrak{X},\]
the space of antisymmetric forms on $S$ with values in $\mathfrak{X}$.

Consider the vertical subbundle of $TE$
\[ V:=\ker(dp)\subset TE,\]
and the subalgebra of $(\mathfrak{X}^{\bullet}(E),[\cdot,\cdot])$ of vertical multivector fields
\[\mathfrak{X}^{\bullet}_{\mathrm{V}}(E)=\Gamma(\Lambda^{\bullet} V) \subset \mathfrak{X}^{\bullet}(E).\]
We recall also the grading:
\[ \textrm{deg}(X):= |X|-1= q- 1\ \ \ \textrm{for}\ X\in \mathfrak{X}^{q}(E).\]
The Dirac structure corresponding to $V$ is
\[L_V:=V\oplus V^{\circ}\subset TE\oplus T^*E,\]
where $V^{\circ}$ is the annihilator of $V$, which can also be identified with the bundle $p^*(T^*S)$. Now,
$\Gamma(V\oplus V^{\circ})$ carries the Dorfman bracket (see subsection \ref{subsection_Dirac}), which here will be
denoted simply by $[\cdot,\cdot]:=[\cdot,\cdot]_D$. Exactly like the Schouten bracket (see
(\ref{EQ_Schouten_bracket})), this bracket can be extended to
\[\Omega_E:=\Gamma(\Lambda^{\bullet}(V\oplus V^{\circ})).\]
We view $\Omega_E$ as a bigraded space, whose elements of bidegree $(p, q)$ are
\[\Omega_E^{p,q}=\Gamma(\Lambda^qV\otimes \Lambda^p V^{\circ}).\]
Then $(\Omega_E,[\cdot,\cdot])$ becomes a graded Lie algebra, with degree
\[ \textrm{deg}(\varphi\otimes X):=|\varphi|+|X|-1= p+q- 1\ \ \ \textrm{for}\ \varphi\otimes X\in\Omega_E^{p,q}.\]
Also, one can think of $\Omega_E$ as the space of forms on $S$, with values in the Lie algebra
$\mathfrak{X}^{\bullet}_{\mathrm{V}}(E)$
\[\Omega^{p,q}_{E}=\Omega^{p}(S,\mathfrak{X}^{q}_{\mathrm{V}}(E))=\Omega^p(S)\otimes_{C^{\infty}(S)}\mathfrak{X}^{q}_{\mathrm{V}}(E),\]
and as such, the Lie bracket is also given by (see \cite{CrFe-stab})
\begin{align*}
[\alpha,\beta]&(X_1,\ldots,X_{p+p'})=\\
&=\sum_{\sigma}(-1)^{|\sigma|+p'(q-1)}[\alpha(X_{\sigma_1},\ldots,X_{\sigma_p}),\beta(X_{\sigma_{p+1}},\ldots,X_{\sigma_{p+p'}})],
\end{align*}
for $X_i\in\mathfrak{X}(S)$, $\alpha\in \Omega_E^{p,q}$, $\beta\in \Omega_E^{p',q'}$, where the sum is over all
$(p,p')$-shuffles $\sigma$. For decomposable elements
\[\varphi\otimes X, \ \  \psi\otimes Y, \ \ \varphi,\psi\in \Omega^{\bullet}(S), \ X,Y\in \mathfrak{X}^{\bullet}_{\mathrm{V}}(E),\]
this formula reduces to
\[[\varphi\otimes X,\psi\otimes Y]=(-1)^{|\psi|(|X|-1)}\varphi\wedge\psi\otimes[X,Y].\]

We will need an extension $\widetilde{\Omega}_E$ of $\Omega_E$. Consider algebra
\[(\mathfrak{X}_{\mathrm{P}}(E),[\cdot,\cdot]),\]
of \textbf{projectable vector fields}\index{projectable vector fields} on $E$, i.e.\ vector fields $X\in \mathfrak{X}(E)$
with the property that there is a vector field on $S$, denoted by $p_S(X)\in \mathfrak{X}(S)$, such that
$dp(X)=p_S(X)$. The Schouten bracket of a projectable vector field and of a vertical multivector field is a vertical
multivector field, therefore
\[\mathfrak{X}^{\bullet}_{\mathrm{P},\mathrm{V}}(E):=\mathfrak{X}_{\mathrm{P}}(E)+\mathfrak{X}^{\bullet}_{\mathrm{V}}(E)\]
is a subalgebra of $\mathfrak{X}^{\bullet}(E)$, which fits in the short exact sequence
\[0\rmap (\mathfrak{X}^{\bullet}_{\mathrm{V}}(E),[\cdot,\cdot])\rmap (\mathfrak{X}^{\bullet}_{\mathrm{P},\mathrm{V}}(E),[\cdot,\cdot])\rmap (\mathfrak{X}(S),[\cdot,\cdot])\rmap 0.\]

Consider the following bigraded vector space,
\[ \widetilde{\Omega}_{E}:= \Omega^{\bullet}(S, \mathfrak{X}^{\bullet}_{\mathrm{P},\mathrm{V}}(E))=\Omega_E +\Omega^{\bullet}(S)\otimes_{C^{\infty}(S)}\mathfrak{X}_{\mathrm{P}}(E) \subset \Omega^{\bullet}(E, \Lambda^{\bullet} TE), \]
which in bidegree $(p, q)$ is given by
\begin{eqnarray*}
\nonumber \widetilde{\Omega}_{E}^{p,q}= \left\{
\begin{array}{rl}
\Omega^{p}(S, \mathfrak{X}^{q}_{\mathrm{V}}(E)) & \text{if } q\neq 1\\
\Omega^{p}(S, \mathfrak{X}_{\mathrm{P}}(E)) & \text{if } q = 1\\
\end{array} \right..
\end{eqnarray*}

Also the space $\widetilde{\Omega}_E$ fits in a short exact sequence of vector spaces:
\[ 0\rmap \Omega_E\rmap \widetilde{\Omega}_E\stackrel{p_S}{\rmap} \Omega^{\bullet}(S, TS) \rmap 0.\]
Next, we show that this is naturally a sequence of graded Lie algebras. On $\Omega^{\bullet}(S, TS)$ we consider the
\textbf{Fr\"{o}hlicher-Nijenhuis-Bracket}\index{Fr\"{o}hlicher-Nijenhuis bracket}, denoted $[\cdot, \cdot]_{F}$,
which we recall using section 13 of \cite{Kumpera-Spencer}. The key-point is that $\Omega^{\bullet}(S, TS)$ can be
identified with the space of derivations of the graded algebra $(\Omega^{\bullet}(S),\wedge)$, which commute with
the de Rham differential, and, as a space of derivations, it inherits a natural Lie bracket. In more detail, for $u=
\alpha\otimes X\in \Omega^{\bullet}(S, TS)$, the operator $L_u:= [i_u, d]$ on $\Omega^{\bullet}(S)$ is given by:
\[ L_{u}(\omega)= \alpha\wedge L_{X}(\omega)+ (-1)^{|\alpha|} d\alpha \wedge i_X(\omega) .\]
The resulting commutator bracket on $\Omega^{\bullet}(S, TS)$ is:
\[ [u, v]_{F}= L_{u}(\beta)\otimes Y-(-1)^{|\alpha||\beta|}L_{v}(\alpha)\otimes X+ \alpha\wedge\beta\otimes[X,Y],\]
for $u= \alpha\otimes X, v= \beta\otimes Y\in \Omega^{\bullet}(S, TS)$. With these $(\Omega^{\bullet}(S,
TS),[\cdot,\cdot]_F)$ is a graded Lie algebra, with grading $\textrm{deg}= r$ on $\Omega^r(S, TS)$.

Denote the element corresponding to the identity map of $TS$ by
\[ \gamma_S=\textrm{Id}_{TS}\in \Omega^1(S, TS).\]
This  element is central in $(\Omega^{\bullet}(S, TS),[\cdot,\cdot]_{F})$ and satisfies
\begin{equation} \label{gamma-represents-d}
L_{\gamma_S}=d:\Omega^{\bullet}(S)\rmap \Omega^{\bullet +1}(S).
\end{equation}

Next, the operations involving $\Omega^{\bullet}(S, TS)$ have the following lifts to $E$:
\begin{itemize}
\item With the short exact sequence
\[ 0\rmap \Omega^{\bullet}(S, \mathfrak{X}_{\mathrm{V}}(E))\rmap \Omega^{\bullet}(S, \mathfrak{X}_{\mathrm{P}}(E))\stackrel{p_S}{\rmap} \Omega^{\bullet}(S, TS)\rmap 0\]
in mind, there is a natural lift of $[\cdot, \cdot]_{F}$ to the middle term, which we denote by the same symbol.
Actually, realizing
\[\Omega^{\bullet}(S, \mathfrak{X}_{\mathrm{P}}(E))\stackrel{p^*}{\hookrightarrow} \Omega^{\bullet}(E, TE),\]
this is the restriction of the Fr\"{o}hlicher-Nijenhuis bracket on $\Omega^{\bullet}(E, TE)$.
\item The action $L$ of $\Omega^{\bullet}(S, TS)$ on $\Omega^{\bullet}(S)$ lifts to an action of
$\Omega^{\bullet}(S, \mathfrak{X}_{\mathrm{P}}(E))$ on $\Omega_E$, for $u=\alpha\otimes X\in
\Omega^{\bullet}(S, \mathfrak{X}_{\mathrm{P}}(E))$ and $v=\omega\otimes Y\in \Omega_E$, we have:
\[ L_{u}(v)= L_{p_S(u)}(\omega)\otimes Y+ \alpha\wedge \omega\otimes [X, Y].\]
\end{itemize}

The following shows how to put these operations together.

\begin{proposition}
 $\widetilde{\Omega}_{E}$ is a graded Lie algebra with bracket
\[[u,v]_{\ltimes}=\left\{
\begin{array}{ll}
[u,v]         &\textrm{for } u, v\in \Omega_E, \\
L_{u}(v) & \textrm{for } u\in \Omega^{\bullet}(S, \mathfrak{X}_{\mathrm{P}}(E)), v\in\Omega_E,\\
\phantom{} [u,v]_{F} & \textrm{for } u,v\in \Omega^{\bullet}(S, \mathfrak{X}_{\mathrm{P}}(E)).
\end{array}
 \right.
\]
Moreover, we have a short exact sequence of graded Lie algebras:
\[0\rmap(\Omega_{E}^{\bullet},[\cdot,\cdot])\rmap (\widetilde{\Omega}^{\bullet}_E,[\cdot,\cdot]_{\ltimes})\stackrel{p_S}{\rmap} (\Omega^{\bullet}(S, TS),[\cdot,\cdot]_F)\rmap 0.\]
\end{proposition}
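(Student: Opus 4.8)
The plan is to verify that the formula for $[\cdot,\cdot]_{\ltimes}$ is well-defined (i.e.\ consistent on overlaps of the three cases), is graded antisymmetric, satisfies the graded Jacobi identity, and that the two maps in the sequence are Lie algebra homomorphisms with the stated exactness. The cleanest route is to realize everything inside a single ambient graded Lie algebra so that most identities come for free. First I would observe that via $p^*$ there is an embedding $\widetilde{\Omega}_E\hookrightarrow \Omega^{\bullet}(E,\Lambda^{\bullet}TE)$, and that the latter — more precisely the piece $\Omega^{\bullet}(E)\otimes_{C^{\infty}(E)}\mathfrak{X}^{\bullet}(E)$, which carries the Fr\"ohlicher--Nijenhuis/Schouten-type bracket (see section 13 of \cite{Kumpera-Spencer}, or the bracket on derivations of the Fr\"olicher--Nijenhuis calculus) — is a graded Lie algebra. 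So the strategy is: show that $\widetilde{\Omega}_E$, as a subspace, is closed under this ambient bracket, and that the induced bracket coincides with the piecewise formula for $[\cdot,\cdot]_{\ltimes}$. Once this is done, graded antisymmetry and the Jacobi identity are inherited automatically.

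The key steps, in order, are as follows. Step 1: recall that $\mathfrak{X}^{\bullet}_{\mathrm{P},\mathrm{V}}(E)$ is a subalgebra of $(\mathfrak{X}^{\bullet}(E),[\cdot,\cdot])$ (already noted in the excerpt via the short exact sequence with $\mathfrak{X}(S)$), hence $\widetilde{\Omega}_E=\Omega^{\bullet}(S,\mathfrak{X}^{\bullet}_{\mathrm{P},\mathrm{V}}(E))$ sits inside the ambient graded Lie algebra as forms on $S$ with values in a sub-Lie-algebra; tensoring a Lie algebra with the commutative algebra $\Omega^{\bullet}(S)$ in the graded sense (more precisely, forms on $S$ valued in projectable/vertical fields, using $L$ of $TS$-valued forms on scalar forms) produces a graded Lie algebra, and the bracket there is exactly $L_u(v)$ when $u$ is a $1$-vector-valued form acting and the Schouten bracket $[\cdot,\cdot]$ when both are vertical. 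Step 2: check that the three displayed cases of the formula are mutually compatible, i.e.\ that on $\Omega_E\cap\Omega^{\bullet}(S,\mathfrak{X}_{\mathrm P}(E))=\Omega^{\bullet}(S,\mathfrak{X}_{\mathrm V}(E))$ (bidegree with $q=1$ and the field vertical) the Schouten bracket agrees with $L_u$; this is a direct computation from the explicit formulas $L_u(v)=L_{p_S(u)}(\omega)\otimes Y+\alpha\wedge\omega\otimes[X,Y]$ with $p_S(u)=0$, matching $[\varphi\otimes X,\psi\otimes Y]=(-1)^{|\psi|(|X|-1)}\varphi\wedge\psi\otimes[X,Y]$. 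Step 3: verify that $\Omega_E$ is an ideal: $[\Omega_E,\Omega_E]\subset\Omega_E$ is clear since $\mathfrak{X}_{\mathrm V}^{\bullet}(E)$ is an ideal in $\mathfrak{X}_{\mathrm{P},\mathrm V}^{\bullet}(E)$, and $[\Omega^{\bullet}(S,\mathfrak{X}_{\mathrm P}(E)),\Omega_E]=L_u(v)\subset\Omega_E$ by the formula. Step 4: verify that $p_S$ is a graded Lie algebra map onto $(\Omega^{\bullet}(S,TS),[\cdot,\cdot]_F)$ — this is essentially the definition of the lift of $[\cdot,\cdot]_F$ to $\Omega^{\bullet}(S,\mathfrak{X}_{\mathrm P}(E))$ recalled just before the proposition, together with $p_S([u,v])=0$ when one argument is vertical; exactness of the sequence is immediate since $\ker(p_S)=\Omega_E$ by construction and $p_S$ is surjective because any $TS$-valued form lifts to a projectable-field-valued form (choose any Ehresmann connection/horizontal lift).

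The main obstacle I anticipate is Step 2 together with the bookkeeping of signs and the verification that the piecewise-defined bracket genuinely lands in $\widetilde{\Omega}_E$ rather than in a larger space — in particular, confirming that $[u,v]_F$ for $u,v\in\Omega^{\bullet}(S,\mathfrak{X}_{\mathrm P}(E))$ has vertical part of the correct type (a projectable field when the form-degree bookkeeping forces $q=1$) and that the Jacobi identity mixing all three cases does not require any identity beyond those holding in the ambient $\Omega^{\bullet}(E,\Lambda^{\bullet}TE)$. Concretely, the subtlety is that $\widetilde{\Omega}_E$ is \emph{not} of the uniform shape $\Omega^{\bullet}(S)\otimes(\text{fixed algebra})$ — in bidegree $q=1$ one allows projectable, not merely vertical, fields — so one must argue the closure of the bracket degree by degree, using that $[\mathfrak{X}_{\mathrm P}(E),\mathfrak{X}_{\mathrm P}(E)]\subset\mathfrak{X}_{\mathrm P}(E)$, $[\mathfrak{X}_{\mathrm P}(E),\mathfrak{X}^{q}_{\mathrm V}(E)]\subset\mathfrak{X}^{q}_{\mathrm V}(E)$, and $[\mathfrak{X}^{q}_{\mathrm V}(E),\mathfrak{X}^{q'}_{\mathrm V}(E)]\subset\mathfrak{X}^{q+q'-1}_{\mathrm V}(E)$. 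These are exactly the closure properties of $\mathfrak{X}^{\bullet}_{\mathrm{P},\mathrm V}(E)$ already invoked in the excerpt, so once the ambient-algebra viewpoint is set up, the remaining work is a (careful but routine) sign computation, and I would present only the nontrivial compatibility check in Step 2 in full, citing \cite{Kumpera-Spencer} and \cite{CrFe-stab} for the standard formulas.
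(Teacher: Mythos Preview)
Your overall plan is sound on the easy parts (compatibility on overlaps, $\Omega_E$ being an ideal, $p_S$ being a Lie map), but the core strategy --- inheriting Jacobi from an ambient graded Lie bracket on $\Omega^{\bullet}(E,\Lambda^{\bullet}TE)$ --- has a genuine gap. There is no standard graded Lie bracket on all of $\Omega^{\bullet}(E,\Lambda^{\bullet}TE)$ that simultaneously restricts to the Fr\"ohlicher--Nijenhuis bracket on $\Omega^{\bullet}(E,TE)$ and to the Schouten bracket on $\mathfrak{X}^{\bullet}(E)$; the reference \cite{Kumpera-Spencer} only covers vector-valued forms. The two brackets are genuinely different in nature: on $\Omega_E$ the bracket is purely ``tensorial'' in the form part ($[\varphi\otimes X,\psi\otimes Y]=\pm\,\varphi\wedge\psi\otimes[X,Y]$), whereas $[\cdot,\cdot]_F$ differentiates the form part via $L_{p_S(u)}$. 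So you cannot get Jacobi for free, and the ``single ambient algebra'' picture does not exist as stated.

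What the paper does instead is exactly what the notation $[\cdot,\cdot]_\ltimes$ suggests: a semidirect product. After checking compatibility on overlaps (your Step~2, which the paper does first), it reduces the graded Jacobi identity to showing that $(\Omega^{\bullet}(S,\mathfrak{X}_{\mathrm P}(E)),[\cdot,\cdot]_F)$ acts on $(\Omega_E,[\cdot,\cdot])$ by graded derivations, i.e.\ the two identities
\[
L_{[u,v]_F}=L_u\circ L_v-(-1)^{(|u|-1)(|v|-1)}L_v\circ L_u,\qquad
L_u([w,z])=[L_u(w),z]+(-1)^{(|u|-1)(|w|-1)}[w,L_u(z)],
\]
which it verifies by direct computation on decomposable elements. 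This is the content you would have to supply anyway; your sketch already contains the right ingredients (the closure properties of $\mathfrak{X}^{\bullet}_{\mathrm{P},\mathrm V}(E)$ and the formula for $L_u$), so the fix is simply to drop the ambient-algebra framing and argue these two derivation identities directly.
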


\begin{proof}
We first check that the definitions agree on overlaps. Consider
\[u=\alpha\otimes X\in \Omega^{\bullet}(S,\mathfrak{X}_{\mathrm{P}}(E)), \ v=\beta\otimes Y \in \Omega^{\bullet}(S,\mathfrak{X}_{\mathrm{V}}(E)),\ w=\gamma\otimes Z\in \Omega_E.\]
Using that $p_S(v)=0$, we obtain
\begin{align*}
[u,v]_F&=L_{p_S(u)}(\beta)\otimes X-(-1)^{|\alpha||\beta|}L_{p_S(v)}(\alpha)\otimes Y+\alpha\wedge\beta\otimes [X,Y]\\
&=L_u(v),\\
L_v(w)&=L_{p_S(v)}(\gamma)\otimes Z+\beta\wedge\gamma\otimes [Y,Z]=[v,w],
\end{align*}
and for $z\in \Omega^{\bullet}(S,\mathfrak{X}_{\mathrm{V}}(E))$, $[v,z]=[v,z]_F$. Thus the bracket is well-defined.

To prove the graded Jacobi identity, it suffices to check that the algebra
\[(\Omega^{\bullet}(S,\mathfrak{X}_{\mathrm{P}}(E)),[\cdot,\cdot]_F)\]
acts by graded derivations on $(\Omega_E,[\cdot,\cdot])$, i.e.\
\begin{align*}
L_u&\circ L_v-(-1)^{(|u|-1)(|v|-1)}L_v\circ L_u=L_{[u,v]_F},\\
L_u([w,z])&=[L_{u}(w),z]+(-1)^{(|u|-1)(|w|-1)}[w,L_u(z)].
\end{align*}
for all $u,v\in \Omega^{\bullet}(S,\mathfrak{X}_{\mathrm{P}}(E))$ and $w,z\in \Omega_E$. We use the previous
notations for the elements. By the formula for $[\cdot,\cdot]_F$, we have that
\begin{align*}
L_{[u,v]_F}(w)=&L_{p_S([u,v]_F)}(\gamma)\otimes Z+L_{p_S(u)}(\beta)\wedge\gamma\otimes[Y,Z]-\\
&-(-1)^{|\alpha||\beta|}L_{p_S(v)}(\alpha)\wedge\gamma\otimes [X,Z]+\alpha\wedge\beta\wedge\gamma\otimes[[X,Y],Z].
\end{align*}
Using that $(\Omega^{\bullet}(S,TS),[\cdot,\cdot]_F)$ acts on $\Omega^{\bullet}(S)$, and that $p_S$ is an algebra
homomorphism, we split the first term in two parts
\[L_{p_S([u,v]_F)}(\gamma)\otimes Z=L_{p_S(u)}(L_{p_S(v)}(\gamma))\otimes Z- (-1)^{|\alpha||\beta|}L_{p_S(v)}(L_{p_S(u)}(\gamma))\otimes Z,\]
and similarly, using Jacobi, we split also the last term
\begin{align*}
\alpha\wedge\beta&\wedge\gamma\otimes[[X,Y],Z]=\\
&=\alpha\wedge\beta\wedge\gamma\otimes[X,[Y,Z]]-(-1)^{|\alpha||\beta|}\beta\wedge\alpha\wedge\gamma\otimes[Y,[X,Z]].
\end{align*}
Adding up the terms where the sign $-(-1)^{|\alpha||\beta|}$ doesn't appear, we obtain
\begin{align*}
L_{p_S(u)}(L_{p_S(v)}(\gamma))&\otimes Z+\alpha\wedge L_{p_S(v)}(\gamma)\otimes [X,Z]+\\
&+L_{p_{S}(u)}(\beta\wedge\gamma)\otimes[Y,Z]+\alpha\wedge\beta\wedge\gamma\otimes [X,[Y,Z]]=\\
&=L_u(L_{p_S(v)}(\gamma)\otimes Z+\beta\wedge\gamma\otimes[Y,Z])=L_u\circ L_v(w),
\end{align*}
and similarly the rest of the terms add up to $-(-1)^{|\alpha||\beta|}L_v\circ L_u(w)$. This finishes the proof of the first
relation.

Denoting $z=\delta\otimes T$, the second relation follows from the computation
\begin{align*}
L_u([w,z])&=(-1)^{|\delta|(|Z|-1)}L_u(\gamma\wedge\delta\otimes[Z,T])=\\
&=(-1)^{|\delta|(|Z|-1)}(L_{p_S(u)}(\gamma\wedge\delta)\otimes[Z,T]+\alpha\wedge\gamma\wedge\delta\otimes[X,[Z,T]])=\\
&=(-1)^{|\delta|(|Z|-1)}(L_{p_S(u)}(\gamma)\wedge\delta\otimes[Z,T]+\alpha\wedge\gamma\wedge\delta\otimes[[X,Z],T]+\\
&+(-1)^{|\alpha||\gamma|}\gamma\wedge L_{p_S(u)}(\delta)\otimes[Z,T]+\alpha\wedge\gamma\wedge\delta\otimes[Z,[X,T]])=\\
&=[L_{p_S(u)}(\gamma)\otimes Z+\alpha\wedge\gamma\otimes[X,Z],\delta\otimes T]+\\
&+(-1)^{|\alpha|(\gamma+|Z|-1)}[\gamma\otimes Z,L_{p_S(u)}(\delta)\otimes T+\alpha\wedge\delta\otimes[X,T]]=\\
&=[L_{u}(w),z]+(-1)^{(|u|-1)(|w|-1)}[w,L_u(z)].\phantom{+\alpha\wedge\delta\otimes[X,T]]====}\qedhere
\end{align*}
\end{proof}

A bit more on the structure of $(\widetilde{\Omega}_E,[\cdot,\cdot]_{\ltimes})$ is given in the lemma below.

\begin{lemma}\label{OmegaS-Central}
The center of $\widetilde{\Omega}_E$ are the constant functions on $E$, and the center of $\Omega_E$ is
$\Omega^{\bullet}(S)$. Moreover, $\Omega^{\bullet}(S)$ is an ideal in $\widetilde{\Omega}_E$.
\end{lemma}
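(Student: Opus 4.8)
The plan is to identify $\Omega^{\bullet}(S)$ inside $\Omega_E$ with the space of \emph{basic} forms $p^*\Omega^{\bullet}(S)=\Omega^{\bullet}(S)\otimes_{C^{\infty}(S)}1_E\subset\bigoplus_p\Omega_E^{p,0}$, where $1_E\in\mathfrak{X}^0_{\mathrm{V}}(E)=C^{\infty}(E)$ is the constant function, and to run all three verifications bidegree by bidegree: one first checks, against the three clauses of the definition, that both $[\cdot,\cdot]$ and $[\cdot,\cdot]_{\ltimes}$ are homogeneous of bidegree $(0,-1)$, so that centers and ideals are automatically bigraded. With this in hand, the two easy halves are immediate. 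That $p^*\Omega^{\bullet}(S)$ is central in $\Omega_E$ follows from the displayed formula $[\varphi\otimes X,\psi\otimes Y]=(-1)^{|\psi|(|X|-1)}\varphi\wedge\psi\otimes[X,Y]$ by taking $X=1_E$ and using $[1_E,Y]=0$ (the Schouten bracket of the constant function with any vertical multivector field vanishes), which kills the bracket of $p^*\varphi$ with every decomposable element of $\Omega_E$, hence with all of $\Omega_E$. That $p^*\Omega^{\bullet}(S)$ is an ideal of $\widetilde{\Omega}_E$ then follows: on $\Omega_E$ the bracket restricts to $[\cdot,\cdot]$, so $[\Omega_E,p^*\Omega^{\bullet}(S)]_{\ltimes}=0$; and for $u=\alpha\otimes X\in\Omega^{\bullet}(S,\mathfrak{X}_{\mathrm{P}}(E))$ the action clause gives $[u,p^*\bar z]_{\ltimes}=L_u(p^*\bar z)=p^*(L_{p_S(u)}\bar z)+\alpha\wedge\bar z\otimes[X,1_E]=p^*(L_{p_S(u)}\bar z)$, again because $[X,1_E]=0$, where $L_{p_S(u)}$ is the Fr\"ohlicher--Nijenhuis Lie derivative on $\Omega^{\bullet}(S)$.

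For the reverse inclusion $Z(\Omega_E)\subseteq p^*\Omega^{\bullet}(S)$, I would take a homogeneous central $z\in\Omega_E^{p,q}$ and first bracket it with functions $f\in\Omega_E^{0,0}=C^{\infty}(E)$. Writing $\Omega_E=\Gamma(\Lambda^{\bullet}L_V)$ with $L_V=V\oplus V^{\circ}$ the (Dirac) Lie algebroid whose anchor $\rho$ is the projection onto $V$, the Gerstenhaber identity gives $[z,f]=\pm\,\iota_{\rho^*df}z$; since $\rho^*df$ lies in the $V^*$-summand of $L_V^*$, this contraction only touches the $\Lambda^qV$-factor of $z$, and as $df|_V$ ranges over all of $\Gamma(V^*)$ pointwise, vanishing for all $f$ forces $z=0$ whenever $q\ge1$. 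When $q=0$, $z\in\Gamma(\Lambda^pV^{\circ})$ is a horizontal $p$-form on $E$, and bracketing with $Y\in\Gamma(V)=\Omega_E^{0,1}$ gives (using $\iota_Y\xi=0$ for $\xi\in\Gamma(V^{\circ})$ and the Leibniz rule) $[z,Y]=\pm\,\iota_Y dz$; so $z$ central forces $dz$ horizontal too, i.e.\ $z$ is horizontal and invariant under vertical vector fields, hence basic --- this is exactly where connectedness of the fibres of $p$ is used --- so $z\in p^*\Omega^p(S)$. Together with the easy half, $Z(\Omega_E)=p^*\Omega^{\bullet}(S)$.

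Finally, for $Z(\widetilde{\Omega}_E)$ I would argue again in each bidegree $(p,q)$. For $q\ge2$ one has $\widetilde{\Omega}_E^{p,q}=\Omega_E^{p,q}$, so the bracket-with-functions argument gives $z=0$. For $q=1$, $z\in\Omega^p(S,\mathfrak{X}_{\mathrm{P}}(E))$ satisfies $[z,f]_{\ltimes}=L_z(f)$, which over a chart $U\subset S$, writing $z|_U=\sum_I dx^I\otimes X_I$ with $X_I\in\mathfrak{X}_{\mathrm{P}}(E|_U)$, equals $\sum_I X_I(f)\,dx^I$; pointwise independence of the $dx^I$ and arbitrariness of $f$ force every $X_I=0$, so $z=0$ (in particular there is no center in positive vertical degree). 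For $q=0$, $z\in\Omega_E^{p,0}$ already commutes with all of $\Omega_E$, so by the previous step $z=p^*\bar z$ with $\bar z\in\Omega^p(S)$; commuting in addition with every projectable vector field gives $0=[z,X]_{\ltimes}=\pm\,p^*(L_{p_S(X)}\bar z)$, and since $p_S\colon\mathfrak{X}_{\mathrm{P}}(E)\to\mathfrak{X}(S)$ is onto (horizontal lift w.r.t.\ any Ehresmann connection), $L_Z\bar z=0$ for all $Z\in\mathfrak{X}(S)$. An elementary local computation (testing against $\partial_i$ and against $x^j\partial_j$) shows a $p$-form with vanishing Lie derivative along every vector field is $0$ for $p\ge1$ and locally constant for $p=0$; as $S$ is connected, $z$ is then a constant function. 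Hence $Z(\widetilde{\Omega}_E)$ consists exactly of the constants on $E$.

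I expect the only part requiring genuine care to be the bookkeeping: verifying that $[\cdot,\cdot]_{\ltimes}$ is homogeneous of bidegree $(0,-1)$ across all three clauses of its definition (so that "the center is bigraded" is legitimate) and matching $[z,f]$, $[z,Y]$ and $[z,f]_{\ltimes}$ with the contraction / Cartan-calculus expressions used above; everything else is mechanical, modulo the (standard) facts that a Dirac structure is a Lie algebroid with the stated anchor and that a differential form killed by all Lie derivatives vanishes in positive degree.
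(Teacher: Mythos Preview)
Your proof is correct. The overall structure matches the paper's: easy inclusions first, then reverse inclusions by bracketing against test elements, then $Z(\widetilde{\Omega}_E)$ by reducing to $\Omega^{\bullet}(S)$ and invoking $L_X\bar z=0$ for all $X\in\mathfrak{X}(S)$. Your preliminary check that $[\cdot,\cdot]_{\ltimes}$ is homogeneous of bidegree $(0,-1)$ is indeed needed (and used implicitly by the paper as well).

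The one genuine difference is economy in the reverse inclusion $Z(\Omega_E)\subseteq\Omega^{\bullet}(S)$. You use two tests --- bracket with functions to force $q=0$ via $[z,f]=\pm\iota_{\rho^*df}z$, then bracket with vertical vector fields to force $z$ basic. The paper uses only the second test, via the single identity
\[
[X,\omega](X_1,\ldots,X_p)=[X,\omega(X_1,\ldots,X_p)]
\]
for $X\in\mathfrak{X}_{\mathrm{V}}(E)=\Omega_E^{0,1}$ and $X_1,\ldots,X_p\in\mathfrak{X}(S)$ (immediate from the $\Omega^{\bullet}(S,\mathfrak{X}^{\bullet}_{\mathrm{V}}(E))$ description of the bracket, since $X$ has $p$-degree $0$). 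This says that $\omega(X_1,\ldots,X_p)\in\mathfrak{X}^q_{\mathrm{V}}(E)$ Schouten-commutes with every vertical vector field, and a short local computation shows any such vertical multivector must be a basic function --- so $q=0$ and $\omega\in\Omega^{\bullet}(S)$ in one stroke. The same identity persists for $\omega\in\widetilde{\Omega}_E^{p,1}$, now with $\omega(X_1,\ldots,X_p)\in\mathfrak{X}_{\mathrm{P}}(E)$; since a projectable vector field commuting with all vertical ones vanishes (test against $fY$), the paper treats all $q$ uniformly rather than by your three-case split. Your Lie-algebroid/Cartan-calculus route is conceptually transparent and perfectly valid, just a bit longer; the paper's single evaluation trick is more compact.
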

\begin{proof}
The fact that $\Omega^{\bullet}(S)$ is in the center of $\Omega_E$ follows straightaway from the definition of the Lie
bracket. For $\omega\in \Omega^{p,q}_E$, a central element, we have
\[0=[X,\omega](X_1,\ldots,X_p)= [X,\omega(X_1,\ldots,X_p)],\]
for every $X\in\mathfrak{X}_{\mathrm{V}}(E)$ and $X_1,\ldots,X_p\in\mathfrak{X}(S)$. Hence,
$\omega(X_1,\ldots,X_p)$ is a multivector field which commutes with all vertical vector fields, and this implies that it is
the pullback of a function on $S$ (this requires the fibers of $p$ to be connected). Thus $\omega\in
\Omega^{\bullet}(S)$.

The fact that $\Omega^{\bullet}(S)$ is an ideal in $\widetilde{\Omega}_E$, follows by definition; just notice that the
induced representation descends to $\Omega^{\bullet}(S,TS)$:
\[[u,\omega]_{\ltimes}=L_u(\omega)=L_{p_S(u)}(\omega).\]
Thus an element $\omega$ in the center of $\widetilde{\Omega}_E$, belongs to $\Omega^{\bullet}(S)$, and satisfies
$L_X\omega=0$, for all $X\in \mathfrak{X}(S)$. This implies that $\omega$ is a constant.
\end{proof}

\subsection{Horizontally nondegenerate Poisson structures}\label{The graded Lie algebra}

\subsubsection*{Ehresmann connections}

As an illustration of the use of $\widetilde{\Omega}_E$, we look at \textbf{Ehresmann connections}\index{Ehresmann
connection} on $E$. Such a connection can be described either as a subbundle $H\subset TE$, complementary to $V$,
or equivalently by a $C^{\infty}(S)$-linear map which associates to a vector field $X$ on $S$ its horizontal lift
$\mathrm{hor}(X)$ to $E$. Since $\mathrm{hor}(X)$ projects back to $X$, we see that the connection can also be
described by an element
\[\Gamma\in\widetilde{\Omega}_E^{1,1}, \textrm{ such that } p_S(\Gamma)=\gamma_S.\]
Also the curvature $R_{\Gamma}$ of $\Gamma$
\[R_{\Gamma}(X,Y)=[\mathrm{hor}(X),\mathrm{hor}(Y)]-\mathrm{hor}([X,Y]),\]
can be described using $\widetilde{\Omega}_E$; it is just
\[R_{\Gamma}=\frac{1}{2}[\Gamma,\Gamma]_{\ltimes}\in\Omega^{2,1}_E.\]
Moreover, with the identification $\Omega^{\bullet,0}_E\cong \Omega^{\bullet}(H)$, the operator
\[d_{\Gamma}:\Omega^{\bullet,0}_E\rmap \Omega^{\bullet+1,0}_E, \ \ \alpha\mapsto [\Gamma,\alpha]_{\ltimes}\]
is just the horizontal derivative of horizontal forms
\begin{equation}\label{EQ_just_the_horizontal_derivative}
d_{\Gamma}(\alpha)(X_1,\ldots,X_n)=d\alpha(\mathrm{hor}(X_1),\ldots,\mathrm{hor}(X_n)).
\end{equation}
This can be easily checked on functions. In general, by decomposing $\alpha$ as a sum of elements of the form
$\omega\otimes f$, with $\omega\in\Omega(S)$ and $f\in C^{\infty}(E)$, and using that both $d_{\Gamma}$ and $d$
act as derivations, this follows from (\ref{gamma-represents-d})
\[d_{\Gamma}(\omega)=[\Gamma,\omega]_{\ltimes}=L_{\Gamma}(\omega)=L_{p_S(\Gamma)}(\omega)=L_{\gamma_S}(\omega)=d\omega.\]

\subsubsection*{Dirac elements and Dirac structures}

We introduce the following generalization of flat Ehresmann connections.

\begin{definition} A \textbf{Dirac element}\index{Dirac element} is an element $\gamma\in \widetilde{\Omega}_{E}^{2}$, satisfying
\[ [\gamma, \gamma]_{\ltimes}=0, \ \ \ \ p_{S}(\gamma)= \gamma_S.\]
We use the following notations for the components of $\gamma$:
\begin{itemize}
\item $\gamma^{\mathrm{v}}$ for the $(0,2)$ component- an element in $\mathfrak{X}_{\mathrm{V}}^{2}(E)$
\item $\Gamma_{\gamma}$ for the $(1, 1)$ component- an Ehresmann connection on $E$
\item $\mathbb{F}_{\gamma}$ for the $(2, 0)$ component- an element in $\Omega^2(S, C^{\infty}(E))$.
\end{itemize}
We denote by $H_{\gamma}\subset TE$ the horizontal distribution corresponding to $\Gamma_{\gamma}$.
\end{definition}

Before explaining the geometric meaning of such elements, we recall:

\begin{definition}[see  \cite{FernandesBrahic, Wade}]
A Dirac structure $L\subset TE\oplus T^*E$ is called \textbf{horizontally nondegenerate}\index{horizontally nondeg.
Dirac}\index{Dirac structure} if $L\cap (V\oplus V^{\circ})= \{0\}$.
\end{definition}

The following proposition is our interpretation of Corollary 2.8 \cite{FernandesBrahic} and of Theorem 2.9
\cite{Wade}.
\begin{proposition}
For $\gamma=(\gamma^{\mathrm{v}},\Gamma_{\gamma},\mathbb{F}_{\gamma})\in\widetilde{\Omega}^2_E$, a
Dirac element, define
\[L_{\gamma}:=\mathrm{Graph}(\gamma^{\textrm{v}\sharp}:H_{\gamma}^{\circ}\to V)\oplus \mathrm{Graph}(\mathbb{F}_{\gamma}^{\sharp}:H_{\gamma}\to V^{\circ})\subset TE\oplus T^*E.\]
The assignment $\gamma\mapsto L_{\gamma}$ is a one to one correspondence between Dirac elements and
horizontally nondegenerate Dirac structures on $E$.
\end{proposition}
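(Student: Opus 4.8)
The plan is to split the statement into a purely linear-algebraic bijection followed by a matching of the integrability conditions. First I would establish a bijection between \emph{almost} Dirac elements — elements $\gamma\in\widetilde{\Omega}_E^{2}$ with $p_S(\gamma)=\gamma_S$ but not necessarily satisfying $[\gamma,\gamma]_{\ltimes}=0$ — and horizontally nondegenerate \emph{almost} Dirac structures, i.e.\ maximal isotropic subbundles $L\subset TE\oplus T^*E$ with $L\cap(V\oplus V^{\circ})=0$ that are not assumed Dorfman-involutive. The elementary fact underlying this is that a genuine Ehresmann connection $H\subset TE$ produces splittings $TE=V\oplus H$ and $T^*E=H^{\circ}\oplus V^{\circ}$ with $H^{\circ}\cong V^*$ and $V^{\circ}\cong H^*\cong p^*(T^*S)$; with respect to $H_\gamma$, the map $\gamma^{\mathrm v\sharp}$ sends $H_\gamma^{\circ}=V^*$ into $V$ and $\mathbb{F}_\gamma^{\sharp}$ sends $H_\gamma$ into $V^{\circ}$, and a rank count gives $\operatorname{rk}L_\gamma=\dim E$, half the rank of $TE\oplus T^*E$.

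For the direction $\gamma\mapsto L_\gamma$: writing two elements of $L_\gamma$ as $\gamma^{\mathrm v\sharp}(\xi_i)+X_i+\xi_i+\mathbb{F}_\gamma^{\sharp}(X_i)$ with $\xi_i\in H_\gamma^{\circ}$ and $X_i\in H_\gamma$, the cross pairings vanish because $\xi_i$ kills $H_\gamma$ and $\mathbb{F}_\gamma^{\sharp}(X_i)\in V^{\circ}$ kills $V$, and the remaining terms cancel by antisymmetry of the vertical bivector $\gamma^{\mathrm v}$ and of the $2$-form $\mathbb{F}_\gamma$; hence $L_\gamma$ is isotropic, and of half rank, so maximal isotropic. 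Horizontal nondegeneracy is immediate, since an element of $L_\gamma$ lying in $V\oplus V^{\circ}$ forces $X\in H_\gamma\cap V=0$ and then $\xi\in H_\gamma^{\circ}\cap V^{\circ}=0$. For the converse, given $L$ horizontally nondegenerate and maximal isotropic, I would set $H_L:=p_T\bigl(L\cap(TE\oplus V^{\circ})\bigr)$. Using $(TE\oplus V^{\circ})^{\perp}=V$ together with $L^{\perp}=L$ one computes $\operatorname{rk}\bigl(L\cap(TE\oplus V^{\circ})\bigr)=\dim E-\operatorname{rk}V$, and $p_T$ restricted to this bundle is injective with image meeting $V$ trivially; hence $H_L\oplus V=TE$, so $H_L$ is a genuine connection, which becomes $\Gamma_\gamma$. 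Relative to $H_L$, the bundle $L\cap(TE\oplus V^{\circ})$ is the graph of a map $H_L\to V^{\circ}$, skew — hence of the form $\mathbb{F}^{\sharp}$ — by isotropy, and $L\cap(V\oplus H_L^{\circ})$ is the graph of a skew map $H_L^{\circ}\to V$, hence of the form $\gamma^{\mathrm v\sharp}$; a rank count shows $L$ is the direct sum of these two graphs, i.e.\ $L=L_\gamma$ for $\gamma=\gamma^{\mathrm v}+\Gamma_{H_L}+\mathbb{F}$. Running the same recipe on $L_\gamma$ returns $\gamma$ — for instance $L_\gamma\cap(TE\oplus V^{\circ})=\{X+\mathbb{F}_\gamma^{\sharp}(X):X\in H_\gamma\}$ shows $H_{L_\gamma}=H_\gamma$ — completing the ``almost'' bijection.

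The remaining step, which I expect to be the main obstacle, is to show that $[\gamma,\gamma]_{\ltimes}=0$ is equivalent to Dorfman-involutivity of $L_\gamma$. Since $\gamma$ has degree $1$ in $(\widetilde{\Omega}_E,[\cdot,\cdot]_{\ltimes})$, the element $[\gamma,\gamma]_{\ltimes}$ decomposes into four components of bidegrees $(0,3)$, $(1,2)$, $(2,1)$, $(3,0)$, which up to nonzero constants are $[\gamma^{\mathrm v},\gamma^{\mathrm v}]$, $[\Gamma_\gamma,\gamma^{\mathrm v}]_{\ltimes}$, $R_{\Gamma_\gamma}+[\gamma^{\mathrm v},\mathbb{F}_\gamma]_{\ltimes}$ and $d_{\Gamma_\gamma}\mathbb{F}_\gamma$ (recall $R_{\Gamma_\gamma}=\tfrac12[\Gamma_\gamma,\Gamma_\gamma]_{\ltimes}$ and $d_{\Gamma_\gamma}\mathbb{F}_\gamma=[\Gamma_\gamma,\mathbb{F}_\gamma]_{\ltimes}$). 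Their vanishing says precisely: $\gamma^{\mathrm v}$ is a fiberwise Poisson bivector; the horizontal lifts of $\Gamma_\gamma$ act by infinitesimal automorphisms of $\gamma^{\mathrm v}$; the curvature of $\Gamma_\gamma$ is Hamiltonian for $\gamma^{\mathrm v}$ with curving $\mathbb{F}_\gamma$; and $\mathbb{F}_\gamma$ is horizontally closed. These are exactly the integrability conditions in the known description of horizontally nondegenerate Dirac structures (Corollary~2.8 of \cite{FernandesBrahic} and Theorem~2.9 of \cite{Wade}); to make the dictionary precise I would compute the Dorfman bracket of sections of $L_\gamma$ bidegree by bidegree, the point being that the bracket $[\cdot,\cdot]_{\ltimes}$ was built so that each such computation reduces to one of the four identities above. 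Combining the linear-algebraic bijection with this equivalence yields the claimed one-to-one correspondence.
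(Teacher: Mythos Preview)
Your proposal is correct and follows the same two-step architecture as the paper: first an ``almost'' bijection at the level of maximal isotropic subbundles transverse to $V\oplus V^\circ$, then matching $[\gamma,\gamma]_{\ltimes}=0$ with Dorfman involutivity via the four bigraded components.

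There is a mild difference in how the inverse map is extracted. The paper uses the transversality $L\oplus(V\oplus V^\circ)=TE\oplus T^*E$ directly: every $X\in TE$ splits as $(V_X-\xi_X)+(H_X+\xi_X)$ with the second summand in $L$, and $X\mapsto H_X$, $X\mapsto\xi_X$ immediately yield $\Gamma$ and $\mathbb{F}^\sharp$; a dual decomposition of covectors gives $\gamma^{\mathrm v\sharp}$. Your route via the intersections $L\cap(TE\oplus V^\circ)$ and $L\cap(V\oplus H_L^\circ)$, together with rank counts using $L=L^\perp$, is equally valid and makes the graph decomposition of $L_\gamma$ more visibly symmetric; the paper's approach is a bit quicker because the projections fall out of a single splitting. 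For the integrability step you correctly identify the four components and their geometric meaning, but only outline the Dorfman computation and defer to \cite{FernandesBrahic,Wade}; the paper actually carries out the three bracket computations (two sections of the bivector graph; one of each; two of the $2$-form graph) and shows each yields two of the four vanishing conditions, with the expected overlaps.
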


\begin{proof}
We start by describing the inverse map. Let $L\subset TE\oplus T^*E $ be a horizontally nondegenerate Dirac
structure. Since $L\oplus L_V=TE\oplus T^*E$, where $L_V=V\oplus V^{\circ}$, we can decompose every $X\in TE$
uniquely as
\[X=(V_X-\xi_X)\oplus (H_X+\xi_X), \ \ V_X\in V,\ \xi_X\in V^{\circ}, \ H_X+\xi_X\in L.\]
Let $\Gamma_{L}$ be the Ehresmann connection with horizontal projection $X\mapsto H_X$, and let $H\subset TE$
be the corresponding distribution. Since $L$ in isotropic, and $\xi_X\in V^{\circ}$, we have that
\[0=(H_X+\xi_X,H_Y+\xi_Y)=(X+\xi_X,Y+\xi_Y)=\xi_X(Y)+\xi_Y(X),\]
thus $\xi$ is given by a skew-symmetric 2-form $\mathbb{F}_L\in\Omega^2(E)$, i.e.\
$\xi_X=\mathbb{F}_{L}^{\sharp}(X)$. Since $L\cap V^{\circ}=0$, we have that $\xi_{|V}=0$, and since $\xi_X\in
V^{\circ}$, it follows that $\mathbb{F}_L\in\Omega^{2,0}_E$. Similarly, an $\eta\in T^*E$ decomposes as
\[\eta=(-V_{\eta}+\theta_{\eta})\oplus (V_{\eta}+\varphi_{\eta}), \ \ V_{\eta}\in V,\ \theta_{\eta}\in V^{\circ}, \ V_{\eta}+\varphi_{\eta}\in L.\]
Using again that $L$ is isotropic, we obtain that for all $X\in TE$,
\[0=(H_X+\xi_X,V_{\eta}+\varphi_{\eta})=\varphi_{\eta}(H_X).\]
Thus $\varphi_{\eta}\in H^{\circ}$. So, $\eta=\theta_{\eta}+\varphi_{\eta}$ is the decomposition of $\eta$
corresponding to $\Gamma_{L}$. By a similar argument as before, this shows that $V_{|H^{\circ}}=0$, and that
there is a vertical bivector $\pi^{\textrm{v}}_{L}\in\Omega^{0,2}_E$, such that
$V_{\xi}=\pi^{\textrm{v},\sharp}_{L}(\xi)$. By comparing dimensions, we obtain that $L=L_{\gamma}$, where
\[\gamma=(\pi_L^{\mathrm{v}},\Gamma_{L},\mathbb{F}_{L})\in \widetilde{\Omega}^2_L.\]

We still have to check that $L_{\gamma}$ being closed under the Dorfman bracket is equivalent to
$[\gamma,\gamma]_{\ltimes}=0$. This equation splits into 4 components
\[0=[\gamma,\gamma]_{\ltimes}=[\gamma^{\mathrm{v}},\gamma^{\mathrm{v}}]\oplus
2[\Gamma_{\gamma},\gamma^\textrm{v}]_{\ltimes}\oplus2(R_{\Gamma_{\gamma}}+[\gamma^\textrm{v},\mathbb{F}_{\gamma}])\oplus 2[\Gamma_{\gamma},\mathbb{F}_{\gamma}]_{\ltimes}\in\]
\[\in\Omega^{0,3}_E\oplus\Omega^{1,2}_E\oplus\Omega^{2,1}_E\oplus\Omega^{3,0}_E=\Omega^{3}_E.\]

Let's consider the condition
\begin{equation}\label{EQ_condition1}
[\gamma^{\mathrm{v},\sharp}(\alpha)+\alpha,\gamma^{\mathrm{v},\sharp}(\beta)+\beta]_D\in L_{\gamma}, \ \ \forall \alpha,\beta\in \Gamma(H_{\gamma}^{\circ}).
\end{equation}
Explicitly, this expression is given by:
\[[\gamma^{\mathrm{v},\sharp}(\alpha),\gamma^{\mathrm{v},\sharp}(\beta)]+\iota_{\gamma^{\mathrm{v},\sharp}(\alpha)}d\beta-\iota_{\gamma^{\mathrm{v},\sharp}(\beta)} d\alpha+d\gamma^{\mathrm{v}}(\alpha,\beta),\]
and denote the form part by $[\alpha,\beta]_{\gamma^{\mathrm{v}}}$. Since the vector part is vertical,
(\ref{EQ_condition1}) is equivalent to the following two conditions
\begin{equation}\label{EQ_condition2}
[\gamma^{\mathrm{v},\sharp}(\alpha),\gamma^{\mathrm{v},\sharp}(\beta)]=\gamma^{\mathrm{v},\sharp}([\alpha,\beta]_{\gamma^{\mathrm{v}}}),
\end{equation}
\begin{equation}\label{EQ_condition3}
[\alpha,\beta]_{\gamma^{\mathrm{v}}}\in \Gamma(H_{\gamma}^{\circ}) ,
\end{equation}
for all $\alpha,\beta\in \Gamma(H_{\gamma}^{\circ})$. Now for $\beta\in \Gamma(V^{\circ})$, and any 1-form
$\alpha$, we have that
\[[\alpha,\beta]_{\gamma^{\mathrm{v}}}=\iota_{\gamma^{\mathrm{v},\sharp}(\alpha)}d\beta\in \Gamma(H_{\gamma}^{\circ}).\]
Therefore, if (\ref{EQ_condition2}) holds, then it holds for all 1-forms $\alpha,\beta\in\Omega^1(E)$. On the other
hand, one easily sees (e.g.\ by applying (\ref{EQ_condition2}) to $\alpha=df,\beta=dg$) that this is equivalent to
$\gamma^{\mathrm{v}}$ being Poisson i.e.\ $[\gamma^{\mathrm{v}},\gamma^{\mathrm{v}}]=0$. For
$\alpha,\beta\in\Gamma(H_{\gamma}^{\circ})$, contracting $[\alpha,\beta]_{\gamma^{\mathrm{v}}}$ with
$\mathrm{hor}(X)$, for $X\in\mathfrak{X}(S)$, one obtains
\begin{align*}
-\iota_{\gamma^{\mathrm{v},\sharp}(\alpha)}&L_{\mathrm{hor}(X)}(\beta)+\iota_{\gamma^{\mathrm{v},\sharp}(\beta)}L_{\mathrm{hor}(X)}(\alpha)+L_{\mathrm{hor}(X)}(\gamma^{\mathrm{v}}(\alpha,\beta))=\\
&=-\gamma^{\mathrm{v}}(\alpha,L_{\mathrm{hor}(X)}(\beta))-\gamma^{\mathrm{v}}(L_{\mathrm{hor}(X)}(\alpha),\beta)+L_{\mathrm{hor}(X)}(\gamma^{\mathrm{v}}(\alpha,\beta))=\\
&=[\mathrm{hor}(X),\gamma^{\mathrm{v}}](\alpha,\beta).
\end{align*}
Since $[\mathrm{hor}(X),\gamma^{\mathrm{v}}]$ is vertical, condition (\ref{EQ_condition3}) is equivalent to
\[[\mathrm{hor}(X),\gamma^{\mathrm{v}}]=[\Gamma_{\gamma},\gamma^{\mathrm{v}}]_{\ltimes}(X)=0, \ \ \forall X\in \mathfrak{X}(S).\]
Thus we have shown that (\ref{EQ_condition1}) is equivalent to the vanishing of the first two components in
$[\gamma,\gamma]_{\ltimes}$.

Consider the condition
\begin{equation}\label{EQ_condition4}
[\mathrm{hor}(X)+\mathbb{F}_{\gamma}^{\sharp}(X),\gamma^{\mathrm{v},\sharp}(\alpha)+\alpha]_D\in L_{\gamma}, \ \ \forall\ X\in \mathfrak{X}(S),\ \alpha\in \Gamma(H_{\gamma}^{\circ}).
\end{equation}
Explicitly, this is given by
\[[\mathrm{hor}(X),\gamma^{\mathrm{v},\sharp}(\alpha)]+L_{\mathrm{hor}(X)}\alpha- \iota_{\gamma^{\mathrm{v},\sharp}(\alpha)}d\iota_{\mathrm{hor}(X)} \mathbb{F}_{\gamma}.\]
Using that $[\gamma^{\mathrm{v},\sharp}(\alpha),\mathrm{hor}(X)]$ is vertical, for the last term we have
\begin{align*}
\iota_{\gamma^{\mathrm{v},\sharp}(\alpha)}d\iota_{\mathrm{hor}(X)} \mathbb{F}_{\gamma}=L_{\gamma^{\mathrm{v},\sharp}(\alpha)}\iota_{\mathrm{hor}(X)} \mathbb{F}_{\gamma}=\iota_{[\gamma^{\mathrm{v},\sharp}(\alpha),\mathrm{hor}(X)]}\mathbb{F}_{\gamma}+\\
+\iota_{\mathrm{hor}(X)}L_{\gamma^{\mathrm{v},\sharp}(\alpha)} \mathbb{F}_{\gamma}=\iota_{\mathrm{hor}(X)}\iota_{\gamma^{\mathrm{v},\sharp}(\alpha)} d\mathbb{F}_{\gamma}.
\end{align*}
Thus we have to show that
\[[\mathrm{hor}(X),\gamma^{\mathrm{v},\sharp}(\alpha)]+L_{\mathrm{hor}(X)}\alpha- \iota_{\mathrm{hor}(X)}\iota_{\gamma^{\mathrm{v},\sharp}(\alpha)} d\mathbb{F}_{\gamma}\in L_{\gamma}.\]
The first term is in $V$ and the last in $V^{\circ}$, therefore (\ref{EQ_condition4}) is equivalent to the following two
conditions
\begin{equation}\label{EQ_condition5}
[\mathrm{hor}(X),\gamma^{\mathrm{v},\sharp}(\alpha)]=\gamma^{\mathrm{v},\sharp}(L_{\mathrm{hor}(X)}\alpha),
\end{equation}
\begin{equation}\label{EQ_condition6}
L_{\mathrm{hor}(X)}\alpha- \iota_{\mathrm{hor}(X)}\iota_{\gamma^{\mathrm{v},\sharp}(\alpha)} d\mathbb{F}_{\gamma}\in \Gamma(H_{\gamma}^{\circ}) ,
\end{equation}
for all $\alpha\in \Gamma(H_{\gamma}^{\circ})$ and $X\in\mathfrak{X}(S)$. The first is equivalent to
\[[\mathrm{hor}(X),\gamma^v]^{\sharp}(\alpha)=0,\] which, as before, translates to
$[\Gamma_{\gamma},\gamma^{\mathrm{v}}]_{\ltimes}=0$. Before looking at (\ref{EQ_condition6}), note that, using
the Koszul formula to compute $d\mathbb{F}_{\gamma}$ and that $\mathbb{F}_{\gamma}$ vanishes on vertical
vectors, we obtain
\begin{equation}\label{EQ_condition0}
d\mathbb{F}_{\gamma}(V,\mathrm{hor}(X),\mathrm{hor}(Y))=L_V(\mathbb{F}_{\gamma}(X,Y)), \ \ \forall \  V\in \mathfrak{X}_{\mathrm{V}}(E).
\end{equation}
So, if we contract (\ref{EQ_condition6}) with $\mathrm{hor}(Y)$, we get
\begin{align*}
 \iota_{\mathrm{hor}(Y)}&L_{\mathrm{hor}(X)}\alpha-
 d\mathbb{F}_{\gamma}(\gamma^{\mathrm{v},\sharp}(\alpha),\mathrm{hor}(X),\mathrm{hor}(Y))=\\
&=\alpha([\mathrm{hor}(X),\mathrm{hor}(Y)])-L_{\gamma^{\mathrm{v},\sharp}(\alpha)}(\mathbb{F}_{\gamma}(X,Y))=\\
&=\alpha(R_{\Gamma_{\gamma}}(X,Y)+[\gamma^{\mathrm{v}},\mathbb{F}_{\gamma}](X,Y)).
\end{align*}
Thus (\ref{EQ_condition6}) is equivalent to
$[\gamma^{\mathrm{v}},\mathbb{F}_{\gamma}]+R_{\Gamma_{\gamma}}=0$. We have shown that
(\ref{EQ_condition4}) is equivalent to the vanishing of the second and third term in $[\gamma,\gamma]_{\ltimes}$.

Finally, we analyze the condition
\begin{equation}\label{EQ_condition7}
[\mathrm{hor}(X)+\mathbb{F}_{\gamma}^{\sharp}(X),\mathrm{hor}(Y)+\mathbb{F}_{\gamma}^{\sharp}(Y)]_D\in L_{\gamma}, \ \ \forall \ X,Y\in \mathfrak{X}(S).
\end{equation}
This can be rewritten as
\begin{align*}
[\mathrm{hor}(X)&,\mathrm{hor}(Y)]+L_{\mathrm{hor}(X)}\iota_{\mathrm{hor}(Y)}\mathbb{F}_{\gamma}-\iota_{\mathrm{hor}(Y)}d(\iota_{\mathrm{hor}(X)}\mathbb{F}_{\gamma})=\\
&=\mathrm{hor}([X,Y])+R_{\Gamma_{\gamma}}(X,Y)+\iota_{[X,Y]}\mathbb{F}_{\gamma}+\iota_{\mathrm{hor}(Y)}\iota_{\mathrm{hor}(X)}d\mathbb{F}_{\gamma}.
\end{align*}
So, (\ref{EQ_condition7}) is equivalent to
\[R_{\Gamma_{\gamma}}(X,Y)+\iota_{\mathrm{hor}(Y)}\iota_{\mathrm{hor}(X)}d\mathbb{F}_{\gamma}\in L_{\gamma}, \ \ \forall \ X,Y\in \mathfrak{X}(S).\]
Since the vector part is vertical, this also can be restated as two conditions
\begin{equation}\label{EQ_condition8}
\iota_{\mathrm{hor}(Y)}\iota_{\mathrm{hor}(X)}d\mathbb{F}_{\gamma}\in \Gamma(H_{\gamma}^{\circ}) ,
\end{equation}
\begin{equation}\label{EQ_condition9}
\gamma^{\mathrm{v},\sharp}(\iota_{\mathrm{hor}(Y)}\iota_{\mathrm{hor}(X)}d\mathbb{F}_{\gamma})=-R_{\Gamma_{\gamma}}(X,Y),
\end{equation}
for all $X, Y\in\mathfrak{X}(S)$. By (\ref{EQ_just_the_horizontal_derivative}), we have that
\[d\mathbb{F}_{\gamma}(\mathrm{hor}(X),\mathrm{hor}(Y),\mathrm{hor}(Z))=[\Gamma_{\gamma},\mathbb{F}_{\gamma}]_{\ltimes}(X,Y,Z),\]
so (\ref{EQ_condition8}) is equivalent to $[\Gamma_{\gamma},\mathbb{F}_{\gamma}]_{\ltimes}=0$. Applying
$\alpha$ to (\ref{EQ_condition9}), by (\ref{EQ_condition0}) we get
\[\alpha([\gamma^{\mathrm{v}},\mathbb{F}_{\gamma}](X,Y))=-\alpha(R_{\Gamma_{\gamma}}(X,Y)),\]
thus (\ref{EQ_condition9}) is equivalent to
$R_{\Gamma_{\gamma}}+[\gamma^\textrm{v},\mathbb{F}_{\gamma}]=0$. So, we have shown that
(\ref{EQ_condition7}) is equivalent to the vanishing of the last two components of $[\gamma,\gamma]_{\ltimes}$. This
finishes the proof.
\end{proof}

\begin{remark}\rm
Notice that the vertical Poisson structure $\gamma^{\mathrm{v}}$ corresponding to the Dirac element $\gamma$
can be described using the product of Dirac structures (see subsection \ref{subsection_Dirac})
\[L_{\gamma^{\mathrm{v}}}=L_{\gamma}*L_{V},\]
where $L_V=V\oplus V^{\circ}$. Moreover, the fact that $L_{\gamma}*L_{V}$ is Poisson, is equivalent to
$L_{\gamma}$ being horizontally nondegenerate.
\end{remark}

As a consequence of the proof of the proposition, we note the following:

\begin{corollary}\label{corollary_bracket_explicit}
Under the isomorphism
\[H_{\gamma}\oplus H_{\gamma}^{\circ}\diffto L_{\gamma}, \ \ (X,\alpha)\mapsto (X+\gamma^{\mathrm{v},\sharp}(\alpha), \alpha+\mathbb{F}_{\gamma}^{\sharp}(X)),\]
the Lie algebroid $(L_{\gamma},[\cdot,\cdot]_{D},p_{T})$ becomes
\[(H_{\gamma}\oplus H_{\gamma}^{\circ},[\cdot,\cdot]_{\gamma},\rho_{\gamma}),\]
with Lie bracket
\begin{align*}
[\mathrm{hor}(X),\mathrm{hor}(Y)]_{\gamma}&=\mathrm{hor}([X,Y])+\iota_{\mathrm{hor}(Y)}\iota_{\mathrm{hor}(X)}d\mathbb{F}_{\gamma}\\
[\mathrm{hor}(X),\alpha]_{\gamma}&=L_{\mathrm{hor}(X)}\alpha+\iota_{\gamma^{\mathrm{v},\sharp}(\alpha)}\iota_{\mathrm{hor}(X)} d\mathbb{F}_{\gamma}\\
[\alpha,\beta]_{\gamma}&=\iota_{\gamma^{\mathrm{v},\sharp}(\alpha)}d\beta- \iota_{\gamma^{\mathrm{v},\sharp}(\beta)}d\alpha+d\gamma^{\mathrm{v}}(\alpha,\beta),
\end{align*}
and anchor
\[\rho_\gamma(\mathrm{hor}(X))=\mathrm{hor}(X), \ \ \rho_\gamma(\alpha)=\gamma^{\mathrm{v},\sharp}(\alpha),\]
for all $X,Y\in\mathfrak{X}(S)$,  $\alpha,\beta\in\Gamma(H^{\circ}_{\gamma})$.
\end{corollary}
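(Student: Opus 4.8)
The plan is to transport the Lie algebroid $(L_{\gamma},[\cdot,\cdot]_{D},p_{T})$ along the isomorphism $\Phi\colon H_{\gamma}\oplus H_{\gamma}^{\circ}\diffto L_{\gamma}$, $(X,\alpha)\mapsto(X+\gamma^{\mathrm{v},\sharp}(\alpha),\alpha+\mathbb{F}_{\gamma}^{\sharp}(X))$, from the statement. The anchor is immediate, as $p_{T}$ drops the cotangent component: $p_{T}\circ\Phi$ sends $\mathrm{hor}(X)$ to $\mathrm{hor}(X)$ and $\alpha$ to $\gamma^{\mathrm{v},\sharp}(\alpha)$, which is $\rho_{\gamma}$. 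For the bracket, observe that the sections $\mathrm{hor}(X)$ with $X\in\mathfrak{X}(S)$ generate $\Gamma(H_{\gamma})$ over $C^{\infty}(E)$, so by the Leibniz rule and the anchor the transported bracket $[\cdot,\cdot]_{\gamma}$ is determined once we know it on the pairs $(\mathrm{hor}(X),\mathrm{hor}(Y))$, $(\mathrm{hor}(X),\alpha)$ and $(\alpha,\beta)$ with $X,Y\in\mathfrak{X}(S)$ and $\alpha,\beta\in\Gamma(H_{\gamma}^{\circ})$. In each of these three cases I would unwind the Dorfman bracket $[A+\xi,B+\eta]_{D}=[A,B]+L_{A}\eta-\iota_{B}d\xi$ from (\ref{EQ_Dorfman_bracket}), invoke the preceding proposition (so that the result again lies in the Dirac structure $L_{\gamma}=\mathrm{Graph}(\gamma^{\mathrm{v},\sharp}|_{H_{\gamma}^{\circ}})\oplus\mathrm{Graph}(\mathbb{F}_{\gamma}^{\sharp}|_{H_{\gamma}})$), and read off the unique decomposition into the two graphs: the $H_{\gamma}$-entry is the horizontal part of the tangent component of the bracket, and the $H_{\gamma}^{\circ}$-entry is the $H_{\gamma}^{\circ}$-part of its cotangent component.

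Carrying this out: for $\alpha,\beta\in\Gamma(H_{\gamma}^{\circ})$ the tangent component $[\gamma^{\mathrm{v},\sharp}(\alpha),\gamma^{\mathrm{v},\sharp}(\beta)]$ is vertical, so the $H_{\gamma}$-entry vanishes, while the cotangent component $\iota_{\gamma^{\mathrm{v},\sharp}(\alpha)}d\beta-\iota_{\gamma^{\mathrm{v},\sharp}(\beta)}d\alpha+d\gamma^{\mathrm{v}}(\alpha,\beta)$ lies in $\Gamma(H_{\gamma}^{\circ})$ (this is (\ref{EQ_condition3})) and yields the third formula. For $\mathrm{hor}(X)$ and $\alpha$, the tangent component $[\mathrm{hor}(X),\gamma^{\mathrm{v},\sharp}(\alpha)]$ is again vertical (bracket of a projectable with a vertical field), and a short Cartan-calculus step — using that $\mathbb{F}_{\gamma}$ is a horizontal $2$-form, whence $\iota_{\gamma^{\mathrm{v},\sharp}(\alpha)}\mathbb{F}_{\gamma}=0$ and $\iota_{[\gamma^{\mathrm{v},\sharp}(\alpha),\mathrm{hor}(X)]}\mathbb{F}_{\gamma}=0$ — rewrites the cotangent component $L_{\mathrm{hor}(X)}\alpha-\iota_{\gamma^{\mathrm{v},\sharp}(\alpha)}d\iota_{\mathrm{hor}(X)}\mathbb{F}_{\gamma}$ as $L_{\mathrm{hor}(X)}\alpha+\iota_{\gamma^{\mathrm{v},\sharp}(\alpha)}\iota_{\mathrm{hor}(X)}d\mathbb{F}_{\gamma}$, the second formula. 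For $\mathrm{hor}(X)$ and $\mathrm{hor}(Y)$, using $L_{A}\iota_{B}\omega-\iota_{B}d\iota_{A}\omega=\iota_{[A,B]}\omega+\iota_{B}\iota_{A}d\omega$ with $\omega=\mathbb{F}_{\gamma}$, together with $[\mathrm{hor}(X),\mathrm{hor}(Y)]=\mathrm{hor}([X,Y])+R_{\Gamma_{\gamma}}(X,Y)$ and $\iota_{R_{\Gamma_{\gamma}}(X,Y)}\mathbb{F}_{\gamma}=0$, the Dorfman bracket becomes $\mathrm{hor}([X,Y])+R_{\Gamma_{\gamma}}(X,Y)+\iota_{\mathrm{hor}([X,Y])}\mathbb{F}_{\gamma}+\iota_{\mathrm{hor}(Y)}\iota_{\mathrm{hor}(X)}d\mathbb{F}_{\gamma}$; its horizontal tangent part is $\mathrm{hor}([X,Y])$, and subtracting $\mathbb{F}_{\gamma}^{\sharp}(\mathrm{hor}([X,Y]))=\iota_{\mathrm{hor}([X,Y])}\mathbb{F}_{\gamma}$ from the cotangent part leaves $\iota_{\mathrm{hor}(Y)}\iota_{\mathrm{hor}(X)}d\mathbb{F}_{\gamma}$, the first formula.

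The various consistency relations that surface — that the vertical part of each tangent component is $\gamma^{\mathrm{v},\sharp}$ of the computed cotangent entry, and that $[\cdot,\cdot]_{\gamma}$ is skew and satisfies the Jacobi identity — require no separate check: they are forced by the bracket lying in $L_{\gamma}$, equivalently by the vanishing of the four components of $[\gamma,\gamma]_{\ltimes}$ worked out in the proof of the preceding proposition, and the Lie algebroid axioms for $[\cdot,\cdot]_{\gamma}$ are inherited from the Courant bracket on $L_{\gamma}$. I do not expect any genuine obstacle: the whole thing is Cartan-calculus bookkeeping, and the only point needing care is consistently separating horizontal from vertical components (and $H_{\gamma}^{\circ}$ from $V^{\circ}$), using that $\mathbb{F}_{\gamma}$ is horizontal, $\gamma^{\mathrm{v}}$ is a vertical bivector, and that $R_{\Gamma_{\gamma}}$ and brackets of vertical with projectable vector fields are vertical.
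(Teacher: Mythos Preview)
Your proposal is correct and follows exactly the paper's approach: the corollary is stated as a direct consequence of the proof of the preceding proposition, where the Dorfman brackets of the three types of generators were computed explicitly (as conditions (\ref{EQ_condition1}), (\ref{EQ_condition4}), (\ref{EQ_condition7})), and your argument simply reads off the $H_{\gamma}\oplus H_{\gamma}^{\circ}$-decomposition from those same computations. The Cartan-calculus identities you use are the ones the paper uses verbatim.
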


The complex computing the cohomology of a horizontally nondegenerate Dirac structure (see subsection
\ref{subsection_Dirac}) can be computed as follows.

\begin{proposition}\label{Proposition_Dirac_cohomology}
Let $\gamma$ be a Dirac element, and $L_{\gamma}$ the corresponding Dirac structure on $E$. Using the
isomorphism $L_{\gamma}^*=V\oplus V^{\circ}$, given by the pairing on $TE\oplus T^*E$, the complex computing
the cohomology of $L_{\gamma}$ becomes
\[(\Omega^{\bullet}_E, d_{\gamma}), \ \ d_{\gamma}:=[\gamma,\cdot]_{\ltimes}.\]
 \end{proposition}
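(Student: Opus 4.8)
The plan is to produce an explicit isomorphism of cochain complexes, matching first the underlying graded algebras and then the two differentials on a generating set. For the graded algebras: both $L_\gamma$ and $L_V:=V\oplus V^\circ$ (the Dirac structure attached to the foliation of $E$ by the fibres of $p$) are maximal isotropic subbundles of $(TE\oplus T^*E,(\cdot,\cdot))$, and the hypothesis that $L_\gamma$ is horizontally nondegenerate is exactly the statement $L_\gamma\cap L_V=0$. Hence $L_\gamma\oplus L_V=TE\oplus T^*E$, and the pairing restricts to a perfect pairing between $L_\gamma$ and $L_V$ (each being its own orthogonal complement); this furnishes a bundle isomorphism $L_V\cong L_\gamma^*$, hence an isomorphism of graded commutative algebras $\Gamma(\Lambda^\bullet L_\gamma^*)\cong\Gamma(\Lambda^\bullet L_V)=\Omega_E^\bullet$, compatible with the grading (cohomological degree $q$ corresponding to total bidegree $p+q'=q$). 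Under this identification the Lie algebroid de Rham differential $d_{L_\gamma}$ of $L_\gamma$ — the Koszul differential built from the Dorfman bracket on $\Gamma(L_\gamma)$ and the anchor $p_T$ — transports to a degree-$+1$ derivation $D$ of $(\Omega_E^\bullet,\wedge)$, and the goal is to identify $D$ with $d_\gamma:=[\gamma,\cdot]_\ltimes$.

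Before the main computation I would record a few routine facts about $d_\gamma$. Writing $\gamma=\gamma^{\mathrm v}+\Gamma_\gamma+\mathbb F_\gamma$, each of $[\gamma^{\mathrm v},\cdot]$, $[\Gamma_\gamma,\cdot]_\ltimes=L_{\Gamma_\gamma}$ and $[\mathbb F_\gamma,\cdot]$ carries $\Omega_E$ into $\Omega_E$ (the Schouten bracket of a vertical multivector with, respectively, a vertical bivector, a horizontal lift, or a function is again vertical), so $d_\gamma$ is well defined on $\Omega_E$; it is a derivation of $\wedge$, since the bracket on $\Omega_E=\Gamma(\Lambda^\bullet L_V)$ is the Schouten-type extension of the Dorfman bracket of the Lie algebroid $L_V$ (hence Gerstenhaber) and $L_{\Gamma_\gamma}$ is a Lie derivative; and $d_\gamma^2=0$ follows from $[\gamma,\gamma]_\ltimes=0$ and the graded Jacobi identity in $(\widetilde\Omega_E,[\cdot,\cdot]_\ltimes)$. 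Consequently both $D$ and $d_\gamma$ are degree-$+1$ derivations of the graded commutative algebra $\Omega_E^\bullet$, which is generated over $\Omega_E^{0,0}=C^\infty(E)$ by its degree-one part $\Omega_E^{1,0}\oplus\Omega_E^{0,1}=\Gamma(V^\circ)\oplus\Gamma(V)$; it therefore suffices to check $D=d_\gamma$ on a function $f$, on a vertical vector field $X\in\Gamma(V)$, and on a one-form $\xi\in\Gamma(V^\circ)\cong\Gamma(p^*T^*S)$.

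For each generator one evaluates $D$ through the Koszul formula: for a $q$-form $\omega$ on $L_\gamma$ and sections $e_0,\dots,e_q$ of $L_\gamma$, $(d_{L_\gamma}\omega)(e_0,\dots,e_q)$ is expressed via the Lie derivatives $L_{p_T(e_i)}$ and the Dorfman brackets $[e_i,e_j]_D$; one then substitutes the explicit parametrization $L_\gamma\cong H_\gamma\oplus H_\gamma^\circ$ and its Lie algebroid bracket in terms of $\gamma^{\mathrm v},\Gamma_\gamma,\mathbb F_\gamma$ given by Corollary \ref{corollary_bracket_explicit}, pairs the outcome back into $\Omega_E=\Gamma(\Lambda^\bullet L_V)$ with the pairing above, and compares with the matching component of $[\gamma,\cdot]_\ltimes$. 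For instance on a function one finds that $Df$ has horizontal part $d_{\Gamma_\gamma}f$ and vertical part $\pm\gamma^{\mathrm v,\sharp}(df)$, which is precisely $[\Gamma_\gamma,f]_\ltimes+[\gamma^{\mathrm v},f]$ (and $[\mathbb F_\gamma,f]_\ltimes=0$); on the degree-one generators the terms that appear are exactly the $L_{\mathrm{hor}(X)}$-, $d\mathbb F_\gamma$- and $d\gamma^{\mathrm v}$-terms of Corollary \ref{corollary_bracket_explicit}, which reassemble, by bidegree, into $[\Gamma_\gamma,\cdot]_\ltimes$, $[\mathbb F_\gamma,\cdot]_\ltimes$ and $[\gamma^{\mathrm v},\cdot]$.

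The main obstacle is bookkeeping rather than anything conceptual: several canonical identifications are in play simultaneously — $V^\circ\cong H_\gamma^*$ and $H_\gamma^\circ\cong V^*$ under $TE=V\oplus H_\gamma$, the pairing identification $L_V\cong L_\gamma^*$, and the isomorphism $L_\gamma\cong H_\gamma\oplus H_\gamma^\circ$ of Corollary \ref{corollary_bracket_explicit} — alongside the various sign conventions (the shift $\mathrm{deg}=|\cdot|-1$ on multivectors, the Dorfman-versus-Schouten signs, and the graded Leibniz signs for derivations). Once these are fixed, the degree-one verification falls into the same four families of terms, of bidegrees $(0,3),(1,2),(2,1),(3,0)$, that already organized the proof of the proposition preceding Corollary \ref{corollary_bracket_explicit}, so in effect most of the computation has been carried out there and only needs to be dualized; this is the only genuinely laborious step.
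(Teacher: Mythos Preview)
Your proposal is correct and follows essentially the same route as the paper: identify $L_\gamma^*\cong V\oplus V^\circ$ via the pairing (equivalently, via $L_\gamma\cong H_\gamma\oplus H_\gamma^\circ$ from Corollary~\ref{corollary_bracket_explicit} together with $H_\gamma^*\cong V^\circ$, $(H_\gamma^\circ)^*\cong V$), note that both $d_{L_\gamma}$ and $d_\gamma$ are derivations of $\wedge$, and then verify equality on degree-one generators by plugging the bracket formulas of Corollary~\ref{corollary_bracket_explicit} into the Koszul formula. The paper's proof omits the degree-zero (function) check and the explicit justification that $d_\gamma$ is a square-zero derivation, so in those respects your outline is actually a bit more careful; the laborious degree-one computation you anticipate is precisely what the paper carries out in full.
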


\begin{proof}
We identify $L_{\gamma}$ with the Lie algebroid $H_{\gamma}\oplus H_{\gamma}^{\circ}$ (as in the previous
corollary), and $H_{\gamma}^*\cong V^{\circ}$, $(H_{\gamma}^{\circ})^{*}\cong V$. With these identifications,
we obtain that $\Omega_E=\Gamma(\Lambda^{\bullet}(V\oplus V^{\circ}))$, which is the complex computing the Lie
algebroid cohomology\index{cohomology, Lie algebroid}\index{cohomology, Dirac} of $H_{\gamma}\oplus
H_{\gamma}^{\circ}$. Denote its differential by $d_{L_{\gamma}}$. Since both $d_{L_{\gamma}}$ and
$d_{\gamma}$ act by derivations, it suffices to check that they coincide on
\[W\in \Omega^{0,1}_E=\mathfrak{X}_{\mathrm{V}}(E) \ \ \textrm{and}\ \ \eta\in\Omega^{1,0}_E=\Omega^1(S,C^{\infty}(E)).\]
First observe that
\begin{align*}
&d_{\gamma}W=[\gamma,W]_{\ltimes}=[\gamma^{\mathrm{v}},W]\oplus [\Gamma_{\gamma},W]_{\ltimes}\oplus [\mathbb{F}_{\gamma},W]\in \Omega^{0,2}_E\oplus\Omega^{1,1}_E\oplus \Omega^{2,0}_E,\\
&d_{\gamma}\eta=[\gamma,\eta]_{\ltimes}=[\gamma^{\mathrm{v}},\eta]\oplus [\Gamma_{\gamma},\eta]_{\ltimes}\in \Omega^{1,1}_E\oplus\Omega^{0,2}_E.
\end{align*}
We compute $d_{L_{\gamma}}$ on $W$ and $\eta$ using the formulas from the previous corollary, and we compare
the two differentials by evaluating them on elements $\alpha,\beta\in \Gamma(H_{\gamma}^{\circ})$ and
$\mathrm{hor}(X),\mathrm{hor}(Y)\in\Gamma(H_{\gamma})$, for $X,Y\in \mathfrak{X}(S)$:
\begin{align*}
(d_{L_{\gamma}}W)(\alpha,\beta)&=L_{\gamma^{\mathrm{v},\sharp}(\alpha)}\beta(W)-L_{\gamma^{\mathrm{v},\sharp}(\beta)}\alpha(W)-\\
&-W(\iota_{\gamma^{\mathrm{v},\sharp}(\alpha)}d\beta-\iota_{\gamma^{\mathrm{v},\sharp}(\beta)}d\alpha+d \gamma^{\mathrm{v}}(\alpha,\beta))=\\
&=\iota_{\gamma^{\mathrm{v},\sharp}(\alpha)}d\iota_W{\beta}-\iota_{\gamma^{\mathrm{v},\sharp}(\beta)}d\iota_W\alpha-\\
&-\iota_W\iota_{\gamma^{\mathrm{v},\sharp}(\alpha)}d\beta+\iota_W\iota_{\gamma^{\mathrm{v},\sharp}(\beta)}d\alpha-L_W \gamma^{\mathrm{v}}(\alpha,\beta)=\\
&=\iota_{\gamma^{\mathrm{v},\sharp}(\alpha)}L_W{\beta}-\iota_{\gamma^{\mathrm{v},\sharp}(\beta)}L_W\alpha-L_W \gamma^{\mathrm{v}}(\alpha,\beta)=\\
&=\gamma^{\mathrm{v}}(\alpha,L_W{\beta})+\gamma^{\mathrm{v}}(L_W\alpha,\beta)-L_W \gamma^{\mathrm{v}}(\alpha,\beta)=\\
&=[\gamma^{\mathrm{v}},W](\alpha,\beta)=[\gamma,W]_{\ltimes}(\alpha,\beta).
\end{align*}
\begin{align*}
(d_{L_{\gamma}}W)(\mathrm{hor}(X),\beta)&=L_{\mathrm{hor}(X)}\beta(W)-W(L_{\mathrm{hor}(X)}\beta-\iota_{\mathrm{hor}(X)}\iota_{\gamma^{\mathrm{v},\sharp}(\beta)}d\mathbb{F}_{\gamma})=\\
&=[\mathrm{hor}(X),W](\beta)=[\Gamma_{\gamma},W]_{\ltimes}(X,\beta)=[\gamma,W]_{\ltimes}(X,\beta),
\end{align*}
where we used that $d\mathbb{F}$ vanishes on two vertical vectors.
\begin{align*}
(d_{L_{\gamma}}W)(\mathrm{hor}(X),\mathrm{hor}(Y))&=-d\mathbb{F}_{\gamma}(\mathrm{hor}(X),\mathrm{hor}(Y),W)=-L_W(\mathbb{F}_{\gamma}(X,Y))=\\
&=[\mathbb{F}_{\gamma},W]_{\ltimes}(X,Y)=[\gamma,W]_{\ltimes}(X,Y),
\end{align*}
where we used (\ref{EQ_condition0}).
\begin{align*}
(d_{L_{\gamma}}\eta)&(\alpha,\beta)=0=[\gamma,\eta]_{\ltimes}(\alpha,\beta),\\
(d_{L_{\gamma}}\eta)&(\mathrm{hor}(X),\beta)=-L_{\gamma^{\mathrm{v},\sharp}(\beta)}\eta(\mathrm{hor}(X))=[\gamma^{\mathrm{v}},\eta](X,\beta)=[\gamma,\eta]_{\ltimes}(X,\beta),\\
(d_{L_{\gamma}}\eta)&(\mathrm{hor}(X),\mathrm{hor}(Y))=L_{\mathrm{hor}(X)}\eta(\mathrm{hor}(Y))-
L_{\mathrm{hor}(Y)}\eta(\mathrm{hor}(X))-\\
&\phantom{12345}-\eta([\mathrm{hor}(X),\mathrm{hor}(Y)])=d\eta(\mathrm{hor}(X),\mathrm{hor}(Y))=\\
&\phantom{12345}=[\Gamma_{\gamma},\eta]_{\ltimes}(X,Y)=[\gamma,\eta]_{\ltimes}(X,Y),
\end{align*}
where we used (\ref{EQ_just_the_horizontal_derivative}).
\end{proof}

\subsection{Horizontally nondegenerate Poisson structures}

Observe that, for a Dirac element $\gamma$, the Poisson support of $L_{\gamma}$ is
\[\mathrm{supp}(L_{\gamma})=\{e\in E\ |\ \mathbb{F}_{\gamma,e}^{\sharp}:H_{\gamma}\to H_{\gamma}^{\circ}\ \textrm{ is invertible}\}.\]
We will refer to this open also as the \textbf{Poisson support}\index{Poisson support} of $\gamma$.

\begin{definition} A Poisson bivector $\pi\in \mathfrak{X}^2(E)$ which satisfies
\[ V_e+ \pi^{\sharp}(V_{e}^{\circ})= T_eE, \ \forall \ e\in E,\]
is called \textbf{horizontally nondegenerate}\index{horizontally nondeg. Poisson}.
\end{definition}
Clearly, a Poisson structure $\pi$ on $E$ is horizontally nondegenerate, if and only if $L_{\pi}$ is a horizontally
nondegenerate Dirac structure. We will denote the corresponding Dirac element as follows:
\[\gamma_{\pi}=(\pi^{\mathrm{v}},\Gamma_{\pi},\mathbb{F}_{\pi}).\]
We describe the triple explicitly. The nondegeneracy of $\pi$ implies that
\[ H_{\pi}= \pi^{\sharp}(V^{\circ})\]
gives an Ehresmann connection on $E$, which corresponds to $\Gamma_{\pi}\in \widetilde{\Omega}_{E}^{1, 1}$.
With respect to the resulting decomposition $TE=V\oplus H_{\pi}$, the mixed component of $\pi$ vanishes
\[ \pi= \pi^{\mathrm{v}}+ \pi^{\mathrm{h}}\in \Lambda^2V\oplus \Lambda^2H_{\pi},\]
and $\pi^{\mathrm{v}}$ is the desired $(0, 2)$-component. The bivector $\pi^{\mathrm{h}}$ is nondegenerate as a
map $V^{\circ}\to H_{\pi}$, and its inverse represents $\mathbb{F}_{\pi}\in \Omega^2(S,C^{\infty}(E))$. Explicitly,
\begin{equation}\label{2Form}
\mathbb{F}_{\pi}(dp(\pi^{\mathrm{h} \sharp}\eta),dp(\pi^{\mathrm{h}
\sharp}\mu))=-\pi^{\mathrm{h}}(\eta,\mu),\ \ \forall  \ \eta,\mu\in V^{\circ}.
\end{equation}
Vorobjev's Theorem 2.1 in \cite{Vorobjev} can be summarized as follows:

\begin{proposition}\label{Dirac-Poisson} There is a 1-1 correspondence between
\begin{enumerate}
\item Dirac elements $\gamma\in \widetilde{\Omega}_{E}^2$ with $\textrm{supp}(L_{\gamma})=E$.
\item Horizontally nondegenerate Poisson structures $\pi$ on $E$.
\end{enumerate}
\end{proposition}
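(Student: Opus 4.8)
The statement to prove (Proposition \ref{Dirac-Poisson}) asserts a bijection between Dirac elements $\gamma\in\widetilde{\Omega}_E^2$ whose Poisson support is all of $E$ and horizontally nondegenerate Poisson structures on $E$. The plan is to deduce it as a corollary of the preceding Proposition, which already establishes the bijection $\gamma\mapsto L_\gamma$ between arbitrary Dirac elements and horizontally nondegenerate Dirac structures on $E$. The key observation is that under this correspondence, the ``full Poisson support'' condition on $\gamma$ matches exactly the condition that $L_\gamma$ be the Dirac structure of a Poisson bivector.

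First I would recall that a Dirac structure $L$ on $E$ is of the form $L_\pi$ for a (uniquely determined) bivector $\pi$ precisely when $p_{T^*}(L)=T^*E$, i.e.\ $\mathrm{supp}(L)=E$ (this is the characterization of Poisson structures among Dirac structures from subsection \ref{subsection_Dirac}). Next I would unwind the definition of $\mathrm{supp}(L_\gamma)$: from the explicit description $L_\gamma=\mathrm{Graph}(\gamma^{\mathrm v\sharp}\colon H_\gamma^\circ\to V)\oplus\mathrm{Graph}(\mathbb F_\gamma^\sharp\colon H_\gamma\to V^\circ)$, a short linear-algebra check shows that $p_{T^*}(L_{\gamma,e})=T_e^*E$ holds if and only if $\mathbb F_{\gamma,e}^\sharp\colon H_\gamma\to H_\gamma^\circ$ is invertible at $e$ — which is precisely the Poisson support of $\gamma$ as defined just before the statement. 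So the two conditions coincide pointwise.

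Combining these two facts with the bijection $\gamma\leftrightarrow L_\gamma$ of the previous Proposition restricts it to a bijection between $\{\gamma : \mathrm{supp}(L_\gamma)=E\}$ and $\{L : L \text{ horizontally nondegenerate Dirac},\ \mathrm{supp}(L)=E\}$, and the latter set is in canonical bijection with horizontally nondegenerate Poisson structures $\pi$ on $E$ via $\pi\mapsto L_\pi$. It only remains to note that $L_\pi$ is automatically horizontally nondegenerate iff $\pi$ is horizontally nondegenerate, which is immediate from the definitions (the condition $L_\pi\cap(V\oplus V^\circ)=\{0\}$ translates directly into $V_e+\pi^\sharp(V_e^\circ)=T_eE$). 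Composing, one obtains the desired 1--1 correspondence, with the triple attached to $\pi$ being exactly $\gamma_\pi=(\pi^{\mathrm v},\Gamma_\pi,\mathbb F_\pi)$ described by the formula for $H_\pi$ and equation (\ref{2Form}).

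The main obstacle, such as it is, is the bookkeeping in identifying $\mathrm{supp}(L_\gamma)$ with the locus where $\mathbb F_\gamma^\sharp$ is invertible; one must be careful that the vertical Poisson part $\gamma^{\mathrm v}$ never obstructs surjectivity of $p_{T^*}$ onto $T^*E$, which is where the direct-sum decomposition $L_\gamma=\mathrm{Graph}(\gamma^{\mathrm v\sharp})\oplus\mathrm{Graph}(\mathbb F_\gamma^\sharp)$ and the complementarity $TE=V\oplus H_\gamma$, $T^*E=V^\circ\oplus H_\gamma^\circ$ are used. Everything else is a formal consequence of results already in hand, so no fast-convergence or cohomological input is needed here.
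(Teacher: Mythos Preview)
Your proposal is correct and follows the same approach as the paper. In fact, the paper does not give a separate proof of this proposition at all: it records the observations that $\mathrm{supp}(L_\gamma)=\{e: \mathbb{F}_{\gamma,e}^\sharp \text{ invertible}\}$ and that $\pi$ is horizontally nondegenerate iff $L_\pi$ is, and then simply states the proposition as a summary of Vorobjev's Theorem 2.1; your argument just makes explicit how these observations combine with the preceding proposition to yield the bijection.
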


The following reformulation of Proposition 4.3 \cite{CrFe-stab} is a direct consequence of Proposition
\ref{Proposition_Dirac_cohomology}.

\begin{proposition}\label{PoissonCohomology}\index{cohomology, Poisson}
Let $\pi$ be a horizontally nondegenerate Poisson structure on $E$ with corresponding Dirac element $\gamma$. Then
there is an isomorphism of complexes
\[ \tau_{\pi}: (\mathfrak{X}^{\bullet}(E),d_{\pi})\diffto (\Omega_{E}^{\bullet},d_{\gamma}), \]
given by
\begin{equation}\label{tau-explicit}
\tau_{\pi}=\wedge^{\bullet} f_{\pi,*}: \mathfrak{X}^{\bullet}(E)\diffto \Omega^{\bullet}_E,
\end{equation}
where $f_{\pi}$ is the bundle isomorphism
\[f_{\pi}:=(I,-\mathbb{F}_{\pi}^{\sharp},):V\oplus H_{\pi}=TE\diffto V\oplus V^{\circ}.\]
\end{proposition}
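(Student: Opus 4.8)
Looking at Proposition \ref{PoissonCohomology}, the claim is that for a horizontally nondegenerate Poisson structure $\pi$ on $E$ with Dirac element $\gamma$, the map $\tau_\pi = \wedge^\bullet f_{\pi,*}$ is an isomorphism of complexes $(\mathfrak{X}^\bullet(E), d_\pi) \to (\Omega_E^\bullet, d_\gamma)$.

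The plan is to deduce this almost immediately from the machinery already in place, in particular from Proposition \ref{Proposition_Dirac_cohomology}. First I would recall that a Poisson structure $\pi$ is the same as its associated Dirac structure $L_\pi \subset TE \oplus T^*E$, and that the Poisson cohomology of $(E,\pi)$ is canonically isomorphic to the Dirac cohomology of $L_\pi$; this was stated in subsection \ref{subsection_Dirac}, where the natural identification $L_\pi^* \cong TE$ (via $\xi \mapsto \pi^\sharp(\xi) + \xi$) induces an isomorphism between the complexes $(\mathfrak{X}^\bullet(E), d_\pi)$ and $(\Gamma(\Lambda^\bullet L_\pi^*), d_{L_\pi})$. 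So the first step is to write down this isomorphism explicitly, call it $\iota_\pi : \mathfrak{X}^\bullet(E) \to \Gamma(\Lambda^\bullet L_\pi^*)$, coming from the bundle map $L_\pi \to TE$, $\pi^\sharp(\xi)+\xi \mapsto \pi^\sharp(\xi)$ — wait, one must be careful about which identification of $L_\pi$ with $TE$ is used; the relevant one for matching with $f_\pi$ is through the projection $p_T : L_\pi \to TE$ or its inverse, and I would check that $\wedge^\bullet(p_T^{-1})^*$ is exactly the chain map from subsection \ref{subsection_Dirac}.

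Second, since $\pi$ is horizontally nondegenerate (i.e. $\mathrm{supp}(L_\pi) = E$, by Proposition \ref{Dirac-Poisson}), Proposition \ref{Proposition_Dirac_cohomology} applies with $L_\gamma = L_\pi$: using the isomorphism $L_\gamma^* = V \oplus V^\circ$ coming from the pairing on $TE \oplus T^*E$, the Dirac cohomology complex of $L_\gamma$ is identified with $(\Omega_E^\bullet, d_\gamma)$. So the composite of $\iota_\pi$ with the isomorphism of Proposition \ref{Proposition_Dirac_cohomology} gives an isomorphism of complexes $(\mathfrak{X}^\bullet(E), d_\pi) \cong (\Omega_E^\bullet, d_\gamma)$. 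The remaining task is purely to unwind the two identifications and verify that the resulting map is $\wedge^\bullet f_{\pi,*}$ with $f_\pi = (I, -\mathbb{F}_\pi^\sharp) : V \oplus H_\pi = TE \to V \oplus V^\circ$. Concretely, I would trace through: under $TE = V \oplus H_\pi$, a vector $v + \pi^{\mathrm{h},\sharp}\eta$ (with $v$ vertical, $\eta \in V^\circ$) lifts via $\pi^\sharp$ to the element $(v + \pi^{\mathrm{h},\sharp}\eta) + (\text{covector part})$ of $L_\pi$, and then under $L_\gamma^* \cong V \oplus V^\circ$ one reads off the vertical part $v$ and the annihilator part governed by $\mathbb{F}_\pi$; from equation (\ref{2Form}), which expresses $\mathbb{F}_\pi$ as the inverse of $\pi^{\mathrm{h}}$, the horizontal component $\pi^{\mathrm{h},\sharp}\eta$ gets sent to $-\mathbb{F}_\pi^\sharp$ applied to it, which is exactly the prescription for $f_\pi$. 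The isomorphism $\tau_\pi$ is then obtained by taking exterior powers, since all the maps involved are bundle isomorphisms and exterior-power functoriality is compatible with Koszul-type differentials.

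The main obstacle — really the only nontrivial point, since both key propositions are already available — is the bookkeeping of sign conventions and of which of several natural identifications ($L_\pi \leftrightarrow TE$ via $p_T$ versus via $p_{T^*}$; $L_\gamma^* \leftrightarrow V \oplus V^\circ$ via the pairing) one is using at each stage, so that the composite genuinely equals $\wedge^\bullet f_{\pi,*}$ and not some twisted variant. I would handle this by fixing one identification at the start, writing $f_\pi$ explicitly on a local frame adapted to the splitting $TE = V \oplus H_\pi$, and checking the formula (\ref{tau-explicit}) degree by degree in low degrees (degrees $1$ suffices to pin down $f_\pi$ itself, and then functoriality of $\wedge^\bullet$ together with the fact that $\tau_\pi$ intertwines $d_\pi$ and $d_\gamma$ — which is Proposition \ref{Proposition_Dirac_cohomology} — gives the rest). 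Since Proposition \ref{Proposition_Dirac_cohomology} already does the hard work of matching $d_{L_\gamma}$ with $d_\gamma = [\gamma, \cdot]_\ltimes$ via an explicit derivation computation, no further differential-level computation is needed here.
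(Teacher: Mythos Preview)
Your proposal is correct and follows essentially the same approach as the paper: both complexes compute the Dirac cohomology of $L_\pi$ (via the standard Poisson/Dirac identification on one side and Proposition \ref{Proposition_Dirac_cohomology} on the other), and one then identifies the composite. The paper handles your bookkeeping concern more tersely by writing the composite Lie algebroid isomorphism $H_\pi \oplus H_\pi^\circ \to L_\pi \to T^*E$, $(X,\eta) \mapsto \eta + \mathbb{F}_\pi^\sharp(X)$, and observing that its dual is $f_\pi$ by skew-symmetry of $\mathbb{F}_\pi$.
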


\begin{proof}
By Proposition \ref{Proposition_Dirac_cohomology}, both complexes compute the cohomology of the Dirac structure
$L_{\pi}$, so we need only identify the resulting isomorphism. This is the dual of the composition of Lie algebroid
isomorphisms
\[H_{\pi}\oplus H_{\pi}^{\circ}\diffto L_{\pi}\diffto T^*E, \]
\[ (X,\eta)\mapsto (X+\pi^{\mathrm{v}}(\eta),\eta+\mathbb{F}_{\pi}^{\sharp}(X))\mapsto \eta+\mathbb{F}_{\pi}^{\sharp}(X),\]
which, by skew-symmetry of $\mathbb{F}_{\pi}$, is $f_{\pi}$.
\end{proof}

We discuss now deformations of horizontally nondegenerate Poisson structures. Let $\{\gamma_t\}_{t\in[0,1]}$ be a
smooth family of Dirac elements, and let $U\subset E$ be an open contained in the support of $\gamma_t$ for all
$t\in[0,1]$. Denote by $\pi_t$ the corresponding horizontally nondegenerate Poisson structure on $U$. Then
$\frac{d}{dt}\pi_t$ is a closed element in the Poisson cohomology of $\pi_t$. We describe this co-cycle in terms of
$\gamma_t$.

\begin{lemma}\label{Lemma_derivative_of_pite}
We have that
\[\tau_{\pi_t}\left(\frac{d}{dt}\pi_t\right)=\frac{d}{dt}\gamma_t.\]
\end{lemma}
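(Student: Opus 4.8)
The plan is to differentiate the identity $\tau_{\pi_t} = \wedge^\bullet f_{\pi_t,*}$ from Proposition \ref{PoissonCohomology}, but more efficiently to exploit that $\tau_{\pi_t}$ is a \emph{chain isomorphism} intertwining $d_{\pi_t}$ and $d_{\gamma_t}$, together with the fact that $\gamma_t \mapsto L_{\gamma_t}$ is the Dirac-structure picture of $\pi_t \mapsto L_{\pi_t}$. First I would recall that $\tau_{\pi_t}$ is induced (by dualization) from the Lie algebroid isomorphism $T^*E \cong L_{\pi_t} \cong H_{\pi_t} \oplus H_{\pi_t}^\circ$, and that under this identification a bivector $W \in \mathfrak{X}^2(E)$ corresponds to the element $\tau_{\pi_t}(W) \in \Omega_E^{\bullet}$ obtained by transporting $W$ through $f_{\pi_t}$. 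So the statement to prove is purely about how the identification $f_{\pi_t}$ transports the specific bivector $\frac{d}{dt}\pi_t$.

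The key computation is at a single point and is linear algebra in $T_eE$. Work in the splitting $T_eE = V_e \oplus H_{\pi_t,e}$ determined by $\gamma_t$; here $\pi_t = \pi_t^{\mathrm v} + \pi_t^{\mathrm h}$ with $\pi_t^{\mathrm h}$ nondegenerate $V^\circ \to H_{\pi_t}$ and inverse $\mathbb{F}_{\pi_t}$. The subtle point, and the one I expect to be the main obstacle, is that the splitting itself moves with $t$: $H_{\pi_t}$ depends on $t$, so $\frac{d}{dt}\pi_t$ has, a priori, a nonzero mixed $V \wedge H$ component relative to the \emph{fixed} reference splitting one wants to differentiate against, and one must check these contributions reassemble correctly. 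Concretely I would write $\gamma_t = (\pi_t^{\mathrm v}, \Gamma_{\pi_t}, \mathbb{F}_{\pi_t})$ and differentiate each component, then differentiate the relation $\mathbb{F}_{\pi_t}^\sharp \circ \pi_t^{\mathrm h,\sharp} = \mathrm{Id}$ to get $\frac{d}{dt}\mathbb{F}_{\pi_t}^\sharp = -\mathbb{F}_{\pi_t}^\sharp \circ (\frac{d}{dt}\pi_t^{\mathrm h,\sharp}) \circ \mathbb{F}_{\pi_t}^\sharp$, mirroring exactly the derivation of (\ref{EQ_gauge_time_derivative}). The derivative of the connection component, $\frac{d}{dt}\Gamma_{\pi_t}$, encodes precisely the motion of $H_{\pi_t}$ inside $TE$, i.e. the mixed component of $\frac{d}{dt}\pi_t$; pushing forward by $f_{\pi_t} = (I, -\mathbb{F}_{\pi_t}^\sharp)$ and using the formula (\ref{tau-explicit}), the three pieces of $\frac{d}{dt}\gamma_t \in \Omega_E^{0,2}\oplus \Omega_E^{1,1}\oplus \Omega_E^{2,0}$ match the $(0,2)$, $(1,1)$ and $(2,0)$ parts of $\tau_{\pi_t}(\frac{d}{dt}\pi_t)$ respectively.

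An alternative, cleaner route — which I would probably present if the bookkeeping above gets heavy — is to avoid differentiating the explicit $f_{\pi_t}$ altogether. One may regard the entire family as a single Dirac element $\widehat\gamma$ on $E \times \mathbb{R}$ (pulling back along the projection) and note that both $\pi_\bullet$ and $\gamma_\bullet$ are restrictions to slices; the assignment $\pi \leftrightarrow \gamma$ of Proposition \ref{Dirac-Poisson} is natural under such restrictions, hence commutes with $\partial_t$ in the obvious sense, and the identity $\tau_{\pi}(W)$ being functorial in the bivector (it is $\wedge^\bullet$ of a bundle map) then gives the result immediately upon applying it to $W = \frac{d}{dt}\pi_t$. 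In either presentation the content is the same: $\tau_{\pi_t}$ is, by construction, the map that converts a multivector on $E$ into the corresponding element of $\Omega_E$ via the Dirac identification, so it automatically carries the $t$-derivative of $\pi_t$ to the $t$-derivative of $\gamma_t$. I would write up the first, explicit version as the main proof since the later chapters (the Moser argument in Theorem \ref{Theorem_TWO}) will want the concrete formulas for $\frac{d}{dt}\Gamma_{\pi_t}$ and $\frac{d}{dt}\mathbb{F}_{\pi_t}$ anyway.
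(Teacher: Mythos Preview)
Your first route is correct and is exactly what the paper does: reduce to $\tau_{\pi_t}(\dot\pi_t^{\mathrm h}) = \dot\Gamma_{\pi_t} + \dot{\mathbb F}_{\pi_t}$ (the vertical part being trivially preserved), then carry out the linear-algebra computation. The paper organizes this via the matrix shorthand $A_t=\pi_t^{\mathrm h,\sharp}$, $B_t=\mathbb F_{\pi_t}^\sharp$, $C_t=A_tB_t$ (the horizontal projection), writes $\tau_{\pi_t}(\dot A_t)=f_{\pi_t}\circ\dot A_t\circ f_{\pi_t}^*$, and simplifies using the algebraic identities among $A_t,B_t,C_t$ and their derivatives; your differentiation of $\mathbb F_{\pi_t}^\sharp\circ\pi_t^{\mathrm h,\sharp}=\mathrm{Id}$ and your observation that $\dot\Gamma_{\pi_t}$ captures the mixed component of $\dot\pi_t$ are precisely the ingredients that make that simplification work.

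Your second route, however, has a genuine gap. The lemma is exactly the statement that the derivative at $\pi_t$ of the \emph{nonlinear} correspondence $\pi\mapsto\gamma(\pi)$ coincides with the \emph{linear} chain isomorphism $\tau_{\pi_t}$. Naturality of the Dirac correspondence under restriction to slices only tells you that $\gamma_t=\gamma(\pi_t)$ for each $t$, hence that $\dot\gamma_t = (D_{\pi_t}\gamma)(\dot\pi_t)$; it does \emph{not} tell you that $D_{\pi_t}\gamma=\tau_{\pi_t}$. Saying ``$\tau_\pi(W)$ is functorial in $W$'' is just saying $\tau_\pi$ is linear, which is irrelevant here since $\tau_\pi$ and $D_\pi\gamma$ are a priori two different linear maps $\mathfrak X^2(E)\to\Omega_E^2$. (Concretely: the $E\times\mathbb R$ trick fails already at the setup, since $\widehat\pi$ has no $\partial_t$-component and is therefore not horizontally nondegenerate for the fibration over $S\times\mathbb R$.) One \emph{can} make a conceptual argument work by identifying both $\dot\pi_t$ and $\dot\gamma_t$ with the same infinitesimal deformation of the Dirac structure $L_{\pi_t}=L_{\gamma_t}$, viewed in $\Lambda^2 L^*$, and then noting that the two identifications $L^*\cong TE$ and $L^*\cong V\oplus V^\circ$ differ exactly by $f_{\pi_t}$; but checking that the deformation class is literally $\dot\pi_t$ on one side and $\dot\gamma_t$ on the other is a computation of the same nature and length as your first route, not something ``immediate''. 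Stick with the explicit approach.
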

\begin{proof}
Decomposing $\pi_t=\pi_t^{\mathrm{v}}+\pi_t^{\mathrm{h}}$, we see that $\dot{\pi}_t^{\mathrm{v}}$ is vertical,
therefore we have to show that
\[\tau_{\pi_t}(\dot{\pi}_t^{\mathrm{h}})=\dot{\Gamma}_{\pi_t}+\dot{\mathbb{F}}_{\pi_t}.\]
We use the following notations
\[A_t:=\pi_t^{\mathrm{h},\sharp}:T^*U\to TU, \  B_t:=\mathbb{F}_{\pi_t}^{\sharp}:TU\to T^*U, \  C_t:=A_t\circ B_t:TU\to TU.\]
Clearly, $C_t$ is just the horizontal projection corresponding to $\Gamma_{\pi_t}$. Also, we regard $\Gamma_{\pi_t}$
and $\mathbb{F}_{\pi_t}$ as skew-symmetric elements in \[\mathrm{Hom}(TU\oplus T^*U,T^*U\oplus TU),\] and as
such they can be written as
\[\Gamma_{\pi_t}=\left(
                   \begin{array}{cc}
                     0 & -C_t^* \\
                     C_t & 0 \\
                   \end{array}
                 \right), \ \
\mathbb{F}_{\pi_t}=\left(
                     \begin{array}{cc}
                       B_t & 0 \\
                       0 & 0 \\
                     \end{array}
                   \right)
\]
The map $f_{\pi_t}:TU\to T^*U\oplus TU$, is given by
\[f_{\pi_t}=\left(
              \begin{array}{c}
                -B_t\\
                I-C_t \\
              \end{array}
            \right).\]
With this formalism, on $\mathfrak{X}^2(U)\subset \mathrm{Hom}(T^*U,TU)$, we have
\[\tau_{\pi_t}(W)=f_{\pi_t}\circ W\circ f_{\pi_t}^*.\]
Now, we compute $\tau_{\pi_t}(\dot{A}_t)$. First we have that
\begin{align*}
\dot{A}_t\circ f_{\pi_t}^*&= \dot{A}_t(B_t,I-C_t^* )=(\dot{A}_tB_t,\dot{A}_t-\dot{A}_tC_t^*)=\\
&=(\dot{C}_t-A_t\dot{B}_t,-(\dot{A}_t-C_t\dot{A}_t)^*)=\\
&=(\dot{C}_t-A_t\dot{B}_t,-(\dot{C}_tA_t)^*)=(\dot{C}_t-A_t\dot{B}_t,A_t\dot{C}_t^*),
\end{align*}
where we have used that $C_tA_t=A_t$. Therefore
\begin{align*}
\tau_{\pi_t}(\dot{A}_t)&=\left(
              \begin{array}{c}
                -B_t\\
                I-C_t \\
              \end{array}
            \right)(\dot{C}_t-A_t\dot{B}_t,A_t\dot{C}_t)=\\
            &=\left(
                                                           \begin{array}{cc}
                                                              B_t(-\dot{C}_t+A_t\dot{B}_t)  & -B_tA_t\dot{C}_t^* \\
                                                             (I-C_t)(\dot{C}_t-A_t\dot{B}_t)  & (I-C_t)A_t\dot{C}_t\\
                                                           \end{array}
                                                         \right).
\end{align*}
For the $(1,1)$-entry, we note that $B_t\dot{C}_t=0$, since $\dot{C}_t$ takes values vertical vectors; and
$B_tA_t=C_t^*$, $B_tC_t=B_t$, therefore
\[B_tA_t\dot{B}_t=-(\dot{B}_tC_t)^*=\dot{B}_t+(B_t\dot{C}_t)^*=B_t.\]
For the $(1,2)$ entry, using also that $C_t^2=C_t$ and that $C_t\dot{C}_t=0$ (since $C_t$ vanishes on vertical
vectors), we obtain
\[-B_tA_t\dot{C}_t^*=-C_t^*\dot{C}_t^*=-(\dot{C}_tC_t)^*=-\dot{C}_t^*+(C_t\dot{C}_t)^*=-\dot{C}_t^* \]
Using $(I-C_t)A_t=0$, we fill in the remaining entries and obtain the result
\[
\tau_{\pi_t}(\dot{A}_t)=\left(
                                                           \begin{array}{cc}
                                                              \dot{B}_t & -\dot{C}_t^* \\
                                                             \dot{C}_t  & 0\\
                                                           \end{array}
                                                         \right)=\dot{\Gamma}_{\pi_t}+\dot{\mathbb{F}}_{\pi_t}.\qedhere\]
\end{proof}

To trivialize a smooth family $\{\pi_t\}_{t\in [0,1]}$ of Poisson structures on $U$, means to find a smooth family of
diffeomorphisms $\Phi_t:U\to U$ that satisfy
\[\Phi_0=\mathrm{Id}, \ \ \ \Phi_t^*(\pi_t)=\pi_0.\]
Let $X_t$ denote the time dependent vector field generating the family $\Phi_t$
\[\frac{d}{dt}\Phi_t(x)=X_{t}(\Phi_t(x)).\]
Then $X_t$ satisfies the so-called \textbf{homotopy equation}\index{homotopy equation}:
\begin{equation}\label{EQ_homotopy_equation}
\frac{d}{dt}\pi_t=[\pi_t,X_t].
\end{equation}
Conversely, if we find a time dependent vector field $X_t$ satisfying (\ref{EQ_homotopy_equation}), then its flow,
defined as the solution to the differential equation
\[\Phi_0(x)=x, \ \ \frac{d}{dt}\Phi_t(x)=X_{t}(\Phi_t(x)),\]
will send $\pi_0$ to $\pi_t$, i.e.\ $\Phi_t^*(\pi_t)=\pi_0$, whenever it is defined.

Lemma \ref{Lemma_derivative_of_pite} and Proposition \ref{PoissonCohomology} imply the following compact version
of Proposition 2.14 \cite{Vorobjev2}.

\begin{lemma}\label{HomotopyEquation}
Let $\{\gamma_t\}_{t\in [0,1]}$ be a family of Dirac elements on $E$, $U\subset E$ an open included in the
intersection of their supports, and let $\pi_t$ be the corresponding family of horizontally nondegenerate Poisson
structures. Then a time dependent vector field $X_t\in\mathfrak{X}(U)$ satisfies the homotopy equation
(\ref{EQ_homotopy_equation}), if and only if \[V_t:=\tau_{\pi_t}(X_t)\in \Omega^1_U\] satisfies
\begin{equation}
\label{homotopy-equation}\frac{d}{dt}\gamma_t= [\gamma_t,V_t]_{\ltimes}.
\end{equation}
\end{lemma}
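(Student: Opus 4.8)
The plan is to reduce the statement to the two facts already established earlier in this section: that for each fixed $t$ the map $\tau_{\pi_t}$ intertwines the Poisson differential $d_{\pi_t}=[\pi_t,\cdot]$ with the differential $d_{\gamma_t}=[\gamma_t,\cdot]_{\ltimes}$ (this is Proposition \ref{PoissonCohomology}), and that it carries the infinitesimal deformation $\tfrac{d}{dt}\pi_t$ to $\tfrac{d}{dt}\gamma_t$ (this is Lemma \ref{Lemma_derivative_of_pite}). The observation that makes everything go through is that the homotopy equation (\ref{EQ_homotopy_equation}) is an equality of sections of $\Lambda^2TU$, and that $\tau_{\pi_t}\colon\mathfrak{X}^{\bullet}(U)\to\Omega^{\bullet}_U$ is, at each fixed time, a vector bundle isomorphism, hence injective on sections; so the equation holds if and only if it holds after applying $\tau_{\pi_t}$.

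Concretely, I would fix $t$ and compute the image of each side of $\tfrac{d}{dt}\pi_t=[\pi_t,X_t]$ under $\tau_{\pi_t}$. For the right-hand side, since $[\pi_t,X_t]=d_{\pi_t}X_t$ and $\tau_{\pi_t}$ is a chain map, $\tau_{\pi_t}([\pi_t,X_t])=\tau_{\pi_t}(d_{\pi_t}X_t)=d_{\gamma_t}(\tau_{\pi_t}X_t)=[\gamma_t,V_t]_{\ltimes}$, where $V_t=\tau_{\pi_t}(X_t)$ by definition. For the left-hand side, Lemma \ref{Lemma_derivative_of_pite} gives $\tau_{\pi_t}(\tfrac{d}{dt}\pi_t)=\tfrac{d}{dt}\gamma_t$. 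Applying the injective map $\tau_{\pi_t}$ to the homotopy equation therefore yields $\tfrac{d}{dt}\gamma_t=[\gamma_t,V_t]_{\ltimes}$; and conversely, since $\tau_{\pi_t}$ is bijective, this last equation implies (\ref{EQ_homotopy_equation}). As $V_t$ and $X_t$ correspond under $\tau_{\pi_t}$ for every $t$, this sets up a genuine bijection between solutions of the two equations.

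I do not expect a serious obstacle: the lemma is essentially a bookkeeping consequence of the results already proved, and the analytic content — differentiating the family of isomorphisms $\tau_{\pi_t}$ in $t$ — has been packaged into Lemma \ref{Lemma_derivative_of_pite}. The only point that needs a moment's care is that $\tau_{\pi_t}$ varies with $t$, so one must apply it at a single time $t$ to both sides of (\ref{EQ_homotopy_equation}); but since both invoked statements are pointwise-in-$t$ assertions, this is harmless. It may also be worth remarking, when writing the argument out, that the correspondence respects the relevant degrees (a vector field in $\mathfrak{X}^1(U)$ is sent to an element of $\Omega^1_U$ and a bivector to an element of $\Omega^2_U$), so that all the brackets and differentials appearing are the intended ones.
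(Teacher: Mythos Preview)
Your argument is correct and matches the paper's approach exactly: the paper simply states that the lemma follows from Lemma~\ref{Lemma_derivative_of_pite} and Proposition~\ref{PoissonCohomology}, and your write-up spells out precisely how those two facts combine via the bijectivity of $\tau_{\pi_t}$.
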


\subsection{The dilation operators and jets along $S$} \label{The dilation operators and jets along $S$}

For $t\in\mathbb{R}$, $t\neq0$ let $\mu_t:E\to E$ be the fiberwise multiplication by $t$. Pullback by $\mu_t$ gives an
automorphism
\[\mu_t^*:(\widetilde{\Omega}_E,[\cdot,\cdot]_{\ltimes})\diffto (\widetilde{\Omega}_E,[\cdot,\cdot]_{\ltimes}),\]
which preserves $\Omega_E$ and acts as the identity on $\Omega^{\bullet}(S)$.

Define the \textbf{dilation operators}\index{dilation operators} as follows:
\[ \varphi_t:\widetilde{\Omega}_E\diffto \widetilde{\Omega}_E, \ \varphi_t(u)=t^{q-1}\mu_t^{*}(u),\textrm{ for }u\in \widetilde{\Omega}_E^{\bullet,q}.\]

\begin{remark}\label{phi-local-coordinates}\rm
It is useful to describe this operation in local coordinates. Choose $x_i$ coordinates for $S$ and $y_{\alpha}$ linear
coordinates on the fibers of $E$. An arbitrary element in $\Omega^p(S,\mathfrak{X}^q(E))$ is a sum of elements of
type
\[ a(x, y) dx_I \otimes \partial_{x_J}\wedge \partial_{y_K} \]
where $I$, $J$ and $K$ are multi-indices with $|I|= p$, $|J|+ |K|= q$ and $a=a(x, y)$ is a smooth function. Elements in
$\Omega_{E}$ contain only terms with $|J|= 0$. The elements in $\widetilde{\Omega}_E$ are also allowed to contain
terms with $|J|= 1$, but those terms must have $|K|=0$, and the coefficient $a$ only depending on $x$.  Applying
$\varphi_t$ to such an element we find
\begin{equation}\label{EQ_how_they_act}
 t^{|J|-1} a(x, ty) dx_I \otimes \partial_{x_J}\wedge \partial_{y_K}.
 \end{equation}
\end{remark}

\begin{lemma}
The dilation operator $\varphi_t$ preserves the bidegree, is an automorphism of the graded Lie algebra
$\widetilde{\Omega}_E$ and preserves $\Omega_E$.
\end{lemma}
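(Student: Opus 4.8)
The plan is to deduce the lemma from the already-recorded properties of pullback along the fibrewise dilation, namely that $\mu_t^*$ is an automorphism of the graded Lie algebra $(\widetilde{\Omega}_E,[\cdot,\cdot]_{\ltimes})$ preserving $\Omega_E$ and acting as the identity on $\Omega^{\bullet}(S)$. The only genuinely new input is the observation that the normalizing factor $t^{q-1}$ is exactly the one compatible with the bracket; everything else is bookkeeping.

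First I would check that $\varphi_t$ preserves the bidegree and preserves $\Omega_E$. Since $p\circ\mu_t=p$ and $\mu_t$ is fibrewise linear, $\mu_t$ is a vector bundle isomorphism of $E$ covering $\mathrm{id}_S$; hence $d\mu_t$ preserves $V=\ker(dp)$ and, dually, $\mu_t^*$ preserves $V^{\circ}$. Therefore $\mu_t^*$ carries $\Omega_E^{p,q}=\Gamma(\Lambda^qV\otimes\Lambda^pV^{\circ})$ into itself, and, because a projectable field is sent by $\mu_t$ to a projectable field with the same projection to $S$ (as $\mu_t$ covers the identity) while $\mu_t^*p^*\alpha=p^*\alpha$ for $\alpha\in\Omega^{\bullet}(S)$, it also carries $\widetilde{\Omega}_E^{p,1}=\Omega^p(S,\mathfrak{X}_{\mathrm{P}}(E))$ into itself. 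As $t^{q-1}\neq 0$, the operator $\varphi_t$ inherits both properties.

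Next I would verify invertibility. From $\mu_s\circ\mu_t=\mu_{st}$ (fibrewise $v\mapsto s(tv)=(st)v$) one gets $\mu_t^*\circ\mu_s^*=(\mu_s\circ\mu_t)^*=\mu_{st}^*$, and since $\mu_s^*$ preserves the vertical degree the prefactors multiply as well:
\[
\varphi_t\circ\varphi_s(u)=t^{q-1}\mu_t^*\bigl(s^{q-1}\mu_s^*u\bigr)=(ts)^{q-1}\mu_{st}^*u=\varphi_{st}(u),\qquad u\in\widetilde{\Omega}_E^{\bullet,q}.
\]
Thus $\{\varphi_t\}_{t\neq 0}$ is a one-parameter group under multiplication of parameters, so $\varphi_t$ is bijective with $\varphi_t^{-1}=\varphi_{1/t}$.

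Finally I would check that $\varphi_t$ respects the bracket. Reading off the three defining cases of $[\cdot,\cdot]_{\ltimes}$ one sees that the bracket is additive in the first bidegree index and Schouten-like in the second, so $[\widetilde{\Omega}_E^{p,q},\widetilde{\Omega}_E^{p',q'}]_{\ltimes}\subseteq\widetilde{\Omega}_E^{p+p',\,q+q'-1}$. Hence, for $u$ of vertical degree $q$ and $v$ of vertical degree $q'$, using that $\mu_t^*$ is a Lie algebra morphism,
\[
\varphi_t([u,v]_{\ltimes})=t^{q+q'-2}\,\mu_t^*[u,v]_{\ltimes}=t^{q+q'-2}\,[\mu_t^*u,\mu_t^*v]_{\ltimes},
\]
while, by $\mathbb{R}$-bilinearity of $[\cdot,\cdot]_{\ltimes}$,
\[
[\varphi_tu,\varphi_tv]_{\ltimes}=[t^{q-1}\mu_t^*u,\,t^{q'-1}\mu_t^*v]_{\ltimes}=t^{q+q'-2}\,[\mu_t^*u,\mu_t^*v]_{\ltimes},
\]
and the two expressions coincide. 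Combining the three steps shows that $\varphi_t$ is a bidegree-preserving automorphism of the graded Lie algebra $\widetilde{\Omega}_E$ restricting to an automorphism of $\Omega_E$. There is no real obstacle; the one point worth a second look is confirming the bidegree rule $[\widetilde{\Omega}_E^{p,q},\widetilde{\Omega}_E^{p',q'}]_{\ltimes}\subseteq\widetilde{\Omega}_E^{p+p',\,q+q'-1}$ uniformly across the three defining formulas (Dorfman/Schouten part, the action $L$, and the Fr\"{o}hlicher--Nijenhuis part), but each case is immediate from the explicit formula given earlier.
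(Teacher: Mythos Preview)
Your proof is correct and follows essentially the same approach as the paper: both arguments rest on the fact that $\mu_t^*$ is already a graded Lie algebra automorphism preserving bidegree and $\Omega_E$, together with the bidegree rule $[\widetilde{\Omega}_E^{p,q},\widetilde{\Omega}_E^{p',q'}]_{\ltimes}\subset\widetilde{\Omega}_E^{p+p',q+q'-1}$, which ensures the scalar factor $t^{q-1}$ is multiplicative under the bracket. The paper compresses all of this into three lines, while you spell out the bidegree preservation and invertibility explicitly, but the content is the same.
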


\begin{proof}
Due to its functoriality, $\mu_{t}^{*}$ has these properties. Since
\[[\widetilde{\Omega}_E^{p,q},\widetilde{\Omega}_E^{p',q'}]_{\ltimes}\subset \widetilde{\Omega}_E^{p+p',q+q'-1},\]
also the multiplication by $t^{q-1}$ has the same properties. Hence also the composition of the two operations, i.e.\
$\varphi_t$, has the desired properties.
\end{proof}

We introduce also the following subspaces of $\widetilde{\Omega}_E$, for $l\in \mathbb{Z}$:
\[ \textrm{gr}_{l}(\widetilde{\Omega}_E)= \{ u\in \widetilde{\Omega}_E: \varphi_t(u)= t^{l-1} u\}\subset \widetilde{\Omega}_E,\]
\[ J^{l}_{S}(\widetilde{\Omega}_E)= \textrm{gr}_{0}(\widetilde{\Omega}_E)\oplus \ldots \oplus \textrm{gr}_{l}(\widetilde{\Omega}_E)\subset \widetilde{\Omega}_E.\]
These spaces vanish for $l< 0$ (see (\ref{EQ_how_they_act})). The elements in $\textrm{gr}_{0}$ are called
\textbf{constant}, those in $\textrm{gr}_{1}$ are called \textbf{linear}, while those in $\textrm{gr}_{l}$ are called
\textbf{homogeneous of degree $l$}. Similarly, one defines $\textrm{gr}_{l}(\Omega_E)$. We have canonical
isomorphisms (see e.g.\ (\ref{EQ_how_they_act}))
\begin{equation}\label{computation-grading}
\textrm{gr}_{l}(\Omega_{E}^{p, q})= \Omega^p(S, \Lambda^qE\otimes \mathcal{S}^lE^*),
\end{equation}
where $\mathcal{S}^lE^*$ denotes the $l$-th symmetric power of $E^*$, and we identify:\\
$\bullet$ sections of $E$ with fiberwise constant vertical vector fields on $E$,\\
$\bullet$ sections of $\mathcal{S}^lE^*$ with degree $l$ homogeneous polynomial functions on $E$.\\
Moreover, $\textrm{gr}_{l}(\widetilde{\Omega}_{E}^{p, q})$ coincides with $\textrm{gr}_{l}(\Omega_{E}^{p,
q})$ except for $l= 1$, $q= 1$, when
\[\textrm{gr}_{1}(\widetilde{\Omega}_{E}^{p, 1})= \Omega^p(S, \mathfrak{X}_{\textrm{lin}}(E)),\]
where $\mathfrak{X}_{\textrm{lin}}(E)$ is the space of linear vector fields on $E$, i.e. projectable vector fields whose
flow is fiberwise linear.

Our next aim is to introduce the partial derivative operators along $S$,
\[ d_{S}^{l}: \widetilde{\Omega}_E\rmap \textrm{gr}_l(\widetilde{\Omega}_E).\]
To define and handle them, we use the formal power series expansion of $t\varphi_{t}(u)$ with respect to $t$.
Although $\varphi_t$ is not defined at $t= 0$, by (\ref{EQ_how_they_act}) it is clear that, for any $u\in
\widetilde{\Omega}_E$, the map
\[ \mathbb{R}^{*}\ni t\mapsto t\varphi_t(u)\in \widetilde{\Omega}_E \]
admits a smooth extension to $\mathbb{R}$. Hence, the following makes sense.

\begin{definition}
Define the \textbf{$n$-th order derivatives} of $u\in \widetilde{\Omega}_E$ along $S$, denoted by $d^{n}_{S}u$, as
the coefficients of the formal power expansion of $\varphi_t$ at $t=0$
\[ \varphi_{t}(u)\cong t^{-1} u|_{S}+ d_{S}u+ t d^{2}_{S} u+ \ldots .\]
In other words,
\[ d_{S}^{n}(u)= \frac{1}{n!}\frac{d^n}{dt^n}_{|t= 0} t\varphi_t(u) \in \widetilde{\Omega}_E.\]
We also use the notation $u|_{S}:=d^0_Su$. The \textbf{$n$-th order jet} of $u$ along $S$ is
\[ j^{n}_{S}(u)= \sum_{k= 0}^{n} d_{S}^{k} (u).\]
\end{definition}

\begin{lemma}\label{TheFormula}
We have that $d_{S}^{n}(u)\in \mathrm{gr}_{n}(\widetilde{\Omega}_E)$ and $j^{n}_{S}(u)\in
J^{n}_{S}(\widetilde{\Omega}_E)$.
\end{lemma}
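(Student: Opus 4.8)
\textbf{Proof strategy for Lemma \ref{TheFormula}.}

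The statement to prove is that $d^n_S(u) \in \mathrm{gr}_n(\widetilde{\Omega}_E)$ — equivalently, that $d^n_S(u)$ is homogeneous of degree $n$, i.e. $\varphi_t(d^n_S u) = t^{n-1} d^n_S u$ — and that consequently $j^n_S(u) = \sum_{k=0}^n d^k_S(u)$ lies in $J^n_S(\widetilde{\Omega}_E) = \bigoplus_{l=0}^n \mathrm{gr}_l(\widetilde{\Omega}_E)$. The second assertion is immediate from the first by definition of $J^n_S$, so the whole content is the homogeneity claim for $d^n_S(u)$.

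My plan is to exploit the semigroup property of the dilation operators. Since $\mu_s \circ \mu_t = \mu_{st}$ and $\varphi_t(v) = t^{q-1}\mu_t^*(v)$ on bidegree $(\bullet, q)$, one checks directly that $\varphi_s \circ \varphi_t = \varphi_{st}$ for all $s, t \in \mathbb{R}^*$; equivalently, writing $g(t) := t\varphi_t(u)$, one has the functional relation
\begin{equation}\label{EQ_functional}
s \, g(st) = \varphi_s(g(t)) \cdot s^{?}
\end{equation}
— more precisely, $g(st) = st\,\varphi_{st}(u) = st\,\varphi_s(\varphi_t(u)) = s\,\varphi_s\big(t\varphi_t(u)\big) = s\,\varphi_s(g(t))$, since $\varphi_s$ is linear. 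So $g(st) = s\,\varphi_s(g(t))$ as elements of $\widetilde{\Omega}_E$, for all fixed $s$. Now I differentiate $n$ times in $t$ at $t = 0$: by definition $d^n_S(u) = \frac{1}{n!} g^{(n)}(0)$. Applying $\partial_t^n$ to both sides of $g(st) = s\,\varphi_s(g(t))$ and setting $t = 0$ gives $s^n g^{(n)}(0) = s\,\varphi_s\big(g^{(n)}(0)\big)$, because $\varphi_s$ is a fixed linear operator (it commutes with $\partial_t^n|_{t=0}$) and the chain rule produces the factor $s^n$ on the left. Dividing by $n!$: $s^n d^n_S(u) = s\,\varphi_s(d^n_S u)$, i.e. $\varphi_s(d^n_S u) = s^{n-1} d^n_S u$, which is exactly the condition $d^n_S(u) \in \mathrm{gr}_n(\widetilde{\Omega}_E)$.

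The only point requiring a little care — and the place where I'd expect the main (though still minor) obstacle — is the justification that $\varphi_s$ commutes with the operation "take the $n$-th Taylor coefficient at $t=0$": this needs the smooth extension of $t \mapsto g(t) = t\varphi_t(u)$ across $t = 0$ (established in the remark preceding the definition via the local-coordinate formula \eqref{EQ_how_they_act}), together with the fact that $\varphi_s$ is a continuous linear endomorphism of $\widetilde{\Omega}_E$ in the relevant Fréchet topology, so that $\varphi_s\big(\frac{d^n}{dt^n}|_{t=0} g(t)\big) = \frac{d^n}{dt^n}|_{t=0} \varphi_s(g(t))$. Alternatively — and this is cleaner and avoids any topological fuss — one can argue purely in local coordinates: by \eqref{EQ_how_they_act} a term $a(x,ty)\,dx_I \otimes \partial_{x_J} \wedge \partial_{y_K}$ with $|K| = k$ and $|J| \le 1$ contributes to $t g(t)$ the expression $t^{|J|} a(x,ty)\,dx_I \otimes \partial_{x_J}\wedge\partial_{y_K}$, whose $n$-th Taylor coefficient at $t=0$ picks out (for $|J| = 0$) the degree-$n$ homogeneous part of $a(x,ty)$ in $y$, and (for $|J|=1$) the degree-$(n-1)$ part — in either case landing precisely in $\mathrm{gr}_n$ by the identifications \eqref{computation-grading} and the description of $\mathrm{gr}_1(\widetilde{\Omega}^{p,1}_E)$. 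I would present the scaling argument as the main line and remark that the coordinate computation gives the same conclusion directly.
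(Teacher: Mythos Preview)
Your proof is correct and follows essentially the same approach as the paper: the paper's one-line argument uses the semigroup identity $\varphi_r\circ\varphi_s=\varphi_{rs}$ to write $\varphi_r(s\varphi_s(u))=r^{-1}[\xi\varphi_\xi(u)]_{|\xi=rs}$ and then differentiates $n$ times at $s=0$, which is exactly your computation $g(st)=s\,\varphi_s(g(t))$ followed by $\partial_t^n|_{t=0}$. Your additional care about commuting $\varphi_s$ with the Taylor expansion, and the alternative coordinate verification, are more thorough than the paper but not needed for a different argument.
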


\begin{proof}
Since $\varphi_{r}\circ \varphi_{s}=\varphi_{rs}$, we have that
$\varphi_{r}(s\varphi_{s}(u))=r^{-1}[\xi\varphi_{\xi}(u)]_{|\xi=rs}$. Taking $n$ derivatives at $s=0$, we obtain the
result.
\end{proof}

The power series description, together with the properties of $\varphi_t$, are very useful in avoiding computations. For
instance, using that $\varphi_t$ preserves $[\cdot, \cdot]_{\ltimes}$ and comparing coefficients, we obtain a Newton
type formula.

\begin{lemma}\label{Newton-formula}
For any $u, v\in \widetilde{\Omega}_E$, we have that
\[ d_{S}^{l}[u, v]_{\ltimes}= \sum_{p+q= l+1} [d^{p}_{S}u, d^{q}_{S}v]_{\ltimes} .\]
\end{lemma}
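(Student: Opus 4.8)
The plan is to avoid any direct combinatorial manipulation of the Koszul-type formula for $[\cdot,\cdot]_{\ltimes}$ and instead exploit the single structural fact already established: each dilation operator $\varphi_t$ is an automorphism of the graded Lie algebra $(\widetilde{\Omega}_E,[\cdot,\cdot]_{\ltimes})$. So for fixed $u,v\in\widetilde{\Omega}_E$ and $t\neq 0$, I would first write down the identity
\[
t\,\varphi_t([u,v]_{\ltimes}) \;=\; t\,[\varphi_t(u),\varphi_t(v)]_{\ltimes} \;=\; \frac{1}{t}\,\bigl[t\varphi_t(u),\,t\varphi_t(v)\bigr]_{\ltimes},
\]
the second equality using only bilinearity. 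The point is that, by the discussion preceding Lemma \ref{TheFormula} (see also Remark \ref{phi-local-coordinates} and the local formula (\ref{EQ_how_they_act})), each of the three maps $t\mapsto t\varphi_t(w)$ for $w\in\{u,v,[u,v]_{\ltimes}\}$ extends smoothly across $t=0$, with formal power series expansion $t\varphi_t(w)\cong \sum_{k\geq 0} t^k d_S^k(w)$, where the $k=0$ term is $w|_S$. Thus the left-hand side has expansion $\sum_{l\geq 0} t^l\, d_S^l([u,v]_{\ltimes})$, and this is exactly the object I want to read off.

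Next I would substitute the expansions of $t\varphi_t(u)$ and $t\varphi_t(v)$ into the right-hand side. Since $[\cdot,\cdot]_{\ltimes}$ is $\mathbb{R}$-bilinear, formal power series in $t$ with coefficients in $\widetilde{\Omega}_E$ multiply (bracket) in the obvious Cauchy-product way, so
\[
\frac{1}{t}\bigl[t\varphi_t(u),t\varphi_t(v)\bigr]_{\ltimes}
\;\cong\; \frac{1}{t}\sum_{p\geq 0}\sum_{q\geq 0} t^{p+q}\,[d_S^p u,\, d_S^q v]_{\ltimes}
\;=\; \sum_{p,q\geq 0} t^{p+q-1}\,[d_S^p u,\, d_S^q v]_{\ltimes}.
\]
Here I need one small a priori remark to see the $1/t$ does no harm: the coefficient of $t^{-1}$ on the right is $[u|_S,v|_S]_{\ltimes}$, and comparing with the left-hand side (which is a genuine power series in nonnegative powers of $t$, having no $t^{-1}$ term) forces $[u|_S,v|_S]_{\ltimes}=0$ — equivalently this is just the $l=0$ case, and it holds because $u|_S,v|_S\in\mathrm{gr}_0(\widetilde{\Omega}_E)$ are $\varphi_t$-invariant, so $t\varphi_t$ of their bracket would have to be both $\varphi_t$-invariant (degree $0$) and vanish at order $t^{-1}$; more directly, $[u|_S,v|_S]_{\ltimes}\in\mathrm{gr}_{-1}=\{0\}$ by the computation (\ref{EQ_how_they_act}). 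Granting this, the right-hand side is also a bona fide power series in nonnegative powers of $t$, with the coefficient of $t^l$ equal to $\sum_{p+q=l+1}[d_S^p u, d_S^q v]_{\ltimes}$.

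Finally I would equate coefficients of $t^l$ on the two sides, which yields exactly the asserted formula $d_S^l[u,v]_{\ltimes}=\sum_{p+q=l+1}[d_S^p u,d_S^q v]_{\ltimes}$. To make the "equate coefficients of the formal power series" step rigorous rather than formal, one notes that all three maps $t\mapsto t\varphi_t(\cdot)$ are honestly smooth $\widetilde{\Omega}_E$-valued functions of $t$ near $0$, so comparing $n$-th Taylor coefficients at $t=0$ (which is how $d_S^n$ was defined) is legitimate; the Cauchy-product identity for the bracket of two smooth curves is just the Leibniz rule iterated, $\tfrac{1}{l!}\tfrac{d^l}{dt^l}[f(t),g(t)]_{\ltimes}\big|_0=\sum_{p+q=l}\tfrac{1}{p!q!}[f^{(p)}(0),g^{(q)}(0)]_{\ltimes}$, applied with $f(t)=t\varphi_t(u)$, $g(t)=t\varphi_t(v)$ and then absorbing the extra factor of $t$ as the index shift $l\mapsto l+1$. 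I do not anticipate a serious obstacle here; the only point requiring a line of care is the handling of the $1/t$ prefactor, i.e.\ checking that $\mathrm{gr}_{-1}(\widetilde{\Omega}_E)=0$ so that no negative-order term survives — and that is immediate from the local description in Remark \ref{phi-local-coordinates}.
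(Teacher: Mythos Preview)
Your proof is correct and follows exactly the approach the paper indicates: use that $\varphi_t$ is a Lie algebra automorphism to write $t\varphi_t([u,v]_{\ltimes})=\tfrac{1}{t}[t\varphi_t(u),t\varphi_t(v)]_{\ltimes}$, and then compare Taylor coefficients at $t=0$. Your handling of the $t^{-1}$ term via $\mathrm{gr}_{-1}(\widetilde{\Omega}_E)=0$ is the right way to make the comparison rigorous.
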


As an illustration of our constructions let us look again at connections. We have already seen that an Ehresmann
connection on $E$ can be seen as an element $\Gamma\in \widetilde{\Omega}_{E}^{1, 1}$. We have that $\Gamma$
is linear as an element of $\widetilde{\Omega}_{E}$ if and only if it is a linear connection. For the direct implication:
the properties of $\varphi_t$ immediately imply that the $\ltimes$- bracket with $\Gamma$ preserves
$\textrm{gr}_{0}(\widetilde{\Omega}^{\bullet, 1}_{E})= \Omega^{\bullet}(S, E)$ hence it induces a covariant
derivative
\[ d_{\Gamma}:= [\Gamma, \cdot ]_{\ltimes}: \Omega^{\bullet}(S, E)\rmap \Omega^{\bullet+ 1}(S, E).\]

\subsection{The first jet of a Poisson structure around a leaf}\label{subsection_linearizing_Poisson_structures}

Throughout this subsection, we let $\pi$ be a horizontally nondegenerate Poisson structure on $E$, with corresponding
Dirac element $\gamma=(\pi^{\mathrm{v}},\Gamma_{\pi},\mathbb{F}_{\pi})$. After the first lemma, we will make
the extra assumption that $S$, viewed as the zero section of $E$, is a symplectic leaf of $\pi$, and we will describe the
linearization of $\pi$ around $S$ using the algebraic tools we have just developed. Actually, as one can easily see,
everything what we prove works more generally, for any horizontally nondegenerate Dirac structure on $E$, which
admits $S$ as a presymplectic leaf.\\

The fact that $S$ is a symplectic leaf can be characterized algebraically (see also Proposition 6.1 \cite{CrFe-stab}).

\begin{lemma} \label{lma-last} $S$ is a symplectic leaf of $\pi$ if and only if $\gamma|_{S}$ lives in bi-degree $(2,
0)$. In this case, the symplectic form is
\[ \omega_S=\gamma|_{S}\in \textrm{gr}_{0}(\Omega_{E}^{2, 0})= \Omega^2(S).\]
\end{lemma}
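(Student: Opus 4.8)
The plan is to unwind the definitions. Recall that $\gamma = \gamma_\pi$ is the Dirac element associated to $L_\pi$, with components $\pi^{\mathrm v}$ in bidegree $(0,2)$, $\Gamma_\pi$ in bidegree $(1,1)$, and $\mathbb{F}_\pi$ in bidegree $(2,0)$. First I would observe that, by Proposition \ref{Proposition_Dirac_cohomology} together with the identification $L_\pi^* \cong V \oplus V^\circ$, the restriction $\gamma|_S = d^0_S\gamma$ computes pointwise data along $S$; concretely, $\gamma|_S$ encodes: the leafwise Poisson (= symplectic) bivector on the fiber direction from $\pi^{\mathrm v}|_S$, the Bott-type connection information from $\Gamma_\pi|_S$, and the ``horizontal symplectic'' form from $\mathbb{F}_\pi|_S$.

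Next I would translate the geometric hypothesis ``$S$ is a symplectic leaf'' into vanishing statements for these components at $S$. The tangent space to the symplectic leaf through a point $e$ is $\pi^\sharp(T_e^*E)$; for $e \in S$ we are requiring this to equal $T_eS$. Since $E$ is horizontally nondegenerate, $H_\pi = \pi^\sharp(V^\circ)$ always surjects onto the base directions, so $S$ being a leaf forces $H_{\pi,e} = T_eS$ for $e\in S$, i.e. the horizontal distribution is \emph{tangent} to $S$ along $S$ — but also that the vertical part $\pi^{\mathrm v}$ must vanish on $S$ (otherwise the leaf would extend into the fiber). This is the key point: $\pi^{\mathrm v}|_S = 0$. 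Concretely in the local coordinates of Remark \ref{phi-local-coordinates}, $\pi^{\mathrm v}$ is built from terms $\partial_{y_\alpha}\wedge\partial_{y_\beta}$ with coefficients that, restricted to $y=0$, must vanish. Similarly, the ``mixed''/connection contribution to $\gamma|_S$ in bidegree $(1,1)$: restricting to $S$, $\Gamma_\pi|_S$ would be an element of $\mathrm{gr}_0(\widetilde\Omega_E^{1,1}) = \Omega^1(S, \mathfrak{X}_{\mathrm V}(E))$ — but the vertical-vector-field part of $\Gamma_\pi$ along $S$ records the failure of the connection to be tangent to $S$, which is exactly what leaf-ness kills. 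Hence $\gamma|_S$ survives only in bidegree $(2,0)$, i.e. $\gamma|_S = \mathbb{F}_\pi|_S \in \mathrm{gr}_0(\Omega_E^{2,0}) = \Omega^2(S)$. Conversely, if $\gamma|_S$ lives in bidegree $(2,0)$, then along $S$ one has $\pi^{\mathrm v}|_S = 0$ and the connection is tangent, so $\pi^\sharp(V^\circ)|_S = TS$ and the leaf through any $s\in S$ is exactly $S$; combined with horizontal nondegeneracy of $\mathbb{F}_\pi|_S$ (forced by $\mathrm{supp}(L_\gamma) = E \supset S$) this makes $S$ a symplectic leaf.

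Finally I would identify the symplectic form. On the leaf $S$ the symplectic form is $\omega_S = (\pi|_S)^{-1}$, but $\pi|_S$ here means the bivector $\pi^{\mathrm h}|_S$ restricted to $TS = H_{\pi}|_S$, whose inverse is precisely $\mathbb{F}_\pi|_S$ by the defining relation \eqref{2Form}. Since $\gamma|_S = \mathbb{F}_\pi|_S$, we conclude $\omega_S = \gamma|_S$. The main obstacle I anticipate is being careful with the two meanings of ``$\pi|_S$'' — the restriction as a bivector field to the submanifold $S$ (which involves $\pi^{\mathrm v}|_S$ as well) versus the induced leafwise symplectic structure — and checking that $\pi^{\mathrm v}|_S = 0$ cleanly, which is most transparently done either via the coordinate description of Remark \ref{phi-local-coordinates} applied to $d^0_S$, or by noting directly that $L_{\pi^{\mathrm v}} = L_\pi * L_V$ and that along $S$, where $H_\pi = TS$, one has $V^\circ|_S = (TS)^\circ = \nu_S^*$ and $L_\pi|_S \cap (V\oplus V^\circ)|_S$ is controlled by $\omega_S$ being nondegenerate on $TS$ — so the vertical Poisson tensor must degenerate entirely there.
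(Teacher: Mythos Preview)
Your approach is essentially the same as the paper's: both arguments show that $S$ being a symplectic leaf is equivalent to $\pi^{\mathrm v}|_S=0$ and $H_\pi|_S=TS$ (i.e.\ $\mathrm{hor}(X)|_S=X$), which is exactly the statement that the $(0,2)$ and $(1,1)$ parts of $\gamma|_S$ vanish, and then both read off $\omega_S=\mathbb{F}_\pi|_S$ from the defining relation (\ref{2Form}). One small slip: $\mathrm{gr}_0(\widetilde\Omega_E^{1,1})$ is $\Omega^1(S,E)$ (fiberwise constant vertical fields), not $\Omega^1(S,\mathfrak{X}_{\mathrm V}(E))$, but this does not affect the argument.
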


\begin{proof}
Recall that the Dirac structure $L_{\pi}$ is spanned by $\mathrm{hor}(X)+\mathbb{F}_{\pi}^{\sharp}(X)$ and
$\pi^{\mathrm{v},\sharp}(\alpha)+\alpha$, with $X\in \mathfrak{X}(S)$ and $\alpha\in \Gamma(H^{\circ})$. Since
$\pi$ has rank at least $\mathrm{dim}(S)$, it follows that $S$ is a symplectic leaf if and only if the vector part of these
elements belongs to $TS\subset TE_{|S}$, which means that $\mathrm{hor}(X)_{|S}=X$ and
$\pi^{\mathrm{v},\sharp}_{|S}(\alpha)=0$. These conditions are equivalent to $\pi^{\mathrm{v}}_{|S}=0$ and
$\Gamma_{\pi|S}=0$, i.e.\ $\gamma_{|S}\in \Omega^{2,0}_E$. In this case, we have that
\[\pi_{|S}=\pi^{\mathrm{v}}_{|S}+\pi^{\mathrm{h}}_{|S}=\pi^{\mathrm{h}}_{|S}\in \mathfrak{X}^2(S),\]
and so
\[\omega_S=(\pi^{\mathrm{h}}_{|S})^{-1}=\mathbb{F}_{\pi|S}=\gamma_{|S}.\]
\end{proof}

From now on, we will assume that $(S,\omega_S)$ is a symplectic leaf of $\pi$, or equivalently that
$\gamma_{|S}=\omega_S$. We look now at the next term in the Taylor expansion of $\gamma$ around $S$, whose
components we denote as follows
\[d^1_S\gamma=(\plin^{\mathrm{v}},\Gamma_{\mathrm{lin}},\mathbb{F}_{\mathrm{lin}})\in \textrm{gr}_1(\widetilde{\Omega}_{E}^2),\]
\[\plin^{\mathrm{v}}\in \Gamma (\Lambda^2E\otimes E^*),\ \Gamma_{\mathrm{lin}}\in  \Omega^1(S, \mathfrak{X}_{\textrm{lin}}(E)),\ \mathbb{F}_{\mathrm{lin}}\in \Omega^2(S, E^*).\]

\begin{lemma}\label{Lemma_d_and_j_are_Dirac}
We have that $d^1_S\gamma$ and $j^1_S\gamma$ are Dirac elements on $E$.
\end{lemma}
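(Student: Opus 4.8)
The statement to prove is that $d^1_S\gamma$ and $j^1_S\gamma$ are Dirac elements on $E$, i.e.\ both satisfy $[\cdot,\cdot]_{\ltimes}=0$ and project under $p_S$ to $\gamma_S$. The key input is that $\gamma$ is itself a Dirac element, so $[\gamma,\gamma]_{\ltimes}=0$ and $p_S(\gamma)=\gamma_S$, together with the Newton-type formula of Lemma \ref{Newton-formula} for the behaviour of $d^n_S$ on brackets, and Lemma \ref{TheFormula} identifying the homogeneous components.

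\textbf{First step: the projection conditions.} I would first check $p_S(d^1_S\gamma)=0$ (for $n\geq 1$) and $p_S(j^1_S\gamma)=\gamma_S$. Since $\varphi_t$ acts as the identity on $\Omega^\bullet(S,TS)$ (the image of $p_S$) — more precisely $p_S$ intertwines $\varphi_t$ with $\mu_t^*$-pullback, which is trivial on the base — the quantity $p_S(t\varphi_t(\gamma))$ equals $t\,\gamma_S$, which is linear in $t$. Hence its Taylor coefficients vanish except the order-zero one, giving $p_S(\gamma|_S)=\gamma_S$, $p_S(d^1_S\gamma)=0$, and therefore $p_S(j^1_S\gamma)=p_S(\gamma|_S)+p_S(d^1_S\gamma)=\gamma_S$. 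This is routine.

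\textbf{Second step: the Maurer–Cartan conditions via Newton's formula.} Applying Lemma \ref{Newton-formula} to $[\gamma,\gamma]_{\ltimes}=0$ and reading off the degree-$l$ component gives $\sum_{p+q=l+1}[d^p_S\gamma,d^q_S\gamma]_{\ltimes}=0$ for every $l\geq 0$. For $l=0$ this says $[\gamma|_S,\gamma|_S]_{\ltimes}=0$; for $l=1$ it says $2[\gamma|_S,d^1_S\gamma]_{\ltimes}=0$; for $l=2$, $[\gamma|_S,d^2_S\gamma]_{\ltimes}+[d^1_S\gamma,d^1_S\gamma]_{\ltimes}=0$. By Lemma \ref{lma-last} the leaf $S$ being symplectic means $\gamma|_S=\omega_S\in\Omega^2(S)\subset\Omega^\bullet(S)$, which by Lemma \ref{OmegaS-Central} lies in the center of $\Omega_E$ — but one must be careful, since $d^1_S\gamma$ has a $(1,1)$-component living in $\widetilde\Omega_E$, not $\Omega_E$. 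However $p_S(d^1_S\gamma)=0$ from the first step, so $d^1_S\gamma\in\Omega_E$, and then $[\omega_S,d^1_S\gamma]_{\ltimes}=0$ by centrality. Similarly $[\omega_S,d^2_S\gamma]_{\ltimes}=0$. Feeding these back, the $l=1$ identity is automatic, and the $l=2$ identity collapses to $[d^1_S\gamma,d^1_S\gamma]_{\ltimes}=0$. Finally $[j^1_S\gamma,j^1_S\gamma]_{\ltimes}=[\omega_S+d^1_S\gamma,\omega_S+d^1_S\gamma]_{\ltimes}=[\omega_S,\omega_S]_{\ltimes}+2[\omega_S,d^1_S\gamma]_{\ltimes}+[d^1_S\gamma,d^1_S\gamma]_{\ltimes}=0$, using centrality of $\omega_S$ and the identity just obtained for $d^1_S\gamma$.

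\textbf{Main obstacle.} The only genuinely delicate point is ensuring that $d^1_S\gamma$ really lands in $\Omega_E$ (where centrality of $\Omega^\bullet(S)$ applies) rather than merely in $\widetilde\Omega_E$; this is exactly the content of showing $p_S(d^1_S\gamma)=0$, which in turn rests on the fact that $S$ is a symplectic leaf so that $\gamma|_S$ has no $(1,1)$-part. Once that is secured, everything reduces to the centrality statement of Lemma \ref{OmegaS-Central} and bookkeeping of the Newton formula coefficients; I would present the argument in the order above, doing the projection/centrality verification carefully and then the bracket identities as a short computation.
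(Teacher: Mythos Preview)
Your overall strategy matches the paper's (Newton formula plus the special role of $\omega_S$), but there is a genuine error in your projection computation that breaks a key step.

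You claim $p_S(\gamma|_S)=\gamma_S$ and $p_S(d^1_S\gamma)=0$. This is backwards. You correctly observe $p_S(t\varphi_t(\gamma))=t\gamma_S$, but the coefficient of $t^0$ here is $0$ and the coefficient of $t^1$ is $\gamma_S$; since $d^n_S u$ is by definition the coefficient of $t^n$ in $t\varphi_t(u)$, this yields $p_S(\gamma|_S)=0$ (consistent with $\gamma|_S=\omega_S\in\Omega_E$) and $p_S(d^1_S\gamma)=\gamma_S$. Concretely, the $(1,1)$-component of $d^1_S\gamma$ is the linear connection $\Gamma_{\mathrm{lin}}$, which projects to $\gamma_S$ by the very definition of a connection. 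Hence $d^1_S\gamma\notin\Omega_E$, and your appeal to centrality of $\omega_S$ in $\Omega_E$ to kill $[\omega_S,d^1_S\gamma]_{\ltimes}$ does not apply. Your ``main obstacle'' paragraph is precisely about this point, and resolves it incorrectly.

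The fix, as in the paper, uses that $\Omega^\bullet(S)$ is an \emph{ideal} in $\widetilde{\Omega}_E$ (Lemma~\ref{OmegaS-Central}), with bracket $[u,\omega]_{\ltimes}=L_{p_S(u)}\omega$ for $\omega\in\Omega^\bullet(S)$. Thus $[d^1_S\gamma,\omega_S]_{\ltimes}=L_{\gamma_S}\omega_S=d\omega_S=0$. The projection conditions then come out right: $p_S(d^1_S\gamma)=\gamma_S$ is exactly what is required for $d^1_S\gamma$ to be Dirac, and $p_S(j^1_S\gamma)=0+\gamma_S=\gamma_S$. Your centrality argument for $[\omega_S,d^2_S\gamma]_{\ltimes}$ \emph{is} valid, since $p_S(d^2_S\gamma)=0$ and hence $d^2_S\gamma\in\Omega_E$. (There is also an off-by-one slip in your Newton-formula bookkeeping: you write $\sum_{p+q=l+1}$ but then read off the cases as if $p+q=l$; with the correct indexing the identity $[d^1_S\gamma,d^1_S\gamma]_{\ltimes}=0$ comes from $l=1$.)
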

\begin{proof}
By the Newton formula from Lemma \ref{Newton-formula}, we have that
\[0=d^1_S[\gamma,\gamma]_{\ltimes}=2[\omega_S,d^2_S\gamma]+[d^1_S\gamma,d^1_S\gamma]_{\ltimes}=[d^1_S\gamma,d^1_S\gamma]_{\ltimes},\]
where we have used also that $d^2_S\gamma\in\Omega_E^2$, and so, by Lemma \ref{OmegaS-Central}, $\omega_S$
commutes with $d^2_S\gamma$. Also, we have that $p_S(\varphi_t(u))=p_S(u)$, for all $u\in \widetilde{\Omega}_E$,
therefore $p_S(d_S\gamma)=\gamma_S$. This shows that $d^1_S\gamma$ is a Dirac element. For $j^1_S\gamma$,
note that $[\omega_{S},\omega_S]=0$, $p_S(j^1_S\gamma)=p_S(d^1_S\gamma)=\gamma_S$, and also
\[ [d^1_S\gamma,\omega_S]_{\ltimes}=L_{p_S(d^1_S\gamma)}\omega_S=L_{\gamma_S}\omega_S=d\omega_S=0.\]
These imply that $j^1_S\gamma_S$ is also Dirac.
\end{proof}

Observe that the 2-form part of $j^1_S\gamma$ is $\omega_S+\mathbb{F}_{\mathrm{lin}}$. Since
$\mathbb{F}_{\mathrm{lin}}$ vanishes along $S$, it follows that the open where
$\omega_S+\mathbb{F}_{\mathrm{lin}}$ is invertible contains $S$. This is precisely the support of $j^1_S\gamma$,
which we denote by
\[N:=\mathrm{supp}(j^1_S\gamma).\]
We denote the corresponding Poisson structure by $\plin(S)$.

\begin{proposition}\label{Proposition_reconcile_2}
The Poisson manifold $(N, \plin(S))$ is the first order approximation of $\pi$ along $S$ from Definition
\ref{definition_first_order_approx}.
\end{proposition}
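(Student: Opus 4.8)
The goal is to identify the Poisson structure $\plin(S)$ on $N = \mathrm{supp}(j^1_S\gamma)$ with the first order approximation $\pi_0 = \lim_{t\to 0}\pi_t$ from Definition \ref{definition_first_order_approx}, where $\pi_t = t\mu_t^*(\pi^{(t-1)p^*(\omega_S)})$. The key observation is that both constructions are governed by the same algebraic object — the Dirac element $\gamma$ of $\pi$ and its homogeneous pieces under the dilation operators $\varphi_t$. The plan is to translate the path $\pi_t$ from the world of bivectors into the world of Dirac elements in $\widetilde{\Omega}_E$, show that this path is exactly $\varphi_t(\gamma)$ (up to the gauge correction that removes the symplectic leaf term), and then read off the limit at $t=0$ using the formal power series expansion that defines $j^1_S$.

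First I would compute the Dirac element associated to $\mu_t^*(\pi)$: since $\mu_t$ is a bundle map covering the identity, $\mu_t^*$ acts on $\widetilde{\Omega}_E$ compatibly with the bracket, and the Dirac element of $\mu_t^*(\pi)$ is $\mu_t^*(\gamma)$. Next I would handle the gauge transformation: by the discussion in subsection \ref{subsection_Dirac}, gauging $\pi$ by the closed 2-form $(t-1)p^*(\omega_S)$ corresponds, at the level of Dirac structures, to the product $L_\pi * L_{(t-1)p^*(\omega_S)}$, i.e. to adding $(t-1)p^*(\omega_S)$ to the $\mathbb{F}$-component (the $(2,0)$-part) of $\gamma$; note $p^*(\omega_S) \in \Omega^{2,0}_E$ is $\varphi_t$-invariant and lies in $\textrm{gr}_0$, so it interacts cleanly with the dilation. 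Combining, the Dirac element of $\pi_t = t\mu_t^*(\pi^{(t-1)p^*(\omega_S)})$ works out — after tracking the scalar factor $t$, which corresponds precisely to the $t^{q-1}$ normalization built into $\varphi_t$ acting on bivectors (bidegree with $q=2$ gives $t^{1}$, matching the explicit $t$ in $\pi_t$) — to be $\varphi_t(\gamma) + (\text{correction from removing the old }\omega_S\text{ term})$. More precisely, I expect to get that the Dirac element of $\pi_t$ equals $t\varphi_t(\gamma) - (t-1)p^*(\omega_S) + (t-1)p^*(\omega_S) = \ldots$; the bookkeeping here is the crux, and the point of the gauge term is exactly to cancel the $t^{-1}\omega_S|_S$ singular term in $\varphi_t(\gamma)$ coming from $\gamma|_S = \omega_S \in \textrm{gr}_0$. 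After this cancellation, the Dirac element of $\pi_t$ extends smoothly to $t=0$ with value $\omega_S + d^1_S\gamma = j^1_S\gamma$, by the very definition of the formal expansion $\varphi_t(\gamma) \cong t^{-1}\omega_S + d_S\gamma + \ldots$ and Lemma \ref{TheFormula}.

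Having identified $\gamma_{\pi_t}$ with a smooth path of Dirac elements whose value at $t=0$ is $j^1_S\gamma$, I would invoke Proposition \ref{Dirac-Poisson} (the $1$-$1$ correspondence between Dirac elements with full support and horizontally nondegenerate Poisson structures) to conclude that $\pi_0 = \lim_{t\to 0}\pi_t$ is exactly the horizontally nondegenerate Poisson structure on $N = \mathrm{supp}(j^1_S\gamma)$ corresponding to $j^1_S\gamma$, which is $\plin(S)$ by definition. This also justifies Remark \ref{Remark_why_pite_is_smooth} (that $\pi_t$ is defined and smooth on a neighborhood of $S$ for all $t \in [0,1]$): the supports of $\gamma_{\pi_t}$ form an open family containing $S\times[0,1]$, so by the Tube Lemma \ref{TubeLemma} there is a common neighborhood. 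The main obstacle I anticipate is purely the sign-and-scalar bookkeeping in the second step: correctly tracking how the gauge transformation by $(t-1)p^*(\omega_S)$, the pullback $\mu_t^*$, and the overall multiplication by $t$ combine at the level of the components $(\pi^{\mathrm{v}}, \Gamma_\pi, \mathbb{F}_\pi)$ of $\gamma$, and verifying that the resulting path is genuinely $\varphi_t$ applied to $\gamma$ with the leaf-term subtracted rather than something off by a power of $t$ in one of the three components. Everything else is a direct appeal to the correspondence theorems and the definition of jets already established.
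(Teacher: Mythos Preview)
Your proposal is correct and follows essentially the same route as the paper: both compute the Dirac element $\gamma_t$ of $\pi_t$ by tracking how the three operations (gauge by $(t-1)p^*(\omega_S)$, pullback $\mu_t^*$, rescaling by $t$) act on the components of $\gamma$, arriving at $\gamma_t = \varphi_t(\gamma) + (1-t^{-1})\omega_S$, which extends smoothly to $j^1_S\gamma$ at $t=0$. The paper carries out the computation at the level of spanning elements of the Dirac structure $L_t = t\mu_t^*(L_\pi^{(t-1)\omega_S})$ rather than component-by-component, which sidesteps exactly the scalar bookkeeping you flag as the main obstacle; note in particular that your tentative formula ``$t\varphi_t(\gamma) - (t-1)p^*(\omega_S) + (t-1)p^*(\omega_S)$'' is not quite right (the overall $t$ on the bivector does not correspond to a single power of $t$ on $\gamma$, since the three components scale differently), but the correct target $\varphi_t(\gamma) + (1-t^{-1})\omega_S$ is straightforward to recover once you work with the Dirac-structure rescaling $tL$ directly.
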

\begin{proof}
Recall that the path $\pi_t$ from Definition \ref{definition_first_order_approx} is given by
\[\pi_t=t\mu_t^*(\pi^{(t-1)\omega_S}),\ t\neq 0 .\]
This is not well-defined everywhere on $E$ as a Poisson structure, but it is a well-defined Dirac structure $L_t$ on $E$,
which is given by
\[L_t=t\mu_t^*(L_{\pi}^{(t-1)\omega_S}),\ t\neq 0,\]
where we use the rescaling of Dirac structures from subsection \ref{subsection_Dirac}. The fact that $S$ is a
symplectic leaf of $\pi$ implies that $L_{\pi}$ is horizontally nondegenerate on some open $U$ containing $S$. We
denote the corresponding Dirac element by
\[\gamma=(\pi^{\mathrm{v}},\Gamma_{\pi},\mathbb{F}_{\pi})\in \widetilde{\Omega}^2_{U}.\]
We claim that $L_t$ is horizontally nondegenerate on $\frac{1}{t}U$, and we compute its Dirac element $\gamma_t\in
\widetilde{\Omega}^2_{\frac{1}{t}U}$. Recall that $L_{\pi}$ is spanned by
\[X+\mathbb{F}_{\pi}^{\sharp}(X),\ \  \pi^{\mathrm{v},\sharp}(\alpha)+\alpha, \ \  X\in H_{\gamma}, \ \alpha\in H_{\gamma}^{\circ}.\]
Since $\omega_S$ is horizontal, it follows that $L_t$ is spanned by
\[t\mu_t^*(X)+\left(\mu_t^*(\mathbb{F}_{\pi})+(t-1)\omega_S\right)^{\sharp}\mu_t^*(X),\  t\mu_t^*(\pi^{\mathrm{v}})^{\sharp}\mu_t^*(\alpha)+\mu_t^*(\alpha),\]
with $X\in H_{\gamma}$ and $\alpha\in H_{\gamma}^{\circ}$. Observe that $H_{t}:=\mu_t^*(H_{\gamma})$ is the
horizontal distribution corresponding to $\mu_t^*(\Gamma_{\pi})$, and that
$H^{\circ}_{t}=\mu_t^*(H_{\gamma}^{\circ})$, both defined on $\frac{1}{t}U$. Therefore, $L_t$ is spanned by the
elements
\[X+t^{-1}\left(\mu_t^*(\mathbb{F}_{\pi})-\omega_S+t\omega_S\right)^{\sharp}(X),\ \  t\mu_t^*(\pi^{\mathrm{v}})^{\sharp}(\alpha)+\alpha,\]
with $X\in H_{t}$ and $\alpha\in H_{t}^{\circ}$. This shows that $L_t$ is the Dirac structure $L_{\gamma_t}$ on
$\frac{1}{t}U$, corresponding to the Dirac element
\[\gamma_t:=t\mu_t^*(\pi^{\mathrm{v}})+\mu_t^*(\Gamma_{\pi})+t^{-1}\mu_t^*(\mathbb{F}_{\pi})+(1-t^{-1})\omega_S=\gamma_{|S}+\frac{t\varphi_t(\gamma)-\gamma_{|S}}{t}.\]
That $\gamma_t$ is Dirac can also be checked directly:
\begin{align*}
p_S(\gamma_t)&=p_S(\varphi_t(\gamma)+(1-t^{-1})\omega_S)=p_S(\gamma)=\gamma_S,\\
[\gamma_t,\gamma_t]_{\ltimes}&=[\varphi_t(\gamma),\varphi_t(\gamma)]_{\ltimes}+2(1-t^{-1})[\varphi_t,\omega_S]_{\ltimes}=\\
&=\varphi_t([\gamma,\gamma]_{\ltimes})+2(1-t^{-1})d\omega_S=0.
\end{align*}
Also, it is clear that $\gamma_t$ extends smoothly at $t=0$, with $\gamma_0=j^1_S\gamma$, and therefore also
$\pi_0=\plin(S)$. This finishes the proof.
\end{proof}

\begin{remark}\rm\label{Remark_why_pite_is_smooth}
The proof above implies the assertions made before Definition \ref{definition_first_order_approx}, namely that $\pi_t$
extends smoothly at $t=0$, and that on some neighborhood of $S$, $\pi_t$ is defined for all $t\in [0,1]$. The fact that
such an open exists, follows by the Tube Lemma and the fact that the support of $\gamma_t$ contains $S$ for all $t$
(simply because $\gamma_{t|S}=\omega_S$).
\end{remark}

We prove now that $d^1_S\gamma$ encodes the structure of the Lie algebroid $A_S$, the restriction of the cotangent
Lie algebroid of $\pi$ to $S$ (see also \cite{Vorobjev}).

\begin{lemma}\label{Lemma_algebroid_cutarica}
The Dirac element $d^1_S\gamma$ induces a Lie algebroid structure on $A:=TS\oplus E^*$, with anchor the first
projection and Lie bracket $[\cdot,\cdot]_A$ given by
\begin{equation}\label{EQ_Lie_bracket}
 [\alpha, \beta]_{A}=\pi^{\mathrm{v}}_{\mathrm{lin}}(\alpha,\beta), \ [X,\alpha]_{A}= \nabla_X(\alpha), \ [X, Y]_{A}= [X, Y]+ \mathbb{F}_{\mathrm{lin}}(X,Y),
 \end{equation}
for $X,Y\in\mathfrak{X}(S)$, $\alpha,\beta\in\Gamma(E^*)$, where $\nabla$ is the covariant derivative corresponding
to $\Gamma_{\mathrm{lin}}$. Moreover the map
\[A=TS\oplus E^*\stackrel{(\omega_S^{\sharp},I)}{\rmap}T^*S\oplus E^*=A_S\]
is an isomorphism of Lie algebroids, where $A_S$ is the Lie algebroid corresponding to the leaf $S$ of $\pi$.
\end{lemma}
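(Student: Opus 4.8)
The plan is to verify the two assertions in turn: first that the formulas in \eqref{EQ_Lie_bracket} really define a Lie algebroid structure on $A = TS \oplus E^*$, and second that $(\omega_S^\sharp, I)$ intertwines this bracket with the one on $A_S = T^*M_{|S}$. For the first part I would use Proposition \ref{Proposition_Dirac_cohomology} together with Lemma \ref{Lemma_d_and_j_are_Dirac}: since $d^1_S\gamma$ is a Dirac element, it determines a horizontally nondegenerate Dirac structure $L_{d^1_S\gamma}$ on $E$, and by Corollary \ref{corollary_bracket_explicit} the associated Lie algebroid is $(H_{d^1_S\gamma} \oplus H_{d^1_S\gamma}^\circ, [\cdot,\cdot]_{d^1_S\gamma}, \rho_{d^1_S\gamma})$. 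The point is that because $d^1_S\gamma$ is \emph{linear}, its components $\pi^{\mathrm v}_{\mathrm{lin}}$, $\Gamma_{\mathrm{lin}}$, $\mathbb{F}_{\mathrm{lin}}$ are fiberwise-polynomial of the right degrees, so the connection $\Gamma_{\mathrm{lin}}$ is a genuine linear connection on $E$, $H_{d^1_S\gamma}^\circ$ can be identified $G$-equivariantly with the pullback of $E^*$, and all the structure descends to $S$. Concretely, restricting the bracket formulas of Corollary \ref{corollary_bracket_explicit} to the fiberwise-constant horizontal and vertical sections (i.e.\ to $TS$ via $\mathrm{hor}$ and to $E^*$ via the identification of $H^\circ$ with $p^*E^*$) and reading off the polynomial degrees — $d\mathbb{F}_{\mathrm{lin}}$ evaluated on two horizontal lifts of vector fields on $S$ is fiberwise linear, $\pi^{\mathrm v}_{\mathrm{lin}}$ pairs two constant forms to a constant form, and $\iota_{\pi^{\mathrm v,\sharp}_{\mathrm{lin}}(\alpha)}\iota_{\mathrm{hor}(X)}d\mathbb{F}_{\mathrm{lin}}$ is again constant — yields exactly \eqref{EQ_Lie_bracket}. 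The Jacobi identity and Leibniz rule are then inherited for free from $[\cdot,\cdot]_{d^1_S\gamma}$.

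For the isomorphism statement, the cleanest route is to invoke Proposition \ref{Proposition_reconcile_2}, which identifies $(N, \pi_{\mathrm{lin}}(S))$ with the first order approximation $\pi_0$ from Definition \ref{definition_first_order_approx}. Since $\pi_0 = \lim_{t\to 0}\pi_t$ with $\pi_1 = \pi$ and each $\pi_t = t\mu_t^*(\pi^{(t-1)\omega_S})$ being Poisson-diffeomorphic to $\pi$ (away from $S$, via $\mu_t$ composed with a gauge transformation), the cotangent Lie algebroid restricted to $S$ is unchanged along the whole path: $\mu_t$ fixes $S$ and its differential along $S$ is the identity on $TM_{|S}$, and the gauge transformation by $(t-1)\omega_S$ acts on $T^*M$ by $\mathrm{Id} + \pi^\sharp \omega^\sharp$-type maps which are the identity along the leaf. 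Hence the Lie algebroid of $\pi_0$ on $S$ equals $A_S$. On the other hand, reading the cotangent algebroid of $\pi_{\mathrm{lin}}(S)$ through the isomorphism $\tau_{\pi_{\mathrm{lin}}(S)}$ of Proposition \ref{PoissonCohomology} and the identifications $L_{d^1_S\gamma}^* \cong V \oplus V^\circ$, $V_{|S} \cong E$, $V^\circ_{|S} \cong T^*S$, one gets precisely $A = TS \oplus E^*$ with the bracket \eqref{EQ_Lie_bracket}, and the comparison of the two identifications of $T^*M_{|S} = T^*S \oplus E^*$ is the map $(\omega_S^\sharp, I)$ — the $\omega_S^\sharp$ appearing because $f_{\pi_{\mathrm{lin}}(S)}$ involves $\mathbb{F}_{\pi}_{|S} = \omega_S$ on the horizontal part.

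Alternatively, and perhaps more transparently, one can avoid the limiting argument and compute directly: the cotangent Lie algebroid bracket $[\cdot,\cdot]_\pi$ on $\Omega^1(M)$ restricted to $S$ depends only on the first jet of $\pi$ along $S$ (this is essentially Proposition \ref{1st-jet-Atiyah}), and the first jet of $\pi$ is encoded by $j^1_S\gamma$, whose linear part is $d^1_S\gamma$; then one checks that under $\tau_\pi$ the sections of $A_S$ correspond to fiberwise-constant elements of $\Omega_E^{1,0} \oplus \Omega_E^{0,1}$ and that the bracket $d_\gamma = [\gamma,\cdot]_\ltimes$, when linearized via the Newton formula $d^1_S[\gamma,\cdot]_\ltimes = [d^1_S\gamma,\cdot]_\ltimes + [\omega_S, d^1_S(\cdot)]_\ltimes$ (the last term dropping out on constant sections by Lemma \ref{OmegaS-Central}), reproduces \eqref{EQ_Lie_bracket}.

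The main obstacle I anticipate is bookkeeping rather than conceptual: one must be careful that the identifications $H_{d^1_S\gamma}^\circ|_S \cong T^*S$, $V|_S \cong E$, and the dualities $H^* \cong V^\circ$, $(H^\circ)^* \cong V$ from Proposition \ref{Proposition_Dirac_cohomology} are applied consistently, and that the twist by $\omega_S$ in $f_{\pi_{\mathrm{lin}}(S)}$ is tracked correctly so that the final comparison map comes out as $(\omega_S^\sharp, I)$ and not, say, its inverse or a sign-twisted version. Verifying the third bracket formula $[X,Y]_A = [X,Y] + \mathbb{F}_{\mathrm{lin}}(X,Y)$ requires noting that the curvature term $R_{\Gamma_{d^1_S\gamma}}$ vanishes along $S$ (since $\Gamma_{\mathrm{lin}}$ vanishes there), so that $\iota_{\mathrm{hor}(Y)}\iota_{\mathrm{hor}(X)}d\mathbb{F}_{\mathrm{lin}}$ restricted to the zero section is the constant-in-the-fiber piece $\mathbb{F}_{\mathrm{lin}}(X,Y) \in \Gamma(E^*)$; this and the analogous checks for the other two brackets are the only genuinely computational steps, and they are short.
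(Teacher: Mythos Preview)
Your approach works but is more circuitous than the paper's, and your limiting argument contains an error. The paper applies Corollary~\ref{corollary_bracket_explicit} not to $d^1_S\gamma$ but directly to $\gamma$ itself (i.e.\ to the original $\pi$), obtaining the Lie algebroid $B = H_\pi \oplus H_\pi^\circ$. Since $S$ is a symplectic leaf, Lemma~\ref{lma-last} gives $H_{\pi|S}=TS$ and $H_{\pi}^\circ|_S = E^*$, so $S$ is an orbit of $B$ and one can restrict. The three bracket formulas in Corollary~\ref{corollary_bracket_explicit} involve $L_{\mathrm{hor}(X)}\alpha$, $\iota_{\mathrm{hor}(Y)}\iota_{\mathrm{hor}(X)}d\mathbb{F}_\pi$, and $d\pi^{\mathrm v}(\alpha,\beta)$; when restricted to $S$ these see only the first jet of $\gamma$, hence reduce to $\nabla_X$, $\mathbb{F}_{\mathrm{lin}}$, and $\pi^{\mathrm v}_{\mathrm{lin}}$ respectively. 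The payoff of using $\gamma$ rather than $d^1_S\gamma$ is that the isomorphism to $A_S$ is now automatic: $B \cong L_\pi \cong T^*M$ as Lie algebroids, and restricting this chain to $S$ gives $(X,\alpha)\mapsto \alpha + \omega_S^\sharp(X)$ since $\pi^{\mathrm v}|_S=0$ and $\mathbb{F}_{\pi|S}=\omega_S$. No separate argument connecting back to $\pi$ is needed.

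Your limiting argument to establish this connection is flawed: the differential of $\mu_t$ along $S$ is \emph{not} the identity on $TM_{|S}$ --- it is the identity on $TS$ but multiplication by $t$ on the fibers $E$ --- and the gauge map $I + (t-1)\pi^\sharp\omega_S^\sharp$ is likewise not the identity on $T^*M_{|S}$ (it is $t\cdot I$ on $T^*S$). So the Lie algebroids $A_S(\pi_t)$ are isomorphic along the path, but not literally equal via the identity, and you would have to track these isomorphisms and check they converge to something sensible as $t\to 0$. The clean fix is the one you sketch in your ``alternatively'' paragraph: $j^1_S\gamma_0 = j^1_S(j^1_S\gamma) = j^1_S\gamma$, so $\pi_0$ and $\pi$ share the same first jet at $S$ and hence literally the same $A_S$. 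But at that point you are essentially carrying out the paper's argument, only for $\pi_0$ instead of $\pi$ --- an unnecessary detour.
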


\begin{proof}
We will use Corollary \ref{corollary_bracket_explicit}, which gives the Lie bracket on the Lie algebroid
$B:=H_{\pi}\oplus H_{\pi}^{\circ}$, and an isomorphism between $B\cong L_{\pi}$. Observe that $H_{\pi|S}=TS$
and $H^{\circ}_{\pi|S}=E^*\subset T^*E_{|S}$, thus $\mathrm{hor}(X)_{|S}=X$, and since
$\pi^{\mathrm{v}}_{|S}=0$, it follows that $S$ is an orbit of $B$, and that the anchor of $B_{|S}$ is the first
projection. Moreover, one easily checks the following equalities
\[\left(L_{\mathrm{hor}(X)}(\alpha)\right)_{|S}=\nabla_X(\alpha_{|S}),\ \  \left(\iota_{\mathrm{hor}(Y)}\iota_{\mathrm{hor}(X)}d\mathbb{F}_{\pi}\right)_{|S}=\mathbb{F}_{\mathrm{lin}}(X,Y), \]
\[ d\pi^{\mathrm{v}}(\alpha,\beta)_{|S}=\pi^{\mathrm{v}}_{\mathrm{lin}}(\alpha_{|S},\beta_{|S}), \ \forall \ \alpha,\beta\in\Gamma(H^{\circ}_{\pi}), \ X,Y\in\mathfrak{X}(S),\]
where we use the inclusion $E^*\subset T^*E_{|S}$. These equations show that also the Lie bracket of $B_{|S}$
coincides with that of $A$, thus $B_{|S}=A$ as Lie algebroids. The induced isomorphism between $B_{|S}$ and
$L_{\pi|S}$ becomes $(X,\alpha)\mapsto (X,\alpha+\omega_{S}^{\sharp}(X))$, and composing it with the isomorphism
$L_{\pi|S}\cong A_S$, $(X,\eta)\mapsto \eta$, we obtain the one from the statement.
\end{proof}

The Dirac structure $L_{d_S^1\gamma}$ corresponding to the element $d_S^1\gamma$ satisfies
\[\mu_t^*(L_{d_S^1\gamma})=tL_{d_S^1\gamma}, \ \ t\neq 0.\]
Such Dirac structures deserve a name.

\begin{definition}
A horizontally nondegenerate Dirac structure $L$ on $E$ is called \textbf{a horizontally nondegenerate linear Dirac
structure}, if it satisfies
\[\mu_t^*(L)=tL, \ \ \forall t\neq 0.\]
\end{definition}

Of course, such Dirac structures correspond to linear Dirac elements. Note that any Lie algebroid structure on
$TS\oplus E^*$ with anchor the first projection, is given by formulas as in (\ref{EQ_Lie_bracket}), for some linear
element
\[\gamma_{\mathrm{lin}}=(\plin^{\mathrm{v}},\Gamma_{\mathrm{lin}},\mathbb{F}_{\mathrm{lin}})\in \textrm{gr}_1(\widetilde{\Omega}_{E}^2),\]
such that $p_S(\gamma_{\mathrm{lin}})=\gamma_S$, i.e.\  $\Gamma_{\mathrm{lin}}$ is a linear connection. We
prove now that this element is Dirac, and that this is a bijective correspondence.

\begin{proposition}\label{proposition_one_to_one_lin_Dirac}
There is a one to one correspondence between
\begin{itemize}
\item Lie algebroid structure on structure on $A=TS\oplus E^*$ with anchor $pr_1$,
\item horizontally nondegenerate linear Dirac structures on $E$.
\end{itemize}
Explicitly,
\begin{itemize}
\item the Lie algebroid corresponding to $L$ is
\[A:=L_{|S},\]
\item the Dirac structure corresponding to $A$ is such that the inclusion
\[(E,L)\subset (A^*,\plin(A))\]
is a backward Dirac map.
\end{itemize}
\end{proposition}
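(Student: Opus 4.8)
The plan is to exhibit both maps of the claimed bijection explicitly and check they are mutually inverse, leaning on the structural results about $\widetilde\Omega_E$ and Dirac elements already established. First I would observe that a horizontally nondegenerate linear Dirac structure $L$ on $E$ corresponds, by the correspondence $\gamma\mapsto L_\gamma$ of the earlier proposition, to a Dirac element $\gamma\in\widetilde\Omega_E^2$ satisfying $p_S(\gamma)=\gamma_S$ and the homogeneity condition $\varphi_t(\gamma)=t^{-1}\cdot(t\gamma)$, i.e.\ $\gamma\in\mathrm{gr}_1(\widetilde\Omega_E^2)$ — this is exactly what the rescaling condition $\mu_t^*(L)=tL$ unwinds to, using the local-coordinate description in Remark~\ref{phi-local-coordinates} of how $\varphi_t$ acts. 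So ``horizontally nondegenerate linear Dirac structure'' is the same as ``linear Dirac element'' $\gamma_{\mathrm{lin}}=(\pi_{\mathrm{lin}}^{\mathrm v},\Gamma_{\mathrm{lin}},\mathbb F_{\mathrm{lin}})$ with $\Gamma_{\mathrm{lin}}$ a linear connection.

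Next I would go from a linear Dirac element to a Lie algebroid structure on $A=TS\oplus E^*$. Restricting $L_{\gamma_{\mathrm{lin}}}$ to $S$ (the zero section), the fact that $\pi_{\mathrm{lin}}^{\mathrm v}$ and $\Gamma_{\mathrm{lin}}$ vanish along $S$ (being linear, hence zero on the zero section) shows that $H_{\gamma_{\mathrm{lin}}|S}=TS$ and $H^\circ_{\gamma_{\mathrm{lin}}|S}=E^*\subset T^*E|_S$, so under the identification $L_{\gamma_{\mathrm{lin}}}\cong H_{\gamma_{\mathrm{lin}}}\oplus H^\circ_{\gamma_{\mathrm{lin}}}$ of Corollary~\ref{corollary_bracket_explicit} we get $L_{\gamma_{\mathrm{lin}}}|_S\cong TS\oplus E^*$. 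Since $S$ is an orbit (the anchor of $L_{\gamma_{\mathrm{lin}}}$ maps onto $TS$ along $S$, because $p_S(\gamma_{\mathrm{lin}})=\gamma_S$), the restricted Lie algebroid $A=L_{\gamma_{\mathrm{lin}}}|_S$ is well-defined with anchor the first projection. Evaluating the bracket formulas of Corollary~\ref{corollary_bracket_explicit} along $S$ — exactly the computation already carried out in the proof of Lemma~\ref{Lemma_algebroid_cutarica} — yields the brackets \eqref{EQ_Lie_bracket}. This gives the map $L\mapsto L|_S$.

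For the reverse direction, given a Lie algebroid structure on $A=TS\oplus E^*$ with anchor $pr_1$, I would write its bracket in the form \eqref{EQ_Lie_bracket}, read off a triple $\gamma_{\mathrm{lin}}=(\pi_{\mathrm{lin}}^{\mathrm v},\Gamma_{\mathrm{lin}},\mathbb F_{\mathrm{lin}})\in\mathrm{gr}_1(\widetilde\Omega_E^2)$ with $p_S(\gamma_{\mathrm{lin}})=\gamma_S$, and check that $[\gamma_{\mathrm{lin}},\gamma_{\mathrm{lin}}]_\ltimes=0$. The cleanest way: the four bidegree components of $[\gamma_{\mathrm{lin}},\gamma_{\mathrm{lin}}]_\ltimes$ are, component by component, linear (in the grading) elements of $\Omega_E^3$; by the identification $\mathrm{gr}_l(\Omega_E^{p,q})=\Omega^p(S,\Lambda^qE\otimes\mathcal S^lE^*)$ they are uniquely determined by their restriction to $S$, and that restriction is precisely the Jacobiator of the bracket \eqref{EQ_Lie_bracket}, which vanishes by hypothesis. (Equivalently, one can invoke the correspondence in the proof of the earlier proposition between the vanishing of the four components of $[\gamma,\gamma]_\ltimes$ and the four pieces of the Dorfman-involutivity/Jacobi identity for $L_\gamma$.) Then $L:=L_{\gamma_{\mathrm{lin}}}$ is the desired horizontally nondegenerate Dirac structure, and it is linear since $\gamma_{\mathrm{lin}}$ is homogeneous of degree one. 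Finally I would verify the two constructions are inverse — one composite is the identity by Lemma~\ref{Lemma_algebroid_cutarica}'s computation, the other because a linear Dirac element is determined by its first jet, i.e.\ by its value on $S$ — and identify the inclusion $(E,L)\hookrightarrow(A^*,\pi_{\mathrm{lin}}(A))$ as a backward Dirac map by comparing $\pi_{\mathrm{lin}}(A)$ (whose defining bracket relations are listed in subsection~\ref{Subsection_the_local_model_the_general_case}) against the pullback of $L$, the key point being the general fact (from subsection~\ref{subsection_Dirac}) that a submanifold inclusion is backward Dirac exactly when the pulled-back Dirac structure agrees with the given one fibrewise. The main obstacle is the bookkeeping in this last identification: matching the linear Poisson bracket on $A^*$ against the Dirac structure $L$ on $E\subset A^*$ requires carefully tracking the identifications $A_S\cong TS\oplus E^*$, $L^*\cong V\oplus V^\circ$, and the splitting used to embed $E=K^*$ into $A^*$, so that the three bracket relations defining $\pi_{\mathrm{lin}}(A)$ reproduce the three brackets of Corollary~\ref{corollary_bracket_explicit} restricted appropriately — everything else is a formal consequence of the graded-Lie-algebra machinery already in place.
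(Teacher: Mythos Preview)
Your plan is correct, but the paper takes a more economical route at the key step. Where you propose to establish $[\gamma_{\mathrm{lin}},\gamma_{\mathrm{lin}}]_\ltimes=0$ directly---by identifying each bidegree component in $\mathrm{gr}_1(\Omega_E^3)$ with the corresponding piece of the Jacobiator of the bracket \eqref{EQ_Lie_bracket}---the paper instead gets integrability \emph{for free} from the backward Dirac property. Concretely, it lets $L$ be the (a priori only pointwise-defined) pullback of $L_{\pi_{\mathrm{lin}}(A)}$ to $E$, checks by an explicit computation with the Poisson bracket on $A^*$ that the spanning elements $\mathrm{hor}(X)+\mathbb F_{\mathrm{lin}}^\sharp(X)$ and $\pi_{\mathrm{lin}}^{\mathrm v,\sharp}(\alpha)+\alpha$ of $L_{\gamma_{\mathrm{lin}}}$ lie in $L$, and then observes that dimension forces $L_{\gamma_{\mathrm{lin}}}=L$. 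Since $L$ is now a smooth pullback of an honest Dirac structure, it is Dirac, hence so is $L_{\gamma_{\mathrm{lin}}}$. One computation thus yields both the backward-Dirac statement and integrability simultaneously.

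Your approach trades this for a cleaner conceptual picture (the Jacobiator of $A$ literally \emph{is} the Maurer--Cartan obstruction for $\gamma_{\mathrm{lin}}$), at the cost of a second computation to verify the backward-Dirac claim. Both are valid; the paper's version is shorter because the Hamiltonian-vector-field calculation on $A^*$ does double duty. One small caveat in your write-up: the phrase ``uniquely determined by their restriction to $S$'' for elements of $\mathrm{gr}_1$ is slightly off---such elements vanish along $S$ in the sense $d_S^0=0$; what you mean is that they are determined by the associated tensor in $\Omega^p(S,\Lambda^q E\otimes E^*)$ via the identification \eqref{computation-grading}, and it is \emph{that} tensor which equals the Jacobiator.
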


\begin{proof}
Both structures are encoded by elements $\gamma_{\mathrm{lin}}\in\textrm{gr}_1(\widetilde{\Omega}_{E}^2)$,
satisfying $p_S(\gamma_{\mathrm{lin}})=\gamma_S$. Also, as in the proof of the previous lemma, if
$\gamma_{\mathrm{lin}}$ is Dirac, then the Lie algebroid $A:=L_{\gamma_{\mathrm{lin}}|S}$ has bracket
determined by $\gamma_{\mathrm{lin}}$ via formulas (\ref{EQ_Lie_bracket}). Thus it is enough to prove that for a
Lie algebroid structure on $A$, determined by $\gamma_{\mathrm{lin}}$, the inclusion
$(E,L_{\gamma_{\mathrm{lin}}})\subset (A^*,\plin(A))$ is a backward Dirac map. Integrability of
$L_{\gamma_{\mathrm{lin}}}$ (or equivalent $[\gamma_{\mathrm{lin}},\gamma_{\mathrm{lin}}]_{\ltimes}=0$)
will follow automatically.

Denote by $L\subset TE\oplus T^*E$ the pullback of $L_{\plin(A)}$, i.e.\ $L$ is spanned by
\[\alpha+\plin^{\sharp}(A)(\theta), \ \textrm{where} \ \theta\in T^*A^*_{|E} \ \textrm{satisfies}\ \theta_{|TE}=\alpha, \ \plin^{\sharp}(A)(\theta)\in TE.\]
A priori, we know that, at every point in $E$, $L$ is a maximal isotropic subspace, but we don't know if it is a smooth
subbundle. If this is the case, then it is automatically a Dirac structure. It is enough to check that
$L_{\gamma_{\mathrm{lin}}}\subset L$, since, by comparing dimensions, $L_{\gamma_{\mathrm{lin}}}=L$, so $L$
is smooth and $L_{\gamma_{\mathrm{lin}}}$ is Dirac.

The cotangent bundle of $A^*=T^*S\oplus E$ is spanned by the differentials of functions of the following type
\begin{itemize}
\item $\widetilde{X}\in C^{\infty}(A^*)$, the linear function corresponding to $X\in\mathfrak{X}(S)$,
\item $\widetilde{\xi}\in C^{\infty}(A^*)$, the linear function corresponding to $\xi\in\Gamma(E^*)$,
\item $\widetilde{f}\in C^{\infty}(A^*)$, the pullback of $f\in C^{\infty}(S)$,
\end{itemize}
and the nonzero Poisson brackets of such functions are
\[\{\widetilde{X},\widetilde{Y}\}=\widetilde{[X,Y]}+\widetilde{\mathbb{F}_{\mathrm{lin}}(X,Y)}, \ \{\widetilde{X},\widetilde{\xi}\}=\widetilde{\nabla_X(\xi)}, \]
\[\{\widetilde{X},\widetilde{f}\}=\widetilde{L_X(f)}, \ \{\widetilde{\xi},\widetilde{\eta}\}=\widetilde{\pi^{\mathrm{v}}_{\mathrm{lin}}(\xi,\eta)}.\]
This shows that $H_{\widetilde{X}}$ preserves the function in $p^*(C^{\infty}(E))$, where $p:A^*\to E$ is the
projection, or equivalently it is $p$-projectable, and clearly it projects to $\mathrm{hor}(X)$. This implies the following
relation
\begin{align*}
\plin(A)(d\widetilde{X}&+p^*(\mathbb{F}_{\mathrm{lin}}^{\sharp}(X)),d\widetilde{Y})=\widetilde{[X,Y]}+\widetilde{\mathbb{F}_{\mathrm{lin}}(X,Y)}-p^*(\mathbb{F}_{\mathrm{lin}}^{\sharp}(X))(H_{\widetilde{Y}})=\\
&=\widetilde{[X,Y]}+\widetilde{\mathbb{F}_{\mathrm{lin}}(X,Y)}-p^*(\mathbb{F}_{\mathrm{lin}}(X,\mathrm{hor}(Y)))=\widetilde{[X,Y]}.
\end{align*}
Restricting to $E\subset A^*$, this gives
\[\plin(A)(d\widetilde{X}+p^*(\mathbb{F}_{\mathrm{lin}}^{\sharp}(X)),d\widetilde{Y})_{|E}=0.\]
Since the differentials $d\widetilde{Y}$ span $(TE)^{\circ}$, this implies that
\[\plin(A)^{\sharp}(d\widetilde{X}+p^*(\mathbb{F}_{\mathrm{lin}}^{\sharp}(X)))_{|E}\in TE.\]
We claim that this vector field equals $\mathrm{hor}(X)$. Since $H_{\widetilde{X}}$ projects to $\mathrm{hor}(X)$,
it suffices to check that $\plin(A)^{\sharp}(p^*(\mathbb{F}_{\mathrm{lin}}^{\sharp}(X)))_{|E}=0$. But this is
clear, since $p^*(\mathbb{F}_{\mathrm{lin}}^{\sharp}(X))$ is in the span of $d\widetilde{f}$, and
\[\plin(A)(d\widetilde{f},d\widetilde{\xi})=0, \ \ \ \plin(A)(d\widetilde{f},d\widetilde{g})=0.\]
Since $d\widetilde{X}_{|E}=0$, we have shown that the following elements are in $L$
\[\mathrm{hor}(X)+\mathbb{F}_{\mathrm{lin}}^{\sharp}(X)\in L, \ \forall \ X\in\mathfrak{X}(S).\]
Consider now $\alpha\in \Gamma(H^{\circ})\subset \Omega^1(E)$. Since $H_{\widetilde{X}}$ projects to
$\mathrm{hor}(X)$, it follows that $p^*(\alpha)(H_{\widetilde{X}})=0$, or equivalently that
$\plin(A)(p^*(\alpha),d\widetilde{X})=0$. As before, this implies that $\plin^{\sharp}(A)(p^*(\alpha))_{|E}\in TE$.
Also, for any $\xi\in \Gamma(E^*)$, by the formulas for the Poisson bracket, we have that
\[\plin(A)(p^*(\alpha),d\widetilde{\xi})_{|E}=\pi^{\mathrm{v}}_{\mathrm{lin}}(\alpha,d\xi),\]
hence $\plin(A)^{\sharp}(p^*(\alpha))_{|E}=\pi_{\mathrm{lin}}^{\mathrm{v},\sharp}(\alpha)$. So, we obtain that
\[\pi_{\mathrm{lin}}^{\mathrm{v},\sharp}(\alpha)+\alpha\in L, \ \ \forall \alpha\in\Gamma(H^{\circ}),\]
and this finishes the proof.
\end{proof}

The Lie algebroid $A_S$ of the leaf has a canonical representation on the conormal bundle of the leaf, $\nu_S^*=E^*$,
since this coincides with the kernel of its anchor. The cohomology of $A_S$ with coefficients in this representation (see
subsection \ref{subsection_Lie_algebroid_cohomology}) can also be computed using our algebraic tools.

\begin{lemma}\label{cohom-restricted-algebroid}
For any $l\geq 0$, the complex $(\Omega^{\bullet}(A_S, \mathcal{S}^l\nu_S^*), d_{A_S})$ computing the
cohomology of $A_S$ with coefficients in the $l$-th symmetric power of $\nu_{S}^{*}= E^*$ is canonically
isomorphic to the complex $(\textrm{gr}_{l}(\Omega_{E}^{\bullet}), [d^{1}_{S}\gamma, \cdot]_{\ltimes})$.
\end{lemma}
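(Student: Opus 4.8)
The plan is to exhibit an explicit isomorphism of cochain complexes, using the same pattern as in Proposition \ref{PoissonCohomology} and Proposition \ref{Proposition_Dirac_cohomology}: identify both complexes with the Lie algebroid cohomology of one and the same transitive Lie algebroid, namely the linear Dirac structure attached to $d^1_S\gamma$. More precisely, by Lemma \ref{Lemma_algebroid_cutarica} the element $d^1_S\gamma$ induces on $A = TS\oplus E^*$ a Lie algebroid structure isomorphic to $A_S$, and by Proposition \ref{proposition_one_to_one_lin_Dirac} it corresponds to a horizontally nondegenerate linear Dirac structure $L_{d^1_S\gamma}$ on $E$ whose restriction to $S$ recovers $A$. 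Proposition \ref{Proposition_Dirac_cohomology} then tells us that the cohomology of $L_{d^1_S\gamma}$ is computed by $(\Omega_E^\bullet, d_{d^1_S\gamma})$ with $d_{d^1_S\gamma} = [d^1_S\gamma, \cdot]_{\ltimes}$.

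The key point is to see that this differential preserves the grading $\mathrm{gr}_l(\Omega_E^\bullet)$, so that the complex splits as a direct sum over $l$, and that the $l$-th graded piece is exactly the complex computing $H^\bullet(A_S, \mathcal{S}^l\nu_S^*)$. The first assertion follows from the functoriality of the dilation operators: since $d^1_S\gamma \in \mathrm{gr}_1(\widetilde\Omega_E^2)$ and $[\mathrm{gr}_1, \mathrm{gr}_l]_{\ltimes}\subset \mathrm{gr}_{l}$ (one checks $\varphi_t[u,v]_{\ltimes} = [\varphi_t u, \varphi_t v]_{\ltimes}$ and compares powers of $t$, exactly as in Lemma \ref{Newton-formula}), the operator $[d^1_S\gamma,\cdot]_{\ltimes}$ maps $\mathrm{gr}_l(\Omega_E^\bullet)$ to itself. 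For the second assertion, I would use the identification \eqref{computation-grading}, $\mathrm{gr}_l(\Omega_E^{p,q}) = \Omega^p(S,\Lambda^q E\otimes \mathcal{S}^l E^*)$, together with the isomorphism $A\cong A_S$ of Lemma \ref{Lemma_algebroid_cutarica}: under $\omega_S^\sharp: TS\to T^*S$ one gets $\Lambda^\bullet(V\oplus V^\circ)^{*}\cong \Lambda^\bullet A^*$, so that $\mathrm{gr}_l(\Omega_E^\bullet)\cong \Omega^\bullet(A,\mathcal{S}^l E^*) = \Omega^\bullet(A_S,\mathcal{S}^l\nu_S^*)$ as graded vector spaces. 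It then remains to check that under this identification the restriction of $[d^1_S\gamma,\cdot]_{\ltimes}$ to $\mathrm{gr}_l$ becomes the Chevalley--Eilenberg differential $d_{A_S}$ for the representation on $\mathcal{S}^l\nu_S^*$; this is the Lie-algebroid-cohomology analogue of the verification carried out in the proof of Proposition \ref{Proposition_Dirac_cohomology}, and since both operators act by derivations it suffices to match them on generators (functions on $S$, sections of $E$, and sections of $E^*$ regarded as linear functions), where the formulas \eqref{EQ_Lie_bracket} for the bracket on $A$ give the answer directly.

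The main obstacle I expect is bookkeeping at $l=1$, $q=1$, where $\mathrm{gr}_1(\widetilde\Omega_E^{p,1})$ involves linear vector fields rather than sections of $\Lambda E\otimes\mathcal{S}^1E^*$; one must be careful that the target complex $(\Omega_E^\bullet, d_{d^1_S\gamma})$ lives inside $\Omega_E$ and not the larger $\widetilde\Omega_E$, so that only the genuine $\Omega_E$-part of $d^1_S\gamma$ acts, and that the representation on $\mathcal{S}^1\nu_S^* = \nu_S^*$ is precisely the canonical one coming from the kernel of the anchor of $A_S$. Concretely I would verify: first, that $d^1_S\gamma$ acting by $[\cdot,\cdot]_{\ltimes}$ on $\mathrm{gr}_1(\Omega_E^{\bullet,1})$ reproduces the covariant derivative $\nabla$ induced by $\Gamma_{\mathrm{lin}}$ twisted by $\mathbb{F}_{\mathrm{lin}}$ and $\pi^{\mathrm{v}}_{\mathrm{lin}}$ exactly as in \eqref{EQ_Lie_bracket}; and second, that this matches the representation of $A_S$ on $\nu_S^*$ after transporting along the isomorphism $A\cong A_S$. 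Once these identifications are in place the statement follows by assembling the graded pieces, and the naturality of all the maps involved (the dilation operators, the isomorphism of Lemma \ref{Lemma_algebroid_cutarica}, and the identification \eqref{computation-grading}) gives that the isomorphism is canonical, i.e. independent of auxiliary choices.
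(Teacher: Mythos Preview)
Your approach is correct but takes a different route from the paper. The paper does not work directly with the linear Dirac element: instead it cites forward to Proposition~\ref{Proposition_isomorphic_complexes} in chapter~\ref{ChFormalRigidity}, which identifies $(\Omega^\bullet(A_S,\mathcal S^l\nu_S^*),d_{A_S})$ with the jet--quotient complex $(I_S^l\mathfrak X^\bullet(E)/I_S^{l+1}\mathfrak X^\bullet(E),d_\pi^l)$; it then transfers this via the chain isomorphism $\tau_\pi$ of Proposition~\ref{PoissonCohomology} to $(I_S^l\Omega_E^\bullet/I_S^{l+1}\Omega_E^\bullet,d_\gamma^l)$, identifies this quotient with $\mathrm{gr}_l(\Omega_E^\bullet)$ via the splitting $d_S^l$, and finally reads off the differential from the Newton formula (Lemma~\ref{Newton-formula}): $d_S^l[\gamma,u]_\ltimes=[d_S^1\gamma,u]_\ltimes$ for $u\in\mathrm{gr}_l$. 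You bypass $\pi$ and $\tau_\pi$ entirely, identifying $\mathrm{gr}_l(\Omega_E^\bullet)\cong\Omega^\bullet(A,\mathcal S^lE^*)$ directly from \eqref{computation-grading} and then matching $[d_S^1\gamma,\cdot]_\ltimes$ with the Koszul differential on generators. Your route is more self-contained (no forward reference, no use of the full $\pi$), but the price is the explicit generator-by-generator verification, which is essentially redoing the linear case of the computation in Proposition~\ref{Proposition_Dirac_cohomology}; the paper's route absorbs that work into Proposition~\ref{Proposition_isomorphic_complexes} and the already-established $\tau_\pi$. Your discussion of the ``obstacle'' at $l=1$, $q=1$ is overcautious: since the target complex lives in $\Omega_E$ (not $\widetilde\Omega_E$) and $[d_S^1\gamma,\cdot]_\ltimes$ sends $\Omega_E$ to $\Omega_E$, the discrepancy between $\mathrm{gr}_1(\Omega_E^{p,1})$ and $\mathrm{gr}_1(\widetilde\Omega_E^{p,1})$ never enters.
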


\begin{proof}
We will use Proposition \ref{Proposition_isomorphic_complexes} from chapter \ref{ChFormalRigidity}, which states
that the complex $(\Omega^{\bullet}(A_S, \mathcal{S}^l\nu_S^*), d_{A_S})$ is canonically isomorphic to the complex
\begin{equation}\label{EQ_complex}
(I^{l}_S\mathfrak{X}^{\bullet}(E)/I^{l+1}_S\mathfrak{X}^{\bullet}(E),d_{\pi}^l),
\end{equation}
where $I_S$ is the vanishing ideal of $S$, $I^k_S$ is its $k$-th power and $d_{\pi}^l$ is the differential induced by
$d_{\pi}=[\pi,\cdot]$. Since the map $\tau_{\pi}:\mathfrak{X}^{\bullet}(E)\to \Omega^{\bullet}_E$ comes from a
vector bundle isomorphism, it restricts to an isomorphism
\[I^{l}_S\mathfrak{X}^{\bullet}(E)\cong I^{l}_S\Omega^{\bullet}_E,\]
thus, by Proposition \ref{PoissonCohomology}, it induces an isomorphism between (\ref{EQ_complex}) and
\begin{equation}\label{EQ_complex2}
(I^{l}_S\Omega_E^{\bullet}/I^{l+1}_S\Omega_E^{\bullet},d_{\gamma}^l),
\end{equation}
where $d_{\gamma}^l$ is the differential induced by $d_{\gamma}=[\gamma,\cdot]_{\ltimes}$. As vector spaces, this
quotient can be identified with $\textrm{gr}_{l}(\Omega_{E}^{\bullet})$, since
\[I^{l}_S\Omega_E^{\bullet}=I^{l+1}_S\Omega_E^{\bullet}\oplus  \textrm{gr}_{l}(\Omega_{E}^{\bullet}),\]
with projection $d_S^l:I^{l}_S\Omega_E^{\bullet}\to \textrm{gr}_{l}(\Omega_{E}^{\bullet})$. Using the
Newton-type formula from Lemma \ref{Newton-formula}, we obtain the expected differential on
$\textrm{gr}_{l}(\Omega_{E}^{\bullet})$
\[d_{\gamma}^l(u)=d_S^l[\gamma,u]_{\ltimes}=[d_S^1\gamma,d_S^lu]_{\ltimes}=[d_S^1\gamma,u]_{\ltimes}, \ \ u=d_S^lu\in \textrm{gr}_{l}(\Omega_{E}^{\bullet}).\qedhere\]
\end{proof}

We end this section by proving the results stated in subsection \ref{Subsection_the_local_model_the_general_case}.

\begin{proof}[Proof of Lemma \ref{Lemma_pi_la_omega_S}]
We have to check that the map $I+p^*(\omega_S)^{\sharp}\circ \plin^{\sharp}(A)$ is invertible. Let $X,Y\in TE$. We
have that
\[\langle p^*(\omega_S)^{\sharp}\circ \plin^{\sharp}(A)\circ p^*(\omega_S)^{\sharp}(X), Y\rangle=-\plin(A)(\xi_X,\xi_Y),\]
where $\xi_X:=p^*(\omega_S)^{\sharp}(X)$ and $\xi_Y:=p^*(\omega_S)^{\sharp}(Y)$. Now, $\xi_X,\xi_Y$ are in the
span of $d \widetilde{f}$, for $f\in C^{\infty}(S)$, and since $\plin(A)(d\widetilde{f},d\widetilde{g})=0$, it follows
that $\plin(A)(\xi_X,\xi_Y)=0$. Hence
\[p^*(\omega_S)^{\sharp}\circ \plin^{\sharp}(A)\circ p^*(\omega_S)^{\sharp}=0.\]
This shows that $p^*(\omega_S)^{\sharp}\circ \plin^{\sharp}(A)$ is nilpotent, thus $I+p^*(\omega_S)^{\sharp}\circ
\plin^{\sharp}(A)$ is invertible. Moreover, the resulting Poisson structure is given by
\[\plin^{p^*(\omega_S),\sharp}(A)=\plin^{\sharp}(A)-\plin^{\sharp}(A)\circ p^*(\omega_S)^{\sharp}\circ \plin^{\sharp}(A).\qedhere\]
\end{proof}

\begin{proof}[Proof of Proposition \ref{proposition_restricting_to_cosymplectic}]
Using the splitting $\sigma$, we identify the Lie algebroid $A$ with $TS\oplus E^*$ with anchor the first projection,
where $E=K^*$. Let $\gamma_{\mathrm{lin}}$ be the corresponding Dirac element. By Proposition
\ref{proposition_one_to_one_lin_Dirac}, we have that the inclusion $(E,L_{\gamma_{\mathrm{lin}}})\subset
(A^*,\plin(A))$ is a backward Dirac map. Gauge transforming both sides by $p^*(\omega_S)$ preserves this relation,
thus $(E,L_{\gamma_{\mathrm{lin}}+\omega_S})\subset (A^*,\plin^{p^*(\omega_S)}(A))$ is a backward Dirac map.
The Poisson support of $\gamma_{\mathrm{lin}}+\omega_S$ coincides with the open $N(A)$, where $E$ is Poisson
transversal, and it includes $S$. The induced Poisson structure $\pi_A$, is the one corresponding to
$\gamma_{\mathrm{lin}}+\omega_S$, therefore, by Lemma \ref{lma-last} it has $(S,\omega_S)$ as a symplectic leaf,
and by Lemma \ref{Lemma_algebroid_cutarica} it has the expected Lie algebroid.
\end{proof}

\begin{proof}[Proof of Proposition \ref{Proposition_reconcile}]
Clearly, we may assume that $\Psi=\mathrm{Id}$ and that $E=\nu_S=M$. Denote by $\gamma$ the corresponding
Dirac element, defined on some open around $S$. By the proof of Proposition
\ref{proposition_restricting_to_cosymplectic}, we have that $\pi_{A_S}$ coincides with the Poisson structure
$\plin(S)$ corresponding to the Dirac element $j^1_S\gamma=\gamma_{|S}+d^1_S\gamma$, and by Proposition
\ref{Proposition_reconcile_2}, this coincides with $\pi_0$. We also note that, on $N(A_S)$, $\pi_0$ can be given as the
limit $\lim_{t\to 0}\pi_t$. To see this, note that, by the continuity of the family $\gamma_t$ from the proof of
Proposition \ref{Proposition_reconcile_2}, for every point $e\in N(A_S)=\mathrm{supp}(j^1_S\gamma)$, we find
$\epsilon>0$, such that $e\in \mathrm{supp}(\gamma_t)$ for all $t\in [0,\epsilon)$, hence $\pi_t$ is defined around $e$
for $t\in [0,\epsilon)$, and so $\pi_{0,e}=\lim_{t\to 0}\pi_{t,e}$.
\end{proof}

\begin{proof}[Proof of Proposition \ref{proposition_splitting_equivalent}]
Let $\sigma:A\to E^*=K$ be a splitting inducing a decomposition $A=TS\oplus E^*$. We denote by $[\cdot,\cdot]_A$
the resulting Lie bracket on $TS\oplus E^*$ and the induced linear Dirac element by
\[\gamma_{\mathrm{lin}}=(\plin^{\mathrm{v}},\Gamma_{\mathrm{lin}},\mathbb{F}_{\mathrm{lin}})\in \textrm{gr}_1(\widetilde{\Omega}_{E}^2).\]
A second splitting is given by an element $\lambda\in \Omega^1(S,E^*)=\textrm{gr}_1(\Omega_{E}^{1,0})$, i.e.\
\[\sigma_{\lambda}:A=TS\oplus E^*\rmap E^*,  \ (X,\alpha)\mapsto \lambda(X)+\alpha.\]
Let $[\cdot,\cdot]_{\lambda}$ be the corresponding Lie bracket and the linear Dirac element be
\[\gamma_{\mathrm{lin}}^{\lambda}=(\plin^{\mathrm{v},\lambda},\Gamma_{\mathrm{lin}}^{\lambda},\mathbb{F}_{\mathrm{lin}}^{\lambda})\in \textrm{gr}_1(\widetilde{\Omega}_{E}^2).\]
The Lie algebroid structures are related by the bundle isomorphism
\[\varphi_{\lambda}:(TS\oplus E^*,[\cdot,\cdot]_{\lambda})\diffto (TS\oplus E^*,[\cdot,\cdot]_{A}), \  \ (X,\alpha)\mapsto (X,\lambda(X)+\alpha).\]
We determine now the Dirac element $\gamma_{\mathrm{lin}}^{\lambda}$. For $\alpha,\beta\in\Gamma(E^*)$, we
have
\begin{align*}
[\alpha,\beta]_{\lambda}&=\varphi_{\lambda}^{-1}[\varphi_{\lambda}(\alpha),\varphi_{\lambda}(\beta)]_{A}=\plin^{\mathrm{v}}(\alpha,\beta),
\end{align*}
thus $\plin^{\mathrm{v},\lambda}=\plin^{\mathrm{v}}$. For $X\in\mathfrak{X}(S)$ and $\alpha\in\Gamma(E^*)$,
we have
\begin{align*}
[X,\alpha]_{\lambda}&=\varphi_{\lambda}^{-1}[X+\lambda(X),\alpha]_{A}=\nabla_X(\alpha)+\plin^{\mathrm{v}}(\lambda(X),\alpha)=\nabla^{\lambda}_X(\alpha).
\end{align*}
The extra term can be also written as
$\plin^{\mathrm{v}}(\lambda(X),\alpha)=L_{-[\lambda,\plin^{\mathrm{v}}](X)}(\alpha)$, therefore the corresponding
linear connection is
\[\Gamma_{\mathrm{lin}}^{\lambda}=\Gamma_{\mathrm{lin}}-[\lambda,\plin^{\mathrm{v}}]\in \textrm{gr}_1(\widetilde{\Omega}_{E}^{1,1}).\]
For $X,Y\in\mathfrak{X}(S)$, we have that
\begin{align*}
[X,Y]_{\lambda}&=\varphi_{\lambda}^{-1}[X+\lambda(X),Y+\lambda(Y)]_{A}=[X,Y]+\mathbb{F}_{\mathrm{lin}}(X,Y)+\\
&+\nabla_X(\lambda(Y))-\nabla_Y(\lambda(X))+\plin^{\mathrm{v}}(\lambda(X),\lambda(Y))-\lambda([X,Y])=\\
&=[X,Y]+\mathbb{F}_{\mathrm{lin}}(X,Y)+[\Gamma_{\mathrm{lin}},\lambda]_{\ltimes}(X,Y)+\plin^{\mathrm{v}}(\lambda(X),\lambda(Y)),
\end{align*}
where we used (\ref{EQ_just_the_horizontal_derivative}). Also the last term can be expressed using the bracket
\[\plin^{\mathrm{v}}(\lambda(X),\lambda(Y))=\frac{1}{2}[\lambda,[\lambda,\plin^{\mathrm{v}}]](X,Y).\]
So, for the 2-from we obtain
\[\mathbb{F}_{\mathrm{lin}}^{\lambda}=\mathbb{F}_{\mathrm{lin}}-[\lambda,\Gamma_{\mathrm{lin}}]_{\ltimes}+\frac{1}{2}[\lambda,[\lambda,\plin^{\mathrm{v}}]].\]
These formulas have a very simple interpretation: the operator
\[ad_{\lambda}:=-[\lambda,\cdot]_{\ltimes}:\widetilde{\Omega}^{p,q}_{E}\rmap \widetilde{\Omega}^{p+1,q-1}_E\]
is nilpotent, therefore its exponential is well defined
\[\exp(ad_{\lambda})(u)=\sum_{n\geq 0}\frac{ad_{\lambda}^n}{n!}(u),\]
and the formulas reduce to $\gamma_{\mathrm{lin}}^{\lambda}=\exp(ad_{\lambda})(\gamma_{\mathrm{lin}})$.
Since $[\lambda,\omega_S]=0$, it follows that the Dirac elements of the two local models are also related by
\[\omega_S+\gamma_{\mathrm{lin}}^{\lambda}=\exp(ad_{\lambda})(\omega_S+\gamma_{\mathrm{lin}}).\]
Consider now the smooth family joining them
\[\gamma_t:=\exp(t ad_{\lambda})(\omega_S+\gamma_{\mathrm{lin}}).\]
Either by using that $\exp(t ad_{\lambda})$ is a Lie algebra automorphism, or that $\gamma_t$ corresponds to the
splitting $\sigma_{t\lambda}$, we deduce that $\gamma_t$ is Dirac, for all $t$. Since $\exp(ad_{\lambda})$ preserves
the spaces $\textrm{gr}_l(\widetilde{\Omega}_{E}^2)$, it follows that $\gamma_{t|S}=\omega_S$, thus $S$ is in the
support of $\gamma_t$, for all $t$. By the Tube Lemma, there exists an open neighborhood $U$ of $S$, such that
$U\subset \mathrm{supp}(\gamma_t)$ for all $t\in[0,1]$. Let $\pi_t$ be the corresponding Poisson structures on $U$.
The time dependent vector field
\[X_t:=\tau_{\pi_t}^{-1}(\lambda)\in\mathfrak{X}(U)\]
vanishes on $S$, so we can choose $V\subset U$, and open neighborhood of $S$, such that the flow of $X_t$ is defined
for all $t\in [0,1]$ as a map $\Phi_t:V\to U$. If we prove that $X_t$ satisfies the homotopy equation
(\ref{EQ_homotopy_equation}), we are done since this equation implies that $\Phi_1$ is a Poisson diffeomorphism
between
\[\Phi_1:(V,\pi_0)\rmap (\Phi_1(V),\pi_1),\]
which is the identity on $S$. Equivalently, by Lemma \ref{HomotopyEquation}, we have to check that $\lambda$
satisfies (\ref{homotopy-equation}), and this is straightforward:
\[\frac{d}{dt}\gamma_t=\frac{d}{dt}\exp(t ad_{\lambda})(\gamma_0)=-[\lambda,\exp(t ad_{\lambda})(\gamma_0)]_{\ltimes}=[\gamma_t,\lambda]_{\ltimes}.\qedhere\]
\end{proof}

\section{Proof of Theorem \ref{Theorem_TWO}, Step 1: Moser's path method}
\label{Proof of the main theorem; step 1: Moser path method}

In this section we use Moser's path method to reduce the proof of Theorem \ref{Theorem_TWO} to some cohomological
equations. The main outcome is Theorem \ref{theorem-1}.

Let $(M, \pi)$ be a Poisson manifold and let $(S,\omega_S)$ be an embedded symplectic leaf. We start by describing
the relevant cohomologies. They are all relatives of the Poisson cohomology groups $H^{\bullet}_{\pi}(M)$. The first
one is, intuitively, the Poisson cohomology of the germ of $(M, \pi)$ around $S$:
\[H^{\bullet}_{\pi}(M)_{S}= \lim_{S\subset U} H^{\bullet}_{\pi_{|U}}(U),\]
where the limit is the direct limit over all opens $U$ around $S$.

The \textbf{Poisson cohomology restricted to $S$},\index{cohomology, Poisson} denoted by
\[H^{\bullet}_{\pi, S}(M),\]
is defined by the complex $(\mathfrak{X}^{\bullet}_{|S}(M), {d_{\pi}}_{|S})$, where
$\mathfrak{X}^{\bullet}_{|S}(M)= \Gamma(\Lambda^{\bullet} TM_{|S})$. Of course, this is just the cohomology
$H^{\bullet}(A_S)$ of the transitive Lie algebroid\index{cohomology, Lie algebroid}
\[ A_{S}:= T^{*}M_{|S},\]
the restriction of the cotangent Lie algebroid of $\pi$ to $S$.

The last relevant cohomology is a version of $H^{\bullet}_{\pi, S}(M)$ with coefficients
\[ H^{\bullet}_{\pi, S}(M, \nu_{S}^{*}):= H^{\bullet}(A_S, \nu_{S}^{*}),\]
where $\nu_S^*$, the conormal bundle of $S$, is canonically a representation of $A_S$. This is also isomorphic to the
cohomology of the complex of multivector fields which vanish at $S$, modulo those which vanish up to first order along
$S$ (see Proposition \ref{Proposition_isomorphic_complexes} in chapter \ref{ChFormalRigidity}).

\begin{theorem}\label{theorem-1}
Let $S$ be an embedded symplectic leaf of a Poisson manifold $(M, \pi)$ and let $\plin(S)$ be the first order
approximation of $\pi$ along $S$ associated to some tubular neighborhood of $S$ in $M$. If
\[ H^{2}_{\pi}(M)_{S}=0, \  H^{1}_{\pi, S}(M)= 0, \  H^{1}_{\pi, S}(M, \nu_{S}^{*})= 0,\]
then, around $S$, $\pi$ and $\plin(S)$ are Poisson diffeomorphic, by a Poisson diffeomorphism which is the identity on
$S$.
\end{theorem}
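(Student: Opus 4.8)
The plan is to use the Moser path method, interpolating between $\pi$ and $\plin(S)$ through the family of Poisson structures $\pi_t$ introduced in Definition \ref{definition_first_order_approx}, and to kill the resulting cohomological obstructions using the three vanishing hypotheses. First I would recall, from the algebraic framework of the previous section, that after fixing a tubular neighborhood $\Psi:\nu_S\to M$ we may work on an open $E\subset\nu_S$ around $S$ on which the family
\[
\pi_t=t\mu_t^*\bigl(\pi^{(t-1)p^*(\omega_S)}\bigr),\qquad t\in(0,1],
\]
is defined and extends smoothly to $t=0$ with $\pi_0=\plin(S)$ and $\pi_1=\pi$ (Remark \ref{Remark_why_pite_is_smooth}); by the Tube Lemma \ref{TubeLemma} this can be arranged uniformly in $t\in[0,1]$ on a possibly smaller neighborhood of $S$. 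All the $\pi_t$ are horizontally nondegenerate with $(S,\omega_S)$ as a symplectic leaf, so Lemma \ref{Lemma_derivative_of_pite} gives that under the isomorphism $\tau_{\pi_t}$ the cocycle $\frac{d}{dt}\pi_t$ corresponds to $\frac{d}{dt}\gamma_t=\frac{d}{dt}\varphi_t(\gamma)$, which by construction of the dilation operators vanishes to first order along $S$: it has no $\mathrm{gr}_0$ or $\mathrm{gr}_1$ part.

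Next I would solve the homotopy equation $\frac{d}{dt}\pi_t=[\pi_t,X_t]$, equivalently (Lemma \ref{HomotopyEquation}) the equation $\frac{d}{dt}\gamma_t=[\gamma_t,V_t]_\ltimes$ for $V_t=\tau_{\pi_t}(X_t)$. The first vanishing hypothesis $H^2_\pi(M)_S=0$ provides, for a neighborhood of $S$, a smooth family $X_t$ solving the homotopy equation. The point of the remaining two hypotheses is to upgrade this to a solution whose flow is defined on a neighborhood of $S$ and fixes $S$ pointwise. Concretely, $X_t$ is determined only up to a $d_{\pi_t}$-closed vector field, i.e.\ up to a cocycle in $H^1_{\pi_t}$. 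Restricting the homotopy equation to $S$ and using that $\frac{d}{dt}\gamma_t$ has vanishing $\mathrm{gr}_0$ part, the condition $H^1_{\pi,S}(M)=H^1(A_S)=0$ lets me correct $X_t$ so that $X_t|_S=0$; then differentiating once more along $S$ and using that the $\mathrm{gr}_1$ part of $\frac{d}{dt}\gamma_t$ also vanishes together with $H^1_{\pi,S}(M,\nu_S^*)=H^1(A_S,\nu_S^*)=0$ (recall $H^\bullet(A_S,\nu_S^*)$ computes the cohomology of multivector fields vanishing on $S$ modulo those vanishing to second order — Proposition \ref{Proposition_isomorphic_complexes}) lets me further correct $X_t$ so that it vanishes to first order along $S$. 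Such a time-dependent vector field vanishes on $S$ with vanishing linearization, so by the Tube Lemma its flow $\Phi_t$ is defined for all $t\in[0,1]$ on some neighborhood $V$ of $S$, fixes $S$, and satisfies $\Phi_t^*(\pi_t)=\pi_0$; then $\Phi_1$ is the desired Poisson diffeomorphism from $(\plin(S)$ near $S)$ to $(\pi$ near $S)$, identity on $S$, and Proposition \ref{Proposition_reconcile} identifies $\plin(S)$ with the first order local model.

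I expect the main obstacle to be exactly this bookkeeping of the two successive corrections of $X_t$ — i.e.\ showing that the freedom in choosing the primitive $X_t$ can be used, order by order along $S$, to force $X_t$ to vanish to first order, and that these choices can be made smoothly in $t$. The subtlety is that one must match the jet-filtration of $\frac{d}{dt}\gamma_t$ in the graded Lie algebra $\widetilde\Omega_E$ against the cohomological obstructions living in $H^1(A_S)$ and $H^1(A_S,\nu_S^*)$, using the identifications $\mathrm{gr}_l(\Omega_E)\cong\Omega^\bullet(A_S,\mathcal S^l\nu_S^*)$ of Lemma \ref{cohom-restricted-algebroid} and the Newton-type formula of Lemma \ref{Newton-formula}; one also has to ensure that the intermediate corrections do not destroy the homotopy equation itself, only shift $X_t$ by genuinely $d_{\pi_t}$-closed terms. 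Everything else — the existence of the interpolating path, smoothness at $t=0$, the flow argument — is either already established in the excerpt or a routine application of the Tube Lemma.
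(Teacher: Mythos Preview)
Your overall strategy matches the paper's, and the roles you assign to the three cohomology groups are correct. But the obstacle you yourself flag---making the choices of primitive smoothly in $t$---is real, and you do not resolve it. The paper's key device, which you are missing, is to \emph{reduce the entire $t$-family of homotopy equations to a single equation at $t=1$}. The path $\gamma_t=\varphi_t(\gamma)+(1-t^{-1})\omega_S$ satisfies the homogeneity relation $t^{-1}\varphi_t(\dot\gamma_1)=\dot\gamma_t$, and the paper shows (in a short lemma) that if a single $Z\in\Omega^1_U$ solves $[\gamma,Z]_\ltimes=\dot\gamma_1$ and has $j^1_S Z=0$, then $V_t:=t^{-1}\varphi_t(Z)$ solves \emph{all} the equations $\dot\gamma_t=[\gamma_t,V_t]_\ltimes$; the first-order vanishing of $Z$ along $S$ is exactly what makes $V_t$ extend smoothly to $t=0$ and vanish on $S$. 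So $H^2_\pi(M)_S=0$ is invoked only once, for the cocycle $\dot\gamma_1$ in the cohomology of $\pi=\pi_1$; the resulting $Z$ is then corrected by an exact term $d_\gamma F$ with $F=F_0+F_1\in C^\infty(S)\oplus\Gamma(E^*)$, where $H^1(A_S)=0$ and $H^1(A_S,\nu_S^*)=0$ are used to solve $[d^1_S\gamma,F_0]_\ltimes=Z|_S$ and $[d^1_S\gamma,F_1]_\ltimes=d^1_S Z$. Without this reduction your proof sketch is incomplete: you would need $H^2_{\pi_t}(U)=0$ for every $t$ together with a smooth-in-$t$ choice of primitive, neither of which follows from the stated hypotheses.

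One further subtlety in the correction step: closedness of $d^1_S Z$ in the complex $(\mathrm{gr}_1(\Omega_E),[d^1_S\gamma,\cdot]_\ltimes)$ does not follow immediately from $j^1_S\dot\gamma_1=0$ and the Newton formula---taking the $\mathrm{gr}_1$ part of $[\gamma,Z]_\ltimes=\dot\gamma_1$ yields $[d^1_S\gamma,d^1_S Z]_\ltimes+[d^2_S\gamma,Z|_S]_\ltimes=0$, with an unwanted extra term. The paper handles this by first solving for $F_0$, which forces $Z|_S=[d^1_S\gamma,F_0]_\ltimes=L_{\gamma_S}F_0=dF_0\in\Omega^\bullet(S)$; since $\Omega^\bullet(S)$ is central in $\Omega_E$ (Lemma \ref{OmegaS-Central}) and $d^2_S\gamma\in\Omega_E$, the extra term vanishes and $d^1_S Z$ is closed. (Also a minor slip: $\dot\gamma_t\neq\frac{d}{dt}\varphi_t(\gamma)$, there is an additional $t^{-2}\omega_S$ term; but $j^1_S\dot\gamma_1=0$ is still correct.)
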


The rest of this section is devoted to this proof of the theorem, followed by a slight improvement that will be used in the
proof of Proposition \ref{main-cor}.

First of all, by using a tubular neighborhood, we may assume that $M= E$ is a vector bundle over $S$ and that $\pi$ is
horizontally nondegenerate on $E$. Let $\gamma\in \widetilde{\Omega}_{E}^{2}$ be the associated Dirac element
and $j^{1}_{S}\gamma$ its linearization. Consider the path of Dirac elements from the proof of Proposition
\ref{Proposition_reconcile_2}
\begin{equation}\label{EQ_path}
\gamma_t=\gamma_{|S}+\frac{\varphi_t(\gamma)-\gamma_{|S}}{t}, \ \ \textrm{with}\ \gamma_0=j^1_S\gamma, \ \textrm{and} \ \gamma_1=\gamma.
\end{equation}
Let $U$ be an open around $S$ included in the support of $\gamma_t$, for all $t\in[0,1]$, and denote by $\pi_t$ the
corresponding Poisson structure on $U$ (see (\ref{EQ_linearization_explicit})).

We are looking for a family $\Phi_{t}$ of diffeomorphisms defined on a neighborhood of $S$ in $U$, for $t\in [0, 1]$,
such that $\Phi_{t|S}= \textrm{Id}$, $\Phi_0=\textrm{Id}$ and
\begin{equation}
\label{desired-homotopy}
\Phi_{t}^{*}\pi_t=\pi_0=\plin(S)
\end{equation}
for all $t\in [0,1]$. Then $\Phi_1$ will be the desired isomorphism. We will define $\Phi_t$ as the flow of a time
dependent vector field $X_t$, i.e.\ as the solution of:
\[ \frac{d}{dt} \Phi_t(x)= X_t(\mu_t(x)), \ \ \Phi_0(x)= x.\]
Hence we are looking for the time dependent $X_t$ defined on an around $S$. The first condition we require is that
$X=0$ along $S$. This implies that $\Phi_{t|S}= \textrm{Id}$, in particular $\Phi_t$ is defined for all $t\in [0,1]$ on
$S$. Hence by the Tube Lemma, $\Phi_t$ is well-defined up to time 1 on an open $O\subset U$ containing $S$. Finally,
since (\ref{desired-homotopy}) holds at $t=0$, it suffices to require its infinitesimal version
\begin{equation}\label{homotopy-equation0}
\frac{d}{dt}\pi_t=[\pi_t,X_t].
\end{equation}
By Lemma \ref{HomotopyEquation}, this is equivalent to finding a time dependent element $V_t=\tau_{\pi_t}(X_t)\in
\Omega^1_U$, such that
\begin{equation}\label{EQ_homotopy_EQ}
\frac{d}{dt}\gamma_t=[\gamma_t,V_t]_{\ltimes}.
\end{equation}
There is one equation for each $t$ but, since $\gamma_t$ is of a special type, one can reduce everything to a single
equation.

\begin{lemma} Assume that there exists $Z\in \Omega^{1}_{U}$ such that $j_{S}^{1}Z= 0$ and
\begin{equation}\label{EquationV}
[\gamma, Z]_{\ltimes}= \frac{d}{dt}_{|t=1}\gamma_{t}.
\end{equation}
Then $V_t:= t^{-1}\varphi_t(Z)$ satisfies the homotopy equations (\ref{EQ_homotopy_EQ}).
\end{lemma}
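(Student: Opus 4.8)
The claim is that if $Z \in \Omega^1_U$ satisfies $j^1_S Z = 0$ and $[\gamma, Z]_{\ltimes} = \frac{d}{dt}_{|t=1}\gamma_t$, then $V_t := t^{-1}\varphi_t(Z)$ solves $\frac{d}{dt}\gamma_t = [\gamma_t, V_t]_{\ltimes}$ for all $t$. The strategy is to exploit that $\varphi_t$ is a Lie algebra automorphism of $(\widetilde{\Omega}_E, [\cdot,\cdot]_{\ltimes})$ together with the explicit form of the path $\gamma_t$ in \eqref{EQ_path}, so that both sides of the desired equation can be rewritten as $\varphi_t$ applied to a $t$-rescaled version of \eqref{EquationV}.

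First I would compute the right-hand side. Since $\gamma_t = \gamma_{|S} + t^{-1}(\varphi_t(\gamma) - \gamma_{|S})$ and $\varphi_t$ fixes $\Omega^\bullet(S)$ (so $\varphi_t(\gamma_{|S}) = \gamma_{|S}$, as $\gamma_{|S} = \omega_S$ lies in bidegree $(2,0)$, degree $0$), and since $V_t = t^{-1}\varphi_t(Z)$, bilinearity gives
\[
[\gamma_t, V_t]_{\ltimes} = t^{-1}[\gamma_{|S}, \varphi_t(Z)]_{\ltimes} + t^{-2}[\varphi_t(\gamma) - \gamma_{|S}, \varphi_t(Z)]_{\ltimes} = t^{-2}[\varphi_t(\gamma), \varphi_t(Z)]_{\ltimes} = t^{-2}\varphi_t([\gamma, Z]_{\ltimes}),
\]
using that $\varphi_t$ is an automorphism in the last step. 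Substituting \eqref{EquationV}, this equals $t^{-2}\varphi_t\big(\frac{d}{dt}_{|t=1}\gamma_t\big)$.

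Next I would compute the left-hand side $\frac{d}{dt}\gamma_t$ directly. From $\gamma_t = \gamma_{|S} + t^{-1}\varphi_t(\gamma) - t^{-1}\gamma_{|S}$, differentiating gives $\frac{d}{dt}\gamma_t = -t^{-2}\varphi_t(\gamma) + t^{-1}\frac{d}{dt}\varphi_t(\gamma) + t^{-2}\gamma_{|S}$. The cleanest way to finish is to introduce $u(t) := t\varphi_t(\gamma)$, which is smooth on all of $\mathbb{R}$ (this is exactly the well-definedness observed before the definition of $d^n_S$), so that $\gamma_t = \gamma_{|S} + t^{-1}(u(t) - t\gamma_{|S})\cdot\ldots$ — more precisely $\varphi_t(\gamma) = t^{-1}u(t)$ and $\gamma_t = \gamma_{|S} + t^{-2}u(t) - t^{-1}\gamma_{|S}$. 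Then $\frac{d}{dt}\gamma_t = -2t^{-3}u(t) + t^{-2}u'(t) + t^{-2}\gamma_{|S}$. Evaluating the same formula for $\frac{d}{dt}_{|t=1}\gamma_t$ at $t=1$ yields $\frac{d}{dt}_{|t=1}\gamma_t = -2u(1) + u'(1) + \gamma_{|S}$. Now I apply $t^{-2}\varphi_t$ to this: since $\varphi_t$ is linear and $\varphi_t\varphi_s = \varphi_{ts}$, one has $\varphi_t(u(1)) = \varphi_t(\varphi_1(\gamma)) = \varphi_t(\gamma) = t^{-1}u(t)$, and $\varphi_t(u'(1))$ can be identified by differentiating $\varphi_t(u(s)) = \varphi_t\varphi_s(s\cdot s^{-1}\ldots)$; concretely, since $s\varphi_s(\gamma) = u(s)$, applying $\varphi_t$ gives $\varphi_t(u(s)) = s\,\varphi_{ts}(\gamma) = s(ts)^{-1}u(ts) = t^{-1}u(ts)$, so differentiating in $s$ at $s=1$: $\varphi_t(u'(1)) = u'(t)$. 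Also $\varphi_t(\gamma_{|S}) = \gamma_{|S}$. Hence $t^{-2}\varphi_t\big(\frac{d}{dt}_{|t=1}\gamma_t\big) = t^{-2}(-2t^{-1}u(t) + u'(t) + \gamma_{|S}) = \frac{d}{dt}\gamma_t$, which matches the right-hand side computed above. This completes the proof.

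The only mild obstacle is bookkeeping with the relation $s\varphi_s(\gamma) = u(s)$ and the semigroup law $\varphi_t\varphi_s = \varphi_{ts}$ to pin down $\varphi_t(u'(1)) = u'(t)$; everything else is a direct consequence of $\varphi_t$ being a graded Lie algebra automorphism fixing $\Omega^\bullet(S)$ and of the bilinearity of $[\cdot,\cdot]_{\ltimes}$. One should also note at the end that $j^1_S V_t = 0$: since $j^1_S Z = 0$ means $Z \in I^2_S\Omega^1_E$ (vanishing to second order), and $\varphi_t$ preserves the filtration by order of vanishing along $S$, $\varphi_t(Z)$ still vanishes to second order, so $V_t = t^{-1}\varphi_t(Z)$ vanishes on $S$; this guarantees $X_t = \tau_{\pi_t}^{-1}(V_t)$ vanishes on $S$, which is what was needed for the flow argument preceding the lemma.
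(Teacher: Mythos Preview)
Your strategy coincides with the paper's --- reduce both $[\gamma_t,V_t]_\ltimes$ and $\dot\gamma_t$ to a power of $t$ times $\varphi_t(\dot\gamma_1)$ --- but the execution contains two errors that happen to cancel, so the argument as written is not valid.

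The first error is the claim that $\varphi_t$ fixes $\omega_S=\gamma_{|S}$. It is $\mu_t^*$ that acts trivially on $\Omega^\bullet(S)$; the dilation $\varphi_t$ carries the extra factor $t^{q-1}$, and since $\omega_S$ has bidegree $(2,0)$ (so $q=0$) one has $\varphi_t(\omega_S)=t^{-1}\omega_S$. Equivalently, $\omega_S\in\mathrm{gr}_0$ and by definition $\varphi_t$ scales $\mathrm{gr}_l$ by $t^{l-1}$, not $t^l$. The second error is that you have taken the displayed formula \eqref{EQ_path} literally, but that display has a typo: the actual path (derived in the proof of Proposition~\ref{Proposition_reconcile_2}, and the one used in the paper's proof of this lemma) is $\gamma_t=\varphi_t(\gamma)+(1-t^{-1})\omega_S$. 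Your version would give $\gamma_t\sim t^{-2}\gamma_{|S}$ as $t\to 0$, contradicting $\gamma_0=j^1_S\gamma$. With both corrections the right-hand side becomes $[\gamma_t,V_t]_\ltimes=t^{-1}\varphi_t(\dot\gamma_1)$ (not $t^{-2}$), and your $u(t)$ computation --- now using $\varphi_t(\gamma_{|S})=t^{-1}\gamma_{|S}$ --- gives $\dot\gamma_t=t^{-1}\varphi_t(\dot\gamma_1)$ as well, which is exactly the auxiliary identity the paper obtains by differentiating $\varphi_t(\gamma_s)=\gamma_{ts}+(\text{const.})\,\omega_S$ at $s=1$. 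Your final remark about $V_t$ vanishing along $S$ is correct.
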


\begin{proof} The condition that the first jet of $Z$ along $S$ vanishes, ensures that $V_t$
is a smooth family defined also at $t=0$ and that $V_t$ vanishes along $S$. We check the homotopy equations at all
$t\in (0, 1]$. For the left hand side
\[[\gamma_t,V_t]_{\ltimes}=[\varphi_t(\gamma)+(1-\frac{1}{t})\omega_S,V_t]_{\ltimes}=[\varphi_t(\gamma),\frac{1}{t}\varphi_{t}(Z)]_{\ltimes})=\frac{1}{t}\varphi_t([\gamma,Z]_{\ltimes}),\]
where we used that $\omega_S$ lies in the center of $\Omega_E$ (Lemma \ref{OmegaS-Central}), and that $\varphi_t$
commutes with the brackets. Using the assumption on $Z$, we find
\[ [\gamma_t,V_t]_{\ltimes}=t^{-1}\varphi_t(\dot{\gamma}_{1}).\]
Hence (\ref{EQ_homotopy_EQ}) will follow for $t\in(0,1]$ if we prove the following equation
\begin{equation}\label{Auxiliary}
t^{-1}\varphi_t(\dot{\gamma}_{1})= \dot{\gamma}_t.
\end{equation}
The explicit formula for $\gamma_{t}$ (\ref{EQ_path}) implies that
\[\varphi_t(\gamma_s)=\gamma_{ts}+(1-t^{-1})\omega_S.\]
Taking the derivative with respect to $s$, at $s=1$, we obtain the result.
\end{proof}

The equation for $Z$ in the last lemma lives in the cohomology of $(\Omega_{U}^{\bullet},d_{\gamma})$. Note that
the right hand side of the equation is indeed closed. This follows by taking the derivative at $t= 1$ in $[\gamma_t,
\gamma_t]_{\ltimes}=0$. By Proposition \ref{PoissonCohomology} and the first assumption of Theorem
\ref{theorem-1}, after shrinking $U$ if necessary, we may assume that $\dot{\gamma}_1$ is exact. This ensures the
existence of $Z$. To conclude the proof of the theorem, we still have to show that $Z$ can be corrected so that
$j_{S}^{1}Z= 0$. We will do so by finding $F\in \Omega_{E}^{0}=C^{\infty}(E)$, such that
\[ j^{1}_{S}([\gamma, F]_{\ltimes})= j^{1}_{S}(Z).\]
Then $Z'= Z- [\gamma,F]_{\ltimes}$ will be the correction of $Z$. Since the condition only depends on $j^{1}_{S}F$,
it suffices to look for $F$ of type
\[ F= F_0+ F_1\in \textrm{gr}_{0}(\Omega_{E}^{0})\oplus \textrm{gr}_{1}(\Omega_{E}^{0})= C^{\infty}(S)\oplus \Gamma(E^*).\]
So $F|_{S}= F_0$, $d_{S}^{1}F= F_1$. Using the Newton formula (Lemma \ref{Newton-formula}) and that
$\omega_S, F_0$ are central in $\Omega_E$ (Lemma \ref{OmegaS-Central}), we find
\[j^1_S[\gamma,F]_{\ltimes}=[d^1_S\gamma, F_0]_{\ltimes}+ [d^1_S\gamma, F_1]_{\ltimes}\in \textrm{gr}_{0}(\Omega_{E}^{1})\oplus \textrm{gr}_{1}(\Omega_{E}^{1}),\]
so we have to solve the following cohomological equations:
\begin{equation}\label{F0-F1}
[d_{S}^{1}\gamma, F_0]_{\ltimes}= Z|_{S},\ \ \  [d_{S}^{1}\gamma, F_1]_{\ltimes}= d_{S}^{1}Z.
\end{equation}
By Lemma \ref{cohom-restricted-algebroid}, the relevant cohomologies are precisely the ones assumed to vanish in the theorem. So 
it is enough to show that $Z|_{S}$ and $d_{S}^{1}Z$ are closed with respect to the differential
$[d_{S}^{1}\gamma,\cdot]_{\ltimes}$. These are precisely the first order consequences of the equation
(\ref{EquationV}) that $Z$ satisfies. By (\ref{Auxiliary}), we have that
$\varphi_t(\dot{\gamma}_1)=t\dot{\gamma}_t$, hence $j^1_S(\dot{\gamma}_1)=0$. Applying the Newton formula to
(\ref{EquationV}) gives
\begin{equation}\label{Equation_Z}
[d_{S}^{1}\gamma, Z|_S]_{\ltimes}= 0,\ \  [d_{S}^{1}\gamma, d_{S}^{1}Z]_{\ltimes}+ [d_{S}^{2}\gamma, Z|_{S}]_{\ltimes}= 0.
\end{equation}
Hence $Z|_{S}$ is closed, and so we can find $F_0$ satisfying the first equation in (\ref{F0-F1}). In particular, this
shows that
\[Z|_{S}=[d_{S}^{1}\gamma, F_0]_{\ltimes}=L_{p_S(d_{S}^{1}\gamma)}F_0=dF_0\in\Omega(S).\]
Hence $Z|_{S}$ commutes with $\Omega_E$, and since $d_{S}^{2}\gamma\in\Omega_E$, the second equation of
(\ref{Equation_Z}) becomes $[d_{S}^{1}\gamma, d_{S}^{1}Z]_{\ltimes}=0$. This finishes the proof.

\begin{remark}\rm \label{remark-corollary}
The previous arguments reveal a certain cohomology class related to the linearization problem, which we will describe
now more explicitly. Consider a tubular neighborhood $p:E\to S$. Since any two tubular neighborhoods are isotopic, it
follows that the class in the de Rham cohomology of the ``germ of $M$ around $S$''
\[[p^*(\omega_S)]\in H^{\bullet}(M)_{S}= \lim_{S\subset U} H^{\bullet}(U),\]
is independent of $p$. The chain map between the de Rham complex and the Poisson complex (see subsection
\ref{SSPoissonCoho}) induced by $\pi^{\sharp}$, induces a map
\[H(\pi^{\sharp}):H^{\bullet}(M)_{S}\rmap H^{\bullet}_{\pi}(M)_{S}.\]
Consider the class of $p^*(\omega_S)$ under this map, denoted by
\[[\pi_{|S}]:=H(\pi^{\sharp})[p^*(\omega_S)]=-[\Lambda^2\pi^{\sharp}(p^*(\omega_S))]\in H^2_{\pi}(M)_S.\]

\begin{definition}
\textbf{The linearization class} associated to an embedded leaf $S$ of a Poisson manifold $(M,\pi)$ is defined by
\[l_{\pi,S}:=[\pi]-[\pi_{|S}]\in H^2_{\pi}(M)_S.\]
\end{definition}

\begin{corollary} \label{remark-corollary2} In the Theorem \ref{theorem-1}, the condition
$H^{2}_{\pi}(M)_{S}= 0$ can be replaced by the condition $l_{\pi,S}=0$.
\end{corollary}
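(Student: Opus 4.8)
The plan is to revisit the proof of Theorem~\ref{theorem-1} and isolate the single place where the hypothesis $H^2_\pi(M)_S=0$ is actually used. After the Moser reduction, the problem has been reduced to solving $[\gamma,Z]_\ltimes=\dot\gamma_1$ in the complex $(\Omega^\bullet_U,d_\gamma)$, and the subsequent corrections needed to achieve $j^1_SZ=0$ use only the vanishing of $H^1_{\pi,S}(M)$ and $H^1_{\pi,S}(M,\nu^*_S)$, which are left untouched. The hypothesis $H^2_\pi(M)_S=0$ enters solely to guarantee that the closed element $\dot\gamma_1$ is exact on some neighborhood $U$ of $S$. By Proposition~\ref{PoissonCohomology} the complex $(\Omega^\bullet_U,d_\gamma)$ is isomorphic, via $\tau_\pi$, to the Poisson complex of $\pi_{|U}$, and by Lemma~\ref{Lemma_derivative_of_pite} the element $\dot\gamma_1$ corresponds under this isomorphism to $\dot\pi_1:=\frac{d}{dt}|_{t=1}\pi_t$, where $\pi_t$ is the dilation path (\ref{EQ_linearization_explicit}). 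Hence the rest of the proof of Theorem~\ref{theorem-1} goes through verbatim as soon as one knows that the class $[\dot\pi_1]\in H^2_\pi(M)_S$ vanishes, so the whole content of the corollary is the identity $[\dot\pi_1]=l_{\pi,S}$.

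To establish that identity I would compute $\dot\pi_1$ directly. Fixing a tubular neighborhood $p\colon E\to S$ and writing $\omega=p^*(\omega_S)$, we have $\pi_t=t\,\mu_t^*\!\big(\pi^{(t-1)\omega}\big)$. Differentiating the product at $t=1$ and using: (i) the variation formula (\ref{EQ_gauge_time_derivative}), namely $\frac{d}{ds}\pi^{s\omega}=-(\Lambda^2(\pi^{s\omega})^\sharp)(\omega)$, evaluated at $s=0$; (ii) the fact that $\frac{d}{dt}\mu_t^*\big|_{t=1}=L_{\mathcal E}$ for the Euler (fibrewise dilation) vector field $\mathcal E$ of $E$; and (iii) $\mu_t^*\omega=\omega$, since $\omega$ is pulled back from $S$, one obtains
\[\dot\pi_1=\pi+L_{\mathcal E}\pi-(\Lambda^2\pi^\sharp)(\omega).\]
The same formula can be cross-checked against the Dirac-element path (\ref{EQ_path}) by computing $\frac{d}{dt}|_{t=1}\varphi_t(\gamma)$ and applying $\tau_\pi^{-1}$, using Lemma~\ref{Lemma_derivative_of_pite}.

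Passing to cohomology then finishes the argument. The middle term is exact: $L_{\mathcal E}\pi=[\mathcal E,\pi]=-d_\pi(\mathcal E)$. The last term, $-(\Lambda^2\pi^\sharp)(\omega)$, is by the very definitions in Remark~\ref{remark-corollary} a representative of $H(\pi^\sharp)[p^*(\omega_S)]=[\pi_{|S}]$; moreover $[p^*(\omega_S)]$, and hence $[\pi_{|S}]$, do not depend on the chosen tubular neighborhood, because any two such neighborhoods are isotopic. Collecting terms gives $[\dot\pi_1]=[\pi]-[\pi_{|S}]=l_{\pi,S}$ in $H^2_\pi(M)_S$, once the signs are matched against the paper's conventions for the Schouten bracket and the chain map $(-1)^{\bullet+1}\Lambda^\bullet\pi^\sharp$. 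Thus $l_{\pi,S}=0$ forces $[\dot\pi_1]=0$, i.e.\ $\dot\gamma_1$ is exact on arbitrarily small neighborhoods of $S$, which is precisely what the remainder of the proof of Theorem~\ref{theorem-1} requires.

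The proof is short, and I do not anticipate a serious obstacle; the only delicate points are bookkeeping. First, one must carry out the differentiation of $\pi_t$ at $t=1$ cleanly and keep the signs consistent across the three descriptions of the relevant cocycle: the dilation path $\pi_t$, the Dirac-element path $\gamma_t$ (through $\tau_\pi$ and Lemma~\ref{Lemma_derivative_of_pite}), and the definition of $l_{\pi,S}$. Second, one should note that ``$\dot\pi_1$ exact on some $U$'' is exactly the statement $[\dot\pi_1]=0$ in the direct limit $H^2_\pi(M)_S=\lim_{S\subset U}H^2_{\pi_{|U}}(U)$, and that shrinking $U$ is harmless, since every later step in the proof of Theorem~\ref{theorem-1} is performed on an arbitrary neighborhood of $S$ inside $E$.
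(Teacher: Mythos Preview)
Your proposal is correct and follows exactly the paper's own proof: reduce to showing $[\dot\pi_1]=l_{\pi,S}$ via Lemma~\ref{Lemma_derivative_of_pite}, then differentiate $\pi_t=t\mu_t^*(\pi^{(t-1)p^*(\omega_S)})$ at $t=1$ using the Euler field and the gauge-variation formula, and observe that $[\mathcal E,\pi]$ is $d_\pi$-exact. Your caution about signs is warranted: the paper's proof of the corollary recomputes $\dot u_1$ directly from $u_t^\sharp(\mathrm{Id}+(t-1)\omega^\sharp\pi^\sharp)=\pi^\sharp$ rather than quoting (\ref{EQ_gauge_time_derivative}), and this direct computation (using $\pi^\sharp\omega^\sharp\pi^\sharp=-[(\Lambda^2\pi^\sharp)\omega]^\sharp$) gives $\dot u_1=+\Lambda^2\pi^\sharp(p^*(\omega_S))$, so that $[\dot\pi_1]=[\pi]+[\Lambda^2\pi^\sharp(p^*(\omega_S))]=[\pi]-[\pi_{|S}]=l_{\pi,S}$ as desired.
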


\begin{proof} In the proof above we only used the vanishing of
$[\dot{\gamma}_{1}]$, hence, by Lemma \ref{Lemma_derivative_of_pite} it suffices to show that
$[\dot{\pi}_1]=l_{\pi,S}$. Recall that (\ref{EQ_linearization_explicit})
\[\pi_t^{\sharp}=t\mu_t^*\left(\pi^{\sharp}(\mathrm{Id}+(t-1)p^*(\omega_S)^{\sharp}\pi^{\sharp})^{-1}\right).\]
Since $\mu_{e^t}$ is the flow of the Liouville vector field $\mathcal{E}=\sum y_i\frac{\partial}{\partial y_i}$, we
obtain
\[\dot{\pi}_1=\pi+[\mathcal{E},\pi]+\dot{u}_1,\]
where $u_t^{\sharp}=\pi^{\sharp}(\mathrm{Id}+(t-1)p^*(\omega_S)^{\sharp}\pi^{\sharp})^{-1}$. We compute
$\dot{u}_1$ as follows
\begin{align*}
0&=\frac{d}{dt}_{|t=1}\left(u^{\sharp}_t(\mathrm{Id}+(t-1)p^*(\omega_S)^{\sharp}\pi^{\sharp})\right)=\\
&=\dot{u}^{\sharp}_1+\pi^{\sharp}p^*(\omega_S)^{\sharp}\pi^{\sharp}=\dot{u}_1-\Lambda^2\pi^{\sharp}(p^*(\omega_S)).
\end{align*}
This finishes the proof.
\end{proof}
\end{remark}

\section{Proof of Theorem \ref{Theorem_TWO}, Step 2: Integrability}
\label{Proof of the main theorem; step 2: Integrability}

In this section we show that the conditions of Theorem \ref{Theorem_TWO} imply the integrability of the Poisson
structure around the symplectic leaf which, in turn, implies that the cohomological conditions from Theorem
\ref{theorem-1} are satisfied. As in the geometric proof of Conn's linearization theorem \cite{CrFe-Conn}, this step
will be divided into three sub-steps, corresponding to the three subsections:
\begin{itemize}
\item Step 2.1: Integrability implies the needed cohomological conditions.
\item Step 2.2: Existence of ``nice'' symplectic realizations implies integrability.
\item Step 2.3: Proof of the existence of such ``nice'' symplectic realizations.
\end{itemize}
In the first sub-step we will also finish the proof of Proposition \ref{main-cor}.

By integrability we mean here the integrability of the cotangent Lie algebroids $T^*M$, or, equivalently, the
integrability of the Poisson manifold $(M,\pi)$ by a symplectic groupoid (see section \ref{Section_Integrability}).

\subsection{Step 2.1: Reduction to integrability}
\label{Step 2.1: Reduction to integrability}

The idea of using integrability in order to prove the vanishing of the cohomologies from Theorem \ref{theorem-1} is
very simple. First of all, all the cohomologies involved are Lie algebroid cohomologies. Then, with the Lie groupoid at
hand, one can try to integrate algebroid cocycles (in the relevant cohomologies) to the groupoid level. This is the idea of
Van Est maps, used by Van Est in the case of Lie groups to prove Lie's Theorem III. The generalization to Lie groupoids
was carried out in \cite{Cra} (Theorem 4, which we recall in subsection \ref{Subsection_Van_Est}). At the groupoid
level, if this is compact, or just proper, one can use averaging in order to prove the triviality of groupoid cocycles
(Proposition 1 in \cite{Cra}, which we recall in subsection
\ref{subsection_differentiable_cohomology_of_Lie_groupoids}).\index{integrable Poisson manifold} The outcome is the
following.

\begin{theorem}\label{TheoremRedInt}
Let $(M,\pi)$ be a Poisson manifold, $x\in M$, and let $S$ be the symplectic leaf through $x$. If $P_x$, the homotopy
bundle at $x$, is smooth and compact, then
\[\ H^{1}_{\pi, S}(M)= 0, \ H^{1}_{\pi, S}(M, \nu_{S}^{*})= 0.\]
If moreover $H^2(P_x)= 0$ and $S$ admits an open neighborhood $U$ whose associated Weinstein groupoid
$\mathcal{G}(U, \pi|_{U})$ is smooth and Hausdorff, then also
\[ H^{2}_{\pi}(M)_{S}=  0.\]
\end{theorem}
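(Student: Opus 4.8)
The plan is to deduce all three vanishing statements from the integrability hypotheses via Van Est maps together with vanishing of cohomology of proper groupoids, exactly along the lines of \cite{CrFe-Conn} but adapted to the setting of transitive Lie algebroids over symplectic leaves. First I would observe that $P_x$, being the $s$-fiber over $x$ of the Weinstein groupoid of $A_S=T^*M_{|S}$, is (under the smoothness hypothesis) the $1$-connected principal $G_x$-bundle integrating the transitive Lie algebroid $A_S$; the gauge groupoid $\Sigma_S:=P_x\times_{G_x}P_x\rightrightarrows S$ is then a Lie groupoid integrating $A_S$, it is Hausdorff, and its $s$-fibers are diffeomorphic to $P_x$, hence compact. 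A Lie groupoid with compact $s$-fibers whose base is compact is proper, so $\Sigma_S$ is a proper Lie groupoid.

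For the first two vanishing statements I would argue as follows. The cohomologies $H^1_{\pi,S}(M)=H^1(A_S)$ and $H^1_{\pi,S}(M,\nu_S^*)=H^1(A_S,\nu_S^*)$ are Lie algebroid cohomologies of $A_S$ with coefficients in the trivial representation, respectively in the representation $\nu_S^*$ (which integrates to a representation of $\Sigma_S$, since $\nu_S^*=\ker\rho$ is an ideal and Lemma \ref{Lemma_integrating_ideals} applies, or more directly since $\Sigma_S$ is transitive and any representation of its isotropy extends). Since the $s$-fibers of $\Sigma_S$ are $1$-connected, hence homologically $0$- and $1$-connected, the Van Est map
\[
\Phi_V:H^{1}_{\mathrm{diff}}(\Sigma_S,V)\rmap H^{1}(A_S,V)
\]
is an isomorphism for $V$ the trivial representation and for $V=\nu_S^*$, by Theorem 4 of \cite{Cra} (recalled in subsection \ref{Subsection_Van_Est}). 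On the other hand, $\Sigma_S$ being proper, Proposition 1 of \cite{Cra} gives $H^{1}_{\mathrm{diff}}(\Sigma_S,V)=0$ for every representation $V$. Composing, $H^1(A_S)=0$ and $H^1(A_S,\nu_S^*)=0$, which are the first two claims.

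For the third statement the situation is subtler because $H^2_{\pi}(M)_S$ is a Poisson cohomology group of the germ of $\pi$ around $S$, not a cohomology of $A_S$; this is where the hypotheses $H^2(P_x)=0$ and Hausdorff integrability of a neighborhood $U$ enter, and this is the step I expect to be the main obstacle. Here I would use that the Weinstein groupoid $\mathcal{G}(U,\pi_{|U})$ is, by hypothesis, a smooth Hausdorff symplectic groupoid integrating $(U,\pi_{|U})$; shrinking $U$ using Proposition \ref{Lemma_small_neighborhoods} (applied after the local model is known to be integrable, together with the fact, proved in Step 2.2--2.3, that integrability of the local model forces integrability of $\pi$ near $S$ with homotopy bundle $P_x$), one arranges that the $s$-fibers of $\mathcal{G}(U,\pi_{|U})$ are all diffeomorphic to $P_x$, hence compact with $H^1=H^2=0$, and that $\mathcal{G}(U,\pi_{|U})$ is proper. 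Then the Van Est map
\[
\Phi:H^{2}_{\mathrm{diff}}(\mathcal{G}(U,\pi_{|U}))\rmap H^{2}(T^*U)=H^2_{\pi}(U)
\]
is injective in degree $2$ (since the $s$-fibers are $1$-connected, hence homologically $1$-connected) with image characterized by the integration condition $\int_{\gamma}J(\omega)=0$ for all $\gamma:\mathbb{S}^2\to s^{-1}(x)\cong P_x$; but $H^2(P_x)=0$ forces every such period to vanish, so every class in $H^2_{\pi}(U)$ lies in the image of $\Phi$. Properness of $\mathcal{G}(U,\pi_{|U})$ then gives $H^{2}_{\mathrm{diff}}(\mathcal{G}(U,\pi_{|U}))=0$, whence $H^2_{\pi}(U)=0$; passing to the direct limit over neighborhoods of $S$ gives $H^2_{\pi}(M)_S=0$. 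The delicate points I would need to handle carefully are: that one genuinely can arrange the $s$-fibers over a whole neighborhood to be $P_x$ (this uses the integrability results of Step 2.2--2.3 and Proposition \ref{Lemma_small_neighborhoods}), that the relevant representations integrate to the chosen groupoid (Lemma \ref{Lemma_integrating_ideals} for the $\nu_S^*$ coefficients), and that the surjectivity criterion for $\Phi$ from \cite{Cra} applies with the loops running over $\mathbb{S}^2$ in a single $s$-fiber. For Proposition \ref{main-cor}, the same argument specializes: if a neighborhood of $S$ is already Hausdorff integrable, one does not need $H^2(P_x)=0$ to get integrability, but one still needs it to conclude $H^2_{\pi}(U)=0$; however, as indicated in the introduction, a finer analysis shows that only the linearization class $l_{\pi,S}\in H^2_{\pi}(M)_S$ of Corollary \ref{remark-corollary2} must vanish, and this weaker statement follows from integrability alone --- one shows $l_{\pi,S}$ is in the image of $\Phi$ and hence killed by properness, without using $H^2(P_x)=0$.
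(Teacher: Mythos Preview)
Your treatment of the two degree-one vanishing statements is essentially the paper's: the gauge groupoid $P_x\times_{G_x}P_x$ coincides with $\mathcal{G}(A_S)$, the $1$-connected integration of $A_S$; compactness of $P_x$ makes it compact hence proper, and Van Est in degree $1$ finishes.

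The degree-two argument has a genuine gap. Your way of arranging that all $s$-fibers of $\mathcal{G}(U,\pi_{|U})$ are diffeomorphic to $P_x$ --- via Proposition \ref{Lemma_small_neighborhoods} and Steps 2.2--2.3 --- is circular. Proposition \ref{Lemma_small_neighborhoods} concerns the \emph{local model} $(N(P),\pi_P)$, not $(U,\pi_{|U})$; transferring its conclusion to $\pi$ would presuppose the normal form (Theorem 2), which is exactly what Theorem \ref{TheoremRedInt} is a step towards. Steps 2.2--2.3 come later in the logical flow and establish something different (special symplectic realizations $\Rightarrow$ integrability), not that integrability of the local model propagates to $\pi$. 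Moreover, even if you obtained $H^2_\pi(U)=0$ for one $U$, the colimit $H^2_\pi(M)_S$ need not vanish: you must show that inside \emph{every} neighborhood $W$ of $S$ there is a smaller $V$ with $H^2_\pi(V)=0$.

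The paper's argument is direct and avoids all of this. Given any $W\subset U$ containing $S$, let $\mathcal{G}\subset\mathcal{G}(U,\pi_{|U})$ be the set of arrows $g$ with $s(g),t(g)\in W$ and whose $s$-fiber and $t$-fiber are diffeomorphic to $P_x$. Local Reeb stability, applied to the foliations of $\mathcal{G}(U,\pi_{|U})$ by $s$-fibers and by $t$-fibers, shows these conditions are open; since they hold over $S$, $\mathcal{G}$ is an open subgroupoid over the invariant open $V:=s(\mathcal{G})$, with $S\subset V\subset W$. Its $s$-fibers are $1$-connected, so $\mathcal{G}=\mathcal{G}(V,\pi_{|V})$; they are compact with $H^2=0$, hence homologically $2$-connected, so Van Est is already an isomorphism in degree $2$ (your period argument is unnecessary) and properness gives $H^2_\pi(V)=0$.
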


\begin{proof}
The first part of the proof is completely similar to that of Theorem 2 in \cite{CrFe-Conn}. The conditions on $P_x$
imply that $\mathcal{G}(A_S)$, the 1-connected groupoid of $A_S= T^{*}M_{|S}$, is smooth and compact. Hence, its
differentiable cohomology with any coefficients vanishes. Since its $s$-fibers are 1-connected, the Van Est map with
coefficients is an isomorphism in degree 1. So, also the cohomology of $A_S$, with any coefficients, vanishes in degree
1.\index{differentiable cohomology}\index{Van Est map}

For the second part, it suffices to show that, for any open $W\subset U$ containing $S$, there exists a smaller one $V$,
containing $S$, such that $H^{2}_{\pi}(V)= 0$. Proceeding as in the first part, it suffices to find $V$ for which
$\mathcal{G}(V, \pi_{|V})$ has $s$-fibers which are compact and cohomologically 2-connected. Let
$\mathcal{G}\subset \mathcal{G}(U,\pi_{|U})$ be the set of arrows with source and target inside $W$, and for which
both the $s$-fiber and the $t$-fiber are diffeomorphic to $P_x$. By local Reeb stability applied to the foliation by the
$s$-fibers (and $t$-fibers respectively), we see that all four conditions are open, therefore
$\mathcal{G}\subset\mathcal{G}(U,\pi_{|U})$ is open, and by assumption, all arrows above $S$ are in $\mathcal{G}$.
By the way $\mathcal{G}$ was defined, we see that it is an open subgroupoid over the invariant open
$V:=s(\mathcal{G})\supset S$, thus it integrates $T^*V$. Since all its $s$-fibers are 1-connected, we have that
$\mathcal{G}=\mathcal{G}(V,\pi_{|V})$, and this finishes the proof.
\end{proof}

\begin{proof}[End of the proof of Proposition \ref{main-cor}]
We will adapt the previous argument, making use of Corollary \ref{remark-corollary2}. We have to show that
\[l_{\pi,S}:=[\pi]-[\pi_{|S}]=0 \in H^2_{\pi}(M)_S.\]
We show that, for any tubular neighborhood $p: W\to S$ of $S$ with $W\subset U$, there exists a smaller one $V$,
containing $S$, such that
\begin{equation*}
[\pi]- [\pi_{|S}]=0 \in H^{2}_{\pi}(V).
\end{equation*}
Let $V$ be as in the previous proof, which we assume to be connected (take the component containing $S$ in $V$).
Since $\mathcal{G}(V,\pi_{|V})$ is still proper, it suffices to show that the class above is in the image of the Van Est
map of $\mathcal{G}(V,\pi_{|V})$ (in degree $2$). By Corollary 2 \cite{Cra} (which we recall in subsection
\ref{Subsection_Van_Est}), this image consists of elements $[\omega]\in H^{2}_{\pi}(V)$, such that
\[\int_{\gamma}J(\omega)=0,\]
for all $2$-spheres $\gamma$ in the $s$-fibers of $\mathcal{G}(V,\pi_{|V})$. Hence it suffices to check this for
$[\pi]-[\pi_{|S}]$. Let $\widetilde{\omega}_S= p^*(\omega_S)_{|V}$ and let $\Omega$ be the symplectic form on
$\mathcal{G}(V,\pi_{|V})$. We compute now $J(\Lambda^2\pi^{\sharp}(\widetilde{\omega}_S))$. Note that
\[\Lambda^2\pi^{\sharp}(\widetilde{\omega}_S)(\alpha,\beta)=\widetilde{\omega}_S(\pi^{\sharp}(\alpha),\pi^{\sharp}(\beta)).\]
Recall from section \ref{Section_symplectic_groupoids}, that, under the identification between the Lie algebroid of
$\mathcal{G}(V,\pi_{|V})$ and $T^*V$ given by $-\Omega^{\sharp}$, we have that $dt_{|T^*V}=\pi^{\sharp}$.
Thus, for $\alpha,\beta \in T_g^s\mathcal{G}(V,\pi_{|V})$, we obtain
\begin{align*}
J(\Lambda^2&\pi^{\sharp}(\widetilde{\omega}_S))(\alpha,\beta)=\widetilde{\omega}_S(\pi^{\sharp}(dr_g(\alpha)),\pi^{\sharp}(dr_g(\beta)))=\\
&=\widetilde{\omega}_S(dt\circ dr_g(\alpha),dt\circ dr_g(\beta))=\widetilde{\omega}_S(dt(\alpha),dt(\beta))=t^*(\widetilde{\omega}_S)(\alpha,\beta).
\end{align*}
Hence, $J(\Lambda^2\pi^{\sharp}(\widetilde{\omega}_S))$ is the restriction to the $s$-fibers of
$t^*(\widetilde{\omega}_S)$. On the other hand, it is well known (see e.g.\ \cite{WeinXu}), and can also be easily
checked using the properties listed in section \ref{Section_symplectic_groupoids}, that $J(\pi)$ is the restriction to the
$s$-fibers of $\Omega$. Thus we obtain that $J([\pi]-[\pi_{|S}])$ is the restriction to the $s$-fibers of the closed
$2$-form
\[\omega:=\Omega+t^*(\widetilde{\omega}_{S}).\]
Notice also that $\omega$ vanishes over the $s$-fibers over points $x\in S$. This follows from the fact that $t$ is an
anti-Poisson map, and so $\Omega_{|s^{-1}(x)}=-t^*(\omega_S)$. Now let $\gamma$ a 2-sphere in an $s$-fiber of
$\mathcal{G}(V)$. Since $V$ is connected, we can find a homotopy between $\gamma$ and a 2-sphere $\gamma_1$
which lies in an $s$-fiber over $S$. Since $\omega$ is closed, we have that
$\int_{\gamma}\omega=\int_{\gamma_1}\omega$, and since the restriction of $\omega$ to $s$-fibers over $S$
vanishes, it follows that $\int_{\gamma_1}\omega=0$. This ends the proof.
\end{proof}

\subsection{Step 2.2: Reduction to the existence of ``nice'' symplectic realizations}

Next, we show that the integrability condition from \ref{TheoremRedInt} is implied by the existence of a symplectic
realization with some specific properties.\index{symplectic realization}

We will use the following notation. Given a symplectic realization $\mu$, we denote by $\mathcal{F}(\mu)$ the
foliation defined by the fibers of $\mu$. By Libermann's theorem, the symplectic orthogonal
$T\mathcal{F}(\mu)^{\perp}$ is involutive, and we denote by $\mathcal{F}(\mu)^{\perp}$ the underlying foliation.

\begin{theorem}\label{theorem-step-2.2} Let $(M, \pi)$ be a Poisson manifold and let $S$ be a symplectic leaf. Assume that there exists a symplectic realization
\[ \mu: (\Sigma, \Omega)\rmap (U, \pi_{|U})\]
of some open neighborhood $U$ of $S$ in $M$ such that any leaf of the foliation $\mathcal{F}^{\perp}(\mu)$ which
intersects $\mu^{-1}(S)$ is compact and 1-connected.

Then there exists an open neighborhood $V\subset U$ of $S$ such that the Weinstein groupoid $\mathcal{G}(V,
\pi_{|V})$ is Hausdorff and smooth.
\end{theorem}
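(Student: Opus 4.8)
The plan is to recognize the source fibers of the (a priori only topological) Weinstein groupoid $\mathcal{G}(V,\pi|_{V})$, for a suitable neighbourhood $V$ of $S$, as leaves of the orthogonal foliation $\mathcal{F}^{\perp}(\mu)$, and then to use Reeb stability to produce a smooth Hausdorff leaf space. Recall that, by Libermann's Theorem, $T\mathcal{F}(\mu)^{\perp}$ is involutive; its leaves have dimension $\dim M$ and are spanned by the Hamiltonian vector fields $X_{\mu^{*}f}$, $f\in C^{\infty}(U)$, so they project into symplectic leaves of $(M,\pi)$. Hence $\mu^{-1}(S)$ is $\mathcal{F}^{\perp}(\mu)$-saturated and, since by hypothesis every $\mathcal{F}^{\perp}(\mu)$-leaf meeting it is compact, each such leaf maps \emph{onto} $S$; in particular $S$ is automatically compact and these leaves have trivial holonomy (being compact and $1$-connected). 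The structural input I would extract from the cotangent-path picture of Section \ref{Constructing symplectic realizations from transversals in the manifold of cotangent paths} is that $\Sigma$, near $\mu^{-1}(S)$, may be viewed as a transversal $\mathcal{T}$ to the cotangent-homotopy foliation $\mathcal{F}$ on the Banach manifold $\mathcal{X}=P(T^{*}U)$ with $\mu=\widetilde{s}|_{\mathcal{T}}$ (e.g. by a Poisson spray, as in Remark \ref{remark_symplectic_realizations_is_a_consequence}); then, by the orthogonality $\ker(\sigma)^{\perp}=\ker(\tau)$ of Proposition \ref{PropTrnasversal}(c), the $\mathcal{F}^{\perp}(\mu)$-leaves are exactly the intersections $\mathcal{T}\cap\widetilde{t}^{-1}(y)$, so each is canonically diffeomorphic to a target fiber $\mathcal{G}(U,\pi|_{U})(-,y)$, hence (by inversion) to a source fiber. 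Alternatively, one can bypass the path space and build the symplectic groupoid directly from $\mu$ once its orthogonal foliation is made simple, by a fibre-product/Marsden–Weinstein construction in the spirit of Lemma \ref{Lemma_symplectic_groupoid_reduction}, and then invoke uniqueness of the $1$-connected integration.

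With this in hand I would first apply the Local Reeb Stability Theorem \ref{Reeb_Theorem}, part 1, to $\mathcal{F}^{\perp}(\mu)$ around each compact $1$-connected leaf $L\subset\mu^{-1}(S)$: a saturated neighbourhood of $L$ is foliated-diffeomorphic to $L\times D$, so $\mathcal{F}^{\perp}(\mu)$ is simple there. Patching these local models over $\mu^{-1}(S)$ — using compactness of the leaves and the fact that a family of compact leaves with trivial holonomy forms, locally, a fibration — produces an $\mathcal{F}^{\perp}(\mu)$-saturated open $\Sigma_{0}\supset\mu^{-1}(S)$ on which $\mathcal{F}^{\perp}(\mu)$ is simple with smooth Hausdorff leaf space and compact fibres; shrinking further, I may assume $V:=\mu(\Sigma_{0})$ is open in $M$, contains $S$, and $\mu|_{\Sigma_{0}}\colon\Sigma_{0}\to V$ is still a surjective submersion (hence a symplectic realization). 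Via the identification above this says precisely that over $V$ every fibre of the Weinstein groupoid is a smooth compact manifold depending smoothly on the base point; by Proposition \ref{smoothness-all-in-one} and the integrability criterion of Section \ref{Section_Integrability}, this means $T^{*}V$ is integrable and its $1$-connected integration $\mathcal{G}(V,\pi|_{V})$ is smooth. For Hausdorffness of $\mathcal{G}(V,\pi|_{V})$: the $s$-fibres are compact and, after the last shrinking, the leaf space of $\mathcal{F}^{\perp}(\mu)$ near $\mu^{-1}(S)$ is Hausdorff; transporting this and using that the base $V$ is Hausdorff rules out non-separated arrows, so $\mathcal{G}(V,\pi|_{V})$ is a Hausdorff Lie groupoid, as required. (This is exactly what feeds into Theorem \ref{TheoremRedInt} and thereby into the proof of Theorem \ref{Theorem_TWO}.)

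I expect the main obstacle to be the second step: promoting the leafwise Reeb-stability models to a \emph{single} saturated neighbourhood $\Sigma_{0}$ whose $\mathcal{F}^{\perp}(\mu)$-leaf space is genuinely smooth and Hausdorff, while keeping $\mu$ a surjective submersion onto an open $V\supseteq S$. Leafwise linearization and the local triviality of simple foliations are routine, but global separation of nearby leaves does not follow formally from compactness and triviality of holonomy of individual leaves (Reeb-type phenomena); one must exploit the extra geometry — essentially that the leaves over $S$ assemble into the source-fibration of a groupoid — together with a tube-lemma/compactness argument to prevent distinct compact leaves from colliding in the quotient. The remaining point requiring care, identifying $\Sigma$ near $\mu^{-1}(S)$ with a transversal in the path space compatibly with $\widetilde{s},\widetilde{t}$ and $\Omega$ (or, on the alternative route, carrying out the fibre-product construction), is essentially a matter of matching two descriptions of the same object and should not be a genuine difficulty.
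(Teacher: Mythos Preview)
Your geometric intuition is right—leaves of $\mathcal{F}^{\perp}(\mu)$ over $S$ correspond to $s$-fibers of the Weinstein groupoid—but the paper's route is quite different and bypasses exactly the obstacle you flag.

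The paper's argument runs as follows. First, replace $\Sigma$ by the open subset $\Sigma'$ of points whose $\mathcal{F}^{\perp}(\mu)$-leaf is compact and $1$-connected (open by local Reeb stability), and $U$ by $\mu(\Sigma')$. Now \emph{all} leaves of $\mathcal{F}^{\perp}(\mu)$ are compact and $1$-connected. Second—and this is the step you are missing—compactness of the leaves implies that the Hamiltonian vector fields $X_{\mu^{*}f}$ are complete, i.e.\ $\mu$ is a \emph{complete} symplectic realization. One then invokes Theorem~8 of \cite{CrFe2}: a complete symplectic realization forces $\mathcal{G}(M,\pi)$ to be smooth. No patching of Reeb models, no identification of $\Sigma$ with a path-space transversal, no appeal to the monodromy criterion. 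Third, Hausdorffness comes from a groupoid isomorphism $\mathcal{G}(M,\pi)\times_{M}\Sigma\cong \mathcal{G}(\mathcal{F}^{\perp}(\mu))$ (the homotopy groupoid of the foliation), also from \cite{CrFe2}: since the leaves are $1$-connected, $\mathcal{G}(\mathcal{F}^{\perp}(\mu))$ embeds in $\Sigma\times\Sigma$ and is therefore Hausdorff, which pushes down to $\mathcal{G}(M,\pi)$.

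Your approach is not wrong in principle, but it rebuilds machinery that \cite{CrFe2} already provides. The patching step you worry about is genuinely delicate, and your proposed route through Proposition~\ref{smoothness-all-in-one} is not quite what that proposition says (it characterizes smoothness via discreteness of monodromy, not via smoothness of individual fibers). The identification of a general symplectic realization with a path-space transversal is also not automatic—Remark~\ref{remark_symplectic_realizations_is_a_consequence} only covers realizations arising from a spray. The paper's use of completeness is exactly the device that makes all of this unnecessary.
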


\begin{proof}
We may assume that all leaves of $\mathcal{F}(\mu)^{\perp}$ are compact and 1-connected. Otherwise, we replace
$\Sigma$ by $\Sigma'$ and $U$ by $U'= \mu(\Sigma')$, where $\Sigma'$ is defined as the set of points $y\in\Sigma$
with the property that the leaf of $\mathcal{F}(\mu)^{\perp}$ through $y$ is compact and 1-connected. Local Reeb
stability implies that $\Sigma'$ is open in $\Sigma$. The hypothesis implies that $\mu^{-1}(S)\subset \Sigma'$ and,
since $\mu$ is open, $U'$ is an open neighborhood of $S$. Clearly, we may also assume that $U= M$.

Hence we have a symplectic realization
\[ \mu: (\Sigma, \Omega)\rmap (M, \pi)\]
with the property that all leaves of $\mathcal{F}(\mu)^{\perp}$ are compact and 1-connected. We claim that
$\mathcal{G}(M, \pi)$ has the desired properties. By Theorem 8 in \cite{CrFe2}, if the symplectic realization $\mu$ is
complete, then $\mathcal{G}(M, \pi)$ is smooth. The compactness assumption on the leaves of
$\mathcal{F}(\mu)^{\perp}$ implies that $\mu$ is complete since the Hamiltonian vector fields of type
$X_{\mu^*(f)}$ are tangent to these leaves. For Hausdorffness, we take a closer look to the argument of \cite{CrFe2}.
It is based on a natural isomorphism of groupoids
\[\mathcal{G}(M, \pi)\times_{M}\Sigma \cong \mathcal{G}(\mathcal{F}(\mu)^{\perp}),\]
where the left hand side is the fibered product over $s$ and $\mu$, and the right hand side is the homotopy groupoid of
the foliation $\mathcal{F}(\mu)^{\perp}$- obtained by putting together the homotopy groupoids of all the leaves. Since
homotopy groupoids are always smooth, \cite{CrFe2} concluded that $\mathcal{G}(M, \pi)$ is smooth. In our case, the
leaves of $\mathcal{F}(\mu)^{\perp}$ are 1-connected, hence the homotopy groupoid is a subgroupoid of $M\times
M$. So it is Hausdorff, and this implies that also $\mathcal{G}(M, \pi)$ is Hausdorff.
\end{proof}

\subsection{Step 2.3: the needed symplectic realization}
\label{Step 2.3: the needed symplectic realization}

To finish the proof, we still have to provide a symplectic realization as in Theorem \ref{theorem-step-2.2}. This will be
constructed using the methods from section \ref{Constructing symplectic realizations from transversals in the
manifold of cotangent paths}. We will use the notations:\index{cotangent path}
\begin{itemize}
\item $\mathcal{X}= P(T^*M)$ is the Banach manifold of cotangent paths.
\item $\mathcal{F}= \mathcal{F}(T^*M)$ is the foliation on $\mathcal{X}$ given by the equivalence relation of cotangent homotopy; it is a smooth foliation of finite
codimension.
\item $\mathcal{Y}= \tilde{s}^{-1}(S)\subset \mathcal{X}$, the submanifold of $\mathcal{X}$ sitting above $S$.
Note that this is the same as the manifold $P(A_S)$ of $A$-paths of the algebroid $A_S=T^*M_{|S}$.
\item $\mathcal{F}_{\mathcal{Y}}= \mathcal{F}_{|\mathcal{Y}}$, the restriction of $\mathcal{F}$ to $\mathcal{Y}$.
This coincides with the foliation $\mathcal{F}(A_S)$ associated to the algebroid $A_S$ \cite{CrFe1}, and
$\mathcal{Y}/\mathcal{F}_{\mathcal{Y}}$ is the groupoid $\mathcal{G}(A_S)$ of $A_S$.
\item We will denote $B=\mathcal{G}(A_S)$. By the assumptions of Theorem \ref{Theorem_TWO}, $B$ is smooth and compact.
\end{itemize}

By Proposition \ref{PropTrnasversal}, every transversal $\mathcal{T}$ to $\mathcal{F}$ inherits a symplectic
structure $\Omega_{|\mathcal{T}}$, such that $\sigma=\tilde{s}_{|\mathcal{T}}:\mathcal{T}\to M$ is a symplectic
realization of its image. Moreover, $\Omega_{|\mathcal{T}}$ is invariant under the induced holonomy action of
$\mathcal{F}$. Our strategy is to produce such a $\mathcal{T}$ and an equivalence relation $\sim$ on $\mathcal{T}$,
weaker than the holonomy relation, such that the quotient $\mathcal{T}/\sim$ is smooth. Then
$\Omega_{|\mathcal{T}}$ will descend to a symplectic structure on $\mathcal{T}/\sim$ and $\sigma$ (which is
invariant under holonomy) will induce the required symplectic realization.

As in the appendix in \cite{CrFe-Conn}, we will use the following technical lemma:

\begin{proposition}
\label{technical} Let $\mathcal{F}$ be a foliation of finite codimension on a Banach manifold $\mathcal{X}$ and let
$\mathcal{Y}\subset \mathcal{X}$ be a submanifold which is saturated with respect to $\mathcal{F}$ (i.e.\ each leaf
of $\mathcal{F}$ which hits $\mathcal{Y}$ is contained in $\mathcal{Y}$). Assume that:
\begin{itemize}
\item[(H0)] The holonomy groups of the foliation $\mathcal{F}$ at the points of $\mathcal{Y}$ are trivial.
\item[(H1)] $\mathcal{F}_{\mathcal{Y}}:= \mathcal{F}_{|\mathcal{Y}}$ is induced by a submersion $p: \mathcal{Y}\to B$, with $B$-compact.
\item[(H2)] The fibration $p: \mathcal{Y}\to B$ is locally trivial.
\end{itemize}
Then one can find:
\begin{enumerate}[(i)]
\item a transversal $\mathcal{T}\subset \mathcal{X}$ to $\mathcal{F}$ such that $\mathcal{T}_{\mathcal{Y}}:= \mathcal{Y}\cap \mathcal{T}$ is a complete transversal to $\mathcal{F}_{\mathcal{Y}}$ (i.e.\ it intersects each leaf of $\mathcal{F}_{\mathcal{Y}}$ at least once).
\item a retraction $r: \mathcal{T}\to \mathcal{T}_{\mathcal{Y}}$.
\item an action of the holonomy of $\mathcal{F}_{\mathcal{Y}}$ on $r$ along the leaves of $\mathcal{F}$.
\end{enumerate}
Moreover, the quotient of $\mathcal{T}$ by this action is a smooth Hausdorff manifold.
\end{proposition}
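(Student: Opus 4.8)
The plan is to construct $\mathcal{T}$ by first producing a complete transversal $\mathcal{T}_{\mathcal{Y}}$ to $\mathcal{F}_{\mathcal{Y}}$ inside $\mathcal{Y}$ and then thickening it, using the trivial holonomy of $\mathcal{F}$ near $\mathcal{Y}$, in the directions transverse to $\mathcal{Y}$. The first remark is a dimension count. Since $\mathcal{Y}$ is saturated, at every $y\in\mathcal{Y}$ a local slice transverse to $\mathcal{F}$ splits as a slice transverse to $\mathcal{F}_{\mathcal{Y}}$ inside $\mathcal{Y}$, times a slice transverse to $\mathcal{Y}$ inside $\mathcal{X}$; hence $\mathrm{codim}_{\mathcal{X}}\mathcal{F}=\mathrm{codim}_{\mathcal{Y}}\mathcal{F}_{\mathcal{Y}}+\mathrm{codim}_{\mathcal{X}}\mathcal{Y}$, all finite, so every transversal in sight is a finite-dimensional manifold and the only Banach input used is the local product structure of a foliation of finite codimension.

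First I would build $\mathcal{T}_{\mathcal{Y}}$ and $\mathcal{T}$. Using (H1)--(H2) and compactness of $B$, choose a finite cover $B=W_1\cup\dots\cup W_N$ by opens over which $p$ is trivial, together with local sections $s_i$ of $p$ over slightly larger opens; set $\mathcal{T}_{\mathcal{Y}}^i:=s_i(W_i)$, a local transversal to $\mathcal{F}_{\mathcal{Y}}$ on which $p$ restricts to a diffeomorphism onto $W_i$. Their disjoint union $\mathcal{T}_{\mathcal{Y}}:=\bigsqcup_i\mathcal{T}_{\mathcal{Y}}^i$, with the evident étale map to $\mathcal{Y}$, is a complete transversal to $\mathcal{F}_{\mathcal{Y}}$. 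Now (H0), together with local Reeb stability, gives for each $i$ a foliated neighbourhood of $\mathcal{T}_{\mathcal{Y}}^i$ in $\mathcal{X}$ modeled on $\mathcal{T}_{\mathcal{Y}}^i\times D_i\times(\textrm{open in a leaf})$, with $D_i$ a finite-dimensional disk transverse to $\mathcal{Y}$; here the Tube Lemma over the relatively compact $\overline{W_i}$ is used to obtain a uniform $D_i$. Put $\mathcal{T}^i:=\mathcal{T}_{\mathcal{Y}}^i\times D_i\times\{*\}$ and $\mathcal{T}:=\bigsqcup_i\mathcal{T}^i$, an étale transversal to $\mathcal{F}$ with $\mathcal{T}\cap\mathcal{Y}=\mathcal{T}_{\mathcal{Y}}$, and define $r\colon\mathcal{T}\to\mathcal{T}_{\mathcal{Y}}$ sheet by sheet as the projection onto the first factor. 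Compatibility of these local retractions on overlapping sheets is precisely what the holonomy action below provides.

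The holonomy action is the heart of the matter. The complete transversal $\mathcal{T}_{\mathcal{Y}}$ carries the holonomy groupoid of $\mathcal{F}_{\mathcal{Y}}$; since $\mathcal{F}_{\mathcal{Y}}$ is defined by the submersion $p$ and its holonomy at points of $\mathcal{Y}$ is trivial — this follows from (H0), as the $\mathcal{F}$-holonomy transport preserves $\mathcal{Y}$ and restricts there to the $\mathcal{F}_{\mathcal{Y}}$-holonomy — this groupoid is the étale submersion groupoid $\mathcal{T}_{\mathcal{Y}}\times_B\mathcal{T}_{\mathcal{Y}}\rightrightarrows\mathcal{T}_{\mathcal{Y}}$, which is free, and proper because $B$ is Hausdorff, with orbit space $B$. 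Given an arrow $(y',y)$ of this groupoid, i.e. $y,y'\in\mathcal{T}_{\mathcal{Y}}$ with $p(y)=p(y')$ joined by an $\mathcal{F}_{\mathcal{Y}}$-leafwise path $\delta$ (which lies in a leaf of $\mathcal{F}$ since $\mathcal{Y}$ is saturated), holonomy transport of $\mathcal{F}$ along $\delta$ carries the $r$-fiber over $y$ to the $r$-fiber over $y'$; by (H0) this is independent of $\delta$, and by the Tube Lemma over the compact $B$ it is globally defined. This is the required action of the holonomy of $\mathcal{F}_{\mathcal{Y}}$ on $r$ along the leaves of $\mathcal{F}$, and it also yields the missing compatibility of the local retractions. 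Finally, the quotient $\mathcal{T}/(\textrm{action})$: it covers the free proper action of $\mathcal{T}_{\mathcal{Y}}\times_B\mathcal{T}_{\mathcal{Y}}$ on $\mathcal{T}_{\mathcal{Y}}$ with quotient the smooth Hausdorff manifold $B$, and along the fibers of $r$ the action is free, since (H0) rules out stabilizers; hence the quotient is a locally trivial bundle over $B$ with finite-dimensional fiber, in particular a smooth Hausdorff manifold.

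The main obstacle is exactly this gluing step: assembling the chart-wise transversals, retractions and holonomy transports into globally defined objects. This is where (H0) is essential — to kill the ambiguity in holonomy transport — and where compactness of $B$ enters, via the Tube Lemma, to make all the shrinkings uniform; the local Reeb stability input for thickening $\mathcal{T}_{\mathcal{Y}}^i$ to $\mathcal{T}^i$ likewise depends on controlling the leaves through $\mathcal{Y}$ uniformly over $\overline{W_i}$.
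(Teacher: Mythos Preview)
The paper does not actually prove this proposition: it is quoted as a technical lemma from the appendix of \cite{CrFe-Conn} and then its hypotheses (H0)--(H2) are verified in the specific situation of cotangent paths. So there is no ``paper's own proof'' to compare against here.

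Evaluated on its own, your outline is essentially the standard argument and is correct in its main lines. You correctly build $\mathcal{T}_{\mathcal{Y}}$ from local sections of $p$ over a finite cover of the compact $B$, thicken each sheet to a transversal of $\mathcal{F}$ using the local product structure, and let the \'etale holonomy groupoid $\mathcal{T}_{\mathcal{Y}}\times_B\mathcal{T}_{\mathcal{Y}}$ act on $\mathcal{T}$ via $\mathcal{F}$-holonomy transport along leafwise paths in $\mathcal{Y}$. You also identify the two genuine technical points: (H0) is what makes the transport independent of the chosen path (hence a well-defined groupoid action), and compactness of $B$ is what allows the various shrinkings to be done uniformly so that the action is globally defined on a single $\mathcal{T}$. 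The quotient argument---free proper \'etale groupoid with orbit space $B$, acting fiberwise on $r$---is the right one.

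Two small remarks. First, your $\mathcal{T}$ is an \emph{\'etale} transversal (a disjoint union with an \'etale map to $\mathcal{X}$), not literally a submanifold $\mathcal{T}\subset\mathcal{X}$; this is the standard meaning in foliation theory and is what the proposition intends, but it is worth saying explicitly. Second, what you invoke as ``local Reeb stability'' for the thickening step is really just the local product structure of a finite-codimension foliation near a point of a transversal; Reeb stability proper concerns compact leaves and is not needed here.
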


In our case, once we make sure that the lemma can be applied, the resulting quotient $\Sigma$ of $\mathcal{T}$ will
produce the symplectic realization required in Theorem \ref{theorem-step-2.2}. To see this, notice that by
construction, $\sigma^{-1}(S)= \mathcal{G}(A_S)\subset \Sigma$, and by Proposition \ref{PropTrnasversal}, the
symplectic orthogonals to the $\sigma$ fibers are the $\tau$ fibers. Now on $\mathcal{G}(A_S)$, $\tau$ becomes the
target map, thus all its fibers are diffeomorphic to $P_x$, in particular they are compact and 1-connected.

(H1) is clear since $B$ is smooth and compact. We first prove a lemma:

\begin{lemma}\label{Localy-triv}
Let $M$ be a finite dimensional manifold, $x_0\in M$, and denote by $\mathrm{Path}(M, x_0)$ the Banach manifold of
$C^2$-paths in $M$ starting at $x_0$. Then
\[ \epsilon: \mathrm{Path}(M, x_0)\rmap M, \ \gamma\mapsto \gamma(1)\]
is a locally trivial fiber bundle.
\end{lemma}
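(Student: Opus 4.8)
Since being a locally trivial fibre bundle is a local condition on the base, the plan is to fix $y\in M$ and construct a trivialisation of $\epsilon$ over a neighbourhood of $y$. The geometric input is a smoothly varying family of compactly supported isotopies of $M$ that move $y$ to the nearby points $z$. Concretely, I would pick a chart $\varphi:W\to\mathbb{R}^n$ with $\varphi(y)=0$, a number $\varepsilon>0$ small enough that $\overline{\varphi^{-1}(B(0,2\varepsilon))}\subset W$, a bump function $\rho\in C_c^\infty(W)$ equal to $1$ on $\varphi^{-1}(B(0,\varepsilon))$, and for $z\in W$ with $\varphi(z)=w$ let $H^z=(H^z_t)_{t\in[0,1]}$ be the flow of the compactly supported (hence complete) vector field that equals $p\mapsto\rho(p)\,w$ in the chart and vanishes outside. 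Setting $U:=\varphi^{-1}(B(0,\varepsilon))$ guarantees that for $z\in U$ the orbit $t\mapsto H^z_t(y)$ stays in the region where $\rho\equiv1$, so that $H^z_t(y)$ is the point with coordinate $tw$; in particular $H^z_0=\mathrm{id}_M$, $H^z_1(y)=z$, $H^y_t\equiv\mathrm{id}_M$, and $(z,t,p)\mapsto H^z_t(p)$ together with its fibrewise inverse $(z,t,p)\mapsto (H^z_t)^{-1}(p)$ are smooth maps $U\times[0,1]\times M\to M$.

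With these in hand I would define
\[
\Phi_U:\epsilon^{-1}(U)\rmap U\times\epsilon^{-1}(y),\qquad
\gamma\longmapsto\bigl(\,\gamma(1),\ t\mapsto (H^{\gamma(1)}_t)^{-1}(\gamma(t))\,\bigr),
\]
with candidate inverse $(z,\mu)\mapsto\bigl(t\mapsto H^z_t(\mu(t))\bigr)$. The endpoint bookkeeping is immediate: if $\gamma(1)=z$ then the transported path starts at $(H^z_0)^{-1}(\gamma(0))=x_0$ and ends at $(H^z_1)^{-1}(z)=y$, so it lies in the fibre $\epsilon^{-1}(y)$ of $C^2$-paths from $x_0$ to $y$; conversely $t\mapsto H^z_t(\mu(t))$ runs from $x_0$ to $z$. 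The two assignments are manifestly mutually inverse and commute with the projection to $U$. Note that $t\mapsto (H^z_t)^{-1}(\gamma(t))$ is automatically $C^2$ in $t$, so no reparametrisation is needed, and that the construction is uniform --- in particular it requires no special treatment of the point $y=x_0$, where the fibre is the based loop space and the naive recipe of applying a single diffeomorphism to the whole path would fail, since it cannot simultaneously fix $x_0$ and move the endpoint.

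The remaining point is that $\Phi_U$ and its inverse are morphisms of Banach manifolds, and this is where I expect the (routine) technical work to sit. It follows from the standard calculus on manifolds of $C^k$-maps: a smooth map $F:U\times[0,1]\times M\to M$ induces a smooth map $\gamma\mapsto\bigl(t\mapsto F(\gamma(1),t,\gamma(t))\bigr)$ between the associated Banach manifolds of $C^2$-paths (the $\Omega$-lemma, i.e.\ smoothness of composition with smooth maps, with parameters, as in the manifold-of-maps machinery underlying \cite{CrFe1}). The same machinery shows that $\mathrm{Path}(M,x_0)$ and each fibre $\epsilon^{-1}(y)=\{\gamma\in C^2([0,1],M):\gamma(0)=x_0,\ \gamma(1)=y\}$ are Banach submanifolds of $C^2([0,1],M)$, being regular preimages of points under the submersive endpoint-evaluation map $\gamma\mapsto(\gamma(0),\gamma(1))$. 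Assembling the $\Phi_U$ over a cover of $M$ then exhibits $\epsilon$ as a locally trivial fibre bundle.
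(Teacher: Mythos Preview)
Your proof is correct and follows essentially the same approach as the paper: both construct the local trivialisation by flowing paths pointwise along the flow of a compactly supported vector field of the form (bump function)$\times$(constant direction) in a chart around the target point, yielding a smooth family of diffeomorphisms $H^z_t$ with $H^z_0=\mathrm{id}$ and $H^z_1(y)=z$. The paper writes the trivialisation in the opposite direction and is terser about the Banach-manifold smoothness, which you spell out via the $\Omega$-lemma, but the argument is the same.
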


\begin{proof}
For $x\in M$, we construct open neighborhoods $U\subset V$ and a smooth family of diffeomorphisms
\[\phi_{y,t}:M\diffto M, \ \  \mathrm{ for }\ y\in U,\  t\in\mathbb{R},\]
such that $\phi_{y,t}$ is supported inside $V$, $\phi_{y,0}=\mathrm{Id}_M$ and $\phi_{y,1}(x)=y$. Then the
required trivialization over $U$ is given by
\[\tau_U:\epsilon^{-1}(x)\times U\diffto \epsilon^{-1}(U),\quad \tau_U(\gamma, y)(t)=\phi_{y,t}(\gamma(t)),\]
with inverse
\[\tau^{-1}_{U}(\gamma)(t)=(\phi^{-1}_{\gamma(1),t}(\gamma(t)),
\gamma(1)).\] The construction of such maps is clearly a local issue, thus we may assume that $M=\mathbb{R}^m$,
with $x=0$ and $U=B_1(0)$, $V=B_2(0)$, the balls of radii 1 and 2 respectively. Consider $f\in
C^{\infty}(\mathbb{R}^m)$, supported inside $B_2(0)$, with $f_{|B_1(0)}=1$. Let $\phi_{y,t}$ be the flow at time
$t$ of the compactly supported vector field $X_y:=f\overrightarrow y$, where $\overrightarrow y$ represents the
constant vector field on $\mathbb{R}^m$ corresponding to $y\in B_1(0)$. Then $\phi_{y,t}$ satisfies all requirements.
\end{proof}

Next, we denote by $\mathrm{Path}^{s}(\mathcal{G}(A_S), 1)$ the Banach manifold of $C^2$-paths $\gamma$ in
$\mathcal{G}(A_S)$ starting at some unit $1_x$ and satisfying $s\circ\gamma=x$. Proposition 1.1 of \cite{CrFe1}
identifies our bundle $p: \mathcal{Y}\to B$ with the bundle
\begin{equation}\label{map_epsilon}
\widetilde{\epsilon}: \mathrm{Path}^s(\mathcal{G}(A_S), 1)\rmap \mathcal{G}(A_S), \ \gamma\mapsto \gamma(1).
\end{equation}
Hence, the following implies (H2).

\begin{lemma} For a source locally trivial Lie groupoid $\mathcal{G}$, the map $\widetilde{\epsilon}$ (\ref{map_epsilon}) is a locally trivial fiber bundle.
\end{lemma}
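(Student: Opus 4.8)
The plan is to reduce the statement to the previous lemma (Lemma on $\epsilon:\mathrm{Path}(M,x_0)\to M$) by a fibre-bundle argument that is purely local over the target. Recall that a source locally trivial Lie groupoid $\mathcal{G}\rightrightarrows M$ is one for which the source map $s$ admits local trivializations as a fibre bundle, equivariantly over $M$; in particular, for every $g\in\mathcal{G}$ with $s(g)=x$, $t(g)=y$, right multiplication $r_g:\mathcal{G}(y,-)\to\mathcal{G}(x,-)$ is a diffeomorphism between $s$-fibres. So the structure I want to exploit is that, near a given arrow $g_0$, the $s$-fibres vary in a locally trivial way.

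First I would fix $g_0\in\mathcal{G}$ with $s(g_0)=x_0$, $t(g_0)=y_0$, and choose an open neighbourhood $W$ of $g_0$ in $\mathcal{G}$ together with a diffeomorphism over $W$ of the restricted source fibration, i.e.\ a smooth family of diffeomorphisms $\psi_g:\mathcal{G}(x_0,-)\diffto \mathcal{G}(s(g),-)$ for $g$ in (a neighbourhood of) $W$, depending smoothly on $g$, with $\psi_{g_0}=\mathrm{id}$ (this comes from source local triviality combined with right translations; shrinking $W$ one may also arrange $s(W)$ to be a coordinate ball so that the needed ``parallel transport'' of source fibres exists). Then, over the open set $U:=W$, I would construct the trivialization of $\widetilde{\epsilon}$ by transporting paths fibrewise: given a $C^2$-path $\gamma$ in $\mathcal{G}(x_0,-)$ with $s\circ\gamma=x_0$ and $\gamma(0)=1_{x_0}$, and given $g\in U$, set
\[
\tau_U(\gamma,g)(t):=\psi_{\beta_t(g)}(\gamma(t)),
\]
where $t\mapsto\beta_t(g)$ is a smooth path in $\mathcal{G}$ chosen so that $\beta_0(g)=1_{s(g_0\text{-fibre base})}$ type endpoint and $\beta_1(g)=g$, mimicking exactly the device used in the proof of Lemma on $\mathrm{Path}(M,x_0)$ (there the role of $\psi$ was played by compactly supported flows on $M$). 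The inverse sends $\gamma'\mapsto(\psi_{\widetilde\epsilon(\gamma')}^{-1}\circ\gamma'(\cdot)\ \text{suitably re-based},\ \widetilde\epsilon(\gamma'))$. One checks that $\tau_U$ lands in $\mathrm{Path}^s(\mathcal{G},1)$: the source condition $s\circ\tau_U(\gamma,g)=$ const and the initial condition $\tau_U(\gamma,g)(0)=1_{(\cdot)}$ follow because $\psi$ preserves source fibres and sends units to units, and $C^2$-regularity in $t$ is preserved because $\psi$ and $\beta$ are smooth in all variables.

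The point where care is needed — and what I'd call the main obstacle — is the bookkeeping of base points: the fibre $\widetilde\epsilon^{-1}(g_0)$ consists of paths with a \emph{fixed} starting unit $1_{x_0}$, whereas for varying $g\in U$ the paths $\tau_U(\gamma,g)$ must start at $1_{s(g)}$, and $s(g)$ moves. So the family $\psi_g$ must be normalized so that $\psi_g$ carries $1_{x_0}$ to $1_{s(g)}$; producing such a normalized smooth family is exactly where source local triviality is used (pick a local section of $s$ through $g_0$ and right-translate the canonical local trivialization). Once that normalization is in place, everything else is formal: $\tau_U$ is smooth with smooth inverse, it is fibrewise the identity after the identification, and it covers $\mathrm{id}_U$, so $\widetilde\epsilon$ is locally trivial. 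Finally, I would remark that the manifold $\mathrm{Path}^s(\mathcal{G},1)$ is a Banach manifold by the standard arguments of \cite{CrFe1} (it is cut out inside $C^2(I,\mathcal{G})$ by the transverse conditions $s\circ\gamma\equiv\mathrm{const}$ and $\gamma(0)\in u(M)$), so ``locally trivial fibre bundle'' makes sense; combined with compactness of the base $B$ in (H1), this gives hypothesis (H2) needed for Proposition \ref{technical}.
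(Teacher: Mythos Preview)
Your overall strategy---reduce to the previous lemma via source local triviality---is the right one, and is exactly what the paper does. But your execution is tangled, and the map you write down does not do what you need.

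Here is the concrete problem. You define $\tau_U(\gamma,g)(t):=\psi_{\beta_t(g)}(\gamma(t))$, where $\psi_h:\mathcal{G}(x_0,-)\to\mathcal{G}(s(h),-)$ is the $s$-fibre identification coming from source local triviality. But such a $\psi_h$ depends only on $s(h)$, so if $\beta_t(g)$ stays in the single $s$-fibre $s^{-1}(s(g))$ (as it must, for the result to be an $s$-path), then $\psi_{\beta_t(g)}$ is independent of $t$. Your formula collapses to $\tau_U(\gamma,g)(t)=\psi_g(\gamma(t))$, a path in $s^{-1}(s(g))$ from $1_{s(g)}$ to $\psi_g(g_0)$---not to $g$. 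So you have transported the path to the correct $s$-fibre but not to the correct endpoint; you still need the endpoint-moving construction from the previous lemma, and your $\beta_t$ device does not supply it.

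The paper avoids this by separating the two steps cleanly. First, pick a source trivialization $\tau:s^{-1}(U)\cong U\times s^{-1}(x_0)$ over a neighbourhood $U$ of $x_0=s(g_0)$ in $M$, normalized so that $\tau(1_x)=(x,1_{x_0})$ (this is possible because the unit section is transverse to $s$). Postcomposing paths with $\tau$ gives a diffeomorphism $\widetilde{\epsilon}^{-1}(s^{-1}(U))\cong U\times\mathrm{Path}(s^{-1}(x_0),1_{x_0})$ under which $\widetilde{\epsilon}$ becomes $\mathrm{Id}_U\times\epsilon$. Now the previous lemma trivializes $\epsilon$ over a neighbourhood $V\subset s^{-1}(x_0)$ of $g_0$, and hence $\widetilde{\epsilon}$ is trivialized over $\tau^{-1}(U\times V)$. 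No path $\beta_t$ is needed at this outer layer; the time-dependent flow enters only inside the proof of the previous lemma, applied to the fixed manifold $s^{-1}(x_0)$.
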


\begin{proof}
Consider $g_0\in\mathcal{G}$, $x_0=s(g_0)$. Consider a local trivialization of $s$ over an open $U$, with $x_0\in U$,
$\tau:s^{-1}(U)\cong U\times s^{-1}(x_0)$. Since the unit map is transverse to $s$, we may assume that
$\tau(1_x)=(x,1_{x_0})$ for all $x\in U$. Left composing with $\tau$ induces a diffeomorphism
$\tau_{*}:\widetilde{\epsilon}^{-1}(s^{-1}(U))\cong U\times \mathrm{Path}(s^{-1}(x_0),1_{x_0})$, under which
$\widetilde{\epsilon}$ becomes
\[\textrm{Id}\times \epsilon: U\times\mathrm{Path}(s^{-1}(x_0),1_{x_0})\rmap U\times s^{-1}(x_0),\]
where $\epsilon$ is the map from the previous lemma. By that lemma, for every $g_0\in s^{-1}(x_0)$, we can find
$V\subset s^{-1}(x_0)$ an open around $g_0$, over which $\epsilon$ can be trivialized. Thus, $\widetilde{\epsilon}$
can be trivialized over $\tau^{-1}(U\times V)$.
\end{proof}

We still have to check (H0). First we note the following.

\begin{lemma} \label{lemma-Getzler1} For any leaf $\mathcal{L}$ of $\mathcal{F}$ inside $\mathcal{Y}$,
$\pi_1(\mathcal{L})\cong \pi_2(P_x)$.
\end{lemma}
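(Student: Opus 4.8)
The plan is to identify the leaf $\mathcal{L}$ through a point $a\in\mathcal{Y}$ with a concrete homotopy-theoretic object and then read off its fundamental group from the long exact sequence of a fibration. First I would recall the key structural fact from \cite{CrFe1}: for a Lie algebroid $A$ (here $A=A_S=T^*M_{|S}$), the leaves of the foliation $\mathcal{F}(A)$ on the path space $P(A)$ are exactly the $A$-homotopy classes, and the leaf through an $A$-path $a$ is the fiber of the quotient map $P(A)\to\mathcal{G}(A)$ over $[a]$. In our notation $\mathcal{G}(A_S)=B$ and the leaf $\mathcal{L}$ through $a\in\mathcal{Y}$ is the fiber of $p:\mathcal{Y}\to B$ over the point $p(a)\in B$. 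Since by hypothesis (Theorem \ref{Theorem_TWO}) $B$ is smooth and compact, and since $\mathcal{Y}=P(A_S)$ is (weakly) contractible — it deformation retracts onto the space of constant $A$-paths, i.e.\ onto $S$, which itself is connected; more precisely, $P(A_S)$ is homotopy equivalent to the base $S$ via the evaluation-at-$0$ map, but even better it is known that the path space of $A$-paths is contractible in the relevant sense used in \cite{CrFe1} — the long exact homotopy sequence of the fibration $p:\mathcal{Y}\to B$ degenerates.

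Concretely, I would use the fibration $p:\mathcal{Y}\to B$ with fiber $\mathcal{L}$, which is locally trivial by the lemma just proved (identifying it with $\widetilde\epsilon:\mathrm{Path}^s(\mathcal{G}(A_S),1)\to\mathcal{G}(A_S)$, $\gamma\mapsto\gamma(1)$). Its total space $\mathcal{Y}$, being the space of $A$-paths, is weakly homotopy equivalent to $S$ (via evaluation at the endpoint together with the fact that the space of $A$-paths over a fixed point is contractible), so $\pi_k(\mathcal{Y})\cong\pi_k(S)$. The base $B=\mathcal{G}(A_S)$ is the $1$-connected integration of $A_S$ restricted appropriately: its $s$-fibers are the Poisson homotopy bundle $P_x$, and $B$ itself fits into the standard picture $\pi_k(B)\cong\pi_k(S)$ for $k\ge 2$ combined with $\pi_1(B)$ being trivial when $S$ is $1$-connected — but in general one should instead use the comparison of the two fibrations $P_x\to B\xrightarrow{t} S$ (the target map) and $\mathcal{L}\to\mathcal{Y}\xrightarrow{p} B$. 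The long exact sequence of $p:\mathcal{Y}\to B$ reads
\[
\cdots\rmap\pi_2(\mathcal{Y})\rmap\pi_2(B)\rmap\pi_1(\mathcal{L})\rmap\pi_1(\mathcal{Y})\rmap\pi_1(B)\rmap\cdots,
\]
and since $\pi_1(\mathcal{Y})\cong\pi_1(S)$ maps isomorphically onto $\pi_1(B)$ (both come from the leaf $S$, the groupoid $\mathcal{G}(A_S)$ being $s$-connected with $s$-fibers covering $S$), the map $\pi_2(B)\to\pi_1(\mathcal{L})$ is surjective; and $\pi_2(\mathcal{Y})\cong\pi_2(S)\to\pi_2(B)$ — here I would invoke that $B$ integrates the transitive algebroid $A_S$ so that its $s$-fibers are $P_x$, giving $\pi_2(B)\cong\pi_2(S)$ via the target fibration $P_x\to B\to S$ together with $\pi_2(P_x)$ absorbed — so that in fact the relevant connecting map has kernel and cokernel governed precisely by $\pi_2(P_x)$. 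Assembling the two long exact sequences (that of $t:B\to S$ with fiber $P_x$, and that of $p:\mathcal{Y}\to B$ with fiber $\mathcal{L}$) and using $\pi_k(\mathcal{Y})\cong\pi_k(S)$ yields $\pi_1(\mathcal{L})\cong\pi_2(P_x)$.

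The main obstacle will be pinning down the precise weak homotopy type of the path space $\mathcal{Y}=P(A_S)$ and justifying $\pi_k(\mathcal{Y})\cong\pi_k(S)$ together with the naturality needed to splice the two long exact sequences — this is exactly the kind of computation carried out in \cite{CrFe1} for the Weinstein groupoid, and I would cite it rather than redo it. A clean alternative, which I would prefer if the citation covers it, is to note directly that $\mathcal{L}$ is homotopy equivalent to $\widetilde\epsilon^{-1}(1_x)=\mathrm{Path}^s(P_x, 1_x)$, the based path space of the $s$-fiber $P_x$ (paths in $P_x$ from the basepoint), whose fundamental group is $\pi_2(P_x)$ by the standard loop–suspension/based-path-space identification $\pi_k(\Omega_{x}P_x)\cong\pi_{k+1}(P_x)$ applied with $k=1$. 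That route makes the isomorphism $\pi_1(\mathcal{L})\cong\pi_2(P_x)$ essentially immediate once the identification of $\mathcal{L}$ with a based path space of $P_x$ is in place, and it is the identification I would establish first.
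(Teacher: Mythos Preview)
Your ``clean alternative'' at the end is essentially the paper's approach, and the paper's proof is only a few lines once the identification of $p:\mathcal{Y}\to B$ with $\widetilde{\epsilon}:\mathrm{Path}^s(\mathcal{G}(A_S),1)\to\mathcal{G}(A_S)$ is in hand: the leaf $\mathcal{L}$ is $\widetilde{\epsilon}^{-1}(g)$ for some $g$ with $s(g)=y$, hence a fiber of the based-path fibration $\epsilon:\mathrm{Path}(P_y,1_y)\to P_y$; the total space $\mathrm{Path}(P_y,1_y)$ is contractible, so the long exact sequence gives $\pi_1(\mathcal{L})\cong\pi_2(P_y)$; and $P_y\cong P_x$ since $\mathcal{G}(A_S)$ is transitive.

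There is, however, a genuine slip in your formulation. You write that $\widetilde{\epsilon}^{-1}(1_x)$ equals ``the based path space of the $s$-fiber $P_x$ (paths in $P_x$ from the basepoint)''. That is not what $\widetilde{\epsilon}^{-1}(1_x)$ is: it consists of paths in $P_x$ starting at $1_x$ \emph{and ending at} $1_x$, i.e.\ the loop space $\Omega_{1_x}P_x$, not the based path space. The based path space is contractible and has trivial $\pi_1$, so if your identification were taken literally the argument would collapse. Your subsequent appeal to $\pi_k(\Omega_xP_x)\cong\pi_{k+1}(P_x)$ is the correct fact for the loop space, so the conclusion survives, but the intermediate identification is misstated. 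The paper sidesteps this by working with a general fiber $\widetilde{\epsilon}^{-1}(g)$ and invoking the long exact sequence of the path-space fibration directly --- which is, of course, exactly how one proves $\pi_1(\Omega P_x)\cong\pi_2(P_x)$ anyway, and it has the added benefit that no separate argument is needed to pass from a generic leaf to the special fiber over $1_x$.

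Your first approach --- splicing the long exact sequences of $p:\mathcal{Y}\to B$ and of $t:B\to S$, using a claimed $\pi_k(\mathcal{Y})\cong\pi_k(S)$ --- is unnecessarily roundabout, and your own hesitation about the homotopy type of $\mathcal{Y}$ is a symptom. Once $p$ is identified with $\widetilde{\epsilon}$, the entire computation takes place inside a single $s$-fiber and requires no global homotopical information about $\mathcal{Y}$ or $B$ beyond transitivity of $\mathcal{G}(A_S)$. Drop the first approach and fix the path-space/loop-space confusion, and you have the paper's proof.
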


\begin{proof}
The foliation $\mathcal{F}_{\mathcal{Y}}$ is given by the fibers of $p:\mathcal{Y}\to B$, which, as remarked before,
is isomorphic to the bundle $\widetilde{\epsilon}:\mathrm{Path}^s(\mathcal{G}(A_S), 1)\to \mathcal{G}(A_S)$. So, a
leaf $\mathcal{L}$ will be identified with $\widetilde{\epsilon}^{-1}(g)$, for some $g\in \mathcal{G}(A_S)$. Let
$y:=s(g)$ and $P_y:=s^{-1}(y)$. Then $\mathcal{L}$ is a fiber of $\epsilon:\mathrm{Path}(P_y,1_y)\to P_y$, which,
by Lemma \ref{Localy-triv}, is a locally trivial fiber bundle. Since $\mathrm{Path}(P_y, 1_y)$ is contractible, using the
long exact sequence in homotopy, we find that $\pi_1(\mathcal{L})\cong \pi_2(P_y)$. Since $\mathcal{G}(A_S)$ is
transitive, $P_y$ and $P_x$ are diffeomorphic.
\end{proof}

Of course, if $\pi_2(P_x)$ were assumed to be trivial, then condition (H0) would follow automatically. Since $P_x$ is
1-connected, by the Hurewicz theorem, the hypothesis that $H^2(P_x)= 0$ is equivalent to $\pi_2(P_x)$ being finite.
We show that this is enough to ensure triviality of the holonomy groups.

\begin{lemma} \label{lemma-Getzler2}
The holonomy group of the foliated manifold $(\mathcal{X}, \mathcal{F})$ is trivial at any point $a\in \mathcal{Y}$.
\end{lemma}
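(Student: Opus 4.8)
The plan is to reduce the statement to the triviality of a finite linear representation and then kill that representation piece by piece, the key input being that the ``transverse'' part of the holonomy is governed by the monodromy map of $A_S$, which vanishes on the subgroup that occurs here.

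Set $x:=\tilde{s}(a)\in S$ and let $\mathcal{L}$ be the leaf of $\mathcal{F}$ through $a$. By Lemma \ref{lemma-Getzler1} (and transitivity of $\mathcal{G}(A_S)$) one has $\pi_1(\mathcal{L})\cong\pi_2(P_x)$; since $P_x$ is $1$-connected the Hurewicz theorem gives $\pi_2(P_x)\cong H_2(P_x;\mathbb{Z})$, and as $P_x$ is a compact manifold this group is finitely generated, while $H^2(P_x;\mathbb{R})=0$ forces its rank to be zero. Hence $\pi_1(\mathcal{L})$, and therefore its quotient $\mathrm{Hol}(\mathcal{F},a)$, is a finite group. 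Because $\mathcal{F}$ has finite codimension, a transversal $\mathcal{T}$ through $a$ is a finite-dimensional manifold on which the finite group $\mathrm{Hol}(\mathcal{F},a)$ acts by germs of diffeomorphisms fixing $a$; by Bochner's linearization theorem we may assume this action linear, so it suffices to show that the linear holonomy representation $\pi_1(\mathcal{L})\to\mathrm{GL}(T_a\mathcal{T})$ is trivial. Being a finite-dimensional real representation of a finite group it is completely reducible, so it is enough to exhibit an invariant subspace of $T_a\mathcal{T}$ on which the action is trivial and such that the action on the quotient is trivial as well.

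For this I would take $\mathcal{T}$ as in Proposition \ref{technical}, so that $\mathcal{T}_{\mathcal{Y}}:=\mathcal{Y}\cap\mathcal{T}$ is a transversal to $\mathcal{F}_{\mathcal{Y}}$. Since $\mathcal{Y}=\tilde{s}^{-1}(S)$ is $\mathcal{F}$-saturated, holonomy transport of any loop in $\mathcal{L}$ preserves $\mathcal{T}_{\mathcal{Y}}$ and restricts there to the holonomy of $\mathcal{F}_{\mathcal{Y}}$, which is trivial because $\mathcal{F}_{\mathcal{Y}}$ is cut out by the fibres of the submersion $p:\mathcal{Y}\to B$ (here $B=\mathcal{G}(A_S)$ is smooth by hypothesis). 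Thus $T_a\mathcal{T}_{\mathcal{Y}}$ is an invariant subspace with trivial action. For the quotient, Proposition \ref{PropTrnasversal} tells us that $\sigma:=\tilde{s}_{|\mathcal{T}}:\mathcal{T}\to M$ is a submersion onto an open neighborhood of $x$ with $\mathcal{T}_{\mathcal{Y}}=\sigma^{-1}(S)$, so $d\sigma$ identifies $T_a\mathcal{T}/T_a\mathcal{T}_{\mathcal{Y}}$ with $\nu_{S,x}=T_xM/T_xS\cong\mathfrak{g}_x^*$. On this quotient the linear holonomy of a loop $\delta$ in $\mathcal{L}$ equals, by the construction of the monodromy map $\partial_x:\pi_2(S,x)\to Z(G(\mathfrak{g}_x))$ via $A$-homotopies, the (co)adjoint action on $\mathfrak{g}_x^*$ of the central element $\partial_x([\delta])$; by the exact sequence (\ref{EQ_short_exact_sequence}), $[\delta]\in\pi_2(P_x)=\ker\partial_x$, so $\partial_x([\delta])$ is the identity and this action is trivial. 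Combining the two summands, the linear holonomy representation is trivial, hence $\mathrm{Hol}(\mathcal{F},a)=\{e\}$.

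The hard part is the identification used in the last step: that the transverse linear holonomy of $\mathcal{F}$ along the normal-to-$\mathcal{Y}$ directions is computed by the monodromy map. Concretely, one must differentiate the cotangent-homotopy equation along a family of cotangent loops lying over a fixed loop $\delta\subset\mathcal{L}$, track how a normal vector $v\in\nu_{S,x}$ is transported by the foliation, and recognize the outcome as the connecting homomorphism defining $\partial_x$ in \cite{CrFe1}; the explicit formulas for $\Omega$, for $T\mathcal{F}$ and for $T_a\mathcal{X}$ from Section \ref{Constructing symplectic realizations from transversals in the manifold of cotangent paths} --- in particular Lemma \ref{Omega_Formula_simpla} and (\ref{the-fol}) --- should make this computation tractable. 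Everything else (the Hurewicz and Bochner arguments, complete reducibility, and the triviality of the holonomy of $\mathcal{F}_{\mathcal{Y}}$) is routine once these pieces are in place.
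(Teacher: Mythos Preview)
Your overall strategy --- finite holonomy, Bochner linearization, then a two-step filtration of $T_a\mathcal{T}$ on which the action is trivial on sub and quotient --- is exactly the paper's. The paper, however, uses a slightly different and much simpler filtration: instead of $T_a\mathcal{T}_{\mathcal{Y}}\subset T_a\mathcal{T}$, it takes the kernel of $d\sigma$, giving the short exact sequence of $\Gamma$-modules
\[
\ker(d\sigma)_a\hookrightarrow T_a\mathcal{T}\twoheadrightarrow T_{\sigma(a)}M.
\]
The action on the quotient $T_{\sigma(a)}M$ is trivial for an elementary reason you overlook: $\sigma=\tilde{s}_{|\mathcal{T}}$ is constant on leaves of $\mathcal{F}$ (cotangent-homotopic paths share the same starting point), so holonomy commutes with $\sigma$ and hence acts trivially on its differential. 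The action on the kernel is trivial because $\sigma^{-1}(\sigma(a))\subset\mathcal{T}_{\mathcal{Y}}$, so $\ker(d\sigma)_a\subset T_a\mathcal{T}_{\mathcal{Y}}$, and you already argued triviality there.

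This completely eliminates your ``hard part''. In fact your own identification $T_a\mathcal{T}/T_a\mathcal{T}_{\mathcal{Y}}\cong\nu_{S,x}$ is induced by $d\sigma$, so the same observation ($\sigma$ is $\Gamma$-invariant) immediately kills the quotient action in your filtration too --- no monodromy computation is needed. Your proposed identification of the transverse holonomy with the coadjoint action of $\partial_x([\delta])$ is left unproven (and would require real work), whereas the invariance of $\tilde{s}$ is a one-line remark. So your proof has a genuine gap as written, but it closes instantly once you notice that $\tilde{s}$ descends to the leaf space.
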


\begin{proof} Let $a\in \mathcal{Y}$ and let $\Gamma$ be the holonomy group at $a$.
Let $\mathcal{T}$ be a transversal of $(\mathcal{X}, \mathcal{F})$ through $a$. Since $\Gamma$ is finite,
$\mathcal{T}$ can be chosen small enough so that the holonomy transformations $\mathrm{hol}_{u}$ define an
action of $\Gamma$ on $\mathcal{T}$. Denote by $\mathcal{T}_{\mathcal{Y}}:= \mathcal{T}\cap \mathcal{Y}$.
Since the holonomy of $(\mathcal{Y}, \mathcal{F}_{\mathcal{Y}})$ is trivial, by making $\mathcal{T}$ smaller, we
may assume:
\begin{enumerate}
\item[C1:] the action of $\Gamma$ on $\mathcal{T}_{\mathcal{Y}}$ is trivial.
\end{enumerate}
Note also that the submersion $\sigma: \mathcal{T}\to M$ satisfies:
\begin{enumerate}
\item[C2:] $\sigma$ is $\Gamma$-invariant.
\item[C3:] $\sigma^{-1}(\sigma(a))\subset \mathcal{T}_{\mathcal{Y}}$.
\end{enumerate}
For C2, just note that, for $a'\in \mathcal{T}$, since $a'$ and $\mathrm{hol}_{u}(a')$ are in the same leaf (i.e.\
cotangent-homotopic), they have the same starting point. C3 is also clear, since $\sigma^{-1}(S)\subset \mathcal{Y}$.

We have to show that the action of $\Gamma$ is trivial in a neighborhood of $a$ in $\mathcal{T}$. Since
$\Gamma$ is finite, it suffices to show that the induced infinitesimal action of $\Gamma$ on $T_{a}\mathcal{T}$ is trivial. 
At the infinitesimal level, we have a short exact sequence
\[ \ker(d\sigma)_a\stackrel{i}{\rmap} T_{a}\mathcal{T} \stackrel{(d\sigma)_a}{\rmap} T_{\sigma(a)}M.\]
This is a sequence of $\Gamma$-modules, where $\Gamma$ acts trivially on the first and the last term. For the last
map, this follows from C2. For the first map, C3 implies that $\ker(d\sigma)_a\subset T_a\mathcal{T}_{\mathcal{Y}}$
on which $\Gamma$ acts trivially by C1. Since $\Gamma$ is finite, the action on the middle term must be trivial as well
(use e.g.\ an equivariant splitting).
\end{proof}

\clearpage \pagestyle{plain}

\chapter{Formal equivalence around Poisson submanifolds}\label{ChFormalRigidity}
\pagestyle{fancy}
\fancyhead[CE]{Chapter \ref{ChFormalRigidity}} 
\fancyhead[CO]{Formal equivalence around Poisson submanifolds}

In this chapter we study the normal form problem for Poisson structures from the formal point of view. The main result
(Theorem \ref{Theorem_THREE}) is a formal rigidity theorem for Poisson structures around Poisson submanifolds,
which generalizes Weinstein's formal linearization around fixed points with semisimple isotropy Lie algebra
\cite{Wein}. The first order jet of a Poisson bivector at a Poisson submanifold encodes in the structure of the restricted
Lie algebroid. We find cohomological conditions on this Lie algebroid which insure that all Poisson structure with the
same first order jet are formally diffeomorphic. This result the formal version of Theorem \ref{Theorem_FOUR} form
chapter \ref{ChRigidity}, which treats the smooth rigidity problem. The content of this chapter was published in
\cite{MaFormal}.

\section{Statement of Theorem \ref{Theorem_THREE}}\label{section_statement_3}

Let $(M,\pi)$ be a Poisson manifold. Recall that an immersed submanifold \[\iota:S\rmap M\] is called a \textbf{Poisson
submanifold}\index{Poisson submanifold} of $M$ if $\pi$ is tangent to $S$. Recall also (see subsection
\ref{subsection_some_properties_of_Lie_algebroids}) that the cotangent Lie algebroid\index{cotangent Lie algebroid}
of $\pi$ can be restricted to $S$, inducing a Lie algebroid structure on
\[A_S:=T^*M_{|S}.\]
This fits in a short exact sequence of Lie algebroids
\begin{equation}\label{shortexact2}
0\rmap \nu_S^*\rmap A_S\rmap T^*S\rmap 0,
\end{equation}
where $\nu_S^*\subset A_S$ is the conormal bundle of $S$ and $T^*S$ is the cotangent algebroid of $\pi_{|S}$. In
particular, we obtain a representation of $A_S$ on $\nu_S^*$ and thus, also on its symmetric powers
$\mathcal{S}^{k}(\nu_S^*)$. Using the explicit formula for the Lie bracket of $T^*M$ (\ref{br}), we see that the Lie
algebroid structures on $A_S$ and the sequence (\ref{shortexact2}), they both depend only on the first jet of $\pi$ at
$S$, denoted by $j^1_{|S}\pi$.

The cohomology relevant for formal deformations of $\pi$ around $S$ is version of the Poisson cohomology of $(M,\pi)$
restricted to $S$ with coefficients (see section \ref{Proof of the main theorem; step 1: Moser path method}). In terms
of the Lie algebroid $A_S$, this is defined as\index{cohomology, Poisson}
\[H^{\bullet}_{\pi,S}(M,\mathcal{S}^{k}(\nu_S^*)):=H^{\bullet}(A_S,\mathcal{S}^{k}(\nu_S^*)).\]
An equivalent description of these groups, in terms of jets of multivector fields, will be given in subsection \ref{The
cohomology of the restricted algebroid}.

The main result of this chapter is the following:
\begin{mtheorem}\label{Theorem_THREE}
Let $\pi_1$ and $\pi_2$ be two Poisson structures on $M$, such that $S\subset M$ is an embedded Poisson
submanifold for both, and such that they have the same first order jet along $S$. If their common algebroid $A_S$
satisfies
\[H^{2}(A_S,\mathcal{S}^{k}(\nu_S^*))=0, \ \ \forall \  k\geq 2,\]
then the two structures are formally Poisson diffeomorphic. More precisely, there exists a diffeomorphism
\[\psi:\mathcal{U}\diffto \mathcal{V},\]
with $d\psi_{|TM_{|S}}=\mathrm{Id}_{TM_{|S}}$, where $\mathcal{U}$ and $\mathcal{V}$ are open
neighborhoods of $S$, such that $\pi_{1|\mathcal{U}}$ and $\psi^*(\pi_{2|\mathcal{V}})$ have the same infinite jet
along $S$:
\[j_{|S}^{\infty}(\pi_{1|\mathcal{U}})=j^{\infty}_{|S}(\psi^*(\pi_{2|\mathcal{V}})).\]
\end{mtheorem}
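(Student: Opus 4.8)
The plan is to reduce Theorem \ref{Theorem_THREE} to a purely algebraic statement about jets of multivector fields and then invoke the equivalence criterion for Maurer--Cartan elements in complete graded Lie algebras (Theorem \ref{Teo1} in Appendix \ref{section_equivalence}). First I would set up the graded Lie algebra. By working in a tubular neighborhood, identify $M$ with the total space of $\nu_S$ and consider the space of infinite jets along $S$ of multivector fields, $J^{\infty}_S(\mathfrak{X}^{\bullet}(M))$, with the graded Lie algebra structure induced by the Schouten bracket; this carries the filtration by order of vanishing along $S$, which is complete in the sense needed for Theorem \ref{Teo1}. The associated graded pieces are exactly the spaces $\Omega^{\bullet}(A_S,\mathcal{S}^k(\nu_S^*))$ (this is the content of Proposition \ref{Proposition_isomorphic_complexes}, referenced in the excerpt, together with the discussion in subsection ``The cohomology of the restricted algebroid''). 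The Poisson structures $\pi_1$ and $\pi_2$ give Maurer--Cartan elements $\tau_i = j^{\infty}_S(\pi_i)$ of this algebra (the Schouten-square being zero), and the hypothesis that they have the same first jet means $\tau_1 - \tau_2$ lies in the second step of the filtration, i.e.\ it is a ``small'' perturbation.

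Next I would verify the hypotheses of the equivalence criterion. One needs, at each filtration level $k \geq 2$, vanishing of the obstruction to gauging away the difference, and these obstructions live precisely in $H^2$ of the associated graded complex, namely $H^2(A_S,\mathcal{S}^k(\nu_S^*))$, which is zero by assumption. The criterion then produces a sequence of gauge transformations whose ``infinite product'' converges formally (thanks to completeness of the filtration) to an element $X$ of the degree-zero part of the algebra with $e^{[X,-]}$ carrying $\tau_1$ to $\tau_2$. I would then have to translate this algebraic gauge equivalence back into geometry: an element $X$ of the degree-zero part of $J^{\infty}_S(\mathfrak{X}^{\bullet}(M))$ is the infinite jet of a vector field vanishing along $S$ (or at least whose flow fixes $S$ to infinite order); by Borel's lemma I can realize it by an actual vector field, and its time-one flow $\psi$, defined on a neighborhood $\mathcal{U}$ of $S$, satisfies $d\psi_{|TM_{|S}} = \mathrm{Id}$ and $j^{\infty}_S(\psi^*\pi_2) = j^{\infty}_S(\pi_1)$. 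Here a small care is needed: the degree-zero piece of the graded Lie algebra may need to be taken with the right normalization (forms of vanishing order $\geq 1$) so that exponentiation makes sense and the resulting diffeomorphism is the identity to first order along $S$; this is handled exactly as in the proof of Theorem \ref{Teo1}.

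Concretely, the steps in order are: (1) fix a tubular neighborhood and pass to $J^{\infty}_S(\mathfrak{X}^{\bullet}(M))$ with its Schouten bracket and vanishing-order filtration; (2) identify the associated graded with $\bigoplus_k \Omega^{\bullet}(A_S,\mathcal{S}^k(\nu_S^*))$ and the differential $[\tau_i,-]$ on the $k$-th graded piece with the Lie algebroid differential $d_{A_S}$ with coefficients in $\mathcal{S}^k(\nu_S^*)$ (independent of $i$, since it depends only on $j^1_S\pi$); (3) observe $\tau_1,\tau_2$ are Maurer--Cartan elements agreeing modulo filtration level $2$; (4) apply Theorem \ref{Teo1}, whose hypothesis ``$H^2$ of the graded vanishes in positive filtration degree'' is exactly $H^2(A_S,\mathcal{S}^k(\nu_S^*))=0$ for $k\geq 2$, to get a formal gauge $e^{[X,-]}$ with $e^{[X,-]}\tau_1 = \tau_2$; (5) use Borel summation to pick a genuine vector field with jet $X$, take its flow $\psi$, and check $d\psi_{|TM_{|S}}=\mathrm{Id}$ and the infinite-jet identity for the pulled-back bivector.

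The main obstacle I anticipate is step (2): establishing cleanly that the bracket $[\pi_i,-]$, restricted to the associated graded of the jet filtration, really is the Lie algebroid differential $d_{A_S}$ on the symmetric powers of the conormal bundle, and that this is manifestly independent of which Poisson structure with the given first jet we use. This is where the explicit formula (\ref{br}) for the cotangent bracket is needed, and one must keep careful track of how the Schouten bracket of a multivector vanishing to order $k$ with one vanishing to order $\ell$ lands in order $k+\ell-1$ and how its leading term only sees the first jet of $\pi$. Once that bookkeeping is in place, the rest is a direct application of the abstract criterion of Appendix \ref{section_equivalence}, and the geometric translation via Borel's lemma is routine. (Indeed this is the formal shadow of the Nash--Moser argument for Theorem \ref{Theorem_FOUR}, so the algebraic skeleton is the same; only the analytic convergence is replaced by the trivial filtration-completeness convergence.)
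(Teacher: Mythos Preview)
Your proposal is correct and follows essentially the same route as the paper: pass to the complete graded Lie algebra $J^{\infty}_S(\mathfrak{X}^{\bullet+1}_S(M))$ with its vanishing-order filtration, identify the associated graded with $\Omega^{\bullet}(A_S,\mathcal{S}^k(\nu_S^*))$ via Proposition \ref{Proposition_isomorphic_complexes}, apply the abstract criterion Theorem \ref{Teo1}, and then realize the formal gauge by a genuine diffeomorphism (the paper does this last step via Lemma \ref{gauge_real}, which makes the representing vector field complete so the flow exists globally). The bookkeeping you flag in step (2)---that the induced differential on the associated graded depends only on $j^1_{|S}\pi$---is exactly what Proposition \ref{Proposition_isomorphic_complexes} proves.
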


Applying Theorem \ref{Theorem_THREE} to the linear Poisson structure on the dual of a compact, semisimple Lie
algebra, we obtain the following result.

\begin{corollary}\label{C_Dual_ss_Lie}
Consider the linear Poisson structure $(\mathfrak{g}^*,\pi_{\mathfrak{g}})$ corresponding to a semisimple Lie
algebra of compact type $\mathfrak{g}$, and let $\mathbb{S}({\mathfrak{g}}^*)\subset \mathfrak{g}^*$ be the unit
sphere in $\mathfrak{g}^*$ centered at the origin, with respect to some invariant inner product. Any Poisson
structure $\pi$, which is defined on some open around $\mathbb{S}({\mathfrak{g}}^*)$, and that satisfies
\[j^1_{|\mathbb{S}({\mathfrak{g}}^*)}(\pi_{\mathfrak{g}})=j^1_{|\mathbb{S}({\mathfrak{g}}^*)}(\pi),\]
is formally Poisson diffeomorphic to $\pi_{\mathfrak{g}}$ around
$\mathbb{S}({\mathfrak{g}}^*)$.\index{Lie-Poisson sphere}
\end{corollary}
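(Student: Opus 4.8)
The plan is to apply Theorem \ref{Theorem_THREE} with $M$ a common open neighbourhood of $S := \mathbb{S}(\mathfrak{g}^*)$ in $\mathfrak{g}^*$ on which both $\pi_{\mathfrak{g}}$ and the given $\pi$ are defined, taking $\pi_1 := \pi_{\mathfrak{g}}$ and $\pi_2 := \pi$. Three things have to be checked: that $S$ is an embedded Poisson submanifold for both structures, that the common restricted Lie algebroid $A_S = T^*M_{|S}$ and its representations $\mathcal{S}^k(\nu_S^*)$ admit a concrete description, and that $H^2(A_S,\mathcal{S}^k(\nu_S^*)) = 0$ for all $k \geq 2$.

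First I would note that $f(\xi) := (\xi,\xi)$ is $\mathrm{Ad}^*$-invariant, hence a Casimir of $\pi_{\mathfrak{g}}$; since $S = f^{-1}(1)$ is a regular level set it is a union of coadjoint orbits, i.e.\ of symplectic leaves, so $\pi_{\mathfrak{g}}$ is tangent to $S$ and $S$ is an embedded Poisson submanifold. The hypothesis $j^1_{|S}(\pi_{\mathfrak{g}}) = j^1_{|S}(\pi)$ forces in particular $\pi_{|S} = \pi_{\mathfrak{g}|S}$, so $S$ is an embedded Poisson submanifold for $\pi$ as well, and by construction the two structures share the restricted algebroid $A_S$, the conormal bundle $\nu_S^*$, and the induced $A_S$-representations on the $\mathcal{S}^k(\nu_S^*)$. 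Next I would identify $A_S$: the cotangent Lie algebroid of $(\mathfrak{g}^*,\pi_{\mathfrak{g}})$ is the action Lie algebroid $\mathfrak{g} \ltimes \mathfrak{g}^*$ of the coadjoint action, so $A_S \cong \mathfrak{g} \ltimes S$, the action algebroid of the coadjoint action of $\mathfrak{g}$ on $S$. The conormal line bundle $\nu_S^* = TS^{\circ} \subset T^*M_{|S}$ is trivialised by the nowhere-vanishing section $df_{|S}$, and since $f$ is a Casimir a short computation with the bracket \eqref{br} gives $[\alpha, df]_{\pi_{\mathfrak{g}}} = 0$ for every $\alpha \in \Gamma(A_S)$; thus $df_{|S}$ is a flat section and the representation of $A_S$ on $\nu_S^*$, hence on each symmetric power $\mathcal{S}^k(\nu_S^*)$, is the trivial representation $\underline{\mathbb{R}}$.

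It then remains to show $H^2(\mathfrak{g}\ltimes S,\underline{\mathbb{R}}) = 0$, which is the only step that really uses the hypothesis on $\mathfrak{g}$. Let $\widetilde{G}$ be the $1$-connected Lie group integrating $\mathfrak{g}$; since $\mathfrak{g}$ is semisimple of compact type, $\widetilde{G}$ is compact. Then $\widetilde{G}\ltimes S \rightrightarrows S$ is a Hausdorff, proper Lie groupoid integrating $A_S$, and all its $s$-fibres are diffeomorphic to $\widetilde{G}$, which is homologically $2$-connected (we have $\pi_1(\widetilde{G}) = 1$ and $\pi_2(\widetilde{G}) = 1$ by Hopf's theorem, so $H_1 = H_2 = 0$). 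By Proposition 1 of \cite{Cra} the differentiable cohomology of $\widetilde{G}\ltimes S$ with any coefficients vanishes in positive degrees, and by Theorem 4 of \cite{Cra} the Van Est map $H^k_{\mathrm{diff}}(\widetilde{G}\ltimes S,\underline{\mathbb{R}}) \to H^k(A_S,\underline{\mathbb{R}})$ is an isomorphism for $k \leq 2$; combining these, $H^1(A_S,\underline{\mathbb{R}}) = H^2(A_S,\underline{\mathbb{R}}) = 0$. With all hypotheses of Theorem \ref{Theorem_THREE} verified, it produces a diffeomorphism $\psi$ between open neighbourhoods of $S$ with $d\psi_{|TM_{|S}} = \mathrm{Id}$ and $j^{\infty}_{|S}(\psi^*\pi) = j^{\infty}_{|S}(\pi_{\mathfrak{g}})$, which is the assertion.

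The main (and essentially only) obstacle is the cohomology vanishing in the third step, which hinges on the compactness of $\widetilde{G}$; the identification of $A_S$ with the coadjoint action algebroid and the triviality of the normal representation are routine once one observes that the normal direction to $S$ is spanned by the Casimir differential $df_{|S}$. One should also keep in mind the mild point that $\mathcal{S}^k(\nu_S^*)$, being the $k$-th symmetric power of a trivial line bundle carrying a trivial action, is again $\underline{\mathbb{R}}$, so that the hypothesis of Theorem \ref{Theorem_THREE} reduces exactly to $H^2(A_S,\underline{\mathbb{R}}) = 0$.
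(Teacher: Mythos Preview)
Your proof is correct and follows essentially the same route as the paper: both identify $A_S$ with the action algebroid, integrate it by the action groupoid $G\ltimes\mathbb{S}(\mathfrak{g}^*)$ with $G$ compact $1$-connected, and then combine the Van~Est isomorphism (valid because the $s$-fibres $\cong G$ are homologically $2$-connected) with vanishing of differentiable cohomology for proper groupoids to kill $H^2(A_S,\mathcal{S}^k(\nu_S^*))$, so that Theorem~\ref{Theorem_THREE} applies.

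The one difference is that you explicitly compute the normal representation to be trivial (via the Casimir section $df_{|S}$), whereas the paper does not bother: since the Van~Est and properness arguments work for \emph{any} representation of the groupoid, the paper simply observes that $H^2(A_S,E)=0$ for all such $E$ and applies this to $E=\mathcal{S}^k(\nu_S^*)$. Your computation is correct and makes the argument slightly more self-contained (in particular it makes obvious that the algebroid representation integrates to the groupoid), but it is not needed for the proof.
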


Recall from chapter \ref{ChNormalForms} that around an embedded symplectic leaf $(S,\omega_S)$ we have the
notion of a first order approximation of $\pi$ around $S$, which we denote by $\plin(S)$. This Poisson structure has the
same first order jet as $\pi$ at $S$, and is constructed only using data encoded in $j^1_{|S}\pi$.

Theorem \ref{Theorem_THREE} implies a version of Theorem \ref{Theorem_TWO} in the formal category:

\begin{theorem}\label{Theorem1}
Let $(M,\pi)$ be a Poisson manifold and let $S\subset M$ be an embedded symplectic leaf. If the cohomology groups
\[H^2(A_S,\mathcal{S}^{k}(\nu_S^*))\]
vanish for all $k\geq 2$, then $\pi$ is formally Poisson diffeomorphic to its first order approximation around $S$.
\end{theorem}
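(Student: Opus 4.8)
The plan is to deduce Theorem~\ref{Theorem1} directly from the formal rigidity result Theorem~\ref{Theorem_THREE}, applied to the pair $\pi_1=\pi$ and $\pi_2=\plin(S)$, the first order approximation of $\pi$ along $S$ (Definition~\ref{definition_first_order_approx}). First I would reduce to the situation in which both structures live on the same manifold: fixing a tubular neighborhood $\Psi:\nu_S\to M$, Proposition~\ref{Proposition_reconcile} tells us that $\plin(S)$ is an honest Poisson structure on the open set $\Psi(N(A_S))\supset S$ and that $\Psi$ identifies it with the local model $\pi_{A_S}$ attached to $A_S$ and $(S,\omega_S)$. Replacing $M$ by $\Psi(N(A_S))$ — which is again a tubular neighborhood of $S$ — we may therefore assume that $\pi$ and $\plin(S)$ are both Poisson structures on $M$ admitting $S$ as an embedded symplectic leaf, in particular as an embedded Poisson submanifold.

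Next I would verify that the hypotheses of Theorem~\ref{Theorem_THREE} are met for this pair. By construction the first order approximation has the same first order jet along $S$ as $\pi$, i.e.\ $j^1_{|S}\pi=j^1_{|S}\plin(S)$ (this is built into the path $\pi_t$ of Proposition~\ref{Proposition_reconcile_2}, whose $1$-jet along $S$ is constant). Since the Lie algebroid structure on $A_S=T^*M_{|S}$ and the exact sequence \eqref{shortexact2} depend only on $j^1_{|S}\pi$, the two Poisson structures induce literally the same Lie algebroid $A_S$, the same conormal representation on $\nu_S^*$, and hence the same representation on each symmetric power $\mathcal{S}^k(\nu_S^*)$. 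Consequently the groups $H^2(A_S,\mathcal{S}^k(\nu_S^*))$ appearing in Theorem~\ref{Theorem1} are exactly the groups appearing in the hypothesis of Theorem~\ref{Theorem_THREE}, and our assumption that they vanish for all $k\ge 2$ supplies precisely what is needed.

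Applying Theorem~\ref{Theorem_THREE} then yields a diffeomorphism $\psi$ between open neighborhoods of $S$ with $d\psi_{|TM_{|S}}=\mathrm{Id}_{TM_{|S}}$ and $j^{\infty}_{|S}(\pi)=j^{\infty}_{|S}(\psi^*\plin(S))$, which is exactly the statement that $\pi$ is formally Poisson diffeomorphic to its first order approximation around $S$. I would close with the observation that $\plin(S)$ is only defined up to a Poisson diffeomorphism fixing $S$ (Proposition~\ref{proposition_splitting_equivalent} handles the dependence on the splitting, and varying $\Psi$ is absorbed the same way via Proposition~\ref{Proposition_reconcile}), so the conclusion is independent of the auxiliary tubular neighborhood used to define the local model.

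The main point to be careful about is not analytic — all the substantive work is carried out inside the proof of Theorem~\ref{Theorem_THREE}, which runs a gauge-equivalence argument for Maurer–Cartan elements in a complete graded Lie algebra of jets of multivector fields along $S$. The only real subtleties here are bookkeeping: ensuring that $\pi$ and $\plin(S)$ are genuinely brought onto a common tubular neighborhood before Theorem~\ref{Theorem_THREE} is invoked, and using the equality of first jets to identify not merely the vector bundle $A_S$ but its full Lie algebroid structure together with the $\nu_S^*$-representation, so that the cohomological hypotheses of the two theorems coincide verbatim.
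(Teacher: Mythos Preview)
Your proposal is correct and matches the paper's approach: the paper simply states that Theorem~\ref{Theorem1} follows from Theorem~\ref{Theorem_THREE} without writing out a separate proof, treating the first order approximation $\plin(S)$ as the second Poisson structure with matching first jet along $S$. You have spelled out in detail exactly the bookkeeping the paper leaves implicit---bringing both structures onto a common tubular neighborhood via Proposition~\ref{Proposition_reconcile}, and noting that the equality of first jets guarantees the common Lie algebroid $A_S$ and its representation on $\nu_S^*$---so your argument is, if anything, more complete than what appears in the text.
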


In many cases, we prove that these cohomological obstructions vanish, and we obtain the following corollaries:

\begin{corollary}\label{Theorem2}
Let $(M,\pi)$ be a Poisson manifold and let $S\subset M$ be an embedded symplectic leaf. Assume that the Poisson
homotopy bundle of $S$ is a smooth principal bundle with vanishing second de Rham cohomology group,
and\index{differentiable cohomology} that its structure group $G$ satisfies
\[H^2_{\mathrm{diff}}(G,\mathcal{S}^k(\mathfrak{g}))=0,\ \ \forall\  k\geq 2,\]
where $\mathfrak{g}$ is the Lie algebra of $G$ and $H^{\bullet}_{\mathrm{diff}}(G,\mathcal{S}^k(\mathfrak{g}))$
denotes the differentiable cohomology of $G$ with coefficients in the $k$-th symmetric power of the adjoint
representation. Then $\pi$ is formally Poisson diffeomorphic to its first order approximation around $S$.
\end{corollary}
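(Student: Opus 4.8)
The plan is to deduce Corollary \ref{Theorem2} from Theorem \ref{Theorem1} by checking that its cohomological hypothesis is satisfied: it suffices to prove that
\[H^2(A_S,\mathcal{S}^k(\nu_S^*))=0,\qquad \forall\ k\ge 2,\]
where $A_S=T^*M_{|S}$ is the transitive Lie algebroid of the symplectic leaf $S$. Since the Poisson homotopy bundle $P:=P_x$ is assumed to be a smooth principal $G$-bundle, $P$ is the $1$-connected principal bundle integrating $A_S$, so $A_S$ is isomorphic to the Atiyah algebroid $TP/G$; under this isomorphism the representation $\nu_S^*=\ker(\rho)$ is carried to the adjoint bundle $\mathrm{ad}(P)=P\times_G\mathfrak g$, and hence $\mathcal{S}^k(\nu_S^*)\cong P\times_G\mathcal{S}^k(\mathfrak g)$ as representations.

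Next I would pass to the groupoid level, exactly as in Step 2.1 of the proof of Theorem \ref{Theorem_TWO}. The $1$-connected Lie groupoid integrating $A_S$ is the gauge groupoid $\mathcal{G}:=P\times_G P$; since $G$ acts freely and properly on $P$, and hence diagonally on $P\times P$, the groupoid $\mathcal{G}$ is a Hausdorff Lie manifold, and its $s$-fibers are diffeomorphic to $P$, hence $1$-connected. Because $H^2(P)=0$, the Hurewicz theorem gives $\pi_2(P)=0$, so the $s$-fibers of $\mathcal{G}$ are homologically $2$-connected. Applying the Van Est theorem (subsection \ref{Subsection_Van_Est}) to the $\mathcal{G}$-representation $V=\mathcal{S}^k(\nu_S^*)=P\times_G\mathcal{S}^k(\mathfrak g)$ yields an isomorphism
\[\Phi_V:H^2_{\mathrm{diff}}(\mathcal{G},\mathcal{S}^k(\nu_S^*))\diffto H^2(A_S,\mathcal{S}^k(\nu_S^*)).\]

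It then remains to compute the differentiable cohomology of the gauge groupoid. Since $\mathcal{G}=P\times_G P$ is transitive over the connected base $S$, it is Morita equivalent, via the bibundle $P$, to its isotropy group $G$ viewed as a Lie groupoid over a point; under this equivalence the associated bundle $P\times_G\mathcal{S}^k(\mathfrak g)$ corresponds to the $G$-representation $\mathcal{S}^k(\mathfrak g)$. By Morita invariance of differentiable groupoid cohomology with coefficients,
\[H^{\bullet}_{\mathrm{diff}}(\mathcal{G},P\times_G\mathcal{S}^k(\mathfrak g))\cong H^{\bullet}_{\mathrm{diff}}(G,\mathcal{S}^k(\mathfrak g)),\]
which by hypothesis vanishes in degree $2$ for every $k\ge 2$. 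Combining this with the Van Est isomorphism gives $H^2(A_S,\mathcal{S}^k(\nu_S^*))=0$ for all $k\ge 2$, so Theorem \ref{Theorem1} applies and $\pi$ is formally Poisson diffeomorphic to its first order approximation around $S$.

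The main obstacle is the last step: identifying $H^{\bullet}_{\mathrm{diff}}$ of the gauge groupoid with $H^{\bullet}_{\mathrm{diff}}(G,-)$, and checking that the representation $\nu_S^*$ matches the adjoint representation $\mathfrak g$ of $G$ under $A_S\cong TP/G$. If one prefers to avoid invoking Morita invariance, the same identification can be made directly: restricting a smooth groupoid cochain on $(P\times_G P)^{(p)}$ with values in $P\times_G\mathcal{S}^k(\mathfrak g)$ along a fixed point $p_0$ of $P$ over the base point $x$ produces a smooth group cochain on $G^{p}$ with values in $\mathcal{S}^k(\mathfrak g)$, and this correspondence intertwines the differentials — the standard reduction of the cohomology of a transitive Lie groupoid to that of its isotropy group. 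Everything else is a formal assembly of results already established in the thesis.
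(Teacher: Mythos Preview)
Your proposal is correct and follows essentially the same route as the paper: reduce to Theorem \ref{Theorem1} by showing $H^2(A_S,\mathcal{S}^k(\nu_S^*))=0$, pass to the gauge groupoid $\mathcal{G}=P\times_G P$, use the Van Est isomorphism (since the $s$-fibers $\cong P$ are $1$-connected with $H^2(P)=0$), and then use Morita equivalence of the transitive groupoid $\mathcal{G}$ with its isotropy group $G$ to identify $H^2_{\mathrm{diff}}(\mathcal{G},\mathcal{S}^k(\nu_S^*))\cong H^2_{\mathrm{diff}}(G,\mathcal{S}^k(\mathfrak g))$; this is exactly Lemma \ref{vanishing_proposition} (1) and (3) in the paper. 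One minor slip: from $H^2(P;\mathbb{R})=0$ and $\pi_1(P)=1$, Hurewicz only gives that $\pi_2(P)$ is torsion, not that it vanishes; but this detour is unnecessary, since the Van Est theorem here only requires the de Rham cohomology $H^1(P)=H^2(P)=0$, which you have directly.
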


Differentiable cohomology of compact groups vanishes, therefore:

\begin{corollary}\label{Corollary_2}
Let $(M,\pi)$ be a Poisson manifold and let $S\subset M$ be an embedded symplectic leaf. Assume that the Poisson
homotopy bundle of $S$ is a smooth principal bundle with vanishing second de Rham cohomology and compact
structure group. Then $\pi$ is formally Poisson diffeomorphic to its first order approximation around $S$.
\end{corollary}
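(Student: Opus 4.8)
The plan is to obtain Corollary~\ref{Corollary_2} as an immediate consequence of Corollary~\ref{Theorem2}. Under the hypotheses of Corollary~\ref{Corollary_2} the Poisson homotopy bundle $P$ of $S$ is already assumed to be a smooth principal bundle with $H^2(P)=0$, so the only thing left to verify is the cohomological condition appearing in Corollary~\ref{Theorem2}, namely that the compact structure group $G$ of $P$ satisfies
\[
H^2_{\mathrm{diff}}\bigl(G,\mathcal{S}^k(\mathfrak{g})\bigr)=0,\qquad k\geq 2,
\]
where $\mathfrak{g}=\mathrm{Lie}(G)$. In fact I would prove the stronger statement that all differentiable cohomology of a compact Lie group with coefficients in a finite-dimensional representation vanishes in positive degrees, which then trivially specializes.

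The key (and essentially only) step is to regard $G$ as a Lie groupoid over a point. Since $G$ is compact, this groupoid is proper, so the vanishing theorem for the differentiable cohomology of proper Lie groupoids (Proposition~1 of \cite{Cra}, recalled in subsection~\ref{subsection_differentiable_cohomology_of_Lie_groupoids}) applies and gives $H^n_{\mathrm{diff}}(G,V)=0$ for every representation $V$ and every $n\geq 1$. Taking $V=\mathcal{S}^k(\mathfrak{g})$, the $k$-th symmetric power of the adjoint representation, yields in particular the required vanishing for $n=2$ and all $k$; note that $\mathcal{S}^k(\mathfrak{g})$ is finite-dimensional for each $k$, so there is no issue with applying the proper-groupoid result, and it is irrelevant whether $G$ is connected. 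Alternatively, one may argue directly: normalized Haar measure on $G$ furnishes an averaging operator on the complex of smooth group cochains, producing a contracting homotopy onto the subcomplex of constant cochains and hence killing cohomology in positive degrees.

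With this verification in place, the hypotheses of Corollary~\ref{Theorem2} are met, and its conclusion---that $\pi$ is formally Poisson diffeomorphic to its first order approximation around $S$---is exactly the assertion of Corollary~\ref{Corollary_2}. I do not expect any genuine obstacle at this stage: all the algebraic content has already been carried out in the proof of Theorem~\ref{Theorem_THREE}, its specialization Theorem~\ref{Theorem1}, and Corollary~\ref{Theorem2}, and the only remaining input is the classical and routine fact that compact groups have no higher differentiable cohomology.
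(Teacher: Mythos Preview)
Your proof is correct and essentially identical to the paper's: the paper invokes Lemma~\ref{vanishing_proposition}(2) (the proper-groupoid vanishing result applied to the compact group $G$) to kill $H^2_{\mathrm{diff}}(G,\mathcal{S}^k(\mathfrak{g}))$ and then appeals to Corollary~\ref{Theorem2}, exactly as you do. Your write-up is just a bit more expansive in spelling out that $G$ is viewed as a groupoid over a point and in offering the direct Haar-averaging alternative.
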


A bit more technical is the following:
\begin{corollary}\label{Theorem3}
Let $(M, \pi)$ be a Poisson manifold and let $S\subset M$ be an embedded symplectic leaf whose isotropy Lie algebra
is reductive. If the abelianization algebroid
\[A_S^{\mathrm{ab}}:=A_S/[\nu_S^*,\nu_S^*]\]
is integrable by a simply connected principal bundle with vanishing second de Rham cohomology and compact
structure group, then $\pi$ is formally Poisson diffeomorphic to its first order approximation around $S$.
\end{corollary}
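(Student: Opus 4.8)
The plan is to reduce Corollary \ref{Theorem3} to Theorem \ref{Theorem1}: it is enough to prove the cohomological vanishing $H^2(A_S,\mathcal{S}^k(\nu_S^*))=0$ for all $k\geq 2$, and in fact I would show the stronger statement that $H^2(A_S,V)=0$ for \emph{every} representation $V$ of $A_S$. First I would record the structure forced by the reductivity hypothesis. Since $S$ is a symplectic leaf, $A_S=T^*M_{|S}$ is a transitive Lie algebroid over $S$ whose isotropy bundle is $\nu_S^*=\ker(\rho)$, with fibres the (pairwise isomorphic) reductive isotropy Lie algebras $\mathfrak{g}_x$. Hence the derived sub-bundle $\mathfrak{k}:=[\nu_S^*,\nu_S^*]$ has constant rank $\dim[\mathfrak{g}_x,\mathfrak{g}_x]$, so it is a smooth subbundle; being contained in $\ker(\rho)$ and preserved by the adjoint action of $A_S$ (Jacobi identity), it is an ideal of $A_S$ with \emph{semisimple} fibres $\mathfrak{k}_x=[\mathfrak{g}_x,\mathfrak{g}_x]$. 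This gives a short exact sequence of Lie algebroids $0\rmap\mathfrak{k}\rmap A_S\rmap A_S^{\mathrm{ab}}\rmap 0$ whose base is the bundle of semisimple Lie algebras $\mathfrak{k}$.

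Next I would run the Hochschild--Serre spectral sequence attached to this extension: for a representation $V$ of $A_S$ one has $E_2^{p,q}=H^p(A_S^{\mathrm{ab}},\mathcal{H}^q(\mathfrak{k},V))\Rightarrow H^{p+q}(A_S,V)$, where $\mathcal{H}^q(\mathfrak{k},V)$ is the bundle over $S$ with fibre the Lie algebra cohomology $H^q(\mathfrak{k}_x,V_x)$, carrying its natural $A_S^{\mathrm{ab}}$-module structure. Applying Whitehead's first and second lemmas fibrewise to the semisimple $\mathfrak{k}_x$ and the finite-dimensional $V_x$, the bundles $\mathcal{H}^1(\mathfrak{k},V)$ and $\mathcal{H}^2(\mathfrak{k},V)$ vanish, while $\mathcal{H}^0(\mathfrak{k},V)=V^{\mathfrak{k}}$ is the sub-bundle of fibrewise $\mathfrak{k}$-invariants; since $A_S$ preserves $\mathfrak{k}$-invariants and $\mathfrak{k}$ acts by zero on them, $V^{\mathfrak{k}}$ is genuinely a representation of $A_S^{\mathrm{ab}}$. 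Consequently, in total degree $2$ only $E_2^{2,0}$ survives, so $H^2(A_S,V)$ is a subquotient of $E_2^{2,0}=H^2(A_S^{\mathrm{ab}},V^{\mathfrak{k}})$, and it remains to show $H^2(A_S^{\mathrm{ab}},W)=0$ for an arbitrary representation $W$ of $A_S^{\mathrm{ab}}$.

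For this last step I would argue exactly as in the proof of Corollary \ref{Corollary_2}, but applied to $A_S^{\mathrm{ab}}$. By hypothesis $A_S^{\mathrm{ab}}$ is integrated by the gauge groupoid $\mathcal{G}^{\mathrm{ab}}:=P^{\mathrm{ab}}\times_{G^{\mathrm{ab}}}P^{\mathrm{ab}}$ of a simply connected principal $G^{\mathrm{ab}}$-bundle $P^{\mathrm{ab}}\to S$ with $G^{\mathrm{ab}}$ compact and $H^2(P^{\mathrm{ab}})=0$. Then $\mathcal{G}^{\mathrm{ab}}$ is a Hausdorff proper Lie groupoid (its anchor $(s,t)$ has fibres diffeomorphic to the compact group $G^{\mathrm{ab}}$), whose $s$-fibres are diffeomorphic to $P^{\mathrm{ab}}$ — in particular simply connected and, as $H^2(P^{\mathrm{ab}};\mathbb{R})=0$, homologically $2$-connected — so $\mathcal{G}^{\mathrm{ab}}$ is the fundamental integration of $A_S^{\mathrm{ab}}$ and $W$ integrates to it. Crainic's Van Est theorem (Theorem 4 and Corollary 2 in \cite{Cra}) then gives $H^2_{\mathrm{diff}}(\mathcal{G}^{\mathrm{ab}},W)\cong H^2(A_S^{\mathrm{ab}},W)$, and the vanishing of differentiable cohomology of proper groupoids (Proposition 1 in \cite{Cra}) gives $H^2_{\mathrm{diff}}(\mathcal{G}^{\mathrm{ab}},W)=0$. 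Combining this with the spectral sequence argument yields $H^2(A_S,\mathcal{S}^k(\nu_S^*))=0$ for all $k\geq 2$, and Theorem \ref{Theorem1} completes the proof.

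The part I expect to require the most care is the setup and justification of the Hochschild--Serre spectral sequence for the Lie algebroid extension by the bundle of semisimple Lie algebras $\mathfrak{k}$: verifying that $\mathfrak{k}$ is a smooth subbundle and an ideal, that the fibrewise Whitehead vanishing upgrades to vanishing of the coefficient bundles $\mathcal{H}^1(\mathfrak{k},V),\mathcal{H}^2(\mathfrak{k},V)$, and that $\mathcal{H}^0(\mathfrak{k},V)=V^{\mathfrak{k}}$ is a well-defined $A_S^{\mathrm{ab}}$-representation. Once these structural points are in place, the Van Est isomorphism and the properness vanishing are routine applications of the cited results.
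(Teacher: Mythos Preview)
Your proposal is correct and follows essentially the same route as the paper: the short exact sequence $0\to\mathfrak{k}\to A_S\to A_S^{\mathrm{ab}}\to 0$, the Hochschild--Serre spectral sequence for Lie algebroid extensions (citing \cite{MackenzieLL}), Whitehead's lemmas to kill $E_2^{0,2}$ and $E_2^{1,1}$, and then Van Est plus properness of the gauge groupoid $P^{\mathrm{ab}}\times_{G^{\mathrm{ab}}}P^{\mathrm{ab}}$ to kill $H^2(A_S^{\mathrm{ab}},V^{\mathfrak{k}})$. The only cosmetic difference is that the paper inserts an extra step via part (3) of Lemma \ref{vanishing_proposition} (Morita equivalence to the isotropy group $T$) before invoking properness, whereas you apply properness directly to the gauge groupoid; your route is slightly more direct and equally valid.
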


\begin{corollary}\label{Corollary_3}
Let $(M, \pi)$ be a Poisson manifold and let $S\subset M$ be an embedded symplectic leaf through $x\in M$. If the
isotropy Lie algebra at $x$ is semisimple, $\pi_1(S,x)$ is finite and $\pi_2(S,x)$ is torsion, then $\pi$ is formally Poisson
diffeomorphic to its first order approximation around $S$.
\end{corollary}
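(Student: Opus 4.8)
The plan is to deduce Corollary \ref{Corollary_3} from Corollary \ref{Theorem3}, by checking that its hypotheses hold; as a backup I also indicate how one argues directly from Theorem \ref{Theorem1} through a Hochschild--Serre spectral sequence, and point out where the only real work lies.

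First I would carry out two elementary reductions. Write $\mathfrak{g}_x$ for the isotropy Lie algebra at $x$; being semisimple it is in particular reductive, so the isotropy hypothesis of Corollary \ref{Theorem3} is satisfied. Next, since $S$ is connected, the isotropy bundle $\nu_S^*\subset A_S$ has all fibres isomorphic to $\mathfrak{g}_x$, and semisimplicity gives $[\mathfrak{g}_x,\mathfrak{g}_x]=\mathfrak{g}_x$; hence $[\nu_S^*,\nu_S^*]=\nu_S^*$ as a subbundle, and the abelianization algebroid collapses to
\[ A_S^{\mathrm{ab}} \;=\; A_S/\nu_S^* \;\cong\; T^*S, \]
the cotangent Lie algebroid of the symplectic manifold $(S,\omega_S)$. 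Via $\omega_S^{-1,\sharp}\colon T^*S\to TS$ this algebroid is isomorphic to $TS$, so it is integrable, its $s$-fibre $1$-connected integration being the fundamental groupoid $\Pi_1(S)$; the corresponding $1$-connected principal bundle is the universal cover $p\colon\widetilde{S}\rmap S$, a smooth principal bundle with structure group $\pi_1(S,x)$.

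It then remains to verify the finiteness and vanishing conditions of Corollary \ref{Theorem3} for this bundle. The structure group $\pi_1(S,x)$ is finite by hypothesis, hence compact, and $\widetilde{S}$ is simply connected. Finally $\pi_2(\widetilde{S},x)\cong\pi_2(S,x)$ is torsion, so by the Hurewicz isomorphism $H_2(\widetilde{S};\mathbb{Z})\cong\pi_2(\widetilde{S},x)$ is torsion; using $H_1(\widetilde{S};\mathbb{Z})=0$ and universal coefficients, $H^2_{\mathrm{dR}}(\widetilde{S})=H^2(\widetilde{S};\mathbb{R})=0$. All hypotheses of Corollary \ref{Theorem3} now hold, and it gives that $\pi$ is formally Poisson diffeomorphic to its first order approximation around $S$, which is exactly the assertion. (Consistently, since $\mathfrak{g}_x$ is semisimple its centre vanishes, so the monodromy group $\mathcal{N}_x$ is trivial and $A_S$ is genuinely integrable.)

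Alternatively, one can bypass Corollary \ref{Theorem3}: by Theorem \ref{Theorem1} it suffices to show $H^2(A_S,\mathcal{S}^k(\nu_S^*))=0$ for all $k\ge 2$. The extension $0\to\nu_S^*\to A_S\to T^*S\to 0$ induces a spectral sequence with $E_2^{p,q}=H^p\!\left(T^*S,\underline{H^q(\nu_S^*;\mathcal{S}^k\nu_S^*)}\right)$, whose coefficient bundle in degree $q$ has fibre $H^q(\mathfrak{g}_x;\mathcal{S}^k\mathfrak{g}_x)$; Whitehead's first and second lemmas (valid since $\mathfrak{g}_x$ is semisimple and $\mathcal{S}^k\mathfrak{g}_x$ is finite dimensional) kill the rows $q=1,2$. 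Hence the only contribution to $H^2$ is $E_2^{2,0}=H^2(T^*S,\underline{(\mathcal{S}^k\mathfrak{g}_x)^{\mathfrak{g}_x}})$, which under $T^*S\cong TS$ is the twisted de Rham cohomology $H^2_{\mathrm{dR}}(S;W)$ of the flat bundle $W$ of fibrewise Casimirs; the connected group $G_x^{\circ}$ acts trivially on invariants, so the monodromy of $W$ factors through $\pi_0(G_x)\cong\pi_1(S,x)$, and finiteness of $\pi_1(S,x)$ gives $H^2_{\mathrm{dR}}(S;W)=(H^2(\widetilde{S};\mathbb{R})\otimes W_x)^{\pi_1(S,x)}=0$ as above. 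The main obstacle in this second route, and the only non-formal step, is setting up and proving convergence of the Hochschild--Serre spectral sequence for an extension of (a priori non-integrable) Lie algebroids with coefficients, and identifying the $q=0$ row with twisted de Rham cohomology together with its monodromy; the first route, via Corollary \ref{Theorem3}, sidesteps this entirely.
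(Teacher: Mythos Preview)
Your first route is essentially identical to the paper's proof: you reduce to Corollary~\ref{Theorem3} by observing that semisimplicity of $\mathfrak{g}_x$ gives $A_S^{\mathrm{ab}}\cong T^*S\cong TS$, whose $1$-connected integrating bundle is the universal cover $\widetilde S\to S$, and then verify compactness of $\pi_1(S,x)$ and $H^2(\widetilde S)=0$ via Hurewicz exactly as the paper does. Your alternative spectral-sequence route is also correct and in fact just unpacks, in this special case, the proof of Corollary~\ref{Theorem3} itself.
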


\subsection{Some related results}

The first order approximation of a Poisson manifold around a fixed point is the linear Poisson structure corresponding
to the isotropy Lie algebra. Formal linearization in this setting was proven by Weinstein in \cite{Wein}, under the
hypothesis that the isotropy Lie algebra is semisimple. This case is covered also by our Corollary \ref{Corollary_3}.

A weaker version of Theorem \ref{Theorem_THREE} is stated in \cite{VorCo}. Instead of embedded Poisson
submanifolds, the authors of \emph{loc.cit.} work with compact symplectic leaves and also their conclusion is a bit
weaker, they prove that for each $k$ there exists a diffeomorphism that identifies the Poisson structures up to order
$k$ (Theorem 7.1 \emph{loc.cit.}). Compactness of the leaf is a too strong assumption for formal equivalence. For
example, Corollary 7.4 \emph{loc.cit.} concludes that hypotheses similar to those of our Corollary \ref{Corollary_3}
imply vanishing of the cohomological obstructions; but, as remarked by the authors themselves, these assumptions
together with the compactness assumption imply that the leaf is a point.

In order to prove Theorem \ref{Theorem_THREE}, we first reduce the problem to an equivalence criterion for Maurer
Cartan elements in complete graded Lie algebras. In the context of differential graded associative algebras, this
criterion can be found in Appendix A \cite{CMB}.

In order to prove vanishing of cohomology and to obtain the corollaries enumerated above, we use techniques like
Whitehead's Lemma for semisimple Lie algebras, spectral sequences for Lie algebroids, but also the more powerful
techniques developed in \cite{Cra} such as the Van Est map and vanishing of cohomology of proper groupoids.

The statement of Corollary \ref{Corollary_2} resembles the most that of Theorem \ref{Theorem_TWO}; the only
hypothesis that is weakened is the compactness of the leaf. Yet, the assumptions of Corollary \ref{Corollary_2} are too
strong in the formal setting; in particular, they imply compactness of the semisimple part of the isotropy Lie algebra.
The more technical Corollary \ref{Theorem3} generalizes this result by allowing the semisimple part to be
noncompact, and therefore, we consider this result to be the appropriate analogue of Theorem \ref{Theorem_TWO} in
the formal category.

\subsection{The first order data}\label{The first order data}

Let $(M,\pi)$ be a Poisson manifold and let $\iota:S\to M$ be an embedded Poisson submanifold. There are several ways
to encode the first order data associated to $S$, which we explain below. Since we are interested in local properties of
$\pi$ around $S$, we may assume that $S$ is also closed in $M$ (this happens if we replace $M$ with a tubular
neighborhood of $S$).

\subsubsection*{The first order jet}

We use the same notations as in subsection \ref{Subsection_the_first_order_data}: $I_S$ is the vanishing ideal of $S$,
$\mathfrak{X}^{\bullet}_S(M)$ is the algebra of multivector fields tangent to $S$, and the jet spaces are defined by
\[J^k_S(\mathfrak{X}^{\bullet}_S(M))=\mathfrak{X}^{\bullet}_S(M)/I^{k+1}_S\mathfrak{X}^{\bullet}(M).\]
Recall also that the jet spaces inherit a graded Lie algebra structure.

Denote by $\pi_S:=\pi_{|S}$ the Poisson structure induced on $S$. Similar to Definition
(\ref{Defintion_first_jet_Poisson_symplectic}), we consider:
\begin{definition}\label{Defintion_first_jet_Poisson_Poisson}
A first jet of a Poisson on $M$ with Poisson submanifold $(S,\pi_S)$, is an element $\tau\in
J^1_S(\mathfrak{X}^{2}_S(M))$, satisfying
\[ \tau_{|S}=\pi_S\ \ \textrm{ and }\ \ [\tau,\tau]=0.\]
We denote by $J^1_{(S,\pi_S)}\mathrm{Poiss}(M)$ the space of such elements.
\end{definition}

\subsubsection*{An extension of Poisson algebras}\index{Poisson algebra}

The fact that $S$ is a Poisson submanifold is equivalent to $I_S$ being an ideal of the Lie algebra
$(C^{\infty}(M),\{\cdot,\cdot\})$. The quotient space $C^{\infty}(M)/I_S$ carries a natural Poisson algebra structure,
which is canonically isomorphic to the Poisson algebra corresponding to $\pi_S$
\[(C^{\infty}(S),\{\cdot,\cdot\}).\]
We regard this algebra as the $0$-th order approximation of the Poisson manifold $(M,\pi)$ around $S$. This gives a
recipe for constructing higher order approximations. For example, the first order approximation fits into an exact
sequence of Poisson algebras
\begin{equation}\label{shortexact}
0\rmap (I_S/I^{2}_S,\{\cdot,\cdot\})\rmap(C^{\infty}(M)/I^{2}_S,\{\cdot,\cdot\})\rmap (C^{\infty}(S),\{\cdot,\cdot\}) \rmap 0.
\end{equation}

\subsubsection*{An extension of Lie algebroids}

The cotangent Lie algebroid $(T^*M,[\cdot,\cdot]_{\pi},\pi^{\sharp})$ can be restricted to $S$, i.e.\ there is a unique
Lie algebroid structure on $A_S:=T^*M_{|S}$ such that the restriction map $\Gamma(T^*M)\to \Gamma(A_S)$ is a
Lie algebra homomorphism. Endowed with this structure, $A_S$ fits in the short exact sequence of Lie algebroids:
\begin{equation}\label{shortexact_alg}
0\rmap (\nu_S^*,[\cdot,\cdot])\rmap (A_S,[\cdot,\cdot])\rmap (T^*S,[\cdot,\cdot]_{\pi_S})\rmap 0,
\end{equation}
where the last term is the cotangent Lie algebroid of $(S,\pi_S)$. The representation of $A_S$ on $\nu_S^*$
corresponds to the flat $A_S$-connection:
\[\nabla:\Gamma(A_S)\times \Gamma(\nu_S^*)\rmap \Gamma(\nu_S^*), \ \ \nabla_{\alpha}(\eta):=[\alpha,\eta].\]

These three constructions are equivalent ways to encode the first order data of $\pi$ around $S$.

\begin{proposition}\label{Proposition_first_order_data}
Let $(S,\pi_S)$ be Poisson manifold and let $\iota:S\to M$ be a closed embedding. There is a one to one correspondence
between
\begin{itemize}
\item first jets of Poisson structures with $(S,\pi_S)$ as a Poisson submanifold,
\item Poisson algebra structures on the commutative algebra $C^{\infty}(M)/I^2_S$ that fit in the short exact
sequence (\ref{shortexact}),
\item Lie algebroid structures on the vector bundle $A_S=T^*M_{|S}$ that fit in the short exact sequence (\ref{shortexact_alg}).
\end{itemize}
\end{proposition}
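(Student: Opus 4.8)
The plan is to establish the three bijections by exhibiting, for each structure, an explicit construction of the other two together with verifications that the constructions are mutually inverse. The key observation is that all three structures depend only on the restriction $\pi_{|S}$ and on a choice of ``linear term''. I would organise the proof around the cotangent Lie algebroid as the pivot, since its Lie bracket is given by the explicit formula (\ref{br}) whose coefficients are first derivatives of $\pi$, making the first-order dependence transparent.

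First I would treat the passage from a first jet $\tau\in J^1_{(S,\pi_S)}\mathrm{Poiss}(M)$ to the other data. Pick any Poisson bivector $\pi$ on (a neighborhood of) $S$ representing $\tau$; such a $\pi$ exists because $J^1_S(\mathfrak{X}^2_S(M))$ is a quotient of $\mathfrak{X}^2_S(M)$ and one may take a bivector tangent to $S$ with the prescribed $1$-jet and then note that the condition $[\tau,\tau]=0$ only constrains the $1$-jet; actually I would sidestep the existence of a genuine Poisson representative by working directly with the jet and checking that the formulas below make sense modulo $I^2_S$. Given $\pi$, the bracket on $A_S=T^*M_{|S}$ is $[\alpha,\beta]_\pi$ restricted to $S$, and since (\ref{br}) involves $\pi^\sharp$, $d\pi$ and Lie derivatives along $\pi^\sharp(\cdot)$, all of which are determined by $j^1_{|S}\pi$, the resulting Lie algebroid depends only on $\tau$; the anchor is $\pi^\sharp_{|S}$. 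That $S$ is a Poisson submanifold forces $\pi^\sharp(\nu_S^\circ)\subset TS$, i.e.\ $\pi^\sharp(A_S)\subset TS$ after dualising, which is exactly the restrictability condition from subsection \ref{subsection_some_properties_of_Lie_algebroids}, and it yields the sub-bundle $\nu_S^*=TS^\circ\subset A_S$ as an ideal, giving the sequence (\ref{shortexact_alg}) with quotient the cotangent algebroid of $(S,\pi_S)$. For the Poisson algebra: the ideal $I_S$ is $\{\cdot,\cdot\}$-closed precisely because $S$ is a Poisson submanifold, so $C^\infty(M)/I^2_S$ inherits a bracket; the Leibniz rule passes to the quotient since $I^2_S$ is an ideal for the commutative product as well, giving (\ref{shortexact}). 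One checks $[\tau,\tau]=0$ translates to the Jacobi identity for this bracket on $C^\infty(M)/I^2_S$ and to $[\cdot,\cdot]_\pi$ satisfying Jacobi on $A_S$ modulo the relevant order.

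Next I would construct the inverse maps and verify the round trips. From a Poisson algebra structure on $C^\infty(M)/I^2_S$, the bracket of two functions in $I_S$ lands in $I_S/I^2_S\cong\Gamma(\nu_S^*)$, and the derivation property shows this is $C^\infty(S)$-bilinear in an appropriate sense, recovering the bracket on $\nu_S^*$; the action of $A_S=T^*M_{|S}$ — identifying sections of $A_S$ with restrictions $d_S f$ — is read off from $\{f,g\}\bmod I^2_S$. Conversely, a Lie algebroid structure on $A_S$ fitting in (\ref{shortexact_alg}) determines, via the Koszul-type formula, a degree-$(-1)$ bracket on $J^1_S(\mathfrak{X}^\bullet_S(M))=\Gamma(\Lambda^\bullet A_S)\oplus(\text{higher})$ and in particular an element $\tau$ of bi-degree $\le 1$ with $\tau_{|S}=\pi_S$; the condition $[\tau,\tau]=0$ is equivalent to the Jacobi identity for the algebroid bracket by the Newton-type formula (Lemma \ref{Newton-formula}) applied in degree $0$ and $1$. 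The main obstacle, and where I would spend the most care, is the bookkeeping that these three notions of ``first-order data'' are not merely in bijection setwise but compatibly so — i.e.\ the triangle of constructions commutes — and that no hidden choice (such as a splitting of (\ref{shortexact_alg}) or a connection) is needed. Here the cleanest route is to prove each of the three data types is equivalent to the single intrinsic object ``a Lie algebroid structure on $A_S$ with the prescribed anchor and restricting correctly along $\nu_S^*$'', which is manifestly natural, and then read off the other two as functors of it; this is essentially a restriction-to-$S$ version of the standard correspondence between Poisson structures, their Poisson algebras, and their cotangent algebroids, and I would cite the general fact (subsection \ref{subsection_some_properties_of_Lie_algebroids}) that restriction of a Lie algebroid to an invariant submanifold is canonical.
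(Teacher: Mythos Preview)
Your approach is correct in outline but takes a different route from the paper, and one step in your inverse construction is underspecified.

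The paper's proof is more concrete and avoids the round-trip verifications you flag as the main obstacle. It fixes a tubular neighborhood, so that $M=\nu_S$, giving canonical splittings $A_S\cong T^*S\oplus\nu_S^*$ and $C^\infty(M)/I_S^2\cong C^\infty(S)\oplus\Gamma(\nu_S^*)$. It then shows that each of the three structures is encoded by the \emph{same} triple $(R,\nabla,\pi^{\mathrm v})$, where $R\in\mathfrak{X}^2(S)\otimes\Gamma(\nu_S^*)$, $\nabla$ is a contravariant $T^*S$-connection on $\nu_S^*$, and $\pi^{\mathrm v}\in\Gamma(\nu_S^*\otimes\Lambda^2\nu_S)$. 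Explicit formulas express the algebroid bracket and the Poisson bracket on $C^\infty(S)\oplus\Gamma(\nu_S^*)$ in terms of this triple, and the first jet $\tau$ recovers the triple via normal derivatives. The bijections are then automatic, and the only remaining work is the (omitted, ``lengthy but straightforward'') computation that $[\tau,\tau]=0$, Jacobi for the algebroid bracket, and Jacobi for the Poisson algebra all impose the same conditions on $(R,\nabla,\pi^{\mathrm v})$.

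Your more intrinsic approach---using the cotangent algebroid as pivot and building the other two structures functorially from it---is sound for the forward directions, and has the virtue of avoiding the auxiliary splitting. But your inverse from a Lie algebroid on $A_S$ back to an element $\tau\in J^1_S(\mathfrak{X}^2_S(M))$ is not quite right as stated: the Koszul formula for $d_{A_S}$ produces a differential on $\Gamma(\Lambda^\bullet A_S^*)=\Gamma(\Lambda^\bullet TM_{|S})$, which is the space of \emph{zeroth} jets of multivector fields, not first jets, so the identification ``$J^1_S(\mathfrak{X}^\bullet_S(M))=\Gamma(\Lambda^\bullet A_S)\oplus(\text{higher})$'' needs to be made precise and is not immediate. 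You would in effect need to reconstruct the triple from the algebroid bracket anyway, at which point you are close to the paper's argument. Also, your appeal to Lemma~\ref{Newton-formula} is set in the graded Lie algebra $\widetilde{\Omega}_E$ of Chapter~\ref{ChNormalForms}, not in $J^1_S(\mathfrak{X}^\bullet_S(M))$; an analogous formula holds here but would need to be stated and proved in this context.
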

\begin{proof}
We first show that the three structures are encoded by the same type of objects. Using a tubular neighborhood, we may
replace $M$ with $\nu_S$. This gives canonical identifications $A_S=T^*S\oplus \nu_S^*$ and
$C^{\infty}(\nu_S)/I^2_S=C^{\infty}(S)\oplus \Gamma(\nu_S^*)$. As a commutative algebra, $C^{\infty}(S)\oplus
\Gamma(\nu_S^*)$ is the square-zero extension of $C^{\infty}(S)$ by $\Gamma(\nu_S^*)$, i.e.\
$\Gamma(\nu_S^*)^2=0$. When regarding an element $\theta\in \Gamma(\nu_S^*)$ as a linear function on $\nu_S$,
we denote it by $\overline{\theta}$; and when we view it as the restriction of a 1-forms on $\nu_S$ to $S$, we simply
denote it by $\theta$. Using the inclusion $\nu_S\subset (T\nu_S)_{|S}$, we have that
\[d\overline{\theta}_{|\nu_S}=\theta.\]
We encode an element $\tau\in J^1_{S}(\mathfrak{X}^2_S(\nu_S))$, such that $\tau_{|S}=\pi_S$, by a triple:
\[(R,\nabla,\pi^{\mathrm{v}}).\]
To explain each component, write $\tau=j^1_{|S}W$ for a bivector $W$ on $\nu_S$. Then:\\
$\bullet$ $R\in\mathfrak{X}^2(S)\otimes \Gamma(\nu_S^*)$ is a bivector with values in $\nu_S^*$ defined by
\[R(\alpha,\beta):= d(W(p^*(\alpha),p^*(\beta)))_{|\nu_S},\ \ \alpha,\beta\in \Gamma(T^*S),\]
$\bullet$ $\nabla:\Gamma(T^*S)\otimes \Gamma(\nu_S^*)\to \Gamma(\nu_S^*)$ is a contravariant
connection\index{contravariant connection} on $\nu_S^*$ for the cotangent Lie algebroid of $\pi_S$ defined by
\[\nabla_{\alpha}(\theta):=d(W(p^*(\alpha),d\overline{\theta}))_{|\nu_S},\ \ \alpha\in\Gamma(T^*S), \theta\in\Gamma(\nu^*_S),\]
$\bullet$ $\pi^{\mathrm{v}}\in \Gamma(\nu_S^*\otimes \Lambda^2\nu_S)$ is defined by
\[\pi^{\mathrm{v}}(\eta,\theta):=d(W(d\overline{\eta},d\overline{\theta}))_{|\nu_S},\ \eta,\theta\in\Gamma(\nu^*_S).\]
It is straightforward to check that these elements are $C^{\infty}(S)$-linear in both arguments, except for $\nabla$ in
the second argument, in which it satisfies the rule of a contravariant derivative (see subsection
\ref{SubSection_Contravariant}):
\[\nabla_{\alpha}f\theta=f\nabla_{\alpha}\theta+L_{\pi_S^{\sharp}(\alpha)}(f)\theta.\]

Similarly, a bilinear operator on $A_S$ that fits in (\ref{shortexact_alg}) and satisfies the Leibniz rule is also given by
the same structures, namely the ``bracket'' is
\begin{equation*}
[(\alpha,\eta),(\beta,\theta)]=([\alpha,\beta]_{\pi_S},R(\alpha,\beta)+\nabla_{\alpha}\theta-\nabla_{\beta}\eta+\pi^{\mathrm{v}}(\eta,\theta)).
\end{equation*}
for $\alpha,\beta\in\Gamma(T^*S)$ and $\eta,\theta\in\Gamma(\nu_S^*)$.

Also, a biderivation of the commutative algebra $C^{\infty}(S)\oplus \Gamma(\nu_S^*)$ that fits into
(\ref{shortexact}) is given by the same type of elements:
\[\{(f,\overline{\eta}),(g,\overline{\theta})\}=(\{f,g\}_S,\overline{R(df,dg)+\nabla_{df}(\theta)-\nabla_{dg}(\eta)+\pi^{\mathrm{v}}(\eta,\theta)}),\]
for $(f,\eta),(g,\theta) \in C^{\infty}(S)\oplus\Gamma(\nu_S^*)$. It is a lengthly but straightforward computation, which
we omit, to check that the equation $[\tau,\tau]=0$ is equivalent to the Jacobi identity for $[\cdot,\cdot]$ on
$\Gamma(A_S)$ and also to the Jacobi identity for $\{\cdot,\cdot\}$ on $C^{\infty}(S)\oplus\Gamma(\nu_S^*)$.

In the case when $\tau$ is the first jet of a Poisson structure $\pi$, notice that the bracket on $A_S$ is the one
obtained by restricting the cotangent Lie algebroid of $\pi$ to $S$, and that the Poisson algebra on
$C^{\infty}(S)\oplus\Gamma(\nu_S^*)$ is the one of $\pi$ modulo the ideal $I^2_S$.
\end{proof}

If $\pi_S$ is nondegenerate, i.e.\ $\pi_S=\omega_S^{-1}$ for a symplectic form $\omega_S$ on $S$, then every first jet
of a Poisson tensor $\tau\in J^1_{(S,\pi_S)}\mathrm{Poiss}(M)$ can be extended to a Poisson structure on a
neighborhood of $S$. This follows from the results in subsection \ref{Subsection_the_local_model_the_general_case}:
by the previous proposition, $\tau$ determines a Lie algebroid structure on $A_S$, which, by nondegeneracy of $\pi_S$,
is transitive; by Proposition \ref{proposition_restricting_to_cosymplectic}, the local model $\pi_{A_S}$ constructed
out of $A_S$ and $\omega_S$ has $(S,\omega_S)$ as a symplectic leaf and $A_S$ as its Lie algebroid, hence, again by
the previous proposition, $j^1_{|S}\pi_{A_S}=\tau$.

In the case when $\pi_S$ is degenerate this is no longer true. The example below illustrates this.

\begin{example}\label{Example_Not_every_X_is_Y}\rm
Consider $S:=\mathbb{R}^2$ as the submanifold $\{z=0\}\subset M:=\mathbb{R}^3$. We define a Poisson algebra
structure on the commutative algebra
\[C^{\infty}(M)/I^2_S=C^{\infty}(M)/(z^2)=C^{\infty}(S)\oplus z C^{\infty}(S)\]
with the property that $\{f,g\}\in (z)$, for all $f,g\in C^{\infty}(M)/(z^2)$. Explicitly,
\begin{align*}
\{f,g\}&=z(\frac{\partial{f}}{\partial x}\frac{\partial{g}}{\partial y}-\frac{\partial{f}}{\partial y}\frac{\partial{g}}{\partial
x}+x\frac{\partial{f}}{\partial x}\frac{\partial{g}}{\partial z}-x\frac{\partial{f}}{\partial z}\frac{\partial{g}}{\partial x}) \ \mathrm{modulo} \
(z^2).
\end{align*}
It is easy to check that $\{\cdot,\cdot\}$ satisfies the Jacobi identity. We obtain an extension of Poisson algebras
\[0\rmap z C^{\infty}(S)\rmap C^{\infty}(S)\oplus z C^{\infty}(S)\rmap C^{\infty}(S)\rmap 0,\]
with zero Poisson bracket on $C^{\infty}S$. The total space of the corresponding Lie algebroid $A_S$ is
$\mathbb{R}^3\times S\to S$ with a global frame given by $\{dx_{|S}, dy_{|S}, dz_{|S}\}$. The anchor is zero, thus
the bracket is determined by the relations
\[[dx_{|S},dy_{|S}]=dz_{|S}, \ \ [dy_{|S},dz_{|S}]=0,\ \ [dx_{|S},dz_{|S}]=xdz_{|S}.\]

We claim that there is no Poisson structure on $M$ (nor on any open neighborhood of $S$) that extends this first order
data. The bracket of such a Poisson structure must be of the form:
\[\{x,y\}=z+z^2h,\ \ \{y,z\}=z^2k, \ \ \{x,z\}=xz+z^2l,\]
for some smooth functions $h,k,l$. Computing the Jacobiator of $x$, $y$, $z$, and putting together all terms that
vanish up third order on $S$, we obtain
\begin{align*}
J\{x,y,z\}&=\{x,\{y,z\}\}+\{y,\{z,x\}\}+\{z,\{x,y\}\}=\\
&=\{x,z^2k\}-\{y,xz+z^2l\}+\{z,z+z^2h\}=\\
&=2z^2k(x,y,0)+z^2-xz^2k(x,y,0)+z^3a(x,y,z)=\\
&=z^2((2-x)k(x,y,0)+1)+z^3a(x,y,z),
\end{align*}
where $a$ is a smooth function. Note that $J$ cannot vanish, because
\[\frac{\partial^2J}{\partial z^2}(2,y,0)=2.\]
\end{example}

\section{The algebra of formal vector fields}\label{Formal equivalence of Poisson structures around symplectic
leaves}

Let $S\subset M$ be a closed embedded submanifold. We describe now the algebraic framework that controls formal
jets of multivector fields along $S$.

The ideal $I_S$ induces a filtration $\mathcal{F}$ on $\mathfrak{X}^{\bullet}_S(M)$:\index{filtration}
\[\mathfrak{X}^{\bullet}_S(M)\supset \mathcal{F}^{\bullet}_0\supset\mathcal{F}^{\bullet}_1\supset\ldots \mathcal{F}^{\bullet}_k\supset\mathcal{F}^{\bullet}_{k+1}\supset\ldots.\]
\[\mathcal{F}^{\bullet}_k=I^{k+1}_S\mathfrak{X}^{\bullet}(M),\ \   k\geq 0.\]
It is readily checked that
\begin{equation}\label{Filtration_bracket}
[\mathcal{F}_k,\mathcal{F}_l]\subset \mathcal{F}_{k+l},\ \ [\mathfrak{X}_S^{\bullet}(M),\mathcal{F}_k]\subset \mathcal{F}_k,
\end{equation}
therefore the jet spaces
\[J^{k}_S(\mathfrak{X}^{\bullet}_S(M)):=\mathfrak{X}_S^{\bullet}(M)/\mathcal{F}^{\bullet}_k\]
inherit a graded Lie algebra structure\index{graded Lie algebra}. Let $J^{\infty}_S(\mathfrak{X}^{\bullet}_S(M))$
be the completion of $\mathfrak{X}^{\bullet}_S(M)$ with respect to the filtration $\mathcal{F}$; it is defined by the
inverse limit
\[J^{\infty}_S(\mathfrak{X}^{\bullet}_S(M)):=\varprojlim \mathfrak{X}^{\bullet}_S(M)/\mathcal{F}^{\bullet}_{k}=\varprojlim J^{k}_S(\mathfrak{X}^{\bullet}_S(M)).\]
By (\ref{Filtration_bracket}), it follows that also $J^{\infty}_S(\mathfrak{X}^{\bullet}_S(M))$ inherits a graded Lie
algebra structure, for which the natural projections
\[ j^{k}_{|S}:J^{\infty}_S(\mathfrak{X}^{\bullet}_S(M))\rmap J^{k}_S(\mathfrak{X}^{\bullet}_S(M)),\textrm{ for }k\geq 0\]
are Lie algebra homomorphisms. The algebra $(J^{\infty}_S(\mathfrak{X}^{\bullet}_S(M)),[\cdot,\cdot])$ will be
called the \textbf{algebra of formal multivector fields along} $S$. Consider also the graded Lie algebra homomorphism
\[j^{\infty}_{|S}:\mathfrak{X}^{\bullet}_S(M)\rmap J^{\infty}_S(\mathfrak{X}^{\bullet}_S(M)).\]
By a version of Borel's Theorem (see e.g.\ \cite{Moerdijk}) about existence of smooth sections with a specified infinite
jet along a submanifold, $j^{\infty}_{|S}$ is surjective. Observe that $J^{\infty}_S(\mathfrak{X}^{\bullet}_S(M))$
inherits a filtration $\hat{\mathcal{F}}$ from $\mathfrak{X}^{\bullet}_S(M)$, given by
\[\hat{\mathcal{F}}^{\bullet}_{k}=j^{\infty}_{|S}\mathcal{F}^{\bullet}_{k},\]
and which satisfies the corresponding equations (\ref{Filtration_bracket}).

The adjoint action of an element $X\in\hat{\mathcal{F}}_1^1$
\[ad_X: J^{\infty}_S(\mathfrak{X}^{\bullet}_S(M))\rmap J^{\infty}_S(\mathfrak{X}^{\bullet}_S(M)),\ \ ad_X(Y):=[Y,X]\]
increases the degree of the filtration by 1. Therefore, the partial sums
\[\sum_{i=0}^n\frac{ad_{X}^i}{i!}(Y)\]
are constant modulo $\hat{\mathcal{F}}_k$ for $n\geq k$ and all $Y\in J^{\infty}_S(\mathfrak{X}^{\bullet}_S(M))$.
This and the completeness of the filtration on $\hat{\mathcal{F}}$ show that the exponential of $ad_X$
\[e^{ad_X}:J^{\infty}_S(\mathfrak{X}^{\bullet}_S(M))\rmap J^{\infty}_S(\mathfrak{X}^{\bullet}_S(M)),\ \ e^{ad_{X}}(Y):=\sum_{n\geq 0} \frac{ad_{X}^n}{n!}(Y)\]
is well defined. It is readily checked that $e^{ad_X}$ is a graded Lie algebra isomorphism with inverse $e^{-ad_X}$
and that it preserves the filtration.

These isomorphisms have a geometric interpretation.
\begin{lemma}\label{gauge_real}
For $X\in\hat{\mathcal{F}}_1^1$, there exists $\psi:M\diffto M$ a diffeomorphism of $M$, with
$\psi_{|S}=\mathrm{Id}_S$ and $d\psi_{|S}=\mathrm{Id}_{TM_{|S}}$, such that
\[j^{\infty}_{|S}(\psi^*(W))=e^{ad_X}(j^{\infty}_{|S}(W)), \ \ \forall \ W\in
\mathfrak{X}^{\bullet}_S(M).\]
\end{lemma}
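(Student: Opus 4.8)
The plan is to realize the algebraic exponential $e^{ad_X}$ geometrically as the effect on $\infty$-jets of the time-one flow of a vector field that vanishes to second order along $S$. Recall that for $X\in\hat{\mathcal{F}}_1^1$ we can, by Borel's theorem, pick a representative $\widetilde X\in\mathfrak{X}^1_S(M)$ with $j^\infty_{|S}\widetilde X=X$; since $X\in\hat{\mathcal{F}}_1^1=j^\infty_{|S}(I_S^2\,\mathfrak{X}(M))$, we may arrange $\widetilde X\in I_S^2\,\mathfrak{X}(M)$, so $\widetilde X$ vanishes along $S$ together with its first derivatives in the normal directions. First I would observe that such an $\widetilde X$ vanishes on $S$, so $S$ is invariant under its (local) flow and $\widetilde X|_S=0$ forces the flow to fix $S$ pointwise; moreover $d\widetilde X|_S=0$ (because $\widetilde X\in I_S^2\mathfrak{X}(M)$ kills the first-order normal data) gives that the linearization of the time-$t$ flow along $S$ is the identity on $TM_{|S}$. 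By the Tube Lemma \ref{TubeLemma} applied to the family of opens on which the flow is defined up to time $t\in[0,1]$, the time-one flow $\psi:=\varphi^1_{\widetilde X}$ is defined on an open neighborhood of $S$; shrinking $M$ to that neighborhood, $\psi:M\diffto M$ is the desired diffeomorphism with $\psi_{|S}=\mathrm{Id}_S$ and $d\psi_{|S}=\mathrm{Id}_{TM_{|S}}$.

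Next I would establish the jet identity $j^\infty_{|S}(\psi^*W)=e^{ad_X}(j^\infty_{|S}W)$ for every $W\in\mathfrak{X}^\bullet_S(M)$. The natural route is to use the standard formula for the derivative of the pullback along a flow: setting $W_t:=(\varphi^t_{\widetilde X})^*W$, one has $\frac{d}{dt}W_t=(\varphi^t_{\widetilde X})^*(L_{\widetilde X}W)=[W_t,\widetilde X]$ (with our sign convention $L_{\widetilde X}W=-[\widetilde X,W]=[W,\widetilde X]$). Because $\widetilde X\in I_S^2\mathfrak{X}(M)$, the operator $Y\mapsto[Y,\widetilde X]$ sends $\mathcal{F}_k$ into $\mathcal{F}_{k+1}$, by the bracket-filtration compatibility $[\mathcal{F}_k,\mathcal{F}_l]\subset\mathcal{F}_{k+l}$ together with $\widetilde X\in\mathcal{F}_1$. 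Passing to the $\infty$-jet, the ODE for $\widehat W_t:=j^\infty_{|S}W_t$ becomes $\frac{d}{dt}\widehat W_t=ad_{X}(\widehat W_t)$ in the complete filtered Lie algebra $J^\infty_S(\mathfrak{X}^\bullet_S(M))$, where $ad_X$ raises filtration degree by one. This linear ODE has the unique solution $\widehat W_t=e^{t\,ad_X}(\widehat W_0)$: indeed, modulo each $\hat{\mathcal{F}}_k$ both sides satisfy the same (now finite-dimensional) linear ODE with the same initial value, so they agree for all $k$, hence agree in the inverse limit. Evaluating at $t=1$ gives the claim. I should note that the formal power series $e^{t\,ad_X}$ converges in $J^\infty_S(\mathfrak{X}^\bullet_S(M))$ precisely because $ad_X$ is filtration-raising and the filtration is complete, exactly as in the discussion of $e^{ad_X}$ preceding this lemma.

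The one subtlety I would be careful about is the interplay between "vanishing to order $2$ along $S$" at the level of the representative $\widetilde X$ and "lying in $\hat{\mathcal{F}}_1$" at the level of the jet: I must choose $\widetilde X$ so that $j^\infty_{|S}\widetilde X=X$ \emph{and} $\widetilde X\in I_S^2\mathfrak{X}(M)$ (not merely $I_S\mathfrak{X}(M)$), which is legitimate since any representative can be modified by an element of $I_S^2\mathfrak{X}(M)\cap\ker(j^\infty_{|S})$... more simply, since $X\in\hat{\mathcal F}_1^1=j^\infty_{|S}(\mathcal F_1^1)$ by definition, there is a representative already in $\mathcal F_1^1=I_S^2\mathfrak{X}(M)$. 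With this choice everything above goes through; the computation that $ad_{\widetilde X}$ raises the filtration degree, and that the flow fixes $S$ with trivial normal linearization, are exactly where $I_S^2$ (rather than $I_S$) is used. The main potential obstacle is purely bookkeeping: making the uniqueness argument for the filtered ODE airtight by reducing mod $\hat{\mathcal{F}}_k$ for each $k$ and invoking that $J^\infty_S$ is the inverse limit; but this is routine once the filtration-raising property of $ad_X$ is in hand, and I do not expect any genuine difficulty.
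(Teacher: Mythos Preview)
Your approach is the same as the paper's---realize $e^{ad_X}$ as the $\infty$-jet of the time-one flow of a representative vector field vanishing to second order along $S$---but there are two concrete discrepancies worth flagging.

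First, a sign issue. With the paper's conventions (and the standard one), $L_{\widetilde X}W=[\widetilde X,W]$, not $[W,\widetilde X]$. Since $ad_X(Y):=[Y,X]$, the flow equation reads $\frac{d}{dt}W_t=[\widetilde X,W_t]=-ad_{\widetilde X}(W_t)$, and with your choice $j^\infty_{|S}\widetilde X=X$ you end up with $j^\infty_{|S}(\psi^*W)=e^{-ad_X}(j^\infty_{|S}W)$, off by a sign in the exponent. The paper handles this by choosing the representative $V$ with $j^\infty_{|S}V=-X$; equivalently you could flow along $-\widetilde X$.

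Second, globality. The lemma asserts a diffeomorphism $\psi:M\diffto M$, not merely between neighborhoods of $S$. Your Tube Lemma argument only produces a flow defined on a neighborhood of $S$, and ``shrinking $M$'' doesn't yield a self-diffeomorphism (the domain and range of $\varphi^1_{\widetilde X}$ need not coincide). The paper fixes this with a simple trick: multiply the representative $V$ by a cutoff so that it is bounded for a complete metric on $M$; since $V$ vanishes along $S$, the cutoff doesn't change its germ (hence its $\infty$-jet) there, and now the flow is complete and $\psi=\varphi_V^1$ is a genuine diffeomorphism of $M$.

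A minor point: the quotients $J^\infty_S/\hat{\mathcal F}_k$ are \emph{not} finite-dimensional (they are sections of finite-rank bundles over $S$), so your uniqueness argument as written is incorrect. It is easily salvaged via nilpotency of $ad_X$ on each quotient: any solution of $\dot u=ad_X u$ satisfies $\frac{d^k}{dt^k}u=0$, hence is a polynomial in $t$ determined by $u(0)$. The paper sidesteps this entirely by working with the finite sums $\sum_{i=0}^k\frac{s^i ad_V^i}{i!}(W_s)$ directly on $M$ and showing their $s$-derivative lies in $\mathcal F_{k+1}$, passing to jets only at the very end.
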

\begin{proof}
By Borel's Theorem, there exists a vector field $V$ on $M$, such that \[X=-j^{\infty}_{|S}(V).\] We claim that $V$ can
be chosen to be complete. Let $g$ be a complete metric on $M$ and let $\phi:M\to [0,1]$ be a smooth function, such
that $\phi=1$ on the set $\{x|g_x(V_x,V_x)\leq 1/2\}$ and $\phi=0$ on the set $\{x|g_x(V_x,V_x)\geq 1\}$. Since
$V_{|S}=0$, $\phi V$ has the same germ as $V$ around $S$, therefore $j^{\infty}_{|S}(\phi V)=X$. Since $\phi V$ is
bounded, it is complete. So, just replace $V$ with $\phi V$.

We will show that $\psi:=\varphi_V$, the flow of $V$ at time 1 satisfies all requirements. Since $j^1_{|S}(V)=0$, it is
clear that $\psi_{|S}=\textrm{Id}_S$ and $d\psi_{|S}=\textrm{Id}_{TM_{|S}}$.

For $W\in \mathfrak{X}_S^{\bullet}(M)$, we denote by $W_s:=(\varphi_V^s)^{*}(W)$ the pullback of $W$ by the
flow of $V$ at time $s$. Using that $W_s$ satisfies the differential equation $\frac{d}{ds}W_s=[V,W_s]=-ad_V(W_s)$,
we obtain that
\begin{align*}
\frac{d}{ds}\left(\sum_{i=0}^k\frac{s^iad_{V}^i}{i!}(W_s)\right)&=\sum_{i=0}^k\left(\frac{s^{i-1}ad_{V}^i}{(i-1)!}(W_s)-\frac{s^iad_{V}^i}{i!}ad_V(W_s)\right)=\\
&=-\frac{s^k ad_V^{k+1}}{k!}(W_s).
\end{align*}
This shows that the sum
\[\sum_{i=0}^k\frac{s^iad_{V}^i}{i!}(W_s)\]
modulo $\mathcal{F}_{k+1}$ is independent of $s$; therefore,
\[W-\sum_{i=0}^k\frac{ad_{V}^i}{i!}(\psi^*(W))\in\mathcal{F}_{k+1}.\]
Applying $j^{\infty}_{|S}$ to this equation yields
\[j^{\infty}_{|S}(W)-\sum_{i=0}^k\frac{(-1)^iad_{X}^i}{i!}j^{\infty}_{|S}(\psi^*(W))\in\hat{\mathcal{F}}_{k+1},\]
hence, the conclusion
\[j^{\infty}_{|S}(W)=e^{-ad_X}j^{\infty}_{|S}(\psi^*(W)).\qedhere\]
\end{proof}

\section{The cohomology of the restricted algebroid}\label{The cohomology of the restricted algebroid}

Let $(M,\pi)$ be a Poisson manifold and $S\subset M$ a closed, embedded Poisson submanifold. By the discussion in
the previous subsection, we have that
\[[\pi,\mathcal{F}^{\bullet}_k]\subset \mathcal{F}^{\bullet}_k.\]
Hence $\mathcal{F}^{\bullet}_k$ is a subcomplex of the Poisson complex
\[(\mathcal{F}^{\bullet}_k, d_{\pi})\subset (\mathfrak{X}^{\bullet}(M), d_{\pi}).\]
Denote the complexes obtained by taking consecutive quotients by
\[(\mathcal{F}^{\bullet}_k/\mathcal{F}^{\bullet}_{k+1},d^k_{\pi}).\]
For $k=0$, this computes the Poisson cohomology relative to $S$. Observe that the differential on these complexes
depends only on the first jet of $\pi$ along $S$. Therefore, following the philosophy of subsection \ref{The first order
data}, it can be described in terms of the Lie algebroid $A_S$.\index{cohomology, Poisson}

\begin{proposition}\label{Proposition_isomorphic_complexes}
The following two complexes are isomorphic
\[(\mathcal{F}^{\bullet}_k/\mathcal{F}^{\bullet}_{k+1},d_\pi^k)\cong (\Omega^{\bullet}(A_S,\mathcal{S}^k(\nu_S^*)),d_{\nabla^k}),\ \  \forall  k\geq 0.\]
\end{proposition}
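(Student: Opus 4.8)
The plan is to construct an explicit isomorphism between the two complexes and then verify that it intertwines the differentials. First I would set up the underlying bundle-level identification. Using a tubular neighborhood we may replace $M$ by $\nu_S$, which gives the canonical splitting $TM_{|S}\cong TS\oplus\nu_S$ and dually $A_S=T^*M_{|S}\cong T^*S\oplus\nu_S^*$. A multivector field $W\in\mathcal{F}^{\bullet}_k=I^{k+1}_S\mathfrak{X}^{\bullet}(M)$, read modulo $\mathcal{F}^{\bullet}_{k+1}=I^{k+2}_S\mathfrak{X}^{\bullet}(M)$, is precisely (via Borel / Taylor expansion along $S$) a section of $\Lambda^{\bullet}(TM_{|S})\otimes\mathcal{S}^{k+1}(\nu_S^*)$; contracting the $\Lambda^{p}TM_{|S}$-part against the pairing with $A_S=T^*M_{|S}$ and absorbing one $\nu_S^*$-factor into the coefficient $\mathcal{S}^{k}(\nu_S^*)$, this space is identified with $\Omega^{p}(A_S,\mathcal{S}^k(\nu_S^*))=\Gamma(\Lambda^pA_S^*\otimes\mathcal{S}^k(\nu_S^*))$. (One has to be a little careful about where the degree shift $k\mapsto k$ versus $k+1$ lands; the point is that the Liouville/dilation grading from subsection \ref{The dilation operators and jets along $S$}, namely $\textrm{gr}_{l}(\Omega_{E}^{p, q})= \Omega^p(S, \Lambda^qE\otimes \mathcal{S}^lE^*)$, together with the bundle isomorphism $\tau_\pi$ of Proposition \ref{PoissonCohomology}, gives exactly this matching — in fact Lemma \ref{cohom-restricted-algebroid} is the special case of this proposition with $A_S$ transitive, and I would model the proof on that argument.)

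Next I would check that, under this identification, the differential $d_\pi^k$ induced by $d_\pi=[\pi,\cdot]$ becomes the Lie algebroid differential $d_{\nabla^k}$ for the representation of $A_S$ on $\mathcal{S}^k(\nu_S^*)$. The key observations are: (i) the Poisson differential $d_\pi$ preserves the filtration $\mathcal{F}^{\bullet}$ by \eqref{Filtration_bracket}, so $d_\pi^k$ is well defined; (ii) the induced operator on the associated graded depends only on $j^1_{|S}\pi$, because in the Koszul-type formula for $[\pi,\cdot]$ the terms that would involve higher jets of $\pi$ either raise the filtration degree or contract away against the extra copies of $I_S$. Then, spelling out the Koszul formula for $d_\pi$ in the local coordinates $(x,y)$ (with $y$ the fiber coordinates of $\nu_S$) and comparing with the Koszul formula for $d_{\nabla^k}$ — using that the anchor of $A_S$ is $\pi^{\sharp}_{|S}$ and that the $A_S$-connection on $\nu_S^*$ is $\nabla_\alpha\eta=[\alpha,\eta]$, whose $\mathcal{S}^k$-power is $\nabla^k$ — one sees the two formulas coincide term by term. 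As in Lemma \ref{cohom-restricted-algebroid}, the Newton-type bookkeeping (Lemma \ref{Newton-formula}) or just a direct bracket computation organizes this cleanly: the leading term of $[\pi,\cdot]$ on $\textrm{gr}_k$ is $[j^1_{|S}\pi,\cdot]$, which is exactly $d_{\nabla^k}$.

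The main obstacle I expect is purely bookkeeping: getting the degree shifts, symmetrization conventions, and sign conventions exactly right so that the bundle isomorphism of part (i) is genuinely a chain map and not merely a chain map up to a nonzero scalar or a sign. Concretely, the subtle point is identifying $I^{k+1}_S\mathfrak{X}^p(M)/I^{k+2}_S\mathfrak{X}^p(M)$ with $\Omega^p(A_S,\mathcal{S}^k(\nu_S^*))$ rather than with $\Omega^p(A_S,\mathcal{S}^{k+1}(\nu_S^*))$ — the resolution is that an element of $I^{k+1}_S$ is a sum of products of $k+1$ functions vanishing on $S$ times a multivector field, and one of those $k+1$ conormal directions is "used up" when one restricts the multivector to $TM_{|S}$ and re-interprets it as a form on $A_S=T^*M_{|S}$, leaving $\mathcal{S}^k(\nu_S^*)$-valued coefficients. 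Once this is pinned down (it is literally the content of the grading computation \eqref{computation-grading} combined with $\tau_\pi$), the verification that $d_\pi^k$ corresponds to $d_{\nabla^k}$ follows the same pattern as the proof of Lemma \ref{cohom-restricted-algebroid}, and no genuinely new idea is required; I would present it by invoking $\tau_\pi$ and the Newton formula to reduce to checking the identity on $\textrm{gr}_0$ and $\textrm{gr}_1$ generators, exactly as done there.
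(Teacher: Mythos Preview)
Your proposal has a genuine gap. The machinery you want to invoke --- the isomorphism $\tau_\pi$ of Proposition~\ref{PoissonCohomology}, the dilation grading on $\widetilde{\Omega}_E$, and the Newton formula --- is set up only for \emph{horizontally nondegenerate} Poisson structures on $E$, which is precisely the condition that $S$ be a symplectic leaf. In the present proposition $S$ is an arbitrary Poisson submanifold, so $\pi_{|S}$ may well be degenerate and $\tau_\pi$ is simply not defined. Relatedly, you propose to model the argument on Lemma~\ref{cohom-restricted-algebroid}, but in the paper that lemma is \emph{deduced from} the present proposition (see its first sentence), so your plan is circular. Your attempted resolution of the degree shift is also wrong: no conormal factor is ``used up'' in passing from $\Lambda^p TM_{|S}$ to $\Lambda^p A_S^*$, since $A_S=T^*M_{|S}$ makes these literally the same bundle. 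The off-by-one you noticed is an indexing inconsistency in the paper between the definition $\mathcal{F}_k=I^{k+1}_S\mathfrak{X}^{\bullet}(M)$ and the proof, which uses $\mathcal{F}_k=I^{k}_S\mathfrak{X}^{\bullet}(M)$; it is not a phenomenon to be explained by absorbing factors.

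The paper's proof is far more elementary than what you outline and needs none of the Chapter~\ref{ChNormalForms} apparatus. One writes down directly the map
\[
\tau_k\colon I^{k}_S\mathfrak{X}^{\bullet}(M)\longrightarrow \Omega^{\bullet}(A_S,\mathcal{S}^k(\nu_S^*)),\qquad
\tau_k(f_1\cdots f_k\,W)=W_{|S}\otimes df_{1|S}\odot\cdots\odot df_{k|S},
\]
checks that it is well defined, surjective, with kernel $I^{k+1}_S\mathfrak{X}^{\bullet}(M)$, and then verifies $\tau_k\circ[\pi,\cdot]=d_{\nabla^k}\circ\tau_k$ on generators: a function $\phi\in C^\infty(M)$ and a vector field $X\in\mathfrak{X}^1(M)$ for $k=0$, and an element $f\in I_S$ for $k=1$. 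Since both differentials act by derivations and $\nabla^k$ is the derivation-extension of $\nabla^1$, this suffices. No tubular neighborhood, no $\tau_\pi$, no Newton formula.
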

\begin{proof}
Using that the space $\Gamma(\nu_S^*)$ is spanned by differentials of elements in $I_S$, it is easy to check that the
following map is well-defined and surjective
\begin{align*}
\tau_k&:\mathcal{F}^{\bullet}_k=I^{k}_S\mathfrak{X}^{\bullet}(M)\rmap \Omega^{\bullet}(A_S,\mathcal{S}^k(\nu_S^*))=\Gamma(\Lambda^{\bullet}(T_SM)\otimes\mathcal{S}^{k}(\nu_S^*)),\\
&\tau_k(f_1\ldots f_kW)=W_{|S}\otimes df_{1|S} \odot \ldots \odot df_{k|S},
\end{align*}
where $f_1,\ldots,f_k\in I_S$ and $W\in\mathfrak{X}^{\bullet}(M)$. Moreover, its kernel is precisely
$\mathcal{F}^{\bullet}_{k+1}=I^{k+1}_S\mathfrak{X}^{\bullet}(M)$. So, it suffices to prove that
\begin{equation}\label{Commuting_with_differential}
\tau_k([\pi,W])=d_{\nabla^k}(\tau_k(W)),\ \ \forall \ W\in \mathcal{F}^{\bullet}_k.
\end{equation}
The algebroid $A_S$ has anchor $\rho=\pi^{\sharp}_{|S}$ and its bracket is determined by
\[[d\phi_{|S},d\psi_{|S}]_{A_S}:=d\{\phi,\psi\}_{|S},\ \ \forall\ \phi,\psi\in C^{\infty}(M).\]
Moreover, $\nabla^0$ is given by
\[\nabla^0:\Gamma(A_S)\times C^{\infty}(S)\rmap C^{\infty}(S),\ \ \nabla^0_{\eta}(h)=L_{\rho(\eta)}(h).\]
Since both differentials $d_{\pi}$ and $d_{\nabla^k}$ act by derivations and $\nabla^k$ is obtained by extending
$\nabla^1$ by derivations, it suffices to prove (\ref{Commuting_with_differential}) for $\phi\in C^{\infty}(M)$,
$X\in\mathfrak{X}^{1}(M)$ (for $k=0$) and $f\in I_S$ (for $k=1$).

Let $\phi\in C^{\infty}(M)$ and $\eta\in\Gamma(A_S)$. Using that $\pi$ is tangent to $S$, we obtain that
(\ref{Commuting_with_differential}) holds for $\phi$:
\[\tau_0([\pi,\phi])(\eta)=[\pi,\phi]_{|S}(\eta)=d\phi_{|S}(\pi^{\sharp}_{|S}(\eta))=L_{\rho(\eta)}(\tau_0(\phi))=d_{\nabla^0}(\tau_0(\phi))(\eta).\]
Let $X\in \mathfrak{X}^{1}(M)$, $\alpha,\beta\in C^{\infty}(M)$ and $\eta:=d\alpha_{|S}$,
$\theta:=d\beta_{|S}\in\Gamma(A_S)$. Then
\begin{align*}
\tau_0([\pi,&X])(\eta,\theta)=[\pi,X]_{|S}(d\alpha_{|S},d\beta_{|S})=\\
=&(\{X(\alpha),\beta\}+\{\alpha,X(\beta)\}-X(\{\alpha,\beta\}))_{|S}=\\
=&\pi^{\sharp}_{|S}(d\alpha_{|S})(X_{|S}(d\beta_{|S}))-\pi^{\sharp}_{|S}(d\beta_{|S})(X_{|S}(d\alpha_{|S}))-X_{|S}(d\{\alpha,\beta\}_{|S})=\\
=&L_{\rho(\eta)}(\tau_0(X)(\theta))-L_{\rho(\theta)}(\tau_0(X)(\eta))-\tau_0(X)([\eta,\theta]_{A_S})=\\
=&d_{\nabla^0}(\tau_0(X))(\eta,\theta),
\end{align*}
thus (\ref{Commuting_with_differential}) holds for $X$.

Consider now $f\in I_S$ and $\eta:=d\alpha_{|S}\in\Gamma(A_S)$, with $\alpha\in C^{\infty}(M)$. The following shows
that (\ref{Commuting_with_differential}) holds also for $f$
\begin{align*}
\tau_1([\pi,f])(\eta)&=\tau_1([\pi,f])(d\alpha_{|S})=\tau_1([\pi,f](d\alpha))=\tau_1(\{\alpha,f\})=\\
&=d\{\alpha,f\}_{|S}=[\eta,df_{|S}]_{A_S}=\nabla^1_{\eta}(\tau(f))=d_{\nabla^1}(\tau(f))(\eta).\phantom{123456789}\qedhere
\end{align*}
\end{proof}

\section{The proofs}

\subsection{Proof of Theorem \ref{Theorem_THREE}}

By replacing $M$ with a tubular neighborhood of $S$, we may assume that $S$ is closed in $M$. Denote by
\[\gamma:=j^{\infty}_{|S}\pi_1, \gamma':=j^{\infty}_{|S}\pi_2\in J^{\infty}_S(\mathfrak{X}^2_S(M)).\]
By Proposition \ref{Proposition_isomorphic_complexes}, we can recast the hypothesis as follows
\begin{align*}
[\gamma,\gamma]=0,\ \ [\gamma',\gamma']=0,\ \ \gamma-\gamma'\in \hat{\mathcal{F}}_1,\ \
H^2(\hat{\mathcal{F}}^{\bullet}_{k}/\hat{\mathcal{F}}^{\bullet}_{k+1},d_{\gamma})=0,\ \ \forall \ k\geq 1,
\end{align*}
where $d_{\gamma}=[\gamma,\cdot ]$. These conditions are expressed in terms of a graded Lie algebra
$\mathcal{L}^{\bullet}$ with a complete filtration, namely:
\[\mathcal{L}^{\bullet}:=J_S^{\infty}(\mathfrak{X}^{\bullet+1}_S(M)).\]
We will prove in the appendix (Theorem \ref{Teo1}) a result about equivalence of Maurer-Cartan elements in complete
graded Lie algebras. In our case, this result implies the existence of a formal vector field $X\in
\hat{\mathcal{F}}_1^1$, so that $\gamma=e^{ad_X}(\gamma')$. By Lemma \ref{gauge_real}, there exists a
diffeomorphism $\psi$ of $M$, such that $j^{\infty}_{|S}(\psi^*(W))=e^{ad_X}j^{\infty}_{|S}(W)$, for all
$W\in\mathfrak{X}_S^{\bullet}(M)$. This concludes the proof:
\[j^{\infty}_{|S}(\psi^*(\pi_2))=e^{ad_X}j^{\infty}_{|S}(\pi_2)=e^{ad_X}(\gamma')=\gamma=j^{\infty}_{|S}(\pi_1).\]

\subsection{On the existence of Poisson structures with a specified infinite jet}

The proof of Theorem \ref{Theorem_THREE} presented above can be used to obtain an existence result for Poisson
bivectors with a specified infinite jet. Let $S$ be a closed embedded submanifold of $M$. An element $\hat{\pi}\in
J^{\infty}_S(\mathfrak{X}^{2}_S(M))$ satisfying the equation $[\hat{\pi},\hat{\pi}]=0$ will be called a \textbf{formal
Poisson bivector}. Observe that
\[\hat{\pi}_{|S}:=j^0_{|S}\hat{\pi} \in \mathfrak{X}^2(S)\]
is a Poisson structure on $S$. By the results in subsection \ref{The first order data}, its first jet
\[j^1_{|S}(\hat{\pi})=\hat{\pi}\ \textrm{modulo}\ \hat{\mathcal{F}}_1\]
determines a Lie algebroid $A_S$ on $T^*M_{|S}$. Assuming that $S$ is a symplectic leaf of $\hat{\pi}$ (i.e.\ that
$\hat{\pi}_{|S}$ is nondegenerate), consider $\plin(S)$ the local model corresponding to $A_S$ and
$\omega_S:=\hat{\pi}_{|S}^{-1}$ from subsection \ref{Subsection_the_local_model_the_general_case}. Then
$\plin(S)$ is a Poisson structure on an open around $S$, whose first order jet coincides with that of $\hat{\pi}$. If the
cohomology groups
\[H^{2}(A_S;\mathcal{S}^{k}(\nu_S^*))\]
vanish for all $k\geq 2$, then, by the proof of Theorem \ref{Theorem_THREE}, we find a diffeomorphism $\psi$, such
that
\[j^{\infty}_{|S}(\psi^*(\plin(S)))=\hat{\pi}.\]
Thus $\pi:=\psi^*(\plin(S))$ gives a Poisson structure defined on an open around $S$ whose infinite jet is $\hat{\pi}$.
Hence, we proved the following statement.
\begin{corollary}
Let $\hat{\pi}\in J^{\infty}_S(\mathfrak{X}^{2}_S(M))$ be a formal Poisson structure, for which $S$ is a symplectic
leaf. If the algebroid $A_S$ induced by $j^1_{|S}\hat{\pi}$ satisfies
\[H^{2}(A_S;\mathcal{S}^{k}(\nu_S^*))=0, \ \ \forall\  k\geq 2,\]
then, on some open around $S$, there exists a Poisson structure $\pi$ such that \[\hat{\pi}=j^{\infty}_{|S}\pi.\]
\end{corollary}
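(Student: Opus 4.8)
The plan is to reduce everything to the local model $\plin(S)$ built in subsection \ref{Subsection_the_local_model_the_general_case} and then to re-run the algebraic argument behind Theorem \ref{Theorem_THREE}. First I would replace $M$ by a tubular neighborhood of $S$, so that $S$ is closed and embedded. Since $S$ is a symplectic leaf of $\hat{\pi}$, the bivector $\hat{\pi}_{|S}=j^0_{|S}\hat{\pi}$ is nondegenerate; write $\omega_S:=(\hat{\pi}_{|S})^{-1}\in\Omega^2(S)$. By Proposition \ref{Proposition_first_order_data}, the first jet $j^1_{|S}\hat{\pi}$ equips $A_S=T^*M_{|S}$ with a Lie algebroid structure fitting into the exact sequence (\ref{shortexact_alg}), and nondegeneracy of $\omega_S$ makes $A_S$ transitive. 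Feeding the pair $(A_S,\omega_S)$ into Proposition \ref{proposition_restricting_to_cosymplectic} produces a genuine Poisson structure $\plin(S)$ on an open neighborhood $N(A_S)$ of $S$ for which $S$ is a symplectic leaf and whose restricted cotangent algebroid is again $A_S$; in particular $j^1_{|S}\plin(S)=j^1_{|S}\hat{\pi}$.

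Next I would run the proof of Theorem \ref{Theorem_THREE}, but with its two Poisson structures replaced by $\pi_1=\plin(S)$ and by the formal bivector $\hat{\pi}$ itself: no honest second Poisson structure is needed, because the argument takes place entirely inside the complete graded Lie algebra $\mathcal{L}^\bullet=J^\infty_S(\mathfrak{X}^{\bullet+1}_S(M))$. Set $\gamma:=j^\infty_{|S}\plin(S)$ and $\gamma':=\hat{\pi}$. Both are Maurer--Cartan elements of $\mathcal{L}^\bullet$, and $\gamma-\gamma'\in\hat{\mathcal{F}}_1$ by the previous paragraph. The differential $d_\gamma=[\gamma,\cdot]$ on the associated graded $\hat{\mathcal{F}}_k/\hat{\mathcal{F}}_{k+1}$ depends only on $j^1_{|S}\gamma=j^1_{|S}\hat{\pi}$, hence only on $A_S$, so by Proposition \ref{Proposition_isomorphic_complexes} these complexes are isomorphic to $(\Omega^\bullet(A_S,\mathcal{S}^k(\nu_S^*)),d_{\nabla^k})$, whose second cohomology vanishes by hypothesis for $k\geq 2$. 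The equivalence criterion for Maurer--Cartan elements in complete graded Lie algebras (Theorem \ref{Teo1}) then yields a formal vector field $X\in\hat{\mathcal{F}}_1^1$ with $e^{ad_X}(\gamma)=\gamma'$.

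It remains to realize this formal gauge transformation geometrically. By Lemma \ref{gauge_real}, there is a diffeomorphism $\psi$ of a neighborhood of $S$ with $\psi_{|S}=\mathrm{Id}_S$ and $d\psi_{|TM_{|S}}=\mathrm{Id}$ such that $j^\infty_{|S}(\psi^*(W))=e^{ad_X}(j^\infty_{|S}(W))$ for every $W\in\mathfrak{X}^\bullet_S(M)$; after shrinking, $\psi$ maps a neighborhood of $S$ into $N(A_S)$. Taking $\pi:=\psi^*(\plin(S))$, which is an honest Poisson structure on an open around $S$ as the pushforward of one, we obtain $j^\infty_{|S}\pi=e^{ad_X}(j^\infty_{|S}\plin(S))=e^{ad_X}(\gamma)=\gamma'=\hat{\pi}$, as desired.

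The only real subtlety I anticipate lies in the second paragraph: one must check that the proof of Theorem \ref{Theorem_THREE} never uses that $\gamma'$ comes from a globally defined Poisson tensor, only that it is a Maurer--Cartan element of $\mathcal{L}^\bullet$ with the prescribed first jet, so that substituting the purely formal $\hat{\pi}$ is legitimate; and that the cohomological input required there, after passing through Proposition \ref{Proposition_isomorphic_complexes}, is precisely the vanishing of $H^2(A_S,\mathcal{S}^k(\nu_S^*))$ for $k\geq 2$ assumed here. Everything substantive --- the construction of $\plin(S)$, Theorem \ref{Teo1}, and Lemma \ref{gauge_real} --- is imported wholesale, so beyond this bookkeeping there should be no further obstacle.
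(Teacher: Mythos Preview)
Your proposal is correct and follows essentially the same route as the paper: construct the local model $\plin(S)$ from $(A_S,\omega_S)$, observe that its first jet agrees with that of $\hat{\pi}$, and then rerun the proof of Theorem \ref{Theorem_THREE} (via Theorem \ref{Teo1} and Lemma \ref{gauge_real}) with the purely formal Maurer--Cartan element $\hat{\pi}$ in place of an honest second Poisson structure, setting $\pi:=\psi^*(\plin(S))$. You have also correctly isolated the one point requiring care --- that the argument of Theorem \ref{Teo1} uses only the Maurer--Cartan condition in $J^\infty_S(\mathfrak{X}^{\bullet+1}_S(M))$, not that $\gamma'$ arises from a global bivector.
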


\subsection{Proofs of the criteria}

In this section we prove the corollaries from the section \ref{section_statement_3}. First, we summarize some of the
results from \cite{Cra} in the form of a lemma (see also section \ref{Section_COHOMOLOGY} for the cohomologies
involved and some of the vanishing results).

\begin{lemma}\label{vanishing_proposition}
Let $\mathcal{G}$ be a Lie groupoid over $N$ with Lie algebroid $A$ and $E\to N$ a representation of
$\mathcal{G}$.\index{differentiable cohomology}\index{cohomology, Lie algebroid}
\begin{itemize}
\item[(1)] If the $s$-fibers of $\mathcal{G}$ are cohomologically 2-connected, then \[H^2(A,E)\cong H^2_{\mathrm{diff}}(\mathcal{G},E).\]
\item[(2)] If $\mathcal{G}$ is proper, then \[H^2_{\mathrm{diff}}(\mathcal{G},E)=0.\]
\item[(3)] If $\mathcal{G}$ is transitive, then
\[H^2_{\mathrm{diff}}(\mathcal{G},E)\cong H^2_{\mathrm{diff}}(\mathcal{G}_x,E_x),\ x\in N,\]
where $\mathcal{G}_x:=s^{-1}(x)\cap t^{-1}(x)$.
\end{itemize}
\end{lemma}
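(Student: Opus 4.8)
The plan is to deduce each of the three statements directly from the results of \cite{Cra} that were recalled earlier in the text, so the argument is essentially an assembly of known facts.

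For (1), I would invoke the Van Est map $\Phi_E : H^p_{\mathrm{diff}}(\mathcal{G},E) \to H^p(A,E)$ together with the theorem recalled in Subsection \ref{Subsection_Van_Est} (Theorem 4 in \cite{Cra}): if the $s$-fibers of $\mathcal{G}$ are homologically $n$-connected, then $\Phi_E$ is an isomorphism in degrees $p \le n$ and injective in degree $n+1$. Taking $n = 2$ and $p = 2$ gives that $\Phi_E$ is an isomorphism $H^2_{\mathrm{diff}}(\mathcal{G},E) \cong H^2(A,E)$, which is exactly (1). One should only remark that, since the $s$-fibers are simply connected, being cohomologically $2$-connected (vanishing of reduced cohomology up to degree $2$) is equivalent to being homologically $2$-connected, which is the form in which the hypothesis of that theorem is phrased.

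For (2), I would simply apply the vanishing result for proper Lie groupoids recalled in Subsection \ref{subsection_differentiable_cohomology_of_Lie_groupoids} (Proposition 1 in \cite{Cra}): for any proper Lie groupoid $\mathcal{G}$ and any representation $V$ one has $H^k_{\mathrm{diff}}(\mathcal{G},V) = 0$ for all $k \ge 1$; specializing to $k = 2$ and $V = E$ gives the claim. The proof of that proposition builds an averaging homotopy operator on the differentiable cochain complex out of a Haar system and a cutoff function, and I would not reproduce it.

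For (3), I would use that a transitive Lie groupoid $\mathcal{G} \rightrightarrows N$ is isomorphic to the gauge groupoid $P \times_G P$ of the principal bundle $P := s^{-1}(x)$ with structure group $G := \mathcal{G}_x$, hence is Morita equivalent (via the bibundle $P$) to $G$ regarded as a groupoid over a point. Since differentiable cohomology with coefficients in a representation is a Morita invariant (again by \cite{Cra}), and $E$ pulls back to the $G$-representation $E_x$ under this equivalence, one obtains $H^\bullet_{\mathrm{diff}}(\mathcal{G},E) \cong H^\bullet_{\mathrm{diff}}(G,E_x) = H^\bullet_{\mathrm{diff}}(\mathcal{G}_x,E_x)$; restricting to degree $2$ yields (3). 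Alternatively one can argue at the chain level, using the inclusion $\mathcal{G}_x \hookrightarrow \mathcal{G}$ and a section of $P \to N$ to produce mutually homotopy-inverse chain maps. The only point needing a little care is making sure the Morita-invariance statement being cited is the one with nontrivial coefficients (and that $E$ indeed corresponds to $E_x$), or else carrying out the explicit homotopy; this is the main, rather minor, obstacle, everything else being a direct citation.
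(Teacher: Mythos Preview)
Your proposal is correct and matches the paper's proof essentially line-by-line: the paper cites Theorem~4 of \cite{Cra} for (1), Proposition~1 of \cite{Cra} for (2), and for (3) observes that a transitive groupoid is Morita equivalent to its isotropy group and invokes Theorem~1 of \cite{Cra} on Morita invariance of differentiable cohomology. One small remark: your aside that the $s$-fibers are simply connected is not part of the hypothesis and is not needed---over $\mathbb{R}$, ``homologically $n$-connected'' and ``cohomologically $n$-connected'' are equivalent by the universal coefficient theorem, so the translation between the two phrasings is automatic.
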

\begin{proof}
For (1) see Theorem 4 \cite{Cra}. For (2) see Proposition 1 \cite{Cra}. For (3), since $\mathcal{G}$ is transitive, it is
Morita equivalent to $\mathcal{G}_x$ \cite{MM}. By Theorem 1 \cite{Cra}, Morita equivalences induce isomorphisms
in differentiable cohomology.
\end{proof}

\begin{proof}[Proof of Corollary \ref{C_Dual_ss_Lie}]

Recall (subsection \ref{Subsection_examples_of_symplectic_groupoids}) that the cotangent Lie algebroid of
$(\mathfrak{g}^*,\pi_{{\mathfrak{g}}})$ is integrable by the action groupoid $G\ltimes \mathfrak{g}^*$, where $G$
denotes the 1-connected Lie group of $\mathfrak{g}$. By assumption, $G$ is compact. The symplectic leaves of
$\pi_{\mathfrak{g}}$ are the coadjoint orbits, and since the inner product is invariant,
$\mathbb{S}({\mathfrak{g}^*})$ is a union of symplectic leaves; thus, a Poisson submanifold. Also, the algebroid
$A_{\mathbb{S}({\mathfrak{g}^*})}$ is integrable by the action groupoid $G\ltimes
\mathbb{S}({\mathfrak{g}}^*)$. Since $G$ is simply connected it follows that $H^2(G)=0$ (see Theorem 1.14.2
\cite{DK}). On the other hand, the $s$-fibers of $G\ltimes \mathbb{S}({\mathfrak{g}^*})$ are diffeomorphic to $G$;
and so, the assumptions of Lemma \ref{vanishing_proposition} (1) are satisfied. Hence, for any representation $E\to
\mathbb{S}({\mathfrak{g}})$ of $G\ltimes \mathbb{S}({\mathfrak{g}})$ we have that
\[H^2(A_{\mathbb{S}({\mathfrak{g}^*})}; E)\cong H^2_{\mathrm{diff}}(G\ltimes \mathbb{S}({\mathfrak{g}^*});E).\]
Since $G\ltimes \mathbb{S}({\mathfrak{g}^*})$ is compact it is proper, by Lemma \ref{vanishing_proposition} (2),
we have that $H^2_{\mathrm{diff}}(G\ltimes \mathbb{S}({\mathfrak{g}^*});E)=0$. The corollary follows from
Theorem \ref{Theorem_THREE}.
\end{proof}
\begin{proof}[Proof of Corollary \ref{Theorem2}]
Let $P$ be the Poisson homotopy bundle of $S$, and let $G$ be its structure group. By hypothesis, $P$ is smooth,
$1$-connected and has vanishing second de Rham cohomology. Let $\mathcal{G}:=P\times_{G}P$ be the gauge
groupoid of $P$. The $s$-fibers of $\mathcal{G}$ are diffeomorphic to $P$; thus, $\mathcal{G}$ satisfies the
assumptions of Lemma \ref{vanishing_proposition} (1). Therefore
\[H^{2}(A_S;\mathcal{S}^k(\nu_S^*))\cong H^{2}_{\mathrm{diff}}(\mathcal{G};\mathcal{S}^k(\nu_S^*)).\]
Since $\mathcal{G}$ is transitive, by Lemma \ref{vanishing_proposition} (3), we have that
\[H^{2}_{\mathrm{diff}}(\mathcal{G};\mathcal{S}^k(\nu_S^*))\cong H^{2}_{\mathrm{diff}}(G;\mathcal{S}^k(\nu_{S,x}^*)).\]
Since $\nu_{S,x}^*\cong \mathfrak{g}$ as $G$ representations, Theorem \ref{Theorem1} implies the conclusion.
\end{proof}
\begin{proof}[Proof of Corollary \ref{Corollary_2}]
By Lemma \ref{vanishing_proposition} (2), the differentiable cohomology of compact groups vanishes; thus, Corollary
\ref{Theorem2} implies the result.
\end{proof}
\begin{proof}[Proof of Corollary \ref{Theorem3}]
Let $x\in S$ and denote by $\mathfrak{g}_x:=\nu_{S,x}^*$ the isotropy Lie algebra of the transitive algebroid $A_S$.
By hypothesis, $\mathfrak{g}_x$ is reductive, i.e.\ it splits as a direct product of a semisimple Lie algebra and its
center $\mathfrak{g}_x=\mathfrak{s}_x\oplus \mathfrak{z}_x$, where
$\mathfrak{s}_x=[\mathfrak{g}_x,\mathfrak{g}_x]$ and $\mathfrak{z}_x=Z(\mathfrak{g}_x)$ is the center of
$\mathfrak{g}_x$. Since $\mathfrak{g}:=\nu_S^*$ is a Lie algebra bundle, it follows that this splitting is in fact
global:
\[\mathfrak{g}=[\mathfrak{g},\mathfrak{g}]\oplus Z(\mathfrak{g})=\mathfrak{s}\oplus\mathfrak{z}.\]
Since $\mathfrak{s}$ is an ideal of $A_S$, we obtain a short exact sequence of algebroids
\[0\rmap \mathfrak{s}\rmap  A_S\rmap  A_S^{\textrm{ab}}\rmap  0,\]
with $A_S^{\textrm{ab}}=A_S/\mathfrak{s}$. Similar to the spectral sequence for Lie algebra extensions (see e.g.\
\cite{Serre}), there is a spectral sequence for extensions of Lie algebroids (see \cite{MackenzieLL}, Theorem 5.5 and
the remark following it), which, in our case, converges to $H^{\bullet}(A_S;\mathcal{S}^{k}(\mathfrak{g}))$, with
\[E_2^{p,q}=H^p(A_S^{\textrm{ab}};H^q(\mathfrak{s};\mathcal{S}^k(\mathfrak{g})))\Rightarrow H^{p+q}(A_S;\mathcal{S}^{k}(\mathfrak{g})).\]
Since $\mathfrak{s}$ is in the kernel of the anchor, $H^q(\mathfrak{s};\mathcal{S}^k(\mathfrak{g}))$ is indeed a
vector bundle, with fiber
$H^q(\mathfrak{s};\mathcal{S}^k(\mathfrak{g}))_x=H^q(\mathfrak{s}_x;\mathcal{S}^k(\mathfrak{g}_x))$ and it
inherits a representation of $A_S^{\textrm{ab}}$. Since $\mathfrak{s}_x$ is semisimple, by the Whitehead Lemma
we have that $H^1(\mathfrak{s}_x;\mathcal{S}^k(\mathfrak{g}_x))=0$ and
$H^2(\mathfrak{s}_x;\mathcal{S}^k(\mathfrak{g}_x))=0$. Therefore,
\begin{equation}\label{IncaUna}
H^{2}(A_S;\mathcal{S}^{k}(\mathfrak{g}))\cong H^2(A_S^{\textrm{ab}};\mathcal{S}^k(\mathfrak{g})^{\mathfrak{s}}),
\end{equation}
where $\mathcal{S}^k(\mathfrak{g}_x)^{\mathfrak{s}_x}$ is the $\mathfrak{s}_x$ invariant part of
$\mathcal{S}^k(\mathfrak{g}_x)$. By hypothesis, $A_S^{\textrm{ab}}$ is integrable by a 1-connected principal
bundle $P^{\mathrm{ab}}$ with vanishing second de Rham cohomology and compact structure group $T$. Therefore,
by (\ref{IncaUna}) and by applying Lemma \ref{vanishing_proposition}, we obtain
\begin{align*}
H^{2}(A_S;\mathcal{S}^{k}(\mathfrak{g}))&\cong H^2(A_S^{\textrm{ab}};\mathcal{S}^k(\mathfrak{g})^{\mathfrak{s}})\cong
H^2_{\mathrm{diff}}(P^{\mathrm{ab}}\times_{T}P^{\mathrm{ab}};\mathcal{S}^k(\mathfrak{g})^{\mathfrak{s}}) \cong \\
&\cong H^2_{\mathrm{diff}}(T;\mathcal{S}^k(\mathfrak{g}_x)^{\mathfrak{s}_x})=0.
\end{align*}
Again, Theorem \ref{Theorem1} concludes the proof.
\end{proof}

\begin{proof}[Proof of Corollary \ref{Corollary_3}]
Assume that $\mathfrak{g}_{x}$ is semisimple, $\pi_1(S,x)$ is finite and $\pi_2(S,x)$ is a torsion group. With the
notation from above, we have that $A_S^{\textrm{ab}}\cong TS$. $TS$ is integrable and the 1-connected principal
bundle integrating it is $\widetilde{S}$, the universal cover of $S$. Finiteness of $\pi_1(S)$ is equivalent to
compactness of the structure group of $\widetilde{S}$. By the Hurewicz theorem
$H_2(\widetilde{S},\mathbb{Z})\cong \pi_2(\widetilde{S})$ and since $\pi_2(\widetilde{S})=\pi_2(S)$ is torsion, we
have that $H^2(\widetilde{S})=0$. So the result follows from Corollary \ref{Theorem3}.
\end{proof}

\section{Appendix: Equivalence of MC-elements in complete GLA's}\label{section_equivalence}

In this appendix we discuss some general facts about graded Lie algebras endowed with a complete filtration, with the
aim of proving a criterion for equivalence of Maurer-Cartan elements (Theorem \ref{Teo1}), which was used in the
proof of Theorem \ref{Theorem_THREE}. Some of the constructions given here can be also found in Appendix B.1 of
\cite{Bursztyn} in the more general setting of differential graded Lie algebras with a complete filtration. In fact all our
constructions can be adapted to this setup, in particular also Theorem \ref{Teo1}. The analog of Theorem \ref{Teo1},
in the case of differential graded associative algebras can be found in the Appendix A of \cite{CMB}.

\begin{definitions}\rm
A \textbf{graded Lie algebra}\index{graded Lie algebra} (\textbf{GLA}) consists of a $\mathbb{Z}$-graded vector
space $\mathcal{L}^{\bullet}$ endowed with a graded bracket $[\cdot,\cdot]:\mathcal{L}^{p}\times
\mathcal{L}^{q}\to\mathcal{L}^{p+q}$, which is graded commutative and satisfies the graded Jacobi identity:
\[[X,Y]=-(-1)^{|X||Y|}[Y,X] ,\ \ [X,[Y,Z]]=[[X,Y],Z]+(-1)^{|X||Y|}[Y,[X,Z]].\]

An element $\gamma\in \mathcal{L}^1$ is called a \textbf{Maurer Cartan element}\index{Maurer Cartan element} if
$[\gamma,\gamma]=0$.

A \textbf{filtration}\index{filtration} on a GLA is a decreasing sequence of homogeneous subspaces:
\[\mathcal{L}^{\bullet}\supset\mathcal{F}_{0}\mathcal{L}^{\bullet}\supset \ldots \supset\mathcal{F}_n\mathcal{L}^{\bullet}\supset \mathcal{F}_{n+1}\mathcal{L}^{\bullet}\supset\ldots,\]
satisfying
\[[\mathcal{F}_{n}\mathcal{L},\mathcal{F}_{m}\mathcal{L}]\subset
\mathcal{F}_{n+m}\mathcal{L},\ \ [\mathcal{L},\mathcal{F}_{n}\mathcal{L}]\subset
\mathcal{F}_{n}\mathcal{L}.\]

A filtration $\mathcal{F}\mathcal{L}$ is called \textbf{complete}\index{complete filtration}, if $\mathcal{L}$ is
isomorphic to the projective limit $\varprojlim \mathcal{L}/\mathcal{F}_{n}\mathcal{L}$.
\end{definitions}

An example of a GLA with a complete filtration appeared in section \ref{Formal equivalence of Poisson structures
around symplectic leaves}: starting from a manifold $M$ with a closed embedded submanifold $S\subset M$, we
constructed the algebra of formal vector fields along $S$,
\[(J^{\infty}_S(\mathfrak{X}^{\bullet+1}_S(M)),[\cdot,\cdot]),\]
with filtration given by the powers of the vanishing ideal of $S$. So, the index of the filtration is the order to which
elements vanish along $S$.

For a general GLA with a complete filtration $\mathcal{FL}$, define the \textbf{order} of an element as follows:
\begin{align*}
&\mathcal{O}:\mathcal{L}\rmap \{0, 1, \ldots, \infty\},\\
&\mathcal{O}(X)=\left\{\begin{array}{lll} 0, &\textrm{if}& X\in \mathcal{L}\backslash
\mathcal{F}_{1}\mathcal{L},\\ n, &\textrm{if}& X\in \mathcal{F}_n\mathcal{L}\backslash \mathcal{F}_{n+1}\mathcal{L},\\
\infty, &\textrm{if} & X=0.\end{array}\right.
\end{align*}
The order has the following properties:
\begin{itemize}
\item $\mathcal{O}(X)=\infty$ if and only if $X=0$,
\item $\mathcal{O}(X+Y)\geq \mathcal{O}(X)\wedge\mathcal{O}(Y):=\mathrm{min}\{\mathcal{O}(X),\mathcal{O}(Y)\}$,
\item $\mathcal{O}(\alpha X)\geq \mathcal{O}(X), \ \ \forall \ \alpha\in\mathbb{R}$,
\item $\mathcal{O}([X,Y])\geq \mathcal{O}(X)+\mathcal{O}(Y)$.
\end{itemize}
Completeness of the filtration implies a criterion for convergence.
\begin{lemma}\label{LemConv}
Let $\{X_n\}_{n\geq 0}\in\mathcal{L}$ be a sequence of elements such that \[\lim_{n\to
\infty}\mathcal{O}(X_n)=\infty.\]
There exists a unique element $X\in \mathcal{L}$, denoted $X:=\sum_{n\geq 0}
X_n$ such that
\[X-\sum_{k=0}^nX_k\in\mathcal{F}_m\mathcal{L},\]
for all $n$ big enough.
\end{lemma}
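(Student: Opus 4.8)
\textbf{Proof plan for Lemma \ref{LemConv}.}

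The plan is to use the completeness hypothesis $\mathcal{L}\cong\varprojlim\mathcal{L}/\mathcal{F}_n\mathcal{L}$ directly: an element of $\mathcal{L}$ is the same as a coherent system of classes in the quotients $\mathcal{L}/\mathcal{F}_n\mathcal{L}$, and I will produce the desired $X$ by specifying its class in each quotient and then checking coherence. First I would fix $m\geq 0$ and look at the partial sums $S_n:=\sum_{k=0}^n X_k$ modulo $\mathcal{F}_m\mathcal{L}$. Because $\mathcal{O}(X_k)\to\infty$, there is an index $N(m)$ such that $\mathcal{O}(X_k)\geq m$, i.e.\ $X_k\in\mathcal{F}_m\mathcal{L}$, for all $k\geq N(m)$; using the property $\mathcal{O}(Y+Z)\geq\mathcal{O}(Y)\wedge\mathcal{O}(Z)$ this shows that $S_n-S_{n'}\in\mathcal{F}_m\mathcal{L}$ whenever $n,n'\geq N(m)$. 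Hence the images $\bar S_n\in\mathcal{L}/\mathcal{F}_m\mathcal{L}$ are eventually constant; call the stable value $\xi_m\in\mathcal{L}/\mathcal{F}_m\mathcal{L}$.

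Next I would check that the family $(\xi_m)_{m\geq 0}$ is compatible with the projection maps $\mathcal{L}/\mathcal{F}_{m+1}\mathcal{L}\to\mathcal{L}/\mathcal{F}_m\mathcal{L}$. This is immediate: for $n$ large enough (namely $n\geq N(m+1)\geq N(m)$, choosing $N$ monotone) the same partial sum $S_n$ represents both $\xi_{m+1}$ and $\xi_m$, so the projection of $\xi_{m+1}$ is $\xi_m$. By the completeness isomorphism, the coherent system $(\xi_m)$ determines a unique element $X\in\mathcal{L}$ whose class in $\mathcal{L}/\mathcal{F}_m\mathcal{L}$ is $\xi_m$ for every $m$. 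Unwinding this, for each $m$ and each $n\geq N(m)$ we have $X-S_n\in\mathcal{F}_m\mathcal{L}$, which is exactly the asserted property $X-\sum_{k=0}^nX_k\in\mathcal{F}_m\mathcal{L}$ for all $n$ large enough.

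Finally I would dispatch uniqueness. If $X$ and $X'$ both satisfy the conclusion, then for every $m$ we may pick $n$ large enough that both $X-S_n$ and $X'-S_n$ lie in $\mathcal{F}_m\mathcal{L}$; subtracting gives $X-X'\in\mathcal{F}_m\mathcal{L}$. Thus $X-X'\in\bigcap_m\mathcal{F}_m\mathcal{L}$, and completeness of the filtration forces this intersection to be $\{0\}$ (it is the kernel of $\mathcal{L}\to\varprojlim\mathcal{L}/\mathcal{F}_m\mathcal{L}$), so $X=X'$. There is no real obstacle here; the only point that requires a little care is making the threshold $N(m)$ monotone in $m$ so that the classes $\xi_m$ are genuinely represented by a common tail of partial sums and the compatibility check is transparent — everything else is a direct unwinding of the definition of a complete filtration.
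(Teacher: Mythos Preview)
Your proof is correct and is exactly the standard argument one expects here: the paper does not supply a proof of this lemma, treating it as an immediate consequence of the definition of a complete filtration, and your argument unwinds that definition in the natural way.
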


Note that $\mathfrak{g}(\mathcal{L}):=\mathcal{F}_{1}\mathcal{L}^{0}$ forms a Lie subalgebra of
$\mathcal{L}^0$. For $X\in \mathfrak{g}(\mathcal{L})$, consider the operator
\[Y\mapsto ad_X(Y):=[X,Y].\]
These operators satisfy \[\mathcal{O}(ad_X(Y))\geq \mathcal{O}(Y)+1,\] for all $Y\in\mathcal{L}$, therefore, by
Lemma \ref{LemConv}, the exponential of $ad_X$ is well-defined
\[Ad(e^X):\mathcal{L}^{\bullet}\to\mathcal{L}^{\bullet},\quad Ad(e^X)Y:=e^{ad_X}(Y)=\sum_{n\geq 0}\frac{ad_X^n}{n!}(Y).\]
Moreover, $Ad(e^X)$ is a GLA-automorphism of $\mathcal{L}^{\bullet}$. By Lemma \ref{LemConv}, the
Campbell-Hausdorff formula converges for $X,Y\in\mathfrak{g}(\mathcal{L})$
\begin{eqnarray}\label{Campbell-Hausdorff}
&&X*Y=X+Y+\sum_{k\geq 1}\frac{(-1)^{k}}{k+1}D_k(X,Y),\ \ \textrm{where}\\
\nonumber && D_k(X,Y)=\sum_{ l_i + m_i > 0}\frac{ad_X^{\phantom{o}l_1}}{l_1!}\circ\frac{ad_Y^{\phantom{o}m_1}}{m_1!}\circ\ldots
\circ\frac{ad_X^{\phantom{o}l_k}}{l_k!}\circ\frac{ad_Y^{\phantom{o}m_k}}{m_k!}(X).
\end{eqnarray}
We will use the notation $\mathcal{G}(\mathcal{L})=\{e^X| X\in \mathfrak{g}(\mathcal{L})\}$, i.e.\
$\mathcal{G}(\mathcal{L})$ is the same space as $\mathfrak{g}(\mathcal{L})$, but we just denote its elements by
$e^X$. The universal properties of the Campbell-Hausdorff formula (\ref{Campbell-Hausdorff}), imply that
$\mathcal{G}(\mathcal{L})$ endowed with the product $e^Xe^Y=e^{X*Y}$ forms a group. Moreover, $Ad$ gives an
action of $\mathcal{G}(\mathcal{L})$ on $\mathcal{L}$ by graded Lie algebra automorphisms, which preserves the
order:
\begin{itemize}
\item $Ad(e^{X*Y})=Ad(e^Xe^Y)=Ad(e^X)\circ Ad(e^Y)$
\item $Ad(e^X)([U,V])=[Ad(e^X)U,Ad(e^X)V]$,
\item $\mathcal{O}(Ad(e^X)(U))=\mathcal{O}(U)$,
\end{itemize}
for all $X,Y\in\mathfrak{g}(\mathcal{L})$ and all $U,V\in \mathcal{L}$.

For later use, we give the following straightforward estimates:
\begin{lemma}\label{PropGroup}
For all $X,Y,X',Y'\in \mathfrak{g}(\mathcal{L})$ and $U\in\mathcal{L}$, we have that
\begin{itemize}
\item[(a)] $\mathcal{O}(X*Y-X'*Y')\geq \mathcal{O}(X-X')\wedge\mathcal{O}(Y-Y'),$
\item[(b)] $\mathcal{O}(Ad(e^X)U-Ad(e^Y)U)\geq \mathcal{O}(X-Y)$.
\end{itemize}
\end{lemma}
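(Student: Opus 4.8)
The plan is to prove both estimates by the same device: a \emph{one-slot-at-a-time} telescoping of iterated brackets, combined with the elementary order properties $\mathcal{O}([U,V])\geq \mathcal{O}(U)+\mathcal{O}(V)$ and $\mathcal{O}(U+V)\geq \mathcal{O}(U)\wedge\mathcal{O}(V)$ listed before Lemma \ref{LemConv} (the latter extended to convergent infinite sums by Lemma \ref{LemConv}), together with the fact that every argument occurring below lies in $\mathfrak{g}(\mathcal{L})=\mathcal{F}_{1}\mathcal{L}^{0}$, hence has order $\geq 1$. The core of the argument I would isolate as a combinatorial sublemma: for any Lie monomial $B$ in $r$ letters and any $z_i,z_i'\in\mathfrak{g}(\mathcal{L})$, one has $\mathcal{O}\big(B(z_1,\dots,z_r)-B(z_1',\dots,z_r')\big)\geq \min_i\mathcal{O}(z_i-z_i')$. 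This follows by replacing the arguments one at a time and using the telescoping identity
\[B(z_1,\dots,z_r)-B(z_1',\dots,z_r')=\sum_{j=1}^{r}B\big(z_1',\dots,z_{j-1}',\,z_j-z_j',\,z_{j+1},\dots,z_r\big),\]
in which each summand has exactly one slot occupied by $z_j-z_j'$ (order $\geq \min_i\mathcal{O}(z_i-z_i')$) and all remaining slots occupied by elements of order $\geq 1$; since $B$ is a nested bracket, the order of such a term is at least $\mathcal{O}(z_j-z_j')$.

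For part (b) I would start from $Ad(e^X)U-Ad(e^Y)U=\sum_{n\geq 1}\tfrac{1}{n!}\big(ad_X^{\,n}(U)-ad_Y^{\,n}(U)\big)$ and apply the telescoping identity
\[ad_X^{\,n}(U)-ad_Y^{\,n}(U)=\sum_{i=0}^{n-1}ad_X^{\,i}\circ ad_{X-Y}\circ ad_Y^{\,n-1-i}(U),\]
which uses $ad_X-ad_Y=ad_{X-Y}$. Each summand has order at least $\mathcal{O}(X-Y)+\mathcal{O}(U)\geq\mathcal{O}(X-Y)$, because the single factor $ad_{X-Y}$ raises the order by $\mathcal{O}(X-Y)$ while the remaining factors $ad_X,ad_Y$ raise it by $\geq 0$. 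Since the orders of the terms $\tfrac{1}{n!}(ad_X^{\,n}(U)-ad_Y^{\,n}(U))$ tend to infinity (they are $\geq n$), Lemma \ref{LemConv} shows the series converges and its order is bounded below by the infimum of the orders of the terms, i.e.\ by $\mathcal{O}(X-Y)$; this is (b).

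For part (a), the linear part of the Campbell--Hausdorff formula (\ref{Campbell-Hausdorff}) contributes $(X-X')+(Y-Y')$, whose order is $\geq n:=\mathcal{O}(X-X')\wedge\mathcal{O}(Y-Y')$. Each higher term $D_k(X,Y)$ is a convergent sum of iterated brackets obtained by evaluating at $(a,b)=(X,Y)$ fixed Lie monomials $B=B(a,b)$ of bracket-length $\geq 2$. Applying the sublemma above (telescoping over all occurrences of $a$, then of $b$) to each such $B$ gives $\mathcal{O}\big(B(X,Y)-B(X',Y')\big)\geq n$; since the length-$(k+1)$ contributions in fact have order $\geq n+k$, the orders of the terms grow, so Lemma \ref{LemConv} lets us sum everything and conclude $X*Y-X'*Y'\in\mathcal{F}_{n}\mathcal{L}$, which is (a).

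The only genuinely delicate point is the bookkeeping in (a): making the telescoping over the occurrences of the two letters inside an arbitrary Lie monomial precise, and checking that the rearrangements and regroupings of the (doubly) infinite sum defining $D_k$ are legitimate in the complete topology. Both are handled once one observes that $\mathcal{O}$ is unchanged under permuting or regrouping a convergent series and that the relevant orders tend to infinity, so Lemma \ref{LemConv} applies throughout. With the sublemma in hand, neither estimate requires any further computation.
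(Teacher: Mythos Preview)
The paper does not actually prove this lemma; it introduces it with the phrase ``we give the following straightforward estimates'' and states the result without proof. Your telescoping argument is correct and is exactly the natural way to make these ``straightforward'' estimates precise: the one-slot-at-a-time identity for iterated brackets, together with the order properties listed before Lemma~\ref{LemConv} and the fact that all arguments lie in $\mathcal{F}_1\mathcal{L}^0$, gives both (a) and (b) cleanly, and your invocation of Lemma~\ref{LemConv} to control the infinite sums (with orders tending to infinity) is the right way to justify the passage to limits.
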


Let $\gamma$ be a MC-element. Notice that $[\gamma,\gamma]=0$, implies that $d_{\gamma}:=ad_{\gamma}$ is a
differential on $\mathcal{L}^{\bullet}$. The fact that $\mathcal{F}_k\mathcal{L}$ are ideals implies that
$(\mathcal{F}_k\mathcal{L}^{\bullet},d_{\gamma})$ are subcomplexes of $(\mathcal{L}^{\bullet},d_{\gamma})$.
The induced differential on the quotient of consecutive complexes depends only on $\gamma$ modulo
$\mathcal{F}_1$, and their cohomology groups will be denoted
\[H^n_{\gamma}(\mathcal{F}_{k}\mathcal{L}^{\bullet}/\mathcal{F}_{k+1}\mathcal{L}^{\bullet}).\]

For $e^X\in\mathcal{G}(\mathcal{L})$, $Ad(e^X)\gamma$ is again a MC-element, and we will call $\gamma$ and
$Ad(e^X)\gamma$ \textbf{gauge equivalent}. The next lemma gives a linear approximation of the action
$\mathcal{G}(\mathcal{L})$ on MC-elements.
\begin{lemma}\label{LemAprox}
For $\gamma$ a MC-element and $e^X\in\mathcal{G}(\mathcal{L})$, we have that
\[\mathcal{O}(Ad(e^X)\gamma-\gamma+d_{\gamma}X)\geq 2 \mathcal{O}(X).\]
\end{lemma}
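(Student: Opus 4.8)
Looking at Lemma \ref{LemAprox}, the statement is a quantitative refinement of the fact that the gauge action of $e^X$ on a Maurer–Cartan element $\gamma$ is, to first order in $X$, the infinitesimal action $\gamma \mapsto \gamma - d_\gamma X$. The goal is to show the error term has order at least $2\,\mathcal{O}(X)$.

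The plan is to expand $Ad(e^X)\gamma = \sum_{n\geq 0} \frac{ad_X^n}{n!}(\gamma)$ directly. The $n=0$ term is $\gamma$, the $n=1$ term is $[X,\gamma] = -[\gamma,X] = -d_\gamma X$, so
\[
Ad(e^X)\gamma - \gamma + d_\gamma X = \sum_{n\geq 2} \frac{ad_X^n}{n!}(\gamma).
\]
Now I estimate the order of each remaining term. For $n\geq 2$, using the property $\mathcal{O}(ad_X(U)) = \mathcal{O}([X,U]) \geq \mathcal{O}(X) + \mathcal{O}(U)$ repeatedly, together with $\mathcal{O}(\gamma) \geq 0$, one gets $\mathcal{O}(ad_X^n(\gamma)) \geq n\,\mathcal{O}(X) + \mathcal{O}(\gamma) \geq n\,\mathcal{O}(X) \geq 2\,\mathcal{O}(X)$, and scalar multiplication by $\frac{1}{n!}$ does not decrease the order. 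Then, since $\mathcal{O}$ of a (convergent) sum is at least the infimum of the orders of the summands — which follows from the ultrametric-type inequality $\mathcal{O}(U+V)\geq \mathcal{O}(U)\wedge\mathcal{O}(V)$ extended to convergent series via Lemma \ref{LemConv} and completeness — we conclude $\mathcal{O}\big(\sum_{n\geq 2}\frac{ad_X^n}{n!}(\gamma)\big) \geq 2\,\mathcal{O}(X)$.

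Two small points need care. First, one must check that the series $\sum_{n\geq 2}\frac{ad_X^n}{n!}(\gamma)$ actually converges in the sense of Lemma \ref{LemConv}: this is immediate because $\mathcal{O}(ad_X^n(\gamma)) \geq n\,\mathcal{O}(X)$ and $\mathcal{O}(X)\geq 1$ (as $X \in \mathfrak{g}(\mathcal{L}) = \mathcal{F}_1\mathcal{L}^0$), so the orders tend to infinity with $n$; this also guarantees $Ad(e^X)\gamma$ is well-defined in the first place. Second, if $\mathcal{O}(X)=\infty$, i.e.\ $X=0$, the claim is trivially true since both sides vanish. I do not anticipate a genuine obstacle here — the lemma is essentially bookkeeping with the order function, and the only thing to be slightly attentive to is justifying that the order inequality for finite sums passes to the convergent infinite sum, which is exactly what completeness of the filtration buys us. This mirrors the style of Lemma \ref{PropGroup}, whose estimates are proved the same way.
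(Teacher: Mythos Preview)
Your proof is correct and is exactly the natural argument; the paper in fact states this lemma without proof, treating it (like Lemma \ref{PropGroup}) as a straightforward estimate, and your expansion $Ad(e^X)\gamma-\gamma+d_\gamma X=\sum_{n\geq 2}\tfrac{1}{n!}ad_X^n(\gamma)$ together with the order bound $\mathcal{O}(ad_X^n(\gamma))\geq n\,\mathcal{O}(X)$ is precisely what is intended. As you implicitly observe, the MC condition on $\gamma$ is not actually needed for this inequality---only $\mathcal{O}(\gamma)\geq 0$ is used.
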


We have the following criterion for gauge equivalence.
\begin{theorem}\label{Teo1}
Let $(\mathcal{L}^{\bullet},[\cdot,\cdot])$ be a GLA with a complete filtration $\mathcal{F}_n\mathcal{L}$. Let
$\gamma,\gamma'$ be two Maurer Cartan elements such that $\mathcal{O}(\gamma-\gamma')\geq 1$ and
\[\ \ H^1_{\gamma}(\mathcal{F}_{q}\mathcal{L}^{\bullet}/\mathcal{F}_{q+1}\mathcal{L}^{\bullet})=0,\ \ \forall\ q\geq \mathcal{O}(\gamma-\gamma').\]
Then $\gamma$ and $\gamma'$ are gauge equivalent, i.e.\ there exists an element $e^X\in \mathcal{G}(\mathcal{L})$
such that $\gamma=Ad(e^X)\gamma'$.
\end{theorem}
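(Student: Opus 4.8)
The plan is to construct $X$ as a formal limit $X = X_1 * X_2 * X_3 * \cdots$ of an infinite sequence of "corrections", where each $X_n$ lives in $\mathcal{F}_n\mathfrak{g}(\mathcal{L}) = \mathcal{F}_n\mathcal{L}^0$ and is chosen so that after gauging by $e^{X_1}\cdots e^{X_n}$ the discrepancy between the two Maurer–Cartan elements is pushed one step deeper into the filtration. Set $q_0 := \mathcal{O}(\gamma-\gamma')\geq 1$, $\gamma_0 := \gamma'$, and inductively define $\gamma_n := Ad(e^{X_n})\gamma_{n-1}$, aiming for $\mathcal{O}(\gamma - \gamma_n)\geq q_0 + n$. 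The key computation at each stage: writing $\delta_{n-1} := \gamma - \gamma_{n-1}$ with $\mathcal{O}(\delta_{n-1})\geq q_0+n-1 =: q$, I want $X_n$ with $\mathcal{O}(X_n)\geq q$ such that $\gamma - Ad(e^{X_n})\gamma_{n-1}$ has order $\geq q+1$. By Lemma \ref{LemAprox}, $Ad(e^{X_n})\gamma_{n-1} \equiv \gamma_{n-1} - d_{\gamma_{n-1}}X_n$ modulo $\mathcal{F}_{2q}\mathcal{L}\subset \mathcal{F}_{q+1}\mathcal{L}$ (using $q\geq 1$), and since $\mathcal{O}(\gamma_{n-1}-\gamma)\geq q$ we may replace $d_{\gamma_{n-1}}$ by $d_\gamma$ up to order $q+1$ as well. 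So modulo $\mathcal{F}_{q+1}\mathcal{L}$ we need $\delta_{n-1} \equiv d_\gamma X_n$, i.e. we need the class of $\delta_{n-1}$ in $\mathcal{F}_q\mathcal{L}^1/\mathcal{F}_{q+1}\mathcal{L}^1$ to be a $d_\gamma$-coboundary.

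The cohomological hypothesis is exactly what makes this solvable: I must first check that $\delta_{n-1} = \gamma - \gamma_{n-1}$ is $d_\gamma$-closed modulo $\mathcal{F}_{q+1}\mathcal{L}$. This follows because both $\gamma$ and $\gamma_{n-1}$ are Maurer–Cartan: $d_\gamma(\gamma - \gamma_{n-1}) = [\gamma,\gamma] - [\gamma,\gamma_{n-1}] = -[\gamma,\gamma_{n-1}]$, and $[\gamma_{n-1},\gamma_{n-1}]=0$ gives $[\gamma,\gamma_{n-1}] = [\gamma - \gamma_{n-1},\gamma_{n-1}] = [\delta_{n-1},\gamma_{n-1}]$, which has order $\geq q + 0$... here I need to be slightly more careful — I should use $[\delta_{n-1},\gamma_{n-1}] = [\delta_{n-1},\gamma] - [\delta_{n-1},\delta_{n-1}]$, and $[\delta_{n-1},\delta_{n-1}]$ has order $\geq 2q \geq q+1$, while $[\delta_{n-1},\gamma]= -d_\gamma\delta_{n-1}$... that is circular, so instead I argue directly: $2 d_\gamma \delta_{n-1} = 2[\gamma, \gamma-\gamma_{n-1}]$; expanding $0 = [\gamma_{n-1},\gamma_{n-1}] = [\gamma - \delta_{n-1}, \gamma-\delta_{n-1}] = [\gamma,\gamma] - 2[\gamma,\delta_{n-1}] + [\delta_{n-1},\delta_{n-1}]$ yields $2[\gamma,\delta_{n-1}] = [\delta_{n-1},\delta_{n-1}]$, so $d_\gamma\delta_{n-1} = \tfrac12[\delta_{n-1},\delta_{n-1}] \in \mathcal{F}_{2q}\mathcal{L}\subset\mathcal{F}_{q+1}\mathcal{L}$. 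Hence $[\delta_{n-1}] \in H^1_\gamma(\mathcal{F}_q\mathcal{L}^\bullet/\mathcal{F}_{q+1}\mathcal{L}^\bullet)$, which vanishes by hypothesis since $q = q_0 + n - 1 \geq q_0 = \mathcal{O}(\gamma-\gamma')$. So there is $X_n \in \mathcal{F}_q\mathcal{L}^0$ with $d_\gamma X_n \equiv \delta_{n-1}$ modulo $\mathcal{F}_{q+1}\mathcal{L}$; note $X_n$ automatically lies in $\mathcal{F}_q\mathfrak{g}(\mathcal{L})\subset\mathfrak{g}(\mathcal{L})$ since $q\geq 1$. Combining with the two approximation steps above gives $\mathcal{O}(\gamma - \gamma_n)\geq q+1 = q_0+n$, completing the induction.

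Finally I would assemble the limit. Since $\mathcal{O}(X_n)\geq q_0 + n - 1 \to \infty$, the partial products $Y_n := X_1 * X_2 * \cdots * X_n$ form a Cauchy sequence in the complete filtration: by Lemma \ref{PropGroup}(a), $\mathcal{O}(Y_{n+1} - Y_n) = \mathcal{O}(Y_n * X_{n+1} - Y_n * 0) \geq \mathcal{O}(X_{n+1})\to\infty$, so $Y_n$ converges (Lemma \ref{LemConv}) to some $X\in\mathfrak{g}(\mathcal{L})$, and $\mathcal{O}(X - Y_n)\to\infty$. By Lemma \ref{PropGroup}(b), $\mathcal{O}(Ad(e^X)\gamma' - Ad(e^{Y_n})\gamma') \geq \mathcal{O}(X - Y_n)\to\infty$, while $Ad(e^{Y_n})\gamma' = \gamma_n$ satisfies $\mathcal{O}(\gamma - \gamma_n)\to\infty$; hence $Ad(e^X)\gamma' = \gamma$ by completeness. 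The main obstacle — or rather the point demanding the most care — is bookkeeping the filtration orders correctly through the three-fold approximation in the inductive step (replacing $d_{\gamma_{n-1}}$ by $d_\gamma$, discarding the quadratic $Ad$-error, and solving the linear equation), making sure each error term genuinely lands in $\mathcal{F}_{q+1}\mathcal{L}$; this relies crucially on $q\geq 1$ so that $2q\geq q+1$, which is why the hypothesis $\mathcal{O}(\gamma-\gamma')\geq 1$ cannot be dropped. The rest is formal convergence machinery already packaged in Lemmas \ref{LemConv} and \ref{PropGroup}.
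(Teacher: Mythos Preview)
Your approach is essentially identical to the paper's: both build $X$ as an infinite Campbell--Hausdorff product of corrections $X_n\in\mathcal{F}_{q_0+n-1}\mathcal{L}^0$, using at each stage the vanishing of $H^1_\gamma(\mathcal{F}_q\mathcal{L}/\mathcal{F}_{q+1}\mathcal{L})$ together with the closedness computation $d_\gamma\delta_{n-1}=\tfrac12[\delta_{n-1},\delta_{n-1}]\in\mathcal{F}_{2q}$ and the quadratic estimate of Lemma~\ref{LemAprox}. Two small slips to fix: the equation you need is $d_\gamma X_n\equiv -\delta_{n-1}$ (not $+\delta_{n-1}$) modulo $\mathcal{F}_{q+1}$; and since $\gamma_n=Ad(e^{X_n})\cdots Ad(e^{X_1})\gamma'$, the partial products must be $Y_n=X_n*X_{n-1}*\cdots*X_1$ rather than $X_1*\cdots*X_n$, otherwise $Ad(e^{Y_n})\gamma'\neq\gamma_n$. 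With these corrections the convergence argument via Lemmas~\ref{LemConv} and~\ref{PropGroup} goes through exactly as you outline (and exactly as in the paper).
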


\begin{proof}
Denote by $p:=\mathcal{O}(\gamma-\gamma')$. By hypothesis, for $q\geq p$, we can find homotopy operators
\[h^q_1:\mathcal{F}_{q}\mathcal{L}^1\rmap \mathcal{F}_{q}\mathcal{L}^0\ \  \textrm{and}\ \  h^q_2:\mathcal{F}_{q}\mathcal{L}^2\rmap
\mathcal{F}_{q}\mathcal{L}^1\] such that $h^q_1(\mathcal{F}_{q+1}\mathcal{L}^1)\subset \mathcal{F}_{q+1}\mathcal{L}^0$,
$h^q_2(\mathcal{F}_{q+1}\mathcal{L}^2)\subset \mathcal{F}_{q+1}\mathcal{L}^1$ and
\[(d_{\gamma}h^q_1+h^q_2d_{\gamma}-\textrm{Id})(\mathcal{F}_{q}\mathcal{L}^1)\subset \mathcal{F}_{q+1}\mathcal{L}^1.\]

We first prove an estimate. Let $q\geq p$ and $\tilde{\gamma}$ a MC-element, with
$\mathcal{O}(\tilde{\gamma}-\gamma)\geq q$. Then for ${\widetilde{X}}:=h^{q}_1(\tilde{\gamma}-\gamma)$, we
claim that the following hold:
\begin{equation}\label{EQ3}
\mathcal{O}({\widetilde{X}})\geq {q}, \ \ \mathcal{O}(Ad(e^{\widetilde{X}})\tilde{\gamma}-\gamma)\geq {q+1}.
\end{equation}
The first holds by the properties of $h^{q}_1$. To prove the second, we compute:
\begin{align*}
\mathcal{O}(Ad(e^{\widetilde{X}})\tilde{\gamma}-\gamma)&\geq \mathcal{O}(Ad(e^{\widetilde{X}})\tilde{\gamma}-\tilde{\gamma}+d_{\tilde{\gamma}}({\widetilde{X}}))\wedge \mathcal{O}(\tilde{\gamma}-d_{\tilde{\gamma}}({\widetilde{X}})-\gamma)\geq\\
&\geq 2 \mathcal{O}({\widetilde{X}})\wedge\mathcal{O}([\gamma-\tilde{\gamma},{\widetilde{X}}])\wedge\mathcal{O}(\tilde{\gamma}-\gamma-d_{\gamma}({\widetilde{X}}))\geq\\
&\geq 2q\wedge(\mathcal{O}(\gamma-\tilde{\gamma})+\mathcal{O}({\widetilde{X}}))\wedge\mathcal{O}(\tilde{\gamma}-\gamma-d_{\gamma}({\widetilde{X}}))\geq\\
&\geq 2q\wedge\mathcal{O}((\textrm{Id}-d_{\gamma}h^{q}_1)(\tilde{\gamma}-\gamma)),
\end{align*}
where, for the second inequality, we used Lemma \ref{LemAprox}. The last term can be evaluated as follows:
\begin{align*}
\mathcal{O}((\textrm{Id}-d_{\gamma}h^{q}_1)(\tilde{\gamma}-\gamma))&\geq \mathcal{O}((\textrm{Id}-d_{\gamma}h^{q}_1-h^{q}_2d_{\gamma})(\tilde{\gamma}-\gamma))\wedge
\mathcal{O}(h^{q}_2(d_{\gamma}(\tilde{\gamma}-\gamma)))\\
&\geq (q+1)\wedge \mathcal{O}(h^{q}_2(d_{\gamma}(\tilde{\gamma}-\gamma))).
\end{align*}
Since $d_{\gamma}(\tilde{\gamma}-\gamma)=-\frac{1}{2}[\tilde{\gamma}-\gamma,\tilde{\gamma}-\gamma]$, we
have that \[\mathcal{O}(d_{\gamma}(\tilde{\gamma}-\gamma))\geq 2q\geq q+1,\] hence
$h^{q}_2(d_{\gamma}(\tilde{\gamma}-\gamma))\in \mathcal{F}_{q+1}\mathcal{L}^1$, and this proves (\ref{EQ3}).

We construct a sequence of MC-elements $\{\gamma_k\}_{k\geq 0}$ and a sequence of group elements
$\{e^{X_k}\}_{k\geq 1}\in\mathcal{G}(\mathcal{L})$ by the following recursive formulas:
\begin{align*}
\gamma_0&:=\gamma',\\
X_{k}&:=h_1^{p+k-1}(\gamma_{k-1}-\gamma), \ \ \textrm{ for }k\geq 1,\\
\gamma_{k}&:=Ad(e^{X_{k}})\gamma_{k-1},  \ \ \textrm{ for }k\geq 1.
\end{align*}
To show that this formulas give indeed well-defined sequences, we have to check that
$\gamma_{k-1}-\gamma\in\mathcal{F}_{p+k-1}\mathcal{L}^1$. This holds for $k=1$, and in general, by applying
inductively, at each step $k\geq 1$, the estimate (\ref{EQ3}) with $\widetilde{\gamma}=\gamma_{k-1}$ and
$q=p+k-1$, we obtain:
\[\mathcal{O}(X_{k})\geq p+k-1, \ \ \mathcal{O}(\gamma_{k}-\gamma)\geq p+k.\]

Using Lemma \ref{PropGroup} (a), we obtain that
\[\mathcal{O}(X_k*X_{k-1}\ldots*X_1-X_{k-1}\ldots*X_1)\geq \mathcal{O}(X_k)\geq {p+k-1},\]
therefore, by Lemma \ref{LemConv}, the product $X_k*X_{k-1}*\ldots*X_1$ converges to some element $X$. Applying
Lemma \ref{PropGroup} (a) $k$ times, we obtain
\[\mathcal{O}(X_k*X_{k-1}\ldots*X_1)\geq \mathcal{O}(X_{k})\wedge\mathcal{O}(X_{k-1})\wedge\ldots \wedge\mathcal{O}(X_1)\geq 1,\]
thus $X\in\mathfrak{g}(\mathcal{L})$. On the other hand, we have that
\begin{eqnarray*}
\mathcal{O}(Ad(e^X)\gamma'-\gamma)&\geq& \mathcal{O}(Ad(e^X)\gamma'-\gamma_{k})\wedge \mathcal{O}(\gamma_{k}-\gamma)\geq\\
&\geq& \mathcal{O}(Ad(e^X)\gamma'-Ad(e^{X_k*\ldots
*X_1})\gamma')\wedge
(p+k)\geq\\
&\geq&\mathcal{O}(X-X_k*\ldots *X_1)\wedge (p+k),
\end{eqnarray*}
where for the last estimate we have used Lemma \ref{PropGroup} (b). If we let $k\to \infty$ we obtain the conclusion:
$Ad(e^X)\gamma'=\gamma$.
\end{proof}

\clearpage \pagestyle{plain}

\chapter{Rigidity around Poisson submanifolds}\label{ChRigidity}

\pagestyle{fancy}
\fancyhead[CE]{Chapter \ref{ChRigidity}} 
\fancyhead[CO]{Rigidity around Poisson submanifolds}

Conn's original proof of the linearization of Poisson structures around fixed points is analytical; it uses the Nash-Moser
fast convergence method (see chapter \ref{ChBasicPoisson}). The geometric approach to Conn's theorem from
\cite{CrFe-Conn}, allowed us to obtain Theorem \ref{Theorem_TWO}, which generalizes Conn's result to arbitrary
symplectic leaves. In this chapter we close the circle and provide an analytical approach to this case of symplectic
leaves. We obtain a local rigidity result (Theorem \ref{Theorem_FOUR}) for Poisson structure that are integrable by a
Hausdorff Lie groupoid whose $s$-fibers are compact and have vanishing second de Rham cohomology. Moreover, we
prove that such structures are determined by their first jet around compact Poisson submanifolds, and this gives an
improvement of Theorem \ref{Theorem_TWO}, which is noticeable already for fixed points. The content of this chapter
is available as a preprint at \cite{MarRig}.

\section{Statement of Theorem \ref{Theorem_FOUR}}\label{introduction_rigi}

Recall that the \textbf{compact-open}\index{compact-open} $C^k$-topology on the space of smooth sections
$\Gamma(F)$ of a fiber bundle $F\to M$ is generated by the opens
\[\mathcal{O}(K,U):=\{f\in\Gamma(F) | j^k(f)(K)\subset U\},\]
where $K\subset M$ is a compact set and $U$ is an open subset of $J^k(F)$, the $k$-th jet bundle of $F$. For a second
manifold $N$, the space of smooth maps from $M$ to $N$ is endowed with the compact-open $C^k$-topology, via the
identification
\[C^{\infty}(M,N)=\Gamma(M\times N\to M).\]
The rigidity property that we will study in this chapter is defined in terms of the compact-open topology:
\begin{definition}\label{Definition_CpC1}
Let $(M,\pi)$ be a Poisson manifold and $S\subset M$ a compact submanifold. We say that $\pi$ is
$C^p$-$C^1$-\textbf{rigid around} $S$, if there are small enough open neighborhoods $U$ of $S$, such that for all
opens $O$ with $S\subset O\subset \overline{O}\subset  U$, there exist
\begin{itemize}
\item an open $\mathcal{V}_{O}\subset \mathfrak{X}^2(U)$ around $\pi_{|U}$ in the compact-open $C^p$-topology,
\item a function $\widetilde{\pi}\mapsto \psi_{\widetilde{\pi}}$, which associates to a Poisson structure $\widetilde{\pi}\in
\mathcal{V}_{O}$ an embedding $\psi_{\widetilde{\pi}}:\overline{O}\hookrightarrow M$,
\end{itemize}
such that $\psi_{\widetilde{\pi}}$ restricts to a Poisson diffeomorphism
\[\psi_{\widetilde{\pi}}:(O,\widetilde{\pi}_{|O})\diffto (\psi_{\widetilde{\pi}}(O),\pi_{|\psi_{\widetilde{\pi}}(O)}),\]
and $\psi$ is continuous at $\widetilde{\pi}=\pi$ (with $\psi_{\pi}=\mathrm{Id}_{\overline{O}}$), with respect to the
$C^p$-topology on the space of Poisson structures and the $C^1$-topology on $C^{\infty}(\overline{O},M)$.
\end{definition}

The main result of this chapter is the following rigidity theorem for integrable Poisson manifolds.

\begin{mtheorem}\label{Theorem_FOUR}
Let $(M,\pi)$ be a Poisson manifold for which the cotangent Lie algebroid is integrable by a Hausdorff Lie groupoid
whose $s$-fibers are compact and their de Rham cohomology vanishes in degree two. There exists $p>0$, such that for
every compact Poisson submanifold $S$ of $M$, the following hold
\begin{enumerate}[(a)]
\item $\pi$ is $C^p$-$C^1$-rigid around $S$,
\item the first order jet of $\pi$ at $S$ determines $\pi$ around $S$.
\end{enumerate}
\end{mtheorem}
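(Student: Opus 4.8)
The plan is to run, in this geometric setting, the analytic scheme behind Conn's theorem presented in Section~\ref{Section_Conn}, with two substitutions: the compact Lie group $G$ (on which one did Hodge theory) is replaced by the compact $s$-fibers of the integrating groupoid, and the linear model $\pi_{\mathfrak{g}}$ is replaced by $\pi$ itself. \textbf{Localisation.} Since $\mathcal{G}$ is Hausdorff with compact $s$-fibers it is proper, and $S$ is a compact invariant submanifold of its base; by Lemma~\ref{Lemma_tubular_neighborhood} there is a tubular neighbourhood $E\cong T_SM/TS$ of $S$ and a metric on $E$ for which all closed tubes $E_r=\{|v|\le r\}$ are $\mathcal{G}$-invariant. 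Restricting the groupoid we may assume $M=E_R$, with $\mathcal{G}$ still acting. Fix $C^n$-norms $\|\cdot\|_{n,r}$ on $\mathfrak{X}^\bullet(E_r)$ and establish, uniformly in $r$, the usual Nash--Moser toolbox: smoothing operators, interpolation inequalities, and tameness of the Schouten bracket, of composition of maps, of flows of vector fields and of pullback of multivector fields along them. These are the general forms of the estimates in Subsection~\ref{Inequalities} and their proofs are routine.

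\textbf{Tame homotopy operators.} This is the crucial geometric input and the point where the argument departs most from Conn's. One needs linear operators $h_1,h_2$ in degree two for the Poisson complex $(\mathfrak{X}^\bullet(E_r),d_\pi)$, with tame estimates $\|h_iW\|_{n,r}\le C_n\|W\|_{n+s,r}$ (constants independent of $r$) and a splitting compatible with jets along $S$. These are produced by the Tame Vanishing Lemma of Appendix~\ref{ChFoli}: the hypothesis that the compact $s$-fibers $P$ of $\mathcal{G}$ have $H^2(P)=0$, classical Hodge theory applied fibrewise along the $s$-fibers, and averaging against the Haar system of the proper groupoid $\mathcal{G}$ together yield invariant, tame homotopy operators. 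Here Lemma~\ref{Lemma_integrating_ideals} is used to integrate the coefficient bundles occurring in the complex (built from the conormal bundle of $S$, which is an ideal of $A_S$) to genuine representations of $\mathcal{G}$, so that the averaging is meaningful.

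\textbf{Fast convergence (proof of (a)).} Given a Poisson structure $\widetilde\pi$ that is $C^p$-close to $\pi$ on $E_R$, set $\pi_0=\widetilde\pi$ and run the iteration on a shrinking sequence of tubes $E_{r_k}\downarrow S$: $Z_k:=\pi_k-\pi$, $X_k:=S_k(h_1(Z_k))$, $\pi_{k+1}:=\varphi_{X_k}^{*}(\pi_k)$. Exactly as in the proof of Theorem~\ref{Theorem_rigidity_fixed_points}, one proves by induction the quadratic estimates controlling $\|Z_k\|_{s,r_k}$ and $\|Z_k\|_{p,r_k}$, deduces that $\psi_k=\varphi_{X_0}\circ\cdots\circ\varphi_{X_k}$ converges in every $C^n$-norm to an embedding $\psi$ with $\psi^{*}\widetilde\pi=\pi$ on a neighbourhood of $S$, and that $\widetilde\pi\mapsto\psi_{\widetilde\pi}$ is continuous at $\pi$ from the $C^p$-topology to the $C^1$-topology (with $\psi_\pi=\mathrm{Id}$). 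Since $h_1$ preserves the subcomplex of multivector fields tangent to $S$, the diffeomorphisms fix $S$ (to first order). This yields $C^p$-$C^1$-rigidity; the exponent $p$ comes out explicitly in terms of the dimension of the $s$-fibers.

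\textbf{Proof of (b).} Let $\pi_1=\pi$ and $\pi_2=\widetilde\pi$ have the same first order jet along $S$. The vanishing $H^2(A_S,\mathcal{S}^k\nu_S^*)=0$ for $k\ge 2$ follows from the Van Est argument of Subsection~\ref{Step 2.1: Reduction to integrability} applied to the proper groupoid $\mathcal{G}|_S$ (compact $s$-fibers with vanishing second cohomology), so Theorem~\ref{Theorem_THREE} applies and, after a formal change of coordinates, we may assume $j^\infty_{|S}\pi_1=j^\infty_{|S}\pi_2$. Now use the fibrewise dilations $\mu_t$ of the tube $E$: on a fixed tube the rescaled families $\mu_t^{*}\pi_i$ converge in the $C^p$-topology, as $t\to 0$, to one and the same first order model determined by the common $1$-jet (Definition~\ref{definition_first_order_approx} and the discussion of Subsection~\ref{Step 2.1: Reduction to integrability}), which is again integrable by a Hausdorff groupoid with compact $s$-fibers of vanishing $H^2$ (cf.\ Proposition~\ref{Lemma_small_neighborhoods} and Proposition~\ref{Proposition_reconcile}). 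Hence, for $t$ small, $\mu_t^{*}\pi_1$ and $\mu_t^{*}\pi_2$ both lie in the rigidity neighbourhood of that model supplied by part~(a), so each is Poisson-diffeomorphic to it near $S$, hence to the other; pushing this diffeomorphism forward by $\mu_{1/t}$ shows $\pi_1$ and $\pi_2$ are Poisson-diffeomorphic on a neighbourhood of $S$.

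\textbf{Main obstacle.} The heart of the difficulty is the construction of \emph{tame} homotopy operators — not merely vanishing of the relevant cohomology — for Lie algebroid cohomology with coefficients, uniformly in the tube radius; this is precisely the content of the Tame Vanishing Lemma and requires carefully interweaving fibrewise Hodge theory on the $s$-fibers with averaging over the proper groupoid and keeping the Sobolev/$C^n$ loss $s$ under control through the whole Nash--Moser recursion. A secondary subtlety, absent in Conn's situation, is that $\pi$ is not homogeneous under the dilations, so part~(b) cannot be reduced to part~(a) by the naive rescaling trick and must instead pass through the formal equivalence theorem.
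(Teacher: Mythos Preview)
Your part (a) matches the paper's proof: invariant tubular neighbourhood (Lemma~\ref{Lemma_tubular_neighborhood}), tame homotopy operators from the Tame Vanishing Lemma, and the Nash--Moser iteration of Proposition~\ref{Proposition_technical}. One minor misplacement: Lemma~\ref{Lemma_integrating_ideals} is not needed for the homotopy operators here (the Poisson complex of $(E,\pi)$ has trivial coefficients); it enters only in part (b), to integrate the $A_S$-representation on $\nu_S^*$ to $\mathcal{G}|_S$ so that the Tame Vanishing Lemma gives $H^2(A_S,\mathcal{S}^k\nu_S^*)=0$.

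Your part (b), however, has a genuine gap. After matching infinite jets via Theorem~\ref{Theorem_THREE} (which you invoke correctly), you claim that the rescaled families $\mu_t^*\pi_i$ converge as $t\to 0$ to a common first-order model. This is false for a general Poisson submanifold: in coordinates $(x,y)$ with $y$ transverse to $S$, the pullback $\mu_t^*\pi$ contains terms of the form $t^{-1}b(x,ty)\,\partial_x\wedge\partial_y$ and $t^{-2}c(x,ty)\,\partial_y\wedge\partial_y$, which generically diverge. There is no first-order Poisson model around an arbitrary Poisson submanifold (see Example~\ref{Example_Not_every_X_is_Y}); the references you cite (Definition~\ref{definition_first_order_approx}, Propositions~\ref{Lemma_small_neighborhoods} and~\ref{Proposition_reconcile}) are specific to \emph{symplectic leaves}, and even there the linearising path is $t\mu_t^*(\pi^{(t-1)p^*(\omega_S)})$, not bare dilation.

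The paper avoids rescaling altogether. The crucial point you miss is that the rigidity threshold in Proposition~\ref{Proposition_technical} is explicitly polynomial in the tube radii: one needs $\|\widetilde\pi-\pi\|_{p,R}<\delta\,(r(R-r))^d$. Since $j^\infty_{|S}(\varphi^*(\widetilde\pi)-\pi)=0$, Taylor's formula gives $\|\varphi^*(\widetilde\pi)-\pi\|_{p,r}\le M r^{2d+1}$, and for $r$ small enough this falls below $\delta\,(r\cdot r/2)^d$. Thus $\varphi^*(\widetilde\pi)$ lands in the rigidity neighbourhood of $\pi$ \emph{itself} on the pair $E_{r/2}\subset E_r$ --- no auxiliary model is needed. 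Finally, to obtain a diffeomorphism that is the identity to first order along $S$, one runs the modified iteration (\textbf{Procedure~P$_1$}) with the degree-$1$ smoothing operators $S_t^{r,1}$ on the subcomplex $\mathfrak{X}^\bullet(E_r)^{(1)}$ of multivector fields vanishing to first order along $S$; this relies on Corollary~\ref{corollary_unu_prim}, which says the homotopy operators preserve that subcomplex.
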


For $p$ we find the (most probably not optimal) value:
\[p=7(\lfloor \mathrm{dim}(M)/2\rfloor+5).\]

In part (b) we prove that every Poisson structure $\widetilde{\pi}$ defined on an open containing $S$, that satisfies
$j^1_{|S}\pi=j^1_{|S}\widetilde{\pi}$, is isomorphic to $\pi$ around $S$ by a diffeomorphism which is the identity on
$S$ up to first order.

\subsection{An improvement of Theorem \ref{Theorem_TWO}}

As an immediate consequence of Theorem \ref{Theorem_FOUR}, we obtain the following improvement of Theorem
\ref{Theorem_TWO}, which also includes rigidity:

\begin{theorem}\label{The_normal_form_theorem}
Let $(M,\pi)$ be a Poisson manifold and let $(S,\omega_S)$ be a compact symplectic leaf. If the Lie algebroid
$A_S:=T^*M_{|S}$ is integrable by a compact Lie groupoid whose $s$-fibers have vanishing de Rham cohomology in
degree two, then
\begin{enumerate}[(a)]
\item $\pi$ is Poisson diffeomorphic around $S$ to its local model around $S$,
\item $\pi$ is $C^p$-$C^1$-rigid around $S$.
\end{enumerate}
\end{theorem}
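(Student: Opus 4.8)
The plan is to deduce both statements from Theorem \ref{Theorem_FOUR}, applied not to $(M,\pi)$ itself (whose cotangent Lie algebroid need not be integrable) but to the \emph{local model} of $\pi$ around $S$. First I would put the integrating groupoid into a convenient shape. Let $\mathcal{G}$ be the given compact Lie groupoid integrating $A_S=T^*M_{|S}$, with $s$-fibers compact and of vanishing second de Rham cohomology. Since $s$ is a proper submersion it is a locally trivial fibration, so the $s$-connected subgroupoid $\mathcal{G}^{\circ}$ is both open and closed, hence still compact; its $s$-fibers are the identity components of those of $\mathcal{G}$, hence still compact, and their degree-two cohomology still vanishes (it is an open-and-closed union of components of the original fibers). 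Replacing $\mathcal{G}$ by $\mathcal{G}^{\circ}$ and using that $S$ is connected and $A_S$ is transitive, $\mathcal{G}$ is transitive, hence a gauge groupoid $\mathcal{G}\cong P\times_{G}P$, where $P$ is any $s$-fiber: a smooth \emph{compact} principal $G$-bundle over $(S,\omega_S)$ with $H^2(P)=0$.

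Next I would form the local model $(N(P),\pi_P)$ built from the triple $(G,\,P\to(S,\omega_S),\,\mathfrak{g}^*)$. By Proposition \ref{Proposition_reconcile_3} this coincides with the local model attached to $A_S$, and by Propositions \ref{1st-jet-Atiyah}, \ref{Proposition_reconcile} and \ref{Proposition_reconcile_2} it realizes $(S,\omega_S)$ as a symplectic leaf with transitive Lie algebroid $A_S$ and with the same first order jet as $\pi$ along $S$, i.e. $j^1_{|S}\pi_P=j^1_{|S}\pi$. By Proposition \ref{Lemma_small_neighborhoods} (together with Lemma \ref{Lemma_symplectic_groupoid_reduction}) the Poisson manifold $(N(P),\pi_P)$ is integrable by a Hausdorff symplectic groupoid, and there are arbitrarily small invariant open neighborhoods $U$ of $S$ in $N(P)$ for which the restricted (Hausdorff) groupoid has all $s$-fibers diffeomorphic to $P$ — hence compact with vanishing second de Rham cohomology. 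Fixing such a $U$, the Poisson manifold $(U,\pi_{P|U})$ satisfies the hypotheses of Theorem \ref{Theorem_FOUR} with respect to the compact Poisson submanifold $S$.

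Now part (a) follows from Theorem \ref{Theorem_FOUR}(b) applied to $(U,\pi_{P|U})$: since $\pi$ is a Poisson structure defined on a neighborhood of $S$ with $j^1_{|S}\pi=j^1_{|S}\pi_P$, that structure is determined around $S$ by its first order jet, so there is a Poisson diffeomorphism $\phi$ between a neighborhood of $S$ in $(M,\pi)$ and a neighborhood of $S$ in $(U,\pi_{P|U})$, equal to the identity on $S$ up to first order. For part (b), Theorem \ref{Theorem_FOUR}(a) applied to $(U,\pi_{P|U})$ gives that $\pi_P$ is $C^p$-$C^1$-rigid around $S$ with $p=7(\lfloor\dim(M)/2\rfloor+5)$, and this property transports to $\pi$ through the fixed Poisson diffeomorphism $\phi$: a Poisson structure $\widetilde{\pi}$ that is $C^p$-close to $\pi$ near $S$ is pushed by $\phi$ to a Poisson structure $\phi_*(\widetilde{\pi})$ that is $C^p$-close to $\pi_P$; rigidity of $\pi_P$ produces an embedding $\psi_{\phi_*(\widetilde{\pi})}$ conjugating it back to $\pi_P$ and $C^1$-close to the identity, and then $\phi^{-1}\circ\psi_{\phi_*(\widetilde{\pi})}\circ\phi$ is the embedding required in Definition \ref{Definition_CpC1} for $\pi$, with the continuity at $\widetilde{\pi}=\pi$ following since it is a composition of maps continuous in the relevant compact-open topologies.

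The only point that needs genuine (though routine) care is the bookkeeping in the last paragraph: matching the nested open sets $S\subset O\subset\overline{O}\subset U$ of Definition \ref{Definition_CpC1} with the domain of $\phi$, and verifying that push-forward by the fixed diffeomorphism $\phi$ and composition of maps of a fixed compact set are continuous for the compact-open $C^p$ (resp. $C^1$) topologies — all standard facts about jet bundles and function spaces. Everything substantive is a direct citation of Theorem \ref{Theorem_FOUR}, Proposition \ref{Lemma_small_neighborhoods}, and the identification of the local model with the first order approximation of $\pi$ at $S$; there is no new analysis, which is why the statement is advertised as an immediate consequence of Theorem \ref{Theorem_FOUR} and a strengthening of Theorem \ref{Theorem_TWO}.
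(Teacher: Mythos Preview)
Your proposal is correct and follows essentially the same approach as the paper: reduce to the local model $(N(P),\pi_P)$ via Propositions \ref{Proposition_reconcile_3}, \ref{Proposition_reconcile}, and \ref{Lemma_small_neighborhoods}, then apply Theorem \ref{Theorem_FOUR} there. Your first paragraph (passing to the $s$-connected subgroupoid to obtain the gauge description $P\times_G P$ with $P$ compact and $H^2(P)=0$) and your last paragraph (transporting rigidity through the Poisson diffeomorphism $\phi$) spell out steps the paper leaves implicit, while the paper is slightly more explicit about the tubular neighborhood $\Psi:\nu_S\to M$ needed to compare $\pi$ (living on $M$) with $\pi_P$ (living on $N(P)$) --- a point you absorb into the citations of Propositions \ref{Proposition_reconcile} and \ref{Proposition_reconcile_2}.
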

\begin{proof}
Since $A_S$ is transitive, the hypothesis implies the existence of a principal $G$-bundle $P\to S$, such that $A_S\cong
TP/G$. Let $\Psi:\nu_S\to M$ be a tubular neighborhood of $S$ in $M$, and consider the corresponding first order
approximation of $\pi$ around $S$ (constructed in subsection \ref{Subsection_the_local_model_the_general_case}),
which is a Poisson structure $\plin(S)$ on an open around $S$. By Proposition \ref{Proposition_reconcile}, $\Psi$ sends
$\pi_{A_S}$, the local model constructed using $A_S$ and the splitting $d\Psi_{|S}^*$, to $\plin(S)$. By Proposition
\ref{Proposition_reconcile_3}, $(N(A_S),\pi_{A_S})$ coincides with $(N(P),\pi_P)$, the local model corresponding to
$P$ (constructed in subsection \ref{The local model}). By Proposition \ref{Lemma_small_neighborhoods}, there are
arbitrarily small opens $U\subset N(P)$, containing $S$, such that $(U,\pi_{P|U})$ is integrable by a Hausdorff Lie
groupoid whose $s$-fibers are diffeomorphic to $P$. Since $P$ is compact and $H^2(P)=0$, the Poisson manifold
$(U,\pi_{P|U})$ satisfies the conditions of Theorem \ref{Theorem_FOUR}, thus also $\plin(S)$ will satisfy these
conditions on $\Psi(U)$. By part (a) of Theorem \ref{Theorem_FOUR}, $\plin(S)_{|\Psi(U)}$ is $C^p$-$C^1$-rigid
around $S$, and since $\plin(S)$ and $\pi$ have the same first order jet at $S$, part (b) of the theorem implies that they
are isomorphic around $S$. In particular, also $\pi$ is $C^p$-$C^1$-rigid around $S$.
\end{proof}

Part (a) of this theorem is a slight improvement of Theorem \ref{Theorem_TWO}: both results require the same
conditions on a Lie groupoid, here this groupoid could be any integration of $A_S$, but in Theorem \ref{Theorem_TWO}
it has to be the fundamental integration of $A_S$ (i.e.\ the $s$-fiber 1-connected).

Already in the case of fixed points, part (a) is stronger than Conn's theorem. Namely, a Lie algebra is integrable by a
compact group with vanishing second de Rham cohomology if and only if it is compact and its center is at most
one-dimensional (Lemma \ref{Lemma_cohomology_lie_groups}). The case when the center is trivial is Conn's result,
and the one-dimensional case is a consequence of a result of Monnier and Zung on smooth Levi decomposition of
Poisson manifolds \cite{Monnier}.

Another class of Poisson structures, where the difference in strength between these two results is easily
noticeable, is that of trivial symplectic foliations. This is discussed in subsection \ref{Trivial_symplectic_foliations}.\\

For a Poisson submanifold $S$ of $(M,\pi)$, the structure encoded by $j^1_{|S}\pi$ can be organized as an extension
of Lie algebroids (see subsection \ref{The first order data})
\begin{equation}\label{EQ_I_1}
0\rmap \nu_S^*\rmap A_S\rmap T^*S\rmap 0,
\end{equation}
where $T^*S$ is the cotangent Lie algebroid of the Poisson manifold $(S,\pi_{|S})$. One might try to follow the same
line of reasoning as in the proof of Theorem \ref{The_normal_form_theorem} above, and use Theorem
\ref{Theorem_FOUR} to obtain a normal form result around Poisson submanifolds. Unfortunately, around general
Poisson submanifolds, a first order local model does not seem to exist. As explained in Example
\ref{Example_Not_every_X_is_Y}, there are Lie algebroid extensions as in (\ref{EQ_I_1}) which do not arise as the first
jet of Poisson structures. Nevertheless, one can use Theorem \ref{Theorem_FOUR} to prove normal form results
around particular classes of Poisson submanifolds.

\subsection{About the proof}

The proof of Theorem \ref{Theorem_FOUR} is inspired mainly by Conn's paper \cite{Conn}. As explained in chapter
\ref{ChBasicPoisson}, Conn uses a technique due to Nash and Moser to construct a sequence of changes of
coordinates in which $\pi$ converges to the linear structure $\pi_{\mathfrak{g}_x}$. At every step the new
coordinates are found by solving some equations which are regarded as belonging to the complex computing the
Poisson cohomology of $\pi_{\mathfrak{g}_x}$. To account for the ``loss of derivatives'' phenomenon during this
procedure he uses smoothing operators. Finally, he proves uniform convergence of these changes of coordinates and of
their higher derivatives on some ball around $x$.

Conn's proof has been formalized in \cite{Miranda,Monnier} into an abstract Nash Moser normal form theorem. It is
likely that part (a) of our Theorem \ref{Theorem_FOUR} could be proven using Theorem 6.8 in \cite{Miranda}. Due to
some technical issues (see Remark \ref{Remark_SCI}), we cannot apply this result to conclude neither part (b) of our
Theorem \ref{Theorem_FOUR} nor Theorem \ref{The_normal_form_theorem}, therefore we follow a direct approach.

We prove Theorem \ref{Theorem_FOUR} in section \ref{section_technical_rigidity}, using the Nash-Moser method. We
simplify Conn's argument by giving coordinate free statements and working with flows of vector fields. For the expert:
we gave up on the polynomial-type inequalities using instead only inequalities that assert tameness of certain maps,
i.e.\ we work in Hamilton's category of tame Fr\'echet spaces.

Our proof deviates the most from Conn's when constructing the homotopy operators. Conn recognizes the Poisson
cohomology of $\pi_{\mathfrak{g}_x}$ as the Chevalley-Eilenberg cohomology of $\mathfrak{g}_x$ with coefficients
in the Fr\'echet space of smooth functions. By passing to the Lie group action on the corresponding Sobolev spaces, he
proves existence of tame (in the sense of Hamilton \cite{Ham}) homotopy operators for this complex. We, on the other
hand, regard this cohomology as Lie algebroid cohomology, and prove a general tame vanishing result for the
cohomology of Lie algebroids integrable by groupoids with compact $s$-fibers. We call this result the Tame Vanishing
Lemma, and we devote the appendix to its proof. This general result can be applied to similar geometric problems.
Such an application is presented also in the appendix, where we briefly review an unpublished paper of Hamilton on
rigidity of foliations.

\section{Remarks, examples and applications}\label{section_examples}

In this section we give a list of examples and applications for our two theorems and we also show some links with other
results from the literature.

\subsection{A global conflict}\label{subsection_a_global_conflict}

Theorem \ref{Theorem_FOUR} does not exclude the case when the Poisson submanifold $S$ is the total space $M$,
and the conclusion is that a compact Poisson manifold for which the Lie algebroid $T^*M$ is integrable by a compact
groupoid $\mathcal{G}$ whose $s$-fibers have vanishing $H^2$ is globally rigid. Nevertheless, this result is useless,
since the only example of such a manifold is $\mathbb{S}^1$, for which the trivial Poisson structure is clearly rigid. In
the case when $\mathcal{G}$ has 1-connected $s$-fibers, this conflict was pointed out in \cite{CrFe0}, and we explain
below the general case.

In symplectic geometry, this non-rigidity phenomenon can be easily remarked: if $(M,\omega)$ is a compact symplectic
manifold, then $t\omega$ (for $t>0$) is a nontrivial deformation of $\omega$, since it gives a different symplectic
volume.

Let $(M,\pi)$ be a Poisson manifold, for which $T^*M$ is integrable by a compact Lie groupoid $\mathcal{G}$ whose
$s$-fibers have trivial second de Rham cohomology. We will prove in Theorem \ref{Theorem_tame_vanishing}, that its
second Poisson cohomology vanishes. In particular the class $[\pi]$ is trivial, so there exists a vector field $X$ such
that $L_X(\pi)=\pi$. This implies that the flow of $X$ gives a Poisson diffeomorphism
\begin{equation}\label{EQ_545}
\varphi_X^t:(M,\pi) \diffto (M,e^{-t}\pi).
\end{equation}

By compactness of $\mathcal{G}$, the leaves of $M$ are compact. Let $M^{\mathrm{reg}}$ be the open in $M$
where $\pi$ has maximal rank. We will prove in the lemma below, that the regular leaves (i.e.\ leaves in
$M^{\mathrm{reg}}$) have finite holonomy. For $(S,\omega_S)$ a regular leaf, denote by $\mathrm{Hol}(S)$ its
holonomy group, and by $\mathrm{Vol}(S)$ the symplectic volume of $S$. We also prove in the lemma that the
function
\[\mathrm{vh}:M^{\mathrm{reg}}\rmap \mathbb{R}, \ \ \mathrm{vh}(x):=\mathrm{Vol}(S)|\mathrm{Hol}(S)|, \ \ \textrm{for }x\in S\]
extends continuously to $M$, with $\mathrm{vh}(x)=0$, for $x\notin M^{\mathrm{reg}}$. Since $\mathrm{vh}$ is
defined in Poisson geometric terms, by (\ref{EQ_545}), we have that $\mathrm{vh}\circ
\varphi_X^t=e^{tk}\mathrm{vh}$, where $2k$ denotes the maximal rank of $\pi$. Thus $\mathrm{vh}$ is
unbounded unless $\pi=0$. If $\pi=0$, then $\mathcal{G}\to M$ is a bundle of tori, so by the cohomological condition,
its fibers are at most one-dimensional. Hence also $M$ is at most 1-dimensional, thus $M$ is a point or
$M=\mathbb{S}^1$.

\begin{lemma}\label{Lemma_volume_holonomy}
The function $\mathrm{vh}$ extends continuously to $M$, with $\mathrm{vh}(x)=0$, for $x\notin
M^{\mathrm{reg}}$.
\end{lemma}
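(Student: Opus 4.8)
The plan is to combine the structure theory of proper Lie groupoids with the local normal forms around symplectic leaves from Chapter~\ref{ChNormalForms}. Since $\mathcal{G}$ is compact it is proper, hence all of its isotropy groups $\mathcal{G}_x$ are compact and all of its orbits --- the symplectic leaves of $\pi$ --- are compact; we may also replace $\mathcal{G}$ by $\mathcal{G}^{\circ}$, which changes neither the orbits nor, up to finitely many diffeomorphic components, the $s$-fibers. First, to see that $\mathrm{vh}$ is well defined one must check that regular leaves have finite holonomy: for a leaf $S\subset M^{\mathrm{reg}}$ through $x$ the isotropy Lie algebra $\mathfrak{g}_x=\nu_{S,x}^{*}$ is abelian, so $\mathcal{G}_x$ is compact abelian; by the normal form around $S$ (Theorem~\ref{Theorem_TWO}, or the linearization of the proper symplectic groupoid $\mathcal{G}$ near $S$) a neighborhood of $S$ is Poisson diffeomorphic to the local model $P_x\times_{\mathcal{G}_x}\mathfrak{g}_x^{*}$, in which --- the coadjoint action of the abelian group $\mathcal{G}_x$ being trivial --- the holonomy of $S$ acts on a transversal through the finite group $\pi_0(\mathcal{G}_x)$, via the homotopy sequence of $t\colon s^{-1}(x)\to S$.

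For continuity on $M^{\mathrm{reg}}$, fix $x_0\in M^{\mathrm{reg}}$ with leaf $S_0$, so $\dim S_0=2k$. By the local Reeb-type normal form for symplectic foliations, a neighborhood of $S_0$ is diffeomorphic, as a symplectic foliation, to a neighborhood of the zero section in $\widetilde{S}_0^{\mathrm{lin}}\times_{H}V$ with $H=H_{\mathrm{lin}}$ finite acting linearly (hence faithfully on germs) on $V=\nu_{S_0,x_0}$, equipped with a smooth $H$-equivariant family of leafwise symplectic forms pulling back to a smooth family $\{\widetilde{\omega}_v\}_{v\in V}$ on the fixed compact connected cover $\widetilde{S}_0^{\mathrm{lin}}$. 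Keeping track of the covering degrees $\widetilde{S}_0^{\mathrm{lin}}\to S_v\to S_0$, and using that faithfulness of the linear action gives $\mathrm{Hol}(S_v)\cong\mathrm{Stab}_H(v)$, the factors combine to
\[
  \mathrm{vh}(x_v)=|\mathrm{Hol}(S_v)|\cdot\mathrm{Vol}(S_v)=\int_{\widetilde{S}_0^{\mathrm{lin}}}\frac{\widetilde{\omega}_v^{\,k}}{k!},
\]
which depends smoothly on $v$ near $0$; in particular $\mathrm{vh}$ is continuous on $M^{\mathrm{reg}}$ (the combination volume-times-holonomy is exactly the one insensitive to the jumps of $|\mathrm{Hol}(S_v)|$).

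For the vanishing along $\partial M^{\mathrm{reg}}$ --- the main point --- let $x_0\in M\setminus M^{\mathrm{reg}}$ and $x_n\in M^{\mathrm{reg}}$ with $x_n\to x_0$; we must show $\mathrm{vh}(x_n)\to 0$. Applying the normal form around the leaf $S_0$ through $x_0$ (Theorem~\ref{Theorem_TWO}/\ref{The_normal_form_theorem}, or the linearization of the proper symplectic groupoid $\mathcal{G}$ near $S_0$), a neighborhood of $S_0$ is Poisson diffeomorphic to the local model $N(P)=P\times_G\mathfrak{g}^{*}$ with $P$ the compact $s$-fiber of $\mathcal{G}$ and $G=\mathcal{G}_{x_0}$ compact, the leaves being the associated bundles $P\times_G O_\xi$ over the coadjoint orbits $O_\xi\subset\mathfrak{g}^{*}$. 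The $x_n$ lie on such leaves with parameters $\xi_n\to 0$, $\xi_n\neq 0$, of maximal rank $2k>\dim S_0$, so $\dim O_{\xi_n}=2k-\dim S_0\geq 2$; as $\xi_n\to 0$ the orbit $O_{\xi_n}$ collapses to $\{0\}$, its Kirillov--Kostant--Souriau Liouville volume tends to $0$, and since the leafwise symplectic form restricts on the fibers to a connection-twist of the KKS form, fiber integration yields $\mathrm{Vol}(P\times_G O_{\xi_n})\to 0$. Combined with a uniform bound on the holonomy orders $|\mathrm{Hol}(P\times_G O_{\xi_n})|$ for small $\xi_n$ --- which comes from the linearization of the proper groupoid $\mathcal{G}$ near $S_0$ (finitely many isotropy types, each with finite component group) --- this gives $\mathrm{vh}(x_n)\to 0=\mathrm{vh}(x_0)$. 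The delicate points are precisely these last two: unwinding the local model's symplectic form to obtain the fiber-integration estimate for the collapsing volume, and, more seriously, establishing the uniform bound on holonomy orders of the leaves accumulating on the non-maximal-rank leaf $S_0$; the continuity on $M^{\mathrm{reg}}$ is comparatively routine once the cover-integral formula for $\mathrm{vh}$ is available.
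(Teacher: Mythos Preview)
Your overall strategy matches the paper's --- use the local model from Chapter~\ref{ChNormalForms} around each leaf --- and your formula $\mathrm{vh}(x_v)=\int_{\widetilde{S}^{\mathrm{lin}}_0}\widetilde{\omega}_v^{\,k}/k!$ for continuity on $M^{\mathrm{reg}}$ is correct and is essentially the same idea the paper uses. But at the boundary you identify the two delicate points (the fiber-integration estimate and the uniform holonomy bound) without actually resolving them. The fiber-integration step is a genuine gap: the leafwise form comes from $\Omega=p^*(\omega_{S_0})-d\widetilde{\theta}$, whose curvature part $\langle\xi,F_\theta\rangle$ mixes horizontal and vertical directions, so $\omega_\xi^{\,k}$ does not split as a bounded base form wedge the KKS fiber volume, and ``the KKS volume of $G\xi$ tends to $0$'' does not by itself force the leaf volume to zero.

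The paper bypasses both difficulties with a single device: fix a maximal torus $T\subset G$ and pull everything back to the \emph{fixed} compact manifold $P/T$. For regular $\xi\in\mathfrak{t}^*$ the leaf $P/G_\xi$ is a quotient of $P/T$ by the finite group $G_\xi/T$, and the pulled-back form $\overline{p}_\xi^*(\omega_\xi)$ lifts to $\Omega_{|P\times\{\xi\}}=p^*(\omega_{S_0})-d\langle\xi,\theta\rangle$, visibly smooth in $\xi$ on all of $\mathfrak{t}^*$. An explicit holonomy computation in the local model (parallel transport along paths lifted to $P$) gives $\mathrm{Hol}(O_\xi)\cong G_\xi/Z_G(T)$, and the covering degrees combine to the clean identity
\[
|Z_G(T)/T|\cdot\mathrm{vh}(O_\xi)=\int_{P/T}\frac{\overline{p}_\xi^*(\omega_\xi)^{\,k}}{k!}.
\]
The right side is continuous in $\xi$ (integral over a fixed manifold of a smooth family of top forms) and vanishes for non-regular $\xi$ because then $\overline{p}_\xi^*(\omega_\xi)$ factors through the lower-dimensional leaf $P/G_\xi$, so its $k$-th power is identically zero. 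Since the map $P/T\times\mathfrak{t}^*\to P\times_G\mathfrak{g}^*$ is proper and surjective, this yields continuity of $\mathrm{vh}$ on the whole local model at once --- in particular, no separate uniform holonomy bound is needed, that bound being absorbed into the single constant $|Z_G(T)/T|$.
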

\begin{proof}
We may assume that $M$ is connected. We will show that every leaf of $M$ has a saturated neighborhood $U$, such
that $U^{\mathrm{reg}}$ is open and dense in $U$. Thus, since $M$ is connected, all leaves that are locally of
maximal dimension, are also globally of maximal dimension, and $M^{\mathrm{reg}}$ is open and dense in $M$.

Let $(S,\omega_S)$ be a symplectic leaf of $M$. We have that $\mathcal{G}_{|S}$ integrates $A_S$, therefore, by
Theorem \ref{The_normal_form_theorem}, the local model holds around $S$. So, for a compact, connected principal
$G$-bundle $P$ (the connected component of an $s$-fiber of $\mathcal{G}_{|S}$) we have that $(M,\pi)$ is Poisson
isomorphic around $S$ to an open around $S$ in the local model $(N(P),\pi_P)$. Recall from subsection \ref{The local
model} that the local model is constructed as the quotient of the symplectic manifold:
\[(\Sigma,\Omega)/G=(N(P),\pi_P),\]
where $\Omega=p^*(\omega_S)-d\widetilde{\theta}\in \Omega^2(P\times\mathfrak{g}^*)$ and $\Sigma$ is the open
in $P\times\mathfrak{g}^*$ where $\Omega$ is nondegenerate. The symplectic leaves of $\pi_P$ are of the form
\[(O_{\xi},\omega_{\xi}),\ \ \ O_{\xi}:=P\times_{G}\{G\xi\},\]
hence they are the base of the principal $G_{\xi}$-bundle
\[p_{\xi}:P\times\{\xi\}\rmap O_{\xi},\]
where $G_{\xi}$ is the stabilizer of $\xi$. We claim that
\begin{equation}\label{EQ_volume}
p_{\xi}^*(\omega_{\xi})=\Omega_{|P\times \{\xi\}}.
\end{equation}
This follows from a general fact about a symplectic realization whose proof is straightforward: Let
$\mathcal{F}^{\perp}$ be the foliation on $\Sigma$ whose tangent bundle is the symplectic orthogonal of the vertical
bundle. A leaf $L$ of $\mathcal{F}^{\perp}$ is mapped to a symplectic leaf $(S,\omega_S)$, and the pullback of
$\omega_S$ to $L$ is $\Omega_{|L}$. To see that we are in this situation, first recall that $T\mathcal{F}^{\perp}$ is
spanned by the Hamiltonian vector fields $H_{p^*(f)}$, for $f\in C^{\infty}(N(P))$. Since the action of $G$ is
Hamiltonian with $G$-equivariant moment map $\mu(q,\xi)=\xi$, it follows that these vector fields are tangent to the
fibers of $\mu$, i.e.\ to the submanifolds $P\times\{\xi\}$, for $\xi\in \mathfrak{g}^*$. Comparing dimensions, we have
that $T\mathcal{F}^{\perp}=(TP\times\mathfrak{g}^*)_{|\Sigma}$, thus (\ref{EQ_volume}) follows.

We will show that $\mathrm{vh}$ extends to a continuous map on $P\times_{G}\mathfrak{g}^*$. Let $T$ be a
maximal torus in $G$ and let $\mathfrak{t}$ be its Lie algebra. By compactness of $G$, we can consider an invariant
metric on $\mathfrak{g}$. This metric allows us to regard $\mathfrak{t}^*$ as a subspace in $\mathfrak{g}^*$ (i.e.\
the orthogonal to $\mathfrak{t}^{\circ}$), and also gives an isomorphism between the adjoint and the coadjoint
representation. For the adjoint representation it is well know (see e.g.\ \cite{DK}) that every orbit hits $\mathfrak{t}$,
hence also every orbit of the coadjoint action hits $\mathfrak{t}^*$. An element $\xi\in\mathfrak{t}^*$ is called
regular if $\mathfrak{g}_{\xi}=\mathfrak{t}$, where $\mathfrak{g}_{\xi}$ is the Lie algebra of $G_{\xi}$, and we
denote by $\mathfrak{t}^{*\mathrm{reg}}$ the set of regular elements. Then $\mathfrak{t}^{*\mathrm{reg}}$ is
open and dense in $\mathfrak{t}^*$ and it coincides with the set of elements $\xi$ for which $G_{\xi}/T$ is finite (see
e.g.\ \cite{DK}). Thus, for $\xi\in \mathfrak{t}^*$, a leaf $O_{\xi}$ has maximal dimension if and only if $\xi\in
\mathfrak{t}^{*\mathrm{reg}}$, hence the regular part of $\pi_P$ equals
\[N(P)^{\mathrm{reg}}=(P \times_G G\cdot\mathfrak{t}^{*\mathrm{reg}})\cap N(P).\]
This implies also the claims made about $M^{\mathrm{reg}}$ at the beginning of the proof.

Now, we fix $\xi\in \mathfrak{t}^{*\mathrm{reg}}$. By Theorem 3.7.1 \cite{DK} we have that
$(G^{\circ})_{\xi}=T$, therefore also $G_{\xi}^{\circ}=T$. Since $P$ is connected, the last terms in the long exact
sequence in homotopy associated to $p_{\xi}$ are
\begin{equation}\label{EQ_last_term_short}
\ldots\rmap \pi_1(O_{\xi})\stackrel{\theta}{\rmap} \pi_0(G_{\xi})\rmap 1.
\end{equation}
Thus we obtain a surjective group homomorphism $\theta:\pi_1(O_{\xi})\to G_{\xi}/T$. Explicitly, let $[q,\xi]\in O_{\xi}$
and $\gamma(t)$ be a closed loop at this point. Consider $\widetilde{\gamma}(t)$ a lift of $\gamma$ to $P$, with
$\widetilde{\gamma}(0)=q$. Since $p_{\xi}(\gamma(1),\xi)=[q,\xi]$, it follows that $\widetilde{\gamma}(1)=qg$, for
some $g\in G_{\xi}$. The map in (\ref{EQ_last_term_short}) is given by $\theta(\gamma)=[g]\in G_{\xi}/T$.

Next, we compute the holonomy group of $O_{\xi}$. Notice first that
\[T_{\xi}G\cdot{\xi}=\mathfrak{g}_{\xi}^{\circ}=\mathfrak{t}^{\circ}\subset \mathfrak{g}^*\cong T_{\xi}\mathfrak{g}^*,\]
and, since $\mathfrak{t}^{*}=\mathfrak{t}^{\circ,\perp}$, it follows that $\xi+\mathfrak{t}^*$ is transverse at
$\xi$ to the coadjoint orbit. Hence also the submanifold
\[\mathcal{T}:=\{q\}\times (\xi+\mathfrak{t}^*)\subset P\times_G\mathfrak{g}^*\]
is transverse to $O_{\xi}$ at $[q,\xi]$. Let $\gamma$ be a loop in $O_{\xi}$ based at $[q,\xi]$, and let
$\widetilde{\gamma}$ be a lift to $P$. Observe that, for $\eta\in\mathfrak{t}^*$, the path
\[t\mapsto [\widetilde{\gamma}(t),\xi+\eta]\in P\times_G\mathfrak{g}^*,\]
stays in the leaf $O_{\xi+\eta}$, and that the map $[q,\xi+\eta]\mapsto [\widetilde{\gamma}(1),\xi+\eta]$ is the
holonomy of $\gamma$ on $\mathcal{T}$. Writing $\widetilde{\gamma}(1)=qg$, for $g\in G_{\xi}$, it follows that the
holonomy of $\gamma$ is corresponds to the action of $g$ on $\mathfrak{t}^*$. This and the surjectivity of $\theta$
imply that
\[\mathrm{Hol}(O_{\xi})\cong G_{\xi}/Z_{G}(T),\]
where $Z_G(T)$ denotes the set of elements in $G$ which commute with all elements in $T$. In particular, the
holonomy groups are finite.

Since every coadjoint orbit hits $\mathfrak{t}^*$, it follows that the quotient map $P\times \mathfrak{g}^*\to
P\times_{G}\mathfrak{g}^*$ induces a surjective map
\[\mathrm{pr}: P/T\times\mathfrak{t}^*\rmap  P\times_{G}\mathfrak{g}^*.\]
Clearly, $\mathrm{pr}$ is a proper map, thus it suffices to show that $\mathrm{vh}\circ \mathrm{pr}$ extends
continuously. Note that, for $\xi\in \mathfrak{t}^{*\mathrm{reg}}$, the map $\mathrm{pr}$ restricts to a
$|G_{\xi}/T|$-covering projection of the leaf
\[\overline{p}_{\xi}:P/T\times \{\xi\}\rmap P/G_{\xi}\cong O_{\xi}.\]
Thus we have that
\begin{align*}
\mathrm{Vol}(P/T\times\{\xi\},\overline{p}_{\xi}^*(\omega_{\xi}))&=|G_{\xi}/T|\mathrm{Vol}(O_{\xi},\omega_{\xi})=\frac{|G_{\xi}/T|}{|G_{\xi}/Z_G(T)|}\mathrm{vh}(O_{\xi})=\\
 &=|Z_G(T)/T|\mathrm{vh}(O_{\xi}).
\end{align*}
Hence it suffices to show that the map
\begin{equation}\label{EQ_volume_1}
\mathfrak{t}^*\ni\xi\mapsto \mathrm{Vol}(P/T\times\{\xi\},\overline{p}_{\xi}^*(\omega_{\xi}))
\end{equation}
is continuous. By (\ref{EQ_volume}), we have that the pullback of $\overline{p}_{\xi}^*(\omega_{\xi})$ to
$P\times\{\xi\}$ is given by $\Omega_{|P\times\{\xi\}}=p^*(\omega_S)-d\langle \xi,\theta\rangle$, in particular it
depends smoothly on $\xi$. Hence also $\overline{p}_{\xi}^*(\omega_{\xi})$ depends smoothly on $\xi$, and so the
map (\ref{EQ_volume_1}) is continuous. To conclude the proof, we have to check that this map vanishes for $\xi\notin
\mathfrak{t}^{*\mathrm{reg}}$. For such an $\xi$, since $\mathrm{dim}(G_{\xi}/T)>0$, we have that
\[2l=\mathrm{dim}(O_{\xi})=\mathrm{dim}(P/G_{\xi})<\mathrm{dim}(P/T)=2k.\]
This finishes the proof, since
\[\Lambda^k\overline{p}_{\xi}^*(\omega_{\xi})=\overline{p}_{\xi}^*(\Lambda^k\omega_{\xi})=0.\qedhere\]
\end{proof}

\subsection{$C^p$-$C^1$-rigidity and isotopies}\label{subsection_CpC1isotopies}

In the definition of $C^p$-$C^1$-rigidity we may assume that the maps $\psi_{\widetilde{\pi}}$ are isotopic to the
inclusion $\textrm{Id}_{\overline{O}}$ of $\overline{O}$ in $M$ by a smooth path of embeddings. This follows from
the continuity of $\psi$ and the fact that $\textrm{Id}_{\overline{O}}$ has a path connected $C^1$-neighborhood in
$C^{\infty}(\overline{O},M)$ consisting of embeddings. To construct such an open, consider the exponential map
$\exp:TM\to M$ of a complete metric on $M$. Then $\exp$ induces a ``chart'' on the Fr\'echet manifold
$C^{\infty}(\overline{O},M)$:
\[\chi:\Gamma(TM_{|\overline{O}})\rmap C^{\infty}(\overline{O},M), \ \chi(X)(p):=\exp(X_p),\]
which is continuous with respect to all the $C^k$-topologies (see \cite{Ham}). One can take
$\mathcal{U}:=\chi(\mathcal{O})$, where $\mathcal{O}$ is a convex, $C^1$-open neighborhood of zero in
$\Gamma(TM_{|\overline{O}})$. If $\mathcal{O}$ is $C^1$-small enough, $\mathcal{U}$ consists of embeddings.

\subsection{The case of fixed points}\label{subsection_fixed_points}\index{fixed point}
Consider a Poisson manifold $(M,\pi)$ and let $x\in M$ be a fixed point of $\pi$. To apply Theorem
\ref{The_normal_form_theorem} in this setting, the isotropy Lie algebra at $x$ needs to be integrable by a compact Lie
group with vanishing second de Rham cohomology. Such Lie algebras have the following structure:

\begin{lemma}\label{Lemma_cohomology_lie_groups}
A Lie algebra $\mathfrak{g}$ is integrable by a compact Lie group with vanishing second de Rham cohomology if and
only if it is of the form
\[\mathfrak{g}=\mathfrak{k} \textrm{ or }\mathfrak{g}=\mathfrak{k}\oplus\mathbb{R},\]
where $\mathfrak{k}$ is a semisimple Lie algebra of compact type.
\end{lemma}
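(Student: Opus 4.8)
The plan is to reduce the statement to the computation of the second Chevalley--Eilenberg cohomology $H^2(\mathfrak{g};\mathbb{R})$. Recall first that a Lie algebra is the Lie algebra of some compact Lie group if and only if it is of compact type, and that in this case it decomposes canonically as $\mathfrak{g}=\mathfrak{k}\oplus\mathfrak{z}$ with $\mathfrak{k}=[\mathfrak{g},\mathfrak{g}]$ semisimple of compact type and $\mathfrak{z}=Z(\mathfrak{g})$ abelian (see \cite{DK}); this is the same decomposition already used for isotropy Lie algebras in the excerpt. Then I would compute $H^2(\mathfrak{g};\mathbb{R})$: since $\mathfrak{z}$ is abelian one has $H^\bullet(\mathfrak{z};\mathbb{R})=\Lambda^\bullet\mathfrak{z}^*$, so by the K\"unneth formula for Lie algebra cohomology together with Whitehead's Lemma (which gives $H^1(\mathfrak{k};\mathbb{R})=H^2(\mathfrak{k};\mathbb{R})=0$ for semisimple $\mathfrak{k}$),
\[
H^2(\mathfrak{g};\mathbb{R})\;\cong\;H^2(\mathfrak{k};\mathbb{R})\,\oplus\,\big(H^1(\mathfrak{k};\mathbb{R})\otimes\mathfrak{z}^*\big)\,\oplus\,\Lambda^2\mathfrak{z}^*\;=\;\Lambda^2\mathfrak{z}^*,
\]
which vanishes precisely when $\dim\mathfrak{z}\leq 1$.

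Next I would relate $H^2(\mathfrak{g};\mathbb{R})$ to the de Rham cohomology of a compact integration $G$. If $G$ is compact then it has finitely many connected components, each diffeomorphic to the identity component $G^\circ$ by left translation, so $H^\bullet(G;\mathbb{R})\cong\bigoplus H^\bullet(G^\circ;\mathbb{R})$ (one summand per component); hence $H^2(G;\mathbb{R})=0$ is equivalent to $H^2(G^\circ;\mathbb{R})=0$. For the compact connected group $G^\circ$, averaging differential forms over $G^\circ$ identifies its de Rham cohomology with the Chevalley--Eilenberg cohomology $H^\bullet(\mathfrak{g};\mathbb{R})$. Thus, for any compact Lie group $G$ with Lie algebra $\mathfrak{g}$, the vanishing of $H^2(G;\mathbb{R})$ is equivalent to $H^2(\mathfrak{g};\mathbb{R})=0$, hence to $\dim Z(\mathfrak{g})\leq 1$.

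Putting these together yields both implications. For the forward direction, if $\mathfrak{g}$ is integrable by a compact $G$ with $H^2(G;\mathbb{R})=0$, then $\mathfrak{g}$ is of compact type, so $\mathfrak{g}=\mathfrak{k}\oplus\mathfrak{z}$ as above with $\dim\mathfrak{z}\leq 1$, i.e.\ $\mathfrak{g}=\mathfrak{k}$ or $\mathfrak{g}=\mathfrak{k}\oplus\mathbb{R}$. For the converse, given $\mathfrak{g}=\mathfrak{k}$ or $\mathfrak{g}=\mathfrak{k}\oplus\mathbb{R}$ with $\mathfrak{k}$ semisimple of compact type, let $K$ be the simply connected Lie group integrating $\mathfrak{k}$, which is compact by Weyl's theorem, and take $G:=K$ in the first case and $G:=K\times\mathbb{S}^1$ in the second; then $G$ is compact, $\mathrm{Lie}(G)=\mathfrak{g}$, and $H^2(G;\mathbb{R})\cong H^2(\mathfrak{g};\mathbb{R})=\Lambda^2\mathfrak{z}^*=0$.

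There is no real obstacle here: the proof is routine once this cohomological dictionary is in place. The only point that deserves a sentence of care is the identification $H^\bullet(G;\mathbb{R})\cong H^\bullet(\mathfrak{g};\mathbb{R})$ for a possibly disconnected compact $G$, but this follows immediately from the connected case via the coset decomposition above.
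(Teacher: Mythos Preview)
Your proof is correct. It differs from the paper's only in how you arrive at the key identity $H^2(G)\cong\Lambda^2\mathfrak{z}^*$ for compact connected $G$. You use the full averaging isomorphism $H^\bullet(G^\circ)\cong H^\bullet(\mathfrak{g})$ and then compute $H^2(\mathfrak{g})$ via K\"unneth and Whitehead's Lemma. The paper instead invokes only the degree-one averaging isomorphism $H^1(G)\cong H^1(\mathfrak{g})$ and combines it with Hopf's theorem (a compact Lie group has the rational homotopy type of a product of odd spheres), which gives $H^2(G)=\Lambda^2 H^1(G)$ directly. Your route stays entirely within Lie algebra cohomology and avoids the topological input of Hopf's theorem; the paper's route trades Whitehead and K\"unneth for a single structural fact about compact groups. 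Both lead to the same formula, and your construction of the integrating groups $K$ and $K\times\mathbb{S}^1$ in the converse direction matches the paper exactly. You are also slightly more careful than the paper in explicitly reducing the possibly disconnected case to $G^\circ$.
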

\begin{proof}
It is well known (e.g.\ \cite{DK}) that a compact Lie algebra $\mathfrak{g}$ decomposes as a direct product
$\mathfrak{g}=\mathfrak{k}\oplus\mathfrak{z}$, where $\mathfrak{k}=[\mathfrak{g},\mathfrak{g}]$ is semisimple
of compact type and $\mathfrak{z}$ is the center of $\mathfrak{g}$. Let $G$ be a compact, connected Lie group
integrating $\mathfrak{g}$. Its cohomology can be computed using invariant forms, hence $H^1(G)\cong
H^1(\mathfrak{g})$. By Hopf's theorem, 
$G$ is homotopy equivalent to a product of odd-dimensional spheres, therefore $H^2(G)=\Lambda^2 H^1(G)$. 
This shows that:
\begin{equation}\label{EQ_5}
H^2(G)= \Lambda^2 H^1(G)\cong \Lambda^2 H^1(\mathfrak{g})=\Lambda^2(\mathfrak{g}/[\mathfrak{g},\mathfrak{g}])^*=\Lambda^2 \mathfrak{z}^*.
\end{equation}
So $H^2(G)=0$ implies that $\mathrm{dim}(\mathfrak{z})\leq 1$.

Conversely, let $K$ be the compact, 1-connected Lie group integrating $\mathfrak{k}$. Take $G=K$ in the first case
and $G=K\times S^1$ in the second. By (\ref{EQ_5}), in both cases we obtain that $H^2(G)=0$.
\end{proof}

So for fixed points Theorem \ref{The_normal_form_theorem} gives:
\begin{corollary}
Let $(M,\pi)$ be a Poisson manifold with a fixed point $x$ for which the isotropy Lie algebra $\mathfrak{g}_x$ is
compact and its center is at most one-dimensional. Then $\pi$ is rigid around $x$, and an open around $x$ is Poisson
diffeomorphic to an open around $0$ in the linear Poisson manifold $(\mathfrak{g}_x^*,\pi_{\mathfrak{g}_x})$.
\end{corollary}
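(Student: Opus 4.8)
The plan is to deduce the corollary directly from Theorem \ref{The_normal_form_theorem} together with Lemma \ref{Lemma_cohomology_lie_groups}, after unwinding the relevant data. First I would observe that a fixed point $x$ of $\pi$ is the same thing as a compact symplectic leaf $S=\{x\}$ carrying the trivial symplectic form $\omega_S=0$, and that the transitive Lie algebroid of this leaf, $A_S=T^*M_{|S}$, is precisely the cotangent space $T^*_xM$ equipped with the bracket $[d_xf,d_xg]=d_x\{f,g\}$; that is, $A_S$ is the isotropy Lie algebra $\mathfrak{g}_x$, now viewed as a Lie algebroid over the one-point manifold $\{x\}$. Its anchor vanishes, so it is trivially transitive.

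Next I would check the hypotheses of Theorem \ref{The_normal_form_theorem}. Since $\mathfrak{g}_x$ is compact with center of dimension at most one, Lemma \ref{Lemma_cohomology_lie_groups} produces a compact connected Lie group $G$ with Lie algebra $\mathfrak{g}_x$ and $H^2(G)=0$. Regarding $G$ as a Lie groupoid over $\{x\}$, its $s$-fiber is $G$ itself, so $A_S=\mathfrak{g}_x$ is integrable by a compact Lie groupoid whose $s$-fibers have vanishing de Rham cohomology in degree two. Theorem \ref{The_normal_form_theorem} therefore applies and yields: (a) $\pi$ is Poisson diffeomorphic around $x$ to its local model around $S=\{x\}$, and (b) $\pi$ is $C^p$-$C^1$-rigid around $x$.

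It then remains to identify this local model with the linear Poisson structure. Here I would invoke the construction of the local model from subsection \ref{Subsection_the_local_model_the_general_case}: for $S=\{x\}$ the normal bundle is $\nu_S=T_xM=\mathfrak{g}_x^*$, the linear Poisson structure $\pi_{\mathrm{lin}}(A_S)$ on $A_S^*=\mathfrak{g}_x^*$ is by definition $\pi_{\mathfrak{g}_x}$, and since $\omega_S=0$ the gauge transformation by $p^*(\omega_S)$ is the identity; hence the Poisson-transversal locus $N(A_S)$ is all of $\mathfrak{g}_x^*$ and $\pi_{A_S}=\pi_{\mathfrak{g}_x}$. (Equivalently one may use the reduction (\ref{EQ_local_model_fixed_point_reduction}) together with Proposition \ref{Proposition_reconcile_3}.) Combining this with (a) gives a Poisson diffeomorphism between a neighborhood of $x$ and a neighborhood of $0$ in $(\mathfrak{g}_x^*,\pi_{\mathfrak{g}_x})$, while (b) gives the rigidity assertion.

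Since the corollary is a pure specialization of Theorem \ref{The_normal_form_theorem}, I do not expect any genuine obstacle: the only point requiring a moment's care is the identification of the local model at a point with the linear structure, which is however immediate from the definitions once one notes $\omega_S=0$ and $\nu_S=\mathfrak{g}_x^*$.
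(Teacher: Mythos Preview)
Your proposal is correct and follows exactly the approach the paper intends: the corollary is stated immediately after Lemma \ref{Lemma_cohomology_lie_groups} as a direct specialization of Theorem \ref{The_normal_form_theorem} to the leaf $S=\{x\}$, and the paper does not give a separate proof beyond the sentence ``So for fixed points Theorem \ref{The_normal_form_theorem} gives:''. You have simply made explicit the identifications ($A_S=\mathfrak{g}_x$, the local model equals $(\mathfrak{g}_x^*,\pi_{\mathfrak{g}_x})$ since $\omega_S=0$) that the paper leaves to the reader.
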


The linearization result in the semisimple case is Conn's theorem \cite{Conn}, and in the case when the isotropy has a
one-dimensional center, it follows from the smooth Levi decomposition theorem of Monnier and Zung \cite{Monnier}.

This fits into Weinstein's notion of a nondegenerate Lie algebra \cite{Wein}. Recall that a Lie algebra $\mathfrak{g}$
is called \textbf{nondegenerate}\index{nondegenerate Lie algebra} if every Poisson structure that has isotropy Lie
algebra $\mathfrak{g}$ at a fixed point $x$ is Poisson-diffeomorphic around $x$ to the linear Poisson structure
$(\mathfrak{g}^*,\pi_{\mathfrak{g}})$ around $0$.

A Lie algebra $\mathfrak{g}$ for which $\pi_{\mathfrak{g}}$ is rigid around $0$ is necessarily nondegenerate. To see
this, consider a Poisson bivector $\pi$ whose linearization at $0$ is $\pi_{\mathfrak{g}}$. We have a smooth path of
Poisson bivectors $\pi^t$, with $\pi^1=\pi$ and $\pi^0=\pi_\mathfrak{g}$, given by $\pi^t=t\mu_t^*(\pi)$, where
$\mu_t$ denotes multiplication by $t>0$. If $\pi_\mathfrak{g}$ is rigid around $0$, then, for some $r>0$ and some
$t>0$, there is a Poisson isomorphism between
\[\psi: (B_{r},\pi^t)\diffto (\psi(B_{r}),\pi_{\mathfrak{g}}).\]
Now $\xi:=\psi(0)$ is a fixed point of $\pi_{\mathfrak{g}}$, which is the same as an element in
$(\mathfrak{g}/[\mathfrak{g},\mathfrak{g}])^{*}$. It is easy to see that translation by $\xi$ is a Poisson isomorphism
of $\pi_{\mathfrak{g}}$, therefore by replacing $\psi$ with $\psi-\xi$ we may assume that $\psi(0)=0$. By linearity of
$\pi_{\mathfrak{g}}$, we have that $\mu_t^*(\pi_{\mathfrak{g}})=\frac{1}{t}\pi_{\mathfrak{g}}$, and this shows
that
\[\pi=\frac{1}{t}\mu_{1/t}^*(\pi^t)=\frac{1}{t}\mu_{1/t}^*(\psi^*(\pi_{\mathfrak{g}}))=\mu_{1/t}^*\circ\psi^*\circ\mu_t^*(\pi_{\mathfrak{g}}),\]
which implies that $\pi$ is linearizable by the map
\[\mu_t\circ\psi\circ\mu_{1/t}:(B_{tr},\pi)\rmap (t\psi(B_r),\pi_{\mathfrak{g}}),\]
which maps $0$ to $0$. Thus $\mathfrak{g}$ is nondegenerate.

\subsection{The Lie-Poisson sphere}\label{subsection_Poisson_sphere}\index{Lie-Poisson sphere}

Let $\mathfrak{g}$ be a semisimple Lie algebra of compact type and let $G$ be the compact, 1-connected Lie group
integrating it. Recall from subsection \ref{Subsection_examples_of_symplectic_groupoids} that the linear Poisson
structure $(\mathfrak{g}^*,\pi_{\mathfrak{g}})$ is integrable by the symplectic groupoid $(G\ltimes
\mathfrak{g}^*,\omega_{\mathfrak{g}})\rightrightarrows \mathfrak{g}^*$, whose $s$-fibers are diffeomorphic to
$G$. Since $H^2(G)=0$, we can apply Theorem \ref{Theorem_FOUR} to any compact Poisson submanifold of
$\mathfrak{g}^*$. Taking the sphere with respect to some invariant metric, we obtain a strengthening of Corollary
\ref{C_Dual_ss_Lie}.
\begin{proposition}\label{proposition_rigidity_of_spheres}
Let $\mathfrak{g}$ be a semisimple Lie algebra of compact type and denote by
$\mathbb{S}(\mathfrak{g}^*)\subset\mathfrak{g}^*$ the unit sphere centered at the origin with respect to some
invariant inner product. Then $\pi_{\mathfrak{g}}$ is $C^p$-$C^1$-rigid around $\mathbb{S}(\mathfrak{g}^*)$
and, up to isomorphism, it is determined around $\mathbb{S}(\mathfrak{g}^*)$ by its first order jet.
\end{proposition}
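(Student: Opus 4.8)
The plan is to derive the proposition directly from Theorem \ref{Theorem_FOUR}, since essentially all the ingredients have already been assembled. First I would check that $(\mathfrak{g}^*,\pi_{\mathfrak{g}})$ satisfies the hypotheses of that theorem. Let $G$ be the compact, $1$-connected Lie group integrating $\mathfrak{g}$. By subsection \ref{Subsection_examples_of_symplectic_groupoids}, the action groupoid $(G\ltimes\mathfrak{g}^*,\omega_{\mathfrak{g}})\rightrightarrows\mathfrak{g}^*$ is a symplectic groupoid inducing $\pi_{\mathfrak{g}}$, hence by Theorem \ref{Theorem_Integration_Poisson} it integrates the cotangent Lie algebroid $T^*\mathfrak{g}^*$. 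This groupoid is Hausdorff, and all its $s$-fibers are diffeomorphic to $G$, which is compact. Since $\mathfrak{g}$ is semisimple, $H^1(G)\cong H^1(\mathfrak{g})=0$, and by Hopf's theorem $H^2(G)=\Lambda^2 H^1(G)=0$, exactly as recorded in Lemma \ref{Lemma_cohomology_lie_groups}; thus the $s$-fibers have vanishing second de Rham cohomology. Therefore $(\mathfrak{g}^*,\pi_{\mathfrak{g}})$ meets all the conditions of Theorem \ref{Theorem_FOUR}.

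Next I would verify that $\mathbb{S}(\mathfrak{g}^*)$ is a compact Poisson submanifold. Compactness is immediate. Since the chosen inner product is $\mathrm{Ad}$-invariant, the coadjoint action of $G$ preserves $\mathbb{S}(\mathfrak{g}^*)$, so the sphere is a union of coadjoint orbits; as the coadjoint orbits are precisely the symplectic leaves of $\pi_{\mathfrak{g}}$ (subsection \ref{SSExamplesPoisson}), $\mathbb{S}(\mathfrak{g}^*)$ is saturated by symplectic leaves, and hence a Poisson submanifold in the sense of subsection \ref{Subsection_the_first_order_data}.

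Finally, applying Theorem \ref{Theorem_FOUR} with $M=\mathfrak{g}^*$ and $S=\mathbb{S}(\mathfrak{g}^*)$: part (a) yields that $\pi_{\mathfrak{g}}$ is $C^p$-$C^1$-rigid around $\mathbb{S}(\mathfrak{g}^*)$ for $p=7(\lfloor\dim(\mathfrak{g})/2\rfloor+5)$, and part (b) yields that, up to isomorphism, $\pi_{\mathfrak{g}}$ is determined around $\mathbb{S}(\mathfrak{g}^*)$ by its first order jet; this in particular strengthens the formal statement of Corollary \ref{C_Dual_ss_Lie} to a genuine smooth equivalence and adds the rigidity assertion. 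Since each step is routine given the earlier results, there is no real obstacle here; the only points I would take care to state cleanly are that Theorem \ref{Theorem_FOUR} applies on all of $\mathfrak{g}^*$ (so the sphere lies in its domain of validity), and that the smooth — as opposed to merely formal — determination by the first jet is supplied by part (b) of Theorem \ref{Theorem_FOUR} directly, with no additional argument needed.
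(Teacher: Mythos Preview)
Your proof is correct and follows essentially the same approach as the paper: verify that the action groupoid $G\ltimes\mathfrak{g}^*$ integrates $T^*\mathfrak{g}^*$ with compact $s$-fibers diffeomorphic to $G$ having $H^2(G)=0$, observe that $\mathbb{S}(\mathfrak{g}^*)$ is a compact Poisson submanifold (being a union of coadjoint orbits), and apply Theorem~\ref{Theorem_FOUR}. You are slightly more explicit than the paper in justifying $H^2(G)=0$ and the Poisson-submanifold property, but the argument is the same.
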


Using this rigidity result, we will give in chapter \ref{ChDef} a description of an open around
$\pi_{\mathfrak{g}|\mathbb{S}(\mathfrak{g}^*)}$ in the moduli space of all Poisson structures on
$\mathbb{S}(\mathfrak{g}^*)$.

\subsection{Relation with stability of symplectic leaves} \label{subsection_relation_stability}

Recall from \cite{CrFe-stab} that a symplectic leaf $(S,\omega_S)$ of a Poisson manifold $(M,\pi)$ is said to be
$C^p$-\textbf{strongly stable} if for every open $U$ around $S$ there exists an open neighborhood
$\mathcal{V}\subset \mathfrak{X}^2(U)$ of $\pi_{|U}$ with respect to the compact-open $C^p$-topology, such that
every Poisson structure in $\mathcal{V}$ has a leaf symplectomorphic to $(S,\omega_S)$. Recall also
\begin{theorem}[Theorem 2.2 \cite{CrFe-stab}]
If $S$ is compact and the Lie algebroid $A_S:=T^*M_{|S}$ satisfies $H^2(A_S)=0$, then $S$ is a strongly stable leaf.
\end{theorem}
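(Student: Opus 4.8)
The statement is a theorem of Crainic and Fernandes, and the proof I would follow rests on the Vorobjev-triple framework of Section~\ref{Poisson structures around a symplectic leaf: the algebraic framework}. First I would pass to a normal form neighbourhood: choose a tubular neighbourhood of $S$ and assume $M=E=\nu_S$ with $S$ the zero section. Since $\omega_S=\mathbb{F}_\pi|_S$ is nondegenerate and horizontal nondegeneracy is an open condition, every Poisson structure $\pi'$ that is $C^1$-close to $\pi$ on $E$ is horizontally nondegenerate on a neighbourhood of $S$, hence corresponds to a Dirac element $\gamma'=(\pi'^{\mathrm v},\Gamma_{\pi'},\mathbb{F}_{\pi'})\in\widetilde{\Omega}_E^2$ that is $C^p$-close to $\gamma$. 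By Lemma~\ref{lma-last}, the zero section is a symplectic leaf of a horizontally nondegenerate structure exactly when the associated Dirac element restricted to $S$ lies in bidegree $(2,0)$; so the obstruction for $S$ to be a symplectic leaf of $\pi'$ is the pair of components $\mathrm{pr}_{(0,2)}(\gamma'|_S)$ and $\mathrm{pr}_{(1,1)}(\gamma'|_S)$, which in general do not vanish.

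Next I would allow the candidate leaf to move. For a section $s\in\Gamma(E)$ let $\tau_s\colon E\to E$, $e\mapsto e+s(p(e))$, be fibrewise translation; its image of the zero section is an arbitrary nearby submanifold $S'$, and for $s$ $C^1$-small $\tau_s^*(\pi')$ is again horizontally nondegenerate near $S$. Working throughout in the $C^p$-topologies, so that all section spaces in sight are Banach, I would consider the smooth map
\[
\Psi(\pi',s)\ :=\ \mathrm{pr}_{(0,2)\oplus(1,1)}\big((\tau_s^*\gamma')|_S\big),
\]
defined near $(\pi,0)$; by Lemma~\ref{lma-last} we have $\Psi(\pi,0)=0$, and $\Psi(\pi',s)=0$ precisely when $S'=\tau_s(\text{zero section})$ is a symplectic leaf of $\pi'$, with symplectic form $\omega_{\pi',s}:=\mathrm{pr}_{(2,0)}((\tau_s^*\gamma')|_S)$. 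The point of restricting the perturbation to the $C^p$-topology is that here, unlike in Conn-type normal form theorems (cf.\ the implicit function theorem discussion around Theorem~\ref{Theorem_TWO}), an ordinary Banach implicit function theorem applies and no Nash--Moser scheme is needed.

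The heart of the argument is the partial linearization $D_s\Psi(\pi,0)$. Translating $s$ by $tV$ is the flow of the fibrewise-constant vertical field $V^{\mathrm v}$, so $\frac{d}{dt}\big|_0\tau_{tV}^*(\gamma)=[\gamma,V^{\mathrm v}]_\ltimes$; since $V^{\mathrm v}$ is homogeneous of degree $0$, the Newton formula (Lemma~\ref{Newton-formula}) gives $[\gamma,V^{\mathrm v}]_\ltimes|_S=[d^1_S\gamma,V]_\ltimes\in\mathrm{gr}_0(\widetilde{\Omega}_E^2)$. Under the identification of $(\mathrm{gr}_0(\Omega_E^\bullet),[d^1_S\gamma,\cdot]_\ltimes)$ with the Lie algebroid complex $(\Omega^\bullet(A_S),d_{A_S})$ from Lemma~\ref{cohom-restricted-algebroid} and Proposition~\ref{Proposition_isomorphic_complexes} (case $k=0$), this says that $D_s\Psi(\pi,0)$ is, up to the projection killing the $(2,0)$-component, the differential $d_{A_S}$ acting on cochains in $\Omega^1(A_S)$. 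Differentiating the constraint $[\gamma',\gamma']_\ltimes=0$ shows the image of $\Psi$ lands in $d_{A_S}$-closed elements, so the relevant obstruction space is a quotient of the closed $2$-cochains; one then checks that (i) reparametrisations of $S$, isotopic to the identity, supply the remaining directions of $\Omega^1(A_S)$ not coming from translations, and (ii) two nearby cohomologous symplectic forms on the compact $S$ are symplectomorphic by Moser, which absorbs both the $(2,0)$-direction and the freedom of adjusting the cohomology class of $\omega_{\pi',s}$ in $H^2(S;\mathbb{R})$. The net effect is that the cokernel of this corrected linearization is exactly $H^2(A_S)$. Granting $H^2(A_S)=0$, the Banach implicit function theorem produces a $C^p$-neighbourhood $\mathcal{V}$ of $\pi|_U$ and, for each Poisson $\pi'\in\mathcal{V}$, a section $s(\pi')$ with $\Psi(\pi',s(\pi'))=0$; then $S':=\tau_{s(\pi')}(\text{zero section})$ is a symplectic leaf of $\pi'$, diffeomorphic to $S$, with $\omega_{S'}$ $C^0$-close to and cohomologous to $\omega_S$, so Moser's theorem gives $(S',\omega_{S'})\cong(S,\omega_S)$.

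The step I expect to be the main obstacle is exactly the cokernel identification in the previous paragraph: the bookkeeping between the bidegree components, the $d_{A_S}$-closedness coming from $[\gamma',\gamma']_\ltimes=0$, and the verification that the Moser correction of the symplectic class together with reparametrisations of $S$ account precisely for the difference between the naive cokernel of $d_{A_S}\colon\Omega^1(A_S)\to\Omega^2(A_S)$ and $H^2(A_S)$. A cleaner alternative, in the spirit of Step~1 of the proof of Theorem~\ref{Theorem_TWO}, would be to run a Moser deformation directly at the level of Dirac elements — connecting $\gamma'$ to one lying in bidegree $(2,0)$ along $S$ and solving the resulting homotopy equation via Lemma~\ref{HomotopyEquation} — whose single cohomological obstruction again lies in $H^2(A_S)$; this reorganises the same input but still reduces, at its core, to the same vanishing.
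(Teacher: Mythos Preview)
This theorem is quoted in the thesis from \cite{CrFe-stab} without proof; the thesis uses it only as a point of comparison with Theorem~\ref{The_normal_form_theorem} and does not reprove it. So there is no ``paper's own proof'' to compare against here.

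That said, your outline is the correct strategy and is essentially the one carried out in \cite{CrFe-stab}: pass to a tubular neighbourhood, encode nearby Poisson structures by their Vorobjev triples, set up a map whose zeros detect when a translated zero-section is a symplectic leaf, identify the linearization with $d_{A_S}$ via the isomorphism of Lemma~\ref{cohom-restricted-algebroid} (case $l=0$), and invoke a Banach implicit function theorem followed by Moser. Your own diagnosis of the delicate step is accurate: the bookkeeping showing that the cokernel is exactly $H^2(A_S)$ --- after accounting for the $(2,0)$-component and the closedness constraint coming from $[\gamma',\gamma']_\ltimes=0$ --- is where the work lies, and in \cite{CrFe-stab} this is handled by a careful functional-analytic setup (splitting off closed complements, etc.) rather than by an informal dimension count. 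One minor correction: the conclusion of strong stability requires a leaf \emph{symplectomorphic} to $(S,\omega_S)$, and your Moser step needs the cohomology classes to match; in the actual argument one does not get this for free but must also use the freedom in choosing $s$ (or an auxiliary parameter) to kill the class in $H^2(S)$, which is again absorbed by the vanishing of $H^2(A_S)$ via the map $H^2(S)\to H^2(A_S)$ induced by the anchor.
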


If $\pi$ is $C^p$-$C^1$-rigid around $S$, then $S$ is a strongly stable leaf. Also, the hypothesis of our Theorem
\ref{The_normal_form_theorem} imply those of Theorem 2.2 \emph{loc.cit.} To see this, let $P\to S$ be a principal
$G$-bundle for which $A_S\cong TP/G$. Then
\[H^{\bullet}(A_S)\cong H^{\bullet}(\Omega(P)^G).\]
If $G$ is compact and connected then
\[H^{\bullet}(\Omega(P)^G)\cong H^{\bullet}(P)^G\subset H^{\bullet}(P),\]
hence $H^2(P)=0$ implies $H^2(A_S)=0$.

On the other hand, $H^2(A_S)=0$ doesn't imply rigidity, counterexamples can be found even for fixed points.
Weinstein proves \cite{Wein4} that a noncompact semisimple Lie algebra $\mathfrak{g}$ of real rank at least two is
degenerate, so $\pi_{\mathfrak{g}}$ is not rigid (see subsection \ref{subsection_fixed_points}). However, $0$ is a
stable point for $\pi_{\mathfrak{g}}$, because by Whitehead's Lemma $H^2(\mathfrak{g})=0$.

According to Theorem 2.3 \cite{CrFe-stab}, the condition $H^2(A_S)=0$ is also necessary for strong stability of the
symplectic leaf $(S,\omega_S)$ in the case of Poisson structures of ``first order'', i.e.\ Poisson structures that are
isomorphic to their local model around $S$. So for this type of Poisson structures, $H^2(A_S)=0$ is also necessary for
rigidity.

For Poisson structures with trivial underlying foliation, we will prove below that the hypotheses of Theorem
\ref{The_normal_form_theorem} and of Theorem 2.2 \cite{CrFe-stab} are equivalent.

\subsection{Trivial symplectic foliations}\label{Trivial_symplectic_foliations}
In this subsection we discuss rigidity and linearization of regular Poisson structures $\pi$ on $S\times\mathbb{R}^n$
with symplectic foliation
\[\{(S\times\{y\},\omega_{y}:=\pi_{|S\times\{y\}}^{-1})\}_{y\in \mathbb{R}^n},\]
where $\omega_y$ is a smooth family of forms on $S$. Denote by $(S,\omega_S)$ the symplectic leaf for $y=0$. As
explained in subsection \ref{Subsection_Examples_linearization}, the first order approximation around $S$
corresponds to the family of 2-forms
\[j^1_{S}(\omega)_{y}:=\omega_S+\delta_S\omega_y,\]
where $\delta_S\omega_y$ is the vertical derivative of $\omega$\index{vertical derivative}
\[\delta_S\omega_y:=\frac{d}{d\epsilon}(\omega_{\epsilon y})_{|\epsilon=0}=y_1\omega_1+\ldots+y_n\omega_n.\]
The first order approximation is defined on an open $U\subset S\times\mathbb{R}^n$ containing $S$, such that
$j^1_{S}(\omega)_y$ is nondegenerate along $U\cap (S\times\{y\})$, for all $y\in\mathbb{R}^n$.

By Lemma \ref{Lemma_algebroid_cutarica}, the splitting $T^*M_{|S}=T^*S\times\mathbb{R}^n$ and the
isomorphism of $\omega_S^{\sharp}:TS\diffto T^*S$ give an identification of $A_S$ with $TS\times\mathbb{R}^n$;
and the corresponding Lie bracket is:
\begin{align}\label{EQ_Bracket_AS}
[(&X,f_1,\ldots,f_n),(Y,g_1,\ldots,g_n)]=\\
\nonumber &=([X,Y],X(g_1)-Y(f_1)+\omega_1(X,Y),\ldots,X(g_n)-Y(f_n)+\omega_n(X,Y)).
\end{align}
The conditions in Theorem \ref{The_normal_form_theorem} are easier to verify in this case.\index{cohomological
variation}
\begin{lemma}\label{L_trivial_foliation}
If $S$ is compact, then the following are equivalent:
\begin{enumerate}[(a)]
\item $A_S$ is integrable by a compact principal bundle $P$, with $H^2(P)=0$,
\item $H^2(A_S)=0$,
\item The cohomological variation $[\delta_S\omega]:\mathbb{R}^n\to H^2(S)$ satisfies:
\begin{enumerate}
\item[(c$_1$)] it is surjective,
\item[(c$_2$)] its kernel is at most 1-dimensional,
\item[(c$_3$)] the map $H^1(S)\otimes \mathbb{R}^n\to H^3(S)$, $\eta\otimes y\mapsto \eta\wedge [\delta_S\omega_y]$ is injective.
\end{enumerate}
\end{enumerate}
\end{lemma}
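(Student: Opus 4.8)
The plan is to compute $H^2(A_S)$ explicitly from the Chevalley--Eilenberg complex of $A_S$, match the answer with the three numerical conditions in (c), and, when (c) holds, to exhibit an integrating bundle as a compact torus bundle. Using the splitting $T^*M_{|S}=T^*S\times\mathbb{R}^n$ together with $\omega_S^\sharp\colon TS\to T^*S$ I would identify $A_S$ with $TS\times\mathbb{R}^n$ carrying the bracket \eqref{EQ_Bracket_AS}. Writing $e^1,\dots,e^n$ for the dual standard basis of $\mathbb{R}^n$, the complex becomes
\[
\Omega^\bullet(A_S)=\Omega^\bullet(S)\otimes\Lambda^\bullet(\mathbb{R}^n)^*,\qquad d_{A_S}|_{\Omega^\bullet(S)}=d,\quad d_{A_S}e^i=-\omega_i,
\]
with $d_{A_S}$ extended as a derivation of the wedge product; here $\omega_1,\dots,\omega_n$ are the components of $\delta_S\omega$, so that $[\delta_S\omega](y)=\sum_i y_i[\omega_i]$.

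The implication (a)$\Rightarrow$(b) is cheap: if $A_S\cong TP/G$ with $P$ compact and $H^2(P)=0$, then $S$ compact forces $G$ compact, $H^\bullet(A_S)\cong H^\bullet(\Omega(P)^G)$, and averaging over $G$ is a chain projection of $\Omega(P)$ onto $\Omega(P)^G$ retracting the inclusion, so $H^2(A_S)\hookrightarrow H^2(P)=0$. For (b)$\Leftrightarrow$(c) I would run the spectral sequence of the filtration of $\Omega^\bullet(A_S)$ by the de Rham degree of the $\Omega^\bullet(S)$-factor: then $E_1^{p,q}=H^p(S)\otimes\Lambda^q(\mathbb{R}^n)^*$ and $d_1$ is the Koszul differential $e^i\mapsto-[\omega_i]$. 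Chasing the three spots contributing to $H^2$: for $E^{2,0}$ the only incoming differential is $d_1\colon(\mathbb{R}^n)^*\to H^2(S)$ with image $\mathrm{Im}[\delta_S\omega]$ and no higher differentials, so $E_\infty^{2,0}=\operatorname{coker}[\delta_S\omega]$; for $E^{1,1}$ there are no incoming differentials and the only outgoing one is $d_1\colon H^1(S)\otimes(\mathbb{R}^n)^*\to H^3(S)$, $[\eta]\otimes e^i\mapsto-[\eta]\wedge[\omega_i]$, and lifting a $d_1$-closed element to a closed $2$-cochain of $\Omega^\bullet(A_S)$ shows all higher differentials vanish there, so $E_\infty^{1,1}=\ker\bigl(H^1(S)\otimes(\mathbb{R}^n)^*\to H^3(S)\bigr)$; for $E^{0,2}$ a Koszul-complex computation gives $\ker\bigl(\Lambda^2(\mathbb{R}^n)^*\to H^2(S)\otimes(\mathbb{R}^n)^*\bigr)=\Lambda^2K$ with $K=\ker[\delta_S\omega]$, and the only possibly nonzero higher differential $d_3\colon\Lambda^2K\to E_3^{3,0}$ sends $k_1\wedge k_2$ to $[\beta_1\wedge d\beta_2-\beta_2\wedge d\beta_1]=[-d(\beta_1\wedge\beta_2)]=0$, where $\sum_i(k_\ell)_i\omega_i=d\beta_\ell$. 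Hence $H^2(A_S)=0$ iff $\operatorname{coker}[\delta_S\omega]$, $\ker\bigl(H^1(S)\otimes\mathbb{R}^n\to H^3(S)\bigr)$ (same map, after identifying $\mathbb{R}^n$ with its dual) and $\Lambda^2K$ all vanish, i.e. iff (c$_1$), (c$_3$), (c$_2$) hold.

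For (c)$\Rightarrow$(a) I would use \eqref{monodromy-regular}, which here reads $\mathcal{N}_x=\{(\int_\sigma\omega_1,\dots,\int_\sigma\omega_n):[\sigma]\in\pi_2(S)\}$. Assuming (c$_1$), $[\sigma]\in\ker\partial$ iff it pairs trivially with every $[\omega_i]$, hence with all of $H^2(S)$, i.e. iff its image in $H_2(S;\mathbb{R})$ vanishes; so $\mathcal{N}_x\cong\operatorname{Im}\bigl(\pi_2(S)\to H_2(S;\mathbb{R})\bigr)$, a full lattice in $H_2(S;\mathbb{R})$, embedded via the transpose of $[\delta_S\omega]$ (injective by (c$_1$)), hence a lattice of rank $b_2=\dim H^2(S)$ in its span $\subset\mathbb{R}^n$. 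In particular $\mathcal{N}_x$ is discrete, so $A_S$ is integrable. Using (c$_2$) I would enlarge $\mathcal{N}_x$ by at most one generator to a full-rank lattice $\Lambda\subset\mathbb{R}^n$; the periods of $\Omega=(\omega_1,\dots,\omega_n)$ still lie in $\Lambda$, so lifting the period homomorphism $H_2(S;\mathbb{Z})\to\mathcal{N}_x\subset\Lambda$ to an Euler class produces a principal $T^n=\mathbb{R}^n/\Lambda$-bundle $P\to S$ with a connection of curvature $\Omega$. Its gauge Lie algebroid carries the trivial flat connection on $\underline{\mathbb{R}^n}$ and curvature $\Omega$, hence is $A_S$ (compare Example \ref{torus-bundle}); $P$ is compact since $S$ and $T^n$ are; and $H^2(P)\cong H^2(\Omega(P)^{T^n})=H^2(A_S)=0$ by (c)$\Rightarrow$(b), $T^n$ being connected. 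This is exactly (a), closing the cycle (a)$\Rightarrow$(b)$\Rightarrow$(c)$\Rightarrow$(a).

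The main obstacle I anticipate is the spectral-sequence bookkeeping in (b)$\Leftrightarrow$(c): verifying that no higher differential disturbs the three low-degree terms, so that $E_\infty$ agrees with $E_2$ in positions $(2,0)$ and $(1,1)$ and $E_\infty^{0,2}=\Lambda^2K$ (the vanishing of $d_3$ on $\Lambda^2K$). The other potentially delicate point is the torus-bundle realization in (c)$\Rightarrow$(a), which relies on the lattice-enlargement afforded by (c$_2$) together with the lift of the period homomorphism to an integral Euler class; the remaining steps are routine.
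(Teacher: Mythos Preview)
Your argument for (a)$\Rightarrow$(b) is the paper's. For (b)$\Leftrightarrow$(c) you run the spectral sequence of the filtration by $\Omega^\bullet(S)$-degree, whereas the paper writes two explicit exact sequences relating $H^2(A_S)$ to $\operatorname{coker}[\delta_S\omega]$, $\ker\bigl(H^1(S)\otimes\mathbb{R}^n\to H^3(S)\bigr)$, and $\Lambda^2(\ker[\delta_S\omega])$; the two computations encode the same information, and your check that $d_3|_{\Lambda^2K}=0$ corresponds to the paper's explicit closed lift $(\eta\wedge\theta,\ \eta\otimes w-\theta\otimes v,\ v\wedge w)$ witnessing exactness at $\Lambda^2\mathbb{R}^n$.

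There is a gap in (c)$\Rightarrow$(a). Under (c$_1$) you correctly identify $\mathcal{N}_x$ with $\operatorname{Im}\bigl(\pi_2(S)\to H_2(S;\mathbb{R})\bigr)$, but the assertion that this is a \emph{full} lattice in $H_2(S;\mathbb{R})$ is unjustified: the cokernel of the Hurewicz map $\pi_2(S)\to H_2(S;\mathbb{Z})$ is $H_2(\pi_1(S);\mathbb{Z})$, and nothing in (c) forces this to be torsion. Consequently ``enlarge by at most one generator'' need not reach full rank, and when you write ``the period homomorphism $H_2(S;\mathbb{Z})\to\mathcal{N}_x\subset\Lambda$'' you have silently replaced $\pi_2$-periods by $H_2$-periods, which need not land in $\mathcal{N}_x$ at all. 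The paper's route is simpler and avoids $\mathcal{N}_x$ entirely: (c$_1$) alone lets one change basis in $\mathbb{R}^n$ so that $[\omega_1],\dots,[\omega_n]\in H^2(S;\mathbb{Z})$ (send the first $b_2$ basis vectors to a $\mathbb{Z}$-basis of the integral lattice, the remaining ones to $0$); then a principal $T^n$-bundle $P\to S$ with those Chern classes carries a connection of curvature $(-\omega_1,\dots,-\omega_n)$, and one verifies directly that $TP/T^n\cong A_S$ via the decomposition $\widetilde{X}+\sum f_i\partial_{\theta_i}\mapsto (X,f_1,\dots,f_n)$. Condition (c$_2$) plays no role in constructing $P$; only (b) is used at the very end to get $H^2(P)=H^2(P)^{T^n}\cong H^2(A_S)=0$.
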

\begin{proof}
The fact that (a) implies (b) was explained in subsection \ref{subsection_relation_stability}. Now, we show that (b) and
(c) are equivalent. The complex computing $H^{\bullet}(A_S)$ is given by:
\[\Omega^{k}(A_S):=\bigoplus_{p+q=k} \Omega^p(S)\otimes \Lambda^{q}\mathbb{R}^n,\]
and the differential acts on elements of degree one and two as follows:
\begin{align*}
&d_{A_S}(\lambda,\sum_{i}\mu_i e_i)=(d\lambda-\sum_{i}\mu_i\omega_i,\sum_i d\mu_i\otimes e_i,0),\\
&d_{A_S}(\alpha,\sum_i \beta_i\otimes e_i,\sum_{i,j}\gamma_{i,j}e_i\wedge e_j)=\\
&=(d\alpha+\sum_i\beta_i\wedge\omega_i,\sum_i(d\beta_i-\sum_j \gamma_{i,j}\omega_j)\otimes e_i, \sum_{i,j}d\gamma_{i,j}\otimes e_i\wedge e_j,0),
\end{align*}
where $\{e_i\}$ is the canonical basis of $\mathbb{R}^n$. We use the exact sequences:
\[0\rmap K\rmap H^2(A_S)\rmap \Lambda^2\mathbb{R}^n\rmap H^2(S)\otimes \mathbb{R}^n,\]
\[\mathbb{R}^n\rmap H^2(S)\rmap K\rmap H^1(S)\otimes \mathbb{R}^n\rmap H^3(S),\]
where the map $H^2(A_S)\to \Lambda^2\mathbb{R}^n$ is $[\alpha,\beta\otimes u, v\wedge w]\mapsto v\wedge w$;
the map $\Lambda^2 \mathbb{R}^n\to H^2(S)\otimes \mathbb{R}^n$, denoted by $[\delta_S\omega]\otimes
\mathrm{Id}$, sends $v\wedge w\mapsto [\delta_S\omega_v]\otimes w-[\delta_S\omega_w]\otimes v$; the map
$\mathbb{R}^n\to H^2(S)$ is $[\delta_S\omega]$; the map $H^2(S)\to K$ is $[\alpha]\mapsto [\alpha,0,0]$; the map
$K\to H^1(S)\otimes\mathbb{R}^n$ is $[\alpha,\beta\otimes v,0]\mapsto [\beta]\otimes v$; the last map is the one from
(c$_3$). When proving exactness, the only nontrivial part of the computation is at $\Lambda^2\mathbb{R}^n$. This is
based on a simple fact from linear algebra:
\begin{equation}\label{EQ_4}
\textrm{ker}([\delta_S\omega]\otimes
\mathrm{Id})=\Lambda^2( \textrm{ker}[\delta_S\omega]).
\end{equation}
So, an element in $\textrm{ker}([\delta_S\omega]\otimes \mathrm{Id})$ can be written as a sum of the form $\sum
v\wedge w$, with $v,w\in \textrm{ker}[\delta_S\omega]$. Writing $\delta_S\omega_{v}=d\eta$,
$\delta_S\omega_{w}=d\theta$, for $\eta,\theta\in\Omega^1(S)$, one easily checks that $$(\eta\wedge \theta,
\eta\otimes w-\theta\otimes v, v\wedge w)\in\Omega^2(A_S)$$ is closed. This implies exactness at
$\Lambda^2\mathbb{R}^n$. So $H^2(A_S)$ vanishes if and only if (c$_1$) and (c$_3$) hold and the map
$[\delta_S\omega]\otimes \mathrm{Id}$ is injective. By (\ref{EQ_4}), injectivity is equivalent to (c$_2$).

We prove that (b) and (c) imply (a). Part (c$_1$) implies that, by taking a different basis of $\mathbb{R}^n$, we may
assume that $[\omega_1],\ldots,[\omega_n]\in H^2(S,\mathbb{Z})$. Let $P\to S$ be a principal $T^n$ bundle with
connection $(\theta_1,\ldots,\theta_n)$ and curvature $(-\omega_1,\ldots,-\omega_n)$. We claim that the Lie algebroid
$TP/T^n$ is isomorphic to $A_S$ and therefore $A_S$ is integrable by the compact gauge groupoid
\[P\times_{T^n}P\rightrightarrows S.\]
A section of $TP/T^n$ (i.e.\ a $T^n$-invariant vector field on $P$) can be decomposed uniquely as
\[\widetilde{X}+\sum f_i\partial_{\theta_i},\]
where $\widetilde{X}$ is the horizontal lift of a vector field $X$ on $S$, $f_1,\ldots,f_n$ are functions on $S$ and
$\partial_{\theta_i}$ is the unique vertical vector field on $P$ which satisfies
\[\theta_j(\partial_{\theta_i})=\delta_{i,j}.\]
Using (\ref{EQ_Bracket_AS}) for the bracket on $A_S$ and that $d\theta_i=-p^*(\omega_i)$, it is straightforward to
check that the following map is a Lie algebroid isomorphism:
\[TP/T^n\diffto A_S, \ \  \widetilde{X}+\sum f_i\partial_{\theta_i}\mapsto (X,f_1,\ldots,f_n).\]
Using that $T^n$ acts trivially on $H^2(P)$ and (b), we obtain the conclusion:
\[H^2(P)=H^2(P)^{T^n}\cong H^2(TP/T^n)\cong H^2(A_S)=0.\qedhere \]
\end{proof}

So for trivial symplectic foliations the conditions in Theorem \ref{The_normal_form_theorem} and in Theorem 2.2
\cite{CrFe-stab} coincide.

\begin{corollary}\label{corollary_trivial_foliations}
Let $\{\omega_y\in\Omega^2(S)\}_{y\in\mathbb{R}^n}$ be a smooth family of symplectic structures on a compact
manifold $S$. If the cohomological variation at $0$
\[[\delta_S\omega]:\mathbb{R}^n\rmap H^2(S)\]
satisfies the conditions from Lemma \ref{L_trivial_foliation}, then the Poisson manifold
\[(S\times\mathbb{R}^n,\{\omega_y^{-1}\}_{y\in\mathbb{R}^n})\]
is isomorphic to its local model and $C^p$-$C^1$-rigid around $S\times\{0\}$.
\end{corollary}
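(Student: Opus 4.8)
The plan is to obtain this as a specialization of Theorem \ref{The_normal_form_theorem}, with Lemma \ref{L_trivial_foliation} used to convert the hypotheses on $[\delta_S\omega]$ into the integrability/cohomological hypothesis on $A_S$ required there. First I would set $M:=S\times\mathbb{R}^n$ and note that $\pi:=\{\omega_y^{-1}\}_{y\in\mathbb{R}^n}$ is a (globally defined, regular) Poisson structure on $M$, since each $\omega_y$ is symplectic on $S$; its symplectic foliation is the trivial one, and $(S\times\{0\},\omega_S)$ with $\omega_S:=\omega_0$ is an embedded compact symplectic leaf. The first jet of $\pi$ along this leaf is encoded, as in subsection \ref{Subsection_the_first_order_data}, by the transitive Lie algebroid $A_S=T^*M_{|S}$, and the first order approximation $\plin(S)$ is identified in the regular case (subsection \ref{second-main-example}, together with Propositions \ref{Proposition_reconcile} and \ref{Proposition_reconcile_3}) with the trivial foliation on an open $U\subset S\times\mathbb{R}^n$ carrying the family $j^1_S(\omega)_y=\omega_S+\delta_S\omega_y$ — i.e. precisely the ``local model'' named in the statement. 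I would record this identification up front so the two meanings of ``local model'' coincide.

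Next I would check the hypotheses of Theorem \ref{The_normal_form_theorem}. Since $S$ is compact and $[\delta_S\omega]$ satisfies conditions (c$_1$)--(c$_3$), Lemma \ref{L_trivial_foliation} gives the equivalent statement (a): $A_S$ is integrable by a compact principal bundle $P\to S$ with $H^2(P)=0$ — concretely the principal $T^n$-bundle built from a connection whose curvature represents $-(\omega_1,\dots,\omega_n)$ for an integral basis adapted to $\mathrm{Im}[\delta_S\omega]$. Writing $A_S\cong TP/G$, it is integrated by the gauge groupoid $\mathcal{G}:=P\times_G P\rightrightarrows S$, which is Hausdorff (a free, proper quotient of $P\times P$), compact (as $P$ and $S$ are), and whose $s$-fibers are diffeomorphic to $P$, hence have vanishing second de Rham cohomology. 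So all hypotheses of Theorem \ref{The_normal_form_theorem} hold for $(S,\omega_S)$, and therefore: (a) $\pi$ is Poisson diffeomorphic around $S$ to $\plin(S)$, i.e. to its local model $\{\omega_S+\delta_S\omega_y\}$; (b) $\pi$ is $C^p$-$C^1$-rigid around $S$, with the value of $p$ recorded in Theorem \ref{Theorem_FOUR}. This is exactly the asserted conclusion.

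I do not expect a genuine obstacle here: the entire analytic content is already packaged in Theorem \ref{Theorem_FOUR} and hence in Theorem \ref{The_normal_form_theorem}. The only points requiring care are bookkeeping ones — matching the ``local model'' of the statement with the first order approximation $\plin(S)$ used in Theorem \ref{The_normal_form_theorem} (via the regular-case discussion of subsection \ref{second-main-example}), and confirming that the gauge groupoid $P\times_G P$ is Hausdorff and compact with $s$-fibers satisfying $H^2=0$, which follows immediately from $H^2(P)=0$ supplied by Lemma \ref{L_trivial_foliation}.
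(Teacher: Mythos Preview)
Your proposal is correct and matches the paper's intended argument: the corollary is an immediate consequence of Lemma \ref{L_trivial_foliation}(a) combined with Theorem \ref{The_normal_form_theorem}, and the paper states it without further proof. Your care in identifying the local model with $\plin(S)$ via the regular-case discussion and in verifying that the gauge groupoid $P\times_G P$ meets the hypotheses of Theorem \ref{The_normal_form_theorem} is exactly what is needed.
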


By Theorem \ref{Theorem_ONE} condition (c$_1$) from Lemma \ref{L_trivial_foliation} (i.e.\ surjectivity of
$[\delta_S\omega]$) is sufficient for the symplectic foliation to be isomorphic to its first order approximation, but by
Theorem 2.3 \cite{CrFe-stab}, all conditions are necessary for rigidity.\\

In the case of trivial foliations we also have an improvement compared to Theorem \ref{Theorem_TWO}. Recall from
Corollary \ref{corollary_conditions_regular} that, in this case, the condition of this result are equivalent to:
\begin{itemize}
\item $S$ is compact with finite fundamental group,
\item $[\delta_S\omega]:\mathbb{R}^n\to H^2(\widetilde{S})$ is an isomorphism,
\end{itemize}
where $\widetilde{S}$ denotes the universal cover of $S$. The first condition implies that $H^1(S)=0$ (so (c$_1$)
holds), and, as in the proof of Lemma \ref{lemma_surj_implies_surj}, the second implies that the pullback induces an
isomorphism $H^2(S)\to H^2(\widetilde{S})$, hence also (c$_2$) and (c$_3$) hold.\\

The conditions of Lemma \ref{L_trivial_foliation} suggest the following construction of rigid Poisson manifolds.
\begin{example}\rm
Let $(S,\omega_S)$ be a compact symplectic manifold. Choose a basis of $H^2(S)$ represented by closed 2-forms
$\omega_1, \ldots,\omega_n$. We define a symplectic foliation on an open around $S\times\{[\omega_S]\}$ in $S\times
H^2(S)$:
\[\{(S\times\{C\},\omega_{C})\}_{C\in U}, \ \textrm{with}\  \omega_C=\omega_S+y_1 \omega_1+\ldots+y_n \omega_n,\]
where the coefficients $y_i$ are such that $C=[\omega_C]$. By applying the leafwise Moser argument from Lemma
\ref{Moser_lemma_for_symplectic foliations}, one proves that different bases of $H^2(S)$ induce Poisson structures
that are isomorphic around $S\times\{[\omega_S]\}$. By Lemma \ref{L_trivial_foliation}, Corollary
\ref{corollary_trivial_foliations}, and Theorem 2.3 \cite{CrFe-stab}, we have that this Poisson structure is rigid around
$S\times\{[\omega_S]\}$ if and only if the cup product is an injective map between the spaces:
\[\wedge: H^1(S)\otimes H^2(S)\rmap H^3(S).\]
This is certainly satisfied if $H^1(S)=0$.
\end{example}

\section{Proof of Theorem \ref{Theorem_FOUR}}\label{section_technical_rigidity}

We start by preparing the setting needed for the Nash-Moser method: we fix norms on the spaces that we will work
with, we construct smoothing operators adapted to our problem and recall the interpolation inequalities. Next, we
prove a series of inequalities which assert tameness of some natural operations like Lie derivative, flow and pullback,
and we also prove some properties of local diffeomorphisms. The section ends with the proof of Theorem
\ref{Theorem_FOUR}.

\begin{remark}\rm
A common convention when dealing with the Nash-Moser techniques (e.g.\ \cite{Ham}), which we also adopt, is to
denote all constants by the same symbol: In the preliminary results below, we work with ``big enough'' constants $C$
and $C_n$, and with ``small enough'' constants $\theta>0$; these depend on the trivialization data for the vector
bundle $E$ and on the smoothing operators. In the proof of Proposition \ref{Proposition_technical}, $C_n$ depends
also on the Poisson structure $\pi$.
\end{remark}

\subsection{The ingredients of the tame category}\label{subsection_norms}

We will use some terminology from \cite{Ham}. A Fr\'echet space $F$ endowed with an increasing family of
semi-norms $\{\|\cdot\|_{n}\}_{n\geq 0}$ generating its topology will be called a \textbf{graded Fr\'echet
space}\index{graded Fr\'echet space}. A linear map $T:F_1\to F_2$ between two graded Fr\'echet spaces is called
\textbf{tame}\index{tame} of degree $d$ and base $b$, if it satisfies inequalities of the form
\[\|Tf\|_{n}\leq C_n\|f\|_{n+d},\ \forall \ n\geq b, f\in F_1.\]

Let $E\to S$ be a vector bundle over a compact manifold $S$ and fix a metric on $E$. For $r>0$, consider the closed
tube in $E$ of radius $r$
\[E_r:=\{v\in E: |v|\leq r\}.\]
The space $\mathfrak{X}^{\bullet}(E_r)$, of multivector fields on $E_r$, endowed with the
$C^n$-norms\index{C$^n$-norms} $\|\cdot\|_{n,r}$ is a graded Fr\'echet space. We recall here the construction of
these norms. Fix a finite open cover of $S$ by domains of charts $\{\chi_i:O_i\diffto \mathbb{R}^d\}_{i=1}^{N}$ and
vector bundle isomorphisms
\[\widetilde{\chi}_i:E_{|O_i}\diffto \mathbb{R}^d\times \mathbb{R}^D\]
covering $\chi_i$. We will assume that $\widetilde{\chi}_i(E_{r|O_i})=\mathbb{R}^d\times \overline{B}_r$ and that
the family
\[\{O_i^{\delta}:=\chi_i^{-1}(B_{\delta})\}_{i=1}^N\]
covers $S$ for all $\delta\geq 1$. Moreover, we assume that the cover satisfies
\begin{equation}\label{EQ_6}
\textrm{if}\ \ O_i^{3/2}\cap O_j^{3/2}\neq \emptyset\  \ \textrm{then}\ \ O_j^1\subset O_i^4.
\end{equation}
This holds if $\chi_i^{-1}:B_4\to O_i$ is the exponential corresponding to some metric on $S$, with injectivity radius
bigger than $4$.

For $W\in\mathfrak{X}^{\bullet}(E_r)$, denote its local expression in the chart $\widetilde{\chi}_i$ by
\[W_i(z):=\sum_{1\leq i_1<\ldots<i_p\leq d+D}W_{i}^{i_1,\ldots,i_p}(z)\frac{\partial}{\partial z_{i_1}}\wedge \ldots\wedge \frac{\partial}{\partial z_{i_p}},\]
and let the $C^n$-norm of $W$ be given by
\[\|W\|_{n,r}:=\sup_{i, i_1,\ldots, i_p}\left\{|\frac{\partial^{|\alpha|}}{\partial{z}^{\alpha}} W_{i}^{i_1,\ldots,i_p}(z)|:z\in B_{1}\times B_r, 0\leq |\alpha|\leq n \right\}.\]
For $s<r$ the restriction maps are norm decreasing
\[\mathfrak{X}^{\bullet}(E_{r})\ni W\mapsto W_{|{s}}:=W_{|E_{s}}\in \mathfrak{X}^{\bullet}(E_{s}), \ \ \  \|W_{|{s}}\|_{n,s}\leq \|W\|_{n,r}.\]

We will work also with the closed subspaces of multivector fields on $E_{r}$ whose first jet vanishes along $S$, which
we denote by
\[\mathfrak{X}^{k}(E_r)^{(1)}:=\{W\in \mathfrak{X}^{k}(E_r) : j^1_{|S}W=0\}.\]

The main technical tool used in the Nash-Moser method are the smoothing operators. We will call a family $\{S_t:F\to
F\}_{t>1}$ of linear operators on the graded Fr\'echet space $F$ \textbf{smoothing operators}\index{smoothing
operators} of degree $d\geq 0$, if there exist constants $C_{n,m}>0$, such that for all $n,m\geq 0$ and $f\in F$, the
following inequalities hold:
\begin{equation}\label{EQ_7}
\|S_t(f)\|_{n+m}\leq t^{m+d}C_{n,m}\|f\|_{n},\ \ \|S_t(f)-f\|_{n}\leq t^{-m}C_{n,m}\|f\|_{n+m+d}.
\end{equation}

The construction of such operators is standard, but since we are dealing with a family of Fr\'echet spaces
$\{\mathfrak{X}^{k}(E_r)\}_{0<r\leq 1}$, we give the dependence of $C_{n,m}$ on $r$.
\begin{lemma}\label{Lemma_smoothing_operators}
The family of graded Fr\'echet spaces $\{(\mathfrak{X}^{k}(E_r),\|\cdot\|_{n,r})\}_{r\in (0,1]}$ has a family of
smoothing operators of degree $d=0$
\[\{S_t^r:\mathfrak{X}^{k}(E_r)\rmap \mathfrak{X}^{k}(E_r) \}_{t>1,0<r\leq 1},\]
which satisfy (\ref{EQ_7}) with constants of the form $C_{n,m}(r)=C_{n,m}r^{-(n+m+k)}$.

Similarly, the family $\{(\mathfrak{X}^{k}(E_r)^{(1)},\|\cdot\|_{n,r})\}_{r\in (0,1]}$ has smoothing operators
\[\{S_t^{r,1}:\mathfrak{X}^{k}(E_r)^{(1)}\rmap \mathfrak{X}^{k}(E_r)^{(1)} \}_{t>1,0<r\leq 1},\]
of degree $d=1$ and constants $C_{n,m}(r)=C_{n,m}r^{-(n+m+k+1)}$.
\end{lemma}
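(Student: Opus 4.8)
The plan is to carry out the classical mollification construction of smoothing operators, localized by a partition of unity, while keeping careful track of how the constants scale with the radius $r$ of the tube. First I would fix a partition of unity $\{\lambda_i\}$ on $S$ subordinate to $\{O_i^1\}$ with $\mathrm{supp}(\lambda_i)\subset O_i^1$, together with enlarged cut-offs $\tilde\lambda_i$ equal to $1$ on $O_i^1$ and supported in $O_i^{3/2}$. Condition (\ref{EQ_6}) is exactly what guarantees that whenever two of these enlarged supports meet, one chart domain is contained in a fixed dilate of the other, so that all the localized pieces can be transported between charts $\widetilde\chi_i$ with transition data independent of $r$ (a vector bundle transition is linear in the fibre, hence itself and its fibre derivatives are bounded on $\overline B_r$ uniformly in $r\le 1$). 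Given $W\in\mathfrak X^k(E_r)$, I would take its components $W_i$ in the chart $\widetilde\chi_i$, multiply by $\lambda_i$ to obtain pieces $W_i^\lambda$ supported in $B_1\times\overline B_r$ in the base direction, extend these across the boundary sphere $\partial\overline B_r$ \emph{in the fibre variables} by a Seeley-type tame extension operator $\mathcal E_r$ to functions $\tilde W_i^\lambda$ on $\mathbb R^d\times\mathbb R^D$, convolve with $\rho_t(z)=t^{d+D}\rho(tz)$ for a fixed Schwartz function $\rho$ with $\int\rho=1$ and all higher moments vanishing, multiply the result by $\tilde\lambda_i$, push forward to $E_r$ and sum over $i$. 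This defines a linear operator $S^r_t$.

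The verification would rest on two observations. The first is the identity $S^r_t W-W=\sum_i \tilde\lambda_i\big(S_t(\tilde W_i^\lambda)-\tilde W_i^\lambda\big)$ (all terms pushed forward), which holds because $\tilde\lambda_i\tilde W_i^\lambda=W_i^\lambda$ and $\sum_i W_i^\lambda=W$; it is what makes the localized construction behave like a genuine smoothing operator even near the boundaries of the cut-offs. The second is the scaling of $\mathcal E_r$: conjugating a fixed extension operator $\mathcal E_1$ on $\overline B_1$ by fibrewise dilations one reads off $\|\mathcal E_r f\|_{C^n(\mathbb R^D)}\le C_n\,r^{-n}\|f\|_{C^n(\overline B_r)}$ for $r\le 1$. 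Combining this with the standard convolution estimates $\|g*\rho_t\|_{C^{n+m}}\le C_{n,m}t^m\|g\|_{C^n}$ and $\|g*\rho_t-g\|_{C^n}\le C_{n,m}t^{-m}\|g\|_{C^{n+m}}$ (the latter using the moment conditions on $\rho$) and summing over the finitely many charts, I would get $\|S^r_t W\|_{n+m,r}\le C_{n,m}t^m r^{-n}\|W\|_{n,r}$ and $\|S^r_t W-W\|_{n,r}\le C_{n,m}t^{-m}r^{-(n+m)}\|W\|_{n+m,r}$. Since $r\le 1$ these are dominated by the asserted bound with constant $C_{n,m}r^{-(n+m+k)}$, so the stated (deliberately non-optimal) exponent follows a fortiori, and $S^r_t$ has degree $d=0$.

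For the subspace $\mathfrak X^k(E_r)^{(1)}$ I would not smooth directly, since mollification does not preserve vanishing of the $1$-jet along $S$. Instead I would set $S^{r,1}_t W:=S^r_t W-\mathcal P\big(j^1_{|S}(S^r_t W)\big)$, where $\mathcal P$ assigns to a $1$-jet along $S$ the unique fibrewise-affine $k$-vector field on $E_r$ realizing it. Then $j^1_{|S}(S^{r,1}_t W)=0$ by construction, so $S^{r,1}_t$ maps $\mathfrak X^k(E_r)^{(1)}$ into itself, and because $j^1_{|S}W=0$ we have $j^1_{|S}(S^r_t W)=j^1_{|S}(S^r_t W-W)$, whence $\|j^1_{|S}(S^r_t W)\|_{C^n(S)}\le C\|S^r_t W-W\|_{n+1,r}$; since $\mathcal P$ is bounded on $\overline B_r$ with $r$-independent constants, the correction is controlled by $C t^{-m}r^{-(n+m+1)}\|W\|_{n+m+1,r}$ in the difference estimate and by $C t^{m+1}r^{-(n+1)}\|W\|_{n,r}$ in the direct estimate. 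This is precisely the statement that $S^{r,1}_t$ is a family of smoothing operators of degree $d=1$ with constants $C_{n,m}r^{-(n+m+k+1)}$.

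The main obstacle — really the only non-routine point — is handling the boundary of the tube and the passage between charts at the same time: one must extend across $\partial\overline B_r$ by an operator whose norm degenerates in a controlled, explicit way as $r\to 0$, and one must arrange the cut-offs so that the localized mollified pieces glue to a global multivector field on $E_r$ without spoiling the smoothing estimates. Condition (\ref{EQ_6}) and the identity for $S^r_tW-W$ are the two ingredients that make this work; everything else is the bookkeeping of interpolation-type constants, which is routine and may be referred, as the text indicates, to the general treatments of the Nash--Moser method.
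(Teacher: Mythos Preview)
Your argument is correct, but for the first family $S_t^r$ you take a noticeably different route than the paper. The paper does not redo the mollification construction at each radius $r$; instead it fixes once and for all smoothing operators $S_t$ on $\mathfrak X^k(E_1)$ (whose existence is quoted as standard from \cite{Ham}) and then \emph{defines}
\[
S_t^r:=\mu_{r^{-1}}^*\circ S_t\circ \mu_r^*,
\]
where $\mu_\rho$ is fiberwise multiplication by $\rho$. The entire $r$-dependence of the constants then comes from the single elementary estimate $\|\mu_\rho^*(W)\|_{n,R}\le \max\{\rho^{-k},\rho^n\}\|W\|_{n,\rho R}$, which immediately yields $C_{n,m}(r)=C_{n,m}r^{-(n+m+k)}$. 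Your approach --- partition of unity, Seeley extension $\mathcal E_r$ in the fiber, convolution, cut-off --- is of course how one proves the $r=1$ case to begin with, and you correctly isolate the $r$-dependence in the scaling of $\mathcal E_r$; but you are in effect redoing locally what the paper's global rescaling trick does in one line. Amusingly, you do use exactly the same conjugation-by-dilation idea, but only for $\mathcal E_r$, rather than for the whole operator. The paper's route is shorter and explains why the exponent $k$ appears (it comes from $\mu_r^*$ acting on $k$-vectors), whereas in your argument that exponent is simply absorbed a fortiori since $r\le 1$.

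For the second family $S_t^{r,1}$ your construction is the same as the paper's. The paper phrases it as $S_t^{r,1}:=P\circ S_t^r$ with $P(W)=W-\sum_i\widetilde\lambda_i\,\widetilde\chi_{i,*}^{-1}T^1_y(W_i)$ the subtraction of the first-order fiberwise Taylor polynomial, and checks that $P$ is a tame projection of degree $1$; your $W\mapsto W-\mathcal P(j^1_{|S}W)$ is the same map in different notation, and your degree-$1$ loss in the estimates matches the tameness degree of $P$.
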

\begin{proof}
The existence of smoothing operators of degree zero on the Fr\'echet space of sections of a vector bundle over a
compact manifold (possibly with boundary) is standard (see \cite{Ham}). We fix such a family
\[\{S_t:\mathfrak{X}^{k}(E_{1})\to \mathfrak{X}^{k}(E_{1})\}_{t>1}.\] Consider also the rescaling operators, defined for $\rho\neq 0$ by
\[ \mu_{\rho}: E_{R}\rmap E_{\rho R}, \ \ \mu_{\rho}(v):=\rho v.\]
For $r<1$, we define $S_t^r$ by conjugation with these operators:
\[S_t^r:\mathfrak{X}^{k}(E_r)\rmap \mathfrak{X}^{k}(E_r),\ \ S_t^r:=\mu_{r^{-1}}^*\circ S_t\circ \mu_{r}^*.\]
It is straightforward to check that $\mu_{\rho}^{*}$ satisfies the following inequality
\[\|\mu_{\rho}^*(W)\|_{n,R}\leq \textrm{max}\{\rho^{-k},\rho^{n}\}\|W\|_{n,\rho R}, \ \forall\ W\in\mathfrak{X}^{k}(E_{\rho R}).\]
Using this we obtain that $S_t^r$ satisfies (\ref{EQ_7}) with $C_{n,m}(r)=C_{n,m}r^{-(n+m+k)}$.

To construct the operators $S_t^{r,1}$, we first define a tame projection \[P: \mathfrak{X}^{k}(E_r)\to
\mathfrak{X}^{k}(E_r)^{(1)}.\] Choose $\{\lambda_i\}_{i=1}^N$ a smooth partition of unity on $S$ subordinated to
the cover $\{O_i^{1}\}_{i=1}^N$, and let $\{\widetilde{\lambda}_i\}_{i=1}^N$ be the pullback to $E$. For
$W\in\mathfrak{X}^{k}(E_r)$, denote its local representatives by \[W_i:=\widetilde{\chi}_{i,*}(W_{|E_{r|O_i}})\in
\mathfrak{X}^{k}(\mathbb{R}^d\times \overline{B}_r).\] Define $P$ as follows:
\[P(W):=\sum\widetilde{\lambda}_i\cdot \widetilde{\chi}_{i,*}^{-1}(W_i-T_y^1(W_i)),\]
where $T^1_y(Z)$ is the first degree Taylor polynomial of $Z$ in the fiber direction
\[T^1_y(Z)(x,y):=Z(x,0)+\sum y_j\frac{\partial Z}{\partial y_j}(x,0).\]
If $W\in \mathfrak{X}^{k}(E_r)^{(1)}$, then $T^1_y(W_i)=0$; so $P$ is a projection. It is easy to check that $P$ is
tame of
degree $1$, that is, there are constants $C_{n}>0$ such that 
\[\|P(W)\|_{n,r}\leq C_n\|W\|_{n+1,r}.\]
Define the smoothing operators on $\mathfrak{X}^{k}(E_r)^{(1)}$ as follows:
\[S_t^{r,1}:\mathfrak{X}^{k}(E_r)^{(1)}\rmap \mathfrak{X}^{k}(E_r)^{(1)},\ \ S_t^{r,1}:=P\circ S_t^{r}.\]
Using tameness of $P$, the inequalities for $S_t^{r,1}$ are straightforward.
\end{proof}

The norms $\|\cdot\|_{n,r}$ satisfy the classical interpolation inequalities with constants that are polynomials in
$r^{-1}$.\index{interpolation inequalities}
\begin{lemma}\label{L_interpolation}
The norms $\|\cdot\|_{n,r}$ satisfy:
\[\|W\|_{l,r}\leq C_{n}r^{k-l} (\|W\|_{k,r})^{\frac{n-l}{n-k}}(\|W\|_{n,r})^{\frac{l-k}{n-k}},\  \forall \ r\in(0,1]\]
for all $0\leq k\leq l\leq n$, not all equal and all $W\in\mathfrak{X}^{\bullet}(E_r)$.
\end{lemma}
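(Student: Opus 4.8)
The plan is to derive the interpolation inequalities from the existence of the smoothing operators of Lemma \ref{Lemma_smoothing_operators}, by the standard argument of Hamilton (cf.\ \cite{Ham}), and then to keep track of how the constants depend on the radius $r$.

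First I would establish the core estimate. Fix $W\in\mathfrak{X}^{q}(E_{r})$ and integers $0\le k\le l\le n$, not all equal, and let $t>1$ be a free parameter. Decompose $W=S^{r}_{t}W+(W-S^{r}_{t}W)$ and bound the two terms separately: $\|S^{r}_{t}W\|_{l,r}$ via the first smoothing inequality with top index $l$ and base index $k$ (so the order of derivatives saved is $l-k$), and $\|W-S^{r}_{t}W\|_{l,r}$ via the second smoothing inequality with base index $l$ and top index $n$ (so the order saved is $n-l$). This yields
\begin{equation*}
\|W\|_{l,r}\le C_{n}(r)\bigl(t^{\,l-k}\,\|W\|_{k,r}+t^{-(n-l)}\,\|W\|_{n,r}\bigr),
\end{equation*}
where $C_{n}(r)$ is an explicit power of $r^{-1}$, read off from the constants $C_{n,m}(r)$ of Lemma \ref{Lemma_smoothing_operators} and from the multivector degree $q$.

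Next I would optimise over $t$. If $\|W\|_{n,r}>\|W\|_{k,r}$, then $t:=(\|W\|_{n,r}/\|W\|_{k,r})^{1/(n-k)}>1$ balances the two summands and gives
\begin{equation*}
\|W\|_{l,r}\le 2\,C_{n}(r)\,(\|W\|_{k,r})^{\frac{n-l}{n-k}}\,(\|W\|_{n,r})^{\frac{l-k}{n-k}} .
\end{equation*}
In the remaining case $\|W\|_{n,r}=\|W\|_{k,r}$, monotonicity of the $C^{n}$-norms forces $\|W\|_{k,r}=\|W\|_{l,r}=\|W\|_{n,r}$ and the asserted inequality is immediate (the constant being at least $1$). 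This already proves the interpolation inequalities with a constant of the form $C_{n}(r)$, an explicit function of $r$ that is bounded on every interval $[r_{0},1]$, which is all that the applications in this chapter actually use.

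Finally, to bring the constant to the precise shape $C_{n}\,r^{k-l}$, I would pass through the fixed tube $E_{1}$. On $E_{1}$ the smoothing operators of Lemma \ref{Lemma_smoothing_operators} have $r$-independent constants, so the argument above produces genuine constants $C_{n}$ there; for $r\le 1$ one then transfers the inequality along the rescaling diffeomorphism $\mu_{r}\colon E_{1}\to E_{r}$, using the elementary estimate $\|\mu_{\rho}^{*}W\|_{n,R}\le\max\{\rho^{-q},\rho^{n}\}\,\|W\|_{n,\rho R}$ recorded in the proof of Lemma \ref{Lemma_smoothing_operators}. The discrepancy between the powers of $r$ produced by fibre derivatives and by base derivatives under this non-uniform rescaling — a total of $l-k$ ``traded'' derivatives — is exactly what accounts for the factor $r^{k-l}$. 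I expect this last bookkeeping of $r$-powers to be the only delicate point; it involves no new idea, the substance of the statement being the smoothing-operator argument of the first two paragraphs, which is entirely classical.
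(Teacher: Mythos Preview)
Your route is genuinely different from the paper's. The paper does not go through the smoothing operators at all: it simply quotes Conn \cite{Conn} for the local interpolation inequality on $C^{\infty}(\overline{B}_1\times\overline{B}_r)$ with the precise constant $C_n r^{k-l}$, and then takes the supremum over the finitely many charts $(E_{r|O_i^1},\widetilde{\chi}_i)$ to globalise. So the paper treats the $r$-dependence as a local fact proved elsewhere, while you try to manufacture it from Lemma~\ref{Lemma_smoothing_operators}.

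Your first two paragraphs are correct and give the inequality with a constant $C_n(r)$ that is polynomial in $r^{-1}$ (reading off the powers in Lemma~\ref{Lemma_smoothing_operators}). As you say, this is all the rest of the chapter actually uses: every subsequent estimate carries constants of the shape $C_n r^{-c_n}$.

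The gap is in your last paragraph. The rescaling bookkeeping does not recover the exact power $r^{k-l}$. For a function $W$ on $\overline{B}_1\times\overline{B}_r$ and $V(x,y)=W(x,ry)$ on $\overline{B}_1\times\overline{B}_1$, one has $\partial_x^{\alpha}\partial_y^{\beta}V=r^{|\beta|}(\partial_x^{\alpha}\partial_y^{\beta}W)(x,ry)$, hence $\|V\|_{m,1}\le\|W\|_{m,r}$ (the extremal case $|\beta|=0$ contributes no $r$-factor) and $\|W\|_{m,r}\le r^{-m}\|V\|_{m,1}$ (extremal case $|\beta|=m$). Plugging into the $E_1$-inequality gives $\|W\|_{l,r}\le r^{-l}C_n\|W\|_{k,r}^{a}\|W\|_{n,r}^{b}$, i.e.\ $r^{-l}$ rather than $r^{k-l}$; for $q$-vectors one picks up a further $r^{-q}$. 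The lost factor $r^{k}$ cannot be recovered because the bound $\|V\|_{k,1}\le\|W\|_{k,r}$ is saturated by purely horizontal derivatives, which carry no power of $r$. If you want the exact constant as stated, you need the local argument of Conn (or an equivalent direct proof on the anisotropic product domain) rather than transport from $E_1$; alternatively, accept the weaker polynomial constant, which suffices downstream.
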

\begin{proof}
By \cite{Conn}, these inequalities hold locally i.e.\ for the $C^n$-norms on the space
$C^{\infty}(\overline{B}_1\times \overline{B}_r)$. Applying these to the components of the restrictions to the charts
$(E_{r|O^{1}_i},\widetilde{\chi}_i)$ of an element in $\mathfrak{X}^{\bullet}(E_r)$, we obtain the conclusion.
\end{proof}

\subsection{Tameness of some natural operators}
In this subsection we prove some tameness properties of the Schouten bracket, the pullback, and the flows of vector
fields.
\subsubsection*{The tame Fr\'echet Lie algebra of multivector fields} We prove that \[(\mathfrak{X}^{\bullet}(E_r),[\cdot,\cdot],\{\|\cdot\|_{n,r}\}_{n\geq 0})\] forms is a graded tame Fr\'echet Lie algebra.
\begin{lemma}\label{L_Bracket}
The Schouten bracket on $\mathfrak{X}^{\bullet}(E_r)$ satisfies
\[\|[W,V]\|_{n,r}\leq C_nr^{-(n+1)}(\|W\|_{0,r}\|V\|_{n+1,r}+\|W\|_{n+1,r}\|V\|_{0,r}),\  \forall \ r\in(0,1].\]
\end{lemma}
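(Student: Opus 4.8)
The plan is to reduce the estimate to the local picture in the charts $\widetilde\chi_i$ and then invoke the classical tame estimate for the bracket of multivector fields on $\mathbb{R}^m$. First I would recall the coordinate formula for the Schouten bracket: in a chart, writing $W=\sum_I W^I\partial_I$ and $V=\sum_J V^J\partial_J$, the bracket $[W,V]$ is a finite sum of terms each of which is, up to sign and reindexing, of the form $W^I\,(\partial_{z_k} V^J)\,\partial_{z_l}\wedge\partial_{\widehat I}\wedge\partial_{\widehat J}$ or $(\partial_{z_k} W^I)\,V^J\,\partial_{z_l}\wedge\partial_{\widehat I}\wedge\partial_{\widehat J}$. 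Thus each component of $[W,V]$ is a finite sum of products of a component of $W$ (or a first derivative of one) with a first derivative of a component of $V$ (or a component of $V$).

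The key estimate is then the classical Leibniz–type tame inequality for a product $fg$ of two functions on $\overline B_1\times\overline B_r$, namely $\|fg\|_{n,r}\le C_n(\|f\|_{0,r}\|g\|_{n,r}+\|f\|_{n,r}\|g\|_{0,r})$, which follows from the Leibniz rule for $\partial^\alpha(fg)$ together with the interpolation inequalities of Lemma \ref{L_interpolation} applied to the intermediate derivatives. Applying this with $f$ a component (or first derivative of a component) of $W$ and $g$ a first derivative of a component of $V$ (or a component of $V$), and absorbing the one extra derivative that appears on $V$ by replacing $\|V\|_{n,r}$ with $\|V\|_{n+1,r}$ in the relevant term, one gets for each local term a bound of the form $C_n\,r^{-?}\big(\|W\|_{0,r}\|V\|_{n+1,r}+\|W\|_{n+1,r}\|V\|_{0,r}\big)$. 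Here I must keep track of the powers of $r$: the interpolation constants from Lemma \ref{L_interpolation} carry factors $r^{k-l}$, and in the worst case (interpolating up to order $n+1$ against order $0$) this produces a factor $r^{-(n+1)}$; one checks that no larger negative power arises, since only first derivatives are taken before interpolation. Summing over the finitely many local terms and over the finitely many charts — using that the restriction of a multivector field to $E_{r|O_i}$ has $C^n$-norm bounded by $\|{\cdot}\|_{n,r}$, and conversely that the global norm is the sup over charts — yields the claimed inequality with a constant $C_n$ depending only on the trivialization data.

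The main obstacle, and the only place requiring genuine care, is the bookkeeping of the $r$-dependence: one needs the overlap condition \eqref{EQ_6} (so that transition functions between the charts $\widetilde\chi_i$ on $O_i^{3/2}\cap O_j^{3/2}$ are controlled uniformly in $r$, since they do not involve the fiber rescaling), and one must verify that the worst interpolation step indeed contributes exactly $r^{-(n+1)}$ and not more. Everything else is the routine combination of the Leibniz rule and Lemma \ref{L_interpolation}, exactly as in the proof of the analogous inequality in subsection \ref{Inequalities}.
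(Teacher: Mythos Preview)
Your proposal is correct and follows essentially the same approach as the paper: write the bracket locally as a sum of products of components and first derivatives, then apply the interpolation inequalities (Lemma \ref{L_interpolation}) to each term, tracking the power $r^{-(n+1)}$ that arises from interpolating between orders $0$ and $n+1$. The paper phrases the first step slightly more compactly as $\|[W,V]\|_{n,r}\le C_n\sum_{i+j=n+1}\|W\|_{i,r}\|V\|_{j,r}$ and then interpolates each summand directly, but this is equivalent to your product estimate; your concern about transition functions and the overlap condition \eqref{EQ_6} is unnecessary here, since the bracket is local and the norm is a supremum over charts, so one works chart by chart without ever changing coordinates.
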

\begin{proof}
By a local computation, the bracket satisfies inequalities of the form:
\[\|[W,V]\|_{n,r}\leq C_n\sum_{i+j=n+1}\|W\|_{i,r}\|V\|_{j,r}.\]
Using the interpolation inequalities, a term in this sum can be bounded by:
\[\|W\|_{i,r}\|V\|_{j,r}\leq C_nr^{-(n+1)}(\|W\|_{0,r}\|V\|_{n+1,r})^{\frac{j}{n+1}}(\|V\|_{0,r}\|W\|_{n+1,r})^{\frac{i}{n+1}}.\]
The following simple inequality (to be used also later) implies the conclusion
\begin{equation}\label{EQ_simple}
x^{\lambda}y^{1-\lambda}\leq x+y, \ \  \forall\  x,y\geq 0, \lambda\in[0,1].
\end{equation}
\end{proof}

\subsubsection*{The space of local diffeomorphisms}
Next, we consider the space of smooth maps $E_r\to E$ that are $C^1$-close to the inclusion $I_r:E_r\hookrightarrow
E$. We call a map $\varphi:E_r\to E$ a \textbf{local diffeomorphism}\emph{local diffeomorphism} if it can be extended
on some open to a diffeomorphism onto its image. Since $E_r$ is compact, this is equivalent to injectivity of
$d\varphi:TE_r\to TE$. To be able to measure $C^n$-norms of such maps, we work with the following open
neighborhood of $I_r$ in $C^{\infty}(E_r;E)$:
\[\mathcal{U}_r:=\left\{\varphi:E_r\rmap E: \varphi(E_{r|\overline{O}_i^{1}})\subset E_{|O_i}, 1\leq i\leq N\right\}.\]
Denote the local representatives of a map $\varphi\in\mathcal{U}_r$ by
\[\varphi_i:\overline{B}_{1}\times \overline{B}_{r}\rmap \mathbb{R}^{d}\times\mathbb{R}^{D}.\]
Define $C^n$-distances between maps $\varphi,\psi\in \mathcal{U}_r$ as follows
\[\mathrm{d}(\varphi,\psi)_{n,r}:=\sup_{1\leq i\leq N} \left\{|\frac{\partial^{|\alpha|}}{\partial z^{\alpha}}(\varphi_i-\psi_i)(z)|: z\in B_{1}\times B_{r},0\leq |\alpha|\leq n\right\}.\]
To control compositions of maps, we will also need the following $C^n$-distances
\[\mathrm{d}(\varphi,\psi)_{n,r,\delta}:=\sup_{1\leq i\leq N} \left\{|\frac{\partial^{|\alpha|}}{\partial z^{\alpha}}(\varphi_i-\psi_i)(z)|: z\in B_{\delta}\times B_{r},0\leq |\alpha|\leq n\right\},\]
which are well-defined only on the opens
\[\mathcal{U}^{\delta}_r:=\left\{\chi\in\mathcal{U}_r: \chi(E_{r|\overline{O}_i^{\delta}})\subset E_{|O_i}\right\}.\]
Similarly, we define also on $\mathfrak{X}^{\bullet}(E_r)$ norms $\|\cdot\|_{n,r,\delta}$ (these measure the
$C^n$-norms in all our local charts on $B_{\delta}\times B_{r}$).

These norms and distances are equivalent:
\begin{lemma}\label{Lemma_equivalent_norms}
There exist $C_n>0$, such that $\forall$  $r\in(0,1]$ and $\delta\in [1,4]$
\[\mathrm{d}(\varphi,\psi)_{n,r}\leq \mathrm{d}(\varphi,\psi)_{n,r,\delta}\leq C_n\mathrm{d}(\varphi,\psi)_{n,r}, \ \forall \ \varphi,\psi\in\mathcal{U}_{r}^{\delta},\]
\[\|W\|_{n,r}\leq \|W\|_{n,r,\delta}\leq C_n\|W\|_{n,r},\ \forall \ W\in\mathfrak{X}^{\bullet}(E_r).\]
\end{lemma}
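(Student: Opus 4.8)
The first inequality in each line is immediate: since $\delta\geq 1$ we have $B_1\times B_r\subset B_\delta\times B_r$, so the supremum defining $\|\cdot\|_{n,r}$ (resp.\ $\mathrm d(\cdot,\cdot)_{n,r}$) is a supremum over a subset of the one defining $\|\cdot\|_{n,r,\delta}$ (resp.\ $\mathrm d(\cdot,\cdot)_{n,r,\delta}$). So the plan is to establish the second inequality, and I would do it by a pointwise transfer between charts.

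Fix $W\in\mathfrak X^{\bullet}(E_r)$, an index $i$, and a point $z=(x,y)\in B_\delta\times B_r$ at which we want to bound $\partial^{\alpha}W_i(z)$ for $|\alpha|\leq n$. Put $p:=\chi_i^{-1}(x)$. Since $\delta\leq 4$ we have $p\in\overline{O_i^{4}}\subset O_i$, and since $\{O_j^1\}_{j=1}^N$ covers $S$ there is a $j$ with $p\in O_j^1$; hence $p\in \overline{O_i^4}\cap\overline{O_j^1}$, a fixed compact subset of $O_i\cap O_j$, and $z':=(\widetilde\chi_j\circ\widetilde\chi_i^{-1})(z)$ has base coordinate $\chi_j(p)\in B_1$. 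Because all the $\widetilde\chi_k$ trivialise $E$ using the same fibre metric (so that $\widetilde\chi_k(E_{r|O_k})=\mathbb R^d\times\overline B_r$ for every $r$), the fibre part of the transition maps $\overline B_r$ onto $\overline B_r$, so in fact $z'\in B_1\times B_r$. On the overlap one has $W_i=(\widetilde\chi_i\circ\widetilde\chi_j^{-1})_{*}W_j$, and the chain rule expresses $\partial^{\alpha}W_i(z)$ as a universal polynomial in the derivatives up to order $n$ of the transition map $\widetilde\chi_i\circ\widetilde\chi_j^{-1}$ at $\chi_j(p)$ and those of $W_j$ at $z'$. The former are bounded by a constant $C_n$: there are finitely many pairs $(i,j)$, each $\widetilde\chi_i\circ\widetilde\chi_j^{-1}$ is a fixed smooth map on $\chi_j(O_i\cap O_j)\times\mathbb R^{D}$, and $\chi_j(p)$ ranges over $\chi_j(\overline{O_i^4}\cap\overline{O_j^1})$, a compact subset of $\chi_j(O_i\cap O_j)$ that does not depend on $r$ — and the fibre variable always stays in $\overline B_1$. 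The latter are bounded by the $C^n$-size of $W_j$ at $z'\in B_1\times B_r$, hence by $\|W\|_{n,r}$. Taking the supremum over $z$, $\alpha$ and $i$ yields $\|W\|_{n,r,\delta}\leq C_n\|W\|_{n,r}$ with $C_n$ independent of $r$ and of $\delta\in[1,4]$.

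The estimate for the distances is proved the same way: writing $\varphi_i=(\widetilde\chi_i\circ\widetilde\chi_j^{-1})\circ\varphi_j\circ(\widetilde\chi_j\circ\widetilde\chi_i^{-1})$ on the relevant overlap (legitimate because $\varphi,\psi\in\mathcal U_r^{\delta}$ keep the images in the right charts and the fibre transitions preserve $\overline B_r$), one gets $\varphi_i(z)-\psi_i(z)=(\widetilde\chi_i\circ\widetilde\chi_j^{-1})(\varphi_j(z'))-(\widetilde\chi_i\circ\widetilde\chi_j^{-1})(\psi_j(z'))$, and the mean value theorem together with the uniform bounds on the finitely many fixed transition maps and their derivatives on the pertinent compact sets turns the $C^n$-size of the left-hand side at $z$ into $C_n$ times the $C^n$-size of $\varphi_j-\psi_j$ at $z'\in B_1\times B_r$, i.e.\ $\leq C_n\,\mathrm d(\varphi,\psi)_{n,r}$. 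I expect the only real obstacle to be the bookkeeping: one must check, each time a transition function or one of its derivatives is evaluated, that its argument lies in a compact set that stays inside the domain of that transition and is independent of $r$ and of the point $z$. This is exactly what the chart set-up provides — $\overline{O_i^4}$ has compact closure in $O_i$ (equivalently, via the exponential-map description, condition (\ref{EQ_6})), $\{O_j^1\}_j$ is itself a cover, and the fibre direction is never rescaled so the fibre variable always ranges in the fixed ball $\overline B_1$.
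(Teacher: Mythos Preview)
The paper leaves this lemma unproved. Your argument for the multivector case is correct: the chain-rule transfer through the transition $\tau_{ji}=\widetilde\chi_j\circ\widetilde\chi_i^{-1}$ evaluates $\tau_{ji}$ and its derivatives only at the \emph{domain} point $z$, whose base lies in the compact set $\chi_i(\overline{O_i^4}\cap\overline{O_j^1})\subset\chi_i(O_i\cap O_j)$ and whose fibre lies in $\overline B_r\subset\overline B_1$; since the transitions are fibrewise linear, all derivatives of $\tau_{ji}$ at such $z$ are bounded by constants independent of $r$ and $\delta$.

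The gap is in the distance case. Writing $\varphi_i(z)-\psi_i(z)=\tau_{ij}(\varphi_j(z'))-\tau_{ij}(\psi_j(z'))$ and invoking the mean value theorem forces you to evaluate derivatives of $\tau_{ij}$ along the segment joining $\varphi_j(z')$ and $\psi_j(z')$. These are \emph{image} points, not domain points, and membership in $\mathcal U_r^\delta$ places no bound on their base or fibre coordinates. Since $\tau_{ij}(x',y')=(\sigma_{ij}(x'),A_{ij}(x')y')$, the differential $d\tau_{ij}$ contains a block $(\partial_{x'}A_{ij})\,y'$ whose norm scales with $|y'|$, and the fibre of $\varphi_j(z')$ can be arbitrarily large; thus the constant you obtain is not uniform in $\varphi,\psi$. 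Your sentence ``the fibre variable always ranges in $\overline B_1$'' refers to the fibre of $z'$, not to the fibre of $\varphi_j(z')$ --- the two are different. A related issue affects the base: the segment between $\mathrm{base}(\varphi_j(z'))$ and $\mathrm{base}(\psi_j(z'))$ need not remain in a fixed compact subset of (or even in) $\chi_j(O_i\cap O_j)$.

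In the paper the distance inequality is only invoked with $\psi=I_r$ and with $\varphi$ already $C^1$-close to $I_r$ (see the proofs of Lemmas~\ref{Lemma_embedding}, \ref{Lemma_composition} and \ref{Lemma_size of _the flow}). Under such a hypothesis $\varphi_j(z')$ and the segment to $\psi_j(z')=z'$ stay in a fixed compact neighbourhood of $z'\in B_1\times B_r$, and then your argument is valid. So either add a standing hypothesis of the type $\mathrm d(\varphi,\psi)_{0,r}\leq 1$, or note that the distance part is only needed (and only true with a uniform $C_n$) in that regime.
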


We also use the simplified notations: 
\[\mathrm{d}(\psi)_{n,r}:=\mathrm{d}(\psi,I_r)_{n,r}, \ \ \mathrm{d}(\psi)_{n,r,\delta}:=\mathrm{d}(\psi,I_r)_{n,r,\delta}.\]

The lemma below is used to check that compositions are defined.
\begin{lemma}\label{Lemma_control_embedding}
There exists a constant $\theta>0$, such that for all $r\in (0,1]$, $\epsilon\in (0,1]$, $\delta\in [1,4]$ and all
$\varphi\in\mathcal{U}_{r}$, satisfying $\mathrm{d}(\varphi)_{0,r}<\epsilon\theta$,
\[\varphi(E_{r|\overline{O}_i^\delta})\subset E_{r+\epsilon|O_i^{\delta+\epsilon}}.\]
\end{lemma}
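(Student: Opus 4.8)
The plan is an elementary covering argument combined with a compactness estimate; the only genuinely delicate point is that the constant $\theta$ must be chosen uniformly in all of $r,\epsilon,\delta,i$.

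First I would fix $i$, $\delta\in[1,4]$, $\epsilon\in(0,1]$, $r\in(0,1]$ and a point $v\in E_{r|\overline{O}_i^{\delta}}$ with base point $p$, and reduce everything to a single chart. Since $\{O_j^1\}_j$ covers $S$, there is $j$ with $p\in O_j^1$, so $v\in E_{r|\overline{O}_j^{1}}$ and hence $\varphi(v)\in E_{|O_j}$ by the definition of $\mathcal{U}_r$; the local representative $\varphi_j=\widetilde{\chi}_j\circ\varphi\circ\widetilde{\chi}_j^{-1}$ is then defined on $\overline{B}_1\times\overline{B}_r\ni\widetilde{\chi}_j(v)$, and $|\varphi_j(\widetilde{\chi}_j(v))-\widetilde{\chi}_j(v)|\le\mathrm{d}(\varphi)_{0,r}<\epsilon\theta$. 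Writing $\widetilde{\chi}_j(v)=(\chi_j(p),y)$ with $|y|\le r$ and $\widetilde{\chi}_j(\varphi(v))=(\chi_j(q),y')$ with $q$ the base point of $\varphi(v)$, this gives $|\chi_j(q)-\chi_j(p)|<\epsilon\theta$ and $|y'-y|<\epsilon\theta$. The fibre part is then immediate from the metric compatibility of the charts, $\widetilde{\chi}_j(E_{\rho|O_j})=\mathbb{R}^d\times\overline{B}_\rho$ for every $\rho$: one has $|\varphi(v)|=|y'|\le|y|+\epsilon\theta\le r+\epsilon$ as soon as $\theta\le1$, and this bound carries no dependence on $r$.

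The main content is the base estimate. I would fix once and for all a Riemannian distance $d_S$ on the compact manifold $S$. Since the atlas is finite, each $\chi_j^{-1}$ is Lipschitz on the compact set $\chi_j^{-1}(\overline{B}_2)\subset O_j$, so (using $\theta\le1$ to keep $\chi_j(q)\in B_2$) $d_S(q,p)<C\,\epsilon\theta$ for a constant $C$ depending only on the charts. On the other hand I would prove the uniform gap
\[ d_S\big(\overline{O}_i^{\delta},\,S\setminus O_i^{\delta+\epsilon}\big)\ \ge\ c\,\epsilon, \qquad \textrm{for all } \delta\in[1,4],\ \epsilon\in(0,1], \]
with $c>0$ independent of $\delta$ and $\epsilon$: splitting $S\setminus O_i^{\delta+\epsilon}$ according to whether a point lies in $O_i^{6}$ or not, the first alternative is handled in the chart $\chi_i$, where $\overline{O}_i^{\delta}=\chi_i^{-1}(\overline{B}_\delta)$ and the Euclidean gap to $\mathbb{R}^d\setminus B_{\delta+\epsilon}$ is exactly $\epsilon$, transported to $d_S$ by the Lipschitz constant of $\chi_i$ on the compact set $\chi_i^{-1}(\overline{B}_6)\subset O_i$; the second alternative is handled by the strictly positive distance between the disjoint sets $\overline{O}_i^{4}$ and $S\setminus O_i^{6}$, which dominates $c\,\epsilon$ because $\epsilon\le1$. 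It is exactly here that $\delta\le4$, $\epsilon\le1$ and the finiteness and geometric arrangement of the cover (cf.\ \ref{EQ_6}) are used to keep all constants uniform. Taking $\theta:=\min\{1,c/C\}$ and then the minimum of the finitely many such $\theta$'s over $i$, we obtain $d_S(q,p)<c\,\epsilon$, hence $q\in O_i^{\delta+\epsilon}$; together with $|\varphi(v)|\le r+\epsilon$ this yields $\varphi(v)\in E_{r+\epsilon|O_i^{\delta+\epsilon}}$, as required.

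The only real obstacle I anticipate is precisely this uniformity: one must check that the margin by which $\overline{O}_i^{\delta}$ sits inside $O_i^{\delta+\epsilon}$ degenerates no faster than linearly in $\epsilon$ and not at all as $\delta$ ranges over $[1,4]$. Everything else — the reduction to one chart and the fibre estimate — is routine once the charts are taken metric-compatible as in the setup.
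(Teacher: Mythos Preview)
The paper states this lemma without proof, treating it as routine; so there is no ``paper's proof'' to compare against. Your argument is correct and is exactly the kind of elementary covering/compactness estimate the author had in mind. The fibre estimate is immediate from the standing assumption $\widetilde{\chi}_i(E_{\rho|O_i})=\mathbb{R}^d\times\overline{B}_\rho$, and your passage through an auxiliary Riemannian distance $d_S$ to transfer the $C^0$-bound from chart $j$ to the geometric statement $q\in O_i^{\delta+\epsilon}$ in chart $i$ is the clean way to handle the uniformity in $i,j,\delta,\epsilon$. One small remark: your parenthetical reference to property~(\ref{EQ_6}) is not actually needed or used in your argument; the gap estimate only requires compactness of $\overline{O}_i^4$ and finiteness of the cover.
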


We prove now that $I_r$ has a $C^1$-neighborhood of local diffeomorphisms.
\begin{lemma}\label{Lemma_embedding}
There exists $\theta>0$, such that, for all $r\in(0,1]$, if $\psi\in \mathcal{U}_r$ satisfies
$\mathrm{d}(\psi)_{1,r}<\theta$, then $\psi$ is a local diffeomorphism.
\end{lemma}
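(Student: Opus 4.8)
The plan is to prove Lemma \ref{Lemma_embedding} by a quantitative inverse function theorem argument: we show that if $\psi$ is $C^1$-close to the inclusion $I_r$, then its differential $d\psi$ is close to $dI_r$, hence invertible at every point, and moreover $\psi$ is locally injective in a uniform way, so that it is an embedding onto its image. The key point is that all estimates must be uniform in $r\in(0,1]$, which is why we work with the cover $\{O_i^\delta\}$ and the property \eqref{EQ_6}.

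First I would work chart by chart. Fix $i$ and consider the local representative $\psi_i:\overline{B}_1\times\overline{B}_r\to\mathbb{R}^d\times\mathbb{R}^D$. The hypothesis $\mathrm{d}(\psi)_{1,r}<\theta$ gives $|D\psi_i(z)-\mathrm{Id}|<\theta$ for all $z\in B_1\times B_r$ (and, after enlarging slightly using Lemma \ref{Lemma_equivalent_norms}, also on $B_{\delta}\times B_r$ for $\delta$ slightly bigger than $1$, which we need because $\overline{O}_i^1$ is contained in the interior of $O_i^\delta$). For $\theta<1/2$ this makes $D\psi_i(z)$ invertible with uniformly bounded inverse; in particular $d\psi:TE_r\to TE$ is fibrewise injective, so $\psi$ is an immersion. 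Since $\dim E_r=\dim E$, $\psi$ is a local diffeomorphism in the naive sense: every point of $E_r$ has a neighborhood on which $\psi$ restricts to an open embedding. It remains to upgrade this to global injectivity on $E_r$ (or on a slightly shrunk tube), which is what ``can be extended on some neighborhood to a diffeomorphism onto its image'' requires.

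For global injectivity I would use the standard Lipschitz estimate: on each convex chart domain, $|\psi_i(z)-\psi_i(z')-(z-z')|\le \theta|z-z'|$, hence $|\psi_i(z)-\psi_i(z')|\ge(1-\theta)|z-z'|$, so $\psi_i$ is injective on $B_\delta\times B_r$. To pass from injectivity-in-each-chart to injectivity on all of $E_r$, suppose $\psi(v)=\psi(w)$ with $v\in E_{r|\overline{O}_i^1}$, $w\in E_{r|\overline{O}_j^1}$. By Lemma \ref{Lemma_control_embedding}, for $\theta$ small the image $\psi(v)$ lies in $E_{r+\epsilon|O_i^{1+\epsilon}}$, and similarly $\psi(w)\in E_{|O_j}$ projects near the base point of $w$; comparing base points and choosing $\epsilon$ small shows $O_i^{3/2}\cap O_j^{3/2}\neq\emptyset$, whence by \eqref{EQ_6} both $v,w$ lie in the single chart domain $E_{|O_i^4}$ (again using that $\psi$ maps $E_{r|\overline{O}_i^1}$ into $E_{|O_i}$ with controlled displacement). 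On that single chart the local injectivity estimate above forces $v=w$. Finally, since $\psi$ is an injective immersion of the compact manifold-with-boundary $E_r$, it is an embedding, and by the inverse function theorem $\psi^{-1}$ extends smoothly to an open neighborhood of $\psi(E_r)$; one may also shrink slightly to $E_{r'}$, $r'<r$, to get an honest open embedding of a neighborhood of $E_{r'}$. This produces the required extension, so $\psi$ is a local diffeomorphism in the sense of the paper.

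The main obstacle I expect is the bookkeeping in the global-injectivity step: making the transition from ``injective on each $B_\delta\times B_r$'' to ``injective on $E_r$'' genuinely uniform in $r$. One has to be careful that the overlap condition \eqref{EQ_6} is exactly what lets two points with coinciding images be forced into a common chart, and that the constant $\theta$ controlling $\mathrm{d}(\psi)_{1,r}$ can be chosen independently of $r$ — this is where Lemma \ref{Lemma_equivalent_norms} (equivalence of the norms $\|\cdot\|_{n,r}$ and $\|\cdot\|_{n,r,\delta}$ with $r$-independent constants) and Lemma \ref{Lemma_control_embedding} (controlling how far $\psi$ displaces base points, again with an $r$-independent $\theta$) do the real work. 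Everything else is a routine application of the mean value inequality and the inverse function theorem on $B_\delta\times B_r$.
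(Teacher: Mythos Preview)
Your proposal is correct and follows essentially the same approach as the paper: show that $d\psi$ is invertible at each point via the $C^1$-estimate in charts, then prove global injectivity by using Lemma~\ref{Lemma_control_embedding} and the overlap property~\eqref{EQ_6} to force any two points with the same image into a single chart $E_{r|O_i^4}$, where the mean value inequality gives $|w^i-w^j|\le\tfrac12|w^i-w^j|$. The paper carries out exactly this argument, with the explicit constant $\tfrac{1}{2(d+D)}$ for the partial derivatives of $g_i=\psi_i-\mathrm{Id}$ on $\overline{B}_4\times\overline{B}_r$ obtained via Lemma~\ref{Lemma_equivalent_norms}.
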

\begin{proof}
By Lemma \ref{Lemma_control_embedding}, if we shrink $\theta$, we may assume that
\begin{equation}\label{EQ_D3}
\psi(E_{r|\overline{O}_i^1})\subset E_{|O_i^{3/2}},\ \ \psi(E_{r|\overline{O}_i^4})\subset E_{|O_i}.
\end{equation}
In a local chart, we write $\psi$ as follows
\[\psi_i:=\textrm{Id}+g_{i}: \overline{B}_4\times \overline{B}_{r}\rmap \mathbb{R}^d\times\mathbb{R}^D.\]
By Lemma \ref{Lemma_equivalent_norms}, if we shrink $\theta$, we may also assume that
\begin{equation}\label{EQ_D1}
|\frac{\partial g_i}{\partial z_j}(z)|<\frac{1}{2(d+D)},  \forall \ z\in \overline{B}_4\times \overline{B}_{r}.
\end{equation}
This ensures that $\textrm{Id}+(dg_i)_z$ is close enough to $\textrm{Id}$ so that it is invertible for all $z\in
\overline{B}_1\times \overline{B}_{r}$; thus, $(d\psi)_p$ is invertible for all $p\in E_r$.

We check now injectivity of $\psi$. Let $p^i\in E_{r|O^1_i}$ and $p^j\in E_{r|O^1_j}$ be such that
$\psi(p^i)=q=\psi(p^j)$. Then, by (\ref{EQ_D3}), $q\in E_{|O^{3/2}_i}\cap E_{|O^{3/2}_j}$, so, by the property
(\ref{EQ_6}) of the opens, we know that $O^{1}_j\subset O^{4}_i$, hence $p^i,p^j\in E_{r|O^4_i}$. Denoting by
$w^i:=\widetilde{\chi}_i(p^i)$ and $w^j:=\widetilde{\chi}_i(p^j)$ we have that $w^i,w^j\in \overline{B}_4\times
\overline{B}_{r}$. Since $w^i+g_i(w^i)=w^j+g_i(w^j)$, using (\ref{EQ_D1}), we obtain
\begin{align*}
|w^i-&w^j|=|g_i(w^i)-g_i(w^j)|=\\
&=|\int_0^1\sum_{k=1}^{D+d}\frac{\partial g_i}{\partial z_k}(tw^i+(1-t)w^j)(w^i_k-w^j_k)dt|\leq \frac{1}{2}|w^i-w^j|.
\end{align*}
Thus $w^i=w^j$, and so $p^i=p^j$. This finishes the proof.
\end{proof}

The composition satisfies the following tame inequalities.
\begin{lemma}\label{Lemma_composition}
There are constants $C_n> 0$ such that for all $1\leq \delta\leq \sigma\leq 4$ and all $0<s\leq r\leq 1$, we have that if
$\varphi\in\mathcal{U}_{s}$ and $\psi\in\mathcal{U}_{r}$ satisfy
\[\varphi(E_{s|\overline{O}_i^{\delta}})\subset E_{r|O_i^{\sigma}}, \ \psi(E_{r|\overline{O}_i^{\sigma}})\subset E_{|O_i}, \ \forall\  0\leq i\leq N,\]
and $\mathrm{d}(\varphi)_{1,s}<1$, then the following inequalities hold:
\begin{align*}
\mathrm{d}(\psi\circ \varphi)_{n,s,\delta}\leq &\mathrm{d}(\psi)_{n,r,\sigma}+\mathrm{d}(\varphi)_{n,s,\delta}+\\
&+C_ns^{-n}(\mathrm{d}(\psi)_{n,r,\sigma}\mathrm{d}(\varphi)_{1,s,\delta}+\mathrm{d}(\varphi)_{n,s,\delta}\mathrm{d}(\psi)_{1,r,\sigma}),\\
\mathrm{d}(\psi\circ \varphi,\psi)_{n,s,\delta}\leq& \mathrm{d}(\varphi)_{n,s,\delta}+\\
&+C_ns^{-n}(\mathrm{d}(\psi)_{n+1,r,\sigma}\mathrm{d}(\varphi)_{1,s,\delta}+\mathrm{d}(\varphi)_{n,s,\delta}\mathrm{d}(\psi)_{1,r,\sigma}).
\end{align*}
\end{lemma}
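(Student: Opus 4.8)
The plan is to prove both inequalities by reducing everything to local estimates in the fixed charts $\widetilde{\chi}_i$, combined with the classical Faà di Bruno / chain-rule tame estimates for compositions of maps between Euclidean balls. The hypotheses $\varphi(E_{s|\overline{O}_i^{\delta}})\subset E_{r|O_i^{\sigma}}$ and $\psi(E_{r|\overline{O}_i^{\sigma}})\subset E_{|O_i}$ guarantee that, for each fixed index $i$, the composition $\psi\circ\varphi$ restricted to $E_{s|\overline{O}_i^{\delta}}$ lands in $E_{|O_i}$, so that its local representative $(\psi\circ\varphi)_i$ on $B_\delta\times B_s$ is genuinely the composite $\psi_i\circ\varphi_i$ (with $\psi_i$ read on $B_\sigma\times B_r$). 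Hence $\mathrm{d}(\psi\circ\varphi)_{n,s,\delta}$ is computed from the Euclidean chain rule applied to $\mathrm{Id}+g_i=\psi_i$ and $\mathrm{Id}+f_i=\varphi_i$, and likewise $\mathrm{d}(\psi\circ\varphi,\psi)_{n,s,\delta}$ is the local $C^n$-size of $\psi_i\circ\varphi_i-\psi_i$.

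First I would recall the standard tame estimate for composition in Euclidean space: if $u:B_\delta\times B_s\to\mathbb R^{d+D}$ and $v$ is defined on a ball containing the image of $\mathrm{Id}+u$, then for $n\ge 1$,
\[
\|v\circ(\mathrm{Id}+u)-v\|_{C^n}\le C_n\big(\|v\|_{C^n}\|u\|_{C^1}+\|u\|_{C^n}\|v\|_{C^1}\big),
\]
and more generally $\|v\circ(\mathrm{Id}+u)-v-Dv\cdot u\|_{C^n}$ is controlled by quadratic terms; these are exactly the inequalities in Lemma A.5 of \cite{Ham} (composition is a tame map). The only subtlety is the appearance of the negative power $s^{-n}$: this comes from the fact that our charts have fiber radius $s$, and a derivative $\partial/\partial y_j$ of a function on $B_s$ contributes a factor $s^{-1}$ relative to the unit-ball normalization used in \cite{Ham}. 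Concretely, I would rescale the fiber variable by $\mu_{1/s}$ as in the proof of Lemma \ref{Lemma_smoothing_operators}, apply the unit-ball composition estimate, and rescale back; each of the up-to-$n$ fiber derivatives produces at most $s^{-n}$, and the condition $\mathrm{d}(\varphi)_{1,s}<1$ is what keeps $\mathrm{Id}+f_i$ inside the domain of $\psi_i$ and keeps the first-order factors bounded so they do not blow up the constant. The term $\mathrm{d}(\psi)_{n,r,\sigma}+\mathrm{d}(\varphi)_{n,s,\delta}$ (resp. $\mathrm{d}(\varphi)_{n,s,\delta}$) is the ``linear part'' — $\psi_i\circ\varphi_i\approx \psi_i+\varphi_i-\mathrm{Id}$ to first order — and the $C_n$-term collects all higher-order contributions, where the shift from $\mathrm{d}(\psi)_{n}$ to $\mathrm{d}(\psi)_{n+1}$ in the second inequality reflects that $\psi_i\circ\varphi_i-\psi_i=\int_0^1 D\psi_i(\mathrm{Id}+tf_i)\cdot f_i\,dt$ costs one extra derivative of $\psi$.

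Finally I would use Lemma \ref{Lemma_equivalent_norms} to pass between the auxiliary distances $\mathrm{d}(\cdot)_{n,r,\delta}$ on the enlarged balls $B_\delta$ and the actual distances $\mathrm{d}(\cdot)_{n,r}$, absorbing the resulting uniform constants into $C_n$; since $\delta,\sigma$ range only over $[1,4]$ this introduces no $r$- or $s$-dependence. The main obstacle, and the part requiring genuine care rather than bookkeeping, is tracking the exact power of $s$: one must verify that the fiber-rescaling argument really yields $s^{-n}$ and not something worse like $s^{-2n}$, which would be fatal for the fast-convergence scheme later; the point is that in the chain rule $D^\alpha(v\circ(\mathrm{Id}+u))$ each term is a product of one derivative $D^\beta v$ with derivatives of $u$ whose orders sum to $|\alpha|$, and the total number of fiber-differentiations appearing in any single such monomial is at most $|\alpha|\le n$, so the worst rescaling factor is $s^{-n}$. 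Everything else is a routine application of the interpolation inequalities (Lemma \ref{L_interpolation}) and the elementary bound \eqref{EQ_simple} to organize the higher-order terms into the stated product form.
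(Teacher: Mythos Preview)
Your proposal is essentially correct and follows the same route as the paper: work in the fixed charts, write $\varphi_i=\mathrm{Id}+g_i$, $\psi_i=\mathrm{Id}+f_i$, expand $\partial^\alpha(f_i\circ(\mathrm{Id}+g_i))$ by Fa\`a di Bruno, use the interpolation inequalities together with the elementary bound $x^\lambda y^{1-\lambda}\le x+y$ to collapse the intermediate norms into the two-term product, and for the second inequality write $f_i(z+g_i(z))-f_i(z)=\int_0^1\sum_j\partial_{z_j}f_i(z+tg_i(z))\,g_i^j(z)\,dt$ to explain the extra derivative on $\psi$.

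One clarification is worth making: the factor $s^{-n}$ does \emph{not} arise from a fiber-rescaling $\mu_{1/s}$ as you suggest, and that detour would actually be awkward here because $\varphi_i$ maps a ball of fiber-radius $s$ into one of fiber-radius $r\ne s$, so rescaling domain and range compatibly does not reduce you to a unit-ball statement. In the paper the $s$-power comes directly from the constants in Lemma~\ref{L_interpolation}: interpolating $\|f_i\|_{|\beta|}$ between $\|f_i\|_1$ and $\|f_i\|_n$ costs $s^{1-|\beta|}$ (using $s\le r$ to replace $r^{1-|\beta|}$ by the larger $s^{1-|\beta|}$), similarly for each $\|g_i\|_{|\gamma_k|}$, and the exponents add up to $s^{1-n}$ by the Fa\`a di Bruno constraint $|\beta|+\sum(|\gamma_k|-1)=n$. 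The hypothesis $\mathrm d(\varphi)_{1,s}<1$ (via Lemma~\ref{Lemma_equivalent_norms}) is used only to absorb the leftover factor $\|g_i\|_{1,s,\delta}^{\,p-1}$ into the constant. Once you drop the rescaling step and run the interpolation argument directly, your sketch matches the paper's proof line for line.
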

\begin{proof}
Denote the local expressions of $\varphi$ and $\psi$ as follows:
\[\varphi_i:=\textrm{Id}+g_{i}:\overline{B}_{\delta}\times\overline{B}_{s}\rmap B_{\sigma}\times B_{r},\]
\[\psi_{i}:=\textrm{Id}+f_i:\overline{B}_{\sigma}\times \overline{B}_{r}\rmap \mathbb{R}^d\times \mathbb{R}^D.\]
Then for all $z\in \overline{B}_{\delta}\times\overline{B}_{s}$, we can write
\[\psi_i(\varphi_i(z))-z=f_i(z+g_i(z))+g_i(z).\]
By computing the $\frac{\partial^{|\alpha|}}{\partial z^{\alpha}}$ of the right hand side, for a multi-index $\alpha$
with $|\alpha|=n$, we obtain an expression of the form
\begin{align*}
\frac{\partial^{|\alpha|}g_i}{\partial z^{\alpha}}(z)+\frac{\partial^{|\alpha|}f_{i}}{\partial z^{\alpha}}(\varphi_{i}(z))+\sum_{\beta,\gamma_1,\ldots,\gamma_p}\frac{\partial^{|\beta|}f_{i}}{\partial z^{\beta}}(\varphi_i(z))\frac{\partial^{|\gamma_1|}g^{j_1}_{i}}{\partial z^{\gamma_1}}(z)\ldots\frac{\partial^{|\gamma_p|}g^{j_p}_{i}}{\partial z^{\gamma_p}}(z),
\end{align*}
where the multi-indices in the sum satisfy
\begin{equation}\label{EQ_E2}
1\leq p\leq n,\ 1\leq|\beta|,|\gamma_j|\leq n, \ |\beta|+\sum_{j=1}^p(|\gamma_j|-1)=n.
\end{equation}
The first two terms can be bounded by $\mathrm{d}(\psi)_{n,r,\sigma}+\mathrm{d}(\varphi)_{n,s,\delta}$. For the
last term we use the interpolation inequalities to obtain
\[\|f_i\|_{|\beta|,r,\sigma}\leq C_ns^{1-|\beta|}\|f_i\|_{1,r,\sigma}^{\frac{n-|\beta|}{n-1}}\|f_i\|_{n,r,\sigma}^{\frac{|\beta|-1}{n-1}},\]
\[\|g_i\|_{|\gamma_i|,s,\delta}\leq C_ns^{1-|\gamma_i|}\|g_i\|_{1,s,\delta}^{\frac{n-|\gamma_i|}{n-1}}\|g_i\|_{n,s,\delta}^{\frac{|\gamma_i|-1}{n-1}}.\]
Multiplying all these, and using (\ref{EQ_E2}), the sum is bounded by
\[ C_ns^{1-n}\|g_i\|_{1,s,\delta}^{p-1}(\|f_i\|_{1,r,\sigma}\|g_i\|_{n,s,\delta})^{\frac{n-|\beta|}{n-1}}(\|f_i\|_{n,r,\sigma}\|g_i\|_{1,s,\delta})^{\frac{|\beta|-1}{n-1}}.\]
Lemma \ref{Lemma_equivalent_norms} implies that $\|g_i\|_{1,s,\delta}<C$. Dropping this term, the first part follows
using inequality (\ref{EQ_simple}).

For the second part, write for $z\in \overline{B}_{\delta}\times\overline{B}_{s}$:
\[\psi_i(\varphi_i(z))-\psi_i(z)=f_i(z+g_i(z))-f_i(z)+g_i(z).\]
We compute $\frac{\partial^{|\alpha|}}{\partial z^{\alpha}}$ of the right hand side, for $\alpha$ a multi-index with
$|\alpha|=n$:
\begin{align*}
\frac{\partial^{|\alpha|}f_{i}}{\partial z^{\alpha}}&(\varphi_{i}(z))-\frac{\partial^{|\alpha|}f_{i}}{\partial z^{\alpha}}(z)+\frac{\partial^{|\alpha|}g_{i}}{\partial z^{\alpha}}(z)+\\
&+ \sum_{\beta,\gamma_1,\ldots,\gamma_p}\frac{\partial^{|\beta|}f_{i}}{\partial z^{\beta}}(\varphi_{i}(z))\frac{\partial^{|\gamma_1|}g^{j_1}_{i}}{\partial z^{\gamma_1}}(z)\ldots\frac{\partial^{|\gamma_p|}g^{j_p}_{i}}{\partial z^{\gamma_p}}(z).
\end{align*}
where the multi-indices in the sum satisfy (\ref{EQ_E2}). The last term we bound as before, and the third by
$\mathrm{d}(\varphi)_{n,s,\delta}$. Writing the first two terms as
\begin{align*}
\sum_{j=1}^{d+D}\int_{0}^{1}\frac{\partial^{|\alpha|+1}f_{i}}{\partial z_j\partial z^{\alpha}}(z+tg_{i}(z))g^j_{i}(z)dt,
\end{align*}
they are less than $C\mathrm{d}(\psi)_{n+1,r,\sigma}\mathrm{d}(\varphi)_{0,s,\delta}$. Adding up, the result
follows.
\end{proof}

We give now conditions for infinite compositions of maps to converge.
\begin{lemma}\label{Lemma_convergent_embbedings}
There exists $\theta>0$, such that for all sequences
\[\{\varphi_{k}\in \mathcal{U}_{r_k}\}_{k\geq 1},\ \ \varphi_k:E_{r_k}\rmap E_{r_{k-1}},\]
where $0<r<r_{k}<r_{k-1}\leq r_0<1$, which satisfy
\[\sigma_0:=\sum_{k\geq 1}\mathrm{d}(\varphi_k)_{0,r_k}<\theta,\ \ \ \sigma_n:=\sum_{k\geq 1}\mathrm{d}(\varphi_k)_{n,r_k}<\infty,\  \forall \ n\geq 1,\]
the sequence of maps
\[\psi_k:=\varphi_1\circ\ldots\circ \varphi_k:E_{r_k}\rmap E_{r_0},\]
converges in all $C^n$-norms on $E_r$ to a map $\psi:E_r\to E_{r_0}$, with $\psi\in\mathcal{U}_r$. Moreover, there
are $C_n>0$, such that if $\mathrm{d}(\varphi_k)_{1,r_k}<1$, $\forall$ $k\geq 1$, then
\[\mathrm{d}(\psi)_{n,r}\leq e^{C_nr^{-n}\sigma_n}C_nr^{-n}\sigma_n.\]
\end{lemma}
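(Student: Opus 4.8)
The plan is to run the Nash--Moser-style bookkeeping already used for the analogous lemma in Chapter \ref{ChBasicPoisson}, now carried out uniformly over the family of tubes $E_{r_k}$. Write $\psi_0 := I_{r_0}$ and $\psi_k = \psi_{k-1}\circ\varphi_k : E_{r_k}\to E_{r_0}$, so the task is to control this telescoping composition. First I would fix collar parameters. Using $\sigma_0 = \sum_k \mathrm{d}(\varphi_k)_{0,r_k} < \theta$, with $\theta$ a sufficiently small multiple of the constant $\theta'$ of Lemma \ref{Lemma_control_embedding}, choose $\epsilon_k > 0$ with $\mathrm{d}(\varphi_k)_{0,r_k} < \epsilon_k\theta'$ and $\sum_k\epsilon_k < 1$, and set $\delta_k := 1 + \sum_{j>k}\epsilon_j$, a decreasing sequence in $(1,2)$ with $\delta_{k-1}-\delta_k = \epsilon_k$ and $\delta_k\downarrow 1$. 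Then Lemma \ref{Lemma_control_embedding} gives $\varphi_k(E_{r_k|\overline{O}_i^{\delta_k}})\subset E_{r_{k-1}|O_i^{\delta_{k-1}}}$, and an easy induction (base case $\psi_0 = I_{r_0}$) shows $\psi_k(E_{r_k|\overline{O}_i^{\delta_k}})\subset E_{|O_i}$, so $\psi_k\in\mathcal{U}_{r_k}^{\delta_k}$; in particular all the chart hypotheses needed to apply Lemma \ref{Lemma_composition} to the pair $(\psi_{k-1},\varphi_k)$ hold.

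Next I would prove the quantitative bound, assuming $\mathrm{d}(\varphi_k)_{1,r_k}<1$ for all $k$. Put $a_k^{(n)} := \mathrm{d}(\psi_k)_{n,r_k,\delta_k}$ and $b_k^{(n)} := \mathrm{d}(\varphi_k)_{n,r_k,\delta_k}$; by Lemma \ref{Lemma_equivalent_norms} one has $\sum_k b_k^{(n)}\le C_n\sigma_n$, and since $r_k\ge r$ every $r_k$-dependent constant in Lemma \ref{Lemma_composition} may be replaced by its value at $r$. The first inequality of Lemma \ref{Lemma_composition} applied to $\psi_k = \psi_{k-1}\circ\varphi_k$ gives
\[ a_k^{(n)} \le a_{k-1}^{(n)} + b_k^{(n)} + C_n r^{-n}\bigl(a_{k-1}^{(n)}b_k^{(1)} + b_k^{(n)}a_{k-1}^{(1)}\bigr), \qquad a_0^{(n)} = 0 .\]
For $n=1$ this is a linear recursion whose solution, by the discrete Gronwall inequality, satisfies $a_k^{(1)} \le e^{C_1 r^{-1}\sigma_1}C_1\sigma_1 =: A_1$ uniformly in $k$. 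Feeding this back for general $n$, and absorbing the extra factors of $\sigma_n$ and $A_1$ into an exponential by means of the elementary inequalities $xy\le x+y$ and $x\le e^x$ (together with $r^{-1}\le r^{-n}$ and $\sigma_1\le\sigma_n$), one gets, after relabelling constants, $a_k^{(n)} \le e^{C_n r^{-n}\sigma_n}C_n r^{-n}\sigma_n$, uniformly in $k$. This arithmetic is slightly delicate but routine.

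For convergence without the extra hypothesis, I would note that $\sigma_1<\infty$ forces $\mathrm{d}(\varphi_k)_{1,r_k}<1$ for all $k\ge K$, for some $K$; factor $\psi_k = \Phi\circ\psi_k''$ with $\Phi := \varphi_1\circ\cdots\circ\varphi_{K-1}$ a fixed smooth map and $\psi_k'' := \varphi_K\circ\cdots\circ\varphi_k$. The sequence $\psi_k''$ falls under the case just treated (re-indexed, with $r_{K-1}$ in the role of $r_0$), so $\sup_k\mathrm{d}(\psi_k'')_{m,r_k,\delta_k} =: A_m < \infty$ for every $m$. Applying the second inequality of Lemma \ref{Lemma_composition} to $\psi_k'' = \psi_{k-1}''\circ\varphi_k$ gives
\[ \mathrm{d}(\psi_k'',\psi_{k-1}'')_{n,r_k,\delta_k} \le b_k^{(n)} + C_n r^{-n}\bigl(A_{n+1}b_k^{(1)} + A_1 b_k^{(n)}\bigr) \le C_{n,r}\, b_k^{(n)} ,\]
hence $\sum_k\mathrm{d}(\psi_k'',\psi_{k-1}'')_{n,r}\le C_{n,r}\sum_k b_k^{(n)}<\infty$, so $\{\psi_k''|_{E_r}\}$ is Cauchy in $C^n(E_r,E)$ for every $n$ and converges to a $C^\infty$ map $\psi''$. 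Since $E_{r_{K-1}}$ is closed and $\mathrm{d}(\psi'')_{0,r}\le C\sigma_0<C\theta$ is small (so $\psi''(E_{r|\overline{O}_i^1})\subset E_{|O_i}$ by compactness), $\psi''\in\mathcal{U}_r$ maps into $E_{r_{K-1}}$; then $\psi := \Phi\circ\psi''\in\mathcal{U}_r$ maps into $E_{r_0}$, and $\psi_k = \Phi\circ\psi_k''\to\psi$ in every $C^n$-norm because post-composition with the fixed smooth map $\Phi$ is $C^n$-continuous. When the extra hypothesis holds one may take $K=1$, $\Phi=I_{r_0}$, and passing the uniform bound of the previous paragraph to the limit gives $\mathrm{d}(\psi)_{n,r}\le e^{C_n r^{-n}\sigma_n}C_n r^{-n}\sigma_n$.

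The main obstacle is the first paragraph: choosing the collar sizes $\delta_k$ adapted to the merely summable distances $\mathrm{d}(\varphi_k)_{0,r_k}$ (the individual terms are not controlled, only their sum) so that every composition is well defined and every chart hypothesis of Lemmas \ref{Lemma_control_embedding} and \ref{Lemma_composition} is met simultaneously along the whole induction. Once that scaffolding is in place, the rest is a standard Gronwall iteration plus a splitting-off of finitely many initial maps.
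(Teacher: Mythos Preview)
Your proposal is correct and follows essentially the same approach as the paper: choose a decreasing sequence of collar parameters $\delta_k$ adapted to the summable $C^0$-distances, use Lemma~\ref{Lemma_control_embedding} to keep every composition inside the right chart domains, feed Lemma~\ref{Lemma_composition} into a Gronwall-type iteration, and split off finitely many initial maps when the hypothesis $\mathrm{d}(\varphi_k)_{1,r_k}<1$ fails.

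The only real difference is in how you handle the recursion. The paper observes directly that the first inequality of Lemma~\ref{Lemma_composition} (after using $b_k^{(1)}\le b_k^{(n)}$ and $a_{k-1}^{(1)}\le a_{k-1}^{(n)}$) implies a \emph{multiplicative} inequality
\[
1+a_k^{(n)} \le \bigl(1+a_{k-1}^{(n)}\bigr)\bigl(1+C_n r^{-n} b_k^{(n)}\bigr),
\]
which telescopes in one step to $1+a_k^{(n)}\le e^{C_n r^{-n}\sigma_n}$, and then $e^x-1\le xe^x$ gives the stated bound. Your route---first bounding $a_k^{(1)}$ by Gronwall, then feeding $A_1$ back into the recursion for general $n$---also works, but the bookkeeping is heavier (in particular you have to absorb a factor of the form $1+Cxe^{Cx}$ with $x=r^{-n}\sigma_n$ into a single exponential, which is fine but is the ``slightly delicate'' step you flag). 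The paper's multiplicative trick avoids the two-stage argument entirely.
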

\begin{proof}
Consider the following sequences of numbers:
\[\epsilon_k:=\frac{\mathrm{d}(\varphi_k)_{0,r_k}}{\sum_{l\geq 1}\mathrm{d}(\varphi_l)_{0,r_l}}, \ \ \delta_k:=2-\sum_{l=1}^k\epsilon_l.\]
We have that $\mathrm{d}(\varphi_k)_{0,r_k}\leq \epsilon_k\theta$. So, by Lemma \ref{Lemma_control_embedding},
we may assume that
\[\varphi_k(E_{r_k|\overline{O}_i^2})\subset E_{r_{k-1}|O_i},\ \ \varphi_{k}(E_{r_k|\overline{O}_i^{\delta_k}})\subset E_{r_{k-1}|O^{\delta_{k-1}}_i},\]
and this implies that
\[\psi_{k-1}(E_{r_{k-1}|\overline{O}_i^{\delta_{k-1}}})\subset E_{r_0|O_i}.\]
So we can apply Lemma \ref{Lemma_composition} to the pair $\psi_{k-1}$ and $\varphi_k$ for all $k> k_0$. The first
part of Lemma \ref{Lemma_composition} and Lemma \ref{Lemma_equivalent_norms} imply an inequality of the form
\begin{align*}
1+\mathrm{d}(\psi_{k})_{n,r_k,\delta_k}\leq (1+\mathrm{d}(\psi_{k-1})_{n,r_{k-1},\delta_{k-1}})(1+C_nr^{-n}\mathrm{d}(\varphi_{k})_{n,r_{k}}).
\end{align*}
Iterating this inequality, we obtain that
\begin{align*}
1+\mathrm{d}(\psi_{k})_{n,r_k,\delta_k}&\leq (1+\mathrm{d}(\psi_{k_0})_{n,r_{k_0},\delta_{k_0}})\prod_{l=k_0+1}^k(1+C_nr^{-n}\mathrm{d}(\varphi_{l})_{n,r_{l}})\leq\\
&\leq (1+\mathrm{d}(\psi_{k_0})_{n,r_{k_0},\delta_{k_0}})e^{C_nr^{-n}\sum_{l> k_0}\mathrm{d}(\varphi_{l})_{n,r_{l}}}\leq\\
&\leq (1+\mathrm{d}(\psi_{k_0})_{n,r_{k_0},\delta_{k_0}})e^{C_nr^{-n}\sigma_n}.
\end{align*}
The second part of Lemma \ref{Lemma_composition} and Lemma \ref{Lemma_equivalent_norms} imply
\begin{align*}
 \mathrm{d}(\psi_{k},\psi_{k-1})_{n,r}&\leq (1+\mathrm{d}(\psi_{k-1})_{n+1,r_{k-1},\delta_{k-1}})C_nr^{-n}\mathrm{d}(\varphi_{k})_{n,r_{k},\delta_{k}}\leq\\
\nonumber &\leq (1+\mathrm{d}(\psi_{k_0})_{n+1,r_{k_0},\delta_{k_0}})e^{C_{n+1}r^{-1-n}\sigma_{n+1}}C_nr^{-n}\mathrm{d}(\varphi_{k})_{n,r_{k}}.
\end{align*}
This shows that the sum $\sum_{k\geq 1}\mathrm{d}(\psi_{k},\psi_{k-1})_{n,r}$ converges for all $n$, hence the
sequence $\{\psi_{k|E_r}\}_{k\geq 1}$ converges in all $C^n$-norms to a smooth function $\psi:E_{r}\to E_{r_0}$.

If $\mathrm{d}(\varphi_k)_{1,r_k}<1$ for all $k\geq 1$, then we can take $k_0=0$. So we obtain
\begin{align*}
1+\mathrm{d}(\psi_{k})_{n,r_k,\delta_k}\leq\prod_{l=1}^k(1+C_nr^{-n}\mathrm{d}(\varphi_{l})_{n,r_{l}})\leq e^{C_nr^{-n}\sum_{1}^k\mathrm{d}(\varphi_{l})_{n,r_{l}}}\leq e^{C_nr^{-n}\sigma_n}.
\end{align*}
Using the trivial inequality $e^x-1\leq xe^x$ for $x\geq 0$, the result follows.
\end{proof}

\subsubsection*{Tameness of the flow} The $C^0$-norm of a vector field controls the size of the domain of its flow.

\begin{lemma}\label{Lemma_domain_of_flow}
There exists $\theta>0$ such that for all $0<s<r\leq 1$ and all $X\in \mathfrak{X}^1(E_r)$ with
$\|X\|_{0,r}<(r-s)\theta$, we have that $\varphi_X^t$, the flow of $X$, is defined for all $t\in [0,1]$ on $E_{s}$ and
belongs to $\mathcal{U}_{s}$.
\end{lemma}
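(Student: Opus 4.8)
The plan is to run the standard escape-time argument for ODEs, exploiting the fact that the trivializing charts $\widetilde{\chi}_i$ were chosen to preserve the fiber norm, so that the tube boundary $\partial E_r=\{|v|=r\}$ is exactly $\{|y|=r\}$ in every chart. First I would fix, via Lemma \ref{Lemma_equivalent_norms} (with $\delta=2\in[1,4]$), a constant $C>0$ such that for every $X\in\mathfrak{X}^1(E_r)$ the chart representatives $X_i:=\widetilde{\chi}_{i,*}(X_{|E_{r|O_i}})$ satisfy $|X_i(z)|\le C\|X\|_{0,r}$ for all $z\in B_2\times\overline{B}_r$ and all $i$; in particular both the base part (in $\mathbb{R}^d$) and the fiber part (in $\mathbb{R}^D$) of $X_i$ are bounded by $C\|X\|_{0,r}$ there. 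Then I would take $\theta>0$ small, to be shrunk finitely often below.

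Fix $0<s<r\le 1$ and $X$ with $\|X\|_{0,r}<(r-s)\theta$, and let $p\in E_s$. Let $\gamma\colon[0,T)\to E_r$ be the maximal integral curve of $X$ with $\gamma(0)=p$ satisfying $|v(\gamma(t))|<r$ on $[0,T)$, where $T\le 1$. The key point is the differential inequality $\tfrac{d}{dt}|v(\gamma(t))|\le C\|X\|_{0,r}$: at any time $t$ the base point $\bar\gamma(t)=p(\gamma(t))$ lies in some $O_i^1$ (the $\{O_i^1\}$ cover $S$), so in $\chi_i$-coordinates $\gamma(t)=\widetilde{\chi}_i^{-1}(x(t),y(t))$ with $x(t)\in B_1\subset B_2$, and since $\widetilde{\chi}_i$ preserves the fiber norm, $|v(\gamma(t))|=|y(t)|$ and $\tfrac{d}{dt}|y(t)|\le|\dot y(t)|\le C\|X\|_{0,r}$, $\dot y(t)$ being the fiber component of $X_i(x(t),y(t))$; by continuity this local estimate is valid near $t$, and patching over $[0,T)$ gives $|v(\gamma(t))|\le s+tC\|X\|_{0,r}<s+(r-s)/2<r$ once $\theta\le 1/(2C)$. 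If $T<1$, then $\gamma([0,T))$ stays in the compact set $\{|v|\le s+(r-s)/2\}\subset\operatorname{int}(E_r)$, so $\gamma$ extends past $T$, contradicting maximality; hence $T=1$, and $\varphi_X^t$ is defined on all of $E_s$ for $t\in[0,1]$ with trajectories confined to $\{|v|\le s+(r-s)/2\}$.

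It remains to show $\varphi_X^t\in\mathcal{U}_s$, i.e.\ $\varphi_X^t(E_{s|\overline{O}_i^1})\subset E_{|O_i}$. Take $p\in E_{s|\overline{O}_i^1}$, so in $\chi_i$-coordinates the base of $\gamma(0)$ lies in $B_1$. A bootstrap on $|x(t)|$ finishes it: as long as $x(t)\in B_2$ (and the fiber part is in $\overline{B}_r$, which we have), $|\dot x(t)|$, the norm of the base component of $X_i(x(t),y(t))$, is $\le C\|X\|_{0,r}<C\theta<1$ after shrinking $\theta$ so $C\theta<\tfrac12$; hence $|x(t)|<|x(0)|+t<2$, and by continuity $x(t)\in B_2$ for all $t\in[0,1]$. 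Therefore $\bar\gamma(t)\in O_i^2\subset O_i$, which gives $\varphi_X^t\in\mathcal{U}_s$.

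The only genuine subtlety — and the step I would spend most care writing up — is the chart-patching in the two differential inequalities: making precise that the local bounds on $\tfrac{d}{dt}|v(\gamma(t))|$ and on $|\dot x(t)|$, each valid in whichever chart the trajectory currently sits over, glue into honest global estimates on all of $[0,T)$. This is exactly where compactness of $S$, the fiber-norm-preserving choice of the $\widetilde{\chi}_i$, and condition (\ref{EQ_6}) (each chart being valid well past $O_i^2$) are used; there is no conceptual difficulty beyond this bookkeeping, and one can make it rigorous by partitioning $[0,T)$ into finitely many subintervals on each of which a single chart works.
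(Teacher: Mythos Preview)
Your proof is correct, and the underlying idea (the escape-time/displacement argument) is the same as the paper's. The difference is that you make the argument more elaborate than necessary. You separate the fiber and base directions, handle the fiber bound by hopping between charts (whichever $O_j^1$ currently contains $\bar\gamma(t)$), and then worry in your last paragraph about the bookkeeping of patching these local estimates together.

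The paper sidesteps all of this by working in a \emph{single} fixed chart from start to finish and bounding the total displacement at once. The point you are missing is that each $\widetilde{\chi}_i$ carries $E_{r|O_i}$ onto all of $\mathbb{R}^d\times\overline{B}_r$, so there is no base-direction boundary to escape through: the only way the chart trajectory $\varphi^{\tau}_{X_i}(p)$ can leave the region $B_2\times B_r$ is by having large displacement. Concretely, for $p\in\overline{B}_1\times\overline{B}_s$ the paper just computes
\[
|\varphi_{X_i}^{t}(p)-p|\le\int_0^{t}|X_i(\varphi_{X_i}^{\tau}(p))|\,d\tau\le \|X_i\|_{0,r,2}\le C\|X\|_{0,r}<r-s<1,
\]
valid as long as the trajectory lies in $B_2\times B_r$; this simultaneously keeps the base in $B_2$ and the fiber in $B_r$, so the bootstrap closes with no chart changes. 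Your invocation of condition (\ref{EQ_6}) is misplaced: that condition is about how the $O_i^{\delta}$ nest among different charts and is used later (in Lemma \ref{Lemma_embedding}), not here.
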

\begin{proof}
We denote the restriction of $X$ to a chart by $X_i\in \mathfrak{X}^{1}(\mathbb{R}^d \times \overline{B}_r)$.
Consider $p\in\overline{B}_1\times \overline{B}_{s}$. Let $t\in(0,1]$ be such that the flow of $X_i$ is defined up to
time $t$ at $p$ and such that for all $\tau\in [0,t)$ it satisfies $\varphi^{\tau}_{X_{i}}(p)\in B_2\times B_r$. Then we
have that
\begin{align*}
|\varphi^t_{X_{i}}(p)-p|=|\int_0^td \left(\varphi^{\tau}_{X_{i}}(p)\right)|\leq\int_0^t|X_i(\varphi^{\tau}_{X_{i}}(p))|d\tau\leq \|X_i\|_{0,r,2}\leq C\|X\|_{0,r},
\end{align*}
where for the last step we used Lemma \ref{Lemma_equivalent_norms}. Hence, if $\|X\|_{0,r}<(r-s)/C$, we have that
$\varphi^t_{X_{i}}(p)\in B_2\times B_r$, and this implies the result.
\end{proof}

We prove now that the map which associates to a vector field its flow is tame (this proof was inspired by the proof of
Lemma B.3 in \cite{Miranda}).
\begin{lemma}\label{Lemma_size of _the flow}
There exists $\theta>0$ such that for all $0<s<r\leq 1$, and all $X\in \mathfrak{X}^1(E_r)$ with
\[\|X\|_{0,r}<(r-s)\theta,\ \ \|X\|_{1,r}<\theta\]
we have that $\varphi_X:=\varphi_X^1$ belongs to $\mathcal{U}_{s}$ and it satisfies:
\[\mathrm{d}(\varphi_X)_{0,s}\leq C_0\|X\|_{0,r},\ \ \mathrm{d}(\varphi_{X})_{n,s}\leq r^{1-n}C_n\|X\|_{n,r},\ \forall\ n\geq 1.\]
\end{lemma}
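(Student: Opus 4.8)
The plan is to run a Gr\"onwall argument on the $C^n$-norm of $g^t:=\varphi_X^t-\mathrm{Id}$, read off in the coordinate charts fixed in Section~\ref{subsection_norms}, combined with the higher-order chain rule (Fa\`a di Bruno) for the derivatives of $z\mapsto X(z+g^t(z))$ and the interpolation inequalities of Lemma~\ref{L_interpolation}; this is the analogue for flows of the composition estimate in Lemma~\ref{Lemma_composition}, and the fixed-point version of the statement is Lemma~\ref{Lemma_domain_of_flowfixed}.

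First I would set up the domain. Shrinking $\theta$ and applying Lemma~\ref{Lemma_domain_of_flow}, the flow $\varphi_X^t$ is defined on $E_s$ for all $t\in[0,1]$ and lies in $\mathcal{U}_s$. Moreover, repeating the estimate in the proof of Lemma~\ref{Lemma_domain_of_flow}, every trajectory issuing from a point of $E_s$ stays, for $t\in[0,1]$, inside $E_{s+C\|X\|_{0,r}}\subset E_{(r+s)/2}$, hence well inside the interior of $E_r$; after shrinking $\theta$ once more this also keeps trajectories over the original coordinate domains, so that every local norm of $X$ evaluated along a trajectory is, by Lemma~\ref{Lemma_equivalent_norms}, controlled up to constants by the norms $\|X\|_{\bullet,r}$. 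In particular, integrating $\tfrac{d}{dt}g^t(z)=X_i(z+g^t(z))$ in $t$ and bounding $|X_i|\le C\|X\|_{0,r}$ gives at once the $C^0$-bound $\mathrm{d}(\varphi_X)_{0,s}\le C_0\|X\|_{0,r}$.

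The $C^n$-bounds I would prove by induction on $n$. Differentiating $\tfrac{d}{dt}g^t=X_i(\cdot+g^t)$ with $\partial^\alpha$, $|\alpha|=n$, the Fa\`a di Bruno formula writes $\tfrac{d}{dt}\partial^\alpha g^t$ as a sum of terms $(\partial^\beta X_i)(\cdot+g^t)\,\partial^{\gamma_1}(g^t)^{l_1}\cdots\partial^{\gamma_k}(g^t)^{l_k}$ with $|\gamma_j|\ge1$ and $|\beta|+\sum_j(|\gamma_j|-1)=n$. I would isolate three groups: (i) the top-order term $(DX_i)(\cdot+g^t)\,\partial^\alpha g^t$, whose coefficient is $\le C\|X\|_{1,r}<C\theta$; (ii) the source term $(\partial^\alpha X_i)(\cdot+g^t)$, bounded by $C\|X\|_{n,r}\le r^{1-n}C\|X\|_{n,r}$ since $r\le1$; (iii) the remaining terms, in which every $|\gamma_j|\le n-1$. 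For (iii) one interpolates (Lemma~\ref{L_interpolation}) each factor $\|X\|_{|\beta|,r}$ between levels $1$ and $n$, and each factor $\|g^t\|_{|\gamma_j|}$ between levels $1$ and $n$, using the inductive bounds $\|g^t\|_m\le r^{1-m}C_m\|X\|_{m,r}$ ($m<n$) and the $C^1$-bound $\|g^t\|_1\le C\|X\|_{1,r}$ coming from the $n=1$ Gr\"onwall; the homogeneity identity $|\beta|+\sum(|\gamma_j|-1)=n$ makes the powers of $r$ collapse to exactly $r^{1-n}$, and every such term retains a factor $\|X\|_{1,r}$ or $\|g^t\|_1$, both $<C\theta$. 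Writing $u(t):=\mathrm{d}(\varphi_X^t)_{n,s}$, one then obtains a differential inequality of Gr\"onwall type $u'(t)\le C\,u(t)+r^{1-n}C_n\|X\|_{n,r}$ with $u(0)=0$ (for $n=1$ the same computation needs no induction and gives $\mathrm{d}(\varphi_X)_{1,s}\le C_1\|X\|_{1,r}$, matching $r^{1-1}=1$), and integrating over $[0,1]$ yields $\mathrm{d}(\varphi_X)_{n,s}=u(1)\le r^{1-n}C_n\|X\|_{n,r}$.

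The hard part is step (iii): organizing the Fa\`a di Bruno expansion so that, after interpolation, all the $r$-powers recombine to $r^{1-n}$ and every non-source term carries a small factor that can be absorbed — the same bookkeeping as in Lemma~\ref{Lemma_composition} and the smoothing/interpolation estimates of Section~\ref{subsection_norms}, with the extra subtlety that $g^t$ appears on both sides, so its highest-order derivative must be split off first (its coefficient being $\|DX\|\lesssim\|X\|_{1,r}$) before Gr\"onwall is invoked. If one carries out only the naive estimate, the surviving mixed terms may feed a factor $r^{1-n}u(t)$ into the Gr\"onwall coefficient, in which case the resulting $C_n$ merely depends continuously on $r$; this is harmless for the applications in Proposition~\ref{Proposition_technical} and in the proof of Theorem~\ref{Theorem_FOUR}, where the radii stay bounded below by a fixed positive number, and with a slightly sharper variational (Duhamel) treatment of the top-order derivative one keeps the Gr\"onwall coefficient $O(1)$ and $C_n$ independent of $r$. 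A final, purely technical point is to ensure all the local estimates are legitimate — i.e. that no trajectory leaves its chart — which is precisely where the margin $r-s$ and the hypothesis $\|X\|_{0,r}<(r-s)\theta$ enter.
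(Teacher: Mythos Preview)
Your overall plan—work in the fixed charts, write $g^t=\varphi_X^t-\mathrm{Id}$, differentiate the flow equation, split off the top-order term, and run a Gr\"onwall argument together with Fa\`a di Bruno and interpolation—is exactly the strategy the paper uses. The $C^0$ and $C^1$ steps are fine, and the Fa\`a di Bruno decomposition you describe matches the paper's equation for $\partial^\alpha g_{i,t}$.

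The gap is in step~(iii), specifically the claim that ``the homogeneity identity $|\beta|+\sum(|\gamma_j|-1)=n$ makes the powers of $r$ collapse to exactly $r^{1-n}$.'' If you carry the induction in the already-interpolated form $\|g^t\|_m\le r^{1-m}C_m\|X\|_{m,r}$ and then interpolate each resulting $\|X\|_{|\gamma_j|,r}$ between levels $1$ and $n$, you pick up \emph{two} factors $r^{1-|\gamma_j|}$ for every $\gamma_j$: one from the inductive bound and one from interpolation. The total $r$-exponent becomes $1+|\beta|-2n$, not $1-n$. Already for $n=3$ and the term with $|\beta|=|\gamma_1|=2$ one gets $r^{-3}$ instead of $r^{-2}$. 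Your hedge about a factor $r^{1-n}$ entering the Gr\"onwall coefficient misdiagnoses the problem: the loss occurs in the inhomogeneous term, not in the coefficient of $u(t)$.

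The paper's fix is precisely to \emph{not} interpolate inside the induction. It carries the induction in terms of explicit polynomials
\[
P_n(X)=\sum_{\substack{j_1+\cdots+j_p=n-1\\ 1\le j_k\le n-1}}\|X\|_{j_1+1,r}\cdots\|X\|_{j_p+1,r},
\]
proves $\|g_{i,t}\|_n\le C_n P_n(X)$ by Gr\"onwall, and uses the multiplicative property $P_{u+1}(X)P_{v+1}(X)\le C_{u,v}P_{u+v+1}(X)$ to absorb the Fa\`a di Bruno products. Interpolation is applied \emph{once}, at the very end, to show $P_n(X)\le C_n r^{1-n}\|X\|_{n,r}$ (using $\|X\|_{1,r}<\theta\le 1$). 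This single interpolation is where the homogeneity $\sum j_k=n-1$ genuinely makes the $r$-powers collapse. If you reorganize your induction to track $P_n(X)$ rather than the interpolated quantity, your argument becomes the paper's.
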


\begin{proof}
By Lemma \ref{Lemma_domain_of_flow}, for $t\in[0,1]$, we have that $\varphi^t_X\in\mathcal{U}_{s}$, and by its
proof that the local representatives take values in $B_2\times B_r$
\[\varphi^t_{X_i}:=\textrm{Id}+g_{i,t}: \overline{B}_1\times \overline{B}_{s}\rmap B_2\times B_r.\]
We will prove by induction on $n$ that $g_{i,t}$ satisfies inequalities of the form:
\begin{equation}\label{EQ_poly}
\|g_{i,t}\|_{n,s}\leq C_nP_{n}(X),
\end{equation}
where $P_n(X)$ denotes the following polynomials in the norms of $X$
\[P_0(X)=\|X\|_{0,r}, \ P_1(X)=\|X\|_{1,r},\]
\[ P_n(X)=\sum_{\stackrel{j_1+\ldots+j_p=n-1}{1\leq j_k\leq n-1}} \| X \|_{j_1+1,r}\ldots \|X\|_{j_p+1,r}.\]
Observe that (\ref{EQ_poly}) implies the conclusion, since by the interpolation inequalities and the fact that
$\|X\|_{1,r}<\theta \leq 1$ we have that
\[\|X\|_{j_k+1,r}\leq C_nr^{-j_k}(\|X\|_{1,r})^{1-\frac{j_k}{n-1}}(\|X\|_{n,r})^{\frac{j_k}{n-1}}\leq C_n r^{-j_k}\|X\|_{n,r}^{\frac{j_k}{n-1}},\]
hence
\[P_n(X) \leq C_nr^{1-n}\|X\|_{n,r}.\]

The map $g_{i,t}$ satisfies the ordinary differential equation
\[\frac{d g_{i,t}}{dt}(z)=\frac{d \varphi^t_{X_i}}{dt}(z)=X_i(\varphi_{X_i}^t(z))=X_i(g_{i,t}(z)+z).\]
Since $g_{i,0}=0$, it follows that
\begin{equation}\label{EQ_for_the_difference}
g_{i,t}(z)=\int_0^tX_i(z+g_{i,\tau}(z))d\tau.
\end{equation}
Using also Lemma \ref{Lemma_equivalent_norms}, we obtain the result for $n=0$:
\[\|g_{i,t}\|_{0,s} \leq \|X\|_{0,r,2}\leq C_0\|X\|_{0,r}.\]
We will use the following version of the Gronwall inequality: if $u:[0,1]\to \mathbb{R}$ is a continuous map and there
are positive constants $A$, $B$ such that
\[u(t)\leq A+B\int_{0}^tu(\tau)d\tau,\]
then $u$ satisfies $u(t)\leq Ae^B$. For the proof, we first compute the derivative
\[\frac{d}{dt}\left( e^{-tB}(A+B\int_{0}^tu(s)ds)\right)= e^{-tB} B\left(-A-B\int_{0}^tu(s)ds+u(t)\right)\leq 0.\]
Thus, the function is non-increasing, and therefore
\[e^{-tB}\left( A+B\int_{0}^tu(s)ds\right)\leq A,\]
which implies the inequality:
\[u(t)\leq Ae^{tB}\leq Ae^B.\]

Computing the partial derivative $\frac{\partial}{\partial z_j}$ of equation (\ref{EQ_for_the_difference}) we obtain
\begin{align*}
\frac{\partial g_{i,t}}{\partial z_j}(z)&=\int_0^t\left(\frac{\partial X_{i}}{\partial z_j}(z+g_{i,\tau}(z))+\sum_{k=1}^{D+d}\frac{\partial X_i}{\partial z_k}(z+g_{i,\tau}(z))\frac{\partial g^k_{i,\tau}}{\partial z_j}(z)\right) d\tau.
\end{align*}
Therefore, using again Lemma \ref{Lemma_equivalent_norms}, the function $|\frac{\partial g_{i,t}}{\partial z_j}(z)|$
satisfies:
\begin{align*}
|\frac{\partial g_{i,t}}{\partial z_j}(z)|\leq C\|X\|_{1,r}+(D+d)\|X\|_{1,r}\int_0^t|\frac{\partial g_{i,\tau}}{\partial z_j}(z)|d\tau.
\end{align*}
The case $n=1$ follows now by Gronwall's inequality:
\[\|\frac{\partial g_{i,t}}{\partial z_j}\|_{0,s}\leq C \|X\|_{1,r}e^{(D+d)\|X\|_{1,r}}\leq C\|X\|_{1,r}.\]

For a multi-index $\alpha$, with $|\alpha|=n\geq 2$, applying $\frac{\partial^{|\alpha|}}{\partial z^{\alpha}}$ to
(\ref{EQ_for_the_difference}), we obtain
\begin{align}\label{EQ_3}
\frac{\partial^{|\alpha|}g_{i,t}}{\partial z^{\alpha}}(z)&=\int_{0}^t\sum_{2\leq |\beta|\leq |\alpha|} \frac{\partial^{|\beta|}X_{i}}{\partial z^{\beta}}(z+g_{i,\tau}(z))\frac{\partial^{|\gamma_1|}g_{i,\tau}^{i_1}}{\partial z^{\gamma_1}}(z)\ldots\frac{\partial^{|\gamma_p|}g_{i,\tau}^{i_p}}{\partial z^{\gamma_p}}(z)d\tau+\\
\nonumber&+\int_0^t\sum_{j=1}^{D+d}\frac{\partial X_i}{\partial z_j}(z+g_{i,\tau}(z))\frac{\partial^{|\alpha|}g_{i,\tau}^{j}}{\partial z^{\alpha}}(z) d\tau,
\end{align}
where the multi-indices satisfy
\[1\leq |\gamma_k|\leq n-1,\ \  (|\gamma_1|-1)+\ldots + (|\gamma_p|-1)+|\beta|=n.\]
Since $|\gamma_k|\leq n-1$, we can apply induction to conclude that
\[\|\frac{\partial^{|\gamma_k|}g_{i,\tau}^{i_k}}{\partial z^{\gamma_k}}\|_{0,s}\leq P_{|\gamma_k|}(X).\]
So, the first part of the sum can be bounded by
\begin{align}\label{EQ_2}
C_n\sum_{\stackrel{j_0+\ldots+j_p=n-1}{1\leq j_k\leq n-1}} \| X \|_{j_0+1,1}P_{j_1+1}(X)\ldots P_{j_p+1}(X).
\end{align}
It is easy to see that the polynomials $P_k(X)$ satisfy:
\begin{equation}\label{EQ_mult_prop_P}
P_{u+1}(X)P_{v+1}(X)\leq C_{u,v}P_{u+v+1}(X),
\end{equation}
therefore (\ref{EQ_2}) is bounded by $C_nP_n(X)$. Using this in (\ref{EQ_3}), we obtain
\[|\frac{\partial^{|\alpha|} g_{i,t}}{\partial z^{\alpha}}(z)|\leq C_nP_n(X)+(D+d)\|X\|_{1,r}\int_0^t|\frac{\partial^{|\alpha|} g_{i,\tau}}{\partial z^{\alpha}}(z)|d\tau.\]
Applying Gronwall's inequality, we obtain the conclusion.
\end{proof}

Next, we show how to approximate pullbacks by flows of vector fields.
\begin{lemma}\label{Lemma_tame_flow_pull_back}
There exists $\theta >0$, such that for all $0<s<r\leq 1$ and all $X\in \mathfrak{X}^1(E_r)$ with
$\|X\|_{0,r}<(r-s)\theta$ and $\|X\|_{1,r}<\theta$, we have that
\begin{align*}
\|\varphi_{X}^*(W)&\|_{n,s} \leq C_n r^{-n}(\|W\|_{n,r}+\|W\|_{0,r}\|X\|_{n+1,r}),\\
\|\varphi_{X}^*(W)&-W_{|s}\|_{n,s}\leq C_nr^{-2n-1}(\|X\|_{n+1,r}\|W\|_{1,r}+\|X\|_{1,r}\|W\|_{n+1,r}),\\
\|\varphi_{X}^*(W)&-W_{|s}- \varphi_{X}^*([X,W])\|_{n,s} \leq \\
&\leq C_nr^{-3(n+2)}\|X\|_{0,r}(\|X\|_{n+2,r}\|W\|_{2,r}+\|X\|_{2,r}\|W\|_{n+2,r}),
\end{align*}
for all $W\in\mathfrak{X}^{\bullet}(E_r)$, where $C_n>0$ is a constant depending only on $n$.
\end{lemma}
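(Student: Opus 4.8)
The plan is to derive all three inequalities from the basic ODE satisfied by the pullback along a flow, reducing the two finer estimates to the coarsest one. Throughout, $\theta$ is shrunk as needed so that Lemmas \ref{Lemma_domain_of_flow} and \ref{Lemma_size of _the flow} apply on $E_s$ for $t\in[0,1]$. The starting point is the identity $\frac{d}{dt}(\varphi_X^t)^*(Z)=(\varphi_X^t)^*(L_XZ)=(\varphi_X^t)^*([X,Z])$, valid for $Z\in\mathfrak{X}^{\bullet}(E_r)$ on $E_s$; integrating once and twice gives
\[\varphi_X^*(W)-W_{|s}=\int_0^1(\varphi_X^t)^*([X,W])\,dt,\qquad \varphi_X^*(W)-W_{|s}-\varphi_X^*([X,W])=-\int_0^1\!\!\int_t^1(\varphi_X^u)^*([X,[X,W]])\,du\,dt.\]

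Granting the first inequality, the second follows by applying it to $[X,W]$ and bounding $\|[X,W]\|_{n,r}$, $\|[X,W]\|_{0,r}$ with the tameness of the Schouten bracket (Lemma \ref{L_Bracket}); the third follows by applying the first inequality to $[X,[X,W]]$ and using Lemma \ref{L_Bracket} twice. In both cases the interpolation inequalities (Lemma \ref{L_interpolation}) are needed to convert cross terms $\|X\|_{i,r}\|X\|_{j,r}$ with $i+j$ fixed into products carrying the required top norm (so that the leading factor is $\|W\|_{1,r}$, resp. $\|X\|_{0,r}$), and the elementary facts $\|X\|_{0,r}\le\|X\|_{1,r}<\theta\le1$, $\|W\|_{0,r}\le\|W\|_{1,r}\le\|W\|_{2,r}$, and $x^{\lambda}y^{1-\lambda}\le x+y$ handle the rest. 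The one thing to watch is the bookkeeping of powers of $r$: each use of Lemma \ref{L_Bracket} costs roughly $r^{-(n+2)}$, each interpolation step a further negative power, and one checks the totals are dominated by $r^{-2n-1}$ and $r^{-3(n+2)}$ (since $r\le1$, any smaller power is harmless).

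It remains to prove the first inequality, which is the substantive step. Working in the fixed charts $\widetilde\chi_i$, Lemma \ref{Lemma_size of _the flow} gives $\varphi_X^t\in\mathcal{U}_s$ with local representatives $\varphi^t_{X,i}=\mathrm{Id}+g_{i,t}$, with $\mathrm{d}(\varphi_X^t)_{1,s}\le C_1\|X\|_{1,r}<C_1\theta$ small and $\|g_{i,t}\|_{n,s}\le C_nr^{1-n}\|X\|_{n,r}$ for $n\ge1$, uniformly in $t\in[0,1]$. The pullback is given pointwise by $(\varphi_X^{t,*}W)_p=\Lambda^k\big(d(\varphi_X^t)_p\big)^{-1}(W_{\varphi_X^t(p)})$ for $W\in\mathfrak{X}^k(E_r)$, so in coordinates its components are finite sums of products of (a) entries of the inverse Jacobian $(I+dg_{i,t})^{-1}=\sum_{m\ge0}(-dg_{i,t})^m$, a Neumann series converging because $\|dg_{i,t}\|_{0,s}$ is small, and (b) components of $W$ precomposed with $\varphi^t_{X,i}$. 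The $C^n$-norm of the composition is controlled exactly as in the proof of Lemma \ref{Lemma_composition} (Faà di Bruno plus interpolation): $\|W_j\circ\varphi^t_{X,i}\|_{n,s}\le C_nr^{-n}\big(\|W\|_{n,r}+\|W\|_{1,r}\,\mathrm{d}(\varphi_X^t)_{n,s}\big)\le C_nr^{-n}\big(\|W\|_{n,r}+\|W\|_{1,r}\|X\|_{n,r}\big)$; the $C^n$-norm of each entry of $(I+dg_{i,t})^{-1}$ is bounded, by the Leibniz rule applied termwise to the Neumann series together with $\|dg_{i,t}\|_{n,s}=\|g_{i,t}\|_{n+1,s}\le C_{n+1}r^{-n}\|X\|_{n+1,r}$, by $C_nr^{-n}\|X\|_{n+1,r}$ plus a bounded term. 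Multiplying these out, passing to the global norm via Lemma \ref{Lemma_equivalent_norms}, and collapsing the cross terms $\|W\|_{i,r}\|X\|_{j,r}$ with Lemma \ref{L_interpolation} so that only $\|W\|_{n,r}$ and $\|W\|_{0,r}\|X\|_{n+1,r}$ survive, yields $\|(\varphi_X^t)^*(W)\|_{n,s}\le C_nr^{-n}\big(\|W\|_{n,r}+\|W\|_{0,r}\|X\|_{n+1,r}\big)$ uniformly in $t$; taking $t=1$ gives the claim.

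The hard part will be precisely the combinatorial and interpolation bookkeeping in the first inequality: organising the multi-index sums coming from the higher chain rule and from differentiating the Neumann series, and then applying interpolation just enough times to land on the clean tame right-hand side with the correct top-order norms and a controlled (almost certainly non-optimal) power of $r$. Once the first inequality is established with constants uniform in $t\in[0,1]$, the reduction of the remaining two inequalities through the integral identities above is routine.
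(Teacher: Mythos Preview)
Your proposal is correct and follows essentially the same route as the paper: the second and third inequalities are reduced to the first via the integral identities $\varphi_X^*(W)-W_{|s}=\int_0^1(\varphi_X^t)^*([X,W])\,dt$ and $\varphi_X^*(W)-W_{|s}-\varphi_X^*([X,W])=-\int_0^1 t\,(\varphi_X^t)^*([X,[X,W]])\,dt$, combined with Lemma~\ref{L_Bracket} and interpolation; and the first inequality is obtained by differentiating the local-chart expression of the pullback and collapsing the resulting multi-index sums via interpolation. The only cosmetic difference is that the paper handles the inverse Jacobian via Cramer's rule (entries are polynomials in $\partial g_i^l/\partial z_j$ times $\det(I+dg_i)^{-k}$) rather than a Neumann series, and organizes the higher-derivative bounds through the polynomials $P_n(X)$ introduced in the proof of Lemma~\ref{Lemma_size of _the flow}; your Neumann-series variant works equally well under the smallness hypothesis on $\|X\|_{1,r}$.
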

\begin{proof}
As in the proof above, the local expression of $\varphi_X$ is defined as follows:
\[\varphi_{X_i}=\textrm{Id}+g_{i}:\overline{B}_1\times \overline{B}_{s}\rmap B_2\times B_r.\]
Let $W\in\mathfrak{X}^{\bullet}(E_r)$, and denote by $W_i$ its local expression on $E_{r|\overline{O}_2^i}$:
\[W_i:=\sum_{J=\{j_1<\ldots<j_k\}} W_i^J(z)\frac{\partial}{\partial z_{j_1}}\wedge \ldots \wedge \frac{\partial}{\partial z_{j_k}}\in\mathfrak{X}^{\bullet}(\overline{B}_2\times\overline{B}_r).\]
The local representative of $\varphi_X^*(W)$, is given for $z\in \overline{B}_1\times \overline{B}_{s}$ by
\[(\varphi_X^*W)_i=\sum_{J} W_i^J(z+g_i(z))(\textrm{Id}+d_zg_i)^{-1}\frac{\partial}{\partial z_{j_1}}\wedge \ldots \wedge (\textrm{Id}+d_zg_i)^{-1}\frac{\partial}{\partial z_{j_k}}.\]
By the Cramer rule, the matrix $(\textrm{Id}+d_zg_i)^{-1}$ has entries of the form
\[\Psi\left(\frac{\partial g_i^{l}}{\partial z_{j}}(z)\right)det(\textrm{Id}+d_zg_i)^{-1},\]
where $\Psi$ is a polynomial in the variables $Y^l_j$, which we substitute by $\frac{\partial g_i^{l}}{\partial
z_{j}}(z)$. Therefore, any coefficient of the local expression of $\varphi_X^*(W)_i$, will be a sum of elements of the
form
\[W_i^{J}(z+g_i(z))\Psi\left(\frac{\partial g_i^{l}}{\partial z_{j}}(z)\right)det(\textrm{Id}+d_zg_i)^{-k}.\]
When computing $\frac{\partial^{|\alpha|}}{\partial z^{\alpha}}$ of such an expression with $|\alpha|=n$, using an
inductive argument, one proves that the outcome is a sum of terms of the form
\begin{equation}\label{EQ_9}
\frac{\partial^{|\beta|}W_{i}^J}{\partial z^{\beta}}(z+g_i(z))\frac{\partial^{|\gamma_1|} g_i^{v_{1}}}{\partial z^{\gamma_1}}(z)\ldots \frac{\partial^{|\gamma_p|} g_i^{v_{p}}}{\partial z^{\gamma_p}}(z)det(\textrm{Id}+d_zg_i)^{-M}
\end{equation}
with coefficients depending only on $\alpha$ and on the multi-indices, which satisfy
\[0\leq p,\ 0\leq M,  \ 1\leq |\gamma_j|, \ |\beta|+(|\gamma_1|-1)+\ldots+(|\gamma_p|-1)=n.\]
By Lemma \ref{Lemma_size of _the flow}, $\|g_i\|_{1,s}<C\theta$, so by shrinking $\theta$ if needed, we find that
\[det(\textrm{Id}+d_zg_i)^{-1}<2, \ \forall z\in\overline{B}_1\times \overline{B}_{s}.\]
Using this, Lemma \ref{Lemma_equivalent_norms} for $W$, and $|\frac{\partial g_i^{l}}{\partial z_{j}}(z)|\leq C$, we
bound (\ref{EQ_9}) by
\[C_n\sum_{j,j_1,\ldots, j_p}\|W\|_{j,r}\|g_i\|_{j_1+1,s}\ldots\|g_i\|_{j_p+1,s},\]
where the indexes satisfy
\[0\leq j,\ 0\leq j_k,\ j+j_1+\ldots +j_p=n.\]
The term with $p=0$ can simply be bounded by $C_n\|W\|_{n,r}$. For the rest, we use the bound $\|g_i\|_{j_k+1,s}\leq
P_{j_k+1}(X)$ from the proof of Lemma \ref{Lemma_size of _the flow}. The multiplicative property
(\ref{EQ_mult_prop_P}) of the polynomials $P_l(X)$ implies
\[\|\varphi_X^*(W)\|_{n,s}\leq C_n\sum_{j=0}^n\|W\|_{j,r}P_{n-j+1}(X).\]
Applying interpolation to $W_{j,r}$ and to a term of $P_{n-j+1}(X)$ we obtain
\begin{align*}
\|W\|_{j,r}&\leq C_nr^{-j}\|W\|_{0,r}^{1-j/n}\|W\|_{n,r}^{j/n},\\
\|X\|_{j_k+1,r}&\leq C_nr^{-j_k}\|X\|_{1,r}^{1-j_k/n}\|X\|_{n+1,r}^{j_k/n}\leq C_nr^{-j_k} \|X\|_{n+1,r}^{j_k/n}.
\end{align*}
Multiplying these terms and using (\ref{EQ_simple}), we obtain the first inequality:
\begin{align*}
\|W\|_{j,r}\|X\|_{j_1+1,r}\ldots\|X\|_{j_p+1,r}&\leq C_nr^{-n}(\|W\|_{0,r}\|X\|_{n+1,r})^{1-j/n}\|W\|_{n,r}^{j/n}\leq\\
&\leq C_nr^{-n}(\|W\|_{n,r}+\|W\|_{0,r}\|X\|_{n+1,r}).
\end{align*}

For the second inequality, denote
\[W_t:=\varphi^{t*}_{X}(W)-W_{|s}\in\mathfrak{X}^{\bullet}(E_{s}).\]
Then $W_0=0$, $W_1=\varphi_{X}^*(W)-W_{|s}$, and $\frac{d}{dt}W_t=\varphi^{t*}_{X}([X,W])$, therefore
\[\varphi_{X}^*(W)-W_{|s}=\int_{0}^1\varphi^{t*}_{X}([X,W])dt.\]
By the first part, we obtain
\[\|\varphi_{X}^*(W)-W_{|s}\|_{n,s}\leq C_nr^{-n}(\|[X,W]\|_{n,r}+\|[X,W]\|_{0,r}\|X\|_{n+1,r}).\]
Using now Lemma \ref{L_Bracket} and that $\|X\|_{1,r}\leq \theta$ we obtain the second part:
\[\|\varphi_{X}^*(W)-W_{|s}\|_{n,s}\leq C_nr^{-2n-1}(\|X\|_{n+1,r}\|W\|_{1,r}+\|W\|_{1,r}\|X\|_{n+1,r}).\]

For the last inequality, denote
\[W_t:=\varphi^{t*}_{X}(W)-W_{|s}- t\varphi_{X}^{t*}([X,W]).\]
We have that $W_0=0$, $W_1=\varphi^{*}_{X}(W)-W_{|s}- \varphi_{X}^*([X,W])$, and
\[\frac{d}{dt}W_t=-t\varphi_{X}^{t*}([X,[X,W]]),\]
therefore
\[W_1=-\int_{0}^1t\varphi_{X}^{t*}([X,[X,W]])dt.\]
Using again the first part, it follows that
\begin{equation}\label{EQ_1}
\|W_1\|_{n,s}\leq C_nr^{-n}(\|[X,[X,W]]\|_{n,r}+\|[X,[X,W]]\|_{0,r}\|X\|_{n+1,r}).
\end{equation}
Applying twice Lemma \ref{L_Bracket}, for all $k\leq n$ we obtain that
\begin{align*}
\|[X,[X,W]]&\|_{k,r}\leq C_{n}(r^{-(k+3)}\|X\|_{k+1,r}(\|X\|_{0,r}\|W\|_{1,r}+\|X\|_{1,r}\|W\|_{0,r})+\\
&+r^{-(2k+3)}\|X\|_{0,r}(\|X\|_{0,r}\|W\|_{k+2,r}+\|X\|_{k+2,r}\|W\|_{0,r}))\leq\\
&\leq C_{n}r^{-(2k+5)}\|X\|_{0,r}(\|W\|_{k+2,r}\|X\|_{0,r}+\|W\|_{2,r}\|X\|_{k+2,r}),
\end{align*}
where we have used the interpolation inequality
\[\|X\|_{1,r}\|X\|_{k+1,r}\leq C_nr^{-(k+2)}\|X\|_{0,r}\|X\|_{k+2,r}.\]
The first term in (\ref{EQ_1}) can be bounded using this inequality for $k=n$. For $k=0$, using also that
$\|X\|_{1,r}\leq \theta$ and the interpolation inequality
\[\|X\|_{2,r}\|X\|_{n+1,r}\leq C_nr^{-(n+1)}\|X\|_{1,r}\|X\|_{n+2,r},\]
we can bound the second term in (\ref{EQ_1}), and this concludes the proof:
\[\|[X,[X,W]]\|_{0,r}\|X\|_{n+1,r}\leq C_nr^{-(n+6)}\|W\|_{2,r}\|X\|_{0,r}\|X\|_{n+2,r}.\qedhere\]
\end{proof}

\subsection{An invariant tubular neighborhood and tame homotopy operators}

We start now the proof of Theorem \ref{Theorem_FOUR}. Let $(M,\pi)$ and $S\subset M$ be as in the statement. Let
$\mathcal{G}\rightrightarrows M$ be a Lie groupoid integrating $T^*M$. By restricting to the connected components
of the identities in the $s$-fibers of $\mathcal{G}$ \cite{MM}, we may assume that $\mathcal{G}$ has connected
$s$-fibers.

By Lemma \ref{Lemma_tubular_neighborhood}, $S$ has an invariant tubular neighborhood $E\cong \nu_S$ endowed
with a metric such that the closed tubes $E_r:=\{v\in E |  |v|\leq r\}$, for $r>0$, are also $\mathcal{G}$-invariant. We
endow $E$ with all the structure from subsection \ref{subsection_norms}.

Since $E$ is invariant, the cotangent Lie algebroid of $(E,\pi)$ is integrable by $\mathcal{G}_{|E}$, which has
compact $s$-fibers with vanishing $H^2$. Therefore, by the Tame Vanishing Lemma and Corollaries
\ref{corollary_unu} and \ref{corollary_unu_prim} from the appendix, there are linear homotopy operators
\[\mathfrak{X}^1(E)\stackrel{h_1}{\longleftarrow}\mathfrak{X}^2(E)\stackrel{h_2}{\longleftarrow}\mathfrak{X}^3(E),\]
\[[\pi,h_1(V)]+h_2([\pi,V])=V,\ \ \forall \  V\in \mathfrak{X}^2(E),\]
which satisfy:
\begin{itemize}
\item they induce linear homotopy operators $h_1^r$ and $h_2^r$ on $(E_r,\pi_{|r})$;
\item there are constants $C_{n}>0$ such that, for all $r\in(0,1]$,
\[\|h_1^{r}(X)\|_{n,r}\leq C_{n} \|X\|_{n+s,r},\ \ \|h_2^{r}(Y)\|_{n,r}\leq C_{n} \|Y\|_{n+s,r},\]
for all $X\in\mathfrak{X}^2(E_r)$, $Y\in\mathfrak{X}^3(E_r)$, where
$s=\lfloor\frac{1}{2}\mathrm{dim}(M)\rfloor+1$;
\item they induce homotopy operators in second degree on the subcomplex of vector fields vanishing along $S$.
\end{itemize}

\subsection{The Nash-Moser method}

We fix radii $0<r<R<1$. Let $s$ be as in the previous subsection, and let
\[\alpha:=2(s+5), \ \ \  \ \ p:=7(s+4).\]
The integer $p$ is the one from the statement of Theorem \ref{Theorem_FOUR}. Consider a second Poisson structure
$\widetilde{\pi}$ defined on $E_R$. To $\widetilde{\pi}$ we associate the following inductive procedure:

\noindent\textbf{Procedure P$_0$}:
\begin{itemize}
\item consider the number
\[t(\widetilde{\pi}):=\|\pi-\widetilde{\pi}\|_{p,R}^{-1/\alpha},\]
\item consider the sequences of numbers
\[\begin{array}{ccc}
  \epsilon_0:=(R-r)/4, & r_0:=R, &  t_0:=t(\widetilde{\pi}), \\
  \epsilon_{k+1}:=\epsilon_k^{3/2}, & r_{k+1}:=r_k-\epsilon_k, & t_{k+1}:=t_k^{3/2},
\end{array}\]
\item consider the sequences of Poisson bivectors and vector fields
\[\{\pi_k\in\mathfrak{X}^{2}(E_{r_k})\}_{k\geq 0},\ \ \ \{X_k\in\mathfrak{X}^1(E_{r_k})\}_{k\geq 0}, \]
defined inductively by
\begin{equation}\label{EQ_procedure}
\pi_0:=\widetilde{\pi}, \ \ \ \pi_{k+1}:=\varphi_{X_k}^*(\pi_k),\ \ \ X_k:=S_{t_k}^{r_k}(h_1^{r_k}(\pi_k-\pi_{|{r_k}})),
\end{equation}
\item consider the sequence of maps
\[\psi_k:=\varphi_{X_0}\circ\ldots\circ \varphi_{X_{k}}:E_{r_{k+1}}\rmap E_R.\]
\end{itemize}
By our choice of $\epsilon_0$, observe that $r<r_k< R$ for all $k\geq 1$:
\begin{align*}
\sum_{k=0}^{\infty} \epsilon_k=\sum_{k=0}^{\infty} \epsilon_0^{(3/2)^k}<\sum_{k=0}^{\infty} \epsilon_0^{1+\frac{k}{2}}=\frac{\epsilon_0}{1-\sqrt{\epsilon_0}}\leq (R-r),
\end{align*}
For \textbf{Procedure P$_0$} to be well-defined, we need that:
\begin{itemize}
\item [$(C_k)$] the time one flow of $X_k$ to be defined as a map \[\varphi_{X_k}:E_{r_{k+1}}\rmap E_{r_{k}}.\]
\end{itemize}

For part (b) of Theorem \ref{Theorem_FOUR}, we consider also \textbf{Procedure P$_1$} associated to
$\widetilde{\pi}$ such that $j^1_{|S}\widetilde{\pi}=j^1_{|S}\pi$. We define \textbf{Procedure P$_1$} the same as
\textbf{Procedure P$_0$} except that in (\ref{EQ_procedure}) we use the smoothing operators $S_{t_k}^{r_k,1}$.

For \textbf{Procedure P$_1$} to be well-defined, besides for condition $(C_k)$, one also needs that
$h_1^{r_k}(\pi_k-\pi_{|{r_k}})\in \mathfrak{X}^1(E_{r_k})^{(1)}$. This is automatically satisfied because the
operators $h_1^{r_k}$ preserve the space of tensors vanishing up to first order, and because
$j^1_{|S}(\pi_k-\pi_{|{r_k}})=0$. This last claim can be proven inductively: By hypothesis,
$j^1_{|S}(\pi_0-\pi_{|R})=0$. Assume that $j^1_{|S}(\pi_k-\pi_{|{r_k}})=0$, for some $k\geq 0$. Then also $X_k\in
\mathfrak{X}^1(E_{r_k})^{(1)}$, hence the first order jet of $\varphi_{X_k}$ along $S$ is that of the identity, and so
\[j^1_{|S}(\pi_{k+1})=j^1_{|S}(\pi_{k})=j^1_{|S}(\pi).\]
Therefore $j^1_{|S}(\pi_{k+1}-\pi_{|{r_{k+1}}})=0$.\\

These procedures converge to the map $\psi$ from Theorem \ref{Theorem_FOUR}.

\begin{proposition}\label{Proposition_technical}
There exists $\delta>0$ and an integer $d\geq 0$, such that both procedures \textbf{P$_0$} and \textbf{P$_1$} are
well defined for every $\widetilde{\pi}$ satisfying
\begin{equation}\label{EQ_8}
\|\widetilde{\pi}-\pi\|_{p,R}<\delta (r(R-r))^{d}.
\end{equation}
Moreover, the sequence $\psi_{k|r}$ converges uniformly on $E_r$ with all its derivatives to a local diffeomorphism
$\psi$, which is a Poisson map between
\[\psi:(E_r,\pi_{|r})\rmap (E_R,\widetilde{\pi}),\]
and which satisfies
\begin{equation}\label{EQ_continuity}
\mathrm{d}(\psi)_{1,r}\leq \|\pi-\widetilde{\pi}\|^{1/\alpha}_{p,R}.
\end{equation}
If $j^1_{|S}\widetilde{\pi}=j^1_{|S}\pi$, then $\psi$ obtained by \textbf{Procedure P$_1$} is the identity along $S$
up to first order.
\end{proposition}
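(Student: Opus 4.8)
The plan is to run the Nash--Moser fast-convergence scheme exactly as in the proof of Theorem \ref{Theorem_rigidity_fixed_points} in section \ref{Section_Conn}, which is the fixed-point toy model, now using the tame homotopy operators $h_1^{r},h_2^{r}$ produced by the Tame Vanishing Lemma (Corollaries \ref{corollary_unu} and \ref{corollary_unu_prim}) in place of the Hodge-theoretic operators, and the Poisson differential $d_\pi=[\pi,\cdot]$ in place of $d_{\pi_{\mathfrak{g}}}$. The only structural difference from the linear case is that $\pi$ is no longer homogeneous, so one works directly with $Z_k:=\pi_k-\pi_{|r_k}$. Since $\pi_k=\psi_{k-1}^*(\widetilde{\pi})$ is Poisson, $Z_k$ satisfies the deformation equation $d_\pi Z_k=-\tfrac12[Z_k,Z_k]$, and the Moser step decomposes as before: writing $U_k:=Z_k-[\pi,X_k]$ and $V_k:=\varphi_{X_k}^*(\pi)-\pi_{|r_{k+1}}-\varphi_{X_k}^*([X_k,\pi])$ one has $Z_{k+1}=V_k+\varphi_{X_k}^*(U_k)$, and the homotopy identity $[\pi,h_1^{r_k}Z_k]+h_2^{r_k}[\pi,Z_k]=Z_k$ together with $X_k=S_{t_k}^{r_k}h_1^{r_k}(Z_k)$ (resp.\ $S_{t_k}^{r_k,1}$) gives $U_k=[\pi,(I-S_{t_k}^{r_k})h_1^{r_k}Z_k]-\tfrac12 h_2^{r_k}[Z_k,Z_k]$. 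Thus $V_k$ is cubic and $U_k$ quadratic in the small quantities, which is the source of the quadratic convergence.

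First I would set up the induction on $k$ with statements $(C_k)$ (the time-one flow of $X_k$ maps $E_{r_{k+1}}\to E_{r_k}$, so that \textbf{Procedure} $P_0$ is well defined up to step $k$) and $(b_k)$: $\|Z_k\|_{s,r_k}\le t_k^{-\alpha}$ and $\|Z_k\|_{p,r_k}\le t_k^{\alpha}$, where $\alpha=2(s+5)$ and $p=7(s+4)$. The base case $(b_0)$ is exactly condition (\ref{EQ_8}) with the $(r(R-r))^d$-factor absorbed into $t_0^{-\alpha}=\|\widetilde{\pi}-\pi\|_{p,R}$. The implication $(b_k)\Rightarrow(C_k),(b_{k+1})$ is a chain of estimates: bound $\|X_k\|_{m,r_k}$ by $t_k^{l}\|Z_k\|_{m+s-l,r_k}$ using Lemma \ref{Lemma_smoothing_operators} and the loss-$s$ bound on $h_1^{r_k}$ (so in particular $\|X_k\|_{1,r_k}$ is small, which via Lemmas \ref{Lemma_domain_of_flow} and \ref{Lemma_size of _the flow} yields $(C_k)$); bound $\|Z_{k+1}\|_{p,r_{k+1}}$ using the pullback estimates of Lemma \ref{Lemma_tame_flow_pull_back}; and bound $\|Z_{k+1}\|_{s,r_{k+1}}$ through the decomposition above, estimating $V_k$ by the third (cubic) inequality of Lemma \ref{Lemma_tame_flow_pull_back}, $[\pi,(I-S_{t_k}^{r_k})h_1^{r_k}Z_k]$ by tameness of the Schouten bracket (Lemma \ref{L_Bracket}) and the smoothing inequalities, and $h_2^{r_k}[Z_k,Z_k]$ by Lemma \ref{L_Bracket} and the interpolation inequalities of Lemma \ref{L_interpolation}. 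The one genuinely new point compared with section \ref{Section_Conn} is that every constant now depends polynomially on $r$ (and, through $r_0=R$ and $\epsilon_0=(R-r)/4$, on $R-r$); I would record at each step a bound of the form $C_n r^{-a_n}(R-r)^{-b_n}$ and check that the finitely many such factors entering the induction are defeated by requiring $t_0$ large, i.e.\ by $\|\widetilde{\pi}-\pi\|_{p,R}<\delta(r(R-r))^d$ with $\delta$ small and $d$ a large absolute integer; since $\alpha$ is fixed and the scheme leaves ample margin (this is why $\alpha,p$ are larger than in the fixed-point case, which also accommodates the degree-one smoothing needed for $P_1$), this goes through.

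Next, from $(b_k)$ I would run the standard bootstrap: interpolating between the $C^s$ and $C^n$ bounds on $Z_k$ (Lemma \ref{L_interpolation}) and iterating the recursion for $\|Z_{k+1}\|_{n,r_{k+1}}$ gives, for each $n$, a bound of the form $\|Z_k\|_{n,r_k}\le C_n t_k^{-(s+7/2)}$ for $k$ large, hence $\|X_k\|_{n,r_k}\le C_n t_k^{-7/2}$, so $\sum_k\|X_k\|_{n,r_k}<\infty$ for all $n$. Lemma \ref{Lemma_size of _the flow} turns this into $\sum_k\mathrm{d}(\varphi_{X_k})_{n,r_{k+1}}<\infty$, and Lemma \ref{Lemma_convergent_embbedings} produces the limit $\psi\in\mathcal{U}_r$ with $\mathrm{d}(\psi)_{1,r}\le e^{C_1 r^{-1}\sigma_1}C_1 r^{-1}\sigma_1$, $\sigma_1=\sum_k\mathrm{d}(\varphi_{X_k})_{1,r_{k+1}}$. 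Because $\|X_k\|_{1,r_k}\le C t_k^{1-\alpha}=C t_0^{(1-\alpha)(3/2)^k}$ and $t_0=\|\widetilde{\pi}-\pi\|_{p,R}^{-1/\alpha}$, one gets $\sigma_1\le C t_0^{1-\alpha}=C\|\widetilde{\pi}-\pi\|_{p,R}^{(\alpha-1)/\alpha}$; since $(\alpha-1)/\alpha>1/\alpha$ and the $r^{-1}$-factors are polynomial, shrinking $\delta$ (and enlarging $d$) forces $\mathrm{d}(\psi)_{1,r}\le\|\widetilde{\pi}-\pi\|_{p,R}^{1/\alpha}$, which is (\ref{EQ_continuity}); in particular $\psi$ is a local diffeomorphism by Lemma \ref{Lemma_embedding}. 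Finally, since $\psi_k=\varphi_{X_0}\circ\cdots\circ\varphi_{X_k}$ satisfies $\psi_k^*(\widetilde{\pi})=\pi_{k+1}=\pi_{|r_{k+1}}+Z_{k+1}$, and $\psi_{k|E_r}\to\psi$ in $C^1$ while $\|Z_{k+1}\|_{0,r}\le\|Z_{k+1}\|_{s,r_{k+1}}\le t_{k+1}^{-\alpha}\to 0$, passing to the limit gives $\psi^*(\widetilde{\pi})=\pi_{|r}$, so $\psi:(E_r,\pi_{|r})\to(E_R,\widetilde{\pi})$ is a Poisson map.

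For \textbf{Procedure} $P_1$, one observes as in the discussion preceding the proposition that the operators $h_1^{r_k}$ preserve the subspace of tensors with vanishing first jet along $S$, that $S_{t_k}^{r_k,1}$ maps into this subspace by construction (Lemma \ref{Lemma_smoothing_operators}), and that $j^1_{|S}(\pi_k-\pi)=0$ inductively; hence $X_k\in\mathfrak{X}^1(E_{r_k})^{(1)}$, so $\varphi_{X_k}$ agrees with the identity to first order along $S$, and therefore so does $\psi$. All the estimates above are unchanged except for replacing $h_i^{r_k}$ and $S_{t_k}^{r_k}$ by their first-order counterparts; the degree-one smoothing $S_{t_k}^{r_k,1}$ costs one extra derivative, which is why $\alpha=2(s+5)$ and $p=7(s+4)$ were taken with this margin. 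I expect the main obstacle to be purely bookkeeping: carrying the polynomial $r$- and $(R-r)$-dependence of all constants through the two-sided induction $(b_k)$ and the bootstrap while keeping the exponents of $t_k$ consistent, so that the super-exponential decay $t_k^{-\alpha}\to 0$ is preserved and the continuity estimate comes out with the exact exponent $1/\alpha$; the quadratic-estimate algebra itself is a direct transcription of section \ref{Section_Conn}.
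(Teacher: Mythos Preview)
Your proposal is correct and follows essentially the same approach as the paper: the same induction hypotheses $(a_k),(b_k)$, the same decomposition $Z_{k+1}=V_k+\varphi_{X_k}^*(U_k)$ with $U_k=[\pi,(I-S_k)h_1^{r_k}Z_k]-\tfrac12 h_2^{r_k}[Z_k,Z_k]$, the same bootstrap via interpolation, and the same treatment of \textbf{P}$_1$. The only discrepancies are bookkeeping ones you already flagged --- some exponents you quote (e.g.\ $t_k^{-7/2}$, $\sigma_1\le Ct_0^{1-\alpha}$) are the fixed-point values and shift slightly here because the smoothing has degree one (so $\|X_k\|_m\lesssim t_k^{l+1}\|Z_k\|_{m+s-l}$ rather than $t_k^l$), but the margins built into $\alpha=2(s+5)$, $p=7(s+4)$ absorb this exactly as you anticipate.
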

\begin{proof}
We will prove the statement for the two procedures simultaneously. We denote by $S_k$ the used smoothing operators,
that is, in \textbf{P$_0$} we let $S_{k}:=S_{t_k}^{r_k}$ and in \textbf{P$_1$} we let $S_{k}:=S_{t_k}^{r_k,1}$.
In both cases, the following hold:
\begin{align*}
\|S_k(X)\|_{m,r_k}&\leq C_{m}r^{-c_m}{t}^{l+1}_k\|X\|_{m-l,r_k},\\
\|S_k(X)-X\|_{m-l,r_k}&\leq C_{m}r^{-c_m}t^{-l}_k\|X\|_{m+1,r_k}.
\end{align*}

For the procedures to be well-defined and to converge, we need that $t_0=t(\widetilde{\pi})$ is big enough, more
precisely, it must satisfy a finite number of inequalities of the form
\begin{equation}\label{EQ_0}
t_0=t(\widetilde{\pi})>C(r(R-r))^{-c}.
\end{equation}
Taking $\widetilde{\pi}$ such that it satisfies (\ref{EQ_8}), it suffices to ask that $\delta$ is small enough and $d$ is
big enough, such that a finite number of inequalities of the form
\[\delta((R-r)r)^d< \frac{1}{C}(r(R-r))^c\]
hold, and then $t_0$ will satisfy (\ref{EQ_0}).

Since $t_0>4(R-r)^{-1}=\epsilon_0^{-1}$, it follows that
\[t_k>\epsilon_k^{-1},\ \ \forall\ k\geq 0.\]

We will prove inductively that the bivectors
\[Z_k:=\pi_k-\pi_{|{r_k}}\in \mathfrak{X}^2(E_{r_{k}})\]
satisfy the inequalities ($a_k$) and ($b_k$)
\begin{equation*}
(a_k)\ \ \ \ \|Z_k\|_{s,r_k}\leq t_k^{-\alpha}, \ \ \ \ \  \ \  (b_k) \ \ \ \ \|Z_k\|_{p,r_k}\leq t_k^{\alpha}.
\end{equation*}
Since $t_0^{-\alpha}=\|Z_0\|_{p,R}$, $(a_0)$ and $(b_0)$ hold. Assuming that $(a_k)$ and $(b_k)$ hold for some
$k\geq 0$, we will show that condition $(C_k)$ holds (i.e.\ the procedure is well-defined up to step $k$) and also that
$(a_{k+1})$ and $(b_{k+1})$ hold.

First we give a bound for the norms of $X_k$ in terms of the norms of $Z_k$:
\begin{align}\label{EQ_X_n_l}
\|X_k\|_{m,r_k}&=\|S_{k}(h_1^{r_k}(Z_k))\|_{m,r_k}\leq C_mr^{-c_m}t_k^{1+l}\|h_1^{r_k}(Z_k)\|_{m-l,r_k}\leq\\
\nonumber &\leq C_mr^{-c_m}t_k^{1+l}\|Z_k\|_{m+s-l,r_k},\ \ \forall \ \ 0\leq l\leq m.
\end{align}
In particular, for $m=l$, we obtain
\begin{align}\label{EQ_X_n_n}
\|X_k\|_{m,r_k}&\leq C_mr^{-c_m}t_k^{1+m-\alpha}.
\end{align}
Since $\alpha>4$ and $t_k>\epsilon_k^{-1}$, this inequality implies that
\begin{equation}\label{EQ_X_12}
\|X_k\|_{1,r_k}\leq Cr^{-c}t_k^{2-\alpha}\leq Cr^{-c}t_0^{-1}t_k^{-1}<Cr^{-c}t_0^{-1}\epsilon_k.
\end{equation}
Since $t_0>Cr^{-c}/\theta$, we have that $\|X_k\|_{1,r_k}\leq \theta\epsilon_k$, and so by Lemma
\ref{Lemma_domain_of_flow} $(C_k)$ holds. Moreover, $X_k$ satisfies the inequalities from Lemma \ref{Lemma_size
of _the flow} and Lemma \ref{Lemma_tame_flow_pull_back}.

Next, we deduce an inequality for all norms $\|Z_{k+1}\|_{n,r_{k+1}}$ with $n\geq s$:
\begin{align}\label{EQ_Z_n}
\|Z_{k+1}&\|_{n,r_{k+1}}=\|\varphi_{X_k}^{*}(Z_k)+\varphi_{X_k}^{*}(\pi)-\pi\|_{n,r_{k+1}}\leq\\
\nonumber &\leq C_nr^{-c_n}(\|Z_k\|_{n,r_{k}}+\|X_k\|_{n+1,r_k}\|Z_k\|_{0,r_k}+\|X_k\|_{n+1,r_k}\|\pi\|_{n+1,r_k})\leq\\
\nonumber&\leq C_nr^{-c_n}(\|Z_k\|_{n,r_{k}}+\|X_k\|_{n+1,r_k})\leq C_nr^{-c_n}t_k^{s+2}\|Z_k\|_{n,r_k},
\end{align}
where we used Lemma \ref{Lemma_tame_flow_pull_back}, the inductive hypothesis, and inequality (\ref{EQ_X_n_l})
with $m=n+1$ and $l=s+1$. For $n=p$, using also that $s+2+\alpha\leq \frac{3}{2}\alpha-1$, this gives
$(b_{k+1})$:
\begin{align*}
\|Z_{k+1}\|_{p,r_{k+1}}&\leq Cr^{-c}t_k^{s+2+\alpha}\leq Cr^{-c}t_k^{\frac{3}{2}\alpha-1}\leq Cr^{-c}t_0^{-1}t_{k+1}^{\alpha}\leq t_{k+1}^{\alpha}.
\end{align*}

To prove $(a_{k+1})$, we write $Z_{k+1}=V_k+\varphi_{X_k}^{*}(U_{k})$, where
\[V_k:=\varphi_{X_k}^{*}(\pi)-\pi-\varphi_{X_k}^{*}([X_k,\pi]),\ \ U_{k}:=Z_{k}-[\pi,X_k].\]
Using Lemma \ref{Lemma_tame_flow_pull_back} and inequality (\ref{EQ_X_n_n}), we bound the two terms by
\begin{align}\label{EQ_V}
&\|V_k\|_{s,r_{k+1}}\leq Cr^{-c}\|\pi\|_{s+2,r_k}\|X_k\|_{0,r_k}\|X_k\|_{s+2,r_k}\leq  C r^{-c}t_k^{s+4-2\alpha},\\
\label{EQ_U}&\|\varphi_{X_k}^{*}(U_{k})\|_{s,r_{k+1}}\leq Cr^{-c}(\|U_k\|_{s,r_k}+\|U_k\|_{0,r_k}\|X_k\|_{s+1,r_{k}})\leq\\
\nonumber&\phantom{\|\varphi_{X_k}^{*}(U_{k})\|_{s,r_{k+1}}}\leq Cr^{-c}(\|U_k\|_{s,r_k}+t_k^{s+2-\alpha}\|U_k\|_{0,r_k}).
\end{align}
To compute the $C^s$-norm for $U_k$, we rewrite it as
\begin{align*}
U_k&=Z_k-[\pi,X_k]=[\pi,h_1^{r_k}(Z_k)]+h_2^{r_k}([\pi,Z_k])-[\pi,X_k]=\\
&=[\pi,(I-S_k)h_1^{r_k}(Z_k)]-\frac{1}{2}h_2^{r_k}([Z_k,Z_k]).
\end{align*}
By tameness of the Lie bracket, the first term can be bounded by
\begin{align*}
\|[\pi,(I-S_k)& h_1^{r_k}(Z_k)]\|_{s,r_k}\leq C r^{-c}\|(I-S_k)h_1^{r_k}(Z_k)\|_{s+1,r_k}\leq\\
&\leq Cr^{-c} t_k^{2-p+2s} \|h_1^{r_k}(Z_k)\|_{p-s,r_k}\leq C r^{-c}t_k^{2-p+2s} \|Z_k\|_{p,r_k}\leq\\
&\leq Cr^{-c} t_k^{2-p+2s+\alpha}=Cr^{-c} t_k^{-\frac{3}{2}\alpha-1},
\end{align*}
and using also the interpolation inequalities, for the second term we obtain
\begin{align*}
\|\frac{1}{2}h_2^{r_k}([Z_k,Z_k])&\|_{s,r_k}\leq C\|[Z_k,Z_k]\|_{2s,r_k}\leq Cr^{-c}\|Z_k\|_{0,r_k}\|Z_k\|_{2s+1,r_k}\leq\\
&\leq Cr^{-c}t_{k}^{-\alpha}\|Z_k\|_{s,r_k}^{\frac{p-(2s+1)}{p-s}}\|Z_k\|_{p,r_k}^{\frac{s+1}{p-s}}\leq Cr^{-c}t_k^{-\alpha(1+\frac{p-(3s+2)}{p-s})}.
\end{align*}
Since $-\alpha(1+\frac{p-(3s+2)}{p-s})\leq -\frac{3}{2}\alpha-1$, these two inequalities imply that
\begin{equation}\label{EQ_U_s}
\|U_k\|_{s,r_k}\leq Cr^{-c}t_k^{-\frac{3}{2}\alpha-1}.
\end{equation}
Using (\ref{EQ_X_n_n}), we bound the $C^0$-norm of $U_k$ by
\begin{equation}\label{EQ_U_0}
\|U_k\|_{0,r_{k}}\leq \|Z_k\|_{0,r_{k}}+\|[\pi,X_{k}]\|_{0,r_k}\leq t_k^{-\alpha}+Cr^{-c}\|X_{k}\|_{1,r_k}\leq Cr^{-c}t_{k}^{2-\alpha}.
\end{equation}
By (\ref{EQ_V}), (\ref{EQ_U}), (\ref{EQ_U_s}), (\ref{EQ_U_0}), and $s+4-2\alpha=-\frac{3}{2}\alpha-1$, $(a_{k+1})$
follows:
\begin{align*}
\|Z_{k+1}\|_{s,r_{k+1}}&\leq Cr^{-c}(t_k^{s+4-2\alpha}+t_k^{-\frac{3}{2}\alpha-1})\leq\\
&\leq Cr^{-c}t_k^{-\frac{3}{2}\alpha-1}\leq(r^{-c}C/t_0)t_{k}^{-\frac{3}{2}\alpha}\leq t_{k+1}^{-\alpha}.
\end{align*}
This finishes the induction.

Using (\ref{EQ_Z_n}), for every $n\geq 1$, we find a $k_n\geq 0$ such that
\[\|Z_{k+1}\|_{n,r_{k+1}}\leq t_k^{s+3}\|Z_{k}\|_{n,r_{k}}, \ \ \forall\ k\geq k_n.\]
Iterating this we obtain
\[t_k^{s+3}\|Z_{k}\|_{n,r_{k}}\leq (t_kt_{k-1}\ldots t_{k_n})^{s+3}\|Z_{k_n}\|_{n,r_{k_n}}.\]
On the other hand we have that
\[t_kt_{k-1}\ldots t_{k_n}=t_{k_n}^{1+\frac{3}{2}+\ldots+(\frac{3}{2})^{k-k_n}}\leq t_{k_n}^{2(\frac{3}{2})^{k+1-k_n}}=t_k^3.\]
Therefore, we obtain a bound valid for all $k>k_n$
\[\|Z_{k}\|_{n,r_{k}}\leq t_{k}^{2(s+3)}\|Z_{k_n}\|_{n,r_{k_n}}.\]

Consider now $m>s$ and denote by $n:=4m-3s$. Applying the interpolation inequalities, for $k> k_{n}$, we obtain
\begin{align*}
\|Z_k\|_{m,r_k}\leq& C_m r^{-c_m} \|Z_k\|_{s,r_k}^{\frac{n-m}{n-s}}\|Z_k\|_{n,r_k}^{\frac{m-s}{n-s}}= C_mr^{-c_m}\|Z_k\|_{s,r_k}^{\frac{3}{4}}\|Z_k\|_{n,r_k}^{\frac{1}{4}}\leq \\
\leq& C_m r^{-c_m}t_k^{-\alpha\frac{3}{4}+2(s+3)\frac{1}{4}}\|Z_{k_{n}}\|_{n,r_{k_{n}}}^{\frac{1}{4}}=C_m r^{-c_m}t_k^{-(s+6)}\|Z_{k_{n}}\|_{n,r_{k_{n}}}^{\frac{1}{4}}.
\end{align*}
Using also inequality (\ref{EQ_X_n_l}), for $l=s$, we obtain
\begin{align*}
\|X_{k}\|_{m,r_k}\leq C_mr^{-c_m}t_k^{s+1}\|Z_k\|_{m,r_k}\leq  t_k^{-5}\left( C_mr^{-c_m}\|Z_{k_{n}}\|_{n,r_{k_{n}}}^{\frac{1}{4}}\right).
\end{align*}
This shows that the series $\sum_{k\geq 0}\|X_k\|_{m,r_k}$ converges for all $m$. By Lemma \ref{Lemma_size of _the
flow}, also $\sum_{k\geq 0}\mathrm{d}(\varphi_{X_k})_{m,r_{k+1}}$ converges for all $m$ and, moreover, by
(\ref{EQ_X_12}), we have that
\begin{equation*}
\sigma_1:=\sum_{k\geq 1}\mathrm{d}(\varphi_{X_k})_{1,r_{k+1}}\leq Cr^{-c}\sum_{k\geq 1}\|X_k\|_{1,r_k}\leq Cr^{-c}t_0^{-4}\sum_{k\geq 1}\epsilon_k\leq t_0^{-3}.
\end{equation*}
So we may assume that $\sigma_1\leq \theta$ and $\mathrm{d}(\varphi_{X_k})_{1,r_{k+1}}<1$. Then by applying
Lemma \ref{Lemma_convergent_embbedings} we conclude that the sequence $\psi_{k|r}$ converges uniformly in all
$C^n$-norms to a map $\psi:E_r\to E_R$ in $\mathcal{U}_r$ which satisfies
\begin{equation*}
\mathrm{d}(\psi)_{1,r}\leq e^{Cr^{-c}\sigma_1}Cr^{-c}\sigma_1\leq e^{t_0^{-2}}t_0^{-2}\leq Ct_0^{-2}\leq t_0^{-1}.
\end{equation*}
So (\ref{EQ_continuity}) holds, and we can also assume that $\mathrm{d}(\psi)_{1,r}<\theta$, which, by Lemma
\ref{Lemma_embedding}, implies that $\psi$ is a local diffeomorphism. Since $\psi_{k|r}$ converges in the
$C^1$-topology to $\psi$ and $\psi_k^*(\widetilde{\pi})=(d\psi_k)^{-1}(\widetilde{\pi}_{\psi_k})$, it follows that
$\psi_k^*(\widetilde{\pi})_{|r}$ converges in the $C^0$-topology to $\psi^*(\widetilde{\pi})$. On the other hand,
$Z_{k|r}=\psi_k^*(\widetilde{\pi})_{|r}-\pi_{|r}$ converges to $0$ in the $C^0$-norm, hence
$\psi^*(\widetilde{\pi})=\pi_{|r}$. So $\psi$ is a Poisson map and a local diffeomorphism between
\[\psi:(E_r,\pi_{|r})\rmap (E_R,\widetilde{\pi}).\]

For \textbf{Procedure P$_1$}, as noted before the proposition, the first jet of $\varphi_{X_k}$ is that of the identity
along $S$. This clearly holds also for $\psi_k$, and since $\psi_{k|r}$ converges to $\psi$ in the $C^1$-topology, we
have that $\psi$ is also the identity along $S$ up to first order.
\end{proof}

Now we are ready to finish the proof of Theorem \ref{Theorem_FOUR}.

\subsubsection*{Proof of part (a) of Theorem \ref{Theorem_FOUR}}

We have to check that the rigidity property from Definition \ref{Definition_CpC1} holds. Consider
$U:=\mathrm{int}(E_{\rho})$ for some ${\rho}\in (0,1)$, and let $O\subset U$ be an open such that $S\subset
O\subset \overline{O}\subset U$. Let $r<R$ be such that $O\subset E_r\subset E_R\subset U$. With $d$ and $\delta$
from Proposition \ref{Proposition_technical}, we let
\[\mathcal{V}_O:=\{W\in\mathfrak{X}^2(U): \|W_{|R}-\pi_{|R}\|_{p,R}<\delta(r(R-r))^d\}.\]
For $\widetilde{\pi}\in \mathcal{V}_O$, define $\psi_{\widetilde{\pi}}$ to be the restriction to $\overline{O}$ of the
map $\psi$ obtained by applying \textbf{Procedure P$_0$} to $\widetilde{\pi}_{|R}$. Then $\psi$ is a Poisson
diffeomorphism $(O,\pi_{|O})\to (U,\widetilde{\pi})$, and by (\ref{EQ_continuity}), the assignment
$\widetilde{\pi}\mapsto \psi$ has the required continuity property.

\subsubsection*{Proof of part (b) of Theorem \ref{Theorem_FOUR}}

Consider $\widetilde{\pi}$ a Poisson structure on some neighborhood of $S$ with
$j^1_{|S}\widetilde{\pi}=j^1_{|S}\pi$. First we show that $\pi$ and $\widetilde{\pi}$ are formally Poisson
diffeomorphic around $S$. For this we have to check that $H^2(A_S,\mathcal{S}^k(\nu_S^*))=0$. The Lie groupoid
$\mathcal{G}_{|S}\rightrightarrows S$ integrates $A_S$ and is $s$-connected. Since $\nu_S^*\subset A_S$ is an
ideal, by Lemma \ref{Lemma_integrating_ideals}, the action of $A_S$ on $\nu_S^*$ (hence also on
$\mathcal{S}^k(\nu_S^*)$) also integrates to $\mathcal{G}_{|S}$. Since $\mathcal{G}_{|S}$ has compact $s$-fibers
with vanishing $H^2$, the Tame Vanishing Lemma implies that $H^2(A_S, \mathcal{S}^k(\nu_S^*))=0$. So we can
apply Theorem \ref{Theorem_THREE} to conclude that there exists a diffeomorphism $\varphi$ between open
neighborhoods of $S$, which is the identity on $S$ up to first order, such that
\[j^{\infty}_{|S}\varphi^*(\widetilde{\pi})=j^{\infty}_{|S}\pi.\]

Consider $R\in(0,1)$ such that $\varphi^*(\widetilde{\pi})$ is defined on $E_R$. Using the Taylor expansion up to
order $2d+1$ around $S$ for the bivector $\pi-\varphi^*(\widetilde{\pi})$ and its partial derivatives up to order $p$,
we find a constant $M>0$ such that
\[\|\varphi^*(\widetilde{\pi})_{|r}-\pi_{|r}\|_{p,r}\leq M r^{2d+1},\ \ \forall\ 0<r<R.\]
If we take $r<2^{-d}\delta/M $, we obtain that $\|\varphi^*(\widetilde{\pi})_{|r}-\pi_{|r}\|_{p,r}<\delta(r(r-r/2))^d$.
So we can apply Proposition \ref{Proposition_technical}, and \textbf{Procedure P$_1$} produces a Poisson
diffeomorphism
\[\tau:(E_{r/2},\pi_{|r/2})\rmap (E_r,\varphi^*(\widetilde{\pi})_{|r}),\]
which is the identity up to first order along $S$. We obtain (b) with $\psi=\varphi\circ\tau$.

\begin{remark}\rm\label{Remark_SCI}
As mentioned already in section \ref{introduction_rigi}, Conn's proof has been formalized in \cite{Miranda,Monnier}
into an abstract Nash Moser normal form theorem, and it is likely that one could use Theorem 6.8 \cite{Miranda} to
prove partially our rigidity result. Nevertheless, the continuity assertion, which is important in applications (see
chapter \ref{ChDef}), is not a consequence of this result. There are also several technical reasons why we cannot apply
\cite{Miranda}: we need the size of the $C^p$-open to depend polynomially on $r^{-1}$ and $(R-r)^{-1}$, because
we use a formal linearization argument (this dependence is not given in \emph{loc.cit.}); to obtain diffeomorphisms
that fix $S$, we work with vector fields vanishing along $S$ up to first order, and it is unlikely that this Fr\'echet space
admits smoothing operators of degree $0$ (in \emph{loc.cit.} this is the overall assumption); for the inequalities in
Lemma \ref{Lemma_composition} we need special norms (indexed also by ``$\delta$'') for the embeddings (these are
not considered in \emph{loc.cit.}).
\end{remark}

\clearpage \pagestyle{plain}

\chapter{Deformations of the Lie-Poisson sphere of a compact semisimple Lie algebra}\label{ChDef}

\pagestyle{fancy}

\fancyhead[CE]{Chapter \ref{ChDef}} 
\fancyhead[CO]{Deformations of the Lie-Poisson sphere of a compact semisimple Lie algebra}

The linear Poisson structure $(\mathfrak{g}^*,\pi_{\mathfrak{g}})$ corresponding to a compact semisimple Lie
algebra $\mathfrak{g}$ induces a Poisson structure $\pi_{\mathbb{S}}$ on the unit sphere
$\mathbb{S}(\mathfrak{g}^*)\subset \mathfrak{g}^*$. As a surprising application of Theorem \ref{Theorem_FOUR},
in this chapter we compute the moduli space of Poisson structures on $\mathbb{S}(\mathfrak{g}^*)$ around
$\pi_{\mathbb{S}}$ (Theorem \ref{Theorem_FIVE}). This is the first explicit computation of a Poisson moduli space in
dimension greater or equal than three around a degenerate (i.e.\ not symplectic) Poisson bivector. The content of this
chapter is available as a preprint at \cite{MarDef}.

\section{Statement of Theorem \ref{Theorem_FIVE}}

Let $(\mathfrak{g},[\cdot,\cdot])$ be a semisimple Lie algebra of compact type. We consider an inner product on
$\mathfrak{g}^*$ which is not just $\mathfrak{g}$-invariant, but also $Aut(\mathfrak{g})$-invariant (e.g. induced by
the Killing form). The corresponding unit sphere around the origin, denoted by $\mathbb{S}(\mathfrak{g}^*)$, is a
Poisson submanifold, and as such it inherits a Poisson structure
$\pi_{\mathbb{S}}:=\pi_{\mathfrak{g}|\mathbb{S}(\mathfrak{g}^*)}$. We will call the Poisson manifold
\[(\mathbb{S}(\mathfrak{g}^*),\pi_{\mathbb{S}})\]
the \textbf{Lie-Poisson sphere}\index{Lie-Poisson sphere} corresponding to $\mathfrak{g}$. Lie algebra
automorphisms of $\mathfrak{g}$ restrict to Poisson diffeomorphisms of $\pi_{\mathbb{S}}$. The inner
automorphisms preserve the coadjoint orbits and so, they act trivially on the space of Casimir functions\index{Casimir
function}. Therefore $Out(\mathfrak{g})$, the group of outer automorphisms of $\mathfrak{g}$, acts naturally on the
space of Casimir functions, which we denote by
\[\mathfrak{Casim}(\mathbb{S}(\mathfrak{g}^*),\pi_{\mathbb{S}}).\]

The main result of this chapter is the following description of Poisson structures on $\mathbb{S}(\mathfrak{g}^*)$
near $\pi_{\mathbb{S}}$.\index{linear Poisson structure}

\begin{mtheorem}\label{Theorem_FIVE}
For the Lie-Poisson sphere $(\mathbb{S}(\mathfrak{g}^*),\pi_{\mathbb{S}})$, corresponding to a compact
semisimple Lie algebra $\mathfrak{g}$, the following hold:
\begin{enumerate}[(a)]
\item There exists a $C^p$-open $\mathcal{W}\subset \mathfrak{X}^2(\mathbb{S}(\mathfrak{g}^*))$ around $\pi_{\mathbb{S}}$, such that every Poisson structure in $\mathcal{W}$ is isomorphic to one
of the form $f\pi_{\mathbb{S}}$, where $f$ is a positive Casimir, by a diffeomorphism isotopic to the identity.
\item For two positive Casimirs $f$ and $g$, the Poisson manifolds $(\mathbb{S}(\mathfrak{g}^*),f\pi_{\mathbb{S}})$ and $(\mathbb{S}(\mathfrak{g}^*),g\pi_{\mathbb{S}})$ are isomorphic
precisely when $f$ and $g$ are related by an outer automorphism of $\mathfrak{g}$.
\end{enumerate}
\end{mtheorem}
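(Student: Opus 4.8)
The plan is to prove Theorem \ref{Theorem_FIVE} in two independent parts, corresponding to (a) and (b), using the rigidity result Proposition \ref{proposition_rigidity_of_spheres} as the main engine for (a), and Lie-theoretic interpretation of leafwise cohomology for (b).

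\textbf{Part (a).} First I would invoke Proposition \ref{proposition_rigidity_of_spheres}: since $\mathfrak{g}$ is semisimple of compact type, $\pi_{\mathfrak{g}}$ is $C^p$-$C^1$-rigid around $\mathbb{S}(\mathfrak{g}^*)$, hence there is a $C^p$-open $\mathcal{W}_0$ of Poisson structures on a neighborhood of the sphere in $\mathfrak{g}^*$ each of which is Poisson-diffeomorphic to $\pi_{\mathfrak{g}}$ near the sphere by a diffeomorphism isotopic to the identity (using subsection \ref{subsection_CpC1isotopies}). But the theorem is about Poisson structures living on $\mathbb{S}(\mathfrak{g}^*)$ itself, not on an open in $\mathfrak{g}^*$, so the ``nice geometric argument'' alluded to in the introduction has to bridge this gap. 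The key observation is that $\mathbb{S}(\mathfrak{g}^*)$ is a Poisson transversal (cosymplectic submanifold): it meets every coadjoint orbit $O_{\xi}$ (for $\xi \neq 0$) transversally in a codimension-one submanifold on which the restriction of $\omega_{\xi}$ is symplectic — indeed the radial direction is transverse to the orbits and symplectic orthogonality forces the intersection to be symplectic. So given a Poisson structure $\widetilde{\pi}$ on $\mathbb{S}(\mathfrak{g}^*)$ that is $C^p$-close to $\pi_{\mathbb{S}}$, I would first extend it (non-canonically, but controllably) to a Poisson structure $\widehat{\pi}$ on a tubular neighborhood of the sphere in $\mathfrak{g}^*$ having $\mathbb{S}(\mathfrak{g}^*)$ as a Poisson transversal and being $C^p$-close to $\pi_{\mathfrak{g}}$; the existence of such an extension for cosymplectic submanifolds, depending continuously on the data, is where the Dirac-geometric machinery of subsection \ref{subsection_Dirac} and Lemma \ref{Lemma_restricting_symplectic_realizations} gets used. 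Then rigidity gives a diffeomorphism $\Phi$, isotopic to the identity, with $\Phi^*(\widehat{\pi}) = \pi_{\mathfrak{g}}$ near the sphere. Now $\Phi(\mathbb{S}(\mathfrak{g}^*))$ is a Poisson transversal of $\pi_{\mathfrak{g}}$, $C^1$-close to $\mathbb{S}(\mathfrak{g}^*)$; since the coadjoint orbits are the level sets of the Casimir $|\xi|^2$, such a transversal is $C^1$-close to a sphere and can be written, after composing with the coadjoint-equivariant radial flow, as a graph $\{f(\xi)^{-1/2}\xi : \xi \in \mathbb{S}(\mathfrak{g}^*)\}$ for a positive function $f$ on the sphere — but transversality to the orbits forces $f$ to be constant on coadjoint orbits, i.e.\ a Casimir. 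Pulling back $\pi_{\mathfrak{g}}$ by this rescaling identifies it with $f\pi_{\mathbb{S}}$, and tracking the isotopies gives the conclusion that $\widetilde{\pi}$ is isomorphic to $f\pi_{\mathbb{S}}$ by a diffeomorphism isotopic to the identity.

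\textbf{Part (b).} For the classification among the $f\pi_{\mathbb{S}}$, one direction is clear: an outer automorphism of $\mathfrak{g}$ restricts to a diffeomorphism of $\mathbb{S}(\mathfrak{g}^*)$ preserving $\pi_{\mathbb{S}}$ (by $Aut(\mathfrak{g})$-invariance of the metric) and carrying $f\pi_{\mathbb{S}}$ to $(f\circ\phi^{-1})\pi_{\mathbb{S}}$, so if $g = f\circ\phi^{-1}$ for $\phi \in Out(\mathfrak{g})$ the two are isomorphic. The hard direction is the converse. Suppose $\Psi:(\mathbb{S}(\mathfrak{g}^*),f\pi_{\mathbb{S}}) \to (\mathbb{S}(\mathfrak{g}^*),g\pi_{\mathbb{S}})$ is a Poisson diffeomorphism. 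It maps symplectic leaves to symplectic leaves; the leaves of $f\pi_{\mathbb{S}}$ are the intersections of coadjoint orbits with the sphere, with symplectic form $f^{-1}$ times the restricted orbit form. The invariant that distinguishes leaves is their symplectic volume (more precisely, the cohomology class of the symplectic form, interpreted Lie-theoretically): for a coadjoint orbit $O$ meeting the sphere in a leaf $S_O$, the symplectic volume of $(S_O, f^{-1}\omega|_{S_O})$ is a function of the orbit-type and of the value of $f$ on that orbit. Using the Lie-theoretic description of the leaves (via the partition of $\mathfrak{g}^*$ into coadjoint orbits, parametrized up to $Out(\mathfrak{g})$ by a Weyl chamber, cf.\ the argument in subsection \ref{subsection_a_global_conflict} on volume-holonomy functions), I would show that $\Psi$ induces a bijection of the orbit-parameter space which, being continuous and compatible with the orbit-type stratification, must be realized by an element of the Weyl group / automorphism group — and the only automorphisms of the parametrized leaf space of $\mathfrak{g}^*$ come from $Out(\mathfrak{g})$. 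Comparing the symplectic volumes of corresponding leaves then forces $g = f\circ\phi^{-1}$ for the resulting $\phi \in Out(\mathfrak{g})$.

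\textbf{Main obstacle.} The crux of the whole argument is Part (a)'s passage from the intrinsic problem on $\mathbb{S}(\mathfrak{g}^*)$ to the ambient rigidity statement in $\mathfrak{g}^*$, i.e.\ constructing the Poisson extension of a nearby $\widetilde{\pi}$ to a cosymplectic-thickening that is $C^p$-close to $\pi_{\mathfrak{g}}$ and depends continuously on $\widetilde{\pi}$ — together with the reverse step of recognizing an arbitrary Poisson transversal of $\pi_{\mathfrak{g}}$ near the sphere as a Casimir-rescaling of the sphere. The second-hardest point is making the leaf-space / symplectic-volume invariant in Part (b) rigid enough to pin down an honest element of $Out(\mathfrak{g})$ rather than just an abstract homeomorphism of the parameter space; this is where the explicit structure of coadjoint orbits and the $Aut(\mathfrak{g})$-invariance of the chosen inner product are essential.
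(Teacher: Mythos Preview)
Your Part (a) contains a genuine error: $\mathbb{S}(\mathfrak{g}^*)$ is a \emph{Poisson submanifold} of $(\mathfrak{g}^*,\pi_{\mathfrak{g}})$, not a Poisson transversal. The inner product is $G$-invariant, so each coadjoint orbit through a point of the sphere lies \emph{entirely inside} the sphere; the intersection $O_\xi\cap\mathbb{S}(\mathfrak{g}^*)$ is all of $O_\xi$, not a codimension-one submanifold. Your proposed cosymplectic-thickening mechanism is therefore based on a false premise and does not apply. The paper's bridge between the intrinsic and ambient problems goes the other way: given $\overline{\pi}$ on $\mathbb{S}(\mathfrak{g}^*)$ close to $f\pi_{\mathbb{S}}$, one forms the \emph{cone} $(\mathbb{S}(\mathfrak{g}^*)\times\mathbb{R}_+,\,t\overline{\pi})$ and pushes it to $\mathfrak{g}^*\setminus\{0\}$ via $(\xi,t)\mapsto \xi/(tf(\xi))$; this is Poisson for $t\overline{\pi}$ versus $\pi_{\mathfrak{g}}$ precisely when $f$ is a Casimir (Lemma~\ref{Lemma_B}), and for general $\overline{\pi}$ it produces a Poisson structure $\widetilde{\pi}$ on $\mathfrak{g}^*\setminus\{0\}$ that is $C^p$-close to $\pi_{\mathfrak{g}}$. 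After applying rigidity, the embedded sphere $S_f$ is a Poisson \emph{submanifold} of $\widetilde{\pi}$, so its image under $\psi_{\widetilde{\pi}}$ is a Poisson submanifold of $\pi_{\mathfrak{g}}$; being $C^1$-close to $S_f$ it is again a radial graph $S_g$, and being a Poisson submanifold forces $g$ to be a Casimir. No extension theorem for cosymplectic data is needed.

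For Part (b) your outline is in the right spirit but the invariant you emphasize, symplectic volume, is too weak to pin down an element of $Out(\mathfrak{g})$. The paper works on the regular part $\mathbb{S}(\mathfrak{g}^*)^{\mathrm{reg}}\cong G/T\times\mathbb{S}(\mathfrak{c})$, where the symplectic form on the leaf over $\xi$ is $\eta_{\xi/f(\xi)}$ and the map $\xi\mapsto[\eta_\xi]$ is a linear \emph{isomorphism} $\mathfrak{t}^*\to H^2(G/T)$ (Lemma~\ref{Lemma_1}). The crucial Lie-theoretic input is Proposition~\ref{Proposition_1}: every diffeomorphism of $G/T$ acts on $H^2(G/T)$ as some $\overline{\sigma}^*$ with $\sigma\in Aut(\mathfrak{g},\mathfrak{t})$, proved by showing the induced map on $\mathfrak{t}^*$ preserves $\mathfrak{t}^{*\mathrm{reg}}$ and the lattice $\Pi$, hence lies in $Aut(\Phi)$ (Corollary~\ref{Corollary_1}). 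Comparing full cohomology classes, not volumes, gives $\xi/f(\xi)=\sigma^*(\theta(\xi))/g(\theta(\xi))$, and taking norms yields $f=g\circ(\sigma^{-1})^*$ directly. Your stratification-and-volume argument would at best produce a homeomorphism of the leaf space, which is exactly the soft part; the rigidity that forces it to come from $Out(\mathfrak{g})$ lives in $H^2(G/T)$ and the root-system result.
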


The open $\mathcal{W}$ will be constructed such that it contains all Poisson structures of the form
$f\pi_{\mathbb{S}}$, with $f$ a positive Casimir. Therefore, the map $F\mapsto e^{F}\pi_{\mathbb{S}}$ induces a
bijection between the space
\begin{equation*}
\mathfrak{Casim}(\mathbb{S}(\mathfrak{g}^*),\pi_{\mathbb{S}})/Out(\mathfrak{g})
\end{equation*}
and an open around $\pi_{\mathbb{S}}$ in the Poisson moduli space of $\mathbb{S}(\mathfrak{g}^*)$. Using
classical invariant theory, we show in section \ref{Section_the_space_of_Casimirs} that this space is isomorphic to
\[C^{\infty}(\overline{B})/Out(\mathfrak{g}),\]
where $B\subset \mathbb{R}^{l-1}$ ($l=\mathrm{rank}(\mathfrak{g})$) is a bounded open which is invariant under
a linear action of
$Out(\mathfrak{g})$ on $\mathbb{R}^{l-1}$.\\

Recall from subsection \ref{SSPoissonCoho} that the space of Casimir functions of a Poisson manifold $(M,\pi)$ is the
$0$-th Poisson cohomology group $H^0_{\pi}(M)$.\index{cohomology, Poisson} Also, that the second Poisson
cohomology group $H^2_{\pi}(M)$ has the heuristic interpretation of being the ``tangent space'' to the Poisson
moduli space at $\pi$. As our result suggests, for the Lie-Poisson sphere we have that multiplication with
$[\pi_{\mathbb{S}}]$ induces an isomorphism (see Proposition \ref{Proposition_Poisson_cohomology})
\[\mathfrak{Casim}(\mathbb{S}(\mathfrak{g}^*),\pi_{\mathbb{S}})\cong H^2_{\pi_{\mathbb{S}}}(\mathbb{S}(\mathfrak{g}^*)).\]

There are only few descriptions, in the literature, of opens in the Poisson moduli space of a compact manifold. We recall
below two such results.

For a compact symplectic manifold $(M,\omega)$, every Poisson structure $C^0$-close to $\omega^{-1}$ is
symplectic as well. The Moser argument shows that two symplectic structures in the same cohomology class, and
which are close enough to $\omega$, are symplectomorphic by a diffeomorphism isotopic to the identity. This implies
that the map $\pi\mapsto [\pi^{-1}]\in H^2(M)$ induces a bijection between an open in the space of all Poisson
structures modulo diffeomorphisms isotopic to the identity and an open in $H^2(M)$. Also the heuristic prognosis
holds, since $H^2(M)\cong H^2_{\omega^{-1}}(M)$. In general it is difficult to say more, that is, to determine
whether two symplectic structures, different in cohomology, are symplectomorphic. In Corollary
\ref{Corollary_moduli_on_coadjoint} we achieve this for the maximal coadjoint orbits of a compact semisimple Lie
algebra.

In \cite{Radko}, Radko obtains a description of the moduli space of \emph{topologically stable Poisson structures} on
a compact oriented surface $\Sigma$. These are Poisson structures $\pi\in \mathfrak{X}^2(\Sigma)$ that intersect the
zero section of $\Lambda^2 T\Sigma$ transversally, and so, they form a dense $C^1$-open in the space of all Poisson
structures. The moduli space decomposes as a union of finite dimensional manifolds (of different
dimensions), and its tangent space at $\pi$ is precisely $H^2_{\pi}(\Sigma)$. The fact that $\Sigma$ is two-dimensional facilitates the study, since any bivector on $\Sigma$ is Poisson.\\

The main difficulty when studying deformations of Poisson structures on compact manifolds (in contrast e.g.\ to
complex structures) is that the Poisson complex fails to be elliptic, unless the structure is symplectic. Therefore, in
general, $H^2_{\pi}(M)$ and the Poisson moduli space are infinite dimensional. This is also the case for the
Lie-Poisson sphere of a compact semisimple Lie algebra $\mathfrak{g}$, except for
$\mathfrak{g}=\mathfrak{su}(2)$. The Lie algebra $\mathfrak{su}(2)$ is special also because it is the only one for
which the Lie-Poisson sphere is symplectic (thus the result follows from Moser's theorem). Moreover, it is the only one
for which the Lie-Poisson sphere is an integrable Poisson manifold (in the sense of section
\ref{Section_symplectic_groupoids}).

\section{Proof of part $(a)$ of Theorem \ref{Theorem_FIVE}}

Throughout this chapter, we fix a compact semisimple Lie algebra $(\mathfrak{g},[\cdot,\cdot])$ and an
$Aut(\mathfrak{g})$-invariant inner product on $\mathfrak{g}$ (e.g.\ the negative of the Killing form), hence also on
$\mathfrak{g}^*$. We denote by $G$ a 1-connected Lie group integrating $\mathfrak{g}$. Then $G$ is compact and
$H^2(G)=0$.

The linear Poisson manifold $(\mathfrak{g}^*,\pi_{\mathfrak{g}})$ is integrable by the action Lie groupoid $G\ltimes
\mathfrak{g}^*$ (see subsection \ref{Subsection_examples_of_symplectic_groupoids}). Since the $s$-fibers of
$G\ltimes \mathfrak{g}^*$ are diffeomorphic to $G$, we can apply Theorem \ref{Theorem_FOUR} and conclude that
$\pi_{\mathfrak{g}}$ is $C^p$-$C^1$-rigid around any compact Poisson submanifold.

For $f\in C^{\infty}(\mathbb{S}(\mathfrak{g}^*))$, with $f>0$, consider the following codimension-one sphere $S_f$
in $\mathfrak{g}^*\backslash\{0\}$
\[S_f:=\left\{\frac{1}{f(\xi)}\xi \ |\  \xi\in \mathbb{S}(\mathfrak{g}^*)\right\}.\]
We denote the map parameterizing $S_f$ by
\[\varphi_f: \mathbb{S}(\mathfrak{g}^*)\diffto S_f, \ \ \varphi_f(\xi):=\xi/f(\xi).\]
The spheres of the type $S_f$ form a $C^1$-open in the space of all (unparameterized) spheres. Namely, every sphere
$S\subset \mathfrak{g}^*\backslash\{0\}$ for which the map
\[\mathrm{pr}:S\rmap \mathbb{S}(\mathfrak{g}^*),\ \ \xi\mapsto \frac{1}{|\xi|}\xi\]
is a diffeomorphism, is of the form $S_f$ for some positive function $f$ on $\mathbb{S}(\mathfrak{g}^*)$.

\begin{lemma}\label{Lemma_B}
The sphere $S_f$ is a Poisson submanifold if and only $f$ is a Casimir. In this case, the following map is a Poisson
diffeomorphism
\[\widetilde{\varphi}_f:(\mathbb{S}(\mathfrak{g}^*)\times\mathbb{R}_{+}, tf\pi_{\mathbb{S}}) \diffto (\mathfrak{g}^*\backslash\{0\},\pi_{\mathfrak{g}}), \ (\xi,t)\mapsto \frac{1}{tf(\xi)}\xi.\]
\end{lemma}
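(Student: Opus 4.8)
The plan is to factor $\widetilde{\varphi}_f$ through the ``polar'' diffeomorphism and reduce everything to the homogeneity of the linear Poisson structure. First I would introduce
\[\psi:\mathbb{S}(\mathfrak{g}^*)\times\mathbb{R}_+\diffto \mathfrak{g}^*\backslash\{0\},\qquad \psi(\xi,s)=s\xi,\]
and prove that $\psi^*(\pi_{\mathfrak{g}})=\tfrac{1}{s}\,\pi_{\mathbb{S}}$, where $\pi_{\mathbb{S}}$ is viewed as a bivector on the product, tangent to the slices $\mathbb{S}(\mathfrak{g}^*)\times\{s\}$. This follows from the scaling relation $\mu_s^*(\pi_{\mathfrak{g}})=\tfrac{1}{s}\pi_{\mathfrak{g}}$ for the fiberwise dilation $\mu_s$ (the relation used in Section \ref{Section_Conn}): restricting $\mu_s$ to the unit sphere — a Poisson submanifold mapped diffeomorphically onto the sphere of radius $s$ — gives $\mu_s^*(\pi_{\mathfrak{g}}|_{\mathbb{S}_s})=\tfrac{1}{s}\pi_{\mathbb{S}}$, and $\psi$ restricted to each slice is precisely this map. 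In particular, recall that $\tfrac1s\pi_\mathbb S$ is genuinely Poisson on the product, $\tfrac1s$ being a Casimir of $\pi_\mathbb S$ there.

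Next I would observe that $\widetilde{\varphi}_f=\psi\circ\Phi_f$, where $\Phi_f(\xi,t)=\bigl(\xi,\tfrac{1}{tf(\xi)}\bigr)$ is a diffeomorphism of $\mathbb{S}(\mathfrak{g}^*)\times\mathbb{R}_+$ (an involution, since $f>0$) covering the identity on the first factor; thus $\widetilde\varphi_f$ is a diffeomorphism, and $S_f=\widetilde{\varphi}_f(\mathbb{S}(\mathfrak{g}^*)\times\{1\})=\psi(\Gamma_f)$ with $\Gamma_f=\{(\xi,1/f(\xi)):\xi\in\mathbb{S}(\mathfrak{g}^*)\}$. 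Since $\psi$ is a Poisson map, $S_f$ is a Poisson submanifold of $(\mathfrak{g}^*\backslash\{0\},\pi_{\mathfrak{g}})$ if and only if $\Gamma_f$ is a Poisson submanifold of $(\mathbb{S}(\mathfrak{g}^*)\times\mathbb{R}_+,\tfrac{1}{s}\pi_{\mathbb{S}})$. Now the sharp map of $\tfrac{1}{s}\pi_{\mathbb{S}}$ kills $ds$ and, at the point $(\xi,1/f(\xi))$, has image $\pi_{\mathbb{S},\xi}^{\sharp}(T^*_\xi\mathbb{S}(\mathfrak{g}^*))=T_\xi O_\xi$, the tangent space to the coadjoint orbit through $\xi$; this horizontal subspace is contained in $T_{(\xi,1/f(\xi))}\Gamma_f$ exactly when $df_\xi$ vanishes on $T_\xi O_\xi$ for all $\xi$, i.e. exactly when $f$ is a Casimir of $\pi_{\mathbb{S}}$. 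This gives the first assertion.

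For the second assertion, with $f$ a Casimir, it remains to compute $\Phi_f^*(\tfrac{1}{s}\pi_{\mathbb{S}})$ and identify it with $tf\,\pi_{\mathbb{S}}$. A short computation of $d\Phi_f$ shows that the pullback at $(\xi,t)$ equals $tf(\xi)\,\pi_{\mathbb{S},\xi}$ plus a term proportional to $\bigl(\pi_{\mathbb{S},\xi}^{\sharp}\,d_\xi(\tfrac{1}{tf})\bigr)\wedge\partial_t$; since $d_\xi(\tfrac{1}{tf})=-\tfrac{1}{tf(\xi)^2}df_\xi$ and $\pi_{\mathbb{S},\xi}^{\sharp}(df_\xi)=H_f=0$ ($f$ being a Casimir), this extra term vanishes, so $\widetilde{\varphi}_f^*(\pi_{\mathfrak{g}})=\Phi_f^*(\tfrac{1}{s}\pi_{\mathbb{S}})=tf\,\pi_{\mathbb{S}}$. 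Alternatively one argues leafwise: if $O$ is a symplectic leaf of $\pi_{\mathbb{S}}$ with $f|_O=c$, then $\widetilde{\varphi}_f$ maps $O\times\{t\}$ onto the coadjoint orbit $\tfrac{1}{tc}O$ by the dilation $\mu_{1/(tc)}$, which by the scaling relation is a symplectomorphism onto $(\tfrac{1}{tc}O,\omega_{\frac{1}{tc}O})$; as $\widetilde{\varphi}_f$ is a bijection carrying symplectic leaves onto symplectic leaves symplectically, it is a Poisson diffeomorphism. I do not expect a real obstacle here: the only point requiring care — and the single place where the Casimir hypothesis is genuinely used in both directions — is the vanishing of the Hamiltonian vector field $H_f$; the rest is bookkeeping.
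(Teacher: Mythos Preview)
Your argument is correct, but the paper takes a shorter and more algebraic route. For the first assertion, the paper simply observes that compact Poisson submanifolds of $(\mathfrak{g}^*,\pi_{\mathfrak{g}})$ are exactly the $G$-invariant submanifolds and Casimirs of $\pi_{\mathbb{S}}$ are exactly the $G$-invariant functions on $\mathbb{S}(\mathfrak{g}^*)$; since the coadjoint action is by isometries, $S_f$ is $G$-invariant iff $f$ is. For the second assertion, the paper checks that $\widetilde{\varphi}_f^*$ preserves Poisson brackets on the generating set $\mathfrak{g}\subset C^\infty(\mathfrak{g}^*)$ of linear functions: since $\widetilde{\varphi}_f^*(X)=\tfrac{1}{tf}\,X|_{\mathbb{S}(\mathfrak{g}^*)}$ for $X\in\mathfrak{g}$, one gets
\[
\widetilde{\varphi}_f^*\{X,Y\}=\tfrac{1}{tf}\{X,Y\}|_{\mathbb{S}}=tf\,\bigl\{\tfrac{1}{tf}X|_{\mathbb{S}},\tfrac{1}{tf}Y|_{\mathbb{S}}\bigr\}_{\pi_{\mathbb{S}}}=\{\widetilde{\varphi}_f^*X,\widetilde{\varphi}_f^*Y\}_{tf\pi_{\mathbb{S}}},
\]
using precisely that $\tfrac{1}{tf}$ is a Casimir to pull it through the bracket.

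Your factorization $\widetilde{\varphi}_f=\psi\circ\Phi_f$ with $\psi^*(\pi_{\mathfrak{g}})=\tfrac{1}{s}\pi_{\mathbb{S}}$ is a genuine alternative: it trades the algebraic generator computation for an explicit pullback of bivectors, and has the virtue of displaying the Casimir hypothesis as the vanishing of a concrete obstruction term $H_f\wedge\partial_t$. The paper's argument is quicker because it never needs to compute $d\Phi_f$ or invert it. Your leafwise alternative at the end is essentially the paper's idea rephrased symplectically.
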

\begin{proof}
Compact Poisson submanifolds of $\mathfrak{g}^*$ are the same as $G$-invariant submanifolds, and Casimirs of
$\pi_{\mathbb{S}}$ are the same as $G$-invariant functions on $\mathbb{S}(\mathfrak{g}^*)$. This implies the first
part. For the second part, it suffices to check that $\widetilde{\varphi}_f^*$ preserves the Poisson bracket on
elements in $\mathfrak{g}\subset C^{\infty}(\mathfrak{g}^*)$. Using that Casimirs go inside the bracket, and that
$\mathbb{S}(\mathfrak{g}^*)$ is a Poisson submanifold, it is a straightforward computation:
\begin{align*}
\widetilde{\varphi}_{f}^*(\{X,Y\})=&\frac{1}{tf}\{X,Y\}_{|\mathbb{S}(\mathfrak{g}^*)}=tf\{\frac{1}{tf}X_{|\mathbb{S}(\mathfrak{g}^*)},\frac{1}{tf}Y_{|\mathbb{S}(\mathfrak{g}^*)}\}_{|\mathbb{S}(\mathfrak{g}^*)}=\\
&=tf\{\widetilde{\varphi}_f^*(X),\widetilde{\varphi}_f^*(Y)\}_{|\mathbb{S}(\mathfrak{g}^*)}.\qedhere
\end{align*}
\end{proof}

Now, we are ready to prove the first part of Theorem \ref{Theorem_FIVE}.
\begin{proof}[Proof of part (a) of Theorem \ref{Theorem_FIVE}]
For every positive Casimir $f$ we construct a $C^p$-open $\mathcal{W}_{f}\subset
\mathfrak{X}^2(\mathbb{S}(\mathfrak{g}^*))$ containing $f\pi_{\mathbb{S}}$, such that every Poisson structure in
$\mathcal{W}_f$ is isomorphic to one of the form $g\pi_{\mathbb{S}}$, for some positive Casimir $g$, and moreover,
by a diffeomorphism isotopic to the identity. This will imply that the $C^p$-open
$\mathcal{W}:=\cup_{f}\mathcal{W}_f$ satisfies the conclusion.

For the compact Poisson submanifold $S_f$, consider opens $S_f\subset O\subset U$, as in Definition
\ref{Definition_CpC1} of $C^p$-$C^1$-rigidity, with $0\notin U$. Denote by $\mathcal{U}_f$ the set of functions
$\chi\in C^{\infty}(\overline{O},\mathfrak{g}^*)$ satisfying $0\notin \chi(S_f)$, and for which the map
\[\mathrm{pr}\circ \chi \circ \varphi_{f}:\mathbb{S}(\mathfrak{g}^*)\rmap \mathbb{S}(\mathfrak{g}^*)\]
is a diffeomorphism isotopic to the identity. The first condition is $C^0$-open and the second is $C^1$-open (see
subsection \ref{subsection_CpC1isotopies}). For the inclusion $\textrm{Id}_{\overline{O}}$ of $\overline{O}$ in
$\mathfrak{g}^*$, we have that $\mathrm{pr}\circ \textrm{Id}_{\overline{O}}\circ \varphi_f=\textrm{Id}$, thus
$\mathcal{U}_f$ is $C^1$-neighborhood of $\textrm{Id}_{\overline{O}}$. By continuity of the map $\psi$ from
Definition \ref{Definition_CpC1}, there exists a $C^p$-neighborhood $\mathcal{V}_f\subset \mathfrak{X}^2(U)$ of
$\pi_{\mathfrak{g}|U}$, such that $\psi_{\widetilde{\pi}}\in \mathcal{U}_f$, for every Poisson structure
$\widetilde{\pi}$ in $\mathcal{V}_f$. We define the $C^p$-open $\mathcal{W}_f$ as follows
\[\mathcal{W}_f:=\{W\in\mathfrak{X}^2(\mathbb{S}(\mathfrak{g}^*)) | \widetilde{\varphi}_{f,*}(t W)_{|U}\in \mathcal{V}_f\}.\]
By Lemma \ref{Lemma_B}, we have that $\widetilde{\varphi}_{f,*}(t
f\pi_{\mathbb{S}})_{|U}=\pi_{\mathfrak{g}|U}$, thus $f\pi_{\mathbb{S}}\in \mathcal{W}_f$. Let $\overline{\pi}$ be
a Poisson structure in $\mathcal{W}_f$. Then for $\widetilde{\pi}:=\widetilde{\varphi}_{f,*}(t\overline{\pi})_{|U}\in
\mathcal{V}_f$, we have that $\psi_{\widetilde{\pi}}\in\mathcal{U}_f$ is a Poisson map between
\[\psi_{\widetilde{\pi}}:(O,\widetilde{\pi}_{|O})\rmap (U,\pi_{\mathfrak{g}|U}).\]
By the discussion before Lemma \ref{Lemma_B}, the fact that the map $\mathrm{pr}\circ
\psi_{\widetilde{\pi}}\circ\varphi_f$ is a diffeomorphism, implies that $\psi_{\widetilde{\pi}}(S_f)=S_g$, for some
$g>0$. Clearly, $(\mathbb{S}(\mathfrak{g}^*)\times \{1\},\overline{\pi})$ is a Poisson submanifold of
$(\mathbb{S}(\mathfrak{g}^*)\times \mathbb{R}_{+},t\overline{\pi})$, therefore also
$S_f=\widetilde{\varphi}_f(\mathbb{S}(\mathfrak{g}^*)\times \{1\})$ is a Poisson submanifold of
$(O,\widetilde{\pi}_{|O})$, and since $\psi_{\widetilde{\pi}}$ is a Poisson map, we have that also
$S_g=\psi_{\widetilde{\pi}}(S_f)$ is a Poisson submanifold of $(\mathfrak{g}^*,\pi_{\mathfrak{g}})$. So by Lemma
\ref{Lemma_B} $g$ is a Casimir and
\[\varphi_g:(\mathbb{S}(\mathfrak{g}^*),g\pi_{\mathbb{S}})\diffto (S_g,\pi_{\mathfrak{g}|S_g})\]
is a Poisson diffeomorphism. Therefore also the map
\[\varphi_g^{-1}\circ\psi_{\widetilde{\pi}}\circ\varphi_f:(\mathbb{S}(\mathfrak{g}^*),\overline{\pi})\diffto (\mathbb{S}(\mathfrak{g}^*),g\pi_{\mathbb{S}}),\]
is a Poisson diffeomorphism. This map is isotopic to the identity, because $\varphi_g^{-1}=\mathrm{pr}_{|S_g}$, and
by construction $\mathrm{pr}\circ \psi_{\widetilde{\pi}} \circ \varphi_{f}$ is isotopic to the identity.
\end{proof}

\subsection{The Poisson cohomology of the Lie-Poisson sphere}

Recall from subsection \ref{SSPoissonCoho} that\index{cohomology, Poisson}
\[\mathfrak{Casim}(\mathbb{S}(\mathfrak{g}^*),\pi_{\mathbb{S}})= H^0_{\pi_{\mathbb{S}}}(\mathbb{S}(\mathfrak{g}^*)).\]
This group will be described in section \ref{Section_the_space_of_Casimirs} more explicitly. Recall also that
$H^2_{\pi_{\mathbb{S}}}(\mathbb{S}(\mathfrak{g}^*))$ has the heuristic interpretation of being the ``tangent
space'' to the Poisson moduli space at $\pi_{\mathbb{S}}$. Theorem \ref{Theorem_FIVE} shows that all deformations
of $\pi_{\mathbb{S}}$ are, modulo diffeomorphisms, of the form $\pi^t=e^{tF}\pi_{\mathbb{S}}$, where $F\in
\mathfrak{Casim}(\mathbb{S}(\mathfrak{g}^*),\pi_{\mathbb{S}})$. As expected, this is reflected also at the level of
the Poisson cohomology.
\begin{proposition}\label{Proposition_Poisson_cohomology}
The first Poisson cohomology groups of the Lie-Poisson sphere satisfy:
$H^1_{\pi_{\mathbb{S}}}(\mathbb{S}(\mathfrak{g}^*))=0$ and
$H^0_{\pi_{\mathbb{S}}}(\mathbb{S}(\mathfrak{g}^*))\cong
H^2_{\pi_{\mathbb{S}}}(\mathbb{S}(\mathfrak{g}^*))$ via the map
\[H^0_{\pi_{\mathbb{S}}}(\mathbb{S}(\mathfrak{g}^*))\diffto H^2_{\pi_{\mathbb{S}}}(\mathbb{S}(\mathfrak{g}^*)), \ f\mapsto [f\pi_{\mathbb{S}}].\]
\end{proposition}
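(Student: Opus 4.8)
The plan is to use the fact that $\mathbb{S}(\mathfrak{g}^*)$ is a codimension-one Poisson submanifold of $\mathfrak{g}^*\setminus\{0\}$, together with the homogeneity of $\pi_{\mathfrak{g}}$ under dilations, to reduce the computation to two inputs: (i) the Poisson cohomology of $\mathfrak{g}^*\setminus\{0\}$ with $\pi_{\mathfrak{g}}$, and (ii) the geometry of the ``collar'' $\mathbb{S}(\mathfrak{g}^*)\times\mathbb{R}_+$ identified with $\mathfrak{g}^*\setminus\{0\}$ via $\widetilde{\varphi}_1$ from Lemma \ref{Lemma_B} (with $f\equiv 1$). Under this identification, $\pi_{\mathfrak{g}}$ pulls back to $t\,\pi_{\mathbb{S}}$ on $\mathbb{S}(\mathfrak{g}^*)\times\mathbb{R}_+$, a Poisson structure which is ``vertical'' (tangent to the $\mathbb{S}(\mathfrak{g}^*)$-slices) with a simple dependence on the collar coordinate $t$. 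First I would compute $H^\bullet_{\pi_{\mathfrak{g}}}(\mathfrak{g}^*\setminus\{0\})$: since $\pi_{\mathfrak{g}}$ is integrable by the compact groupoid $G\ltimes(\mathfrak{g}^*\setminus\{0\})$ whose $s$-fibers are copies of $G$ (with $H^2(G)=0$ and more generally cohomologically well-understood since $G$ is a product of odd spheres), one can either invoke the Tame Vanishing Lemma / Van Est machinery, or compute directly via the coadjoint orbit foliation and a Leray-type argument; either way $H^1=0$ and $H^2$ is controlled by Casimirs (functions of $|\xi|$ and the invariant polynomials).

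Next I would set up a Mayer--Vietoris or, more directly, a ``product with an interval'' argument for the Poisson complex on $\mathbb{S}(\mathfrak{g}^*)\times\mathbb{R}_+$ with the bivector $t\pi_{\mathbb{S}}$. Because this bivector has no $\partial_t$-component and depends on $t$ only through the overall factor, the Poisson differential $d_{t\pi_{\mathbb{S}}}$ decomposes multivector fields on the product according to their $\partial_t$-degree, and one gets a short exact sequence of complexes relating $\mathfrak{X}^\bullet(\mathbb{S}(\mathfrak{g}^*)\times\mathbb{R}_+)$ to two copies of the $d_{\pi_{\mathbb{S}}}$-complex on $\mathbb{S}(\mathfrak{g}^*)$ (one for the vertical part, one for the part containing $\partial_t$), with connecting maps built from $[\,t\pi_{\mathbb{S}},\,\cdot\,]$. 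Combining this with the known computation of $H^\bullet_{\pi_{\mathfrak{g}}}(\mathfrak{g}^*\setminus\{0\})=H^\bullet_{t\pi_{\mathbb{S}}}(\mathbb{S}(\mathfrak{g}^*)\times\mathbb{R}_+)$ from the previous paragraph yields a long exact sequence in which $H^0_{\pi_{\mathbb{S}}}$ and $H^1_{\pi_{\mathbb{S}}}$ and $H^2_{\pi_{\mathbb{S}}}$ appear, and one reads off $H^1_{\pi_{\mathbb{S}}}(\mathbb{S}(\mathfrak{g}^*))=0$ and an isomorphism between $H^0_{\pi_{\mathbb{S}}}$ and $H^2_{\pi_{\mathbb{S}}}$. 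The explicit form of the iso — that the class $[f\pi_{\mathbb{S}}]$ corresponds to the Casimir $f$ — comes from tracking the connecting homomorphism: the class of the radial vector field $\mathcal{E}=t\partial_t$ (Liouville field) satisfies $L_{\mathcal{E}}(t\pi_{\mathbb{S}})=t\pi_{\mathbb{S}}$, which is exactly the statement that multiplying by the collar direction implements multiplication by $[\pi_{\mathbb{S}}]$ on the nose.

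An alternative, and perhaps cleaner, route that I would pursue in parallel: for any Casimir $f$, the bivector $f\pi_{\mathbb{S}}$ is a gauge-type rescaling, and the path $\pi^t=e^{tf}\pi_{\mathbb{S}}$ has $\tfrac{d}{dt}\pi^t|_{t=0}=f\pi_{\mathbb{S}}$, which is automatically $d_{\pi_{\mathbb{S}}}$-closed; the content is exactness/injectivity. Injectivity of $f\mapsto[f\pi_{\mathbb{S}}]$ would follow from Theorem \ref{Theorem_FIVE}(b): if $[f\pi_{\mathbb{S}}]=0$ then $f\pi_{\mathbb{S}}$ is an infinitesimally trivial deformation, hence (by the rigidity/formal arguments available, or a direct Moser argument along $e^{sf}\pi_{\mathbb{S}}$) $e^{f}\pi_{\mathbb{S}}$ is isomorphic to $\pi_{\mathbb{S}}$ by a diffeomorphism, which by part (b) forces $e^f$ (hence $f$) to be related to the constant $1$ by an outer automorphism; since $Out(\mathfrak{g})$ is finite and acts linearly fixing constants, and $f$ is close to $0$, one gets $f$ constant, and a constant $f$ with $f\pi_{\mathbb{S}}\cong\pi_{\mathbb{S}}$ forces $f=0$ by comparing, say, the symplectic volumes of the generic leaves (maximal coadjoint orbits scaled). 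Surjectivity onto $H^2_{\pi_{\mathbb{S}}}$ is the substantive point and is where I expect the main obstacle: one must show every $d_{\pi_{\mathbb{S}}}$-closed bivector on $\mathbb{S}(\mathfrak{g}^*)$ is, up to $d_{\pi_{\mathbb{S}}}$-exact terms, of the form $f\pi_{\mathbb{S}}$ with $f$ Casimir. I would handle this via the collar long exact sequence above, so that the hard analytic input (vanishing of the relevant cohomology of the $G$-action on $\mathfrak{g}^*\setminus\{0\}$) is imported wholesale from the Tame Vanishing Lemma and the structure of $H^\bullet(G)$; the bookkeeping in the long exact sequence, and verifying that the surviving class is represented by a genuine Casimir multiple of $\pi_{\mathbb{S}}$ rather than something more exotic, is the step requiring the most care.
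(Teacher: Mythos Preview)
Your core ingredients match the paper's exactly: identify $(\mathfrak{g}^*\setminus\{0\},\pi_{\mathfrak{g}})$ with $(\mathbb{S}(\mathfrak{g}^*)\times\mathbb{R}_+,\,t\pi_{\mathbb{S}})$ via $\widetilde{\varphi}_1$, and use that $H^1_{\pi_{\mathfrak{g}}}$ and $H^2_{\pi_{\mathfrak{g}}}$ vanish there because the integrating groupoid $G\ltimes(\mathfrak{g}^*\setminus\{0\})$ is proper with $2$-connected $s$-fibers. That is precisely how the paper proceeds.

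Where you diverge is in the packaging. Your ``short exact sequence relating the product complex to two copies of the $d_{\pi_{\mathbb{S}}}$-complex on $\mathbb{S}(\mathfrak{g}^*)$'' is not accurate as stated: decomposing by $\partial_t$-degree gives subquotients isomorphic to $C^\infty(\mathbb{R}_+)\otimes\mathfrak{X}^\bullet(\mathbb{S}(\mathfrak{g}^*))$ with differential $t\,d_{\pi_{\mathbb{S}}}$, not single copies of the sphere complex. To extract $H^\bullet_{\pi_{\mathbb{S}}}(\mathbb{S}(\mathfrak{g}^*))$ from these you would still need a statement about smooth families of cocycles admitting smooth primitives, plus bookkeeping with the connecting map. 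This can be made to work, but it is a detour.

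The paper avoids all of this with a one-line trick you should use instead: a cocycle on $\mathbb{S}(\mathfrak{g}^*)$ lifts tautologically to a $t$-independent cocycle on $\mathbb{S}(\mathfrak{g}^*)\times\mathbb{R}_+$; by the vanishing on $U$ it is exact there, say $[t\pi_{\mathbb{S}},\,\cdot\,]$ of something; now set $t=1$ and read off the answer on the sphere. Concretely: for $X$ with $[\pi_{\mathbb{S}},X]=0$, write $X=[t\pi_{\mathbb{S}},f_t]$ and take $t=1$ to get $H^1=0$. For $W$ with $[\pi_{\mathbb{S}},W]=0$, write $W=[t\pi_{\mathbb{S}},\,X_t+f_t\partial_t]$, expand, and at $t=1$ the $\partial_t$-component forces $f_1$ to be Casimir with $[W]=-[f_1\pi_{\mathbb{S}}]$, giving surjectivity. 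For injectivity, if $f\pi_{\mathbb{S}}=[\pi_{\mathbb{S}},X]$, then $\tfrac{1}{t}X+f\partial_t$ is a $d_{t\pi_{\mathbb{S}}}$-cocycle on the product, hence exact (since $H^1=0$ there), which forces $f=0$.

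Your alternative route via Theorem~\ref{Theorem_FIVE}(b) for injectivity is overkill and has a gap: vanishing of the class $[f\pi_{\mathbb{S}}]$ at $\pi_{\mathbb{S}}$ does not by itself run a Moser argument along $e^{sf}\pi_{\mathbb{S}}$, since you would need the corresponding class to vanish in $H^2_{e^{sf}\pi_{\mathbb{S}}}$ for every $s$, and the differentials $d_{e^{sf}\pi_{\mathbb{S}}}$ are not simply rescalings of $d_{\pi_{\mathbb{S}}}$. Drop this route; the direct $t=1$ argument above is both shorter and self-contained.
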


\begin{proof}
The invariant open $U:=\mathfrak{g}^*\backslash\{0\}$ is integrable by the Lie groupoid $G\ltimes U$. This groupoid
is proper and has 2-connected $s$-fibers. Therefore, by the results of \cite{Cra} (discussed in section
\ref{Section_COHOMOLOGY}), or by our Theorem \ref{Theorem_tame_vanishing}, we have that
\[H^1_{\pi_{\mathfrak{g}|U}}(U)=0, \ \ H^2_{\pi_{\mathfrak{g}|U}}(U)=0.\]
Using the Poisson diffeomorphism $\widetilde{\varphi}_1$ from Lemma \ref{Lemma_B}, we identify
$(U,\pi_{\mathfrak{g}|U})$ with the Poisson manifold $(\mathbb{S}(\mathfrak{g}^*)\times\mathbb{R}_+,
t\pi_{\mathbb{S}}).$

Let $X\in \mathfrak{X}^1(\mathbb{S}(\mathfrak{g}^*))$ be such that $[\pi_{\mathbb{S}},X]=0$. If we regard $X$
as a vector field on $\mathbb{S}(\mathfrak{g}^*)\times\mathbb{R}_{+}$ it is also a cocycle. Since
$H^1_{\pi_{\mathfrak{g}|U}}(U)=0$, there is a function $f_t\in
C^{\infty}(\mathbb{S}(\mathfrak{g}^*)\times\mathbb{R}_+)$ such that $[t\pi_{\mathbb{S}},f_t]=X$. For $t=1$,
this shows that $X$ is a coboundary. Hence $H^1_{\pi_{\mathbb{S}}}(\mathbb{S}(\mathfrak{g}^*))=0$.

Let $W\in \mathfrak{X}^2(\mathbb{S}(\mathfrak{g}^*))$ be such that $[\pi_{\mathbb{S}},W]=0$. As before, there
exists vector fields $V_t=X_t+f_t\frac{\partial}{\partial t}$ such that $W=[t\pi_{\mathbb{S}},V_t]$. Hence,
\[W=t[\pi_{\mathbb{S}},X_t]+t[\pi_{\mathbb{S}},f_t]\wedge \frac{\partial}{\partial t}-f_t\pi_{\mathbb{S}}.\]
For $t=1$, this shows that $[\pi_{\mathbb{S}},f_1]=0$ (i.e.\ $f_1$ is a Casimir), and so $[W]$ and
$-[f_1\pi_{\mathbb{S}}]$ represent the same class. Therefore, multiplication by $[\pi_{\mathbb{S}}]$ is onto:
\[H^2_{\pi_{\mathbb{S}}}(\mathbb{S}(\mathfrak{g}^*))=H^0_{\pi_{\mathbb{S}}}(\mathbb{S}(\mathfrak{g}^*))[\pi_{\mathbb{S}}].\]

To check injectivity, let $f\in H^0_{\pi_{\mathbb{S}}}(\mathbb{S}(\mathfrak{g}^*))$ be so that
$[\pi_{\mathbb{S}},X]=f\pi_{\mathbb{S}}$, for some $X\in \mathfrak{X}^1(\mathbb{S}(\mathfrak{g}^*))$. Then we
have that
\[[t\pi_{\mathbb{S}},1/tX+f\frac{\partial}{\partial t}]=[\pi_{\mathbb{S}},X]-f\pi_{\mathbb{S}}=0.\]
So the vector field $V_t=1/tX+f\frac{\partial}{\partial t}$ is a cocycle in $H^1_{\pi_{\mathfrak{g}|U}}(U)$. Since
this group is trivial, there exists a smooth function $g_t$ such that $V_t=[\pi_{\mathbb{S}},g_t]$. This implies that
$f=0$ and finishes the proof.
\end{proof}

\section{Some standard Lie theoretical results}\label{Section_Lie_results}
In this section we recall some results on compact semisimple Lie algebras, which will be used in the proof of part $(b)$
of Theorem \ref{Theorem_FIVE}. Most of these can be found in standard textbooks like
\cite{Dixmier,DK,Humphreys,Knapp}.

\subsection{The coadjoint action and its symplectic orbits}

Recall from subsection \ref{SSExamplesPoisson} that the symplectic\index{coadjoint orbit} leaf of the linear Poisson
manifold $(\mathfrak{g}^*,\pi_{\mathfrak{g}})$ through $\xi\in\mathfrak{g}^*$ is the coadjoint orbit through $\xi$;
we will denote it by $(O_{\xi},\omega_{\xi})$. Denote the stabilizer of $\xi$ by $G_{\xi}\subset G$, and its Lie algebra
by $\mathfrak{g}_{\xi}\subset \mathfrak{g}$. Recall also (\ref{EQ_pull_back_coadjoint}) that the pullback of
$\omega_{\xi}$ via the map
\[p:G\rmap O_{\xi}, \ \ g\mapsto Ad_{g^{-1}}^{*}(\xi),\]
is given by
\[p^*(\omega_{\xi})=-d\xi^l,\]
where $\xi^{l}$ is the left invariant extension of $\xi$ to $G$.

The adjoint\index{coadjoint action} and the coadjoint representations are isomorphic, the invariant metric gives an
intertwining map between the two. We restate here some standard results about the adjoint action in terms of the
coadjoint (as a reference see section 3.2 in \cite{DK}). We are interested especially in the set
$\mathfrak{g}^{*\mathrm{reg}}$ of \textbf{regular} elements. An element $\xi\in\mathfrak{g}^*$ is regular if and
only if it satisfies any of the following equivalent conditions:
\begin{itemize}
\item $\mathfrak{g}_{\xi}$ is a maximal abelian subalgebra;
\item the leaf $O_{\xi}$ has maximal dimension among all leaves;
\item $G_{\xi}$ is a maximal torus in $G$.
\end{itemize}
We fix $T\subset G$ a maximal torus and denote by $\mathfrak{t}$ its Lie algebra. We regard $\mathfrak{t}^*$ as a
subspace in $\mathfrak{g}^*$, via the identification
\[\mathfrak{t}^*=\{\xi\in\mathfrak{g}^* | \mathfrak{t}\subset\mathfrak{g}_{\xi}\}\subset\mathfrak{g}^*.\]
Consider $\mathfrak{t}^{*\mathrm{reg}}:=\mathfrak{t}^{*}\cap \mathfrak{g}^{*\mathrm{reg}}$, the regular
part of $\mathfrak{t}^*$. Then $\mathfrak{t}^{*\mathrm{reg}}$ is the union of the open Weyl
chambers\index{Weyl chamber}. Fix $\mathfrak{c}$ such a chamber. The structure of
$\mathfrak{g}^{*\mathrm{reg}}$ is described by the equivariant diffeomorphism (Proposition 3.8.1 \cite{DK})
\begin{equation}\label{EQ_regular_part}
\Psi:G/T\times \mathfrak{c} \diffto \mathfrak{g}^{*\mathrm{reg}}, \ \ \Psi([g],\xi)= Ad_{g^{-1}}^{*}(\xi).
\end{equation}

\subsection{Roots and dual roots}

Let $\Phi\subset i\mathfrak{t}^*$ be the root system corresponding to $\mathfrak{t}$. Denote the group of
automorphisms of $\Phi$ by\index{root}\index{dual root}\index{root system}
\[Aut(\Phi):=\{f\in Gl(i\mathfrak{t}^*) | f(\Phi)=\Phi\}.\]
For $\alpha\in \Phi$ let $\mathfrak{g}_{\alpha}\subset \mathfrak{g}\otimes\mathbb{C}$ be the corresponding root
space. The \textbf{root dual} to $\alpha\in \Phi$ is the element $\alpha^{\vee}\in
[\mathfrak{g}_{-\alpha},\mathfrak{g}_{\alpha}]$ satisfying $\alpha(\alpha^{\vee})=2$. The set of dual roots forms
the \textbf{dual root system} $\Phi^{\vee}\subset i\mathfrak{t}$. For all $\alpha,\beta\in \Phi$, we have that
$\alpha(\beta^{\vee}) \in\mathbb{Z}$, and these numbers are called the \textbf{Cartan integers}\index{Cartan
integers} (Theorem 3.10.2 \cite{DK}). One can also define $\Phi^{\vee}$ in terms of only the abstract root system
$\Phi$: namely, $\alpha^{\vee}$ is the unique element in $i\mathfrak{t}$ for which the map
\begin{equation}\label{EQ_symmetries}
s_{\alpha}:i\mathfrak{t}^*\rmap i\mathfrak{t}^*, \ \ s_{\alpha}(\lambda)=\lambda - \lambda(\alpha^{\vee})\alpha
\end{equation}
sends $\alpha$ to $-\alpha$ and preserves the root system (Lemma 9.1 \cite{Humphreys}). With this description, one
can define the double dual $\Phi^{\vee\vee}$. This turns out to be $\Phi$.
\begin{lemma}\label{Lemma_doubble_dual}
We have that $\alpha^{\vee\vee}=\alpha$, and that the following map is a group isomorphism
\[Aut(\Phi)\diffto Aut(\Phi^{\vee}), \ \ f\mapsto (f^{-1})^*.\]
\end{lemma}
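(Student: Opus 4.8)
The plan is to deduce both statements from a single identity describing how $Aut(\Phi)$ acts on coroots, combined with the uniqueness property of coroots recalled just above (if a finite set $\Psi\subset V$ spans $V$, there is at most one $\phi\in V^{*}$ such that $v\mapsto v-\phi(v)\psi$ sends $\psi$ to $-\psi$ and preserves $\Psi$). Concretely I will prove the identity

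\begin{equation*}
(f\alpha)^{\vee}=(f^{-1})^{*}(\alpha^{\vee}),\qquad f\in Aut(\Phi),\ \alpha\in\Phi,
\end{equation*}

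referring to it as $(\star)$, where $(f^{-1})^{*}=(f^{*})^{-1}\colon i\mathfrak{t}\to i\mathfrak{t}$. Note that $\Phi$ spans $i\mathfrak{t}^{*}$ and, since $\mathfrak{g}$ is semisimple, $\Phi^{\vee}$ spans $i\mathfrak{t}$, so the uniqueness statement is available both in $i\mathfrak{t}^{*}$ and in $i\mathfrak{t}$.

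To prove $(\star)$, set $g:=(f^{-1})^{*}(\alpha^{\vee})\in i\mathfrak{t}$ and consider the linear map $s_{g}(\lambda):=\lambda-\lambda(g)(f\alpha)$ on $i\mathfrak{t}^{*}$. Since $\lambda(g)=(f^{-1}\lambda)(\alpha^{\vee})$ and, by the reflection formula, $s_{\alpha}(f^{-1}\lambda)=f^{-1}\lambda-(f^{-1}\lambda)(\alpha^{\vee})\alpha$, a one-line rearrangement (apply $f$ and substitute back) yields $s_{g}=f\circ s_{\alpha}\circ f^{-1}$ as linear operators on $i\mathfrak{t}^{*}$. As $Aut(\Phi)$ is a group containing $s_{\alpha}$, this conjugate lies in $Aut(\Phi)$, hence preserves $\Phi$; moreover $s_{g}(f\alpha)=f\,s_{\alpha}(\alpha)=-f\alpha$. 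By the uniqueness characterization of the coroot of $f\alpha$, we conclude $g=(f\alpha)^{\vee}$, which is $(\star)$.

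Given $(\star)$, the two assertions follow. First, taking $f=s_{\alpha}$ (so $f^{-1}=f$) in $(\star)$ gives $s_{\alpha}^{*}(\beta^{\vee})=(s_{\alpha}\beta)^{\vee}\in\Phi^{\vee}$ for every $\beta\in\Phi$; hence $s_{\alpha}^{*}\colon i\mathfrak{t}\to i\mathfrak{t}$ preserves $\Phi^{\vee}$, and an immediate computation shows $s_{\alpha}^{*}(x)=x-\alpha(x)\alpha^{\vee}$, which sends $\alpha^{\vee}$ to $-\alpha^{\vee}$. Reading this off against the definition of the coroot of $\alpha^{\vee}$ inside $i\mathfrak{t}$ — the unique $\phi\in(i\mathfrak{t})^{*}=i\mathfrak{t}^{*}$ for which $x\mapsto x-\phi(x)\alpha^{\vee}$ has these two properties — and invoking uniqueness, we obtain $\alpha^{\vee\vee}=\alpha$, so $\Phi^{\vee\vee}=\Phi$. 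Second, from $(fg)^{*}=g^{*}f^{*}$ one gets $((fg)^{-1})^{*}=(f^{-1})^{*}(g^{-1})^{*}$, so $f\mapsto(f^{-1})^{*}$ is a group homomorphism; by $(\star)$ it carries $\Phi^{\vee}$ onto $\Phi^{\vee}$ (since $\alpha^{\vee}\mapsto(f\alpha)^{\vee}$ and $f$ permutes $\Phi$), hence maps $Aut(\Phi)$ into $Aut(\Phi^{\vee})$. Applying the same construction to the root system $\Phi^{\vee}$ produces a homomorphism $Aut(\Phi^{\vee})\to Aut(\Phi^{\vee\vee})=Aut(\Phi)$, $h\mapsto(h^{-1})^{*}$, and a direct check using that the double transpose is the identity shows the composites in both directions are the identity. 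Therefore $f\mapsto(f^{-1})^{*}$ is an isomorphism $Aut(\Phi)\diffto Aut(\Phi^{\vee})$.

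The only genuinely computational point is the identification $s_{g}=f\circ s_{\alpha}\circ f^{-1}$ in the proof of $(\star)$; everything else is bookkeeping with transposes and the uniqueness of coroots. The mild obstacle to keep straight is the handling of dualities — consistently using the canonical identification $(i\mathfrak{t}^{*})^{*}=i\mathfrak{t}$ and the relation $(f^{-1})^{*}=(f^{*})^{-1}$ — so that "the coroot of $\alpha^{\vee}$'' is correctly interpreted as an element of $i\mathfrak{t}^{*}$ and the uniqueness statement is applied in the right space.
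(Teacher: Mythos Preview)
Your proof is correct and follows essentially the same approach as the paper's: both hinge on the identity $(f\alpha)^{\vee}=(f^{-1})^{*}(\alpha^{\vee})$ and the uniqueness characterization of coroots, and both deduce $\alpha^{\vee\vee}=\alpha$ by recognizing that $x\mapsto x-\alpha(x)\alpha^{\vee}$ preserves $\Phi^{\vee}$ and negates $\alpha^{\vee}$. The only organizational difference is that you prove the identity $(\star)$ once at the outset via the conjugation formula $s_{g}=f\circ s_{\alpha}\circ f^{-1}$ and derive both parts from it, whereas the paper first treats the special case $f=s_{\alpha}$ (citing the conjugation formula from Humphreys) and then re-derives the general identity via preservation of Cartan integers; your arrangement is slightly more streamlined but substantively equivalent.
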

\begin{proof}
By Lemma 9.2 \cite{Humphreys}, we have that $f\circ s_{\beta}\circ f^{-1}=s_{f(\beta)}$, for all $f\in Aut(\Phi)$ and
$\beta\in \Phi$. For $f=s_{\alpha}$ this gives $s_{\alpha} \circ s_{\beta}\circ s_{\alpha}=s_{s_{\alpha}(\beta)}$.
Computing both sides of this equation, we obtain
$s_{\alpha}(\beta)^{\vee}=\beta^{\vee}-\alpha(\beta^{\vee})\alpha^{\vee}$. This shows that the map
\[i\mathfrak{t}\rmap i\mathfrak{t}, \ \xi\mapsto \xi-\alpha(\xi)\alpha^{\vee}\]
maps $\Phi^{\vee}$ to itself, and sends $\alpha^{\vee}$ to $-\alpha^{\vee}$. So, by the canonical definition of the
dual root system, we obtain that $\alpha^{\vee\vee}=\alpha$.

Using the first part, it suffices to show that $(f^{-1})^*\in Aut(\Phi^{\vee})$ for all $f\in Aut(\Phi)$. Since $f$
preserves that Cartan integers (Lemma 9.2 \cite{Humphreys}), for all $\alpha,\beta\in \Phi$, the following holds:
\[\alpha((f^{-1})^*(\beta^{\vee}))=f^{-1}(\alpha)(\beta^{\vee})=\alpha(f(\beta)^{\vee}).\]
Since $\Phi$ spans $i\mathfrak{t}^*$, we obtain $(f^{-1})^*(\beta^{\vee})=f(\beta)^{\vee}$, hence $(f^{-1})^*\in
Aut(\Phi^{\vee})$.
\end{proof}

\begin{remark}\rm
Denote the Killing form of $\mathfrak{g}$ by $(\cdot,\cdot)$ and the induced isomorphism
$\mathfrak{g}\cong\mathfrak{g}^*$ by
\[\kappa:\mathfrak{g}\rmap \mathfrak{g}^*,\ \kappa(X)(Y):=(X,Y).\]
We note that some authors (e.g.\ \cite{Humphreys}) regard the dual roots inside $i\mathfrak{t}^*$, they refer to the
element $\frac{2\alpha}{(\alpha,\alpha)}$ as being the root dual to $\alpha\in\Phi$. These two conventions are related
by the formula (Proposition 8.3 (g) \cite{Humphreys}):
\begin{equation}\label{EQ}
\kappa(\alpha^{\vee})=\frac{2\alpha}{(\alpha,\alpha)}.
\end{equation}
\end{remark}

Denote the kernel of the exponential of $T$ by
\[\Pi:=\ker(\exp_{|\mathfrak{t}})\subset \mathfrak{t}.\]
Then $\Pi$ is a lattice in $\mathfrak{t}$. For each $\alpha\in \Phi$, consider the element
\[X_{\alpha}:=2\pi i \alpha^{\vee}\in \mathfrak{t}.\]
These elements satisfy the following:

\begin{lemma}\label{Lemma_2}
For all $\alpha\in \Phi$, we have that $\mathbb{R}X_{\alpha}\cap \Pi =\mathbb{Z}X_{\alpha}$. Moreover,
\[\mathfrak{t}^{*\mathrm{reg}}=\{\xi\in\mathfrak{t}^{*} | \xi(X_{\alpha})\neq 0\ \forall \alpha\in \Phi  \}.\]
\end{lemma}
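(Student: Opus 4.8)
The statement has two essentially independent halves, and I would prove them separately.

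\emph{First equality.} The inclusion $\mathbb{Z}X_{\alpha}\subseteq\Pi$ is the assertion $\exp(2\pi i\,\alpha^{\vee})=e$, which I would read off from the $\mathfrak{su}(2)$-triple attached to $\{\alpha,-\alpha\}$: the subalgebra of $\mathfrak{g}\otimes\mathbb{C}$ generated by $\mathfrak{g}_{\alpha}$ and $\mathfrak{g}_{-\alpha}$ is isomorphic to $\mathfrak{sl}(2,\mathbb{C})$ with ``Cartan'' line $\mathbb{C}\alpha^{\vee}$, its intersection with $\mathfrak{g}$ is a compact real form $\cong\mathfrak{su}(2)$ with torus line $\mathbb{R}\,i\alpha^{\vee}\subset\mathfrak{t}$, and, since $G$ is $1$-connected, this triple integrates to a homomorphism $SU(2)\to G$ under which the kernel generator $2\pi i\,(\text{coroot})$ of the $\mathfrak{su}(2)$-torus maps to $X_{\alpha}$; so $\exp(X_{\alpha})=e$ and $X_{\alpha}\in\Pi$. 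For the reverse inclusion I would use that, $G$ being $1$-connected, the integral lattice is the coroot lattice, $\Pi=2\pi i\,Q^{\vee}$ with $Q^{\vee}\subset i\mathfrak{t}$ the $\mathbb{Z}$-span of the coroots. Choosing a positive system in which $\alpha$ is simple, the simple coroots form a $\mathbb{Z}$-basis of $Q^{\vee}$ (every coroot is an integer combination of simple coroots, and the simple coroots are linearly independent), so $\alpha^{\vee}$ is a primitive vector of $Q^{\vee}$ and $X_{\alpha}$ a primitive vector of $\Pi$; hence $t\,X_{\alpha}\in\Pi$ forces $t\in\mathbb{Z}$, i.e.\ $\mathbb{R}X_{\alpha}\cap\Pi=\mathbb{Z}X_{\alpha}$. (Equivalently, primitivity can be extracted from the fundamental weight $\varpi$ dual to the simple root $\alpha$: for $1$-connected $G$ it defines a character of $T$, so $\langle\varpi,\Pi\rangle\subseteq 2\pi i\,\mathbb{Z}$ while $\langle\varpi,X_{\alpha}\rangle=2\pi i$, excluding $\tfrac1c X_{\alpha}\in\Pi$ for $c\geq 2$.)

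\emph{Second equality.} I would fix the invariant inner product to be (minus) the Killing form $(\cdot,\cdot)$, let $\kappa:\mathfrak{g}\to\mathfrak{g}^{*}$ be the induced isomorphism, and set $H_{\xi}:=\kappa^{-1}(\xi)\in\mathfrak{t}$ for $\xi\in\mathfrak{t}^{*}$. By invariance of the Killing form, $\kappa$ carries the coadjoint stabilizer $\mathfrak{g}_{\xi}$ onto the centralizer $\mathfrak{g}^{H_{\xi}}$, so $\xi\in\mathfrak{t}^{*\mathrm{reg}}$ — which, as recalled before the lemma, means $\mathfrak{g}_{\xi}=\mathfrak{t}$ — if and only if $\mathfrak{g}^{H_{\xi}}=\mathfrak{t}$. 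Decomposing $\mathfrak{g}\otimes\mathbb{C}$ into root spaces, $\mathrm{ad}_{H_{\xi}}$ acts on $\mathfrak{g}_{\alpha}$ by the scalar $\alpha(H_{\xi})$, so $\mathfrak{g}^{H_{\xi}}=\mathfrak{t}$ exactly when $\alpha(H_{\xi})\neq 0$ for all $\alpha\in\Phi$. Finally I would translate this pointwise: using (\ref{EQ}), $\alpha^{\vee}=\frac{2}{(\alpha,\alpha)}\kappa^{-1}(\alpha)$, whence
\[\langle\xi,X_{\alpha}\rangle=2\pi i\,\langle\kappa(H_{\xi}),\alpha^{\vee}\rangle=\frac{4\pi i}{(\alpha,\alpha)}\,(H_{\xi},\kappa^{-1}(\alpha))=\frac{4\pi i}{(\alpha,\alpha)}\,\alpha(H_{\xi}),\]
and since $\frac{4\pi i}{(\alpha,\alpha)}\neq 0$ this is nonzero precisely when $\alpha(H_{\xi})\neq 0$. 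Combining the two equivalences yields $\mathfrak{t}^{*\mathrm{reg}}=\{\xi\in\mathfrak{t}^{*}\mid\xi(X_{\alpha})\neq 0\ \forall\,\alpha\in\Phi\}$.

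\emph{Main obstacle.} There is no difficulty of principle. The one point needing care is the first equality, where one must pin down the normalization relating $\Pi$, the coroots and the factor $2\pi i$, and where the two genuinely structural inputs — that the integral lattice of $G$ equals the coroot lattice, and that each coroot is primitive in it — are exactly the places that use $1$-connectedness of $G$; everything else (the root-space computation of the centralizer of a torus element, and the bookkeeping with the Killing form via (\ref{EQ})) is routine.
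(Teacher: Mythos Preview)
Your proof is correct and follows essentially the same route as the paper. The only minor difference is that for primitivity of $X_{\alpha}$ in $\Pi$ the paper uses precisely your parenthetical alternative (the fundamental weight $\lambda_1$, realized via the action on a highest-weight vector in $V_{\lambda_1}$) rather than invoking the identification $\Pi=2\pi i\,Q^{\vee}$ directly; and for the second equality the paper cites \cite{DK} for the description of $\mathfrak{t}^{\mathrm{reg}}$ where you unpack the root-space computation of the centralizer --- but the transfer via $\kappa$ and (\ref{EQ}) is identical.
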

\begin{proof}
According to Theorem 3.10.2 (iv) \cite{DK}, we have that $X_{\alpha}\in \Pi$. Let $\alpha_{1}, \ldots ,\alpha_l$ be a
simple system for $\Phi$ such that $\alpha_1=\alpha$. The corresponding fundamental weights
$\lambda_1,\ldots,\lambda_l\in \mathfrak{t}^*\otimes \mathbb{C}$ are defined by the relations
$\lambda_i(\alpha_j^{\vee})=\delta_{i,j}$, for all $1\leq i,j\leq l$. Let $V_{\lambda_1}$ be the irreducible
representation of $\mathfrak{g}\otimes \mathbb{C}$ of highest weight $\lambda_1$. In particular, $V_{\lambda_1}$
contains a vector $v\neq 0$, such that the action of $X\in\mathfrak{t}$ on $v$ is given by $X\cdot v= \lambda_1(X)v$.
Thus
\[tX_{\alpha}\cdot v=\lambda_1(2\pi i t \alpha^{\vee})v=2\pi it v, \ \forall t\in\mathbb{R}.\]
Integrating $V_{\lambda_1}$ to the action of $G$, we have that $\exp(t X_{\alpha})\cdot v= e^{2\pi i t}\cdot v$. So, if
$t X_{\alpha}\in \Pi$, then $e^{2\pi i t}=1$, hence $t\in\mathbb{Z}$. This shows that $\mathbb{R}X_{\alpha}\cap \Pi
=\mathbb{Z}X_{\alpha}$.

Theorem 3.7.1 (ii) \cite{DK} describes $\mathfrak{t}^{\mathrm{reg}}$ as the complement of the hyperplanes
$\ker(\alpha)$ for $\alpha\in\Phi$. Since $\kappa:\mathfrak{g}\to\mathfrak{g}^*$ is $G$-equivariant and symmetric
(i.e. $\kappa^*=\kappa$), and since $\kappa(X_{\alpha})=\frac{4\pi i \alpha}{(\alpha,\alpha)}$ (\ref{EQ}), we have
that $\mathfrak{t}^{*\mathrm{reg}}$ is the complement of the hyperplanes $\{\xi | \xi(X_\alpha)=0\}$, for $\alpha\in
\Phi$.
\end{proof}

The result below plays a crucial role in the proof of Theorem \ref{Theorem_FIVE}.

\begin{corollary}\label{Corollary_1}
If $f:\mathfrak{t}^*\to \mathfrak{t}^*$ is an isomorphism such that it preserves $\mathfrak{t}^{*\mathrm{reg}}$
and $f^*:\mathfrak{t}\to\mathfrak{t}$ induces an isomorphism of $\Pi$, then $f\in Aut(\Phi)$.
\end{corollary}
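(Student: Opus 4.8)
The plan is to translate the two hypotheses on $f$ into statements about its dual $f^*\colon \mathfrak{t}\to\mathfrak{t}$, expressed in terms of the lattice vectors $X_\alpha=2\pi i\alpha^\vee$, and then to conclude via Lemma \ref{Lemma_doubble_dual}. Throughout I identify $\mathfrak{t}^*$ with the real form $i\mathfrak{t}^*$ carrying the root system $\Phi$, so that $f$ is literally an element of $\mathrm{Gl}(i\mathfrak{t}^*)$ and its dual $f^*$ acts on $i\mathfrak{t}$ (where $\Phi^\vee$ lives).

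First I would record, using Lemma \ref{Lemma_2}, that $\mathfrak{t}^{*\mathrm{reg}}$ is the complement of the finite hyperplane arrangement $\{H_\alpha:=\{\xi\in\mathfrak{t}^*:\xi(X_\alpha)=0\}\}_{\alpha\in\Phi}$, that $H_\alpha$ is exactly the annihilator of the line $\mathbb{R}X_\alpha$, and that these hyperplanes are pairwise distinct up to $H_\alpha=H_{-\alpha}$ (the root system of a compact semisimple Lie algebra is reduced, and $\alpha^\vee$ is proportional to $\alpha$ under the Killing isomorphism by \eqref{EQ}, so $\mathbb{R}X_\alpha=\mathbb{R}X_\beta$ forces $\beta=\pm\alpha$). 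Since $f$ is a linear isomorphism with $f(\mathfrak{t}^{*\mathrm{reg}})=\mathfrak{t}^{*\mathrm{reg}}$, it maps $\bigcup_\alpha H_\alpha$ onto itself; as each $H_\alpha$ is a connected hyperplane through the origin contained in this finite union, $f(H_\alpha)$ must equal some $H_{\sigma(\alpha)}$ for a permutation $\sigma$. Passing to annihilators, $f^*$ permutes the set of lines $\{\mathbb{R}X_\alpha:\alpha\in\Phi\}$ in $\mathfrak{t}$.

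Next I would bring in the second hypothesis, namely that $f^*$ restricts to an automorphism of the lattice $\Pi=\ker(\exp_{|\mathfrak{t}})$. By Lemma \ref{Lemma_2}, $\mathbb{R}X_\alpha\cap\Pi=\mathbb{Z}X_\alpha$, i.e. $X_\alpha$ is a primitive vector of $\Pi$ on its line. Since $f^*$ preserves $\Pi$ and permutes the lines $\mathbb{R}X_\alpha$, it carries primitive lattice vectors to primitive lattice vectors, so $f^*(X_\alpha)=\pm X_{\tau(\alpha)}$ for a suitable permutation $\tau$ of $\Phi$ (up to sign). Dividing by $2\pi i$ gives $f^*(\alpha^\vee)=\pm\tau(\alpha)^\vee\in\Phi^\vee$ for every $\alpha\in\Phi$; since $f^*$ is invertible it maps $\Phi^\vee$ bijectively onto itself, hence $f^*\in Aut(\Phi^\vee)$.

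Finally I would invoke Lemma \ref{Lemma_doubble_dual}. Since $\Phi^{\vee\vee}=\Phi$, that lemma (with $\Phi$ replaced by $\Phi^\vee$) says $h\mapsto (h^{-1})^*$ is a group isomorphism $Aut(\Phi^\vee)\to Aut(\Phi)$. Applying it to $h=f^*$ yields $((f^*)^{-1})^*=(f^{-1})^{**}=f^{-1}\in Aut(\Phi)$, and since $Aut(\Phi)$ is a group this gives $f\in Aut(\Phi)$, as desired. I expect the only genuinely delicate point to be the bookkeeping in the second and third paragraphs — keeping the two dualities $\mathfrak{t}^*\leftrightarrow\mathfrak{t}$ and $\Phi\leftrightarrow\Phi^\vee$ and the $\pm$ signs straight — while the reductions to Lemmas \ref{Lemma_2} and \ref{Lemma_doubble_dual} are essentially formal.
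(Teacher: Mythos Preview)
Your proof is correct and follows essentially the same route as the paper: use Lemma \ref{Lemma_2} to see that $f$ permuting the singular hyperplanes forces $f^*$ to permute the lines $\mathbb{R}X_\alpha$, then use $f^*(\Pi)=\Pi$ together with $\mathbb{R}X_\alpha\cap\Pi=\mathbb{Z}X_\alpha$ to pin down $f^*(X_\alpha)=\pm X_{\sigma(\alpha)}$, conclude $f^*\in Aut(\Phi^\vee)$, and finish with Lemma \ref{Lemma_doubble_dual}. The only cosmetic difference is that the paper absorbs the sign by replacing $\sigma(\alpha)$ with $-\sigma(\alpha)$ and then invokes Lemma \ref{Lemma_doubble_dual} directly to get $f\in Aut(\Phi)$, whereas you keep the $\pm$ and pass through $f^{-1}\in Aut(\Phi)$; both are fine.
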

\begin{proof}
By the previous lemma, the complement of $\mathfrak{t}^{*\mathrm{reg}}$ is the union of the hyperplanes $\{\xi |
\xi(X_{\alpha})=0\}$ for $\alpha\in \Phi$. Thus $f^*$ preserves the set of lines
$\{\mathbb{R}X_{\alpha}|\alpha\in\Phi\}$. So we can write $f^*(X_{\alpha})=\theta(\alpha) X_{\sigma(\alpha)}$.
Since $X_{-\sigma(\alpha)}=-X_{\sigma(\alpha)}$, we may assume that $\theta(\alpha)>0$. On the other hand, $f^*$ is
an isomorphism of $\Pi$, so it induces an isomorphism between $\Pi\cap \mathbb{R}X_{\alpha}$ and $\Pi\cap
\mathbb{R}X_{\sigma(\alpha)}$. By the lemma, $\theta(\alpha)$ is a generator of $\mathbb{Z}$, hence
$\theta(\alpha)=1$. So $f^*(X_{\alpha})=X_{\sigma(\alpha)}$, and also $f^*(\alpha^{\vee})=\sigma(\alpha)^{\vee}$.
This shows that $f^*\in Aut(\Phi^{\vee})$. Hence, by Lemma \ref{Lemma_doubble_dual}, we obtain that $f\in
Aut(\Phi)$.
\end{proof}

\subsection{Automorphisms}

The group $Aut(\mathfrak{g})$ of Lie algebra automorphisms of $\mathfrak{g}$ contains the normal subgroup
$Ad(G)$ of inner automorphisms. Below we recall two descriptions of the group of \textbf{outer
automorphisms}\index{outer automorphism} $Out(\mathfrak{g}):=Aut(\mathfrak{g})/Ad(G)$.

Let $Aut(\mathfrak{g},\mathfrak{t})$ be the subgroup of $Aut(\mathfrak{g})$ consisting of elements which send
$\mathfrak{t}$ to itself. Since every two maximal tori are conjugated, $Aut(\mathfrak{g},\mathfrak{t})$ intersects
every component of $Aut(\mathfrak{g})$; hence \[Out(\mathfrak{g})\cong
Aut(\mathfrak{g},\mathfrak{t})/Ad(N_G(T)),\] where $N_G(T)$ is the normalizer of $T$ in $G$. An element $\sigma\in
Aut(\mathfrak{g},\mathfrak{t})$ induces a symmetry of $\Phi$, and we denote the resulting group homomorphism by
\[\tau: Aut(\mathfrak{g},\mathfrak{t})\rmap Aut(\Phi), \ \sigma\mapsto (\sigma^{-1}_{|\mathfrak{t}})^*.\]

Recall also that the \textbf{Weyl group}\index{Weyl group} of $\Phi$, denoted by $W\subset Aut(\Phi)$, is the group
generated by the symmetries $s_{\alpha}$ (defined in (\ref{EQ_symmetries})), for $\alpha\in \Phi$.

For the following lemma see Theorem 7.8 \cite{Knapp} and section 3.15 \cite{DK}.

\begin{lemma}\label{Lemma_3}
The map $\tau:Aut(\mathfrak{g},\mathfrak{t})\to Aut(\Phi)$ is surjective, and \[\tau^{-1}(W)=Ad(N_G(T))\subset
Aut(\mathfrak{g},\mathfrak{t}).\] Therefore, $\tau$ induces an isomorphism between the groups
\[Out(\mathfrak{g})\cong Aut(\Phi)/W.\]

If $\mathfrak{c}\subset \mathfrak{t}^*$ is an open Weyl chamber, then $Aut(\Phi)=Aut(\Phi,\mathfrak{c})\ltimes W$,
where
\[Aut(\Phi,\mathfrak{c}):=\{f\in Aut(\Phi)| f(i\mathfrak{c})=i\mathfrak{c}\}.\]
Moreover, $Aut(\Phi,\mathfrak{c})$ (hence also $Out(\mathfrak{g})$) is isomorphic to the symmetry group of the
Dynkin diagram of $\Phi$.
\end{lemma}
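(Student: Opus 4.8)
The plan is to deduce all assertions of the lemma from three classical facts about compact semisimple Lie groups and their root systems, organised so that the Lie-theoretic input and the purely combinatorial input are kept separate. The backbone is the first isomorphism theorem applied to $\tau$, once surjectivity of $\tau$ and the identification $\tau^{-1}(W)=Ad(N_G(T))$ are in place.

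First I would establish $\tau^{-1}(W)=Ad(N_G(T))$. One inclusion is immediate: for $n\in N_G(T)$ the automorphism $Ad(n)$ preserves $\mathfrak{t}$ and acts on $i\mathfrak{t}^*$ through $N_G(T)/T$, which is the Weyl group $W$ of $\Phi$ (the classical description of $W(G,T)$ by the reflections $s_\alpha$). For the converse, given $\sigma\in Aut(\mathfrak{g},\mathfrak{t})$ with $\tau(\sigma)\in W$, pick $n\in N_G(T)$ with $\tau(Ad(n)^{-1}\sigma)=\mathrm{id}$; replacing $\sigma$ by $Ad(n)^{-1}\sigma$ I may assume $\sigma$ fixes $\mathfrak{t}$ pointwise. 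Then $\sigma$ preserves each complex root space $\mathfrak{g}_\alpha$ and acts on it by a scalar $\chi_\alpha$; the relations $[\mathfrak{g}_\alpha,\mathfrak{g}_\beta]\subset\mathfrak{g}_{\alpha+\beta}$ and $\mathfrak{g}_{-\alpha}=\overline{\mathfrak{g}_\alpha}$ (for the compact real form) force $\chi$ to be a unitary character of the root lattice, whence $\sigma=Ad(t)$ for a unique $t\in T$; in particular $\sigma\in Ad(N_G(T))$.

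Next I would prove surjectivity of $\tau$, which with the previous step yields $Aut(\mathfrak{g},\mathfrak{t})/Ad(N_G(T))\cong Aut(\Phi)/W$. Since every maximal torus is conjugate to $T$, the inclusion $Aut(\mathfrak{g},\mathfrak{t})\hookrightarrow Aut(\mathfrak{g})$ induces a surjection onto $Out(\mathfrak{g})$ whose kernel is $Aut(\mathfrak{g},\mathfrak{t})\cap Ad(G)=Ad(N_G(T))$ (an inner automorphism preserving $\mathfrak{t}$ comes from an element normalising $T$, using connectedness of $G$), giving $Out(\mathfrak{g})\cong Aut(\Phi)/W$. For surjectivity of $\tau$ itself, the reflections $s_\alpha$ are already in $\tau(Ad(N_G(T)))$, so only the ``diagram part'' remains: fixing Chevalley generators adapted to a simple system bounding $\mathfrak{c}$, an element of $Aut(\Phi,\mathfrak{c})$ permutes the simple roots preserving the Cartan integers, and the induced permutation of generators extends, via the Serre presentation, to an automorphism of $\mathfrak{g}\otimes\mathbb{C}$, which can be arranged to preserve $\mathfrak{g}$ by essential uniqueness of the compact real form.

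Finally the combinatorial part: $W$ acts simply transitively on the Weyl chambers, so every $f\in Aut(\Phi)$ factors uniquely as $wf_0$ with $w\in W$ and $f_0(i\mathfrak{c})=i\mathfrak{c}$, and $f s_\alpha f^{-1}=s_{f(\alpha)}$ makes $W$ normal; this gives $Aut(\Phi)=Aut(\Phi,\mathfrak{c})\ltimes W$. An element of $Aut(\Phi,\mathfrak{c})$ is determined by, and may be prescribed arbitrarily as, a permutation of the walls of $\mathfrak{c}$ (the simple roots) preserving the Cartan matrix, i.e. a symmetry of the Dynkin diagram, so $Aut(\Phi,\mathfrak{c})$ is the diagram automorphism group; combined with the isomorphism of the previous paragraph this identifies $Out(\mathfrak{g})$ with it. I expect the real obstacle to be not the combinatorics but the two lifting points in the Lie-theoretic steps — that an automorphism fixing $\mathfrak{t}$ pointwise is $Ad(t)$ for some $t\in T$, and that a diagram automorphism lifts to an automorphism of $\mathfrak{g}$ compatible with the compact real form — since those are precisely where one must appeal to the classification/isomorphism theorems rather than to formal manipulation, which is why the paper is content to cite them.
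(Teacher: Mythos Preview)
Your argument is correct and is precisely the classical proof. However, you should be aware that the paper does not give its own proof of this lemma: it simply refers the reader to Theorem~7.8 in Knapp and section~3.15 in Duistermaat--Kolk. Your outline reconstructs the content of those references faithfully --- the reduction of $\tau^{-1}(W)=Ad(N_G(T))$ to the statement that an automorphism fixing $\mathfrak{t}$ pointwise is $Ad(t)$ for some $t\in T$, the lifting of diagram automorphisms via the Serre presentation and uniqueness of the compact real form, and the simple transitivity of $W$ on chambers giving the semidirect product --- and you have correctly flagged the two lifting steps as the only places requiring genuine structure theory rather than combinatorics. There is nothing to add.
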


The last part of the lemma allows us to compute $Out(\mathfrak{g})$ for all semisimple compact Lie algebras. First, it
is enough to consider simple Lie algebras, since, if $\mathfrak{g}$ decomposes into simple components as
$n_1\mathfrak{s}_1\oplus \ldots \oplus n_k \mathfrak{s}_k$, then \[Out(\mathfrak{g})\cong S_{n_1}\ltimes
Out(\mathfrak{s}_1)^{n_1}\times\ldots \times S_{n_k}\ltimes Out(\mathfrak{s}_k)^{n_k}.\] Further, for the simple
Lie algebras, a glimpse at their Dynkin diagrams reveals that the only ones with nontrivial outer automorphism group
are: $A_{n\geq 2}$, $D_{n\geq 5}$, $E_6$ with $Out\cong \mathbb{Z}_2$, and $D_{4}$ with $Out\cong S_{3}$.

\subsection{The maximal coadjoint orbits}

The manifold $G/T$ is called a \textbf{generalized flag manifold}, and it is diffeomorphic to all maximal leaves of the
linear Poisson structure on $\mathfrak{g}^*$. The topology of $G/T$ is well understood \cite{Borel}; here we discuss
some aspects needed for the proof.

Using that $G$ is 2-connected, we see that the first terms in the long exact sequence in homotopy associated to the
principal $T$-bundle $G\to G/T$ are
\[1\rmap \pi_2(G/T)\rmap \pi_1(T)\rmap 1 \rmap \pi_1(G/T)\rmap 1.\]
Hence $G/T$ is simply connected and that $\pi_1(T)\cong \pi_2(G/T)$. On the other hand, we have an isomorphism
between
\[\Pi\diffto \pi_1(T),\ \ X\mapsto \gamma_X,\ \ \textrm{where }\gamma_X(t):=\exp(tX).\]
The induced isomorphism $\Pi\diffto \pi_2(G/T)$ can be given explicitly: $X\mapsto S_{X}$, if and only if there is a disc
$D_{X}$ in $G$ which projects to $S_{X}$ and has as boundary the curve $\gamma_{X}$.

The lemma below describes the dual of this isomorphism.

\begin{lemma}\label{Lemma_1}
\begin{enumerate}[(a)]
\item For $\xi\in \mathfrak{t}^*$, the 2-form $-d\xi^l$ is the pullback of a closed 2-form $\eta_{\xi}$ on $G/T$, which satisfies
\[\xi(X)=-\int_{S_{X}}\eta_{\xi}, \  \ X\in \Pi.\]
\item The assignment $\mathfrak{t}^*\ni\xi\mapsto [\eta_{\xi}]\in H^2(G/T)$ is a linear isomorphism.
\item We have that $\xi\in\mathfrak{t}^{*\mathrm{reg}}$ if and only if $\Lambda^{\mathrm{top}}[\eta_{\xi}]\in H^{\mathrm{top}}(G/T)$ is not zero.
\end{enumerate}
\end{lemma}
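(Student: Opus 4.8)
The plan is to reduce all three statements to the pullback formula (\ref{EQ_pull_back_coadjoint}), $p^*(\omega_\xi) = -d\xi^l$, together with standard facts about the topology of $G/T$. For part (a), the crucial point is that $\xi \in \mathfrak{t}^*$ forces $\mathfrak{t} \subset \mathfrak{g}_\xi$, and since $T$ is connected this gives $T \subset G_\xi$; hence the $G$-equivariant map $p : G \rmap O_\xi$, $g \mapsto Ad^{*}_{g^{-1}}(\xi)$, factors through the projection $\pi' : G \rmap G/T$ as $p = q \circ \pi'$ with $q : G/T \rmap O_\xi = G/G_\xi$ smooth. I would then set $\eta_\xi := q^*(\omega_\xi)$; this is closed and satisfies $(\pi')^*(\eta_\xi) = p^*(\omega_\xi) = -d\xi^l$ by (\ref{EQ_pull_back_coadjoint}), so $-d\xi^l$ is the pullback of $\eta_\xi$ (as $\pi'$ is a surjective submersion). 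For the integral identity I would use the disc $D_X \subset G$ with $\partial D_X = \gamma_X$, $\gamma_X(t) := \exp(tX)$, projecting to $S_X$: by Stokes, $\int_{S_X}\eta_\xi = \int_{D_X}(\pi')^*(\eta_\xi) = -\int_{D_X}d\xi^l = -\int_{\gamma_X}\xi^l$, and since $\dot\gamma_X(t) = X^l_{\gamma_X(t)}$ while $\xi^l(X^l) \equiv \xi(X)$, the last integral equals $\xi(X)$, giving $\xi(X) = -\int_{S_X}\eta_\xi$.

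For part (b), linearity of $\xi \mapsto [\eta_\xi]$ is immediate from the construction. I would then compare dimensions: $\dim_{\mathbb{R}}\mathfrak{t}^* = l = \mathrm{rank}(\mathfrak{g})$, while $G/T$ is simply connected, so Hurewicz gives $H_2(G/T;\mathbb{Z}) \cong \pi_2(G/T) \cong \pi_1(T) \cong \Pi$, a lattice of rank $l$, whence $\dim_{\mathbb{R}}H^2(G/T) = l$. It therefore suffices to prove injectivity: if $[\eta_\xi] = 0$, then part (a) yields $\xi(X) = -\int_{S_X}\eta_\xi = 0$ for every $X \in \Pi$, and since $\Pi$ spans $\mathfrak{t}$ over $\mathbb{R}$ this forces $\xi = 0$. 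Equivalently, the argument shows the pairing between $\mathfrak{t}^*$ and $\Pi$ is recovered, up to sign, as the pairing between $H^2(G/T)$ and $H_2(G/T)$ via the classes $[S_X]$.

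For part (c), put $2m := \dim(G/T) = |\Phi|$, so $\Lambda^{\mathrm{top}}[\eta_\xi] = [\eta_\xi]^{\wedge m} \in H^{2m}(G/T)$. If $\xi$ is regular then $G_\xi = T$, so $q : G/T \rmap O_\xi$ is a diffeomorphism and $\eta_\xi = q^*(\omega_\xi)$ is a symplectic form; hence $\eta_\xi^{\wedge m}$ is a volume form and $\Lambda^{\mathrm{top}}[\eta_\xi] \neq 0$. If $\xi$ is not regular then $G_\xi \supsetneq T$, so $\dim O_\xi = \dim G - \dim G_\xi < 2m$, whence $\omega_\xi^{\wedge m}$ vanishes identically on $O_\xi$ for degree reasons; therefore $\eta_\xi^{\wedge m} = q^*(\omega_\xi^{\wedge m}) = 0$ and $\Lambda^{\mathrm{top}}[\eta_\xi] = 0$. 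The whole argument is essentially bookkeeping; the only steps needing care are the factorization $p = q \circ \pi'$ in (a), which relies on connectedness of $T$, and invoking the correct identification $\Pi \cong H_2(G/T;\mathbb{Z})$ in (b). I do not foresee a genuine obstacle.
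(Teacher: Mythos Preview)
Your proof is correct and follows essentially the same approach as the paper: the factorization through $G/T \to G/G_\xi$ in (a), the Hurewicz-plus-dimension-count argument for (b), and the regular/non-regular dichotomy via the symplectic form in (c) all match the paper's proof almost line for line.
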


\begin{proof}
Since $-d\xi^l$ is the pullback of $\omega_{\xi}$ via the map $G\to G/G_{\xi}\cong O_{\xi}$, and since $T\subset
G_{\xi}$, it follows that $\eta_{\xi}$ is the pullback of $\omega_{\xi}$ via the projection $G/T \to G/G_{\xi}$. The
second claim in (a) follows using Stokes' theorem:
\[-\int_{S_X}\eta_{\xi}=\int_{D_X}d\xi^l=\int_{\gamma_X}\xi^l=\int_{0}^1\xi\circ d l_{\gamma_X(-t)}\left(\frac{d \gamma_X}{dt}(t)\right)dt=\xi(X).\]

Since $G/T$ is simply connected, we can apply the Hurewicz theorem to conclude that
$H_2(G/T,\mathbb{Z})=\pi_2(G/T)\cong \Pi$. In particular, the second Betti number of $G/T$ equals
$\mathrm{dim}(\mathfrak{t})$. So it suffices to show injectivity of the map $\xi\mapsto [\eta_{\xi}]$. For $\xi\in
\mathfrak{t}^*$, with $\xi\neq 0$, we can find $X\in\Pi$ such that $\xi(X)\neq 0$. Then $\int_{S_X}\eta_{\xi}\neq 0$,
thus $\eta_{\xi}$ is not exact. This proves (b).

If $\xi\in \mathfrak{t}^{*\mathrm{reg}}$, then $T=G_{\xi}$ and so $\eta_{\xi}=\omega_{\xi}$, which is symplectic.
Therefore $\Lambda^{\mathrm{top}}\eta_{\xi}$ is a volume form, and so $\Lambda^{\mathrm{top}}[\eta_{\xi}]\neq
0$. On the other hand, if $\xi\notin \mathfrak{t}^{*\mathrm{reg}}$, then $\mathfrak{t}\subsetneq
\mathfrak{g}_{\xi}$, thus $\mathrm{dim}(T)<\mathrm{dim}(G_{\xi})$. Now, $\eta_{\xi}$ is the pullback of
$\omega_{\xi}$ via the map $G/T\to G/G_{\xi}$, whose fibers have positive dimension. Therefore, the rank of
$\eta_{\xi}$ is constant and strictly smaller than $\mathrm{dim}(G/T)$; hence
$\Lambda^{\mathrm{top}}\eta_{\xi}=0$. This finishes the proof of (c).
\end{proof}
An element $\sigma\in Aut(\mathfrak{g},\mathfrak{t})$ integrates to a Lie group isomorphism of $G$, denoted by the
same symbol, which satisfies $\sigma(T)=T$. Therefore it induces a diffeomorphism $\overline{\sigma}$ of $G/T$. This
diffeomorphism satisfies:

\begin{lemma}\label{Lemma_sigma}
We have that $\overline{\sigma}^{*}(\eta_{\xi})=\eta_{\sigma^*(\xi)}$.
\end{lemma}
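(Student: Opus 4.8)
The plan is to exploit the naturality of the assignment $\xi\mapsto\eta_\xi$ from Lemma \ref{Lemma_1} with respect to the automorphism $\sigma$. Recall that $\eta_\xi$ is the unique $2$-form on $G/T$ with $p^*(\eta_\xi)=-d\xi^l$, where $p\colon G\to G/T$ is the projection and $\xi^l\in\Omega^1(G)$ is the left-invariant extension $\xi^l_g=l_{g^{-1}}^*(\xi)$. Since $p$ is a surjective submersion, $p^*$ is injective on forms, so it suffices to verify the asserted identity after pulling back along $p$.

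First I would record the compatibility $p\circ\sigma=\overline{\sigma}\circ p$, which holds by the definition of $\overline{\sigma}$ (namely $\overline{\sigma}(gT)=\sigma(g)T$, using $\sigma(T)=T$). Hence $p^*(\overline{\sigma}^*\eta_\xi)=\sigma^*(p^*\eta_\xi)=-\sigma^*(d\xi^l)=-d(\sigma^*\xi^l)$, and the whole lemma reduces to the equivariance identity $\sigma^*(\xi^l)=(\sigma^*\xi)^l$ of left-invariant $1$-forms on $G$. This follows from the fact that $\sigma\colon G\to G$ is a group homomorphism: for each $g\in G$ one has $l_{\sigma(g)^{-1}}\circ\sigma=\sigma\circ l_{g^{-1}}$ as maps $G\to G$, whence $(\sigma^*\xi^l)_g=(d\sigma_g)^*\bigl(l_{\sigma(g)^{-1}}^*\xi\bigr)=(l_{\sigma(g)^{-1}}\circ\sigma)^*\xi=(\sigma\circ l_{g^{-1}})^*\xi=l_{g^{-1}}^*\bigl((d\sigma_e)^*\xi\bigr)=(\sigma^*\xi)^l_g$. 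I would also check that $\sigma^*\xi$ actually lies in $\mathfrak{t}^*$, so that $\eta_{\sigma^*\xi}$ is defined: since $\sigma(T)=T$ gives $d\sigma_e(\mathfrak{t})=\mathfrak{t}$ and $d\sigma_e$ is a Lie algebra automorphism, the coadjoint stabilizer of $\sigma^*\xi$ equals $(d\sigma_e)^{-1}(\mathfrak{g}_\xi)$, which contains $(d\sigma_e)^{-1}(\mathfrak{t})=\mathfrak{t}$; thus $\mathfrak{t}\subset\mathfrak{g}_{\sigma^*\xi}$, i.e.\ $\sigma^*\xi\in\mathfrak{t}^*$. Combining, $p^*(\overline{\sigma}^*\eta_\xi)=-d(\sigma^*\xi)^l=p^*(\eta_{\sigma^*\xi})$, and injectivity of $p^*$ yields $\overline{\sigma}^*(\eta_\xi)=\eta_{\sigma^*(\xi)}$.

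There is no real obstacle here: the argument is a short diagram chase plus the computation $\sigma^*(\xi^l)=(\sigma^*\xi)^l$, which is the only piece of genuine content. The one point worth a line of verification — and easy to overlook — is that $\sigma^*\xi$ stays inside $\mathfrak{t}^*$, which is exactly what makes the right-hand side $\eta_{\sigma^*\xi}$ meaningful; everything else is formal manipulation of left-invariant forms and the equivariance of $p$.
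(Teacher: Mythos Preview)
Your proof is correct and follows essentially the same approach as the paper: both reduce to the identity $\sigma^*(\xi^l)=(\sigma^*\xi)^l$ on $G$, established via the homomorphism relation $l_{\sigma(g)^{-1}}\circ\sigma=\sigma\circ l_{g^{-1}}$. Your version is more explicit in spelling out the reduction via injectivity of $p^*$ and in verifying $\sigma^*\xi\in\mathfrak{t}^*$, but the core argument is identical.
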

\begin{proof}
Using that $l_{\sigma(g)^{-1}}\circ \sigma=\sigma \circ l_{g^{-1}}$ for $g\in G$, the result follows from the
computation below, for $X\in T_gG$:
\[\sigma^*(\xi^l)(X)=\xi(dl_{\sigma(g)^{-1}}\circ d\sigma(X))=\xi(d\sigma\circ d l_{g^{-1}}(X))=\sigma^*(\xi)^l(X).\qedhere\]
\end{proof}

Every diffeomorphism of $G/T$ induces an algebra automorphism of the cohomology ring
$H^{\bullet}(G/T;\mathbb{Z})$, and by Theorem 1.2 \cite{Papadima}, the possible outcomes are covered by the maps
$\overline{\sigma}$. For completeness, we include a proof for the action on $H^2(G/T)$; this will be needed later on.

\begin{proposition}\label{Proposition_1}
For every diffeomorphism $\varphi:G/T\diffto G/T$ there exists $\sigma\in  Aut(\mathfrak{g},\mathfrak{t})$ such that
$\overline{\sigma}$ induces the same map on $H^2(G/T)$
\[\varphi^*=\overline{\sigma}^*:H^2(G/T)\rmap H^2(G/T).\]
\end{proposition}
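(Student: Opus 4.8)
The plan is to transfer $\varphi^{*}$ to a linear automorphism of $\mathfrak{t}^{*}$ via the identification of Lemma \ref{Lemma_1}(b), verify that this automorphism satisfies the hypotheses of Corollary \ref{Corollary_1}, and then invoke the surjectivity of $\tau$ from Lemma \ref{Lemma_3}. Concretely, let $\Xi\colon\mathfrak{t}^{*}\to H^{2}(G/T)$ be the isomorphism $\xi\mapsto[\eta_{\xi}]$, and set
\[
f:=\Xi^{-1}\circ\varphi^{*}\circ\Xi\in Gl(\mathfrak{t}^{*}).
\]
By Lemma \ref{Lemma_sigma}, for any $\sigma\in\mathrm{Aut}(\mathfrak{g},\mathfrak{t})$ the conjugate $\Xi^{-1}\circ\overline{\sigma}^{*}\circ\Xi$ is the linear map $A_{\sigma}:=(\sigma_{|\mathfrak{t}})^{*}$ on $\mathfrak{t}^{*}$, and by the definition of $\tau$ one has $\tau(\sigma)=A_{\sigma}^{-1}$. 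Hence it suffices to produce $\sigma$ with $A_{\sigma}=f$, equivalently with $\tau(\sigma)=f^{-1}$; once $f\in\mathrm{Aut}(\Phi)$ is established this follows from the surjectivity of $\tau$, since $\mathrm{Aut}(\Phi)$ is a group.

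Next I would show that $f$ respects the integral structure. Since $G/T$ is simply connected, $H^{2}(G/T;\mathbb{Z})$ is free and embeds in $H^{2}(G/T;\mathbb{R})$; by the Hurewicz theorem $H_{2}(G/T;\mathbb{Z})\cong\Pi$ (this is used in the proof of Lemma \ref{Lemma_1}), and the pairing formula $\xi(X)=-\int_{S_{X}}\eta_{\xi}$ of Lemma \ref{Lemma_1}(a) shows that $\Xi$ carries the dual lattice $\Pi^{*}=\{\xi\in\mathfrak{t}^{*}:\xi(\Pi)\subset\mathbb{Z}\}$ isomorphically onto $H^{2}(G/T;\mathbb{Z})$. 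As $\varphi$ is a diffeomorphism, $\varphi^{*}$ preserves $H^{2}(G/T;\mathbb{Z})$, so $f(\Pi^{*})=\Pi^{*}$, and therefore the transpose $f^{*}\colon\mathfrak{t}\to\mathfrak{t}$ preserves the double dual lattice $(\Pi^{*})^{*}=\Pi$.

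Then I would check that $f$ preserves $\mathfrak{t}^{*\mathrm{reg}}$. Because $\varphi^{*}$ is a ring homomorphism and restricts to an isomorphism on $H^{\mathrm{top}}(G/T;\mathbb{R})$ (any diffeomorphism has degree $\pm1$), we have $\Lambda^{\mathrm{top}}(\varphi^{*}[\eta_{\xi}])=\varphi^{*}(\Lambda^{\mathrm{top}}[\eta_{\xi}])$, which is nonzero if and only if $\Lambda^{\mathrm{top}}[\eta_{\xi}]$ is. Since $\varphi^{*}[\eta_{\xi}]=[\eta_{f(\xi)}]$ by the definition of $f$, Lemma \ref{Lemma_1}(c) then gives that $f(\xi)$ is regular precisely when $\xi$ is, i.e.\ $f(\mathfrak{t}^{*\mathrm{reg}})=\mathfrak{t}^{*\mathrm{reg}}$. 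With both conditions in hand, Corollary \ref{Corollary_1} yields $f\in\mathrm{Aut}(\Phi)$ (under the standard identification of $\mathfrak{t}^{*}$ with $i\mathfrak{t}^{*}$), hence $f^{-1}\in\mathrm{Aut}(\Phi)$, and Lemma \ref{Lemma_3} provides $\sigma\in\mathrm{Aut}(\mathfrak{g},\mathfrak{t})$ with $\tau(\sigma)=f^{-1}$; by the first paragraph $\overline{\sigma}^{*}=\varphi^{*}$ on $H^{2}(G/T)$.

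The substantial inputs (Lemmas \ref{Lemma_1}, \ref{Lemma_sigma}, \ref{Lemma_3} and Corollary \ref{Corollary_1}) are already available, so the only point requiring genuine care is the lattice bookkeeping of the second paragraph — identifying $H^{2}(G/T;\mathbb{Z})$ with $\Pi^{*}$ via the pairing and deducing $f^{*}(\Pi)=\Pi$ — together with keeping the conventions consistent (the inverse in the definition of $\tau$, the sign in the pairing of Lemma \ref{Lemma_1}(a), and the passage between $\mathfrak{t}^{*}$ and $i\mathfrak{t}^{*}$) so that Corollary \ref{Corollary_1} applies verbatim.
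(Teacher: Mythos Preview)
Your proof is correct and follows essentially the same route as the paper: define $f$ via the isomorphism $\xi\mapsto[\eta_\xi]$, use Lemma~\ref{Lemma_1}(c) for regularity, verify the lattice condition, apply Corollary~\ref{Corollary_1}, and finish with Lemma~\ref{Lemma_3}. The only cosmetic difference is that the paper checks $f^*(\Pi)=\Pi$ directly by computing $\varphi_*$ on $H_2(G/T;\mathbb{Z})\cong\Pi$ through the pairing formula of Lemma~\ref{Lemma_1}(a), whereas you dualize, identifying $H^2(G/T;\mathbb{Z})$ with $\Pi^*$ and deducing $f^*(\Pi)=\Pi$ from $f(\Pi^*)=\Pi^*$; these are equivalent bookkeeping choices.
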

\begin{proof}
By part (b) of Lemma \ref{Lemma_1}, there exists an automorphism $f:\mathfrak{t}^*\to \mathfrak{t}^*$ such that
$\varphi^*([\eta_{\xi}])=[\eta_{f(\xi)}]$. Since $\varphi^*$ preserves the classes $[\omega]\in H^2(G/T)$ with
$\wedge^{\mathrm{top}}[\omega]=0$, Lemma \ref{Lemma_1} (c) implies that $f$ preserves
$\mathfrak{t}^{*\mathrm{reg}}$. Let $X\in \Pi$, and denote $Y\in \Pi$ the element satisfying $\varphi_*(S_X)=S_Y\in
\pi_2(G/T)$. By Lemma \ref{Lemma_1} (a), for all $\xi\in \mathfrak{t}^*$, we have that
\begin{align*}
\xi(f^*(X))=f(\xi)(X)=-\int_{S_X}\eta_{f(\xi)}=-\int_{S_X}\varphi^*(\eta_{\xi})=-\int_{S_Y}\eta_{\xi}=\xi(Y).
\end{align*}
Thus $f^*(X)=Y$. This shows that $\varphi_*(S_X)=S_{f^*(X)}$ for all $X\in \Pi$. Since $\varphi_*$ is an isomorphism
of $H_2(G/T,\mathbb{Z})$, it follows that $f^*$ is an isomorphism of $\Pi$. By Corollary \ref{Corollary_1} we
conclude that $f\in Aut(\Phi)$. By Lemma \ref{Lemma_3}, there exists $\sigma\in Aut(\mathfrak{g},\mathfrak{t})$
such that $\sigma^*_{|\mathfrak{t}^*}=f$, and so Lemma \ref{Lemma_sigma} implies the result.
\end{proof}

The following consequence will not be used in the proof of Theorem \ref{Theorem_FIVE}.

\begin{corollary}\label{Corollary_moduli_on_coadjoint}
The map $\xi\mapsto \eta_{\xi}$ induces a homeomorphism between the space
$\mathfrak{t}^{*\mathrm{reg}}/Aut(\Phi)$ and an open in the moduli space of all symplectic structures on $G/T$.
\end{corollary}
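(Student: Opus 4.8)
The plan is to analyze the induced map
\[ \overline{\Psi}\colon \mathfrak{t}^{*\mathrm{reg}}/Aut(\Phi)\longrightarrow \mathcal{M},\qquad [\xi]\longmapsto [\eta_\xi], \]
where $\mathcal{M}$ denotes the space of all symplectic forms on $G/T$ modulo $\mathrm{Diff}(G/T)$, equipped with the quotient of the $C^\infty$-topology, and to show that it is well defined, injective, continuous and open onto an open subset of $\mathcal{M}$; a continuous open injection is automatically a homeomorphism onto its (open) image. First note that for $\xi\in\mathfrak{t}^{*\mathrm{reg}}$ one has $G_\xi=T$, so $\eta_\xi=\omega_\xi$ is genuinely symplectic (this is exactly what is shown in the proof of Lemma \ref{Lemma_1}(c)); hence $\xi\mapsto\eta_\xi$ really takes values in symplectic forms. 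Since this assignment is linear, it is continuous, and since $Aut(\Phi)$ permutes the root hyperplanes and $\mathfrak{t}^{*\mathrm{reg}}$ is their complement by Lemma \ref{Lemma_2}, the group $Aut(\Phi)$ acts on $\mathfrak{t}^{*\mathrm{reg}}$; continuity of $\overline{\Psi}$ will then follow once it is shown to be well defined.

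For well-definedness, given $f\in Aut(\Phi)$, surjectivity of $\tau$ in Lemma \ref{Lemma_3} together with the group structure of $Aut(\Phi)$ lets me write $f=\sigma^*_{|\mathfrak{t}^*}$ for some $\sigma\in Aut(\mathfrak{g},\mathfrak{t})$; then Lemma \ref{Lemma_sigma} gives $\overline{\sigma}^*(\eta_\xi)=\eta_{\sigma^*(\xi)}=\eta_{f(\xi)}$, so $\eta_\xi$ and $\eta_{f(\xi)}$ lie in one $\mathrm{Diff}(G/T)$-orbit. For injectivity, suppose $\varphi^*(\eta_{\xi'})=\eta_\xi$ for some diffeomorphism $\varphi$ of $G/T$. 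By Proposition \ref{Proposition_1} there is $\sigma\in Aut(\mathfrak{g},\mathfrak{t})$ with $\varphi^*=\overline{\sigma}^*$ on $H^2(G/T)$, so $[\eta_\xi]=[\eta_{\sigma^*(\xi')}]$, and the injectivity of $\xi\mapsto[\eta_\xi]$ from Lemma \ref{Lemma_1}(b) forces $\xi=\sigma^*(\xi')=f(\xi')$ with $f:=\sigma^*_{|\mathfrak{t}^*}\in Aut(\Phi)$; thus $\xi$ and $\xi'$ have the same $Aut(\Phi)$-orbit.

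The remaining and main point is openness, and here the key input is a uniform Moser argument. Fix $\xi_0\in\mathfrak{t}^{*\mathrm{reg}}$ and a small ball $B$ around $\xi_0$ inside $\mathfrak{t}^{*\mathrm{reg}}$; because $\xi\mapsto\omega_\xi$ is $C^\infty$-continuous, nondegeneracy is $C^0$-open, and $\xi\mapsto[\eta_\xi]$ is a linear isomorphism onto $H^2(G/T)$ (Lemma \ref{Lemma_1}(b)), I can pick a $C^0$-neighborhood $V$ of $\omega_{\xi_0}$ in the space of symplectic forms such that all convex combinations of elements of $V\cup\{\omega_\xi:\xi\in B\}$ are symplectic and such that $[\omega]\in\{[\eta_\xi]:\xi\in B\}$ for all $\omega\in V$. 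For $\omega\in V$, writing $[\omega]=[\eta_{\xi'}]=[\omega_{\xi'}]$ with $\xi'\in B$, the path $(1-t)\omega+t\omega_{\xi'}$ consists of symplectic forms with constant cohomology class, so Moser's theorem on the compact manifold $G/T$ yields $\omega\cong\omega_{\xi'}$. Hence the $\mathrm{Diff}$-saturation $\widetilde V=\bigcup_{\varphi}\varphi^*V$ is open and saturated, its image in $\mathcal{M}$ is an open neighborhood of $[\omega_{\xi_0}]$, and it is contained in $\{[\omega_{\xi'}]:\xi'\in B\}=\overline{\Psi}(q(B))$, where $q$ is the quotient map. Consequently for any open $\mathcal{U}\subset\mathfrak{t}^{*\mathrm{reg}}/Aut(\Phi)$ and any $[\xi_0]\in\mathcal{U}$, choosing $B\subset q^{-1}(\mathcal{U})$ shows that $\overline{\Psi}(\mathcal{U})$ contains an $\mathcal{M}$-neighborhood of $\overline{\Psi}([\xi_0])$; since every point of $\overline{\Psi}(\mathcal{U})$ has this form, $\overline{\Psi}(\mathcal{U})$ is open in $\mathcal{M}$. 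Thus $\overline{\Psi}$ is open with open image, and being also continuous and injective it is a homeomorphism onto its image. I expect the genuine obstacle to be precisely this last paragraph: making the Moser comparison uniform (controlling $V$ and $B$ simultaneously) and keeping the bookkeeping of the quotient topologies straight; the algebraic content (Lemmas \ref{Lemma_1}, \ref{Lemma_2}, \ref{Lemma_3}, \ref{Lemma_sigma} and Proposition \ref{Proposition_1}) carries the rest of the argument essentially for free.
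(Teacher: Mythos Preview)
Your proof is correct and follows essentially the same route as the paper. Both arguments use Lemmas \ref{Lemma_1}, \ref{Lemma_sigma}, \ref{Lemma_3} and Proposition \ref{Proposition_1} for well-definedness and injectivity, and both rely on Moser's argument together with the continuity of the cohomology map $\omega\mapsto[\omega]$ for the topological part. The only difference is packaging: the paper defines $\mathcal{S}$ as the $\mathrm{Diff}(G/T)$-saturation of $\{\eta_\xi:\xi\in\mathfrak{t}^{*\mathrm{reg}}\}$, shows it is $C^0$-open via Moser, and then proves the resulting bijection $\Theta$ is a homeomorphism by checking continuity of $\Theta^{-1}$ directly (invoking Hodge theory to make $\omega\mapsto[\omega]\in H^2(G/T)\cong\mathfrak{t}^*$ continuous, with image in $\mathfrak{t}^{*\mathrm{reg}}$ by Lemma \ref{Lemma_1}(c)); you instead prove openness of $\overline{\Psi}$ locally. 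Your condition ``$[\omega]\in\{[\eta_\xi]:\xi\in B\}$ for all $\omega\in V$'' is exactly where that continuity of cohomology enters --- you should make this explicit, since it is the one nontrivial analytic input beyond Moser.
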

\begin{proof}
First, Proposition \ref{Proposition_1}, Lemma \ref{Lemma_sigma} and Lemma \ref{Lemma_3} imply that for
$\xi_1,\xi_2\in\mathfrak{t}^{*\mathrm{reg}}$, we have that $\eta_{\xi_1}$ and $\eta_{\xi_2}$ are symplectomorphic,
if and only if $\xi_1=f(\xi_2)$ for some $f\in Aut(\Phi)$. Therefore the map $\xi\mapsto \eta_{\xi}$ induces a bijection
\[\Theta:\mathfrak{t}^{*\mathrm{reg}}/Aut(\Phi)\diffto \mathcal{S}/\textrm{Diff}(G/T),\]
where $\mathcal{S}$ denotes the space of all symplectic forms on $G/T$ that are symplectomorphic to one of the type
$\eta_{\xi}$, for $\xi\in \mathfrak{t}^{*\mathrm{reg}}$. The Moser argument implies that $\mathcal{S}$ is
$C^0$-open in the space of all symplectic forms. Next, we show that $\Theta$ is a homeomorphism. Continuity of
$\Theta$ follows from that of the map $\xi\mapsto \eta_{\xi}\in \mathcal{S}$. Hodge theory implies that taking
cohomology is a continuous map $\mathcal{S}\to H^2(G/T)$ and, composing with the isomorphism
$H^2(G/T)\cong\mathfrak{t}^*$, it follows that the induced map $\mathcal{S}\to \mathfrak{t}^{*}$ is continuous.
By Lemma \ref{Lemma_1} (c), the image of this map is $\mathfrak{t}^{*\mathrm{reg}}$. Therefore, the lift of
$\Theta^{-1}$ to a map $\mathcal{S}\to \mathfrak{t}^{*\mathrm{reg}}/Aut(\Phi)$ is continuous. Thus also
$\Theta^{-1}$ is continuous, and so $\Theta$ is a homeomorphism.
\end{proof}

\section{Proof of part $(b)$ of Theorem \ref{Theorem_FIVE}}

In Poisson geometric terms, $\mathfrak{g}^{*\mathrm{reg}}$ is described as the regular part of
$(\mathfrak{g}^*,\pi_{\mathfrak{g}})$, i.e.\ the open consisting of leaves of maximal dimension. The regular part of
$\mathbb{S}(\mathfrak{g}^*)$ is
$\mathbb{S}(\mathfrak{g}^*)^{\mathrm{reg}}=\mathfrak{g}^{*\mathrm{reg}}\cap\mathbb{S}(\mathfrak{g}^*)$.
Let $\mathfrak{c}\subset \mathfrak{t}^*$ be an open Weyl chamber and denote by
$\mathbb{S}(\mathfrak{c}):=\mathfrak{c}\cap \mathbb{S}(\mathfrak{g}^*)$. From (\ref{EQ_regular_part}) it
follows that $\mathbb{S}(\mathfrak{g}^*)^{\mathrm{reg}}$ is described by the diffeomorphism
\[\Psi: G/T\times \mathbb{S}(\mathfrak{c})\diffto \mathbb{S}(\mathfrak{g}^{*})^{\mathrm{reg}}, \ \ \Psi([g],\xi):=Ad_{g^{-1}}^{*}(\xi),\]
and the symplectic leaves correspond to the slices $(G/T\times \{\xi\},\eta_{\xi})$, $\xi\in\mathfrak{c}$.

\begin{proof}[Proof of part (b) of Theorem \ref{Theorem_FIVE}]
Let $\phi:(\mathbb{S}(\mathfrak{g}^*),f\pi_{\mathbb{S}})\rmap (\mathbb{S}(\mathfrak{g}^*),g\pi_{\mathbb{S}})$
be a Poisson diffeomorphism, where $f,g$ are positive Casimirs. Now, the symplectic leaves of $f\pi_{\mathbb{S}}$
and $g\pi_{\mathbb{S}}$ are also the coadjoint orbits $O_{\xi}$, for $\xi\in \mathbb{S}(\mathfrak{g}^*)$, but with
symplectic structures $1/f(\xi)\omega_{\xi}$, respectively $1/g(\xi)\omega_{\xi}$. In particular, they have the same
regular part $\mathbb{S}(\mathfrak{g}^*)^{\mathrm{reg}}$. So, after conjugating with $\Psi$, the Poisson
diffeomorphism on the regular parts takes the form
\[\Psi^{-1}\circ \phi\circ \Psi: (G/T\times \mathbb{S}(\mathfrak{c}), \Psi^*(f\pi_{\mathbb{S}}))\diffto (G/T\times \mathbb{S}(\mathfrak{c}), \Psi^*(g\pi_{\mathbb{S}})),\]
\[(x,\xi)\mapsto (\phi_{\xi}(x),\theta(\xi)), \]
for a diffeomorphism $\theta:\mathbb{S}(\mathfrak{c})\diffto \mathbb{S}(\mathfrak{c})$ and a symplectomorphism
\[\phi_{\xi}:(G/T,\eta_{\xi/f(\xi)})\diffto (G/T,\eta_{\theta(\xi)/g(\theta(\xi))}).\]
By connectivity of $\mathbb{S}(\mathfrak{c})$, the maps $\phi_{\xi}$ for $\xi\in \mathbb{S}(\mathfrak{c})$ are
isotopic to each other. In particular, they induces the same map on $H^2(G/T)$. By Proposition \ref{Proposition_1},
this map is induced also by a diffeomorphism $\overline{\sigma}$, for some $\sigma\in
Aut(\mathfrak{g},\mathfrak{t})$. Lemma \ref{Lemma_sigma} implies the following equality in $H^2(G/T)$ for all
$\xi\in \mathbb{S}(\mathfrak{c})$:
\[[\eta_{\xi/f(\xi)}]=[\phi_{\xi}^*(\eta_{\theta(\xi)/g(\theta(\xi))})]=[\overline{\sigma}^*(\eta_{\theta(\xi)/g(\theta(\xi))})]=[\eta_{\sigma^*(\theta(\xi)/g(\theta(\xi)))}].\]
Using Lemma \ref{Lemma_1} we obtain that $\xi/f(\xi)=\sigma^*(\theta(\xi))/g(\theta(\xi))$. Since $\sigma^*$
preserves the norm, we get that $f(\xi)=g(\theta(\xi))$. This shows that $\xi=\sigma^*(\theta(\xi))$. So $\sigma^*$
preserves $\mathbb{S}(\mathfrak{c})$ and on this space $\theta=(\sigma^{-1})^*$. So $f\circ \sigma^*(\xi)=g(\xi)$
for all $\xi\in \mathbb{S}(\mathfrak{c})$. Since the regular leaves are dense in $\mathbb{S}(\mathfrak{g}^*)$, they
all hit $\mathbb{S}(\mathfrak{c})$, and since both $f\circ \sigma^*$ and $g$ are Casimirs, it follows that $f\circ
\sigma^*=g$.
\end{proof}

\section{The space of Casimirs}\label{Section_the_space_of_Casimirs}

Theorem \ref{Theorem_FIVE} implies that the map that associates to $F\in
\mathfrak{Casim}(\mathbb{S}(\mathfrak{g}^*),\pi_{\mathbb{S}})$ the Poisson structure $e^{F}\pi_{\mathbb{S}}$
on $\mathbb{S}(\mathfrak{g}^*)$ induces a parametrization of an open in the Poisson moduli space of
$\mathbb{S}(\mathfrak{g}^*)$ around $\pi_{\mathbb{S}}$ by the space
\[\mathfrak{Casim}(\mathbb{S}(\mathfrak{g}^*),\pi_{\mathbb{S}})/Out(\mathfrak{g}).\]
In this section we describe this space using classical invariant theory.\index{Casimir function}

Let $P[\mathfrak{g}^*]$ and $P[\mathfrak{t}^*]$ denote the algebras of polynomials on $\mathfrak{g}^*$ and
$\mathfrak{t}^*$ respectively. A classical result (see e.g. Theorem 7.3.5 \cite{Dixmier}) states that the restriction
map $P[\mathfrak{g}^*]\to P[\mathfrak{t}^*]$ induces an isomorphism between the algebras of invariants
\begin{equation}\label{EQ_Invariant_Polynomials}
P[\mathfrak{g}^*]^G\cong P[\mathfrak{t}^*]^W.
\end{equation}
A theorem of Schwarz, extends this result to the smooth setting
\begin{equation}\label{EQ_Invariant_Functions}
C^{\infty}(\mathfrak{g}^*)^G\cong C^{\infty}(\mathfrak{t}^*)^W.
\end{equation}
To explain this, first recall that $P[\mathfrak{g}^*]^G$ is generated by $l:=\mathrm{dim}(\mathfrak{t})$
algebraically independent homogeneous polynomials $p_1,\ldots,p_l$ (Theorem 7.3.8 \cite{Dixmier}). Hence by
(\ref{EQ_Invariant_Polynomials}), $P[\mathfrak{t}^*]^W$ is generated by
$q_1:=p_{1|\mathfrak{t}^*},\ldots,q_l:=p_{l|\mathfrak{t}^*}$. Consider the maps
\[p=(p_1,\ldots,p_l):\mathfrak{g}^*\rmap \mathbb{R}^{l},\  \textrm{and}\ q=(q_1,\ldots,q_l):\mathfrak{t}^*\rmap \mathbb{R}^{l},\]
and denote by $\Delta:=p(\mathfrak{g}^*)$. Since the inclusion $\mathfrak{t}^*\subset \mathfrak{g}^*$ induces a
bijection between the $W$-orbits and the $G$-orbits, it follows that $q(\mathfrak{t}^*)=\Delta$. The theorem of
Schwarz \cite{Schwarz} applied to the action of $G$ on $\mathfrak{g}^*$ and to the action of $W$ on
$\mathfrak{t}^*$, shows that the pullbacks by $p$ and $q$ give isomorphisms
\begin{equation}\label{EQ_Invariant_Functions2}
C^{\infty}(\mathfrak{g}^*)^G\cong C^{\infty}(\Delta), \ \ C^{\infty}(\mathfrak{t}^*)^W\cong C^{\infty}(\Delta).
\end{equation}
In particular, we obtain (\ref{EQ_Invariant_Functions}). Schwarz's result asserts that $p$, respectively $q$, induce
homeomorphisms between the orbit spaces and $\Delta$
\begin{equation}\label{EQ_Leaf_space_homeomorphisms}
\mathfrak{g}^*/G\cong \Delta\cong \mathfrak{t}^*/W.
\end{equation}
We can describe the orbit space also using an open Weyl chamber $\mathfrak{c}\subset \mathfrak{t}^*$.

\begin{lemma}\label{Lemma_2}
The map $q:\overline{\mathfrak{c}}\to \Delta$ is a homeomorphism and restricts to a diffeomorphism between the
interiors $q:\mathfrak{c}\to \mathrm{int}(\Delta)$.
\end{lemma}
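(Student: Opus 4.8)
The plan is to reduce everything to three standard facts about the action of the Weyl group $W$ on $\mathfrak{t}^{*}$: (i) the closed chamber $\overline{\mathfrak{c}}$ is a \emph{strict} fundamental domain, i.e.\ each $W$-orbit meets it in exactly one point (see e.g.\ Bourbaki, \emph{Groupes et alg\`ebres de Lie}, Ch.~V, or Humphreys); (ii) the quotient map $\pi:\mathfrak{t}^{*}\to\mathfrak{t}^{*}/W$ is closed and proper, being the quotient by a finite group; (iii) the Jacobian determinant of the polynomial map $q=(q_{1},\dots,q_{l})$ on $\mathfrak{t}^{*}$ is, up to a nonzero constant, the product $\prod_{\alpha\in\Phi^{+}}\alpha$ of the positive roots, so $dq_{\xi}$ is invertible exactly when $\xi\in\mathfrak{t}^{*\mathrm{reg}}$. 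Recall also that, by $(\ref{EQ_Invariant_Polynomials})$, the $q_{i}$ are homogeneous generators of $P[\mathfrak{t}^{*}]^{W}$, so (iii) applies to them.

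First I would prove the homeomorphism statement. By (i), the restriction $\pi|_{\overline{\mathfrak{c}}}:\overline{\mathfrak{c}}\to\mathfrak{t}^{*}/W$ is a continuous bijection; since $\overline{\mathfrak{c}}$ is closed in $\mathfrak{t}^{*}$ and $\pi$ is a closed map by (ii), this bijection is closed, hence a homeomorphism. Composing with the homeomorphism $\mathfrak{t}^{*}/W\cong\Delta$ from $(\ref{EQ_Leaf_space_homeomorphisms})$ — which is exactly the map induced by the $W$-invariant polynomial map $q$ — yields that $q\colon\overline{\mathfrak{c}}\to\Delta$ is a homeomorphism.

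For the statement on interiors, note that by (iii) the map $q|_{\mathfrak{c}}$ is a local diffeomorphism, hence an open map into $\mathbb{R}^{l}$; being also injective (it is a restriction of the bijection $q|_{\overline{\mathfrak{c}}}$), it is a diffeomorphism of $\mathfrak{c}$ onto the open set $q(\mathfrak{c})\subseteq\mathbb{R}^{l}$. Since $q(\mathfrak{c})\subseteq\Delta$ is open in $\mathbb{R}^{l}$, we get $q(\mathfrak{c})\subseteq\operatorname{int}(\Delta)$. For the reverse inclusion, suppose $\xi\in\overline{\mathfrak{c}}\setminus\mathfrak{c}$ and $q(\xi)\in\operatorname{int}(\Delta)$. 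Then $q(\xi)$ has a neighborhood $U\subseteq\Delta$ open in $\mathbb{R}^{l}$, and pulling back along the homeomorphism $q|_{\overline{\mathfrak{c}}}$ produces a neighborhood of $\xi$ in $\overline{\mathfrak{c}}$ homeomorphic to $U\cong\mathbb{R}^{l}$. But $\overline{\mathfrak{c}}$ is a closed convex polyhedral cone in $\mathfrak{t}^{*}\cong\mathbb{R}^{l}$ with nonempty interior and $\operatorname{int}(\overline{\mathfrak{c}})=\mathfrak{c}$, so a boundary point $\xi$ has no Euclidean neighborhood inside $\overline{\mathfrak{c}}$ (invariance of domain applied to a supporting half-space at $\xi$); contradiction. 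Hence $q(\overline{\mathfrak{c}}\setminus\mathfrak{c})\subseteq\partial\Delta$, so $\operatorname{int}(\Delta)=\Delta\setminus\partial\Delta\subseteq q(\mathfrak{c})$, and combining inclusions gives that $q|_{\mathfrak{c}}\colon\mathfrak{c}\to\operatorname{int}(\Delta)$ is a diffeomorphism.

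The part I expect to need the most care is the interior identification $q(\mathfrak{c})=\operatorname{int}(\Delta)$ — specifically, ruling out that a wall point of $\overline{\mathfrak{c}}$ maps into $\operatorname{int}(\Delta)$, which the invariance-of-domain argument above settles but which relies on the Jacobian formula (iii) (to get openness of $q(\mathfrak{c})$) and on the convex-geometric local model of $\overline{\mathfrak{c}}$ near its boundary; both are routine but should be cited precisely rather than assumed. A softer alternative worth recording is that $q(\mathfrak{c})$ is open and dense in $\Delta$ (density since $\mathfrak{c}$ is dense in $\overline{\mathfrak{c}}$ and $q$ is continuous), while its complement in $\Delta$ is $q(\overline{\mathfrak{c}}\setminus\mathfrak{c})$, the image under a smooth map of a set contained in finitely many hyperplanes, hence of measure zero; one still needs a small topological argument to conclude equality with $\operatorname{int}(\Delta)$, so the invariance-of-domain route is the cleanest.
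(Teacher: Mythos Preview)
Your proof is correct and follows the same broad outline as the paper's: first establish that $q|_{\overline{\mathfrak{c}}}$ is a homeomorphism onto $\Delta$, then show $q|_{\mathfrak{c}}$ is a local diffeomorphism, and conclude. The homeomorphism step is essentially identical (fundamental domain plus properness/closedness of the quotient).

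The genuine difference is in the immersion step. You invoke the classical Jacobian formula $\det(dq)=c\prod_{\alpha\in\Phi^{+}}\alpha$ (Chevalley/Steinberg), which immediately gives invertibility of $dq_{\xi}$ on $\mathfrak{t}^{*\mathrm{reg}}\supset\mathfrak{c}$. The paper instead avoids citing this formula: given $0\neq V\in T_{\xi}\mathfrak{c}$, it picks a compactly supported $\chi\in C^{\infty}(\mathfrak{c})$ with $d\chi_{\xi}(V)\neq 0$, extends it $W$-equivariantly to all of $\mathfrak{t}^{*}$, and uses Schwarz's theorem (already available in the paper's setup via (\ref{EQ_Invariant_Functions2})) to write $\widetilde{\chi}=h\circ q$; differentiating forces $dq_{\xi}(V)\neq 0$. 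Your route is shorter and more standard; the paper's trick is self-contained relative to the tools it has already assembled and does not require importing the Jacobian identity.

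You are also more explicit than the paper about the identification $q(\mathfrak{c})=\operatorname{int}(\Delta)$, which the paper leaves implicit. Your invariance-of-domain argument is fine; note that it actually works symmetrically (applying it to $q^{-1}|_{\operatorname{int}(\Delta)}$ gives the reverse inclusion directly), so the convex-cone discussion of boundary points, while correct, can be shortened.
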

\begin{proof}
It is well known that $\overline{\mathfrak{c}}$ intersects each orbit of $W$ exactly once (see e.g.\ \cite{DK}) and so,
by (\ref{EQ_Leaf_space_homeomorphisms}), the map is a bijection. Since $q:t^*\to \mathbb{R}^l$ is proper, it follows
that also $q_{|\overline{c}}$ is proper, and this implies the first part. We are left to check that $q_{|\mathfrak{c}}$
is an immersion. Let $V\in T_\xi\mathfrak{c}$ be a nonzero vector. Consider $\chi$ a smooth, compactly supported
function on $\mathfrak{c}$ satisfying $d\chi_{\xi}(V)\neq 0$. Since the action gives a homeomorphism $W\times
\mathfrak{c}\cong \mathfrak{t}^{*\mathrm{reg}}$, $\chi$ has a unique $W$-invariant extension to
$\mathfrak{t}^*$, which is defined on $w\mathfrak{c}$ by $\widetilde{\chi}=w^*(\chi)$, and extended by zero on
$\mathfrak{t}^*\backslash \mathfrak{t}^{*\mathrm{reg}}$. Then, by (\ref{EQ_Invariant_Functions2}),
$\widetilde{\chi}$ is of the form $\widetilde{\chi}=h\circ q$, for some $h\in C^{\infty}(\Delta)$. Differentiating in the
direction of $V$, we obtain that $d_{\xi}q(V)\neq 0$, and this finishes the proof.
\end{proof}

The polynomials $p_1,\ldots,p_l$ are not unique; a necessary and sufficient condition for a set of homogeneous
polynomials to be such a generating system is that their image in $I/I^2$ forms a basis, where $I\subset
P[\mathfrak{g}^*]^G$ denotes the ideal of polynomials vanishing at $0$. Since $I^2$ is $Out(\mathfrak{g})$
invariant, it is easy to see that we can choose $p_1,\ldots,p_l$ such that $p_1(\xi)=|\xi|^2$ and the linear span of
$p_2,\ldots, p_l$ is $Out(\mathfrak{g})$ invariant. This choice endows $\mathbb{R}^l$ with a linear action of
$Out(\mathfrak{g})$ for which $p$ is $Aut(\mathfrak{g})$ equivariant. Moreover, the action is trivial on the first
component and $\{0\}\times\mathbb{R}^{l-1}$ is an invariant subspace. The isomorphism $Aut(\Phi)/W\cong
Out(\mathfrak{g})$ from Lemma \ref{Lemma_3} shows that also $q$ is equivariant with respect to the actions of
$Aut(\Phi)$ and $Out(\mathfrak{g})$. Thus we have isomorphisms between the spaces
\[C^{\infty}(\mathfrak{g}^*)^G/Aut(\mathfrak{g})\cong C^{\infty}(\mathfrak{t}^*)^W/Aut(\Phi)\cong C^{\infty}(\Delta)/Out(\mathfrak{g}).\]

Notice that every Casimir $f$ on $\mathbb{S}(\mathfrak{g}^*)$ can be extended to a $G$-invariant smooth function on $\mathfrak{g}^*$, 
therefore
\[\mathfrak{Casim}(\mathbb{S}(\mathfrak{g}^*),\pi_{\mathbb{S}})\cong C^{\infty}(\mathfrak{g}^*)^G_{|\mathbb{S}(\mathfrak{g}^*)}.\]
Since $p_1(\xi)=|\xi|^2$, it follows that $p(\mathbb{S}(\mathfrak{g}^*))=\left(\{1\}\times
\mathbb{R}^{l-1}\right)\cap \Delta$. Denote
\[p':=(p_2,\ldots,p_l):\mathfrak{g}^*\rmap \mathbb{R}^{l-1} \ \ \textrm{and} \ \ \Delta':=p'(\mathbb{S}(\mathfrak{g}^*)).\]
We have that $C^{\infty}(\Delta')=C^{\infty}(\Delta)_{|\{1\}\times \Delta'}$. By Lemma \ref{Lemma_2}, we have that
$q':=p'_{|\mathbb{S}(\mathfrak{t}^*)}$ is a homeomorphism between $\mathbb{S}(\overline{\mathfrak{c}})\cong
\Delta'$, which restricts to a diffeomorphism between $\mathbb{S}(\mathfrak{c})\cong \mathrm{int}(\Delta')$. This
shows that $\Delta'=\overline{B}$, where $B$ is a bounded open, diffeomorphic to an open ball.

With these, we have the following description of the Casimirs.

\begin{corollary}
The polynomial map $p':\mathfrak{g}^*\to \mathbb{R}^{l-1}$ is equivariant with respect to the actions of
$Aut(\mathfrak{g})$ and $Out(\mathfrak{g})$, and $q':=p'_{|\mathfrak{t}^*}$ is equivariant with respect to the
actions of $Aut(\Phi)$ and $Out(\mathfrak{g})$. These maps induce isomorphisms between the spaces
\[\mathfrak{Casim}(\mathbb{S}(\mathfrak{g}^*),\pi_{\mathbb{S}})/Out(\mathfrak{g})\cong C^{\infty}(\mathbb{S}(\mathfrak{t}^*))^W/Out(\mathfrak{g})\cong C^{\infty}(\Delta')/Out(\mathfrak{g}),\]
and $Out(\mathfrak{g})$-equivariant homeomorphisms between the spaces
\[\mathbb{S}(\mathfrak{g}^*)/G\cong \mathbb{S}(\mathfrak{t}^*)/W\cong \Delta'.\]
\end{corollary}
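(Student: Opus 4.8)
The plan is to assemble the corollary from the facts established earlier in the section; the only genuine work is to keep track of the group actions and of the compatibility of Schwarz's isomorphisms with restriction to the unit sphere.

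First I would record the equivariance statements. By construction the generators $p_1,\ldots,p_l$ are chosen so that $p_1(\xi)=|\xi|^2$ and the span of $p_2,\ldots,p_l$ is $Out(\mathfrak{g})$-invariant; this is exactly what equips $\mathbb{R}^{l-1}$ with a linear $Out(\mathfrak{g})$-action for which $p'=(p_2,\ldots,p_l)$ is $Aut(\mathfrak{g})$-equivariant. The action descends from $Aut(\mathfrak{g})$ to $Out(\mathfrak{g})$ because inner automorphisms preserve every coadjoint orbit, hence every fibre of $p$, hence act trivially on $p'$; likewise $q'=p'_{|\mathfrak{t}^*}$ is $Aut(\Phi)$-equivariant (restricting the $Aut(\Phi)$-equivariance of $q$ recalled before the corollary), and $W$ acts trivially on the $W$-invariant polynomials, so via the isomorphism $Aut(\Phi)/W\cong Out(\mathfrak{g})$ of Lemma \ref{Lemma_3} one obtains $Out(\mathfrak{g})$-equivariance of $q'$. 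These facts make all the spaces and maps below $Out(\mathfrak{g})$-equivariant, so it suffices to produce the non-equivariant isomorphisms and then pass to quotients.

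Next I would treat the Casimir spaces. Every Casimir on $\mathbb{S}(\mathfrak{g}^*)$ extends to a $G$-invariant smooth function on $\mathfrak{g}^*$, so $\mathfrak{Casim}(\mathbb{S}(\mathfrak{g}^*),\pi_{\mathbb{S}})\cong C^{\infty}(\mathfrak{g}^*)^{G}_{|\mathbb{S}(\mathfrak{g}^*)}$; under the Schwarz isomorphism $C^{\infty}(\mathfrak{g}^*)^{G}\cong C^{\infty}(\Delta)$ given by $p^*$, restricting a $G$-invariant function to $\mathbb{S}(\mathfrak{g}^*)$ corresponds to restricting the associated function on $\Delta$ to $p(\mathbb{S}(\mathfrak{g}^*))=\{1\}\times\Delta'$, and $C^{\infty}(\Delta)_{|\{1\}\times\Delta'}=C^{\infty}(\Delta')$ as noted just above; hence $(p')^*$ induces $\mathfrak{Casim}(\mathbb{S}(\mathfrak{g}^*),\pi_{\mathbb{S}})\cong C^{\infty}(\Delta')$. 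The same argument on $\mathfrak{t}^*$ — using that a $W$-invariant smooth function on the $W$-invariant sphere $\mathbb{S}(\mathfrak{t}^*)$ extends $W$-invariantly (extend arbitrarily, then average over $W$), that $q(\mathbb{S}(\mathfrak{t}^*))=\{1\}\times\Delta'$ since every $G$-orbit meets $\mathfrak{t}^*$, and the Schwarz isomorphism $C^{\infty}(\mathfrak{t}^*)^{W}\cong C^{\infty}(\Delta)$ given by $q^*$ — yields $C^{\infty}(\mathbb{S}(\mathfrak{t}^*))^{W}\cong C^{\infty}(\Delta')$ via $(q')^*$. Passing to $Out(\mathfrak{g})$-quotients gives the stated chain of isomorphisms.

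For the orbit spaces I would restrict the homeomorphisms $\mathfrak{g}^*/G\cong\Delta\cong\mathfrak{t}^*/W$ of (\ref{EQ_Leaf_space_homeomorphisms}) to the saturated subsets $\mathbb{S}(\mathfrak{g}^*)$ and $\mathbb{S}(\mathfrak{t}^*)$: their images are $\{1\}\times\Delta'$, which is canonically homeomorphic to $\Delta'$, so $p'$ and $q'$ induce homeomorphisms $\mathbb{S}(\mathfrak{g}^*)/G\cong\Delta'\cong\mathbb{S}(\mathfrak{t}^*)/W$, compatible with the inclusion $\mathfrak{t}^*\hookrightarrow\mathfrak{g}^*$ because $p'_{|\mathfrak{t}^*}=q'$; equivariance of $p'$ and $q'$ makes these homeomorphisms $Out(\mathfrak{g})$-equivariant. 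The main (and quite mild) point requiring care — essentially the only obstacle — is verifying that restriction to the unit sphere is compatible with the Schwarz identifications, i.e.\ that $p(\mathbb{S}(\mathfrak{g}^*))=q(\mathbb{S}(\mathfrak{t}^*))=\{1\}\times\Delta'$ and that the restricted maps remain proper (hence closed), so that they descend to genuine homeomorphisms on quotients rather than merely continuous bijections; all of this is immediate from $p_1=|\cdot|^2$, the properness of $p$ and $q$, and the description of $\Delta'$ established immediately before the corollary.
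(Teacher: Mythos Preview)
Your proposal is correct and follows essentially the same approach as the paper: the corollary is presented there without a separate proof, as a summary of the discussion immediately preceding it, and you have accurately reconstructed and organized that argument (equivariance from the choice of generators, Schwarz's isomorphisms, restriction to the level set $p_1=1$, and the homeomorphisms (\ref{EQ_Leaf_space_homeomorphisms})). Your added care about properness to upgrade continuous bijections to homeomorphisms is a reasonable clarification of what the paper leaves implicit.
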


\section{The case of $\mathfrak{su}(3)$.}

In this section, we describe our result for the Lie algebra $\mathfrak{g}=\mathfrak{su}(3)$, whose 1-connected Lie
group is $G=\mathbf{SU}(3)$. Recall that
\[\mathfrak{su}(3)=\{A\in M_3(\mathbb{C})| A+A^*=0, \ tr(A)=0\}, \]
\[  \mathbf{SU}(3)=\{U\in M_3(\mathbb{C})| UU^*=I, \ det(U)=1\}.\]
We use the invariant inner product given by the negative of the trace form $(A,B):=-tr(AB)$. Let $\mathfrak{t}$ be the
space of diagonal matrices in $\mathfrak{su}(3)$
\[\mathfrak{t}:=\left\{D(ix_1,ix_2,ix_3):=\left(
                                               \begin{array}{ccc}
                                                 ix_1 & 0 & 0 \\
                                                 0 & ix_2 & 0 \\
                                                 0 & 0 & ix_3 \\
                                               \end{array}
                                             \right) | \ x_j\in\mathbb{R},\ \sum_j x_j=0 \right\}.\]
The corresponding maximal torus is
\[T:=\{D(e^{i\theta_1},e^{i\theta_2},e^{i\theta_3})\ |\ \theta_j\in\mathbb{R},\ \prod_j e^{i\theta_j}=1\}.\]
The Weyl group is $W=S_3$. It acts on $\mathfrak{t}$ as follows
\[\sigma D(ix_1,ix_2,ix_3)=D(ix_{\sigma(1)},ix_{\sigma(2)},ix_{\sigma(3)}), \ \sigma\in S_3.\]
The Dynkin diagram of $\mathfrak{su}(3)$ is $A_2$ (a graph with one edge), so its symmetry group is
$\mathbb{Z}_2$. A generator of $Out(\mathfrak{su}(3))$ is complex conjugation
\[\gamma\in Aut(\mathfrak{su}(3),\mathfrak{t}),\ \gamma(A)=\overline{A}.\]
On $\mathfrak{t}$, $\gamma$ acts by multiplication with $-1$.

Under the identification of $\mathfrak{t}\cong\mathfrak{t}^*$ given by the inner product, the invariant polynomials
$P[\mathfrak{t}]^{S_3}$ are generated by the symmetric polynomials
\[q_1(D(ix_1,ix_2,ix_3))=x_1^2+x_2^2+x_3^2, \ \ q_2(D(ix_1,ix_2,ix_3))=\sqrt{6}(x_1^3+x_2^3+x_3^3).\]
Identifying also $\mathfrak{su}(3)\cong\mathfrak{su}^*(3)$, $q_1$ and $q_2$ are the restriction to $\mathfrak{t}$ of
the invariant polynomials $p_1$, $p_2\in P[\mathfrak{su}^*(3)]^{\mathbf{SU}(3)}$ (which generate
$P[\mathfrak{su}^*(3)]^{\mathbf{SU}(3)}$)
\[ p_1(A)=-tr(A^2), \ \ \ p_2(A)=i\sqrt{6}tr(A^3).\]
Clearly $p_2\circ\gamma=-p_2$. The inner product on $\mathfrak{t}$ is
\[(D(ix_1,ix_2,ix_3),D(ix'_1,ix'_2,ix'_3))=x_1x_1'+x_2x_2'+x_3x_3',\]
and we have that $\mathbb{S}(\mathfrak{t}^*)\cong \mathbb{S}(\mathfrak{t})$ is a circle, isometrically
parameterized by
\[A(\theta):= \frac{cos(\theta)}{\sqrt{2}} D\left(i,-i,0\right)+\frac{sin(\theta)}{\sqrt{6}}D\left(i,i,-2i\right), \  \theta\in [0,2\pi].\]
In polar coordinates on $\mathfrak{t}$, the polynomials $q_1$ and $q_2$ become
\[q_1(rA(\theta))=r^2, \ \ q_2(rA(\theta))=r^3 sin(3\theta).\]
This implies that the space $\Delta$ is given by
\[\Delta=\{(r^2, r^3 sin(3\theta)) | r\geq 0, \ \theta\in [0,2\pi] \}=\{(x,y)\in \mathbb{R}^2 | x^3\geq y^2\}.\]
The map $q:=(q_1,q_2):\mathfrak{t}\to \mathbb{R}^2$, restricted to the open Weyl chamber
\[\mathfrak{c}:=\{rA(\theta)| r>0, \ \theta\in (-\pi/6,\pi/6 )\},\]
is a diffeomorphism onto $\mathrm{int}(\Delta)$. The linear action of $\mathbb{Z}_2=Out(\mathfrak{su}(3))$ on
$\mathbb{R}^2$, for which $q$ is equivariant, is multiplication by $-1$ on the second component. Therefore
$q':=q_2$ is a $\mathbb{Z}_2$-equivariant homeomorphism between
\[q':\mathbb{S}(\overline{\mathfrak{c}})=\{A(\theta) | \theta\in [-\pi/6,\pi/6]\}\diffto \Delta':=[-1,1],\]
which restricts to a diffeomorphism between the interiors.

We conclude that the Poisson moduli space of the 7-dimensional sphere $\mathbb{S}(\mathfrak{su}(3)^*)$ is
parameterized around $\pi_{\mathbb{S}}$ by the space
\[C^{\infty}([-1,1])/\mathbb{Z}_2,\]
where $\mathbb{Z}_2$ acts on $C^{\infty}([-1,1])$ by the involution
\[\gamma(f)(x)=f(-x), \ \ f\in C^{\infty}([-1,1]).\]

\clearpage \pagestyle{plain}

\appendix

\chapter{The Tame Vanishing Lemma, and Rigidity of Foliations, an unpublished paper of Richard S.
Hamilton}\label{ChFoli}

\pagestyle{fancy}

\fancyhead[CE]{Appendix} 
\fancyhead[CO]{The Tame Vanishing Lemma}

In the first part of this appendix we prove the Tame Vanishing Lemma, an existence result for tame homotopy
operators on the complex computing Lie algebroid cohomology with coefficients. We have used this result in chapter
\ref{ChRigidity} for the Poisson complex. In the second part we revisit a theorem of Richard S. Hamilton \cite{Ham3}
on rigidity of foliations, and we show that the Tame Vanishing Lemma implies ``tame infinitesimal rigidity'', which is a
crucial step in the proof of this result.

\section{The Tame Vanishing Lemma}\label{section_homotopy_operators}

\subsection{The weak $C^{\infty}$-topology}\label{subsection_weak_topology}

The compact-open $C^k$-topology, defined in section \ref{introduction_rigi}, can be generated by a family of
semi-norms, and we recall here a construction of such semi-norms, generalizing the construction from section
\ref{section_technical_rigidity}. These semi-norms will be used to express the tameness property of the homotopy
operators.

Let $W\to M$ be a vector bundle. Consider $\mathcal{U}:=\{U_i\}_{i\in I}$ a locally finite open cover of $M$ by
relatively compact domains of coordinate charts $\{\chi_i:U_i\diffto \mathbb{R}^m\}_{i\in I}$ and choose
trivializations for $W_{|U_i}$. Let $\mathcal{O}:=\{O_i\}_{i\in I}$ be a second open cover, with $\overline{O}_i$
compact and $\overline{O}_i\subset U_i$. A section $\sigma\in\Gamma(W)$ can be represented in these charts by a
family of smooth functions $\{\sigma_i:\mathbb{R}^m\to \mathbb{R}^k\}_{i\in I}$, where $k$ is the rank of $W$. For
$U\subset M$, an open set with compact closure, we have that $\overline{U}$ intersects only a finite number of the
coordinate charts $U_i$. Denote the set of such indexes by $I_{U}\subset I$. Define the $n$-th norm of $\sigma$ on
$U$ by
\[\|\sigma\|_{n,\overline{U}}:=\sup \left\{|\frac{\partial^{|\alpha|}\sigma_{i}}{\partial x^{\alpha}}(x)|: |\alpha|\leq n,\   x\in \chi_i(U\cap O_i),\  i\in I_U\right\}.\]

For a fixed $n$, the family of semi-norms $\|\cdot\|_{n,\overline{U}}$, with $U$ a relatively compact open in $M$,
generate the \textbf{compact-open}\index{compact-open} $C^{n}$-\textbf{topology} on $\Gamma(W)$ discussed in
section \ref{introduction_rigi}. The union of all these topologies, for $n\geq 0$, is called the \textbf{weak}
$C^{\infty}$-\textbf{topology} on $\Gamma(W)$. Observe that the semi-norms $\{\|\cdot\|_{n,\overline{U}}\}_{n\geq
0}$ induce norms on $\Gamma(W_{|\overline{U}})$.\index{C$^n$-norms}

\subsection{The statement of the Tame Vanishing Lemma}

We use the notations from subsection \ref{subsection_Lie_algebroid_cohomology} Lie algebroid cohomology with
coefficients. The main result of the appendix is:\index{cohomology, Lie algebroid}
\begin{tvL}\label{Theorem_tame_vanishing}
Let ${\mathcal{G}}\rightrightarrows M$ be a Hausdorff Lie groupoid with Lie algebroid $A$ and let $V$ be a
representation of $\mathcal{G}$. If the $s$-fibers of ${\mathcal{G}}$ are compact and their de Rham cohomology
vanishes in degree $p$, then \[H^p(A,V)=0.\] Moreover, there exist linear homotopy operators
\[\Omega^{p-1}(A,V) \stackrel{h_1}{\longleftarrow}\Omega^{p}(A,V)\stackrel{h_2}{\longleftarrow}\Omega^{p+1}(A,V), \]
\[d_{\nabla}h_1+h_2d_{\nabla}=\mathrm{Id},\]
which satisfy\index{homotopy operators}
\begin{enumerate}[(1)]
\item invariant locality: for every orbit $O$ of $A$, they induce linear maps
\[\Omega^{p-1}(A_{|O},V_{|O}) \stackrel{h_{1,O}}{\longleftarrow}\Omega^{p}(A_{|O},V_{|O})\stackrel{h_{2,O}}{\longleftarrow}\Omega^{p+1}(A_{|O},V_{|O}),\]
such that for all $\omega\in \Omega^{p}(A,V)$, $\eta\in \Omega^{p+1}(A,V)$, we have that
\[h_{1,O}(\omega_{|O})=(h_1\omega)_{|O},\  \ h_{2,O}(\eta_{|O})=(h_2 \eta)_{|O},\]
\item tameness: for every invariant open $U\subset M$, with $\overline{U}$ compact, there are constants
$C_{n,U}>0$, such that
\[\|h_1(\omega)\|_{n,\overline{U}}\leq C_{n,U} \|\omega \|_{n+s,\overline{U}},\ \ \|h_2(\eta)\|_{n,\overline{U}}\leq C_{n,U}  \|\eta \|_{n+s,\overline{U}},\]
for all $\omega\in\Omega^{p}(A_{|\overline{U}},V_{|\overline{U}})$ and
$\eta\in\Omega^{p+1}(A_{|\overline{U}},V_{|\overline{U}})$, where \[s=\lfloor\frac{1}{2}rank(A)\rfloor+1.\]
\end{enumerate}
\end{tvL}
We also note the following consequences of the proof:
\begin{corollary}\label{corollary_unu}
The constants $C_{n,U}$ can be chosen such that they are uniform over all invariant open subsets of $U$. More
precisely: if $V\subset U$ is a second invariant open, then one can choose $C_{n,V}:=C_{n,U}$, assuming that the
norms on $\overline{U}$ and $\overline{V}$ are computed using the same charts and trivializations. 
\end{corollary}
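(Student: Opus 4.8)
The statement to be proven — Corollary \ref{corollary_unu} — asserts that the tameness constants in the Tame Vanishing Lemma can be taken independent of the invariant open set, depending only on the ambient compact invariant set $U$ (and the chosen charts/trivializations). The plan is to trace through the construction of the homotopy operators $h_1,h_2$ from the proof of the Tame Vanishing Lemma and observe that at no point is the choice of the smaller invariant open $V\subset U$ used in a way that affects the constants. Concretely, the operators $h_1,h_2$ are built globally on $A$ from three ingredients: (i) the chain isomorphism $J$ (and its one-sided inverse $P$) between $\Omega^{\bullet}(A,V)$ and the invariant $s$-foliated forms on $\mathcal{G}$ with values in $s^*(V)$, as in Lemma \ref{Lemma_forms_on_A_invariant_forms}; (ii) fibrewise Hodge-theoretic homotopy operators $H_1,H_2$ for the de Rham complex along the compact $s$-fibers, of the form $\delta\circ\Delta^{-1}$ and $\Delta^{-1}\circ\delta$ for a suitable fibrewise metric, exactly as in the fixed-point case of Section \ref{Section_Conn}; and (iii) the identity $h_i = P\circ H_i\circ J$. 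The restriction-to-an-orbit property (invariant locality) is automatic because $J$, $P$ and the fibrewise $H_i$ all commute with restriction to the saturated subset $s^{-1}(O)$.

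The key point is that the tameness inequalities
\[
\|h_1(\omega)\|_{n,\overline{U}}\leq C_{n,U}\|\omega\|_{n+s,\overline{U}},\qquad \|h_2(\eta)\|_{n,\overline{U}}\leq C_{n,U}\|\eta\|_{n+s,\overline{U}}
\]
are proven by first estimating $H_1,H_2$ in Sobolev norms along the $s$-fibers over points of $\overline{U}$ (where $\Delta^{-1}$ is bounded with a loss of two derivatives, and $\delta$ is first order, by classical Hodge theory applied fibrewise; cf. (\ref{EQ_Delta_inverse_tame}) and (\ref{EQ_fixed_point_tame_homotopy})), then invoking the Sobolev embedding theorem to pass to $C^n$-norms with loss $s=\lfloor\tfrac12\mathrm{rank}(A)\rfloor+1$, and finally using that $J$ and $P$ are tame of degree $0$ over any compact piece of the base (this is the analogue of Lemma \ref{Lemma_pullback_tame}). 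Each of these three estimates involves only: the geometry of $\mathcal{G}$ over the \emph{compact} set $\overline{U}$, the fixed fibrewise metric, and the fixed charts/trivializations used to define $\|\cdot\|_{n,\overline{U}}$. None of them sees the smaller invariant open $V$. Therefore, once $C_{n,U}$ has been fixed for $U$, the very same inequalities hold verbatim when all norms $\|\cdot\|_{n,\overline{U}}$ are replaced by $\|\cdot\|_{n,\overline{V}}$, provided — as in the statement — that the norms on $\overline{V}$ are computed with the same charts and trivializations: indeed $\|\cdot\|_{n,\overline{V}}\leq\|\cdot\|_{n,\overline{U}}$ on $\Gamma(W_{|\overline U})$, and the supremum defining the left-hand side over $\chi_i(V\cap O_i)$ is over a subset of the one over $\chi_i(U\cap O_i)$, while $h_1,h_2$ commute with restriction from $\overline U$ to $\overline V$ by invariant locality. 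So we may simply set $C_{n,V}:=C_{n,U}$.

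I would carry out the proof in the following order. First I would restate precisely how $h_1,h_2$ are constructed in the Tame Vanishing Lemma — namely $h_i=P\circ H_i\circ J$ with $H_i$ the fibrewise Hodge homotopy operators — and record that this construction is global and independent of any choice of invariant open. Second, I would note that invariant locality for $h_1,h_2$ is inherited from the corresponding property of $J$, $P$, and $H_i$, since restriction of forms on $A$ to an orbit corresponds, under $J$, to restriction of $s$-foliated forms on $\mathcal G$ to $s^{-1}(O)$, on which $\Delta$ and hence $H_i$ restrict. Third, I would point out that the chain of estimates producing $C_{n,U}$ uses only data attached to $\overline U$ and to the fixed charts/trivializations, so the constant is intrinsic to $(U,\text{charts})$ and not to any refinement; then the monotonicity $\|\cdot\|_{n,\overline V}\le\|\cdot\|_{n,\overline U}$ together with the commutation of $h_i$ with restriction yields the claim with $C_{n,V}=C_{n,U}$. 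The main obstacle — really the only subtle point — is making sure the norms on $\overline V$ are genuinely computed with \emph{the same} atlas and trivializations as those on $\overline U$; this is exactly the hypothesis in the statement, and without it the constants would have to absorb the transition functions between two different atlases, which is why the corollary is phrased with that proviso. Everything else is bookkeeping.
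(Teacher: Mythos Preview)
Your overall approach matches the paper's: the corollary is deduced by tracing monotonicity of the constants through the three pieces $J$, $H_i$, $P$ of the decomposition $h_i=P\circ H_i\circ J$. The paper's one-line proof simply cites Lemma~\ref{Lemma_pullback_tame}(a) for $J$ and $P$ and Lemma~\ref{Lemma_fiberwise_Green}(c) for $H_i$, each of which states explicitly that the tameness constants are decreasing under inclusion $V\subset U$.

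There is, however, a genuine wrinkle in your final justification. You argue that once the inequality $\|h_1(\omega)\|_{n,\overline U}\le C_{n,U}\|\omega\|_{n+s,\overline U}$ is established, the inequality over $\overline V$ follows from the norm monotonicity $\|\cdot\|_{n,\overline V}\le\|\cdot\|_{n,\overline U}$ together with the fact that $h_i$ commutes with restriction. But this reasoning only applies to forms $\omega$ defined on all of $\overline U$: given $\omega\in\Omega^p(A_{|\overline V},V_{|\overline V})$, there is no a priori extension $\widetilde\omega$ to $\overline U$ with $\|\widetilde\omega\|_{n+s,\overline U}$ controlled by $\|\omega\|_{n+s,\overline V}$, so you cannot pass through the $\overline U$-estimate. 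The correct mechanism---which you in fact state one sentence earlier (``the chain of estimates producing $C_{n,U}$ uses only data attached to $\overline U$'')---is that each of the three intermediate inequalities (for $J$, for $H_i$, and for $P$) is proven over $\overline V$ \emph{directly}, with constants that are suprema of local quantities over $\overline U$ (operator norms of $\Delta_x^{-1}$, $C^n$-norms of the bundle maps underlying $J$ and $P$, etc.), and these suprema do not increase upon passing to the subset $\overline V$. This is exactly what Lemmas~\ref{Lemma_pullback_tame}(a) and~\ref{Lemma_fiberwise_Green}(c) record. Your correct observation should be carried through piece by piece rather than bypassed by the norm-monotonicity shortcut.
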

\begin{corollary}\label{corollary_unu_prim}
The homotopy operators preserve the order of vanishing around orbits. More precisely: if $O$ is an orbit of $A$ and
$\omega\in \Omega^{p}(A,V)$ is a form such that $j^k_{|O}\omega=0$, then $j^k_{|O}h_1(\omega)=0$; and
similarly for $h_2$.
\end{corollary}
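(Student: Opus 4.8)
The plan is to read off the claim directly from the structure of the homotopy operators produced in the proof of the Tame Vanishing Lemma. As in the fixed‑point case (Lemma \ref{Lemma_fixed_point_homotopy_ops}), these operators factor as $h_i = P\circ H_i\circ J$, where $J$ is the invariant‑extension chain map (\ref{EQ_definition_of_J}) of Lemma \ref{Lemma_forms_on_A_invariant_forms}, $H_1=\delta\circ\Delta^{-1}$ and $H_2=\Delta^{-1}\circ\delta$ are the Hodge homotopy operators acting fibrewise along the $s$-fibers for an invariant fibre metric, and $P$ (\ref{EQ_definition_of_P}) is restriction to the unit section. First I would recast the hypothesis: writing $\mathcal{I}_O\subset C^{\infty}(M)$ for the vanishing ideal of $O$, the condition $j^k_{|O}\omega=0$ means $\omega\in \mathcal{I}_O^{\,k+1}\,\Omega^{p}(A,V)$, and the conclusion is that $h_1(\omega)\in\mathcal{I}_O^{\,k+1}\,\Omega^{p-1}(A,V)$. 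The whole corollary thus reduces to showing that $J$, $H_i$ and $P$ each respect this ideal. I would work in a neighbourhood of $O$ where the orbit is embedded, and (as is arranged throughout the applications) assume $\mathcal{G}$ has connected $s$-fibres, so that $\mathcal{G}$-orbits coincide with $A$-orbits; this gives the crucial equality $s^{-1}(O)=t^{-1}(O)=\mathcal{G}_{|O}$, a subgroupoid whose vanishing ideal I denote $I_{\mathcal{G}_{|O}}$.

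The step for $J$ is purely formal: since $J(\eta)_g=(r_{g^{-1}}^{*}\otimes g^{-1})(\eta_{t(g)})$ is a smooth bundle isomorphism applied to the value of $\eta$ at $t(g)$, the section $g\mapsto \omega_{t(g)}$ vanishes to order $k$ along $t^{-1}(O)$, i.e.\ lies in $(t^{*}\mathcal{I}_O)^{k+1}$, and applying a smooth bundle map preserves membership in that ideal. Because $s,t$ are submersions and $s^{-1}(O)=t^{-1}(O)=\mathcal{G}_{|O}$, the functions $v_i\circ t$ and $v_i\circ s$ (for transverse defining functions $v_i$ of $O$) are both transverse defining systems for $\mathcal{G}_{|O}$, so $t^{*}\mathcal{I}_O$ and $s^{*}\mathcal{I}_O$ generate one and the same ideal $I_{\mathcal{G}_{|O}}$. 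Hence $J(\omega)\in (s^{*}\mathcal{I}_O)^{k+1}=I_{\mathcal{G}_{|O}}^{\,k+1}$.

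Next I would treat $H_i$, which is where the essential point lies. The operators $H_i$ are local along the $s$-fibres: they are built from the fibrewise codifferential $\delta$ and the fibrewise inverse Laplacian $\Delta^{-1}$ (well defined in the relevant degree by vanishing of $H^{p}$ of the compact fibres). A function of the form $s^{*}\bar f$ is constant on each $s$-fibre, so multiplication by it commutes with $\delta$ and with $\Delta$, and therefore with $\Delta^{-1}$; consequently $H_i$ commutes with multiplication by $s^{*}C^{\infty}(M)$. Writing an element of $I_{\mathcal{G}_{|O}}^{\,k+1}=(s^{*}\mathcal{I}_O)^{k+1}$ as a sum of terms $(v_{i_1}\circ s)\cdots(v_{i_{k+1}}\circ s)\,\eta'$ and pulling the $s^{*}$-factors outside $H_i$ shows $H_i\!\left(I_{\mathcal{G}_{|O}}^{\,k+1}\right)\subset I_{\mathcal{G}_{|O}}^{\,k+1}$, hence $H_i(J\omega)\in I_{\mathcal{G}_{|O}}^{\,k+1}$.

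Finally, for $P$: since it is restriction along the unit embedding $u:M\rmap\mathcal{G}$ followed by the identification $T^{s}\mathcal{G}_{|u(x)}\cong A_x$, and $s\circ u=\mathrm{Id}_M$, one has $u^{*}(s^{*}\mathcal{I}_O)=\mathcal{I}_O$ and therefore $u^{*}\!\left(I_{\mathcal{G}_{|O}}^{\,k+1}\right)\subset\mathcal{I}_O^{\,k+1}$; this yields $h_1(\omega)=P(H_1J\omega)\in\mathcal{I}_O^{\,k+1}\Omega^{p-1}(A,V)$, i.e.\ $j^{k}_{|O}h_1(\omega)=0$, and the argument for $h_2$ is word for word the same with $H_2=\Delta^{-1}\circ\delta$. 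I expect the main obstacle to be the two geometric matchings underlying the formal ideal bookkeeping: verifying $s^{-1}(O)=t^{-1}(O)$ (so that the $t$-direction in which $J\omega$ vanishes coincides with the $s$-direction along which the Hodge operators are local), which is exactly where $s$-connectedness enters, and confirming that $\Delta^{-1}$ is genuinely fibrewise and hence commutes with multiplication by $s^{*}C^{\infty}(M)$. Both are clarifications of the construction rather than new estimates, so no tameness input is needed here.
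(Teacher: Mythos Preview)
Your proof is correct and follows essentially the same approach as the paper: factor $h_i=P\circ H_i\circ J$, propagate the vanishing condition through $J$ via $t^{-1}(O)=\mathcal{G}_{|O}$, use that $H_i$ is $s^*C^{\infty}(M)$-linear (the paper simply says ``$H_1$ is $C^{\infty}(M)$ linear''), and then pull back along $u$ via $P$. The paper packages the $J$ and $P$ steps into Lemma~\ref{Lemma_pullback_tame}(b), whereas you unwind them by hand and are more explicit about why the $s$- and $t$-ideals of $\mathcal{G}_{|O}$ coincide and why $\Delta^{-1}$ commutes with $s^*C^{\infty}(M)$; but the underlying argument is the same.
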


\subsection{The de Rham complex of a fiber bundle}\label{subsection_de Rham_complex}

To prove the Tame Vanishing Lemma, we first construct tame homotopy operators for the foliated de Rham complex of
a fiber bundle. For this, we use a result on the family of inverses of elliptic operators (Proposition
\ref{Theorem_family_of_operators}), which we prove at the end of the section.

Let $\pi:\mathcal{B}\to M$ be a locally trivial fiber bundle whose fibers $\mathcal{B}_x:=\pi^{-1}(x)$ are
diffeomorphic to a compact, connected manifold $F$ and let $V\to M$ be a vector bundle. The space of vertical vectors
on $\mathcal{B}$ will be denoted by $T^{\pi}\mathcal{B}$ and the space of foliated forms with values in $\pi^*(V)$
by $\Omega^{\bullet}(T^{\pi}\mathcal{B},\pi^*(V))$. An element
$\omega\in\Omega^{\bullet}(T^{\pi}\mathcal{B},\pi^*(V))$ is a smooth family of forms on the fibers of $\pi$ with
values in $V$
\[\omega=\{\omega_{x}\}_{x\in M}, \ \ \ \ \ \omega_x\in \Omega^{\bullet}(\mathcal{B}_x,V_x).\]
The fiberwise exterior derivative induces the differential
\[d\otimes I_V:\Omega^{\bullet}(T^{\pi}\mathcal{B},\pi^*(V))\rmap \Omega^{\bullet+1}(T^{\pi}\mathcal{B},\pi^*(V)), \]
\[d\otimes I_V(\omega)_x:=(d\otimes I_{V_x})(\omega_x),\  x\in M.\]

We construct the homotopy operators using Hodge theory. Let $m$ be a metric on $T^{\pi}{\mathcal{B}}$, or
equivalently a smooth family of Riemannian metrics $\{m_x\}_{x\in M}$ on the fibers of $\pi$. Integration against the
volume density gives an inner product on $\Omega^{\bullet}(\mathcal{B}_x)$
\[(\eta,\theta):=\int_{\mathcal{B}_x}m_x(\eta,\theta)|dVol(m_x)|,\ \eta,\theta\in\Omega^q(\mathcal{B}_x).\]
Let $\delta_x$ denote the formal adjoint of $d$ with respect to this inner product
\[\delta_x:\Omega^{\bullet+1}({\mathcal{B}}_x)\rmap \Omega^{\bullet}({\mathcal{B}}_x),\]
i.e.\ $\delta_x$ is the unique linear first order differential operator satisfying
\[(d\eta,\theta)=(\eta,\delta_x\theta),\ \ \forall\ \eta\in\Omega^{\bullet}(\mathcal{B}_x), \theta\in\Omega^{{\bullet}+1}(\mathcal{B}_x).\]
The Laplace-Beltrami operator associated to $m_x$ will be denoted by\index{Laplace-Beltrami operator}
\[\Delta_x:\Omega^{\bullet}({\mathcal{B}}_x)\rmap \Omega^{\bullet}({\mathcal{B}}_x),\ \ \Delta_x:=d\delta_x+\delta_x d.\]
Both these operators induce linear differential operators on $\Omega^{\bullet}(T^{\pi}\mathcal{B},\pi^{*}(V))$
\[\delta\otimes I_V:\Omega^{\bullet+1}(T^{\pi}\mathcal{B},\pi^{*}(V))\to \Omega^{\bullet}(T^{\pi}\mathcal{B},\pi^{*}(V)),\ \ \ \delta\otimes I_V(\omega)_x:=(\delta_x\otimes I_{V_x})(\omega_x),\]
\[\Delta\otimes I_V:\Omega^{\bullet}(T^{\pi}\mathcal{B},\pi^{*}(V))\to \Omega^{\bullet}(T^{\pi}\mathcal{B},\pi^{*}(V)),\ \ \ \Delta\otimes I_V(\omega)_x:=(\Delta_x\otimes I_{V_x})(\omega_x).\]

By the Hodge theorem (see e.g.\ \cite{Warner}), if the fiber $F$ of $\mathcal{B}$ has vanishing de Rham cohomology
in degree $p$, then $\Delta_x$ is invertible in degree $p$.
\begin{lemma}\label{Lemma_fiberwise_Green}
If $H^p(F)=0$ then the following hold:
\begin{enumerate}[(a)]
\item $\Delta\otimes I_V$ is invertible in degree $p$ and its inverse is given by
\begin{align*}
G\otimes I_V :\Omega^{p}(T^{\pi}{\mathcal{B}},\pi^{*}(V))\rmap \Omega^{p}(T^{\pi}{\mathcal{B}},\pi^{*}(V)),\\
(G\otimes I_V)(\omega)_x:=(\Delta_x^{-1}\otimes I_{V_x})(\omega_x), \  x\in M;
\end{align*}
\item the maps $H_1:=(\delta\otimes I_V)\circ (G\otimes I_V)$ and $H_2:=(G\otimes I_V)\circ (\delta\otimes I_V)$
\[\Omega^{p-1}(T^{\pi}\mathcal{B},\pi^{*}(V))\stackrel{H_1}{\longleftarrow}\Omega^{p}(T^{\pi}\mathcal{B},\pi^{*}(V))\stackrel{H_2}{\longleftarrow}\Omega^{p+1}(T^{\pi}\mathcal{B},\pi^{*}(V))\]
are linear homotopy operators in degree $p$;\index{homotopy operators}
\item $H_1$  and $H_2$ satisfy the following local-tameness property: for every relatively compact open $U\subset M$, there are
constants $C_{n,U}>0$ such that
\[\|H_1(\eta) \|_{n,\mathcal{B}_{|\overline{U}}}\leq C_{n,U} \|\eta \|_{n+s,\mathcal{B}_{|\overline{U}}},\ \forall\ \eta\in\Omega^{p}(T^{\pi}{\mathcal{B}_{|\overline{U}}},\pi^{*}(V_{|\overline{U}})),\]
\[\|H_2(\omega) \|_{n,\mathcal{B}_{|\overline{U}}}\leq C_{n,U} \|\omega \|_{n+s,\mathcal{B}_{|\overline{U}}},\ \forall\ \omega\in\Omega^{p+1}(T^{\pi}{\mathcal{B}_{|\overline{U}}},\pi^{*}(V_{|\overline{U}})).\]
where $s=\lfloor\frac{1}{2}\mathrm{dim}(F)\rfloor+1$.

Moreover, if $V\subset U$, then one can take $C_{n,V} := C_{n,U}$.
\end{enumerate}
\end{lemma}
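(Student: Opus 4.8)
The plan is to reduce everything to classical Hodge theory on the compact fibers, carried out fiberwise and then glued over the base using the family-of-inverses statement (Proposition \ref{Theorem_family_of_operators}) proved at the end of the section. First I would record a preliminary remark: since $\pi^*(V)$ is pulled back from $M$, its transition functions are constant along each fiber $\mathcal{B}_x$, so the fiberwise operators $d_x,\delta_x,\Delta_x$ commute with them; hence $d\otimes I_V$, $\delta\otimes I_V$ and $\Delta\otimes I_V$ are well defined independently of the chosen local frame of $V$, and in any such frame they act as the usual form-operators tensored with the identity. For part (a): in a local frame the kernel of $\Delta_x\otimes I_{V_x}$ on $\Omega^p(\mathcal{B}_x,V_x)$ is a direct sum of copies of the space of harmonic $p$-forms on $\mathcal{B}_x$, which is isomorphic to $H^p(\mathcal{B}_x)\cong H^p(F)=0$; as $\Delta_x$ is a self-adjoint elliptic operator, vanishing of its kernel in degree $p$ forces it to be invertible on $\Omega^p(\mathcal{B}_x,V_x)$, with inverse the Green operator. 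Proposition \ref{Theorem_family_of_operators}, applied to the smooth, fiberwise-invertible family $\{\Delta_x\otimes I_{V_x}\}_{x\in M}$, then says that $G\otimes I_V$, defined fiberwise by $x\mapsto\Delta_x^{-1}\otimes I_{V_x}$, carries smooth sections to smooth sections, so it is a genuine two-sided inverse of $\Delta\otimes I_V$ in degree $p$.

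For part (b), I would run the standard Hodge argument in each fiber. On all degrees $\Delta_x$ commutes with both $d_x$ and $\delta_x$, hence with $d_x\delta_x$ and $\delta_x d_x$; multiplying the relations $\Delta_x(d_x\delta_x)=(d_x\delta_x)\Delta_x$ and $\Delta_x(\delta_x d_x)=(\delta_x d_x)\Delta_x$ on both sides by $G_x=\Delta_x^{-1}$ on $\Omega^p$ shows that $G_x$ commutes with $d_x\delta_x$ and $\delta_x d_x$ there. Tensoring with $I_{V_x}$ and summing, one obtains on $\Omega^{p}(T^{\pi}\mathcal{B},\pi^*(V))$
\[
(d\otimes I_V)\circ H_1 + H_2\circ(d\otimes I_V) = \bigl((d\delta+\delta d)\otimes I_V\bigr)\circ(G\otimes I_V) = (\Delta\otimes I_V)\circ(G\otimes I_V) = \mathrm{Id},
\]
which is precisely the homotopy identity in degree $p$ for the complex $(\Omega^{\bullet}(T^{\pi}\mathcal{B},\pi^*(V)),d\otimes I_V)$; in particular this complex has vanishing cohomology in degree $p$.

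For part (c) I would extract from Proposition \ref{Theorem_family_of_operators} the quantitative statement: over a relatively compact open $U\subset M$ the family $\{G_x\otimes I_{V_x}\}_{x\in\overline U}$ gains two fiberwise Sobolev derivatives with a constant uniform in $x\in\overline U$, and, because the family depends smoothly on $x$, differentiating $(G\otimes I_V)$ in base directions brings in only derivatives of the symbol that are bounded over $\overline U$, so the two-derivative gain persists for the mixed norms $\|\cdot\|_{n,\mathcal{B}_{|\overline U}}$. Since $\delta\otimes I_V$ is first order, $H_1=(\delta\otimes I_V)(G\otimes I_V)$ and $H_2=(G\otimes I_V)(\delta\otimes I_V)$ have a net gain of one derivative; passing through the fiberwise Sobolev embedding $H^{n+\sigma}\hookrightarrow C^n$, which costs $\sigma:=\lfloor\tfrac12\dim F\rfloor+1$ derivatives, gives $\|H_i(\cdot)\|_{n,\mathcal{B}_{|\overline U}}\le C_{n,U}\|\cdot\|_{n+\sigma-1,\mathcal{B}_{|\overline U}}\le C_{n,U}\|\cdot\|_{n+s,\mathcal{B}_{|\overline U}}$ with $s=\sigma$. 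Finally, $(H_i\omega)_x$ depends only on $\omega_x$, so $H_i$ is local along the base; restricting to $\mathcal{B}_{|\overline V}$ for an open $V\subset U$ leaves the operator unchanged and only shrinks the sup defining the $C^n$-norms, so the same constant works, $C_{n,V}:=C_{n,U}$.

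The main obstacle is precisely the family-of-inverses input: that the pointwise Green operators $G_x$ assemble into an operator on smooth sections over $\mathcal{B}$ satisfying elliptic estimates with a two-derivative gain and controlled dependence on the base parameter. This is exactly Proposition \ref{Theorem_family_of_operators}, established separately at the end of the section; granting it, what remains above is the essentially routine bookkeeping of translating the fiberwise Sobolev gain into the stated $C^n$-loss of $s=\lfloor\tfrac12\dim F\rfloor+1$ and of checking that the constants are compatible with restriction to smaller invariant opens.
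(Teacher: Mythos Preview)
Your proposal is correct and follows essentially the same route as the paper: invoke Proposition~\ref{Theorem_family_of_operators} for the smoothness and tameness of the fiberwise Green operator, check the homotopy identity via the commutation of $\Delta$ with $d\delta$ (and $\delta d$) on $\Omega^p$, and then compose with the first-order operator $\delta$ to obtain the stated loss of $s$ derivatives. One small remark on your derivative bookkeeping in (c): Proposition~\ref{Theorem_family_of_operators} already incorporates the Sobolev embedding and yields $\|G\otimes I_V(\eta)\|_n\le D_{n,U}\|\eta\|_{n+s-1}$ in $C^n$-norms; composing with the first-order $\delta$ then gives the loss $n+s$, so your intermediate claim of $n+\sigma-1$ for $H_i$ is slightly optimistic, but your final bound $n+s$ is the correct one and matches the lemma.
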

\begin{proof}
In a trivialization chart the operator $\Delta\otimes I_V$ is given by a smooth family of Laplace-Beltrami operators:
\[\Delta_x:\Omega^p(F)^k\rmap \Omega^p(F)^k,\]
where $k$ is the rank of $V$. These operators are elliptic and invertible, therefore, by Proposition
\ref{Theorem_family_of_operators}, $\Delta_x^{-1}(\omega_x)$ is smooth in $x$, for every smooth family $\omega_x\in
\Omega^p(F)^k$. This shows that $G\otimes I_V$ maps smooth forms to smooth forms. Clearly $G\otimes I_V$ is the
inverse of $\Delta\otimes I_V$, so we have proven (a).

For part (c), let $U\subset M$ be a relatively compact open. Applying part (2) of Proposition
\ref{Theorem_family_of_operators} to a family of coordinate charts which cover $\overline{U}$, we find constants
$D_{n,U}$ such that
\[\|G\otimes I_V(\eta) \|_{n,\mathcal{B}_{|\overline{U}}}\leq D_{n,U} \|\eta \|_{n+s-1,\mathcal{B}_{|\overline{U}}},\ \forall\ \eta\in\Omega^{p}(T^{\pi}{\mathcal{B}_{|\overline{U}}},\pi^{*}(V_{|\overline{U}})).\]
Moreover, the constants may be assumed to be decreasing in $U$. Since $H_1$ and $H_2$ are defined as the
composition of $G\otimes I_V$ with a linear differential operator of degree one, it follows that we can also find
constants $C_{n,U}$ such that the inequalities form (c) are satisfied, and which are also decreasing in $U$.

For part (b), using that $\delta_x^2=0$, we obtain that $\Delta_x$ commutes with $d \delta_x$
\[\Delta_x d\delta_x=(d\delta_x+\delta_x d)d\delta_x=d\delta_x d\delta_x+ \delta_x d^2\delta_x=d\delta_x d\delta_x,\]
\[d\delta_x\Delta_x=d\delta_x(d\delta_x+\delta_x d)=d\delta_x d\delta_x+d\delta_x^2 d=d\delta_x d\delta_x.\]
This implies that $\Delta\otimes I_V$ commutes with $(d\otimes I_V)(\delta\otimes I_V)$, and thus $G\otimes I_V$
commutes with $(d\otimes I_V)( \delta\otimes I_V)$. Using this, we obtain that $H_1$ and $H_2$ are homotopy
operators
\begin{align*}
I=&(G\otimes I_V)(\Delta\otimes I_V)=(G\otimes I_V)((d\otimes I_V)(\delta\otimes I_V)+(\delta\otimes I_V)( d\otimes I_V))=\\
&=(d\otimes I_V)(\delta\otimes I_V)(G\otimes I_V)+(G\otimes I_V)(\delta\otimes I_V)( d\otimes I_V)=\\
&=(d\otimes I_V) H_1+H_2(d\otimes I_V).
\end{align*}
\end{proof}

\subsection{Proof of the Tame Vanishing Lemma}

Let $\mathcal{G}\rightrightarrows M$ be as in the statement. By passing to the connected components of the
identities in the $s$-fibers (see subsection \ref{subsection_some_properties_of_Lie_algebroids}), we may assume that
$\mathcal{G}$ is $s$-connected. Then $s:\mathcal{G}\to M$ is a locally trivial fiber bundle, so we can apply Lemma
\ref{Lemma_fiberwise_Green} to the complex of $s$-foliated forms with coefficients in $s^*(V)$
\[(\Omega^{{\bullet}}(T^s{\mathcal{G}}, s^*(V)),d\otimes I_V).\]
Recall from subsection \ref{subsection_Lie_algebroid_cohomology} that the $\mathcal{G}$ invariant part of this
complex is isomorphic to the complex computing the Lie algebroid cohomology of $A$ with coefficients in $V$.
Moreover, we obtained an explicit isomorphism (\ref{EQ_definition_of_J})
\begin{equation}\label{EQ_isomorphism}
J:(\Omega^{\bullet}(A,V),d_{\nabla})\diffto (\Omega^{\bullet}(T^s{\mathcal{G}},s^*(V))^{\mathcal{G}},d\otimes I_V),
\end{equation}
and a left inverse of $J$, defined in (\ref{EQ_definition_of_P}), was denoted by $P$.

Let $\langle\cdot,\cdot\rangle$ be an inner product on $A$. Using right translations, we extend
$\langle\cdot,\cdot\rangle$ to an invariant metric $m$ on $T^s{\mathcal{G}}$:
\[m(X,Y)_g:=\langle dr_{g^{-1}}X,dr_{g^{-1}}Y\rangle_{t(g)},\  \forall \ X,Y\in T_g^s{\mathcal{G}}.\]
Invariance of $m$ implies that the right translation by an arrow $g:x\to y$ is an isometry between the $s$-fibers
\[r_g:({\mathcal{G}}_y,m_y)\rmap ({\mathcal{G}}_x,m_x).\]
The corresponding operators from subsection \ref{subsection_de Rham_complex} are also invariant.
\begin{lemma}\label{Lemma_delta_inv}
The operators $\delta\otimes I_V$, $\Delta\otimes I_V$, $H_1$ and $H_2$, corresponding to $m$, send invariant forms
to invariant forms.
\end{lemma}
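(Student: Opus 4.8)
The plan is to trace invariance through each step of the construction, the crucial input being that, by the very definition of $m$, right translation by an arrow is a Riemannian isometry between $s$-fibers.

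First I would record the scalar version. Fix an arrow $g\colon x\to y$. Invariance of $m$ says exactly that $r_g\colon (s^{-1}(y),m_y)\to (s^{-1}(x),m_x)$, $h\mapsto hg$, is an isometry of Riemannian manifolds; hence the pullback $r_g^*\colon \Omega^\bullet(s^{-1}(x))\to \Omega^\bullet(s^{-1}(y))$ preserves the pointwise inner products on forms and carries $|dVol(m_x)|$ to $|dVol(m_y)|$, so by the change-of-variables formula it is an isometry of the $L^2$-inner products $(\cdot,\cdot)$ of the two fibers. Since $d$ is natural, $r_g^*\circ d = d\circ r_g^*$; taking formal adjoints with respect to the $L^2$-inner products gives $r_g^*\circ \delta_x = \delta_y\circ r_g^*$, and adding the two relations, $r_g^*\circ \Delta_x = \Delta_y\circ r_g^*$. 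In degree $p$ the Laplacians $\Delta_x,\Delta_y$ are invertible by Lemma \ref{Lemma_fiberwise_Green} and $r_g^*$ is a linear isomorphism, so also $r_g^*\circ G_x = G_y\circ r_g^*$, where $G_x:=\Delta_x^{-1}$ in degree $p$.

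Next I would add the coefficients. Each of the operators in the statement acts $s$-fiberwise and only on the form factor, so tensoring the three relations above with the linear isomorphism $g\cdot\colon V_x\to V_y$ and using bifunctoriality of the tensor product, the isomorphism $r_g^*\otimes g$ intertwines $\delta_x\otimes I_{V_x}$, $\Delta_x\otimes I_{V_x}$, $G_x\otimes I_{V_x}$ with $\delta_y\otimes I_{V_y}$, $\Delta_y\otimes I_{V_y}$, $G_y\otimes I_{V_y}$ respectively; composing the last two, it also intertwines the fiberwise operators defining $H_1 = (\delta\otimes I_V)(G\otimes I_V)$ and $H_2 = (G\otimes I_V)(\delta\otimes I_V)$. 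Finally, unwinding the definition, a form $\omega$ is invariant precisely when $(r_g^*\otimes g)(\omega|_{s^{-1}(x)}) = \omega|_{s^{-1}(y)}$ for every arrow $g\colon x\to y$; so if $T$ denotes any one of the four operators, then $(r_g^*\otimes g)((T\omega)|_{s^{-1}(x)}) = (r_g^*\otimes g)(T(\omega|_{s^{-1}(x)})) = T((r_g^*\otimes g)(\omega|_{s^{-1}(x)})) = T(\omega|_{s^{-1}(y)}) = (T\omega)|_{s^{-1}(y)}$, i.e.\ $T\omega$ is again invariant. I do not expect a genuine obstacle here; the only care needed is with the source/target bookkeeping in the definition of invariance, together with the harmless observation that the $V$-coefficient, being constant along each $s$-fiber, is untouched by $\delta_x$, $\Delta_x$ and $G_x$.
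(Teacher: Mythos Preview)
Your proposal is correct and follows essentially the same approach as the paper: use that $r_g$ is an isometry to get $r_g^*\circ\delta_x=\delta_y\circ r_g^*$, tensor with $g\cdot$, and then observe that the remaining operators are built from $d$, $\delta$ and the inverse of $\Delta$. The paper is terser---it simply cites that $\delta_z$ is invariant under isometries and then writes out the intertwining for $\delta\otimes I_V$ on an invariant form, dispatching $\Delta\otimes I_V$, $H_1$, $H_2$ in one line by noting they are constructed from $\delta\otimes I_V$ and $d\otimes I_V$---whereas you spell out the $L^2$-adjoint argument and the passage to $G_x=\Delta_x^{-1}$ explicitly, but the content is the same.
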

\begin{proof}
Since right translations are isometries and the operators $\delta_z$ are invariant under isometries we have that
$r_g^*\circ \delta_x=\delta_y\circ r_g^*$, for all arrows $g:x\to y$.

For $\eta\in\Omega^{\bullet}(T^s{\mathcal{G}},s^*(V))^{\mathcal{G}}$ we have that
\begin{align*}
(r_g^*\otimes g)&(\delta\otimes I_V(\eta))_{|{\mathcal{G}}_x}=(r_g^*\circ \delta_x\otimes g)(\eta_{|{\mathcal{G}}_x})=(\delta_y\circ r_g^*\otimes g)(\eta_{|{\mathcal{G}}_x})=\\
&=(\delta_y\otimes I_{V_y})(r_g^*\otimes g)(\eta_{|{\mathcal{G}}_x})=(\delta_y\otimes I_{V_y})(\eta_{|{\mathcal{G}}_y})=(\delta\otimes I_V)(\eta)_{|{\mathcal{G}}_y}.
\end{align*}
This shows that $\delta\otimes I_V(\eta)\in\Omega^{\bullet}(T^s{\mathcal{G}},s^*(V))^{\mathcal{G}}$. The other
operators are constructed in
terms of $\delta\otimes I_V$ and $d\otimes I_V$, thus they also preserve $\Omega^{\bullet}(T^s{\mathcal{G}},s^*(V))^{\mathcal{G}}$.
\end{proof}

This lemma and the isomorphism (\ref{EQ_isomorphism}) imply that the maps
\[\Omega^{p-1}(A,V) \stackrel{h_1}{\longleftarrow}\Omega^{p}(A,V)\stackrel{h_2}{\longleftarrow}\Omega^{p+1}(A,V),\]
\[h_1:=P\circ H_{1}\circ J, \ \  h_2:=P\circ H_{2}\circ J,\]
are linear homotopy operators for the Lie algebroid complex in degree $p$.

For part (1) of the Tame Vanishing Lemma, let $\omega\in\Omega^p(A,V)$ and $O\subset M$ an orbit of $A$. Since
$\mathcal{G}$ is $s$-connected we have that $s^{-1}(O)=t^{-1}(O)=\mathcal{G}_{|O}$. Clearly
$J(\omega)_{|s^{-1}(O)}$ depends only on $\omega_{|O}$. By the construction of $H_1$, for all $x\in O$, we have that
\[h_1(\omega)_x=H_1(J(\omega))_{1_x}=(\delta_x\circ\Delta_x^{-1}\otimes I_{V_x})(J(\omega)_{|s^{-1}(x)})_{1_x}.\]
Thus $h_1(\omega)_{|O}$ depends only on $\omega_{|O}$. The same argument applies also to $h_2$.

Before checking part (2), we give a simple lemma:
\begin{lemma} \label{Lemma_pullback_tame}
Consider a vector bundle map $A:F_1\to F_2$ between vector bundles $F_1\to M_1$ and $F_2\to M_2$, covering a map
$f:M_1\to M_2$. If $A$ is fiberwise invertible and $f$ is proper, then the pullback map
\[A^*:\Gamma(F_2)\rmap \Gamma(F_1), \ A(\sigma)_x:=A_{x}^{-1}(\sigma_{f(x)})\]
satisfies the following tameness inequalities: for every open $U\subset M_2$, with $\overline{U}$ compact, there are
constants $C_{n,U}>0$ such that
\[\|A^*(\sigma)\|_{n, \overline{f^{-1}(U)}}\leq C_{n,U}\|\sigma\|_{n, U},\ \ \forall\  \sigma \in \Gamma(F_{2|\overline{U}}).\]
Moreover:
\begin{enumerate}[(a)]
\item if $U'\subset U$ is open, and one uses the same charts when computing the norms, then one can choose $C_{n,U'}:=C_{n,U}$;
\item if $N\subset M_2$ is a submanifold and $\sigma\in\Gamma(F_2)$ satisfies $j^k_{|N}(\sigma)=0$, then its pullback satisfies
$j^k_{|f^{-1}(N)}(A^*(\sigma))=0$.
\end{enumerate}
\end{lemma}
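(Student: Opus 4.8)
\textbf{Proof plan for Lemma \ref{Lemma_pullback_tame}.}
The statement is purely local, so the plan is to reduce everything to a coordinate computation. First I would cover $\overline{U}$ by finitely many trivialization charts for $F_2$; since $f$ is proper, $\overline{f^{-1}(U)}$ is compact, so it is covered by finitely many trivialization charts for $F_1$, each of which maps (under $f$) into one of the chosen charts downstairs. In such a pair of charts, $A^*(\sigma)$ is represented by $x\mapsto A_i(x)^{-1}\sigma_j(f_{ji}(x))$, where $A_i$ is a smooth family of invertible matrices, $f_{ji}$ is the smooth local expression of $f$, and $\sigma_j$ is the local expression of $\sigma$. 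The entries of $A_i(x)^{-1}$ are smooth (Cramer's rule: ratio of polynomials in the entries of $A_i$, denominator nonvanishing on the compact closure), hence all their derivatives up to order $n$ are bounded on the relevant compact set by constants depending only on $A$ and the chart. Likewise all derivatives of $f_{ji}$ up to order $n$ are bounded there.

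The core estimate is then the Faà di Bruno / Leibniz bound: for a composition and product of smooth maps,
\[
\Big|\frac{\partial^{|\alpha|}}{\partial x^{\alpha}}\big(A_i(x)^{-1}\sigma_j(f_{ji}(x))\big)\Big|
\le C_{n}\sum_{|\beta|\le n}\Big|\frac{\partial^{|\beta|}\sigma_j}{\partial y^{\beta}}\big(f_{ji}(x)\big)\Big|,
\]
where $C_n$ depends only on sup-norms of derivatives of $A_i^{-1}$ and $f_{ji}$ over the compact closures, i.e.\ only on $A$, $f$, and the chosen charts — not on $\sigma$. Taking suprema over $x$ in the appropriate compact pieces and then over the finitely many charts gives $\|A^*(\sigma)\|_{n,\overline{f^{-1}(U)}}\le C_{n,U}\|\sigma\|_{n,U}$, which is the claimed inequality. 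I do not expect any real obstacle here; the only mild care needed is that the sup defining $\|\sigma\|_{n,U}$ ranges over $\chi_j(U\cap O_j)$, so one must make sure the chosen downstairs charts and the second cover $\{O_j\}$ are compatible with this, which is automatic once the charts are fixed as in subsection \ref{subsection_weak_topology}.

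Part (a) is immediate from the construction: if $U'\subset U$ and the \emph{same} charts are used, then $\overline{f^{-1}(U')}\subset\overline{f^{-1}(U)}$, the constants $C_n$ produced above (sup-norms of derivatives of $A_i^{-1}$ and $f_{ji}$ over the larger compact sets) still work, so $C_{n,U'}:=C_{n,U}$ is a valid choice. For part (b), I would argue locally again: choose the downstairs charts adapted to $N$ (so that $N$ corresponds to a coordinate subspace $\{y'=0\}$); then $j^k_{|N}(\sigma)=0$ means $\sigma_j$ and all its derivatives up to order $k$ vanish on $\{y'=0\}$, equivalently $\sigma_j$ lies in the $(k+1)$-st power of the ideal of functions vanishing on that subspace. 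Since $f^{-1}(N)$ corresponds locally to $f_{ji}^{-1}(\{y'=0\})$, the composite $\sigma_j\circ f_{ji}$ — and hence $A_i^{-1}\cdot(\sigma_j\circ f_{ji})$, multiplication by a smooth matrix preserving the ideal — also lies in the $(k+1)$-st power of the ideal of $f^{-1}(N)$, which is precisely the statement $j^k_{|f^{-1}(N)}(A^*(\sigma))=0$. This finishes the proof.
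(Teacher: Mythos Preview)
Your proof is correct and follows essentially the same strategy as the paper: reduce to local coordinates, bound derivatives via the chain rule (Fa\`a di Bruno plus Leibniz), and observe that the constants only involve sup-norms of derivatives of the fixed data $A^{-1}$ and $f$ over compact sets. The paper streamlines the computation by first reducing to the case $F_1=f^*(F_2)$ and then to a trivial line bundle (by complementing $F_2$ to a trivial bundle), so that it only needs to estimate $f^*:C^\infty(M_2)\to C^\infty(M_1)$ on scalar functions; your direct handling of the matrix $A_i(x)^{-1}$ is equally valid but slightly heavier. For part~(b) the paper simply notes the pointwise fact that $j^k_{f(x)}(\sigma)=0$ forces $j^k_x(\sigma\circ f)=0$, which is more direct than your ideal-theoretic argument but equivalent.
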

\begin{proof}
Since $A$ is fiberwise invertible, we can assume that $F_1=f^*(F_2)$ and $A^*=f^*$. By choosing a vector bundle
$F'$ such that $F_2\oplus F'$ is trivial, we reduce the problem to the case when $F_2$ is the trivial line bundle. So, we
have to check that $f^*:C^{\infty}(M_2)\to C^{\infty}(M_1)$ has the desired properties. But this is straightforward:
we just cover both $\overline{f^{-1}(U)}$ and $\overline{U}$ by charts, and apply the chain rule. The constants
$C_{n,U}$ are the $C^n$-norm of $f$ over $\overline{f^{-1}(U)}$, and therefore are getting smaller if $U$ gets
smaller. This implies (a). For part (b), just observe that $j^k_{f(x)}(\sigma)=0$ implies $j^k_x(\sigma\circ f)=0$.
\end{proof}

Note that the maps $J$ and $P$ are induced by maps satisfying the conditions of the lemma (see the explicit formulas
(\ref{EQ_definition_of_J}) and (\ref{EQ_definition_of_P})).

Part (2) of the Tame Vanishing Lemma follows by Lemma \ref{Lemma_fiberwise_Green} (c) and by applying Lemma
\ref{Lemma_pullback_tame} to $J$ and $P$. Corollary \ref{corollary_unu} follows from Lemma
\ref{Lemma_pullback_tame} (a) and Lemma \ref{Lemma_fiberwise_Green} (c). To prove Corollary
\ref{corollary_unu_prim}, consider $\omega$ a form with $j^k_{|O}\omega=0$, for $O$ an orbit. Then, by Lemma
\ref{Lemma_pullback_tame} (b), it follows that $J(\omega)$ vanishes up to order $k$ along
$t^{-1}(O)=\mathcal{G}_{|O}$. By construction, we have that $H_1$ is $C^{\infty}(M)$ linear, therefore also
$H_1(J(\omega))$ vanishes up to order $k$ along $\mathcal{G}_{|O}$; and again by Lemma
\ref{Lemma_pullback_tame} (b) $h_1(\omega)=P(H_1(J(\omega)))$ vanishes along $O$ up to order $k$.

\subsection{The inverse of a family of elliptic operators}\label{section_inverse_family}

This subsection is devoted to proving the following result:

\begin{proposition}\label{Theorem_family_of_operators}
Consider a smooth family of linear differential operators between the vector bundles $V$ and $W$ over a compact base
$F$
\[P_{x}:\Gamma(V)\rmap \Gamma(W),\ \ x\in \mathbb{R}^m.\]
If $P_x$ is elliptic of degree $d\geq 1$ and invertible for all $x\in \mathbb{R}^m$, then
\begin{enumerate}[(1)]
\item the family of inverses $\{Q_x:=P_{x}^{-1}\}_{x\in\mathbb{R}^m}$ induces a linear operator
\[Q:\Gamma(p^*(W))\rmap \Gamma(p^*(V)),\ \  \{\omega_{x}\}_{x\in \mathbb{R}^m}\mapsto \{Q_x\omega_x\}_{x\in \mathbb{R}^m},\]
where $p^*(V):=V\times \mathbb{R}^m\to F\times\mathbb{R}^m$ and $p^*(W):=W\times \mathbb{R}^m\to
F\times\mathbb{R}^m$;
\item $Q$ is locally tame, in the sense that for all bounded opens $U\subset\mathbb{R}^m$, there exist constants $C_{n,U}>0$, such that
the following inequalities hold
\[\|Q(\omega)\|_{n,F\times\overline{U}}\leq C_{n,U}\|\omega\|_{n+s-1,F\times \overline{U}}, \ \forall \omega \in \Gamma(p^*(W)_{| F\times\overline{U}}),\]
with $s=\lfloor\frac{1}{2}\mathrm{dim}(F)\rfloor+1$. If $U'\subset U$, then one can take $C_{n,U'}:= C_{n,U}$.
\end{enumerate}
\end{proposition}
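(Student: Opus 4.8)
The plan is to combine standard parameter-dependent elliptic theory with Sobolev embedding to extract the tame estimate. I would first fix Sobolev norms $\|\cdot\|_{H^k}$ on $\Gamma(V)$ and $\Gamma(W)$ over the compact manifold $F$. Since $P_x$ is elliptic of order $d$ with coefficients depending smoothly on $(z,x)$, hence, over a bounded open $U\subset\mathbb{R}^m$, with all coefficient derivatives bounded uniformly in $x\in\overline U$, G{\aa}rding's inequality gives for each $k$ a constant $C_{k,U}$ with $\|u\|_{H^{k+d}}\leq C_{k,U}(\|P_x u\|_{H^k}+\|u\|_{H^0})$ for all $x\in\overline U$ and $u\in\Gamma(V)$; the uniformity in $x$ holds because the constant in G{\aa}rding's inequality depends continuously on the finitely many local coefficients, and $\overline U$, $F$ are compact. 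Because $P_x$ is invertible on smooth sections and Fredholm of index zero between the Sobolev completions, it is an isomorphism $H^{k+d}(V)\to H^k(W)$; a Rellich compactness argument then lets me absorb the term $\|u\|_{H^0}$, yielding $\|Q_x\omega\|_{H^{k+d}}\leq C_{k,U}\|\omega\|_{H^k}$ for all $x\in\overline U$, i.e. the operator norms $\|Q_x\|_{H^k\to H^{k+d}}$ are bounded locally uniformly in $x$.

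Next I would establish smoothness of $x\mapsto Q_x$. From the resolvent-type identity $Q_x-Q_y=Q_x(P_y-P_x)Q_y$ together with the operator bounds above, the difference quotients in each $x_i$ converge, in the operator norm $H^k\to H^{k+d}$, to $-Q_x(\partial_{x_i}P_x)Q_x$, where $\partial_{x_i}P_x$ is again a differential operator of order $\leq d$ with smooth coefficients. Iterating, $\partial_x^\alpha Q_x$ is a finite sum of compositions $Q_x B_1 Q_x B_2\cdots B_{|\alpha|}Q_x$, each $B_j$ a partial $x$-derivative of $P_x$ of order $\leq d$; such a composition maps $H^k\to H^{k+d}$ — the intermediate losses of $d$ derivatives from the $B_j$ are exactly compensated by the gains from the $Q_x$'s — with operator norm bounded locally uniformly in $x$. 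This shows $Q(\omega)=\{Q_x\omega_x\}$ sends smooth sections to sections that are $C^\infty$ in $x$ with values in $\bigcap_k H^k(V)=\Gamma(V)$; joint smoothness on $F\times\mathbb{R}^m$ then follows from standard facts on Fr\'echet-valued smooth maps, giving part $(1)$.

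For the tame estimate $(2)$, fix a bounded open $U$ and a multi-index with $|\beta|+|\alpha|\leq n$. Using the Leibniz expansion $\partial_x^\alpha(Q\omega)_x=\sum_{\beta'\leq\alpha}\binom{\alpha}{\beta'}(\partial_x^{\alpha-\beta'}Q_x)(\partial_x^{\beta'}\omega_x)$ and the Sobolev embedding $H^{s+|\beta|}(F)\hookrightarrow C^{|\beta|}(F)$ with $s=\lfloor\tfrac{1}{2}\mathrm{dim}(F)\rfloor+1$, I would estimate, for $x\in\overline U$ (with the obvious modification when $s+|\beta|<d$, where one instead uses $Q_x\colon H^0\to H^d$),
\[
\|\partial_z^\beta\partial_x^\alpha(Q\omega)(\cdot,x)\|_{C^0(F)}\leq C\sum_{\beta'\leq\alpha}\|(\partial_x^{\alpha-\beta'}Q_x)(\partial_x^{\beta'}\omega_x)\|_{H^{s+|\beta|}(F)}\leq C\sum_{\beta'\leq\alpha}\|\partial_x^{\beta'}\omega_x\|_{H^{s+|\beta|-d}(F)},
\]
invoking the operator bounds from the previous step. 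Since $d\geq 1$ and $\|\cdot\|_{H^m(F)}\leq C\|\cdot\|_{C^m(F)}$, the right-hand side is dominated by $C\|\omega\|_{C^{s+|\beta|-1+|\alpha|}(F\times U)}\leq C\|\omega\|_{n+s-1,F\times\overline U}$, because $|\beta|+|\alpha|\leq n$. Taking the supremum over admissible multi-indices gives $\|Q(\omega)\|_{n,F\times\overline U}\leq C_{n,U}\|\omega\|_{n+s-1,F\times\overline U}$. Finally, every constant that enters — the G{\aa}rding constants and the operator-norm bounds for $Q_x$ and its $x$-derivatives — is monotone in the parameter region, so the value produced for $\overline U$ also works for any $\overline{U'}\subset\overline U$ computed in the same charts, and one may set $C_{n,U'}:=C_{n,U}$.

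The main obstacle I expect is not this final derivative-counting but making the parameter-dependent elliptic theory fully rigorous: proving that the difference quotients of $Q_x$ converge in the correct operator topology rather than merely pointwise, that the constants in the elliptic estimates are genuinely locally uniform in the parameter, and that $x$-smoothness valued in each $H^k(V)$ upgrades to joint $C^\infty$-smoothness on $F\times\mathbb{R}^m$. These points are standard but need care; once they are settled, the tame estimate is essentially the computation above.
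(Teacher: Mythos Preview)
Your proposal is correct and follows essentially the same strategy as the paper: establish uniform Sobolev bounds $\|Q_x\|_{H^k\to H^{k+d}}$ and smoothness of $x\mapsto Q_x$ in operator norm, then combine Sobolev embedding with the Leibniz expansion $\partial_x^\alpha(Q_x\omega_x)=\sum\binom{\alpha}{\beta}(\partial_x^{\alpha-\beta}Q_x)(\partial_x^\beta\omega_x)$ to obtain the $C^n$ estimate. The paper obtains Sobolev invertibility via a parametrix argument and smoothness of $x\mapsto Q_x$ by first proving $x\mapsto[P_x]_n$ is smooth (through a partition-of-unity decomposition $P_x=\sum f_i(x)P_i$) and then composing with the smooth inversion map on Banach spaces, whereas you use G{\aa}rding plus Rellich absorption and the resolvent identity directly; these are standard equivalent routes, and your final derivative-counting is essentially identical to the paper's.
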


Fixing $C^n$-norms $\|\cdot\|_{n}$ on $\Gamma(V)$, we induce semi-norms on $\Gamma(p^*(V))$:
\[\|\omega\|_{n,F\times\overline{U}}:=\sup_{0\leq k+|\alpha|\leq n}\sup_{x\in U}\|\frac{\partial^{|\alpha|}\omega_{x}}{\partial x^{\alpha}}\|_{k},\]
where $\omega\in\Gamma(p^*(V))$ is regarded as a smooth family $\omega=\{\omega_x\in
\Gamma(V)\}_{x\in\mathbb{R}^m}$. Similarly, fixing norms on $\Gamma(W)$, we define also norms on
$\Gamma(p^*(W))$.

Endow $\Gamma(V)$ and $\Gamma(W)$ also with Sobolev norms, denoted by $\{|\cdot|_n\}_{n\geq 0}$. Loosely
speaking, $|\omega|_n$, measures the $L^2$-norm of $\omega$ and its partial derivatives up to order $n$ (for a
precise definition, see e.g. \cite{Gilkey}). Denote by $H_n(\Gamma(V))$ and by $H_n(\Gamma(W))$ the completion of
$\Gamma(V)$, respectively of $\Gamma(W)$, with respect to the Sobolev norm $|\cdot|_n$.

We will use the standard inequalities between the Sobolev norms and the $C^n$-norms that follow from the Sobolev
embedding theorem
\begin{equation}\label{EQ_Sobolev}
\|\omega\|_n\leq C_n |\omega|_{n+s},  \ \ |\omega|_n\leq C_n \|\omega\|_{n},
\end{equation}
for all $\omega\in\Gamma(V)$ (resp. $\Gamma(W)$), where $s=\lfloor\frac{1}{2}\mathrm{dim}(F)\rfloor+1$ and
$C_n>0$.

Since $P_x$ is of order $d$, it induces continuous linear maps between the Sobolev spaces, denoted by
\[[P_x]_n: H_{n+d}(\Gamma(V))\rmap  H_{n}(\Gamma(W)).\]

These maps are invertible.

\begin{lemma}\label{L_invertible_on_Sobolev}
If an elliptic differential operator of degree $d$ \[P:\Gamma(V)\rmap\Gamma(W)\] is invertible, then for every $n\geq 0$
the induced map
\[[P]_n:H_{n+d}(\Gamma(V))\rmap H_{n}(\Gamma(W))\] is also invertible and its inverse is induced by the inverse of $P$.
\end{lemma}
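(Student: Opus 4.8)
The plan is to combine the classical analytic machinery of elliptic operators — the a priori (Gårding) estimate, elliptic regularity, Rellich compactness, and Fredholm theory — with the hypothesis that $P$ is a bijection on smooth sections. Throughout, $[P]_n$ denotes the bounded extension $H_{n+d}(\Gamma(V))\to H_n(\Gamma(W))$.

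First I would recall the a priori estimate for an elliptic operator of order $d$: there are constants $C_n>0$ with
\[|u|_{n+d}\le C_n\bigl(|[P]_nu|_n+|u|_0\bigr)\]
for all $u\in\Gamma(V)$, extending by density to all of $H_{n+d}(\Gamma(V))$. Two consequences follow. By elliptic regularity any $u\in H_{n+d}(\Gamma(V))$ with $[P]_nu=0$ is a smooth section, hence lies in $\ker P=\{0\}$; so $[P]_n$ is injective. Moreover, using the Rellich theorem (the inclusion $H_{n+d}(\Gamma(V))\hookrightarrow H_0(\Gamma(V))$ is compact), the standard sequence argument upgrades the estimate to the kernel-free form $|u|_{n+d}\le C_n|[P]_nu|_n$: if it failed, a sequence $u_k$ with $|u_k|_{n+d}=1$, $|[P]_nu_k|_n\to 0$ would, after passing to a Rellich-convergent subsequence, be Cauchy in $H_{n+d}$ by the a priori estimate and converge to a nonzero element of $\ker[P]_n$, a contradiction. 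In particular $[P]_n$ has closed range; alternatively one may just invoke that $[P]_n$ is Fredholm.

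Next I would prove surjectivity. The range of $[P]_n$ is closed and contains $P(\Gamma(V))=\Gamma(W)$ by the surjectivity of $P$ on smooth sections, and $\Gamma(W)$ is dense in $H_n(\Gamma(W))$; a closed dense subspace of a Hilbert space is the whole space. Thus $[P]_n$ is a continuous bijection of Hilbert spaces, and by the open mapping theorem $[P]_n^{-1}$ is continuous.

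Finally, to identify $[P]_n^{-1}$ with the operator induced by $P^{-1}$: for $w\in\Gamma(W)$ the smooth section $P^{-1}w\in\Gamma(V)$ lies in $H_{n+d}(\Gamma(V))$ and satisfies $[P]_n(P^{-1}w)=w$, so $[P]_n^{-1}$ and $P^{-1}$ agree on the dense subspace $\Gamma(W)$; by continuity $[P]_n^{-1}$ is the unique continuous extension of $P^{-1}$, which is the assertion. I do not expect a genuine obstacle here — all ingredients are classical (and the whole argument is then used, via Sobolev embedding \eqref{EQ_Sobolev}, to prove Proposition \ref{Theorem_family_of_operators}); the only point needing a little care is the upgrade of the Gårding estimate to its kernel-free form, which is exactly where injectivity of $P$ and Rellich compactness enter.
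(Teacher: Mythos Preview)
Your proof is correct, but it takes a different route from the paper. The paper argues via the parametrix: since $P$ is elliptic there is a pseudo-differential operator $\Psi$ of degree $-d$ with $\Psi P-\mathrm{Id}=K_1$ and $P\Psi-\mathrm{Id}=K_2$ smoothing. Injectivity of $[P]_n$ then follows because $[P]_n\eta=0$ forces $\eta=-[K_1]_n\eta$ to be smooth, hence in $\ker P=\{0\}$; surjectivity follows because for any $\theta\in H_n(\Gamma(W))$ the error $[K_2]_n\theta$ is smooth and hence of the form $P\eta$, so $\theta=[P]_n([\Psi]_n\theta-\eta)$. The identification with $P^{-1}$ is the same density argument you give.

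Your approach replaces the explicit parametrix by its standard consequences (the G{\aa}rding estimate, elliptic regularity, Rellich compactness, and the Fredholm/closed-range machinery) and then concludes surjectivity abstractly from the fact that a closed subspace containing a dense set is everything. Both arguments are classical and essentially equivalent in strength; the parametrix version is slightly more direct and constructive (an explicit almost-inverse is on the table from the start), while your version is more modular and makes the role of each analytic ingredient transparent. Either is perfectly acceptable here.
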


\begin{proof}
Since $P$ is elliptic, it is invertible modulo smoothing operators (see Lemma 1.3.5 in \cite{Gilkey}), i.e.\ there exists a
pseudo-differential operator
\[\Psi:\Gamma(W)\rmap \Gamma(V),\]
of degree $-d$ such that $\Psi P-\textrm{Id}=K_1$ and $P \Psi-\textrm{Id}=K_2$, where $K_1$ and $K_2$ are
smoothing operators. Therefore, $\Psi$ induces continuous maps
\[[\Psi]_n:H_{n}(\Gamma(W))\rmap H_{n+d}(\Gamma(V)),\]
and $K_1$ and $K_2$ induce continuous maps
\[[K_1]_n:H_{n}(\Gamma(V))\rmap \Gamma(V), \ \ \ [K_2]_n:H_{n}(\Gamma(W))\rmap \Gamma(W).\]
Now we show that $[P]_n$ is a bijection:

\underline{injective}: For $\eta\in H_{n+d}(\Gamma(V))$, with $[P]_n\eta=0$, we have that
\[\eta=(\textrm{Id}-[\Psi]_n [P]_n)\eta=-[K_1]_n\eta\in \Gamma(V),\]
hence $[P]_n\eta=P\eta$. By injectivity of $P$, we have that $\eta=0$.

\underline{surjective}: For $\theta\in H_{n}(\Gamma(W))$, we have that
\[([P]_n[\Psi]_n-\textrm{Id})\theta=[K_2]_n\theta\in \Gamma(W).\]
Since $P$ is onto, $[K_2]_n\theta=P\eta$ for some $\eta\in \Gamma(V)$. We obtain that $\theta$ is in the range of
$[P]_n$:
\[\theta=[P]_n([\Psi]_n\theta-\eta).\]

The inverse of a bounded operator between Banach spaces is bounded, therefore $[P]_n^{-1}$ is continuous. Since on
smooth sections $[P]_n^{-1}$ coincides with $P^{-1}$, and since the space of smooth sections is dense in all Sobolev
spaces, it follows that $P^{-1}$ induces a continuous map $H_{n}(\Gamma(W))\to H_{n+d}(\Gamma(V))$, and that
that map is $[P]_n^{-1}$.
\end{proof}

For two Banach spaces $B_1$ and $B_2$ denote by $Lin(B_1,B_2)$ the Banach space of bounded linear maps between
them and by $Iso(B_1,B_2)$ the open subset consisting of invertible maps. The following proves that the family
$[P_x]_n$ is smooth.

\begin{lemma}\label{L_smooth_family_operators}
Let $\{P_x\}_{x\in \mathbb{R}^m}$ be a smooth family of linear differential operators of order $d$ between the
sections of vector bundles $V$ and $W$, both over a compact manifold $F$. Then the map induced by $P$ from
$\mathbb{R}^m$ to the space of bounded linear operators between the Sobolev spaces
\[\mathbb{R}^m\ni x\mapsto [P_x]_n\in Lin(H_{n+d}(\Gamma(V)),H_{n}(\Gamma(W)))\]
is smooth and its derivatives are induced by the derivatives of $P_x$.
\end{lemma}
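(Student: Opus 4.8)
The plan is to localize the problem with a partition of unity, reduce it to a statement about multiplication operators and constant--coefficient derivatives on Sobolev spaces over Euclidean charts, and then finish with elementary Banach--space calculus. First I would fix a finite atlas $\{(U_i,\kappa_i)\}$ of $F$ over which $V$ and $W$ are trivialized, together with a subordinate partition of unity $\{\rho_i\}$. Writing $P_x=\sum_i M_{\rho_i}\circ P_x$, where $M_{\rho_i}$ denotes multiplication by $\rho_i$, each $M_{\rho_i}P_x$ has coefficients supported in $U_i$, and transported to the chart $\kappa_i$ (in the trivializations) it becomes a matrix--valued operator $\sum_{|\alpha|\le d} a^i_\alpha(z,x)\,\partial^\alpha_z$ whose coefficients $a^i_\alpha$ are smooth on $\kappa_i(U_i)\times\mathbb{R}^m$ and compactly supported in $z$ with $x$--independent support. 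Since the Sobolev norm on $H_n(\Gamma(V))$ (resp.\ $H_n(\Gamma(W))$) is equivalent to the sum over $i$ of the local Sobolev norms coming from a partition of unity, it suffices to prove that each $x\mapsto [M_{\rho_i}P_x]_n$ is smooth into $Lin(H_{n+d},H_n)$; a finite sum of smooth Banach--space--valued maps is smooth, with derivative the sum of the derivatives.

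Next I would isolate the purely analytic ingredient as a bounded \emph{linear} map. Namely, the assembly map $(b_\alpha)_{|\alpha|\le d}\mapsto \sum_{|\alpha|\le d} M_{b_\alpha}\circ\partial^\alpha$ from $\prod_{|\alpha|\le d} C^n_c(\kappa_i(U_i))$ into $Lin(H_{n+d},H_n)$ is bounded: multiplication by a $C^n$ function is bounded on $H_n$ with $\|M_b\|\le C_n\|b\|_{C^n}$, and $\partial^\alpha\colon H_{n+d}\to H_{n+d-|\alpha|}\hookrightarrow H_n$ is bounded, so $\|\sum M_{b_\alpha}\partial^\alpha\|_{Lin(H_{n+d},H_n)}\le C_n\sum_\alpha\|b_\alpha\|_{C^n}$. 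A bounded linear map between Banach spaces is $C^\infty$ and coincides with its own derivative, so this step needs no further argument; the point is that, once the coefficients are seen to depend smoothly on $x$ \emph{in the $C^n$--topology}, smoothness of $x\mapsto[M_{\rho_i}P_x]_n$ and the formula for its derivatives follow immediately from the chain rule.

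The hard part will be the Banach--space calculus lemma behind the previous sentence: if $a\colon \mathbb{R}^{\dim F}\times\mathbb{R}^m\to\mathbb{R}$ is smooth and has $x$--independent compact support in the first variable, then $x\mapsto a(\cdot,x)$ is a $C^\infty$ map $\mathbb{R}^m\to C^n(\mathbb{R}^{\dim F})$ for every $n$, with $j$--th derivative $\partial^j_x a(\cdot,x)$. I would prove this by Taylor's theorem with integral remainder in the $x$--variable: for the first derivative one writes
\[
a(z,x+h)-a(z,x)-\sum_j h_j\,\partial_{x_j}a(z,x)=\sum_{j,k}h_jh_k\int_0^1(1-t)\,\partial_{x_j}\partial_{x_k}a(z,x+th)\,dt,
\]
and bounds the $C^n$--norm in $z$ of the right--hand side by $C|h|^2$ using that all $z$--derivatives of $\partial_x^2 a$ are uniformly bounded on $\operatorname{supp}_z a$ times a compact $x$--neighbourhood — this is exactly where compactness of $F$ enters. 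Iterating gives all higher derivatives and the stated formulas. Granting this, $x\mapsto (a^i_\alpha(\cdot,x))_\alpha$ is smooth into $\prod_\alpha C^n_c$; composing with the bounded linear assembly map of the second step and using that a linear map is its own derivative identifies $\partial_{x_j}[M_{\rho_i}P_x]_n$ with the assembly of $(\partial_{x_j}a^i_\alpha(\cdot,x))_\alpha$, i.e.\ with $[\partial_{x_j}(M_{\rho_i}P_x)]_n$, and the same computation iterates. Summing over $i$ yields the lemma, with derivatives induced by the coefficientwise derivatives of $P_x$.
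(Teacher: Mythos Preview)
Your proof is correct and follows essentially the same route as the paper: reduce, via a partition of unity over $F$, to the statement that multiplication by a smooth function $f(\cdot,x)$ on $F$ depends smoothly on $x$ as a bounded operator on $H_n$, and verify this by a Taylor expansion with integral remainder. The paper packages the reduction slightly more invariantly---writing $P_x=\sum f_i(x)P_i$ with $P_i$ fixed sections of $Hom(J^d(V);W)$ rather than passing to charts---but the analytic core (the Taylor estimate for $x\mapsto[f(x)\mathrm{Id}]_n$) is identical to your ``calculus lemma'' in step three.
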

\begin{proof}
Linear differential operators of degree $d$ from $V$ to $W$ are sections of the vector bundle
$Hom(J^d(V);W)=J^d(V)^*\otimes W$, where $J^d(V)\to F$ is the $d$-th jet bundle of $V$. Therefore, $P$ can be
viewed as a smooth section of the pullback bundle \[p^*(Hom(J^d(V);W)):=Hom(J^d(V);W)\times \mathbb{R}^m \to
F\times \mathbb{R}^m.\] Since $F$ is compact, by choosing a partition of unity on $F$ with supports inside opens on
which $V$ and $W$ trivialize, one can write any section of $p^*(Hom(J^d(V);W))$ as a linear combination of sections
of $Hom(J^d(V);W)$ with coefficients in $C^{\infty}(\mathbb{R}^m\times F)$. Hence, there are constant differential
operators $P_i$ and $f_i\in C^{\infty}(\mathbb{R}^m\times F)$, such that
\[P_x=\sum f_i(x)P_i.\]
So it suffices to prove that for $f\in C^{\infty}(\mathbb{R}^m\times F)$, multiplication with $f(x)$ induces a smooth
map
\[\mathbb{R}^m\ni x\mapsto [f(x)\textrm{Id}]_n\in Lin(H_{n}(\Gamma(W)),H_{n}(\Gamma(W))).\]
First, it is easy to see that for any smooth function $g\in C^{\infty}(\mathbb{R}^m\times F)$ and every compact
$K\subset \mathbb{R}^m$, there are constants $C_n(g,K)$ such that $|g(x)\sigma|_n\leq C_n(g,K)|\sigma|_n$ for all
$x\in K$ and $\sigma\in H_n(\Gamma(W))$; or equivalently that the operator norm satisfies
$|[g(x)\mathrm{Id}]_n|_{op}\leq C_n(g,K)$, for $x\in K$.

Consider $f\in C^{\infty}(\mathbb{R}^m\times F)$, and $\overline{x}\in\mathbb{R}^m$ and $K$ is a closed ball
centered at $\overline{x}$. Using the Taylor expansion of $f$ at $\overline{x}$, write
\[f(x)-f(\overline{x})=\sum_{i=1}^m(x_i-\overline{x}_i)T^i_{\overline{x}}(x),\]
\[f(x)-f(\overline{x})-\sum_{i=1}^m(x_i-\overline{x}_i)\frac{\partial f}{\partial x_i}(\overline{x})=\sum_{1\leq i\leq j\leq m}(x_i-\overline{x}_i)(x_j-\overline{x}_j)T^{i,j}_{\overline{x}}(x),\]
where $T^{i}_{\overline{x}},T^{i,j}_{\overline{x}}\in C^{\infty}(\mathbb{R}^m\times F)$. Thus, for all $x\in K$,
we have that
\begin{align*}
|[f(x)\mathrm{Id}]_n-[f(\overline{x})\mathrm{Id}]_n&|_{op}\leq |x-\overline{x}| \sum_{1\leq i\leq m}C_n(T^{i}_{\overline{x}},K),\\
|[f(x)\mathrm{Id}]_n-[f(\overline{x})\mathrm{Id}]_n-&\sum_{i=1}^m(x_i-\overline{x}_i)[\frac{\partial f}{\partial x_i}(\overline{x})\mathrm{Id}]_n|_{op}\leq\\
&\leq |x-\overline{x}|^2\sum_{1\leq i\leq j\leq m}C_n(T^{i,j}_{\overline{x}},K).
\end{align*}
The first inequality implies that the map $x\mapsto [f(x)\mathrm{Id}]_n$ is $C^0$ and the second that it is $C^1$,
with partial derivatives given by
\[\frac{\partial }{\partial x_i}[f\mathrm{Id}]_n=[\frac{\partial f}{\partial x_i}\mathrm{Id}]_n.\]
The statement follows now inductively.
\end{proof}

\subsubsection*{Proof of Proposition \ref{Theorem_family_of_operators}}

By Lemma \ref{L_invertible_on_Sobolev}, $Q_x=P_x^{-1}$ induces continuous operators
\[[Q_x]_n:H_{n}(\Gamma(W))\rmap H_{n+d}(\Gamma(V)).\]
We claim that the following map is smooth
\[\mathbb{R}^m\ni x\mapsto [Q_x]_n\in Lin(H_{n}(\Gamma(W)),H_{n+d}(\Gamma(V))).\]
This follows by Lemma \ref{L_invertible_on_Sobolev} and Lemma \ref{L_smooth_family_operators}, since we can write
\[[Q_x]_n=[P_x^{-1}]_n=[P_x]_n^{-1}=\iota([P_x]_n),\]
where $\iota$ is the (smooth) inversion map
\[\iota:Iso(H_{n+d}(\Gamma(V)),H_{n}(\Gamma(W)))\rmap Iso(H_{n}(\Gamma(W)),H_{n+d}(\Gamma(V))).\]

Let $\omega=\{\omega_x\}_{x\in \mathbb{R}^m}\in\Gamma(p^*(W))$. By our claim it follows that
\[x\mapsto [Q_x]_n[\omega_x]_n=[Q_x\omega_x]_{n+d}\in H_{n+d}(\Gamma(V))\] is a smooth map. On the other hand, the Sobolev inequalities
(\ref{EQ_Sobolev}) show that the inclusion $\Gamma(V)\to  \Gamma^n(V)$, where $\Gamma^n(V)$ is the space of
sections of $V$ of class $C^n$ (endowed with the norm $\|\cdot\|_{n}$), extends to a continuous map
\[H_{n+s}(\Gamma(V))\rmap \Gamma^n(V).\]
Since also evaluation $ev_p:\Gamma^n(V)\to V_p$ at $p\in F$ is continuous, it follows that the map $x\mapsto
Q_x\omega_x(p)\in V_p$ is smooth. This is enough to conclude smoothness of the family $\{Q_x\omega_x\}_{x\in
\mathbb{R}^m}$, so $Q(\omega)\in\Gamma(p^*(V))$. This finishes the proof of the first part.

For the second part, let $U\subset \mathbb{R}^m$ be an open with $\overline{U}$ compact. Since the map $x\mapsto
[Q_x]_n$ is smooth, it follows that the numbers are finite:
\begin{equation}\label{EQ_finite_numbers}
D_{n,m,U}:=\sup_{x\in U}\sup_{|\alpha|\leq m}|\frac{\partial^{|\alpha|}}{\partial x^\alpha}[Q_x]_n|_{op},
\end{equation}
where $|\cdot|_{op}$ denotes the operator norm. Let $\omega=\{\omega_x\}_{x\in \overline{U}}$ be an element of
$\Gamma(p^*(W)_{|F\times \overline{U}})$. By Lemma \ref{L_smooth_family_operators}, also the map $x\mapsto
[\omega_x]_{n}\in H_{n}(\Gamma(W))$ is smooth and that for all multi-indices $\gamma$
\[\frac{\partial^{|\gamma|}}{\partial x^\gamma}[\omega_x]_{n}=[\frac{\partial^{|\gamma|}}{\partial x^\gamma}\omega_x]_{n}.\]
Let $k$ and $\alpha$ be such that $|\alpha|+k\leq n$. Using (\ref{EQ_Sobolev}), (\ref{EQ_finite_numbers}) we obtain
\begin{align*}
\|\frac{\partial^{|\alpha|}}{\partial x^\alpha}& (Q_x\omega_x)\|_{k}\leq \|\frac{\partial^{|\alpha|}}{\partial x^\alpha} (Q_x\omega_x)\|_{k+d-1}\leq
 C_{k+d-1} |\frac{\partial^{|\alpha|}}{\partial x^\alpha} (Q_x \omega_x)|_{k+s+d-1}\leq\\
&\leq C_{k+d-1} \sum_{\beta+\gamma=\alpha}\binom{\alpha}{\beta\  \gamma} |\frac{\partial^{|\beta|}}{\partial x^\beta}Q_x\frac{\partial^{|\gamma|}}{\partial x^\gamma}\omega_x|_{k+s+d-1}\leq\\
&\leq C_{k+d-1} \sum_{\beta+\gamma=\alpha}\binom{\alpha}{\beta\  \gamma} D_{k+s-1,|\beta|,U}|\frac{\partial^{|\gamma|}}{\partial x^\gamma}\omega_x|_{k+s-1}\leq\\
&\leq C_{k+d-1}C_{k+s-1} \sum_{\beta+\gamma=\alpha}\binom{\alpha}{\beta\  \gamma} D_{k+s-1,|\beta|,U}\|\frac{\partial^{|\gamma|}}{\partial x^\gamma}\omega_x\|_{k+s-1}\leq\\
&\leq C_{n,U}\|\omega\|_{n+s-1,F\times\overline{U}}.
\end{align*}
This proves the second part:
\[\|Q(\omega)\|_{n,F\times\overline{U}}\leq C_{n,U}\|\omega\|_{n+s-1,F\times\overline{U}}.\]
The constants $D_{n,m,U}$ are clearly decreasing in $U$, hence for $U'\subset U$ we also have that $C_{n,U'}\leq
C_{n,U}$. This finishes the proof of Proposition \ref{Theorem_family_of_operators}.

\clearpage \pagestyle{plain}

\pagestyle{fancy} \fancyhead[CE]{Appendix}

\fancyhead[CO]{Rigidity of Foliations, an unpublished paper of Richard S. Hamilton}

\section{Rigidity of foliations, an unpublished paper of Richard S. Hamilton}\label{Section_Richard}

In this section we revisit an unpublished paper of Hamilton \cite{Ham3} on rigidity of foliations. The novelty in our
approach compared to Hamilton's lies in the use of Lie groupoids and Lie algebroids, and the only point where we
deviate from his proof is in the construction of the tame homotopy operators for the deformation complex of the
foliation. This complex computes the Lie algebroid cohomology of the foliation with coefficients in the Bott
representation, and therefore, the Tame Vanishing Lemma provides such operators. Besides this, we briefly rewrite the
content of \cite{Ham3}.

\subsection{The statement of Hamilton's rigidity theorem}

Let $M$ be a compact connected manifold and let $\mathcal{F}$ be a foliation on $M$. The foliation is called
\textbf{Hausdorff}\index{Hausdorff foliation}\index{foliation} if the quotient topology on the space of leaves
$M/\mathcal{F}$ is Hausdorff. This condition has some strong consequences, it implies that $M$ has a generic
compact leaf $L$ such that all the leaves in an saturated dense open of $M$ are diffeomorphic to $L$, and such that
$L$ is a finite covering space of any other leaf (see the proof of Lemma \ref{Lemma_hausdorff}).

We recall now the main result from \cite{Ham3} (see also \cite{Epstein} for a similar result on $C^k$-foliations).

\begin{theorem}[Global Reeb-Thurston Stability Theorem]\label{Theorem_foliations}
Let $M$ be a compact manifold and let $\mathcal{F}$ be a Hausdorff foliation on $M$. If the generic leaf $L$ satisfies
$H^1(L)=0$, then the foliation $\mathcal{F}$ is $C^{\infty}$-rigid.
\end{theorem}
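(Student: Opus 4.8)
The plan is to deduce Theorem~\ref{Theorem_foliations} from the Tame Vanishing Lemma together with the fast--convergence scheme developed for Theorem~\ref{Theorem_FOUR} in chapter~\ref{ChRigidity} (whose toy model is section~\ref{Section_Conn}), following \cite{Ham3}. First I would record the structure theory of a Hausdorff foliation $\mathcal{F}$ on a compact connected $M$: since the leaf space is Hausdorff every leaf is closed, hence compact; by (global/local) Reeb stability every leaf then has finite holonomy, there is a generic leaf $L$ such that every leaf over a dense saturated open is diffeomorphic to $L$ with trivial holonomy, and $L$ is a finite cover of every other leaf. From the local model around a compact leaf (Theorem~\ref{Reeb_Theorem}) one reads off that the holonomy cover $\widetilde{L}'$ of \emph{any} leaf $L'$ is diffeomorphic to the generic leaf $L$; hence the hypothesis $H^1(L)=0$ yields $H^1(\widetilde{L}')=0$ for every leaf. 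The upshot is that the holonomy groupoid $\mathrm{Hol}(\mathcal{F})\rightrightarrows M$ is a proper Hausdorff Lie groupoid integrating $T\mathcal{F}$ whose $s$-fibers (the holonomy covers) are compact manifolds with vanishing first de Rham cohomology.

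Next I would identify the deformation complex of $\mathcal{F}$. Viewing foliations near $\mathcal{F}$ as sections of $\Gr$ ($k=\dim\mathcal{F}$) and fixing a splitting $TM=T\mathcal{F}\oplus\nu$, a $k$-distribution $C^1$-close to $T\mathcal{F}$ and transverse to $\nu$ is the graph of a bundle map $\omega\in\Omega^1(T\mathcal{F},\nu)=\Gamma(\Lambda^1 T^*\mathcal{F}\otimes\nu)$, and, after normalizing the choice of complement, it is involutive precisely when it solves a Maurer--Cartan equation $d_\nabla\omega+\tfrac12[\omega,\omega]=0$, where $d_\nabla$ is the differential of the Lie algebroid complex $(\Omega^\bullet(T\mathcal{F},\nu),d_\nabla)$ of $T\mathcal{F}$ with coefficients in the Bott representation $\nu=TM/T\mathcal{F}$. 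The infinitesimal action of $\mathrm{Diff}(M)$ on these $\omega$'s is, to first order, $X\mapsto d_\nabla X$ for $X\in\Gamma(\nu)=\Omega^0(T\mathcal{F},\nu)$; hence the linearization of the problem around $\mathcal{F}$ is the truncated complex $\Omega^0(T\mathcal{F},\nu)\xrightarrow{d_\nabla}\Omega^1(T\mathcal{F},\nu)\xrightarrow{d_\nabla}\Omega^2(T\mathcal{F},\nu)$. ``Tame infinitesimal rigidity'' — that $H^1(T\mathcal{F},\nu)=0$ and that this sequence admits tame homotopy operators — is then exactly the output of the Tame Vanishing Lemma applied to $\mathrm{Hol}(\mathcal{F})$ in degree $p=1$: one gets linear operators $h_1,h_2$ with $d_\nabla h_1+h_2 d_\nabla=\mathrm{Id}$ on $\Omega^1(T\mathcal{F},\nu)$, invariant-local and order-preserving, satisfying the tame estimates with loss $s=\lfloor\tfrac12\,\mathrm{rank}(T\mathcal{F})\rfloor+1$.

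With these ingredients I would run the Nash--Moser iteration verbatim as in the proof of Theorem~\ref{Theorem_FOUR}: given a foliation $\widetilde{\mathcal{F}}$ that is $C^p$-close to $\mathcal{F}$ for a suitable $p$ (of order $7(\lfloor\dim M/2\rfloor+c)$), represent it by $\omega_0\in\Omega^1(T\mathcal{F},\nu)$; set $X_k=S_{t_k}(h_1(\omega_k))$, let $\varphi_{X_k}$ be the time-one flow of a vector field realizing $X_k$, and let $\omega_{k+1}$ be the graph-datum of $\varphi_{X_k}^*(\widetilde{\mathcal{F}})$. Using the tameness of the bracket, of composition, of flows and of pullback established in chapter~\ref{ChRigidity}, together with the smoothing operators $S_t$ and the interpolation inequalities, one proves inductively the quadratic estimates that force $\|\omega_k\|$ to converge super-exponentially to zero while $\psi_k=\varphi_{X_0}\circ\cdots\circ\varphi_{X_k}$ converges in every $C^n$-norm to a diffeomorphism $\psi$ of $M$, $C^1$-close to the identity, carrying $\widetilde{\mathcal{F}}$ to $\mathcal{F}$. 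Since $M$ is compact the conclusion is global, and the Hausdorff hypothesis and $H^1(L)=0$ enter only through the Tame Vanishing Lemma.

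The main obstacle is, as always for Nash--Moser arguments, organizing the iteration so that the loss of derivatives coming from $h_1,h_2$ and from the flow and composition estimates is compensated by the smoothing operators and by the quadratic form of the Maurer--Cartan remainder; but this bookkeeping is precisely the content of section~\ref{Section_Conn} and chapter~\ref{ChRigidity}, so the genuinely new work is the reduction carried out in the first paragraph: checking that the holonomy groupoid of a Hausdorff foliation of a compact manifold is a \emph{Hausdorff} proper Lie groupoid with compact $s$-fibers, and that $H^1$ of the generic leaf controls $H^1$ of every holonomy cover, so that the Tame Vanishing Lemma genuinely applies.
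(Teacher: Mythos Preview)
Your proposal is correct and shares with the paper the two essential ingredients: the structure theory of Hausdorff foliations (Lemma~\ref{Lemma_hausdorff}, which gives that $\mathrm{Hol}(\mathcal{F})$ is Hausdorff and proper with compact $s$-fibers diffeomorphic to the generic leaf~$L$) and the Tame Vanishing Lemma applied in degree~$1$ to obtain tame homotopy operators for $(\Omega^\bullet(T\mathcal{F},\nu_{\mathcal{F}}),d_{\mathcal{F}})$. The genuine difference is in the analytic step. You propose to run the explicit Nash--Moser iteration of chapter~\ref{ChRigidity} by hand, adapted to the compact manifold~$M$ (where, as you note, no shrinking of domains is needed and the scheme simplifies). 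The paper instead sets up the problem in Hamilton's tame Fr\'echet category: distributions as sections of the Grassmannian bundle $\Gr$, involutivity as the zero locus of a $\mathrm{Diff}(M)$-equivariant section $Q$ of a tame vector bundle, and the linearized complex identified with~(\ref{EQ_some_sequence}). It then invokes Hamilton's abstract Nash--Moser Exactness Theorem through the equivariant Corollary~\ref{corollary_inverse_with_groups}, so that tame splitting of the linear complex at the single point $T\mathcal{F}$ suffices (equivariance propagates it along the $\mathrm{Diff}(M)$-orbit). This is exactly the situation where Hamilton's black box applies cleanly --- in contrast to Conn's theorem, where the paper explains in subsection~\ref{Inequalities} and the subsequent discussion why it does \emph{not}. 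What the paper's route buys is that one avoids redoing all the bookkeeping of the iteration; what your route buys is self-containment and independence from Hamilton's abstract machinery, at the cost of rewriting the estimates (albeit in a simpler form, since $M$ is compact and flows are complete).
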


The rigidity property is similar to the rigidity of Poisson structures presented in Definition \ref{Definition_CpC1}; we
will state this conclusion in more detail in Theorem \ref{Theorem_rigi_crit_folii}.

\subsubsection*{Principal $\mathbb{S}^1$-bundles}

To motivate a bit the condition $H^1(L)=0$, let us look at the foliation $\mathcal{F}$ given by the fibers of a compact
principal $\mathbb{S}^1$-bundle $M\to N$. This foliation is Hausdorff, since its leaf space is $N$. We claim that,
unless $N$ is a point, $\mathcal{F}$ is not rigid. For this, let
\begin{itemize}
\item $\frac{\partial}{\partial \theta}$ be the infinitesimal generator of the $\mathbb{S}^1$-action,
\item $X$ be a vector field on $N$ with at least one noncompact flow line,
\item $\widetilde{X}$ be a lift of $X$ to $M$.
\end{itemize}
Denote by $\mathcal{F}_{\epsilon}$ the foliation given by the flow lines of $\frac{\partial}{\partial \theta}+\epsilon
\widetilde{X}$. Then $\mathcal{F}_{0}=\mathcal{F}$, but for $\epsilon\neq 0$, $\mathcal{F}_{\epsilon}$ is not
diffeomorphic to $\mathcal{F}$ since it has at least one noncompact leaf. So $\mathcal{F}$ is not rigid.

\subsection{The Inverse Function Theorem of Nash and Moser}\label{subsection_inv_funct_teorema}

The proof of Theorem \ref{Theorem_foliations} uses the Nash-Moser Inverse Function Theorem developed by Hamilton
\cite{Ham}. In this subsection we explain this result.

\subsubsection*{Tame Fr\'echet manifolds}

First, we recall some terminology developed in \cite{Ham}.\\

\noindent $\bullet$ A Fr\'echet space $F$ endowed with an increasing family of norms $\{\|\cdot\|_n\}_{n\geq 0}$
generating its
topology is called a \textbf{graded Fr\'echet space}.\index{graded Fr\'echet space}\\
\noindent $\bullet$ A family $\{S_t:F\to F\}_{t>1}$ of linear operators are called \textbf{smoothing
operators}\index{smoothing operators} of degree $d$, if there exist constants $C_{n,m}>0$ such that for all $n,m\geq
0$ and $\sigma\in F$ the following inequalities hold:
\[\|S_t\sigma\|_{n+m}\leq t^{m+d}C_{n,m}\|\sigma\|_{n},\ \ \|S_t\sigma-\sigma\|_{n}\leq t^{d-m}C_{n,m}\|\sigma\|_{n+m}.\]
\noindent $\bullet$ A graded Fr\'echet space that admits smoothing operators is called a \textbf{tame Fr\'echet
space}\index{tame Fr\'echet space}.

Let $({{F}},\{\|\cdot\|_n\}_{n\geq 0})$ and $({{E}},\{\|\cdot\|_n\}_{n\geq 0})$ be two graded Fr\'echet
spaces.\\
\noindent $\bullet$ A liner map
\[A:{F}\rmap {E},\]
is called \textbf{tame linear}\index{tame linear} of degree $r$ and base $b$ if it satisfies
\[\|A\sigma\|_n\leq C_n \|\sigma\|_{n+r},\ \ \forall\ \sigma\in F,\]
for all $n\geq b$, with constants $C_n>0$ depending only on $n$.\\
\noindent $\bullet$ A continuous map $P:U\to {E}$, where $U\subset {F}$ is an open set, is called \textbf{tame} of
degree $r$ and base $b$, if each point in $U$ has an open neighborhood $V\subset U$ on which the following estimates
hold:
\[\|P(\sigma)\|_n\leq C_n(1+ \|\sigma\|_{n+r}),\ \ \forall\  \sigma\in V,\]
for all $n\geq b$ and with constants $C_n>0$ depending only on $V$ and $n$.\\
\noindent $\bullet$ $P$ is called \textbf{smooth tame}\index{smooth tame}, if $P$ is smooth and all its higher
derivatives are tame. We denote the differential of $P$ by
\[DP:U\times F\rmap E, \ \ D_{\sigma}Pv:=\lim_{t\to 0}\frac{1}{t}\left(P(\sigma+tv)-P(\sigma)\right).\]
The higher derivatives of $P$ are the symmetric maps
\[D^lP:U\times F\times \ldots \times F\rmap E, \]
\[ D^l_{\sigma}P\{v_1,\ldots,v_l\}:=\frac{\partial^l}{\partial t_1\ldots\partial t_l}P(\sigma+t_1v_1+\ldots+t_lv_l)_{|t_1=\ldots=t_l=0}.\]
\noindent $\bullet$ A \textbf{tame Fr\'echet manifold}\index{tame Fr\'echet manifold} is a Hausdorff topological space
endowed with an atlas with
values in graded Fr\'echet spaces, for which the transition functions are smooth tame maps.\\
\noindent $\bullet$ A \textbf{smooth tame map} between two tame Fr\'echet manifolds is a continuous map which is
smooth tame in every coordinate chart.\\
\noindent $\bullet$ A \textbf{tame Fr\'echet vector bundle}\index{tame Fr\'echet vector bundle} consists of a smooth
tame map between tame Fr\'echet manifolds
\[P:\mathcal{V}\rmap \mathcal{M},\]
such that the fibers of $P$ are endowed with the structure of tame Fr\'echet spaces and there are (smooth tame) local
charts on $\mathcal{V}$ of the form
\[U_{\mathcal{V}}\cong F\times U_{\mathcal{M}},\]
where $U_{\mathcal{M}}$ is a chart on $\mathcal{M}$, $F$ is a tame Fr\'echet vector space,
$P_{|U_{\mathcal{V}}}$ is the second projection, and the tame Fr\'echet space structure on the fibers of $P$ is that
of $F$.

\subsubsection*{Examples of tame manifolds}

The reference for the examples below is again \cite{Ham}.\\

The main example of a tame Fr\'echet space is the space of section of a vector bundle $V\to M$,
\[(\Gamma(V),\{\|\cdot\|_n\}_{n\geq 0}),\]
where $M$ is a compact manifold and the norms $\|\cdot\|_n$ are $C^n$-norms (see subsection
\ref{subsection_weak_topology}).

If $W$ is a second vector bundle over $M$, and
\[P:\Gamma(V)\rmap \Gamma(W)\]
is a linear differential operator of degree $d$, then $P$ is a tame linear map of degree $d$ and base 0. More generally,
let $P$ be a nonlinear differential operator of degree $d$, i.e.\ $P$ is of the form
\[P:\Gamma(V)\rmap \Gamma(W), \ \ P({\sigma})=p_*(j^d({\sigma})), \ \ \forall \ {\sigma}\in\Gamma(V),\]
where $p$ is a smooth bundle map from the $d$-th jet bundle
\[p:J^d(V)\rmap W.\]
Then $P$ is a smooth tame map of degree $d$ and base 0.\\

The main example of a tame Fr\'echet manifold is the space of section $\Gamma(B)$ of a fiber bundle $b:B\to M$ over a
compact manifold $M$. The topology on $\Gamma(B)$ is the weak $C^{\infty}$-topology described in subsection
\ref{subsection_weak_topology}, which, by compactness of $M$, coincides with the strong $C^{\infty}$-topology.

The tangent space at ${\sigma}\in\Gamma(B)$ is
\[T_{{\sigma}}\Gamma(B)=\Gamma({\sigma}^*(T^{b}B)),\]
where $T^bB\subset TB$ is the kernel of $db$. The tangent bundle of $\Gamma(B)$ is
\[T\Gamma(B)=\Gamma(T^bB\to M)\to \Gamma(B).\]
The space of vector fields on $\Gamma(B)$ is $\mathfrak{X}(\Gamma(B))=\Gamma(T^bB)$. 

To construct a chart around ${\sigma}\in\Gamma(B)$, let
\[\Psi:{\sigma}^*(T^bB)\hookrightarrow B\]
be a tubular neighborhood of ${\sigma}(M)$ in $B$ along the fibers of $b$ (i.e.\ $\Psi$ is a bundle map). Then a chart
around ${\sigma}$ is given by
\[\Psi_*:\Gamma({\sigma}^*(T^bB))\rmap \Gamma(B).\]

Let $b_1:B_1\to M$, $b_2:B_2\to M$ be two fiber bundles. A bundle map $p:B_1\to B_2$ induces a smooth tame map
between the Fr\'echet manifolds $p_*:\Gamma(B_1)\to\Gamma(B_2)$. The differential of $p_*$ is simply
\[D_{\sigma}(p_*)v=dp\circ v\in \Gamma((p\circ {\sigma})^*(T^{b_2}B_2)),\]
where ${\sigma}\in\Gamma(B_1)$ and $v\in\Gamma({\sigma}^*(T^{b_1}B_1))=T_{{\sigma}}\Gamma(B_1)$. More
generally, a bundle map $p:J^d(B_1)\to B_2$ induces a nonlinear differential operator
\[P:\Gamma(B_1)\rmap \Gamma(B_2), \ \ P({\sigma})=p_*(j^d({\sigma})).\]
Then $P$ is a smooth tame map.

The space of functions $C^{\infty}(M,N)$, for $M$ compact, is a tame Fr\'echet manifold. This can be easily seen by
identifying
\[C^{\infty}(M,N)=\Gamma(M\times N\to M).\]

In particular, the group of diffeomorphisms of a compact manifold $M$
\[\textrm{Diff}(M)\subset C^{\infty}(M,M)\]
is a $C^1$-open. In fact, it is a \textbf{tame Lie group}\index{tame Lie group}, in the sense that all structure maps
are tame and smooth. The Lie algebra of this group is
\[(\mathfrak{X}(M),[\cdot,\cdot],\{\|\cdot\|_n\}_{n\geq 0}),\]
and the exponential map is the map that sends a vector field to its time 1 flow. As opposed to finite dimensional Lie
groups, the exponential is not a local diffeomorphism (its range is not a neighborhood of the identity \cite{Ham}). This
is related to the failure of the standard inverse function theorem in the case of tame Fr\'echet manifolds. Nevertheless,
a version of it exists and will be explained in the following subsection.

\subsubsection*{The Inverse Function Theorem of Nash and Moser}

Hamilton has several versions of the inverse function theorem for tame Fr\'echet manifolds \cite{Ham,Ham2,Ham3}.
Here we state the version needed for Theorem \ref{Theorem_foliations}, which is more of an implicit function theorem.

Consider $\mathcal{V}\to \mathcal{M}$ a tame Fr\'echet vector bundle over a tame Fr\'echet manifold and
$Q:\mathcal{M}\to \mathcal{V}$ a section of $\mathcal{V}$. We are interested in the problem of locally
parameterizing the zeros of $Q$. Identifying $\mathcal{M}$ with the zero section
\[\mathcal{M}\hookrightarrow \mathcal{V},\]
we have a canonical isomorphism
\[T_{\sigma}\mathcal{V}=T_\sigma\mathcal{M}\oplus \mathcal{V}_{\sigma}, \ \ \sigma\in\mathcal{M}.\]
If $\sigma$ is a zero of $Q$, then the differential of $Q$ takes the following form
\[D_{\sigma}Q:T_{\sigma}\mathcal{M}\rmap T_{\sigma}\mathcal{M}\oplus \mathcal{V}_{\sigma}, \ \ D_{\sigma}Q v=(v,\delta_{\sigma}Q v).\]
We call the tame linear map
\[\delta_{\sigma}Q:T_{\sigma}\mathcal{M}\rmap \mathcal{V}_{\sigma}\]
the vertical derivative of $Q$ at ${\sigma}$.

Consider a second tame Fr\'echet manifold $\mathcal{P}$ and a tame smooth map
\[P:\mathcal{P}\rmap \mathcal{M},\]
such that $Q\circ P=0$. We say that $P$ \textbf{parameterizes} the zeros of $Q$ around $\sigma_{0}=P(\tau_0)$, if
$\sigma_0$ has an open neighborhood $U$, such that for every $\sigma\in U$ with $Q(\sigma)=0$, there exists
$\tau\in\mathcal{P}$ such that $P(\tau)=\sigma$. The infinitesimal version of this condition is exactness at
$\tau=\tau_0$ of tame linear the complex:
\begin{equation}\label{EQ_linear_complex}
T_{\tau}\mathcal{P}\stackrel{D_{\tau}P}{\rmap}T_{P(\tau)}\mathcal{M}\stackrel{\delta_{P(\tau)}Q}{\rmap}\mathcal{V}_{P(\tau)},\ \ \ \tau\in\mathcal{P}.
\end{equation}

We state now the Nash-Moser Exactness Theorem from \cite{Ham2}.

\begin{theorem}
Assume that the linear complex (\ref{EQ_linear_complex}) is tame exact, i.e.\ there exist smooth tame vector bundle
maps
\[T\mathcal{P}\stackrel{VP}{\longleftarrow}P^*(T\mathcal{M})\stackrel{VQ}{\longleftarrow}P^*(\mathcal{V}),\]
such that
\[D_{\tau}P\circ V_{\tau}Pv+V_{\tau}Q\circ  \delta_{P(\tau)}Qv=v,\]
for all $\tau\in \mathcal{P}$ and $v\in T_{P(\tau)}\mathcal{M}$. Then $P$ parameterizes locally the zeros of $Q$.
More precisely, for every $\tau_0\in \mathcal{P}$, there exists an open neighborhood $U\subset \mathcal{M}$ of
$\sigma_0:=P(\tau_0)$, and a smooth tame map $S:U\to \mathcal{P}$, such that $S(\sigma_0)=\tau_0$ and for every
$\sigma\in U$ that satisfies $Q(\sigma)=0$, we have that
\[\sigma=P(S(\sigma)).\]
\end{theorem}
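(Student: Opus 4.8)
The plan is to treat this as an application of the Nash--Moser fast convergence method, in the spirit of the iteration carried out in section \ref{section_technical_rigidity}; since the statement is due to Hamilton one may of course simply invoke \cite{Ham,Ham2}, but it is worth indicating the mechanism. First I would pass to a model situation: around $\tau_0$ and $\sigma_0=P(\tau_0)$ choose tame charts so that $\mathcal{P}$, $\mathcal{M}$ and the fibre of $\mathcal{V}$ become opens in graded tame Fr\'echet spaces, $Q$ and $P$ become smooth tame maps, and (after shrinking) the bundle maps $VP$, $VQ$ become smooth tame families of linear operators $V_\tau P$, $V_\sigma Q$ of some fixed degree $r$ and base $b$, with $D_\tau P\circ V_\tau P+VQ\circ\delta_{P(\tau)}Q=\mathrm{Id}$ on a neighbourhood. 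Fix smoothing operators $\{S_t\}$ on the model spaces; by \cite{Ham} these also give the interpolation inequalities for the norms $\|\cdot\|_n$.

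Next, given $\sigma$ with $Q(\sigma)=0$ and $\|\sigma-\sigma_0\|_p$ small, I would run the Newton-type scheme
\[ e_k:=\sigma-P(\tau_k),\qquad w_k:=V_{\tau_k}P\cdot e_k,\qquad \tau_{k+1}:=\tau_k+S_{t_k}(w_k),\qquad t_{k+1}=t_k^{3/2},\]
with $t_0\sim\|\sigma-\sigma_0\|_p^{-1/\alpha}$ and $\alpha$, $p$ chosen as in chapter \ref{ChRigidity}. The key point is that both $P(\tau_k)$ and $\sigma$ are zeros of $Q$ (for $P(\tau_k)$ because $Q\circ P=0$), so Taylor expansion of $Q$ gives $\delta_{P(\tau_k)}Q\cdot e_k=O(\|e_k\|^2)$; feeding this into the homotopy identity yields $e_k=D_{\tau_k}P\cdot w_k+VQ\cdot(\delta_{P(\tau_k)}Q\cdot e_k)$, so $e_k=D_{\tau_k}P\cdot w_k$ modulo a quadratically small error (with a derivative loss from $VQ$). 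Taylor expanding $P$ then gives $e_{k+1}=(e_k-D_{\tau_k}P\cdot w_k)+D_{\tau_k}P(w_k-S_{t_k}w_k)+O(\|S_{t_k}w_k\|^2)$, i.e.\ a quadratic term, a smoothing error $(\mathrm{Id}-S_{t_k})$, and a further quadratic term. Exactly as in the proof of Proposition \ref{Proposition_technical} one proves inductively a pair of estimates $\|e_k\|_s\le t_k^{-\alpha}$ and $\|e_k\|_p\le t_k^{\alpha}$ in a low and a high norm, from which $\sum_k\|\tau_{k+1}-\tau_k\|_n<\infty$ for every $n$; hence $\tau_k\to\tau=:S(\sigma)$ in all norms, with $P(\tau)=\sigma$ and $S(\sigma_0)=\tau_0$.

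The main obstacle is the usual one in Nash--Moser theory, here in its sharpest form: there is no genuine submersion, only tame exactness of the linearised complex, and both $V_\tau P$ and $VQ$ lose derivatives. One must therefore fix the integer $p$ and the exponent $\alpha$ so that the quadratic gain at each step beats both the derivative loss and the smoothing error uniformly in $k$, which forces the same delicate bookkeeping of low and high norms (via the tame bounds for $P$, $Q$, $V_\tau P$, $VQ$, the smoothing estimates and interpolation) as in chapter \ref{ChRigidity}. I expect this norm bookkeeping, together with verifying that the various Taylor remainders admit tame quadratic estimates, to be the technical heart of the argument.

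Finally, I would address smoothness of $S$: since the whole iteration depends tamely and smoothly on $\sigma$, with all estimates uniform on the neighbourhood $U$, the limit map $S$ is a limit of smooth tame maps with uniform tame bounds and is therefore itself smooth tame. Alternatively, differentiating the identities $Q(\sigma)=0$ and $P(S(\sigma))=\sigma$ one obtains a linear equation for $DS$ solved by the same homotopy operators, and iterating this yields tame estimates on all higher derivatives of $S$, completing the proof.
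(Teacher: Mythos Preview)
The paper does not prove this theorem at all: it is stated as the ``Nash--Moser Exactness Theorem from \cite{Ham2}'' and used as a black box. Immediately after the statement the text continues with ``In order to apply this result, one needs to find the operators\ldots'' and proceeds to derive Corollary~\ref{corollary_inverse_with_groups} from it. So there is no proof in the paper to compare your proposal against; the author simply invokes Hamilton.

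That said, your sketch is a faithful outline of how Hamilton's argument runs, and it is indeed parallel to the iteration in chapter~\ref{ChRigidity}: work in charts, run a smoothed Newton scheme $\tau_{k+1}=\tau_k+S_{t_k}(V_{\tau_k}P\cdot e_k)$, use $Q(\sigma)=0=Q(P(\tau_k))$ to get the quadratic estimate on $\delta Q\cdot e_k$, feed this into the homotopy identity to see that $e_{k+1}$ splits into a quadratic term plus a smoothing remainder, and close the low-norm/high-norm induction. One small point worth flagging: the homotopy identity is stated at the level of tangent vectors, whereas $e_k=\sigma-P(\tau_k)$ is a difference of points, so you are implicitly using the linear chart identification $T_{P(\tau_k)}\mathcal{M}\cong$ model space; this is fine but should be said explicitly, and the tame estimates for $V_\tau P$, $V_\tau Q$ must be shown to hold uniformly over a neighbourhood of $\tau_0$ in that chart (this is where smooth tameness of the bundle maps is used). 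With that caveat, your plan is sound and matches Hamilton's; for the thesis's purposes, citing \cite{Ham2} is what the author actually does.
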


In order to apply this result, one needs to find the operators $V_{\tau}P$ and $V_{\tau}Q$, for every $\tau\in
\mathcal{P}$. Potentially, this means solving infinitely many equations. Nonetheless, for our application to foliations
(and in general in rigidity problems coming from geometry) this problem simplifies. To explain this, consider an
equation
\[Q(\sigma)=0\]
that is equivariant with respect to the action of a group $\mathcal{G}$, and fix a solution $\sigma_0$. The
corresponding rigidity problem is to determine whether the orbit $\mathcal{G}\sigma_0$ is a neighborhood of
$\sigma_0$ in $Q^{-1}(0)$. If this happens, we call $\sigma_0$ rigid.\\

We put this abstract rigidity problem in the setting of the theorem. Consider the following objects:
\begin{itemize}
\item $\mathcal{V}\to \mathcal{M}$ a tame Fr\'echet vector bundle,
\item $\mathcal{G}$ a tame Fr\'echet Lie group,
\item a tame smooth action of $\mathcal{G}$ by vector bundle maps on $\mathcal{V}\to \mathcal{M}$,
\item $Q:\mathcal{M}\to \mathcal{V}$ a tame smooth $\mathcal{G}$-equivariant section,
\item $\sigma_0\in\mathcal{M}$ a zero of $Q$.
\end{itemize}

Denote the smooth tame map parameterizing the orbit of $\sigma_0$ by
\[P:\mathcal{G}\rmap \mathcal{M}, \ \ P(g):=g\sigma_0.\]
Rigidity of $\sigma_0$ is equivalent to $Q^{-1}(0)$ being parameterized by $P$ around $\sigma_0$.

Consider the linear complex at the unit $e\in \mathcal{G}$
\begin{equation}\label{EQ_linear_complex1}
T_e\mathcal{G}\stackrel{D_eP}{\rmap}T_{\sigma_0}\mathcal{M}\stackrel{\delta_{\sigma_0}Q}{\rmap}\mathcal{V}_{\sigma_0},
\end{equation}
and let us assume that it is tame exact, i.e.\ there are tame linear maps
\begin{equation}\label{EQ_tame_split}
T_e\mathcal{G}\stackrel{V_eP}{\longleftarrow}T_{\sigma_0}\mathcal{M}\stackrel{V_eQ}{\longleftarrow}\mathcal{V}_{\sigma_0},
\end{equation}
such that
\begin{equation}\label{EQ_tame_split2}
D_{e}P\circ V_{e}Pv+V_{e}Q\circ  \delta_{\sigma_0}Qv=v,\ \ \forall \ v\in T_{\sigma_0}\mathcal{M}.
\end{equation}
Let $l_g$ denote the left translation by $g$ on $\mathcal{G}$ and $\mu_g$ denote the action of $g$ on
$\mathcal{M}$ and $\mathcal{V}$. Since $P(g)=l_g(\sigma_0)$ and since $Q$ is equivariant, we have that
\[D_gP=D_{\sigma_0}\mu_g\circ D_eP\circ D_el_{g^{-1}}, \ \delta_{g\sigma_{0}}Q=D_{\sigma_0}\mu_g\circ \delta_{\sigma_0}Q\circ D_{\sigma_0}\mu_{g^{-1}}.\]
Consider the smooth tame vector bundle maps
\[T\mathcal{G}\stackrel{VP}{\longleftarrow}P^*(T\mathcal{M})\stackrel{VQ}{\longleftarrow}P^*(\mathcal{V}),\]
\[V_gP:=D_{e}l_g\circ V_eP\circ D_{\sigma_0}\mu_{g^{-1}}, \ V_gQ:=D_{\sigma_0}\mu_g\circ V_eQ\circ D_{\sigma_0}\mu_{g^{-1}}.\]
The above properties imply that these maps satisfy
\[D_{g}P\circ V_{g}Pv+V_{g}Q\circ  \delta_{g\sigma_0}Qv=v,\]
for all $g\in\mathcal{G}$ and $v\in T_{g\sigma_0}\mathcal{M}$. So, as in \cite{Ham3}, we obtain:
\begin{corollary}\label{corollary_inverse_with_groups}
If there are tame linear maps as in (\ref{EQ_tame_split}), satisfying (\ref{EQ_tame_split2}), then $\sigma_0$ is rigid.
More precisely, there exists an open neighborhood $U\subset \mathcal{M}$ of $\sigma_0$ and a smooth tame map
$S:U\to \mathcal{G}$, such that $S(\sigma_0)=e$ and such that, for $\sigma\in U$, the following implication holds
\[Q(\sigma)=0 \ \  \Rightarrow \ \ \sigma=S(\sigma)\sigma_0.\]
\end{corollary}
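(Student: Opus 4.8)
The plan is to reduce the statement to a direct application of the Nash--Moser Exactness Theorem quoted above, with $\mathcal{P}=\mathcal{G}$ and $\tau_0=e$. First I would record that the map $P:\mathcal{G}\to\mathcal{M}$, $g\mapsto g\sigma_0$, is smooth tame: it is the composite of the smooth tame action map $\mathcal{G}\times\mathcal{M}\to\mathcal{M}$ with the smooth tame inclusion $g\mapsto(g,\sigma_0)$. Equivariance of $Q$ together with $Q(\sigma_0)=0$ gives $Q\circ P=0$, so that $P$ takes values in $Q^{-1}(0)$. Hence it only remains to exhibit smooth tame vector bundle maps $VP:P^*(T\mathcal{M})\to T\mathcal{G}$ and $VQ:P^*(\mathcal{V})\to P^*(T\mathcal{M})$ that split the linearized complex along the whole orbit.

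For the splitting I would take exactly the families written just above the statement,
\[V_gP:=D_{e}l_g\circ V_eP\circ D_{\sigma_0}\mu_{g^{-1}}, \qquad V_gQ:=D_{\sigma_0}\mu_g\circ V_eQ\circ D_{\sigma_0}\mu_{g^{-1}},\]
built from the fixed tame linear maps $V_eP,V_eQ$ supplied by the hypothesis. Smoothness and tameness of these families follows because left translation and the action are smooth tame maps of tame Fréchet manifolds, their differentials depend smoothly and tamely on the base point, and composition of smooth tame (vector bundle) maps is again smooth tame; one also checks that $P^*(T\mathcal{M})$ and $P^*(\mathcal{V})$ are genuine tame Fréchet vector bundles over $\mathcal{G}$ by pulling back local trivialisations through $P$. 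The required identity
\[D_{g}P\circ V_{g}Pv+V_{g}Q\circ \delta_{g\sigma_0}Qv=v,\qquad v\in T_{g\sigma_0}\mathcal{M},\]
would then follow by substituting the chain-rule identities for $D_gP$ and $\delta_{g\sigma_0}Q$ recorded just above the statement — obtained respectively by differentiating $P(g)=\mu_g(\sigma_0)$ and the equivariance relation $Q\circ\mu_g=\mu_g\circ Q$ — and then applying the hypothesised splitting identity $D_eP\circ V_ePw+V_eQ\circ\delta_{\sigma_0}Qw=w$ with $w=D_{\sigma_0}\mu_{g^{-1}}v$; the conjugating factors $D_{\sigma_0}\mu_g$ and $D_{\sigma_0}\mu_{g^{-1}}$ cancel and what is left is $v$.

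With $P$, $Q$ and the tame splitting $(VP,VQ)$ in hand, the Nash--Moser Exactness Theorem applies and produces, at $\tau_0=e$, an open neighbourhood $U\subset\mathcal{M}$ of $\sigma_0=P(e)$ and a smooth tame map $S:U\to\mathcal{G}$ with $S(\sigma_0)=e$ such that $\sigma=P(S(\sigma))=S(\sigma)\sigma_0$ for every $\sigma\in U$ with $Q(\sigma)=0$; this is precisely the asserted conclusion. I expect the only genuinely technical point to be the verification that $VP$, $VQ$ and the pullback bundles on which they live are smooth tame \emph{vector bundle maps} in Hamilton's category, rather than merely tame linear fibrewise — everything else is bookkeeping with the chain rule and equivariance. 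Since the paragraph preceding the statement already carries out this bookkeeping, the proof is in effect the single sentence: apply the Nash--Moser Exactness Theorem to $P$, $Q$ and the just-constructed splitting.
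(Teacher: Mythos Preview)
Your proposal is correct and follows exactly the approach the paper takes: the paper places the entire argument in the paragraph preceding the corollary, defining the same families $V_gP$ and $V_gQ$ by conjugating the hypothesised splitting with the group action and left translation, verifying the splitting identity along the orbit via the chain-rule formulas for $D_gP$ and $\delta_{g\sigma_0}Q$, and then invoking the Nash--Moser Exactness Theorem. Your write-up is in fact more explicit than the paper's, which simply says ``So, as in \cite{Ham3}, we obtain'' and states the corollary.
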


\subsection{Proof of Theorem \ref{Theorem_foliations}}

\subsubsection*{The Grassmannian}

We start with some facts from linear algebra. Let $V$ be a vector space, and denote the $k$-th\index{Grassmannian}
Grassmannian of $V$ by $\textrm{Gr}(k,V)$. A point in $\textrm{Gr}(k,V)$ is a $k$-dimensional subspaces $W$ of
$V$. We claim that the tangent space of $\textrm{Gr}(k,V)$ at $W$ can be canonically identified with
\begin{equation}\label{EQ_grass}
T_W(\textrm{Gr}(k,V))=W^*\otimes V/W, \ \ W\in \textrm{Gr}(k,V).
\end{equation}
To see this, note that $\textrm{Gr}(k,V)$ is a homogenous $\textrm{Gl}(V)$-space and that the stabilizer of $W$ has
Lie algebra
\[\mathfrak{gl}(W,V)=\{A\in\mathfrak{gl}(V) | A(W)\subset W\}.\]
Since $\mathfrak{gl}(V)/\mathfrak{gl}(W,V)\cong W^*\otimes V/W$, (\ref{EQ_grass}) follows from the short exact
sequence induced by the action
\[0\rmap \mathfrak{gl}(W,V)\rmap \mathfrak{gl}(V) \rmap T_W(\textrm{Gr}(k,V))\rmap 0.\]

\subsubsection*{Rigidity of foliations}

For a compact manifold $M$, denote the $k$-th Grassmannian bundle by
\[\Gr\rmap M, \ \ \Gr_x:=\textrm{Gr}(k,T_xM), \ \ x\in M.\]
A section of $\Gr$ is a called a \textbf{distribution} of rank $k$, i.e.\ a subbundle of $TM$ of rank $k$. By the
discussion in subsection \ref{subsection_inv_funct_teorema}, the space of distributions is a tame Fr\'echet manifold,
which we denote by
\[\mathcal{M}:=\Gamma(\Gr).\]
Denote the normal bundle of $B\in \mathcal{M}$ by
\[\nu_B:=TM/B.\]
The description of the tangent bundle from subsection \ref{subsection_inv_funct_teorema} and formula
(\ref{EQ_grass}), allow us to identify
\[T_{B}\mathcal{M}\cong\Omega^1(B,\nu_B):=\Gamma(B^*\otimes \nu_B).\]
Explicitly, let $B_{\epsilon}$ be a family of distributions, with $B_0=B$ and consider a family $a_{\epsilon}\in
\Gamma(Gl(TM))$ of automorphisms of $TM$, such that
\[a_0=\textrm{Id}, \ \ a_{\epsilon}(B)=B_{\epsilon}.\]
Then the 1-form $\eta\in\Omega^1(B,\nu_B)$ corresponding to the tangent vector
\[\frac{d}{d\epsilon}_{|\epsilon=0}B_{\epsilon}\in T_{B}\mathcal{M}\]
is given by
\[\eta(X):=\left( \dot{a}_{0}(X) \ \mathrm{mod}\ B\right)  \in\nu_B, \ \ X\in B.\]

By the Frobenius theorem, a distribution is the tangent bundle of a foliation if and only if it is involutive. To put this
condition in our setting, consider the tame Fr\'echet vector bundle over $\mathcal{M}$
\[\mathcal{V}\rmap \mathcal{M}, \ \ \mathcal{V}_B:=\Omega^2(B,\nu_B).\]
This vector bundle has a canonical section which measures integrability
\[Q:\mathcal{M}\rmap \mathcal{V},\]
\[Q_B(X,Y):=\left([X,Y]\ \textrm{mod} \ B\right) \in \Gamma(\nu_B), \ \ X,Y\in\Gamma(B).\]
That $Q_B$ is $C^{\infty}(M)$-linear follows from the Leibniz rule. The zeros of $Q$ is the space of foliations on $M$.

The tame Fr\'echet Lie group \[\mathcal{G}:=\textrm{Diff}(M)\] has a smooth tame action on
$\mathcal{V}\to\mathcal{M}$. For $\varphi\in\mathcal{G}$, $B\in\mathcal{M}$ and $\omega\in\mathcal{V}_B$, the
action is defined as follows
\[\varphi_*(B)_{\varphi(x)}:=d_x\varphi(B_{x}),\ \varphi_*(\omega)(X,Y):=\overline{d\varphi}\circ\omega(d\varphi^{-1}(X),d\varphi^{-1}(Y)),\]
for $X,Y\in\Gamma(\varphi_*(B))$, where $\overline{d\varphi}$ denotes the isomorphism induced between
\[\overline{d_x\varphi}:\nu_{B_x}\diffto \nu_{\varphi_*(B)_{\varphi(x)}}.\]
The section $Q$ is equivariant with respect to the action.

Let $\mathcal{F}$ be a foliation on $M$ and denote its tangent bundle by
\[B_{\mathcal{F}}=T\mathcal{F}\in \mathcal{M}.\]
As before, we consider the map
\[P:\mathcal{G}\rmap \mathcal{M}, \ \ P(\varphi):=\varphi_*(B_{\mathcal{F}}).\]

We compute now the linear complex.

\begin{lemma}
The linear complex (\ref{EQ_linear_complex1}) is isomorphic to:
\begin{equation}\label{EQ_some_sequence}
\mathfrak{X}(M)\xrightarrow{X\mapsto d_{\mathcal{F}}([X])}\Omega^1(B_{\mathcal{F}},\nu_{\mathcal{F}})\xrightarrow{\eta\mapsto d_{\mathcal{F}}(\eta)} \Omega^2(B_{\mathcal{F}},\nu_{\mathcal{F}}).
\end{equation}
\end{lemma}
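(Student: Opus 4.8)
The plan is to unwind the two maps in the complex $(\ref{EQ_linear_complex1})$ under the identifications recorded above, namely $T_e\mathcal{G}\cong\mathfrak{X}(M)$, $T_{B_{\mathcal{F}}}\mathcal{M}\cong\Omega^1(B_{\mathcal{F}},\nu_{\mathcal{F}})$ and $\mathcal{V}_{B_{\mathcal{F}}}\cong\Omega^2(B_{\mathcal{F}},\nu_{\mathcal{F}})$, and to recognize $D_eP$ and $\delta_{B_{\mathcal{F}}}Q$ as the Lie algebroid differential $d_{\mathcal{F}}$ of $B_{\mathcal{F}}=T\mathcal{F}$ with coefficients in the Bott representation on $\nu_{\mathcal{F}}=\nu_{\mathcal{F}}$, in degrees $0$ and $1$ respectively. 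Here I use $d_{\nabla}\lambda(Y)=\nabla_Y\lambda$ for a $0$-cochain $\lambda\in\Gamma(\nu_{\mathcal{F}})$ and the Koszul formula $d_{\nabla}\eta(Y_1,Y_2)=\nabla_{Y_1}(\eta(Y_2))-\nabla_{Y_2}(\eta(Y_1))-\eta([Y_1,Y_2])$ for a $1$-cochain, together with the defining property $\nabla_Y(\overline{Z})=\overline{[Y,Z]}$ of the Bott connection, for $Y\in\Gamma(B_{\mathcal{F}})$ and $Z\in\mathfrak{X}(M)$.

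For the first map I would compute $D_eP$ as follows. Let $\varphi_t$ be a path of diffeomorphisms with $\varphi_0=\mathrm{Id}_M$ and $\frac{d}{dt}\varphi_t|_{t=0}=X$. Frame $B_{\mathcal{F}}$ locally by vector fields $e_1,\dots,e_k$; then $(\varphi_t)_*(B_{\mathcal{F}})$ is framed by $(\varphi_t)_*(e_i)$, which equal $e_i$ at $t=0$. By the description of tangent vectors to $\mathcal{M}$ in terms of a path of automorphisms covering the identity, the tangent vector $\frac{d}{dt}P(\varphi_t)|_{t=0}$ corresponds to the $1$-form $Y\mapsto \big(\frac{d}{dt}(\varphi_t)_*(Y)|_{t=0}\ \mathrm{mod}\ B_{\mathcal{F}}\big)$ for $Y\in\Gamma(B_{\mathcal{F}})$; since $\frac{d}{dt}(\varphi_t)_*(e_i)|_{t=0}=-L_X e_i=[e_i,X]$, this is $Y\mapsto\overline{[Y,X]}=\nabla_Y([X])=d_{\mathcal{F}}([X])(Y)$, where $[X]\in\Gamma(\nu_{\mathcal{F}})$ is the image of $X$. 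Hence $D_eP$ is the map $X\mapsto d_{\mathcal{F}}([X])$.

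For the second map, since $Q_{B_{\mathcal{F}}}=0$, the derivative $\delta_{B_{\mathcal{F}}}Q(\eta):=\frac{d}{dt}Q_{B_t}|_{t=0}\in\mathcal{V}_{B_{\mathcal{F}}}$ along a path $B_t=a_t(B_{\mathcal{F}})$ with $a_0=\mathrm{Id}$, $\dot a_0=A$ (so the tangent vector $\eta$ is $\eta(Y)=\overline{AY}$, $Y\in\Gamma(B_{\mathcal{F}})$) is well defined independently of a connection. Transporting $Q_{B_t}\in\Omega^2(B_t,\nu_{B_t})$ back to $B_{\mathcal{F}}$ via the isomorphism induced by $a_t^{-1}$ and evaluating on $Y_1,Y_2\in\Gamma(B_{\mathcal{F}})$, I would expand $a_t^{-1}[a_tY_1,a_tY_2]=[Y_1,Y_2]+t\big([AY_1,Y_2]+[Y_1,AY_2]-A[Y_1,Y_2]\big)+O(t^2)$; projecting to $\nu_{\mathcal{F}}$, the $O(1)$ term vanishes by involutivity, and the $t$-coefficient is $\overline{[Y_1,AY_2]}-\overline{[Y_2,AY_1]}-\overline{A[Y_1,Y_2]}=\nabla_{Y_1}(\eta(Y_2))-\nabla_{Y_2}(\eta(Y_1))-\eta([Y_1,Y_2])=d_{\mathcal{F}}(\eta)(Y_1,Y_2)$. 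Thus $\delta_{B_{\mathcal{F}}}Q$ is the map $\eta\mapsto d_{\mathcal{F}}(\eta)$, which completes the identification.

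The only real subtlety — the step I expect to take the most care — is making the identification of $\mathcal{V}_{B_t}$ with $\Omega^2(B_t,\nu_{B_t})$ and its variation in $t$ precise enough that ``$\frac{d}{dt}Q_{B_t}|_{t=0}$'' is unambiguous; this is exactly where the vanishing $Q_{B_{\mathcal{F}}}=0$ and the Bott connection enter, and where one must be careful that the derivative of the projection $TM\to TM/B_t$ does not contribute an extra term because $[Y_1,Y_2]\in\Gamma(B_{\mathcal{F}})$. The remaining work — the Lie-derivative identity $\frac{d}{dt}(\varphi_t)_*(e_i)|_{t=0}=[e_i,X]$ and the sign bookkeeping in the Koszul formula — is routine.
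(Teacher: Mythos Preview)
Your proposal is correct and follows essentially the same approach as the paper: both compute $D_eP$ by differentiating the pushforward action of the flow on sections of $B_{\mathcal{F}}$ to obtain $Y\mapsto\overline{[Y,X]}=\nabla_Y[X]$, and both compute $\delta_{B_{\mathcal{F}}}Q$ by trivializing along a path $B_t=a_t(B_{\mathcal{F}})$ and expanding $a_t^{-1}[a_tY_1,a_tY_2]$ to first order, then projecting to $\nu_{\mathcal{F}}$ to recover the Koszul formula. The paper is slightly more explicit in introducing the auxiliary family $a_\epsilon\in\Gamma(Gl(TM))$ covering the identity (rather than working directly with $(\varphi_t)_*$, which does not), but your frame-based description and the subtlety you flag about the $t$-dependent projection are exactly the points that make the two arguments agree.
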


\begin{proof}
First, note that we have a canonical identification
\[\mathfrak{X}(M)\diffto T_{e}\mathcal{G},\ \ \ X\mapsto \frac{d}{d\epsilon}_{|\epsilon=0}\varphi_X^{\epsilon},\]
where $\varphi_X^{\epsilon}$ denotes the flow of $X$.

Fix $X\in\mathfrak{X}(M)$, and consider $a_{\epsilon}\in \Gamma(Gl(TM))$ such that
\[a_0=\textrm{Id}, \ \  a_{\epsilon}(B_{\mathcal{F}})=\varphi_{X,*}^{\epsilon}(B_{\mathcal{F}}).\]
For $Y\in\Gamma(B_{\mathcal{F}})$, let $Y_{\epsilon}\in\Gamma(B_{\mathcal{F}})$ denote the smooth family
defined by
\[\varphi_{X,*}^{\epsilon}(Y)=a_{\epsilon}(Y_{\epsilon}).\]
Clearly $Y_0=Y$. Taking the derivative at $\epsilon=0$, we obtain that
\[-[X,Y]=\dot{a}_0(Y)+\dot{Y}_0.\]
Since $\dot{Y}_0\in\Gamma(B_{\mathcal{F}})$, we obtain the first map of the complex:
\begin{align*}
D_eP(X)(Y)&=\left(\dot{a}_0(Y)\ \textrm{mod}\ B_{\mathcal{F}}\right)=\left([Y,X]\ \textrm{mod}\ B_{\mathcal{F}}\right)=\\
&=\nabla_{Y}[X]=d_{\mathcal{F}}([X])(Y),
\end{align*}
where $[X]:=\left(X\ \textrm{mod}\ B_{\mathcal{F}}\right)\in\Gamma(\nu_{\mathcal{F}})$ and we used the
differential $d_{\mathcal{F}}$ corresponding to the Bott connection\index{Bott connection} $\nabla$ on
$\nu_{\mathcal{F}}$ (see subsection \ref{subsection_model_sympl_foli}).

We compute now $\delta_{B_{\mathcal{F}}}Q$. Consider $\eta\in T_{B_{\mathcal{F}}}\mathcal{M}$ and let
$B_{\epsilon}$ be a smooth path in $\mathcal{M}$ with $B_0=B_{\mathcal{F}}$ and $\dot{B}_0=\eta$. To compute
$\delta_{B_{\mathcal{F}}}Q(\eta)$, we must trivialize $\mathcal{V}$ along $B_{\epsilon}$. For this, choose a smooth
family $a_{\epsilon}\in \Gamma(Gl(TM))$, such that
\[a_0=\textrm{Id}, \ \ a_{\epsilon}(B)=B_{\epsilon}.\]
A trivialization is given by the map
\[A_{\epsilon}:\mathcal{V}_{B_{\epsilon}}\diffto \mathcal{V}_{B_{\mathcal{F}}}, \ A_{\epsilon}(\omega)(X,Y):=(\overline{a}_{\epsilon})^{-1}\circ \omega(a_{\epsilon}(X),a_{\epsilon}(X)),\]
for $X,Y\in \Gamma(B_{\mathcal{F}})$, where $\overline{a}_{\epsilon}$ denotes the induced isomorphism
\[\overline{a}_{\epsilon}:\nu_{\mathcal{F}}\diffto \nu_{B_{\epsilon}}.\]
For $X,Y\in\Gamma(B_{\mathcal{F}})$, we compute
\begin{align*}
\delta_{B_{\mathcal{F}}}Q(\eta)(X,Y)&=\frac{d}{d\epsilon}_{|\epsilon=0}\left((\overline{a}_{\epsilon})^{-1}\circ Q_{B_{\epsilon}}(a_{\epsilon}(X),a_{\epsilon}(X))\right)=\\
&=\frac{d}{d\epsilon}_{|\epsilon=0}\left(a_{\epsilon}^{-1}\circ [a_{\epsilon}(X),a_{\epsilon}(X)]\right) \textrm{mod} \ B_{\mathcal{F}}=\\
&=\left(-\dot{a}_{0}([X,Y])+[X,\dot{a}_{0}(Y)]+[\dot{a}_{0}(X),Y]\right) \ \textrm{mod} \ B_{\mathcal{F}}=\\
&=-\eta([X,Y])+\nabla_X(\eta(Y))+\nabla_Y(\eta(X))=d_{\mathcal{F}}\eta(X,Y).
\end{align*}
This identifies also the second map in the complex.
\end{proof}
As in \cite{Ham3}, we obtain the following result.

\begin{theorem}\label{Theorem_rigi_crit_folii}
Let $M$ be a compact manifold with a foliation $\mathcal{F}$ of rank $k$. Assume that
$H^1(\mathcal{F},\nu_{\mathcal{F}})$ vanishes tamely, in the sense that there are tame linear maps
\[\Omega^0(T\mathcal{F},\nu_{\mathcal{F}})\stackrel{h_1}{\longleftarrow}\Omega^1(T\mathcal{F},\nu_{\mathcal{F}})\stackrel{h_2}{\longleftarrow}\Omega^2(T\mathcal{F},\nu_{\mathcal{F}}),\]
satisfying
\[d_{\mathcal{F}}\circ h_1(\eta)+h_2\circ d_{\mathcal{F}}(\eta)=\eta, \  \ \forall \ \eta\in \Omega^1(T\mathcal{F},\nu_{\mathcal{F}}).\]
Then $\mathcal{F}$ is rigid. More precisely, there is a smooth tame map
\[S:U\subset \Gamma(\Gr)\rmap \textrm{Diff}(M),\]
where $U$ is an open neighborhood of $T\mathcal{F}$ in the space of all rank $k$ distributions on $M$, such that
$S(T\mathcal{F})=\mathrm{Id}_M$, and for every foliation $\widetilde{\mathcal{F}}$, with
$T\widetilde{\mathcal{F}}\in U$, we have that $\varphi:=S(T\widetilde{\mathcal{F}})$ is an isomorphism of foliated
manifolds
\[\varphi:(M,\mathcal{F})\diffto (M,\widetilde{\mathcal{F}}).\]
\end{theorem}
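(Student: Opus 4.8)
The plan is to apply the Nash--Moser exactness result in its equivariant form, namely Corollary \ref{corollary_inverse_with_groups}, to the $\mathcal{G}=\mathrm{Diff}(M)$-equivariant section $Q:\mathcal{M}\to\mathcal{V}$ whose zero set is the space of foliations, at the point $B_{\mathcal{F}}=T\mathcal{F}$. By the previous lemma the linearized complex $(\ref{EQ_linear_complex1})$ at the identity is isomorphic to the three-term piece $(\ref{EQ_some_sequence})$ of the Lie algebroid complex of $T\mathcal{F}$ with coefficients in the Bott representation $\nu_{\mathcal{F}}$, with the first map $X\mapsto d_{\mathcal{F}}([X])$ and the second $\eta\mapsto d_{\mathcal{F}}\eta$. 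So the only thing to check in order to invoke Corollary \ref{corollary_inverse_with_groups} is that this complex is tamely exact at the middle term, i.e.\ that there are tame linear operators $h_1,h_2$ with $d_{\mathcal{F}}h_1+h_2d_{\mathcal{F}}=\mathrm{Id}$ on $\Omega^1(T\mathcal{F},\nu_{\mathcal{F}})$ --- which is exactly the hypothesis of the theorem I am proving. Given such $h_1,h_2$, the maps $(\ref{EQ_tame_split})$ are $V_eP:=h_1$ (composed with the isomorphism $T_{B_{\mathcal{F}}}\mathcal{M}\cong\Omega^1(T\mathcal{F},\nu_{\mathcal{F}})$ and the identification $T_e\mathcal{G}\cong\mathfrak{X}(M)$) and $V_eQ:=h_2$ (composed with $\mathcal{V}_{B_{\mathcal{F}}}\cong\Omega^2(T\mathcal{F},\nu_{\mathcal{F}})$), and the homotopy identity $(\ref{EQ_tame_split2})$ is precisely $D_eP\circ h_1+h_2\circ\delta_{B_{\mathcal{F}}}Q=\mathrm{Id}$, which follows from $d_{\mathcal{F}}h_1+h_2d_{\mathcal{F}}=\mathrm{Id}$.

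Next I would unpack the output of Corollary \ref{corollary_inverse_with_groups}. It yields an open neighborhood $U\subset\mathcal{M}=\Gamma(\mathrm{Gr}_k(M))$ of $B_{\mathcal{F}}$ and a smooth tame map $S:U\to\mathrm{Diff}(M)$ with $S(B_{\mathcal{F}})=\mathrm{Id}_M$ such that whenever $B\in U$ satisfies $Q(B)=0$ --- i.e.\ $B$ is the tangent distribution of a foliation $\widetilde{\mathcal{F}}$ --- one has $B=S(B)_*(B_{\mathcal{F}})$. Setting $\varphi:=S(T\widetilde{\mathcal{F}})$, the equality $\varphi_*(T\mathcal{F})=T\widetilde{\mathcal{F}}$ says exactly that $\varphi$ carries the distribution $T\mathcal{F}$ to $T\widetilde{\mathcal{F}}$; since a diffeomorphism taking one integrable distribution to another is automatically an isomorphism of the associated foliated manifolds, $\varphi:(M,\mathcal{F})\diffto(M,\widetilde{\mathcal{F}})$ as claimed. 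The continuity/smoothness of $S$ on $U$ and the normalization $S(B_{\mathcal{F}})=\mathrm{Id}$ are inherited directly from the corollary, giving the full ``more precisely'' statement.

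The only genuine content, and the main obstacle, is verifying that all the maps in sight are smooth tame in Hamilton's sense: the section $Q$ (a first-order nonlinear differential operator, hence smooth tame of degree $1$ by the standard examples in Section \ref{subsection_inv_funct_teorema}), the action of $\mathrm{Diff}(M)$ on the Grassmannian bundle and on $\mathcal{V}$, the parametrization $P(\varphi)=\varphi_*(B_{\mathcal{F}})$, and especially the chart identifications $T_B\mathcal{M}\cong\Omega^1(B,\nu_B)$, $\mathcal{V}_B\cong\Omega^2(B,\nu_B)$ which must be realized by tame vector bundle trivializations. These are all routine --- they follow the template for function spaces and diffeomorphism groups recalled in Section \ref{subsection_inv_funct_teorema} --- but they are what makes the hypothesis of Corollary \ref{corollary_inverse_with_groups} literally applicable. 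I would state these tameness facts and refer to \cite{Ham,Ham3} rather than re-derive them; the genuinely new input, the tame splitting of the deformation complex, is supplied by hypothesis here, and (in the proof of Theorem \ref{Theorem_foliations} itself) is produced from the Tame Vanishing Lemma once one knows, via the Hausdorff hypothesis, that $T\mathcal{F}$ is integrable by a proper groupoid with compact $s$-fibers whose first cohomology vanishes because $H^1(L)=0$.
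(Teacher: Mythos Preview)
Your approach is essentially the paper's: apply Corollary \ref{corollary_inverse_with_groups} using the previous lemma's identification of the linearized complex with (\ref{EQ_some_sequence}). There is one small gap in your construction of the homotopy operators. The map $h_1$ lands in $\Omega^0(T\mathcal{F},\nu_{\mathcal{F}})=\Gamma(\nu_{\mathcal{F}})$, not in $T_e\mathcal{G}\cong\mathfrak{X}(M)$; these are \emph{not} identified --- the latter surjects onto the former via $X\mapsto[X]$. So you cannot set $V_eP:=h_1$ directly. What is needed is a tame linear splitting $\iota:\Gamma(\nu_{\mathcal{F}})\hookrightarrow\mathfrak{X}(M)$ of the short exact sequence
\[0\rmap \Gamma(B_{\mathcal{F}})\rmap \mathfrak{X}(M)\rmap \Gamma(\nu_{\mathcal{F}})\rmap 0,\]
(e.g.\ coming from a Riemannian metric), and then $V_eP:=\iota\circ h_1$. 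Since $[\iota(v)]=v$, one gets $D_eP\circ(\iota\circ h_1)=d_{\mathcal{F}}\circ h_1$, and the homotopy identity $d_{\mathcal{F}}h_1+h_2 d_{\mathcal{F}}=\mathrm{Id}$ then gives (\ref{EQ_tame_split2}) as you claim. This is exactly the paper's (three-line) proof; once you insert $\iota$, your argument is complete.
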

\begin{proof}
Using a metric on $M$, we obtain a tame linear splitting \[\iota:\Gamma(\nu_{\mathcal{F}})\hookrightarrow
\mathfrak{X}(M)\] of the exact sequence
\[0\rmap \Gamma(B_{\mathcal{F}})\rmap \mathfrak{X}(M)\rmap \Gamma(\nu_{\mathcal{F}})\rmap 0.\]
The tame linear homotopy operators for the complex (\ref{EQ_some_sequence}) are given by:
\[\mathfrak{X}(M)\stackrel{\iota\circ h_1}{\longleftarrow}\Omega^1(T\mathcal{F},\nu_{\mathcal{F}})\stackrel{h_2}{\longleftarrow}\Omega^2(T\mathcal{F},\nu_{\mathcal{F}}).\]
Therefore, Corollary (\ref{corollary_inverse_with_groups}) concludes the proof.
\end{proof}

\subsubsection*{Hausdorff foliations}

In order to put the Hausdorffness\index{Hausdorff foliation} condition on $M/\mathcal{F}$ in the framework of Lie
algebroids/groupoids, we prove a well-known result (yet difficult to extract from the literature).
\begin{lemma}\label{Lemma_hausdorff}
Let $\mathcal{F}$ be a foliation on a compact connected manifold $M$. The condition that $\mathcal{F}$ is Hausdorff
is equivalent to the holonomy groupoid
\[\mathrm{Hol}(\mathcal{F})\rightrightarrows M,\]
being Hausdorff and proper. In this case, the generic leaf of $\mathcal{F}$ is diffeomorphic to any $s$-fiber of
$\mathrm{Hol}(\mathcal{F})$.
\end{lemma}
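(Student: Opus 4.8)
The plan is to prove the two implications separately, organised around the orbit equivalence relation $R_{\mathcal{F}}=\{(x,y)\in M\times M:\ x,y\ \text{lie on the same leaf}\}$, which is exactly the image of $(s,t)\colon \mathrm{Hol}(\mathcal{F})\to M\times M$, together with the elementary fact that the quotient map $q\colon M\to M/\mathcal{F}$ is open (the saturation of an open set is open, by sliding along plaques of foliation charts). From this, $M/\mathcal{F}$ is Hausdorff if and only if $R_{\mathcal{F}}$ is closed in $M\times M$.

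The implication ``$\mathrm{Hol}(\mathcal{F})$ Hausdorff and proper $\Rightarrow$ $\mathcal{F}$ Hausdorff'' is the soft one: properness means in particular that $(s,t)$ is a proper continuous map into the compact Hausdorff space $M\times M$, hence closed, so $R_{\mathcal{F}}=(s,t)(\mathrm{Hol}(\mathcal{F}))$ is closed and $M/\mathcal{F}$ is Hausdorff. For the converse I assume $R_{\mathcal{F}}$ closed; since $M$ is compact, $R_{\mathcal{F}}$ is compact and each leaf $L_x=R_{\mathcal{F}}\cap(\{x\}\times M)$ is closed, hence compact. The key classical input, which I would cite from the structure theory of compact foliations (Epstein \cite{Epstein}, Edwards--Millett--Sullivan, Millett--Tischler), is that a foliation of a compact manifold with Hausdorff leaf space has all leaves compact with \emph{finite} holonomy. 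Granting this, the local Reeb stability theorem (Theorem \ref{Reeb_Theorem}, version $3$)) applies to every leaf $L$: there is a saturated tubular neighbourhood $U_L\cong \widetilde{L}\times_H V$, with $H$ the finite holonomy group acting freely on the holonomy cover $\widetilde{L}$ and linearly and effectively on a ball $V$, and (after shrinking $V$) the restricted holonomy groupoid is the manifold quotient $\mathrm{Hol}(\mathcal{F})|_{U_L}\cong(\widetilde{L}\times\widetilde{L}\times V)/H$ with $H$ acting diagonally (cf. \cite{MM}). Each such restriction is then Hausdorff (a free finite quotient of a manifold) and proper over $U_L\times U_L$ (as $\widetilde{L}$ is compact and $H$ finite).

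Next I would promote this to the global statement by localisation, using that $\mathrm{Hol}(\mathcal{F})|_{U_L}=s^{-1}(U_L)\cap t^{-1}(U_L)$ is open in $\mathrm{Hol}(\mathcal{F})$. For Hausdorffness: two distinct arrows that are limits of one common sequence $g_n$ must have equal source $x$ and equal target $y$, forcing (after composing with an inverse) a non-unit isotropy element $\gamma\in\mathrm{Hol}(\mathcal{F})_x$ to be a limit of units $1_{x_n}$; for $n$ large, $x_n\in U_L$ where $L=L_x$, so this limit takes place inside the open Hausdorff set $\mathrm{Hol}(\mathcal{F})|_{U_L}$, where $1_{x_n}\to 1_x$ as well, giving $\gamma=1_x$, a contradiction. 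For properness: a net $g_\alpha$ with $(s,t)(g_\alpha)\to(x,y)$ has $(x,y)\in R_{\mathcal{F}}$ (closedness), so $x,y$ lie on a common leaf $L$; for $\alpha$ large, $g_\alpha\in\mathrm{Hol}(\mathcal{F})|_{U_L}$, which is proper over $U_L\times U_L$, so a subnet of $g_\alpha$ converges in $\mathrm{Hol}(\mathcal{F})|_{U_L}$ and hence in $\mathrm{Hol}(\mathcal{F})$.

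For the last assertion I would use the local models once more: in $\widetilde{L}\times_H V$ the leaf through $[\ell,v]$ is $\widetilde{L}\times_H(Hv)\cong\widetilde{L}/H_v$, the leaves with trivial holonomy are precisely those with trivial stabiliser $H_v=\{1\}$, such $v$ form an open dense subset of $V$ whose image in the local leaf space $V/H$ is connected, and the corresponding leaves are all diffeomorphic to the $s$-fibre $\widetilde{L}$ (cf. \cite{MM}). Patching these pictures, the union $M^{\mathrm{gen}}$ of leaves with trivial holonomy is open and dense, and $M^{\mathrm{gen}}/\mathcal{F}$ is connected (the complement is, locally, the image of a positive-codimension singular set in a connected leaf space), so $M^{\mathrm{gen}}\to M^{\mathrm{gen}}/\mathcal{F}$ is a locally trivial fibration over a connected base and all its fibres are mutually diffeomorphic; this defines the generic leaf $L_0$, and the local description gives $L_0\cong\widetilde{L_x}=s^{-1}(x)$ for every $x$. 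I expect the main obstacle of the write-up to be not any individual step but the honest justification (or the choice of a precise citation) for the structure theorem asserting compactness of all leaves and finiteness of all holonomy groups under Hausdorffness of $M/\mathcal{F}$; once that is in hand, everything else is point-set topology together with the Reeb stability theorem already proved in the thesis.
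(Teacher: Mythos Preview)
Your proposal is correct and follows essentially the same strategy as the paper: compactness of leaves, the Epstein-type input giving finite holonomy, local Reeb stability producing the local model $\mathrm{Hol}(\mathcal{F})|_{U_L}\cong(\widetilde{L}\times\widetilde{L}\times V)/H$, and then patching these to obtain Hausdorffness and properness of the holonomy groupoid.

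Two minor differences are worth noting. For the direction ``$\mathrm{Hol}(\mathcal{F})$ Hausdorff proper $\Rightarrow$ $M/\mathcal{F}$ Hausdorff'', your argument via closedness of $R_{\mathcal{F}}=(s,t)(\mathrm{Hol}(\mathcal{F}))$ is more efficient than the paper's, which sets up the full Reeb local model and produces explicit separating saturated neighbourhoods $U_r$, $U'_s$. Conversely, for the generic-leaf statement the paper's argument is slightly cleaner than yours: rather than arguing that $M^{\mathrm{gen}}/\mathcal{F}$ is connected (which is delicate in codimension one, where the non-generic locus in $V/H$ can locally disconnect), the paper simply observes that in each Reeb neighbourhood $U_L$ the generic leaf is $\widetilde{L}$, and then uses connectedness of $M$ to chain overlapping neighbourhoods and conclude that all the $\widetilde{L}$'s, i.e.\ all $s$-fibres, are mutually diffeomorphic. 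You may want to replace your connectedness-of-$M^{\mathrm{gen}}/\mathcal{F}$ step by this overlap argument.
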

\begin{proof}
Assume first that $\textrm{Hol}(\mathcal{F})$ is Hausdorff and proper.\index{holonomy}\index{holonomy
groupoid}\index{holonomy group} Since $M$ is compact, this implies compactness of $\textrm{Hol}(\mathcal{F})$.
In particular, its $s$-fibers are compact and its isotropy groups are finite. Since the $s$-fibers are the holonomy covers
of the leaves, it follows that the leaves are compact, and since the isotropy groups are the holonomy groups, it follows
that the Local Reeb Stability Theorem \ref{Reeb_Theorem}\index{Reeb stability} applies. We conclude that every leaf
$S$ has a saturated open neighborhood $U$ of the form
\begin{equation}\label{EQ_reeb_model}
(U,\mathcal{F}_{|U})\cong (\widetilde{S}\times_{\textrm{Hol}_x}\nu_x,\mathcal{F}_N),
\end{equation}
where $\widetilde{S}$ is the holonomy cover of $S$ and $\nu_x$ is endowed with the linear holonomy action of
$\textrm{Hol}_x$. The leaves of $\mathcal{F}_N$ are of the form
\[S_v=\widetilde{S}\times_{\textrm{Hol}_x}\textrm{Hol}_xv\cong \widetilde{S}/\textrm{H}_v,\ \ v\in\nu_x,\]
where $\textrm{H}_v\subset \textrm{Hol}_x$ is the stabilizer of $v$. Since $\textrm{Hol}_x$ is finite and it acts
faithfully on $\nu_x$, the set of points $v$, with $\textrm{H}_v=\{e\}$ is open and dense in $\nu_x$. This follows by
presenting its complement as a finite union of proper vector subspaces
\[\bigcup_{g\in \textrm{Hol}_x\backslash\{e\}}\{v\in \nu_x | (g-\textrm{Id}_{\nu_x})v=0\}.\]
This implies the fact that the generic leaf of $\mathcal{F}_{|U}$ is diffeomorphic to $\widetilde{S}$, the $s$-fiber of
$\textrm{Hol}(\mathcal{F})_{|U}$. Since $M$ is connected, this also provides a proof for the fact that $M$ has a
generic leaf which is a finite cover for any other leaf.

Let $B_r$ be the open ball in $\nu_x$ of radius $r$ centered at the origin with respect to a $\textrm{Hol}_x$-invariant
inner product on $\nu_{x}$ and let $U_r\subset U$ be the open corresponding to
$\widetilde{S}\times_{\textrm{Hol}_x} B_r$. If $S'$ is a second leaf, denote by $U'_r$ the corresponding opens
around $S'$. Since $S$ and $S'$ are compact, there exist $r,s>0$ small enough, such that $\overline{U}_r\cap
\overline{U}'_s=\emptyset$. Hence $U_r/\mathcal{F}$ and $U'_s/\mathcal{F}$ are opens in $M/\mathcal{F}$
separating the images of $S$ and $S'$. This shows that $M/\mathcal{F}$ is Hausdorff.

Conversely, assume that $M/\mathcal{F}$ is Hausdorff. In particular, the points in $M/\mathcal{F}$ are closed,
therefore, the leaves of $\mathcal{F}$ are closed. By compactness of $M$, the leaves are also compact. Now,
Hausdorffness and compact leaves imply that all holonomy groups are finite, for this see Theorems 4.1 and 4.2 in
\cite{Epstein2}. Hence, around every leaf $S$, the local model (\ref{EQ_reeb_model}) holds on some open $U$. Using
the description of the holonomy groupoid of such foliations from subsection \ref{trivial holonomy, non-Hausdorff
holonomy group} and that $\textrm{Hol}_x$ acts linearly and faithfully on $\nu_x$, we see that the holonomy groupoid
of $U$ is isomorphic to
\[\textrm{Hol}(\mathcal{F}_{|U})\cong(\widetilde{S}\times\nu_x\times\nu_x)/{\textrm{Hol}_x}.\]
In particular it is Hausdorff and proper. Since $U$ is invariant, we have that
\[\textrm{Hol}(\mathcal{F}_{|U})=\textrm{Hol}(\mathcal{F})_{|U}.\]
This implies that $\textrm{Hol}(\mathcal{F})$ is proper: a sequence $g_n$ in $\textrm{Hol}(\mathcal{F})$, such that
$(s(g_n),t(g_n))$ converges to $(x,y)\in S\times S$, will belong to $\textrm{Hol}(\mathcal{F})_{|U}$, for $n$ big
enough, thus it contains a convergent subsequence. Hausdorffness also follows: if $g\neq g'$ are arrows over the leaves
$S$ and $S'$ respectively, then, if $S=S'$ then $g$ and $g'$ can be separated in $\textrm{Hol}(\mathcal{F})_{|U}$,
and if $S\neq S'$, by Hausdorffness of $M/\mathcal{F}$, we can find opens neighborhoods $S\subset V$ and $S'\subset
V'$ such that $V\cap V'=\emptyset$, and then $\textrm{Hol}(\mathcal{F})_{|V}$ and
$\textrm{Hol}(\mathcal{F})_{|V'}$ separate $g$ and $g'$.
\end{proof}

\subsubsection*{Proof of Theorem \ref{Theorem_foliations}}

Since $\mathcal{F}$ is Hausdorff and $M$ is compact, the lemma above implies that $\textrm{Hol}(\mathcal{F})$ is
Hausdorff, proper and that its $s$-fibers are compact and have vanishing $H^1$. Now, the parallel transport of the
Bott connection (see section \ref{section_local_reeb_stability_for_non-compact_leaves}) defines an action of
$\textrm{Hol}(\mathcal{F})$ on the normal bundle $\nu_{\mathcal{F}}$, and this action integrates the Bott
representation of $T\mathcal{F}$. Thus, we may apply the Tame Vanishing Lemma to construct tame homotopy
operators for the complex $(\Omega^{\bullet}(T\mathcal{F},\nu_{\mathcal{F}}),d_{\mathcal{F}})$ in degree one.
Theorem \ref{Theorem_rigi_crit_folii} concludes the proof.

\clearpage \pagestyle{plain}

\bibliographystyle{amsplain}
\def\lllll{}

\clearpage \pagestyle{plain}

\printindex

\clearpage \pagestyle{plain}

\chapter*{Samenvatting}
\addcontentsline{toc}{chapter}{Samenvatting} \pagestyle{fancy} 
\fancyhead[CE]{Samenvatting} 
\fancyhead[CO]{Samenvatting}

Dit proefschrift presenteert een aantal nieuwe resultaten in het vakgebied Poisson-meetkunde. Het gaat hoofdzakelijk
om een normaalvormstelling (`normal form theorem'), een stelling over lokale rigiditeit en een expliciete beschrijving
van de Poisson-moduli-ruimtes rond Lie-Poissonbollen. Daarnaast bewijzen we ook een standaardvormstelling voor
symplectische foliaties, een stelling over formele equivalentie rond Poissondeelvari\"eteiten en een resultaat over het
getemd verdwijnen van Lie-algebro\"ide-cohomologie. We geven ook in detail de bewijzen van enkele bekende
resultaten: de existentie van symplectische realisaties (met een origineel bewijs), Conn's lineariserings-stelling (met
enkele vereenvoudigingen) en een resultaat van Hamilton over de rigiditeit van foliaties (welke een applicatie is van het
getemd verdwijnen van de cohomologie).

Poissonstructuren leven op differentieerbare vari\"eteiten. Een vari\"eteit van dimensie $n$ is een ruimte die rond elk
punt lijkt op een $n$-dimensionale kopie van de Euclidische ruimte $\mathbb R^n$. Zo'n ruimte is differentieerbaar als
er calculus op gedaan kan worden (vergelijkbaar met calculus in meerdere variabelen op $\mathbb R^n$). Lijnen en
cirkels zijn \'e\'en-dimensionale vari\"eteiten. Twee-dimensionale vari\"eteiten worden ook wel oppervlakken genoemd.
Voorbeelden hiervan zijn het vlak, de bolschil (het oppervlak van een bol) en de torus (het oppervlak van een donut).

Symplectische vari\"eteiten zijn de klassieke voorbeelden van Poisson-vari\"eteiten. Een symplectische structuur op
een vari\"eteit is een gesloten niet-gedegenereerde twee-vorm. Symplectische meetkunde heeft zijn oorsprong in het
Hamilton-formalisme van de klassieke mechanica, waarbij de faseruimtes van bepaalde klasieke systemen de structuur
aannemen van een symplectische vari\"eteit. Poisson-meetkunde stelt ons in staat om op een differentieerbare manier
meerdere van zulke systemen samen te voegen. De configuratieruimte van een object met massa $m$ dat beweegt in
een gravitationeel veld is bijvoorbeeld een symplectische vari\"eteit. Door deze ruimtes samen te voegen voor alle
waarden van $m$ ontstaat er een Poissonvari\"eteit.

Meetkundig gezien heeft een Poissonvari\"eteit een canonieke ontbinding in symplectische\break vari\"eteiten van
verschillende dimensies, welke symplectische bladeren worden genoemd. Voor de drie- dimensionale Euclidische
ruimte $\mathbb R^3$ bestaat er bijvoorbeeld een Poissonstructuur wiens bladeren bestaan uit de cocentrische bollen
rond de oorsprong (die twee-dimensionaal zijn) en de oorsprong zelf (die nul-dimensionaal is).

Een van de hoofdresultaten van dit proefschrift (Stelling 2) is een standaardvormstelling rond symplecische bladeren.
Het generaliseert Conn's stelling vanuit vaste punten (nul-dimensionale bladeren). In het bijzonder geeft de stelling
voor bladeren die aan onze hypothese voldoen een expliciete beschrijving van alle nabijgelegen bladeren.

We bewijzen ook een rigiditeitsresultaat (Stelling 4) voor integreerbare Poissonvari\"eteiten. Een ``rigide''
Poissonvari\"eteit is er een die in essentie niet vervormd kan worden. Dit betekent dat elke nabijgelegen
Poissonstructuur gelijk is op een isomorfisme na. Dit resultaat heeft als gevolg een versterking van Stelling 2.

Lie-theorie geeft interessante voorbeelden van Poissonvari\"eteiten. In het bijzonder kan er vanuit een compacte
semi-simpele Lie algebra een zogenaamde Lie-Poissonbol geconstrueerd worden. Stelling 4 blijkt hier een verrassende
toepassing te hebben: in Stelling 5 beschrijven we de ruimte van gladde vervormingen van de Lie-Poissonstructuur op
isomorfisme na. We bewijzen dat de Lie-Poissonbol in feite rigide is: Elke nabijgelegen Poissonstructuur is isomorf aan
de Lie-Poissonstructuur op een herschaling van de symplectische vormen op de bladeren na. Dit resultaat is de eerste
berekening van een Poisson-moduli-ruimte in een dimensie groter dan twee rond een Poissonstructuur die niet
symplectisch is.

\clearpage \pagestyle{plain}

\chapter*{Acknowledgements}
\addcontentsline{toc}{chapter}{Acknowledgements} \pagestyle{fancy} 
\fancyhead[CE]{Acknowledgements} 
\fancyhead[CO]{Acknowledgements}

It would not have been possible to write this thesis without the help and support of the kind people around me. I would
like to mention some of them.

First of all, I want to thank my advisor, Marius Crainic. Dear Marius, thank you for introducing me to this beautiful
field of mathematics, for all the nice problems you gave me to work on, for your constant help and support, for your
kind advises regarding my mathematical career, and foremost, for your friendship. I hope that we will solve many more
problems together and that we will continue our endless discussions about politics. Many thanks also to Dacina,
Andreea-Carola and Ana-Nicola for the very nice time we had at your house.

I would like to thank the members of the reading committee, professors Marco Gualtieri, Victor Guillemin, Rui Loja
Fernandes, Alan Weinstein and Nguyen Tien Zung, for taking the time to read this thesis and for their useful
comments.

The Mathematics Department at Utrecht University has been a very pleasant and stimulating workplace, and there are
several people who contributed to this. First, I would like to thank Prof. Ieke Moerdijk for organizing so many fruitful
reading seminars, which created a stimulating work environment that persisted also after he left Utrecht. I am grateful
also to Prof. Erik van den Ban for answering all my questions about Lie groups. To my dear colleagues, thank you for
your friendship and support: Mar\'ia, Ivan, Roy, Andor, Camilo, M\'arti, Jo\~ao, Pedro, Sergey, Boris, Dmitry, Florian,
Benoit, Floris, Jaap, Bas, Dave, Ori, Gil, Andr\'e, KaYin and Ittay. I am grateful also to Helga, Jean, Ria and Wilke for
taking care of everything and making my life very easy.

I am indebted to my teachers at Babe\cb{s}-Bolyai University in Cluj for all I learned from them, especially Andrei
M\u{a}rcu\cb{s}, Paul Blaga and Valeriu Anisiu.

Many thanks also to my mathematical friends from high school and my time in Cluj: Anda, Adriana, Dan, Ralph, Iuli,
Vali, Ovidiu and Raul for sharing my interests and for the nice discoveries we made together.

My visits to Deventer made me feel more at home in the Netherlands, thank you Alina, Rogier and Sam for this.

Dana, you made Utrecht a sunny place. Thanks for constantly distracting me from work, without you this thesis would
have been twice as long.

Last but not the least, I want to thank my dear parents, to whom I dedicate this thesis: Dragi p\u{a}rin\cb{t}i,
mul\cb{t}umesc pentru c\u{a} \^{i}ntotdeuna m-a\cb{t}i l\u{a}sat s\u{a} fac orice am vrut, \cb{s}i mai mult, pentru
c\u{a} \^{i}ntotdeauna m-a\cb{t}i sprijinit in orice am vrut s\u{a} fac. Ca un mic semn de
recuno\cb{s}tin\cb{t}\u{a}, v\u{a} dedic aceast\u{a} c\u{a}rticic\u{a}.

\clearpage \pagestyle{plain}
\chapter*{Curriculum Vitae}
\addcontentsline{toc}{chapter}{Curriculum Vitae} \pagestyle{fancy} 
\fancyhead[CE]{Curriculum Vitae} 
\fancyhead[CO]{Curriculum Vitae}

Ioan Tiberiu M\u{a}rcu\cb{t} was born in Sibiu, Romania on the $24^{\textrm{th}}$ of March 1984. In the period
1999-2003 he studied at the German-language high school Samuel von Brukenthal Gymnasium in Sibiu.

After obtaining his high school degree (Baccalaureate), in October 2003 he moved to Cluj Napoca where he studied
Mathematics at Babe\cb{s}-Bolyai University. During 2005-2006 he participated at master-level courses organized by
\cb{S}coala Normal\u{a} Superioar\u{a} Bucure\cb{s}ti, a scientific institution supported by the Mathematical
Institute of the Romanian Academy. In June 2007 he obtained his Bachelor's degree in Mathematics under the
supervision of Prof. Andrei M\u{a}rcu\cb{s} at Babe\cb{s}-Bolyai University.

In September 2007 he joined the master class program ``Quantum Groups, Affine Lie Algebras and their Applications'',
organized at Utrecht University by the Mathematical Research Institute. He wrote his research project under the
supervision of Prof. Jasper Stokman.

Between September 2008 and September 2012 he was a Ph.D. student at Utrecht University under the guidance of
Prof. Marius Crainic. Currently, he is working as a postdoctoral researcher at Utrecht University.

\end{document}